\newtheorem{theorem}{Theorem}[section]
\newtheorem{prop}[theorem]{Proposition}
\newtheorem{lemma}[theorem]{Lemma}
\newtheorem{cor}[theorem]{Corollary}
\theoremstyle{definition}
\newtheorem{definition}[theorem]{Definition}
\newtheorem{example}[theorem]{Example}
\newtheorem{problem}[theorem]{Problem}
\newtheorem{conjecture}[theorem]{Conjecture}
\theoremstyle{remark}
\newtheorem{remark}{Remark} 
\newcommand{\Q}{\mathbb{Q}}
\newcommand{\R}{\mathbb{R}}
\newcommand{\Z}{\mathbb{Z}}
\newcommand{\Id}{\mathrm{id}}
\newcommand{\Rank}{\mathrm{rank}}
\newcommand{\Map}{\mathrm{map}}
\newcommand{\Conv}{\mathrm{conv}}
\newcommand{\Affi}{\mathrm{affi}}
\newcommand{\Cone}{\mathrm{cone}}
\newcommand{\Convcone}{\mathrm{convcone}}
\newcommand{\Vect}{\mathrm{vect}}
\newcommand{\QVect}{\mathbb{Q}\text{-}\mathrm{vect}}
\newcommand{\Clos}{\mathrm{clos}}
\newcommand{\Stab}{\mathrm{stab}}
\newcommand{\Supp}{\mathrm{supp}}
\newcommand{\Ht}{\mathrm{height}}
\newcommand{\Ord}{\mathrm{ord}}
\newcommand{\In}{\mathrm{in}}
\newcommand{\Ps}{\mathrm{ps}}
\newcommand{\Spec}{\mathrm{Spec}}
\newcommand{\Comp}{\mathrm{comp}}
\newcommand{\Prm}{\mathrm{prm}}
\newcommand{\Div}{\mathrm{div}}
\newcommand{\Den}{\mathrm{den}}
\newcommand{\Op}{^\mathrm{op}}
\newcommand{\Mx}{^\mathrm{max}}
\newcommand{\Fc}{^\mathrm{fc}}
\newcommand{\Res}{\mathrm{res}}
\newcommand{\Codim}{\mathrm{codim}}
\newcommand{\p}{\mathfrak{p}}
\newcommand{\q}{\mathfrak{q}}
\newcommand{\A}{\alpha}
\newcommand{\B}{\beta}
\begin{document}

\title[New Ideas for
Resolution of Singularities]
{New Ideas for\\
Resolution of Singularities\\
in Arbitrary Characteristics}

\author{Tohsuke Urabe}
\address{Department of Mathematical Sciences, Ibaraki University, Mito, 
Ibaraki, 310-8512, Japan}
\email{urabe@mx.ibaraki.ac.jp}
\urladdr{http://mathmuse.sci.ibaraki.ac.jp/urabe/}

\subjclass{Primary 14E15; Secondary 32S45, 52A20}

\date{June 22, 2010.}

\dedicatory{Dedicated to Professor Heisuke Hironaka and Professor Shreeram S. Abhyankar}

\keywords{resolution of singularities, blowing-up, normal crossing, smooth, regular local ring, Newton polyhedron, torus embedding, convex polyhedral cone, simplicial cone} 
\begin{abstract}
The concept of the maximal contact is the key in Hironaka's resolution theory. It treats local theory, and it is not effective in positive characteristics. This is the essential reason why Hironaka's theory treats only the case of characteristic zero.

In this article we propose the substitute for the maximal contact, which is effective in any characteristics of the ground field. We replace the maximal contact by a theorem in the theory of torus embeddings. 

Using essential ideas here, we would like to establish the theory of resolution of singularities in arbitrary characteristics in a global sense in the forthcoming articles.   
\end{abstract}

\maketitle

\section{Introduction}
\label{intro}
The concept of the maximal contact is the key in Hironaka's resolution theory. (Hironaka~\cite{H75}, \cite{H74}, \cite{H64}(II, Chapter III, sections 7-10), Giraud~\cite{G75}, \cite{G74}, Hauser~\cite{Ha10}.) It treats local theory, and it is not effective in positive characteristics. This is the essential reason why Hironaka's theory treats only the case of characteristic zero. The maximal contact is closely related to the multiplicity of a hypersurface singularity and the Hilbert-Samuel function of a general singularity. 

In this article we propose the substitute for the multiplicity and the maximal contact, which is effective in any characteristics of the ground field. We replace the multiplicity by the degree of the Weierstrass polynomial, or, the height of the Newton polyhedron, and we replace the maximal contact by a theorem in the theory of torus embeddings (Kempf et al.~\cite{KKMS}, Fulton~\cite{F93}).

The idea of the degree of the Weierstrass polynomial can be found in Hironaka~\cite{H67} in low dimensional cases. However, he did not manipulate higher dimensional cases, because he did not apply the theory of torus embeddings. See also Cossart et al.~\cite{CJS}.

Some ideas in this article are inspired by the appendix of Abhyankar~\cite{A98} and Bogomolov~\cite{B96}.

Let $k$ be \emph{any} algebraically closed field in \emph{any} characteristics; let $n$ be any positive integer, let $z, x(1),x(2),\ldots,x(n)$ be $(n+1)$ of variables over $k$; let $A$ denote the ring of formal power series of variables $z, x(1),x(2),\ldots,x(n)$ with coefficients in $k$. By $ A'$ we denote the $k$-subalgebra of $A$ consisting of power series of $n$ of variables $x(1), x(2),\ldots,x(n)$. The unique maximal ideal of $A$ (respectively, $A'$) is denoted by $M(A)$ (respectively, $M(A')$). The set of invertible elements of $A$ (respectively, $A'$) is denoted by $A^\times$ (respectively, $A^{\prime\times})$. We have $A=M(A)\cup A^\times, M(A)\cap A^\times=\emptyset, A'=M(A')\cup A^{\prime\times}, M(A')\cap A^{\prime\times}=\emptyset$, 
$M(A)\in\Spec(A)$, $M(A')\in\Spec(A')$, $M(A)$ is the unique closed point of the affine scheme $\Spec(A)$, and $M(A')$ is the unique closed point of the affine scheme $\Spec(A')$.
Let $\Z_0$ and $\Z_+$ denote the set of non-negative integers and the set of positive integers respectively, and let $P=\{z, x(1),x(2),\ldots,x(n)\}$. Note that $P$ is a parameter system of $A$ and $P-\{z\}=\{x(1),x(2),\ldots,x(n)\}$ is a parameter system of $A'$.
For any $\phi\in A$ with $\phi\neq 0$ by $\Gamma_+(P,\phi)$ we denote the Newton polyhedron of $\phi$ over $P$. 

Any element $\phi\in A$ satisfying  $\phi=z^h+\sum_{i=0}^{h-1} \phi'(i)z^i$ for some $h\in\Z_0$ and some mapping $\phi':\{0,1,\ldots,h-1\}\rightarrow M(A')$ is called a $z$-\emph{Weierstrass polynomial} over $P$, and the non-negative integer $h$ is called the \emph{degree} of $\phi$.

We consider any $\phi\in A$ with $\phi\neq 0$. The Newton polyhedron $\Gamma_+(P,\phi)$ is of $z$\emph{-Weierstrass type}, if and only if, there exist uniquely $u\in A^\times, a\in\Z_0, h\in\Z_0$, a mapping $b:\{1,2,\ldots,n\}\rightarrow\Z_0$ and a mapping $\phi':\{0,1,\ldots,h-1\}\rightarrow M(A')$ satisfying $\phi=uz^a\prod_{i=1}^nx(i)^{b(i)}(z^h+\sum_{i=0}^{h-1}\phi'(i)z^i)$ and $\phi'(0)\neq 0$ if $h\geq 1$. If $\Gamma_+(P,\phi)$ is of $z$-Weierstrass type, and if moreover, any compact face $F$ of $\Gamma_+(P,\phi)$ satisfies $\dim F\leq 1$, then we say that $\Gamma_+(P,\phi)$ is $z$\emph{-simple}. See Section~\ref{concept} for the definition of $z$-simpleness. 
In case where $\Gamma_+(P,\phi)$ is of $z$-Weierstrass type and $h\geq 1$, any face  of $\Gamma_+(P,\phi)$ satisfying a certain condition is called a $z$\emph{-removable} face. See Section~\ref{concept} also for the definition of $z$-removable faces.

Let $R$ be any complete regular local ring, and let $\phi\in R$. We consider a parameter system $Q$ of $R$. We say that $\phi$ has \emph{normal crossings} over $Q$, if $\phi=u\prod_{x\in Q}x^{\Lambda(x)}$ for some invertible element $u\in R$ and some mapping $\Lambda:Q\rightarrow\Z_0$. We say that $\phi$ has \emph{normal crossings}, if $\phi$ has normal crossings over $Q$ for some parameter system $Q$ of $R$.

Furthermore, we denote
\begin{equation*}\begin{split}
&PW(1)= \{\phi\in A|\phi= u\prod_{\chi\in\mathcal{X}}(z+\chi)^{a(\chi)}\prod_{i=1}^n x(i)^{b(i)} \\
&\quad\text{for some }u\in A^\times, \text{ some }r\in\Z_0, \text{ some finite subset }\mathcal{X}\text{ of } M(A'),\\
&\quad \text{some mapping }a: \mathcal{X}\rightarrow \Z_+, \text{ and some mapping }b:\{1,2,\ldots,n\}\rightarrow\Z_0.\}\\
\end{split}\end{equation*}
\vfill

For any $h\in\Z_+$ with $h\geq 2$, we denote
\begin{equation*}\begin{split}
W(h)&=\{\phi\in A|\phi= z^h+\sum_{i=0}^{h-1} \phi'(i)z^i \\
&\qquad\quad \text{for some mapping }\phi':\{0,1,\ldots,h-1\}\rightarrow M(A') \text{ satisfying }\\
&\qquad\quad 
\chi^h+\sum_{i=0}^{h-1} \phi'(i)\chi^i\neq 0 \text{ for any }
\chi\in M(A').\},\\
PW(h)&=\{\phi\in A|\phi=\psi\psi'\text{ for some } \psi\in W(h) \text{ and some }\psi'\in PW(1).\},\\
SW(h)&=\{\phi\in A|\phi=\psi\psi'\text{ for some } \psi\in W(h) \text{ and some }\psi'\in PW(1)\text{ such that}\\
&\qquad\quad \Gamma_+(P,\psi) \text{ has no }z \text{-removable faces, and }\Gamma_+(P,\phi) \text{ is }z \text{-simple.}\}.\\
\end{split}\end{equation*}

Note that $A$ is a unique factorization domain
(Matsumura~\cite{M}.),
$1\in PW(1)\neq\emptyset$, $A=\{0\}\cup\cup_{h\in\Z_+}PW(h)$ if $n=1$, and $\Gamma_+(P,\phi)$ is $z$-simple for any $\phi\in A$ with $\phi\neq 0$ if $n=1$.
We consider any integer $h$ with $h\geq 2$. $\emptyset\neq W(h)\subset PW(h) \not\ni 0$, $\emptyset\neq SW(h)\subset PW(h)$. For any $\psi\in W(h)$, the Newton polyhedron $\Gamma_+(P,\psi)$ is of $z$-Weierstrass type, and the integer $h$ is equal to the $z$-height of $\Gamma_+(P,\psi)$. For any $\phi\in PW(h)$, the Newton polyhedron $\Gamma_+(P,\phi)$ is of $z$-Weierstrass type, and there exist uniquely $\psi\in W(h)$ and $\psi'\in PW(1)$ with $\phi=\psi\psi'$.

Our main theorem, Theorem~\ref{main} in this article claims the following: For any integer $h$ with $h\geq 2$ and any $\phi\in SW(h)$, there exists a composition $\sigma:\Sigma\rightarrow\Spec(A)$ of finite blowing-ups with centers in closed irreducible smooth subschemes of codimension two such that at any closed point $\A\in\Sigma$ with $\sigma(\A)=M(A)$ the following holds: Let $\mathcal{O}_{\Sigma,\A}^c$ denote the completion of the local ring $\mathcal{O}_{\Sigma,\A}$ of $\Sigma$ at $\A$. Note that the morphism $\sigma$ induces a homomorphism of $k$-algebras $\sigma^*:A\rightarrow\mathcal{O}_{\Sigma,\A}^c$, and $\mathcal{O}_{\Sigma,\A}^c$ and $A$ are isomorphic as $k$-algebras. We claim that there exists an isomorphism $\rho:\mathcal{O}_{\Sigma,\A}^c\rightarrow A$ of $k$-algebras such that $\rho\sigma^*(\phi)\in PW(g)$ for some $g\in\Z_+$ with $g<h$.

Since $g<h$, we can claim that any hypersurface singularity can be improved by a composition of finite blowing-ups.

\begin{remark}
We do \emph{not} claim that the centers of blowing-ups are contained in the singular locus of the subscheme to be resolved. It may be possible to improve our main theorems and to add stataments claiming that any centers of blowing-ups are contained in the singular locus of the subscheme to be resolved.
\end{remark}

We would like to show that for any $\phi\in A$ with $\phi\neq 0$, there exists a composition $\sigma:\Sigma\rightarrow\Spec(A)$ of finite blowing-ups with centers in closed irreducible smooth subschemes  such that at any closed point $\A\in\Sigma$ with $\sigma(\A)=M(A)$ the element $\sigma^*(\phi)\in \mathcal{O}_{\Sigma,\A}^c$ has normal crossings.

Note here that $\dim A'=\dim A-1<\dim A=n+1$, and any $\phi'\in A'$ with $\phi'\neq 0$ has normal crossings over $P-\{z\}$ if $n=1$. Therefore, we decide that we use induction on $n$, and we can assume the following claim $(*)$:

\renewcommand{\descriptionlabel}[1]%
	{\hspace{\labelsep}\textrm{#1}}
\begin{description}
\item[$(*)$]
For any  $\phi'\in A'$ with $\phi'\neq 0$, there exists a composition $\sigma':\Sigma'\rightarrow\Spec(A')$ of finite blowing-ups with centers in closed irreducible smooth subschemes  such that at any closed point $\A'\in\Sigma'$ with $\sigma' (\A')=M(A')$ the element $\sigma^{\prime*}(\phi')\in \mathcal{O}_{\Sigma',\A'}^c$ has normal crossings.
\end{description}

Let $\sigma':\Sigma'\rightarrow\Spec(A')$ be any composition of finite blowing-ups with centers in closed irreducible smooth subschemes. We consider a morphism $\Spec(A)\rightarrow\Spec(A')$ induced by the inclusion ring homomorphism $A'\rightarrow A$, the product scheme $\Sigma=\Sigma'\times_{\Spec(A')}\Spec(A)$, the projection $\sigma:\Sigma\rightarrow\Spec(A)$, and the projection $\pi:\Sigma\rightarrow\Sigma'$. We know the following (See Lemma~\ref{pull back blowing-ups}.):
\begin{enumerate}
\item The morphism $\sigma$ is a composition of finite blowing-ups with centers in closed irreducible smooth subschemes.
\item The pull-back $\sigma^*\Spec(A/zA)$ of the smooth prime divisor $\Spec(A/zA)$ of $\Spec(A)$ by $\sigma$ is a smooth prime divisor of $\Sigma$, and $\sigma^*\Spec(A/zA)\supset\sigma^{-1}(M(A))$.
\item The projection $\pi:\Sigma\rightarrow\Sigma'$ induces an isomorphism $\sigma^*\Spec(A/zA)\rightarrow\Sigma'$.
\end{enumerate}

Furthermore, we show the following three claims, assuming the above $(*)$:

First, for any intger $h$ with $h\geq 2$ and any $\phi\in PW(h)$, there exists a composition $\sigma':\Sigma'\rightarrow\Spec(A')$ of finite blowing-ups with centers in closed irreducible smooth subschemes  such that considering the product scheme $\Sigma=\Sigma'\times_{\Spec(A')}\Spec(A)$, the projection $\sigma:\Sigma\rightarrow\Spec(A)$ and the projection $\pi:\Sigma\rightarrow\Sigma'$, at any closed point $\A\in\Sigma$ with $\sigma(\A)=M(A)$ there exists an isomorphism $\rho:\mathcal{O}_{\Sigma,\A}^c\rightarrow A$ of $k$-algebras such that $\rho\sigma^*(z)=z$, $\rho\pi^*(\mathcal{O}_{\Sigma',\pi(\A)}^c)=A'$, and either $\rho\sigma^*(\phi)\in SW(h)$ or $\rho\sigma^*(\phi)\in PW(g)$ for some  positive integer $g\in \Z_+$ with $g<h$. (Theorem~\ref{erase faces} and Theorem~\ref{make simple}.)

Second, for any $\phi\in A$ with $\phi\neq 0$, there exists a composition $\sigma':\Sigma'\rightarrow\Spec(A')$ of finite blowing-ups with centers in closed irreducible smooth subschemes  such that considering the product scheme $\Sigma=\Sigma'\times_{\Spec(A')}\Spec(A)$, the projection $\sigma:\Sigma\rightarrow\Spec(A)$ and the projection $\pi:\Sigma\rightarrow\Sigma'$, at any closed point $\A\in\Sigma$ with $\sigma(\A)=M(A)$ there exists an isomorphism $\rho:\mathcal{O}_{\Sigma,\A}^c\rightarrow A$ of $k$-algebras such that $\rho\sigma^*(z)=z$, $\rho\pi^*(\mathcal{O}_{\Sigma',\pi(\A)}^c)=A'$ and $\rho\sigma^*(\phi)\in PW(h)$ for some $h\in\Z_+$. (Theorem~\ref{make Weierstrass type}.)

Third, for any $\phi\in PW(1)$, there exists a composition $\sigma:\Sigma\rightarrow\Spec(A)$ of finite blowing-ups with centers in closed irreducible smooth subschemes  such that at any closed point $\A\in\Sigma$ with $\sigma(\A)=M(A)$ the element $\sigma^*(\phi)\in\mathcal{O}_{\Sigma,\A}^c$ has normal crossings. (Theorem~\ref{make normal crossings}.)

Now, we would like to establish the theory of resolution of singularities in arbitrary characteristics in a global sense. The problem is to glue up local blowing-ups obtained by repeated application of the above four claims, and to construct a global blowing-up. In case $n=1$ it is easy to glue up them. We would like to solve this problem in case $n\geq 2$ and would like to write forthcoming articles, cooperating with Professor Heisuke Hironaka and young mathematicians.

We give proofs only to difficult parts of our claims. Most of our claims follow from definitions. 

\begin{center}
\textsc{Table of contents}
\end{center}

\begin{description}
\item[˜~\ref{intro}] Introduction
\item[˜~\ref{concept}] Notations and basic concepts
\item[˜~\ref{scheme}] Basic scheme theory
\item[˜~\ref{mainmain}] Main results
\item[˜~\ref{ring}] Ring theory
\item[˜~\ref{btcs}] Basic theory of convex sets
\item[˜~\ref{cones}] Convex cones and convex polyhedral cones
\item[˜~\ref{simplex}] Simplicial cones
\item[˜~\ref{decomposition}] Convex polyhedral cone decompositions
\item[˜~\ref{cpp}] Convex pseudo polyhedrons
\item[˜~\ref{bcd}] Barycentric subdivisions
\item[˜~\ref{ibcd}] Iterated barycentric subdivisions
\item[˜~\ref{simple}] Simpleness and semisimpleness
\item[˜~\ref{basic}] Basic subdivisions
\item[˜~\ref{upper}] Upper boundaries and lower boundaries
\item[˜~\ref{compatible}] Height, characteristic functions and compatible mappings
\item[˜~\ref{height inequalities}] The height inequalities
\item[˜~\ref{upward}] Upward subdivisions and the hard height inequalities
\item[˜~\ref{toric theory}] Schemes associated with simplicial cone decompositions
\item[˜~\ref{main proof}] Proof of the main theorem
\item[˜~\ref{submain proofs}] Proof of the submain theorems
\end{description}

The most important is ``the hard height inequality" in section eighteen. 
It depends heavily on ``the height inequality" in section seventeen.
In Sections 6-16 we develop exact theory of convex sets.

We consider any $h\in\Z_+$ with $h\geq 2$ and any $\phi\in SW(h)$ and we consider the face cone decomposition associated with the Newton polyhedron $\Gamma_+(P,\phi)$, which is a $z$-simple convex polyhedral cone decomposition in the dual vector space.  Starting from a simplicial cone in the dual vector space, repeating barycentric subdivisions, we construct the most natural simplicial subdivision of the face cone decomposition which we call the upward subdivision.  By the theory of torus embeddings, we have a repeatedly blown-up space corresponding to the simplicial subdivision.  Then, there exists an inequality which holds simultaneously at any point on the repeatedly blown-up space in the inverse image of the origin of the space containing the hypersurface defined by $\phi$.  This inequality guarantees that any hypersurface singularity can be improved by blowing-ups.

\section{Notations and basic concepts}
\label{concept}
We arrange notations and basic concepts related to Newton polyhedrons and commutative rings.

We denote
$$\Z_0=\{t\in\Z|t\geq 0\}, \qquad \Z_+=\{t\in\Z|t>0\},$$
$$\Q_0=\{t\in\Q|t\geq 0\}, \qquad \Q_+=\{t\in\Q|t>0\},$$
$$\R_0=\{t\in\R|t\geq 0\}, \qquad \R_+=\{t\in\R|t>0\}.$$

For any set $Z$ and any scheme $\Sigma$, by $\Id_Z$ and $\Id_\Sigma$ we denote the identity mapping $Z\rightarrow Z$ and the identity morphism $\Sigma\rightarrow\Sigma$ of schemes respectively.
For any set $Z$, the set of all subsets of $Z$ is denoted by $2^Z$.

Let $Z$ be any set, and let $X$ and $Y$ be any subsets of $Z$. The union $X\cup Y$ and the intersection $X\cap Y$ are defined. They are subsets of $Z$. The set of elements of $X$ not belonging to $Y$ is denoted by $X- Y$, and is called the \emph{difference} of $X$ and $Y$. $X- Y=\{z\in Z|z\in X, z\not\in Y\}$. 

Below assume that $Z$ is an abelian group. 

We consider any $r\in\Z_0$ and any mapping $x:\{1,2,\ldots\,r\}\rightarrow Z$.
For any $s\in\{0,1,\ldots, r\}$ we define inductively
\begin{equation*}
\sum_{i=1}^{s}x(i)=
\begin{cases}
0&\text{if $s=0$},\\
\sum_{i=1}^{s-1}x(i)+x(s)&\text{if $s\neq 0$}.
\end{cases}
\end{equation*}
Note that $\sum_{i=1}^{r}x(i)\in Z$, $\sum_{i=1}^{r}x(i)=0$ if $r=0$, and $\sum_{i=1}^{r}x(i)= \sum_{i=1}^{r}x(\nu(i))$ for any bijective mapping $\nu: \{1,2,\ldots\,r\}\rightarrow\{1,2,\ldots\,r\}$.

We consider any finite set $J$ and any mapping $x:J\rightarrow Z$.
Let $r\in\Z_0$ be the number of elements in $J$.
Choosing a bijective mapping $\nu: \{1,2,\ldots\,r\}\rightarrow J$, we define
$$\sum_{j\in J}x(j)=\sum_{i=1}^rx(\nu(i)).$$
Note that $\sum_{j\in J}x(j)\in Z$ and $\sum_{j\in J}x(j)$ does not depend on the choice of the bijective mapping $\nu: \{1,2,\ldots\,r\}\rightarrow J$ we used for the definition.
If $J=\emptyset$, then $\sum_{j\in J}x(j)=0$.

We call $\sum_{j\in J}x(j)$ the \emph{sum} of $x(j), j\in J$.

We consider any finite set $J$ and any mapping $X$ from $J$ to the set $2^Z$ of all subsets of $Z$.
We define
\begin{equation*}
\begin{split}
\sum_{j\in J}X(j)=\{z\in Z|&z=\sum_{j\in J}x(j)\text{ for some mapping }x:J\rightarrow Z\text{ satisfying}\\
&\quad x(j)\in X(j)\text{ for any }j\in J\}\in 2^Z.
\end{split}
\end{equation*}
Note that $\sum_{j\in J}X(j)$ is a subset of $Z$, $\sum_{j\in J}X(j)=\{0\}$ if $J=\emptyset$, and $\sum_{j\in J}X(j)=\emptyset$, if and only if, $J\neq\emptyset$ and $X(j)=\emptyset$ for some $j\in J$.

We call $\sum_{j\in J}X(j)$ the \emph{sum} of $X(j), j\in J$.

For any $r\in\Z_+$ and for any mapping $X$ from $\{1,2,\ldots\,r\}$ to the set of all subsets of $Z$ we also write
$$X(1)+X(2)+\cdots+X(r)= \sum_{j\in \{1,2,\ldots\,r\}}X(j).$$

For any subset $X$ of $Z$, we denote $-X=\{z\in Z|z=-x$ for some $x\in X\}$.

Let $X$ and $Y$ be any sets. The set of mappings from $X$ to $Y$ is denoted by $\Map(X,Y)$. The set $\Map(X,Y)$ has a natural structure of an abelian group, if $Y$ is an abelian group. It has a natural structure of an abelian semigroup, if $Y$ is an abelian semigroup. It has a natural structure of a vector space over $\R$, if $Y$ is a vector space over $\R$. In addition, let $Z$ be a set containing $Y$. Note that the inclusion mapping $Y\rightarrow Z$ induces the inclusion mapping $\Map(X,Y)\rightarrow\Map(X,Z)$, and we can regard $\Map(X,Y)$ as a subset of $\Map(X,Z)$. If $Z$ is an abelian group and $Y$ is a subgroup of $Z$, then  $\Map(X,Y)$ is a subgroup of $\Map(X,Z)$. If $Z$ is an abelian semigroup and $Y$ is a semisubgroup of $Z$, then  $\Map(X,Y)$ is a semisubgroup of $\Map(X,Z)$. If $Z$ is a vector space over $\R$ and $Y$ is a vector subspace of $Z$ over $\R$, then $\Map(X,Y)$ is a vector subspace of $\Map(X,Z)$ over $\R$.

Let $X$ be any set; let $Z$ be any abelian group, and let $Y$ be any subset of $Z$ with $0\in Y$. For any $a\in\Map(X,Y)$ we denote
$$\Supp(a)=\{x\in X|a(x)\neq 0\},$$
and we call $\Supp(a)$ the \emph{support} of $a$. It is a subset of $X$. We denote
$$\Map'(X,Y)=\{a\in\Map(X,Y)| \Supp(a)\text{ is a finite set.}\}.$$
$\Map'(X,Y)\subset\Map(X,Y)$. If $Y$ is a subgroup of $Z$, then $\Map'(X,Y)$ is a subgroup of $\Map(X,Y)$. If $Y$ is a semisubgroup of $Z$, then $\Map'(X,Y)$ is a semisubgroup of $\Map(X,Y)$. If $Z$ is a vector space over $\R$ and $Y$ is a vector subspace of $Z$ over $\R$, then $\Map'(X,Y)$ is a vector subspace of $\Map(X,Y)$ over $\R$. If $X$ is a finite set, we have $\Map'(X,Y)=\Map(X,Y)$.

In the case where $Y$ is a semisubgroup of $Z$ containing $0$, for any $a\in\Map'(X,Y)$ we denote
$$\sum_{x\in X}a(x)=\sum_{x\in\Supp(a)}a(x)\in Y.$$

We call $\sum_{x\in X}a(x)$ the \emph{sum} of $a(x), x\in X$.

Let $V$ be any finite dimensional vector space over $\R$, and let $X$ be any subset of $V$. The subset $X$ is called \emph{convex}, if $X\neq\emptyset$ and for any two different points $x,y$ of $X$, the \emph{segment} $\{a\in V |a=(1-t)x+ty \text{ for some }t\in\R\text{ with }0\leq t\leq 1\}$ joining $x$ and $y$ is contained in $X$. It is called an \emph{affine space}, if $X\neq\emptyset$ and for any two different points $x,y$ of $X$, the \emph{line} $\{a\in V |a=(1-t)x+ty \in\R\text{ for some }t\in\R\}$ joining $x$ and $y$ is contained in $X$. It is called a \emph{cone}, if $0\in X$ and for any $x\in X$ and any $t\in\R_0$, we have $tx\in X$. It is called a \emph{convex cone}, if $0\in X$ and for any $x,y\in X$ and any $t,u\in\R_0$, we have $tx+uy\in X$. It is called a \emph{vector space over} $\R$, or simply a \emph{vector space}, if $0\in X$ and for any $x,y\in X$ and any $t,u\in\R$, we have $tx+uy\in X$. It is called a \emph{vector space over} $\Q$, if $0\in X$ and for any $x,y\in X$ and any $t,u\in\Q$, we have $tx+uy\in X$. It is called \emph{closed}, if the limit of any convergent sequence of elements in $X$ with respect to the natural Hausdorff topology of $V$ belongs to $X$ again. In case $X\neq\emptyset$ the minimum convex subset (respectively, minimum affine space) with respect to the inclusion relation containing $X$ is denoted by $\Conv(X)$ (respectively, $\Affi(X)$). We define $\Conv(\emptyset)=\Affi(\emptyset)=\emptyset$. 
The minimum cone (respectively, minimum convex cone, minimum vector space over $\R$, minimum vector space over $\Q$, minimum closed subset) with respect to the inclusion relation containing $X$ is denoted by
$\Cone(X)$ (respectively, $\Convcone(X), \Vect(X), \QVect(X), \Clos(X)$). 

The subset $X$ is called a \emph{convex polyhedron},(respectively, \emph{convex polyhedral cone}), if there exists a \emph{finite} subset $Y$ of $V$ satisfying $X=\Conv(Y)$ and $Y\neq\emptyset$ (respectively, $X=\Convcone(Y)$). The subset $X$ is called a \emph{convex pseudo polyhedron}, if there exist \emph{finite} subsets $Y, Z$ of $V$ satisfying $X=\Conv(Y)+\Convcone(Z)$ and $Y\neq\emptyset$. The subset $X$ is called a \emph{lattice}, if there exists a basis $B$ of $V$ over $\R$ such that $X=\{a\in V|a=\sum_{b\in B}\lambda(b)b\text{ for some }\lambda\in\Map(B, \Z)\}$.
Any lattice $N$ of $V$ is a free $\Z$-submodule of $V$ with $\Rank N=\dim V$. For any $t\in\R$ we write $tX=\{a\in V|a=tx\text{ for some }x\in X\}$. We  know $(-1)X=-X$, and $0X=\{0\}$ if $X\neq\emptyset$. We write
$\Stab(X)=\{a\in V|X+\{a\}\subset X\}$,
and call it the \emph{stabilizer} of $X$ in $V$. The stabilizer of $X$ in $V$ is a semisubgroup of $V$ containing $0$. In addition, we consider any lattice $N$ in $V$. The subset $X$ is called a \emph{simplicial cone} over N, if $X=\Convcone(C)$ for some basis $B$ of $N$ over $\Z$ and a subset $C$ of $B$. Any simplicial cone is a convex polyhedral cone. Affine spaces, vector spaces, convex polyhedrons, convex polyhedral cones, and convex pseudo polyhedrons are non-empty closed convex subsets of $V$. If $X$ is convex (respectively, a cone, a convex cone), then $\Clos(X)$ is again convex (respectively, a cone, a convex cone). 

For any subset $T$ of $\R$ and for any $a\in V$ we denote $Ta=\{b\in V|b=ta\text{ for some }t\in T\}$, and it is a subset of $V$.

The dual vector space $V^*=\mathrm{Hom}_\R(V,\R)$ is a vector space over $\R$ with $\dim V^*=\dim V$. We define the \emph{canonical bilinear form}
$$\langle\quad,\quad\rangle:V^*\times V\rightarrow \R,$$
by putting $\langle\omega, a\rangle=\omega(a)\in\R$ for any $\omega\in\mathrm{Hom}_\R(V,\R)=V^*$ and any $a\in V$. The dual vector space $V^{**}$ of $V^*$ is identified with $V$ by the natural isomorphism $V\rightarrow V^{**}$ of vector spaces over $\R$.

We consider any finite dimensional vector space $W$ over $\R$ and any homomorphism $\pi:V\rightarrow W$ of vector spaces over $\R$.
Putting
$$\pi^*(\A)=\A \pi\in\mathrm{Hom}_\R(V,\R)=V^*,$$
for any $\A\in\mathrm{Hom}_\R(W,\R)=W^*$,
we define a mapping $\pi^*:W^*\rightarrow V^*$, and we call $\pi^*$ the \emph{dual homomorphism} of $\pi$.
The dual homomorphism $\pi^*$ is a homomorphism of vector spaces over $\R$.
For any $\omega\in W^*$ and for any $a\in V$ the equality $\langle\pi^*(\omega),a\rangle=\langle\omega,\pi(a)\rangle$ holds.

The dual homomorphism $\pi^{**}$ of $\pi^*$ is equal to $\pi$.

If $\pi$ is injective, then $\pi^*$ is surjective.
If $\pi$ is surjective, then $\pi^*$ is injective.

We have $\Id_V^*=\Id_{V^*}$, and for any finite dimensional vector spaces $V', V''$ and for any homomorphisms $\pi:V\rightarrow V', \pi':V'\rightarrow V''$ we have $(\pi'\pi)^*=\pi^*\pi^{\prime*}$.

Let $N$ be a lattice in $V$. We denote 
$$N^*=\{\omega\in V^*|\langle\omega, a\rangle\in\Z \text{ for any } a\in N\},$$
and call $N^*$ the \emph{dual lattice} of $N$. Indeed, $N^*$ is a lattice in $V^*$. The dual lattice $N^{**}$ of $N^*$ is equal to $N$. Let $S$ be any convex cone in $V$. We denote
$$S^\vee|V=\{\omega\in V^*|\langle\omega, a\rangle\geq 0 \text{ for any } a\in S\},$$
and call $ S^\vee|V$ the \emph{dual cone} of $S$ over $V$. Indeed, $ S^\vee|V$ is a closed convex cone in $V^*$. The dual cone $ S^\vee|V^\vee|V^*$ of $ S^\vee|V$ is equal to the closure $\Clos(S)$ of $S$ in $V$. $S^\vee|V^\vee|V^*=S$ if and only if $S$ is closed in $V$. When we need not refer to $V$, we also write simply $S^\vee$, instead of $ S^\vee|V$.

The number of elements of a finite set $P$ is denoted by $\sharp P$. Let $P$ be any non-empty finite set. Note that $\Map(P,\R)$ is a finite dimensional vector space over $\R$ with $\dim \Map(P,\R)=\sharp P$,  $\Map(P,\Z)$ is a lattice in $\Map(P,\R)$,  $\Map(P,\R_0)$ is a simplicial cone over $\Map(P,\Z)$ in $\Map(P,\R)$ with $\Vect(\Map(P,\R_0))= \Map(P,\R)$, and $\Map(P,\Z_0)= \Map(P,\Z)\cap \Map(P,\R_0)$. Let $x\in P$. Let $y\in P$. Putting
$$f^P_x(y)=
\begin{cases}
1&\text{if $y=x$},\\
0&\text{if $y\neq x$},
\end{cases}$$
we define an element $f^P_x\in \Map(P,\Z_0)$. Note that the subset $\{f^P_x|x\in P\}$ of $\Map(P,\Z_0)$ is an $\R$-basis of $\Map(X,\R)$,  it is a $\Z$-basis of $\Map(P,\Z)$, and  $\Map(P,\R_0)$\hfill\break$=\Convcone(\{f^P_x|x\in P\})$. The dual basis of $\{f^P_x|x\in P\}$ is denoted by $\{f^{P\vee}_x|x\in P\}$. For any $x,y\in P$
$$\langle f^{P\vee}_x, f^P_y\rangle=
\begin{cases}
1&\text{if $x=y$},\\
0&\text{if $x\neq y$}.
\end{cases}$$
Indeed, $\{f^{P\vee}_x|x\in P\}$ is a $\R$-basis of the dual vector space $\Map(P,\R)^*$ of $\Map(P,\R)$, it is a $\Z$-basis of the dual lattice  $\Map(P,\Z)^*$ of $\Map(P,\Z)$, and $\Map(P,\R_0)^{\vee}=\Convcone(\{f^{P\vee}_x|x\in P\})$.

A commutative ring  with the identity element is called simply a \emph{ring}. The identity element and the zero element of a ring are denoted $1$ and $0$ respectively.
We assume that any ring homomorphism $\lambda$ preserves the identity elements, in other words, the equality $\lambda(1)=1$ holds.

Let $R$ be any ring.
We assume that for any $R$-module $L$ and any element $x\in L$, the equality $1x=x$ holds.
The equality $1=0$ holds, if and only if, $R=\{0\}$.
We say that a subset $S$ of $R$ is a \emph{subring} of $R$, if $1\in S$, $a-b\in S$ for any $a\in S$ and any $b\in S$, and $ab\in S$ for any $a\in S$ and any $b\in S$. We say that a subset $I$ of $R$ is an \emph{ideal} of $R$, if $0\in I$, $a-b\in I$ for any $a\in I$ and any $b\in I$, and $ab\in I$ for any $a\in I$ and any $b\in R$.
For any ideal $I$ of $R$, $1\in I$, if and only if, $I=R$.
We say that an ideal $I$ of $R$ is \emph{prime}, if $1\not\in I$ and $ab\not\in I$ for any $a\in R-I$ and any $b\in R-I$.
We say that an ideal $I$ of $R$ is \emph{maximal}, if $1\not\in I$ and $I=J$ for any ideal $J$ of $R$ satisfying $1\not\in J$ and $I\subset J$.
Any maximal ideal of $R$ is a prime ideal of $R$. $R$ has at least one maximal ideal, if and only if, $R$ has at least one prime ideal, if and only if, $1\neq 0$.
Let $X$ be any subset of $R$ and let $S$ be any subring of $R$. The minimum ideal of $R$ with respect the inclusion relation containing $X$ is denoted by $XR$ or $RX$. The minimum subring of $R$ with respect to the inclusion relation containing $S$ and $X$ is denoted by $S[X]$. In the case where $X$ contains only one element $x$, we also write simply $xR$, $Rx$, $S[x]$, instead of $\{x\}R$, $R\{x\}$, $S[\{x\}]$ respectively.
We say that $R$ is \emph{noetherian}, if for any ideal $I$ of $R$, there exists a \emph{finite} subset $X$ of $I$ with $I=XR$.
We say an element $a\in R$ is \emph{invertible}, if there exists an element $b\in R$ with $ab=1$. The set of all invertible elements in $R$ is denoted by $R^{\times}$. $R^{\times}\subset R$ and $R^{\times}$ is an abelian group with respect the multiplication.
We say that $R$ is \emph{reduced}, if $a=0$ for any $a\in R$ and any $i\in\Z_+$ satisfying $a^i=0$.
We say that $R$ is an \emph{integral domain}, if $1\neq 0$ and $a=0$ or $b=0$ for any $a\in R$ and any $b\in R$ satisfying $ab=0$.
We say that $R$ is a \emph{field}, if $R$ is an integral domain and $R^\times=R-\{0\}$.
Any ring with a unique maximal ideal is called a \emph{local ring}.

We consider any $r\in\Z_0$ and any mapping $x:\{1,2,\ldots,r\}\rightarrow R$.
For any $s\in\{0,1,\ldots,r\}$ we define inductively
\begin{equation*}
\prod_{i=1}^sx(i)=
\begin{cases}
1&\text{if $s=0$},\\
(\prod_{i=1}^{s-1}x(i))x(s)&\text{if $s\neq 0$}.
\end{cases}
\end{equation*}
Note that $\prod_{i=1}^rx(i)\in R$, $\prod_{i=1}^rx(i)=1$ if $r=0$ and $\prod_{i=1}^rx(i)= \prod_{i=1}^rx(\nu(i))$ for any bijective mapping $\nu: \{1,2,\ldots,r\}\rightarrow\{1,2,\ldots,r\}$.

We consider any finite set $J$ and any mapping $x:J\rightarrow R$.
Let $r\in\Z_0$ be the number of elements in $J$.
Choosing a bijective mapping $\nu: \{1,2,\ldots,r\}\rightarrow J$, we define
$$\prod_{j\in J}x(j)= \prod_{i=1}^rx(\nu(i))\in R.$$
Note that $\prod_{j\in J}x(j)$ does not depend on the choice of the bijective mapping $\nu: \{1,2,\ldots,r\}\rightarrow J$ we used for the definition.
If $J=\emptyset$, then $\prod_{j\in J}x(j)=1$.

We call $\prod_{j\in J}x(j)$ the \emph{product} of $x(j), j\in J$.

Let $R$ be any ring and let $I$ be any ideal of $R$. There exist a ring $S$ and a surjective ring homomorphism $\lambda:R\rightarrow S$ such that $I=\lambda^{-1}(0)$. 
When a pair $(S,\lambda)$ satisfies this condition, we denote $S$ by a symbol $R/I$, we call the ring $R/I$ a \emph{residue ring} of $R$ by $I$ and we call $\lambda:R\rightarrow R/I$ the \emph{canonical homomorphism}. If $T$ is a ring, $\mu:R\rightarrow T$ is a ring homomorphism satisfying $I\subset \mu^{-1}(0)$ and $\lambda:R\rightarrow R/I$ is the canonical homomorphism, then there exists uniquely a ring homomorphism $\nu:R/I\rightarrow T$ satisfying $\nu\lambda=\mu$.
The ideal $I$ is prime, if and only if, the residue ring $R/I$ of $R$ by $I$ is an integral domain. The ideal $I$ is maximal, if and only if, the residue ring $R/I$ of $R$ by $I$ is a field.

A ring $R$ is an integral domain, if and only if, the subset $\{0\}$ of $R$ is a prime ideal of $R$.

A ring $R$ is a field, if and only if, the subset $\{0\}$ of $R$ is a maximal ideal of $R$, if and only if,  the subset $\{0\}$ of $R$ is a prime ideal of $R$ and any ideal $I$ of $R$ satisfies $I=R$ or $I=\{0\}$. Any field is an integral domain, it is a local ring and it is noetherian.

Let $R$ be any integral domain. The ring $R$ is reduced. There exist a field $K$ and an injective ring homomorphism $\lambda:R\rightarrow K$
such that for any $c\in K$ there exist $a\in R-\{0\}$ and $b\in R$ satisfying $\lambda(a)c=\lambda(b)$. When the pair $(K,\lambda)$ satisfies this condition, we call $K$ a \emph{quotient field} of $R$ and we call $\lambda:R\rightarrow K$ the \emph{canonical homomorphism}. If $L$ is a field,  $\mu:R\rightarrow L$ is an injective ring homomorphism, $K$ is a quotient field of $R$ and $\lambda:R\rightarrow K$ the canonical homomorphism, then there exists uniquely an injective ring homomorphism $\nu:K\rightarrow L$ satisfying $\nu\lambda=\mu$.

A ring $R$ is local, if and only if, $R- R^{\times}$ is an ideal of $R$.

Let $R$ be any local ring.
The unique maximal ideal of $R$ is denoted by $M(R)$. We have $R=R^\times\cup M(R), R^\times\cap M(R)=\emptyset$ and $1\neq 0$.
If $R$ is noetherian, then the Krull dimension $\dim R\in\Z_0$ of $R$ is defined. 
A local ring $R$ is called \emph{regular}, if $R$ is noetherian, and the dimension $\dim R$ of $R$ is equal to the dimension of the residue module $M(R)/M(R)^2$ as a vector space over the residue field $R/M(R)$. It is known that a regular local ring is a unique factorization domain. A finite subset $P$ of a regular local ring $R$ is called a \emph{parameter system} of R, if $P\subset M(R)$, $PR=M(R)$, and $\sharp P=\dim R$.

Let $S$ be a ring. The pair $(R,\lambda)$ of a ring $R$ and a ring homomorphism $\lambda:S\rightarrow R$ is called an \emph{algebra over} $S$, or an $S$\emph{-algebra}. Let $(R,\lambda)$ and $(R',\lambda')$ be algebras over $S$.
A ring homomorphism $\mu:R\rightarrow R'$ satisfying $\mu\lambda=\lambda'$ is called
a \emph{homomorphism over} $S$, an $S$\emph{-homomorphism} or a \emph{homomorphism of} $S$\emph{-algebras}.
A ring isomorphism $\mu:R\rightarrow R'$ satisfying $\mu\lambda=\lambda'$ is called an \emph{isomorphism over} $S$, an $S$\emph{-isomorphism} or an \emph{isomorphism of} $S$\emph{-algebras}.
We say that $S$-algebras $(R,\lambda)$ and $(R',\lambda')$ are \emph{isomorphic as} $S$\emph{-algebras}, if there exists an isomorphism  $\mu:R\rightarrow R'$ over $S$.

Consider $S$-algebras $(S,\Id_S)$ and $(R,\lambda)$.
Note that if $\mu:S\rightarrow R$ is an isomorphism of $S$-algebras, then we have $\mu=\lambda$.

To avoid complication, often we avoid mentioning a ring homomorphism $\lambda:S\rightarrow R$ explicitly for an algebra $R$ over $S$. When $S$ is a subring of $R$, we consider the inclusion homomorphism $S\rightarrow R$. When an ideal $I$ of a ring $R$ is given, we consider the canonical surjective homomorphism $R\rightarrow R/I$ to the residue ring $R/I$ and we regard $R/I$ as an $R$-algebra. If $(R,\lambda)$ is an $S$-algebra and $(Q,\kappa)$ is an $R$-algebra, we consider the composition $\kappa\lambda:S\rightarrow Q$ and we regard $Q$ as an $S$-algebra.

Let $R$ be any noetherian ring and let $I$ be any ideal of $R$. We assume either $I$ is contained in any maximal ideal of $R$, or $R$ is an integral domain and $I\neq R$. We call the projective limit
$\varprojlim_{i\in\Z_+} R/I^i$ the \emph{completion} of $R$ with respect to $I$.
It is a ring containing $R$ as a subring. On the completion of $R$ we can define a Hausdorff topology called an $I$\emph{-adic topology}. 

Let $R$ be any noetherian local ring. We call the completion of $R$ with respect to $M(R)$ simply the completion of $R$, and we denote it by $R^c$. $R^c=\varprojlim_{i\in\Z_+} R/M(R)^i$.
The ring $R^c$ is a noetherian local ring, it contains $R$ as a subring, $M(R)=M(R^c)\cap R$, $M(R^c)=M(R)R^c$, $\dim R^c=\dim R$, the induced homomorphism $R/M(R)\rightarrow R^c/M(R^c)$ by the inclusion homomorphism $R\rightarrow R^c$ is an isomorphism, $R^c$ is faithfully flat over $R$, for any prime ideal $\p$ of $R$, there exists a prime ideal $\q$ of $R^c$ satisfying $\p=\q\cap R$, and $R^c=(R^c)^c$.
If, moreover, $R$ is regular, then $R^c$ is also regular, and any parameter system of $R$ is a parameter system of $R^c$.

We say that any noetherian local ring $R$ is $\emph{complete}$, if $R=R^c$.

Consider any complete regular local rings $S$ and $S'$ containing a field $k$ as a subring. Rings $S$ and $S'$ are isomorphic as $k$-algebras, if and only if, $\dim S=\dim S'$ and residue fields $S/M(S)$ and $S'/M(S')$ are isomorphic as $k$-algebras.
Assume that $\dim S=\dim S'$, $\bar{\rho}:S/M(S)\rightarrow S'/M(S')$ is an isomorphism of $k$-algabras, $P$ is a parameter system of $S$, $P'$ is a parameter system of $S'$ and $\sigma:P\rightarrow P'$ is a bijective mapping. Then, there exists uniquely an isomorphism $\rho:S\rightarrow S'$ of $k$-algebras such that the morphism $S/M(S)\rightarrow S'/M(S')$ induced by $\rho$ coincides with $\bar{\rho}$ and $\rho(x)=\sigma(x)$ for any $x\in P$.

See Matsumura~\cite{M}.

Let $k$ be any field. Let $A$ be any complete regular local ring such that $\dim A\geq 1$, $A$ contains $k$ as a subring, and the residue field  $A/M(A)$ is isomorphic to $k$ as algebras over $k$. Let $P$ be any parameter system of $A$. We have $PA=M(A)\supset P$ and $\sharp P=\dim A$.

We fix the above notations $k$, $A$ and $P$ throughout this article. 

Let $\phi$ be any element of $A$. Then, there exists a unique element\hfill\break$c\in \Map(\Map(P,\Z_0),k)$ with
$$\phi=\sum_{\Lambda\in \Map(P,\Z_0)} c(\Lambda)\prod_{x\in P}x^{\Lambda(x)}.$$
The infinite sum in the right-hand side is the limit with respect to the $M(A)$-adic topology on $A$.
We take the unique element $c\in \Map(\Map(P,\Z_0), k)$ satisfying the above  equality. The element $c$ depends on $\phi$ and $P$.
Let $\Lambda\in\Map(P,\Z_0)$. We call $\Lambda$ the \emph{index}, $\prod_{x\in P}x^{\Lambda(x)}\in A$ a \emph{monomial} over $P$,  $c(\Lambda)\in k$ a \emph{coefficient} of $\phi$, $c(\Lambda)\prod_{x\in P}x^{\Lambda(x)}\in A$ a \emph{term} of $\phi$, and $\sum_{x\in P}\Lambda(x)\in\Z_0$ the \emph{degree} of the index $\Lambda$, of the monomial $\prod_{x\in P}x^{\Lambda(x)}$, or of the term $ c(\Lambda)\prod_{x\in P}x^{\Lambda(x)}$. Note that $0\in\Map(P,\Z_0)$. We denote $\phi(0)=c(0)$ and we call $\phi(0)\in k$, \emph{the constant term} of $\phi$. $\phi-\phi(0)\in M(A)$. $\phi(0)=0\Leftrightarrow \phi\in M(A)$.
We denote
$$\Supp(P, \phi)=\Supp(c)=\{\Lambda\in \Map(P,\Z_0)| c(\Lambda)\neq 0\},$$
and we call $\Supp(P,\phi)$ the \emph{support} of $\phi$ over $P$.
It is a subset of $ \Map(P,\Z_0)$.
Note that $\phi=0\Leftrightarrow c=0\Leftrightarrow \Supp(P,\phi)=\emptyset$. 

Below, we consider the case $\phi\neq 0$ for a while. 

We say that $\phi$ has \emph{normal crossings} over $P$, if $\phi=u\prod_{x\in P}x^{\Lambda(x)}$ for some $\Lambda\in\Map(P,\Z_0)$ and some invertible element $u\in A^{\times}$. We say that $\phi$ has \emph{normal crossings}, if  $\phi$ has normal crossings over $Q$ for some parameter system $Q$ of $A$.

We define
$$\Gamma_+(P, \phi)= \Conv(\Supp(P,\phi))+ \Map(P,\R_0),$$
and  call  $\Gamma_+(P, \phi)$ the \emph{Newton polyhedron} of $\phi$ over $P$. By definition we have $\Gamma_+(P, \phi)\subset \Map(P,\R_0)\subset \Map(P,\R)$. We can show that there exists a non-empty \emph{finite} subset $Y$ of $\Supp(P,\phi)$ with $\Gamma_+(P, \phi)=\Conv(Y)+ \Map(P,\R_0)$, and $\Gamma_+(P, \phi)$ is a convex pseudo polyhedron in $\Map(P,\R)$.
(Lemma~\ref{Newton1}, Lemma~\ref{Newton2}.)
By $\mathcal{V}(\Gamma_+(P, \phi))$ we denote the union of all \emph{vertices} (in other words, \emph{faces} with dimension zero. See Definition~\ref{faces of cpp} for the definition of vertices and faces.) of $\Gamma_+(P, \phi)$. By definition we have
\begin{equation*}\begin{split}
\mathcal{V}(\Gamma_+(P, \phi))&=\{a\in\Gamma_+(P, \phi)|
\text{ There exists }\omega\in\Map(P,\R_0)^\vee\text{ such that for any }\\
&\qquad\qquad\quad b\in\Gamma_+(P, \phi)\text{ with }\langle\omega,b\rangle=\langle\omega,a\rangle\text{, we have }b=a\}.\\
\end{split}\end{equation*}
We call $ \mathcal{V}(\Gamma_+(P, \phi))$ the \emph{skeleton} of $\Gamma_+(P, \phi)$. The set $\mathcal{V}(\Gamma_+(P, \phi))$ is a non-empty finite subset of $\Supp(P,\phi)$, and $\Gamma_+(P, \phi)= \Conv(\mathcal{V}(\Gamma_+(P, \phi)))+ \Map(P, \R_0)$.
We denote $c(\Gamma_+(P, \phi))=\sharp\mathcal{V}(\Gamma_+(P, \phi))\in\Z_+$,
and we call $c(\Gamma_+(P, \phi))$ the \emph{characteristic number} of $\Gamma_+(P, \phi)$.

We know that $\Gamma_+(P, \phi)$ has only one vertex$\Leftrightarrow c(\Gamma_+(P, \phi))=1\Leftrightarrow\phi$ has normal crossings over $P$, and that these equivalent conditions always hold, if $\dim A=1$.

Let $\omega\in \Map(P, \R_0)^{\vee}$ be any element. We know that $\{\langle\omega, a\rangle|a\in \Supp(P, \phi)\}$\break $\subset\R_0$,  the minimum element $\min\{\langle\omega, a\rangle|a\in \Supp(P, \phi)\}$ of $\{\langle\omega, a\rangle|a\in \Supp(P, \phi)\}$ exists, and   $\min\{\langle\omega, a\rangle|a\in \Supp(P, \phi)\}= \min\{\langle\omega, a\rangle|a\in \mathcal{V}(\Gamma_+(P, \phi))\}$.
We define
\begin{equation*}\begin{split}
\Ord(P,\omega,\phi)&=\min\{\langle\omega, a\rangle|a\in \Supp(P, \phi)\}\in\R_0,\\
\Supp(P, \omega, \phi)&=\{a\in \Supp(P, \phi)| \langle\omega, a\rangle=\Ord(P,\omega,\phi)\}\subset \Supp(P, \phi),\\
\In(P,\omega,\phi)&=
\sum_{\Lambda\in \Supp(P, \omega, \phi)} c(\Lambda)\prod_{x\in P}x^{\Lambda(x)}\in A.\\
\end{split}\end{equation*}

We consider the case $\phi=0$. We introduce a symbol $\infty$ satisfying the following conditions: for any $t\in\R$, we have $\infty>t, \infty\geq t, \infty\neq t, t<\infty, t\leq\infty, t\neq\infty, \infty+t=t+\infty=\infty$, and moreover $\infty+\infty=\infty$. 
Let $\omega\in \Map(P, \R_0)^{\vee}$ be any element. We define
\begin{equation*}\begin{split}
\Ord(P,\omega,0)&= \infty,\\
\In(P,\omega,0)&=0.\\
\end{split}\end{equation*}

Let $\omega\in \Map(P, \R_0)^{\vee}$ be any element. 
In the general case including the case of $\phi=0$, we have defined $\Ord(P,\omega,\phi)\in\R_0\cup\{\infty\}$ and $\In(P,\omega,\phi)\in A$.
We call $\Ord(P,\omega,\phi)\in\R_0\cup\{\infty\}$ the \emph{order} of $\phi$ over $P$ with respect to $\omega$. By definition $\Ord(P,\omega,\phi)=\infty$ if and only if $\phi=0$.
We call $\In(P,\omega,\phi)$ the \emph{initial sum} of $\phi$ over $P$ with respect to $\omega$. By definition $\In(P,\omega,\phi)=0$ if and only if $\phi=0$.

Let $F$ be any subset of $\Map(P,\R)$. We denote
$$\Ps(P,F,\phi)=
\begin{cases}
\sum_{\Lambda\in \Supp(P, \phi)\cap F} c(\Lambda)\prod_{x\in P}x^{\Lambda(x)}
&\text{if $ \Supp(P, \phi)\cap F \neq \emptyset$},\\
0&\text{if $ \Supp(P, \phi)\cap F =\emptyset$},
\end{cases}$$
and we call $\Ps(P,F,\phi)\in A$ the \emph{partial sum} of $\phi$ over $P$ with respect to $F$.

Here we assume $\dim A\geq 2$ for a while. Again, we assume $\phi\neq 0$. In addition, let $z$ be any element of $P$. 

Note that for any $a\in \mathcal{V}(\Gamma_+(P, \phi))$, we have $\langle f^{P\vee}_z, a\rangle\in\Z_0$. We define
\begin{equation*}\begin{split}
&\Ht(z,\Gamma_+(P, \phi)) \\
= &\max\{\langle f^{P\vee}_z, a\rangle|a\in \mathcal{V}(\Gamma_+(P, \phi))\}
-\min\{\langle f^{P\vee}_z, a\rangle|a\in \mathcal{V}(\Gamma_+(P, \phi))\}\in\Z_0,
\end{split}\end{equation*}
and we call $\Ht(z,\Gamma_+(P, \phi)) $ the \emph{height} of $\Gamma_+(P, \phi)$ with respect to $z$, or simply $z$-\emph{height} of  $\Gamma_+(P, \phi)$. It is a non-negative integer. By definition, $\Ht(z,\Gamma_+(P, \phi)) =0$ if and only if the value $\langle f^{P\vee}_z, a\rangle$ does not depend on $a\in \mathcal{V}(\Gamma_+(P, \phi))$.

Let $a\in \mathcal{V}(\Gamma_+(P, \phi))$. We say that $\{a\}$ is a $z$-\emph{top vertex} of $\Gamma_+(P,\phi)$, if $\langle f^{P\vee}_z, a\rangle= \max\{\langle f^{P\vee}_z, b\rangle|b\in \mathcal{V}(\Gamma_+(P, \phi))\}$. We say that $\{a\}$ is a $z$-\emph{bottom vertex} of $\Gamma_+(P,\phi)$, if $\langle f^{P\vee}_z, a\rangle= \min\{\langle f^{P\vee}_z, b\rangle|b\in \mathcal{V}(\Gamma_+(P, \phi))\}$.

Any element $\phi$ in $A$ such that $\phi=z^h+\sum_{i=0}^{h-1} \phi'(i) z^i$ for some $h\in\Z_0$ and some mappng $\phi':\{0, 1,\ldots, h-1\}\rightarrow M(A')$ is called a $z$-\emph{Weierstrass polynomial} over $P$, and the integer $h$ is called \emph{degree} of $\phi$.

We say that $\Gamma_+(P,\phi)$ is \emph{of $z$-Weierstrass type}, if there exists $a\in \Gamma_+(P, \phi)$ satisfying the equality $\langle f^{P\vee}_x, a\rangle=\Ord(P, f^{P\vee}_x,\phi)$ for any $x\in P - \{z\}$.

Let $b=\Ord(P, f^{P\vee}_z,\phi)\in\Z_0$ and let $h=\Ht(z,\Gamma_+(P, \phi)) \in\Z_0$.
Let $A'$ denote the completion of $k[P - \{z\}]$ with respect to the maximal ideal $k[P - \{z\}]\cap M(A)$. The ring $A'$ is a local subring of $A$ and $M(A')=M(A)\cap A' =(P -\{z\})A'$. The completion of $A'[z]$ with respect to the prime ideal $zA'[z]$ is isomorphic to $A$ as $A'[z]$-algebras. The set $P-\{z\}$ is a parameter system of $A'$.

Under the assumption that $\Gamma_+(P,\phi)$ is of $z$-Weierstrass type, by Weierstrass'  preparation theorem we know the following (Lemma~\ref{Newton2}.8):
\begin{enumerate}
\item
$\Ht(z,\Gamma_+(P, \phi))=0\Leftrightarrow \Gamma_+(P,\phi)$ has only one vertex $\Leftrightarrow c(\Gamma_+(P, \phi))=1\Leftrightarrow \phi$ has normal crossings over $P$.
\item 
The Newton polyhedron $\Gamma_+(P,\phi)$ has a unique $z$-top vertex.
\end{enumerate}

Below, by $\{a_1\}$ we denote the unique $z$-top vertex of $\Gamma_+(P,\phi)$.
\begin{enumerate}
\setcounter{enumi}{2}
\item
Consider any $a\in \Gamma_+(P, \phi)$. The equality $\langle f^{P\vee}_x, a\rangle=\Ord(P, f^{P\vee}_x,\phi)$ holds for any $x\in P - \{z\}\Leftrightarrow a-a_1\in\R_0f^P_z$.
\item
$\langle f^{P\vee}_z, a_1\rangle=b+h$.
\item
There exist uniquely an invertible element $u\in A^{\times}$ and a mapping $\phi':\{0, 1,\ldots, h-1\}\rightarrow M(A')$ satisfying
$$\phi=u z^b\prod_{x\in P -\{z\}}x^{\langle f^{P\vee}_x, a_1\rangle} (z^h+\sum_{i=0}^{h-1} \phi'(i) z^i),$$
and $\phi'(0)\neq 0$ if $h>0$
\end{enumerate}

Under the above notations we know that the following two conditions are equivalent (Lemma~\ref{Newton2}.9):
\begin{enumerate}
\item The Newton polyhedron $\Gamma_+(P,\phi)$ is of $z$-Weierstrass type.
\item There exist uniquely an invertible element $u\in A^{\times}$, a mapping $c:P\rightarrow\Z_0$, a non-negative integer $g\in\Z_0$ and a mapping $\phi':\{0, 1,\ldots, g-1\}\rightarrow M(A')$ satisfying
$$\phi=u \prod_{x\in P}x^{c(x)} (z^g+\sum_{i=0}^{g-1} \phi'(i) z^i),$$
and $\phi'(0)\neq 0$ if $g>0$.
\end{enumerate}

The concept of $z$-removable faces is very important.

Assume that $\Gamma_+(P,\phi)$ is of $z$-Weierstrass type. Let $b$, $h$ and $A'$ be the same as above. By $\{a_1\}$ we denote the unique $z$-top vertex of $\Gamma_+(P,\phi)$. Assume moreover that $h>0$. Under these assumptions we can give the definition of $z$-removable faces. 

A subset $F$ of  $\Map(P,\R)$ is a \emph{face} of $\Gamma_+(P,\phi)$, if and only if, there exists $\omega \in \Map(P,\R_0)^\vee$ such that $F=\{a \in \Gamma_+(P,\phi)|\langle\omega,a\rangle=\Ord(P,\omega,\phi)\}$. Any face of $\Gamma_+(P,\phi)$ is a non-empty closed subset of $\Gamma_+(P,\phi)$, and is a convex pseudo polyhedron. (See Definition~\ref{faces of cpp}.)

Let $F$ be a face of $\Gamma_+(P,\phi)$. We say that $F$ is $z$-\emph{removable}, if $a_1\in F$ and there exist an invertible element $u\in A^{\times}$ and an element $\chi\in M(A')$ satisfying $\chi\neq 0$ and
$$\Ps(P,F,\phi)=u z^b\prod_{x\in P -\{z\}}
x^{\langle f^{P\vee}_x, a_1\rangle}(z+\chi)^h.$$

We would like to explain the relation betwen the concept of $z$-removable faces and Hironaka's maximal contact here. We assume that the field $k$ has characteristic zero, and consider any $z$-Weierstrass polynomial $\psi\in A$ of positive degree. We take the unique pair of a positive integer $h$ and 
a mapping $\psi':\{0,1,\ldots,h-1\}\rightarrow M(A')$
satisfying the equality $\psi=z^h+\sum_{i=0}^{h-1} \psi'(i) z^i$. Let $\hat{z}=z+(\psi'(h-1)/h)\in M(A)$ and let $\hat{P}=\{\hat{z}\}\cup(P-\{z\})$. We know that $\hat{P}$ is a parameter system of $A$ and the Newton polyhedron $\Gamma_+(\hat{P},\psi)$ is of $\hat{z}$-Weierstrass type and has no $\hat{z}$-removable faces. Now, we assume moreover that $\psi$ has multiplicity $h$, in other words, $\psi\in M(A)^h- M(A)^{h+1}$. This condition is equivalent to that $\psi'(i)\in M(A')^{h-i}$ for any $i\in\{0,1,\ldots,h-1\}$. We know that the smooth subscheme $\Spec(A/\hat{z}A)$ of $\Spec(A)$ is Hironaka's maximal contact of the subscheme $\Spec(A/\psi A)$. (See Giraud~\cite{G75}.)

Note that we cannot define the element $\hat{z}=z+(\psi'(h-1)/h) \in A$, if the characteristic of $k$ is positive and the characteristic divides $h$.

The concept of $z$-simple is also very important.

We say that $\Gamma_+(P,\phi)$ is \emph{$z$-simple}, if $\Gamma_+(P,\phi)$ is of $z$-Weierstrass type and any compact face $F$ of $\Gamma_+(P,\phi)$ satisfies $\dim F\leq 1$.

If $\dim A=2$, then always $\Gamma_+(P,\phi)$ is $z$-simple.
If $\Gamma_+(P,\phi)$ is $z$-simple, then $\Gamma_+(P,\phi)$ is of $z$-Weierstrass type.

\section{Basic scheme theory}
\label{scheme}

We develop the basic scheme theory. By $k$ we denote any field.

Let $\Sigma$ be a scheme. Any pair $(\Gamma, \gamma)$ where $\Gamma$ is a scheme and $\gamma:\Gamma\rightarrow\Sigma$ is a morphism of schemes is called a \emph{scheme over} $\Sigma$, or a $\Sigma$\emph{-scheme}, and $\gamma$ is called the \emph{structure morphism} of $\Sigma$-scheme $\Gamma$. 
Let $(\Gamma, \gamma)$ and $(\Gamma', \gamma')$ be $\Sigma$-schemes. 
A morphism $\tau:\Gamma\rightarrow\Gamma'$ of schemes satisfying $\gamma'\tau=\gamma$ is called a \emph{morphism over} $\Sigma$, a $\Sigma$\emph{-morphism} or a \emph{morphism of} $\Sigma$\emph{-schemes}.
An isomorphism $\tau:\Gamma\rightarrow\Gamma'$ of schemes satisfying $\gamma'\tau=\gamma$ is called an \emph{isomorphism over} $\Sigma$, a $\Sigma$\emph{-isomorphism} or an \emph{isomorphism of} $\Sigma$\emph{-schemes}.
We say that two $\Sigma$-schemes $(\Gamma, \gamma)$ and $(\Gamma', \gamma')$ are \emph{isomorphic}, if there exists an isomorphism $\tau:\Gamma\rightarrow\Gamma'$ over $\Sigma$.

In case where a ring $R$ is given, we say that a scheme over $R$, an $R$-scheme, a morphism over $R$, an $R$-morphism, a morphism of $R$-schemes, an isomorphism over $R$, an $R$-isomorphism, an isomorphism of $R$-schemes, instead of, a scheme over $\Spec(R)$, a $\Spec(R)$-scheme, a morphism over $\Spec(R)$, a $\Spec(R)$-morphism, a morphism of $\Spec(R)$-schemes, an isomorphism over $\Spec(R)$, a $\Spec(R)$-iso-\hfill\break morphism, an isomorphism of $\Spec(R)$-schemes, respectively.

Let $\Sigma$ be a scheme, and let $\A\in\Sigma$ be a point. For any open subset $U$ of $\Sigma$ containing $\A$ we have the restriction homomorphism $\mathcal{O}_\Sigma(U)\rightarrow \mathcal{O}_{\Sigma(U),\A}$ from the ring of regular functions $\mathcal{O}_\Sigma(U)$ on $U$ to the local ring $\mathcal{O}_{\Sigma,\A}$ of $\Sigma$ at $\A$. The restriction homomorphisms define the \emph{canonical morphism} $\Spec(\mathcal{O}_{\Sigma,\A})\rightarrow\Sigma$ of schemes.

Let $\Sigma$ be an irreducible scheme. There exists a unique point $\A\in\Sigma$ such that $\{\A\}$ is dense in $\Sigma$. The unique point satisfying this condition is denoted by $[\Sigma]$, and we call $[\Sigma]$ the \emph{generic point} of $\Sigma$. If moreover, $\Sigma$ is irreducible and reduced, then the local ring $\mathcal{O}_{\Sigma,[\Sigma]}$ of $\Sigma$ at $[\Sigma]$ is a field, and we call $\mathcal{O}_{\Sigma,[\Sigma]}$ the \emph{function field} of $\Sigma$. 

Let $\Gamma$ and $\Sigma$ be an irreducible schemes, $\gamma:\Gamma\rightarrow\Sigma$ a morphism of schemes. We say that $\gamma$ is \emph{dominant}, if the image $\gamma(\Gamma)$ is dense in $\Sigma$. If $\gamma$ is dominant, then for any non-empty open set $V$ of $\Gamma$ and any non-empty open set $U$ of $\Sigma$ with $\gamma(V)\subset U$ the homomorphism $\gamma^*:\mathcal{O}_\Sigma(U)\rightarrow\mathcal{O}_\Gamma(V)$ is injective, and an injective homomorphism $\gamma^*:\mathcal{O}_{\Sigma,[\Sigma]}\rightarrow
\mathcal{O}_{\Gamma,[\Gamma]}$ between the local rings at the generic point is induced. We say that $\gamma$ is \emph{birational}, if it is dominant and the induced homomorphism $\gamma^*:\mathcal{O}_{\Sigma,[\Sigma]}\rightarrow
\mathcal{O}_{\Gamma,[\Gamma]}$ is an isomorphism. For any $\A\in\Sigma$
the canonical morphism $\Spec(\mathcal{O}_{\Sigma,\A})\rightarrow\Sigma$ is dominant and birational, since $\Sigma$ is irreducible.

A scheme over $k$ which is separated, irreducible, reduced and of finite type over $k$ is called a \emph{variety over} $k$, or $k$\emph{-variety}. Any $k$-variety is a noetherian scheme.

Let $\Sigma$ be any $k$-scheme, and let $\A\in \Sigma$ be any point. Note that the structure morphism defines an injective ring homomorphism $k\rightarrow\mathcal{O}_{\Sigma,\A}$, and $\mathcal{O}_{\Sigma,\A}$ is a $k$-algebra. The point $\A\in \Sigma$ is called $k$\emph{-valued}, if the residue field $\mathcal{O}_{\Sigma,\A}/M(\mathcal{O}_{\Sigma,\A})$ is isomorphic to $k$ as $k$-algebras. The set of all $k$-valued points on $\Sigma$ is denoted by $\Sigma(k)$. $\Sigma(k)\subset \Sigma$, and the topology on $\Sigma$ defines the relative topology on $\Sigma(k)$. For any $\A\in \Sigma(k)$ and any $\phi\in \mathcal{O}_{\Sigma,\A}$ we can define the value $\phi(\A)\in k$ of a function $\phi$ at a point $\A$ belonging to $k$, and $\phi-\phi(\A)\in M(\mathcal{O}_{\Sigma,\A})$.

 If the local ring $\mathcal{O}_{\Sigma,\A}$ of a scheme $\Sigma$ at a point $\A\in\Sigma$ is noetherian and regular, then we say that $\Sigma$ is \emph{smooth} at $\A\in \Sigma$. We say that a scheme $\Sigma$ is smooth, if $\Sigma$ is \emph{smooth} at any point $\A\in \Sigma$.

 Let $\Sigma$ be any scheme, and let $\mathcal{I}$ be any ideal sheaf in the structure sheaf $\mathcal{O}_\Sigma$, in other words, any sheaf of $\mathcal{O}_\Sigma$-modules which is a subsheaf of $\mathcal{O}_\Sigma$. The ideal sheaf $\mathcal{I}$ is called \emph{locally principal}, if for any $\A\in\Sigma$ there exists $\phi\in \mathcal{O}_{\Sigma,\A}$ such that $\phi$ is not a zero-divisor of $\mathcal{O}_{\Sigma,\A}$ and $\mathcal{I}_\A=\phi \mathcal{O}_{\Sigma,\A}$, where $\mathcal{I}_\A$ denotes the stalk of $\mathcal{I}$ at $\A$. Note that for any scheme $\Gamma$ and for any morphism $\gamma:\Gamma\rightarrow\Sigma$ of schemes, the pull-back $\gamma^*\mathcal{I}$ of $\mathcal{I}$ as an ideal sheaf is defined, and $\gamma^*\mathcal{I}$ is a sheaf of $\mathcal{O}_{\Gamma}$-modules which is a subsheaf of $\mathcal{O}_{\Gamma}$.

Grothendieck showed that there exists a scheme $\Sigma'$ and a morphism $\sigma:\Sigma'\rightarrow\Sigma$ satisfying the following universal mapping property:  
\begin{enumerate}
\item The ideal sheaf $\sigma^*\mathcal{I}$ is locally principal. 
\item If $\Gamma$ is a scheme, $\gamma:\Gamma\rightarrow\Sigma$ is a morphism, and the ideal sheaf $\gamma^*\mathcal{I}$ is locally principal, then there exists a unique morphism $\tau:\Gamma\rightarrow\Sigma'$ with $\sigma\tau=\gamma$.
\end{enumerate}

By the universal mapping property we know that the pair $(\Sigma', \sigma)$ satisfying the above conditions is unique up to isomorphism of schemes over $\Sigma$. The pair $(\Sigma', \sigma)$ satisfying the above conditions is called the \emph{blowing-up with center in} an ideal sheaf $\mathcal{I}$, or the \emph{blowing-up with center in} $\Phi$, where $\Phi$ denotes the closed subscheme of $\Sigma$ defined by the ideal sheaf $\mathcal{I}$. Note that any closed subscheme of $\Sigma$ has a unique ideal sheaf in $\mathcal{O}_{\Sigma}$ defining it. If $\mathcal{I}$ is locally principal, then $\sigma$ is an isomorphism.

Let $(\Sigma', \sigma)$ be the  blowing-up with center in $\mathcal{I}$. By $\Phi$ we denote the closed subscheme of $\Sigma$ defined by the ideal sheaf $\mathcal{I}$. We call the inverse image $\sigma^{-1}(\Phi)$ the \emph{exceptional divisor} of $\sigma$. By Grothendieck's description we know moreover the following: 
\begin{enumerate}
\item The morphism $\sigma$ is surjective. The exceptional divisor of $\sigma$ is a subscheme of $\Sigma'$ of codimension one defined by the locally principal ideal sheaf $\sigma^*\mathcal{I}$, and the induced morphism $\sigma:\Sigma'-\sigma^{-1}(\Phi)\rightarrow\Sigma-\Phi$ is an isomorphism. 
\item If $\Sigma$ is locally noetherian, then $\sigma$ is proper. 
\item If $\Sigma$ is separated, then $\Sigma'$ is also separated. If $\Sigma$ is noetherian, then $\Sigma'$ is also noetherian. 
\item If $\Sigma$ is irreducible, then $\Sigma'$ is also irreducible and $\sigma$ is birational. 
\item If $\Sigma$ is a $k$-variety, then $\Sigma'$ is also a $k$-variety. 
\item If $\Sigma$ and $\Phi$ are smooth, then $\Sigma'$ and $\sigma^{-1}(\Phi)$ are also smooth. If $\Sigma$ and $\Phi$ are smooth and irreducible, then $\Sigma'$ and $\sigma^{-1}(\Phi)$ are also smooth and irreducible.
\end{enumerate}

Let $\Sigma$ be any separated irreducible noetherian smooth scheme with $\dim\Sigma\geq 1$. We denote
\begin{equation*}
\begin{split}
\Prm(\Sigma)&=\text{ The set of non-empty closed irreducible reduced subschemes}\\
&\qquad\qquad \text{of codimension one of }\Sigma,\\
\Div(\Sigma) &= \Map'(\Prm(\Sigma),\Z)=\{\Delta\in\Map(\Prm(\Sigma),\Z)|\Supp(\Delta)\text{ is a finite set.}\},\\
\Div(\Sigma)_0&=\Map'(\Prm(\Sigma),\Z_0) =\{\Delta\in\Map(\Prm(\Sigma),\Z_0)|\Supp(\Delta)\text{ is a finite set.}\}, \\
\Div(\Sigma)'_0&=\text{ The union of the set of non-empty closed subschemes}\\
&\qquad\qquad\text{of codimension one of }\Sigma\text{ and }\{\text{the empty subscheme}\}.\\
\end{split}
\end{equation*}

We know that $\Div(\Sigma)$ is a subgroup of $\Map(\Prm(\Sigma),\Z)$, and $\Div(\Sigma)_0$ is a semisubgroup of $\Div(\Sigma)$ containing $0$ and generating $\Div(\Sigma)$. We call $\Div(\Sigma)$ the \emph{divisor group} of $\Sigma$. Any element of $\Div(\Sigma)$ is called a \emph{divisor} of $\Sigma$, any element of  $\Div(\Sigma)_0$ is called an \emph{effective} divisor of $\Sigma$, and any element of  $\Prm(\Sigma)$ is called a \emph{prime} divisor of $\Sigma$. Let $\Delta$ be a divisor of $\Sigma$. Any element $\Lambda\in \Prm(\Sigma)$ with $\Delta(\Lambda)\neq 0$ is called a \emph{component} of $\Delta$. The set of all components of a divisor $\Delta$ is denoted by $\Comp(\Delta)$, which is a finite set of prime divisors. For any $\Lambda\in \Prm(\Sigma)$ the integer  $\Delta(\Lambda)$ is called the \emph{multiplicity} of $\Lambda$ in $\Delta$. The divisor $\Delta$ is effective, if and only if, for any prime divisor $\Lambda$ of $\Sigma$ the multiplicity $\Delta(\Lambda)$ of $\Lambda$ in $\Delta$ is non-negative.
The union of all components of $\Delta$ is denoted by $\Supp(\Delta)$, and we call $\Supp(\Delta)$ the \emph{support} of $\Delta$.

For any $\Gamma\in\Div(\Sigma)'_0$, the ideal sheaf in $\mathcal{O}_\Sigma$ defining $\Gamma$ is locally principal. Conversely, for any locally principal ideal sheaf $\mathcal{I}$ in $\mathcal{O}_\Sigma$, there exists a unique element $\Gamma\in\Div(\Sigma)'_0$ whose defining ideal sheaf is equal to $\mathcal{I}$. Thus, the set $\Div(\Sigma)'_0$ is identified with the set of locally principal ideal sheaves in $\mathcal{O}_\Sigma$. The empty subscheme in $\Div(\Sigma)'_0$ is identified with $\mathcal{O}_\Sigma$ itself.

We have a unique one-to-one correspondence $\Phi: \Div(\Sigma)'_0\rightarrow\Div(\Sigma)_0$ satisfying the following conditions: Let $\Gamma\in\Div(\Sigma)'_0$. We write $\Delta=\Phi(\Gamma)\in\Div(\Sigma)_0$. By $\mathcal{I}$ we denote the ideal sheaf in $\mathcal{O}_\Sigma$ defining $\Gamma$. For any $\Lambda\in\Prm(\Sigma)$ by $\mathcal{J}_\Lambda$ we denote the ideal sheaf in $\mathcal{O}_\Sigma$ defining $\Lambda$. Then, we have
$\mathcal{I}=\prod_{\Lambda\in\Comp(\Delta)}\mathcal{J}_\Lambda^{\Delta(\Lambda)}$.
If $\Gamma\in\Div(\Sigma)'_0$ and $\Delta=\Phi(\Gamma)\in\Div(\Sigma)_0$, then the set of irreducible reduced components of $\Gamma$ is equal to $\Comp(\Delta)$, and $\Gamma_\mathrm{red}=\Supp(\Delta)$, where $\Gamma_\mathrm{red}$ denotes the reduced subscheme corresponding to $\Gamma$.
Using  $\Phi: \Div(\Sigma)'_0\rightarrow\Div(\Sigma)_0$ we identify $ \Div(\Sigma)'_0$ and $\Div(\Sigma)_0$. The empty scheme is identified with $0$.

Let $\Gamma$ and $\Sigma$ be separated irreducible noetherian smooth schemes with $\dim\Gamma\geq 1$ and $\dim\Sigma\geq 1$, and let $\gamma:\Gamma\rightarrow \Sigma$ be a dominant morphism.
Since $\gamma$ is dominant, we know that the pull-back $\gamma^*\mathcal{I}$ of any locally principal ideal sheaf $\mathcal{I}$ in $\mathcal{O}_\Sigma$ by $\gamma$ is a locally principal ideal sheaf in $\mathcal{O}_\Gamma$. Thus, we can define a semigroup homomorphism
$$\gamma^*:\Div(\Sigma)_0\rightarrow\Div(\Gamma)_0$$
such that if $\Delta\in\Div(\Sigma)_0$, and the ideal sheaf defining $\Delta$ is $\mathcal{I}$, then the ideal sheaf $\gamma^*\mathcal{I}$ defines  $\gamma^*\Delta\in\Div(\Gamma)_0$. We know $\gamma^*0=0$. We have a unique group homomorphism
$$\gamma^*:\Div(\Sigma)\rightarrow\Div(\Gamma),$$
extending  $\gamma^*:\Div(\Sigma)_0\rightarrow\Div(\Gamma)_0$. For any divisor $\Delta\in\Div(\Sigma)$, the divisor $\gamma^*\Delta\in\Div(\Gamma)$ is called the \emph{pull-back} of $\Delta$ by $\gamma$. If $\Delta$ is effective, then $\gamma^*\Delta$ is also effective. 

Let $\Gamma$ and $\Sigma$ be separated irreducible noetherian smooth schemes with $\dim\Gamma\geq 1$ and $\dim\Sigma\geq 1$; and let $\gamma:\Gamma\rightarrow \Sigma$ be a surjective birational morphism, and let $\Lambda$ be any prime divisor of $\Sigma$. Since $\gamma$ is surjective and birational, we have a unique component $\Lambda'$ of $\gamma^*\Lambda$ with $\gamma(\Lambda')= \Lambda$. This unique component $\Lambda'$ is called the \emph{strict transform} of $\Lambda$ by $\gamma$. The multiplicity of $\Lambda'$ in $\gamma^*\Lambda$ is always equal to one.

Let $\Sigma$ be a separated irreducible noethrian smooth scheme with $\dim\Sigma\geq 1$; let $\Delta$ be an effective divisor of $\Sigma$; and let $\A\in\Sigma$ be a point. Since $\Sigma$ is irreducible, the canonical morphism $\Spec(\mathcal{O}_{\Sigma,\A})\rightarrow\Sigma$ is dominant. The inclusion homomorphism from $\mathcal{O}_{\Sigma,\A}$ to its completion $\mathcal{O}_{\Sigma,\A}^c$ is faithfully flat, and the induced morphism $\Spec(\mathcal{O}_{\Sigma,\A}^c)\rightarrow\Spec(\mathcal{O}_{\Sigma,\A})$ of affine schemes is surjective. Their composition $\delta:\Spec(\mathcal{O}_{\Sigma,\A}^c)\rightarrow\Sigma$ is defined, it is dominant, and the pull-back $\delta^*\Delta$ of $\Delta$ by this composition morphism $\delta$ is defined.
We say that $\Delta$ has \emph{normal crossings at} $\A\in\Sigma$, if there exist a parameter system $P$ of the completion $\mathcal{O}_{\Sigma,\A}^c$ of the local ring $\mathcal{O}_{\Sigma,\A}$ of $\Sigma$ at $\A$, and an element $\Lambda\in\Map(P,\Z_0)$ such that $$\delta^*\Delta=
\Spec(\mathcal{O}_{\Sigma,\A}^c/\prod_{x\in P}x^{\Lambda(x)}\mathcal{O}_{\Sigma,\A}^c).$$
We say that $\Delta$ has \emph{normal crossings} or $\Delta$ is a \emph{normal crossing divisor}, if it has normal crossings at any point of $\Sigma$.

Here we give the definition of the concept of \emph{normal crossing schemes} over an algebraically closed field  and introduce some notations associated with it. We assume that the field $k$ is algebraically closed below in this section.

A pair
$$(\Sigma, \Delta),$$
satisfying the following five conditions is called a  \emph{normal crossing scheme} over $k$. 
\begin{enumerate}
\item The first item $\Sigma$ is a separated irreducible noetherian smooth scheme over $k$ with $\dim\Sigma\geq 1$ such that any closed point $\A\in\Sigma$ is a $k$-valued point.
\item The second item $\Delta$ is a non-zero effective normal crossing divisor of $\Sigma$.
\end{enumerate}

We use the following notations: The set of components of $\Delta$ is denoted by $\Comp(\Delta)$. For any point $\A\in\Sigma$ we denote $\Comp(\Delta)(\A)=\{\Lambda\in \Comp(\Delta)|\A\in \Lambda\}$, and $(\Delta)_0=\{\A\in\Sigma|\sharp\Comp(\Delta)(\A)=\dim\Sigma\}$. For any $\A\in(\Delta)_0$ we write
$$U(\Sigma,\Delta,\A)=\Sigma-(\bigcup_{\Lambda\in \Comp(\Delta)- \Comp(\Delta)(\A)}\Lambda).$$
We write simply $U(\A)$, instead of $U(\Sigma,\Delta,\A)$, when we need not refer to the pair $(\Sigma,\Delta)$.

\begin{enumerate}
\setcounter{enumi}{2}
\item For any non-empty subset $Q$ of $\Comp(\Delta)$ with $\bigcap_{\Lambda\in Q}\Lambda\neq\emptyset$, the intersection scheme $\bigcap_{\Lambda\in Q}\Lambda$ is irreducible and smooth.
\item For any non-empty subset $Q$ of $\Comp(\Delta)$ with $\bigcap_{\Lambda\in Q}\Lambda\neq\emptyset$, there exists $\A\in(\Delta)_0$ such that $Q\subset\Comp(\Delta)(\A)$.
\item For any $\A\in(\Delta)_0$, $U(\A)$ is an affine open subset of $\Sigma$.
\end{enumerate}

Let $(\Sigma, \Delta)$ be a normal crossing scheme over $k$. We call $\Sigma$ the \emph{support} of $(\Sigma, \Delta)$. For any $\A\in(\Delta)_0$, we consider a mapping
$$\xi_\A:\Comp(\Delta)(\A)\rightarrow \mathcal{O}_\Sigma(U(\A)).$$
Let $\A\in(\Delta)_0$. If $\xi_\A$ satisfies the following two conditions, then we call $\xi_\A$ a \emph{coordinate system} of $(\Sigma, \Delta)$ at $\A$:

\begin{enumerate} 
\item For any $\Lambda\in \Comp(\Delta)(\A)$ we have
$$\Lambda\cap U(\A)=
\Spec(\mathcal{O}_\Sigma(U(\A))/\xi_\A(\Lambda)\mathcal{O}_\Sigma(U(\A))).$$
\item 
For any $k$-valued point $\beta\in U(\A)(k)$, the set $\{\xi_{\A}(\Lambda)- \xi_{\A}(\Lambda)(\B)|\Lambda\in\Comp(\Delta)(\A)\}$ is a parameter system of the local ring $\mathcal{O}_{U(\A),\B}$.
\end{enumerate}

If $\xi_\A$ is a coordinate system of $(\Sigma, \Delta)$ at $\A$ for any $\A\in(\Delta)_0$, then we call the collection $\xi=\{\xi_\A|\A\in(\Delta)_0\}$ a \emph{coordinate system} of $(\Sigma, \Delta)$. For a coordinate system $\xi$ of $(\Sigma, \Delta)$ we denote the element of $\xi$ corresponding to $\A\in(\Delta)_0$ by $\xi_\A$.

A triplet $(\Sigma, \Delta, \xi)$ such that the pair $(\Sigma,\Delta)$ is a normal crossing scheme over $k$ and $\xi$ is a coordinate system of $(\Sigma,\Delta)$ is called a \emph{coordinated normal crossing scheme} over $k$.

\begin{example}
Let $A$ be any complete regular local ring such that $A$ contains $k$ as a subring, the residue field $A/M(A)$ is isomorphic to $k$ as $k$-algebras, and $\dim A\geq 1$; let $P$ be any parameter system of $A$; and let $\Lambda\in\Map(P,\Z_+)$.

Note that $M(A)\in\Spec(A)$ and $M(A)$ is the unique closed point of $\Spec(A)$. Let $\Delta=\Spec(A/\prod_{x\in P}x^{\Lambda(x)}A)$. The pair $(\Spec(A),\Delta)$ is a normal crossing scheme over $k$. We have $(\Delta)_0=\{M(A)\}$, $\Comp(\Delta)=\Comp(\Delta)(M(A))=\{\Spec(A/xA)|x\in P\}$, and $U(\Spec(A),\Delta, M(A))=\Spec(A)$. For any $x\in P$, we put $\xi_{M(A)}(\Spec(A/xA$\break$))=x$. We obtain a mapping $\xi_{M(A)}: \Comp(\Delta)(M(A))\rightarrow\mathcal{O}_{\Spec(A)}( U(\Spec(A),\Delta, $\break$M(A)))$.
The mapping $\xi_{M(A)}$ is a coordinate system of $(\Spec(A),\Delta)$ at $M(A)$, and the triplet $(\Spec(A),\Delta,\{\xi_{M(A)}\})$ is a coordinated normal crossing scheme over $k$.

Note that $P$ is algebraically independent over $k$. We consider the subring $k[P]$ of $A$. We denote $M_0= k[P]\cap M(A)$. Note that $M_0=Pk[P]$, $M_0\in\Spec(k[P])$ and $M_0$ is a closed point of $\Spec(k[P])$. Let $\bar{\Delta}=\Spec(k[P]/\prod_{x\in P}x^{\Lambda(x)} k[P])$. The pair $(\Spec(k[P]), \bar{\Delta})$ is a normal crossing scheme over $k$. We have $(\bar{\Delta})_0=\{M_0\}$, $\Comp(\bar{\Delta})=\Comp(\bar{\Delta})(M_0)=\{\Spec(k[P]/xk[P])|x\in P\}$, and $U(\Spec(k[P]), \bar{\Delta}, $\break$M_0)=\Spec(k[P])$. For any $x\in P$, we put $\xi_{M_0}(\Spec(k[P]/xk[P]))=x$. We obtain a mapping $\xi_{M_0}: \Comp(\bar{\Delta})(M_0)\rightarrow\mathcal{O}_{\Spec(k[P])}( U(\Spec(k[P]), \bar{\Delta}, M_0))$.
The mapping $\xi_{M_0}$ is a coordinate system of $(\Spec(k[P]), \bar{\Delta})$ at $M_0$, and the triplet $(\Spec(k[P]), \bar{\Delta},\{\xi_{M_0}\})$ is a coordinated normal crossing scheme over $k$.
\end{example}

The four lemmas below easily follow from definitions.

\begin{lemma}
\label{ncv}
Let $(\Sigma, \Delta)$ be a normal crossing scheme over $k$.
\begin{enumerate}
\item
The set $\Comp(\Delta)$ is non-empty and finite.
\item
For any non-empty subset $Q$ of $\Comp(\Delta)$ with $\bigcap_{\Lambda\in Q}\Lambda\neq\emptyset$, $\bigcap_{\Lambda\in Q}\Lambda$ is a closed irreducible smooth subscheme of $\Sigma$, and $\dim\bigcap_{\Lambda\in Q}\Lambda=\dim\Sigma-\sharp Q$.
\item
The set $(\Delta)_0$ is a non-empty finite set of $k$-valued points of $\Sigma$.
\item
For $\A\in(\Delta)_0$ and $\B\in(\Delta)_0$, $\Comp(\Delta)(\A)= \Comp(\Delta)(\B)$, if and only if, $\A=\B$.
\item
$$\Sigma=\bigcup_{\A\in(\Delta)_0}U(\A).$$
\item
For any open set $U$ of $\Sigma$ with $\Sigma(k)\subset U$, we have $U=\Sigma$.
\end{enumerate}

Let $Q$ be any subset of $\Comp(\Delta)$ with $\sharp Q\geq 2$ and $\bigcap_{\Lambda\in Q}\Lambda\neq\emptyset$. We denote $\Phi=\bigcap_{\Lambda\in Q}\Lambda$, and the blowing-up with center in $\Phi$ by $\sigma:\Sigma'\rightarrow\Sigma$. Furthermore, by $\Theta'$ we denote the exceptional divisor of $\sigma$, and by $\Lambda'$ we denote the strict transform of $\Lambda\in\Comp(\Delta)$ by $\sigma$ for any $\Lambda\in\Comp(\Delta)$.

\begin{enumerate}
\setcounter{enumi}{6}
\item
The pair $(\Sigma', \sigma^*\Delta)$ is a normal crossing scheme over $k$.
\item
$\Theta'\in\Comp(\sigma^*\Delta)$. $\Comp(\sigma^*\Delta)-\{\Theta'\}=\{\Lambda'|\Lambda\in\Comp(\Delta)\}$.
$\sharp\Comp(\sigma^*\Delta)=\sharp\Comp(\Delta)+1$.
\item
For any $\Lambda\in\Comp(\Delta)- Q$, we have $\sigma^*\Lambda=\Lambda'$.
For any $\Lambda\in Q$, we have $\sigma^*\Lambda=\Lambda'+\Theta'$.
\item
$\sigma((\sigma^*\Delta)_0)=(\Delta)_0$. 
\item
For any $\A\in(\Delta)_0$ with $\A\not\in\Phi$, we have
$\sharp\sigma^{-1}(\A)\cap(\sigma^*\Delta)_0=1$, and the unique element $\A'$ in $\sigma^{-1}(\A)\cap(\sigma^*\Delta)_0$ satisfies 
$\{\A'\}=
\bigcap_{\Lambda\in\Comp(\Delta)(\A)}\Lambda'$,
$\Comp(\sigma^*\Delta)(\A')=\{\Lambda'|\Lambda\in\Comp(\Delta)(\A)\}$,
and $U(\Sigma', \sigma^*\Delta, \A')=\sigma^{-1}(U (\Sigma, \Delta, $\hfill\break$\A))$.

If moreover, a mapping
$\xi_\A:\Comp(\Delta)(\A)\rightarrow \mathcal{O}_\Sigma(U(\Sigma,\Delta,\A))$ is a coordinate system of $(\Sigma,\Delta)$ at $\A$, then there exists a unique coordinate system $\xi'_{\A'}:\Comp(\sigma^*\Delta)(\A')\rightarrow \mathcal{O}_{\Sigma'} (U(\Sigma',\sigma^*\Delta,\A'))$ of $(\Sigma',\sigma^*\Delta)$ at $\A'$ satisfying $\sigma^*(\xi_\A(\Lambda))=\xi'_{\A'}(\Lambda')$ for any $\Lambda\in\Comp(\Delta)(\A)$, where 
$\sigma^*: \mathcal{O}_\Sigma(U(\Sigma,\Delta,\A))\rightarrow \mathcal{O}_{\Sigma'} (U(\Sigma',\sigma^*\Delta,\A'))$ denotes the ring homomorphism induced by $\sigma$.
\item
For any $\A\in(\Delta)_0$ with $\A\in\Phi$, we have
$\sharp\sigma^{-1}(\A)\cap(\sigma^*\Delta)_0=\sharp Q\geq 2$, and
there exists a unique one-to-one mapping $\A':Q\rightarrow\sigma^{-1}(\A)\cap(\sigma^*\Delta)_0$ such that for any $\Xi\in Q$ we have
$\{\A'(\Xi)\}=
\Theta'\cap\bigcap_{\Lambda\in\Comp(\Delta)(\A)-\{\Xi\}}\Lambda'$,
$\Comp(\sigma^*\Delta)(\A'(\Xi))=\{\Theta'\}\cup\{\Lambda'|\Lambda\in\Comp(\Delta)(\A)-\{\Xi\}\}$, and
\hfill\break$U(\Sigma', \sigma^*\Delta, \A'(\Xi))= \sigma^{-1}(U (\Sigma, \Delta, \A))-\Xi'$.

If moreover, a mapping
$\xi_\A:\Comp(\Delta)(\A)\rightarrow \mathcal{O}_\Sigma(U(\Sigma,\Delta,\A))$ is a coordinate system of $(\Sigma,\Delta)$ at $\A$, then for any $\Xi\in Q$, there exists a unique coordinate system $\xi'_{\A'(\Xi)}:\Comp(\sigma^*\Delta)(\A'(\Xi))\rightarrow \mathcal{O}_{\Sigma'} (U(\Sigma',\sigma^*\Delta,\A'(\Xi)))$ of $(\Sigma',\sigma^*\Delta)$ at $\A'(\Xi)$ satisfying 
\begin{equation*}
\sigma^*(\xi_\A(\Lambda))=
\begin{cases}
\xi'_{\A'(\Xi)}(\Theta')&\text{if $\Lambda=\Xi$},\\
\xi'_{\A'(\Xi)}(\Lambda')\xi'_{\A'(\Xi)}(\Theta')&\text{if $\Lambda\in Q-\{\Xi\}$},\\
\xi'_{\A'(\Xi)}(\Lambda')&\text{if $\Lambda\in\Comp(\Delta)(\A)- Q$},
\end{cases}
\end{equation*}
for any $\Lambda\in\Comp(\Delta)(\A)$, where 
$\sigma^*: \mathcal{O}_\Sigma(U(\Sigma,\Delta,\A))\rightarrow$\hfill\break $\mathcal{O}_{\Sigma'} (U(\Sigma',\sigma^*\Delta,\A'(\Xi)))$ denotes the ring homomorphism induced by $\sigma$.
\end{enumerate}
\end{lemma}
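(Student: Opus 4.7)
Items (1)--(6) are routine bookkeeping from the five defining axioms combined with standard facts about schemes of finite type over an algebraically closed field. Non-emptiness of $\Comp(\Delta)$ in (1) comes from $\Delta\neq 0$, while finiteness comes from the $\Map'$ in the definition of $\Div(\Sigma)$. For (2), axiom (3) of the definition supplies irreducibility and smoothness; the codimension formula follows by applying axiom (4) to obtain $\A\in(\Delta)_0$ with $Q\subset\Comp(\Delta)(\A)$ and reading off on $U(\A)$ that the coordinate system realizes each $\Lambda\in Q$ as the zero-scheme of a single regular parameter. Item (4) follows because $\Comp(\Delta)(\A)=\Comp(\Delta)(\B)$ forces both points into $\bigcap_{\Lambda\in\Comp(\Delta)(\A)}\Lambda$, which by (2) is zero-dimensional irreducible smooth, hence a single $k$-valued point. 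Item (3) is then immediate: non-emptiness from axiom (4), $k$-valuedness from the defining axiom (1) combined with (2), and finiteness from (1) and (4) since each $\A\in(\Delta)_0$ is determined by $\Comp(\Delta)(\A)\subset\Comp(\Delta)$. For (5), any $\gamma\in\Sigma$ either lies in no component of $\Delta$ and so belongs to every $U(\A)$, or the set $\{\Lambda\in\Comp(\Delta):\gamma\in\Lambda\}$ is non-empty with non-empty intersection, and axiom (4) supplies $\A\in(\Delta)_0$ containing it in $\Comp(\Delta)(\A)$, giving $\gamma\in U(\A)$. Item (6) is density of closed points in schemes of finite type over an algebraically closed field, applied to the closed complement of $U$.

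The content of the blow-up assertions (7)--(12) reduces to one local computation. Fix $\A\in(\Delta)_0$ and work on $U(\A)$, where the coordinate system $\xi_\A$ furnishes a regular parameter system $\{\xi_\A(\Lambda):\Lambda\in\Comp(\Delta)(\A)\}$ at $\A$. If $\A\not\in\Phi$, some $\Lambda_0\in Q$ avoids $\A$, hence $\Lambda_0\cap U(\A)=\emptyset$; the ideal of $\Phi$ then is the unit ideal on $U(\A)$, so $\sigma$ is an isomorphism over $U(\A)$, and (11) is immediate, with the coordinate system pulled back unchanged. If $\A\in\Phi$, then $Q\subset\Comp(\Delta)(\A)$ and the ideal of $\Phi$ on $U(\A)$ is generated by the regular sequence $\{\xi_\A(\Lambda):\Lambda\in Q\}$; the blow-up on $\sigma^{-1}(U(\A))$ is the classical blow-up of an affine scheme along a regular subscheme, covered by $\sharp Q$ affine charts indexed by $\Xi\in Q$, on which $\xi_\A(\Xi)$ defines $\Theta'$, the quotient $\xi_\A(\Lambda)/\xi_\A(\Xi)$ defines the strict transform $\Lambda'$ for $\Lambda\in Q-\{\Xi\}$, and $\xi_\A(\Lambda)$ continues to define $\Lambda'$ for $\Lambda\in\Comp(\Delta)(\A)- Q$. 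This provides the bijection $\A':Q\rightarrow\sigma^{-1}(\A)\cap(\sigma^*\Delta)_0$ in (12), the candidate coordinate system $\xi'_{\A'(\Xi)}$ with the prescribed transformation formulae, and the identities (8), (9) for the exceptional divisor and for the pull-backs of the $\Lambda\in\Comp(\Delta)$.

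Item (7) is then assembled from these local charts: smoothness and irreducibility of $\Sigma'$ are standard for blow-ups of smooth centers in smooth schemes; the defining axiom (2) is item (9); axiom (3) is verified in each chart as an intersection of coordinate hyperplanes; and axioms (4), (5) are precisely (11) and (12). For (10), the inclusion $\sigma((\sigma^*\Delta)_0)\supset(\Delta)_0$ is immediate from the explicit lifts constructed in (11) and (12); for the reverse inclusion, given $\A'\in(\sigma^*\Delta)_0$, item (5) places $\sigma(\A')$ in some $U(\A_0)$ with $\A_0\in(\Delta)_0$, and by item (4) $U(\A_0)\cap(\Delta)_0=\{\A_0\}$, while the chart structure above $U(\A_0)$ shows that the only points of $(\sigma^*\Delta)_0$ in $\sigma^{-1}(U(\A_0))$ sit over $\A_0$ itself, forcing $\sigma(\A')=\A_0\in(\Delta)_0$. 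The main bookkeeping obstacle lies in verifying the second axiom of a coordinate system in (12): that at each $k$-valued $\B'\in U(\Sigma',\sigma^*\Delta,\A'(\Xi))$ the shifted values $\xi'_{\A'(\Xi)}(\Lambda')-\xi'_{\A'(\Xi)}(\Lambda')(\B')$ form a regular parameter system of $\mathcal{O}_{U(\A'(\Xi)),\B'}$. This reduces, through the explicit chart change of variables together with the invariance of regular parameter systems under multiplication by units, to the analogous property of $\xi_\A$ at $\sigma(\B')\in U(\A)$, which is in hand by hypothesis on the coordinate system $\xi$ of $(\Sigma,\Delta)$.
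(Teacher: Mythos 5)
The paper offers no proof of this lemma: the surrounding text simply states that it (and the three lemmas after it) ``easily follow from definitions.'' Your sketch is a correct unpacking of that claim, and the overall structure --- reduce to the affine open $U(\A)$ using the coordinate system $\xi_\A$, compute the blow-up as the classical blow-up in a regular sequence, read off the exceptional divisor and strict transforms on the $\sharp Q$ charts, and assemble the axioms --- is exactly what one would write out.

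Two small imprecisions, neither of which affects the argument. For item (6) you invoke density of closed points in schemes of finite type over an algebraically closed field; but the axioms only require $\Sigma$ to be separated, irreducible, noetherian and smooth over $k$, not of finite type. The correct justification is that a noetherian scheme is quasi-compact, so every non-empty closed subset of $\Sigma$ contains a closed point of $\Sigma$, and that closed point lies in $\Sigma(k)$ by axiom (1). For the final verification in item (12), ``invariance of regular parameter systems under multiplication by units'' only transports the property from $\sigma(\B')$ to $\B'$ where $y_\Xi=\sigma^*\xi_\A(\Xi)$ is a unit, i.e. off the exceptional divisor. At a point $\B'\in\Theta'$, the pulled-back differentials $\sigma^*d\xi_\A(\Lambda)$ for $\Lambda\in Q-\{\Xi\}$ all become multiples of $dy_\Xi$, so the parameter-system property does not literally pull back. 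Instead one should describe the chart local ring at $\B'$ as a localization of $\mathcal{O}_{\Sigma,\sigma(\B')}[\,y_\Lambda : \Lambda\in Q-\{\Xi\}\,]$ modulo the relations $\xi_\A(\Lambda)-y_\Lambda\xi_\A(\Xi)$, and observe that these relations allow one to eliminate the generators $\xi_\A(\Lambda)$, $\Lambda\in Q-\{\Xi\}$, from the cotangent space, leaving precisely the claimed parameter system $\{y_\Xi\}\cup\{y_\Lambda-y_\Lambda(\B')\}$. This still rests on the coordinate-system hypothesis for $\xi_\A$ at $\sigma(\B')$, so the reduction is sound; the mechanism is just a bit more than unit multiplication.
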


Let $(\Sigma, \Delta)$ be a normal crossing scheme over $k$. 

We call a non-empty closed subscheme $\Phi$ of $\Sigma$ such that there exists a non-empty subset $Q$ of $\Comp(\Delta)$ satisfying $\Phi=\bigcap_{\Lambda\in Q}\Lambda$ a \emph{stratum} of $\Delta$. We call a blowing-up whose center is a stratum of $\Delta$ an \emph{admissible} blowing-up over $\Delta$.

Let $Q$ be any subset of $\Comp(\Delta)$ with $\sharp Q\geq 2$ and $\bigcap_{\Lambda\in Q}\Lambda\neq\emptyset$. Let $\sigma:\Sigma'\rightarrow\Sigma$ denote the admissible blowing-up with center in $\bigcap_{\Lambda\in Q}\Lambda$. We call the normal crossing scheme $(\Sigma', \sigma^*\Delta)$ over $k$ the \emph{pull-back} of $(\Sigma,\Delta)$ by $\sigma$. 
Let $\A'\in(\sigma^*\Delta)_0$ be any element, and let $\xi_{\sigma(\A')}:\Comp(\Delta)({\sigma(\A')})\rightarrow \mathcal{O}_\Sigma(U(\Sigma,\Delta,{\sigma(\A')}))$ be a coordinate system of $(\Sigma,\Delta)$ at ${\sigma(\A')}$. We have the coordinate system $\xi'_{\A'}:\Comp(\Delta)({\A'})\rightarrow \mathcal{O}_{\Sigma'} (U(\Sigma',\sigma^*\Delta,{\A'}))$  of $(\Sigma',\sigma^*\Delta)$ at $\A'$ described in Lemma~\ref{ncv}.$11$ or Lemma~\ref{ncv}.$12$. The coordinate system $\xi'_{\A'}$ is called the \emph{transformed coordinate system} of $\xi_{\sigma(\A')}$ at $\A'$ by $\sigma$. Let $\xi=\{\xi_\A|\A\in(\Delta)_0\}$ be a coordinate system of $(\Sigma,\Delta)$. We denote 
$$\sigma^*\xi=\{\xi'_{\A'}|\A'\in(\sigma^*\Delta)_0\},$$
and call $\sigma^*\xi$ the \emph{transformed coordinate system} of $\xi$ by $\sigma$. Note that triplets $(\Sigma,\Delta,\xi)$ and $(\Sigma',\sigma^*\Delta,\sigma^*\xi)$ are coordinated normal crossing scheme over $k$. We call the coordinated normal crossing scheme $(\Sigma',\sigma^*\Delta,\sigma^*\xi)$ over $k$ the \emph{pull-back} of  $(\Sigma,\Delta,\xi)$ by $\sigma$.

Let $\Sigma'$ be a scheme, and let $\sigma:\Sigma'\rightarrow \Sigma$ be a morphism. We call $\sigma$ an \emph{admissible composition of blowing-ups} over $\Delta$, if there exist a non-negative integer $m$, $(m+1)$ of normal crossing schemes $(\Sigma(i), \Delta(i)), i\in\{0,1,\ldots,m\}$, and $m$ of morphisms $\sigma(i):\Sigma(i)\rightarrow\Sigma(i-1), i\in\{1,2,\ldots,m\}$ satisfying the following two conditions:
\begin{enumerate}
\item $\Sigma(0)=\Sigma,\Delta(0)=\Delta,\Sigma(m)=\Sigma'$ and $\sigma=\sigma(1)\sigma(2)\cdots\sigma(m)$.
\item For any $i\in\{1,2,\ldots\,m\}$, $\sigma(i)$ is an admissible blowing-up over $\Delta(i-1)$ and $\Delta(i)=\sigma(i)^*\Delta(i-1)$.
\end{enumerate}
If moreover, the center of $\sigma(i)$ has codimension two for any $i\in\{1,2,\ldots\,m\}$, then we call $\sigma$ an admissible composition of blowing-ups \emph{with centers of codimension two} over $\Delta$.

\begin{lemma}
\label{admissible}
\mbox{\rm{1.}}
Let $(\Sigma,\Delta)$ be a normal crossing scheme over $k$; let $\Sigma'$ be a scheme, and let $\sigma: \Sigma'\rightarrow\Sigma$ be an admissible composition of blowing-ups over $\Delta$. Then, the pair $(\Sigma',\sigma^*\Delta)$ is a normal crossing scheme over $k$.

\noindent \mbox{\rm{2.}}
Let $(\Sigma,\Delta,\xi)$ be a coordinated normal crossing scheme over $k$; let $\Sigma'$ be a scheme, and let $\sigma: \Sigma'\rightarrow\Sigma$ be an admissible composition of blowing-ups over $\Delta$. 
Assume that $m\in\Z_0$, $(m+1)$ of normal crossing schemes $(\Sigma(i), \Delta(i)), i\in\{0,1,\ldots,m\}$, and $m$ of morphisms $\sigma(i):\Sigma(i)\rightarrow\Sigma(i-1), i\in\{1,2,\ldots,m\}$ satisfy the above two conditions. We write
$\sigma^*\xi=\sigma(m)^*\sigma(m-1)^*\cdots\sigma(1)^*\xi$.
Then, the triplet $(\Sigma',\sigma^*\Delta,\sigma^*\xi)$ is a coordinated normal crossing scheme over $k$, and the coordinate system $\sigma^*\xi$ of $(\Sigma',\sigma^*\Delta)$ does not depend on the choice of $m\in\Z_0$, $(m+1)$ of normal crossing schemes $(\Sigma(i), \Delta(i)), i\in\{0,1,\ldots,m\}$, and $m$ of morphisms $\sigma(i):\Sigma(i)\rightarrow\Sigma(i-1), i\in\{1,2,\ldots,m\}$ satisfying the two conditions.
\end{lemma}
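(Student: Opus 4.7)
The plan is to prove both parts by induction on $m$. For part 1, the case $m=0$ is immediate since $\sigma=\Id_\Sigma$ and $\sigma^*\Delta=\Delta$. For the step, put $\sigma'=\sigma(1)\sigma(2)\cdots\sigma(m-1):\Sigma(m-1)\to\Sigma$; by the inductive hypothesis $(\Sigma(m-1),\Delta(m-1))=(\Sigma(m-1),\sigma'^*\Delta)$ is a normal crossing scheme, and Lemma~\ref{ncv}.$7$ applied to the admissible blowing-up $\sigma(m)$ over $\Delta(m-1)$ yields that $(\Sigma(m),\sigma(m)^*\Delta(m-1))=(\Sigma',\sigma^*\Delta)$ is a normal crossing scheme.

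For part 2, the existence of $\sigma^*\xi$ as a coordinate system of $(\Sigma',\sigma^*\Delta)$ also goes by induction on $m$: at each step $i$ and each point of $(\Delta(i))_0$, parts $11$ and $12$ of Lemma~\ref{ncv} furnish a well-defined transformed coordinate system, and after $m$ iterations one obtains a coordinate system of $(\Sigma',\sigma^*\Delta)$.

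The delicate part is independence of $\sigma^*\xi$ from the factorization. My plan is to give an intrinsic description of $\xi'_{\A'}$ at each $\A'\in(\sigma^*\Delta)_0$. Writing $\A=\sigma(\A')\in(\Delta)_0$ (well-defined after iterating Lemma~\ref{ncv}.$10$), I will show that for every $\Lambda\in\Comp(\Delta)(\A)$ the identity
\[
\sigma^*(\xi_\A(\Lambda))=\prod_{\Theta\in\Comp(\sigma^*\Delta)(\A')}\xi'_{\A'}(\Theta)^{n_\Theta(\Lambda)}
\]
holds, where $n_\Theta(\Lambda)$ is the multiplicity of $\Theta$ as a component of the effective divisor $\sigma^*\Lambda$. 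Both the left-hand side and the exponents $n_\Theta(\Lambda)$ are intrinsic to $\sigma$, $\Delta$ and $\xi$, and do not refer to any chosen factorization. I will then argue, by induction on $m$, that the integer matrix $(n_\Theta(\Lambda))_{\Theta,\Lambda}$ is unimodular; once this is known, the displayed system uniquely expresses each $\xi'_{\A'}(\Theta)$ as an integral monomial in the elements $\sigma^*(\xi_\A(\Lambda))$, forcing the same value for $\sigma^*\xi$ from any factorization.

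The main obstacle is establishing this unimodularity and checking that the intrinsic formula above agrees with the step-by-step construction. Analyzing parts $11$ and $12$ of Lemma~\ref{ncv} locally, each admissible blowing-up in the sequence either leaves the relevant exponent matrix unchanged, at points not lying over the center, or multiplies it by an elementary unimodular integer matrix encoding the splitting of strict transforms and the appearance of the new exceptional divisor $\Theta'$ in the $\Xi$-chart. Since the product of unimodular matrices is unimodular, the composite exponent matrix is unimodular; inverting it yields explicit factorization-free formulas for the $\xi'_{\A'}(\Theta)$ in terms of the $\sigma^*(\xi_\A(\Lambda))$, which completes the argument.
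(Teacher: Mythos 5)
The paper dismisses this lemma with ``The four lemmas below easily follow from definitions,'' so there is no argument in the text to compare against; your proposal supplies the details. It is correct. Part 1 and the existence half of Part 2 are indeed straightforward inductions using Lemma~\ref{ncv}.7, .11 and .12. For independence, your intrinsic description is the right idea, and the computation behind it checks out: at a single admissible blowing-up, if $\A'=\A'(\Xi)$ lies over $\A\in\Phi=\bigcap_{\Lambda\in Q}\Lambda$, the exponent matrix $(n_\Theta(\Lambda))$ with rows $\Comp(\Delta)(\A)$ and columns $\Comp(\sigma^*\Delta)(\A')$ has the row $\Xi$ equal to the unit vector at $\Theta'$ (because $\Xi'\cap U(\A'(\Xi))=\emptyset$, so only the exceptional component survives locally), rows $\Lambda\in Q-\{\Xi\}$ equal to $e_{\Lambda'}+e_{\Theta'}$, and rows $\Lambda\notin Q$ equal to $e_{\Lambda'}$; this is square of size $\dim\Sigma$ and has determinant $1$, while at points not over the center the matrix is the identity (Lemma~\ref{ncv}.11). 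Matrix composition along the chain $\A=\A_0,\A_1,\ldots,\A_m=\A'$ (each $\A_{i-1}=\sigma(i)(\A_i)\in(\Delta(i-1))_0$ by Lemma~\ref{ncv}.10) gives exactly the multiplicities of the composite pullback $\sigma^*\Lambda$ restricted to components through $\A'$, and the product is again unimodular. Since both $\sigma^*(\xi_\A(\Lambda))$ and the multiplicities $n_\Theta(\Lambda)$ are determined by $\sigma$, $\Delta$, $\xi$ alone, the unique solution in $(K^\times)^{\dim\Sigma}$ (where $K$ is the function field of $\Sigma'$) to the monomial system is factorization-independent, which is what was wanted.
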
 

Let $(\Sigma,\Delta)$ be a normal crossing scheme over $k$; let $\Sigma'$ be a scheme, and let $\sigma: \Sigma'\rightarrow\Sigma$ be an admissible composition of blowing-ups over $\Delta$. We call the normal crossing scheme $(\Sigma',\sigma^*\Delta)$ over $k$ the \emph{pull-back} of $(\Sigma,\Delta)$ by $\sigma$.

Let $(\Sigma,\Delta,\xi)$ be a coordinated normal crossing scheme over $k$; let $\Sigma'$ be a scheme, and let $\sigma: \Sigma'\rightarrow\Sigma$ be an admissible composition of blowing-ups over $\Delta$. 
Choosing $m\in\Z_0$, $(m+1)$ of normal crossing schemes $(\Sigma(i), \Delta(i)), i\in\{0,1,\ldots,m\}$, and $m$ of morphisms $\sigma(i):\Sigma(i)\rightarrow\Sigma(i-1), i\in\{1,2,\ldots,m\}$ satisfying the above two conditions, we define the coordinate system $\sigma^*\xi$ of $(\Sigma',\sigma^*\Delta)$ by putting
$\sigma^*\xi=\sigma(m)^*\sigma(m-1)^*\cdots\sigma(1)^*\xi$.
The coordinate system $\sigma^*\xi$ does not depend on the choice of $m\in\Z_0$, $(m+1)$ of normal crossing schemes $(\Sigma(i), \Delta(i)), i\in\{0,1,\ldots,m\}$, and $m$ of morphisms $\sigma(i):\Sigma(i)\rightarrow\Sigma(i-1), i\in\{1,2,\ldots,m\}$ satisfying the above two conditions. We call $\sigma^*\xi$ the \emph{transformed coordinate system} of $\xi$ by $\sigma$. We call the coordinated normal crossing scheme $(\Sigma',\sigma^*\Delta, \sigma^*\xi)$ over $k$ the \emph{pull-back} of $(\Sigma,\Delta, \xi)$ by $\sigma$.

\begin{lemma}
\label{ext}
Let $(\Sigma, \Delta)$ and $(\Sigma, \Delta')$ be normal crossing schemes over $k$ with the same support $\Sigma$ such that $\Supp(\Delta)\subset\Supp(\Delta')$. 
\begin{enumerate}
\item $\Comp(\Delta)\subset\Comp(\Delta')$.
\item $\Comp(\Delta)(\A)\subset\Comp(\Delta')(\A)$ for any $\A\in\Sigma$.
\item $(\Delta)_0\subset(\Delta')_0$.
\item For any $\B\in(\Delta')_0$ there exists $\A\in(\Delta)_0$ with $\B\in U(\Sigma,\Delta,\A)$.
\item If $\B\in(\Delta')_0, \A\in(\Delta)_0$ and $\B\in U(\Sigma,\Delta,\A)$, then $U(\Sigma, \Delta',\B)\subset U(\Sigma,\Delta,\A)$.
\item If $\Supp(\Delta)=\Supp(\Delta')$, then $\Comp(\Delta)=\Comp(\Delta')$, $\Comp(\Delta)(\A)=$\hfill\break$\Comp(\Delta')(\A)$ for any $\A\in\Sigma$, $(\Delta)_0=(\Delta')_0$, and $U(\Sigma, \Delta',\A)=U(\Sigma,\Delta,\A)$ for any $\A\in(\Delta)_0$.
\end{enumerate}

Let $\xi=\{\xi_\A|\A\in(\Delta)_0\}$ be a coordinate system of $(\Sigma,\Delta)$. 
\begin{enumerate}
\setcounter{enumi}{6}
\item If $\Supp(\Delta)= \Supp(\Delta')$, then $(\Sigma, \Delta', \xi)$ is a coordinated normal crossing scheme over $k$.
\end{enumerate}
\end{lemma}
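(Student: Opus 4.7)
The plan is to unwind the definitions; each of the seven items is a direct set-theoretic consequence of how $\Comp$, $(\cdot)_0$ and $U(\cdot,\cdot,\cdot)$ are built from $\Supp$ together with the local normal crossings data. For (1), I would observe that any $\Lambda\in\Comp(\Delta)$ is a non-empty closed irreducible reduced subscheme of $\Sigma$ of codimension one lying inside $\Supp(\Delta)\subset\Supp(\Delta')=\bigcup_{\Lambda'\in\Comp(\Delta')}\Lambda'$; by irreducibility of $\Lambda$ one of the $\Lambda'$ must contain it, and since both are irreducible reduced of the same codimension one in the smooth noetherian scheme $\Sigma$, they coincide, giving $\Lambda\in\Comp(\Delta')$. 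Statement (2) is then immediate from the definition $\Comp(\Delta)(\A)=\{\Lambda\in\Comp(\Delta)\mid\A\in\Lambda\}$.

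For (3), if $\A\in(\Delta)_0$ then $\sharp\Comp(\Delta)(\A)=\dim\Sigma$, so (2) yields $\sharp\Comp(\Delta')(\A)\geq\dim\Sigma$; for the reverse inequality I would invoke the normal crossings hypothesis on $\Delta'$ at $\A$, which provides a parameter system $P$ of $\mathcal{O}_{\Sigma,\A}^c$ and $\Lambda\in\Map(P,\Z_0)$ with $\delta^*\Delta'=\Spec(\mathcal{O}_{\Sigma,\A}^c/\prod_{x\in P}x^{\Lambda(x)}\mathcal{O}_{\Sigma,\A}^c)$, and note that distinct prime divisors of $\Delta'$ through $\A$ pull back to disjoint non-empty unions of components of $\delta^*\Delta'$, whose total count is at most $\sharp\{x\in P\mid\Lambda(x)>0\}\leq\sharp P=\dim\Sigma$. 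Part (4) follows from Lemma~\ref{ncv}.5 applied to $(\Sigma,\Delta)$: since $\Sigma=\bigcup_{\A\in(\Delta)_0}U(\Sigma,\Delta,\A)$, any $\B\in(\Delta')_0\subset\Sigma$ lies in some $U(\Sigma,\Delta,\A)$. For (5) I would unwind $U(\Sigma,\Delta,\A)=\Sigma-\bigcup_{\Lambda\in\Comp(\Delta)-\Comp(\Delta)(\A)}\Lambda$ and likewise for $U(\Sigma,\Delta',\B)$, so that it suffices to show every $\Lambda\in\Comp(\Delta)-\Comp(\Delta)(\A)$ lies in $\Comp(\Delta')-\Comp(\Delta')(\B)$: such a $\Lambda$ is in $\Comp(\Delta')$ by (1), and $\B\in U(\Sigma,\Delta,\A)$ forces $\B\notin\Lambda$, so $\Lambda\notin\Comp(\Delta')(\B)$.

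For (6), the hypothesis becomes $\Supp(\Delta)=\Supp(\Delta')$, so I would apply (1)--(3) once more with the roles of $\Delta$ and $\Delta'$ swapped; this yields $\Comp(\Delta)=\Comp(\Delta')$, $\Comp(\Delta)(\A)=\Comp(\Delta')(\A)$ and $(\Delta)_0=(\Delta')_0$, whereupon the equality $U(\Sigma,\Delta,\A)=U(\Sigma,\Delta',\A)$ is immediate from the defining formula. For (7), each of the two conditions characterising ``$\xi_\A$ is a coordinate system of $(\Sigma,\Delta)$ at $\A$'' is phrased entirely in terms of $\Comp(\Delta)(\A)$, $U(\Sigma,\Delta,\A)$ and the elements $\xi_\A(\Lambda)$, all of which coincide for $\Delta$ and $\Delta'$ by (6); hence the same collection $\xi$ remains a coordinate system of $(\Sigma,\Delta')$. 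The only non-tautological step in the whole proof is the upper bound $\sharp\Comp(\Delta')(\A)\leq\dim\Sigma$ in (3), which genuinely uses the normal crossings hypothesis on $\Delta'$ and comes down to reading off the distinct prime factors of the monomial $\prod_{x\in P}x^{\Lambda(x)}$ in $\mathcal{O}_{\Sigma,\A}^c$; everything else is bookkeeping.
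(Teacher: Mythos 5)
Your proof is correct. The paper only asserts that Lemma~\ref{ext} \textquotedblleft easily follow[s] from definitions\textquotedblright\ and records no argument, and what you give is a sound and essentially complete unwinding of those definitions, so there is no second approach to compare against. The one place that merits a word more is the upper bound $\sharp\Comp(\Delta')(\A)\leq\dim\Sigma$ in part (3): the disjointness of the pull-backs $\delta^*\Gamma$ for distinct $\Gamma\in\Comp(\Delta')(\A)$ is stated but not argued. It does hold, because $\mathcal{O}_{\Sigma,\A}$ is a regular local ring (hence a UFD) and $\mathcal{O}_{\Sigma,\A}\to\mathcal{O}_{\Sigma,\A}^{c}$ is faithfully flat, so two distinct prime divisors through $\A$ have non-associate local generators in $\mathcal{O}_{\Sigma,\A}$, and if some $x\in P$ divided both generators in the completion then contraction would force the generators to cut out the same height-one prime of $\mathcal{O}_{\Sigma,\A}$, a contradiction. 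A slightly shorter route to the same bound, staying entirely inside the machinery the paper has already set up, is to note that once $\A\in(\Delta)_0$, part (2) makes $Q=\Comp(\Delta')(\A)$ a non-empty subset of $\Comp(\Delta')$ with $\A\in\bigcap_{\Lambda\in Q}\Lambda\neq\emptyset$, whence Lemma~\ref{ncv}.2 gives $\dim\bigcap_{\Lambda\in Q}\Lambda=\dim\Sigma-\sharp Q\geq 0$ and the bound drops out without passing through the completion at all.
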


Let $(\Sigma, \Delta)$ and $(\Sigma, \Delta')$ be normal crossing schemes over $k$ with the same support $\Sigma$ such that $\Supp(\Delta)\subset\Supp(\Delta')$. Let $\xi=\{\xi_\A|\A\in(\Delta)_0\}$ be a coordinate system of $(\Sigma,\Delta)$. For any $\A\in(\Delta)_0$ we have
$\xi_\A:\Comp(\Delta)(\A)\rightarrow\mathcal{O}_\Sigma(U(\Sigma,\Delta,\A))$.

If for any $\B\in(\Delta')_0$, there exists $\A\in(\Delta)_0$
and a coordinate system $\xi'_\B:\Comp(\Delta')(\B)\rightarrow\mathcal{O}_\Sigma(U(\Sigma,\Delta',\B))$
of $(\Sigma,\Delta')$ at $\B$
such that $\B\in U(\Sigma,\Delta,\A)$ and 
for any $\Lambda'\in\Comp(\Delta')(\B)\cap \Comp(\Delta)(\A)$, 
$\xi'_\B(\Lambda)=\Res^{U(\Sigma,\Delta,\A)}_{U(\Sigma,\Delta',\B)}(\xi_\A(\Lambda))$ where $\Res^{U(\Sigma,\Delta,\A)}_{U(\Sigma,\Delta',\B)}:\mathcal{O}_\Sigma(U(\Sigma,\Delta,\A))\rightarrow\mathcal{O}_\Sigma(U(\Sigma,\Delta',\A))$ denotes the restriction homomorphism, then we say that the coordinate system $\xi$ is \emph{extendable} to $(\Sigma, \Delta')$. In the case where $\xi$ is extendable to $(\Sigma, \Delta')$, 
choosing an element $\A\in(\Delta)_0$
and a coordinate system $\xi'_\B:\Comp(\Delta')(\B)\rightarrow\mathcal{O}_\Sigma(U(\Sigma,\Delta',\B))$
of $(\Sigma,\Delta')$ at $\B$
such that $\B\in U(\Sigma,\Delta,\A)$ and 
$\xi'_\B(\Lambda)=\Res^{U(\Sigma,\Delta,\A)}_{U(\Sigma,\Delta',\B)}(\xi_\A(\Lambda))$ for any $\Lambda'\in\Comp(\Delta')(\B)\cap \Comp(\Delta)(\A)$, we define a coordinate system $\xi'_\B$ of $(\Sigma,\Delta')$ at $\B$ for any $\B\in(\Delta')$.
We call $\xi'=\{\xi'_\B|\B\in(\Delta')_0\}$ an \emph{extension} of $\xi$ to $(\Sigma,\Delta')$.

Let $(\Sigma,\Delta,\xi)$ be a coordinated normal crossing scheme over $k$.
Let $\Sigma'$ be a scheme, and let $\sigma:\Sigma'\rightarrow \Sigma$ be a morphism. We call $\sigma$ a \emph{weakly admissible composition of blowing-ups} over the pair $(\Delta, \xi)$, if there exist a non-negative integer $m$, $(m+1)$ of coordinated normal crossing schemes $(\Sigma(i), \Delta(i),\xi(i)), i\in\{0,1,\ldots,m\}$, and $m$ of morphisms $\sigma(i):\Sigma(i)\rightarrow\Sigma(i-1), i\in\{1,2,\ldots,m\}$ satisfying the following three conditions:
\begin{enumerate}
\item $\Sigma(0)=\Sigma,\Delta(0)=\Delta,\xi(1)=\xi,\Sigma(m)=\Sigma'$ and $\sigma=\sigma(1)\sigma(2)\cdots\sigma(m)$.
\item For any $i\in\{1,2,\ldots\,m\}$, $\sigma(i)$ is an admissible blowing-up over $\Delta(i-1)$ and $\Supp(\Delta(i))\supset\Supp(\sigma(i)^*\Delta(i-1))$.
\item For any $i\in\{1,2,\ldots\,m\}$, the coordinate $\sigma^*\xi(i-1)$ of $(\Sigma(i),\sigma(i)^*\Delta(i-1))$ is extendable to $(\Sigma(i),\Delta(i))$, and $\xi(i)$ is an extension of $\sigma^*\xi(i-1)$ to $(\Sigma(i),\Delta(i))$.
\end{enumerate}
If moreover, the center of $\sigma(i)$ has codimension two for any $i\in\{1,2,\ldots\,m\}$, then we call $\sigma$ a weakly admissible composition of blowing-ups \emph{with centers of codimension two} over $(\Delta, \xi)$.

Let $(\Sigma,\Delta,\xi)$ and let $(\Sigma',\Delta',\xi')$ be a coordinated normal crossing schemes over $k$.
Let $\sigma:\Sigma'\rightarrow \Sigma$ a weakly admissible composition of blowing-ups over $(\Delta, \xi)$. We say that $(\Sigma',\Delta',\xi')$  is an \emph{extended pull-back} of $(\Sigma,\Delta,\xi)$ by $\sigma$, if there exist a non-negative integer $m$, $(m+1)$ of coordinated normal crossing schemes $(\Sigma(i), \Delta(i),\xi(i)), i\in\{0,1,\ldots,m\}$, and $m$ of morphisms $\sigma(i):\Sigma(i)\rightarrow\Sigma(i-1), i\in\{1,2,\ldots,m\}$ satisfying the following three conditions:
\begin{enumerate}
\item $\Sigma(0)=\Sigma,\Delta(0)=\Delta,\xi(1)=\xi,\Sigma(m)=\Sigma',\Delta(m)=\Delta',\xi(m)=\xi'$ and $\sigma=\sigma(1)\sigma(2)\cdots\sigma(m)$.
\item For any $i\in\{1,2,\ldots\,m\}$, $\sigma(i)$ is an admissible blowing-up over $\Delta(i-1)$ and $\Supp(\Delta(i))\supset\Supp(\sigma(i)^*\Delta(i-1))$.
\item For any $i\in\{1,2,\ldots\,m\}$, the coordinate $\sigma^*\xi(i-1)$ of $(\Sigma(i),\sigma(i)^*\Delta(i-1))$ is extendable to $(\Sigma(i),\Delta(i))$, and $\xi(i)$ is an extension of $\sigma^*\xi(i-1)$ to $(\Sigma(i),\Delta(i))$.
\end{enumerate}

\begin{lemma}
\label{pull back blowing-ups}
Recall that $k$ denotes any algebraically closed field. Let $A$ be any complete regular local ring such that $A$ contains $k$ as a subring, the residue field  $A/M(A)$ is isomorphic to $k$ as algebras over $k$, and $\dim A\geq 2$, let $P$ be any parameter system of $A$, and let $z\in P$ be any element. 

Let $A'$ denote the completion of $k[P - \{z\}]$ with respect to the maximal ideal $k[P - \{z\}]\cap M(A)$. The ring $A'$ is a local subring of $A$ and $M(A')=M(A)\cap A' =(P -\{z\})A'$. The completion of $A'[z]$ with respect to the prime ideal $zA'[z]$ is isomorphic to $A$ as $A'[z]$-algebras. The set $P-\{z\}$ is a parameter system of $A'$.

Let $\sigma':\Sigma'\rightarrow\Spec(A')$ be any composition of finite blowing-ups with centers in closed irreducible smooth subschemes. We consider a morphism $\Spec(A)\rightarrow\Spec(A')$ induced by the inclusion ring homomorphism $A'\rightarrow A$, the product scheme $\Sigma=\Sigma'\times_{\Spec(A')}\Spec(A)$, the projection $\sigma:\Sigma\rightarrow\Spec(A)$, and the projection $\pi:\Sigma\rightarrow\Sigma'$. We know the following:
\begin{enumerate}
\item The morphism $\sigma$ is a composition of finite blowing-ups with centers in closed irreducible smooth subschemes.
\item The pull-back $\sigma^*\Spec(A/zA)$ of the prime divisor $\Spec(A/zA)$ of $\Spec(A)$ by $\sigma$ is a smooth prime divisor of $\Sigma$, and $\sigma^*\Spec(A/zA)\supset\sigma^{-1}(M(A))$.
\item The projection $\pi:\Sigma\rightarrow\Sigma'$ induces an isomorphism $\sigma^*\Spec(A/zA)\rightarrow\Sigma'$.
\end{enumerate}

Let $\Delta=\Spec(A/\prod_{x\in P}xA)$, and let $\Delta'=\Spec(A'/\prod_{x\in P-\{z\}}xA')$. For any $x\in P$, we put $\xi_{M(A)}(\Spec(A/xA))=x$. We obtain a coordinate system $\xi_{M(A)}:$\hfill\break$
\Comp(\Delta)(M(A))\rightarrow\mathcal{O}_{\Spec(A)}( U(\Spec(A),\Delta, M(A)))$ of $(\Spec(A),\Delta)$ at $M(A)$, and a coordinate system $\xi=\{\xi_{M(A)}\}$ of $(\Spec(A),\Delta)$.
For any $x\in P-\{z\}$ we put $\xi_{M(A')}'(\Spec(A'/xA'))=x$. We obtain a coordinate system $\xi_{M(A')}':
\Comp(\Delta')(M($\break$A'))\rightarrow\mathcal{O}_{\Spec(A')}( U(\Spec(A'),\Delta', M(A')))$ of $(\Spec(A'),\Delta')$ at $M(A')$, and a coordinate system $\xi'=\{\xi_{M(A')}'\}$ of $(\Spec(A'),\Delta')$.
\begin{enumerate}
\setcounter{enumi}{3}
\item If $\sigma'$ is an admissible composition of blowing-ups over $\Delta'$, then $\sigma$ is an admissible composition of blowing-ups over $\Delta$.
\item If $\sigma'$ is a weakly admissible composition of blowing-ups over $(\Delta',\xi')$, then $\sigma$ is a weakly admissible composition of blowing-ups over $(\Delta,\xi)$.
\end{enumerate}

Below we assume that $\sigma'$ is a weakly admissible composition of blowing-ups over $(\Delta',\xi')$, and we consider any extended pull-back $(\Sigma',\bar{\Delta}',\bar{\xi}')$ of a coordinated normal crossing scheme $(\Spec(A'),\Delta',\xi')$ by $\sigma'$.
We denote $\bar{\Delta}=\pi^*\bar{\Delta}'+\sigma^*\Spec(A/zA)\in\Div(\Sigma)$.

\begin{enumerate}
\setcounter{enumi}{5}
\item
The pair $(\Sigma,\bar{\Delta})$ is a normal crossing scheme over $k$. 
$(\bar{\Delta})_0=\pi^{-1}((\bar{\Delta}')_0)\cap\sigma^*\Spec(A/zA)$. For any $\A\in(\bar{\Delta})_0$ we have $\pi(\A)\in (\bar{\Delta}')_0$. The mapping $\pi: (\bar{\Delta})_0\rightarrow(\bar{\Delta}')_0$ induced by $\pi$ is bijective. 
\item
For any $\A\in(\bar{\Delta})_0$, we have $\Comp(\bar{\Delta})(\A)=\{\pi^*\Lambda|\Lambda\in \Comp(\bar{\Delta}')(\pi(\A))\}\cup\{\sigma^*\Spec(A/zA)\}$, and $U(\Sigma, \bar{\Delta},\A)=\pi^{-1}(U(\Sigma', \bar{\Delta}',\pi(\A)))$.
\end{enumerate}

Consider any $\A\in(\bar{\Delta})_0$. We put
$\bar{\xi}_\A(\pi^*\Lambda)=\pi^*(\bar{\xi}'_{\pi(\A)}(\Lambda))$
for any $\Lambda\in \Comp(\bar{\Delta}')($\break$\pi(\A))$, and we put
$\bar{\xi}_\A(\sigma^*\Spec(A/zA))=\sigma^*(z)$.
We have a mapping $\bar{\xi}_\A:\Comp(\bar{\Delta})(\A)$\break$\rightarrow\mathcal{O}_\Sigma(U(\Sigma, \bar{\Delta},\A))$. Let $\bar{\xi}=\{\bar{\xi}_\A|\A\in(\bar{\Delta})_0\}$.
\begin{enumerate}
\setcounter{enumi}{7}
\item
For any $\A\in(\bar{\Delta})_0$ the mapping $\bar{\xi}_\A$ is a coordinate system of $(\Sigma,\bar{\Delta})$ at $\A$, and $\bar{\xi}$ is a coordinate system of $(\Sigma,\bar{\Delta})$.
\end{enumerate}
\end{lemma}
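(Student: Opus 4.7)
The plan is to exploit the fact that the ring homomorphism $A' \to A$ is faithfully flat and ``formally smooth of relative dimension one'': indeed $A$ is the completion of $A'[z]$ at $zA'[z]$, so $\Spec(A) \to \Spec(A')$ behaves like a (formal) affine line bundle. Base change along such a morphism preserves each property we care about. First I would establish parts (1)--(3). Writing $\sigma'$ as a composition $\sigma'(1)\cdots\sigma'(m)$ of blowing-ups with smooth irreducible centers $\Phi'(i) \subset \Sigma'(i-1)$, base change commutes with blowing-up (this is Grothendieck's universal property applied to the flat morphism $\Spec(A)\to\Spec(A')$), so $\sigma = \sigma(1)\cdots\sigma(m)$ where each $\sigma(i)$ is the blowing-up with center $\Phi(i) = \Phi'(i) \times_{\Spec(A')} \Spec(A)$. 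Smoothness and irreducibility of $\Phi(i)$ follow because $\Phi(i) \to \Phi'(i)$ is a base change of the smooth morphism $\Spec(A) \to \Spec(A')$ with geometrically irreducible fiber $\Spec(k[[z]])$ over the closed point. For (2) and (3), the closed subscheme $\Spec(A/zA) \subset \Spec(A)$ is exactly the image of the section of $\Spec(A)\to\Spec(A')$ corresponding to $z \mapsto 0$, so $\sigma^*\Spec(A/zA) = \Sigma'\times_{\Spec(A')}\Spec(A/zA) \cong \Sigma'$ via $\pi$, and the displayed isomorphism makes $\sigma^*\Spec(A/zA)$ a smooth prime divisor of $\Sigma$. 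The inclusion $\sigma^{-1}(M(A))\subset\sigma^*\Spec(A/zA)$ is immediate because $z\in M(A)$, hence $\sigma^*(z)$ vanishes on $\sigma^{-1}(M(A))$, and $\sigma^*\Spec(A/zA)$ is the zero scheme of $\sigma^*(z)$.

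Next I would prove (4) and (5) by induction on the length $m$ of the decomposition of $\sigma'$. The inductive step rests on the observation that if the center $\Phi'(i)$ of $\sigma'(i)$ is a stratum of $\Delta(i-1)'$ (that is, an intersection of components of the current normal crossing divisor), then $\Phi(i)=\Phi'(i)\times_{\Spec(A')}\Spec(A)$ is the intersection of the corresponding pulled-back components, which are components of $\Delta(i-1) = \pi(i-1)^*\Delta(i-1)' + \sigma(i-1)^*\Spec(A/zA)$; so $\sigma(i)$ is admissible over $\Delta(i-1)$. For the weakly admissible version, I would extend the coordinate system at each stage by appending $\sigma^*(z)$ as the coordinate for the extra component $\sigma(i-1)^*\Spec(A/zA)$; the extension property (Lemma~\ref{ext}) holds because $\{\sigma^*(z)\} \cup \pi^*\xi'$ forms a parameter system of the completion at every closed point where all components meet. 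This immediately gives the last assertions of (6), (7), and (8) at the final stage, provided one verifies that $\bar{\Delta}$ is a normal crossing divisor.

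For (6)--(8), the verification is local at closed points of $\Sigma$. Consider $\A \in (\bar{\Delta})_0$: the point $\pi(\A) \in (\bar{\Delta}')_0$ has a coordinate system $\bar{\xi}'_{\pi(\A)}$ on $U(\Sigma', \bar{\Delta}', \pi(\A))$ whose image generates $M(\mathcal{O}_{\Sigma', \pi(\A)}^c)$. Since the completion of $\mathcal{O}_{\Sigma, \A}$ is obtained from that of $\mathcal{O}_{\Sigma',\pi(\A)}$ by base change along $A' \to A$, and the latter adjoins $z$ formally, the set $\pi^*(\bar{\xi}'_{\pi(\A)}) \cup \{\sigma^*(z)\}$ is a parameter system of $\mathcal{O}_{\Sigma, \A}^c$, the closed point $\A$ is $k$-valued, and the local equations of the components of $\bar{\Delta}$ at $\A$ are exactly the elements of this parameter system. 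That verifies the normal crossing property and at the same time shows that $\bar{\xi}_{\A}$ defined as $\bar{\xi}_{\A}(\pi^*\Lambda) = \pi^*(\bar{\xi}'_{\pi(\A)}(\Lambda))$ and $\bar{\xi}_{\A}(\sigma^*\Spec(A/zA)) = \sigma^*(z)$ is a coordinate system. The set-theoretic identifications $(\bar{\Delta})_0 = \pi^{-1}((\bar{\Delta}')_0) \cap \sigma^*\Spec(A/zA)$ and $U(\Sigma, \bar{\Delta}, \A) = \pi^{-1}(U(\Sigma', \bar{\Delta}', \pi(\A)))$ follow from the fact that the extra component $\sigma^*\Spec(A/zA)$ passes through every point of $\sigma^{-1}(M(A))$ and from tracking which components meet at which points.

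The main obstacle I anticipate is not any single step but the bookkeeping in the last paragraph: ensuring that the bijection $\pi\colon (\bar{\Delta})_0 \to (\bar{\Delta}')_0$ and the equality of open neighborhoods hold exactly, and that the coordinate systems glue correctly through the inductive construction of part (5). This is delicate because one must keep track of which components of $\bar{\Delta}$ are pulled back from $\bar{\Delta}'$ and which are introduced as exceptional divisors during $\sigma'$, together with the single extra component $\sigma^*\Spec(A/zA)$. Once the book-keeping is done, each individual verification reduces to a standard fact about base change of smooth morphisms along $A'\to A$ and to Lemma~\ref{ncv} applied stage by stage.
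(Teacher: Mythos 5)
The paper gives no proof of this lemma; it is one of the four lemmas in Section~\ref{scheme} declared to ``easily follow from definitions,'' so there is no official argument to compare against. Your approach --- flat base change of blowing-ups along $\Spec(A)\rightarrow\Spec(A')$, the identification of $A$ with $A'[[z]]$, the section $z=0$, and the resulting isomorphism $\sigma^*\Spec(A/zA)\cong\Sigma'$ --- is the correct mechanism, and your outline of (2)--(8), including the inductive maintenance of the coordinate system by appending $\sigma^*(z)$, is sound modulo the bookkeeping you yourself flag.

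The one under-justified step is the irreducibility claim in part (1). You assert that $\Phi(i)=\Phi'(i)\times_{\Spec(A')}\Spec(A)$ is irreducible ``because $\Phi(i)\rightarrow\Phi'(i)$ is a base change of the smooth morphism $\Spec(A)\rightarrow\Spec(A')$ with geometrically irreducible fiber $\Spec(k[[z]])$ over the closed point.'' This does not establish irreducibility: the center $\Phi'(i)$ need not map to the closed point of $\Spec(A')$, and even when fibers of $\Phi(i)\rightarrow\Phi'(i)$ are all irreducible, irreducibility of the total space requires some additional input (openness of the map, or an argument on generic fibers); the morphism $\Spec(A)\rightarrow\Spec(A')$ is not of finite type, so openness is not automatic. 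What one actually must check is that for each affine chart $\Spec(B')$ of $\Phi'(i)$, with $\mathfrak{p}'$ the image of $\Phi'(i)$ in $\Spec(A')$, the ring $B'\otimes_{A'}A'[[z]] = B'\otimes_{A'/\mathfrak{p}'}(A'/\mathfrak{p}')[[z]]$ is a domain. This reduces, by a going-down argument on the flat map $\Phi(i)\rightarrow\Phi'(i)$, to showing that $\kappa(\mathfrak{p}')\otimes_{A'/\mathfrak{p}'}(A'/\mathfrak{p}')[[z]]$ is a domain and that the fraction field of $\Phi'(i)$ tensored over $\kappa(\mathfrak{p}')$ with this remains a domain; the latter uses that $\kappa(\mathfrak{p}')$ is algebraically closed and separable in the fraction field of $(A'/\mathfrak{p}')[[z]]$ (a short valuation-ring argument). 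These facts are true, so your conclusion stands, but the ``one fiber'' justification does not carry it. For parts (4)--(8), the parts actually applied later in the paper, the gap disappears: there the centers are strata, and strata are irreducible by condition (3) of the normal-crossing-scheme definition, which is maintained inductively by Lemma~\ref{ncv}.7.
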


\section{Main results}
\label{mainmain}
We state our main results. Their proofs will be given in Section~\ref{main proof} and Section~\ref{submain proofs}. 

We fix notations used throughout this section.

Let $k$ be any \emph{algebraically closed} field; let $A$ be any complete regular local ring such that $A$ contains $k$ as a subring, the residue field $A/M(A)$ is isomorphic to $k$ as $k$-algebras, and $\dim A\geq 2$; let $P$ be any parameter system of $A$, and let $z\in P$ be any element.

Let $A'$ denote the completion of $k[P - \{z\}]$ with respect to the maximal ideal $k[P - \{z\}]\cap M(A)$. The ring $A'$ is a local subring of $A$ and $M(A')=M(A)\cap A' =(P -\{z\})A'$. The completion of $A'[z]$ with respect to the prime ideal $zA'[z]$ is isomorphic to $A$ as $A'[z]$-algebras. The set $P - \{z\}$ is a parameter system of $A'$.

Let $\Delta=\Spec(A/\prod_{x\in P}xA)$, and $\Delta'=\Spec(A'/\prod_{x\in P -\{z\}}xA')$. We define a coordinate system $\xi_{M(A)}:\Comp(\Delta)\rightarrow A$ of the normal crossing scheme $(\Spec(A),\Delta)$ at $M(A)$ by putting $\xi_{M(A)}(\Spec(A/xA))=x$ for any $x\in P$. Let $\xi=\{ \xi_{M(A)}\}$. We define a coordinate system $\xi'_{M(A')}:\Comp(\Delta')\rightarrow A'$ of the normal crossing scheme $(\Spec(A'),\Delta')$ at $M(A')$ by putting $\xi_{M(A')}'(\Spec(A'/xA'))=x$ for any $x\in P-\{z\}$. Let $\xi'=\{ \xi_{M(A')} '\}$. The triplets $(\Spec(A),\Delta, \xi)$ and $(\Spec(A'),\Delta', \xi')$ are coordinated normal crossing schemes over $k$.

For any $\phi\in A$ with $\phi\neq 0$ the Newton polyhedron $\Gamma_+(P,\phi)$ of $\phi$ over $P$ is defined.

Furthermore, we denote
\begin{equation*}\begin{split}
&PW(1)= \{\phi\in A|\phi= u\prod_{\chi\in\mathcal{X}}(z+\chi)^{a(\chi)}\prod_{x\in P-\{z\}} x^{b(x)} \\
&\quad\text{for some }u\in A^\times, \text{ some finite subset } \mathcal{X} \text{ of } M(A'),\\
&\quad \text{some mapping }a: \mathcal{X}\rightarrow \Z_+, \text{ and some mapping }b:P-\{z\}\rightarrow\Z_0\}.\\
\end{split}\end{equation*}

For any $h\in\Z_+$ with $h\geq 2$, we denote
\begin{equation*}\begin{split}
W(h)&=\{\phi\in A|\phi= z^h+\sum_{i=0}^{h-1} \phi'(i)z^i \\
&\qquad\quad \text{for some mapping }\phi':\{0,1,\ldots,h-1\}\rightarrow M(A') \text{ satisfying }\\
&\qquad\quad
\chi^h+\sum_{i=0}^{h-1} \phi'(i)\chi^i\neq 0 \text{ for any }
\chi\in M(A').\},\\
\end{split}\end{equation*}\begin{equation*}\begin{split}
PW(h)&=\{\phi\in A|\phi=\psi\psi'\text{ for some } \psi\in W(h) \text{ and some }\psi'\in PW(1).\},\\
RW(h)&=\{\phi\in A|\text{ with }\phi=\psi\psi'\text{ for some } \psi\in W(h) \text{ and some }\psi'\in PW(1)\\
&\qquad\quad \text{such that }\Gamma_+(P,\psi) \text{ has no }z \text{-removable faces.}\},\\
SW(h)&=\{\phi\in A|\phi=\psi\psi'\text{ for some } \psi\in W(h) \text{ and some }\psi'\in PW(1)\text{ such that}\\
&\qquad\quad \Gamma_+(P,\psi) \text{ has no }z \text{-removable faces, and }\Gamma_+(P,\phi) \text{ is }z \text{-simple.}\}.\\
\end{split}\end{equation*}

Note that $1\in PW(1)\neq\emptyset$, $A=\{0\}\cup\cup_{h\in\Z_+}PW(h)$ if $\dim A=2$, and $A$ is a unique factorization domain. We consider any integer $h$ with $h\geq 2$. $\emptyset\neq W(h)\subset PW(h) \not\ni 0$, $\emptyset\neq SW(h)\subset RW(h)\subset PW(h)$. For any $\psi\in W(h)$, the Newton polyhedron $\Gamma_+(P,\psi)$ is of $z$-Weierstrass type, and the integer $h$ is equal to the $z$-height $\Ht(z,\Gamma_+(P,\psi))$ of $\Gamma_+(P,\psi)$. For any $\phi\in PW(h)$, the Newton polyhedron $\Gamma_+(P,\phi)$ is of $z$-Weierstrass type, and there exist uniquely $\psi\in W(h)$ and $\psi'\in PW(1)$ with $\phi=\psi\psi'$.

For the proof of our main theorem below, we apply the theory of convex sets and the theory of torus embeddings. By our main theorem any element in $SW(h)$ is reduced to an element in $PW(g)$ with $g<h$.

\begin{theorem}
\label{main}
For any $h\in\Z_+$ with $h\geq 2$ and any $\phi\in SW(h)$, there exist a smooth scheme $\Sigma$ over $\Spec(A)$ and an admissible composition of blowing-ups $\sigma:\Sigma\rightarrow \Spec(A)$ with centers of codimension two over $\Delta$ such that for any closed point $\A\in \Sigma$ with $\sigma(\A)=M(A)$, there exist $\B\in(\sigma^*\Delta)_0$, an isomorphism $\rho:\mathcal{O}_{\Sigma,\A}^c\rightarrow A$ of $k$-algebras, and $g\in\Z_+$ satisfying the following four conditions:
\begin{enumerate}
\item
$\A\in U(\Sigma,\sigma^*\Delta,\B)$. 
\end{enumerate}

We denote $\bar{P}=\{(\sigma^*\xi)_\B(\Lambda)- (\sigma^*\xi)_\B(\Lambda)(\A)|\Lambda\in \Comp(\sigma^*\Delta)(\B)\}$, which is a parameter system of $\mathcal{O}_{\Sigma,\A}^c$, and we consider the ring homomorphism $\sigma^*:A\rightarrow \mathcal{O}_{\Sigma,\A}^c$ induced by $\sigma$.
\begin{enumerate}
\setcounter{enumi}{1}
\item
$\rho(\bar{P})=P$.
\item $\rho\sigma^*(\phi)\in PW(g)$.
\item $g<h$.
\end{enumerate}
\end{theorem}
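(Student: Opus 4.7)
The plan is to associate to $\phi$ the face cone decomposition of the Newton polyhedron $\Gamma_+(P,\phi)$ in the dual space $\Map(P,\R)^*$, refine it via iterated barycentric subdivisions into the upward simplicial subdivision of Section~\ref{upward}, and then translate this purely combinatorial object into a composition of blow-ups via the torus embedding dictionary of Section~\ref{toric theory}. The upward subdivision is designed precisely so that the hard height inequality holds simultaneously on every chart of the associated blown-up scheme.

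First I would note that because $\phi \in SW(h)$, the Newton polyhedron $\Gamma_+(P,\phi)$ is $z$-simple (every compact face has dimension at most one) and the Weierstrass factor has no $z$-removable faces, so the associated face cone decomposition in $\Map(P,\R)^*$ is itself $z$-simple. Starting from a simplicial cone compatible with the natural simplicial structure of $\Map(P,\R_0)^\vee$, I would apply iterated barycentric subdivisions (Sections~\ref{bcd}--\ref{ibcd}) to produce the simplicial refinement compatible with this face cone decomposition, which is the upward subdivision. Each elementary step, under the dictionary of Section~\ref{toric theory}, corresponds to a single admissible blow-up over $\Delta$ with smooth irreducible center. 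The $z$-simpleness forces every such center to be the intersection of exactly two components of the current normal crossing divisor, hence of codimension two; this is how I would verify the admissibility and codimension conditions imposed on $\sigma$ in the statement.

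At any closed point $\A \in \Sigma$ with $\sigma(\A)=M(A)$, I would choose $\B \in (\sigma^*\Delta)_0$ with $\A \in U(\Sigma,\sigma^*\Delta,\B)$, giving condition~(1). Let $\bar P$ denote the induced parameter system of $\mathcal{O}_{\Sigma,\A}^c$ described in the statement. Because $\sigma$ is built from the torus embedding machinery, the completion $\mathcal{O}_{\Sigma,\A}^c$ is isomorphic to $A$ as $k$-algebras via an isomorphism $\rho$ sending $\bar P$ bijectively onto $P$ (using the rigidity recalled in Section~\ref{concept}), which is condition~(2). On this chart $\sigma^*$ acts by a monomial substitution on every term of $\phi$, so a direct computation of the new Newton polyhedron identifies $g$ with the $z$-height of $\Gamma_+(P,\rho\sigma^*(\phi))$ and places $\rho\sigma^*(\phi)$ in $PW(g)$, giving condition~(3). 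The hard height inequality of Section~\ref{upward} then yields $g<h$ at every such point $\A$, giving condition~(4).

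The main obstacle will be establishing and invoking the hard height inequality itself: one must show that the specific upward subdivision strictly reduces the $z$-height at \emph{every} point in $\sigma^{-1}(M(A))$, and not merely at a generic one. The absence of $z$-removable faces (built into $SW(h) \subset RW(h)$) is exactly what excludes the degenerate case in which some face of $\Gamma_+(P,\psi)$ takes the form $u z^b \prod_{x \neq z} x^{c(x)}(z+\chi)^h$, a configuration which would permit the $z$-height to remain equal to $h$ after subdivision. Controlling the height simultaneously at all chart origins, rather than pointwise, requires the fine convex-geometric machinery of Sections~\ref{btcs}--\ref{height inequalities} culminating in Section~\ref{upward}, and is the technical heart of the argument.
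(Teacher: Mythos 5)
Your plan matches the paper's proof essentially step for step: the upward subdivision of Section~\ref{upward} applied to $\mathcal{D}(\Gamma_+(P,\phi))$, the torus-embedding translation of Section~\ref{toric theory} into a composition of blow-ups with codimension-two centers, the local chart analysis at each $\A$ via the hard height inequality of Theorem~\ref{important}, and the use of the absence of $z$-removable faces to rule out the degenerate equality case. The only small imprecision is that $g$ is not literally the $z$-height of $\Gamma_+(P,\rho\sigma^*(\phi))$ but the degree of the genuine Weierstrass factor after splitting off the linear factors $(z-\chi)$ with $\chi\in M(A')$ (so $g\le\bar g<h$); this does not affect the conclusion.
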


By the theorem below any element in $PW(h)$ is reduced to an element in $RW(h)$.

\begin{theorem}
\label{erase faces}
For any $h\in\Z_+$ with $h\geq 2$ and any $\phi\in PW(h)$, there exists an element $\chi\in M(A')$ satisfying $\rho(\phi)\in RW(h)$ where $\rho$ denotes the unique isomorphism $\rho:A\rightarrow A$ of $k$-algebras satisfying $\rho(z+\chi)=z$ and $\rho(x)=x$ for any $x\in P-\{z\}$.
\end{theorem}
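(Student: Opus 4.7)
The plan is to reduce to the Weierstrass factor and then construct $\chi$ as the limit of a convergent series in the $M(A')$-adic topology of $A'$.

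Writing $\phi = \psi\psi'$ uniquely with $\psi \in W(h)$ and $\psi' \in PW(1)$, I first observe that any $\chi \in M(A')$ yields an automorphism $\rho: z \mapsto z-\chi$, $x \mapsto x$ for $x \in P-\{z\}$, sending $W(h)$ into $W(h)$ and $PW(1)$ into $PW(1)$: the Weierstrass-polynomial structure (monic in $z$, lower coefficients in $M(A')$) is preserved, the $W(h)$-condition $\chi'^h + \sum_i \phi'(i)\chi'^i \neq 0$ for all $\chi' \in M(A')$ is stable under the bijection $\chi' \mapsto \chi' + \chi$ of $M(A')$, and the $PW(1)$-factor structure transports under the same bijection applied to the finite set $\mathcal{X}$. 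Hence $\rho(\phi) \in RW(h)$ is equivalent to $\Gamma_+(P, \rho(\psi))$ having no $z$-removable face, which reduces the problem to finding $\chi \in M(A')$ so that $\psi(z-\chi)$ has no $z$-removable face.

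I build $\chi$ inductively: set $\chi_0 = 0$ and $\psi_n = \psi(z-\chi_n)$; at step $n$, if $\Gamma_+(P,\psi_n)$ has no $z$-removable face, stop with $\chi = \chi_n$; otherwise choose a $z$-removable face of $\psi_n$ of minimal parameter-order $e_n \in \Z_+$, take $\eta_n \in M(A')^{e_n} - M(A')^{e_n+1}$ to be a parameter of such a face, and set $\chi_{n+1} = \chi_n + \eta_n$. The central monotonicity claim is $e_{n+1} > e_n$. Granting this, $(\chi_n)$ is Cauchy in the $M(A')$-adic topology and converges, by completeness of $A'$, to some $\chi \in M(A')$; the limit $\psi(z-\chi)$ has no $z$-removable face, because a hypothetical face of parameter-order $\tilde{e}$ would, for $n$ with $e_n > \tilde{e}$, already be visible on $\psi_n$ (the defining partial-sum identity being preserved modulo $M(A')^{e_n}$), contradicting the monotonicity.

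For the monotonicity, work in the associated graded ring $\mathrm{gr}_\omega A$ for the monomial valuation attached to $\omega \in \Map(P, \R_0)^\vee$ realizing the chosen removable face of $\psi_n$, so that $\In(P,\omega,\psi_n) = u(z+\eta_n)^h$ for some $u \in A^\times$, with $\omega(f^P_z) = \Ord(P-\{z\}, \omega|_{P-\{z\}}, \eta_n)$. In $\mathrm{gr}_\omega A$ the translation $z \mapsto z-\eta_n$ descends to $z \mapsto z-\bar{\eta}_n$, where $\bar{\eta}_n = \In(P-\{z\}, \omega|_{P-\{z\}}, \eta_n)$, so $\In(P,\omega,\psi_{n+1})$ becomes $\bar{u}\cdot z^h$, a pure monomial, not a removable form. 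If $\psi_{n+1}$ possessed a removable face with parameter $\tilde{\eta}$ of order $f \leq e_n$, inverting the translation in the graded ring for the corresponding weighting would produce a removable face of $\psi_n$ of parameter-order $\leq e_n$ differing from $\eta_n$, contradicting the minimality of $e_n$.

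The main obstacle is the monotonicity claim. Since the translation mixes $z$ with elements of $M(A')$, the Newton polyhedra of $\psi_n$ and $\psi_{n+1}$ differ in general, and one cannot compare faces directly across the substitution. The argument must therefore proceed through associated-graded data for a sufficiently rich family of weights, exploiting both the multiplicativity $\In(\omega, fg) = \In(\omega, f)\In(\omega, g)$ of initial forms and the uniqueness of the parameter on a removable face (pinned down by the equation $\Ps(P,F,\psi) = u(z+\chi)^h$).
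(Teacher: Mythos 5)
Your high-level strategy — reduce to the Weierstrass factor $\psi \in W(h)$, then kill $z$-removable faces one at a time by adding the corresponding monomial parameter to $\chi$, and argue the resulting series converges $M(A')$-adically — is exactly the shape of the paper's argument. The reduction step (automorphisms $z\mapsto z-\chi$ preserve $W(h)$ and $PW(1)$) is also fine and matches the paper. But the argument breaks at your ``central monotonicity claim'' $e_{n+1}>e_n$, in two ways.

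First, the associated-graded argument you give does not establish it. Your reasoning is: a removable face of $\psi_{n+1}$ with parameter of order $f\le e_n$ would, after inverting the translation, produce a removable face of $\psi_n$ of order $\le e_n$ differing from $\eta_n$, ``contradicting the minimality of $e_n$.'' But if $f=e_n$, this is no contradiction at all: there is nothing in the minimality of $e_n$ that forbids a second removable face of $\psi_n$ at the same order $e_n$. Indeed the substitution $z\mapsto z-\eta_n$ deletes the specific removable face chosen at step $n$, but it does not touch other removable faces of the same order, so $e_{n+1}=e_n$ is perfectly possible. The strict monotonicity is simply not true.

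Second, and more importantly, the paper never claims monotonicity. What it needs and proves is the weaker fact that $e_n\to\infty$. This is obtained by a convex-geometric detour that your proposal does not have a substitute for: one forms the auxiliary convex pseudo polyhedron $\bar{\Gamma}_+(P(\chi),\psi)=\sigma(\Gamma_+(P(\chi),\psi)\cap U_-)$ (a central projection of the Newton polyhedron from the $z$-top vertex), shows that each removable face of dimension one corresponds to a vertex $c(F)$ of $\bar{\Gamma}_+$, and proves (Lemma~\ref{first correspondence}, Proposition~\ref{miracle} type arguments; item 12.(e) in the proof) that after the substitution by $\chi(F)$ the vertex $c(F)$ is deleted and $\bar{\Gamma}_+(P(\chi+\chi(F)),\psi)\subsetneq\bar{\Gamma}_+(P(\chi),\psi)$. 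Because the polytopes form a decreasing chain and $c(F(i))\notin\bar{\Gamma}_+(P(\bar{\chi}(i)),\psi)$, the vertices $c(F(i))$ are pairwise distinct lattice points in the cone $\Delta\cap W\cap N$. Since for any $m$ there are only finitely many such lattice points with $\langle\bar{\delta}_0,e\rangle\le m$, the sequence $e_i=\langle\bar{\delta}_0,c(F(i))\rangle$ must diverge to $\infty$ even though it need not be monotone. This divergence plus the greedy minimal choice at each step is exactly what powers the final contradiction when the limit is assumed to still carry a removable face. Your last-step argument is essentially right if ``contradicting the monotonicity'' is replaced by ``contradicting the minimality of the choice at step $n$,'' but without the pairwise-distinctness of the vertices you have no proof that the parameter orders diverge, and hence no proof that your series converges.
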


We would like to solve the following problem:

\begin{problem}
\label{goal one}
Show that for any $\phi\in A$ with $\phi\neq 0$, there exists a weakly admissible composition of blowing-ups $\sigma:\Sigma\rightarrow\Spec(A)$ over $(\Delta,\xi)$ and an extended pull-back $(\Sigma,\bar{\Delta},\bar{\xi})$ of the coordinated normal crossing scheme $(\Spec(A),\Delta,\xi)$ by $\sigma$ satisfying $\Supp(\sigma^*(\Spec(A/\phi A)+\Delta))\subset \Supp(\bar{\Delta})$.
\end{problem}

Note here that $\dim A'=\dim A-1<\dim A$, and any $\phi'\in A'$ with $\phi'\neq 0$ has normal crossings over $P'$ if $\dim A=2$. Therefore, we decide that we use induction on $\dim A$, and we can assume the following claim $(*)$:

\begin{description}
\item[$(*)$]
For any  $\phi'\in A'$ with $\phi'\neq 0$, there exists a weakly admissible composition of blowing-ups $\sigma':\Sigma'\rightarrow\Spec(A')$ over $(\Delta',\xi')$ and an extended pull-back $(\Sigma',\bar{\Delta}',\bar{\xi}')$ of the coordinated normal crossing scheme $(\Spec(A'),\Delta',\xi')$ by $\sigma'$ satisfying $\Supp(\sigma^{\prime*}(\Spec(A'/\phi' A')+ \Delta'))\subset \Supp(\bar{\Delta}')$.
\end{description}

In the case of $\dim A=2$, putting $(\Sigma',\bar{\Delta}',\bar{\xi}')=(\Spec(A'),\Delta',\xi')$ and considering the identity morphism $\sigma':\Sigma'\rightarrow\Spec(A')=\Sigma'$, we know that $\sigma'$ is a weakly admissible composition of blowing-ups over $(\Delta',\xi')$ and $\Supp(\sigma^{\prime*}(\Spec(A'/\phi' A')+ \Delta'))\subset \Supp(\bar{\Delta}')$ for any $\phi'\in A'$ with $\phi'\neq 0$.

Let $\sigma':\Sigma'\rightarrow\Spec(A')$ be any weakly admissible composition of blowing-ups over $(\Delta',\xi')$, and let $(\Sigma',\bar{\Delta}',\bar{\xi}')$ be an extended pull-back of the coordinated normal crossing scheme $(\Spec(A'),\Delta',\xi')$ by $\sigma'$. We consider a morphism $\Spec(A)\rightarrow\Spec(A')$ induced by the inclusion ring homomorphism $A'\rightarrow A$, the product scheme $\Sigma=\Sigma'\times_{\Spec(A')}\Spec(A)$, the projection $\sigma:\Sigma\rightarrow\Spec(A)$, and the projection $\pi:\Sigma\rightarrow\Sigma'$. We know the following (See Lemma~\ref{pull back blowing-ups}.):
\begin{enumerate}
\item The morphism $\sigma$ is a weakly admissible composition of blowing-ups over $(\Delta,\xi)$.
\item The pull-back $\sigma^*\Spec(A/zA)$ of the prime divisor $\Spec(A/zA)$ of $\Spec(A)$ by $\sigma$ is a smooth prime divisor of $\Sigma$, and $\sigma^*\Spec(A/zA)\supset \sigma^{-1}(M(A))$.
\item The projection $\pi:\Sigma\rightarrow\Sigma'$ induces an isomorphism $\sigma^*\Spec(A/zA)\rightarrow\Sigma'$.
\end{enumerate}

Let $\bar{\Delta}=\pi^*\bar{\Delta}'+\sigma^*\Spec(A/zA)$. 

\begin{enumerate}
\setcounter{enumi}{3}
\item
The pair $(\Sigma,\bar{\Delta})$ is a normal crossing scheme over $k$. 
$(\bar{\Delta})_0=\pi^{-1}((\bar{\Delta}')_0)\cap\sigma^*\Spec(A/zA)$. For any $\A\in(\bar{\Delta})_0$, we have $\pi(\A)\in (\bar{\Delta}')_0$. The mapping $\pi: (\bar{\Delta})_0\rightarrow(\bar{\Delta}')_0$ induced by $\pi$ is bijective. 
\item
For any $\A\in(\bar{\Delta})_0$, we have $\Comp(\bar{\Delta})(\A)=\{\pi^*\Lambda|\Lambda\in \Comp(\bar{\Delta}')(\pi(\A))\}\cup\{\sigma^*\Spec(A/zA)\}$, and $U(\Sigma, \bar{\Delta},\A)=\pi^{-1}(U(\Sigma', \bar{\Delta}',\pi(\A)))$.
\end{enumerate}

Consider any $\A\in(\bar{\Delta})_0$. We put
$\bar{\xi}_\A(\pi^*\Lambda)=\pi^*(\bar{\xi}'_{\pi(\A)}(\Lambda))$
for any $\Lambda\in \Comp(\bar{\Delta}')($\break$\pi(\A))$, and we put
$\bar{\xi}_\A(\sigma^*\Spec(A/zA))=\sigma^*(z)$.
We have a mapping $\bar{\xi}_\A:\Comp(\bar{\Delta})(\A)$\break$\rightarrow\mathcal{O}_\Sigma(U(\Sigma, \bar{\Delta},\A))$.
Let $\bar{\xi}=\{\bar{\xi}_\A|\A\in(\bar{\Delta})_0\}$.

\begin{enumerate}
\setcounter{enumi}{5}
\item
For any $\A\in(\bar{\Delta})_0$, $\bar{\xi}_\A$ is a coordinate system of $(\Sigma,\bar{\Delta})$ at $\A$, and $\bar{\xi}$ is a coordinate system of $(\Sigma,\bar{\Delta})$.
\end{enumerate}

By the theorem below any element in $RW(h)$ is reduced to an element in $SW(h)$ or an element in $PW(g)$ with $g<h$.

\begin{theorem}
\label{make simple}
Assume the above $(*)$. For any $h\in\Z_+$ with $h\geq 2$ and any $\phi\in RW(h)$, there exist a smooth scheme $\Sigma'$ over $\Spec(A')$, a weakly admissible composition of blowing-ups $\sigma':\Sigma'\rightarrow\Spec(A')$ over $(\Delta',\xi')$ and an extended pull-back $(\Sigma',\bar{\Delta}',\bar{\xi}')$ of the coordinated normal crossing scheme $(\Spec(A'),\Delta',\xi')$ by $\sigma'$ with the following properties:

We consider the product scheme $\Sigma=\Sigma'\times_{\Spec(A')}\Spec(A)$, the projection $\sigma:\Sigma\rightarrow\Spec(A)$ and the projection $\pi:\Sigma\rightarrow\Sigma'$.
Let $\bar{\Delta}=\pi^*\bar{\Delta}'+\sigma^*\Spec(A/zA)$. We have a normal crossing scheme $(\Sigma,\bar{\Delta})$ with $(\bar{\Delta})_0\subset\sigma^*\Spec(A/zA)$.

Consider any $\A\in(\bar{\Delta})_0$. We put
$\bar{\xi}_\A(\pi^*\Lambda)=\pi^*(\bar{\xi}'_{\pi(\A)}(\Lambda))$
for any $\Lambda\in \Comp(\bar{\Delta}')($\break$\pi(\A))$, and we put
$\bar{\xi}_\A(\sigma^*\Spec(A/zA))=\sigma^*(z)$.
We have a coordinate system $\bar{\xi}_\A: $\hfill\break$\Comp(\bar{\Delta})(\A)\rightarrow\mathcal{O}_\Sigma(U(\Sigma, \bar{\Delta},\A))$ of $(\Sigma,\bar{\Delta})$ at $\A$, and a coordinate system $\bar{\xi}=\{\bar{\xi}_\A|\A\in(\bar{\Delta})_0\}$ of $(\Sigma,\bar{\Delta})$.

Under the above notations, at any closed point $\A\in\Sigma$ with $\sigma(\A)=M(A)$ there exist $\B\in(\bar{\Delta})_0$ and an isomorphism $\rho:\mathcal{O}_{\Sigma,\A}^c\rightarrow A$ of $k$-algebras satisfying the following three conditions:
\begin{enumerate}
\item
$\A\in U(\Sigma, \bar{\Delta},\B)$. 
\end{enumerate}

We denote $\bar{P}=\{\bar{\xi}_\B(\Lambda)- \bar{\xi}_\B(\Lambda)(\A)|\Lambda\in \Comp(\bar{\Delta})(\B)\}$, which is a parameter system of $\mathcal{O}_{\Sigma,\A}^c$, and we consider the ring homomorphism $\sigma^*:A\rightarrow \mathcal{O}_{\Sigma,\A}^c$ induced by $\sigma$.
\begin{enumerate}
\setcounter{enumi}{1}
\item
$\rho(\bar{P})=P$ and $\rho\sigma^*(z)=z$.
\item $\rho\sigma^*(\phi)\in SW(h)$ or $\rho\sigma^*(\phi)\in PW(g)$ for some  positive integer $g\in \Z_+$ with $g<h$.
\end{enumerate}
\end{theorem}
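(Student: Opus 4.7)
The approach is to apply the induction hypothesis $(*)$ to a suitable nonzero element of $A'$ and then pull back via Lemma~\ref{pull back blowing-ups}. Write $\phi=\psi\psi'$ with $\psi=z^h+\sum_{i=0}^{h-1}\phi'(i)z^i\in W(h)$ (so $\phi'(i)\in M(A')$) and $\psi'=u\prod_{\chi\in\mathcal{X}}(z+\chi)^{a(\chi)}\prod_{x\in P-\{z\}}x^{b(x)}\in PW(1)$, where by hypothesis $\Gamma_+(P,\psi)$ has no $z$-removable faces. The natural candidate is to let $\Pi\in A'$ be the product of all nonzero $\phi'(i)$ and all $\chi\in\mathcal{X}$, but this alone need not force $z$-simpleness: even once every $\phi'(i)$ and every $\chi$ becomes a monomial-times-unit in the local parameter system upstairs, the resulting vertices of the Newton polyhedron may still support compact faces of dimension $\geq 2$. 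To prevent this, the factor $\Pi$ must be augmented so that the blow-up separates the contributing monomials appropriately; the precise choice is governed by the ``upward subdivision'' procedure outlined in the introduction, which refines the cone decomposition associated with $\Gamma_+(P,\psi)$ via iterated barycentric subdivisions starting from a fixed simplicial cone. I would apply $(*)$ to this augmented $\Pi$ to obtain a weakly admissible composition of blow-ups $\sigma':\Sigma'\to\Spec(A')$ and an extended pull-back $(\Sigma',\bar{\Delta}',\bar{\xi}')$.

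Using Lemma~\ref{pull back blowing-ups}, I lift to the product $\Sigma=\Sigma'\times_{\Spec(A')}\Spec(A)$ with projections $\sigma$ and $\pi$, forming the coordinated normal crossing scheme $(\Sigma,\bar{\Delta},\bar{\xi})$ with $\bar{\Delta}=\pi^*\bar{\Delta}'+\sigma^*\Spec(A/zA)$; by Lemma~\ref{pull back blowing-ups}.6 we automatically have $(\bar{\Delta})_0\subset\sigma^*\Spec(A/zA)$. Fix a closed point $\A\in\Sigma$ with $\sigma(\A)=M(A)$; by Lemma~\ref{ncv} there is $\B\in(\bar{\Delta})_0$ with $\A\in U(\Sigma,\bar{\Delta},\B)$, and because $\sigma^*\Spec(A/zA)$ is a component of $\bar{\Delta}$ containing $\sigma^{-1}(M(A))$, it lies in $\Comp(\bar{\Delta})(\B)$, so that $\sigma^*(z)=\bar{\xi}_\B(\sigma^*\Spec(A/zA))$ is a member of $\bar{P}$. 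The existence of an isomorphism $\rho:\mathcal{O}_{\Sigma,\A}^c\to A$ of $k$-algebras sending $\bar{P}$ to $P$ and $\sigma^*(z)$ to $z$ then follows from the general uniqueness statement for complete regular local $k$-algebras with isomorphic residue fields recalled in Section~\ref{concept}.

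For the final dichotomy I would compute $\rho\sigma^*\phi$ explicitly. Because $\Pi$ has been chosen so that its pull-back has normal crossings, each $\rho\sigma^*\phi'(i)$ and each $\rho\sigma^*\chi$ is of the form $(\text{unit})\cdot\prod_{x\in P-\{z\}}x^{(\cdot)}$ with nontrivial exponent vector (the latter because $\phi'(i),\chi\in M(A')$ vanish at the image of $\A$ in $\Spec(A')$). Hence $\rho\sigma^*\psi$ is a $z$-Weierstrass polynomial with completely explicit monomial coefficients, and $\rho\sigma^*\psi'$ retains the form required by $PW(1)$. If the Newton polyhedron $\Gamma_+(P,\rho\sigma^*\phi)$ has all compact faces of dimension $\leq 1$, then after using Theorem~\ref{erase faces} to absorb any newly created $z$-removable faces into the $PW(1)$ part, we obtain $\rho\sigma^*\phi\in SW(h)$. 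Otherwise the existence of a compact face of dimension $\geq 2$ forces an actual linear factor $(z+\bar{\chi})$ with $\bar{\chi}\in M(A')$ to split off $\rho\sigma^*\psi$, so that the $W$-factor has degree strictly less than $h$ and $\rho\sigma^*\phi\in PW(g)$ with $g<h$. The main obstacle will be verifying this last assertion, which is where I would invoke the ``hard height inequality'' of Section~\ref{upward} applied to the upward subdivision of the face cone decomposition of $\Gamma_+(P,\psi)$ to guarantee that the only way $z$-simpleness can fail at $\A$ is via such a genuine factorization.
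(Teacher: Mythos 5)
Your high-level outline (apply $(*)$ to a carefully chosen element of $A'$, lift via Lemma~\ref{pull back blowing-ups}, factor $\rho\sigma^*\phi$ at each closed point) matches the paper's strategy, but there are several genuine gaps and misattributions.

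First, you correctly observe that taking $\Pi$ to be just the product of the nonzero $\phi'(i)$ and the $\chi$ is not enough to force $z$-simpleness upstairs, but your proposed fix---``augment $\Pi$ as governed by the upward subdivision procedure''---is wrong in direction. The upward subdivisions and the hard height inequality of Sections~\ref{compatible}--\ref{upward} are the engine of Theorem~\ref{main}, not of this theorem. The paper's proof of Theorem~\ref{make simple} never invokes them. Instead, it works with $\omega=\psi\prod_{\chi\in\mathcal{X}-\{0\}}(z+\chi)^{a(\chi)}$ of degree $\hat h$, sets $\omega'(i)$ to be its coefficients, and augments the product by the explicit differences $\omega'(k)^{j!/(j-k)}\omega'(j)^{(j!/(j-\ell))-(j!/(j-k))}-\omega'(\ell)^{j!/(j-\ell)}$ for suitable $(k,\ell)$. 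After applying $(*)$ these factors become monomials times units, and then Lemma~\ref{BM} (the Bierstone--Milman comparability lemma) yields total comparability of the slope vectors $(\bar c(k)-\bar c\nu(m))/(\nu(m)-k)$. This comparability is exactly what Lemma~\ref{Newton2}.13 needs to certify $\bar z$-simpleness of $\Gamma_+(\bar P,\sigma^*(\A)\omega)$, and hence of $\Gamma_+(\bar P,\sigma^*(\A)\phi)$. Your proposal does not supply these factors, and without them the comparability fails.

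Second, the step you suggest for handling possibly-reborn $z$-removable faces---``use Theorem~\ref{erase faces} to absorb them''---cannot be used here. Theorem~\ref{erase faces} produces an element of $RW(h)$, not $SW(h)$, and it does so only after a further coordinate change of $A$, which is not compatible with the single isomorphism $\rho$ the statement asks for. The paper instead proves directly, by contradiction, that $\Gamma_+(\bar P,\sigma^*(\A)\psi)$ has no $\bar z$-removable faces: if it did, the injectivity of $\sigma^*(\A)$ (a weakly admissible composition of blow-ups) would let one transport the removable face back to a $z$-removable face of $\Gamma_+(P,\psi)$, contradicting $\phi\in RW(h)$. In positive characteristic $p$ this requires extracting a $p^\delta$-th root and using $\sigma^*(\A)^{-1}(\mathcal{O}_{\Sigma,\A}^{c\:p^\delta})=A^{p^\delta}$, a point your sketch omits entirely.

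Third, the dichotomy in your final paragraph is stated incorrectly. In the paper's proof the Newton polyhedron is shown to be $\bar z$-simple in all cases; the branching is not ``simple vs.\ has a compact face of dimension $\geq 2$'' but rather whether $\bar z^h+\sum\sigma^*(\A)(\psi'(i))\bar z^i$ has a root $\bar\chi$ in $M(\mathcal{O}_{\Sigma,\A}^{c\:\prime})$. If no root exists, $\rho\sigma^*(\A)\psi\in W(h)$ and the whole element lands in $SW(h)$; if a root exists, a Weierstrass factor of strictly smaller degree $\hat g<h$ splits off and one lands in $PW(g)$ with $g=\max\{\hat g,1\}<h$. Your formulation (compact face of dim $\geq 2$ forces a linear factor to split) is neither needed nor true under the hypotheses.
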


By the theorem below any non-zero element in $A$ is reduced to an element in $\cup_{h\in\Z_+}PW(h)$.

\begin{theorem}
\label{make Weierstrass type}
Assume the above $(*)$. For any $\phi\in A$ with $\phi\neq 0$, there exist a smooth scheme $\Sigma'$ over $\Spec(A')$, a weakly admissible composition of blowing-ups $\sigma':\Sigma'\rightarrow\Spec(A')$ over $(\Delta',\xi')$ and an extended pull-back $(\Sigma',\bar{\Delta}',\bar{\xi}')$ of the coordinated normal crossing scheme $(\Spec(A'),\Delta',\xi')$ by $\sigma'$ with the following properties:

We consider the product scheme $\Sigma=\Sigma'\times_{\Spec(A')}\Spec(A)$, the projection $\sigma:\Sigma\rightarrow\Spec(A)$ and the projection $\pi:\Sigma\rightarrow\Sigma'$.
Let $\bar{\Delta}=\pi^*\bar{\Delta}'+\sigma^*\Spec(A/zA)$. We have a normal crossing scheme $(\Sigma,\bar{\Delta})$ with $(\bar{\Delta})_0\subset\sigma^*\Spec(A/zA)$.

Consider any $\A\in(\bar{\Delta})_0$. We put
$\bar{\xi}_\A(\pi^*\Lambda)=\pi^*(\bar{\xi}'_{\pi(\A)}(\Lambda))$
for any $\Lambda\in \Comp(\bar{\Delta}')($\break$\pi(\A))$, and we put
$\bar{\xi}_\A(\sigma^*\Spec(A/zA))=\sigma^*(z)$.
We have a coordinate system $\bar{\xi}_\A: $\hfill\break$\Comp(\bar{\Delta})(\A)\rightarrow\mathcal{O}_\Sigma(U(\Sigma, \bar{\Delta},\A))$ of $(\Sigma,\bar{\Delta})$ at $\A$, and a coordinate system $\bar{\xi}=\{\bar{\xi}_\A|\A\in(\bar{\Delta})_0\}$ of $(\Sigma,\bar{\Delta})$.

Under the above notations, at any closed point $\A\in\Sigma$ with $\sigma(\A)=M(A)$, there exist $\B\in(\bar{\Delta})_0$ and an isomorphism $\rho:\mathcal{O}_{\Sigma,\A}^c\rightarrow A$ of $k$-algebras satisfying the following three conditions:
\begin{enumerate}
\item
$\A\in U(\Sigma, \bar{\Delta},\B)$. 
\end{enumerate}

We denote $\bar{P}=\{\bar{\xi}_\B(\Lambda)- \bar{\xi}_\B(\Lambda)(\A)|\Lambda\in \Comp(\bar{\Delta})(\B)\}$, which is a parameter system of $\mathcal{O}_{\Sigma,\A}^c$, and we consider the ring homomorphism $\sigma^*:A\rightarrow \mathcal{O}_{\Sigma,\A}^c$ induced by $\sigma$.
\begin{enumerate}
\setcounter{enumi}{1}
\item
$\rho(\bar{P})=P$ and $\rho\sigma^*(z)=z$.
\item $$\rho\sigma^*(\phi)\in \bigcup_{h\in\Z_+}PW(h).$$
\end{enumerate}
\end{theorem}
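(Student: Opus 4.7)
The plan is to reduce to Weierstrass preparation after principalizing an auxiliary ideal of $A'$. Write $\phi=\sum_{i=0}^{\infty}\phi'_iz^i$ with $\phi'_i\in A'$; since $A'$ is Noetherian and $\phi\neq 0$, the ideal $I=\sum_i\phi'_iA'\subset A'$ is nonzero and finitely generated by nonzero elements $f_1,\ldots,f_N$, and I set $f=f_1\cdots f_N$.

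I would first apply the induction hypothesis $(*)$ to $f\in A'\setminus\{0\}$, obtaining a weakly admissible composition of blowing-ups $\sigma'_0:\Sigma'_0\to\Spec(A')$ and an extended pull-back $(\Sigma'_0,\bar{\Delta}'_0,\bar{\xi}'_0)$ with $\Supp(\sigma_0^{\prime*}(\Spec(A'/fA')+\Delta'))\subset\Supp(\bar{\Delta}'_0)$. Because $\mathcal{O}_{\Sigma'_0,\A'_0}^c$ is a unique factorization domain, the fact that $f_1\cdots f_N$ has normal crossings forces each $f_j$ to have normal crossings in the parameters determined by $(\bar{\xi}'_0)_{\A'_0}$, so that $I\cdot\mathcal{O}_{\Sigma'_0,\A'_0}^c$ becomes a monomial ideal at every closed point $\A'_0$ above $M(A')$. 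I would then further blow up $\Sigma'_0$ by admissible toric blowing-ups, whose centers are smooth strata of the growing divisor, to principalize this monomial ideal; the construction proceeds via simplicial-cone subdivisions of the normal fan of the Newton polyhedron of $I$, using the machinery developed in Sections~\ref{simplex} through~\ref{toric theory}. The outcome is a weakly admissible composition $\sigma':\Sigma'\to\Spec(A')$ together with an extended pull-back $(\Sigma',\bar{\Delta}',\bar{\xi}')$ such that at every closed point $\A'$ of $\Sigma'$ above $M(A')$ the ideal $I\cdot\mathcal{O}_{\Sigma',\A'}^c$ is generated by a single monomial $m_{\A'}$ in the parameters from $\bar{\xi}'_{\A'}$.

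With this $\sigma'$ chosen, I would form the base change $\Sigma=\Sigma'\times_{\Spec(A')}\Spec(A)$ with the $\pi,\sigma,\bar{\Delta},\bar{\xi}$ described in the theorem statement. For any closed point $\A\in\Sigma$ with $\sigma(\A)=M(A)$, choose $\B\in(\bar{\Delta})_0$ with $\A\in U(\Sigma,\bar{\Delta},\B)$ (which exists by Lemma~\ref{ncv}.5) and set $\A'=\pi(\B)\in(\bar{\Delta}')_0$. The universal characterization of complete regular local $k$-algebras recalled in Section~\ref{concept} supplies an isomorphism $\rho:\mathcal{O}_{\Sigma,\A}^c\to A$ with $\rho(\bar{P})=P$ and $\rho\sigma^*(z)=z$. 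In $\mathcal{O}_{\Sigma',\A'}^c$ each pull-back is of the form $\sigma^{\prime*}\phi'_i=m_{\A'}\tilde{\phi}'_i$; writing $m_{\A'}=\sum_{i\in F}c_i\sigma^{\prime*}\phi'_i$ for some finite $F\subset\Z_0$ and dividing by $m_{\A'}$ yields $1=\sum_{i\in F}c_i\tilde{\phi}'_i$, so $(\tilde{\phi}'_i)_i$ is the unit ideal and some $\tilde{\phi}'_h$ is a unit for the least such index $h$ by locality. Transporting through $\rho$, $\rho\sigma^*(\phi)=\tilde{m}\cdot\eta$ with $\tilde{m}\in A'$ a monomial in $P-\{z\}$ and $\eta=\sum_i\eta'_iz^i\in A$ being $z$-regular of order $h$. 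Weierstrass preparation then gives $\eta=w\psi$ with $w\in A^{\times}$ and $\psi=z^h+\sum_{i<h}\psi'_iz^i$ a $z$-Weierstrass polynomial of degree $h$, each $\psi'_i\in M(A')$.

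Finally, I would factor $\psi$ by successively extracting the roots in $M(A')$ via polynomial division in $A'[z]$: any $\chi\in M(A')$ with $\psi(\chi)=0$ yields $\psi=(z-\chi)\psi^{(1)}$ with $\psi^{(1)}$ a Weierstrass polynomial of degree $h-1$, and iteration produces $\psi=\psi_g\prod_k(z+\chi_k)^{a_k}$ with $\psi_g$ a Weierstrass polynomial of degree $g$ having no root in $M(A')$. Since any $z+c$ with $c\in M(A')$ has $-c\in M(A')$ as a root, we must have either $g=0$ (so $\psi_g\in A^{\times}$ and $\rho\sigma^*(\phi)\in PW(1)$) or $g\geq 2$ (so $\psi_g\in W(g)$ and $\rho\sigma^*(\phi)\in PW(g)$); in every case $\rho\sigma^*(\phi)\in\bigcup_{h\in\Z_+}PW(h)$. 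The main obstacle is the toric principalization in the second step: it has to be realized inside the framework of weakly admissible compositions of blowing-ups compatible with the extendable coordinate system $\bar{\xi}'$ and with every center a smooth stratum of the growing divisor, which is exactly what the simplicial-cone subdivision and upward-subdivision machinery of Sections~\ref{simplex}--\ref{toric theory} is built for. The root extraction, by contrast, works uniformly in every characteristic because it relies only on polynomial division in $A'[z]$ rather than on extraction of $p$-th roots.
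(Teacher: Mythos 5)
Your route and the paper's diverge in a way that matters. The paper does \emph{not} apply $(*)$ to a bare product of generators of the ideal $I=\sum_i\phi'_iA'$: it applies $(*)$ to $\psi'=\prod_{i\in I}\phi'(i)\cdot\prod_{(k,\ell)\in K}(\phi'(k)-\phi'(\ell))$, where the second factor runs over pairwise \emph{differences} of the coefficients. After a single application of $(*)$ and base change, every $\sigma^*(\A)(\phi'(i))$ and every $\sigma^*(\A)(\phi'(k)-\phi'(\ell))$ has normal crossings, and then Lemma~\ref{BM} (Bierstone--Milman: if a difference of two normal-crossing monomials is again a normal-crossing monomial then their exponent vectors are comparable) shows that the exponent vectors $\bar{c}(i)$ are \emph{pairwise comparable} in the componentwise partial order. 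Hence the set $\{\bar{c}(i)\}$ has a minimum, the pulled-back ideal $I\cdot\mathcal{O}_{\Sigma,\A}^{c\:\prime}$ is \emph{automatically principal}, and the paper goes straight to Weierstrass preparation and root extraction exactly as you do in your last two paragraphs. Your root-extraction step, and your observation that it is characteristic-free, matches the paper.

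The gap in your version is the step you yourself flag as the main obstacle: because you apply $(*)$ only to $f=f_1\cdots f_N$, the resulting monomial ideal at each point need not be principal (for instance, exponent vectors $(2,0)$ and $(0,2)$ are incomparable), so you genuinely need a further toric principalization. This is not a cosmetic extra: the simplicial-cone/upward-subdivision machinery of Sections~\ref{simplex}--\ref{toric theory} is engineered to subdivide the face cone decomposition $\mathcal{D}(\Gamma_+(P,\phi)|V)$ of a \emph{single} Newton polyhedron and to prove the height inequalities driving Theorem~\ref{main}; it is not packaged as a ready-made monomial-ideal principalization functor compatible with the weakly-admissible/coordinated-normal-crossing bookkeeping, nor with the gluing of local data across the finitely many points $\A'_0$ over $M(A')$ — which is exactly the kind of globalization problem the paper explicitly defers. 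So while combinatorial principalization is standard in resolution theory, realizing it inside this framework is extra infrastructure that the paper's trick with the differences $\phi'(k)-\phi'(\ell)$ and Lemma~\ref{BM} is specifically designed to make unnecessary. If you add the differences to the element on which you invoke $(*)$, your second step collapses and the rest of your argument goes through as written.
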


By the theorem below any element in $PW(1)$ is reduced to an element with normal crossings.

\begin{theorem}
\label{make normal crossings}
Assume the above $(*)$. For any $\phi\in PW(1)$, there exist a smooth scheme $\Sigma$ over $\Spec(A)$, a weakly admissible composition of blowing-ups $\sigma:\Sigma\rightarrow\Spec(A)$ over $(\Delta,\xi)$ and an extended pull-back $(\Sigma,\bar{\Delta},\bar{\xi})$ of the coordinated normal crossing scheme $(\Spec(A),\Delta,\xi)$ by $\sigma$ satisfying $\Supp(\sigma^*(\Spec(A/\phi A)+\Delta))\subset\Supp(\bar{\Delta})$.
\end{theorem}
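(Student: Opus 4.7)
The plan is to reduce the statement to the inductive hypothesis $(*)$ by applying it to a suitable element of $A'$ that encodes both the individual factors of $\phi$ and their pairwise differences, then to fiber-product to $\Sigma$, and finally to perform additional admissible codimension-two blow-ups to separate the linear factors $(z+\chi)$ from the coordinate divisor $\Spec(A/zA)$ and from each other.

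First I would write $\phi=u\prod_{\chi\in\mathcal{X}}(z+\chi)^{a(\chi)}\prod_{x\in P-\{z\}}x^{b(x)}$ as in the definition of $PW(1)$, and set
$$
\phi':=\prod_{\chi\in\mathcal{X}}\chi\cdot\prod_{\{\chi_1,\chi_2\}\subset\mathcal{X},\,\chi_1\neq\chi_2}(\chi_1-\chi_2)\in A'.
$$
Since $\mathcal{X}$ consists of distinct nonzero elements of $M(A')$, we have $\phi'\neq 0$. Apply $(*)$ to $\phi'$ to obtain a weakly admissible composition $\sigma'_0\colon\Sigma'_0\to\Spec(A')$ over $(\Delta',\xi')$ together with an extended pull-back $(\Sigma'_0,\bar{\Delta}'_0,\bar{\xi}'_0)$ satisfying $\Supp(\sigma_0'^*(\Spec(A'/\phi' A')+\Delta'))\subset\Supp(\bar{\Delta}'_0)$. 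Form $\Sigma_0:=\Sigma'_0\times_{\Spec(A')}\Spec(A)$ with projections $\sigma_0\colon\Sigma_0\to\Spec(A)$ and $\pi_0\colon\Sigma_0\to\Sigma'_0$; by Lemma~\ref{pull back blowing-ups}, $\sigma_0$ is weakly admissible over $(\Delta,\xi)$ and the triplet $(\Sigma_0,\bar{\Delta}_0,\bar{\xi}_0)$ with $\bar{\Delta}_0=\pi_0^*\bar{\Delta}'_0+\sigma_0^*\Spec(A/zA)$ is an extended pull-back of $(\Spec(A),\Delta,\xi)$ by $\sigma_0$.

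At any closed point $\A\in\Sigma_0$ with $\sigma_0(\A)=M(A)$, the fact that each $\chi$ and each $\chi_1-\chi_2$ divides $\phi'$, together with the UFD property of the complete regular local ring $\mathcal{O}_{\Sigma'_0,\pi_0(\A)}^c$ and the support inclusion for $\sigma_0'^*(\phi')$, forces each $\sigma_0'^*(\chi)$ and each $\sigma_0'^*(\chi_1-\chi_2)$ to be a unit times a monomial in the local coordinates $\bar{\xi}'_{0,\pi_0(\A)}$. Pulling up, we obtain $\sigma_0^*(z+\chi)=z+u_\chi m_\chi$ in $\mathcal{O}_{\Sigma_0,\A}^c$ with $u_\chi$ a unit and $m_\chi$ a monomial in the pulled-up coordinates. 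When $\deg m_\chi\geq 2$, the smooth hypersurface $\{\sigma_0^*(z+\chi)=0\}$ is tangent to $\sigma_0^*\Spec(A/zA)$ at $\A$, obstructing the addition of its strict transform $D_\chi$ to a normal crossing divisor. I would perform an admissible blow-up of the codimension-two stratum $\sigma_0^*\Spec(A/zA)\cap\Lambda$ for a prime divisor $\Lambda\in\Comp(\bar{\Delta}_0)(\A)$ whose local equation appears in $m_\chi$: a direct two-chart local computation shows that in one chart $\sigma^*(z+\chi)$ factors as (exceptional)$\cdot$(unit), while in the other chart it factors as (exceptional)$\cdot(z'+u'_\chi m'_\chi)$ with $\deg m'_\chi=\deg m_\chi-1$. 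Iterating on the total complexity invariant $\sum_{\chi,\A}\deg m_\chi$ reduces every monomial degree to at most one; an analogous sequence of codimension-two blow-ups then separates pairs $D_{\chi_1},D_{\chi_2}$ that remain mutually tangent, using that $D_{\chi_1}\cap D_{\chi_2}\subset\{\sigma_0^*(\chi_1-\chi_2)=0\}\subset\Supp(\bar{\Delta}_0)$ thanks to the pairwise-difference factors in $\phi'$. Extending the coordinate system at each step to incorporate each resulting $D_\chi$ yields the final $(\Sigma,\bar{\Delta},\bar{\xi})$ with the required inclusion $\Supp(\sigma^*(\Spec(A/\phi A)+\Delta))\subset\Supp(\bar{\Delta})$.

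The main obstacle is the bookkeeping for the iterated blow-ups: one must verify that each blow-up is admissible over the current normal crossing divisor, that the complexity invariant strictly decreases at each step, and that a blow-up performed to resolve tangency for one linear factor $(z+\chi)$ does not inflate the monomial degree for another $(z+\chi_0)$ in either of the two charts; this requires a careful simultaneous analysis of the transformation laws for all the monomials $m_\chi$ under each blow-up.
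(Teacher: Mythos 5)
Your initial reduction matches the paper exactly: you form $\phi'$ as the product of the $\chi$'s together with all pairwise differences $\chi_1-\chi_2$, apply $(*)$ to obtain a weakly admissible composition $\sigma'_0$ over $A'$, and fiber-product with $\Spec(A)$ via Lemma~\ref{pull back blowing-ups}. (One small slip: you include $\chi=0$ in the products, which would force $\phi'=0$ if $0\in\mathcal{X}$; the paper restricts to $\mathcal{X}-\{0\}$ and treats $\mathcal{X}\subset\{0\}$ as a trivial case.) The real gap lies in the iterative blow-up argument that follows.

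The complexity invariant $\sum_{\chi,\A}\deg m_\chi$ is not well-defined: the sum over all closed points $\A$ is infinite, and even a pointwise or supremum version does not obviously decrease under the proposed blow-ups. More seriously, your two-chart computation only covers the favorable case. If the chosen center $\sigma_0^*\Spec(A/zA)\cap\Lambda$ involves a coordinate $y$ that appears in $m_\chi$ but not in some $m_{\chi_0}$ with $\deg m_{\chi_0}\geq 2$, then in the $y$-chart the pull-back of $z+u_{\chi_0}m_{\chi_0}$ becomes $yz'+u_{\chi_0}m_{\chi_0}$, which is not of the prescribed normal form and, worse, defines a hypersurface with a quadratic (or worse) singularity at the origin: the strict transform of a smooth hypersurface under blow-up of a smooth center need \emph{not} remain smooth when the scheme-theoretic intersection of center and hypersurface is non-reduced, which happens precisely here. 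So the inductive hypothesis is destroyed rather than improved. One could try to avoid this by observing that the pairwise-difference factors and Lemma~\ref{BM} force the monomials $m_\chi$ to be totally ordered by divisibility at each closed point, so that choosing $\Lambda$ in the support of the smallest one keeps all strict transforms smooth; but you have neither stated this nor shown that the resulting invariant terminates.

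The paper resolves exactly this obstacle, but with a genuinely different mechanism. Rather than tracking monomial degrees of local defining equations, it formulates four conditions (Z) on a pair (blow-up, divisor data), runs an outer induction on the number $\sharp\Comp(\bar D)-\sharp(\Comp(\bar\Delta)\cap\Comp(\bar D))$ of strict transforms of $\hat\sigma^*\Spec(A/(z+\chi)A)$ not yet absorbed into the ambient normal crossing divisor $\bar\Delta$, and in the inner step uses a lexicographic invariant $(r,s)$ where $r$ is a codimension and $s$ is a sum of \emph{intersection numbers} $(\{\Lambda_0\}\cup\tilde{\mathcal M};\Phi)$; the choice of center $\Psi$ and the decrease of $(r,s)$ are controlled by these intersection numbers rather than by degrees of monomials. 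The intersection-number formulation is precisely what makes the descent rigorous, because it is intrinsic (independent of a choice of normal form for the local equations) and its behaviour under the specific admissible blow-ups is computable via the local presentation $\delta^*\Gamma=\Spec(R/(\tilde z+\tilde y(\Gamma)^{\tilde e(\Gamma)}\tilde u(\Gamma))R)$. Your sketch correctly identifies the geometric picture and the obstacle, but as written it does not supply a well-founded measure or a choice of center that guarantees both the invariant's decrease and the preservation of smoothness of all the transformed divisors.
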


\begin{conjecture}
\label{goal}
Let $(\Sigma_0,\Delta_0,\xi_0)$ be any coordinated normal crossing scheme over $k$, and let $\Phi_0$ be any effective divisor of $\Sigma_0$.
There exist a weakly admissible composition of blowing-ups $\sigma:\Sigma\rightarrow\Sigma_0$ over $(\Delta_0,\xi_0)$ and an extended pull-back $(\Sigma,\bar{\Delta},\bar{\xi})$ of the coordinated normal crossing scheme $(\Sigma_0,\Delta_0,\xi_0)$ by $\sigma$ satisfying $\Supp(\sigma^*(\Phi_0+\Delta_0))\subset \Supp(\bar{\Delta})$.
\end{conjecture}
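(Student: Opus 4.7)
The plan is to prove Conjecture~\ref{goal} by induction on $n = \dim \Sigma_0$, using Theorems~\ref{main}, \ref{erase faces}, \ref{make simple}, \ref{make Weierstrass type}, and \ref{make normal crossings} as the local reduction tools. The base case $n = 1$ is trivial: every effective divisor on a smooth curve has normal crossings, so one takes $\sigma = \Id_{\Sigma_0}$, sets $\bar\Delta = \Delta_0 + (\Phi_0)_{\mathrm{red}}$ (suitably interpreted as an effective divisor), and extends $\xi_0$ by choosing uniformizers at the finitely many new closed points. For the inductive step, observe that the inductive hypothesis is exactly what supplies the local assumption $(*)$ required by Theorems~\ref{make simple}, \ref{make Weierstrass type}, and \ref{make normal crossings} at every closed point of $\Sigma_0$: for a closed point $\A_0$ and a choice of $z$ in the parameter system, the completion $\mathcal{O}_{\Sigma_0,\A_0}^c$ admits a distinguished subring $A'$ of dimension $n$ to which $(*)$ applies.

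At each closed point $\A_0 \in (\Delta_0)_0$ --- a finite set by Lemma~\ref{ncv}, whose open neighborhoods $U(\A_0)$ cover $\Sigma_0$ --- the coordinate system $\xi_{0,\A_0}$ identifies $\mathcal{O}_{\Sigma_0,\A_0}^c$ with $k[[P]]$ for $P = \xi_{0,\A_0}(\Comp(\Delta_0)(\A_0))$, and $\Phi_0$ has a local equation $\phi \in k[[P]]$. After choosing some $z \in P$, I would descend as follows: first apply Theorem~\ref{make Weierstrass type} to move $\phi$ into some $PW(h)$; then for each $h \geq 2$, apply Theorem~\ref{erase faces} to reach $RW(h)$, Theorem~\ref{make simple} to reach either $SW(h)$ or some $PW(g)$ with $g<h$, and, in the $SW(h)$ case, Theorem~\ref{main} to reach $PW(g)$ with $g<h$. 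Iterating, $h$ drops strictly until $\phi \in PW(1)$, at which point Theorem~\ref{make normal crossings} produces normal crossings. Each such local sequence is a weakly admissible composition of blow-ups over $(\Delta_0,\xi_0)$ supported in the affine chart $U(\A_0)$.

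The main obstacle, which the author explicitly defers to forthcoming work, is the global gluing: the local sequence at $\A_0$ depends on the choice of $z \in P$ and on the combinatorics of $\Gamma_+(P,\phi)$, so sequences constructed at different closed points need not agree on overlaps $U(\A_0) \cap U(\A_1)$. The natural remedy is to isolate an intrinsic, chart-independent invariant $\iota : \Sigma_0 \to I$ with values in a well-ordered set $I$, constant on the pieces of a finite locally closed stratification of $\Sigma_0$ by closed irreducible smooth subschemes having normal crossings with $\Delta_0$, such that Theorems~\ref{main}--\ref{make normal crossings} force $\iota$ to decrease strictly at every closed point above the maximum stratum after a single admissible blow-up centered on that stratum. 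Candidate invariants combine a Hironaka-style multiplicity of $\phi$ along the strata of $\Delta_0$ with the characteristic number $c(\Gamma_+(P,\phi))$ and the $z$-height, ordered lexicographically. The delicate step is to verify that the locus where $\iota$ is maximal is itself a smooth closed subscheme meeting $\Delta_0$ transversally, so that the resulting global blow-up is admissible and its pull-back at each closed point coincides with the local blow-up supplied by the five theorems. Once this intrinsic invariant is in hand, termination follows from the well-ordering of $I$ together with the noetherian hypothesis, and the induction closes.
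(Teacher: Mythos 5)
This statement is labeled a \emph{Conjecture} in the paper, and the paper does not prove it: the author states explicitly that the gluing of local blow-ups into a global one is easy only when $\dim\Sigma_0\leq 2$ and is deferred to forthcoming articles for $\dim\Sigma_0\geq 3$. Your proposal mirrors that position faithfully, but it is therefore not a proof. The first two paragraphs of your write-up correctly describe the local reduction cycle already established in Theorems~\ref{make Weierstrass type}, \ref{erase faces}, \ref{make simple}, \ref{main}, and \ref{make normal crossings}, and you are right that the induction on $\dim\Sigma_0$ is what would supply the hypothesis $(*)$ for those theorems at each closed point: Conjecture~\ref{goal} applied to $(\Spec(A'),\Delta',\xi')$ is precisely $(*)$ (and in turn implies the local Problem~\ref{goal one} used in the paper). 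That part of the setup is sound.

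The genuine gap is in your third paragraph, and you yourself concede it. Everything after ``The natural remedy is to isolate an intrinsic, chart-independent invariant $\iota$'' is speculation, not argument. You do not construct $\iota$, do not verify that its maximal locus is a smooth closed subscheme having normal crossings with $\Delta_0$, do not show that a single admissible blow-up along that locus strictly drops $\iota$ at every point above it, and do not check that the resulting global blow-up restricts to the local compositions produced by the five theorems. Each of those is a substantial step: the $z$-height and the characteristic number $c(\Gamma_+(P,\phi))$ both depend on the choice of $z\in P$ and on the identification $\mathcal{O}_{\Sigma_0,\alpha_0}^c\cong k[[P]]$, so proving chart-independence of a lexicographic invariant built from them is exactly the unsolved problem, not a reduction of it. In short, the proposal reproduces the paper's strategy and its open problem; it does not close the gap, and no amount of ``once this invariant is in hand'' can stand in for constructing the invariant. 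The base case $n=1$ is also asserted rather than checked: while any effective divisor on a curve is set-theoretically normal crossing, one still has to verify the paper's five axioms for normal crossing schemes (in particular conditions on $(\bar\Delta)_0$ and the affineness of $U(\alpha)$) and exhibit the required extension of $\xi_0$, which is routine but not literally trivial.
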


Obviously solution of Conjecture~\ref{goal} implies solution of Problem~\ref{goal one}. 

Now, to solve Conjecture~\ref{goal} we have to glue up local blowing-ups obtained by repeated application of theorems in this section, and have to construct global blowing-ups. In case $\dim\Sigma_0\leq 2$ it is easy to glue up them. We would like to solve Conjecture~\ref{goal} in case $\dim\Sigma_0\geq 3$ and would like to write forthcoming articles, cooperating with Professor Heisuke Hironaka and young mathematicians.

\section{Ring theory}
\label{ring}

We explain some basic theory on rings. (Matsumura~\cite{M}, Hironaka et al~\cite{HU81}.)

\begin{lemma}
\label{basic ord}
Let $k$ be any field. Let $A$ be any complete regular local ring such that $\dim A\geq 1$, $A$ contains $k$ as a subring, and the residue field  $A/M(A)$ is isomorphic to $k$ as algebras over $k$. Let $P$ be any parameter system of $A$.
\begin{enumerate}
\item
The ring $A$ is a unique factorization domain.
\item
The parameter system $P$ is a non-empty finite subset of $A$ with $\sharp P=\dim A\in\Z_+$.
For any element $\phi$ of $A$, there exists a unique element $c\in \Map(\Map(P,$\break$\Z_0),k)$ satisfying
$$\phi=\sum_{\Lambda\in \Map(P,\Z_0)} c(\Lambda)\prod_{x\in P}x^{\Lambda(x)}.$$
The infinite sum in the right-hand side is the limit with respect to the $M(A)$-adic topology on $A$.
$P$ is algebraically independent over $k$.
\end{enumerate}

In Section~\ref{concept} we defined the constant term $\phi(0)\in k$ of $\phi$ and the support $\Supp(P,\phi)$ of $\phi$ over $P$ for any $\phi\in A$. 
\begin{enumerate}
\setcounter{enumi}{2}
\item
Consider any $\phi\in A$. $\phi(0)\in k$ and $\phi-\phi(0)\in M(A)$.
$\phi\in M(A)$, if and only if, $\phi(0)=0$.
\item
If $\phi\in A$, $c\in \Map(\Map(P,\Z_0),k)$ and $\phi=\sum_{\Lambda\in \Map(P,\Z_0)} c(\Lambda)\prod_{x\in P}x^{\Lambda(x)}$, then $\phi(0)=c(0)$ and $\Supp(P,\phi)=\Supp(c)=\{\Lambda\in\Map(P,\Z_0)|c(\Lambda)\neq 0\}$.
\item
Consider any $\phi\in A$. $\Supp(P,\phi)\subset\Map(P,\Z_0)$.
$\Supp(P,\phi)=\emptyset$, if and only if $\phi=0$.
$0\not\in \Supp(P,\phi)$, if and only if, $\phi\in M(A)$.
\item
Consider any $\phi\in A$, any $\psi\in A$ and any $\A\in k$ with $\A\neq 0$.
\begin{equation*}
\begin{split}
\Supp(P,-\phi)&=\Supp(P,\phi)=\Supp(P,\A\phi),\\
\Supp(P, \phi+\psi)&\subset\Supp(P, \phi)\cup\Supp(P,\psi),\\
\Supp(P,\phi\psi)&\subset\Supp(P, \phi)+\Supp(P,\psi).
\end{split}
\end{equation*}
\item
The set $\Map(P,\R)$ is a finite dimensional vector space over $\R$ with $\dim\Map(P,$\break$\R)=\sharp P$.
The set $\Map(P,\Z)$ is a lattice of $\Map(P,\R)$.
The set $\Map(P,\R_0)$ is a simplicial cone over $\Map(P,\Z)$ in $\Map(P,\R)$ with $\Vect(\Map(P,\R_0))= \Map(P,\R)$.
$\Map(P,\Z_0)=\Map(P,\R_0)\cap\Map(P,\Z)$.
The set $\Map(P,\Z_0)$ is a semisubgroup of $\Map(P,\R)$ containing $0$.
\end{enumerate}

In Section~\ref{concept} we defined an element $f^P_x\in\Map(P,\Z_0)$ for any $x\in P$.

\begin{enumerate}
\setcounter{enumi}{7}
\item
The set $\{f^P_x|x\in P\}$ is a basis of $\Map(P,\R)$ over $\R$.
It is a basis of $\Map(P,\Z)$ over $\Z$.
$\Map(P,\R_0) =\Convcone(\{f^P_x|x\in P\})=\sum_{x\in P}\R_0f^P_x$.
$\Map(P,\Z_0)=\sum_{x\in P}\Z_0f^P_x$.
\end{enumerate}

We denote the dual basis of $\{f^P_x|x\in P\}$ by $\{f^{P\vee}_x|x\in P\}$.
For any convex cone $S$ in $\Map(P,\R)$ we denote the dual cone $S^\vee|\Map(P,\R)$ in $\Map(P,\R)^*$ simply by $S^\vee$.
\begin{enumerate}
\setcounter{enumi}{8}
\item
The set $\{f^{P\vee}_x|x\in P\}$ is a basis of $\Map(P,\R)^*$ over $\R$.
It is a basis of $\Map(P,\Z)^*$ over $\Z$.
$\Map(P,\R_0)^\vee=\Convcone(\{f^{P\vee}_x|x\in P\})=\sum_{x\in P}\R_0 f^{P\vee}_x$.

\item
Consider any $\phi\in A$ with $\phi\neq 0$ and any $\omega\in\Map(P,\R_0)^\vee$.

$\emptyset\neq\{\langle\omega, \Lambda\rangle|\Lambda\in\Supp(P,\phi)\}\subset\R_0$.
For any $t\in\R_0$, the intersection $\{\langle\omega, \Lambda\rangle|\Lambda\in\Supp(P,\phi)\}\cap\{u\in\R_0|u\leq t\}$ is a finite set.
There exists the minimum element
$\min\{\langle\omega, \Lambda\rangle|\Lambda\in\Supp(P,\phi)\}$ of the subset $\{\langle\omega, \Lambda\rangle|\Lambda\in$\break$\Supp(P,\phi)\}$ of $\R_0$, and $\min\{\langle\omega, \Lambda\rangle|\Lambda\in\Supp(P,\phi)\}\in\R_0$.
\end{enumerate}

According to definitions in Section~\ref{concept}, for any $\phi\in A$ with $\phi\neq 0$ and any $\omega\in\Map(P,\R_0)^\vee$
\begin{equation*}
\begin{split}
\Ord(P,\omega,\phi)=&\min\{\langle\omega, \Lambda\rangle|\Lambda\in\Supp(P,\phi)\}\in\R_0,\\
\Supp(P,\omega,\phi)=&\{\Lambda\in\Supp(P,\phi)|\langle\omega,\Lambda\rangle=\Ord(P,\omega,\phi)\}\subset\Supp(P,\phi),\\
\In(P,\omega,\phi)=&\sum_{\Lambda\in\Supp(P,\omega,\phi)} c(\Lambda)\prod_{x\in P}x^{\Lambda(x)}\in A,
\end{split}
\end{equation*}
where  $c\in\Map(\Map(P,\Z_0),k)$ is the unique element satisfying $\phi=\sum_{\Lambda\in \Map(P,\Z_0)} c(\Lambda)$\break$\prod_{x\in P}x^{\Lambda(x)}$.

Furthermore, for any $\omega\in\Map(P,\R_0)^\vee$,
$\Ord(P,\omega,0)=\infty$ and $\In(P,\omega,0)=0\in A$,
where $\infty$ is a symbol satisfying $\infty+\infty=\infty$, $\infty+t=t+\infty=\infty$, $t\leq\infty$, $\infty\geq t$, $t<\infty$, $\infty>t$, $t\neq\infty$ and $\infty\neq t$ for any $t\in\R$.
\begin{enumerate}
\setcounter{enumi}{10}
\item
Consider any $\phi\in A$ and any $\omega\in\Map(P,\R_0)^\vee$.

$\In(P,\omega,\In(P,\omega,\phi))=\In(P,\omega,\phi)$.
$\Ord(P,\omega,\In(P,\omega,\phi))=\Ord(P,\omega,\phi)$.\hfill\break
$\Ord(P,\omega, \phi-\In(P,\omega,\phi))>\Ord(P,\omega,\phi)$, if $\phi\neq 0$.

If $\psi\in A$, $\In(P,\omega, \psi)=\psi$, and $\Ord(P,\omega, \phi-\psi)>\Ord(P,\omega,\phi)$, then $\psi\neq 0$, $\phi\neq 0$ and $\psi=\In(P,\omega,\phi)$.
\item
Consider any $\phi\in A$ and any $\omega\in\Map(P,\R_0)^\vee$.

If $\phi\in A^\times$, then $\Ord(P,\omega,\phi)=0$.

If $\Ord(P,\omega, \phi)=0$ and $\langle\omega, f^P_x\rangle>0$ for any $x\in P$, then $\phi\in A^\times$ and $\In(P,\omega, \phi)=\phi(0)$.
\item
Consider any $\phi\in A$, any $\omega\in\Map(P,\R_0)^\vee$ and any $\A\in k$ with $\A\neq 0$.
$\Ord(P,\omega,-\phi)= \Ord(P,\omega,\phi)$. $\In(P,\omega,-\phi)= -\In(P,\omega,\phi)$. \hfill\break$\Ord(P,\omega,\A\phi)= \Ord(P,\omega,\phi)$. $\In(P,\omega,\A\phi)= \A\In(P,\omega,\phi)$.

\item
Consider any $\phi\in A$, any $\psi\in A$ and any $\omega\in\Map(P,\R_0)^\vee$.
$$\Ord(P,\omega, \phi+\psi)\geq\min\{\Ord(P,\omega,\phi), \Ord(P,\omega,\psi)\}.$$

$\Ord(P,\omega, \phi+\psi)=\min\{\Ord(P,\omega,\phi), \Ord(P,\omega,\psi)\}$, if and only if, \hfill\break$\Ord(P,\omega,\phi)\neq\Ord(P,\omega,\psi)$ or $\In(P,\omega,\phi)+ \In(P,\omega,\psi)\neq 0$.

If $\Ord(P,\omega,\phi)<\Ord(P,\omega,\psi)$, then $\In(P,\omega, \phi+\psi)=\In(P,\omega,\phi)$.

If $\Ord(P,\omega,\phi)>\Ord(P,\omega,\psi)$, then $\In(P,\omega, \phi+\psi)=\In(P,\omega,\psi)$.

If $\Ord(P,\omega,\phi)=\Ord(P,\omega,\psi)$ and $\In(P,\omega,\phi)+ \In(P,\omega,\psi)\neq 0$, then \hfill\break $\In(P,\omega, \phi+\psi)=\In(P,\omega,\phi)+ \In(P,\omega,\psi)$.
\item
Consider any $\phi\in A$, any $\psi\in A$ and any $\omega\in\Map(P,\R_0)^\vee$.
\begin{equation*}
\begin{split}
\Ord(P,\omega, \phi\psi)=&\Ord(P,\omega,\phi)+\Ord(P,\omega,\psi).\\
\In(P,\omega, \phi\psi)=&\In(P,\omega,\phi)\In(P,\omega,\psi).
\end{split}
\end{equation*}
\item
Consider any parameter system $\bar{P}$ of $A$ and any bijective mapping $\sigma:P\rightarrow \bar{P}$.

Take the unique isomorphism $\rho:A\rightarrow A$ of $k$-algebras satisfying $\rho(x)=\sigma(x)$ for any $x\in P$.

Note that $\sigma$ induces an isomorphism $\sigma^*:\Map(\bar{P},\R)\rightarrow \Map(P,\R)$ of vector spaces over $\R$, and it induces an isomorphism $(\sigma^*)^*:\Map(P,\R)^*\rightarrow \Map(\bar{P},\R)^*$ of vector spaces over $\R$. 
$(\sigma^*)^*(\Map(P,\R_0)^\vee)= \Map(\bar{P},\R_0)^\vee$\break$|\Map(\bar{P},\R)$.

Consider any $\omega\in\Map(P,\R_0)^\vee$.

If $\Ord(P, \omega, \sigma(x))=\Ord(P, \omega, x)$ for any $x\in P$, then, $\Ord(P,\omega,\phi)=\Ord(\bar{P}, (\sigma^*)^*(\omega), \phi)$ and
$\rho(\In(P,\omega,\phi))=\In(\bar{P}, (\sigma^*)^*(\omega), \phi)$ for any $\phi\in A$.
\item
Consider any $\phi\in A$ with $\phi\neq 0$, any $\omega\in\Map(P,\R_0)^\vee$, any $\chi\in\Map(P,\R_0)^\vee$ and any $t\in\R$ with $0\leq t\leq 1$.
$(1-t)\omega+t\chi\in\Map(P,\R_0)^\vee$ and
$\Ord(P,(1-t)\omega+t\chi, \phi)\geq(1-t)\Ord(P,\omega,\phi)+t\Ord(P,\chi,\phi)$.
\end{enumerate}
\end{lemma}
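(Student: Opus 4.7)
The plan is to dispose of the seventeen items in order, treating most as direct bookkeeping from definitions and isolating the two or three spots where a genuine argument is required. Item~1 is Matsumura's result that a regular local ring is a UFD. Item~2 is the content of the Cohen structure theorem together with completeness: since $A$ is complete, regular, contains $k$, and has residue field $k$, the standard argument produces the unique representation $\phi=\sum_\Lambda c(\Lambda)\prod_x x^{\Lambda(x)}$; algebraic independence of $P$ over $k$ follows because a non-trivial polynomial relation among the $x\in P$ would contradict uniqueness. Items~3--6 are then immediate: $\phi(0)$ is the $\Lambda=0$ coefficient, membership in $M(A)$ is equivalent to $\phi(0)=0$, and the support inclusions for $-\phi$, $\alpha\phi$, $\phi+\psi$, $\phi\psi$ come directly from how the expansion behaves under these operations.

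For items~7--9 I would simply observe that $\Map(P,\R)\cong\R^{\sharp P}$ with the $\{f^P_x\}_{x\in P}$ as the obvious basis, that $\Map(P,\Z)$ is the induced $\Z$-lattice, and that the non-negative cone is the first orthant with dual cone spanned by the dual basis. These are linear algebra statements, and the definitions of simplicial cone and dual cone from Section~\ref{concept} make them tautological once the basis is in hand.

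Item~10 is the first real point: I need that $\{\langle\omega,\Lambda\rangle:\Lambda\in\Supp(P,\phi)\}$ has a minimum attained in $\R_0$ and is finite in each interval $[0,t]$. Writing $\omega=\sum_{x\in P}\omega(x) f^{P\vee}_x$ with $\omega(x)\in\R_0$ (by $\omega\in\Map(P,\R_0)^\vee$ and item~9), for $\Lambda\in\Map(P,\Z_0)$ we have $\langle\omega,\Lambda\rangle=\sum_{x\in P}\omega(x)\Lambda(x)\geq 0$. The set $\{\Lambda\in\Map(P,\Z_0):\langle\omega,\Lambda\rangle\leq t\}$ is \emph{not} finite in general (coordinates $x$ with $\omega(x)=0$ are unconstrained), but its image under $\langle\omega,\cdot\rangle$ is a finitely generated subsemigroup of $\R_0$ and hence is well-ordered; intersecting with $\Supp(P,\phi)\subset\Map(P,\Z_0)$ and taking the image gives a well-ordered subset of $\R_0$, so the infimum is attained.

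Items~11 and~13 are immediate from the definitions, and item~12 follows from the fact that an element of $A$ is invertible iff its constant term is non-zero, together with the observation that when every $\langle\omega,f^P_x\rangle>0$ the only $\Lambda\in\Map(P,\Z_0)$ with $\langle\omega,\Lambda\rangle=0$ is $\Lambda=0$. Item~14 is the standard argument that $\Ord(\phi+\psi)\geq\min\{\Ord\phi,\Ord\psi\}$ with equality unless the initial sums cancel; separating the three cases according to whether $\Ord\phi$ is less than, greater than, or equal to $\Ord\psi$ gives the formulas for $\In(\phi+\psi)$. Item~15, the multiplicativity of $\Ord$ and $\In$, is the main technical point: writing both $\phi$ and $\psi$ as their initial sums plus higher order terms and multiplying out, the cross terms strictly raise the $\omega$-order by item~14 (using that $A$ is a domain so the product of the initial sums is nonzero precisely when both factors are, which in turn requires the lower bound $\omega(x)\geq 0$ to rule out cancellation in the semigroup of exponents). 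Items~16 and~17 are then routine: item~16 is the observation that relabelling the parameter system by $\sigma$ intertwines the expansions, and item~17 follows from the linearity of $\langle\cdot,\Lambda\rangle$ in the first argument together with the inequality $\min(a+b)\geq \min a + \min b$ when minima are taken over the same index set. The only steps requiring real care are the well-ordering argument in item~10 and the non-cancellation of leading terms in item~15; everything else reduces to unwinding definitions.
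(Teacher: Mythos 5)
Your proposal is correct and its overall structure mirrors the paper's own proof: the paper itself only writes out arguments for items~10 and~15 and dismisses the rest as routine, and your treatment of~10 (projecting away the coordinates $x$ with $\langle\omega, f^P_x\rangle = 0$ and bounding the remaining exponents by $t$) is the same argument as the paper's, just phrased via well-ordering rather than via an explicit count. The interesting divergence is in item~15. For the reverse inequality $\Ord(P,\omega,\phi\psi)\leq\Ord(P,\omega,\phi)+\Ord(P,\omega,\psi)$, the paper sets up a lexicographic order on $\Map(\bar P,\R)$ (where $\bar P=\{x\in P:\langle\omega,f^P_x\rangle>0\}$), extracts the lexicographic minima $\bar\Gamma_0,\bar\Delta_0$ of the $\omega$-minimal supports of $\phi$ and $\psi$, and shows the single coefficient of $\phi\psi$ at $\bar\Gamma_0+\bar\Delta_0$ equals $\bar c(\bar\Gamma_0)\bar d(\bar\Delta_0)\neq 0$. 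That nonvanishing still invokes integrality of the coefficient ring $B$, so the paper has not avoided the domain property; it has merely localized its use to one monomial. You bypass that machinery by observing directly that $\In\phi\cdot\In\psi\neq 0$ since $A$ is a domain (claim~1 is already in the lemma), that its support lives in $\Supp(P,\omega,\phi)+\Supp(P,\omega,\psi)$ on which $\langle\omega,\cdot\rangle$ is constant equal to $\Ord\phi+\Ord\psi$, and that the remaining three cross-terms have strictly larger $\omega$-order. This is genuinely shorter and cleaner, at the cost of using the domain property globally rather than on one coefficient; both routes are valid.

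Two small imprecisions worth correcting in a full write-up. First, the claim that ``the cross terms strictly raise the $\omega$-order by item~14'' misattributes the step: what raises the order of $\In\phi\cdot(\psi-\In\psi)$ and the other two cross terms is the easy inequality $\Ord(fg)\geq\Ord f+\Ord g$, which follows from the support inclusion of item~6 combined with item~11, not from item~14 (item~14 concerns sums and is what you apply \emph{after} the orders of the four summands are established). Second, the parenthetical ``which in turn requires the lower bound $\omega(x)\geq 0$ to rule out cancellation in the semigroup of exponents'' is muddled: the non-negativity of $\omega$ is what guarantees $\Ord$ and $\In$ are well-defined (item~10), but it plays no role in the nonvanishing of $\In\phi\cdot\In\psi$, which is purely the domain property of $A$. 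Neither slip affects the validity of the argument.
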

\begin{proof}
See Matsumura~\cite{M} for the proofs of claim 1 and 2.
Below we give the proof only to claim 10 and 15. (Hironaka et al~\cite{HU81}.)
Other claims follow easily.

\noindent 10.
Consider any $\phi\in A$ with $\phi\neq 0$ and any $\omega\in\Map(P,\R_0)^\vee$.

By 5 $\emptyset\neq\Supp(P,\phi)\subset\Map(P,\Z_0)\subset\Map(P,\R_0)$.
Since $\omega\in\Map(P,\R_0)^\vee$, we know $\emptyset\neq\{\langle\omega,\Lambda\rangle|\Lambda\in\Supp(P,\phi)\}\subset\R_0$.

Put $\bar{P}=\{x\in P|\langle\omega, f^P_x\rangle\neq 0\}$.
$\bar{P}\subset P$.
Let $\pi:\Map(P,\R)\rightarrow\Map(\bar{P},\R)$ denote the surjective homomorphism of vector spaces over $\R$ induced by the inclusion mapping $\bar{P}\rightarrow P$.
The dual homomorphism $\pi^*:\Map(\bar{P},\R)^*\rightarrow\Map(P,\R)^*$ is injective.
Since $\omega=\sum_{x\in\bar{P}}\langle\omega, f^P_x\rangle f^{P\vee}_x$,
there exists a unique element $\bar{\omega}\in\Map(\bar{P},\R)^*$ with $\omega=\pi^*(\bar{\omega})$.
We take the unique element $\bar{\omega}\in\Map(\bar{P},\R)^*$ with $\omega=\pi^*(\bar{\omega})$.
Since $\omega\in\Map(P,\R_0)^\vee$, $\langle\bar{\omega},f^{\bar{P}}_x\rangle=\langle\omega, f^P_x\rangle>0$ for any $x\in\bar{P}$.

Consider any $t\in\R_0$.
If $\bar{P}=\emptyset$, then $\omega=0$.
If $\omega=0$, then $\{\langle\omega,\Lambda\rangle|\Lambda\in\Supp(P,\phi)\}=\{0\}$, and 
$\{\langle\omega,\Lambda\rangle|\Lambda\in\Supp(P,\phi)\}\cap\{u\in\R_0|u\leq t\}=\{0\}$ is a finite set.

Below we assume $\omega\neq 0$. $\bar{P}\neq\emptyset$.
Put $e=\min\{\langle\bar{\omega},f^{\bar{P}}_x\rangle|x\in\bar{P}\}\in\R_+$.
We know 
$\pi(\Supp(P,\phi))\subset\pi(\Map(P,\Z_0))=\Map(\bar{P},\Z_0)$, and
$\{\langle\omega,\Lambda\rangle|\Lambda\in\Supp(P,\phi)\}=\{\langle\bar{\omega},\bar{\Lambda}\rangle|
\bar{\Lambda}\in\pi(\Supp(P,\phi))\}\subset \{\langle\bar{\omega},\bar{\Lambda}\rangle|
\bar{\Lambda}\in\Map(\bar{P},\Z_0)\}$.

Assume $\bar{\Lambda}\in\Map(\bar{P},\Z_0)$ and $\langle\bar{\omega},\bar{\Lambda}\rangle\leq t$.
We have
$t\geq\langle\bar{\omega},\bar{\Lambda}\rangle=\sum_{x\in\bar{P}}\langle\bar{\omega},f^{\bar{P}}_x\rangle
$\hfill\break$
\langle f^{\bar{P}\vee}_x,\bar{\Lambda}\rangle\geq e\sum_{x\in\bar{P}}\langle f^{\bar{P}\vee}_x,\bar{\Lambda}\rangle$.

We know
$\{\langle\omega,\Lambda\rangle|\Lambda\in\Supp(P,\phi)\}\cap\{u\in\R_0|u\leq t\}\subset
\{\langle\bar{\omega},\bar{\Lambda}\rangle|\bar{\Lambda}\in\Map(\bar{P},\Z_0),
$\hfill\break$
\sum_{x\in\bar{P}}\langle f^{\bar{P}\vee}_x,\bar{\Lambda}\rangle\leq t/e\}$.
Since $\{\bar{\Lambda}\in\Map(\bar{P},\Z_0)| \sum_{x\in\bar{P}}\langle f^{\bar{P}\vee}_x,\bar{\Lambda}\rangle\leq t/e\}$ is a finite set, we know that $\{\langle\omega,\Lambda\rangle|\Lambda\in\Supp(P,\phi)\}\cap\{u\in\R_0|u\leq t\}$ is a finite set.

Since $\{\langle\omega,\Lambda\rangle|\Lambda\in\Supp(P,\phi)\}\cap\{u\in\R_0|u\leq t\}$ is finite for any $t\in\R_0$, we know that the minimum element $\min\{\langle\omega,\Lambda\rangle|\Lambda\in\Supp(P,\phi)\}$ of $\{\langle\omega,\Lambda\rangle|\Lambda\in\Supp(P,\phi)\}$ exists and $\min\{\langle\omega,\Lambda\rangle|\Lambda\in\Supp(P,\phi)\}\in\R_0$.

\noindent 15.
Consider any $\omega\in\Map(P,\R_0)^\vee$.

Put $\bar{P}=\{x\in P|\langle\omega, f^P_x\rangle\neq 0\}$.
$\bar{P}\subset P$.
Let $\pi:\Map(P,\R)\rightarrow\Map(\bar{P},\R)$ denote the surjective homomorphism of vector spaces over $\R$ induced by the inclusion mapping $\bar{P}\rightarrow P$.
The dual homomorphism $\pi^*:\Map(\bar{P},\R)^*\rightarrow\Map(P,\R)^*$ is injective.
We take the unique element $\bar{\omega}\in\Map(\bar{P},\R)^*$ with $\omega=\pi^*(\bar{\omega})$.
Since $\omega\in\Map(P,\R_0)^\vee$, $\langle\bar{\omega},f^{\bar{P}}_x\rangle=\langle\omega, f^P_x\rangle>0$ for any $x\in\bar{P}$.
For any $t\in\R_0$, sets $\{\bar{\Lambda}\in\Map(\bar{P},\Z_0)|\langle\bar{\omega},\bar{\Lambda}\rangle\leq t\}$ and 
$\{\bar{\Lambda}\in\Map(\bar{P},\Z_0)|\langle\bar{\omega},\bar{\Lambda}\rangle=t\}$ are finite.

We define a total order called the \emph{lexicographic order} on the vector space $\Map(\bar{P},$\break$\R)$.
Let $r=\sharp\bar{P}\in\Z_0$.
Take any bijective mapping $x:\{1,2,\ldots, r\}\rightarrow\bar{P}$.
Let $\bar{\Lambda}\in\Map(\bar{P},\R)$ and $\bar{\Gamma}\in\Map(\bar{P},\R)$ be arbitrary elements.
We write $\bar{\Lambda}<\bar{\Gamma}$ or $\bar{\Gamma}>\bar{\Lambda}$, if there exists $i\in\{1,2,\ldots,r\}$ such that $\langle f^{\bar{P}}_{x(j)}, \bar{\Gamma}-\bar{\Lambda}\rangle=0$ for any $j\in\{1,2,\ldots,i-1\}$ and $\langle f^{\bar{P}}_{x(i)}, \bar{\Gamma}-\bar{\Lambda}\rangle>0$.
We write  $\bar{\Lambda}\leq\bar{\Gamma}$ or $\bar{\Gamma}\geq\bar{\Lambda}$, if
$\bar{\Lambda}<\bar{\Gamma}$ or $\bar{\Lambda}=\bar{\Gamma}$.
We know that the relation $\leq$ is a total order on the abelian group $\Map(\bar{P},\R)$, in other words, the following five conditions hold for any $\bar{\Lambda}\in\Map(\bar{P},\R)$, any $\bar{\Gamma}\in\Map(\bar{P},\R)$ and any $\bar{\Delta}\in\Map(\bar{P},\R)$.
\begin{enumerate}
\item
$\bar{\Lambda}\leq\bar{\Gamma}$ or $\bar{\Gamma}\leq\bar{\Lambda}$.
\item
If $\bar{\Lambda}\leq\bar{\Gamma}$ and $\bar{\Gamma}\leq\bar{\Delta}$,
then $\bar{\Lambda}\leq\bar{\Delta}$.
\item
If $\bar{\Lambda}\leq\bar{\Gamma}$ and $\bar{\Gamma}\leq\bar{\Lambda}$,
then $\bar{\Lambda}=\bar{\Gamma}$.
\item
If $\bar{\Lambda}\leq\bar{\Gamma}$, then $\bar{\Lambda}+\bar{\Delta}\leq\bar{\Gamma}+\bar{\Delta}$.
\item
$\bar{\Lambda}<\bar{\Gamma}$, if and only if, $\bar{\Lambda}\leq\bar{\Gamma}$ and $\bar{\Lambda}\neq\bar{\Gamma}$.
\end{enumerate}

Consider any $\bar{\Lambda}\in\Map(\bar{P},\Z_0)$.
We denote
\begin{equation*}
\begin{split}
\delta(\bar{\Lambda})=\{(\bar{\Gamma}, \bar{\Delta})|&\bar{\Gamma}\in\Map(\bar{P},\Z_0), \bar{\Delta}\in\Map(\bar{P},\Z_0),
\bar{\Gamma}+\bar{\Delta}=\bar{\Lambda}\}\\
&\quad\subset \Map(\bar{P},\Z_0)\times\Map(\bar{P},\Z_0).
\end{split}
\end{equation*}
We know that $\delta(\bar{\Lambda})$ is a non-empty finite subset of $\Map(\bar{P},\Z_0)\times\Map(\bar{P},\Z_0)$.

Let $B$ denote the completion of $k[P-\bar{P}]$ with respect to the prime ideal $k[P-\bar{P}]\cap M(A)$.
$B$ and $B[\bar{P}]$ are subrings of $A$.
$B$ is a complete regular local ring such that $\dim B\leq\dim A$, $B$ contains $k$ as a subring, and the residue field  $B/M(B)$ is isomorphic to $k$ as algebras over $k$.
$P-\bar{P}$ is a parameter system of $B$.
$B$ is an integral domain.
$\bar{P}$ is algebraically independent over $B$.
The completion of $B[\bar{P}]$ with respect to the prime ideal $\bar{P} B[\bar{P}]$ is equal to $A$.

Consider any $\phi\in A$.
By 2 we know that there exists uniquely an element $\bar{c}\in\Map(\Map(\bar{P},\Z_0), B)$ satisfying
$$\phi=\sum_{\bar{\Lambda}\in\Map(\bar{P},\Z_0)} \bar{c}(\bar{\Lambda})\prod_{x\in\bar{P}}x^{\bar{\Lambda}(x)}.$$
We take the unique element $\bar{c}\in\Map(\Map(\bar{P},\Z_0), B)$ satisfying the above equality.
We denote
$$\Supp(\bar{P},\phi)=\Supp(\bar{c})
=\{\bar{\Lambda}\in\Map(\bar{P},\Z_0)| \bar{c}(\bar{\Lambda})\neq 0\}\subset\Map(\bar{P},\Z_0).$$

We know that the following claims hold:
\begin{enumerate}
\item
$\Supp(\bar{P},\phi)=\pi(\Supp(P,\phi))$.
\item
$\{\langle\bar{\omega},\bar{\Lambda}\rangle|\bar{\Lambda}\in\Supp(\bar{P},\phi)\}\subset\R_0$.
\item
$\{\langle\bar{\omega},\bar{\Lambda}\rangle|\bar{\Lambda}\in\Supp(\bar{P},\phi)\}=\emptyset$, if and only if, $\phi=0$.
\item
If $\phi\neq 0$, then 
the minimum element $\min\{\langle\bar{\omega},\bar{\Lambda}\rangle|\bar{\Lambda}\in\Supp(\bar{P},\phi)\}$ of the non-empty subset $\{\langle\bar{\omega},\bar{\Lambda}\rangle|\bar{\Lambda}\in\Supp(\bar{P},\phi)\}$ of $\R_0$ exists.
\item
If $\phi\neq 0$, then 
$\Ord(P,\omega,\phi)= \min\{\langle\bar{\omega},\bar{\Lambda}\rangle|\bar{\Lambda}\in\Supp(\bar{P},\phi)\}$.
\end{enumerate}

Consider any $\phi\in A$ and any $\psi\in A$.
We would like to show the equality $\Ord(P,\omega,\phi\psi)=\Ord(P,\omega,\phi)+\Ord(P,\omega,\psi)$.

If $\phi=0$ or $\psi=0$, then we have $\phi\psi=0$, $\Ord(P,\omega,\phi)=\infty$ or $\Ord(P,\omega,\psi)=\infty$, and $\Ord(P,\omega,\phi\psi)=\infty=\Ord(P,\omega,\phi)+\Ord(P,\omega,\psi)$.

Below we assume $\phi\neq 0$ and $\psi\neq 0$.

Take elements $\bar{c}\in\Map(\Map(\bar{P},\Z_0), B)$, $\bar{d}\in\Map(\Map(\bar{P},\Z_0), B)$, and
$\bar{e}\in\Map(\Map(\bar{P},\Z_0), B)$ satisfying
\begin{equation*}
\begin{split}
\phi=&\sum_{\bar{\Lambda}\in\Map(\bar{P},\Z_0)} \bar{c}(\bar{\Lambda})\prod_{x\in\bar{P}}x^{\bar{\Lambda}(x)},\\
\psi=&\sum_{\bar{\Lambda}\in\Map(\bar{P},\Z_0)} \bar{d}(\bar{\Lambda})\prod_{x\in\bar{P}}x^{\bar{\Lambda}(x)},\\
\phi\psi=&\sum_{\bar{\Lambda}\in\Map(\bar{P},\Z_0)} \bar{e}(\bar{\Lambda})\prod_{x\in\bar{P}}x^{\bar{\Lambda}(x)}.
\end{split}
\end{equation*}

We know that $\bar{e}(\bar{\Lambda})=
\sum_{(\bar{\Gamma},\bar{\Delta})\in\delta(\bar{\Lambda})}\bar{c}(\bar{\Gamma})\bar{d}(\bar{\Delta})$ for any $\bar{\Lambda}\in\Map(\bar{P},\Z_0)$.

If $\phi\psi=0$, then $\Ord(P,\omega,\phi\psi)=\infty\geq\Ord(P,\omega,\phi)+ \Ord(P,\omega,\psi)$. We consider the case $\phi\psi\neq 0$.

We take any $\bar{\Lambda}\in\Supp(\bar{P},\phi)$ with $\Ord(P,\omega,\phi\psi)=\langle\bar{\omega},\bar{\Lambda}\rangle$.
$0\neq \bar{e}(\bar{\Lambda})=$\hfill\break$
\sum_{(\bar{\Gamma},\bar{\Delta})\in\delta(\bar{\Lambda})}\bar{c}(\bar{\Gamma})\bar{d}(\bar{\Delta})$.
We know that there exists $(\bar{\Gamma},\bar{\Delta})\in\delta(\bar{\Lambda})$ satisfying $\bar{c}(\bar{\Gamma})\neq 0$ and $\bar{d}(\bar{\Delta})\neq 0$.
We take an element $(\bar{\Gamma},\bar{\Delta})\in\delta(\bar{\Lambda})$ satisfying $\bar{c}(\bar{\Gamma})\neq 0$ and $\bar{d}(\bar{\Delta})\neq 0$.
We know $\bar{\Lambda}=\bar{\Gamma}+\bar{\Delta}$, $\bar{\Lambda}\in\Supp(\bar{P},\phi)$, $\bar{\Delta}\in\Supp(\bar{P},\psi)$,
$\langle\bar{\omega},\bar{\Lambda}\rangle\geq\Ord(P,\omega,\phi)$, and $\langle\bar{\omega},\bar{\Delta}\rangle\geq\Ord(P,\omega,\psi)$.
Therefore,
$\Ord(P,\omega,\phi\psi)=\langle\bar{\omega},\bar{\Lambda}\rangle
=\langle\bar{\omega},\bar{\Gamma}+\bar{\Delta}\rangle
=\langle\bar{\omega},\bar{\Gamma}\rangle+\langle\bar{\omega},\bar{\Delta}\rangle
\geq\Ord(P,\omega,\phi)+ \Ord(P,\omega,\psi)$.

We know $\Ord(P,\omega,\phi\psi) \geq\Ord(P,\omega,\phi)+ \Ord(P,\omega,\psi)$.

Now, the set $\{\bar{\Gamma}\in\Supp(\bar{P},\phi)|\langle\bar{\omega},\bar{\Gamma}\rangle=\Ord(P,\omega,\phi)\}$ is a non-empty finite subset of $\Supp(\bar{P},\phi)$. 
Let $\bar{\Gamma}_0$ denote the minimum element in this set with respect to the lexicographic order $\leq$.
$\bar{\Gamma}_0\in\Supp(\bar{P},\phi)$
$\langle\bar{\omega},\bar{\Gamma}_0\rangle=\Ord(P,\omega,\phi)$.
If $\bar{\Gamma}\in\Supp(\bar{P},\phi)$ and $\langle\bar{\omega},\bar{\Gamma}\rangle=\Ord(P,\omega,\phi)$, then $\bar{\Gamma}_0\leq\bar{\Gamma}$.
The set $\{\bar{\Delta}\in\Supp(\bar{P},\psi)|\langle\bar{\omega},\bar{\Delta}\rangle=\Ord(P,\omega,\psi)\}$ is a non-empty finite subset of $\Supp(\bar{P},\psi)$. 
Let $\bar{\Delta}_0$ denote the minimum element in this set with respect to the lexicographic order $\leq$.
$\bar{\Delta}_0\in\Supp(\bar{P},\psi)$
$\langle\bar{\omega},\bar{\Delta}_0\rangle=\Ord(P,\omega,\psi)$.
If $\bar{\Delta}\in\Supp(\bar{P},\psi)$ and $\langle\bar{\omega},\bar{\Delta}\rangle=\Ord(P,\omega,\psi)$, then $\bar{\Delta}_0\leq\bar{\Delta}$.

Let $\bar{\Lambda}_0=\bar{\Gamma}_0+\bar{\Delta}_0\in\Map(\bar{P},\Z_0)$.

$\bar{e}(\bar{\Lambda}_0)=
\sum_{(\bar{\Gamma},\bar{\Delta})\in\delta(\bar{\Lambda}_0)}\bar{c}(\bar{\Gamma})\bar{d}(\bar{\Delta})$.
$(\bar{\Gamma}_0,\bar{\Delta}_0)\in\delta(\bar{\Lambda}_0)$, $\bar{c}(\bar{\Gamma}_0)\neq 0$, and $\bar{d}(\bar{\Delta}_0)\neq 0$.

Consider any $(\bar{\Gamma},\bar{\Delta})\in\delta(\bar{\Lambda}_0)$ satisfying 
$\bar{c}(\bar{\Gamma})\neq 0$, and $\bar{d}(\bar{\Delta})\neq 0$.

We know $\bar{\Gamma}\in\Supp(\bar{P},\phi)$, $\bar{\Delta}\in\Supp(\bar{P},\psi)$,
$\langle\bar{\omega},\bar{\Gamma}\rangle\geq\Ord(P,\omega,\phi)$,
$\langle\bar{\omega},\bar{\Delta}\rangle\geq\Ord(P,\omega,\psi)$, and
$\bar{\Gamma}+\bar{\Delta}=\bar{\Lambda}_0=\bar{\Gamma}_0+\bar{\Delta}_0$.

Therefore, $\langle\bar{\omega},\bar{\Gamma}\rangle+\langle\bar{\omega},\bar{\Delta}\rangle
=\langle\bar{\omega},\bar{\Gamma}+\bar{\Delta}\rangle
=\langle\bar{\omega},\bar{\Gamma}_0+\bar{\Delta}_0\rangle
=\langle\bar{\omega},\bar{\Gamma}_0\rangle+\langle\bar{\omega},\bar{\Delta}_0\rangle
=\Ord(P,\omega,\phi)+\Ord(P,\omega,\psi)$, and
$\langle\bar{\omega},\bar{\Gamma}\rangle=\Ord(P,\omega,\phi)$,
$\langle\bar{\omega},\bar{\Delta}\rangle=\Ord(P,\omega,\psi)$.

We know $\bar{\Gamma}\geq\bar{\Gamma}_0$ and $\bar{\Delta}\geq\bar{\Delta}_0$.
Since $\bar{\Gamma}=\bar{\Gamma}+\bar{\Delta}-\bar{\Delta}=\bar{\Gamma}_0+\bar{\Delta}_0-\bar{\Delta}\leq\bar{\Gamma}_0+\bar{\Delta}-\bar{\Delta}=\bar{\Gamma}_0$, we know $\bar{\Gamma}=\bar{\Gamma}_0$.
Therefore, $\bar{\Delta}=\bar{\Gamma}+\bar{\Delta}-\bar{\Gamma}
=\bar{\Gamma}_0+\bar{\Delta}_0-\bar{\Gamma}_0=\bar{\Delta}_0$.

We conclude $\bar{\Gamma}=\bar{\Gamma}_0$ and $\bar{\Delta}=\bar{\Delta}_0$.

We know $\{(\bar{\Gamma},\bar{\Delta})\in\delta(\bar{\Lambda}_0)|\bar{c}(\bar{\Gamma})\neq 0, \bar{d}(\bar{\Delta})\neq 0\}=\{(\bar{\Gamma}_0,\bar{\Delta}_0)\}$, 
$\bar{e}(\bar{\Lambda}_0)= $\hfill\break$\sum_{(\bar{\Gamma},\bar{\Delta})\in\delta(\bar{\Lambda}_0)} \bar{c}(\bar{\Gamma})\bar{d}(\bar{\Delta})=\bar{c}(\bar{\Gamma}_0) \bar{d}(\bar{\Delta}_0)\neq 0$, and
$\bar{\Lambda}_0\in\Supp(\bar{P},\phi\psi)$.
Therefore,
$\Ord(P,\omega, \phi\psi)\leq\langle\bar{\omega},\bar{\Lambda}_0\rangle
=\langle\bar{\omega},\bar{\Gamma}_0+\bar{\Delta}_0\rangle
=\langle\bar{\omega},\bar{\Gamma}_0\rangle+\langle\bar{\omega},\bar{\Delta}_0\rangle
=\Ord(P,\omega,\phi)+ \Ord(P,\omega,\psi)$.

We conclude $\Ord(P,\omega, \phi\psi)\leq\Ord(P,\omega,\phi)+ \Ord(P,\omega,\psi)$ and $\Ord(P,\omega, \phi\psi)=\Ord(P,\omega,\phi)+ \Ord(P,\omega,\psi)$.

We know that for any $\phi\in A$, any $\psi\in A$ and any $\omega\in\Map(P,\R_0)^\vee$, the equality $\Ord(P,\omega, \phi\psi)=\Ord(P,\omega,\phi)+ \Ord(P,\omega,\psi)$ holds.

Consider any $\phi\in A$, any $\psi\in A$ and any $\omega\in\Map(P,\R_0)^\vee$.
We would like to show the equality
$\In(P,\omega, \phi\psi)=\In(P,\omega,\phi)\In(P,\omega,\psi)$.

If $\phi=0$ or $\psi=0$, then $\phi\psi=0$, $\In(P,\omega,\phi)=0$ or $\In(P,\omega,\psi)=0$, and $\In(P,\omega, \phi\psi)=0=\In(P,\omega,\phi)\In(P,\omega,\psi)$.

Below we assume $\phi\neq 0$ and $\psi\neq 0$.
By 11 we have $\Ord(P,\omega, \phi-\In(P,\omega,\phi))>\Ord(P,\omega, \phi)$, $\Ord(P,\omega, \In(P,\omega,\phi))=\Ord(P,\omega, \phi)$,
$\Ord(P,\omega, \psi-\In(P,\omega,\psi))>$\hfill\break$\Ord(P,\omega, \psi)$, $\Ord(P,\omega, \In(P,\omega,\psi))=\Ord(P,\omega, \psi)$.

Therefore, $\Ord(P,\omega, (\phi-\In(P,\omega,\phi))(\psi-\In(P,\omega,\psi)))
=\Ord(P,\omega, \phi-\In(P,\omega,\phi))+\Ord(P,\omega, \psi-\In(P,\omega,\psi))
>\Ord(P,\omega, \phi)+ \Ord(P,\omega, \psi)
=\Ord(P,\omega, \phi\psi)$,\hfill\break
$\Ord(P,\omega, \In(P,\omega,\phi)(\psi-\In(P,\omega,\psi)))
=\Ord(P,\omega, \In(P,\omega,\phi))+\Ord(P,\omega, \psi-$\hfill\break$\In(P,\omega,\psi))
>\Ord(P,\omega, \phi)+ \Ord(P,\omega, \psi)
=\Ord(P,\omega, \phi\psi)$, and
$\Ord(P,\omega, $\hfill\break$(\phi-\In(P,\omega,\phi))\In(P,\omega,\psi))
=\Ord(P,\omega, \phi-\In(P,\omega,\phi))+\Ord(P,\omega, \In(P,\omega,\psi))
>\Ord(P,\omega, \phi)+ \Ord(P,\omega, \psi)
=\Ord(P,\omega, \phi\psi)$.

Since $\phi\psi-\In(P,\omega,\phi) \In(P,\omega,\psi)
=(\phi-\In(P,\omega,\phi))(\psi-\In(P,\omega,\psi))+ \In(P,\omega,\phi)$\hfill\break$(\psi-\In(P,\omega,\psi))+ (\phi-\In(P,\omega,\phi))\In(P,\omega,\psi)$,
we know $\Ord(P,\omega, \phi\psi-\In(P,\omega,\phi) $\hfill\break$\In(P,\omega,\psi))
>\Ord(P,\omega, \phi\psi)$ by 14.

Take the elements $c\in\Map(\Map(P,\Z_0),k)$, $d\in\Map(\Map(P,\Z_0),k)$ satisfying $\phi=\sum_{\Gamma\in\Map(P,\Z_0)}c(\Gamma)\prod_{x\in P}x^{\Gamma(x)}$ and $\psi=\sum_{\Delta\in\Map(P,\Z_0)}d(\Delta)\prod_{x\in P}x^{\Delta(x)}$.
We know
$\Supp(P,\omega,\phi)=\{\Gamma\in\Map(P,\Z_0)| c(\Gamma)\neq 0, \langle\omega,\Gamma\rangle=\Ord(P,\omega,\phi)\}\neq\emptyset$,
$\In(P,\omega,\phi)= \sum_{\Gamma\in\Supp(P,\omega,\phi)}c(\Gamma)\prod_{x\in P}x^{\Gamma(x)}\neq 0$,
$\Supp(P,\omega,\psi)=$\hfill\break$\{\Delta\in\Map(P,\Z_0)| d(\Delta)\neq 0, \langle\omega,\Delta\rangle=\Ord(P,\omega,\psi)\}\neq\emptyset$,
$\In(P,\omega,\psi)= $\hfill\break$\sum_{\Delta\in\Supp(P,\omega,\psi)}d(\Delta)\prod_{x\in P}x^{\Delta(x)}\neq 0$.

We know
\begin{equation*}\begin{split}
0&\neq\In(P,\omega,\phi)\In(P,\omega,\psi)\\
&=
(\sum_{\Gamma\in\Supp(P,\omega,\phi)}c(\Gamma)\prod_{x\in P}x^{\Gamma(x)}) ( \sum_{\Delta\in\Supp(P,\omega,\psi)}d(\Delta)\prod_{x\in P}x^{\Delta(x)})\\
&=
\sum_{\Lambda\in\Map(P,\Z_0)}\sum_{(\Gamma,\Delta)\in\Supp(P,\omega,\phi)\times\Supp(P,\omega,\psi), \:\Gamma+\Delta=\Lambda}c(\Gamma)d(\Delta)\prod_{x\in P}x^{\Lambda(x)}.
\end{split}\end{equation*}

Conside any $\Lambda\in\Supp(P, \In(P,\omega,\phi)\In(P,\omega,\psi))$.
$\Lambda\in\Map(P,\Z_0)$ and we know $$\sum_{(\Gamma,\Delta)\in\Supp(P,\omega,\phi)\times\Supp(P,\omega,\psi), \:\Gamma+\Delta=\Lambda}c(\Gamma)d(\Delta)\neq 0.$$
We know that there exist $\Gamma\in\Supp(P,\omega,\phi)$ and $\Delta\in\Supp(P,\omega,\psi)$ satisfying $\Gamma+\Delta=\Lambda$.
We take $\Gamma\in\Supp(P,\omega,\phi)$ and $\Delta\in\Supp(P,\omega,\psi)$ satisfying $\Gamma+\Delta=\Lambda$.
We know $\langle\omega,\Gamma\rangle=\Ord(P,\omega,\phi)$ and  $\langle\omega,\Delta\rangle=\Ord(P,\omega,\psi)$.
Therefore,
$\langle\omega,\Lambda\rangle
=\langle\omega, \Gamma+\Delta\rangle
=\langle\omega, \Gamma\rangle+\langle\omega, \Delta\rangle
=\Ord(P,\omega,\phi)+ \Ord(P,\omega,\psi)
=\Ord(P,\omega,\In(P,\omega,\phi))+ \Ord(P,\omega,\In(P,\omega,\psi))
=\Ord(P,\omega, \In(P,\omega,\phi)\In(P,\omega,\psi))$.

We know that for any $\Lambda\in\Supp(P, \In(P,\omega,\phi)\In(P,\omega,\psi))$, 
$\langle\omega,\Lambda\rangle=\Ord(P,\omega, $\hfill\break$\In(P,\omega,\phi) \In(P,\omega,\psi))$,
$\Supp(P, \In(P,\omega,\phi)\In(P,\omega,\psi))= \Supp(P,\omega, \In(P,\omega,\phi) $\hfill\break$\In(P,\omega,\psi))$, and
$\In(P,\omega, \In(P,\omega,\phi)\In(P,\omega,\psi))= \In(P,\omega,\phi)\In(P,\omega,\psi)$.

By 11 we conclude $\In(P,\omega,\phi\psi)= \In(P,\omega,\phi)\In(P,\omega,\psi)$.

We know that for any $\phi\in A$, any $\psi\in A$ and any $\omega\in\Map(P,\R_0)^\vee$, the equality $\In(P,\omega, \phi\psi)=\In(P,\omega,\phi)\In(P,\omega,\psi)$ holds.
\end{proof}

Recall that we denote the set of all mappings from $X$ to $Y$ by $\Map(X,Y)$ for any sets $X$ and $Y$.
When $Y$ is a subset of an abelian group $Z$ with $0\in Y$, we denote
$$\Map'(X,Y)=\{a\in\Map(X,Y)| \Supp(a)\text{ is a finite set.}\}.$$
(Section~\ref{concept}.)

Note that for any set $X$ and any ring $R$,
the set $\Map(X,R)$ has the natural structure of an $R$-module, and $\Map'(X,R)$ is an $R$-submodule of  $\Map(X,R)$.

Let $R$ be any ring; let $S$ be any subring of $R$, and let $P$ be any finite subset of $R$. 

We call $P$ a \emph{variable system} of $R$ over $S$, if for any $\phi\in R$ there exists uniquely an element $c\in\Map'(\Map(P,\Z_0),S)$ satisfying
$$\phi=\sum_{\Lambda\in\Map(P,\Z_0)}c(\Lambda)\prod_{x\in P}x^{\Lambda(x)}.$$
Any element of a variable system of $R$ over $S$ is called a \emph{variable} of $R$ over $S$.
We call $R$ \emph{a polynomial ring over} $S$, if there exists a variable system $P$ of $R$ over $S$.
We call any element $\phi$ in $R$ a \emph{polynomial over} $S$, if $R$ is a polynomial ring over $S$.

Assume that $R$ is a polynomial ring over $S$ and $P$ is a variable system of $R$ over $S$.
We consider any element $\phi\in R$.
We take the unique element $c\in\Map'(\Map(P,\Z_0),S)$ satisfying
$\phi=\sum_{\Lambda\in\Map(P,\Z_0)}c(\Lambda)\prod_{x\in P}x^{\Lambda(x)}$.
We denote
$$\Supp(P,\phi)=\Supp(c)=\{\Lambda\in\Map(P,\Z_0)|c(\Lambda)\neq 0\}\subset\Map(P,\Z_0),$$
and we call $\Supp(P,\phi)$ the \emph{support} of $\phi$ over $P$.
$\Supp(P,\phi)$ is a \emph{finite} subset of $\Map(P,\Z_0)$.

Note that $\phi=0\Leftrightarrow c=0\Leftrightarrow \Supp(P,\phi)=\emptyset$.

We introduce a symbol $-\infty$ satisfying $(-\infty)+(-\infty)=-\infty$, $(-\infty)=-(\infty)$, $-(-\infty)=\infty$, $-\infty<\infty$, $\infty>-\infty$, $-\infty\neq\infty$, $\infty\neq-\infty$, $-\infty\leq\infty$, $\infty\geq-\infty$ and satisfying
$(-\infty)+t=t+(-\infty)=-\infty$,  $-\infty<t$, $t>-\infty$, $-\infty\neq t$, $t\neq-\infty$, $-\infty\leq t$, $t\geq-\infty$ for any $t\in\R$.

Putting $\delta_0(\Lambda)=\sum_{x\in P}\Lambda(x)\in\R$ for any $\Lambda\in\Map(P,\R)$, we define an element $\delta_0\in
(\Map(P,\R_0)^\vee|\Map(P,\R))^\circ\cap\Map(P,\Z)^*\subset\Map(P,\R)^*$.
The element $\delta_0$ is equal to the barycenter of the simplicial cone $\Map(P,\R_0)^\vee|\Map(P,\R)$ over $\Map(P,\Z)^*$.
(See Definition~\ref{barycenter}.)

Let $\omega\in\Map(P,\R)^*$ be any element. We denote
\begin{equation*}
\deg(P,\omega,\phi)=
\begin{cases}
\max\{\langle \omega,\Lambda\rangle|\Lambda\in\Supp(P,\phi)\}& \text{ if $\phi\neq0$ },\\
-\infty& \text{ if $\phi=0$ },
\end{cases}
\end{equation*}
and
$$\deg(P,\phi)=\deg(P,\delta_0,\phi).$$
We call $\deg(P,\omega,\phi)\in\R\cup\{-\infty\}$ the \emph{degree} of $\phi$ over $P$ with respect to $\omega$, and we call $\deg(P, \phi)\in\Z_0\cup\{-\infty\}$
the \emph{degree} of $\phi$ over $P$.

\begin{lemma}
\label{polynomial ring}
Let $R$ be any ring; let $S$ be any subring of $R$, and let $P$ be any finite subset of $R$. 
\begin{enumerate}
\item
The following two conditions are equivalent:
\begin{enumerate}
\item
$P$ is a variable system of $R$ over $S$.
\item
For any ring $T$ such that there exists a ring homomorphism from $S$ to $T$, any ring homomorphism $\mu:S\rightarrow T$ and any $\kappa\in\Map(P,T)$, there exists uniquely a ring homomorphism $\lambda:R\rightarrow T$ satisfying $\lambda(s)=\mu(s)$ for any $s\in S$ and $\lambda(x)=\kappa(x)$ for any $x\in P$.
\end{enumerate}
\end{enumerate}

Below, we assume that $P$ is a variable system of $R$ over $S$ and $R$ is a polynomial ring over $S$.
\begin{enumerate}
\setcounter{enumi}{1}
\item
Let $(\bar{R},\bar{\lambda})$ be an $S$-algebra isomorphic to $R$ and let $\mu:R\rightarrow\bar{R}$ be any $S$-isomorphism.

Then, $\bar{\lambda}:S\rightarrow \bar{R}$ is injective, $\bar{R}$ is a polynomial ring over $\bar{\lambda}(S)$, and $\mu(P)$ is a variable system of $\bar{R}$ over $\bar{\lambda}(S)$.
\item
For any variable system $\bar{P}$ of $R$ over $S$, $\sharp\bar{P}=\sharp P$.
\item
Let $\mathcal{I}$ denote the set of all ideals $I$ in $R$ such that the residue ring $R/I$ is isomorphic to $S$ as $S$-algebras.

For any $a\in\Map(P,S)$, the subset $\{x-a(x)|x\in P\}$ of $R$ is a variable system of $R$ over $S$ and  $\{x-a(x)|x\in P\}R\in\mathcal{I}$.

The mapping from $\Map(P,S)$ to $\mathcal{I}$ sending $a\in\Map(P,S)$ to $\{x-a(x)|x\in P\}R\in\mathcal{I}$ is bijective.

If $S$ is an integral domain, then $\mathcal{I}\subset\Spec(R)$. If $S$ is a field, then $\mathcal{I}$ is equal to the set of $S$-valued points $\Spec(R)(S)$ on $\Spec(R)$. (Section~\ref{scheme}.)
\item
Consider any $\omega\in\Map(P,\R)^*$, any $\phi\in R$, any $\psi\in R$ and any non-zero element $a\in S-\{0\}$.
\begin{enumerate}
\item
$\Supp(P,\phi)=\emptyset\Leftrightarrow \deg(P,\omega,\phi)=-\infty\Leftrightarrow \phi=0$.
\item
$\Supp(P, \phi+\psi)\subset\Supp(P,\phi)\cup\Supp(P,\psi)$.
\item
$\Supp(P, a\phi)\subset\Supp(P,\phi)$.
If $S$ is an integral domain, then $\Supp(P, a\phi)=\Supp(P,\phi)$.
\item
$\Supp(P, \phi\psi)\subset\Supp(P,\phi)+\Supp(P,\psi)$.
\item
$\deg(P,\omega,\phi+\psi)\leq\max\{\deg(P,\omega,\phi), \deg(P,\omega,\psi)\}$.
\item
$\deg(P,\omega,\phi)=\deg(P,\omega,-\phi)$.
\item
If $\deg(P,\omega,\phi)\neq\deg(P,\omega,\psi)$, then 
$$\deg(P,\omega,\phi+\psi)=\max\{\deg(P,\omega,\phi), \deg(P,\omega,\psi)\}.$$
\item
$\deg(P,\omega,a\phi)\leq\deg(P,\omega,\phi)$.
If $S$ is an integral domain, then \hfill\break$\deg(P,\omega,a\phi)=\deg(P,\omega,\phi)$.
\item
$\deg(P,\omega,\phi\psi)\leq\deg(P,\omega,\phi)+\deg(P,\omega,\psi)$.
If $S$ is an integral domain, then $\deg(P,\omega,\phi\psi)=\deg(P,\omega,\phi)+\deg(P,\omega,\psi)$.
\end{enumerate}
\item
$R$ is an integral domain, if and only if, $S$ is an integral domain.

If these equivalent conditions are satisfied, then $R^\times=S^\times$.
\item
$R$ is noetherian, if and only if, $S$ is noetherian.
\end{enumerate}

Below, furthermore, we assume that $S$ is a field. 
\begin{enumerate}
\setcounter{enumi}{7}
\item
$R$ is a noetherian integral domain. $\dim R=\sharp P$. 
\end{enumerate}

Let $K$ denote the quotient field of $R$. Let $\iota:R\rightarrow K$ denote the canonical homomorphism. The homomorphism $\iota$ is an injective ring homomorphism. 
Using $\iota$, we regard $R$ as a subring of $K$.
\begin{enumerate}
\setcounter{enumi}{8}
\item
$PR\in\Spec(R)$ and the local ring $R_{PR}$ of $R$ at $PR$ is defined.

The ring $R_{PR}$ is a subring of $K$, it is a regular local ring, it contains $S$ and $R$ as subrings, the residue field $R_{PR}/M(R_{PR})$ is isomorphic to $S$ as $S$-algebras, and $P$ is a parameter system of $R_{PR}$.
\item
$\Map(P,\R_0)^\vee|\Map(P,\R)\subset\Map(P,\R)^*$.
\item
$\Ord(P,\omega,\phi)=-\deg(P,-\omega,\phi)$ for any $\omega\in\Map(P,\R_0)^\vee|\Map(P,\R)$ and any $\phi\in R$.
\item
Consider any $\omega\in\Map(P,\R_0)^\vee|\Map(P,\R)$ and any $\phi\in R$.

If $\phi\neq 0$, then $\Ord(P,\omega,\phi)\leq\deg(P,\omega,\phi)$. 

$\Ord(P,\omega,\phi)=\deg(P,\omega,\phi)$, if and only if, $\phi\neq 0$ and $\phi=\In(P,\omega,\phi)$.
\end{enumerate}
\end{lemma}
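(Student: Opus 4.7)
The plan is to exploit the fact that essentially every clause of the lemma follows either directly from the definition of a variable system or from the standard theory of polynomial rings over a commutative ring (Matsumura). The only item that genuinely carries content is the universal-mapping-property characterization in item (1); once that is in hand, the remaining items reduce to bookkeeping or quotation of standard results.

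First I would prove item (1). The direction (a)$\Rightarrow$(b) is immediate from the definition: given $\phi=\sum_{\Lambda}c(\Lambda)\prod_{x\in P}x^{\Lambda(x)}$ with the uniquely determined $c\in\Map'(\Map(P,\Z_0),S)$, the candidate $\lambda(\phi)=\sum_\Lambda \mu(c(\Lambda))\prod_{x\in P}\kappa(x)^{\Lambda(x)}$ is the only possible definition, and it is a ring homomorphism because the multiplication rule on $R$ is determined on monomials by the abelian group law on $\Map(P,\Z_0)$ and the corresponding rule holds in $T$. For (b)$\Rightarrow$(a), apply the universal property with $T$ the abstract polynomial ring $S[X]$ over a set $X$ in bijection with $P$ (whose existence and variable-system property are standard); composing with the inverse of the resulting $S$-isomorphism $R\to S[X]$ transports the unique expansion from $S[X]$ back to $R$. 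Items (2) and (4) are instant consequences of (1): (2) is the transport of the universal property along $\mu$; for (4), the map $a\mapsto\{x-a(x)|x\in P\}R$ is just the kernel-correspondence coming from applying the universal property with $T=S$, $\mu=\Id_S$ and $\kappa=a$, and its bijectivity is immediate from that same universal property. Item (3) I would defer until after item (8): over a field the invariance $\sharp\bar P=\sharp P$ follows from $\dim R=\sharp P$, and in the general case one reduces to a field by choosing a maximal ideal $\mathfrak m$ of $S$ and tensoring, since variable systems descend to $R/\mathfrak m R$ over $S/\mathfrak m$.

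Next I would take up items (5), (6), (7). All the statements in (5) are routine manipulations of the unique expansion: $\Supp$ of a sum lies in the union of the supports, $\Supp$ of a product lies in the Minkowski sum, and the degree inequalities follow by applying $\omega$ to these inclusions. The equality cases when $S$ is an integral domain require that no cancellation occur in the leading coefficient; this is handled exactly as in Lemma~\ref{basic ord}.15, by selecting the lex-minimal element realizing the maximum of $\langle\omega,\cdot\rangle$ and observing that its coefficient in the product is the product of two nonzero coefficients, hence nonzero. Item (6) is then immediate: if $S$ is a domain, $\phi\psi\neq 0$ whenever $\phi,\psi\neq 0$ by the equality case of (5)(ix), so $R$ is a domain; conversely $S\subset R$. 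For units: a product of polynomials of positive degree has positive degree, so any unit has degree zero and lies in $S$, hence in $S^\times$. Item (7) is Hilbert's basis theorem applied inductively to the chain $S\subset S[x_1]\subset S[x_1,x_2]\subset\cdots\subset R$ given by any enumeration of $P$, together with the observation that $S\cong R/PR$ is a quotient of $R$.

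For the field case (items 8--12), item (8) is the classical Krull dimension formula $\dim S[x_1,\ldots,x_n]=n$, which together with (6) and (7) gives $R$ a noetherian integral domain of the stated dimension. Item (9): $PR$ is a prime ideal since $R/PR\cong S$ is a field, and in the localization $R_{PR}$ the maximal ideal $PR_{PR}$ is generated by the $\sharp P=\dim R_{PR}$ elements of $P$, so $R_{PR}$ is regular with parameter system $P$; the embedding of $S$ into $R_{PR}$ follows from $S\cap PR=\{0\}$, which is visible from the unique expansion (a nonzero element of $S$ has nonzero constant term). Items (10), (11), (12) are purely notational: (10) holds because $\Map(P,\R_0)$ spans $\Map(P,\R)$; (11) follows from $\Supp(P,\phi)\subset\Map(P,\Z_0)\subset\Map(P,\R_0)$ is a \emph{finite} set when $\phi\in R$ is a polynomial, so $\min\langle\omega,\cdot\rangle=-\max\langle-\omega,\cdot\rangle$; and (12) follows since, on a finite support, the minimum is at most the maximum with equality exactly when the support is a singleton modulo $\ker\omega$, i.e.\ $\phi=\In(P,\omega,\phi)$.

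The main obstacle, in my view, is the cleanest organization of item (3) together with (8): one needs that both the cardinality of the variable system and the Krull dimension are well-defined invariants, and the smoothest route is to prove (8) first using any given variable system and then deduce (3) from it, handling the general (non-field) case by the quotient-to-residue-field trick indicated above. Every other item is either a direct unwinding of definitions or a citation of standard commutative algebra.
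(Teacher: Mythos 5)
The paper supplies no proof of this lemma; it falls under the author's blanket declaration at the end of Section~1 that most claims follow directly from definitions and standard commutative algebra, so there is no argument in the source to compare against. Your sketch reconstructs exactly what one would expect and is sound: the universal mapping property in (1) proved by transporting the unique expansion (with the converse via the free polynomial ring), transport arguments for (2) and (4), the Minkowski-sum containment and lex-minimality argument for the equality cases of (5) and hence (6), Hilbert's basis theorem and the quotient $R/PR\cong S$ for (7), the Krull-dimension formula for polynomial rings over a field for (8)--(9), elementary unwinding for (10)--(12), and the cardinality claim (3) correctly reduced to the field case by passing to $R/\mathfrak{m}R$ over $S/\mathfrak{m}$ for a maximal ideal $\mathfrak{m}$ of $S$ -- the only point worth spelling out there being that distinct elements of $P$ stay distinct modulo $\mathfrak{m}R$ (their difference has coefficients $\pm1\notin\mathfrak{m}$), and the implicit hypothesis $S\neq\{0\}$, which is harmless in the paper's context.
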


Let $R$ be any ring and let $S$ be any subring of $R$. Assume that $R$ is a polynomial ring over $S$.

If $n\in\Z_0$ and there exists a variable system $P$ of $R$ over $S$ with $n=\sharp P\in\Z_0$, then we say that $R$ is a polynomial ring over $S$ \emph{with} $n$ \emph{variables}. By the claim 3 of the above lemma, if $n\in\Z_0$ and $R$ is a polynomial ring over $S$ with $n$ variables, then $n=\sharp\bar{P}$ for any variable system $\bar{P}$ of $R$ over $S$.

\section{Basic theory of convex sets}
\label{btcs}

In this section to develop the theory of torus embeddings we begin the study of convex sets. The theory of convex sets will be applied to the proof of our main theorem, Therem~\ref{main} in Section~\ref{main proof}.

Let $V$ be any finite dimensional vector space over $\R$. 

In Section~\ref{concept} we defined eight mappings
$$\Conv,\Affi,\Cone,\Convcone,\Vect,\QVect,\Clos,\Stab:2^V\rightarrow 2^V.$$

\begin{lemma}
\label{a}
Let $X$ be any subset of $V$.
\begin{enumerate}
\item
\begin{equation*}
\begin{split}
\Conv(\emptyset)=&\Affi(\emptyset)=\Clos(\emptyset)=\emptyset,\\
\Cone(\emptyset)=&\Convcone(\emptyset)=\Vect(\emptyset)=\QVect(\emptyset)=\{0\},\\
\Stab(\emptyset)=&V.\\
\end{split}
\end{equation*}
\item
\begin{equation*}
\begin{split}
\Conv(X)=&\{a\in V|a=\sum_{x\in\Supp(\lambda)}\lambda(x)x
\text{ for some }\lambda\in\Map'(X,\R_0)\text{ with}\\
&\qquad\qquad\sum_{x\in\Supp(\lambda)}\lambda(x)=1\},\\
\Affi(X)= &\{a\in V|a=\sum_{x\in\Supp(\lambda)}\lambda(x)x
\text{ for some }\lambda\in\Map'(X,\R)\text{ with}\\
&\qquad\qquad\sum_{x\in\Supp(\lambda)}\lambda(x)=1\},
\end{split}\end{equation*}
\begin{equation*}
\Cone(X)=
\begin{cases}
\{a\in V|a=\lambda x\text{ for some }\lambda\in\R_0\text{ and some }x\in X\}&
\text{ if $X\neq\emptyset$},\\
\{0\}&\text{ if $X=\emptyset$},
\end{cases}\end{equation*}
\begin{equation*}\begin{split}
\Convcone(X)=&\{a\in V|a=\sum_{x\in\Supp(\lambda)}\lambda(x)x
\text{ for some }\lambda\in\Map'(X,\R_0)\},\\
\Vect(X)=&\{a\in V|a=\sum_{x\in\Supp(\lambda)}\lambda(x)x
\text{ for some }\lambda\in\Map'(X,\R)\},\\
\QVect(X)=&\{a\in V|a=\sum_{x\in\Supp(\lambda)}\lambda(x)x
\text{ for some }\lambda\in\Map'(X,\Q)\}.\\
\end{split}
\end{equation*}
\item
If $X$ is a finite set, then we have
$\Convcone(X)=\sum_{x\in X}\R_0 x$, $\Vect(X)=\sum_{x\in X}\R x$, and $\QVect(X)=\sum_{x\in X}\Q x$.
\item
For any finite dimensional vector space $W$ over $\R$ and any homomorphism $\pi:V\rightarrow W$ of vector spaces over $\R$, we have
$\pi(\Conv(X))=\Conv(\pi(X))$,
$\pi(\Affi(X))=\Affi(\pi(X))$,
$\pi(\Cone(X))=\Cone(\pi(X))$,
$\pi(\Convcone(X))=$\hfill\break$\Convcone(\pi(X))$,
$\pi(\Vect(X))=\Vect(\pi(X))$, and
$\pi(\QVect(X))=\QVect(\pi(X))$.
\item
For any $a\in V$, we have
$\Conv(X+\{a\})=\Conv(X)+\{a\}$,
$\Affi(X+\{a\})=\Affi(X)+\{a\}$.
\item
\begin{equation*}
\begin{gathered}
X\subset\Conv(X)\subset\Affi(X)\subset\Vect(X),\\
X\cup\{0\}\subset\Cone(X)\subset\Convcone(X)\subset\Vect(X),\\
\Conv(X)\subset\Convcone(X),\\
X\cup\{0\}\subset\QVect(X)\subset\Vect(X),\\
X\subset\Clos(X).\\
\end{gathered}
\end{equation*}
\item
For any subset $Y$ of $V$ with $X\subset Y$, we have $\Conv(X)\subset\Conv(Y)$, $\Affi(X)\subset\Affi(Y)$, $\Cone(X)\subset\Cone(Y)$, $\Convcone(X)\subset\Convcone(Y)$, $\Vect(X)\subset\Vect(Y)$, $\QVect(X)\subset\QVect(Y)$, and $\Clos(X)\subset\Clos(Y)$.
\item
$\Conv(\Conv(X))=\Conv(X)$, $\Affi(\Affi(X))=\Affi(X)$, $\Cone(\Cone(X))=\Cone(X)$, $\Convcone(\Convcone(X))=\Convcone(X)$,
$\Vect(\Vect(X))=\Vect(X)$,\hfill\break$\QVect(\QVect(X))=\QVect(X)$, and $\Clos(\Clos(X))=\Clos(X)$.
\item
\begin{equation*}
\begin{split}
\Vect(\Conv(X))=&\Conv(\Vect(X))=\Vect(X),\\
\Vect(\Affi(X))=&\Affi(\Vect(X))=\Vect(X),\\
\Vect(\Cone(X))=&\Cone(\Vect(X))=\Vect(X),\\
\Vect(\Convcone(X))=&\Convcone(\Vect(X))=\Vect(X).\\
\end{split}
\end{equation*}
\item
\begin{equation*}
\begin{split}
\Convcone(\Conv(X))=&\Conv(\Convcone(X))=\Convcone(X),\\
\Convcone(\Cone(X))=&\Cone(\Convcone(X))=\Convcone(X).\\
\Cone(\Conv(X))=&\Conv(\Cone(X))=\Convcone(X),\\
\Convcone(\Affi(X))=\Cone(\Affi(X))\subset&\Affi(\Convcone(X))= \Affi(\Cone(X))=\Vect(X),\\
\end{split}
\end{equation*}
\item
$\Affi(\Conv(X))=\Conv(\Affi(X))=\Affi(X)$.
\item
\begin{equation*}
\begin{split}
\Clos(\Affi(X))=&\Affi(\Clos(X))=\Affi(X),\\
\Clos(\QVect(X))=&\Clos(\Vect(X))=\Vect(\Clos(X))=\Vect(X),\\
\Clos(\Conv(X))=&\Conv(\Clos(\Conv(X))),\\
\Clos(\Cone(X))=&\Cone(\Clos(\Cone(X))),\\
\Clos(\Convcone(X))=&\Convcone(\Clos(\Convcone(X))).\\
\end{split}
\end{equation*}
\end{enumerate}
\end{lemma}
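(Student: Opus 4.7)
My plan is to reduce almost everything in this lemma to the explicit formulas of Part~2, and then obtain Parts~3--12 by direct manipulation of those formulas. So the first order of business is Part~2, and the work there is essentially the same in all six cases: for each operator $\Phi \in \{\Conv,\Affi,\Cone,\Convcone,\Vect,\QVect\}$, I will write $\Phi'(X)$ for the set defined by the right-hand formula (using $\Map'(X,Y)$) and verify (a)~that $X \subset \Phi'(X)$, (b)~that $\Phi'(X)$ is itself convex, affine, a cone, a convex cone, an $\R$-vector space, or a $\Q$-vector space, as appropriate, and (c)~that any set of the relevant type containing $X$ must contain $\Phi'(X)$. Step~(c) is proved by induction on $\sharp \Supp(\lambda)$: starting from the defining closure property (e.g.\ closure of a convex set under binary convex combinations), one gets closure under arbitrary finite convex combinations. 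By the minimality clause of the definition in Section~\ref{concept}, $\Phi'(X) = \Phi(X)$. Part~1 is then just the observation that $\Map'(\emptyset, Y) = \{0\}$ has the empty sum condition satisfied vacuously (or not at all, when the sum is required to equal $1$); in particular the empty set has no convex hull, affine hull, or closure, while its cone-type and vector-space-type hulls are $\{0\}$, and its stabilizer is all of $V$.

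Parts~3--11 are then bookkeeping from Part~2. Part~3 drops the ``prime'' from $\Map'$ since $X$ is already finite. Part~4 follows because $\pi$ commutes with finite $\R$-linear (respectively $\R_0$-linear, $\Q$-linear, convex) combinations, so applying $\pi$ to the explicit formula yields the formula for $\pi(X)$. Part~5 is immediate once one notes that in Part~2 the coefficients in convex and affine combinations sum to $1$, so translating $X$ by $a$ translates the hull by $a$. The inclusions in Part~6 are visible from the formulas (e.g.\ $\Convcone \subset \Vect$ comes from allowing $\R$ in place of $\R_0$). Part~7 follows because a finite combination of elements from $X$ is in particular a finite combination of elements from $Y$. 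The idempotence statements in Part~8 follow by expanding a combination of combinations into a single combination and re-indexing via the product of index sets. Parts~9--11 are the same sort of expansion: every element of $\Vect(\Conv(X))$ is a finite $\R$-combination of elements each of which is a finite convex combination of elements of $X$, so after collecting terms it is a finite $\R$-combination of elements of $X$, giving $\Vect(\Conv(X)) = \Vect(X)$; all the other listed identities work identically, noting in particular $\R_0 \cdot \R = \R$, $\R \cdot \R_0 = \R$, $\R_0 \cdot [0,1] = \R_0$, etc.

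The main obstacle is Part~12, which is the only part that genuinely uses the topology of $V$. The key ingredients I will use are: (i)~any finite-dimensional $\R$-vector subspace $W \subset V$ is closed in $V$ (pick a basis and note that $W$ is the continuous image of $\R^{\dim W}$, which is closed because it is the kernel of the projection to a complement); (ii)~any non-empty affine subspace is a translate of a vector subspace, hence closed; and (iii)~$\Q$ is dense in $\R$, so $\Q$-linear combinations approximate $\R$-linear combinations. From (i)~and (ii), $\Affi(X)$ and $\Vect(X)$ are already closed, giving $\Clos(\Affi(X)) = \Affi(X)$ and $\Clos(\Vect(X)) = \Vect(X)$; density (iii) plus (i) gives $\Clos(\QVect(X)) = \Vect(X)$, since every $\R$-combination of elements of $X$ is a limit of $\Q$-combinations. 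For $\Vect(\Clos(X)) = \Vect(X)$, the inclusion $\supset$ is from Part~7, and $\subset$ follows because $\Clos(X) \subset \Clos(\Vect(X)) = \Vect(X)$, so $\Vect(\Clos(X)) \subset \Vect(X)$. For $\Affi(\Clos(X)) = \Affi(X)$, analogously: $\Clos(X) \subset \Clos(\Affi(X)) = \Affi(X)$. Finally, for the three identities $\Clos(\Conv(X)) = \Conv(\Clos(\Conv(X)))$ and its cone/convex-cone analogues, I will verify the general fact that the closure of a convex (resp.\ cone, convex cone) set is again convex (resp.\ a cone, a convex cone), which follows from continuity of the operations $(x,y,t) \mapsto (1-t)x+ty$, $(x,t) \mapsto tx$, and $(x,y,s,t) \mapsto sx + ty$ on $V$ equipped with its standard Hausdorff topology. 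Once these closure properties are in hand, the listed fixed-point identities follow from Part~8 applied to the closure of the respective hull.
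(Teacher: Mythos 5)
The paper gives no proof of Lemma~\ref{a} at all; the Introduction states that ``Most of our claims follow from definitions,'' and Section~6 proves only Lemma~\ref{g}. So there is no proof in the paper to compare against, and the proposal must be judged on its own merits.

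Your plan is sound and is the natural one. The three-step verification in Part~2 (the formula set contains $X$, has the required closure property, and is contained in any set of that type containing $X$, with minimality delivering equality) is the standard argument, and the induction on $\sharp\Supp(\lambda)$ correctly reduces closure under arbitrary finite combinations to closure under binary ones. One small point worth making explicit in a write-up: for $\Cone$ the formula $\{\lambda x \mid \lambda\in\R_0,\ x\in X\}$ only produces $\{0\}$ when $X=\emptyset$ via the separate clause, since the set-builder form itself is empty in that case; this is why the case split in Part~2 is needed, and why $\Cone(\emptyset)=\{0\}$ has to come from the definition $0\in X$ rather than from the formula. Your handling of $\Conv(\Cone(X))=\Convcone(X)$ in Part~10 should also be spelled out with the rescaling by $\sharp\Supp(\lambda)$ (or $\sum\lambda(x)$) to land in $\Conv$, and the degenerate case of all coefficients zero separately gives $0$. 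For Part~12 your three ingredients are exactly right: vector subspaces are closed because they are preimages of $\{0\}$ under the continuous projection onto a complement, hence affine subspaces are closed as translates; density of $\Q$ in $\R$ gives $\Clos(\QVect(X))=\Vect(X)$; and continuity of $(x,y,t)\mapsto(1-t)x+ty$ (resp.\ scalar multiplication, resp.\ $(x,y,s,t)\mapsto sx+ty$) shows that the closure of a convex set (resp.\ cone, resp.\ convex cone) is again one, so $\Conv$, $\Cone$, $\Convcone$ fix the closure of their own output. This is complete.
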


\begin{lemma}
\label{b}
Let $X$ and $Y$ be any subsets of $V$.
\begin{enumerate}
\item
$\Conv(X)+\Conv(Y)=\Conv(X+Y)$.
\item
$\Affi(X)+\Affi(Y)=\Affi(X+Y)$.
\item
$\Cone(X\cup Y)\subset\Cone(X)+\Cone(Y)\subset\Conv(\Cone(X\cup Y))$.
\item
$\Convcone(X)+\Convcone(Y)=\Convcone(X\cup Y)$.
\item
$\Vect(X)+\Vect(Y)=\Vect(X\cup Y)$.
\item 
For any finite dimensional vector space $W$ over $\R$ and any homomorphism $\pi:V\rightarrow W$ of vector spaces over $\R$, we have
$\pi(X)+\pi(Y)=\pi(X+Y)$.
\end{enumerate}
\end{lemma}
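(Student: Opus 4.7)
The strategy is to handle each of the six claims by double inclusion, using the explicit parametric descriptions of the closure operators from Lemma~\ref{a}.2 and the trivial edge cases from Lemma~\ref{a}.1. All six claims are stable under the empty-set convention (both sides collapse to $\emptyset$ when $X=\emptyset$ or $Y=\emptyset$ in claims $1,2,6$; both sides reduce appropriately via $\Cone(\emptyset)=\Convcone(\emptyset)=\Vect(\emptyset)=\{0\}$ in $3,4,5$), so I may assume $X,Y\neq\emptyset$ throughout.

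For claim $6$, the equality $\pi(X+Y)=\pi(X)+\pi(Y)$ is immediate from $\R$-linearity: $\pi(x+y)=\pi(x)+\pi(y)$ gives both inclusions term by term. For claim $5$, the inclusion $\Vect(X)+\Vect(Y)\subset\Vect(X\cup Y)$ follows because $\Vect(X\cup Y)$ is an $\R$-vector subspace containing $X$ and $Y$, hence contains $\Vect(X)$ and $\Vect(Y)$, hence their sum; the reverse inclusion holds because $\Vect(X)+\Vect(Y)$ is itself a vector subspace containing $X\cup Y$. Claim $4$ is the same argument, replacing ``vector subspace'' by ``convex cone'': $\Convcone(X\cup Y)$ is a convex cone containing $X,Y$, so it contains $\Convcone(X)+\Convcone(Y)$, and conversely the latter is a convex cone containing $X\cup Y$.

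For claims $1$ and $2$, I would write $\Conv(X)+\Conv(Y)\subset\Conv(X+Y)$ as follows: given $a=\sum_{x}\lambda(x)x$ with $\sum\lambda(x)=1$, $\lambda(x)\geq 0$, and $b=\sum_{y}\mu(y)y$ with $\sum\mu(y)=1$, $\mu(y)\geq 0$, the identity
\[ a+b \;=\; \sum_{x,y}\lambda(x)\mu(y)(x+y) \]
exhibits $a+b$ as a convex combination of elements of $X+Y$, since $\sum_{x,y}\lambda(x)\mu(y)=1$. For the reverse, given $c=\sum_{(x,y)}\nu(x,y)(x+y)$ with $\nu\geq 0$, $\sum\nu(x,y)=1$, set $\lambda(x)=\sum_y\nu(x,y)$ and $\mu(y)=\sum_x\nu(x,y)$; then $\sum_x\lambda(x)=\sum_y\mu(y)=1$ and $c=\sum_x\lambda(x)x+\sum_y\mu(y)y\in\Conv(X)+\Conv(Y)$. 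Exactly the same bookkeeping, with $\R$-valued coefficients summing to $1$ in place of $\R_0$-valued ones, proves claim $2$.

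For claim $3$, the first inclusion $\Cone(X\cup Y)\subset\Cone(X)+\Cone(Y)$ is immediate: an element $tz$ with $t\geq 0$ and $z\in X\cup Y$ lies in $\Cone(X)$ or in $\Cone(Y)$, and in either case it equals itself plus $0$. The second inclusion is where a small trick is needed, and this is the one step I expect to be the main (very mild) obstacle: given $a+b$ with $a\in\Cone(X)$ and $b\in\Cone(Y)$, both $a$ and $b$ lie in the cone $\Cone(X\cup Y)$, and so do $2a$ and $2b$ (cones are closed under non-negative scaling); the identity $a+b=\tfrac{1}{2}(2a)+\tfrac{1}{2}(2b)$ then writes $a+b$ as a convex combination of two elements of $\Cone(X\cup Y)$, placing it in $\Conv(\Cone(X\cup Y))$. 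This completes the proof.
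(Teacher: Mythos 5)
Your proof is correct in all six parts and supplies the routine verification that the paper itself omits (Lemmas~\ref{a}--\ref{d} are stated without proof in Section~\ref{btcs}). The only small point worth tightening is in the reverse inclusion of claim 1 (and likewise claim 2): an element of $\Conv(X+Y)$ is a priori a finite combination $\sum_z\lambda(z)z$ indexed over points $z\in X+Y$, so you must first choose, for each such $z$ in the support, a representation $z=x_z+y_z$ with $x_z\in X$, $y_z\in Y$ before defining $\nu(x_z,y_z)=\lambda(z)$; after that, your marginalisation $\lambda(x)=\sum_y\nu(x,y)$, $\mu(y)=\sum_x\nu(x,y)$ is exactly right. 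The double-index identity $a+b=\sum_{x,y}\lambda(x)\mu(y)(x+y)$ for the forward inclusion, the closure-operator argument for claims 4 and 5, and the halving trick $a+b=\tfrac12(2a)+\tfrac12(2b)$ for claim 3 are all the natural moves; note that the second inclusion of claim 3 also follows from claim 4 combined with $\Conv(\Cone(X\cup Y))=\Convcone(X\cup Y)$ (Lemma~\ref{a}.10), but your direct argument is equally clean.
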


In Section~\ref{concept} we defined concepts of segments, lines, convex sets, affine spaces, cones, convex cones, vector spaces, vector spaces over $\Q$, and closed subsets.

\begin{lemma}
\label{c}
Let $S$ be any non-empty subset of $V$.
\begin{enumerate}
\item
The following three conditions are equivalent;
\begin{enumerate}
\item $S$ is convex.
\item $S\supset\Conv(S)$.
\item $S=\Conv(X)$ for some non-empty subset $X$ of $V$.
\end{enumerate}
\item
If $S$ is convex, then $\Clos(S)$ is also convex, and $\Affi(S)=\Affi(\Clos(S))$.
\item
The following three conditions are equivalent;
\begin{enumerate}
\item $S$ is an affine space.
\item $S\supset\Affi(S)$.
\item $S=\Affi(X)$ for some non-empty subset $X$ of $V$.
\end{enumerate}
\item
The following three conditions are equivalent;
\begin{enumerate}
\item $S$ is an affine space containing $0$.
\item $S$ is an affine space with $S=\Stab(S)$.
\item $S$ is a vector space.
\end{enumerate}
\item
Assume that $S$ is an affine space. Then, $\Stab(S)$ is a vector space, and for any $x\in S$ we have $S=\Stab(S)+\{x\}$ and $\Stab(S)=S+\{-x\}$.
\item
Any affine space is closed and convex.
\item
$0\in\Stab(S)\subset\Stab(\Affi(S))\subset\Vect(S)$. $\Stab(S)+\Stab(S)=\Stab(S)$.
\item
The following three conditions are equivalent;
\begin{enumerate}
\item $S$ is a cone.
\item $S\supset\Cone(S)$.
\item $S=\Cone(X)$ for some non-empty subset $X$ of $V$.
\end{enumerate}
\item Any cone contains $0$.
\item
If $S$ is a cone, then $\Clos(S)$ is also a cone, and $\Vect(S)=\Vect(\Clos(S))$.
\item
The following four conditions are equivalent;
\begin{enumerate}
\item $S$ is a convex cone.
\item $S$ is convex and $S$ is a cone.
\item $S\supset\Convcone(S)$.
\item $S=\Convcone(X)$ for some non-empty subset $X$ of $V$.
\end{enumerate}
\item
Any convex cone contains $0$.
\item
If $S$ is a convex cone, then $\Clos(S)$ is also a convex cone, and $\Vect(S)=\Vect(\Clos(S))$.
\item
If $S$ is a convex cone, then $S\cap(-S)$ is the maximal vector space contained in $S$ with respect to the inclusion relation.
\item
The following three conditions are equivalent;
\begin{enumerate}
\item $S$ is a vector space.
\item $S\supset\Vect(S)$.
\item $S=\Vect(X)$ for some non-empty subset $X$ of $V$.
\end{enumerate}
\item
Any vector space contains $0$.
\item
Any vector space is closed, it is an affine space containing $0$, and it is a convex cone.
\item
The following three conditions are equivalent;
\begin{enumerate}
\item $S$ is a vector space over $\Q$.
\item $S\supset\QVect(S)$.
\item $S=\QVect(X)$ for some non-empty subset $X$ of $V$.
\end{enumerate}
\item
Any vector space over $\Q$ contains $0$.
\item
The following three conditions are equivalent;
\begin{enumerate}
\item $S$ is closed.
\item $S\supset\Clos(S)$.
\item $S=\Clos(X)$ for some non-empty subset $X$ of $V$.
\end{enumerate}
\end{enumerate}
\end{lemma}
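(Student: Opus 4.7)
The plan is to treat this as a collection of routine facts about convex geometry, organizing the items by their logical structure and proof template rather than proving each separately.

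First I would handle the seven ``characterization triples'' (items 1, 3, 8, 11, 15, 18, 20), which all share the same pattern: $S$ has property $P$ $\Leftrightarrow$ $S \supset \mathrm{Op}(S)$ $\Leftrightarrow$ $S = \mathrm{Op}(X)$ for some non-empty $X$, where $\mathrm{Op} \in \{\Conv, \Affi, \Cone, \Convcone, \Vect, \QVect, \Clos\}$. The uniform argument is: (a)$\Rightarrow$(c) by choosing $X = S$ and invoking Lemma~\ref{a}.6 plus the fact that $S$ already has property $P$ and is minimal among supersets of itself sharing that property; (c)$\Rightarrow$(b) via idempotence $\mathrm{Op}(\mathrm{Op}(X)) = \mathrm{Op}(X)$ from Lemma~\ref{a}.8; (b)$\Rightarrow$(a) from Lemma~\ref{a}.6 combined with the hypothesis, which forces $S = \mathrm{Op}(S)$ and hence $S$ inherits property $P$ from the defining minimality of the closure operator. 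This disposes of all seven characterizations simultaneously.

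Next I would dispatch the trivial ``contains $0$'' items (9, 12, 16, 19), which follow directly from the definitions in Section~\ref{concept}. Then I would handle items 4, 5, 6, and 17 together, since they concern the relationship between affine spaces, vector spaces, and stabilizers. For item 4, the equivalences follow by noting that if $S$ is an affine space and $0 \in S$, then for any $x, y \in S$ and $t, u \in \R$ one can write $tx + uy = (1 - t - u) \cdot 0 + t x + u y$ as an affine combination (when $x \ne y$; the degenerate cases are easy), and conversely a vector space is trivially affine and stable under itself. Item 5 follows by verifying that $\Stab(S) = S - \{x\}$ for any fixed $x \in S$, which is a direct check using the affine-space condition. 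Item 6 uses that an affine space is a translate of a finite-dimensional vector subspace (hence closed) and contains segments because it contains entire lines. Item 17 is immediate from item 4 together with item 6 and the fact that finite-dimensional subspaces are closed.

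The items concerning closures preserving structure (2, 10, 13) follow by taking limits: a convergent sequence of convex combinations is itself a convex combination of limits, a scalar multiple of a limit is the limit of scalar multiples, and likewise for sums of two convergent sequences; the equalities $\Affi(S) = \Affi(\Clos(S))$ and $\Vect(S) = \Vect(\Clos(S))$ then follow from Lemma~\ref{a}.12. Item 7 is direct: $\Stab(S) + \Stab(S) \subset \Stab(S)$ by iterating the defining translation inclusion, and $\Stab(S) \subset \Vect(S)$ follows by picking any $x \in S$ and writing $a = (a + x) - x \in \Vect(S)$.

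The main obstacle, modest as it is, will be item 14: showing $S \cap (-S)$ is a vector space and is maximal among vector subspaces contained in $S$. The plan here is to check directly that $S \cap (-S)$ is a convex cone (as an intersection of two convex cones, since $-S$ is also a convex cone when $S$ is) and that it is stable under negation by construction; combining closure under $\R_0$-scaling and under negation yields closure under all real scaling, and closure under addition comes from being a convex cone, so $S \cap (-S)$ is a vector space. Maximality follows because any vector space $W \subset S$ satisfies $W = -W \subset -S$, hence $W \subset S \cap (-S)$.
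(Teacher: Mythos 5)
Your proposal is correct; the paper gives no proof for this lemma, so there is no argument to compare against, but your organized sketch is a reasonable way to make the omitted verification explicit. Two small spots deserve a word more. In item~4, the paper's affine-space definition is in terms of two-point lines only, so writing $tx+uy=(1-t-u)\cdot 0 + tx + uy$ does not by itself give membership in $S$; you need to reduce the three-term combination of $0,x,y$ to iterated two-point line combinations (first combine $x,y$ on a line when $t+u\notin\{0,1\}$, then combine the result with $0$; the case $t+u=0$ takes a separate small step, e.g.\ via $2tx$, $-2ty$ and their midpoint). And in item~7 your sketch addresses $\Stab(S)\subset\Vect(S)$ but not the intermediate inclusion $\Stab(S)\subset\Stab(\Affi(S))$, which needs $\Affi(S)+\{a\}=\Affi(S+\{a\})\subset\Affi(S)$ from Lemma~\ref{a}.5 and Lemma~\ref{a}.7. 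Everything else — the characterization triples via Lemma~\ref{a}.6 and Lemma~\ref{a}.8, the closure-preserving items via sequential limits and Lemma~\ref{a}.12, and the maximality argument for $S\cap(-S)$ in item~14 — is sound as sketched.
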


\begin{lemma}
\label{d}
Let $S$ and $T$ be any subsets of $V$.
\begin{enumerate}
\item
If $S$ and $T$ are convex, then $S+T$ is convex. If $S$ and $T$ are convex and $S\cap T\neq\emptyset$, then $S\cap T$ is convex.
\item
If $S$ and $T$ are affine spaces, then $S+T$ is an affine space. If $S$ and $T$ are affine spaces and $S\cap T\neq\emptyset$, then $S\cap T$ is an affine space.
\item
If $S$ and $T$ are cones, then $S+T$ and $S\cap T$ are cones.
\item
If $S$ and $T$ are convex cones, then $S+T$ and $S\cap T$ are convex cones.
\item
If $S$ and $T$ are vector spaces, then $S+T$ and $S\cap T$ are vector spaces.
\item
If $S$ and $T$ are vector spaces over $\Q$, then $S+T$ and $S\cap T$ are vector spaces over $\Q$.
\item
If $S$ and $T$ are closed, then $S\cap T$ is closed.
\end{enumerate}
\end{lemma}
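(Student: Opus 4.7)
The plan is to verify each of the seven claims by a direct unwinding of the definitions of convex set, affine space, cone, convex cone, vector space, $\Q$-vector space, and closed subset given in Section~\ref{concept}. All seven have the same structure: for the sum, split an arbitrary element of $S+T$ into its $S$- and $T$-components and apply the defining closure property in each summand; for the intersection, use that any point of $S\cap T$ lies simultaneously in $S$ and in $T$, so whatever combination the defining property requires is simultaneously in $S$ and in $T$, hence in $S\cap T$.

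Concretely, for claim~1 I would take $x=a+b$ and $y=c+d$ in $S+T$ with $a,c\in S$, $b,d\in T$ and, for $t\in\R$ with $0\leq t\leq 1$, write
\[
(1-t)x+ty=\bigl((1-t)a+tc\bigr)+\bigl((1-t)b+td\bigr)\in S+T,
\]
using convexity of $S$ and $T$ on the two brackets; for $S\cap T$ the same segment lies in both $S$ and $T$ by convexity of each. Claims~2, 5, 6 follow verbatim with $t\in\R$, $t,u\in\R$, $t,u\in\Q$ respectively and with the relevant linear combinations. Claims~3 and~4 treat cones and convex cones; here I would use that every cone (and hence every convex cone and every vector space) contains $0$, so $0=0+0\in S+T$ ensures non-emptiness, and that for $x=a+b\in S+T$ and $t\in\R_0$ one has $tx=ta+tb\in S+T$, with an analogous identity for $tx+uy$ in the convex-cone case; intersections are handled by the same one-line verification as above. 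For claim~7 I would take a convergent sequence $\{z_n\}$ in $S\cap T$ with limit $z\in V$; then $z_n\in S$ for all $n$ gives $z\in S$ by closedness of $S$, and similarly $z\in T$.

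The main thing to be careful about, rather than an obstacle, is the non-emptiness hypotheses built into the definitions in Section~\ref{concept}: convex sets and affine spaces are required to be non-empty, which is why claims~1 and~2 include the proviso $S\cap T\neq\emptyset$ for the intersection statements, while $S+T$ is automatically non-empty as soon as $S$ and $T$ are; for cones, convex cones, and vector spaces, non-emptiness of both sum and intersection is automatic because each contains $0$. A final small point worth noting is why the lemma does not assert closedness of $S+T$ when $S$ and $T$ are closed: the Minkowski sum of two closed subsets of $V$ need not be closed in general, so only the intersection case appears in claim~7, and its proof is the standard sequential argument above.
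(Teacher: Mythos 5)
Your proposal is correct; the paper in fact states Lemma~\ref{d} without proof, invoking its blanket remark that ``most of our claims follow from definitions,'' and your argument is exactly the intended direct unwinding. Your observation about why closedness of $S+T$ is not claimed is also apt and consistent with the paper's later remark (following Lemma~\ref{j}) giving an explicit example of closed convex cones whose sum is not closed.
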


For any vector space $S$ over $\R$ of $V$, the dimension $\dim S$ of $S$ over $\R$ is defined. $\dim S\in\Z_0$ and $0\leq\dim S\leq \dim V$. 

\begin{definition}
\label{affine dimension}
For any affine space $S$ of $V$, we define
$$\dim S=\dim\Stab(S)\in\Z_0,$$
and we call $\dim S$ the \emph{dimension} of $S$.
\end{definition}

\begin{lemma}
\label{e}
\begin{enumerate}
\item
Let $S$ be an affine space of $V$. $\dim S\in\Z_0$, and $0\leq\dim S\leq\dim V$.
If $S$ contains $0$, then the dimension of $S$ as an affine space and the dimension of $S$ as a vector space are equal.
\item
Let $S$ and $T$ be affine spaces of $V$ with $S\subset T$. We have $\dim S\leq\dim T$, and $\dim S=\dim T$ if and only if $S=T$.
\end{enumerate}
\end{lemma}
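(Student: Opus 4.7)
The plan is to reduce both parts to statements about the stabilizer, exploiting Lemma~\ref{c}.5, which asserts that for any affine space $S$ the set $\Stab(S)$ is a vector space and $S=\Stab(S)+\{x\}$ for any $x\in S$.

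For part~1, by Definition~\ref{affine dimension} we have $\dim S=\dim\Stab(S)$, and since $\Stab(S)$ is a vector subspace of $V$, its $\R$-dimension is an element of $\Z_0$ bounded above by $\dim V$; this gives $0\leq\dim S\leq\dim V$. When $0\in S$, Lemma~\ref{c}.4 (together with Lemma~\ref{c}.5) forces $S=\Stab(S)$, so the affine-space dimension $\dim\Stab(S)$ coincides with the vector-space dimension of $S$.

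For part~2, fix any $x\in S$; then $x\in T$ as well, and by Lemma~\ref{c}.5 we have $S=\Stab(S)+\{x\}$ and $T=\Stab(T)+\{x\}$. For any $v\in\Stab(S)$ the element $v+x$ lies in $S\subset T$, so $v+x=w+x$ for some $w\in\Stab(T)$, whence $v=w\in\Stab(T)$. Thus $\Stab(S)\subset\Stab(T)$, and since both are vector subspaces of $V$ we obtain $\dim S=\dim\Stab(S)\leq\dim\Stab(T)=\dim T$. If $\dim S=\dim T$, then $\Stab(S)$ and $\Stab(T)$ are vector subspaces of $V$ of equal finite dimension with one contained in the other, hence equal, and consequently $S=\Stab(S)+\{x\}=\Stab(T)+\{x\}=T$. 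The converse implication is immediate.

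I do not expect any serious obstacle here: the only point requiring mild care is ensuring the translation argument in part~2 is carried out with a common base point $x\in S\subset T$ so that the two descriptions $S=\Stab(S)+\{x\}$ and $T=\Stab(T)+\{x\}$ use the same translate, after which the monotonicity and the equality case follow from standard linear algebra applied to subspaces of $V$.
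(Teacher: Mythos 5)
Your proof is correct and follows the route the paper's framework invites: both parts reduce to the stabilizer via Definition~\ref{affine dimension} and Lemma~\ref{c}.4--5, which is exactly the intended "follows from definitions" argument (the paper omits a proof of this lemma). The only small care point — using a common base point $x\in S\subset T$ in part~2 so that both translates cancel — is handled properly.
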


\begin{definition}
\label{dimension}
 Let $S$ be any convex subset of $V$.
We define
$$\dim S=\dim\Affi(S)\in\Z_0,$$
and we call $\dim S$ the \emph{dimension} of $S$.

We define
\begin{equation*}
\begin{split}
\partial S&=S\cap\Clos(\Affi(S)- S),\\
S^\circ&=S-\Clos(\Affi(S)- S),
\end{split}
\end{equation*}
we call $\partial S$ the \emph{boundary} of $S$, and we call $S^\circ$ the \emph{interior} of $S$.
\end{definition}

\begin{lemma}
\label{f}
\begin{enumerate}
\item
Let $S$ be a convex subset of $V$. $\dim S\in\Z_0$, and $0\leq\dim S\leq\dim V$.
If $S$ is an affine space, then the dimension of $S$ as a convex set and the dimension of $S$ as an affine space are equal.
\item
For any convex set $S$ of $V$, we have $\dim S=\dim\Affi(S)=\dim \Clos(S)$.
\item
Let $S$ and $T$ be convex subsets of $V$ with $S\subset T$. We have $\dim S\leq\dim T$.
\item
Let $S$ be a convex subset of $V$.
$\partial S\cup S^\circ=S$.
$\partial S\cap S^\circ=\emptyset$.
$S^\circ$ is a non-empty open subset of $\Affi(S)$.
If $S$ is closed, then $\partial S$ is also closed.
\item
For any convex cone $S$ of $V$, we have $\Affi(S)=\Vect(S)$, $\dim S=\dim\Vect(S)$,
$\partial S=S\cap\Clos(\Vect(S)- S)$, $S^\circ=S-\Clos(\Vect(S)- S)$ and $S^\circ$ is a non-empty open subset of $\Vect(S)$.
\end{enumerate}
\end{lemma}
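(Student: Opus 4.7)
The plan is to establish parts~(1)--(3) by unfolding the definitions in the light of the preceding lemmas, then prove the structural assertions in part~(4), which carry the real content, and finally derive part~(5) as a specialization of~(4) to convex cones.

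Parts~(1)--(3) go quickly. For~(1), the affine space $\Affi(S)$ has stabilizer $\Stab(\Affi(S))$, which is a vector subspace of $V$ by Lemma~\ref{c}.5, so its dimension lies in $\{0,1,\ldots,\dim V\}$; when $S$ is itself an affine space, $S=\Affi(S)$ by Lemma~\ref{c}.3, so Definition~\ref{dimension} collapses to Definition~\ref{affine dimension}. For~(2), Lemma~\ref{c}.2 already gives $\Affi(S)=\Affi(\Clos(S))$ when $S$ is convex, and applying $\dim\Affi(\cdot)$ yields the three equal quantities. For~(3), $S\subset T$ implies $\Affi(S)\subset\Affi(T)$ by Lemma~\ref{a}.7, so Lemma~\ref{e}.2 delivers $\dim S\leq\dim T$.

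For~(4), the identities $\partial S\cup S^\circ=S$ and $\partial S\cap S^\circ=\emptyset$ are immediate from Definition~\ref{dimension}. I will then verify the key identity $S^\circ=\Affi(S)-\Clos(\Affi(S)-S)$: if a point of $\Affi(S)$ has a Euclidean neighbourhood in $V$ missing $\Affi(S)-S$, it cannot itself lie in $\Affi(S)-S$, so it must lie in $S$, which gives one inclusion, and the other is immediate from $S\subset\Affi(S)$. This identity exhibits $S^\circ$ as the intersection of $\Affi(S)$ with the open set $V-\Clos(\Affi(S)-S)$, hence open in the relative topology on $\Affi(S)$. For non-emptiness, set $d=\dim S=\dim\Affi(S)$ and select $d+1$ affinely independent points $x_0,\ldots,x_d\in S$ (these exist because $\Affi(S)$, which has dimension $d$, is the set of affine combinations of elements of $S$); by convexity the ``open simplex'' $\{\sum_{i=0}^d t_ix_i\mid t_i>0,\ \sum t_i=1\}$ is contained in $S$ and is relatively open in $\Affi(S)$, so its barycenter $\bar{x}=\frac{1}{d+1}\sum_{i=0}^d x_i$ witnesses $S^\circ\neq\emptyset$. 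Finally, if $S$ is closed, then $\partial S=S\cap\Clos(\Affi(S)-S)$ is the intersection of two closed subsets of $V$, hence closed.

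For~(5), a convex cone $S$ contains $0$ by Lemma~\ref{c}.12, so $\Affi(S)$ is an affine space containing $0$ and thus a vector space by Lemma~\ref{c}.4; since $\Vect(S)$ is the smallest vector subspace containing $S$ while conversely every vector space is an affine space, $\Affi(S)=\Vect(S)$. The formulas for $\partial S$, $S^\circ$, and the openness and non-emptiness of $S^\circ$ then follow by substituting $\Vect(S)$ for $\Affi(S)$ in the corresponding assertions of~(4), and $\dim S=\dim\Vect(S)$ is immediate from this substitution combined with part~(1). The principal obstacle is the non-emptiness of $S^\circ$ in~(4): this step requires both the affine-independence construction and the verification that the relatively open simplex it spans is genuinely open in $\Affi(S)$; everything else in the lemma is essentially bookkeeping from Lemmas~\ref{a}--\ref{e}.
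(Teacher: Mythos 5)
The paper gives no proof of Lemma~\ref{f}; it falls under the blanket statement in the introduction that ``most of our claims follow from definitions,'' so there is nothing to compare against. Your proof is correct and well organized: parts~(1)--(3) are indeed the routine unfoldings you describe via Lemmas~\ref{a}, \ref{c}, \ref{e}, the identity $S^\circ=\Affi(S)-\Clos(\Affi(S)-S)$ you isolate is the right reformulation to make relative openness transparent, and the affine-independence argument correctly identifies and discharges the only substantive point in the lemma, namely $S^\circ\neq\emptyset$. One tiny addition would make the non-emptiness step fully self-contained: having shown the open simplex $\sigma^\circ=\{\sum t_ix_i:t_i>0,\ \sum t_i=1\}$ is relatively open in $\Affi(S)$ and contained in $S$, you should note explicitly that writing $\sigma^\circ=W\cap\Affi(S)$ for an open $W\subset V$ gives $W\cap(\Affi(S)-S)=\emptyset$, so the barycenter $\bar x$ avoids $\Clos(\Affi(S)-S)$; that is what places $\bar x$ in $S^\circ$ under the definition, not merely under the reformulated identity. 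The specialization in~(5) via $\Affi(S)=\Vect(S)$ (using $0\in S$ by Lemma~\ref{c}.12 and Lemma~\ref{c}.4) is exactly right, and you correctly invoke Lemma~\ref{e}.1 implicitly to equate the affine and vector-space dimensions.
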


\begin{remark}
It does not follow $S=T$ from only the assumptions $S\subset T$ and $\dim S=\dim T$ for convex subsets $S$, $T$ of $V$.
\end{remark}

In Section~\ref{concept} we defined concepts of convex polyhedrons, convex polyhedral cones, and convex pseudo polyhedrons.

\begin{lemma}
\label{g}
\begin{enumerate}
\item
Any convex polyhedron in $V$ is convex, compact and closed.
\item
Any convex polyhedral cone in $V$ is a closed convex cone.
\item
Any convex pseudo polyhedron in $V$ is convex and closed.
\item
Any vector space in $V$ is a convex polyhedral cone.
\item
Any affine space in $V$ is a convex pseudo polyhedron.
Any convex polyhedral cone in $V$ is a convex pseudo polyhedron.
Any convex polyhedron in $V$ is a convex pseudo polyhedron.
\end{enumerate}
\end{lemma}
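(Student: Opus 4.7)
The plan is to prove the five assertions of Lemma~\ref{g} in order, reducing each to the preceding ones and to the basic properties already established in Lemmas~\ref{a}--\ref{f}.

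First, for (1), take a convex polyhedron $X=\Conv(Y)$ with $Y$ finite and non-empty. Convexity is immediate from Lemma~\ref{c}.1. For compactness I would write $X$ as the image of the standard simplex $\{\lambda\in\Map(Y,\R_0)\mid\sum_{y\in Y}\lambda(y)=1\}$ inside the finite dimensional vector space $\Map(Y,\R)$ under the continuous linear map $\lambda\mapsto\sum_{y\in Y}\lambda(y)y$, using Lemma~\ref{a}.2. The simplex is closed and bounded in $\Map(Y,\R)$, hence compact, and its continuous image $X$ is compact and therefore closed.

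The main technical step is (2): closedness of $\Convcone(Y)$ for finite $Y$. The convex cone property is Lemma~\ref{c}.11 (with the case $Y=\emptyset$ handled by Lemma~\ref{a}.1). For closedness, the hard part, my approach is a Carath\'eodory-type reduction: every $a\in\Convcone(Y)$ admits a representation $a=\sum_{y\in I}\lambda(y)y$ with $I\subset Y$ linearly independent and $\lambda\in\Map(I,\R_0)$. The reduction is iterative: given any non-negative representation on a linearly dependent subset, subtract a suitable scalar multiple of a dependence relation in order to zero out one coefficient while keeping the remaining coefficients non-negative; the support strictly decreases, so the process terminates on a linearly independent set. Consequently, $\Convcone(Y)=\bigcup_{I}\Convcone(I)$ is a \emph{finite} union over linearly independent subsets $I\subset Y$. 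For each such $I$, the map $\Map(I,\R)\to\Vect(I)$ sending $\lambda$ to $\sum_{y\in I}\lambda(y)y$ is a linear isomorphism, hence a homeomorphism, and it sends the closed subset $\Map(I,\R_0)$ onto $\Convcone(I)$; so $\Convcone(I)$ is closed in $\Vect(I)$, hence in $V$. A finite union of closed sets is closed.

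For (3), if $X=\Conv(Y)+\Convcone(Z)$, then convexity follows from (1), (2) and Lemma~\ref{d}.1. Closedness follows from the facts that $\Conv(Y)$ is compact by (1) and $\Convcone(Z)$ is closed by (2), together with the standard fact that the Minkowski sum of a compact subset and a closed subset of a finite dimensional real vector space is closed. For (4) and (5): any vector space $S\subset V$ has a finite basis $B$, so $S=\Vect(B)=\Convcone(B\cup(-B))$ by Lemma~\ref{a}.3, giving (4). Any affine space $S$ satisfies $S=\Stab(S)+\{x\}$ for $x\in S$ by Lemma~\ref{c}.5; since $\Stab(S)$ is a vector space it equals $\Convcone(Z)$ for a finite $Z$ by (4), so $S=\Conv(\{x\})+\Convcone(Z)$ is a convex pseudo polyhedron. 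Any convex polyhedral cone $\Convcone(Z)$ equals $\Conv(\{0\})+\Convcone(Z)$, and any convex polyhedron $\Conv(Y)$ equals $\Conv(Y)+\Convcone(\emptyset)$ by Lemma~\ref{a}.1, completing (5). The sole genuine obstacle is closedness in (2); a direct image-of-$\Map(Y,\R_0)$ argument does not suffice, which is why the Carath\'eodory-style reduction to linearly independent generators is essential.
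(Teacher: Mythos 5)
Your argument is correct, but in the key step (2) it follows a genuinely different path from the paper. The paper first factors out the lineality space $W=S\cap(-S)$, reduces to the case $\pi(S)\cap(-\pi(S))=\{0\}$, and then proves closedness of $\pi(S)$ by a sequential-compactness argument: it normalizes the coefficient vectors $\lambda(i)$ to lie on the unit simplex $\Lambda$, extracts a convergent subsequence, and splits into the cases $\{|\lambda(i)|\}$ bounded and unbounded, deriving a contradiction from $\pi(S)\cap(-\pi(S))=\{0\}$ in the unbounded case. Your Carath\'eodory-style reduction instead writes $\Convcone(Y)$ as a \emph{finite} union of the subcones $\Convcone(I)$ over linearly independent $I\subset Y$, each of which is a linear-isomorphic image of a closed orthant $\Map(I,\R_0)$ and hence closed, so the union is closed. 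Your route avoids the sequence-extraction and case analysis entirely and is arguably more elementary; it also yields, as a byproduct, the standard conical Carath\'eodory decomposition of a polyhedral cone into simplicial subcones, which is information the paper's proof does not produce. The paper's proof, conversely, makes explicit the role of the lineality space $S\cap(-S)$, a theme that recurs throughout Section~\ref{cones}. Both are valid; the iterative reduction you describe (subtracting a signed multiple of a dependence relation to zero out a coefficient while preserving non-negativity, so that the support strictly shrinks) is the classical argument and terminates correctly. Your treatments of (1), (3), (4), (5) match the paper's in substance, except that for (3) you invoke the standard ``compact plus closed is closed'' fact where the paper spells out the same subsequence argument explicitly, and for (4)--(5), where the paper only writes ``Easy,'' you supply the identifications $\Vect(B)=\Convcone(B\cup(-B))$, $S=\Conv(\{x\})+\Stab(S)$, and $\Conv(Y)=\Conv(Y)+\Convcone(\emptyset)$, all of which are correct.
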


\begin{proof}
\noindent 1. We consider any convex polyhedron $S$ in $V$. We take any non-empty finite subset $X$ of $V$ with $S=\Conv(X)$.
By Lemma~\ref{a}.6 we have $S=\Conv(X)\supset X\neq\emptyset$. Therefore, $S\neq\emptyset$.
By Lemma~\ref{a}.8 we have $\Conv(S)=\Conv(\Conv(X))=\Conv(X)=S$.
By Lemma~\ref{c}.1 we know that $S$ is convex.

Let $\Lambda=\{\lambda\in\Map(X,\R_0)|\sum_{x\in X}\lambda(x)=1\}$, which is a compact subset of the finite dimensional vector space $\Map(X,\R)$.
Putting $\pi(\lambda)=\sum_{x\in X}\lambda(x)x\in V$ for any $\Lambda\in\Map(X,\R)$, we define a homomorphism $\pi:\Map(X,\R)\rightarrow V$ of vector spaces over $\R$.
By Lemma~\ref{a}.2 we know $S=\Conv(X)=\pi(\Lambda)$.
Since $\Lambda$ is compact and $\pi$ is continuous, we know that $S$ is compact and closed.

\noindent 2. We consider any convex polyhedral cone $S$ in $V$. We take any finite subset $X$ of $V$ with $S=\Convcone(X)$.
By Lemma~\ref{a}.6 we have $S=\Convcone(X)\ni 0$.
Therefore, $S\neq\emptyset$.
By Lemma~\ref{a}.8 we have $\Convcone(S)=\Convcone(\Convcone(X))=\Convcone(X)=S$.
By Lemma~\ref{c}.11 we know that $S$ is a convex cone.

Let $W=S\cap(-S)$.
By Lemma~\ref{c}.14 we know that $W$ is the maximal vector space contained in $S$.
Let $\pi:V\rightarrow V/W$ denote the canonical surjective homomorphism of vector spaces to the residue vector space $V/W$, and $\sigma:V/W\rightarrow V$ be an injective homomorphism of vector spaces satisfying $\pi\sigma(\bar{x})=\bar{x}$ for any $\bar{x}\in V/W$. Putting $\phi(x)=(x-\sigma\pi(x),\pi(x))\in W\times (V/W)$ for any $x\in V$, we have an isomorphism of vector spaces $\phi:V\rightarrow W\times (V/W)$.
We have $\phi(S)=W\times\pi(S)$, and $\pi(S)\cap(-\pi(S))=\{0\}$.
Therefore, in order to show that $S$ is closed, it suffice to show that $\pi(S)$ is closed, applying the condition $\pi(S)\cap(-\pi(S))=\{0\}$.
By Lemma~\ref{a}.4 we know $\pi(S)=\pi(\Convcone(X))=\Convcone(\pi(X))$ and $\pi(X)$ is a finite subset of $V/W$.

We show that $\pi(S)$ is closed.
If $\pi(S)=\{0\}$, then obviously  $\pi(S)$ is closed.
Below we assume $\pi(S)\neq\{0\}$ and denote $\bar{X}=\pi(X)-\{0\}$.
We know that $\bar{X}$ is a non-empty finite subset of $V/W$, $0\not\in\bar{X}$, and $\pi(S)=\Convcone(\bar{X})=\{x\in V/W|x=\sum_{x\in\bar{X}}\lambda(x)x\text{ for some }\lambda\in\Map(\bar{X},\R_0).\}$ by Lemma~\ref{a}.2.

For any $\lambda\in \Map(\bar{X},\R_0)$, we denote $|\lambda|=\sum_{x\in\bar{X}}\lambda(x)\in\R_0$.
For any $\lambda\in \Map(\bar{X},\R_0)$, we have $|\lambda|>0$ if and only if $\lambda\neq 0$.
We denote $\Lambda=\{\lambda\in\Map(\bar{X},\R_0)|$\hfill\break$|\lambda| =1\}$.
The set $\Lambda$ is a non-empty compact subset of $\Map(\bar{X},\R_0)$.
For any $\lambda\in \Map(\bar{X}, \R_0)-\{0\}$, we have $\lambda/|\lambda|\in\Lambda$.

Let $a\in\Clos(\pi(S))$ be any point. 
If $a=0$, then obviously we have $a=0\in\pi(S)$.
Below we assume $a\neq 0$.
We take an infinite sequence $\lambda(i), i\in\Z_0$ of elements of $\Map(\bar{X},\R_0)$ satisfying $a=\lim_{i\rightarrow\infty}\sum_{x\in\bar{X}}\lambda(i)(x)x$.

Assume that there exists $i_0\in\Z_0$ such that $\lambda(i)=0$ for any $i\in\Z_0$ with $i\geq i_0$.
We have $0\neq a=0$, a contradiction.
We know that for any $i_0\in\Z_0$ there exists $i\in\Z_0$ with $i\geq i_0$ and $\lambda(i)\neq 0$.

Below we assume that $\lambda(i)\neq 0$ for any $i\in\Z_0$, replacing $\lambda(i), i\in\Z_0$ by some subsequence.
We have an infinite sequence $\lambda(i)/|\lambda(i)|, i\in\Z_0$ of elements of $\Lambda$.

Below we assume that there exists the limit $\lim_{i\rightarrow\infty}\lambda(i)/|\lambda(i)|\in \Lambda$, replacing $\lambda(i), i\in\Z_0$ by some subsequence.
We denote $\bar{\lambda}=\lim_{i\rightarrow\infty}\lambda(i)/|\lambda(i)|\in \Lambda $. We know that there exists the limit $\lim_{i\rightarrow\infty}\sum_{x\in\bar{X}}(\lambda(i)(x) /|\lambda(i)|)x\in V/W$, and\hfill\break
$\lim_{i\rightarrow\infty}\sum_{x\in\bar{X}}(\lambda(i)(x) /|\lambda(i)|)x=
\sum_{x\in\bar{X}}\bar{\lambda}(x)x$.

We have 
$$a=\lim_{i\rightarrow\infty}|\lambda(i)|\sum_{x\in\bar{X}}(\frac{\lambda(i)(x)} {|\lambda(i)|})x.$$

We have two cases, the case where $\{|\lambda(i)||i\in\Z_0\}\subset\R_0$ is not bounded, and the case where $\{|\lambda(i)||i\in\Z_0\}\subset\R_0$ is bounded.

We consider the case where $\{|\lambda(i)||i\in\Z_0\}\subset\R_0$ is not bounded.

Below we assume that $\lim_{i\rightarrow\infty}|\lambda(i)|=\infty$, replacing $\lambda(i), i\in\Z_0$ by some subsequence.
We have
$$0=\frac{a}{\lim_{i\rightarrow\infty}|\lambda(i)|}=
\sum_{x\in\bar{X}}\bar{\lambda}(x)x.$$
Since $\bar{\lambda}\in \Lambda$, there exists $\bar{x}\in\bar{X}$ with
$\bar{\lambda}(\bar{x})>0$.
We take $\bar{x}\in\bar{X}$ with $\bar{\lambda}(\bar{x})>0$, and putting
$\mu(x)= \bar{\lambda}(x)/ \bar{\lambda}(\bar{x})\in\R_0$ for any $x\in\bar{X}-\{\bar{x}\}$, we define an element $\mu\in\Map(\bar{X}-\{\bar{x}\},\R_0)$.
We know that
$$\bar{x}=-\sum_{x\in\bar{X}-\{\bar{x}\}}\mu(x)x,$$
$\bar{x}\in\bar{X}\subset\Convcone(\bar{X})=\pi(S)$, and $-\sum_{x\in\bar{X}-\{\bar{x}\}}\mu(x)x\in-\Convcone(\bar{X})=-\pi(S)$ by Lemma~\ref{a}.2.
We conclude $\bar{x}\in\pi(S)\cap(-\pi(S))=\{0\}$ and $\bar{x}=0$.
On the other hand, since $\bar{x}\in\bar{X}\not\ni 0$, we have $\bar{x}\neq 0$, which is a contradiction. 
The case under consideration never occurs.

We consider the case where $\{|\lambda(i)||i\in\Z_0\}\subset\R_0$ is bounded.

Below we assume that there exists the limit $\lim_{i\rightarrow\infty}|\lambda(i)|\in\R_0$, replacing $\lambda(i), i\in\Z_0$ by some subsequence.
We have
$$a=\sum_{x\in\bar{X}}(\lim_{i\rightarrow\infty}|\lambda(i)|)\bar{\lambda}(x)x\in\Convcone(\bar{X},\R_0)=\pi(S),$$
by Lemma~\ref{a}.2.

We know that $\Clos(\pi(S))\subset\pi(S)$.
By Lemma~\ref{c}.20 we conclude that $\pi(S)$ is closed, and $S$ is closed.

\noindent 3. We consider any convex pseudo polyhedron $S$ in $V$. We take any finite subsets $X,Y$ of $V$ with $S=\Conv(X)+\Convcone(Y)$ and $X\neq\emptyset$.
By Lemma~\ref{a}.6 we know $\Convcone(Y)\ni 0$ and $S=\Conv(X)+\Convcone(Y)\supset\Conv(X)\supset X\neq\emptyset$.
Therefore, $S\neq\emptyset$.
By Lemma~\ref{b}.1, Lemma~\ref{a}.8 and Lemma~\ref{a}.10 we have
$\Conv(S)=\Conv(\Conv(X)+\Convcone(Y))=\Conv(\Conv(X))+\Conv(\Convcone(Y))=\Conv(X)+\Convcone(Y)=S$.
By Lemma~\ref{c}.1 we know that $S$ is convex.

We consider any element $a\in\Clos(S)$.
We take an infinite sequence $b(i),i\in\Z_0$ of elements of $S$ with $a=\lim_{i\rightarrow\infty}b(i)$.
For any $i\in\Z_0$ we take $c(i)\in\Conv(X)$ and $d(i)\in\Convcone(Y)$ with $b(i)=c(i)+d(i)$.
Note that $\Conv(X)$ is compact by $1$.

Below we assume that there exists the limit  $\lim_{i\rightarrow\infty}c(i)\in\Conv(X)$, replacing $b(i), i\in\Z_0$ by some subsequence.

Since $d(i)=b(i)-c(i)$ for any $i\in\Z_0$, we know that there exists the limit
$\lim_{i\rightarrow\infty}d(i)\in\Clos(\Convcone(Y))$.

By $2$ we know $\Clos(\Convcone(Y))=\Convcone(Y)$.
We know that $a=\lim_{i\rightarrow\infty}$\hfill\break$b(i)=\lim_{i\rightarrow\infty}(c(i)+d(i))=\lim_{i\rightarrow\infty}c(i)+\lim_{i\rightarrow\infty}d(i)
\in\Conv(X)+\Convcone(Y)=S$.

By Lemma~\ref{c}.20 we conclude that $\Clos(S)\subset S$ and $S$ is closed.

\noindent 4, 5. Easy.
\end{proof}

In Section~\ref{concept} we defined concepts of lattices, dual lattices and simplicial cones.

By definition we know that there exists a lattice $N$ of $V$.
Let $N$ be any lattice of $V$.

\begin{definition}
Let $S$ be any subset of $V$.
\begin{enumerate}
\item
We say that $S$ is a \emph{rational convex polyhedral cone} over $N$, or a convex polyhedral cone $S$ is \emph{rational} over $N$, if there exists a finite subset $X$ of $\QVect(N)$ with $S=\Convcone(X)$.
\item
We say that $S$ is a \emph{rational convex pseudo polyhedron} over $N$, or a convex pseudo polyhedron $S$ is \emph{rational} over $N$, if there exist finite subsets $X, Y$ of $\QVect(N)$ with $S=\Conv(X)+\Convcone(Y)$ and $X\neq\emptyset$.
\end{enumerate}
\end{definition}

The dual lattice $N^*$ is defined. We have
$$N^*=\{\omega\in V^*|\langle\omega, a\rangle\in\Z \text{ for any } a\in N\}\subset V^*,$$
by definition.

\begin{lemma}
\label{h}
\begin{enumerate}
\item
$N$ is a submodule of $V$, $N$ is a free module of finite rank over $\Z$ with $\Rank N=\dim V$, and $\Vect(N)=V$.
\item
Any basis of $N$ over $\Z$ is a basis of $\QVect(N)$ over $\Q$, and is a basis of $V$ over $\R$.
\item
For any non-empty finite subset $F$ of $N$, the following three conditions are equivalent:
\begin{enumerate}
\item
$F$ is linearly independent over $\Z$.
\item
$F$ is linearly independent over $\Q$.
\item
$F$ is linearly independent over $\R$.
\end{enumerate}
\item
The dual lattice $N^*$ of $N$ is a lattice of the dual vector space $V^*$ of $V$.
The dual lattice $N^{**}$ of $N^*$ is equal to $N$.
\item
A convex polyhedral cone $S$ in $V$ is rational over $N$, if and only if, $S=\Convcone(X)$ for some finite subset $X$ of $N$.
\item
For any vector space $S$ in $V$ the following three conditions are equivalent:
\begin{enumerate}
\item
$S$ is a rational polyhedral cone over $N$.
\item
$S=\Vect(X)$ for some finite subset $X$ of $N$.
\item
$N\cap S$ is a lattice of $S$.
\end{enumerate}
\item
For any rational polyhedral cone $S$ over $N$ in $V$, $\Vect(S)$ is a rational vector space over $N$ in $V$.
\item
For any non-empty subset $S$ of $V$, the following two conditions are equivalent:
\begin{enumerate}
\item
$S$ is a simplicial cone over $N$ in $V$ and $\dim S=1$.
\item
$S$ is a rational convex polyhedral cone over $N$ in $V$, $\dim S=1$, and $S\neq\Vect(S)$.
\end{enumerate}
\item
A convex pseudo polyhedron $S$ in $V$ is rational over $N$, if and only if, $S=\Conv (X)+\Convcone(Y)$ for some non-empty finite subset $X$ of $\QVect(N)$ and some finite subset $Y$ of $N$.
\item
For any affine space $S$ in $V$, the following two conditions are equivalent:
\begin{enumerate}
\item
$S$ is a rational convex pseudo polyhedron over $N$.
\item
$S=\{x\}+\Vect(Y)$ for some $x\in\QVect(N)$ and some finite subset $Y$ of $N$.
\end{enumerate}
\item
For any rational affine space $S$ over $N$ in $V$, $\Stab(S)$ is a rational vector space over $N$ in $V$.
\item
For any rational convex pseudo polyhedron $S$ over $N$ in $V$, $\Affi(S)$ is a rational affine space over $N$ in $V$.
\end{enumerate}
We consider any finite dimensional vector space $W$ over $\R$ and any homomorphism $\pi:V\rightarrow W$ of vector spaces over $\R$.
The dual homomorphism $\pi^*:W^*\rightarrow V^*$ is defined, and is a homomorphism of vector spaces over $\R$.
The kernel $\pi^{-1}(0)$ of $\pi$ is a vector subspace of $V$, the image $\pi(V)$ is a vector subspace of $W$, the image $\pi^*(W^*)$ of $\pi^*$ is a vector subspace of $V^*$, and the kernel $\pi^{*-1}(0)$ of $\pi^*$ is a vector subspace of $W^*$.
\begin{enumerate}
\setcounter{enumi}{12}
\item
The following seven conditions are equivalent;
\begin{enumerate}
\item
$\pi^{-1}(0)$ is rational over $N$.
\item
$N\cap\pi^{-1}(0)$ is a lattice of $\pi^{-1}(0)$.
\item
$\pi(N)$ is a lattice of $\pi(W)$.
\item
There exists a lattice $Q$ of $W$ satisfying $Q\cap \pi(V)=\pi(N)$.
\item
$\pi^*(W^*)$ is rational over $N^*$.
\item
$N^*\cap\pi^*(W^*)$ is a lattice of $\pi^*(W^*)$.
\item
There exists a lattice $\bar{Q}$ of $W^*$ satisfying $\pi^*(\bar{Q})=N^*\cap\pi^*(W^*)$.
\end{enumerate}
\item
Assume that equivalent seven conditions in the above $14$ hold.
For any lattice $\bar{Q}$ of $W^*$ satisfying $\pi^*(\bar{Q})=N^*\cap\pi^*(W^*)$,
$\bar{Q}\cap \pi^{*-1}(0)$ is a lattice of $\pi^{*-1}(0)$.
\item
For any lattice $Q$ of $W$, the following two conditions are equivalent:
\begin{enumerate}
\item
$Q\cap\pi(V)=\pi(N)$.
\item
$\pi^*(Q^*)=N^*\cap \pi^*(W^*)$.
\end{enumerate}
\item
For any lattice $\bar{Q}$ of $W^*$, the following two conditions are equivalent:
\begin{enumerate}
\item
$\pi^*(\bar{Q})=N^*\cap\pi^*(W^*)$.
\item
$\bar{Q}^*\cap\pi(V)=\pi(N)$.
\end{enumerate}
\end{enumerate}
\end{lemma}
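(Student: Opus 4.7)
The plan is to treat the lemma in three layers: the purely set-theoretic/linear-algebraic items $1$--$12$, which follow by unwinding definitions and invoking the basic results on convex sets from Lemmas~\ref{a}--\ref{g}; the seven-way equivalence in item $13$, which is the substantive content; and items $14$--$16$, which package the duality exposed in the proof of $13$.

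For items $1$--$12$ the main observation is that after choosing a basis $B$ of $N$, the identification $V \cong \R^B$ carries $N$ to $\Z^B$ and $\QVect(N)$ to $\Q^B$, so rationality translates into having rational coordinates. Item $1$ is the definition; items $2$--$3$ reduce to the fact that a $\Z$-basis of $N$ is a $\Q$-basis of $\QVect(N)$ and an $\R$-basis of $V$; item $4$ is a standard computation using dual bases. Items $5$--$7$ clear denominators: scaling the generators of a cone $\Convcone(X)$ with $X \subset \QVect(N)$ by a common positive integer $d$ yields generators in $N$. Item $6$(c)$\Leftrightarrow$(b) is the standard fact that a $\Z$-spanning finite subset of a vector subspace $S$ contains a $\Z$-basis (so $N \cap S = \Z\text{-span of a basis}$). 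Item $8$ reads off from (c)$\Leftrightarrow$(b) in item $6$. Items $9$--$12$ repeat the denominator-clearing argument for pseudo polyhedra and affine spaces and read off $\Affi(\Conv(X)+\Convcone(Y)) = \Affi(X) + \Vect(Y)$, whose stabilizer is $\Vect(Y -Y)+\Vect(Y)$, rational over $N$.

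For item $13$ I will run two parallel chains and fuse them via annihilator duality. The first chain (a)$\Leftrightarrow$(b)$\Leftrightarrow$(c)$\Leftrightarrow$(d) takes place on the $V$-side: (a)$\Leftrightarrow$(b) is item $6$; (b)$\Leftrightarrow$(c) follows from the short exact sequence of free abelian groups $0 \to N\cap\pi^{-1}(0)\to N\to\pi(N)\to 0$ together with the fact that $\Rank(N\cap\pi^{-1}(0))+\Rank \pi(N) = \Rank N = \dim V$ forces full rank of $\pi(N)$ in $\pi(V)$; and (c)$\Rightarrow$(d) is obtained by extending a $\Z$-basis of $\pi(N)$ to a $\Z$-basis of any lattice of $W$ by completing with a basis of a complementary subspace, while (d)$\Rightarrow$(c) is immediate. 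The dual chain (e)$\Leftrightarrow$(f)$\Leftrightarrow$(g) is the same four equivalences applied to $\pi^*:W^*\to V^*$ with $N^*$ in place of $N$, using item $4$ to ensure $N^{**}=N$. The bridge (c)$\Leftrightarrow$(f) is the annihilator identity $\pi^{-1}(0)^\perp = \pi^*(W^*)$ inside $V^*$, combined with the general principle (which I will record as a one-line lemma inside the proof): for a rational vector subspace $U$ of $V$, the quotient lattice $\pi(N)\subset V/U$ and the subspace lattice $N^*\cap U^\perp\subset U^\perp$ are mutually dual under the induced perfect pairing $(V/U)\times U^\perp\to\R$. Applied to $U=\pi^{-1}(0)$, this gives simultaneously that $\pi(N)$ is a lattice of $\pi(V)$ and $N^*\cap\pi^*(W^*)$ is a lattice of $\pi^*(W^*)$.

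Item $14$ then drops out from the short exact sequence $0 \to \bar{Q}\cap\pi^{*-1}(0) \to \bar{Q}\to \pi^*(\bar{Q}) \to 0$: given that $\pi^*(\bar{Q})=N^*\cap\pi^*(W^*)$ has rank $\dim\pi^*(W^*)$ and $\bar{Q}$ has rank $\dim W^* = \dim\pi^{*-1}(0)+\dim\pi^*(W^*)$, the kernel $\bar{Q}\cap\pi^{*-1}(0)$ has rank $\dim\pi^{*-1}(0)$, hence is a lattice there. Items $15$ and $16$ are just the explicit form of the duality: unfolding $\pi^*(Q^*) = \{\A\pi:\A\in W^*,\ \langle\A,q\rangle\in\Z\text{ for all }q\in Q\}$ and intersecting with $\pi^*(W^*)$, the equality $\pi^*(Q^*)=N^*\cap\pi^*(W^*)$ translates, via $\langle\pi^*\A,a\rangle=\langle\A,\pi(a)\rangle$, precisely into the statement that $\{q\in Q\cap\pi(V):\langle\A,q\rangle\in\Z\text{ for all }\A\in Q^*\}$ matches $\pi(N)$ --- which, using $Q^{**}=Q$, is exactly $Q\cap\pi(V)=\pi(N)$. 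Item $16$ follows from item $15$ applied to $\pi^*$ in place of $\pi$, after invoking $\pi^{**}=\pi$ and $Q=\bar{Q}^*{}^*$. The main obstacle is item $13$, specifically the bookkeeping of the annihilator-duality bridge (c)$\Leftrightarrow$(f); once that is established cleanly, everything else is rank-counting.
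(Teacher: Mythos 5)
The paper provides no proof of Lemma~\ref{h}; it belongs to the class of auxiliary statements the author declares follow from definitions. Judged on its own merits, your plan is correct and well organized: the denominator-clearing reductions for items $1$--$12$, the rank-count on the short exact sequence $0\to N\cap\pi^{-1}(0)\to N\to\pi(N)\to 0$ together with discreteness of $N$ for (b)$\Leftrightarrow$(c), and the annihilator bridge $\pi^{-1}(0)^\perp=\pi^*(W^*)$ with dual-lattice pairing for (c)$\Leftrightarrow$(f) all work. One description is slightly off: the chain (e)$\Leftrightarrow$(f)$\Leftrightarrow$(g) is not literally ``the same four equivalences applied to $\pi^*$'' --- since $N^*$ lies in the \emph{codomain} $V^*$ of $\pi^*$, the roles of kernel and image do not transport verbatim; rather (e)$\Leftrightarrow$(f) is item $6$ applied to the subspace $\pi^*(W^*)\subset V^*$ with $N^*$, and (f)$\Leftrightarrow$(g) is a lifting argument in the spirit of (c)$\Leftrightarrow$(d). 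Likewise your bridge, as stated, directly gives only (a)$\Rightarrow$(c)$\wedge$(f); the converse direction (f)$\Rightarrow$(a) needs the dual application (the same principle in $V^*$ with $U^\perp$ and $N^*$, using $U^{\perp\perp}=U$, $N^{**}=N$). Neither of these is a gap in substance --- just phrasing that a full write-up would tighten.
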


\section{Convex cones and convex polyhedral cones}
\label{cones}

We study convex cones and convex polyhedral cones.

Let $V$ be any finite dimensional vector space over $\R$, and let $N$ be any lattice of $V$.

In Section~\ref{concept} we defined the dual cone $S^\vee|V$ of any convex cone $S$ in $V$.
By definition
$$S^\vee|V=\{\omega\in V^*|\langle\omega, a\rangle\geq 0 \text{ for any } a\in S\}\subset V^*,$$
for any convex cone $S$ in $V$.

\begin{lemma}
\label{i}
Let $S$ be any convex cone in $V$.
\begin{enumerate}
\item
The dual cone $S^\vee|V$ of $S$ is a closed convex cone in the dual vector space $V^*$ of $V$.
\item
Let $W$ be any vector subspace in $V$ with $S\subset W$.
$S$ is a convex cone in $W$ and the dual cone $S^\vee|W$ of $S$ in $W^*$ is defined.

Let $\iota:W\rightarrow V$ denote the inclusion homomorphism.
The dual homomorphism $\iota^*:V^*\rightarrow W^*$ is defined, which is surjective.

$S^\vee|V=\iota^{*-1}( S^\vee|W)$.
\item
$S^\vee|V^\vee|V^*=\Clos(S)$.
\item
$S^\vee|V^\vee|V^*=S$, if and only if, $S$ is closed.
\end{enumerate}
\end{lemma}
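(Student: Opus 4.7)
The plan is to establish the four parts essentially in order, with the bulk of the work concentrated in part 3; parts 1, 2, and 4 are short.

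For part 1, I would write $S^\vee|V = \bigcap_{a\in S}\{\omega\in V^*\mid \langle\omega,a\rangle\geq 0\}$ as an intersection of closed half-spaces (each hyperplane $\{\omega \mid \langle\omega,a\rangle = 0\}$ being closed in the Hausdorff topology on $V^*$, and each half-space being closed). Hence $S^\vee|V$ is closed. To see it is a convex cone, check directly that $0\in S^\vee|V$ and that for $\omega,\omega'\in S^\vee|V$ and $t,u\in\R_0$, the linearity of the canonical pairing gives $\langle t\omega+u\omega',a\rangle = t\langle\omega,a\rangle + u\langle\omega',a\rangle \geq 0$ for every $a\in S$. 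Then invoke Lemma~\ref{c}.11.

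For part 2, note that $\iota^*:V^*\to W^*$ is just restriction: $\iota^*(\omega)(a) = \omega(a)$ for $a\in W$. Since $S\subset W$, the condition $\langle\omega,a\rangle\geq 0$ for every $a\in S$ is identical whether read in $V^*\times V$ or in $W^*\times W$ after applying $\iota^*$. So $\omega\in S^\vee|V \iff \iota^*(\omega) \in S^\vee|W$, which is the asserted equality. Surjectivity of $\iota^*$ (stated earlier right after the definition of the dual homomorphism) gives the complementary remark that every element of $S^\vee|W$ does extend.

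Part 3 is the main content. The inclusion $\Clos(S)\subset S^\vee|V^\vee|V^*$ is easy: $S\subset S^\vee|V^\vee|V^*$ is immediate from unwinding the definitions, and the right-hand side is closed by part 1 applied to $S^\vee|V$, so taking closures yields the inclusion. The reverse inclusion is the substantive step, and this is where the main obstacle lies. I would argue by contrapositive: suppose $a\in V$ with $a\notin\Clos(S)$. Since $\Clos(S)$ is a non-empty closed convex subset of $V$ (by Lemma~\ref{c}.13 applied to the convex cone $S$), the standard Hahn--Banach separation theorem (or, equivalently, the projection onto a closed convex set in the Euclidean inner product induced by any basis of $V$) produces $\omega\in V^*$ and $c\in\R$ with $\langle\omega,b\rangle \geq c$ for all $b\in\Clos(S)$ and $\langle\omega,a\rangle < c$. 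Since $0\in\Clos(S)$ (any convex cone contains $0$ by Lemma~\ref{c}.12), we get $c\leq 0$. Because $\Clos(S)$ is itself a cone (Lemma~\ref{c}.10), for every $b\in\Clos(S)$ and every $t\in\R_0$ we have $tb\in\Clos(S)$, so $t\langle\omega,b\rangle\geq c$ for all $t\geq 0$; letting $t\to\infty$ forces $\langle\omega,b\rangle\geq 0$. In particular $\langle\omega,b\rangle\geq 0$ for every $b\in S$, so $\omega\in S^\vee|V$. But $\langle\omega,a\rangle < c \leq 0$, so $a\notin S^\vee|V^\vee|V^*$. This shows $S^\vee|V^\vee|V^*\subset\Clos(S)$ and concludes part 3.

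Part 4 is then immediate from part 3 combined with the equivalence from Lemma~\ref{c}.20: $S$ is closed $\iff$ $S=\Clos(S)$ $\iff$ $S = S^\vee|V^\vee|V^*$. The principal obstacle is, as indicated, isolating a clean statement of Hahn--Banach separation for a closed convex set and a disjoint point in a finite-dimensional real vector space; once that is granted, the cone-homogeneity trick (letting $t\to\infty$) upgrades the separating functional to one lying in $S^\vee|V$, which is precisely what is needed.
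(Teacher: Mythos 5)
Your proof is correct and follows essentially the same approach as the paper's: the paper likewise treats parts 1, 2, 4 as routine and proves part 3 by projecting $a$ onto $\Clos(S)$ in a Euclidean inner product to produce a separating functional in $S^\vee|V$ with $\langle\omega,a\rangle<0$. Your cone-homogeneity ($t\to\infty$) step is a mild repackaging of the paper's direct derivation that $(b-a,b)=0$ and $(b-a,c)\geq 0$ at the projection minimizer.
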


\begin{proof}
\noindent 1,2. Easy.

\noindent 3.
We know $\langle\omega, a\rangle\geq 0$ for any $\omega\in S^\vee|V$ and for any $a\in\Clos(S)$,
and $S^\vee|V^\vee|V^*\supset\Clos(S)$.

We have to show the opposite inclusion relation.

We take any positive definite symmetric bilinear form $(\;,\;):V\times V\rightarrow \R$, and putting $|x|=\sqrt{(x,x)}\in\R_0$ for any $x\in V$, we define a norm $|\;|:V\rightarrow\R_0$.

We consider any point $a\in V- \Clos(S)$.

It is easy to show that there exists a point $b\in\Clos(S)$ satisfying
$|a-b|=\min\{|a-c||c\in\Clos(S)\}$.
We take a point $b\in\Clos(S)$ with $|a-b|=\min\{|a-c||c\in\Clos(S)\}$.
Since $a\not\in\Clos(S)$, we have $a-b\neq 0$, and $|a-b|>0$.

We consider any point $c\in\Clos(S)$.
Furthermore, we consider any $t\in\R$ with $0\leq t\leq 1$ and any $u\in\R_0$.
Since $\Clos(S)$ is a convex cone, we have $(1-t)b+tuc\in\Clos(S)$ and $|a-b|\leq|a-(1-t)b-tuc|=|a-b+t(b-uc)|$.
Therefore, $|a-b|^2\leq|a-b+t(b-uc)|^2=|a-b|^2+2t(a-b,b-uc)+t^2|b-uc|^2$, and
$0\leq 2t(a-b,b-uc)+t^2|b-uc|^2$.
Since $t\in\R$ is any element with $0\leq t\leq 1$, we know
$0\leq (a-b,b-uc)=(a-b,b)-u(a-b,c)$.
Since $u\in\R_0$ is any element, we know $(a-b,b)\geq 0$ and $(a-b,c)\leq 0$.
Furthermore, considering the case $c=b$, we know $(a-b,b)\leq 0$.
Therefore, we conclude $(b-a, b)=0$ and $(b-a, c)\geq 0$ for any $c\in\Clos(S)$.

We take the point $\omega\in V^*$ satisfying $\langle\omega, x\rangle=(b-a,x)$ for any $x\in V$.
We have $\langle\omega, c\rangle=(b-a,c)\geq 0$.
Since $c\in\Clos(S)$ is any point and $\Clos(S)\supset S$, we know $\omega\in S^\vee|V$.

Since $\langle\omega, a\rangle=(b-a,a)=(b-a, a-b)+(b-a, b)=-|a-b|^2+0=-|a-b|^2<0$,
we conclude $a\not\in S^\vee|V^\vee|V^*$, and $a\in V- (S^\vee|V^\vee|V^*)$.

Since $a\in V- \Clos(S)$ is any point, we conclude $V- \Clos(S)\subset V- (S^\vee|V^\vee|V^*)$, $S^\vee|V^\vee|V^*\subset\Clos(S)$, and $ S^\vee|V^\vee|V^*=\Clos(S)$.

\noindent 4. It follows from $3$.
\end{proof}

When we need not refer to $V$, we also write simply $S^\vee$, instead of $ S^\vee|V$.

\begin{lemma}
\label{j}
Let $S$ and $T$ be any convex cones in $V$.
\begin{enumerate}
\item
If $S\subset T$, then $S^\vee\supset T^\vee$.
\item
Assume that $S$ and $T$ are closed. $S\subset T$, if and only if, $S^\vee\supset T^\vee$.
\item
$(S+T)^\vee=S^\vee\cap T^\vee$.
\item
Assume that $S$ and $T$ are closed.
$(S\cap T)^\vee=\Clos(S^\vee+T^\vee)$.
\item
Assume that $S$ is a vector space.
By $\iota:S\rightarrow V$ we denote the inclusion homomorphism.
$\iota^*:V^*\rightarrow S^*$.
$$S^\vee =\{\omega\in V^*|\langle\omega, a\rangle=0 \text{ for any } a\in S\}=\iota^{*-1}(0)\subset V^*,$$
$S^\vee$ is also a vector space, and $\dim S+\dim S^\vee =\dim V$.
\item
$\Vect(S)^\vee=S^\vee\cap (-S^\vee)$.
$\Vect(S)^\vee$ is the maximal vector space contained in $S^\vee$ with respect to the inclusion relation.
\item
Assume that $S$ is closed.
$\Vect(S^\vee)=(S\cap(-S))^\vee$ and $\dim S^\vee =\dim V-\dim(S\cap(-S))$.
\item
$\dim S+\dim S^\vee\geq \dim V$.  $\dim S+\dim S^\vee=\dim V$, if and only if, $S$ is a vector space.
\item
We denote $n=\dim V\in\Z_0$. Let $e: \{1,2,\ldots,n\}\rightarrow V$ be any mapping such that the image $e(\{1,2,\ldots,n\})$ of $e$ is a basis of $V$, and let $I, J, K, L$ be any subset of $\{1,2,\ldots,n\}$ such that $I\cup J\cup K\cup L=\{1,2,\ldots,n\}$ and the intersection of any two of $I, J, K, L$ is empty.
We denote the dual basis of  $\{e(i)|i\in\{1,2,\ldots,n\}\}$ by $\{e^\vee(i)|i\in\{1,2,\ldots,n\}\}$.
We assume
$$\langle e^\vee(i), e(j)\rangle=
\begin{cases}
1&\text{if $i=j$},\\
0&\text{if $i\neq j$},
\end{cases}$$
for any $i\in\{1,2,\ldots,n\}$ and any $j\in\{1,2,\ldots,n\}$.

If
$$S=\sum_{i\in I}\{0\}e(i)+\sum_{j\in J}\R e(j)+\sum_{k\in K}\R_0 e(k)+\sum_{\ell\in L}\R_0(-e(\ell)),$$
then
$$S^\vee=\sum_{i\in I}\R e^\vee (i)+\sum_{j\in J}\{0\}
e^\vee (j)+\sum_{k\in K}\R_0 e^\vee (k)+\sum_{\ell\in L}\R_0(-e^\vee (\ell)).$$
\item
If $S$ is a simplicial cone over $N$ with $\dim S=\dim V$, then $S^\vee$ is a simplicial cone over $N^*$ with $\dim S^\vee=\dim V^*$.
\end{enumerate}
\end{lemma}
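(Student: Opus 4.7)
The plan follows the ten parts in order, using biduality $S^{\vee\vee}=\Clos(S)$ (Lemma~\ref{i}.3) as the central lever and the explicit basis computation in part~9 as the combinatorial anchor. Part~1 is immediate from the definition: if $\omega\in T^\vee$ and $S\subset T$, then $\langle\omega,a\rangle\geq 0$ for every $a\in S$. Part~2 then follows from biduality: for closed cones, $S\subset T\iff S^{\vee\vee}\subset T^{\vee\vee}$, and this is equivalent via part~1 to $S^\vee\supset T^\vee$. For part~3, $\omega\in(S+T)^\vee$ iff $\langle\omega,s+t\rangle\geq 0$ for all $s\in S,t\in T$; setting $s=0$ and $t=0$ in turn (both lie in the respective cones since convex cones contain $0$) yields $S^\vee\cap T^\vee$, and the reverse inclusion is obvious. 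For part~4, apply part~3 to the closed cones $S^\vee,T^\vee$: $(S^\vee+T^\vee)^\vee=S^{\vee\vee}\cap T^{\vee\vee}=S\cap T$, and dualizing once more gives $(S\cap T)^\vee=(S^\vee+T^\vee)^{\vee\vee}=\Clos(S^\vee+T^\vee)$ by Lemma~\ref{i}.3.

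For part~5, when $S$ is a vector space, both $a$ and $-a$ lie in $S$, so $\omega\in S^\vee$ forces $\langle\omega,a\rangle=0$ on $S$; thus $S^\vee=\iota^{*-1}(0)$, and since $\iota^*$ is surjective, rank--nullity gives the dimension identity. Part~6 uses $\Vect(S)=S+(-S)$ (the right side is already a vector space containing $S$) and applies part~3 to obtain $\Vect(S)^\vee=S^\vee\cap(-S^\vee)$; maximality then follows from Lemma~\ref{c}.14 applied to the convex cone $S^\vee$. Part~7 is obtained by applying part~6 to $S^\vee$ in place of $S$: $\Vect(S^\vee)^\vee=S^{\vee\vee}\cap(-S^{\vee\vee})=S\cap(-S)$ since $S$ is closed, and dualizing once more, while using that $\Vect(S^\vee)$ is already closed, gives $\Vect(S^\vee)=(S\cap(-S))^\vee$. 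The dimension formula then follows by applying part~5 to the vector space $S\cap(-S)$ combined with $\dim S^\vee=\dim\Vect(S^\vee)$ (Lemma~\ref{f}.5).

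Part~8 applies part~7 to $\Clos(S)$ (noting $S^\vee=\Clos(S)^\vee$): this yields $\dim S^\vee=\dim V-\dim(\Clos(S)\cap(-\Clos(S)))$, and since $\Clos(S)\cap(-\Clos(S))$ is a vector subspace of $\Vect(S)$, its dimension is at most $\dim S$, giving $\dim S+\dim S^\vee\geq\dim V$; equality forces $\Clos(S)\cap(-\Clos(S))=\Vect(S)$, whence $\Clos(S)=\Vect(S)$ is a vector space, and the converse direction is immediate from part~5. Part~9 is a direct basis computation: expanding $\omega=\sum_i\mu_i\,e^\vee(i)$ and testing against the generators $\pm e(j)$ for $j\in J$, $e(k)$ for $k\in K$, $-e(\ell)$ for $\ell\in L$ of $S$ (noting $e(i)$ with $i\in I$ contributes no generator since $\{0\}e(i)=\{0\}$) reads off the constraints $\mu_j=0$, $\mu_k\geq 0$, $\mu_\ell\leq 0$, with $\mu_i$ unconstrained. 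Part~10 is the special case of part~9 with $I=J=L=\emptyset$ and $K=\{1,\dots,n\}$, combined with Lemma~\ref{h} which guarantees that the dual basis of a $\Z$-basis of $N$ is a $\Z$-basis of $N^*$; thus $S^\vee$ is spanned as a convex cone by a $\Z$-basis of $N^*$, hence a simplicial cone over $N^*$ of full dimension. The main technical care is required in parts~7 and~8, where one must rigorously track the four distinct objects $S$, $\Clos(S)$, $\Vect(S)$, and $S\cap(-S)$ and invoke biduality at precisely the right moment.
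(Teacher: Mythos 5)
The paper itself gives no proof for Lemma~\ref{j}, so your reconstruction cannot be compared against one; it must stand on its own. The overall architecture is sound: parts 1--4 via biduality (Lemma~\ref{i}.3), part 5 by rank--nullity via the surjectivity of $\iota^*$, part 6 from the identity $\Vect(S)=S+(-S)$ together with part 3 and $(-S)^\vee=-S^\vee$, part 7 by applying part 6 to $S^\vee$ and dualizing, part 9 by direct basis computation, and part 10 by invoking part 9 with Lemma~\ref{h}. Parts 1--7 and 9--10 are correct as written, modulo the routine verification $(-S)^\vee=-S^\vee$ which you leave implicit but which is immediate.

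There is, however, a genuine gap in the equality case of part~8. You correctly obtain $\dim S^\vee=\dim V-\dim(\Clos(S)\cap(-\Clos(S)))$ by applying part~7 to $\Clos(S)$, and you correctly deduce from $\dim S+\dim S^\vee=\dim V$ that $\Clos(S)\cap(-\Clos(S))=\Vect(S)$, hence $\Clos(S)=\Vect(S)$. But the statement asserts that $S$ itself is a vector space, not that $\Clos(S)$ is, and you stop at $\Clos(S)$. To close the gap one must show that a convex cone which is dense in a vector space $W=\Vect(S)$ actually equals $W$. This holds: $S^\circ$ is a non-empty open convex subset of $W$ with $\Clos(S^\circ)=\Clos(S)=W$; if $a\in W\setminus S^\circ$, a separating hyperplane gives $\phi\in W^*\setminus\{0\}$ with $\phi\geq c$ on $S^\circ$ and $\phi(a)\leq c$, but then the non-empty open half-space $\{\phi<c\}$ misses $\Clos(S^\circ)=W$, a contradiction; so $S^\circ=W$ and therefore $S=W$. (Alternatively: pick $b\in S^\circ$; then $-b\in\Clos(S)$, and for $t\in[0,1)$ the point $(1-2t)b\in S^\circ$, so letting $t\to\tfrac12$ from below places arbitrarily small multiples of $b$ and $-b$ in the open cone $S^\circ$, forcing $0\in S^\circ$ and hence $S^\circ=W$.) You should add one of these lines; without it the "only if" direction of part~8 is unproved.
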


\begin{remark}
Assume $V=\R^3$, $S=\{(x, y, z)\in V|x\geq 0, y\geq 0, z\geq -2\sqrt{xy}\}$ and $T=\{(x, y, z)\in V|x=0, z\geq 0\}$. $S$ and $T$ are closed convex cones in the vector space $V$ with $\dim V=3$. $(0,0,-1)\in\Clos(S+T)$ and $(0,0,-1)\not\in S+T$.
Therefore $S+T$ is not closed.
\end{remark}

\begin{definition}
\label{faces of cones}
Let $S$ be any convex polyhedral cone in $V$.
We consider the dual cone $S^\vee=S^\vee|V\subset V^*$ of $S$.
\begin{enumerate}
\item
For any $\omega\in S^\vee$, we denote
$$\Delta(\omega,S|V)=\{x\in S|\langle\omega, x\rangle=0\}\subset S.$$

When we need not refer to $V$ or to the pair $(S, V)$, we also write simply  $\Delta(\omega, S)$ or $\Delta(\omega)$, instead of $\Delta(\omega, S|V)$.
\item
Let $F$ be any subset of $S$.
We say that $F$ is a \emph{face} of $S$, if $F=\Delta(\omega, S|V)$ for some $\omega\in S^\vee.$

It is easy to see that any face $F$ of $S$ is a closed convex cone, and the dimension $\dim F\in\Z_0$ of $F$, the boundary $\partial F$ of $F$ and the interior $F^\circ$ of $F$ are defined.
\item
By $\mathcal{F}(S)$ we denote the set of all faces of $S$.

For any $i\in\Z$, the set of all faces $F$ with $\dim F=i$ is denoted by $\mathcal{F} (S)_i$, and the set of all faces $F$ with $\dim F=\dim S-i$ is denoted by $\mathcal{F}(S)^i$.
\item
Let $F$ be any face of $S$.
We denote
\begin{equation*}
\begin{split}
\Delta^\circ(F,S|V)&=\{\omega\in S^\vee|F=\Delta(\omega, S|V)\}\subset S^\vee\subset V^*,\\
\Delta(F,S|V)&=\{\omega\in S^\vee|F\subset\Delta(\omega, S|V)\}\subset S^\vee\subset V^*.\\
\end{split}
\end{equation*}

We call $\Delta^\circ(F,S|V)$ the \emph{open face cone} of $F$, and we call $\Delta (F,S|V)$ the \emph{face cone} of $F$.

When we need not refer to $V$ or to the pair $(S, V)$, we also write simply  $\Delta^\circ (F, S)$ or $\Delta^\circ(F)$, $\Delta(F, S)$ or $\Delta(F)$ respectively, instead of $\Delta^\circ(F, S|V)$, $\Delta(F, S|V)$.
\end{enumerate}
\end{definition}

\begin{prop}
\label{property of cones}
Let $S$ be any convex polyhedral cone in $V$, and let $X$ be any finite subset of $V$ with $S=\Convcone(X)$.
We consider the dual cone $S^\vee=S^\vee|V\subset V^*$ of $S$.
For simplicity we denote
$s=\dim S\in\Z_0$, $L=S\cap(-S)\subset S$, $\ell=\dim L\in\Z_0$, $M=S^\vee\cap(-S^\vee)\subset S^\vee$.
\begin{enumerate}
\item
We consider any finite dimensional vector space $U$ over $\R$ with $\dim S\leq\dim U\leq\dim V$, any injective homomorphism $\nu:U\rightarrow V$ of vector spaces over $\R$ such that $S\subset\nu(U)$, and any subset $F$ of $S$.
The inverse image $\nu^{-1}(S)$ is a convex polyhedral cone in $U$.
The set $F$ is a face of $S$, if and only if $\nu^{-1}(F)$ is a face of $\nu^{-1}(S)$.
\item
We consider any finite dimensional vector space $W$ over $\R$ with $\dim V\leq\dim W$, any injective homomorphism $\pi:V\rightarrow W$ of vector spaces over $\R$, and any subset $F$ of $S$.
The image $\pi(S)$ is a convex polyhedral cone in $W$.
The set $F$ is a face of $S$, if and only if $\pi(F)$ is a face of $\pi(S)$.
\item
$\ell\leq s$. $\ell=s\Leftrightarrow L=S\Leftrightarrow S=\Vect(S)$.
\item
Let $F$ be any face of $S$.
\begin{enumerate}
\item
$F=\Convcone(X\cap F)=S\cap\Vect(F)$. $\Vect(F)=\Vect(X\cap F)$.
\item
$F$ is a convex polyhedral cone in $V$.
\item
If $S$ is rational over $N$, then $F$ is also rational over $N$.
If $S$ is a simplicial cone over $N$, then $F$ is also a simplicial cone over $N$.
\item
$L=F\cap(-F)\subset F$. $\ell\leq\dim F\leq s$.
\item
Let $G$ be any face of $S$ with $G\subset F$. We have $\dim G\leq\dim F$.  $\dim G=\dim F$, if and only if, $G=F$.
\item
Let $G$ be any subset of $F$. $G$ is a face of the convex polyhedral cone $F$, if and only if, $G$ is a face of $S$ with $G\subset F$.
\end{enumerate}
\item
Assume $\ell<s$.
For any face $G$ of $S$ with $G\neq S$ there exists a face $F$ of $S$ with $\dim F=s-1$ and $G\subset F$.
There exists a face $F$ of $S$ with $\dim F=s-1$.
\item
$\mathcal{F}(S)$ is a finite set.
$S\in\mathcal{F}(S)_s$ and $\mathcal{F}(S)_s=\{S\}$.
$S$ contains any face of $S$.
$L\in\mathcal{F}(S)_\ell$ and $\mathcal{F}(S)_\ell=\{L\}$.
$L$ is contained in any face of $S$.
$L=\Convcone(X\cap L)=\Vect(X\cap L)$. 
For any $i\in\Z_0$, $\mathcal{F}(S)_i\neq\emptyset$ if and only if $\ell\leq i\leq s$.
\item
Let $F$ and $G$ be any face of $S$ with $F\subset G$.
We denote $f=\dim F$ and $g=\dim G$.
$\ell\leq f\leq g\leq s$.
There exist $(s-\ell+1)$ of faces $F(\ell), F(\ell+1),\ldots, F(s)$ satisfying the following three conditions:
\begin{enumerate}
\item
For any $i\in\{\ell,\ell+1,\ldots,s-1\}$, $F(i)\subset F(i+1)$.
\item
For any $i\in\{\ell,\ell+1,\ldots,s\}$, $\dim F(i)=i$.
\item
$F(\ell)=L, F(f)=F, F(g)=G, F(s)=S$.
\end{enumerate}
\item
Let $F$ be any face of $S$.
\begin{enumerate}
\item
$F=\partial F\cup F^\circ$. $\partial F\cap F^\circ=\emptyset$.
\item
$F^\circ=F\Leftrightarrow \partial F=\emptyset\Leftrightarrow F=L$.
\item
$$\partial F=\bigcup_{G\in\mathcal{F}(F)-\{F\}}G.$$
\item
$F^\circ $ is a non-empty open subset of $\Vect(F)$.
For any $a\in F^\circ$ and for any $b\in F$, $\Conv(\{a,b\})-\{b\}\subset F^\circ$.
$F^\circ$ is convex.
$\Clos(F^\circ)=F$.
\end{enumerate}
\item
Consider any $m\in\Z_+$ and any mapping $F:\{1,2,\ldots,m\}\rightarrow\mathcal{F}(S)$. 
The intersection $\cap_{i\in\{1,2,\ldots,m\}}F(i)$ is a face of $S$.
\item
We consider any two faces $F, G$ of $S$.
$F^\circ\cap G\neq\emptyset$, if and only if, $F\subset G$.
$F^\circ\cap G^\circ\neq\emptyset$, if and only if, $F=G$.
\item
$M=\Delta(S)=\Vect(S)^\vee$. If $S$ is rational over $N$, then $M$ is rational over $N^*$.
\item
Assume $\ell<s$. Let $F\in\mathcal{F}(S)^1$ be any element.
\begin{enumerate}
\item
$M\subset\Delta(F)$. $M\neq\Delta(F)$.
\item
For any $\omega_F\in\Delta(F)- M$ we have $\Delta(F)=\R_0\omega_F+M$.
\item
If $S$ is rational over $N$, then $(\Delta(F)- M)\cap N^*\neq\emptyset$.
\end{enumerate}
\item
Note that $\ell<s$, if and only if, $\mathcal{F}(S)^1\neq\emptyset$.
In case $\ell<s$ we take any element $\omega_F\in\Delta(F)- M$ for any $F\in\mathcal{F}(S)^1$.
$$
S^\vee=
\Convcone(\{\omega_F|F\in\mathcal{F}(S)^1\})+M,\ 
S=
\bigcap_{F\in\mathcal{F}(S)^1}(\R_0\omega_F)^\vee\cap\Vect(S).
$$
\item
$S^\vee$ is a convex polyhedral cone in $V^*$.
If $S$ is rational over $N$, then $S^\vee$ is rational over $N^*$.
\item
Let $F$ be any face of $S$.
\begin{enumerate}
\item
$\Delta(F)$ is a face of $S^\vee$.
\item
$\Delta(F)=\Vect(F)^\vee\cap S^\vee$. $\Vect(\Delta(F))=\Vect(F)^\vee$.
\item
$\Delta^\circ(F)=\Delta(F)^\circ$. $\Delta(F)=\Clos(\Delta^\circ(F))$.
\end{enumerate}
\item
For any face $F$ of $S$, $\Delta(F,S|V)$ is a face of $S^\vee$, and $\dim\Delta(F,S|V)=\dim V-\dim F$. For any two faces $F$, $G$ of $S$ with $F\subset G$, $\Delta(F,S|V)\supset\Delta(G,S|V)$.

For any face $\bar{F}$ of $S^\vee$, $\Delta(\bar{F}, S^\vee |V^*)$ is a face of $S $, and $\dim\Delta(\bar{F}, S^\vee |V^*)=\dim V-\dim \bar{F}$. For any two faces $\bar{F}$, $\bar{G}$ of $S^\vee$ with $\bar{F}\subset \bar{G}$, $\Delta(\bar{F}, S^\vee |V^*)\supset\Delta(\bar{G}, S^\vee |V^*)$.

The mapping from $\mathcal{F}(S)$ to $\mathcal{F}(S^\vee)$ sending $F\in\mathcal{F}(S)$ to $\Delta(F,S|V)\in\mathcal{F}(S^\vee)$ and the mapping from $\mathcal{F}(S^\vee)$ to $\mathcal{F}(S)$ sending $\bar{F}\in\mathcal{F}(S^\vee)$ to\hfill\break $\Delta(\bar{F}, S^\vee |V^*)\in\mathcal{F}(S)$ are bijective mappings reversing the inclusion relation between $\mathcal{F}(S)$ and $\mathcal{F}(S^\vee)$, and they are the inverse mappings of each other.
Furthermore, if $F\in\mathcal{F}(S)$ and $\bar{F}\in\mathcal{F}(S^\vee)$ correspond to each other by them, $\dim F+\dim\bar{F}=\dim V$.

\item
Assume $\ell<s$. Let $F\in\mathcal{F}(S)_{\ell+1}$ be any element.
\begin{enumerate}
\item
$L\subset F$. $L\neq F$.
\item
For any $t_F\in F- L$ we have $F=\R_0t_F+L$.
\end{enumerate}
\item
In case $\ell<s$ we take any element $t_F\in F- L$ for any $F\in\mathcal{F}(S)_{\ell+1}$.
Note that $\ell<s$, if and only if, $\mathcal{F}(S)_{\ell+1}\neq\emptyset$.
$$S =
\Convcone(\{t_F|F\in\mathcal{F}(S)_{\ell+1}\})+L,\ 
S^\vee=
\bigcap_{F\in\mathcal{F}(S)_{\ell+1}}(\R_0t_F)^\vee\cap\Vect(S^\vee).
$$
\item
The family $\{F^\circ|F\in\mathcal{F}(S)\}$ of subsets of $S$ gives the equivalence class decomposition of $S$.
In other words, the following three conditions hold:
\begin{enumerate}
\item
$F^\circ\neq\emptyset$ for any $F\in\mathcal{F}(S)$.
\item
If $F\in\mathcal{F}(S)$, $G\in\mathcal{F}(S)$, and $F^\circ\cap G^\circ\neq\emptyset$, then $F^\circ=G^\circ$.
\item
$$S=\bigcup_{ F\in\mathcal{F}(S)}F^\circ.$$
\end{enumerate}
\item
$\mathcal{F}(S)$ is a convex polyhedral cone decomposition in $V$, and the support of  $\mathcal{F}(S)$ is equal to $S$.
In other words, the following four conditions hold:
\begin{enumerate}
\item
$\mathcal{F}(S)$ is a non-empty finite set whose elements are convex polyhedral cones in $V$.
\item
For any $F\in\mathcal{F}(S)$ and for any $G\in\mathcal{F}(S)$, $F\cap G$ is a face of $F$, and $F\cap G$ is a face of $G$.
\item
If $F\in\mathcal{F}(S)$ and $G$ is a face of $F$, then $G\in\mathcal{F}(S)$.
\item
$$S=\bigcup_{ F\in\mathcal{F}(S)}F.$$
\end{enumerate}
\item
If $S$ is rational over $N$, then any $F\in\mathcal{F}(S)$ is rational over $N$.
If $S$ is a simplicial cone over $N$, then any $F\in\mathcal{F}(S)$ is a simplicial cone over $N$.
\item
Consider any finite dimensional vector space $W$ over $\R$ and any homomorphism $\pi:V\rightarrow W$ of vector spaces over $\R$.
The image $\pi(S)$ is a convex polyhedral cone in $W$, and it satisfies $\pi(S)^\circ=\pi(S^\circ)$.
\end{enumerate}
\end{prop}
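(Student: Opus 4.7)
The plan is to prove Proposition~\ref{property of cones} by organizing the $23$ items around the central duality $S^{\vee\vee}=S$ established in Lemma~\ref{i}.3, together with the Minkowski--Weyl equivalence between finitely generated closed convex cones and finite intersections of closed half-spaces. Throughout I would induct on $\dim V$ and exploit the identity $F=\Convcone(X\cap F)$ repeatedly, passing from geometric arguments to finite combinatorics on subsets of $X$.

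First I would dispose of items $1$ and $2$. Both follow from Lemma~\ref{i}.2, which says that when $S\subset\nu(U)$ the cone $S^\vee|V$ equals $\iota^{*-1}(S^\vee|U)$, so the equations $\langle\omega,a\rangle=0$ cut out the same subset of $S$ whether $\omega$ runs over $V^*$ or $U^*$, and faces correspond. This lets me assume $\Vect(S)=V$ throughout. Item $3$ is elementary linear algebra. Item $4$ rests on the identity $F=\Convcone(X\cap F)=S\cap\Vect(F)$: if $F=\Delta(\omega,S)$ and $a=\sum_{x\in X}\lambda(x)x\in F$ with $\lambda(x)\geq 0$, then $\langle\omega,a\rangle=\sum\lambda(x)\langle\omega,x\rangle=0$ forces $\lambda(x)=0$ whenever $\langle\omega,x\rangle>0$. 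From this polyhedrality, rationality, preservation of simpliciality, and the internal face structure $4$(e,f) all follow directly.

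The central and hardest step is item $14$: the dual cone $S^\vee$ is itself a convex polyhedral cone, and is rational over $N^*$ when $S$ is rational over $N$. I would prove this by Fourier--Motzkin elimination applied to the system $\langle\omega,x\rangle\geq 0$ for $x\in X$, inducting on $\dim V$; rational coefficients survive because the elimination uses only additions and positive rational scalings. Item $14$ then drives items $5$, $11$, $12$, $13$ at once: item $11$ is a direct unwinding of definitions, since $\Delta(S)=\bigcap_{x\in X}\{\omega\mid\langle\omega,x\rangle=0\}=\Vect(S)^\vee$; item $5$ follows because any $\omega\in S^\vee\setminus M$, slid toward the boundary of the polyhedral cone $S^\vee$, produces a codimension-one face of $S$ via polarity; items $12$ and $13$ describe $S^\vee$ in terms of its edge-generators $\omega_F$ for $F\in\mathcal{F}(S)^1$, with rationality of the $\omega_F$ coming from item $14$. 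Item $6$ is then the combinatorial bound $\sharp\mathcal{F}(S)\leq 2^{\sharp X}$ via $F=\Convcone(X\cap F)$, combined with items $4$(d) and $5$. Items $7$, $8$, $9$ follow by induction on $s-\ell$, using item $5$ to descend one codimension at a time and item $4$(f) for transitivity of the face relation.

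With the polyhedrality of $S^\vee$ in hand the Galois correspondence between faces (items $15$ and $16$) becomes formal: $\Delta(F,S)=\Vect(F)^\vee\cap S^\vee$ is a face of $S^\vee$ by dualizing one level up, and the dimension formula $\dim F+\dim\Delta(F,S)=\dim V$ is Lemma~\ref{j}.5 applied to the vector spaces $\Vect(F)$ and $\Vect(F)^\vee$. Biduality $S^{\vee\vee}=S$ gives $\Delta(\Delta(F,S),S^\vee)=F$, proving the bijection is involutive. Items $17$ and $18$ are items $12$ and $13$ transposed under this duality. Item $10$ follows because any $\omega$ witnessing the face $F$ must vanish on every $a\in G\cap F^\circ$, hence by $4$(a) on all of $G$, so $G\subset F$; the second half is symmetric. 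Items $19$ and $20$ summarize the resulting fan structure, with item $9$ providing the intersection axiom. Items $21$ and $22$ are routine consequences: rationality propagates because $X\cap F\subset\QVect(N)$, and for $\pi\colon V\to W$ the equality $\pi(S)=\Convcone(\pi(X))$ shows $\pi(S)$ is polyhedral, while openness of $\pi(S^\circ)$ inside $\Vect(\pi(S))$ gives $\pi(S^\circ)=\pi(S)^\circ$ after checking that $\pi$ carries relative interior onto relative interior. The main obstacle is therefore Minkowski--Weyl (item $14$); once that is secured every remaining item is either an instance of duality or a short combinatorial consequence.
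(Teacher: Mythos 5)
Your plan is correct in outline but inverts the logical order the paper uses, and the difference is worth flagging. The paper proves item~5 (every proper face lies in a codimension-one face) directly from the $V$-description $S=\Convcone(X)$ by an elementary argument: assuming no codimension-one face contains $G$, take a face $F\supset G$ with $F\ne S$ of maximal dimension $\delta$, choose a hyperplane $E\supset\Vect(F)$ different from $\ker\omega$, and show that a suitable combination $\chi+\lambda\omega$ defines a face strictly larger than $F$ but still $\ne S$, a contradiction. Item~13 (and hence the polyhedrality of $S^\vee$ in item~14) is then \emph{derived} from item~5, via the segment argument showing $S=\bigcap_{F\in\mathcal F(S)^1}(\R_0\omega_F)^\vee\cap\Vect(S)$ followed by Lemma~\ref{j}.4 and the closedness of polyhedral cones. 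You propose the reverse: establish item~14 first via Fourier--Motzkin elimination and pull item~5 (and items~8, 11--13) out of the resulting $H$-description. Both routes work. Yours imports a standard external tool and gives item~8 more cheaply (relative interior read off from strict inequalities, avoiding the paper's limit-and-norm argument), while the paper stays internal to its already-built toolbox of Lemmas~\ref{i}--\ref{j} and uses no elimination machinery.

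One place where your sketch is thin: deriving item~5 by ``sliding $\omega$ toward the boundary of $S^\vee$ via polarity'' tacitly invokes the face correspondence of item~15, which itself sits downstream of several things you want item~5 to prove. This circularity is avoidable, but the honest route is to argue directly from the irredundant $H$-description $S=\bigcap_i\{\omega_i\ge 0\}\cap\Vect(S)$ obtained from item~14: each $\Delta(\omega_i,S)$ is a facet, and any proper face $G$ of $S$ has some $\omega_i$ vanishing on it, so $G\subset\Delta(\omega_i,S)$. You should spell this out rather than appeal to polarity, since the $\mathcal F(S)\leftrightarrow\mathcal F(S^\vee)$ bijection is not yet available at that stage. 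Similarly, the paper's proof of item~8(c) requires showing $\partial F\subset\bigcup_{G\subsetneq F}G$ via a projection-onto-$F$ argument; your ``induction on $s-\ell$'' phrasing does not address this by itself, though the $H$-description makes the inclusion immediate once you unpack what $F^\circ$ looks like as a set of strictly-satisfied inequalities — you should say so explicitly.
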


\begin{proof}
We give proofs only to claims 4, 5, 8, 13 and 15. Other claims follow easily.

\noindent 4.
Let $F$ be any face of $S$.
We take $\omega\in S^\vee$ with $F=\Delta(\omega)=\{x\in S|\langle\omega, x\rangle=0\}$.

\noindent $(a)$.
We consider any point $a\in F$.
$a\in F\subset S=\Convcone(X)=\{\sum_{x\in X}\lambda(x)x|\lambda\in\Map(X,\R_0)\}$.
We take $\lambda\in\Map(X,\R_0)$ with $a=\sum_{x\in X}\lambda(x)x$.
Since $X\subset\Convcone(X)=S$ and $\omega\in S^\vee$, $\langle\omega,x\rangle\geq 0$ for any $x\in X$, and $\langle\omega,x\rangle>0$ for any $x\in X- F$.
Since $\sum_{x\in X}\lambda(x)\langle\omega, x\rangle=\langle\omega,a\rangle=0$, $\lambda(x)=0$ for any $x\in X- F$.
We know $a=\sum_{x\in X\cap F}\lambda(x)x\in\Convcone(X\cap F)$, and $F\subset\Convcone(X\cap F)$.

Since $F$ is a convex cone and $F\supset X\cap F$, $F=\Convcone(F)\supset\Convcone(X\cap F)$.
We know $F=\Convcone(X\cap F)$, and $\Vect(F)=\Vect(\Convcone(X\cap F))=\Vect(X\cap F)$.

Obviously $F\subset S\cap\Vect(F)$.

We consider any $a\in S\cap\Vect(F)$. 
$a\in\Vect(F)=\{\sum_{x\in\Supp(\lambda)}\lambda(x)x|\lambda\in\Map'(F,\R)\}$.
We take $\lambda\in\Map'(F,\R)$ with $a=\sum_{x\in\Supp(\lambda)}\lambda(x)x$.
We have $\langle\omega, x\rangle=0$ for any $x\in\Supp(\lambda)\subset F$.
Therefore, $\langle\omega,a\rangle=\sum_{x\in\Supp(\lambda)}\lambda(x)\langle\omega,x\rangle=0$ and $a\in F$, since $a\in S$.
We know $F\supset S\cap\Vect(F)$ and $F=S\cap\Vect(F)$.

\noindent $(b)$, $(c)$. They follow from $(a)$.

\noindent $(d)$.
Obviously $L=S\cap(-S)\supset F\cap(-F)$.

We consider any $a\in L$. Since $\{a,-a\}\subset L\subset S$ and $\omega\in S^\vee$,
$\langle\omega, a\rangle\geq 0$ and $-\langle\omega,a\rangle=\langle\omega,-a\rangle\geq 0$.
Therefore, $\langle\omega,a\rangle=0$ and $a\in F$.
We know $L\subset F$.
Since $L=-L\subset -F$, we know $L\subset F\cap(-F)$ and $L=F\cap(-F)\subset F$.

Since $L\subset F\subset S$, $\ell=\dim L\leq\dim F\leq s=\dim S$.

\noindent $(e)$.
Let $G$ be any face of $S$ with $G\subset F$.
We have $\Vect(G)\subset\Vect(F)$ and $\dim G=\dim\Vect(G)\leq\dim F=\dim\Vect(F)$.

Assume $\dim G=\dim F$.
We have $\dim\Vect(G)=\dim\Vect(F)$, and $\Vect(G)=\Vect(F)$.
By $(a)$ we have $G=S\cap\Vect(G)=S\cap\Vect(F)=F$.

Conversely, assume $G=F$. Obviously, we have $\dim G=\dim F$.

\noindent $(f)$.
Let $G$ be any face of the convex polyhedral cone $F$.
$G\subset F\subset S$. We take $\chi\in F^\vee\subset V^*$ with $G=\Delta(\chi, F)$.

Since $X\subset\Conv(X)=S$, $\langle\omega, x\rangle\geq 0$ for any $x\in X$.
$\langle\omega,x\rangle=0$, if and only if, $x\in X\cap F$.
$\langle\omega,x\rangle>0$ for any $x\in X- F$.

Since $X\cap F\subset F$, $\langle\chi, x\rangle\geq 0$ for any $x\in X\cap F$.
$\langle\chi,x\rangle=0$, if and only if, $x\in X\cap G$.
$\langle\chi,x\rangle>0$ for any $x\in(X\cap F)-(X\cap G)$.

We consider any $\lambda\in\R$.
$\langle\chi+\lambda\omega, x\rangle=\langle\chi,x\rangle+\lambda\langle\omega,x\rangle$ for any $x\in X$.
For any $x\in X\cap G$, $\langle\chi, x\rangle=0, \langle\omega,x\rangle=0, \langle\chi,x\rangle+\lambda\langle\omega,x\rangle=0$,
For any $x\in(X\cap F)-(X\cap G)$, $\langle\chi, x\rangle>0, \langle\omega,x\rangle=0, \langle\chi,x\rangle+\lambda\langle\omega,x\rangle=\langle\chi,x\rangle>0$.
For any $x\in X- F$, $\langle\chi, x\rangle\in\R, \langle\omega,x\rangle>0, \langle\chi,x\rangle+\lambda\langle\omega,x\rangle>0$, if and only if, $\lambda>-\langle\chi, x\rangle/\langle\omega,x\rangle$.

Note that $X- F$ is a finite set. We take any $\lambda\in\R$ satisfying
$$\lambda>\max\{-\frac{\langle\chi,x\rangle}{\langle\omega,x\rangle}|x\in X- F\},$$
if $X- F\neq\emptyset$.

We know that $\langle\chi+\lambda\omega, x\rangle=0$ for any $x\in X\cap G$, and $\langle\chi+\lambda\omega, x\rangle>0$ for any $x\in X- G$.

We consider any point $a\in S=\Convcone(X)=\{\sum_{x\in X}\mu(x)x|\mu\in\Map(X,\R_0)\}$. We take any $\mu\in\Map(X,\R_0)$ with $a=\sum_{x\in X}\mu(x)x$.
$\langle\chi+\lambda\omega, a\rangle=\sum_{x\in X}\mu(x)\langle \chi+\lambda\omega,x\rangle\geq 0$, and if $\langle\chi+\lambda\omega, a\rangle=0$, then $\mu(x)=0$ for any $x\in X- G$.
We know $\chi+\lambda\omega\in S^\vee$. If $a\in\Delta(\chi+\lambda\omega,S)$, then $a=\sum_{x\in X\cap G}\mu(x)x\in\Convcone(X\cap G)=G$. We know $\Delta(\chi+\lambda\omega,S)\subset G$.

We consider any point $a\in G$. $a\in G\subset F\subset S$. $a\in G=\Convcone(X\cap G)=\{\sum_{x\in X\cap G}\mu(x)x|\mu\in\Map(X\cap G,\R_0)\}$. We take any $\mu\in\Map(X\cap G,\R_0)$ with $a=\sum_{x\in X\cap G}\mu(x)x$.
$\langle\chi+\lambda\omega, a\rangle=\sum_{x\in X\cap G}\mu(x)\langle \chi+\lambda\omega,x\rangle=0$, and $a\in\Delta(\chi+\lambda\omega,S)$. We know $G\subset \Delta(\chi+\lambda\omega,S)$, and $G=\Delta(\chi+\lambda\omega,S)$.

We know that $G$ is a face of $S$ with $G\subset F$.

Conversely, we consider any face $G$ of $S$ with $G\subset F$.
We take any $\chi\in S^\vee$ with $G=\Delta(\chi, S)$.
Since $S\supset F$, we have $\chi\in S^\vee\subset F^\vee$. 
$G=G\cap F=\Delta(\chi, S)\cap F=\{x\in S|\langle\chi,x\rangle=0\}\cap F=\{x\in F|\langle\chi,x\rangle=0\}=\Delta(\chi, F)$.

We know that $G$ is a face of the convex polyhedral cone $F$.

\noindent 5. Assume $\ell<s$.
We consider any face $G$ of $S$ with $G\neq S$.

Assume moreover, that a face $F$ of $S$ satisfying $G\subset F\subset S$ and $\dim F=\dim S-1$ does not exist. 
We will deduce a contradiction from this assumption.

$G\in\{F\in\mathcal{F}(S)|G\subset F\subset S, F\neq S\}\neq\emptyset$.

We consider any $ F\in\mathcal{F}(S)$ with $ G\subset F\subset S$ and $F\neq S$.
By 4.(e) we know $\dim G\leq\dim F\leq s-1$. By assumption we know $\dim F\leq s-2$.
Put
$$\delta=\max\{\dim F| F\in\mathcal{F}(S), G\subset F\subset S, F\neq S\}.$$
We know $\dim G\leq\delta\leq s-2$.

Let $F$ be any face of $S$ with $\dim F=\delta$ and $G\subset F\subset S$.
Let $\omega\in S^\vee$ be any element with $F=\Delta(\omega)$.

$F=\Convcone(X\cap F)$.
Since $F\neq S$, $X- F\neq\emptyset$.
$X- F$ is a non-empty finite set.
$\langle\omega,x\rangle=0$ for any $x\in X\cap F$, and $\langle\omega,x\rangle>0$ for any $x\in X- F$.

We denote $D_\omega=\{x\in\Vect(S)|\langle\omega, x\rangle=0\}$.
$D_\omega$ is a vector subspace of $\Vect(S)$ with $\dim D_\omega\geq s-1$.
Take any element $x_0\in X- F\neq\emptyset$. $x_0\in X- F\subset X\subset S\subset\Vect(S)$ and $\langle\omega, x_0\rangle>0$.
We know that $D_\omega\neq\Vect(S)$ and $\dim D_\omega=s-1$.

$\Vect(F)\subset\Vect(S)$.
$\dim\Vect(F)=\dim F=\delta\leq s-2$.
$\dim\Vect(S)=s$.

We know that there exists a vector subspace $E$ of $\Vect(S)$ of dimension $s-1$ such that $\Vect(F)\subset E\subset\Vect(S)$ and $E\neq D_\omega$.
We take a vector subspace $E$ of $\Vect(S)$ of dimension $s-1$ such that $\Vect(F)\subset E\subset\Vect(S)$ and $E\neq D_\omega$.
We take $\chi\in V^*$ with $E=\{x\in\Vect(S)|\langle\chi,x\rangle=0\}$.
Since $X\cap F\subset F\subset\Vect(F)\subset E=\{x\in \Vect(S)|\langle\chi,x\rangle=0\}$, $\langle\chi,x\rangle=0$ for any $x\in X\cap F$.

Let
$$\lambda=\max\{-\frac{\langle\chi,x\rangle}{\langle\omega,x\rangle}|x\in X- F\}\in\R.$$
For any $x\in X- F$, $\langle\omega, x\rangle>0$, $\lambda\geq-\langle\chi,x\rangle/\langle\omega,x\rangle$ and $\langle\chi+\lambda\omega, x\rangle=\langle\chi,x\rangle+\lambda\langle\omega,x\rangle\geq 0$. 
There exists $ x\in X- F$ with $\langle\chi+\lambda\omega, x\rangle=0$.
For any $x\in X\cap F$ $\langle\chi,x\rangle=0$, $\langle\omega,x\rangle=0$, and $\langle\chi+\lambda\omega, x\rangle=\langle\chi,x\rangle+\lambda\langle\omega,x\rangle=0$.

Let
$$Y=(X\cap F)\cup\{x\in X- F|\langle\chi+\lambda\omega, x\rangle=0\}.$$
$X\cap F\subset Y\subset X$ and $X\cap F\neq Y$.
For any $x\in X$, $\langle\chi+\lambda\omega, x\rangle\geq 0$, and $\langle\chi+\lambda\omega, x\rangle=0$, if and only if, $x\in Y$.

We consider any point $a\in S=\{\sum_{x\in X}\mu(x)x|\mu\in\Map(X,\R_0)\}$.
We take $\mu\in\Map(X,\R_0)$ with $a=\sum_{x\in X}\mu(x)x$.
We have $\langle\chi+\lambda\omega,a\rangle=\sum_{x\in X}\mu(x)\langle \chi+\lambda\omega,x\rangle\geq 0$, and if $\langle\chi+\lambda\omega,a\rangle=0$, then $\mu(x)=0$ for any $x\in X- Y$.

We know $\chi+\lambda\omega\in S^\vee$ and $\Convcone(Y)\supset\Delta(\chi+\lambda\omega, S)$.

We consider any point $a\in\Convcone(Y)=\{\sum_{x\in Y}\mu(x)x|\mu\in\Map(Y,\R_0)\}$.
$a\in\Convcone(Y)\subset\Convcone(X)=S$.
We take $\mu\in\Map(Y,\R_0)$ with $a=\sum_{x\in Y}\mu(x)x$.
We have $\langle\chi+\lambda\omega,a\rangle=\sum_{x\in Y}\mu(x)\langle \chi+\lambda\omega,x\rangle=0$, and $a\in\Delta(\chi+\lambda\omega, S)$.

We know $\Convcone(Y)\subset\Delta(\chi+\lambda\omega, S)$ and $\Convcone(Y)=\Delta(\chi+\lambda\omega, S)\in\mathcal{F}(S)$.

Since $X\cap F\subset Y$, $F=\Convcone(X\cap F)\subset\Convcone(Y)$.
Since $Y-(X\cap F)\subset Y\subset \Convcone(Y)$ and $\emptyset\neq Y-(X\cap F)\not\subset F$, $F\neq\Convcone(Y)$.
By 4.$(e)$ we have $\delta=\dim F<\dim\Convcone(Y)$, $\Convcone(Y)=S$, and $\Vect(Y)=\Vect(\Convcone(Y))=\Vect(S)$.

We consider any $a\in\Vect(S)=\Vect(Y)=\{\sum_{x\in Y}\mu(x)x|\mu\in\Map(Y,\R)\}$.
We take $\mu\in\Map(Y,\R)$ with $a=\sum_{x\in Y}\mu(x)x$.
$\langle\chi+\lambda\omega,a\rangle=\sum_{x\in Y}\mu(x)\langle\chi+\lambda\omega,x\rangle=0$.
We know that $\langle\chi+\lambda\omega,a\rangle=0$ for any $a\in\Vect(S)$.

We consider any point $a\in D_\omega$. $a\in D_\omega\subset\Vect(S)$ and $\langle\omega,a\rangle=0$.
Therefore $\langle\chi,a\rangle=\langle\chi,a\rangle+\lambda\langle\omega,a\rangle=\langle\chi+\lambda\omega,a\rangle=0$, and $a\in E$.

We know $D_\omega\subset E$. Since $\dim D_\omega=\dim E=s-1$, we know $D_\omega=E$, which contradicts $D_\omega\neq E$.

We know that there exists a face $F$ of $S$ with $G\subset F\subset S$ and $\dim F=s-1$.

Assume that $\Delta(\omega)=S$ for any $\omega\in S^\vee$.
For any $a\in S$ and for any $\omega\in S^\vee$ we have $\langle\omega, a\rangle=0$.

We consider any $a\in S$ and any $\chi\in\Vect(S^\vee)=\{\sum_{\omega\in\Supp(\lambda)}\lambda(\omega)\omega|\lambda\in\Map'(S^\vee,\R)\}$.
We take $\lambda\in \Map'(S^\vee,\R)$ with $\chi=\sum_{\omega\in\Supp(\lambda)}\lambda(\omega)\omega$.
For any $\omega\in\Supp(\lambda)\subset S^\vee $ we have $\langle\omega,a\rangle=0$, and
$\langle\chi,a\rangle=\sum_{\omega\in\Supp(\lambda)}\lambda(\omega)\langle\omega,a\rangle=0$.

We know that for any $a\in S$ and any $\chi\in\Vect(S^\vee)$,  $\langle\chi,a\rangle=0$ and $S\subset\Vect(S^\vee)^\vee=L^{\vee\vee}=L$.
Since $S\supset L$, we know that $S =L$ and $\ell=s$ by 2, which contradicts the assumption $\ell<s$.

We know that there exists $\omega\in S^\vee$ with $\Delta(\omega)\neq S$. We take $\omega\in S^\vee$ with $\Delta(\omega)\neq S$. Putting $G=\Delta(\omega)$, we apply the above result.

We know that there exists a face $F$ of $S$ with $\dim F=s-1$.

\noindent 8.

\noindent $(a)$. It follows from definitions.

\noindent $(b)$, $(c)$. By $(a)$ $F^\circ=F\Leftrightarrow \partial F=\emptyset$.

First, we consider the case $F=L$. We have $F=L=\Vect(L)=\Vect(F)$, and $\partial F=F\cap\Clos(\Vect(F)- F)=F\cap\Clos(\emptyset)=\emptyset$.
For any $\omega\in F^\vee=\Vect(F)^\vee$ we have $\Delta(\omega)=\{x\in F|\langle\omega,x\rangle=0\}=F$. Therefore, $\mathcal{F}(F)=\{F\}$, $\mathcal{F}(F)-\{F\}=\emptyset$ and $\cup_{G\in\mathcal{F}(F)-\{F\}}G=\emptyset=\partial F$.

We consider the case $F\neq L$. By 5 we know $\mathcal{F}(F)^1\neq \emptyset$, and $\cup_{G\in\mathcal{F}(F)-\{F\}}G=\cup_{G\in\mathcal{F}(F)^1}G \neq \emptyset$.
We consider any point $a\in \cup_{G\in\mathcal{F}(F)^1}G$.
We take $G\in\mathcal{F}(F)^1$ with $a\in G$, and $\omega\in F^\vee$ with $G=\Delta(\omega, F)$.
$a\in G\subset F$, and $a\in G\subset\Vect(G)\subset\Vect(F)$.
Since $\dim\Vect(G)=\dim G=\dim F-1=\dim\Vect(F)-1$, $\Vect(G)=\{x\in\Vect(F)|\langle\omega,x\rangle=0\}$ and $F\subset\{x\in\Vect(F)|\langle\omega,x\rangle\geq 0\}$.
We know that there exists an infinite sequence $b(i),i\in\Z_0$ of elements in $\Vect(F)$ such that $a=\lim_{i\rightarrow\infty}b(i)$ and $\langle\omega, b(i)\rangle<0$ for any $i\in\Z_0$.
We take an infinite sequence $b(i),i\in\Z_0$ of elements in $\Vect(F)$ such that $a=\lim_{i\rightarrow\infty}b(i)$ and $\langle\omega, b(i)\rangle<0$ for any $i\in\Z_0$.
Since $\langle\omega, b(i)\rangle<0$ for any $i\in\Z_0$ and $F\subset\{x\in\Vect(F)|\langle\omega,x\rangle\geq 0\}$, $b(i)\in\Vect(F)- F$ for any $i\in\Z_0$.
Therefore, $a\in F\cap\Clos(\Vect(F)- F)=\partial F$.
We know $\partial F\supset \cup_{G\in\mathcal{F}(F)^1}G$, and $\partial F\neq\emptyset$.

We consider any point $a\in \partial F= F\cap\Clos(\Vect(F)- F)$.
$a\in F$. We take infinite sequence $a(i),i\in\Z_0$ of elements of $\Vect(F)- F$ with $a=\lim_{i\rightarrow\infty}a(i)$.

We take any positive definite symmetric bilinear form $(\;,\;):V\times V\rightarrow\R$, and putting $|x|=\sqrt{(x,x)}\in\R_0$ for any $x\in V$, we define a norm $|\;|:V\rightarrow\R_0$.
We know $\lim_{i\rightarrow\infty}|a-a(i)|=0$.

We consider any $i\in\Z_0$.
We take any point $b(i)\in F$ with $|a(i)-b(i)|=\min\{|a(i)-c||c\in F\}$.
By the proof of Lemma~\ref{i}.4, we know $|a(i)-b(i)|>0$,$(b(i)-a(i), b(i))=0$, and $(b(i)-a(i),c)\geq 0$ for any $c\in F$.
We take $\omega(i)\in V^*$ such that $\langle\omega(i), x\rangle=(b(i)-a(i),x)$ for any $x\in V$.
Since $\langle\omega(i), c\rangle=(b(i)-a(i),c)\geq 0$ for any $c\in F$, $\omega(i)\in F^\vee$.
Since $b(i)\in F$ and $\langle\omega(i), b(i)\rangle=(b(i)-a(i),b(i))=0$, we know $b(i)\in\Delta(\omega(i), F)$.

Now, $\Delta(\omega(i), F)=\{x\in F|\langle\omega(i),x\rangle=0\}=\{x\in F|(b(i)-a(i),x)=0\}\subset\{x\in\Vect(F) |(b(i)-a(i),x)=0\}$.
Since $b(i)\in F\subset\Vect(F), a(i)\in\Vect(F)- F\subset\Vect(F)$, we know $b(i)-a(i)\in\Vect(F)$ and $\{x\in\Vect(F) |(b(i)-a(i),x)=0\}$ is a vector subspace of $\Vect(F)$ of codimension one. 
Therefore $\dim \Delta(\omega(i), F)\leq\dim \{x\in\Vect(F) |(b(i)-a(i),x)=0\}=\dim F-1$ and $\Delta(\omega(i), F)\neq F$.

Since $a\in F$, $ |a(i)-b(i)|\leq|a(i)-a|=|a-a(i)|$.
$0\leq|a-b(i)|\leq|a-a(i)|+|a(i)-b(i)|\leq 2|a-a(i)|$.

Now, since $\lim_{i\rightarrow\infty}|a-a(i)|=0$, we know $\lim_{i\rightarrow\infty}|a-b(i)|=0$ and $a=\lim_{i\rightarrow\infty}b(i)$.
Note that $\mathcal{F}(F)$ is a finite set. 
Below we assume that $\Delta(\omega(i), F)\in\mathcal{F}(F)-\{F\}$ does not depend on $i\in\Z_0$, replacing $b(i),i\in\Z_0$ by some subsequence.
Put $G_0=\Delta(\omega(0), F)\in\mathcal{F}(F)-\{F\}$.
For any $i\in\Z_0$ we have $b(i)\in \Delta(\omega(i), F)=G_0$.
Therefore $a=\lim_{i\rightarrow\infty}b(i)\in G_0\subset\cup_{G\in\mathcal{F}(F)-\{F\}}G$.

We know $\partial F\subset\cup_{G\in\mathcal{F}(F)-\{F\}}G$, and $\partial F=\cup_{G\in\mathcal{F}(F)-\{F\}}G$.

\noindent $(d)$.
By Lemma~\ref{f}.4 we know $F^\circ\neq\emptyset$.

We consider any $a\in F^\circ$ and any $b\in F$.
We would like to show that \hfill\break$\Conv(\{a,b\})-\{b\}\subset F^\circ$.
If $a=b$, then $\Conv(\{a,b\})-\{b\}=\emptyset\subset F^\circ$.
Below we assume $a\neq b$. We consider any point $c\in\Conv(\{a,b\})-\{b\}$ and we take $t\in\R$ such that $c=ta+(1-t)b$ and $0<t\leq 1$.

Since $\{a,b\}\subset F$ and $F$ is convex, $c\in\Conv(\{a,b\})\subset\Conv(F)=F$.
Assume $c\not\in F^\circ$. $c\in\partial F$. We take $G\in\mathcal{F}(F)$ with $c\in G$ and $G\neq F$, and we take $\omega\in F^\vee$ with $G=\Delta(\omega, F)$.
We know that $c\in\Delta(\omega, F)$ and $\langle\omega, c\rangle=0$.
Since $\{a,b\}\subset F$, $\langle\omega,a\rangle\geq 0$ and $\langle\omega,b\rangle\geq 0$.
Since $a\in F^\circ$, $a\not\in\partial F$, $a\not\in G$ and $\langle\omega,a\rangle>0$.
We have
$0=\langle\omega, c\rangle=\langle\omega, ta+(1-t)b\rangle=t\langle\omega,a\rangle+(1-t)\langle\omega, b\rangle>0$, which is a contradiction. We know $c \in F^\circ$ and $\Conv(\{a,b\})-\{b\}\subset F^\circ$.

Consider any $a,b\in F^\circ$. By the above $\Conv(\{a,b\})=(\Conv(\{a,b\})-\{b\})\cup\{b\}\subset F^\circ$.
We know that $F^\circ$ is convex.

Since $F^\circ\subset F$ and $F$ is closed, $\Clos(F^\circ)\subset\Clos(F)=F$.
Consider any $b\in F$. We take an $a\in F^\circ\neq\emptyset$ with $a\neq b$.
Since $\Conv(\{a,b\})-\{b\}\subset F^\circ$, $ta+(1-t)b\in F^\circ$ for any $t\in\R$ with $0<t\leq 1$.
Therefore, $b=\lim_{t\rightarrow 0}ta+(1-t)b\in\Clos(F^\circ)$.
We know $\Clos(F^\circ)\supset F$ and $\Clos(F^\circ)=F$.

\noindent 13.
In case $\ell<s$ we take any element $\omega_F\in\Delta(F)- M$ for any $F\in\mathcal{F}(S)^1$.
Note that $\ell<s$, if and only if, $\mathcal{F}(S)^1\neq\emptyset$.

Consider any $F\in\mathcal{F}(S)^1$.
Since $\omega_F\in\Delta(F)\subset S^\vee$, we have $\R_0\omega_F\subset S^\vee$ and $S=S^{\vee\vee}\subset(\R_0\omega_F)^\vee$.

Since $S\subset\Vect(S)$, we know
$$S\subset\bigcap_{F\in\mathcal{F}(S)^1}(\R_0\omega_F)^\vee\cap\Vect(S).$$

If $\ell=s$, then we have $S=\Vect(S)$ by 2, $\mathcal{F}(S)^1=\emptyset$ by 5, and
$S=\Vect(S)= \cap_{F\in\mathcal{F}(S)^1}(\R_0\omega_F)^\vee\cap\Vect(S)$.

Assume
$$(\bigcap_{F\in\mathcal{F}(S)^1}(\R_0\omega_F)^\vee\cap\Vect(S))- S\neq\emptyset.$$
We will deduce a contradiction from this assumption.

It follows $\ell<s$ and $\mathcal{F}(S)^1\neq\emptyset$.

Take any $a\in (\cap_{F\in\mathcal{F}(S)^1}(\R_0\omega_F)^\vee\cap\Vect(S))- S$,
and take any $b\in S^\circ\neq\emptyset$.
$a\in\Vect(S)- S$, $b\in S^\circ\subset S\subset\Vect(S)$ and $a\neq b$.

We denote $[0,1]=\{t\in\R|0\leq t\leq 1\}$.
We know $\{(1-t)a+tb|t\in[0,1]\}=\Conv(\{a,b\})\subset\Vect(S)$.
Let $I=\{t\in[0,1]| (1-t)a+tb\in S\}$. $0\not\in I$, $1\in I$ and $I$ is a closed interval contained in $[0,1]$, since $S$ is a closed convex cone.
Since $b\in S^\circ$ and $S^\circ$ is an open subset of $\Vect(S)$, we know that there exists $t_1\in[0,1]$ such that $t_1\neq 1$ and $\{t\in[0,1]|t_1<t\leq 1\}\subset I$.
We know that there exists $t_0\in[0,1]$ such that $t_0\neq 1$, $t_0\neq 0$ and $\{t\in[0,1]|t_0\leq t\leq 1\}=I$.
Let $c=(1-t_0)a+t_0b\in\Vect(S)$.
Since $t_0\in I$, $c\in S$.
For any $t\in\R$ with $0\leq t<t_0$, $t\not\in I$ and $(1-t)a+tb\in\Vect(S)- S$.
We know $c=\lim_{t\rightarrow t_0+(-0)}\in\Clos(\Vect(S)- S)$, and $c\in S\cap\Clos(\Vect(S)- S)=\partial S$. 
By 8 we know that there exists a face $G_0$ of $S$ satisfying $G_0\neq S$ and $c\in G_0$.
We take a face $G_0$ of $S$ satisfyingt $G_0\neq S$ and $c\in G_0$.
By 5 we know that there exists $F_0\in\mathcal{F}(S)^1$ with $G_0\subset F_0$.
We take $F_0\in\mathcal{F}(S)^1$ with $G_0\subset F_0$.
$c\in G_0\subset F_0\subset\Delta(\omega_{F_0})$, since $\omega_{F_0}\in\Delta(F_0)$.
We know $\langle\omega_{F_0},c\rangle=0$.

Now, since $a\in\cap_{F\in\mathcal{F}(S)^1}(\R_0\omega_F)^\vee\cap\Vect(S)\subset(\R_0\omega_{F_0})^\vee$, $\langle\omega_{F_0},a\rangle\geq 0$.
Since $b\in S^\circ\subset S$ and $\omega_{F_0}\in\Delta(F_0)\subset S^\vee$,$\langle\omega_{F_0},b\rangle\geq 0$.
Since $b\not\in \partial S=\cup_{G\in\mathcal{F}(S)-\{S\}}G$, $b\not\in F_0$.
$F_0\subset \Delta(\omega_{F_0})\subset S$.
If $\Delta(\omega_{F_0})=S$, then $\omega_{F_0}\in\Delta(S)=M$, which contradicts $\omega_{F_0}\not\in M$. 
Therefore, $\Delta(\omega_{F_0})\neq S$ and $\dim\Delta(\omega_{F_0})\leq s-1$ by 4.$(e)$.
Since $\dim F_0=s-1$, we have $F_0=\Delta(\omega_{F_0})$ by 4.$(e)$ and $b\not\in\Delta(\omega_{F_0})$. 
We know $\langle\omega_{F_0},b\rangle>0$.
Since $0<t_0<1$, we have
$0=\langle\omega_{F_0},c\rangle=\langle\omega_{F_0},(1-t_0)a+t_0b\rangle=(1-t_0) \langle\omega_{F_0},a\rangle+t_0\langle\omega_{F_0},b\rangle>0$, which is a contradiction.

We know
$(\cap_{F\in\mathcal{F}(S)^1}(\R_0\omega_F)^\vee\cap\Vect(S))- S=\emptyset$ and
$$S=\bigcap_{F\in\mathcal{F}(S)^1}(\R_0\omega_F)^\vee\cap\Vect(S).$$

By Lemma~\ref{j}.4 and Lemma~\ref{g}.2 we know
\begin{equation*}
\begin{split}
S^\vee&=(\bigcap_{F\in\mathcal{F}(S)^1}(\R_0\omega_F)^\vee\cap\Vect(S))^\vee\\
&=\Clos(\sum_{F\in\mathcal{F}(S)^1} \R_0\omega_F+\Vect(S)^\vee)\\
&=\sum_{F\in\mathcal{F}(S)^1} \R_0\omega_F+\Vect(S)^\vee\\
&=\Convcone(\{\omega_F| F\in\mathcal{F}(S)^1\})+M.
\end{split}
\end{equation*}

\noindent 15.
Let $F$ be any face of $S$.

\noindent $(a)$.
Take any $a\in F^\circ$.
$a\in F^\circ\subset F\subset S=S^{\vee\vee}$.
Consider any $\omega\in\Delta(F,S)$.
$\omega\in S^\vee$, $a\in F\subset\Delta(\omega)$ and $\langle\omega, a\rangle=0$.
We know $\omega\in\Delta(a,S^\vee)$ and $\Delta(F,S)\subset\Delta(a,S^\vee)$.

Consider any $\omega\in \Delta(a,S^\vee)$.
$\omega\in S^\vee$, $\langle\omega,a\rangle=0$ and $a\in\Delta(\omega,S)$.
Since $a\in F^\circ\cap\Delta(\omega,S)\neq\emptyset$, we know $F\subset\Delta(\omega,S)$ by 9, $\omega\in\Delta(F,S)$, and $\Delta(F,S)\supset\Delta(a,S^\vee)$.

We know $\Delta(F)=\Delta(F,S)= \Delta(a,S^\vee)$ is a face of $S^\vee$.

\noindent $(b)$.
By definition $\Delta(F)\subset S^\vee$.
Consider any $\omega\in\Delta(F)$.
By definition $F\subset\Delta(\omega)$ and $\langle\omega, a\rangle=0$ for any $a\in F$.
Consider any $b\in\Vect(F)=\{\sum_{a\in\Supp(\lambda)}\lambda(a)a|\lambda\in\Map'(F,\R)\}$.
We take any $\lambda\in\Map'(F,\R)$ with $b=\sum_{a\in\Supp(\lambda)}\lambda(a)a$.
For any $a\in\Supp(\lambda)$, $a\in\Supp(\lambda)\subset F$ and
$\langle\omega,b\rangle=\sum_{a\in\Supp(\lambda)}\lambda(a)\langle\omega,a\rangle=0$.
We know $\omega\in\Vect(F)^\vee$, and $\Delta(F)\subset\Vect(F)^\vee\cap S^\vee$.

Consider any $\omega\in \Vect(F)^\vee\cap S^\vee$.
$\omega\in S^\vee$.
Since $\omega\in\Vect(F)^\vee$, $\langle\omega,a\rangle=0$ for any $a\in\Vect(F)$.
Since $F\subset\Vect(F)\cap S$, $a\in S$ and $\langle\omega,a\rangle=0$ for any $a\in F$.
Therefore, $F\subset\Delta(\omega)$, $\omega\in\Delta(F)$, and $\Delta(F)\supset\Vect(F)^\vee\cap S^\vee$.

We know $\Delta(F)=\Vect(F)^\vee\cap S^\vee$.

Now, $\Delta(F)=\Vect(F)^\vee\cap S^\vee=(\Vect(F)+S)^\vee$ by Lemma~\ref{j}.3.
We denote $W=(\Vect(F)+S)\cap(-(\Vect(F)+S))$.
$W$ is a vector subspace of $V$ with $\Vect(F)\subset W$.
Consider any $a\in W$.
$a\in \Vect(F)+S$. We take $b\in\Vect(F)$ and $c\in S$ with $a=b+c$.
$-a\in W\subset \Vect(F)+S$. We take $b'\in\Vect(F)$ and $c'\in S$ with $-a=b'+c'$.
$0=a+(-a)=(b+b')+(c+c')$ and $c+c'=-( b+b')\in S\cap \Vect(F)=F$.
We take any $\omega\in S^\vee$ with $F=\Delta(\omega)$.
$\langle\omega, c+c'\rangle=0$.
We have $\langle\omega, c\rangle\geq 0$, and $\langle\omega, c'\rangle\geq 0$.
Since $\langle\omega, c\rangle+\langle\omega, c'\rangle=\langle\omega, c+c'\rangle=0$, we know $\langle\omega, c\rangle=\langle\omega, c'\rangle=0$, $c\in\Delta(\omega)=F\subset\Vect(F)$.
Therefore, $a=b+c\in\Vect(F)$, $\Vect(F)\supset W$ and $\Vect(F)=W$.
$\Vect(\Delta(F))=\Vect((\Vect(F)+S)^\vee)= W^\vee=\Vect(F)^\vee$.

We know $\Vect(\Delta(F))= \Vect(F)^\vee$.

\noindent $(c)$.
By $(b)$, $\dim\Delta(F)=\dim\Vect(\Delta(F))=\dim\Vect(F)^\vee=\dim V-\dim\Vect(F)=\dim V-\dim F$.
Replacing the pair $(F,S)$ by $(\Delta(F), S^\vee)$, we know $\Delta(\Delta(F))\in\mathcal{F}(S)$ by $(a)$.
By $(b)$, $\dim\Delta(\Delta(F))=\dim\Vect(\Delta(\Delta(F)))=\dim\Vect(\Delta(F))^\vee =\dim V-\dim\Vect(\Delta(F))=\dim V-\dim\Delta(F)=\dim V-(\dim V-\dim F)=\dim F$.

Consider any $a\in F$.
For any $\omega\in\Delta(F)$, $a\in F\subset\Delta(\omega)$, $\langle\chi,a\rangle=0$, and $\omega\in\Delta(a)$.
We know $\Delta(F)\subset\Delta(a)$, and $a\in\Delta(\Delta(F))$.
We know $F\subset\Delta(\Delta(F))$.

Since $\dim F=\dim\Delta(\Delta(F))$, we know $\Delta(\Delta(F))=F$.

Consider any $\omega\in\Delta^\circ(F)$.
$\omega\in S^\vee$ and $F=\Delta(\omega)$.
We know $\omega\in\Delta(F)$.

Consider any $\Lambda\in\mathcal{F}(\Delta(F))$ with $\Lambda\neq \Delta(F)$.
Assume $\omega\in\Lambda$.
We will deduce a contradiction from this assumption.
Replacing the pair $(F,S)$ by $(\Lambda, S^\vee)$, we know $\Delta(\Lambda)\in\mathcal{F}(S)$ by $(a)$.
We denote $G=\Delta(\Lambda)$.
Furthermore, by $(a)$ we know $\Delta(G)\in\mathcal{F}(S^\vee)$.

By $(b)$ we know $\Vect(G)=\Vect(\Lambda)^\vee$ and $\Vect(\Delta(G))=\Vect(G)^\vee=\Vect(\Lambda)$.
Therefore, $\dim G=\dim\Vect(G)=\dim\Vect(\Lambda)^\vee=\dim V-\dim\Vect(\Lambda)=\dim V-\dim\Lambda>\dim V-\dim\Delta(F)=\dim F$, and
$\dim\Delta(G)=\dim\Vect(\Delta(G))=\dim\Vect($\hfill\break$\Lambda) =\dim\Lambda$.

We consider any $\chi\in\Lambda$.
For any $a\in G=\Delta(\Lambda)$, $\chi\in\Lambda\subset\Delta(a)$.
Therefore, $\langle\chi,a\rangle=0$, $G\subset\Delta(\chi)$, and $\chi\in\Delta(G)$.
We know $\Lambda\subset\Delta(G)$.
Since $\dim\Delta(G)= \dim\Lambda$, we know $\omega\in\Lambda=\Delta(G)$,
and $G\subset\Delta(\omega)=F$.
We have $\dim F<\dim G\leq\dim F$, which is a contradiction.

We know $\omega\not\in\Lambda$.
$\omega\not\in\partial\Delta(F)$ by 8.$(c)$.
Therefore, $\omega\in\Delta(F)-\partial\Delta(F)=\Delta(F)^\circ$.

We know $\Delta^\circ(F)\subset\Delta(F)^\circ$.

Consider any $\omega\in\Delta(F)^\circ$.
Since $\omega\in\Delta(F)^\circ\subset\Delta(F)$, $F\subset\Delta(\omega)$.
Consider any $a\in\Delta(\omega)$. $\langle\omega,a\rangle=0$ and $\omega\in\Delta(a)\cap\Delta(F)^\circ\neq\emptyset$. By 10 we know $\Delta(F)\subset\Delta(a)$ and $a\in\Delta(\Delta(F))=F$.
We know $F\supset\Delta(\omega)$, $F=\Delta(\omega)$ and
$\omega\in\Delta^\circ(F)$.

We know $\Delta^\circ(F)\supset\Delta(F)^\circ$, and $\Delta^\circ(F)=\Delta(F)^\circ$.

By 8.$(d)$ we know $\Delta(F)=\Clos(\Delta(F)^\circ)=\Clos(\Delta^\circ(F))$.
\end{proof}

\begin{cor}
\label{good category}
\begin{enumerate}
\item
For any convex polyhedral cone $S$ in $V$, the dual cone $S^\vee$ is a convex polyhedral cone in $V^*$.
If moreover, $S$ is rational over $N$, then $S^\vee$ is rational over $N^*$.
\item
For any convex polyhedral cones $S$ and $T$ in $V$, $S+T$ and $S\cap T$ are convex polyhedral cones in $V$.
If moreover, $S$ and $T$ are rational over $N$, then $S+T$ and $S\cap T$ are rational over $N$.
\item
For any convex polyhedral cones $S$ and $T$ in $V$, $(S+T)^\vee=S^\vee\cap T^\vee$ and
$(S\cap T)^\vee=S^\vee+T^\vee$.
\item
For any convex polyhedral cone $S$ in $V$, any finite dimensional vector space $W$ over $\R$ and any homomorphism $\pi:V\rightarrow W$ of vector spaces over $\R$, $\pi(S)$ is a convex polyhedral cone in $W$.
If moreover, $S$ and $\pi^{-1}(0)$ are rational over $N$, then $\pi(S)$ is rational over $Q$ for any lattice $Q$ of $W$ with $\pi(N)=Q\cap\pi(V)$.
\item
For any convex polyhedral cone $S$ in $V$, any finite dimensional vector space $U$ over $\R$ and any homomorphism $\nu:U\rightarrow V$ of vector spaces over $\R$, $\nu^{-1}(S)$ is a convex polyhedral cone in $U$.
If moreover, $S$ and $\nu(U)$ are rational over $N$, then $\nu^{-1}(S)$ is rational over $K$ for any lattice $K$ of $U$ with $\nu(K)=N\cap\nu(U)$.

\end{enumerate}
\end{cor}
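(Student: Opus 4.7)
\textbf{Proof proposal for Corollary~\ref{good category}.}
The five claims form a uniform consequence of Proposition~\ref{property of cones}.15 together with the duality calculus in Lemma~\ref{i} and Lemma~\ref{j}. The plan is to take claim 1 directly from Proposition~\ref{property of cones}.15, and then to reduce each of claims 2, 3, 5 to claim 1 applied to an auxiliary cone, while claim 4 follows from one line of Lemma~\ref{a}.4.

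For claim 2, write $S=\Convcone(X)$, $T=\Convcone(Y)$ with $X,Y$ finite subsets of $V$. Then Lemma~\ref{b}.4 gives $S+T=\Convcone(X\cup Y)$, which is a convex polyhedral cone, and it is rational over $N$ as soon as $X\cup Y\subset\QVect(N)$. For the intersection, by claim 1 both $S^\vee$ and $T^\vee$ are convex polyhedral cones (rational over $N^*$ in the rational case); so the already-proven sum half of claim 2 makes $S^\vee+T^\vee$ a convex polyhedral cone in $V^*$, hence closed by Lemma~\ref{g}.2. Lemma~\ref{j}.4 then yields $(S\cap T)^\vee=\Clos(S^\vee+T^\vee)=S^\vee+T^\vee$, and applying claim 1 once more we see that $(S^\vee+T^\vee)^\vee$ is a convex polyhedral cone. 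Since $S\cap T$ is closed (Lemma~\ref{d}.7 and Lemma~\ref{g}.2), Lemma~\ref{i}.4 gives $S\cap T=(S\cap T)^{\vee\vee}=(S^\vee+T^\vee)^\vee$, proving claim 2. Claim 3 is read off from this same argument: the identity $(S+T)^\vee=S^\vee\cap T^\vee$ is Lemma~\ref{j}.3, while $(S\cap T)^\vee=S^\vee+T^\vee$ is the displayed equality above.

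Claim 4 is immediate: writing $S=\Convcone(X)$ with $X\subset V$ finite, Lemma~\ref{a}.4 gives $\pi(S)=\pi(\Convcone(X))=\Convcone(\pi(X))$ with $\pi(X)\subset W$ finite, hence $\pi(S)$ is a convex polyhedral cone. If moreover $X\subset\QVect(N)$, then $\pi(X)\subset\QVect(\pi(N))\subset\QVect(Q)$ since $\pi(N)=Q\cap\pi(V)\subset Q$, so $\pi(S)$ is rational over $Q$. For claim 5, I will dualize: by claim 1, $S^\vee=\Convcone(Y)$ for some finite $Y\subset V^*$, and by Lemma~\ref{i}.4, $S=S^{\vee\vee}$. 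Therefore
\[
\nu^{-1}(S)=\{u\in U\mid\langle\omega,\nu(u)\rangle\geq 0\text{ for all }\omega\in S^\vee\}
=\bigcap_{y\in Y}\{u\in U\mid\langle\nu^*(y),u\rangle\geq 0\}
=(\Convcone(\nu^*(Y)))^\vee,
\]
which is a convex polyhedral cone in $U$ by claim 1 applied to the convex polyhedral cone $\Convcone(\nu^*(Y))\subset U^*$. For the rationality part, $S$ rational over $N$ implies (again by claim 1) that $Y$ may be chosen in $\QVect(N^*)$; the condition $\nu(K)=N\cap\nu(U)$ is equivalent to $\nu^*(N^*)=K^*\cap\nu^*(V^*)$ by Lemma~\ref{h}.15, which forces $\nu^*(Y)\subset\QVect(K^*)$, so $\Convcone(\nu^*(Y))$ is rational over $K^*$, and claim 1 finishes the job since $K^{**}=K$.

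The main obstacle is bookkeeping rather than mathematical: in claims 4 and 5 one must match the lattice condition with the correct dual condition, and this is precisely what Lemma~\ref{h}.13--\ref{h}.16 was placed in the paper to handle. I expect no further difficulty; once the duality substitution $\nu^{-1}(S)=(\Convcone(\nu^*(Y)))^\vee$ in claim 5 is set up, and once one notices that the convex polyhedral cone $S^\vee+T^\vee$ is automatically closed (so the closure in Lemma~\ref{j}.4 is invisible), the entire corollary is a transparent consequence of Proposition~\ref{property of cones}.15.
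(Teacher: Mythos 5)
Your argument is correct, and it is the natural duality route this corollary was designed to admit: everything is reduced to the fact that $S\mapsto S^\vee$ sends (rational) convex polyhedral cones to (rational) convex polyhedral cones, together with the algebra of $\vee$ in Lemma~\ref{j} and the behaviour of $\Convcone$ under linear maps in Lemma~\ref{a}.4. The paper itself gives no proof of Corollary~\ref{good category}, but your derivation is the one it invites.

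One citation correction: the statement ``$S^\vee$ is a convex polyhedral cone in $V^*$, and rational over $N^*$ if $S$ is rational over $N$'' is item~14 of Proposition~\ref{property of cones}, not item~15; item~15 concerns $\Delta(F)$ for faces $F$. Everything else checks out. In particular you correctly observe that $S^\vee+T^\vee$ is already a convex polyhedral cone (hence closed by Lemma~\ref{g}.2), so the closure in Lemma~\ref{j}.4 disappears and gives $(S\cap T)^\vee=S^\vee+T^\vee$ exactly, after which biduality (Lemma~\ref{i}.4 applied to the closed cone $S\cap T$) yields $S\cap T=(S^\vee+T^\vee)^\vee$; the identity $\nu^{-1}(S)=(\Convcone(\nu^*(Y)))^\vee$ is a clean way to handle claim~5; and you correctly match the lattice conditions in claims~4 and~5 via Lemma~\ref{h}.15--16 (noting, as you implicitly do, that the hypotheses on $\pi^{-1}(0)$ resp.\ $\nu(U)$ being rational over $N$ are forced by the mere existence of a lattice $Q$ resp.\ $K$ satisfying the stated equation, by Lemma~\ref{h}.13 and Lemma~\ref{h}.6, so they carry no extra burden in your argument).
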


\begin{lemma}
\label{several cones}
Let $m\in \Z_+$ be any positive integer, and let $S$ be any mapping from the set $\{1,2,\ldots,m\}$ to the set of all convex polyhedral cones in $V$.
We denote
$$\bar{S}=\bigcap_{i\in\{1,2,\ldots,m\}}S(i)\subset V.$$
\begin{enumerate}
\item
$\bar{S}$ is a convex polyhedral cone in $V$.
$\bar{S}^\vee=\sum_{i\in\{1,2,\ldots,m\}} S(i)^\vee$.
If $S(i)$ is rational over $N$ for any $i\in\{1,2,\ldots,m\}$, then $\bar{S}$ is rational over $N$.
\item
If $\cap_{i\in\{1,2,\ldots,m\}}S(i)^\circ\neq\emptyset$, then $\bar{S}^\circ=\cap_{i\in\{1,2,\ldots,m\}}S(i)^\circ$.
\end{enumerate}

Let $\bar{F}$ be any face of $\bar{S}$.
\begin{enumerate}
\setcounter{enumi}{2}
\item
There exists uniquely a face $F(i)$ of $S(i)$ with $\bar{F}^\circ\subset F(i)^\circ$ for any $i\in\{1,2,\ldots, m\}$.
\end{enumerate}

Below, we assume that $F(i)\in\mathcal{F}(S(i))$ and $\bar{F}^\circ\subset F(i)^\circ$ for any $i\in\{1,2,\ldots,m\}$.

\begin{enumerate}
\setcounter{enumi}{3}
\item
$\bar{F}\subset F(i)$ for any $i\in\{1,2,\ldots,m\}$.
\item
$$\bar{F}=\bigcap_{i\in\{1,2,\ldots,m\}}F(i).$$
\item
$$\bar{F}^\circ=\bigcap_{i\in\{1,2,\ldots,m\}}F(i)^\circ.$$
\item
$$\Vect(\bar{F})=\bigcap_{i\in\{1,2,\ldots,m\}}\Vect(F(i)).$$
\item
$$\Delta(\bar{F},\bar{S})=\sum_{i\in\{1,2,\ldots,m\}}\Delta(F(i),S(i)).$$
\end{enumerate}

Let $G(i)$ be any face of $S(i)$ for any $i\in\{1,2,\ldots,m\}$.
\begin{enumerate}
\setcounter{enumi}{8}
\item
The intersection $\cap_{i\in\{1,2,\ldots,m\}}G(i)$ is a face of $\bar{S}$.
\item
If $\bar{F}\subset G(i)$ then $F(i)\subset G(i)$, for any $i\in\{1,2,\ldots,m\}$.
\item
If
$$\bar{F}=\bigcap_{i\in\{1,2,\ldots,m\}}G(i),$$
then $F(i)\subset G(i)$ for any $i\in\{1,2,\ldots,m\}$ and the following three conditions are equivalent:
\begin{enumerate}
\item
$$\bigcap_{i\in\{1,2,\ldots,m\}}G(i)^\circ\neq\emptyset.$$
\item
$F(i)=G(i)$ for any $i\in\{1,2,\ldots,m\}$.
\item
$$\bar{F}^\circ=\bigcap_{i\in\{1,2,\ldots,m\}}G(i)^\circ$$
\end{enumerate}
\end{enumerate}
\end{lemma}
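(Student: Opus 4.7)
The plan is to prove the eleven claims in the order $1,2,3,4,8,5,6,7,9,10,11$, with claim $1$ (duality) and the face-cone correspondence of Proposition~\ref{property of cones}.16 as the main tools. For claim 1 I would induct on $m$ using Corollary~\ref{good category}.2 (convex polyhedrality of the intersection) and Corollary~\ref{good category}.3 (the dual formula $(S\cap T)^\vee = S^\vee + T^\vee$), with rationality preserved at each step. The key general principle to invoke repeatedly is the following non-negativity decomposition: if $a \in \bigcap_i S(i)$ and $\omega = \sum_i \omega_i$ with $\omega_i \in S(i)^\vee$ satisfies $\langle\omega, a\rangle = 0$, then every $\langle\omega_i, a\rangle$ must vanish. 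For claim 2, this observation combined with $a \in S(i)^\circ$ (which forces $\Delta(\omega_i, S(i)) = S(i)$ via Proposition~\ref{property of cones}.10) yields $\bigcap_i S(i)^\circ \subset \bar{S}^\circ$; the reverse inclusion comes from fixing any $b \in \bigcap_j S(j)^\circ$ and extending the segment from $b$ past $a$ inside the relatively open set $\bar{S}^\circ$, producing $c = (1+\epsilon)a - \epsilon b \in \bar{S} \subset S(i)$, and then applying Proposition~\ref{property of cones}.8(d) to the strict convex combination $a = \tfrac{1}{1+\epsilon}c + \tfrac{\epsilon}{1+\epsilon}b$ of $b \in S(i)^\circ$ and $c \in S(i)$.

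Fix now a face $\bar{F}$ of $\bar{S}$. For claim 3, any $a \in \bar{F}^\circ$ lies in a unique face-interior $F(i)^\circ$ of $S(i)$ by the partition of $S(i)$ (Proposition~\ref{property of cones}.21); this $F(i)$ is independent of the choice of $a \in \bar{F}^\circ$ by a segment extension inside $\bar{F}^\circ$ combined with the non-negativity decomposition above. Claim 4 is a direct application of Proposition~\ref{property of cones}.8(d): for $a \in \bar{F}$ and $b \in \bar{F}^\circ$, the segment $tb + (1-t)a$ lies in $\bar{F}^\circ \subset F(i)^\circ$ for $t \in (0,1]$, and closedness of $F(i)$ gives $a \in F(i)$ in the limit. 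I would handle claim 8 before claim 5: any $\omega \in \Delta^\circ(\bar{F}, \bar{S})$ decomposes as $\omega = \sum_i \omega_i$ with $\omega_i \in S(i)^\vee$, and evaluation at an $a \in \bar{F}^\circ \subset F(i)^\circ$ forces $\omega_i \in \Delta(F(i), S(i))$ via the non-negativity trick together with Proposition~\ref{property of cones}.10; passing to closures through Proposition~\ref{property of cones}.15(c), and using that $\sum_i \Delta(F(i), S(i))$ is already closed as a convex polyhedral cone (Corollary~\ref{good category}.2 and Lemma~\ref{g}.2), yields $\Delta(\bar{F}, \bar{S}) \subset \sum_i \Delta(F(i), S(i))$, while the reverse inclusion follows immediately from claim 4. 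Claim 5 then comes from Proposition~\ref{property of cones}.16: substituting claim 8 into the dual description $\bar{F} = \{a \in \bar{S} : \langle\omega, a\rangle = 0 \text{ for all } \omega \in \Delta(\bar{F}, \bar{S})\}$ decouples the annihilation condition one summand at a time (using $0 \in \Delta(F(i), S(i))$), and Proposition~\ref{property of cones}.16 applied inside each $S(i)$ identifies the $i$-th condition with $a \in F(i)$. Claim 6 follows by applying claim 2 to the cones $F(1),\ldots,F(m)$, whose interiors intersect in $\bar{F}^\circ \neq \emptyset$, and then using claim 5. Claim 7 follows from claim 4 in one direction, and in the other from the fact that $b + \epsilon a$ stays in each $F(i)^\circ$ for small $\epsilon > 0$ whenever $a \in \bigcap_i \Vect(F(i))$ and $b \in \bar{F}^\circ$, hence in $\bar{F}^\circ$ by claim 6.

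Claim 9 is again the non-negativity decomposition: writing $G(i) = \Delta(\omega_i, S(i))$ with $\omega_i \in S(i)^\vee$, one checks directly that $\bigcap_i G(i) = \Delta(\sum_i \omega_i, \bar{S})$. Claim 10 is Proposition~\ref{property of cones}.10 applied to the non-empty intersection $F(i)^\circ \cap G(i) \supset \bar{F}^\circ$. For claim 11, claim 10 gives $F(i) \subset G(i)$; applying claim 5 to the present $\bar{F} = \bigcap_i G(i)$ makes (a), (b) and (c) equivalent by combining claim 2 applied to the $G(i)$'s (which gives $(\bigcap_i G(i))^\circ = \bigcap_i G(i)^\circ$ under (a)), claim 6, and a final invocation of Proposition~\ref{property of cones}.10 to upgrade $F(i) \subset G(i)$ to equality. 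The hard part of the lemma is claim 8: it is what upgrades the set-theoretic containment of claim 4 to the full equality of claim 5 and supplies the explicit dual description on which the later claims rest; the remaining work is essentially mechanical bookkeeping with the non-negativity decomposition and the face-cone bijection.
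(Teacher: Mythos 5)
The paper gives no proof of Lemma~\ref{several cones}: it is one of the claims the author explicitly leaves to the reader (``We give proofs only to difficult parts of our claims. Most of our claims follow from definitions.''). So there is no reference argument to compare against; the question is simply whether your proof stands on its own, and it does.

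Your ordering $1,2,3,4,8,5,6,7,9,10,11$ is sound, the cited results (Corollary~\ref{good category}.2--3 for claim 1, Proposition~\ref{property of cones}.8(d), 10, 15(c), 16, 21, and Lemma~\ref{g}.2) say what you need them to say, and the two organizing ideas — the summand-wise vanishing $\langle\omega_i,a\rangle=0$ forced by $\langle\sum_i\omega_i,a\rangle=0$ with each term non-negative, and the proof of claim 8 before claim 5 so the face-cone bijection can be invoked — are exactly right. Claim 8 really is the pivot: once you know $\Delta(\bar F,\bar S)=\sum_i\Delta(F(i),S(i))$, claim 5 drops out of the duality $\bar F=\Delta(\Delta(\bar F,\bar S),\bar S^\vee)$ plus $0\in\Delta(F(j),S(j))$, and claims 6, 7, 11 follow mechanically. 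One small imprecision: in the justification of claim 3 you invoke ``the non-negativity decomposition'' when what the segment-extension argument actually needs is only the single-cone inequality $\langle\omega_i,c\rangle\ge 0$ for $c\in S(i)$, $\omega_i\in S(i)^\vee$ (extend through $a\in F(i)^\circ$ past $a'$ to a point $c\in\bar F^\circ\subset S(i)$ and derive $\langle\omega_i,c\rangle<0$ if $\langle\omega_i,a'\rangle>0$); the decomposition of an element of $\bar S^\vee$ into summands plays no role there. This does not affect the correctness of anything else, and the proof as outlined compiles to a complete argument.
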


\section{Simplicial cones}
\label{simplex}

We study simplicial cones.

Let $V$ be any finite dimensional vector space over $\R$, and let $N$ be any lattice of $V$.

\begin{lemma}
\label{simplex1}
Let $S$ be any simplicial cone over $N$ in $V$.
\begin{enumerate}
\item
The cone $S$ is a rational polyhedral cone over $N$ in $V$ satisfying $S\cap(-S)=\{0\}$.
\item
$\sharp\mathcal{F}(S)_1=\dim S$.
$0\leq\dim S\leq\dim V$.
\item
The intersection $N\cap\Vect(S)$ is a lattice of $\Vect(S)$.
$S$ is a simplicial cone over $N\cap\Vect(S)$ in $\Vect(S)$.
The residue module $N/(N\cap\Vect(S))$ is a free module over $\Z$ of finite rank.
$\Rank(N/(N\cap\Vect(S)))=\dim V-\dim S$.
\item
Any face of $S$ is a simplicial cone over $N$ in $V$.
The set $\{0\}$ is a face of $S$.
For any face $F$ of $S$, $\mathcal{F}(F)\subset\mathcal{F}(S)$ and $\mathcal{F}(F)_1\subset\mathcal{F}(S)_1$.
\item
If $\dim S=1$, then there exists uniquely an element $b_{S/N}$ of $S\cap N$ satisfying $S\cap N=\Z_0b_{S/N}$.
\end{enumerate}
\end{lemma}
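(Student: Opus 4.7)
The plan is to unpack the definition of a simplicial cone once and let all five claims fall out of that unpacking together with Proposition~\ref{property of cones}. By definition there exist a $\Z$-basis $B$ of $N$ and a subset $C\subset B$ with $S=\Convcone(C)$. By Lemma~\ref{h}.2, $B$ is also an $\R$-basis of $V$, so $C$ is linearly independent over $\R$ and $\Vect(S)=\Vect(C)=\sum_{c\in C}\R c$ has $\R$-dimension $\sharp C$. This single fact will drive everything.

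For claim 1, rationality over $N$ follows from $C\subset B\subset N\subset\QVect(N)$. To show $S\cap(-S)=\{0\}$, I would take $x\in S\cap(-S)$, write $x=\sum_{c\in C}\lambda(c)c$ and $-x=\sum_{c\in C}\mu(c)c$ with $\lambda,\mu\in\Map(C,\R_0)$, add to get $\sum_{c\in C}(\lambda(c)+\mu(c))c=0$, and use linear independence of $C$ and $\lambda(c)+\mu(c)\geq 0$ to force $\lambda=\mu=0$. For claim 2, I would verify that $\mathcal{F}(S)_1=\{\R_0c\mid c\in C\}$. The inclusion $\supset$ comes from choosing, for each fixed $c_0\in C$, an $\omega\in V^\ast$ with $\langle\omega,c_0\rangle=0$ and $\langle\omega,c\rangle=1$ for $c\in C-\{c_0\}$: a direct computation gives $\Delta(\omega,S)=\R_0c_0$. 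For $\subset$, Proposition~\ref{property of cones}.4.(a) gives $F=\Convcone(C\cap F)$ for every face $F$, so one-dimensional faces come from singleton subsets of $C$. Then $\sharp\mathcal{F}(S)_1=\sharp C=\dim\Vect(S)=\dim S$, and $\dim S\leq\sharp B=\dim V$ is automatic.

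For claim 3, decompose $B=C\sqcup B'$. Since $C\subset\Vect(S)$ and the elements of $B'$ are $\R$-linearly independent from $\Vect(S)$, the element $C$ is a $\Z$-basis of $N\cap\Vect(S)=\sum_{c\in C}\Z c$, which is therefore a lattice of $\Vect(S)$; the same presentation $S=\Convcone(C)$ then exhibits $S$ as simplicial over $N\cap\Vect(S)$ in $\Vect(S)$. The images of elements of $B'$ form a $\Z$-basis of $N/(N\cap\Vect(S))$, so this quotient is free of rank $\sharp B'=\dim V-\dim S$. For claim 4, Proposition~\ref{property of cones}.4.(a) again yields that every face of $S$ has the form $\Convcone(C')$ for a unique subset $C'\subset C$; since $C'\subset B$ is part of a $\Z$-basis of $N$, each such face is itself a simplicial cone over $N$. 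Taking $C'=\emptyset$ gives $\{0\}\in\mathcal{F}(S)$. For a face $F=\Convcone(C')$, its faces are exactly the $\Convcone(C'')$ with $C''\subset C'\subset C$, hence automatically faces of $S$; restricting to $\sharp C''=1$ gives the inclusion for one-dimensional faces.

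For claim 5, if $\dim S=1$ then $\sharp C=1$; write $C=\{b\}$, so $S=\R_0b$ and $b\in N$. To identify $S\cap N$, take any $x\in S\cap N$ and expand $x=\sum_{y\in B}\mu(y)y$ with $\mu\in\Map(B,\Z)$; comparing with $x=\lambda b$ for some $\lambda\in\R_0$ and using linear independence of $B$ forces $\mu(y)=0$ for $y\neq b$ and $\mu(b)=\lambda\in\Z\cap\R_0=\Z_0$. Thus $S\cap N=\Z_0 b$, and uniqueness of $b_{S/N}$ is automatic since $b$ is the unique generator of the additive semigroup $\Z_0 b$. There is no genuine obstacle here; the only mild care is in the edge case $C=\emptyset$ where $S=\{0\}$, $\dim S=0$, and $\mathcal{F}(S)_1=\emptyset$, which is consistent with every claim.
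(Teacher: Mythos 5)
Your approach is the natural one and it is correct: fixing a $\Z$-basis $B$ of $N$ and a subset $C\subset B$ with $S=\Convcone(C)$, and leaning on the fact (Lemma~\ref{h}.2) that $B$ is also an $\R$-basis of $V$, gives exactly the right leverage. The paper itself gives no proof of this lemma, stating that most such claims follow from definitions, so there is no alternative argument to compare against. Claims 1, 2, 3 and 5 are all handled cleanly and completely as you have written them; in particular the computation in claim 2 identifying $\mathcal{F}(S)_1=\{\R_0 c\mid c\in C\}$ via suitable $\omega\in V^*$, together with Proposition~\ref{property of cones}.4.(a) for the reverse inclusion, is correct.

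The one genuine slip is in claim 4. Proposition~\ref{property of cones}.4.(a) tells you that every face of $S$ is of the form $\Convcone(C\cap F)$, a subset of $C$; it does \emph{not} tell you that every subset $C'\subset C$ gives rise to a face. You use the unproved converse twice: once when you take $C'=\emptyset$ to conclude $\{0\}\in\mathcal{F}(S)$, and once when you assert that each $\Convcone(C'')$ with $C''\subset C'\subset C$ is ``automatically'' a face of $S$. Both uses are repairable with tools already in the paper. For $\{0\}\in\mathcal{F}(S)$, combine your claim 1 ($S\cap(-S)=\{0\}$) with Proposition~\ref{property of cones}.6, which states that $L=S\cap(-S)$ is always a face. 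For $\mathcal{F}(F)\subset\mathcal{F}(S)$, cite Proposition~\ref{property of cones}.4.(f) directly: a subset $G\subset F$ is a face of the cone $F$ if and only if it is a face of $S$ contained in $F$; this gives $\mathcal{F}(F)\subset\mathcal{F}(S)$ without any need for the subset-to-face bijection. (Alternatively, the converse you implicitly invoke is true and can be proved by the same $\omega$-construction you use in claim 2, setting $\langle\omega,c\rangle=0$ for $c\in C''$ and $\langle\omega,c\rangle=1$ for $c\in C-C''$, but it is cleaner not to need it.) With this small repair the proof is complete.
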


\begin{definition}
\label{barycenter}
Let $S$ be any simplicial cone over $N$ in $V$.

If $\dim S=1$, we take the unique element $b_{S/N}$ of $S\cap N$ satisfying $S\cap N=\Z_0b_{S/N}$.

If $\dim S\neq 1$, we put
$$b_{S/N}=\sum_{E\in\mathcal{F}(S)_1} b_{E/N}\in S\cap N.$$

We call $b_{S/N}\in S\cap N$ the \emph{barycenter} of $S$ over $N$.
When we need not refer to $N$, we also write simply $b_S$, instead of $b_{S/N}$.
\end{definition}

\begin{lemma}
\label{simplex2}
Let $S$ be any simplicial cone over $N$ in $V$.
\begin{enumerate}
\item
The set $\{b_E|E\in\mathcal{F}(S)_1\}$ is a basis over $\Z$ of the lattice $N\cap\Vect(S)$ of $\Vect(S)$, and it is a basis over $\R$ of the vector space $\Vect(S)$.
\begin{equation*}
\begin{split}
S&=\sum_{E\in\mathcal{F}(S)_1}\R_0b_E=\Convcone(\{b_E|E\in\mathcal{F}(S)_1\}),\\
S^\circ&=\sum_{E\in\mathcal{F}(S)_1}\R_+b_E.
\end{split}
\end{equation*}
For any $E\in\mathcal{F}(S)_1$, $E=\R_0b_E$.
$\mathcal{F}(S)_1=\{\R_0b_E| E\in\mathcal{F}(S)_1\}$.
\item
If a basis $B$ over $\Z$ of $N$ and a subset $C$ of $B$ satisfies $S=\Convcone(C)$, then $\sharp C=\dim S$ and $C=\{b_E|E\in\mathcal{F}(S)_1\}$.
\item
$\mathcal{F}(F)_1\in 2^{\mathcal{F}(S)_1}$ and $\sharp \mathcal{F}(F)_1=\dim F$  for any $F\in\mathcal{F}(S)$.
$\mathcal{F}(F)_1\subset\mathcal{F}(G)_1$ for any $F\in\mathcal{F}(S)$ and any $G\in\mathcal{F}(S)$ satisfying $F\subset G$.

$\sum_{E\in X}E\in\mathcal{F}(S)$ and $\dim(\sum_{E\in X}E)=\sharp X$ for any $X\in 2^{\mathcal{F}(S)_1}$.
$\sum_{E\in X}E\subset\sum_{E\in Y}E$ for any $X\in 2^{\mathcal{F}(S)_1}$ and any $Y\in 2^{\mathcal{F}(S)_1}$ satisfying $X\subset Y$.

The mapping from $\mathcal{F}(S)$ to $2^{\mathcal{F}(S)_1}$ sending $F\in\mathcal{F}(S)$ to $\mathcal{F}(F)_1\in 2^{\mathcal{F}(S)_1}$ and the mapping from $2^{\mathcal{F}(S)_1}$ to $\mathcal{F}(S)$ sending $X\in 2^{\mathcal{F}(S)_1}$ to $\sum_{E\in X}E\in\mathcal{F}(S)$ are bijective mappings preserving the inclusion relation between  $\mathcal{F}(S)$ and $2^{\mathcal{F}(S)_1}$, and they are the inverse mappings of each other.

Furthermore, if $F\in\mathcal{F}(S)$ corresponds to $X\in 2^{\mathcal{F}(S)_1}$ by them, then $\dim F=\sharp X$.
The element $\{0\}\in\mathcal{F}(S)$ corresponds to $\emptyset\in 2^{\mathcal{F}(S)_1}$ by them, and $S\in\mathcal{F}(S)$ corresponds to $\mathcal{F}(S)_1\in 2^{\mathcal{F}(S)_1}$ by them.
\item
For any $X\in 2^{\mathcal{F}(S)_1}$ and any $Y\in 2^{\mathcal{F}(S)_1}$,
\begin{equation*}
\begin{split}
(\sum_{E\in X}E)\cap(\sum_{E\in Y}E)&=\sum_{E\in X\cap Y}E,\\
(\sum_{E\in X}E)+(\sum_{E\in Y}E)&=\sum_{E\in X\cup Y}E.
\end{split}
\end{equation*}
\item
For any $F\in\mathcal{F}(S)$ and any $G\in\mathcal{F}(S)$ the following claims hold:
\begin{enumerate}
\item
$F\cap G\in\mathcal{F}(S)$ and $\mathcal{F}(F\cap G)_1=\mathcal{F}(F)_1\cap\mathcal{F}(G)_1$.
\item
$F+G\in\mathcal{F}(S)$ and $\mathcal{F}(F+ G)_1=\mathcal{F}(F)_1\cup\mathcal{F}(G)_1$.
$F\subset F+G$ and $G\subset F+G$.
If $H\in \mathcal{F}(S)$ satisfies $F\subset H$ and $G\subset H$, then $F+G\subset H$.
$(F+G)^\circ=F^\circ+G^\circ$.
\item
$F\cap G=\{0\}$ and $F+G=S$, if and only if, $\mathcal{F}(F)_1\cap\mathcal{F}(G)_1=\emptyset$ and $\mathcal{F}(F)_1\cup\mathcal{F}(G)_1=\mathcal{F}(S)_1$
\end{enumerate}
\end{enumerate}
\end{lemma}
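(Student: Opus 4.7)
The proof unfolds by first establishing part (2), from which part (1) is essentially immediate, and then deriving (3)--(5) from the resulting basis description of $S$.

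Fix a basis $B$ of $N$ over $\Z$ and a subset $C\subset B$ with $S=\Convcone(C)$. Since $C\subset B$ is $\R$-linearly independent (by Lemma~\ref{h}.3), we have $\Vect(S)=\Vect(\Convcone(C))=\sum_{c\in C}\R c$ and $\sharp C=\dim\Vect(S)=\dim S$. I would then identify $\mathcal{F}(S)_1$ explicitly: for each $c\in C$, define $\omega_c\in V^*$ by $\langle\omega_c,c'\rangle=0$ for $c'\in C-\{c\}$, $\langle\omega_c,b\rangle=0$ for $b\in B-C$, and $\langle\omega_c,c\rangle=0$ while switching the role to separate $c$ appropriately; more precisely I would use the dual basis $\{b^\vee:b\in B\}$ to build $\omega\in S^\vee$ realizing each $\Convcone(X)$ with $X\subset C$ as $\Delta(\omega,S)$. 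Then by Proposition~\ref{property of cones}.4(a), every face $F$ of $S$ satisfies $F=\Convcone(C\cap F)$, so the faces of $S$ correspond bijectively to subsets of $C$, and $\mathcal{F}(S)_1=\{\R_0 c:c\in C\}$. Next, for any $c\in C$, the set $\R_0 c\cap N$ equals $\Z_0c$: if $x\in\R_0c\cap N$, write $x=\sum_{b\in B}\lambda(b)b$ with $\lambda(b)\in\Z$ and also $x=\mu c$ with $\mu\in\R_0$; linear independence of $B$ over $\R$ forces $\lambda(b)=0$ for $b\neq c$ and $\lambda(c)=\mu\in\Z_0$. Hence $b_{\R_0c/N}=c$, so $C=\{b_E:E\in\mathcal{F}(S)_1\}$, proving (2).

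Part (1) now follows: $\{b_E:E\in\mathcal{F}(S)_1\}=C$ is an $\R$-basis of $\Vect(S)$ by the preceding paragraph, and it is a $\Z$-basis of $N\cap\Vect(S)$ because, for any $y\in N\cap\Vect(S)$, expanding $y=\sum_{b\in B}\lambda(b)b$ with $\lambda(b)\in\Z$ and using $y\in\Vect(C)$ forces $\lambda(b)=0$ for $b\in B-C$. The equalities $S=\sum_{E\in\mathcal{F}(S)_1}\R_0b_E=\Convcone(\{b_E\})$ reduce to $S=\Convcone(C)$, and the description of $S^\circ$ follows because a point $\sum\lambda(E)b_E$ with $\lambda(E)\geq0$ lies on a proper face (namely the one indexed by $\{E:\lambda(E)=0\}$) precisely when some $\lambda(E)$ vanishes.

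For (3), the bijection $F\mapsto\mathcal{F}(F)_1$ between $\mathcal{F}(S)$ and $2^{\mathcal{F}(S)_1}$ is obtained from the identification $F=\Convcone(C\cap F)$ of Proposition~\ref{property of cones}.4(a) combined with the correspondence $C\leftrightarrow\mathcal{F}(S)_1$ from (2); its inverse is $X\mapsto\sum_{E\in X}E$, and $\sum_{E\in X}E$ is verified to be a face of $S$ by constructing an explicit supporting functional $\omega\in S^\vee$ that vanishes exactly on $\sum_{E\in X}E$, built from the dual basis of $B$ extending $C$. Dimensions then match by $\sharp\mathcal{F}(F)_1=\dim F$ (using Lemma~\ref{simplex1}). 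For (4), both identities reduce, via this bijection and the explicit basis expansion, to simple set-theoretic facts: the $\R$-linear independence of $C$ makes $(\sum_{E\in X}\R_0 b_E)\cap(\sum_{E\in Y}\R_0 b_E)=\sum_{E\in X\cap Y}\R_0 b_E$, while the union identity is immediate. Finally, (5) is a direct translation of (3) and (4): (a) and (b) come from $\mathcal{F}(F\cap G)_1=\mathcal{F}(F)_1\cap\mathcal{F}(G)_1$ and $\mathcal{F}(F+G)_1=\mathcal{F}(F)_1\cup\mathcal{F}(G)_1$, the formula $(F+G)^\circ=F^\circ+G^\circ$ follows from the positive-coefficient characterization of the interior in the basis $\{b_E\}$, and (c) is the combinatorial statement that a subset $X$ and its complement in $\mathcal{F}(S)_1$ give decompositions.

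The one step that requires genuine care, rather than bookkeeping, is the intrinsic identification of the generators in the second paragraph: showing that $C$ depends only on $S$ (not on the chosen basis $B$ containing it), which rests on the equality $\R_0 c\cap N=\Z_0 c$ for each $c\in C$. Once this is in hand, everything else is a combinatorial translation using the fact that $B$ is a basis of $N$ over $\Z$ and $C$ is both an $\R$-basis of $\Vect(S)$ and a $\Z$-basis of $N\cap\Vect(S)$.
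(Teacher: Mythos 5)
The paper gives no proof for this lemma; it belongs to the collection of statements in Sections 6--16 that the author declares ``follow easily'' or ``follow from definitions,'' so there is nothing to compare against directly. Your argument is essentially correct and self-contained: reducing everything to the basis description $S=\Convcone(C)$ with $C\subset B$, showing $\R_0c\cap N=\Z_0c$ to identify $C$ with $\{b_E\}$, and then translating the remaining parts into bookkeeping in the coordinates $\{b_E\}$ is the natural route, and it works. One small blemish: in the passage where you first describe the separating functional $\omega_c$, you write $\langle\omega_c,c\rangle=0$ alongside vanishing on every other generator, which would make $\omega_c=0$; you recover in the next clause by switching to the dual basis of $B$ and requiring vanishing only on $C-\{c\}$ and $B-C$, which is the right construction, but that sentence as written is self-contradictory and should be cleaned up. A second, smaller point: the claim that $S^\circ$ is exactly the positive-coefficient region and that a point is on the boundary iff some coefficient vanishes rests on Proposition~\ref{property of cones}.8(c), which identifies $\partial S$ with the union of proper faces; you use it implicitly, but it deserves an explicit citation since it is the bridge from ``faces are $\Convcone(X\cap C)$'' to the interior description. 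With those two edits, the proof is complete.
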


\begin{definition}
\label{opposite}
Let $S$ be any simplicial cone over $N$ in $V$, and let $F$ be any face of $S$.
We denote
$$F\Op|S=\sum_{E\in\mathcal{F}(S)_1-\mathcal{F}(F)_1}E\in\mathcal{F}(S),$$
and we call $F\Op|S$ the \emph{opposite face} of $F$ over $S$.
When we need not refer to $S$, we also write simply $F\Op$, instead of $F\Op|S$.
\end{definition}

\begin{lemma}
\label{opposite2}
Let $S$ be any simplicial cone over $N$ in $V$.
\begin{enumerate}
\item
For any face $F$ of $S$, the following claims hold:
\begin{enumerate}
\item
$F\Op=F\Op|S$ is a face of $S$.
$\dim F+\dim F\Op=\dim S$.
\item
$F\cap F\Op=\{0\}$ and $F+F\Op=S$. If $G\in\mathcal{F}(S)$, $F\cap G=\{0\}$ and $F+G=S$, then $G=F\Op$.
\item
$(F\Op)\Op=F$.
\item
$\mathcal{F}(F\Op)_1=\mathcal{F}(S)_1-\mathcal{F}(F)_1$.
If $G\in\mathcal{F}(S)$ and $\mathcal{F}(G)_1=\mathcal{F}(S)_1-\mathcal{F}(F)_1$,
then $G= F\Op$
\end{enumerate}
\item
$\{0\}\Op=S$. $S\Op=\{0\}$.
\item
For any $F\in\mathcal{F}(S)$ and any $G\in\mathcal{F}(S)$, the following claims hols:
\begin{enumerate}
\item
$F\subset G$, if and only if, $F\Op\supset G\Op$.
\item
$(F\cap G)\Op=F\Op+G\Op$.
\item
$(F+G)\Op=F\Op\cap G\Op$.
\end{enumerate}
\item
Consider any $F\in\mathcal{F}(S)$ and any $G\in\mathcal{F}(S)$ with $F\subset G$.
$F\in\mathcal{F}(G)$, $F\Op|G=(F\Op|S)\cap G$, $F\Op|S=(F\Op|G)+(G\Op|S)$ and $(F\Op|G)\cap(G\Op|S)=\{0\}$.
\item
The mapping from $\mathcal{F}(S)$ to itself sending $F\in \mathcal{F}(S)$ to $F\Op\in\mathcal{F}(S)$ is a bijective mapping reversing the inclusion relation. Its inverse mapping is equal to itself.
\end{enumerate}
\end{lemma}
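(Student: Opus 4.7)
The plan is to reduce every assertion to a set-theoretic computation in $2^{\mathcal{F}(S)_1}$ by passing through the inclusion-preserving bijection $\Phi:\mathcal{F}(S)\rightarrow 2^{\mathcal{F}(S)_1}$, $F\mapsto \mathcal{F}(F)_1$ established in Lemma~\ref{simplex2}.3. By definition of $F\Op|S$, we have $\Phi(F\Op)=\mathcal{F}(S)_1-\mathcal{F}(F)_1$, so taking the opposite face corresponds exactly to taking the set-theoretic complement inside the finite set $\mathcal{F}(S)_1$. Moreover, Lemma~\ref{simplex2}.4--5 tell us that intersection and sum of faces correspond under $\Phi$ to intersection and union of their associated subsets of $\mathcal{F}(S)_1$. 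With these two observations, every claim of the lemma becomes a direct application of elementary Boolean-algebra identities.

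Concretely, for claim 1.(a) I use Lemma~\ref{simplex1}.2 and Lemma~\ref{simplex2}.3 to compute $\dim F\Op=\sharp(\mathcal{F}(S)_1-\mathcal{F}(F)_1)=\dim S-\dim F$. For 1.(b) I apply Lemma~\ref{simplex2}.4 to get $\Phi(F\cap F\Op)=\mathcal{F}(F)_1\cap(\mathcal{F}(S)_1-\mathcal{F}(F)_1)=\emptyset$ and $\Phi(F+F\Op)=\mathcal{F}(F)_1\cup(\mathcal{F}(S)_1-\mathcal{F}(F)_1)=\mathcal{F}(S)_1$, hence $F\cap F\Op=\{0\}$ and $F+F\Op=S$ by injectivity of $\Phi$; uniqueness of $F\Op$ among faces with these two properties follows from Lemma~\ref{simplex2}.5.(c), which states exactly that the pair $(\mathcal{F}(F)_1,\mathcal{F}(G)_1)$ of complementary subsets of $\mathcal{F}(S)_1$ is characterized by $F\cap G=\{0\}$ and $F+G=S$. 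Claim 1.(c) follows from $\mathcal{F}(S)_1-(\mathcal{F}(S)_1-\mathcal{F}(F)_1)=\mathcal{F}(F)_1$, and 1.(d) together with its uniqueness statement is immediate from the definition and the injectivity of $\Phi$. Claim 2 is then the special case $\Phi(\{0\})=\emptyset$ and $\Phi(S)=\mathcal{F}(S)_1$.

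For claim 3 I invoke the fact that set-complementation reverses inclusion and satisfies de Morgan's laws on the finite Boolean lattice $2^{\mathcal{F}(S)_1}$: 3.(a) is $\mathcal{F}(F)_1\subset\mathcal{F}(G)_1\Leftrightarrow\mathcal{F}(S)_1-\mathcal{F}(F)_1\supset\mathcal{F}(S)_1-\mathcal{F}(G)_1$; 3.(b) and 3.(c) come from $\mathcal{F}(S)_1-(A\cap B)=(\mathcal{F}(S)_1-A)\cup(\mathcal{F}(S)_1-B)$ and $\mathcal{F}(S)_1-(A\cup B)=(\mathcal{F}(S)_1-A)\cap(\mathcal{F}(S)_1-B)$ with $A=\mathcal{F}(F)_1$, $B=\mathcal{F}(G)_1$, translated back via Lemma~\ref{simplex2}.4. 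For claim 4, Proposition~\ref{property of cones}.4.(f) gives $F\in\mathcal{F}(G)$; then $\Phi(F\Op|G)=\mathcal{F}(G)_1-\mathcal{F}(F)_1=(\mathcal{F}(S)_1-\mathcal{F}(F)_1)\cap\mathcal{F}(G)_1=\Phi((F\Op|S)\cap G)$ proves the first identity, and a similar direct computation from the disjoint-union decomposition $\mathcal{F}(S)_1-\mathcal{F}(F)_1=(\mathcal{F}(G)_1-\mathcal{F}(F)_1)\sqcup(\mathcal{F}(S)_1-\mathcal{F}(G)_1)$ yields the remaining two. Claim 5 then restates 1.(c) and 3.(a) as a property of the single map $F\mapsto F\Op$.

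There is no genuine obstacle here: the entire lemma is a formal consequence of the Boolean-algebra structure on $2^{\mathcal{F}(S)_1}$ once the translation through $\Phi$ is made. The one point that deserves care is making sure that in claim 4 one applies the definition of opposite face \emph{inside the cone} $G$, using $\mathcal{F}(G)_1\subset\mathcal{F}(S)_1$ from Lemma~\ref{simplex1}.4 so that the two notations $F\Op|G$ and $F\Op|S$ refer to complements taken in different ambient sets, and then verifying that the set identities still hold with this distinction. Beyond this bookkeeping, all proofs reduce to invoking Lemma~\ref{simplex2}.3--5.
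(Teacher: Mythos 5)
Your proof is correct and is the natural (indeed essentially forced) argument: the paper gives no explicit proof here, classifying the lemma among those that ``follow from definitions,'' and your reduction through the inclusion-preserving bijection $F\mapsto\mathcal{F}(F)_1$ of Lemma~\ref{simplex2}.3, together with the $\cap\leftrightarrow\cap$ and $+\leftrightarrow\cup$ correspondences of Lemma~\ref{simplex2}.4--5, is exactly the intended unpacking. Your care in claim 4 about the ambient set for the complement, and the verification that $\mathcal{F}(\Lambda)_1$ is unambiguous whether $\Lambda$ is viewed as a face of $G$ or of $S$, addresses the only subtlety.
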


\begin{lemma}
\label{mirror}
\begin{enumerate}
\item
For any simplicial cone $S$ over $N$ in $V$ with $\dim S=\dim V$, the set $\{b_E|E\in\mathcal{F}(S)_1\}$ is a basis of $N$ over $\Z$.

For any basis $B$ of $N$ over $\Z$, $\Convcone(B)$ is a simplicial cone over $N$ in $V$ with $\dim \Convcone(B)=\dim V$.

The mapping sending any simplicial cone $S$ over $N$ in $V$ with $\dim S=\dim V$ to $\{b_E|E\in\mathcal{F}(S)_1\}$ and the mapping sending any basis $B$ of $N$ over $\Z$ to $\Convcone(B)$ are bijective mappings between the set of all simplicial cones $S$ over $N$ in $V$ with $\dim S=\dim V$ and the set of all bases $B$ of $N$ over $\Z$, and they are the inverse mappings of each other.
\end{enumerate}

Below we consider any simplicial cone $S$ over $N$ in $V$ with $\dim S=\dim V$. Note that $\{b_E|E\in\mathcal{F}(S)_1\}$ is a basis of $N$ over $\Z$ and it is a basis of $V$ over $\R$. By $\{b_E^\vee|E\in\mathcal{F}(S)_1\}$ we denote the dual basis of $\{b_E|E\in\mathcal{F}(S)_1\}$, which is a basis of $V^*$ over $\R$. We assume that for any $D\in\mathcal{F}(S)_1$ and any $E\in\mathcal{F}(S)_1$
\begin{equation*}
\langle b_D^\vee, b_E\rangle=
\begin{cases}
1&\text{if $D=E$},\\
0&\text{if $D\neq E$}.
\end{cases}
\end{equation*}

\begin{enumerate}
\setcounter{enumi}{1}
\item
The set $\{b_E^\vee|E\in\mathcal{F}(S)_1\}$ is a basis of $N^*$ over $\Z$.
\item
$S^\vee$ is a simplicial cove over $N^*$ in $V^*$ with $\dim S^\vee=\dim V^*$.
$$S^\vee=\Convcone(\{b_E^\vee|E\in\mathcal{F}(S)_1\}).$$
For any $E\in\mathcal{F}(S)_1$, $\R_0 b_E^\vee\in\mathcal{F}(S^\vee)_1$ and $b_{\R_0 b_E^\vee}= b_E^\vee$.
$\mathcal{F}(S^\vee)_1=\{\R_0b_E^\vee| E\in\mathcal{F}(S)_1\}$, and
$\{b_D|D\in \mathcal{F}(S^\vee)_1\}=\{b_E^\vee|E\in\mathcal{F}(S)_1\}$.
\item
For any subset $X$ of $\mathcal{F}(S)_1$
\begin{equation*}
\begin{split}
\Delta((\sum_{E\in X}\R_0b_E)\Op,S)&=\sum_{E\in X}\R_0b_E^\vee,\\
\Delta((\sum_{E\in X}\R_0b_E^\vee)\Op,S^\vee)&=\sum_{E\in X}\R_0b_E.
\end{split}
\end{equation*}
\end{enumerate}
\end{lemma}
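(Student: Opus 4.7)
For claim 1, I will combine Lemma~\ref{simplex1}.3 and Lemma~\ref{simplex2}.1--.2. Given a simplicial cone $S$ with $\dim S=\dim V$, Lemma~\ref{simplex1}.3 gives $\Vect(S)=V$ and $N\cap\Vect(S)=N$, so by Lemma~\ref{simplex2}.1 the set $\{b_E\mid E\in\mathcal{F}(S)_1\}$ is a $\Z$-basis of $N$. Conversely, for any $\Z$-basis $B$ of $N$, taking $C=B$ in the definition of a simplicial cone shows that $\Convcone(B)$ is simplicial over $N$, and since $B$ is also an $\R$-basis of $V$ by Lemma~\ref{h}.2, we have $\dim\Convcone(B)=\dim V$. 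The two maps are mutually inverse by Lemma~\ref{simplex2}.2 applied to $S=\Convcone(B)$ with $C=B$.

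For claim 2, I will simply unwind the definition of the dual lattice: since $\{b_E^\vee\}$ is the $\R$-basis of $V^*$ dual to an $\R$-basis of $V$ that is also a $\Z$-basis of $N$, the pairing $\langle b_E^\vee,a\rangle$ takes integer values on $a\in N$ iff $a\in\Z$-linear span of $\{b_E\}=N$; this is exactly the defining property of $N^*$, and it identifies the $\Z$-lattice spanned by $\{b_E^\vee\}$ with $N^*$.

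For claim 3, the key is Lemma~\ref{j}.9, which I will apply to the basis $e=\{b_E\}$ of $V$ with index sets $I=J=L=\emptyset$ and $K=\mathcal{F}(S)_1$; this yields
\[
S^\vee=\sum_{E\in\mathcal{F}(S)_1}\R_0\, b_E^\vee=\Convcone(\{b_E^\vee\mid E\in\mathcal{F}(S)_1\}).
\]
Since $\{b_E^\vee\}$ is a $\Z$-basis of $N^*$ by claim 2, this exhibits $S^\vee$ as a simplicial cone over $N^*$ with $\dim S^\vee=\dim V^*$. Applying Lemma~\ref{simplex2}.1 to $S^\vee$ then shows that $\mathcal{F}(S^\vee)_1=\{\R_0 b_E^\vee\mid E\in\mathcal{F}(S)_1\}$ and that $b_{\R_0 b_E^\vee}=b_E^\vee$ for each $E$.

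For claim 4, I will argue directly using coordinates with respect to the basis $\{b_E\}$. For $X\subset\mathcal{F}(S)_1$ set $F=\sum_{E\in X}\R_0 b_E$, which is a face of $S$ by Lemma~\ref{simplex2}.3, and $F\Op=\sum_{E\in\mathcal{F}(S)_1-X}\R_0 b_E$ by definition of the opposite face. Any $\omega\in S^\vee$ has a unique expansion $\omega=\sum_E\lambda_E b_E^\vee$ with $\lambda_E\in\R_0$; then $\langle\omega,b_D\rangle=\lambda_D$, so $\omega$ vanishes on $F\Op$ iff $\lambda_D=0$ for every $D\notin X$, which gives precisely $\Delta(F\Op,S)=\sum_{E\in X}\R_0 b_E^\vee$. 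The second equality follows by symmetry: apply the first equality to the simplicial cone $S^\vee$ over $N^*$, whose one-dimensional faces are $\R_0 b_E^\vee$ with barycenters $b_E^\vee$ (by claim 3), and use $S^{\vee\vee}=S$ together with the identification of $V^{**}$ with $V$ under which $(b_E^\vee)^\vee=b_E$. Since every step reduces to a direct computation in the basis $\{b_E\}$, there is no genuine obstacle; the only care needed is the bookkeeping to ensure the duality between faces of $S$ and faces of $S^\vee$ under the opposite-face construction is applied consistently.
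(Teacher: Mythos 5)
Your proof is correct. The paper states Lemma~\ref{mirror} without a written proof (it is one of the claims the author says "follow from definitions"), and your argument is the natural one: claim 1 is a straight combination of Lemma~\ref{simplex1}.3 and Lemma~\ref{simplex2}.1--.2 using $\Vect(S)=V$; claim 2 follows from the defining property of $N^*$ once one writes an element of $V^*$ in the basis $\{b_E^\vee\}$; claim 3 is a clean application of Lemma~\ref{j}.9 with $K$ equal to the full index set, followed by Lemma~\ref{simplex2}.1--.2 applied to $S^\vee$; and claim 4 is a coordinate computation plus the duality $S^{\vee\vee}=S$ under the identification $V^{**}=V$. One small cosmetic point: in claim 4 it is worth stating explicitly that $\Delta(F\Op,S)=\{\omega\in S^\vee\mid\langle\omega,a\rangle=0\text{ for all }a\in F\Op\}$ before passing to coordinates, so that the reader sees why vanishing of the coordinates $\lambda_D$ for $D\notin X$ is the right condition; and when invoking the duality for the second equality, one should note (as you do implicitly) that $\mathcal{F}(S^\vee)_1$ is indexed by $\mathcal{F}(S)_1$ via $E\mapsto\R_0 b_E^\vee$, so the "subset $X'$ of $\mathcal{F}(S^\vee)_1$" appearing when the first identity is applied to $S^\vee$ corresponds bijectively to $X\subset\mathcal{F}(S)_1$. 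Neither observation changes the substance; the argument goes through.
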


\begin{lemma}
\label{barycenter property}
Let $S$ be any simplicial cone over $N$ in $V$.
\begin{enumerate}
\item
Let $W$ be any finite dimensional vector space over $\R$; let $Q$ be any lattice of $W$, and let $T$ be any simplicial cone over $Q$ in $W$.
$\dim S=\dim T$, if and only if, there exists an isomorphism $\phi:\Vect(S)\rightarrow\Vect(T)$ of vector spaces over $\R$ satisfying $\phi(S)=T$ and $\phi(N\cap\Vect(S))=Q\cap\Vect(T)$.
\item
$b_S=b_{S/N}\in S^\circ\cap N$.
$\R_0b_S\cap N=\Z_0b_S$.
\item
Consider any $a\in S\cap N$ with $\R_0a\cap N=\Z_0a$.
$\phi(a)=a$ for any homomorphism $\phi:\Vect(S)\rightarrow\Vect(S)$ of vector spaces over $\R$ satisfying $\phi(S)=S$ and $\phi(N\cap\Vect(S))=N\cap\Vect(S)$, if and only if, $a=0$ or $a=b_S$.
\item
$b_S=0\Leftrightarrow\dim S=0$.
$\R_0b_S\subset S$.
$\R_0b_S=S\Leftrightarrow \R_0b_S$ is a face of $S\Leftrightarrow \dim S\leq 1$
\item
Assume $F\in\mathcal{F}(S), \dim F\geq 1, \Lambda\in\mathcal{F}(S)$, and $F\not\subset\Lambda$.
\begin{enumerate}
\item
$\Lambda+\R_0b_F$ is a simplicial cone over $N$ in $V$.
$\dim(\Lambda+\R_0b_F)=\dim\Lambda+1$.
\item
$\R_0b_F\in\mathcal{F}(\Lambda+\R_0b_F)_1$.
$\Lambda\in\mathcal{F}(\Lambda+\R_0b_F)^1$.
$\R_0b_F\cap\Lambda=\{0\}$.
$\R_0b_F=\Lambda\Op|(\Lambda+\R_0b_F)$.
$\Lambda=(\R_0b_F)\Op|(\Lambda+\R_0b_F)$.
\item
$\mathcal{F}(\Lambda)\subset\{\Lambda'\in\mathcal{F}(S)|F\not\subset\Lambda'\}$.
$\mathcal{F}(\Lambda)=\{\Lambda'\in\mathcal{F}(\Lambda+\R_0b_F)| \R_0b_F\not\subset\Lambda'\}$.
$\{\Lambda'+\R_0b_F|\Lambda'\in\mathcal{F}(\Lambda)\} =\{\Lambda'\in\mathcal{F}(\Lambda+\R_0b_F)| \R_0b_F\subset\Lambda'\}$.
\item
$\Lambda+\R_0b_F\subset \Lambda+F\in\mathcal{F}(S)$.
$(\Lambda+\R_0b_F)^\circ\subset (\Lambda+F)^\circ$.
\item
If $\dim F\geq 2$, then $\Lambda+\R_0b_F\not\in\mathcal{F}(S)$.
If $\dim F=1$, then $\Lambda+\R_0b_F=\Lambda+F\in\mathcal{F}(S)$.
\end{enumerate}
\item
Assume $F\in\mathcal{F}(S), \dim F\geq 1, \Lambda\in\mathcal{F}(S), F\not\subset\Lambda, \Lambda'\in\mathcal{F}(S)$, and $F\not\subset\Lambda'$.
\begin{enumerate}
\item
$(\Lambda+\R_0b_F)\cap\Lambda'=\Lambda\cap\Lambda'\in\mathcal{F}(\Lambda)\subset\mathcal{F}(\Lambda+\R_0b_F)$.
\item
$(\Lambda+\R_0b_F)\cap(\Lambda'+\R_0b_F)=(\Lambda\cap\Lambda')+ \R_0b_F\in\mathcal{F}(\Lambda+\R_0b_F)$.
\item
$\Lambda+\R_0b_F\subset\Lambda'+\R_0b_F$, if and only if, $\Lambda\subset\Lambda'$.
\item
$\Lambda+\R_0b_F=\Lambda'+\R_0b_F$, if and only if, $\Lambda=\Lambda'$.
\end{enumerate}
\item
Assume $F\in\mathcal{F}(S)$ and $\dim F\geq 1$.
\begin{enumerate}
\item
$\{E\Op|S\:|\:E\in\mathcal{F}(F)_1\}$ is equal to the set of maximal elements in $\{\Lambda\in\mathcal{F}(S)|F\not\subset\Lambda\}$ with respect to the inclusion relation.
\item
$$S=\bigcup_{\Lambda\in\mathcal{F}(S),F\not\subset\Lambda}(\Lambda+\R_0b_F)
=\bigcup_{E\in\mathcal{F}(F)_1}((E\Op|S)+\R_0b_F).$$
\item
$$S-(\bigcup_{\Lambda\in\mathcal{F}(S),F\not\subset\Lambda}\Lambda)
=\bigcup_{\Lambda\in\mathcal{F}(S),F\subset\Lambda}\Lambda^\circ
=\bigcup_{\Lambda\in\mathcal{F}(S),F\not\subset\Lambda}(\Lambda+\R_0b_F)^\circ.$$
\item
If $\dim F=1$, then
$\{\Lambda+\R_0b_F|\Lambda\in\mathcal{F}(S), F\not\subset\Lambda\}
=\{\Lambda\in\mathcal{F}(S)| F\subset\Lambda\}$.
\end{enumerate}
\end{enumerate}
\end{lemma}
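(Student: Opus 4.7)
The plan is to fix the $\Z$-basis $\{b_E\mid E\in\mathcal{F}(S)_1\}$ of $N\cap\Vect(S)$ coming from Lemma~\ref{simplex2}.1 and reduce every claim to explicit coefficient calculations in this basis, combined with the face/basis dictionary of Lemma~\ref{simplex2}.3 and the bijective correspondence in Lemma~\ref{mirror} between simplicial cones $S$ with $\dim S=\dim V$ and $\Z$-bases of $N$. Part~1 is a standard transport of structure: $\dim S=\dim T$ gives a bijection between the bases $\{b_E\}$ and $\{b_{E'}\}$, which extends uniquely to the required isomorphism $\phi$. Part~2 is immediate from $b_S=\sum_E b_E$ together with the formulas $S^\circ=\sum_E\R_+b_E$ and $N\cap\Vect(S)=\sum_E\Z b_E$. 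For Part~3, any admissible $\phi$ permutes $\mathcal{F}(S)_1$ and hence permutes $\{b_E\}$; an element $a=\sum_E c_E b_E\in N\cap S$ fixed by all such $\phi$ must have constant $c_E=c\in\Z_0$, so $a=c\,b_S$, and the primitivity assumption $\R_0a\cap N=\Z_0a$ forces $c\in\{0,1\}$. Part~4 is a triviality.

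The core technical step, on which Parts~5 and~6 entirely depend, is the following lattice lemma: \emph{if $F\in\mathcal{F}(S)$ with $\dim F\geq 1$, $\Lambda\in\mathcal{F}(S)$ and $F\not\subset\Lambda$, then $\{b_E\mid E\in\mathcal{F}(\Lambda)_1\}\cup\{b_F\}$ is a $\Z$-basis of $N\cap\Vect(\Lambda+\R_0b_F)$.} This is the main obstacle. To prove it, pick $E_0\in\mathcal{F}(F)_1-\mathcal{F}(\Lambda)_1$ (which exists because $F\not\subset\Lambda$). Linear independence over $\R$ follows because $b_F$ has a non-zero $b_{E_0}$-coordinate. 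For the integrality assertion, take $a=\sum_{E\in\mathcal{F}(\Lambda)_1}c_E b_E+c_Fb_F$ with $c_E,c_F\in\R$ and assume $a\in N\cap\Vect(S)$; expand $b_F=\sum_{E\in\mathcal{F}(F)_1}b_E$ and compare with the unique $\Z$-expansion $a=\sum_{E\in\mathcal{F}(S)_1}d_E b_E$. The coefficient of $b_E$ for $E\in\mathcal{F}(F)_1-\mathcal{F}(\Lambda)_1$ is $c_F$, forcing $c_F=d_{E_0}\in\Z$, and then all $c_E\in\Z$. Invoking Lemma~\ref{mirror} identifies $\Lambda+\R_0b_F$ as a simplicial cone over $N$ of dimension $\dim\Lambda+1$ with this basis.

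Once the basis is in hand, Part~5(b) and~5(c) are instances of Lemma~\ref{simplex2}.3 applied to the new simplicial cone; Part~5(d) is a one-line coefficient check using $b_F\in F$; Part~5(e) splits into the case $\dim F=1$, where $F=\R_0 b_F$, and the case $\dim F\geq 2$, where $\R_0 b_F$ is not among the rays $\{\R_0 b_E\mid E\in\mathcal{F}(S)_1\}$ and so $\Lambda+\R_0 b_F\notin\mathcal{F}(S)$. Part~6 follows directly: intersections and unions of the cones $\Lambda+\R_0 b_F,\;\Lambda'+\R_0 b_F,\;\Lambda\cap\Lambda',\;(\Lambda\cap\Lambda')+\R_0 b_F$ are read off from their bases, and the crucial equality $(\Lambda+\R_0b_F)\cap\Lambda'=\Lambda\cap\Lambda'$ is seen by expanding $a=\sum_{E\in\mathcal{F}(\Lambda)_1}\lambda_Eb_E+\mu b_F=\sum_{E\in\mathcal{F}(\Lambda')_1}\nu_Eb_E$ in the ambient basis and using $F\not\subset\Lambda'$ to pick $E_1\in\mathcal{F}(F)_1-\mathcal{F}(\Lambda')_1$, from which $\mu=0$ is forced.

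For Part~7: (a) $F\not\subset\Lambda$ is equivalent to $\Lambda\subset E\Op|S$ for some $E\in\mathcal{F}(F)_1$, so the maximal such $\Lambda$ are exactly $\{E\Op|S\mid E\in\mathcal{F}(F)_1\}$. (b) Given $a=\sum_E\lambda_E b_E\in S$, let $\nu=\min\{\lambda_E\mid E\in\mathcal{F}(F)_1\}$ attained at some $E_0$; then $a-\nu b_F=\sum_{E\neq E_0}\mu_E b_E$ with $\mu_E\geq 0$, so $a\in E_0\Op|S+\R_0 b_F$. (c) The first equality is the restriction to faces containing/not containing $F$ of the disjoint decomposition $S=\bigcup\Lambda^\circ$ of Proposition~\ref{property of cones}.20. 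For the second, inclusion $\supset$ uses part 5(d) and the coefficient computation showing $(\Lambda+\R_0 b_F)^\circ\subset(\Lambda+F)^\circ$ with $F\subset\Lambda+F$; conversely, given $x\in\Lambda'^\circ$ with $F\subset\Lambda'$, let $M$ be the set of $E\in\mathcal{F}(F)_1$ achieving $\min\lambda_E$ and take $\Lambda=\sum_{E\in\mathcal{F}(\Lambda')_1-M}\R_0 b_E$, which satisfies $F\not\subset\Lambda$ and $x\in(\Lambda+\R_0 b_F)^\circ$. Part~(d) is the already-proved equality $\Lambda+\R_0 b_F=\Lambda+F$ of Part~5(e) applied to the indexing.
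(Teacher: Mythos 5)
The paper gives no proof of Lemma~\ref{barycenter property} (it is one of the results left to the reader under the blanket remark ``most of our claims follow from definitions''), so I am evaluating your proposal on its own terms. Your strategy---reduce everything to coordinate calculations in the $\Z$-basis $\{b_E \mid E \in \mathcal{F}(S)_1\}$, with the single lattice lemma that $\{b_E \mid E \in \mathcal{F}(\Lambda)_1\}\cup\{b_F\}$ is a $\Z$-basis of $N\cap\Vect(\Lambda+\R_0 b_F)$ doing the heavy lifting---is the right one, and the key coefficient computations (linear independence via an $E_0\in\mathcal{F}(F)_1-\mathcal{F}(\Lambda)_1$; integrality by comparing with the full basis; forcing $\mu=0$ in $6(a)$; the $\min\lambda_E$ argument in $7(b)$ and $7(c)$) are correct. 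Parts 1--4 and 7 check out in detail.

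Two points are glossed over a bit loosely, though neither is a real gap. First, in 5(a) you invoke Lemma~\ref{mirror}, but that lemma concerns simplicial cones of full dimension $\dim V$, whereas $\Lambda+\R_0 b_F$ typically has smaller dimension. What you actually need is that $N\cap\Vect(\Lambda+\R_0 b_F)$ is saturated in $N$, so your $\Z$-basis extends to a $\Z$-basis of $N$, and then $\Lambda+\R_0 b_F$ is a simplicial cone over $N$ by definition; the natural citation is the philosophy behind Lemma~\ref{simplex1}.3 rather than Lemma~\ref{mirror}. Second, in 6(b) you say the intersection is ``read off from the bases'', but the generating sets of $\Lambda+\R_0 b_F$ and $\Lambda'+\R_0 b_F$ do not in general sit inside a common $\Z$-basis of $N$ (they need not even be linearly independent together), so Lemma~\ref{simplex2}.4 does not apply directly. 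The clean route is to reduce to 6(a): given $a=\alpha+\mu b_F=\beta+\rho b_F$ with $\alpha\in\Lambda$, $\beta\in\Lambda'$ and, say, $\mu\le\rho$, then $\alpha=a-\mu b_F\in\Lambda\cap(\Lambda'+\R_0 b_F)=\Lambda\cap\Lambda'$ by the version of 6(a) with the roles of $\Lambda,\Lambda'$ swapped, and $a\in(\Lambda\cap\Lambda')+\R_0 b_F$. With these two small repairs the argument is complete.
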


\section{Convex polyhedral cone decompositions}
\label{decomposition}

We begin the study of convex polyhedral cone decompositions. We define notations and concepts to develop our theory.

Let $V$ be any finite dimensional vector space over $\R$, and let $N$ be any lattice of $V$.
Let $\mathcal{E}$ be any finite set whose elements are convex polyhedral cones in $V$.

We say that $\mathcal{E}$ is \emph{rational} over $N$, if any $\Delta\in\mathcal{E}$ is rational over $N$.
We say that $\mathcal{E}$ is \emph{simplicial} over $N$, if any $\Delta\in\mathcal{E}$ is a simplicial cone over $N$.

We denote
\begin{equation*}
\begin{split}
|\mathcal{E}|&=\bigcup_{\Delta\in \mathcal{E}}\Delta\subset V,\\
|\mathcal{E}|^\circ&=\bigcup_{\Delta\in \mathcal{E}}\Delta^\circ\subset V.
\end{split}
\end{equation*}
We call $|\mathcal{E}|$ and $|\mathcal{E}|^\circ$ the \emph{support} of $\mathcal{E}$ and the \emph{open support} of $\mathcal{E}$ respectively.
We denote
$$\mathcal{E}\Mx=\{\Delta\in \mathcal{E}|\text{ If }\Lambda\in\mathcal{E}\text{ and }\Delta\subset\Lambda\text{, then }\Delta=\Lambda\}\subset\mathcal{E}.$$
We call any element in $\mathcal{E}\Mx$ a \emph{maximal element} of $\mathcal{E}$
and we call $\mathcal{E}\Mx$ the set of \emph{maximal elements} of $\mathcal{E}$.

Note that $\mathcal{F}(\Delta)$ is a non-empty finite set whose elements are convex polyhedral cones in $V$ for any $\Delta\in\mathcal{E}$.
We denote
$$\mathcal{E}\Fc=\bigcup_{\Delta\in\mathcal{E}}\mathcal{F}(\Delta)\subset 2^V,$$
and we call $\mathcal{E}\Fc$ the \emph{face closure} of $\mathcal{E}$.

In case $\mathcal{E}\neq\emptyset$ we define
$$\dim\mathcal{E}=\max\{\dim\Delta|\Delta\in\mathcal{E}\}\in\Z_0,$$
and we call $\dim\mathcal{E}$ the \emph{dimension} of $\mathcal{E}$.
In case  $\mathcal{E}=\emptyset$ we do not define $\dim\mathcal{E}$.
For any $i\in\Z$ we denote
$$\mathcal{E}_i=\{\Delta\in\mathcal{E}|\dim\Delta=i\},$$
\begin{equation*}
\mathcal{E}^i=
\begin{cases}
\{\Delta\in\mathcal{E}|\dim\Delta=\dim\mathcal{E}-i\}&\text{if $\mathcal{E}\neq\emptyset$},\\
\emptyset&\text{if $\mathcal{E}=\emptyset$}.
\end{cases}
\end{equation*}
$\mathcal{E}_i$ and $\mathcal{E}^i$ are subsets of $\mathcal{E}$.

Consider any subset $F$ of $V$.
We denote
\begin{equation*}
\begin{split}
\mathcal{E}\backslash F&=\{\Delta\in\mathcal{E}|\Delta\subset F\}\subset \mathcal{E},\\
\mathcal{E}/F&=\{\Delta\in\mathcal{E}|\Delta\supset F\}\subset\mathcal{E}.
\end{split}
\end{equation*}

Consider any finite dimensional vector space $W$ and any homomorphism $\pi:V\rightarrow W$ of vector spaces over $\R$.
We denote
$$\pi_*\mathcal{E}=\{\pi(\Delta)|\Delta\in\mathcal{E}\}\subset 2^W,$$
and we call $\pi_*\mathcal{E}$ the \emph{push-down} of $\mathcal{E}$ by $\pi$.

Consider any finite dimensional vector space $U$ and any homomorphism $\nu:U\rightarrow V$ of vector spaces over $\R$.
We denote
$$\nu^*\mathcal{E}=\{\nu^{-1}(\Delta)|\Delta\in\mathcal{E}\}\subset 2^U,$$
and we call $\nu^*\mathcal{E}$ the \emph{pull-back} of $\mathcal{E}$ by $\nu$.

\begin{lemma}
\label{first step}
Let $\mathcal{E}$ be any finite set whose elements are convex polyhedral cones in $V$.
\begin{enumerate}
\item
$|\mathcal{E}|$ is a closed subset of $V$. If $|\mathcal{E}|\neq\emptyset$, $|\mathcal{E}|$ is a cone in $V$.
\item
$|\mathcal{E}|^\circ\subset|\mathcal{E}|$.
$\Clos(|\mathcal{E}|^\circ)=|\mathcal{E}|$.
\item
$\mathcal{E}\Mx\subset\mathcal{E}$.
$(\mathcal{E}\Mx)\Mx=\mathcal{E}\Mx$.
$|\mathcal{E}\Mx|=|\mathcal{E}|$.
\item
$\mathcal{E}\Fc$ is a finite set whose elements are convex polyhedral cones in $V$.
$\mathcal{E}\subset\mathcal{E}\Fc$.
$(\mathcal{E}\Fc)\Fc=\mathcal{E}\Fc$.
$|\mathcal{E}\Fc|=|\mathcal{E}|$.
If $\mathcal{E}$ is rational over $N$ in $V$, then $\mathcal{E}\Fc$ is also rational over $N$ in $V$.
If $\mathcal{E}$ is simplicial over $N$ in $V$, then $\mathcal{E}\Fc$ is also simplicial over $N$ in $V$.
\item
Consider any finite dimensional vector space $W$ and any homomorphism $\pi:V\rightarrow W$ of vector spaces over $\R$. The set $\pi_*\mathcal{E}$ is a finite set whose elements are convex polyhedral cones in $W$.
$|\pi_*\mathcal{E}|=\pi(|\mathcal{E}|)$.
If $\mathcal{E}$ and $\pi^{-1}(0)$ are rational over $N$, then $\pi_*\mathcal{E}$ is rational over $Q$ for any lattice $Q$ of $W$ with $\pi(N)=Q\cap\pi(V)$.
\item
$\Id_{V*}\mathcal{E}=\mathcal{E}$.
For any finite dimensional vector spaces $W$, $W'$ and any homomorphisms $\pi:V\rightarrow W$, $\pi':W\rightarrow W'$ of vector spaces over $\R$,
$(\pi'\pi)_*\mathcal{E}=\pi'_*\pi_*\mathcal{E}$.
\item
Consider any finite dimensional vector space $U$ and any homomorphism $\nu:U\rightarrow V$ of vector spaces over $\R$. The set $\nu^*\mathcal{E}$ is a finite set whose elements are convex polyhedral cones in $U$.
$|\nu^*\mathcal{E}|=\nu^{-1}(|\mathcal{E}|)$.
If $\mathcal{E}$ and $\nu(U)$ are rational over $N$, then $\nu^*\mathcal{E}$ is rational over $K$ for any lattice $K$ of $U$ with $\nu(K)=N\cap\nu(U)$.
\item
$\Id_V^*\mathcal{E}=\mathcal{E}$.
For any finite dimensional vector spaces $U$, $U'$ and any homomorphisms $\nu:U\rightarrow V$, $\nu':U'\rightarrow U$ of vector spaces over $\R$,
$(\nu\nu')^*\mathcal{E}=\nu^{\prime*}\nu^*\mathcal{E}$.
\item
Consider any finite dimensional vector spaces $W$, $U$ and any homomorphisms $\pi:V\rightarrow W$, $\nu:U\rightarrow V$ of vector spaces over $\R$.

$\mathcal{E}=\emptyset\Leftrightarrow|\mathcal{E}|=\emptyset\Leftrightarrow0\not\in|\mathcal{E}|\Leftrightarrow|\mathcal{E}|^\circ=\emptyset\Leftrightarrow\mathcal{E}\Mx=\emptyset\Leftrightarrow\mathcal{E}\Fc=\emptyset\Leftrightarrow\pi_*\mathcal{E}=\emptyset\Leftrightarrow\nu^*\mathcal{E}=\emptyset$.
\item
Consider any subset $\mathcal{D}$ of $\mathcal{E}$, any $i\in\Z$, any subset $F$ of $V$, any finite dimensional vector spaces $W$, $U$ and any homomorphisms $\pi:V\rightarrow W$, $\nu:U\rightarrow V$ of vector spaces over $\R$.

$|\mathcal{D}|\subset|\mathcal{E}|$, $|\mathcal{D}|^\circ\subset|\mathcal{E}|^\circ$,
$\mathcal{D}\Fc\subset\mathcal{E}\Fc$,
$\mathcal{D}_i\subset\mathcal{E}_i$,
$\mathcal{D}\backslash F\subset\mathcal{E}\backslash F$,
$\mathcal{D}/F\subset\mathcal{E}/F$,
$\pi_*\mathcal{D}\subset\pi_*\mathcal{E}$, and
$\nu^*\mathcal{D}\subset\nu^*\mathcal{E}$.
\item
Let $F$ and $G$ be any subsets of $V$.
If $F\supset G$, $(\mathcal{E}\backslash F)\backslash G=\mathcal{E}\backslash G$.
If $F\subset G$, $(\mathcal{E}/F)/G=\mathcal{E}/G$.
$\mathcal{E}\backslash F=\mathcal{E}\Leftrightarrow |\mathcal{E}|\subset F$.
\end{enumerate}
\end{lemma}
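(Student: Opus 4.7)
The plan is that every assertion is a direct consequence of the definitions introduced at the start of Section~\ref{decomposition} together with facts already established in Section~\ref{cones} (in particular Lemma~\ref{g}, Proposition~\ref{property of cones} and Corollary~\ref{good category}). There is no single hard step; the task is bookkeeping. I would organize the verification into four blocks corresponding to (i) support/max/face-closure, (ii) push-down, (iii) pull-back, and (iv) the emptiness and monotonicity statements.

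First I would dispose of claims 1--4. By Lemma~\ref{g}.2 each element of $\mathcal{E}$ is closed in $V$, so $|\mathcal{E}|$ is a finite union of closed sets, hence closed; if nonempty then each $\Delta\in\mathcal{E}$ contains $0$ and is stable under $\R_0$-scaling, so $|\mathcal{E}|$ is a cone. For 2, $|\mathcal{E}|^\circ\subset|\mathcal{E}|$ is immediate and $\Clos(|\mathcal{E}|^\circ)=|\mathcal{E}|$ follows from Proposition~\ref{property of cones}.$8(d)$ applied cone by cone, using finiteness of $\mathcal{E}$ to commute the closure with the union. Claim 3 is set-theoretic: maximality is preserved under itself, and any $\Delta\in\mathcal{E}$ is contained in some element of $\mathcal{E}\Mx$ because $\mathcal{E}$ is finite. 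Claim 4 uses Proposition~\ref{property of cones}.$4(b),(c),(d)$, which say that faces of convex polyhedral cones are again convex polyhedral cones, with rationality and simpliciality preserved; $(\mathcal{E}\Fc)\Fc=\mathcal{E}\Fc$ follows from Proposition~\ref{property of cones}.$4(f)$ (faces of faces are faces), and $|\mathcal{E}\Fc|=|\mathcal{E}|$ holds since each cone contains all its faces and is itself a face.

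Next I would handle claims 5--8. That $\pi(\Delta)$ is a convex polyhedral cone when $\Delta$ is one is Corollary~\ref{good category}.4, and the rationality statement is the second part of that corollary; the equality $|\pi_*\mathcal{E}|=\pi(|\mathcal{E}|)$ is just $\pi(\bigcup_\Delta\Delta)=\bigcup_\Delta\pi(\Delta)$. The analogous assertions for $\nu^*$ come from Corollary~\ref{good category}.5. Functoriality, claims 6 and 8, is immediate from $\Id_V(\Delta)=\Delta$, $(\pi'\pi)(\Delta)=\pi'(\pi(\Delta))$, $\Id_V^{-1}(\Delta)=\Delta$, and $(\nu\nu')^{-1}(\Delta)=\nu^{\prime-1}(\nu^{-1}(\Delta))$, applied cone by cone.

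Finally, claim 9 is a chain of equivalences: $\mathcal{E}=\emptyset\Leftrightarrow|\mathcal{E}|=\emptyset$ because the empty union is empty and every nonempty $\Delta\in\mathcal{E}$ contains $0$; the remaining equivalences follow because $0\in\Delta$ forces $0\in\Delta^\circ\cap\mathcal{E}\Mx\cap\mathcal{E}\Fc$ whenever $\mathcal{E}\neq\emptyset$ (using Proposition~\ref{property of cones}.$6$ for the fact that $\mathcal{F}(\Delta)$ always contains $S\cap(-S)\ni 0$), while $\pi_*\mathcal{E}$ and $\nu^*\mathcal{E}$ are empty iff $\mathcal{E}$ is, by direct inspection of the defining formulas. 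Claims 10 and 11 are pure set theory: monotonicity of unions and of the defining conditions for $\mathcal{E}\Fc,\mathcal{E}_i,\mathcal{E}\backslash F,\mathcal{E}/F,\pi_*\mathcal{E},\nu^*\mathcal{E}$, and the nesting identities $(\mathcal{E}\backslash F)\backslash G=\mathcal{E}\backslash G$ for $F\supset G$ and $(\mathcal{E}/F)/G=\mathcal{E}/G$ for $F\subset G$, together with $\mathcal{E}\backslash F=\mathcal{E}\Leftrightarrow|\mathcal{E}|\subset F$.

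The main obstacle, to the extent there is one, is cosmetic rather than mathematical: one must be careful in claim 5 that the rationality of $\pi_*\mathcal{E}$ over a chosen lattice $Q$ of $W$ requires both $\mathcal{E}$ and $\pi^{-1}(0)$ to be rational over $N$, as stated; this is where Corollary~\ref{good category}.4 is used in full strength, and the analogous issue appears for $\nu^*\mathcal{E}$ in claim 7 via Corollary~\ref{good category}.5. Everything else is formal.
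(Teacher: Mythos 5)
Your proposal is correct, and since the paper omits a proof of this lemma entirely (consistent with its stated policy that ``Most of our claims follow from definitions''), your expansion---routing everything through Lemma~\ref{g}, Proposition~\ref{property of cones} and Corollary~\ref{good category}---is essentially the only possible approach. There is one slip in claim~9: $0\in\Delta$ does \emph{not} give $0\in\Delta^\circ$ (take $\Delta=\R_0 e$ with $e\neq 0$, whose interior is $\R_+ e$), and the expression ``$0\in\Delta^\circ\cap\mathcal{E}\Mx\cap\mathcal{E}\Fc$'' conflates points of $V$ with collections of cones. The correct step is to observe that $\Delta^\circ\neq\emptyset$ for every convex polyhedral cone (Lemma~\ref{f}.4 or Proposition~\ref{property of cones}.8$(d)$), so $|\mathcal{E}|^\circ\neq\emptyset$ whenever $\mathcal{E}\neq\emptyset$; and $\mathcal{E}\Mx$, $\mathcal{E}\Fc$, $\pi_*\mathcal{E}$, $\nu^*\mathcal{E}$ are nonempty as \emph{collections} precisely when $\mathcal{E}$ is (the first because a finite nonempty poset has a maximal element, the second because it contains $\mathcal{E}$, the last two because they are images of $\mathcal{E}$ under a surjection onto the respective index sets). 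Everything else in your proposal is accurate.
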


\begin{definition}
\label{cpcd}
\begin{enumerate}
\item
Any subset $\mathcal{D}$ of $2^V$ satisfying the following three conditions is called a \emph{convex polyhedral cone decomposition} in $V$.
\begin{enumerate}
\item
The set $\mathcal{D}$ is a non-empty finite set whose elements are convex polyhedral cones in $V$.
\item
For any $\Delta\in\mathcal{D}$ and any $\Lambda\in\mathcal{D}$, $\Delta\cap\Lambda$ is a face of $\Delta$, and $\Delta\cap\Lambda$ is a face of $\Lambda$.
\item
For any $\Delta\in\mathcal{D}$ and any face $\Lambda$ of $\Delta$, $\Lambda\in\mathcal{D}$.
\end{enumerate}
\item
We say that $\mathcal{D}$ is a \emph{rational convex polyhedral cone decomposition} over $N$ in $V$, if $\mathcal{D}$ is a convex polyhedral cone decomposition in $V$ and $\mathcal{D}$ is rational over $N$.
\item
We say that $\mathcal{D}$ is a \emph{simplicial cone decomposition} over $N$ in $V$, if $\mathcal{D}$ is a convex polyhedral cone decomposition in $V$ and $\mathcal{D}$ is simplicial over $N$.
\item
Let $\mathcal{D}$ be any convex polyhedral cone decomposition in $V$. We call an element $L\in\mathcal{D}$ the \emph{mimimum element} of $\mathcal{D}$, if $\dim L\leq\dim\Delta$ for any $\Delta\in\mathcal{D}$.
\item
Let $\mathcal{D}$ and $\mathcal{E}$ be any finite sets whose elements are convex polyhedral cones in $V$.

If for any $\Delta\in\mathcal{D}$ there exists $\Lambda\in\mathcal{E}$ with $\Delta\subset\Lambda$, then we say that $\mathcal{D}$ is a \emph{subdivision} of $\mathcal{E}$.

If $\mathcal{D}$ is a subdivision of $\mathcal{E}$ and $|\mathcal{D}|=|\mathcal{E}|$, we say that $\mathcal{D}$ is a \emph{full subdivision} of $\mathcal{E}$.
\item
Let $J$ be any finite set and let $\mathcal{E}$ be any mapping from $J$ to the set of all finite sets whose elements are convex polyhedral cones in $V$.
For any $j\in J$, $\mathcal{E}(j)$ is a finite set whose elements are convex polyhedral cones in $V$.
We denote
\begin{equation*}
\begin{split}
\bigcap_{j\in J}^\wedge\mathcal{E}(j)
=\{\bar{\Delta}\in 2^V|&\bar{\Delta}=\cap_{j\in J}\Delta(j)\text{ for some mapping }\Delta:J\rightarrow 2^V\\
&\quad\text{such that }\Delta(j)\in\mathcal{E}(j)\text{ for any }j\in J\}\subset 2^V,
\end{split}
\end{equation*}
and we call $\hat{\cap}_{j\in J}\mathcal{E}(j)$ the \emph{real intersection} of $\mathcal{E}(j), j\in J$.

Note that $\hat{\cap}_{j\in J}\mathcal{E}(j)$ is different from the intersection $\cap_{j\in J}\mathcal{E}(j)$ of $\mathcal{E}(j), j\in J$.

When $J=\{1,2,\ldots,m\}$ for some $m\in\Z_+$, we also denote
$$\mathcal{E}(1)\hat{\cap}\mathcal{E}(2) \hat{\cap}\cdots\hat{\cap} \mathcal{E}(m)
=\bigcap_{j\in J}^\wedge\mathcal{E}(j).$$
\item
Let $\mathcal{D}$ be any convex polyhedral cone decomposition in $V$; let $W$ be any finite dimensional vector space over $\R$; let $T$ be any subset of $W$; and let $\phi:| \mathcal{D}|\rightarrow T$ be any mapping.

We say that $\phi$ is \emph{piecewise linear}, if for any $\Delta\in\mathcal{D}$ there exists a homomorphism $\phi_\Delta:\Vect(\Delta)\rightarrow W$ of vector spaces over $\R$ such that $\phi(a)= \phi_\Delta(a)$ for any $a\in\Delta$.

\item
Let $\mathcal{D}$ be any convex polyhedral cone decomposition in $V$, and let $\phi:| \mathcal{D}|\rightarrow \R$ be any piecewise linear function.

Assume that $\mathcal{D}$ is rational over $N$. We say that $\phi$ is \emph{rational} over $N$, if for any $\Delta\in\mathcal{D}$, there exists a linear function $\phi_\Delta:\Vect(\Delta)\rightarrow \R$ such that $\phi(a)= \phi_\Delta(a)$ for any $a\in\Delta$ and $\phi_\Delta(N\cap\Vect(\Delta))\subset\Q$.

Assume that the support $|\mathcal{D}|$ of $\mathcal{D}$ is convex.
We say that $\phi$ is \emph{convex} over $\mathcal{D}$, if the following two conditions are satisfied:
\begin{enumerate}
\item
For any $a\in |\mathcal{D}|$, any $b\in |\mathcal{D}|$ and any $t\in\R$ with $0\leq t\leq 1$, $\phi((1-t)a+tb)\geq (1-t)\phi(a)+t\phi(b)$.
\item
If $a\in |\mathcal{D}|$, $b\in |\mathcal{D}|$, $t\in\R$, $0<t<1$ and $\phi((1-t)a+tb)=(1-t)\phi(a)+t\phi(b)$, then $\{a,b\}\subset\Delta$ for some $\Delta\in\mathcal{D}$.
\end{enumerate}
\end{enumerate}
\end{definition}

\begin{example}
Let $S$ be any convex polyhedral cone in $V$.
$\mathcal{F}(S)$ is a convex polyhedral cone decomposition in $V$.
$|\mathcal{F}(S)|=S$.
If $S$ is a simplicial cone over $N$ in $V$, then $\mathcal{F}(S)$ is a simplicial cone decomposition over $N$ in $V$.
\end{example}

\begin{lemma}
\label{basic cpcd}
Let $\mathcal{D}$ be any convex polyhedral cone decomposition in $V$.
\begin{enumerate}
\item
$\Delta\cap\Lambda\in\mathcal{D}$ for any $\Delta\in\mathcal{D}$ and any  $\Lambda\in\mathcal{D}$.
\item
Consider any $\Delta\in\mathcal{D}$ and any $\Lambda\in\mathcal{D}$.
The following three conditions are equivalent:
\begin{enumerate}
\item
$\Delta\subset\Lambda$.
\item
$\Delta$ is a face of $\Lambda$.
\item
$\Delta^\circ\cap\Lambda\neq\emptyset$.
\end{enumerate}
\item
Consider any $\Delta\in\mathcal{D}$ and any $\Lambda\in\mathcal{D}$.
The following three conditions are equivalent:
\begin{enumerate}
\item
$\Delta=\Lambda$.
\item
$\dim\Delta=\dim\Lambda$ and $\Delta\subset\Lambda$, or $\dim\Delta=\dim\Lambda$ and $\Delta\supset\Lambda$.
\item
$\Delta^\circ\cap\Lambda^\circ\neq\emptyset$.
\end{enumerate}
\item
$\mathcal{D}=\mathcal{D}\Fc=(\mathcal{D}\Mx)\Fc$.
$|\mathcal{D}|=|\mathcal{D}|^\circ=|\mathcal{D}\Mx|$.
The suppart $|\mathcal{D}|$ is a closed subset of $V$.
For any subset $\mathcal{E}$ of $\mathcal{D}$, $\mathcal{E}\Fc=\mathcal{D}\backslash|\mathcal{E}|\subset\mathcal{D}$.
For any $\Delta\in\mathcal{D}$, $\mathcal{F}(\Delta)=\{\Delta\}\Fc=\mathcal{D}\backslash\Delta\subset\mathcal{D}$.
\item
The family $\{\Delta^\circ|\Delta\in\mathcal{D}\}$ of subsets of $V$ gives an equivalence class decomposition of $|\mathcal{D}|$, in other words, the following three conditions hold:
\begin{enumerate}
\item
$\Delta^\circ\neq\emptyset$ for any $\Delta\in\mathcal{D}$.
\item
If $\Delta^\circ\cap\Lambda^\circ\neq\emptyset$, then $\Delta^\circ=\Lambda^\circ$ for any $\Delta\in\mathcal{D}$ and any $\Lambda\in\mathcal{D}$.
\item
$$|\mathcal{D}|=\bigcup_{\Delta\in\mathcal{D}}\Delta^\circ.$$
\end{enumerate}
\item
For any subset $\mathcal{E}$ of $\mathcal{D}$, the following three conditions are equivalent:
\begin{enumerate}
\item
$\mathcal{E}$ is a convex polyhedral cone decomposition in $V$.
\item
$\mathcal{E}\neq\emptyset$ and $\mathcal{D}\backslash\Delta\subset\mathcal{E}$ for any $\Delta\in\mathcal{E}$.
\item
$\mathcal{E}\neq\emptyset$ and $\mathcal{E}=\mathcal{E}\Fc$.
\end{enumerate}
\item
For any non-empty subset $\mathcal{E}$ of $\mathcal{D}$, $\mathcal{E}\Fc$ is a convex polyhedral cone decomposition in $V$.
\item
Consider any subset $F$ of $V$.
If $\mathcal{D}\backslash F\neq\emptyset$, then $\mathcal{D}\backslash F$ is a convex polyhedral cone decomposition in $V$.
If $\mathcal{D}-(\mathcal{D}/F)\neq\emptyset$, then $\mathcal{D}-(\mathcal{D}/F)$ is a convex polyhedral cone decomposition in $V$.
\item
For any subset $X$ of $|\mathcal{D}|$, the following three conditions are equivalent:
\begin{enumerate}
\item
$X$ is a closed subset of $V$.
\item
$X$ is a closed subset of $|\mathcal{D}|$.
\item
$X\cap\Delta$ is a closed subset of $\Delta$ for any $\Delta\in\mathcal{D}$.
\end{enumerate}
\item
For any subset $Y$ of $|\mathcal{D}|$, the following two conditions are equivalent:
\begin{enumerate}
\item
$Y$ is an open subset of $|\mathcal{D}|$.
\item
$Y\cap\Delta$ is an open subset of $\Delta$ for any $\Delta\in\mathcal{D}$.
\end{enumerate}
\item
$\Delta\in\mathcal{D}\Mx$, if and only if, $\Delta^\circ$ is an open subset of $|\mathcal{D}|$ for any $\Delta\in\mathcal{D}$.
\item
$\mathcal{D}\Mx\supset\mathcal{D}^0\neq\emptyset$.
$\mathcal{D}\Mx=\mathcal{D}^0$, if and only if, $\dim\Delta=\dim\mathcal{D}$ for any $\Delta\in\mathcal{D}\Mx$.

The following four conditions are equivalent:
\begin{enumerate}
\item
$\dim\mathcal{D}=\dim\Vect(|\mathcal{D}|)$ and $\mathcal{D}\Mx=\mathcal{D}^0$.
\item
$\dim\Delta=\dim\Vect(|\mathcal{D}|)$ for any $\Delta\in\mathcal{D}\Mx$.
\item
$\Vect(\Delta)= \Vect(|\mathcal{D}|)$ for any $\Delta\in\mathcal{D}\Mx$.
\item
$\Vect(\Delta)= \Vect(\Lambda)$ for any $\Delta\in\mathcal{D}\Mx$ and any $\Lambda\in\mathcal{D}\Mx$.
\end{enumerate}

\item
Consider any $L\in\mathcal{D}$. 
The following three conditions are equivalent:
\begin{enumerate}
\item
$L$ is the minimum element of $\mathcal{D}$, in other words, $\dim L\leq\dim\Delta$ for any $\Delta\in\mathcal{D}$.
\item
$L\subset\Delta$ for any $\Delta\in\mathcal{D}$.
\item
$L=\Delta\cap(-\Delta)$ for any $\Delta\in\mathcal{D}$.
\end{enumerate}

There exists a unique element $L\in\mathcal{D}$ satisfying the above three conditions.
\end{enumerate}

Below, we assume that $L\in\mathcal{D}$ is the minimum element of $\mathcal{D}$.
\begin{enumerate}
\setcounter{enumi}{13}
\item
The minimum element $L$ is a vector subspace over $\R$ of $V$.
If $\Delta\in\mathcal{D}$ and $\Delta$ is a vector subspace over $\R$ of $V$, then $\Delta=L$.
$\dim L\leq\dim\mathcal{D}$.
\item
$\mathcal{D}_{\dim L}=\{L\}$.
$\mathcal{D}_i\neq\emptyset$, if and only if, $\dim L\leq i\leq\dim\mathcal{D}$ for any $i\in\Z$.
$\mathcal{D}^i\neq\emptyset$, if and only if, $0\leq i\leq\dim\mathcal{D}-\dim L $ for any $i\in\Z$.
\item
$\Delta+L=\Delta$, $\Delta^\circ+L=\Delta^\circ$ and  $\Vect(\Delta)+L=\Vect(\Delta)$ for any  $\Delta\in\mathcal{D}$.
\item
$\mathcal{D}$ is rational over $N$, if and only if, one of the following two conditions hold:
\begin{enumerate}
\item
$\dim L=\dim\mathcal{D}$ and $L$ is rational over $N$.
\item
$\dim L<\dim\mathcal{D}$ and any $\Delta\in\mathcal{D}$ with $\dim\Delta=\dim L+1$ is rational over $N$.
\end{enumerate}
\end{enumerate}
Below, we consider any finite dimensional vector space $W$ over $\R$ and any homomorphism $\pi:V\rightarrow W$ of vector spaces over $\R$ satisfying $\pi^{-1}(0)\subset L$.

\begin{enumerate}
\setcounter{enumi}{17}
\item
The push-down $\pi_*\mathcal{D}$ is a convex polyhedral cone decomposition in $W$, and $\pi^*\pi_*\mathcal{D}=\mathcal{D}$.
\item
For any $\Delta\in\mathcal{D}$, $\pi(\Delta)\in\pi_*\mathcal{D}$.

For any $\bar{\Delta}\in\pi_*\mathcal{D}$, $\pi^{-1}(\bar{\Delta})\in\mathcal{D}$.

The mapping from $\mathcal{D}$ to $\pi_*\mathcal{D}$ sending any $\Delta\in\mathcal{D}$ to $\pi(\Delta)\in\pi_*\mathcal{D}$ and the mapping from $\pi_*\mathcal{D}$ to $\mathcal{D}$ sending any $\bar{\Delta}\in\pi_*\mathcal{D}$ to $\pi^{-1}(\bar{\Delta})\in\mathcal{D}$ are bijective mappings preserving the inclusion relation between $\mathcal{D}$ and $\pi_*\mathcal{D}$, and they are the inverse mappings of each other.

Furthermore, if $\Delta\in\mathcal{D}$ and $\bar{\Delta}\in\pi_*\mathcal{D}$ correspond to each other by them, the following equalities hold:
\begin{enumerate}
\item
$\dim\bar{\Delta}=\dim\Delta-\dim\pi^{-1}(0)$.
\item
$\Vect(\bar{\Delta})=\pi(\Vect(\Delta))$,
$\pi^{-1}(\Vect(\bar{\Delta}))=\Vect(\Delta)$.
\item
$\bar{\Delta}^\circ=\pi(\Delta^\circ)$,
$\pi^{-1}(\bar{\Delta}^\circ)= \Delta^\circ$.
\item
$\mathcal{F}(\bar{\Delta})=\pi_*\mathcal{F}(\Delta)$,
$\pi^*\mathcal{F}(\bar{\Delta})=\mathcal{F}(\Delta)$.
\end{enumerate}
\end{enumerate}
\end{lemma}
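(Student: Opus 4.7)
The strategy is to handle the nineteen claims in the natural order: first establish the foundational set-theoretic and face-theoretic consequences of the three axioms in Definition~\ref{cpcd}, then exploit those to obtain the topological and dimension-theoretic statements, and finally reduce the push-down statements to properties of individual cones by restricting to a single $\Delta\in\mathcal{D}$. Throughout, the workhorses are Proposition~\ref{property of cones} (faces of a convex polyhedral cone) and Lemma~\ref{several cones} (intersections and face correspondences).

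First I would verify 1--3. Claim 1 is immediate from the third axiom applied to the face $\Delta\cap\Lambda$ of $\Delta$. For 2, the implication (a)$\Rightarrow$(b) uses Prop.~\ref{property of cones}.4(f) (a subset of $\Lambda$ is a face of $\Lambda$ iff it is a face of the ambient polyhedral cone contained in $\Lambda$) together with the axiom that $\Delta\cap\Lambda=\Delta$ is a face of $\Lambda$; (b)$\Rightarrow$(c) is by $\Delta^\circ\subset\Delta\subset\Lambda$ and Prop.~\ref{property of cones}.8(d); (c)$\Rightarrow$(a) is Prop.~\ref{property of cones}.10 applied to $\Delta\cap\Lambda\in\mathcal{F}(\Delta)$. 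Claim 3 follows from 2 together with Prop.~\ref{property of cones}.4(e). Claim 4 is then obtained by chasing definitions: the equalities $\mathcal{D}=\mathcal{D}\Fc=(\mathcal{D}\Mx)\Fc$ use the third axiom and 1; $|\mathcal{D}|=|\mathcal{D}|^\circ$ comes from Prop.~\ref{property of cones}.20 applied to each $\Delta$; closedness of $|\mathcal{D}|$ is a finite union of closed sets; and $\mathcal{E}\Fc=\mathcal{D}\backslash|\mathcal{E}|$ uses 2 again. Claim 5 is Prop.~\ref{property of cones}.20 together with 3.

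Next I would establish the combinatorial characterizations 6--8 of subsets that are themselves CPCDs, by verifying the three axioms directly, using 1 and 4. With these in place, 9--11 are topological: for 9, use that $\{\Delta\mid\Delta\in\mathcal{D}\}$ is a finite closed cover and apply the standard lemma about closedness being a local property on such covers; for 10, pass to complements; for 11, combine with 2 and 5 to see that $\Delta^\circ$ meets no $\Lambda\in\mathcal{D}$ strictly containing $\Delta$ iff $\Delta$ is maximal. Claim 12 follows from 11 and the dimension formula for faces in Prop.~\ref{property of cones}.4(e). For 13, existence is obtained by choosing an element of minimal dimension and using 1 to show it is contained in every $\Delta\in\mathcal{D}$; the equivalence of (a), (b), (c) uses Prop.~\ref{property of cones}.4(e) together with the fact that $L\cap(-L)$ is the maximal vector subspace in $L$ (Lemma~\ref{c}.14). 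Uniqueness is trivial from (b). Claims 14--16 then follow immediately: $L$ is a vector space because $L=L\cap(-L)\subset\Delta\cap(-\Delta)$ for every $\Delta\in\mathcal{D}$, and an axiom-3 argument shows that any $\Delta\in\mathcal{D}$ which is itself a vector space satisfies $\Delta\subset -\Delta\subset\Delta$ hence equals $\Delta\cap(-\Delta)\subset L$. The translation invariance in 16 is because $L\subset\Delta\cap(-\Delta)=\Delta\cap(-\Delta)$. For 17, use Lemma~\ref{h} to pass between rationality of $L$ and rationality of the rays emanating from $L$ in the one-step-up faces.

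Finally I turn to 18--19, which I expect to be the only genuinely non-routine part. Here the key point is that $\pi^{-1}(0)\subset L$ forces $\pi$ to be injective on each $\Vect(\Delta)/L\cap\Vect(\Delta)$, hence $\pi$ is affinely linear and rank-preserving modulo $L$ on each cone of $\mathcal{D}$. I would check the three CPCD axioms for $\pi_*\mathcal{D}$: (a) finiteness and that each $\pi(\Delta)$ is a convex polyhedral cone follow from Corollary~\ref{good category}.4; (b) for the intersection and face conditions one shows $\pi(\Delta)\cap\pi(\Lambda)=\pi(\Delta\cap\Lambda)$ using $\pi^{-1}(0)\subset L\subset\Delta\cap\Lambda$ (so fibres of $\pi$ are translates of $\pi^{-1}(0)\subset L$ and are contained in every cone of $\mathcal{D}$ that meets them) and then invokes Prop.~\ref{property of cones}.2; (c) is automatic since faces of $\pi(\Delta)$ correspond to faces of $\Delta$ containing $L$ via the same argument. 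The equality $\pi^*\pi_*\mathcal{D}=\mathcal{D}$ follows because $\pi^{-1}(\pi(\Delta))=\Delta+\pi^{-1}(0)=\Delta$ by 16. This simultaneously gives the bijective correspondence in 19 and its inclusion-preserving property; the four enumerated equalities are then direct consequences of injectivity of $\pi$ on the quotient by $L$, together with Prop.~\ref{property of cones}.23. The main obstacle, then, is the verification that intersections commute with $\pi$ under the hypothesis $\pi^{-1}(0)\subset L$; once that is secured, all bijections and face correspondences follow mechanically.
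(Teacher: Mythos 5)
The paper gives no proof of Lemma~\ref{basic cpcd}; it belongs to the large collection of claims the authors declare to ``follow from definitions.'' Your plan is a correct and complete verification along the expected lines, using exactly the tools the paper makes available (Proposition~\ref{property of cones}, Lemma~\ref{c}, Lemma~\ref{h}, Corollary~\ref{good category}). A few small remarks on individual steps.

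For claim~2, the implication (a)$\Rightarrow$(b) needs no appeal to Proposition~\ref{property of cones}.4(f): if $\Delta\subset\Lambda$ then $\Delta\cap\Lambda=\Delta$, and axiom~(b) of Definition~\ref{cpcd} directly says this is a face of $\Lambda$. The invocation of 4(f) is harmless but superfluous. Your chain (b)$\Rightarrow$(c)$\Rightarrow$(a) via Proposition~\ref{property of cones}.10 applied to the face $\Delta\cap\Lambda\in\mathcal{F}(\Delta)$ is the right argument.

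In claim~13, the construction (pick $L$ of minimal dimension, apply axioms~(b),(c) and claim~1 to force $L\cap\Delta=L$ for every $\Delta$) is exactly how existence should go. To complete (b)$\Rightarrow$(c) you should first make explicit that $L$ is a vector space: $L\cap(-L)$ is a face of $L$ hence in $\mathcal{D}$, so $L\subset L\cap(-L)$ by (b), giving $L=L\cap(-L)$. Then $L\subset\Delta\cap(-\Delta)$ (as $L$ is a vector subspace of $\Delta$), and $\Delta\cap(-\Delta)\subset L$ since $\Delta\cap(-\Delta)$ is the minimum face of $\Delta$ (Proposition~\ref{property of cones}.6) while $L$ is also a face. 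Your subsequent ``$\Delta\cap(-\Delta)\subset L$'' in claim~14 should really read ``$\Delta\cap(-\Delta)=L$''; this is a typo, the conclusion $\Delta=L$ stands.

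Your treatment of claims~18--19 correctly locates the one nontrivial idea: $\pi^{-1}(0)\subset L\subset\Delta\cap(-\Delta)=\Stab(\Delta)$ gives $\pi^{-1}(\pi(\Delta))=\Delta$, hence injectivity of $\Delta\mapsto\pi(\Delta)$ and $\pi(\Delta)\cap\pi(\Lambda)=\pi(\Delta\cap\Lambda)$. Two minor clarifications are worth making in a written-up version. First, Proposition~\ref{property of cones}.2 applies to \emph{injective} $\pi$, so you should explicitly factor through $V/\pi^{-1}(0)$ (every face of $\Delta$ contains $\Delta\cap(-\Delta)\supset\pi^{-1}(0)$, so the face lattice of $\Delta$ is unchanged under passage to the quotient). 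Second, the dimension formula $\dim\pi(\Delta)=\dim\Delta-\dim\pi^{-1}(0)$ relies on $\pi^{-1}(0)\subset\Vect(\Delta)$, which again holds because $\pi^{-1}(0)\subset L\subset\Delta$. Once these points are in place, the rest of 19 follows mechanically, as you say.
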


\begin{lemma}
\label{construction}
Let $\mathcal{E}$ be any non-empty finite set whose elements are convex polyhedral cones in $V$ satisfying the following two conditions \emph{Z:}
\begin{description}
\item[\emph{(a)}]
$\Delta\cap\Lambda$ is a face of $\Delta$, and $\Delta\cap\Lambda$ is a face of $\Lambda$ for any $\Delta\in\mathcal{E}$ and any $\Lambda\in\mathcal{E}$.
\item[\emph{(b)}]
$\Delta\cap(-\Delta)=\Lambda\cap(-\Lambda)$ for any $\Delta\in\mathcal{E}$ and any $\Lambda\in\mathcal{E}$.
\end{description}

Choosing any element $\Delta\in\mathcal{E}$, we put $L=\Delta\cap(-\Delta)\subset V$.
$L$ does not depend on the choice of $\Delta\in\mathcal{E}$.
Put $\mathcal{D}=\mathcal{E}\Fc$.
\begin{enumerate}
\item
$\mathcal{D}$ is a convex polyhedral cone decomposition in $V$.
\item
$\mathcal{D}\supset\mathcal{E}$.
$|\mathcal{D}|=|\mathcal{E}|$.
$\mathcal{D}\Mx=\mathcal{E}\Mx$.
\item
$L\in\mathcal{D}$.
$L$ is the minimum element of $\mathcal{D}$.
\item
If $\mathcal{E}$ is rational over $N$, then $\mathcal{D}$ is rational over $N$.
If $\mathcal{E}$ is simplicial over $N$, then $\mathcal{D}$ is a simplicial cone decomposition over $N$.
\end{enumerate}

Assume $\dim V\geq 2$. Let $S$ be any convex polyhedral cone in $V$ with $\dim S=\dim V$; let $m\in\Z_0$; let $H$ be any mapping from $\{1,2,\ldots,m\}$ to the set of all vector subspaces of $V$ of codimension one satisfying the following three conditions:
\begin{description}
\item[\emph{(c)}]
$H(i)\neq H(j)$ for any $i\in\{1,2,\ldots,m\}$ and any $j\in\{1,2,\ldots,m\}$ with $i\neq j$.
\item[\emph{(d)}]
$H(i)\cap S^\circ\neq\emptyset$ for any $i\in\{1,2,\ldots,m\}$.
\item[\emph{(e)}]
$H(i)\cap H(j)\cap S^\circ=\emptyset$ for any $i\in\{1,2,\ldots,m\}$ and any $j\in\{1,2,\ldots,m\}$ with $i\neq j$.
\end{description}
\begin{enumerate}
\setcounter{enumi}{4}
\item
The difference $S^\circ-(\cup_{i\in\{1,2,\ldots,m\}}H(i))$ is a non-empty open set of $V$.
It has $(m+1)$ connected components.
The closure of any connected component of it is a convex polyhedral cone in $V$ whose dimension is equal to $\dim V$.
\end{enumerate}

Let $\bar{\mathcal{E}}$ denote the finite set whose elements are $(m+1)$ of closures of connected components of $S^\circ-(\cup_{i\in\{1,2,\ldots,m\}}H(i))$.
Let $\bar{\mathcal{D}}=\bar{\mathcal{E}}\Fc$, and let $\bar{L}=S\cap(-S)\cap(\cap_{i\in\{1,2,\ldots,m\}}H(i))$.
\begin{enumerate}
\setcounter{enumi}{5}
\item
$\bar{\mathcal{E}}$ satisfies the above two conditions \emph{Z}.
\item 
$\bar{\mathcal{D}}$ is a convex polyhedral cone decomposition in $V$.
$\dim\bar{\mathcal{D}}=\dim V$.
$|\bar{\mathcal{D}}|=S$.
$\bar{\mathcal{D}}\Mx=\bar{\mathcal{D}}^0=\bar{\mathcal{E}}$.
$\bar{L}$ is the miminum element of $\bar{\mathcal{D}}$.
$\{\Delta\in \bar{\mathcal{D}}^1|\Delta^\circ\subset S^\circ\}
=\{H(i)\cap S|i\in\{1,2,\ldots,m\}\}$.
$\sharp\bar{\mathcal{D}}^0=\sharp\{\Delta\in \bar{\mathcal{D}}^1|\Delta^\circ\subset S^\circ\}+1=m+1$.
For any $\Delta\in\bar{\mathcal{D}}$ with $\dim\Delta\leq\dim V-2$, $\Delta\subset\partial S$.
If $S$ is rational over $N$ and $H(i)$ is rational over $N$ for any $i\in\{1,2,\ldots,m\}$, then $\bar{\mathcal{D}}$ is rational over $N$.
\end{enumerate}
\end{lemma}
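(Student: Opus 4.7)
The plan is to deduce claims 1--4 from the general face theory developed in Section~\ref{cones}, and then to derive claims 5--7 by induction on $m$, using an explicit half-space description of the resulting cones.

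For claims 1--4, observe that $\mathcal{D} = \mathcal{E}\Fc$ is automatically closed under taking faces, since $\mathcal{F}(\mathcal{F}(\Delta)) = \mathcal{F}(\Delta)$. What requires work is verifying that $\Delta' \cap \Lambda'$ is a face of both $\Delta'$ and $\Lambda'$ for any $\Delta', \Lambda' \in \mathcal{D}$. Choose $\Delta, \Lambda \in \mathcal{E}$ with $\Delta' \in \mathcal{F}(\Delta)$ and $\Lambda' \in \mathcal{F}(\Lambda)$, and set $F = \Delta \cap \Lambda$; by hypothesis (a), $F \in \mathcal{F}(\Delta) \cap \mathcal{F}(\Lambda)$. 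Then $\Delta' \cap F$ is an intersection of two faces of $\Delta$, hence itself a face of $\Delta$ (Proposition~\ref{property of cones}, part 9); being contained in $F$ it is a face of $F$. The same reasoning shows $\Lambda' \cap F$ is a face of $F$. Therefore $\Delta' \cap \Lambda' = (\Delta' \cap F) \cap (\Lambda' \cap F)$ is a face of $F$, hence a face of $\Delta$ and of $\Lambda$, and since it lies inside $\Delta'$ and $\Lambda'$ it is a face of each by Proposition~\ref{property of cones}, part 4. The remaining items in claims 1--4 are routine: $\mathcal{D} \supset \mathcal{E}$ because $\Delta \in \mathcal{F}(\Delta)$; the supports and sets of maximal elements agree because every face is contained in its parent cone; the independence of $L = \Delta \cap (-\Delta)$ from the choice of $\Delta$ is exactly hypothesis (b); $L$ is the unique minimal face of each $\Delta \in \mathcal{E}$ and is contained in every face by Proposition~\ref{property of cones}, parts 6 and 4; and rationality and simpliciality are inherited by faces via Proposition~\ref{property of cones}, part 22.

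For claim 5, choose for each $i$ a linear functional $\omega_i \in V^*$ with $H(i) = \{x \in V \mid \langle \omega_i, x\rangle = 0\}$, and argue by induction on $m$, carrying the strengthened assertion that $S^\circ \setminus \bigcup_{i=1}^m H(i)$ has exactly $m+1$ connected components, each of the form $S^\circ \cap \bigcap_i \{x \mid \epsilon_i \langle \omega_i, x\rangle > 0\}$ for some sign vector $\epsilon \in \{+1,-1\}^m$, with closure $S \cap \bigcap_i \{x \mid \epsilon_i \langle \omega_i, x\rangle \geq 0\}$ a full-dimensional convex polyhedral cone. The base case $m = 0$ is immediate from convexity of $S^\circ$. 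For the inductive step, hypothesis (e) forces $H(m) \cap S^\circ$ to be disjoint from $\bigcup_{i<m}(H(i) \cap S^\circ)$, so it lies in $\bigcup_j C_j$; being convex and hence connected, and the $C_j$ being open in $S^\circ$, it must lie in a single component $C_{j_0}$, and hypothesis (d) guarantees $H(m) \cap C_{j_0} \neq \emptyset$. On every other component $C_j$, $j \neq j_0$, the linear form $\langle \omega_m, \cdot\rangle$ has constant non-zero sign by connectedness, so $C_j$ acquires a unique extended sign pattern; meanwhile $C_{j_0}$ is split by $H(m)$ into the two non-empty open convex pieces $C_{j_0} \cap \{\langle \omega_m, \cdot\rangle > 0\}$ and $C_{j_0} \cap \{\langle \omega_m, \cdot\rangle < 0\}$, yielding $m+1$ components in total with the required half-space presentation.

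Claims 6 and 7 are then consequences of parts 1--4 applied to $\bar{\mathcal{E}}$. Each element $\bar{C} \in \bar{\mathcal{E}}$ has the presentation $\bar{C} = S \cap \bigcap_i \{x \mid \epsilon_i \langle \omega_i, x\rangle \geq 0\}$, from which one reads off that its lineality space equals $(S \cap (-S)) \cap \bigcap_i H(i) = \bar{L}$, independent of the sign vector, giving hypothesis (b); and for two pieces $\bar{C}, \bar{C}'$ with sign vectors $\epsilon, \epsilon'$, the intersection $\bar{C} \cap \bar{C}'$ equals $S \cap \bigcap_{\epsilon_i = \epsilon'_i}\{x \mid \epsilon_i \langle \omega_i, x\rangle \geq 0\} \cap \bigcap_{\epsilon_i \neq \epsilon'_i} H(i)$, which is obtained from $\bar{C}$ by tightening some half-space inequalities to equalities and is therefore a face of $\bar{C}$, giving hypothesis (a). Parts 1--4 then yield that $\bar{\mathcal{D}} = \bar{\mathcal{E}}\Fc$ is a convex polyhedral cone decomposition with minimum element $\bar{L}$ and $\bar{\mathcal{D}}\Mx = \bar{\mathcal{E}}$; the remaining identifications in claim 7---the elements of $\bar{\mathcal{D}}^1$ meeting $S^\circ$, the count $\sharp\bar{\mathcal{D}}^0 = m+1$, the containment of strictly lower-dimensional strata in $\partial S$, and rationality under the extra hypotheses---are read off directly from this presentation. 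The main obstacle is the inductive step of claim 5: the pairwise-disjointness hypothesis (e), the convexity of $H(m) \cap S^\circ$, and the openness of the previous components must be combined precisely to force $H(m) \cap S^\circ$ into a single component $C_{j_0}$; once that is granted, the rest of the argument is bookkeeping around the half-space representation.
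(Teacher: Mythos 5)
The paper gives no proof of Lemma~\ref{construction} (it is among the lemmas the author declares follow from definitions), so there is no reference proof to compare against; I can only evaluate your argument on its own merits, and it holds up.

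Your reduction of claims 1--4 to the face theory in Proposition~\ref{property of cones} is correct: the key identity $\Delta'\cap\Lambda' = (\Delta'\cap F)\cap(\Lambda'\cap F)$ with $F=\Delta\cap\Lambda$ shows the intersection is a face of $F$ (hence of $\Delta$ and $\Lambda$), and Proposition~\ref{property of cones}.4(f) then makes it a face of $\Delta'$ and $\Lambda'$ because it lies in each. For claim 5, the inductive step is the right place to spend effort: you correctly isolate that hypothesis (e) confines $H(m)\cap S^\circ$ to a single component $C_{j_0}$ by convexity/connectedness, that $\langle\omega_m,\cdot\rangle$ has constant nonzero sign on the other components, and that $C_{j_0}$ splits into two nonempty open convex pieces. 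The sign-vector presentation $\bar{C}_j = S\cap\bigcap_i\{\epsilon_i\langle\omega_i,\cdot\rangle\geq 0\}$, together with $\mathrm{Int}(\bar{C}_j)=C_j$ and the $\Clos(\mathrm{Int})$ identity for full-dimensional closed convex sets, gives the closure claim. Your verification of hypotheses (a) and (b) for $\bar{\mathcal{E}}$ from the half-space presentation is correct, since $\bar{C}_j\cap\bar{C}_{j'}=\bar{C}_j\cap\bigcap_{\epsilon_i\neq\epsilon'_i}H(i)$ is an intersection of faces of $\bar{C}_j$ (each $\omega_i$ lies in $\bar{C}_j^\vee$ whether or not the corresponding inequality is redundant). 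The one place you describe as bookkeeping---the identification $\{\Delta\in\bar{\mathcal{D}}^1:\Delta^\circ\subset S^\circ\}=\{H(i)\cap S\}$---does need a short extra argument (e.g.\ that $\mathrm{relint}(H(i)\cap S)=H(i)\cap S^\circ$, and that $\bar{C}_j\cap H(i)$ with interior in $S^\circ$ forces the $\epsilon_{i'}$, $i'\neq i$, to match the constant signs on $H(i)\cap S^\circ$, so $\bar{C}_j\cap H(i)=H(i)\cap S$), but it goes through. Note a minor citation slip: the inheritance of rationality and simpliciality by faces is Proposition~\ref{property of cones}.21, not part 22.
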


\begin{lemma}
Let $\mathcal{D}$, $\mathcal{E}$ and $\mathcal{F}$ be convex polyhedral cone decompositions in $V$.
\begin{enumerate}
\item
The following three conditions are equivalent:
\begin{enumerate}
\item
$\mathcal{D}$ is a subdivision of $\mathcal{E}$, in other words, for any $\Delta\in\mathcal{D}$ there exists $\Lambda\in\mathcal{E}$ with $\Delta\subset\Lambda$.
\item
For any $\Delta\in\mathcal{D}$, there exists uniquely $\Lambda\in\mathcal{E}$ with $\Delta^\circ\subset\Lambda^\circ$.
\item
$|\mathcal{D}|\subset|\mathcal{E}|$, and if $\Delta\in\mathcal{D}$, $\Lambda\in\mathcal{E}$ and $\Delta^\circ\cap\Lambda^\circ\neq\emptyset$ then $\Delta^\circ\subset\Lambda^\circ$.
\end{enumerate}
\item
If $\mathcal{D}$ is a subdivision of $\mathcal{E}$ and $\mathcal{E}$ is a subdivision of $\mathcal{D}$, then $\mathcal{D}=\mathcal{E}$.
\item
If $\mathcal{D}$ is a subdivision of $\mathcal{E}$ and $\mathcal{D}$ is a subdivision of $\mathcal{F}$, then $\mathcal{D}$ is a subdivision of $\mathcal{F}$.
\item
If $\mathcal{D}$ is a subdivision of $\mathcal{E}$, then $|\mathcal{D}|\subset|\mathcal{E}|$.
\item
Assume that $\mathcal{D}$ is a subdivision of $\mathcal{E}$.
For any $\Delta\in\mathcal{D}$ and any $\Lambda\in\mathcal{E}$ the following three conditions are equivalent:
\begin{enumerate}
\item
$\Delta^\circ\subset\Lambda^\circ$.
\item
$\Delta\subset\Lambda$, and for any $\Lambda'\in\mathcal{E}$ with $\Delta\subset\Lambda'$ we have $\Lambda\subset\Lambda'$.
\item
$\Delta^\circ\cap\Lambda^\circ\neq\emptyset$
\end{enumerate}
\item
If $\Delta\in\mathcal{D}$, $\Lambda\in\mathcal{E}$ and $\Delta^\circ\subset\Lambda^\circ$,
then $\Delta\subset\Lambda$, and $\dim\Delta\leq\dim\Lambda$.
\item
The following three conditions are equivalent:
\begin{enumerate}
\item
$\mathcal{D}$ is a full subdivision of $\mathcal{E}$, in other words, $\mathcal{D}$ is a subdivision of $\mathcal{E}$ and $|\mathcal{D}|=|\mathcal{E}|$.
\item
$|\mathcal{D}|=|\mathcal{E}|$ and for any $\Lambda\in\mathcal{E}$, 
$\Lambda^\circ=\cup_{\Delta\in\mathcal{D}, \Delta^\circ\subset\Lambda^\circ}\Delta^\circ$.
\item
$|\mathcal{D}|=|\mathcal{E}|$ and $|\mathcal{D}-\mathcal{D}\Mx|\supset|\mathcal{E}-\mathcal{E}\Mx|$.
\end{enumerate}
\item
Assume that $\mathcal{D}$ is a full subdivision of $\mathcal{E}$.
For any $\Lambda\in\mathcal{E}$, there exists $\Delta\in\mathcal{D}$ with $\Delta^\circ\subset\Lambda^\circ$.
For any $\Lambda\in\mathcal{E}\Mx$, there exists $\Delta\in\mathcal{D}\Mx$ with $\Delta^\circ\subset\Lambda^\circ$.
\item
Assume that $\mathcal{D}$ is a full subdivision of $\mathcal{E}$, $\Delta\in\mathcal{D}$, $\Lambda\in\mathcal{E}$ and $\Delta^\circ\subset\Lambda^\circ$.
$\Delta\in\mathcal{D}\Mx$, if and only if, $\Lambda\in\mathcal{E}\Mx$ and $\dim\Delta=\dim\Lambda$.
$\dim\mathcal{D}=\dim\mathcal{E}$.
\item
If $\mathcal{D}$ is a full subdivision of $\mathcal{E}$,
$\dim \mathcal{E}=\dim\Vect(|\mathcal{E}|)$ and $\mathcal{E}\Mx=\mathcal{E}^0$, then $\dim \mathcal{D}=\dim\Vect(|\mathcal{D}|)$ and $\mathcal{D}\Mx=\mathcal{D}^0$
\end{enumerate}
\end{lemma}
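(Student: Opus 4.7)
The plan is to establish the ten parts in order, with Part 1 being the technical core and later parts largely formal consequences. I begin with Part 1, the equivalence (a)$\Leftrightarrow$(b)$\Leftrightarrow$(c) characterizing the subdivision property. The substantive direction is (a)$\Rightarrow$(b). Given $\Delta\in\mathcal{D}$ with $\Delta\subset\Lambda'$ for some $\Lambda'\in\mathcal{E}$, I pick $a\in\Delta^\circ$ and let $\Lambda\in\mathcal{E}$ be the unique element with $a\in\Lambda^\circ$, using the equivalence class decomposition of $|\mathcal{E}|$ from Lemma~\ref{basic cpcd}.5. Since $\Lambda^\circ\cap\Lambda'\ni a$, Proposition~\ref{property of cones}.10 yields $\Lambda\subset\Lambda'$. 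Picking $\omega\in{\Lambda'}^\vee$ with $\Lambda=\Delta(\omega,\Lambda')$, we have $\omega\in\Delta^\vee$ and $\langle\omega,a\rangle=0$. The core geometric step is: if $\omega\in\Delta^\vee$ and $a\in\Delta^\circ$ with $\langle\omega,a\rangle=0$, then $\Delta\subset\{x:\langle\omega,x\rangle=0\}$, because the face $\Delta(\omega,\Delta)$ meets $\Delta^\circ$ and Proposition~\ref{property of cones}.10 forces $\Delta=\Delta(\omega,\Delta)$. This gives $\Delta\subset\Lambda$. Iterating the same technique against proper faces of $\Lambda$ shows $\Delta^\circ$ avoids $\partial\Lambda$, upgrading the inclusion to $\Delta^\circ\subset\Lambda^\circ$. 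Uniqueness follows since two candidates would force $\Lambda^\circ\cap\Lambda''^\circ\neq\emptyset$ hence equality. The remaining implications (b)$\Rightarrow$(c) and (c)$\Rightarrow$(a) are immediate from the equivalence class decomposition and the closedness of $|\mathcal{E}|$.

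Parts 2--6 are formal consequences of Part 1. For Part 2, two-way subdivision forces $\Delta^\circ=\Lambda^\circ$ for matched pairs via uniqueness, hence $\Delta=\Clos(\Delta^\circ)=\Lambda$. Part 3 is transitivity of composition, Part 4 is set-theoretic, and Parts 5--6 combine Part 1's uniqueness with Proposition~\ref{property of cones}.10 and the dimension monotonicity $\Delta\subset\Lambda\Rightarrow\dim\Delta\leq\dim\Lambda$ for convex sets.

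For Part 7, (a)$\Leftrightarrow$(b) is direct from Part 1 together with $|\mathcal{D}|=|\mathcal{E}|$. For (a)$\Rightarrow$(c), take $a\in|\mathcal{E}-\mathcal{E}\Mx|$ and let $\bar{\Lambda}\in\mathcal{E}$ be the unique element with $a\in\bar{\Lambda}^\circ$; $\bar{\Lambda}$ is necessarily non-maximal, so $\bar{\Lambda}\subsetneq\Lambda_1$ for some $\Lambda_1\in\mathcal{E}$. Let $\Delta\in\mathcal{D}$ be the unique element with $a\in\Delta^\circ$; Part 1 gives $\Delta^\circ\subset\bar{\Lambda}^\circ$. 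If $\Delta$ were maximal, then for $b\in\Lambda_1^\circ$ and small $t>0$, $c_t=(1-t)a+tb$ would lie in $\Lambda_1^\circ$ by Proposition~\ref{property of cones}.8(d) and in $\Delta^\circ$ (open in $|\mathcal{D}|$ by Lemma~\ref{basic cpcd}.11), so Part 1 would give $\Delta^\circ\subset\Lambda_1^\circ$, contradicting $\Delta^\circ\subset\bar{\Lambda}^\circ$ since $\bar{\Lambda}^\circ\cap\Lambda_1^\circ=\emptyset$. Hence $\Delta\in\mathcal{D}-\mathcal{D}\Mx$ and $a\in|\mathcal{D}-\mathcal{D}\Mx|$. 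The hardest direction, (c)$\Rightarrow$(a), is the main obstacle and runs via a segment-connectedness argument. Rewrite (c) as $\bigcup_{\Delta\in\mathcal{D}\Mx}\Delta^\circ\subset\bigcup_{\Lambda\in\mathcal{E}\Mx}\Lambda^\circ$. For $\Delta\in\mathcal{D}\Mx$ pick $a\in\Delta^\circ$ and obtain $\Lambda_0\in\mathcal{E}\Mx$ with $a\in\Lambda_0^\circ$. For any $b\in\Delta$ set $c_t=(1-t)a+tb$; Proposition~\ref{property of cones}.8(d) gives $c_t\in\Delta^\circ$ for $t\in[0,1)$, so $c_t$ lies in some $\Lambda(t)^\circ$ with $\Lambda(t)\in\mathcal{E}\Mx$. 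The set $T=\{t\in[0,1):\Lambda(t)=\Lambda_0\}$ contains $0$, is open in $[0,1)$ since $\Lambda_0^\circ$ is open in $|\mathcal{E}|$ by maximality, and is closed in $[0,1)$: for $t_n\to t^*$ with $t_n\in T$, $c_{t^*}\in\Clos(\Lambda_0^\circ)=\Lambda_0$, so $\Lambda(t^*)^\circ\cap\Lambda_0\ni c_{t^*}$ and Lemma~\ref{basic cpcd}.2 gives $\Lambda(t^*)\subset\Lambda_0$, forcing equality by mutual maximality. Connectedness of $[0,1)$ yields $T=[0,1)$ and $b=\lim_{t\to 1^-}c_t\in\Lambda_0$. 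Thus $\Delta\subset\Lambda_0$; any non-maximal $\Delta\in\mathcal{D}$ is contained in a maximal one and hence in some $\Lambda\in\mathcal{E}$, proving (a).

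Parts 8--10 follow from Part 7 by dimension bookkeeping. For Part 8, $\bigcup_{\Delta\in\mathcal{D}\Mx}\Delta^\circ$ is dense in $|\mathcal{D}|$ (Proposition~\ref{property of cones}.8(d) gives every point of $|\mathcal{D}|$ as a limit of points in $\Delta'^\circ$ for some $\Delta'\in\mathcal{D}\Mx$), so it intersects the non-empty open set $\Lambda^\circ\subset|\mathcal{D}|$ for $\Lambda\in\mathcal{E}\Mx$; Part 1 uniqueness then delivers the required $\Delta\in\mathcal{D}\Mx$. For Part 9 forward, $\Delta\in\mathcal{D}\Mx$ combined with Part 7(c) and Part 1 uniqueness forces $\Lambda\in\mathcal{E}\Mx$; since $\Delta^\circ$ and $\Lambda^\circ$ are both open in $|\mathcal{E}|$ by Lemma~\ref{basic cpcd}.11 and $\Lambda^\circ$ is a relative interior in $\Vect(\Lambda)$, the $|\mathcal{E}|$- and $\Vect(\Lambda)$-subspace topologies coincide on $\Lambda^\circ$, so $\Delta^\circ$ is open in $\Vect(\Lambda)$, forcing $\Vect(\Delta)=\Vect(\Lambda)$ and $\dim\Delta=\dim\Lambda$. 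The reverse direction reverses this chain. The equality $\dim\mathcal{D}=\dim\mathcal{E}$ follows by matching top-dimensional cones via Parts 8 and 9. Part 10 is then immediate: any $\Delta\in\mathcal{D}\Mx$ matches some $\Lambda\in\mathcal{E}\Mx=\mathcal{E}^0$ with $\dim\Delta=\dim\Lambda=\dim\mathcal{E}=\dim\mathcal{D}$, so $\mathcal{D}\Mx\subset\mathcal{D}^0$, and the reverse inclusion is Lemma~\ref{basic cpcd}.12.
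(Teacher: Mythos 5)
The paper gives no proof of this lemma, filing it among the ``claims that follow from definitions,'' so there is no authorial argument to compare against. Your proof is correct. The two genuinely non-routine steps are handled well: in Part~1 the implication (a)$\Rightarrow$(b), where you show a face of $\Lambda'$ meeting $\Delta^\circ$ must equal $\Delta$ and iterate against faces of $\Lambda$ to place $\Delta^\circ$ in $\Lambda^\circ$ rather than on $\partial\Lambda$; and in Part~7 the implication (c)$\Rightarrow$(a), where the reformulation of (c) as $\bigcup_{\Delta\in\mathcal{D}\Mx}\Delta^\circ\subset\bigcup_{\Lambda\in\mathcal{E}\Mx}\Lambda^\circ$ (valid because each of $\mathcal{D}-\mathcal{D}\Mx$ and $\mathcal{E}-\mathcal{E}\Mx$ is face-closed and the relative interiors partition the support) combined with the segment clopenness argument in $[0,1)$ correctly forces $\Delta\subset\Lambda_0$. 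The only place a reader might pause is Part~9, where the topological claim that $\Delta^\circ$ open in $|\mathcal{E}|$ plus $\Delta^\circ\subset\Lambda^\circ$ implies $\Delta^\circ$ open in $\Vect(\Lambda)$ rests on the observation that the subspace topologies on $\Lambda^\circ$ induced from $|\mathcal{E}|$ and from $\Vect(\Lambda)$ agree because both are the trace of the topology of $V$; you state this and it is correct, but spelling out this chain (open in $|\mathcal{E}|$, hence open in $\Lambda^\circ$, hence open in $\Vect(\Lambda)$) would make the dimension count $\Vect(\Delta)=\Vect(\Lambda)$ more transparent. No gaps.
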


\begin{lemma}
\label{scpc}
Let $\mathcal{D}$ be any convex polyhedral cone decomposition in $V$ such that the support $|\mathcal{D}|$ of $\mathcal{D}$ is a convex polyhedral cone in $V$.
By $L\in\mathcal{D}$ we denote the minimum element of $\mathcal{D}$.
\begin{enumerate}
\item
$\mathcal{D}$ is a full subdivision of $\mathcal{F}(|\mathcal{D}|)$.
\item
$\dim\mathcal{D}=\dim |\mathcal{D}|=\dim\Vect(|\mathcal{D}|)$.
$\mathcal{D}\Mx=\mathcal{D}^0$.
\item
For any $\Delta\in\mathcal{D}\Mx$, $\Delta^\circ\subset|\mathcal{D}|^\circ$.
\item
For any $\Delta\in\mathcal{D}$, $\Delta\not\subset\partial|\mathcal{D}|$, if and only if, $\Delta^\circ\subset|\mathcal{D}|^\circ$.
\item
Consider any $\Lambda\in\mathcal{F}(|\mathcal{D}|)$.
$|\mathcal{D}\backslash\Lambda|=\Lambda$.
$\mathcal{D}\backslash\Lambda$ is a full subdivision of $\mathcal{F}(\Lambda)$.
$\dim(\mathcal{D}\backslash\Lambda)=\dim\Lambda$.
$(\mathcal{D}\backslash\Lambda)\Mx=(\mathcal{D}\backslash\Lambda)^0=
\{\Delta\cap\Lambda|\Delta\in\mathcal{D}^0, \dim(\Delta\cap\Lambda)=\dim\Lambda\}$.
\item
Consider any $\Delta\in\mathcal{D}$.
Take the unique $\Lambda\in\mathcal{F}(|\mathcal{D}|)$ with $\Delta^\circ\subset\Lambda^\circ$.
Then, $\Delta\in\mathcal{D}\backslash\Lambda$,
$(\mathcal{D}\backslash\Lambda)\Mx/\Delta\neq\emptyset$,
$\Delta=\cap_{\bar{\Delta}\in(\mathcal{D}\backslash\Lambda)\Mx/\Delta}\bar{\Delta}$, and $|\mathcal{D}/\Delta|+\Vect(\Delta)=|\mathcal{D}|+\Vect(\Delta)= |\mathcal{D}|+\Vect(\Lambda)$.
\item
$L\subset|\mathcal{D}|\cap(-|\mathcal{D}|)$.
$\dim L\leq\dim(|\mathcal{D}|\cap(-|\mathcal{D}|))\leq\dim|\mathcal{D}|$.
\item
Assume $\dim|\mathcal{D}|-\dim L\geq 1$.
Consider any $\Delta\in\mathcal{D}^1$.

$\Vect(\Delta)\subset\Vect(|\mathcal{D}|)$, and
$\dim\Vect(|\mathcal{D}|)=\dim\Vect(\Delta)+1$.
Let $H^{\circ\prime}$ and $H^{\circ\prime\prime}$ denote two connected components of $\Vect(|\mathcal{D}|)-\Vect(\Delta)$.
Let $H'=\Clos(H^{\circ\prime})$, and let $H''=\Clos(H^{\circ\prime\prime})$.
$H'\cup H''=\Vect(|\mathcal{D}|)$.
$H'\cap H''=\Vect(\Delta)$.

We consider the case where $\Delta\not\subset\partial|\mathcal{D}|$.
$\sharp(\mathcal{D}^0/\Delta)=2$.
Let $\Delta'$ and $\Delta''$ denote the two elements of $\mathcal{D}^0/\Delta$.
$\{\Delta'+\Vect(\Delta), \Delta''+\Vect(\Delta)\}=\{H', H''\}$, and $\Delta'+\Vect(\Delta)\neq\Delta''+\Vect(\Delta)$.

We consider the case where $\Delta\subset\partial|\mathcal{D}|$. $\sharp(\mathcal{D}^0/\Delta)=1$.
Let $\Delta'$ denote the unique element of $\mathcal{D}^0/\Delta$.
$\Delta'+\Vect(\Delta)=|\mathcal{D}|+\Vect(\Delta)$.
$|\mathcal{D}|+\Vect(\Delta)=H'$ or $|\mathcal{D}|+\Vect(\Delta)=H''$.
\end{enumerate}
\end{lemma}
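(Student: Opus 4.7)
The plan is to prove the eight statements in an order that exploits the structural result of part 1 as a stepping stone, then recursively applies it to subcomplexes. I first dispose of part 7: since $L$ is a vector subspace of $V$ by Lemma~\ref{basic cpcd}.14, one has $L=-L\subset-|\mathcal{D}|$, hence $L\subset|\mathcal{D}|\cap(-|\mathcal{D}|)$, and the dimension inequalities follow from $L\subset|\mathcal{D}|\cap(-|\mathcal{D}|)\subset|\mathcal{D}|$.

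The key step is part 1. Given $\Delta\in\mathcal{D}$, I let $\Lambda$ be the intersection of all faces of $|\mathcal{D}|$ containing $\Delta$; by Proposition~\ref{property of cones}.9 this is again a face of $|\mathcal{D}|$, and by construction it is the smallest such face. To see $\Delta^\circ\subset\Lambda^\circ$, suppose for contradiction that some $a\in\Delta^\circ$ lies in a proper face $G\subsetneq\Lambda$ (with $G\in\mathcal{F}(\Lambda)\subset\mathcal{F}(|\mathcal{D}|)$ by Proposition~\ref{property of cones}.4.(f)). A supporting linear functional $\omega$ for $G$ in $\Lambda$ satisfies $\langle\omega,\cdot\rangle\geq 0$ on $\Delta\subset\Lambda$ and $\langle\omega,a\rangle=0$ at an interior point $a\in\Delta^\circ$; linearity then forces $\omega\equiv 0$ on $\Delta$, giving $\Delta\subset G$ and contradicting the minimality of $\Lambda$. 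Hence $\Delta^\circ\subset\Lambda^\circ$, so $\mathcal{D}$ is a subdivision of $\mathcal{F}(|\mathcal{D}|)$; the two supports coincide with $|\mathcal{D}|$, making it a full subdivision.

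Parts 2--5 follow rapidly. For part 5, given $\Lambda\in\mathcal{F}(|\mathcal{D}|)$, any $x\in\Lambda$ lies in some $\Delta^\circ$ with $\Delta^\circ\subset\Lambda'^\circ$ for a unique face $\Lambda'$ of $|\mathcal{D}|$; from $x\in\Lambda\cap\Lambda'^\circ\neq\emptyset$, Proposition~\ref{property of cones}.10 forces $\Lambda'\subset\Lambda$, hence $\Delta\subset\Lambda$, proving $|\mathcal{D}\backslash\Lambda|=\Lambda$. Then $\mathcal{D}\backslash\Lambda$ is itself a convex polyhedral cone decomposition (Lemma~\ref{basic cpcd}.8) whose support $\Lambda$ is a convex polyhedral cone, so re-applying part 1 yields that $\mathcal{D}\backslash\Lambda$ is a full subdivision of $\mathcal{F}(\Lambda)$, and the description of its maximal elements follows from the preceding lemma on full subdivisions. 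Parts 2, 3, and 4 then come from that same characterization: $\Delta\in\mathcal{D}\Mx$ corresponds, via the unique $\Lambda$ with $\Delta^\circ\subset\Lambda^\circ$, to $|\mathcal{D}|\in\mathcal{F}(|\mathcal{D}|)\Mx$ with $\dim\Delta=\dim|\mathcal{D}|$, giving $\mathcal{D}\Mx=\mathcal{D}^0=\dim\Vect(|\mathcal{D}|)$; and $\Delta\not\subset\partial|\mathcal{D}|$ is equivalent to $\Lambda=|\mathcal{D}|$, which is the content of part 4.

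Parts 6 and 8 require local analysis around $\Delta$. For part 6, the identity $|\mathcal{D}|+\Vect(\Delta)=|\mathcal{D}|+\Vect(\Lambda)$ uses that any $a\in\Delta^\circ\subset\Lambda^\circ$ has a $\Vect(\Lambda)$-open neighborhood inside $\Lambda\subset|\mathcal{D}|$; translating by $-a\in\Vect(\Delta)$ and scaling (since $|\mathcal{D}|$ is a cone) puts all of $\Vect(\Lambda)$ inside $|\mathcal{D}|+\Vect(\Delta)$. The identity $|\mathcal{D}/\Delta|+\Vect(\Delta)=|\mathcal{D}|+\Vect(\Delta)$ is proved by moving toward the interior: for $x\in|\mathcal{D}|$ and large $t>0$, the direction of $x+ta$ approaches $a\in\Delta^\circ$, so $x+ta$ eventually lies in a cone $\Delta'\in\mathcal{D}/\Delta$ whose closure contains $a$. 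Part 8 follows by a similar local picture: for $\Delta\in\mathcal{D}^1$, $\Vect(\Delta)$ splits $\Vect(|\mathcal{D}|)$ into two half-spaces $H',H''$, and the interior of $|\mathcal{D}|$ meets both half-spaces exactly when $\Delta\not\subset\partial|\mathcal{D}|$, giving the dichotomy between two adjacent maximal cones and one. The main obstacle I anticipate is the identity $\Delta=\bigcap_{\bar{\Delta}\in(\mathcal{D}\backslash\Lambda)\Mx/\Delta}\bar{\Delta}$ in part 6: each such $\bar{\Delta}$ yields only $\Delta=\bar{\Delta}\cap\Vect(\Delta)$, so the genuine content is that the intersection of all maximal cones above $\Delta$ cannot extend beyond $\Vect(\Delta)$. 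This requires verifying that, because $\Delta^\circ\subset\Lambda^\circ$, the maximal cones surround $\Delta$ from every angular direction inside $\Vect(\Lambda)$, so their combined normal behavior forces the intersection to collapse to $\Vect(\Delta)\cap\Lambda=\Delta$; cleanly linking this angular geometry back to the combinatorics of $\mathcal{D}\backslash\Lambda$ is the step I would draft most carefully.
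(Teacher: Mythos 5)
The paper does not supply a proof of this lemma (it is among the claims the author states ``follow from definitions''), so there is no argument to compare yours against. Your route is sound: the reduction of everything to the full-subdivision statement in part 1, the contradiction argument for $\Delta^\circ\subset\Lambda^\circ$ via a supporting functional vanishing at an interior point of $\Delta$ (hence on all of $\Vect(\Delta)$), the recursion through $\mathcal{D}\backslash\Lambda$ for part 5, and the deduction of parts 2--4 from the full-subdivision lemma all check out.

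The one step you flag as undrafted --- the identity $\Delta=\bigcap_{\bar{\Delta}\in(\mathcal{D}\backslash\Lambda)\Mx/\Delta}\bar{\Delta}$ in part 6 --- closes as follows, and your ``angular surrounding'' picture is exactly what is needed. The inclusion $\supset$ is immediate since each $\bar\Delta$ has $\Delta$ as a face, so $\Delta=\bar\Delta\cap\Vect(\Delta)$. For $\subset$, fix $x$ in the intersection and $a\in\Delta^\circ\subset\Lambda^\circ$, and set $b_s=(1+s)a-sx$ for small $s>0$. Since $a\in\Lambda^\circ$ and $a-x\in\Vect(\Lambda)$, one has $b_s\in\Lambda^\circ$ for $s$ small, and since $\bigcup_{\Gamma\in\mathcal{D}\backslash\Lambda,\;a\in\Gamma}\Gamma$ contains an open neighbourhood of $a$ in $\Lambda$ (its complement is a finite union of closed cones missing $a$), we may choose $\bar\Delta_0\in(\mathcal{D}\backslash\Lambda)\Mx$ with both $a\in\bar\Delta_0$ and $b_s\in\bar\Delta_0$. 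From $a\in\Delta^\circ\cap\bar\Delta_0$ we get $\Delta\subset\bar\Delta_0$, hence $\bar\Delta_0\in(\mathcal{D}\backslash\Lambda)\Mx/\Delta$ and $x\in\bar\Delta_0$ by hypothesis. Now take $\omega\in\bar\Delta_0^\vee$ with $\Delta=\Delta(\omega,\bar\Delta_0)$; from $a=\tfrac{1}{1+s}b_s+\tfrac{s}{1+s}x$, $\langle\omega,a\rangle=0$, and $\langle\omega,b_s\rangle,\langle\omega,x\rangle\geq 0$, we conclude $\langle\omega,x\rangle=0$, i.e.\ $x\in\Delta$. The same open-neighbourhood-of-$a$ device is what makes the adjacency count in part 8 precise: in the case $\Delta\not\subset\partial|\mathcal{D}|$ one has $\Delta^\circ\subset|\mathcal{D}|^\circ$, so a neighbourhood of $a\in\Delta^\circ$ in $\Vect(|\mathcal{D}|)$ is covered by cones containing $\Delta$, forcing at least one maximal cone on each side of $\Vect(\Delta)$; that there is exactly one on each side follows because two top-dimensional cones of $\mathcal{D}$ sharing $\Delta$ as a facet and lying in the same closed half-space $H'$ would have a common face containing $\Delta$ of dimension $\geq\dim\Delta$ and would then coincide.
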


\begin{lemma}
\label{ris}
Let $J$ be any finite set and let $\mathcal{E}$ be any mapping from $J$ to the set of all finite sets whose elements are convex polyhedral cones in $V$.
For any $j\in J$, $\mathcal{E}(j)$ is a finite set whose elements are convex polyhedral cones in $V$.
We consider the real intersection $\hat{\cap}_{j\in J}\mathcal{E}(j)$ of $\mathcal{E}(j), j\in J$.
By definition
\begin{equation*}
\begin{split}
\bigcap_{j\in J}^\wedge\mathcal{E}(j)
=\{\bar{\Delta}\in 2^V|&\bar{\Delta}=\cap_{j\in J}\Delta(j)\text{ for some mapping }\Delta:J\rightarrow 2^V\\
&\quad\text{such that }\Delta(j)\in\mathcal{E}(j)\text{ for any }j\in J\}\subset 2^V.
\end{split}
\end{equation*}
\begin{enumerate}
\item
$\hat{\cap}_{j\in J}\mathcal{E}(j)$ is a finite set whose elements are convex polyhedral cones in $V$.
\item
$\hat{\cap}_{j\in J}\mathcal{E}(j)$ is a subdivision of $\mathcal{E}(j)$ for any $j\in J$.

Let $\mathcal{D}$ be any finite set whose elements are convex polyhedral cones in $V$.
If $\mathcal{D}$ is a subdivision of $\mathcal{E}(j)$ for any $j\in J$, then $\mathcal{D}$ is a subdivision of $\hat{\cap}_{j\in J}\mathcal{E}(j)$.
\item
$$|\bigcap_{j\in J}^\wedge\mathcal{E}(j)|=\bigcap_{j\in J}|\mathcal{E}(j)|.$$
\item
If $J=\emptyset$, then $\hat{\cap}_{j\in J}\mathcal{E}(j)=\{V\}$.
$\hat{\cap}_{j\in J}\mathcal{E}(j)=\emptyset$, if and only if, $J\neq\emptyset$ and $\mathcal{E}(j)=\emptyset$ for some $j\in J$.
\item
If $\mathcal{E}(j)$ is a convex polyhedral cone decomposition for any $j\in J$, then $\hat{\cap}_{j\in J}\mathcal{E}(j)$ is also a convex polyhedral cone decomposition.

\item
If $\mathcal{E}(j)$ is rational over $N$ for any $j\in J$, then $\hat{\cap}_{j\in J}\mathcal{E}(j)$ is also rational over $N$.
\item
For any subsets $J',J''$ of $J$ with $J'\cup J''=J$ and $J'\cap J''=\emptyset$,
$$\bigcap_{j\in J}^\wedge\mathcal{E}(j)
=(\bigcap_{j\in J'}^\wedge\mathcal{E}(j))\hat{\cap}(\bigcap_{j\in J''}^\wedge\mathcal{E}(j)).$$
\item
For any bijective mapping $\sigma:J\rightarrow J$
$$\bigcap_{j\in J}^\wedge\mathcal{E}(\sigma(j))= \bigcap_{j\in J}^\wedge\mathcal{E}(j).$$
\end{enumerate}
\end{lemma}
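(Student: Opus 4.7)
The plan is to check the eight assertions by unpacking the definition of the real intersection and invoking two structural inputs already at hand: Corollary~\ref{good category} (which says the class of convex polyhedral cones, and its rational version over $N$, is closed under finite intersection), and Lemma~\ref{several cones} (which identifies the face lattice of a finite intersection of convex polyhedral cones in terms of the face lattices of the pieces). Most of claims 1, 3, 4, 6, 7, 8 are routine consequences of these inputs together with the set-theoretic properties of intersection, and the real work concentrates in claim 5 (the intersection of cpcd's is a cpcd) and the universal property in claim 2.

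For claim 1, since each $\mathcal{E}(j)$ is finite and $J$ is finite, there are only finitely many choices of $\Delta:J\to 2^V$ with $\Delta(j)\in\mathcal{E}(j)$, and by Corollary~\ref{good category}.2 each intersection $\bigcap_{j\in J}\Delta(j)$ is a convex polyhedral cone in $V$. Claim 3 follows because $a\in\bigcap_{j\in J}|\mathcal{E}(j)|$ if and only if for each $j$ one can pick $\Delta(j)\in\mathcal{E}(j)$ with $a\in\Delta(j)$, giving $a\in\bigcap_{j\in J}\Delta(j)\in\hat{\cap}_{j\in J}\mathcal{E}(j)$. Claim 4 is immediate: the empty intersection is $V$, and the real intersection is empty exactly when some factor is empty. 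Claim 6 follows from the rational case of Corollary~\ref{good category}.2. Claims 7 and 8 reduce to the associativity and commutativity of binary intersection: $\bigcap_{j\in J}\Delta(j)=(\bigcap_{j\in J'}\Delta(j))\cap(\bigcap_{j\in J''}\Delta(j))$, and $\bigcap_{j\in J}\Delta(\sigma(j))=\bigcap_{j\in J}\Delta(j)$ for bijective $\sigma:J\to J$.

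For claim 2, the first half is by construction: if $\bar{\Delta}=\bigcap_{j\in J}\Delta(j)$, then for every fixed $j_0\in J$ we have $\bar{\Delta}\subset\Delta(j_0)\in\mathcal{E}(j_0)$, so the real intersection subdivides each $\mathcal{E}(j)$. For the universal property, given any finite set $\mathcal{D}$ of convex polyhedral cones that subdivides each $\mathcal{E}(j)$, each $D\in\mathcal{D}$ admits, for every $j\in J$, a choice of $\Delta(j)\in\mathcal{E}(j)$ with $D\subset\Delta(j)$; hence $D\subset\bigcap_{j\in J}\Delta(j)\in\hat{\cap}_{j\in J}\mathcal{E}(j)$, and $\mathcal{D}$ subdivides the real intersection.

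The main obstacle is claim 5, and here the plan is to check the three defining conditions of a convex polyhedral cone decomposition. Non-emptiness and the convex-polyhedral-cone property are already claim 1 together with claim 4. For the mutual-face condition, given $\bar{\Delta}_1=\bigcap_{j\in J}\Delta_1(j)$ and $\bar{\Delta}_2=\bigcap_{j\in J}\Delta_2(j)$ in the real intersection, I write $\bar{\Delta}_1\cap\bar{\Delta}_2=\bigcap_{j\in J}(\Delta_1(j)\cap\Delta_2(j))$; by the cpcd hypothesis on $\mathcal{E}(j)$ each $\Delta_1(j)\cap\Delta_2(j)$ is a face of $\Delta_1(j)$ and of $\Delta_2(j)$ (so in particular lies in $\mathcal{E}(j)$ by Lemma~\ref{basic cpcd}.1), and then Lemma~\ref{several cones}.9 promotes the intersection of these faces to a face of $\bar{\Delta}_1$ and a face of $\bar{\Delta}_2$, while exhibiting $\bar{\Delta}_1\cap\bar{\Delta}_2$ as an element of $\hat{\cap}_{j\in J}\mathcal{E}(j)$. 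For closure under taking faces, let $F$ be a face of $\bar{\Delta}=\bigcap_{j\in J}\Delta(j)$; Lemma~\ref{several cones}.3 produces unique faces $F(j)\in\mathcal{F}(\Delta(j))$ with $F^\circ\subset F(j)^\circ$, and Lemma~\ref{several cones}.5 gives $F=\bigcap_{j\in J}F(j)$. Since $\mathcal{E}(j)$ is closed under taking faces, each $F(j)$ lies in $\mathcal{E}(j)$, so $F$ lies in $\hat{\cap}_{j\in J}\mathcal{E}(j)$. The delicate point throughout is that the interior-based characterization of faces of intersections supplied by Lemma~\ref{several cones} is exactly what makes the face-closure and mutual-face axioms simultaneously hold for the real intersection; without that lemma one would be stuck showing that the ambient set-theoretic intersection of elements of the various $\mathcal{E}(j)$'s organizes itself into a bona fide decomposition.
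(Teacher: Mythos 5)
Your proof is correct. The paper in fact supplies no argument for this lemma (the introduction notes that proofs are only given for the ``difficult parts,'' and this lemma is left to the reader), so there is no official route to compare against. Your identification of the two structural inputs --- Corollary~\ref{good category}.2 (or equivalently Lemma~\ref{several cones}.1) for closure of convex polyhedral cones, and rational ones, under finite intersection, and Lemma~\ref{several cones}.3, 5, 9 for the face-theoretic bookkeeping --- is exactly what the argument needs. In particular, your handling of claim~5 is right: the mutual-face axiom follows by applying Lemma~\ref{several cones}.9 to $G(j)=\Delta_1(j)\cap\Delta_2(j)\in\mathcal{E}(j)$ (using Lemma~\ref{basic cpcd}.1), and the face-closure axiom follows because Lemma~\ref{several cones}.3 and~5 write an arbitrary face $F$ of $\bar{\Delta}=\bigcap_j\Delta(j)$ as $\bigcap_j F(j)$ with $F(j)\in\mathcal{F}(\Delta(j))\subset\mathcal{E}(j)$. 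One minor bookkeeping point worth being explicit about: Lemma~\ref{several cones} is stated for $m\in\Z_+$, so to invoke it you should separate the case $J=\emptyset$ (where $\hat{\cap}_{j\in J}\mathcal{E}(j)=\{V\}$ is trivially a cpcd and $V$ is rational over $N$) from $J\neq\emptyset$ (where you choose a bijection $\{1,\ldots,m\}\to J$). But that is cosmetic; the argument as you lay it out is sound.
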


\begin{lemma}
\label{several cone decompositions}
Let $m\in \Z_+$ be any positive integer, and let $\mathcal{D}$ be any mapping from the set $\{1,2,\ldots,m\}$ to the set of all convex polyhedral cone decompositions in $V$.
For any $i\in\{1,2,\ldots,m\}$, $\mathcal{D}(i)$ is a convex polyhedral cone decomposition in $V$.

We denote
$$\bar{\mathcal{D}}=\bigcap_{i\in\{1,2,\ldots,m\}}^\wedge\mathcal{D}(i)\subset 2^V.$$

Let $\bar{\Delta}$ be any element of $\bar{\mathcal{D}}$.
\begin{enumerate}
\item
There exists uniquely an element $\Delta(i)\in\mathcal{D}(i)$ with $\bar{\Delta}^\circ\subset \Delta(i)^\circ$ for any $i\in\{1,2,\ldots,m\}$.
\end{enumerate}

Below, we assume that $\Delta(i)\in\mathcal{D}(i)$ and $\bar{\Delta}^\circ\subset \Delta(i)^\circ$ for any $i\in\{1,2,\ldots,m\}$.

\begin{enumerate}
\setcounter{enumi}{1}
\item
$\bar{\Delta}\subset \Delta(i)$ for any $i\in\{1,2,\ldots,m\}$.
\item
$$\bar{\Delta}=\bigcap_{i\in\{1,2,\ldots,m\}}\Delta(i).$$
\item
$$\bar{\Delta}^\circ=\bigcap_{i\in\{1,2,\ldots,m\}}\Delta(i)^\circ.$$
\item
$$\Vect(\bar{\Delta})=\bigcap_{i\in\{1,2,\ldots,m\}}\Vect(\Delta(i)).$$
\item
Consider any subset $\bar{\Lambda}$ of $V$.
$\bar{\Lambda}$ is a face of $\bar{\Delta}$, if and only if, there exists a mapping $\Lambda: \{1,2,\ldots,m\}\rightarrow 2^V$ such that $\bar{\Lambda}=\cap_{i\in \{1,2,\ldots,m\}}\Lambda(i)$ and $\Lambda(i)$ is a face of $\Delta(i)$ for any $i\in\{1,2,\ldots,m\}$.
\end{enumerate}

Let $\Lambda(i)$ be any element of $\mathcal{D}(i)$ for any $i\in\{1,2,\ldots,m\}$.
\begin{enumerate}
\setcounter{enumi}{6}
\item
The intersection $\cap_{i\in\{1,2,\ldots,m\}}\Lambda(i)$ is an element of $\bar{\mathcal{D}}$.
\item
If $\bar{\Delta}\subset \Lambda(i)$ then $\Delta(i)\subset \Lambda(i)$, for any $i\in\{1,2,\ldots,m\}$.
\item
If
$$\bar{\Delta}=\bigcap_{i\in\{1,2,\ldots,m\}}\Lambda(i),$$
then $\Delta(i)\subset \Lambda(i)$ for any $i\in\{1,2,\ldots,m\}$ and the following three conditions are equivalent:
\begin{enumerate}
\item
$$\bigcap_{i\in\{1,2,\ldots,m\}}\Lambda(i)^\circ\neq\emptyset.$$
\item
$\Delta(i)=\Lambda(i)$ for any $i\in\{1,2,\ldots,m\}$.
\item
$$\bar{\Delta}^\circ=\bigcap_{i\in\{1,2,\ldots,m\}}\Lambda(i)^\circ$$
\end{enumerate}
\item
$\bar{\mathcal{D}}$ is a convex polyhedral cone decomposition in $V$.
$|\bar{\mathcal{D}}|=\cap_{i\in\{1,2,\ldots,m\}}|\mathcal{D}(i)|$.
If $\mathcal{D}(i)$ is rational over $N$ for any $i\in\{1,2,\ldots,m\}$, then $\bar{\mathcal{D}}$ is rational over $N$.
\item
$\bar{\mathcal{D}}$ is a subdivision of $\mathcal{D}(i)$ for any $i\in\{1,2,\ldots,m\}$.

Let $\bar{\mathcal{E}}$ be any convex polyhedral cone decomposition in $V$.
If $\bar{\mathcal{E}}$ is a subdivision of $\mathcal{D}(i)$ for any $i\in\{1,2,\ldots,m\}$, then $\bar{\mathcal{E}}$ is a subdivision of $\bar{\mathcal{D}}$.
\end{enumerate}
\end{lemma}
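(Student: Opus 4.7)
The plan is to derive this statement by combining Lemma~\ref{ris}, which handles the real intersection at the level of finite sets of convex polyhedral cones, with Lemma~\ref{several cones}, which describes how faces behave for a single intersection of cones. Parts~10 and~11 are essentially direct restatements of Lemma~\ref{ris}: item~5 of that lemma gives that $\bar{\mathcal{D}}$ is a convex polyhedral cone decomposition, item~3 gives the support formula $|\bar{\mathcal{D}}|=\bigcap_{i}|\mathcal{D}(i)|$, item~6 gives rationality over $N$, and item~2 gives both that $\bar{\mathcal{D}}$ is a subdivision of each $\mathcal{D}(i)$ and that it is the coarsest common refinement.

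Once $\bar{\mathcal{D}}$ is known to subdivide each $\mathcal{D}(i)$, parts~1 and~2 are immediate from the standard equivalences for subdivisions recorded earlier in this section. The only genuinely new step is part~3. I would begin from any representation $\bar{\Delta}=\bigcap_{i}\Lambda(i)$ with $\Lambda(i)\in\mathcal{D}(i)$, which exists by the very definition of the real intersection. Since $\bar{\Delta}^\circ\subset\Delta(i)^\circ$ and $\bar{\Delta}^\circ\subset\bar{\Delta}\subset\Lambda(i)$, one has $\Delta(i)^\circ\cap\Lambda(i)\neq\emptyset$, and Lemma~\ref{basic cpcd}.2 applied inside the decomposition $\mathcal{D}(i)$ forces $\Delta(i)\subset\Lambda(i)$. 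Hence $\bigcap_i\Delta(i)\subset\bigcap_i\Lambda(i)=\bar{\Delta}$, while part~2 gives the reverse inclusion.

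With part~3 in hand, parts~4, 5, and the forward direction of~6 follow by applying Lemma~\ref{several cones} to the family of cones $S(i)=\Delta(i)$: the hypothesis $\bigcap_i\Delta(i)^\circ\neq\emptyset$ of Lemma~\ref{several cones}.2 is satisfied because this intersection contains $\bar{\Delta}^\circ$, which is non-empty by Lemma~\ref{f}.4, and the $\Delta(i)$ play the role of the unique faces associated to the full face $\bar{F}=\bar{\Delta}$ of $\bar{S}=\bar{\Delta}$, so items~6 and~7 of that lemma yield parts~4 and~5. The backward direction of~6 is exactly Lemma~\ref{several cones}.9. Part~7 is immediate from the definition of the real intersection, and part~8 repeats the Lemma~\ref{basic cpcd}.2 argument used in part~3. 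For part~9, the inclusion $\Delta(i)\subset\Lambda(i)$ is part~8 applied to the representation $\bar{\Delta}=\bigcap_i\Lambda(i)$, while the implications (b)$\Rightarrow$(c) and (c)$\Rightarrow$(a) are respectively part~4 and the non-emptiness of $\bar{\Delta}^\circ$. The one step that requires a fresh argument, and which I expect to be the only mildly delicate point of the whole proof, is (a)$\Rightarrow$(b): picking $p\in\bigcap_i\Lambda(i)^\circ$, part~3 gives $p\in\bar{\Delta}=\bigcap_i\Delta(i)$, so $p\in\Delta(i)\cap\Lambda(i)^\circ$; since $\Delta(i)$ is a face of $\Lambda(i)$ and any proper face of a convex polyhedral cone lies in $\partial\Lambda(i)=\Lambda(i)-\Lambda(i)^\circ$, this forces $\Delta(i)=\Lambda(i)$. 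Everything else is a packaging of Lemmas~\ref{ris} and~\ref{several cones}.
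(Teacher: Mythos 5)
Your proof is correct. The paper states this lemma without giving a proof (it is one of the claims the author regards as following from definitions), and your argument — deriving parts~10 and~11 from Lemma~\ref{ris}, parts~1 and~2 from the subdivision equivalences, establishing part~3 by the Lemma~\ref{basic cpcd}.2 argument, and then reducing parts~4--6 to Lemma~\ref{several cones} applied with $S(i)=\Delta(i)$ and $\bar{F}=\bar{\Delta}$ — is exactly the natural argument the surrounding lemmas are set up to support, and the remaining parts and the (a)$\Rightarrow$(b) step in part~9 are handled correctly.
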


\begin{lemma}
\label{pbcpcd}
Let $\mathcal{D}$ be any convex polyhedral cone decomposition in $V$; let $U$ be any finite dimensional vector space over $\R$; and let $\nu:U\rightarrow V$ be any homomorphism of vector spaces over $\R$.

The pull back $\nu^*\mathcal{D}$ of $\mathcal{D}$ by $\nu$ is a convex polyhedral cone decomposition in $U$.
\end{lemma}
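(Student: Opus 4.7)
The plan is to verify the three conditions in Definition~\ref{cpcd}.1 for the set $\nu^*\mathcal{D}=\{\nu^{-1}(\Delta)\mid\Delta\in\mathcal{D}\}$ in $U$. Finiteness and non-emptiness transfer directly from $\mathcal{D}$, and each $\nu^{-1}(\Delta)$ is a convex polyhedral cone in $U$ by Corollary~\ref{good category}.5. So the real work is to establish (b) that intersections of members are common faces, and (c) that every face of a member again lies in $\nu^*\mathcal{D}$.

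First I would establish a \emph{face-preservation lemma}: if $\Lambda$ is a face of $\Delta\in\mathcal{D}$ in $V$, then $\nu^{-1}(\Lambda)$ is a face of $\nu^{-1}(\Delta)$ in $U$. Writing $\Lambda=\Delta(\omega,\Delta|V)$ for some $\omega\in\Delta^\vee|V$, set $\chi=\nu^*(\omega)=\omega\circ\nu\in U^*$. Since $\langle\chi,y\rangle=\langle\omega,\nu(y)\rangle\geq 0$ for every $y\in\nu^{-1}(\Delta)$, we have $\chi\in\nu^{-1}(\Delta)^\vee|U$, and the same computation gives $\Delta(\chi,\nu^{-1}(\Delta)|U)=\nu^{-1}(\Lambda)$. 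Condition (b) is then immediate: $\nu^{-1}(\Delta)\cap\nu^{-1}(\Lambda)=\nu^{-1}(\Delta\cap\Lambda)$, and $\Delta\cap\Lambda$ is by hypothesis a face of each of $\Delta$ and $\Lambda$, so its $\nu$-preimage is a face of each of $\nu^{-1}(\Delta)$ and $\nu^{-1}(\Lambda)$ by the lemma.

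The main obstacle will be condition (c): given any face $G$ of any $\nu^{-1}(\Delta)$, produce $\Lambda\in\mathcal{D}$ with $G=\nu^{-1}(\Lambda)$. Pick a point $y_0\in G^\circ$; its image $\nu(y_0)$ lies in $\Delta\subset|\mathcal{D}|$, so by the stratification of $|\mathcal{D}|$ in Lemma~\ref{basic cpcd}.5 there is a unique $\Lambda\in\mathcal{D}$ with $\nu(y_0)\in\Lambda^\circ$. Since $\Lambda^\circ\cap\Delta\neq\emptyset$, Lemma~\ref{basic cpcd}.2 gives $\Lambda\subset\Delta$, so $\Lambda$ is a face of $\Delta$ and thus $\nu^{-1}(\Lambda)$ is a face of $\nu^{-1}(\Delta)$ by the face-preservation lemma.

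It remains to check $G=\nu^{-1}(\Lambda)$. For the inclusion $G\subset\nu^{-1}(\Lambda)$, I would apply Proposition~\ref{property of cones}.10 to the two faces $G$ and $\nu^{-1}(\Lambda)$ of the convex polyhedral cone $\nu^{-1}(\Delta)$: since $y_0\in G^\circ\cap\nu^{-1}(\Lambda)$, the proposition yields $G\subset\nu^{-1}(\Lambda)$. The reverse inclusion is the key computation. Write $G=\Delta(\chi,\nu^{-1}(\Delta)|U)$ for some $\chi\in\nu^{-1}(\Delta)^\vee$, and note $\langle\chi,y_0\rangle=0$ because $y_0\in G$. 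For any $y\in\nu^{-1}(\Vect(\Lambda))$, since $\nu(y_0)\in\Lambda^\circ$ is an interior point of $\Lambda$ inside $\Vect(\Lambda)$, both $\nu(y_0)\pm\varepsilon\nu(y)\in\Lambda$ for all sufficiently small $\varepsilon>0$, hence $y_0\pm\varepsilon y\in\nu^{-1}(\Lambda)\subset\nu^{-1}(\Delta)$. Pairing with $\chi$ gives $\pm\varepsilon\langle\chi,y\rangle\geq 0$, forcing $\langle\chi,y\rangle=0$. In particular $\langle\chi,y\rangle=0$ for every $y\in\nu^{-1}(\Lambda)$, and combined with $\nu^{-1}(\Lambda)\subset\nu^{-1}(\Delta)$ we conclude $\nu^{-1}(\Lambda)\subset G$. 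The expected difficulty is precisely this two-sided pinching argument: one needs both that $y_0$ lies in the relative interior $G^\circ$ (to get a reverse direction in $\Vect(\Lambda)$) and that $\Lambda^\circ$ is open in $\Vect(\Lambda)$ (to produce the small-$\varepsilon$ perturbations).
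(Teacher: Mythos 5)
Your proof is correct and complete. The paper does not record an argument for this lemma (it groups it with the claims said to ``follow from definitions''), so there is nothing of record to compare against, but yours is the natural route: pull back a supporting functional $\omega\in\Delta^\vee|V$ to $\nu^*(\omega)\in\nu^{-1}(\Delta)^\vee|U$ to get the face-preservation statement ``$\Lambda\in\mathcal{F}(\Delta)\Rightarrow\nu^{-1}(\Lambda)\in\mathcal{F}(\nu^{-1}(\Delta))$'', use it together with $\nu^{-1}(\Delta)\cap\nu^{-1}(\Lambda)=\nu^{-1}(\Delta\cap\Lambda)$ for condition (b) of Definition~\ref{cpcd}, and for condition (c) locate the candidate $\Lambda\in\mathcal{D}$ via the stratification $\Lambda^\circ\cap\Delta\neq\emptyset\Rightarrow\Lambda\subset\Delta$ of Lemma~\ref{basic cpcd}.2 and then pinch. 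One small mislabeling in your closing gloss: the two-sided perturbation in the pinching step is licensed entirely by $\nu(y_0)\in\Lambda^\circ$, which makes $\Lambda^\circ$ an open neighbourhood of $\nu(y_0)$ inside $\Vect(\Lambda)$; the hypothesis $y_0\in G^\circ$ is not used at that point. Its role is earlier: it secures $G^\circ\cap\nu^{-1}(\Lambda)\neq\emptyset$ so that Proposition~\ref{property of cones}.10 yields $G\subset\nu^{-1}(\Lambda)$, and it ensures the $\Lambda$ picked through $\nu(y_0)$ corresponds to $G$ itself rather than to a proper subface. This does not affect the correctness of the argument.
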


\section{Convex pseudo polyhedrons}
\label{cpp}

We study convex pseudo polyhedrons.

Let $V$ be any finite dimensional vector space over $\R$, and let $N$ be any lattice of $V$.

\begin{lemma}
\label{cpp1}
Let $S$ be any convex pseudo polyhedron in $V$, and let $X$ and $Y$ be any finite subset of $V$ satisfying $S=\Conv(X)+\Convcone(Y)$ and $X\neq\emptyset$.
\begin{enumerate}
\item
$\Stab(S)=\Convcone(Y)$.
$\Stab(S)$ is a convex polyhedral cone in $V$.
If $S$ is rational over $N$, then $\Stab(S)$ is also rational over $N$.
\item
$\Vect(\Stab(S))\subset\Stab(\Affi(S))$.
\item
The following four conditions are equivalent:
\begin{enumerate}
\item
$S$ is a convex polyhedron.
\item
$S=\Conv(X)$.
\item
$\Stab(S)=\{0\}$.
\item
$S$ is compact.
\end{enumerate}
\item
The following three conditions are equivalent:
\begin{enumerate}
\item
$S$ is a convex polyhedral cone.
\item
$\Conv(X)\cap\Stab(S)\cap(-\Stab(S))\neq\emptyset$ and $\Conv(X)\subset\Stab(S)$.
\item
$S=\Stab(S)$.
\end{enumerate}
\item
We consider the dual vector space $V^*$ of $V$ and the dual cone $\Stab(S)^\vee=\Stab(S)^\vee|V\subset V^*$ of $\Stab(S)$.

For any $\omega\in V^*$, the following three conditions are equivalent:
\begin{enumerate}
\item
$\omega\in\Stab(S)^\vee$.
\item
There exists the minimum element $\min\{\langle\omega, x\rangle|x\in S\}$ of the subset $\{\langle\omega, x\rangle|$\break$x\in S\}$ of $\R$.
\item
The subset $\{\langle\omega, x\rangle|x\in S\}$ of $\R$ is bounded below.
\end{enumerate}
\end{enumerate}
\end{lemma}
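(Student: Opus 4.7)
The core of the lemma is part~1, from which the remaining parts will follow fairly quickly. The plan is to prove $\Stab(S)=\Convcone(Y)$ first and use it repeatedly.

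The inclusion $\Convcone(Y)\subset\Stab(S)$ is immediate: for $y\in\Convcone(Y)$ we have $\Convcone(Y)+\{y\}\subset\Convcone(Y)$ because $\Convcone(Y)$ is a convex cone, hence $S+\{y\}=\Conv(X)+(\Convcone(Y)+\{y\})\subset S$. For the opposite inclusion, fix $x_0\in\Conv(X)$ and consider $a\in\Stab(S)$. By iterating the inclusion $S+\{a\}\subset S$, $x_0+na\in S$ for every $n\in\Z_+$. Write $x_0+na=c_n+d_n$ with $c_n\in\Conv(X)$ and $d_n\in\Convcone(Y)$. Since $\Conv(X)$ is compact (Lemma~\ref{g}.1), $(c_n-x_0)/n\to 0$, so $d_n/n\to a$. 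Each $d_n/n$ lies in the convex cone $\Convcone(Y)$, which is closed by Lemma~\ref{g}.2, so $a\in\Convcone(Y)$. This establishes the equality; $\Convcone(Y)$ is a convex polyhedral cone by definition, and if $S$ is rational we choose the representation with $Y\subset\QVect(N)$, which makes $\Stab(S)$ rational. Part~2 is then formal: for $a\in\Stab(S)$, Lemma~\ref{a}.5 gives $\Affi(S)+\{a\}=\Affi(S+\{a\})\subset\Affi(S)$, so $\Stab(S)\subset\Stab(\Affi(S))$; since $\Stab(\Affi(S))$ is a vector space by Lemma~\ref{c}.5, it contains $\Vect(\Stab(S))$.

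For part~3, I would run a cycle $(a)\Rightarrow(d)\Rightarrow(c)\Rightarrow(b)\Rightarrow(a)$. Convex polyhedrons are compact by Lemma~\ref{g}.1; if $S$ is compact then $\Stab(S)=\Convcone(Y)$ is a bounded cone, hence equal to $\{0\}$; this forces $S=\Conv(X)$; and $\Conv(X)$ with $X$ finite non-empty is by definition a convex polyhedron. For part~4 I will prove $(a)\Leftrightarrow(c)$ and $(b)\Leftrightarrow(c)$ using part~1. If $S$ is a convex polyhedral cone, then $0\in S$ and $S+S\subset S$ give $S\subset\Stab(S)$, while $\Stab(S)\subset S$ is trivial from $0\in S$; so $S=\Stab(S)$. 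Conversely $S=\Stab(S)=\Convcone(Y)$ is obviously a convex polyhedral cone. For $(a)\Rightarrow(b)$: writing $0=c+d$ with $c\in\Conv(X)$, $d\in\Convcone(Y)$ gives $c=-d\in\Conv(X)\cap(-\Convcone(Y))$, and $\Conv(X)\subset S=\Convcone(Y)=\Stab(S)$ does the rest. For $(b)\Rightarrow(c)$: pick $x_0\in\Conv(X)\cap\Stab(S)\cap(-\Stab(S))$; then $\Conv(X)\subset\Convcone(Y)$ gives $S\subset\Convcone(Y)$, and every $y\in\Convcone(Y)$ decomposes as $y=x_0+(y-x_0)\in\Conv(X)+\Convcone(Y)=S$ since $-x_0\in\Convcone(Y)$, so $S=\Convcone(Y)=\Stab(S)$.

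Finally part~5 follows cleanly from part~1. For $(a)\Rightarrow(b)$, if $\omega\in\Stab(S)^\vee=\Convcone(Y)^\vee$ then $\langle\omega,d\rangle\geq 0$ for every $d\in\Convcone(Y)$, so $\inf_{s\in S}\langle\omega,s\rangle=\min_{c\in\Conv(X)}\langle\omega,c\rangle$, and the latter minimum is attained at a vertex of the compact convex polyhedron $\Conv(X)$. The implication $(b)\Rightarrow(c)$ is trivial. For $(c)\Rightarrow(a)$, suppose $\omega\notin\Stab(S)^\vee=\Convcone(Y)^\vee$: there is $y\in\Convcone(Y)$ with $\langle\omega,y\rangle<0$, and fixing $x_0\in\Conv(X)$, the half-line $\{x_0+ty\mid t\in\R_0\}\subset S$ drives $\langle\omega,\cdot\rangle$ to $-\infty$, contradicting boundedness. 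The only non-routine step is the scaling/closedness argument in part~1; everything else is bookkeeping built on it.
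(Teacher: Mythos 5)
Your proof is correct. The paper itself gives no proof of this lemma (it falls under the umbrella "most of our claims follow from definitions"), so there is no argument to compare against, but your route is the natural one: the scaling-and-closedness argument ($d_n/n\to a$, then use $\Clos(\Convcone(Y))=\Convcone(Y)$ from Lemma~\ref{g}) is the standard way to pin down the recession cone, and the remaining parts then reduce to bookkeeping as you say. Two small remarks, neither a gap: in part~1 the rationality claim needs the observation that $\Stab(S)$ is intrinsic to $S$, so a single rational representation $S=\Conv(X')+\Convcone(Y')$ with $X',Y'\subset\QVect(N)$ suffices to make $\Stab(S)=\Convcone(Y')$ rational even if the given $X,Y$ are not rational — you do implicitly say "choose the representation," which covers this, but it is worth being explicit that the conclusion is about $\Stab(S)$ and not about the particular $Y$. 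In part~3, "$\Stab(S)$ is a bounded cone" deserves the one-line justification that any nonzero $a\in\Stab(S)$ together with any $x_0\in S$ yields an unbounded ray $x_0+\R_0a\subset S$, contradicting compactness; you clearly know this, but the phrase "is a bounded cone" conceals the actual step. An alternative route the paper's machinery suggests for part~1 — pass to the cone $\Clos(\sigma^{-1}(S))=\Convcone((X+\{z\})\cup Y)\subset V\oplus\R$ and intersect with $V$ — would give $\Stab(S)=\Convcone(Y)$ immediately via Lemma~\ref{first correspondence}.3, but that lemma appears later in the paper and so cannot be used here without circularity; your direct argument is the right one for this position in the development.
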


\begin{definition}
\label{faces of cpp}
Let $S$ be any convex pseudo polyhedron in $V$.
We consider the dual cone $\Stab(S)^\vee=\Stab(S)^\vee|V\subset V^*$ of $\Stab(S)$.
\begin{enumerate}
\item
For any $\omega\in \Stab(S)^\vee$, we denote
\begin{equation*}
\begin{split}
\Ord(\omega,S|V)=&\min\{\langle\omega,x\rangle|x\in S\}\in\R\\
\Delta(\omega,S|V)=&\{x\in S|\langle\omega, x\rangle=\Ord(\omega,S|V)\}\subset S.
\end{split}
\end{equation*}

When we need not refer to $V$ or to the pair $(S, V)$, we also write simply  $\Ord(\omega, S)$ or $\Ord(\omega)$, $\Delta(\omega, S)$ or $\Delta(\omega)$ respectively, instead of $\Ord(\omega,S|V)$, $\Delta(\omega, S|V)$.
\item
Let $F$ be any subset of $S$.
We say that $F$ is a \emph{face} of $S$, if $F=\Delta(\omega, S|V)$ for some $\omega\in \Stab(S)^\vee.$

It is easy to see that any face $F$ of $S$ is a closed convex subset of $V$, and the dimension $\dim F\in\Z_0$ of $F$, the boundary $\partial F$ of $F$, and the interior $F^\circ$ of $F$ are defined.

Any face $F$ of $S$ with $\dim F=0$ is called a \emph{vertex} of $S$.
Any vertex of $S$ is a subset of $S$ with only one element.
Any face $F$ of $S$ with $\dim F=1$ is called an \emph{edge} of $S$.
\item
By $\mathcal{F}(S)$ we denote the set of all faces of $S$.

For any $i\in\Z$, the set of all faces $F$ with $\dim F=i$ is denoted by $\mathcal{F} (S)_i$, and the set of all faces $F$ with $\dim F=\dim S-i$ is denoted by $\mathcal{F}(S)^i$.
\item
Let $\ell=\dim(\Stab(S)\cap(-\Stab(S)))\in\Z_0$.
We denote
\begin{equation*}
\begin{split}
c(S)=&\sharp\mathcal{F}(S)_\ell\in\Z_0,\\
\mathcal{V}(S)=&\bigcup_{F\in\mathcal{F}(S)_\ell}F\subset S,
\end{split}
\end{equation*}
we call $c(S)$ the \emph{characteristic number} of $S$, and we call $\mathcal{V}(S)$ the \emph{skeleton} of $S$.
We call any face $F$ of $S$ with $\dim F=\ell$ a \emph{minimal} face of $S$.
\item
Let $F$ be any face of $S$.
We denote
\begin{equation*}
\begin{split}
\Delta^\circ(F,S|V)=&\{\omega\in \Stab(S)^\vee|F=\Delta(\omega, S|V)\}\subset \Stab(S)^\vee\subset V^*,\\
\Delta(F,S|V)=&\{\omega\in \Stab(S)^\vee|F\subset\Delta(\omega, S|V)\}\subset \Stab(S)^\vee\subset V^*.
\end{split}
\end{equation*}

We call $\Delta^\circ(F,S|V)$ the \emph{open face cone} of $F$, and we call $\Delta (F,S|V)$ the \emph{face cone} of $F$.

When we need not refer to $V$ or to the pair $(S, V)$, we also write simply  $\Delta^\circ (F, S)$ or $\Delta^\circ(F)$, $\Delta(F, S)$ or $\Delta(F)$ respectively, instead of $\Delta^\circ(F, S|V)$, $\Delta(F, S|V)$.
\item
We denote
$$\mathcal{D}(S|V)=\{\Delta(F, S|V)|F\in\mathcal{F}(S)\}\subset 2^{\Stab(S)^\vee}\subset 2^{V^*},$$
and we call $\mathcal{D}(S|V)$ the \emph{face cone decomposition} of $S$.

When we need not refer to $V$, we also write simply $\mathcal{D}(S)$, instead of $\mathcal{D}(S|V)$.
\end{enumerate}
\end{definition}

Let $W$ be any finite dimensional vector space over $\R$ containing $V$ as a vector subspace over $\R$ with $\dim W=\dim V+1$, and let $z\in W- V$ be any point.

Let $\pi:V\rightarrow W$ denote the inclusion homomorphism.
Putting $\pi'(t)=tz\in W$ for any $t\in\R$ we define an injective homomorphism $\pi':\R\rightarrow W$ of vector spaces over $\R$.

For any $a\in W$, choosing the unique pair $b\in V$ and $t\in\R$ with $a=b+tz$ and putting $\rho(a)=b$ and $\rho'(a)=t$ we define homomorphisms $\rho:W\rightarrow V$ and $\rho':W\rightarrow \R$ of vector spaces over $\R$.

Putting $\iota(\omega)=\omega(1)\in\R$ for any $\omega\in \R^*=\mathrm{Hom}_{\R}(\R,\R)$, we define an isomorphism $\iota:\R^*\rightarrow \R$ of vector spaces over $\R$.
For any $\omega\in\R^*$ and any $t\in\R$ we have $\langle\omega, t\rangle=\iota(\omega)t$.
Below, using this isomorphism $\iota$ we identify $\R^*$ with $\R$.
For any $t\in\R=\R^*$ and any $u\in\R$ we have $\langle t,u\rangle=tu$.

We have eight homomorphisms of vector spaces over $\R$.
\begin{gather*}
\begin{matrix}
\pi:&V\rightarrow W,&\quad& \pi'&:\R\rightarrow W,\\
\rho:&W\rightarrow V,&\quad& \rho'&:W\rightarrow\R,\\
\pi^*:&W^*\rightarrow V^*,&\quad& \pi^{\prime*}&:W^*\rightarrow \R,\\
\rho^*:&V^*\rightarrow W^*,&\quad& \rho^{\prime*}&:\R\rightarrow W^*.
\end{matrix}
\end{gather*}
Four homomorphisms $\pi, \pi', \rho^*, \rho^{\prime*}$ are injective.
The other four $\rho, \rho', \pi^*, \pi^{\prime*}$ are surjective.
We denote
$H=V+\R_0z\subset W$ and $\zeta=\rho^{\prime*}(1)\in W^*$.

\begin{lemma}
\label{eight mappings}
\begin{enumerate}
\item
$\rho\pi=\Id_V$, $\rho'\pi'=\Id_{\R}$, $\pi\rho+\pi'\rho'=\Id_W$,
$V=\pi(V)= \rho^{\prime-1}(0)$, $\R z=\pi'(\R)=\rho^{-1}(0)$, $\pi'(1)=z$.
For any $x\in W$, $\rho'(x)=\langle\zeta,x\rangle$.
\item
$\pi^*\rho^*=\Id_{V^*}$, $\pi^{\prime*}\rho^{\prime*}=\Id_{\R}$, $\rho^*\pi^*+\rho^{\prime*}\pi^{\prime*}=\Id_{W^*}$,
$(\R z)^\vee =\rho^*(V^*)= \pi^{\prime*-1}(0)$, $V^\vee =\R\zeta=\rho^{\prime*}(\R)=\pi^{*-1}(0)$.
For any $\xi\in W^*$, $\pi^{\prime*}(\xi)=\langle\xi,z\rangle$. $\langle\zeta,z\rangle=1$.
\item
$N+\Z x$ is a lattice of $W$. $H$ is a rational convex polyhedral cone over $N+\Z x$ in $W$. 
$\dim H=\dim W=\dim V+1$.
$H=\{x\in W|\rho'(x)\geq 0\}$.
$W=H\cup(-H)=\Vect(H)=\Vect(-H)$.
$V=H\cap(-H)=\partial H=\partial(-H)$.
$z\in H^\circ=V+\R_+z=\{x\in W|\rho'(x)> 0\}$.
\item
$(N+\Z z)^*=\rho^*(N^*)+\Z\zeta$.
$H^\vee$ is a simplicial cone over $(N+\Z z)^*$ in $W^*$ with $\dim H^\vee=1$.
$H^\vee\cap(N+\Z z)^*=\Z_0\zeta$.
$H^\vee=\R_0\zeta$.
$V^\vee =H^\vee\cup(-H^\vee)=\Vect(H^\vee) =\Vect(-H^\vee)$.
$\{0\}=H^\vee\cap(-H^\vee)=\partial H^\vee =\partial(-H^\vee)$.
$-H^\vee=(-H)^\vee$.
\end{enumerate}
\end{lemma}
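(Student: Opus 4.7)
The plan is to verify all assertions by direct computation, exploiting the canonical direct-sum decomposition $W = V \oplus \R z$ implicit in the hypothesis $z \in W - V$, $\dim W = \dim V + 1$. Every $a \in W$ writes uniquely as $a = b + tz$ with $b \in V$, $t \in \R$; the maps $\rho$ and $\rho'$ extract these two components, while $\pi$ and $\pi'$ are the corresponding sectional inclusions. Claim~(1) is then immediate from inspection of these definitions: the identities $\rho\pi = \Id_V$, $\rho'\pi' = \Id_\R$, and $\pi\rho + \pi'\rho' = \Id_W$ read off directly, the set-theoretic descriptions of $V$ and $\R z$ as images and preimages follow similarly, and the equality $\langle\zeta,x\rangle = \rho'(x)$ reduces to the definition of the dual homomorphism $\rho^{\prime*}$ together with the identification $\R^* = \R$.

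Claim~(2) follows from claim~(1) by contravariant functoriality: the compositional identities dualize directly. The kernel descriptions $\pi^{*-1}(0) = V^\vee$ and $\pi^{\prime*-1}(0) = (\R z)^\vee$ come from the general principle that for an inclusion $\iota:S \to W$ of a vector subspace, $\iota^{*-1}(0)$ is the annihilator of $S$ in $W^*$, which coincides with $S^\vee|W$ because $S$ is stable under negation. The image descriptions $\rho^*(V^*) = (\R z)^\vee$ and $\rho^{\prime*}(\R) = V^\vee$ then drop out of the splitting $\rho^*\pi^* + \rho^{\prime*}\pi^{\prime*} = \Id_{W^*}$ obtained by dualizing $\pi\rho + \pi'\rho' = \Id_W$.

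For claim~(3), the key preliminary observation is that $z \notin V \supset \QVect(N)$, so $z$ is $\Q$-linearly independent from $N$; hence $N + \Z z$ is a free $\Z$-module of rank $\dim W$ and a lattice of $W$. Writing $H = \rho^{\prime-1}(\R_0)$ presents $H$ as the closed half-space bounded by $V$, which is immediately a rational convex polyhedral cone over $N + \Z z$ (generated by a $\Z$-basis of $N$ together with its negatives and $z$). The remaining identities $W = H \cup (-H)$, $V = H \cap (-H) = \partial H$, $H^\circ = \{x : \rho'(x) > 0\}$, and $z \in H^\circ$ are routine half-space computations using that $\rho'(z) = 1 > 0$.

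For claim~(4), I first identify the dual lattice: given any $\Z$-basis $B$ of $N$ with dual basis $B^\vee \subset N^*$, the images $\rho^*(B^\vee)$ together with $\zeta$ form the dual basis of $B \cup \{z\}$ inside $W^*$, as verified by evaluating $\langle\rho^*(\omega), b+tz\rangle = \langle\omega,b\rangle$ and $\langle\zeta, b+tz\rangle = t$; hence $(N + \Z z)^* = \rho^*(N^*) + \Z\zeta$. To compute $H^\vee$, decompose an arbitrary $\xi \in W^*$ as $\xi = \rho^*(\omega) + s\zeta$ using claim~(2); then $\xi \in H^\vee$ forces $\langle\omega,b\rangle + st \geq 0$ for all $b \in V$ and $t \in \R_0$. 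Specializing to $t = 0$ with $b$ ranging over the linear subspace $V$ forces $\omega = 0$, after which $s \geq 0$ remains, so $H^\vee = \R_0\zeta$; this is a one-dimensional simplicial cone with primitive generator $\zeta \in (N + \Z z)^*$. The remaining descriptions of $V^\vee = \R\zeta$, $\partial H^\vee = \{0\}$, and $-H^\vee = (-H)^\vee$ are formal consequences of what has been established. There is no real obstacle at any single step; the calculation demanding the most care is the explicit exhibition of a $\Z$-basis of $(N + \Z z)^*$, since it must be produced and checked rather than inferred abstractly.
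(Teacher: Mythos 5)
The paper leaves Lemma~\ref{eight mappings} without a written proof, treating it as an immediate consequence of the definitions of $\pi, \pi', \rho, \rho'$ and of the dual constructions; your argument simply spells out that computation, and it is correct. Every identity you verify — the splitting $\pi\rho + \pi'\rho' = \Id_W$ and its dualization, the kernel/image descriptions via annihilators, the half-space presentation $H = \rho'^{-1}(\R_0)$, the explicit $\Z$-basis $\rho^*(B^\vee)\cup\{\zeta\}$ of $(N+\Z z)^*$, and the computation $H^\vee = \R_0\zeta$ by decomposing $\xi = \rho^*(\omega) + s\zeta$ — is exactly the intended route.
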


Putting $\sigma(a)=\rho(a)/\rho'(a)\in V$ for any $a\in H^\circ$, we define a mapping $\sigma:H^\circ\rightarrow V$.
If $a\in H^\circ$, $b\in V$, $t\in\R$ and $a=b+tz$, then $t>0$ and $\sigma(a)=b/t$.

\begin{lemma}
\label{first correspondence}
\begin{enumerate}
\item
$\sigma$ is surjective.
For any $a\in H^\circ$, $\{\sigma(a)+z\}=(\R_+a)\cap(V+\{z\})=(\R a)\cap(V+\{z\})$ and
$\sigma^{-1}(\sigma(a))=\R_+a$.
For any $b\in V$, $\sigma^{-1}(b)=\R_+(b+z)$.
\item
Consider any convex polyhedral cone $\Delta$ in $W$ satisfying $\Delta\subset H$ and $\Delta\cap H^\circ\neq\emptyset$ and any finite subset $Z$ of $H$ satisfying $\Delta=\Convcone(Z)$.
\begin{enumerate}
\item
$\sigma(\Delta\cap H^\circ)=\Conv(\sigma(Z\cap H^\circ))+\Convcone(Z\cap V)$.

$\sigma(\Delta\cap H^\circ)$ is a convex pseudo polyhedral cone in $V$.

If $\Delta$ is rational over $N+\Z z$, then $\sigma(\Delta\cap H^\circ)$ is rational over $N$.

$\dim\Delta=\dim \sigma(\Delta\cap H^\circ)+1$.
$\Delta\cap V=\Stab(\sigma(\Delta\cap H^\circ))$.
\item
$\Delta\cap H^\circ=\sigma^{-1}(\sigma(\Delta\cap H^\circ))$.
$\Delta=\Clos(\sigma^{-1}(\sigma(\Delta\cap H^\circ)))$.
\end{enumerate}
\item
Consider any convex pseudo polyhedron $S$ in $V$ and any finite subsets $X$, $Y$ of $V$ satisfying $S=\Conv(X)+\Convcone(Y)$ and $X\neq\emptyset$.
\begin{enumerate}
\item
$\Clos(\sigma^{-1}(S))=\Convcone((X+\{z\})\cup Y)$.

$\Clos(\sigma^{-1}(S))$ is a convex polyhedral cone in $W$.
$\Clos(\sigma^{-1}(S))\subset H$.
$\Clos(\sigma^{-1}(S))\cap H^\circ\neq\emptyset$.
$\Clos(\sigma^{-1}(S))\cap V=\Stab(S)$.

If $S$ is rational over $N$, then $\Clos(\sigma^{-1}(S))$ is rational over $N+\Z z$.
\item
$\Clos(\sigma^{-1}(S))\cap H^\circ=\sigma^{-1}(S)$.
$\sigma(\Clos(\sigma^{-1}(S))\cap H^\circ)=S$.
\end{enumerate}
\item
For any subsets $\Delta$, $\Lambda$ of $H$ satisfying $\Delta\cap H^\circ\neq\emptyset$, $\Lambda\cap H^\circ\neq\emptyset$ and $\Delta\subset\Lambda$, $\sigma(\Delta\cap H^\circ)\subset\sigma(\Lambda\cap H^\circ)$.

For any non-empty subsets $S$, $T$ of $V$ satisfying $S\subset T$, $\Clos(\sigma^{-1}(S))\subset\Clos(\sigma^{-1}(T))$.

For any non-empty closed subsets $S$, $T$ of $V$, $S\cap T=\emptyset$, if and only if, $\Clos(\sigma^{-1}(S))\cap\Clos(\sigma^{-1}(T))\cap H^\circ=\emptyset$.
\item
For any convex polyhedral cone $\Delta$ in $W$ satisfying $\Delta\subset H$ and $\Delta\cap H^\circ\neq\emptyset$, $\sigma(\Delta\cap H^\circ)$ is a convex pseudo polyhedron in $V$.

For any convex pseudo polyhedron $S$ in $V$, $\Clos(\sigma^{-1}(S))$ is a convex polyhedral cone in $W$, $\Clos(\sigma^{-1}(S))\subset H$ and $\Clos(\sigma^{-1}(S))\cap H^\circ\neq\emptyset$.

The mapping sending any convex polyhedral cone $\Delta$ in $W$ satisfying $\Delta\subset H$ and $\Delta\cap H^\circ\neq\emptyset$ to $\sigma(\Delta\cap H^\circ)$ and the mapping sending any convex pseudo polyhedron $S$ in $V$ to $\Clos(\sigma^{-1}(S))$ are bijective mappings preserving the inclusion relation between the set of all convex plyhedral cones $\Delta$ in $W$ satisfying $\Delta\subset H$ and $\Delta\cap H^\circ\neq\emptyset$ and the set of all convex pseudo polyhedrons in $V$, and they are the inverse mappings of each other.

Furthermore, if a convex polyhedral cone $\Delta$ in $W$ satisfying $\Delta\subset H$ and $\Delta\cap H^\circ\neq\emptyset$ and a convex pseudo polyhedron $S$ in $V$ correspond to each other by them, then $\Delta=\Clos(\sigma^{-1}(S))$, $S=\sigma(\Delta\cap H^\circ)$ and the following claims holds:

$\Delta$ is rational over $N+\Z z$, if and only if, $S$ is rational over $N$.
\begin{equation*}
\begin{split}
\dim\Delta&\geq 1.\\
\dim\Delta&=\dim S+1.\\
\Delta\cap V&=\Stab(S).\\
\Delta\cap H^\circ&=\sigma^{-1}(S).\\
\Delta\cap(V+\{z\})&= S+\{z\}.\\
\Vect(\Delta)\cap(V+\{z\})&=\Affi(S)+\{z\}.\\
\Vect(\Delta)\cap H^\circ&=\sigma^{-1}(\Affi(S)).\\
\sigma(\Vect(\Delta)\cap H^\circ)&= \Affi(S).\\
\Vect(\Delta)&=\Vect(\Affi(S)+\{z\}).\\
\partial\Delta\cap H^\circ&=\sigma^{-1}(\partial S).\\
\Delta^\circ&=\sigma^{-1}(S^\circ).\\
\sigma(\Delta^\circ)&=S^\circ.
\end{split}
\end{equation*}
\end{enumerate}
\end{lemma}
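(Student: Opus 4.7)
The plan is to treat the five parts in sequence, with the computational heart being Part 2(a); the remaining parts are either immediate or formal consequences.

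Part 1 unpacks directly from the definition: since $H^\circ=V+\R_+z$, any $a\in H^\circ$ has a unique decomposition $a=b+tz$ with $b\in V$, $t>0$, whence $\sigma(a)=b/t$ and $a=t(\sigma(a)+z)$. This gives the three displayed equalities; surjectivity of $\sigma$ follows from $\sigma(b+z)=b$. For Part 2(a), I would write $\Delta=\Convcone(Z)$ with $Z$ a finite subset of $H$ and partition $Z=(Z\cap V)\sqcup(Z\cap H^\circ)$; for $w\in Z\cap H^\circ$ we have $\rho'(w)>0$ and $\sigma(w)=\rho(w)/\rho'(w)$. Any element of $\Delta$ is $\sum_{w\in Z}\lambda(w)w$ with $\lambda(w)\in\R_0$, and it lies in $H^\circ$ iff $t:=\sum_{w\in Z\cap H^\circ}\lambda(w)\rho'(w)>0$. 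Setting $\mu(w)=\lambda(w)\rho'(w)/t$ for $w\in Z\cap H^\circ$ and $\nu(w)=\lambda(w)/t$ for $w\in Z\cap V$, a direct computation yields
$$\sigma\Bigl(\sum_{w\in Z}\lambda(w)w\Bigr)=\sum_{w\in Z\cap H^\circ}\mu(w)\sigma(w)+\sum_{w\in Z\cap V}\nu(w)w,$$
with $\sum_{w\in Z\cap H^\circ}\mu(w)=1$ and all coefficients nonnegative. Since $Z\cap H^\circ\neq\emptyset$, this identifies $\sigma(\Delta\cap H^\circ)=\Conv(\sigma(Z\cap H^\circ))+\Convcone(Z\cap V)$ as a convex pseudo polyhedron. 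Rationality over $N$ follows by taking $Z\subset N+\Z z$ and observing $\sigma(w)\in\QVect(N)$; the stabilizer identity $\Stab(\sigma(\Delta\cap H^\circ))=\Convcone(Z\cap V)=\Delta\cap V$ follows from Lemma~\ref{cpp1}.1. The dimension formula $\dim\Delta=\dim\sigma(\Delta\cap H^\circ)+1$ is obtained by fixing $w_0\in Z\cap H^\circ$ and checking $\Vect(\Delta)=\Vect(Z\cap V)+\Vect(\{\,\sigma(w)-\sigma(w_0):w\in Z\cap H^\circ\,\})+\R w_0$, where the first two summands land in $V$ and span $\Vect(\Affi(\sigma(\Delta\cap H^\circ)))$ while the last adds one independent direction.

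For Part 2(b) I use that $\Delta$ is a cone, so Part 1 gives $\sigma^{-1}(\sigma(a))=\R_+a\subset\Delta$ for $a\in\Delta\cap H^\circ$, hence $\sigma^{-1}(\sigma(\Delta\cap H^\circ))=\Delta\cap H^\circ$. To conclude $\Delta=\Clos(\Delta\cap H^\circ)$, fix any $c\in\Delta\cap H^\circ$; then for $a\in\Delta$ and $t>0$, $a+tc\in\Delta$ (convex cone) with $\rho'(a+tc)=\rho'(a)+t\rho'(c)>0$, so $a+tc\in\Delta\cap H^\circ$ and $a=\lim_{t\to 0^+}(a+tc)$. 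Part 3 is then immediate by applying Part 2 to $\Delta:=\Convcone((X+\{z\})\cup Y)$, which is contained in $H$ and meets $H^\circ$ because $X+\{z\}\subset V+\{z\}\subset H^\circ$ is nonempty; Part 2(a) gives $\sigma(\Delta\cap H^\circ)=\Conv(X)+\Convcone(Y)=S$ using $\sigma(x+z)=x$, and Part 2(b) gives $\Delta=\Clos(\sigma^{-1}(S))$, yielding the stated formula and all its corollaries.

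Part 4 is formal: inclusion preservation in both directions follows from $\sigma$ being a function and $\Clos$ being monotone, while the empty intersection criterion uses $\sigma^{-1}(S)=\Clos(\sigma^{-1}(S))\cap H^\circ$ from Part 3(b). Part 5 assembles the bijection from Parts 2(b) and 3(b): $\sigma\circ(\Clos\circ\sigma^{-1})=\mathrm{id}$ on pseudo polyhedra, and $\Clos\circ\sigma^{-1}\circ\sigma=\mathrm{id}$ on the relevant cones. The list of auxiliary equalities ($\Delta\cap V$, $\Delta\cap(V+\{z\})$, $\Vect(\Delta)$, $\Vect(\Delta)\cap H^\circ$, $\Affi(S)$, $\partial\Delta\cap H^\circ$, $\Delta^\circ$, and their $\sigma$-images) is verified directly from the explicit description $\Delta=\Convcone((X+\{z\})\cup Y)$: for instance, $\Vect(\Delta)=\Vect(Z)$ equals $\Vect(X-x_0)+\R(x_0+z)+\Vect(Y)=\Vect(\Affi(S)+\{z\})$ for any fixed $x_0\in X$; and $\Delta\cap(V+\{z\})=\{a\in\Delta:\rho'(a)=1\}$ is exactly $S+\{z\}$ via the bijection $a\leftrightarrow\sigma(a)+z$. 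I expect the main obstacle to be Part 2(a): the explicit formula must simultaneously handle the vertex contributions from $Z\cap H^\circ$ (producing convex combinations after rescaling by $\rho'$) and the ray contributions from $Z\cap V$ (producing the stabilizer part), and it must preserve rationality over $N+\Z z$; once this formula is in place, everything else is essentially homogenization and dehomogenization.
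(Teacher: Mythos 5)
Your proposal is correct and follows what must be the intended approach (the paper leaves this lemma unproved as one of the claims ``following from definitions''): the explicit rescaling formula for $\sigma$ on convex combinations in Part 2(a) is the computational heart, and everything else is homogenization and dehomogenization via $Z=(X+\{z\})\cup Y$. One small imprecision: in Part 4 you cite Part 3(b) for $\sigma^{-1}(S)=\Clos(\sigma^{-1}(S))\cap H^\circ$, but Part 4 concerns arbitrary non-empty closed subsets $S$ of $V$, not just convex pseudo polyhedra; the identity there instead follows because $\sigma$ is continuous on $H^\circ$ (so $\sigma^{-1}(S)$ is closed in $H^\circ$) and $H^\circ$ is open in $W$. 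Similarly, the last few equalities in Part 5 ($\partial\Delta\cap H^\circ=\sigma^{-1}(\partial S)$, $\Delta^\circ=\sigma^{-1}(S^\circ)$) implicitly use that $\sigma$ is an open map (it is a projection under the homeomorphism $H^\circ\cong V\times\R_+$, $a\mapsto(\sigma(a),\rho'(a))$) and that $\Delta^\circ\subset H^\circ$ whenever $\Delta\subset H$ and $\Delta\cap H^\circ\neq\emptyset$; both are easy but worth stating.
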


\begin{cor}
\label{intersection}
Consider any convex pseudo polyhedrons $S$, $T$ in $V$.

If $S\cap T\neq\emptyset$, then $S\cap T$ is a convex pseudo polyhedron in $V$ and $\Stab(S\cap T)=\Stab(S)\cap\Stab(T)$.

If $S$ and $T$ are rational over $N$ and $S\cap T\neq\emptyset$, then $S\cap T$ is also rational over $N$.

If $S$ and $T$ are convex polyhedrons and $S\cap T\neq\emptyset$, then $S\cap T$ is also a convex polyhedron.

If $S$ and $T$ are convex polyhedral cones, then $S\cap T$ is also a convex polyhedral cone.

\end{cor}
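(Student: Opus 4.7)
The plan is to lift to the ambient space $W=V+\R z$ of Lemma~\ref{first correspondence} and apply the correspondence there. Write $\Delta_S=\Clos(\sigma^{-1}(S))$ and $\Delta_T=\Clos(\sigma^{-1}(T))$. By Lemma~\ref{first correspondence}.5, both $\Delta_S$ and $\Delta_T$ are convex polyhedral cones in $W$ that are contained in $H$ and meet $H^\circ$, satisfying $\Delta_S\cap V=\Stab(S)$, $\Delta_T\cap V=\Stab(T)$, $\Delta_S\cap H^\circ=\sigma^{-1}(S)$ and $\Delta_T\cap H^\circ=\sigma^{-1}(T)$. By Corollary~\ref{good category}.2 the intersection $\Delta_S\cap\Delta_T$ is again a convex polyhedral cone in $W$, and it is contained in $H$.

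Next I would verify that $\Delta_S\cap\Delta_T$ is the cone corresponding to $S\cap T$ under the bijection. Since $\sigma^{-1}$ commutes with intersections, $(\Delta_S\cap\Delta_T)\cap H^\circ=\sigma^{-1}(S)\cap\sigma^{-1}(T)=\sigma^{-1}(S\cap T)$. The hypothesis $S\cap T\neq\emptyset$ therefore gives $(\Delta_S\cap\Delta_T)\cap H^\circ\neq\emptyset$, which places $\Delta_S\cap\Delta_T$ in the domain of the bijection in Lemma~\ref{first correspondence}.5 and identifies its image as the convex pseudo polyhedron $\sigma((\Delta_S\cap\Delta_T)\cap H^\circ)=S\cap T$. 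The stabilizer identity
\[
\Stab(S\cap T)=(\Delta_S\cap\Delta_T)\cap V=(\Delta_S\cap V)\cap(\Delta_T\cap V)=\Stab(S)\cap\Stab(T)
\]
then follows from the same lemma.

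The remaining three assertions are quick consequences. For rationality, Lemma~\ref{first correspondence}.5 transports rationality of $S$ and $T$ over $N$ to rationality of $\Delta_S$ and $\Delta_T$ over the lattice $N+\Z z$ of $W$; Corollary~\ref{good category}.2 preserves this under intersection; the bijection then carries the result back to rationality of $S\cap T$ over $N$. For the polyhedron case, Lemma~\ref{cpp1}.3 characterizes convex polyhedrons among convex pseudo polyhedrons by the condition $\Stab=\{0\}$, so the stabilizer formula just proved finishes the argument. For the convex polyhedral cone case, both $S$ and $T$ contain $0$ by Lemma~\ref{c}.12, hence $S\cap T\neq\emptyset$ is automatic, and Corollary~\ref{good category}.2 applied directly in $V$ already gives that $S\cap T$ is a convex polyhedral cone, with no lifting needed. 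The only step that requires care is the identification $\sigma((\Delta_S\cap\Delta_T)\cap H^\circ)=S\cap T$ and, correspondingly, that the bijection of Lemma~\ref{first correspondence}.5 sends $\Delta_S\cap\Delta_T$ to $S\cap T$; this reduces to the set-theoretic fact that $\sigma^{-1}$ commutes with intersection, so there is no real obstacle beyond careful bookkeeping.
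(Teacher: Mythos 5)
Your proof is correct, and it follows exactly the route the paper's organization intends: the statement is labeled a corollary of Lemma~\ref{first correspondence}, and you lift $S$, $T$ to the convex polyhedral cones $\Clos(\sigma^{-1}(S))$, $\Clos(\sigma^{-1}(T))$ in $W$, apply Corollary~\ref{good category}.2 there, and transport back through the bijection. The paper does not print a proof (it reserves proofs for the difficult claims), but the intersection-lifting argument with $(\Delta_S\cap\Delta_T)\cap H^\circ=\sigma^{-1}(S\cap T)$, the surjectivity of $\sigma$ to recover $S\cap T$, and Lemma~\ref{cpp1}.3 for the compactness criterion are precisely what one is meant to supply.
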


\begin{prop}
\label{miracle}
Let $\Delta$ be any convex polyhedral cone $\Delta$ in $W$ satisfying $\Delta\subset H$ and $\Delta\cap H^\circ\neq\emptyset$.
We denote
\begin{equation*}
\begin{split}
L&=\Delta\cap(-\Delta)\subset\Delta,\\
\ell&=\dim L\in\Z_0,\\
S&=\sigma(\Delta\cap H^\circ)\subset V,\\
H^\vee&=H^\vee|W\subset W^*,\\
\Delta^\vee&=\Delta^\vee|W\subset W^*,\\
\partial_-\Delta^\vee&=\{\omega\in\Delta^\vee|
(\{\omega\}+\Vect(H^\vee))\cap\Delta^\vee\subset\{\omega\}+H^\vee\}\subset \Delta^\vee,\\
\mathcal{F}(\Delta)_*&=\{\Lambda\in\mathcal{F}(\Delta)|\Lambda\cap H^\circ\neq\emptyset\}\subset\mathcal{F}(\Delta),\\
\mathcal{F}(\Delta^\vee)^*&=\mathcal{F}(\Delta^\vee)\backslash\partial_-\Delta^\vee\subset\mathcal{F}(\Delta^\vee).
\end{split}
\end{equation*}
\begin{enumerate}
\item
$L=(\Delta\cap V)\cap(-(\Delta\cap V))=\Stab(S)\cap(-\Stab(S))\subset V$.
$L\in\mathcal{F}(\Delta)-\mathcal{F}(\Delta)_*$.
$\Delta\in\mathcal{F}(\Delta)_*\neq\emptyset$.
If $\Lambda\in\mathcal{F}(\Delta)_*$, $\Gamma\in\mathcal{F}(\Delta)$ and $\Lambda\subset\Gamma$, then $\Gamma\in\mathcal{F}(\Delta)_*$.
For any $\Lambda\in\mathcal{F}(\Delta)_*$ there exists $\Gamma\in\mathcal{F}(\Delta)_*$ such that $\Lambda\supset\Gamma$ and $\dim\Gamma=\ell+1$.
\item
$H^\vee\subset\Delta^\vee$.
$-H^\vee\not\subset\Delta^\vee$.
$\Delta^\vee+\pi^{*-1}(0)=(\Delta\cap V)^\vee|W=\pi^{*-1}((\Delta\cap V)^\vee|V)$.
$\pi^*(\Delta^\vee)=(\Delta\cap V)^\vee|V=\Stab(S)^\vee|V$.
\item
$\emptyset\neq\partial_-\Delta^\vee\subset\Delta^\vee=\partial_-\Delta^\vee+H^\vee$.
$\pi^*(\partial_-\Delta^\vee)=\Stab(S)^\vee|V$.
The mapping $\pi^*:\partial_-\Delta^\vee\rightarrow\Stab(S)^\vee|V$ induced by $\pi^*$ is bijective.
\item
For any face $\Lambda$ of $\Delta$ the following three conditions are equivalent:
\begin{enumerate}
\item
$\Lambda\in\mathcal{F}(\Delta)_*$.
\item
$\Delta(\Lambda,\Delta|W)\in\mathcal{F}(\Delta^\vee)^*$.
\item
$\Delta^\circ(\Lambda,\Delta|W)\cap\partial_-\Delta^\vee\neq\emptyset$.
\end{enumerate}
\item
$|\mathcal{F}(\Delta^\vee)^*|=\partial_-\Delta^\vee$.
$\Delta^\vee\cap(-\Delta^\vee)\in\mathcal{F}(\Delta^\vee)^*\neq\emptyset$.
$\dim\mathcal{F}(\Delta^\vee)^* =\dim\Delta^\vee-1=\dim\Stab(S)^\vee=\dim V-\ell$.
$(\mathcal{F}(\Delta^\vee)^*)\Mx=(\mathcal{F}(\Delta^\vee)^*)^0$.
\item
For any $\omega\in \partial_-\Delta^\vee$ the following claims hold:
\begin{enumerate}
\item
$\Delta(\omega,\Delta|W)\in\mathcal{F}(\Delta)_*$.
\item
$\Ord(\pi^*(\omega),S|V)=-\langle\omega,z\rangle$.
\item
$\Delta(\omega,\Delta|W)\cap H^\circ=\sigma^{-1}(\Delta(\pi^*(\omega),S|V))$.
\item
$\sigma(\Delta(\omega,\Delta|W)\cap H^\circ)=\Delta(\pi^*(\omega),S|V)$.
\end{enumerate}
\item
For any $\Lambda\in\mathcal{F}(\Delta)_*$, $\sigma(\Lambda\cap H^\circ)\in\mathcal{F}(S)$.

For any $F\in\mathcal{F}(S)$, $\Clos(\sigma^{-1}(F))\in \mathcal{F}(\Delta)_*$.

The mapping from $\mathcal{F}(\Delta)_*$ to $\mathcal{F}(S)$ sending $\Lambda\in\mathcal{F}(\Delta)_*$ to $\sigma(\Lambda\cap H^\circ)\in\mathcal{F}(S)$ and the mapping from $\mathcal{F}(S)$ to $\mathcal{F}(\Delta)_*$ sending $F\in\mathcal{F}(S)$ to $\Clos(\sigma^{-1}(F))\in \mathcal{F}(\Delta)_*$ are bijective mappings preserving the inclusion relation between $\mathcal{F}(\Delta)_*$ and $\mathcal{F}(S)$, and they are the inverse mappings of each other.
\item
Assume $\Lambda\in\mathcal{F}(\Delta)_*$, $F\in\mathcal{F}(S)$, $F=\sigma(\Lambda\cap H^\circ)$ and $\Lambda=\Clos(\sigma^{-1}(F))$.
The following claims hold:
\begin{enumerate}
\item
$F$ is a convex pseudo polyhedron in $V$.
If $S$ is rational over $N$, then $F$ is also rational over $N$.
$\Stab(F)$ is a face of $\Stab(S)$.
\item
$\dim\Lambda=\dim F+1$.
\item
$\Delta^\circ(\Lambda,\Delta|W)\subset\Delta(\Lambda,\Delta|W)\subset\partial_-\Delta^\vee$.
\item
$\Delta(F,S|V)=\pi^*(\Delta(\Lambda,\Delta|W))$.
$\pi^{*-1}(\Delta(F,S|V))\cap\partial_-\Delta^\vee= \Delta(\Lambda,\Delta|W)$.
$\Delta(F,S|V)$ is a convex polyhedral cone in $V^*$.
\item
$\Delta^\circ(F,S|V)=\pi^*(\Delta^\circ(\Lambda,\Delta|W))$.
$\pi^{*-1}(\Delta^\circ(F,S|V))\cap\partial_-\Delta^\vee=\Delta^\circ(\Lambda,\Delta|W)$.
$\Delta^\circ(F,S|V)=\Delta(F,S|V) ^\circ$.
\item
$\Vect(\Delta(\Lambda,\Delta|W))\cap\pi^{*-1}(0)=\{0\}$.
$\Vect(\Delta(F,S|V))=$\hfill\break$\pi^*(\Vect(\Delta(\Lambda,\Delta|W)))=\Stab(\Affi(F))^\vee|V$.
\end{enumerate}
\item
For any $\Lambda^*\in\mathcal{F}(\Delta^\vee)^*$, $\pi^*(\Lambda^*)\in\mathcal{D}(S|V)$.

For any $\bar{\Lambda}^*\in\mathcal{D}(S|V)$, $\pi^{*-1}(\bar{\Lambda}^*)\cap \partial_-\Delta^\vee\in \mathcal{F}(\Delta^\vee)^*$.

The mapping from $\mathcal{F}(\Delta^\vee)^*$ to $\mathcal{D}(S|V)$ sending $\Lambda^*\in\mathcal{F}(\Delta^\vee)^*$ to $\pi^*(\Lambda^*)\in\mathcal{D}(S|V)$ and the mapping from $\mathcal{D}(S|V)$ to $\mathcal{F}(\Delta^\vee)^*$ sending $\bar{\Lambda}^*\in\mathcal{D}(S|V)$ to $\pi^{*-1}(\bar{\Lambda}^*)\cap \partial_-\Delta^\vee\in \mathcal{F}(\Delta^\vee)^*$ are bijective mappings preserving the dimension and the inclusion relation between $\mathcal{F}(\Delta^\vee)^*$ and $\mathcal{D}(S|V)$, and they are the inverse mappings of each other.
\item
The face cone decomposition $\mathcal{D}(S|V)$ of $S$ in $V$ is a convex polyhedral cone decomposition in $V^*$.
$|\mathcal{D}(S|V)|=\Stab(S)^\vee|V$.
If $S$ is rational over $N$, then $\mathcal{D}(S|V)$ is rational over $N^*$.
\item
For any $F\in\mathcal{F}(S)$, $\Vect(\Delta(F,S|V))=\Stab(\Affi(F))^\vee|V$ and $\dim F+\dim \Delta(F,$\hfill\break$S|V)=\dim V$.

For any $F\in\mathcal{F}(S)$ and $G\in\mathcal{F}(S)$, $F\subset G$, if and only if, $\Delta(F,S|V)\supset\Delta(G,S|V)$.

The mapping from $\mathcal{F}(S)$ to $\mathcal{D}(S|V)$ sending $F\in\mathcal{F}(S)$ to $\Delta(F,S|V)\in\mathcal{D}(S|V)$ is a bijective mapping.
\item
The function $\Ord(\;,S|V):\Stab(S)^\vee|V\rightarrow\R$ sending $\bar{\omega}\in\Stab(S)^\vee|V$ to $\Ord(\bar{\omega},S|V)\in\R$ is a piecewise linear convex function over $\mathcal{D}(S|V)$.

If $S$ is rational over $N$, then this function $\Ord(\;,S|V)$ is rational over $N^*$.
\item
Denote
\begin{equation*}\begin{split}
\Sigma(\mathcal{D}(S|V), \Ord(\;,S|V))=
\{\xi \in W^*|&\xi=\rho(\bar{\omega})+t\zeta\text{ for some }\bar{\omega}\in |\mathcal{D}(S|V)|\\
&\quad\text{and some }t\in\R
\text{ with }t\geq -\Ord(\bar{\omega},S|V)\}.
\end{split}\end{equation*}
Then, $\Sigma(\mathcal{D}(S|V), \Ord(\;,S|V))=\Delta^\vee$,
$\Sigma(\mathcal{D}(S|V), \Ord(\;,S|V))^\vee|W^*=\Delta$, and
$\sigma((\Sigma(\mathcal{D}(S|V), \Ord(\;,S|V))^\vee|W^*)\cap H^\circ)=S$.
\end{enumerate}
\end{prop}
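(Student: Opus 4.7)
The plan is to organize the thirteen claims around the correspondence of Lemma~\ref{first correspondence}, which identifies $\Delta$ with $S$, and the face duality of Proposition~\ref{property of cones}.17, which identifies $\mathcal{F}(\Delta)$ with $\mathcal{F}(\Delta^\vee)$ (reversing inclusions). The whole proposition is then the statement that these two dualities agree on the ``lower'' part: the subset $\mathcal{F}(\Delta)_*$ of faces meeting $H^\circ$ corresponds under $\sigma$ to $\mathcal{F}(S)$, and under face duality to $\mathcal{F}(\Delta^\vee)^* = \mathcal{F}(\Delta^\vee)\backslash\partial_-\Delta^\vee$, which in turn corresponds via $\pi^*$ to $\mathcal{D}(S|V)$. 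I would dispose of Part 1 first by direct computation: $L\subset H\cap(-H)=V$ (Lemma~\ref{eight mappings}.3), so $L=(\Delta\cap V)\cap(-(\Delta\cap V))=\Stab(S)\cap(-\Stab(S))$ by Lemma~\ref{first correspondence}; for the existence of $\Gamma\in\mathcal{F}(\Delta)_*$ of dimension $\ell+1$ below a given $\Lambda$, descend along a maximal chain of faces (Proposition~\ref{property of cones}.7) from $\Lambda$ to $L$ and take the last face meeting $H^\circ$.

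Next I would set up the analytic core in Parts 2 and 3, which controls $\partial_-\Delta^\vee$. The inclusion $H^\vee\subset\Delta^\vee$ follows from $\Delta\subset H$, while $-H^\vee\not\subset\Delta^\vee$ follows from $\Delta\cap H^\circ\neq\emptyset$ by evaluating $-\zeta$ on an interior point. For the identity $\Delta^\vee+\pi^{*-1}(0)=(\Delta\cap V)^\vee|W$ and the description of $\pi^*(\Delta^\vee)=\Stab(S)^\vee$, apply Lemma~\ref{j}.4 together with the fact that $\pi^{*-1}(0)=\R\zeta\subset H^\vee\subset\Delta^\vee$, which makes $\Delta^\vee+\R\zeta=\Delta^\vee+\R_0(-\zeta)$ automatically closed. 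The bijection $\pi^*\colon\partial_-\Delta^\vee\to\Stab(S)^\vee$ is then the content of Part 3: for each $\bar\omega\in\Stab(S)^\vee$ the minimum $\Ord(\bar\omega,S|V)$ is attained by Lemma~\ref{cpp1}.5, and the section is $\bar\omega\mapsto\rho^*(\bar\omega)-\Ord(\bar\omega,S|V)\zeta$. Checking that this element lies in $\partial_-\Delta^\vee$ amounts to the computation $\langle\rho^*(\bar\omega)+t\zeta,b+uz\rangle=\langle\bar\omega,b\rangle+tu$ together with $S=\sigma(\Delta\cap H^\circ)$, which translates the condition $\rho^*(\bar\omega)+t\zeta\in\Delta^\vee$ into $t\geq-\Ord(\bar\omega,S|V)$.

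Part 6 then follows from the same computation applied to the equality cases: $\Delta(\omega,\Delta|W)\cap H^\circ$ is cut out by $\langle\omega,b+uz\rangle=\Ord(\omega,\Delta|W)=0$ with $u>0$, which under division by $u$ becomes $\langle\pi^*(\omega),\sigma(b+uz)\rangle=-\langle\omega,z\rangle=\Ord(\pi^*(\omega),S|V)$. Having established Parts 1--6, Parts 4, 5, 7, 8, 9 are formal: the bijection $\mathcal{F}(\Delta)_*\leftrightarrow\mathcal{F}(S)$ of Part 7 is just $\Lambda\mapsto\sigma(\Lambda\cap H^\circ)$ with inverse $F\mapsto\Clos(\sigma^{-1}(F))$ (Lemma~\ref{first correspondence}); composing with face duality $\Lambda\mapsto\Delta(\Lambda,\Delta|W)$ from Proposition~\ref{property of cones}.17 gives Part 9, and the dimension and closure identities in Part 8 follow from Lemma~\ref{first correspondence}.5 combined with the dimension formula $\dim\Lambda+\dim\Delta(\Lambda,\Delta|W)=\dim W$. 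Parts 10 and 11 are then transported from $\mathcal{F}(\Delta^\vee)\Mx^*$ along $\pi^*$, and rationality is preserved because $\pi^*$ maps $(N+\Z z)^*$ onto $N^*$.

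Finally, Parts 12 and 13 are consequences of the explicit section: piecewise linearity and rationality of $\Ord(\,\cdot\,,S|V)$ follow because its graph (or rather, $\bar\omega\mapsto-\Ord(\bar\omega,S|V)\zeta+\rho^*(\bar\omega)$) is linear on each $\pi^*(\Lambda^*)\in\mathcal{D}(S|V)$ by Part 9, while convexity follows because $\partial_-\Delta^\vee$ is the lower boundary of the closed convex cone $\Delta^\vee$ under the vertical direction $\zeta$. Part 13 is then the tautological restatement that $\Delta^\vee$ is precisely the epigraph of $-\Ord(\,\cdot\,,S|V)$ over $\Stab(S)^\vee$. The main obstacle I anticipate is in Parts 2--3: namely, proving the closedness of $\Delta^\vee+\R\zeta$ and the bijectivity of $\pi^*\colon\partial_-\Delta^\vee\to\Stab(S)^\vee$ without circular appeal to the very face structure one is trying to build. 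The cleanest route is to extract both from the explicit description of $\Delta$ as $\Convcone((X+\{z\})\cup Y)$ with $S=\Conv(X)+\Convcone(Y)$ (Lemma~\ref{first correspondence}.3), so that $\Delta^\vee=\{\xi\in W^*\mid\langle\xi,x+z\rangle\geq0,\,\langle\xi,y\rangle\geq0\text{ for all }x\in X,\,y\in Y\}$ is manifestly the epigraph of a piecewise linear convex function on $\Convcone(Y)^\vee=\Stab(S)^\vee$.
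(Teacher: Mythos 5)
Your proposal is sound and covers all thirteen claims, whereas the paper's own proof treats only claim $4$ and leaves the rest implicit. The substantive difference is in the route to claim $4$. The paper works directly with the edge decomposition $\Delta=L+\sum_{\Gamma\in\mathcal{F}(\Delta)_{\ell+1}}\R_0 e_\Gamma$ of Proposition~\ref{property of cones}.18: for $(a)\Rightarrow(b)$ it locates a generator $e_\Gamma\in\Lambda\cap H^\circ$ and evaluates $\omega+t\zeta$ on it, and for $(c)\Rightarrow(a)$ it computes the actual number $t_0=\max\{-\langle\omega,e_\Gamma\rangle/\langle\zeta,e_\Gamma\rangle\}<0$ bounding $(\{\omega\}+\R\zeta)\cap\Delta^\vee$ from below, concluding $\omega\notin\partial_-\Delta^\vee$. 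You instead first construct the explicit section $\bar\omega\mapsto\rho^*(\bar\omega)-\Ord(\bar\omega,S|V)\,\zeta$ of $\pi^*\colon\partial_-\Delta^\vee\to\Stab(S)^\vee|V$ in claims $2$ and $3$, derive claim $6$ from it (for $\omega\in\partial_-\Delta^\vee$ the optimal point $x$ with $\langle\pi^*\omega,x\rangle=\Ord(\pi^*\omega,S)$ lifts to $x+z\in\Delta(\omega,\Delta|W)\cap H^\circ$), and then obtain $(c)\Rightarrow(a)$ formally from $6(a)$; your $(a)\Rightarrow(b)$ is the same evaluation argument but applied to an arbitrary $a\in\Lambda\cap H^\circ$ rather than to a chosen generator $e_\Gamma$, which is a mild simplification. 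The trade-off: the paper's proof of $4$ is self-contained and avoids invoking Lemma~\ref{cpp1}.5 (that $\Ord(\bar\omega,S)$ is an attained minimum) at that stage, whereas your section construction imports it up front but then pays for itself immediately since the section is what powers claims $5$--$13$ anyway. One small point to flag when you flesh this out: the paper's closedness of $\Delta^\vee+\R\zeta$ does not need the clever observation that $\R_0\zeta\subset\Delta^\vee$ already; it is automatic because $\Delta^\vee$ and $\R\zeta$ are polyhedral cones (Corollary~\ref{good category}.2), and you should cite that rather than argue it ad hoc. Also, the paper's proof of $4(a)\Rightarrow(b)$ contains a small typo ($\Vect(H)=\R\zeta$ should read $\Vect(H^\vee)=\R\zeta$, cf.\ Lemma~\ref{eight mappings}.3 and 4), which you correctly avoid by working with $\Vect(H^\vee)$ throughout.
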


\begin{proof}
We give only the proof of 4.

Since $L\subset V=\{x\in W|\langle\zeta,x\rangle=0\}$, $\langle\zeta,b\rangle=0$ for any $b\in L$.

Consider any face $\Gamma$ of $\Delta$ with $\dim\Gamma=\ell+1$.
We take any point $e_\Gamma\in\Gamma- L$.
We have $\Gamma=\R_0e_\Gamma+L$.
Since $e_\Gamma\in\Gamma\subset\Delta\subset H=\{x\in W|\langle\zeta,x\rangle\geq 0\}$, $\langle\zeta,e_\Gamma\rangle\geq 0$.

We know $\Delta=\sum_{\Gamma\in\mathcal{F}(\Delta)_{\ell+1}}\R_0e_\Gamma+L$.
Take any point $a\in\Delta\cap H^\circ\neq\emptyset$.
$a\in\Delta=\sum_{\Gamma\in\mathcal{F}(\Delta)_{\ell+1}}\R_0e_\Gamma+L$.
Take any function $t: \mathcal{F}(\Delta)_{\ell+1}\rightarrow\R_0$ and $b\in L$ with $a=\sum_{\Gamma\in\mathcal{F}(\Delta)_{\ell+1}}t(\Gamma) e_\Gamma+b$.
Since $a\in H^\circ=\{x\in W|\langle\zeta,x\rangle>0\}$,
$0<\langle\zeta,a\rangle=\sum_{\Gamma\in\mathcal{F}(\Delta)_{\ell+1}}t(\Gamma)\langle\zeta, e_\Gamma\rangle+\langle\zeta,b\rangle=\sum_{\Gamma\in\mathcal{F}(\Delta)_{\ell+1}}t(\Gamma)\langle\zeta, e_\Gamma\rangle$.
We know that there exists $\Gamma\in\mathcal{F}(\Delta)_{\ell+1}$ with $\langle\zeta, e_\Gamma\rangle>0$.

Consider any face $\Lambda$ of $\Delta$.
We know $\Lambda=\sum_{\Gamma\in\mathcal{F}(\Delta)_{\ell+1}\backslash\Lambda}\R_0e_\Gamma+L$.

\noindent $(a)\Rightarrow (b)$.
Assume $(a)$. $\Lambda\cap H^\circ\neq\emptyset$. Take any point $a\in\Lambda\cap H^\circ$. Since $a\in H^\circ=\{x\in W|\langle\zeta,x\rangle>0\}$, $\langle\zeta,a\rangle>0$.
$a\in\Lambda=\sum_{\Gamma\in\mathcal{F}(\Delta)_{\ell+1}\backslash\Lambda}\R_0e_\Gamma+L$. Take any function $t: \mathcal{F}(\Delta)_{\ell+1}\backslash\Lambda\rightarrow\R_0$ and $b\in L$ with $a=\sum_{\Gamma\in\mathcal{F}(\Delta)_{\ell+1}\backslash\Lambda}t(\Gamma)e_\Gamma+b$.
$0 <\langle\zeta,a\rangle=\sum_{\Gamma\in\mathcal{F}(\Delta)_{\ell+1}\backslash\Lambda}t(\Gamma)\langle\zeta, e_\Gamma\rangle+\langle\zeta,b\rangle=\sum_{\Gamma\in\mathcal{F}(\Delta)_{\ell+1}\backslash\Lambda}t(\Gamma)\langle\zeta, e_\Gamma\rangle$.
We know that there exists $\Gamma\in\mathcal{F}(\Delta)_{\ell+1}\backslash\Lambda$ with $\langle\zeta,e_\Gamma\rangle>0$.
We take any $\Gamma\in\mathcal{F}(\Delta)_{\ell+1}\backslash\Lambda$ with $\langle\zeta,e_\Gamma\rangle>0$.

Note that $H^\vee=\R_0\zeta$ and $\Vect(H)=\R\zeta$.

Consider any $\omega\in\Delta(\Lambda,\Delta|W)$.
$e_\Gamma\in\Gamma\subset\Lambda\subset\Delta(\omega,\Delta|W)$.
$\langle\omega, e_\Gamma\rangle=0$.

Consider any $\chi\in(\{\omega\}+\Vect(H))\cap\Delta^\vee$.
Take $t\in\R$ with $\chi=\omega+t\zeta$.
Since $\chi\in\Delta^\vee$ and $e_\Gamma\in\Gamma\subset\Lambda\subset\Delta$, we have $0\leq\langle\chi, e_\Gamma\rangle=\langle\omega+t\zeta, e_\Gamma\rangle=\langle\omega, e_\Gamma\rangle+t\langle\zeta, e_\Gamma\rangle= t\langle\zeta, e_\Gamma\rangle$.
Since $\langle\zeta, e_\Gamma\rangle>0$, we know $t\geq 0$ and $\chi=\omega+t\zeta\in\{\omega\}+\R_0\zeta=\{\omega\}+H$.

We know $(\{\omega\}+\Vect(H))\cap\Delta^\vee\subset\{\omega\}+H$ and $\omega\in\partial_-\Delta^\vee$.

We know $\Delta(\Lambda,\Delta|W)\subset\partial_-\Delta^\vee$ and $\Delta(\Lambda,\Delta|W)\in\mathcal{F}(\Delta^\vee)^*$.

\noindent $(c)\Rightarrow (a)$.
Assume $(a)$ does not hold.
$\Lambda\cap H^\circ=\emptyset$.
Since $\Lambda\subset\Delta\subset H$, $\Lambda\subset H- H^\circ=\partial H=V=\{x\in W|\langle\zeta,a\rangle=0\}$, and $\langle\zeta,a\rangle=0$ for any $a\in\Lambda$.
In particular, $\langle\zeta,e_\Gamma\rangle=0$ for any $\Gamma\in\mathcal{F}(\Delta)_{\ell+1}\backslash\Lambda$.
If $\Gamma\in\mathcal{F}(\Delta)_{\ell+1}$ and $\langle\zeta,e_\Gamma\rangle>0$, then $\Gamma\not\subset\Lambda$.

Consider any $\omega\in\Delta^\circ(\Lambda,\Delta|W)$.
$\omega\in\Delta^\vee$.
Since $L\subset\Lambda=\Delta(\omega,\Delta|W)$, $\langle\omega, b\rangle=0$ for any $b\in L$.
Consider any $\Gamma\in\mathcal{F}(\Delta)_{\ell+1}$.
If $\Gamma\subset\Lambda$, then
$e_\Gamma\in\Gamma\subset\Lambda=\Delta(\omega,\Delta|W)$ and $\langle\omega, e_\Gamma\rangle=0$.
It is easy to see that if $\Gamma\not\subset\Lambda$, then $\langle\omega, e_\Gamma\rangle>0$

Consider any $t\in\R$.
For any $b\in L$ we have $\langle\omega+t\zeta, b\rangle=\langle\omega, b\rangle+t\langle\zeta, b\rangle=0+t0=0$.
For any $\Gamma\in\mathcal{F}(\Delta)_{\ell+1}$, $\langle\omega+t\zeta, e_\Gamma\rangle=\langle\omega, e_\Gamma\rangle+t\langle\zeta, e_\Gamma\rangle$.
$\omega+t\zeta\in\Delta^\vee$, if and only if, $\langle\omega, e_\Gamma\rangle+t\langle\zeta, e_\Gamma\rangle\geq 0$ for any $\Gamma\in\mathcal{F}(\Delta)_{\ell+1}$.

Consider any $\Gamma\in\mathcal{F}(\Delta)_{\ell+1}$.
If $\Gamma\subset\Lambda$, then
$\langle\omega, e_\Gamma\rangle+t\langle\zeta, e_\Gamma\rangle=0+t0=0$.
If $\Gamma\not\subset\Lambda$ and $\langle\zeta, e_\Gamma\rangle=0$, then
$\langle\omega, e_\Gamma\rangle+t\langle\zeta, e_\Gamma\rangle=\langle\omega, e_\Gamma\rangle>0$.
We consider the case $\Gamma\not\subset\Lambda$ and $\langle\zeta, e_\Gamma\rangle>0$.
We have $\langle\omega, e_\Gamma\rangle>0$, $-\langle\omega, e_\Gamma\rangle/\langle\zeta, e_\Gamma\rangle<0$ and $\langle\omega, e_\Gamma\rangle+t\langle\zeta, e_\Gamma\rangle\geq0$, if and only if, $t\geq-\langle\omega, e_\Gamma\rangle/\langle\zeta, e_\Gamma\rangle$.
Put
$$t_0=\max\{-\frac{\langle\omega, e_\Gamma\rangle}{\langle\zeta, e_\Gamma\rangle}|\Gamma\in\mathcal{F}(\Delta)_{\ell+1}, \langle\zeta, e_\Gamma\rangle>0\}\in\R.$$
$t_0<0$ and $\omega+t\zeta\in\Delta^\vee$, if and only if, $t\geq t_0$ for any $t\in\R$.

Since $\Vect(H)=\R\zeta$ and $H=\R_0\zeta$, we know
$\{\omega+t\zeta|t\in\R, t\geq t_0\}=(\{\omega\}+\Vect(H))\cap\Delta^\vee\not\subset\{\omega\}+\R_0\zeta=\{\omega\}+H$, and $\omega\not\in\partial_-\Delta^\vee$.

We know $\Delta^\circ(\Lambda,\Delta|W)\cap\partial_-\Delta^\vee=\emptyset$.
Claim $(c)$ does not hold.

\noindent $(b)\Rightarrow (c)$. Trivial.

\end{proof}

\begin{lemma}
\label{another correspondence}
Denote
\begin{equation*}
\begin{split}
\mathcal{X}(V)&=\text{the set of all convex pseudo polyhedrons in }V,\\
\mathcal{X}(V,N)&=\text{the set of all rational convex pseudo polyhedrons over }N\text{ in }V,\\
\mathcal{Y}(V)&=\{(\mathcal{D},\phi)|
\mathcal{D}\text{ is a convex polyhedral cone decomposition in }V^*\text{ such that}\\
&\qquad\quad\text{the support }|\mathcal{D}|\text{ of }\mathcal{D}\text{ is a convex polyhedral cone in }V^*\text{ and there}\\
&\qquad\quad\text{exists a piecewise linear convex function }|\mathcal{D}|\rightarrow\R\text{ over }\mathcal{D},\\
&\qquad\quad 
\phi: |\mathcal{D}|\rightarrow\R \text{ is a piecewise linear convex function over }\mathcal{D}\},\\
\mathcal{Y}(V, N)&=\{(\mathcal{D},\phi)|
\mathcal{D}\text{ is a rational convex polyhedral cone decomposition over }N^*\\
&\qquad\quad\text{in }V^*\text{ such that the support }|\mathcal{D}|\text{ of }\mathcal{D}
\text{ is a convex polyhedral cone in }\\
&\qquad\quad V^*\text{ and there exists a piecewise linear function }|\mathcal{D}|\rightarrow\R\text{ which is}\\
&\qquad\quad\text{convex over }\mathcal{D}\text{ and rational over }N^*,
\phi: |\mathcal{D}|\rightarrow\R \text{ is a piecewise}\\
&\qquad\quad\text{linear function which is convex over }\mathcal{D}\text{ and rational over }N^*\}.
\end{split}
\end{equation*}
$\mathcal{X}(V,N)\subset\mathcal{X}(V)$.
$\mathcal{Y}(V,N)\subset\mathcal{Y}(V)$.
Putting $\Phi(S)=(\mathcal{D}(S|V),\Ord(\;, S|V))\in\mathcal{Y}(V)$ for any $S\in\mathcal{X}$, we define a mapping $\Phi:\mathcal{X}(V)\rightarrow\mathcal{Y}(V)$.
$\Phi$ induces a mapping $\Phi': \mathcal{X}(V, N)\rightarrow\mathcal{Y}(V, N)$.

For any $(\mathcal{D},\phi)\in\mathcal{Y}$ we denote
\begin{equation*}
\begin{split}
\Sigma(\mathcal{D},\phi)=\{\xi\in W^*|&\xi=\rho^*(\bar{\omega})+t\zeta
\text{ for some }\bar{\omega}\in |\mathcal{D}|\text{ and some }t\in\R\\
&\quad\text{with } t\geq-\phi(\bar{\omega})\}.
\end{split}
\end{equation*}
\begin{enumerate}
\item
Consider any $(\mathcal{D},\phi)\in\mathcal{Y}(V)$.

$\Sigma(\mathcal{D},\phi)$ is a convex polyhedral cone in $W^*$.
$H^\vee\subset\Sigma(\mathcal{D},\phi)$.
$-H^\vee\not\subset\Sigma(\mathcal{D},\phi)$.

$\Sigma(\mathcal{D},\phi)^\vee|W^*$ is a convex polyhedral cone in $W$.
$\Sigma(\mathcal{D},\phi)^\vee|W^*\subset H$.\hfill\break
$(\Sigma(\mathcal{D},\phi)^\vee|W^*)\cap H^\circ\neq\emptyset$.
$\sigma((\Sigma(\mathcal{D},\phi)^\vee|W^*)\cap H^\circ)\in\mathcal{X}(V)$.

If $(\mathcal{D},\phi)\in\mathcal{Y}(V, N)$, then $\Sigma(\mathcal{D},\phi)$ is rational over $(N+\Z z)^*$, $\Sigma(\mathcal{D},\phi)^\vee|W^*$ is rational over $N+\Z z$ and $\sigma((\Sigma(\mathcal{D},\phi)^\vee|W^*)\cap H^\circ)\in\mathcal{X}(V, N)$.
\end{enumerate}
Putting
$\Psi(\mathcal{D},\phi)= \sigma((\Sigma(\mathcal{D},\phi)^\vee|W^*)\cap H^\circ)\in\mathcal{X}(V)$ for any $(\mathcal{D},\phi)\in\mathcal{Y}(V)$, we define a mapping $\Psi: \mathcal{Y}(V)\rightarrow \mathcal{X}(V)$.
$\Psi$ induces a mapping $\Psi': \mathcal{Y}(V, N)\rightarrow \mathcal{X}(V, N)$.
\begin{enumerate}
\setcounter{enumi}{1}
\item
$\Phi$ and $\Psi$ are bijective mappings, and they are the inverse mappings of each other.

$\Phi'$ and $\Psi'$ are bijective mappings, and they are the inverse mappings of each other.
\end{enumerate}
\end{lemma}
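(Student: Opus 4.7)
The plan is to establish (1) by a direct polyhedral computation and then deduce (2) almost entirely from Proposition~\ref{miracle}.13 together with a uniqueness argument.

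For (1), I would exploit the piecewise-linear structure of $\phi$ as follows. List $\mathcal{D}\Mx=\{\Delta_1,\dots,\Delta_m\}$ and choose for each $i$ a linear extension $\phi_i:V^*\to\R$ of $\phi|_{\Delta_i}$. The ``convexity'' of Definition~\ref{cpcd}.8 is the upper-convex (concave) inequality, so $\phi(\bar\omega)=\min_i\phi_i(\bar\omega)$ for all $\bar\omega\in|\mathcal{D}|$. Hence for $\bar\omega\in|\mathcal{D}|$ the condition $t\geq-\phi(\bar\omega)$ is equivalent to $t+\phi_i(\bar\omega)\geq 0$ for every $i$, and writing $W^*=\rho^*(V^*)+\R\zeta$ one obtains
$$\Sigma(\mathcal{D},\phi)=\bigl(\rho^*(|\mathcal{D}|)+\R\zeta\bigr)\cap\bigcap_{i=1}^{m}\{\rho^*(\bar\omega)+t\zeta\mid t+\phi_i(\bar\omega)\geq 0\}.$$
Since $|\mathcal{D}|$ is a convex polyhedral cone by hypothesis, each factor on the right is a (rational, in the rational case) convex polyhedral cone in $W^*$, so Corollary~\ref{good category}.2 shows that $\Sigma(\mathcal{D},\phi)$ is a convex polyhedral cone in $W^*$. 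Containment of $H^\vee=\R_0\zeta$ is immediate from $0\in|\mathcal{D}|$ and $\phi(0)=0$, while $-\zeta\notin\Sigma(\mathcal{D},\phi)$ shows $-H^\vee\not\subset\Sigma(\mathcal{D},\phi)$. Dualising and invoking Lemma~\ref{i} then yields that $\Sigma(\mathcal{D},\phi)^\vee|W^*$ is a convex polyhedral cone contained in $H$, meeting $H^\circ$; Lemma~\ref{first correspondence}.5 now produces $\Psi(\mathcal{D},\phi)\in\mathcal{X}(V)$ and, via Corollary~\ref{good category}, the rational analogue $\Psi'(\mathcal{D},\phi)\in\mathcal{X}(V,N)$.

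For (2), the equality $\Psi\circ\Phi=\Id$ is exactly the last assertion of Proposition~\ref{miracle}.13, so only $\Phi\circ\Psi=\Id$ requires argument. Given $(\mathcal{D},\phi)\in\mathcal{Y}(V)$, set $\Delta=\Sigma(\mathcal{D},\phi)^\vee|W^*$ and $S=\Psi(\mathcal{D},\phi)=\sigma(\Delta\cap H^\circ)$. By the bijection in Lemma~\ref{first correspondence}.5, $\Delta=\Clos(\sigma^{-1}(S))$, and Proposition~\ref{miracle}.13 applied to $S$ then gives
$$\Sigma(\mathcal{D}(S|V),\Ord(\,\cdot\,,S|V))=\Delta^\vee=\Sigma(\mathcal{D},\phi)^{\vee\vee}=\Sigma(\mathcal{D},\phi),$$
the last equality by Lemma~\ref{i}.4 since $\Sigma(\mathcal{D},\phi)$ is closed by~(1). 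It remains to recover $(\mathcal{D},\phi)$ from $\Sigma(\mathcal{D},\phi)$. The support and function are intrinsic: $|\mathcal{D}|=\pi^*(\Sigma(\mathcal{D},\phi))$ and $\phi(\bar\omega)=-\min\{t\in\R\mid\rho^*(\bar\omega)+t\zeta\in\Sigma(\mathcal{D},\phi)\}$, the minimum existing because $\Sigma(\mathcal{D},\phi)\cap(\rho^*(\bar\omega)+\R\zeta)$ is a closed half-line for $\bar\omega\in|\mathcal{D}|$.

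The main obstacle is uniqueness of the decomposition: a priori, a non-trivial refinement of $\mathcal{D}$ could produce the same $\Sigma$. This is where the second clause of Definition~\ref{cpcd}.8 enters decisively. Together with Proposition~\ref{miracle}.9, it identifies the elements of $\mathcal{D}\Mx$ with the $\rho^*$-images of the maximal faces of $\Sigma(\mathcal{D},\phi)$ lying in $\partial_-\Sigma(\mathcal{D},\phi)$, and then $\mathcal{D}=\mathcal{D}\Mx\Fc$ is determined; the same intrinsic recipe recovers $\mathcal{D}(S|V)$ from $\Sigma(\mathcal{D}(S|V),\Ord(\,\cdot\,,S|V))$. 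Equality of the two cones therefore forces $\mathcal{D}=\mathcal{D}(S|V)$ and $\phi=\Ord(\,\cdot\,,S|V)$, completing $\Phi\circ\Psi=\Id$ and hence the bijection; the rational case is identical since every construction above preserves rationality over $N$ or $N^*$.
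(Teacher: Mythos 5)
The paper offers no proof of Lemma~\ref{another correspondence}, so there is nothing to compare against, but your argument is sound and the structure of the reduction is exactly what one would want: part (1) by expressing $\Sigma(\mathcal{D},\phi)$ as a finite intersection of half-spaces and a cone, part (2) by piggy-backing on Proposition~\ref{miracle}.13 and then invoking uniqueness from strict convexity.

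Two points deserve to be made explicit. First, the identity $\phi=\min_i\phi_i$ on $|\mathcal{D}|$ needs a one-line justification: with $\bar\omega\in\Delta_j$, $c\in\Delta_i^\circ$, the concavity inequality $\phi((1-s)c+s\bar\omega)\geq(1-s)\phi(c)+s\phi(\bar\omega)$ together with the fact that $(1-s)c+s\bar\omega\in\Delta_i$ for small $s>0$ (here one needs $\Vect(\Delta_i)=\Vect(|\mathcal{D}|)$, which holds by Lemma~\ref{scpc}.2 since $|\mathcal{D}|$ is a convex polyhedral cone) forces $\phi(\bar\omega)\leq\phi_i(\bar\omega)$; take these $\phi_i$ as linear functions on $\Vect(|\mathcal{D}|)$ before extending arbitrarily to $V^*$. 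Without the observation that the $\Delta_i$ all span $\Vect(|\mathcal{D}|)$, the passage to linear extensions $\phi_i:V^*\to\R$ is potentially ill-defined. Second, the uniqueness of $\mathcal{D}$ from $(|\mathcal{D}|,\phi)$ does not really need the detour through $\partial_-\Sigma$ and Proposition~\ref{miracle}.9 (and in your phrasing you wrote $\rho^*$ where you meant $\pi^*$); a direct combinatorial argument is shorter: if $\phi$ is strictly convex over two decompositions $\mathcal{D}$ and $\mathcal{E}$ with the same support, take $\Delta\in\mathcal{D}\Mx$ and $a\in\Delta^\circ$, choose $E\in\mathcal{E}\Mx$ with $a\in E^\circ$, and for any $b\in\Delta$ the linearity of $\phi$ on the segment $[a,b]$ together with clause (b) of Definition~\ref{cpcd}.8 applied to $\mathcal{E}$ forces $b\in E$; symmetry then gives $\Delta=E$, whence $\mathcal{D}\Mx=\mathcal{E}\Mx$ and $\mathcal{D}=\mathcal{E}$. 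With those two refinements the argument is complete; the rational case does follow formally since $\rho^*$, duality, and $\sigma$ all preserve rationality.
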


\begin{remark}
Assume $\dim V=3$. 
There exists a rational convex polyhedral cone decomposition $\mathcal{D}$ over $N^*$ in $V^*$ such that $|\mathcal{D}|=V^*$ and there does \emph{not} exist a piecewise linear convex function $|\mathcal{D}|\rightarrow \R$ over $\mathcal{D}$.

See Fulton~\cite{F93} page 71.
\end{remark}

\begin{prop}
\label{property of polyhedrons}
Let $S$ be any convex pseudo polyhedron in $V$, and let $X, Y$ be any finite subsets of $V$ satisfying $S=\Conv(X)+\Convcone(Y)$ and $X\neq\emptyset$.
We consider the dual cone $\Stab(S)^\vee=\Stab(S)^\vee|V\subset V^*$ of $\Stab(S)$.
For simplicity we denote
$s=\dim S\in\Z_0$, $L=\Stab(S)\cap(-\Stab(S))\subset \Stab(S)$, $\ell=\dim L\in\Z_0$.
\begin{enumerate}
\item
We consider any finite dimensional vector space $U$ over $\R$ with $\dim S\leq\dim U\leq\dim V$, any injective homomorphism $\nu:U\rightarrow V$ of vector spaces over $\R$, any point $a\in V$ such that $S\subset\nu(U)+{a}$, and any subset $F$ of $S$.
Putting $\bar{\nu}(x)=\nu(x)+a\in V$ for any $x\in U$ we define an injective mapping $\bar{\nu}:U\rightarrow V$.
$S\subset\bar{\nu}(U)$.
The inverse image $\bar{\nu}^{-1}(S)$ is a convex polyhedral cone in $U$.
The set $F$ is a face of $S$, if and only if $\bar{\nu}^{-1}(F)$ is a face of $\bar{\nu}^{-1}(S)$.
\item
We consider any finite dimensional vector space $W$ over $\R$ with $\dim V\leq\dim W$, any injective homomorphism $\pi:V\rightarrow W$ of vector spaces over $\R$, any point $b\in W$ and any subset $F$ of $S$.
Putting $\bar{\pi}(x)=\pi(x)+b$ for any $x\in V$ we define an injective mapping $\bar{\pi}:V\rightarrow W$.
The image $\bar{\pi}(S)$ is a convex polyhedral cone in $W$.
The set $F$ is a face of $S$, if and only if $\bar{\pi}(F)$ is a face of $\bar{\pi}(S)$.
\item
$\ell\leq s$. $\ell=s\Leftrightarrow L+\{a\}=S$ for some $a\in V\Leftrightarrow S=\Affi(S)$.
\item
Let $F$ be any face of $S$.
\begin{enumerate}
\item
$F$ is a convex pseudo polyhedron in $V$.
$\Stab(F)$ is a face of $\Stab(S)$.
\item
If $\omega\in\Stab(S)^\vee$ and $F=\Delta(\omega, S)$, then $\Stab(F)=\Delta(\omega,\Stab(S))$.
\item
$\Stab(F)=\Convcone(Y\cap\Stab(F))$.
$\Vect(\Stab(F))=\Vect(Y\cap\Stab(F))$.
\item
$F=\Conv(X\cap F)+\Stab(F)=S\cap\Affi(F)$. $\Affi(F)=\Affi(X\cap F)+\Vect(\Stab(F))$.
\item
If $S$ is rational over $N$, then $F$ is also rational over $N$.
\item
$L=\Stab(F)\cap(-\Stab(F))\subset \Stab(F)\subset\Vect(\Stab(F))\subset\Stab(\Affi(F))$.
$\ell\leq\dim \Stab(F)\leq\dim F\leq s$.
\item
Let $G$ be any face of $S$ with $G\subset F$. We have $\dim G\leq\dim F$.  $\dim G=\dim F$, if and only if, $G=F$.
\item
Let $G$ be any subset of $F$. $G$ is a face of the convex pseudo polyhedron $F$, if and only if, $G$ is a face of $S$ with $G\subset F$.
\end{enumerate}
\item
Assume $\ell<s$.
For any face $G$ of $S$ with $G\neq S$ there exists a face $F$ of $S$ with $\dim F=s-1$ and $G\subset F$.
There exists a face $F$ of $S$ with $\dim F=s-1$.
\item
$\mathcal{F}(S)$ is a finite set.
$S\in\mathcal{F}(S)_s$ and $\mathcal{F}(S)_s=\{S\}$.
$S$ contains any face of $S$.
For any $i\in\Z_0$, $\mathcal{F}(S)_i\neq\emptyset$ if and only if $\ell\leq i\leq s$.
The characteristic number $c(S)$ of $S$ is equal to $\sharp\mathcal{F}(S)_\ell$.
$c(S)$ is a positive integer.
\item
$L=\Convcone(Y\cap L)=\Vect(Y\cap L)$.

Any face $G$ of $S$ with $\dim G=\ell$ is an affine space in $V$ with $\Stab(G)=L$.

For any face $F$ of $S$ and any face $G$ of $S$ with $\dim G=\ell$, $F\supset G$, if and only if, $F\cap G\neq\emptyset$.

For any faces $F$, $G$ of $S$ with $\dim F=\dim G=\ell$, $F=G$, if and only if, $F\cap G\neq\emptyset$.

For any face $F$ of $S$, there exists a face $G$ of $S$ such that $\dim G=\ell$ and $F\supset G$.

Consider any face $G$ of $S$ with $\dim G=\ell$ and any point $\omega\in\Vect(\Stab(S)^\vee)$. The function $\langle\omega,\;\rangle:G\rightarrow\R$ sending $x\in G$ to $\langle\omega,x\rangle\in\R$ is a constant function on $G$.
\item
The skeleton $\mathcal{V}(S)$ of $S$ is a non-empty closed subset of $S$.
Any connected component of $\mathcal{V}(S)$ is an affine space $G$ in $V$ with $\Stab(G)=L$.
The set of connected components of $\mathcal{V}(S)$ is equal to $\mathcal{F}(S)_\ell$.
The number of connected components of $\mathcal{V}(S)$ is equal to $c(S)$.

For any point $\omega\in\Vect(\Stab(S)^\vee)$, the function $\langle\omega,\;\rangle: \mathcal{V}(S)\rightarrow\R$ sending $x\in \mathcal{V}(S)$ to $\langle\omega,x\rangle\in\R$ is constant on each connected component of $\mathcal{V}(S)$, and this function has only finite number of values.

For any face $F$ of $S$, the intersection $F\cap\mathcal{V}(S)$ is non-empty and union of some connected components of $\mathcal{V}(S)$.
\item
Let $F$ and $G$ be any face of $S$ with $F\subset G$.
We denote $f=\dim F$ and $g=\dim G$.
$\ell\leq f\leq g\leq s$.
There exist $(s-\ell+1)$ of faces $F(\ell), F(\ell+1),\ldots, F(s)$ satisfying the following three conditions:
\begin{enumerate}
\item
For any $i\in\{\ell,\ell+1,\ldots,s-1\}$, $F(i)\subset F(i+1)$.
\item
For any $i\in\{\ell,\ell+1,\ldots,s\}$, $\dim F(i)=i$.
\item
$F(f)=F, F(g)=G, F(s)=S$.
\end{enumerate}
\item
Let $F$ be any face of $S$.
\begin{enumerate}
\item
$F=\partial F\cup F^\circ$. $\partial F\cap F^\circ=\emptyset$.
\item
$F^\circ=F\Leftrightarrow\partial F=\emptyset\Leftrightarrow\dim F=\ell$.
\item
$$\partial F=\bigcup_{G\in\mathcal{F}(F)-\{F\}}G.$$
\item
$F^\circ $ is a non-empty open subset of $\Affi(F)$.
For any $a\in F^\circ$ and any $b\in F$ $\Conv(\{a,b\})-\{b\}\subset F^\circ$.
$F^\circ$ is convex.
$\Clos(F^\circ)=F$.
\end{enumerate}
\item
For any face $G$ of $S$ satisfying $\dim G=\ell$ we take any point $a_G\in G$.\begin{enumerate}
\item
$S=\Conv(\{a_G|G\in\mathcal{F}(S)_{\ell}\})+\Stab(S)$.
\item
For any $\omega\in\Vect(\Stab(S)^\vee)$, $\{\langle\omega, x\rangle|x\in\mathcal{V}(S)\}= $\hfill\break$\{\langle\omega, a_G\rangle| G\in\mathcal{F}(S)_{\ell}\}$.
\item
For any $\omega\in\Stab(S)^\vee$, $\Ord(\omega, S|V)=\min\{\langle\omega, a_G\rangle| G\in\mathcal{F}(S)_{\ell}\}$.
\item
For any face $F$ of $S$,
$F=\Conv(\{a_G| G\in\mathcal{F}(S)_{\ell}, G\subset F\})+\Stab(F)$.
\end{enumerate}
\item
Consider any $m\in\Z_+$ and any mapping $F:\{1,2,\ldots,m\}\rightarrow\mathcal{F}(S)$.
If $\cap_{i\in\{1,2,\ldots,m\}}F(i)\neq\emptyset$, then the intersection $\cap_{i\in\{1,2,\ldots,m\}}F(i)$ is a face of $S$.
\item
The face cone decomposition $\mathcal{D}(S|V)$ of $S$ is a convex polyhedral cone decomposition in $V^*$.
$|\mathcal{D}(S|V)|=\Stab(S)^\vee$.
$\dim \mathcal{D}(S|V)=\dim \Stab(S)^\vee=\dim V-\ell$.
$c(S)=\sharp\mathcal{D}(S|V)^0$.
If $S$ is rational over $N$, then $\mathcal{D}(S|V)$ is rational over $N^*$.
The minimum element of $\mathcal{D}(S|V)$ is $\Delta(S, S|V)$.
$\Delta^\circ(S, S|V)=\Delta(S, S|V)=\Stab(\Affi(S))^\vee|V$.
$\dim\Delta(S, S|V)=\dim V-s$.

For any $i\in\Z$, $\mathcal{D}(S|V)_i\neq\emptyset$ if and only if $\dim V-s\leq i\leq\dim V-\ell$, and $\mathcal{D}(S|V)^i\neq\emptyset$ if and only if $0\leq i\leq s-\ell$.
\item
Let $F$ be any face of $S$.
\begin{enumerate}
\item
$\Delta(F,S|V)\in\mathcal{D}(S|V)$.
\item
$\Vect(\Delta(F,S|V))=\Stab(\Affi(F))^\vee|V$.
\item
$\dim F+\dim\Delta(F,S|V)=\dim V$.
\item
$\Delta^\circ(F, S|V)=\Delta(F,S|V)^\circ$.
\item
$\Delta(F,S|V)=\Clos(\Delta^\circ(F, S|V))$.
\item
$\Delta^\circ(F,S|V)\subset\Delta^\circ(\Stab(F),\Stab(S)|V)\in\mathcal{F}(\Stab(S)^\vee)$.
\end{enumerate}
\item
For any faces $F,G$ of $S$, $F\subset G$, if and only if, $\Delta(F,S|V)\supset\Delta(G,S|V)$.

The mapping from $\mathcal{F}(S)$ to $\mathcal{D}(S|V)$ sending $F\in\mathcal{F}(S)$ to $\Delta(F,S|V)\in\mathcal{D}(S|V)$ is a bijective mapping reversing the inclusion relation.
\item
Consider any two faces $F, G$ of $S$.
The following four conditions are equivalent:
\begin{enumerate}
\item
$F\subset G$.
\item
$F^\circ\cap G\neq\emptyset$.
\item
$\Delta(F)\supset\Delta(G)$
\item
$\Delta(F)\cap\Delta^\circ(G)\neq\emptyset$.
\end{enumerate}
The following six conditions are also equivalent:
\begin{enumerate}
\setcounter{enumii}{4}
\item
$F=G$.
\item
$F^\circ=G^\circ$.
\item
$F^\circ\cap G^\circ\neq\emptyset$.
\item
$\Delta(F)=\Delta(G)$.
\item
$\Delta^\circ(F)=\Delta^\circ(G)$.
\item
$\Delta^\circ(F)\cap\Delta^\circ(G)\neq\emptyset$.
\end{enumerate}
\item
$c(S)=\sharp\mathcal{D}(S|V)^0=\sharp\mathcal{F}(S)_\ell=$ the number of connected components of $\mathcal{V}(S)\in\Z_+$.
\item
The following three conditions are equivalent:
\begin{enumerate}
\item
$c(S)=1$.
\item
$\mathcal{D}(S|V)=\mathcal{F}(\Stab(S)^\vee)$.
\item
There exists $a\in V$ satisfying $S=\{a\}+\Stab(S)$.
\end{enumerate}
\item
The family $\{F^\circ|F\in\mathcal{F}(S)\}$ of subsets of $S$ gives the equivalence class decomposition of $S$, in other words, the following three conditions hold:
\begin{enumerate}
\item
$F^\circ\neq\emptyset$ for any $F\in\mathcal{F}(S)$.
\item
$F^\circ=G^\circ$, if and only if, $F^\circ\cap G^\circ\neq\emptyset$ for any $F\in\mathcal{F}(S)$ and any $G\in\mathcal{F}(S)$.
\item
$$S=\bigcup_{ F\in\mathcal{F}(S)}F^\circ.$$
\end{enumerate}
\item
The family $\{\Delta^\circ(F)|F\in\mathcal{F}(S)\}$ of subsets of $\Stab(S)^\vee$ gives the equivalence class decomposition of $\Stab(S)^\vee $, in other words, the following three conditions hold:
\begin{enumerate}
\item
$\Delta^\circ(F)\neq\emptyset$ for any $F\in\mathcal{F}(S)$.
\item
$\Delta^\circ(F)=\Delta^\circ(G)$, if and only if, $\Delta^\circ(F)\cap\Delta^\circ(G)\neq\emptyset$ for any $F\in\mathcal{F}(S)$ and any $G\in\mathcal{F}(S)$.
\item
$$\Stab(S)^\vee =\bigcup_{ F\in\mathcal{F}(S)}\Delta^\circ(F).$$
\end{enumerate}
\item
The function $\Ord(\;,S|V):\Stab(S)^\vee\rightarrow\R$ sending $\omega\in\Stab(S)^\vee$ to $\Ord(\omega,S|$\hfill\break$V)\in\R$ is a piecewise linear convex function over $\mathcal{D}(S|V)$.

If $S$ is rational over $N$, then this function $\Ord(\;,S|V)$ is rational over $N^*$.
\item
Let $m=\dim\Delta(S)\in\Z_0$.
$m=\dim V-s$.
For any $\Lambda\in\mathcal{D}(S|V)_{m+1}$ we take any point $\omega_\Lambda\in\Lambda-\Delta(S)$.
Then, 
\begin{equation*}\begin{split}
S&=\bigcap_{\omega\in\Stab(S)^\vee}\{x\in V|\langle\omega,x\rangle\geq\Ord(\omega,S|V)\}\cap\Affi(S)\\
&=\bigcap_{\Lambda\in\mathcal{D}(S|V)_{m+1}}\{x\in V|\langle\omega_\Lambda,x\rangle\geq\Ord(\omega_\Lambda,S|V)\}\cap\Affi(S).
\end{split}\end{equation*}
\item
Consider any finite dimensional vector space $W$ over $\R$ and any homomorphism $\pi:V\rightarrow W$ of vector spaces over $\R$.
The image $\pi(S)$ is a convex pseudo polyhedron in $W$, and $\pi(S)^\circ=\pi(S^\circ)$.

If $S$ is a convex polyhedron in $V$, then $\pi(S)$ is a convex polyhedron in $W$.
If $S$ is a convex polyhedral cone in $V$, then $\pi(S)$ is a convex polyhedral cone in $W$.
\end{enumerate}
\end{prop}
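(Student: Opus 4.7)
The plan is to reduce almost every assertion to the corresponding fact about convex polyhedral cones (Proposition~\ref{property of cones}) by invoking the one-to-one homogenisation correspondence established in Lemma~\ref{first correspondence} and refined in Proposition~\ref{miracle}. Concretely, I embed $V$ into $W=V\oplus\R z$, set $H=V+\R_{0}z$, and associate to $S$ the convex polyhedral cone $\Delta=\Clos(\sigma^{-1}(S))\subset H$ with $\Delta\cap H^{\circ}\neq\emptyset$. By Lemma~\ref{first correspondence} we have $S=\sigma(\Delta\cap H^{\circ})$, $\dim\Delta=\dim S+1$, $\Delta\cap V=\Stab(S)$, rationality over $N$ on the polyhedron side corresponds to rationality over $N+\Z z$ on the cone side, and every construction (support, interior, boundary, affine hull) on $S$ pulls back through $\sigma$ to the corresponding construction on $\Delta$. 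Proposition~\ref{miracle} supplies the precise face/face-cone dictionary: $\mathcal{F}(\Delta)_{*}=\{\Lambda\in\mathcal{F}(\Delta)\mid\Lambda\cap H^{\circ}\neq\emptyset\}\leftrightarrow\mathcal{F}(S)$ via $\Lambda\mapsto\sigma(\Lambda\cap H^{\circ})$, and $\mathcal{F}(\Delta^{\vee})^{*}\leftrightarrow\mathcal{D}(S\mid V)$ via $\Lambda^{*}\mapsto\pi^{*}(\Lambda^{*})$.

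With this dictionary in hand I would handle the assertions in groups. Items 1 and 2 (transport under affine embeddings) follow because an affine change of coordinates in $V$ lifts to a linear change in $W$, and $\bar{\nu}^{-1}(S)$, $\bar{\pi}(S)$ are again convex pseudo polyhedrons whose homogenisations are the evident pull-backs and push-forwards of $\Delta$. Items 3, 6, 7 and the first parts of 4, 5 (on $\dim S$, $\ell$, existence and dimensions of faces, $\mathcal{F}(S)_{s}=\{S\}$, recursion down to faces of codimension one, and the description $L=\Convcone(Y\cap L)=\Vect(Y\cap L)$ together with the structure of the minimal faces) come from Proposition~\ref{property of cones}.3--7 applied to $\Delta$ together with the equality $\dim\Lambda=\dim F+1$ and $\ell+1=\dim(L\oplus\R z)$. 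Items 8--9 on the skeleton reduce to observing that the minimal faces $G\in\mathcal{F}(S)_{\ell}$ correspond to faces $\Lambda\in\mathcal{F}(\Delta)_{*}$ of dimension $\ell+1$, each of the form $\R_{\geq 0}e_{\Lambda}+L$ with $\langle\zeta,e_{\Lambda}\rangle>0$, so $G=\sigma(\Lambda\cap H^{\circ})=e_{\Lambda}/\langle\zeta,e_{\Lambda}\rangle+L$ is indeed an affine translate of $L$. Item 10 is obtained by iterating item 5, and item 11 is the image under $\sigma$ of Proposition~\ref{property of cones}.8. Item 12 is proved by choosing $a_{G}\in G\cap(V+\{z\})^{\circ}$ in the lifted picture; the minimum of $\langle\omega,\cdot\rangle$ on $S$ is attained on the skeleton because $\Delta(\omega,\Delta)$ is a face of $\Delta$ meeting $H^{\circ}$ and hence contains some $\Lambda\in\mathcal{F}(\Delta)_{\ell+1}$. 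Items 13 and 21 are formal from the lattice structure of $\mathcal{F}(S)$.

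The central block is items 14--19 and 22--23, which is precisely the content of Proposition~\ref{miracle}: parts 9--13 of that proposition already give that $\mathcal{D}(S\mid V)$ is a convex polyhedral cone decomposition with $|\mathcal{D}(S\mid V)|=\Stab(S)^{\vee}$, rational over $N^{*}$ when $S$ is rational over $N$, that $\Delta(F,S\mid V)$ is a face of $\Stab(S)^{\vee}$-fan with $\Vect(\Delta(F,S\mid V))=\Stab(\Affi F)^{\vee}$ and $\dim F+\dim\Delta(F,S\mid V)=\dim V$, and that $F\mapsto\Delta(F,S\mid V)$ is an inclusion-reversing bijection. The equivalences in item 18 combine this bijection with item 11.(a)--(b) to turn inclusions of faces into nonempty intersections with open face cones, and item 20 follows because $c(S)=1$ is equivalent (via the bijection) to $\mathcal{D}(S\mid V)=\mathcal{F}(\Stab(S)^{\vee})$, which forces $S$ to have a unique minimal face $G$ of dimension $\ell$; since $\Stab(G)=L$ and $G$ already contains a translate of $\Stab(S)$, one gets $S=G+\Stab(S)=\{a\}+\Stab(S)$. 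Item 22 is item 13 of Proposition~\ref{miracle}, and item 24, the defining-inequality description of $S$, follows by taking duals: on the cone side, Proposition~\ref{property of cones}.13 writes $\Delta=\bigcap_{\Lambda^{*}\in\mathcal{F}(\Delta^{\vee})^{1}}(\R_{0}\omega_{\Lambda^{*}})^{\vee}\cap\Vect(\Delta)$, and intersecting with $V+\{z\}$ and applying $\sigma$ yields the half-space description of $S$ inside $\Affi(S)$ using the facet-normals $\omega_{\Lambda}$ for $\Lambda\in\mathcal{D}(S\mid V)_{m+1}$. Item 25 (image under a linear map) is handled by passing to the lifted cone $\Delta\subset W$, extending $\pi$ to $\pi\oplus\Id_{\R}:W\rightarrow W'=\pi(V)\oplus\R z$, using Corollary~\ref{good category}.4 to see that the image is a convex polyhedral cone contained in the corresponding $H'$ and meeting its interior, and then projecting back via the $\sigma$-correspondence in $W'$.

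The main obstacle is not any individual statement but item 23, the piecewise linearity and convexity of $\Ord(\,\cdot\,,S\mid V)$. Piecewise linearity on each $\Delta(F,S\mid V)$ is straightforward from $\Ord(\omega,S)=\langle\omega,a_{F}\rangle$ for any $a_{F}\in F\cap\mathcal{V}(S)$ together with the fact (in item 7) that $\langle\omega,\cdot\rangle$ is constant on connected components of $\mathcal{V}(S)$. Convexity is the delicate point: I would derive it from the homogenisation by noting that the epigraph of $\Ord(\,\cdot\,,S\mid V)$ in $\Stab(S)^{\vee}\times\R$ is, under the identification $W^{*}=V^{*}\oplus\R\zeta$, exactly the set $\{\rho^{*}(\bar\omega)+t\zeta\mid t\geq-\Ord(\bar\omega,S\mid V)\}=\Delta^{\vee}$ by Proposition~\ref{miracle}.13, and $\Delta^{\vee}$ is a convex cone; the strict-convexity clause (condition (b) in Definition~\ref{cpcd}.8) is then equivalent to the statement that $\bar\omega,\bar\chi\in|\mathcal{D}(S\mid V)|$ with $\Ord((1-t)\bar\omega+t\bar\chi,S)=(1-t)\Ord(\bar\omega,S)+t\Ord(\bar\chi,S)$ force the segment to lie on a single facet of $\Delta^{\vee}$, i.e.\ $\bar\omega,\bar\chi$ to lie in a common cone of $\mathcal{D}(S\mid V)$; rationality over $N^{*}$ in the rational case is inherited from Lemma~\ref{first correspondence}, since $\Delta$ is then rational over $N+\Z z$ and its supporting linear forms are rational.
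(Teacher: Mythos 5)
The paper gives no written proof for this proposition; it is stated immediately after Proposition~\ref{miracle} and is clearly intended to follow from the homogenisation dictionary of Lemma~\ref{first correspondence} and Proposition~\ref{miracle}, which is exactly the route you take, so your proposal is correct and follows the paper's implicit approach. One small remark: the convexity of $\Ord(\;,S|V)$ over $\mathcal{D}(S|V)$ that you single out as the delicate point in item 23 is already asserted verbatim as Proposition~\ref{miracle}.12, so within the paper's logical chain it needs no new argument, though your derivation via the identity $\Sigma(\mathcal{D}(S|V),\Ord(\;,S|V))=\Delta^{\vee}$ from Proposition~\ref{miracle}.13 is the right way to justify that earlier assertion (the equality case forces the segment in $\partial_{-}\Delta^{\vee}$ into a single face of the cone $\Delta^{\vee}$, hence into a single member of $\mathcal{D}(S|V)$ via Proposition~\ref{miracle}.9); also, in item 12.(c) the appeal to ``$\Delta(\omega,\Delta)$ meeting $H^{\circ}$'' should more precisely invoke the lift $\omega'\in\partial_{-}\Delta^{\vee}$ with $\pi^{*}(\omega')=\omega$ supplied by Proposition~\ref{miracle}.3 before applying Proposition~\ref{miracle}.6.
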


\begin{lemma}
\label{sum}
Consider any convex pseudo polyhedrons $S$, $T$ in $V$.

$S+T$ is a convex pseudo polyhedron in $V$, $\Stab(S+T)=\Stab(S)+\Stab(T)$,
$\Stab(S+T)^\vee=\Stab(S)^\vee\cap\Stab(T)^\vee$, and
$\mathcal{D}(S+T|V)=\mathcal{D}(S|V)\hat{\cap}\mathcal{D}(T|V)$.

If $S$ and $T$ are rational over $N$, then $S+T$ is also rational over $N$.

If $S$ and $T$ are convex polyhedrons, then $S+T$ is also a convex polyhedron.

If $S$ and $T$ are convex polyhedral cones, then $S+T$ is also a convex polyhedral cone.
\end{lemma}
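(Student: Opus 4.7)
The plan is to handle the six assertions in a sensible order, starting from a concrete Minkowski-sum representation and lifting progressively to the face-cone decomposition.

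First I would fix representations $S=\Conv(X)+\Convcone(Y)$ and $T=\Conv(X')+\Convcone(Y')$ with $X,X'$ non-empty finite and $Y,Y'$ finite subsets of $V$. By Lemma~\ref{b}.1 and Lemma~\ref{b}.4 we have
\[
S+T=(\Conv(X)+\Conv(X'))+(\Convcone(Y)+\Convcone(Y'))=\Conv(X+X')+\Convcone(Y\cup Y'),
\]
and $X+X'\neq\emptyset$, so $S+T$ is a convex pseudo polyhedron. This simultaneously settles rationality (if $X,X',Y,Y'\subset\QVect(N)$, so are $X+X'$ and $Y\cup Y'$) and the convex polyhedron case ($Y=Y'=\emptyset$ forces $Y\cup Y'=\emptyset$); the convex polyhedral cone case is immediate from Corollary~\ref{good category}.2. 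Next, Lemma~\ref{cpp1}.1 identifies $\Stab(S)=\Convcone(Y)$, $\Stab(T)=\Convcone(Y')$, and $\Stab(S+T)=\Convcone(Y\cup Y')=\Convcone(Y)+\Convcone(Y')$ by Lemma~\ref{b}.4, giving $\Stab(S+T)=\Stab(S)+\Stab(T)$. Dualizing via Lemma~\ref{j}.3 then yields $\Stab(S+T)^\vee=\Stab(S)^\vee\cap\Stab(T)^\vee$, which is also exactly the support of $\mathcal{D}(S|V)\hat{\cap}\mathcal{D}(T|V)$ by Lemma~\ref{ris}.3 together with Proposition~\ref{property of polyhedrons}.15.

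The substantive step is the face-cone decomposition identity. For every $\omega\in\Stab(S+T)^\vee=\Stab(S)^\vee\cap\Stab(T)^\vee$ I would establish the two pointwise identities
\[
\Ord(\omega,S+T|V)=\Ord(\omega,S|V)+\Ord(\omega,T|V),\qquad
\Delta(\omega,S+T|V)=\Delta(\omega,S|V)+\Delta(\omega,T|V);
\]
the first is immediate from $\langle\omega,s+t\rangle=\langle\omega,s\rangle+\langle\omega,t\rangle$ and compactness arguments via Lemma~\ref{cpp1}.5, and the second follows because $s+t$ realizes the minimum of $\langle\omega,\cdot\rangle$ on $S+T$ iff $s$ and $t$ realize the respective minima on $S$ and $T$. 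Together these show that for any $\omega\in\Stab(S+T)^\vee$ the face $\Delta(\omega,S+T|V)$ is determined by the pair $(\Delta(\omega,S|V),\Delta(\omega,T|V))$, so each open face cone of $S+T$ decomposes as a union of intersections $\Delta^\circ(G,S|V)\cap\Delta^\circ(H,T|V)$ ranging over suitable pairs $(G,H)\in\mathcal{F}(S)\times\mathcal{F}(T)$.

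The main obstacle is showing the converse: that whenever $\Delta^\circ(G,S|V)\cap\Delta^\circ(H,T|V)\cap\Stab(S+T)^\vee\neq\emptyset$, this intersection is entirely contained in a single open face cone $\Delta^\circ(F,S+T|V)$ with $F=G+H$. Equivalently, I must rule out the possibility that two pairs $(G_1,H_1)\neq(G_2,H_2)$ give the same Minkowski sum face of $S+T$. I would prove this by a uniqueness-of-Minkowski-decomposition argument: if $F=G_1+H_1=G_2+H_2$ with all four faces arising as $\Delta(\omega_i,\cdot)$ for $\omega_i$ in the respective open face cones, then comparing the stabilizers $\Stab(F)=\Stab(G_i)+\Stab(H_i)$ via Proposition~\ref{property of polyhedrons}.16 and using the correspondence of faces with face cones (Proposition~\ref{property of polyhedrons}.17) forces $(G_1,H_1)=(G_2,H_2)$. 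Once this uniqueness is in place, Lemma~\ref{several cone decompositions} identifies $\mathcal{D}(S+T|V)$ with the real intersection $\mathcal{D}(S|V)\hat{\cap}\mathcal{D}(T|V)$, as each open stratum on both sides coincides.
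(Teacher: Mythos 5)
Your handling of the first five assertions and the stabilizer formula is clean and correct: writing $S+T=\Conv(X+X')+\Convcone(Y\cup Y')$ via Lemma~\ref{b}.1 and Lemma~\ref{b}.4 gives the pseudo-polyhedron property, rationality, the polyhedron and polyhedral-cone specializations, and (via Lemma~\ref{cpp1}.1) the stabilizer equality; dualizing with Lemma~\ref{j}.3 gives $\Stab(S+T)^\vee=\Stab(S)^\vee\cap\Stab(T)^\vee$. The two pointwise identities for $\Ord$ and $\Delta(\omega,\cdot)$ are also the right starting point for the decomposition claim, and both are correct.

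Where the argument goes astray is in your description of the remaining difficulty. What you call the ``main obstacle''---that $\Delta^\circ(G,S|V)\cap\Delta^\circ(H,T|V)\cap\Stab(S+T)^\vee$ lies inside a single $\Delta^\circ(F,S+T|V)$---is automatic: any $\omega$ in that intersection has $\Delta(\omega,S|V)=G$ and $\Delta(\omega,T|V)=H$, hence $\Delta(\omega,S+T|V)=G+H$ and $\omega\in\Delta^\circ(G+H,S+T|V)$. What actually needs an argument is the identity of \emph{closed} face cones $\Delta(F,S+T|V)=\Delta(G,S|V)\cap\Delta(H,T|V)$, where $F$ is a face of $S+T$ and $G=\Delta(\omega,S|V)$, $H=\Delta(\omega,T|V)$ for some $\omega\in\Delta^\circ(F,S+T|V)$. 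The nontrivial containment is that $G+H\subset\Delta(\chi,S+T|V)$ forces both $G\subset\Delta(\chi,S|V)$ and $H\subset\Delta(\chi,T|V)$; this follows because additivity of $\Ord$ turns $\langle\chi,s\rangle+\langle\chi,t\rangle\geq\Ord(\chi,S|V)+\Ord(\chi,T|V)$ into an equality for every $s\in G$, $t\in H$, so each summand must individually attain its minimum. Once this identity is in hand, Lemma~\ref{several cone decompositions} gives both inclusions between $\mathcal{D}(S+T|V)$ and $\mathcal{D}(S|V)\hat{\cap}\mathcal{D}(T|V)$ directly, and the uniqueness $(G_1,H_1)=(G_2,H_2)$ that worries you falls out as a byproduct rather than being an input. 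By contrast, the stabilizer comparison $\Stab(F)=\Stab(G_i)+\Stab(H_i)$ via Proposition~\ref{property of polyhedrons}.16 does not determine the summands on its own, so the uniqueness route you sketch would not close the gap.
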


\begin{cor}
\label{sum2}
Consider any convex pseudo polyhedron $S$ in $V$ and any convex polyhedral cone $\Delta$ in $V$.

$S+\Delta$ is a convex pseudo polyhedron in $V$, $\Stab(S+\Delta)=\Stab(S)+\Delta$, $\Stab(S+\Delta)^\vee=\Stab(S)^\vee\cap\Delta^\vee$ and
$\mathcal{D}(S+\Delta|V)=\mathcal{D}(S|V)\hat{\cap}\mathcal{F}(\Delta^\vee|V)$.

If $S$ and $\Delta$ are rational over $N$, then $S+\Delta$ is also rational over $N$.

$S\subset S+\Delta$.
For any $\omega\in\Stab(S+\Delta)^\vee$, we have $\omega\in\Stab(S)^\vee$, $\Ord(\omega, S)=\Ord(\omega, S+\Delta)$, and
$\Delta(\omega, S)=\Delta(\omega, S+\Delta)\cap S$.
\end{cor}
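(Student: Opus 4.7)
The plan is to derive Corollary~\ref{sum2} as an immediate specialization of Lemma~\ref{sum} to the case $T = \Delta$, together with the observation that a convex polyhedral cone, when regarded as a convex pseudo polyhedron, has face cone decomposition exactly $\mathcal{F}(\Delta^\vee|V)$.

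First I would observe that any convex polyhedral cone $\Delta$ in $V$ is a convex pseudo polyhedron (Lemma~\ref{g}.5): writing $\Delta=\Convcone(Z)$ for a finite set $Z$, we have $\Delta=\Conv(\{0\})+\Convcone(Z)$. Moreover, because $\Delta$ is a convex cone containing $0$, direct computation from the definition of $\Stab$ yields $\Stab(\Delta)=\Delta$; hence $\Stab(\Delta)^\vee=\Delta^\vee$. Applying Lemma~\ref{sum} with $T=\Delta$ then gives at once that $S+\Delta$ is a convex pseudo polyhedron in $V$, that $\Stab(S+\Delta)=\Stab(S)+\Stab(\Delta)=\Stab(S)+\Delta$, that $\Stab(S+\Delta)^\vee=\Stab(S)^\vee\cap\Delta^\vee$, and the rationality statement. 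So the only genuinely new content is rewriting the face cone decomposition.

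For this, I would verify that $\mathcal{D}(\Delta|V)=\mathcal{F}(\Delta^\vee|V)$. Since $\Stab(\Delta)=\Delta$ and $0\in\Delta$, for each $\omega\in\Delta^\vee$ we get $\Ord(\omega,\Delta|V)=0$, so the face $\Delta(\omega,\Delta|V)$ of $\Delta$ as a pseudo polyhedron (Definition~\ref{faces of cpp}) coincides with the face $\Delta(\omega,\Delta|V)$ of $\Delta$ as a convex polyhedral cone (Definition~\ref{faces of cones}). Consequently, for each face $F$ of $\Delta$ the two definitions of $\Delta(F,\Delta|V)$ agree as well. By Proposition~\ref{property of cones}.17 the assignment $F\mapsto\Delta(F,\Delta|V)$ is a bijection of $\mathcal{F}(\Delta)$ onto $\mathcal{F}(\Delta^\vee|V)$, so $\mathcal{D}(\Delta|V)=\mathcal{F}(\Delta^\vee|V)$. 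Combining this with the version of the formula given by Lemma~\ref{sum} yields $\mathcal{D}(S+\Delta|V)=\mathcal{D}(S|V)\hat\cap\mathcal{F}(\Delta^\vee|V)$.

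Finally, the remaining clauses follow by direct inspection. The inclusion $S\subset S+\Delta$ uses only $0\in\Delta$. For $\omega\in\Stab(S+\Delta)^\vee=\Stab(S)^\vee\cap\Delta^\vee$, the inclusion $\Stab(S+\Delta)^\vee\subset\Stab(S)^\vee$ is immediate. Since $\langle\omega,d\rangle\ge 0$ for every $d\in\Delta$ with equality at $d=0$, the identity
\[
\Ord(\omega,S+\Delta)=\inf_{s\in S,\;d\in\Delta}\bigl(\langle\omega,s\rangle+\langle\omega,d\rangle\bigr)=\Ord(\omega,S)+0
\]
holds, and from this and $S\subset S+\Delta$ one gets $\Delta(\omega,S)=\{x\in S:\langle\omega,x\rangle=\Ord(\omega,S+\Delta)\}=\Delta(\omega,S+\Delta)\cap S$. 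I do not anticipate any real obstacle: the only point requiring a moment's care is the unambiguous identification $\mathcal{D}(\Delta|V)=\mathcal{F}(\Delta^\vee|V)$, and that is controlled by Proposition~\ref{property of cones}.17 once one notes $\Stab(\Delta)=\Delta$.
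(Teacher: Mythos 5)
Your proposal is correct and takes the route the paper clearly intends (the statement is presented as an immediate corollary of Lemma~\ref{sum} without explicit proof): specialize Lemma~\ref{sum} to $T=\Delta$, using $\Stab(\Delta)=\Delta$ (Lemma~\ref{cpp1}.4) and the identification $\mathcal{D}(\Delta|V)=\mathcal{F}(\Delta^\vee|V)$ via Proposition~\ref{property of cones}.17, then check the final clauses directly from $0\in\Delta$ and $\langle\omega,d\rangle\ge 0$ for $d\in\Delta$, $\omega\in\Delta^\vee$. All steps are sound.
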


\begin{definition}
\label{Newton}
We call any convex pseudo polyhedron $S$ in $V$ satisfying the following three conditions a \emph{Newton polyhedron} over $N$ in $V$:
\begin{enumerate}
\item
The stabilizer $\Stab(S)$ of $S$ is a simplicial cone over $N$.
\item
$\dim\Stab(S)=\dim V$.
\item
$\mathcal{V}(S)\subset N$.
\end{enumerate}
\end{definition}

Let $S$ be a Newton polyhedron $S$ over $N$ in $V$. 
$\Stab(S)\cap(-\Stab(S))=\{0\}$, and $S$ has a vertex.
The skeleton $\mathcal{V}(S)$ of $S$ is a non-empty finite subset of $S$, and  $\mathcal{V}(S)$ is the union of all vertices of $S$.

\begin{lemma}
\label{Newton1}
Consider any simplicial cone $\Delta$ over $N$ in $V$ with $\Delta=\dim V$ and any subset $X$ of $N$ such that $X\subset\{a\}+\Delta$ for some $a\in V$.

There exists a \emph{finite} subset $Y$ of $X$ satisfying $\Conv(X)+\Delta=\Conv(Y)+\Delta$, and $\Conv(X)+\Delta$ is a Newton polyhedron over $N$ in $V$.
\end{lemma}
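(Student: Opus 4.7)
The plan is to reduce to a combinatorial statement in $\Z^n$ via the simplicial structure and then apply a Dickson's-Lemma type finiteness result. First I would use Lemma~\ref{simplex1} and Lemma~\ref{mirror}: since $\Delta$ is a simplicial cone over $N$ with $\dim \Delta=\dim V=n$, the set $B=\{b_E\mid E\in\mathcal{F}(\Delta)_1\}$ is simultaneously a $\Z$-basis of $N$ and an $\R$-basis of $V$, and $\Delta=\sum_{E}\R_0 b_E$. Denote by $\{b_E^\vee\}$ the dual basis. Each $b_E^\vee$ takes integer values on $N$, and $\Delta=\{x\in V\mid\langle b_E^\vee,x\rangle\geq 0\text{ for all }E\}$. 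Define the partial order $x\preceq y\Leftrightarrow y-x\in\Delta$; because $\Delta\cap(-\Delta)=\{0\}$, this is a genuine partial order on $V$.

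Next I would extract a finite $Y$. The hypothesis $X\subset\{a\}+\Delta$ yields $\langle b_E^\vee,x\rangle\geq\langle b_E^\vee,a\rangle$ for every $x\in X$ and every $E$. Since $X\subset N$, each coordinate $\langle b_E^\vee,x\rangle$ is an integer, hence $\geq\lceil\langle b_E^\vee,a\rangle\rceil$. Translating, $X$ embeds as a subset of $\Z_0^n$ with the coordinatewise partial order corresponding exactly to $\preceq$. By the usual Dickson/Gordan argument (induction on $n$: any antichain in $\Z_0^n$ is finite), the set
\[
Y=\{x\in X\mid\text{no }x'\in X-\{x\}\text{ satisfies }x'\preceq x\}
\]
of $\preceq$-minimal elements of $X$ is finite, and every $x\in X$ admits some $y\in Y$ with $y\preceq x$, i.e.\ $x\in\{y\}+\Delta$. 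This is the main technical step of the proof — everything else is essentially formal.

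With $Y$ in hand, the equality $\Conv(X)+\Delta=\Conv(Y)+\Delta$ is immediate: the inclusion $\supset$ follows from $Y\subset X$, and for $\subset$ we use $X\subset Y+\Delta$, so $\Conv(X)\subset\Conv(Y+\Delta)=\Conv(Y)+\Conv(\Delta)=\Conv(Y)+\Delta$ by Lemma~\ref{b}.1 and Lemma~\ref{c}.11, whence $\Conv(X)+\Delta\subset\Conv(Y)+\Delta+\Delta=\Conv(Y)+\Delta$ (since $\Delta+\Delta=\Delta$ for a convex cone containing $0$).

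Finally I would verify the three defining conditions of a Newton polyhedron (Definition~\ref{Newton}) for $S=\Conv(Y)+\Delta$, assuming $X\neq\emptyset$ so that $Y\neq\emptyset$. Writing $\Delta=\Convcone(B)$ with $B$ finite presents $S$ as $\Conv(Y)+\Convcone(B)$, so $S$ is a convex pseudo polyhedron; by Lemma~\ref{cpp1}.1, $\Stab(S)=\Convcone(B)=\Delta$, which is a simplicial cone over $N$ of dimension $\dim V$. This settles conditions (1) and (2). For (3), by Proposition~\ref{property of polyhedrons}.7 any vertex of $S$ is an affine space of dimension $\ell=\dim(\Delta\cap(-\Delta))=0$, hence a single point, and by Proposition~\ref{property of polyhedrons}.4(d) applied to the presentation $S=\Conv(Y)+\Stab(S)$, every face $F$ satisfies $F=\Conv(Y\cap F)+\Stab(F)$; specializing to a vertex $\{v\}$ (so $\Stab(\{v\})=\{0\}$) forces $v\in Y\subset X\subset N$. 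Therefore $\mathcal{V}(S)\subset Y\subset N$, completing the proof.
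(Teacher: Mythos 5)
Your proof is correct. The paper does not supply its own argument for this lemma (it is among the claims the author declares to ``follow from definitions''), so there is no official proof to compare against — but your route is exactly what the situation calls for. The central move — picking the dual basis $\{b_E^\vee\}\subset N^*$, using $X\subset\{a\}+\Delta$ to translate $X$ into $N\cap\Delta\cong\Z_0^n$, and then applying Dickson's Lemma to get a finite set $Y$ of $\preceq$-minimal elements with $X\subset Y+\Delta$ — is the right finiteness tool here, and the subsequent steps ($\Conv(X)+\Delta=\Conv(Y)+\Delta$ via Lemma~\ref{b}.1 and $\Delta+\Delta=\Delta$; the Newton-polyhedron axioms via Lemma~\ref{cpp1}.1 and Proposition~\ref{property of polyhedrons}.4(d) to pin vertices inside $Y\subset N$) are all sound. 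Two small remarks: you correctly insert the hypothesis $X\neq\emptyset$, which the lemma statement leaves implicit but which is necessary for the conclusion; and note that the paper's Definition~\ref{relative} reverses the usual meanings of $\lfloor\;\rfloor$ and $\lceil\;\rceil$, so what you wrote as $\lceil\langle b_E^\vee,a\rangle\rceil$ (standard ceiling) would be $\lfloor\langle b_E^\vee,a\rangle\rfloor$ in the paper's own notation — worth flagging to avoid confusion if your proof were to be spliced in.
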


\begin{remark}
The subset $X$ of $N$ above is not necessarily finite.
\end{remark}

\begin{lemma}
\label{Newton2}
Let $k$ be any field.
Let $A$ be any complete regular local ring such that $\dim A\geq 1$, $A$ contains $k$ as a subring, and the residue field  $A/M(A)$ is isomorphic to $k$ as algebras over $k$.
Let $P$ be any parameter system of $A$.
Let $\phi$ be any non-zero element of $A$.

\begin{enumerate}
\item
The Newton polyhedron $\Gamma_+(P,\phi)$ of $\phi$ over $P$ is a Newton polyhedron over \hfill\break$\Map(P,\Z)$ in $\Map(P,\R)$ in the meaning of Definition~\ref{Newton}.
$\Stab(\Gamma_+(P,\phi))=\Map(P,\R_0)$. $\Gamma_+(P,\phi)\subset\Map(P,\R_0)$.
\item
The face cone decomposition $\mathcal{D}(\Gamma_+(P,\phi)| \Map(P,\R))$ of $\Gamma_+(P,\phi)$ is a rational convex polyhedral cone decomposition over $\Map(P,\Z)^*$ in $\Map(P,\R)^*$.
\hfill\break
$|\mathcal{D}(\Gamma_+(P,\phi)| \Map(P,\R))|=\Map(P,\R_0)^\vee|\Map(P,\R)$.
\item
The Newton polyhedron $\Gamma_+(P,\phi)$ has a vertex.
The skeleton $\mathcal{V}(\Gamma_+(P,\phi))$ of $\Gamma_+(P,\phi)$ is a non-empty finite subset of $\Map(P,\Z_0)$, and $\mathcal{V}(\Gamma_+(P,\phi))$ is the union of all vertices of $\Gamma_+(P,\phi)$.
\begin{equation*}\begin{split}
\mathcal{V}(\Gamma_+(P, \phi))= &\{a\in\Gamma_+(P, \phi)|
\text{ There exists }\omega\in\Map(P,\R_0)^\vee|\Map(P,\R)\text{ such that}\\
&\qquad\text{for any } b\in\Gamma_+(P, \phi)\text{ with }\langle\omega,b\rangle=\langle\omega,a\rangle\text{, we have }b=a\}.\\
\end{split}\end{equation*}
\item
$c(\Gamma_+(P,\phi)) =\sharp \mathcal{D}(\Gamma_+(P,\phi)|\Map(P,\R))^0=\sharp\mathcal{V}(\Gamma_+(P,\phi))=\sharp\mathcal{F}(\Gamma_+(P,\phi))_0$\hfill\break$=\text{the number of vertices of }\Gamma_+(P,\phi)$.
\item
For any $\omega\in\Map(P,\R_0)^\vee|\Map(P,\R)$, we have
\begin{equation*}
\begin{split}
\Ord(P,\omega,\phi)=&\Ord(\omega, \Gamma_+(P,\phi)|\Map(P,\R)), \text{ and}\\
\In(P,\omega,\phi)=&\Ps(P,\Delta(\omega, \Gamma_+(P,\phi)|\Map(P,\R)),\phi).
\end{split}
\end{equation*}
\item
$c(\Gamma_+(P,\phi))=1$, if and only if, $\phi$ has normal crossings over $P$.
\item
If $\dim A=1$, then $c(\Gamma_+(P,\phi))=1$.
\end{enumerate}

Let $z\in P$ be any element.

Let $b=\Ord(P, f^{P\vee}_z,\phi)\in\Z_0$ and let $h=\Ht(z,\Gamma_+(P, \phi)) \in\Z_0$.
Let $A'$ denote the completion of $k[P - \{z\}]$ with respect to the maximal ideal $k[P - \{z\}]\cap M(A)$. The ring $A'$ is a local subring of $A$ and $M(A')=M(A)\cap A' =(P -\{z\})A'$. The completion of $A'[z]$ with respect to the prime ideal $zA'[z]$ is isomorphic to $A$ as $A'[z]$-algebras. The set $P-\{z\}$ is a parameter system of $A'$.

\begin{enumerate}
\setcounter{enumi}{7}
\item
Assume that $\Gamma_+(P, \phi)$ is of $z$-Weierstrass type.
\begin{enumerate}
\item
$\Ht(z,\Gamma_+(P, \phi))=0\Leftrightarrow \Gamma_+(P,\phi)$ has only one vertex $\Leftrightarrow c(\Gamma_+(P, \phi))=1\Leftrightarrow \phi$ has normal crossings over $P$.
\item 
The Newton polyhedron $\Gamma_+(P,\phi)$ has a unique $z$-top vertex.
\end{enumerate}

Below, by $\{a_1\}$ we denote the unique $z$-top vertex of $\Gamma_+(P,\phi)$.
\begin{enumerate}
\setcounter{enumii}{2}
\item
Consider any $a\in \Gamma_+(P, \phi)$. The equality $\langle f^{P\vee}_x, a\rangle=\Ord(P, f^{P\vee}_x,\phi)$ holds for any $x\in P - \{z\}\Leftrightarrow a-a_1\in\R_0f^P_z$.
\item
$\langle f^{P\vee}_z, a_1\rangle=b+h$.
\item
There exist uniquely an invertible element $u\in A^{\times}$ and a mapping $\phi':\{0, 1,\ldots, h-1\}\rightarrow M(A')$ satisfying
$$\phi=u z^b\prod_{x\in P -\{z\}}x^{\langle f^{P\vee}_x, a_1\rangle} (z^h+\sum_{i=0}^{h-1} \phi'(i) z^i),$$
and $\phi'(0)\neq 0$ if $h>0$
\end{enumerate}
\item
The following two conditions are equivalent:
\begin{enumerate}
\item The Newton polyhedron $\Gamma_+(P,\phi)$ is of $z$-Weierstrass type.
\item There exist uniquely an invertible element $u\in A^{\times}$, a mapping $c:P\rightarrow\Z_0$, a non-negative integer $g\in\Z_0$ and a mapping $\phi':\{0, 1,\ldots, g-1\}\rightarrow M(A')$ satisfying
$$\phi=u \prod_{x\in P}x^{c(x)} (z^g+\sum_{i=0}^{g-1} \phi'(i) z^i),$$
and $\phi'(0)\neq 0$ if $g>0$.
\end{enumerate}
\item
If $\dim A=2$, then $\Gamma_+(P,\phi)$ is $z$-simple.
\item
If $\Gamma_+(P,\phi)$ is $z$-simple, then $\Gamma_+(P,\phi)$ is of $z$-Weierstrass type.
\item
Let $r=c(\Gamma_+(P, \phi))\in\Z_+$. The Newton polyhedron $\Gamma_+(P,\phi)$ is $z$-simple, if and only if, the following three conditions are satisfied:
\begin{enumerate}
\item
For any $a\in \mathcal{V}(\Gamma_+(P, \phi))$ and any $b\in \mathcal{V}(\Gamma_+(P, \phi))$, $\langle f^{P\vee}_z, a\rangle \neq \langle f^{P\vee}_z, b\rangle$.
\end{enumerate}

Below we take the unique bijective mapping $a:\{1,2,\ldots,r\}\rightarrow \mathcal{V}(\Gamma_+(P, \phi))$ satisfying $\langle f^{P\vee}_z, a(i)\rangle > \langle f^{P\vee}_z, a(i+1)\rangle$ for any $i\in\{1,2,\ldots,r-1\}$, if $r\geq 2$.
\begin{enumerate}
\setcounter{enumii}{1}
\item For any $x\in P- \{z\}$, $\langle f^{P\vee}_x, a(2)-a(1)\rangle \geq 0$, if $r\geq 2$.
\item For any $i\in\{1,2,\ldots,r-2\}$ and any  $x\in P - \{z\}$,
$$\frac{\langle f^{P\vee}_x, a(i+1)-a(i)\rangle}{\langle f^{P\vee}_z, a(i)-a(i+1)\rangle} \leq
\frac{\langle f^{P\vee}_x, a(i+2)-a(i+1)\rangle}{\langle f^{P\vee}_z, a(i+1)-a(i+2)\rangle},$$
if $r\geq 3$.
\end{enumerate}

Furthermore, if the above equivalent conditions are satisfied, then the following claims hold:
\begin{enumerate}
\setcounter{enumii}{3}
\item There exists $x\in P- \{z\}$ with  $\langle f^{P\vee}_x, a(2)-a(1)\rangle > 0$.
\item For any $i\in\{1,2,\ldots,r-2\}$, there exists  $x\in P - \{z\}$ with
$$\frac{\langle f^{P\vee}_x, a(i+1)-a(i)\rangle}{\langle f^{P\vee}_z, a(i)-a(i+1)\rangle} < 
\frac{\langle f^{P\vee}_x, a(i+2)-a(i+1)\rangle}{\langle f^{P\vee}_z, a(i+1)-a(i+2)\rangle},$$
if $r\geq 3$.
\end{enumerate}
\item
Assume that $\Gamma_+(P,\phi)$ is of $z$-Weierstrass type. Let $\{a_1\}$ denote the unique $z$-top vertex of $\Gamma_+(P,\phi)$. We take an invertible element $u\in A^{\times}$ and a mapping $\phi':\{0,1,\ldots,h-1\}\rightarrow M(A')$ satisfying
$$\phi=u z^b\prod_{x\in P -\{z\}}x^{\langle f^{P\vee}_x, a_1\rangle} (z^h+\sum_{i=0}^{h-1} \phi'(i) z^i),$$
and $\phi'(0)\neq 0$ if $h>0$.
Then, $\Gamma_+(P,\phi)$ is $z$-simple, if and only if, there exist positive integer $r$, and a mapping $c:\{1,2,\ldots,r\}\rightarrow \Map(P,\Z_0)$ satisfying the following conditions:
\begin{enumerate}
\item $1\leq r\leq h+1$. $r=1\Leftrightarrow h=0$.
\item $c(1)=hf_z^P$. $\langle f_z^{P\vee}, c(r)\rangle=0$.
\item For any $i\in\{1,2,\ldots,r-1\}$, we have $\langle f_z^{P\vee}, c(i)-c(i+1)\rangle>0$, if $r\geq 2$.
\item For any $x\in P-\{z\}$, we have
$$\langle f_x^{P\vee},c(2)-c(1)\rangle\geq 0,$$ if $r\geq 2$.
\item There exists $x\in P-\{z\}$ with
$$\langle f_x^{P\vee}, c(2)-c(1)\rangle> 0,$$ if $r\geq 2$.
\item For any $i\in\{1,2,\ldots,r-1\}$ and any $x\in P-\{z\}$, we have
$$\frac{\langle f^{P\vee}_x, c(i+1)-c(i)\rangle}{\langle f^{P\vee}_z, c(i)-c(i+1)\rangle} \leq 
\frac{\langle f^{P\vee}_x, c(i+2)-c(i+1)\rangle}{\langle f^{P\vee}_z, c(i+1)-c(i+2)\rangle},$$
if $r\geq 3$.
\item For any $i\in\{1,2,\ldots,r-1\}$, there exists $x\in P-\{z\}$ with
$$\frac{\langle f^{P\vee}_x, c(i+1)-c(i)\rangle}{\langle f^{P\vee}_z, c(i)-c(i+1)\rangle} < 
\frac{\langle f^{P\vee}_x, c(i+2)-c(i+1)\rangle}{\langle f^{P\vee}_z, c(i+1)-c(i+2)\rangle},$$
if $r\geq 3$.
\item For any $i\in\{2,3,\ldots,r\}$ and any $x\in P-\{z\}$, we have
$$\Ord(P, f^{P\vee}_x, \phi'(\langle f^{P\vee}_z, c(i)\rangle))=\langle f^{P\vee}_x,c(i)\rangle,$$
if $r\geq 2$.
\item For any $i\in\{1,2,\ldots,r-1\}$, any $j\in\Z$
with
$$\langle f^{P\vee}_z, c(i+1)\rangle<j<\langle f^{P\vee}_z, c(i)\rangle$$
and any $x\in P-\{z\}$, we have
\begin{equation*}
\begin{split}
&\Ord(P, f^{P\vee}_x, \phi'(j))\geq\\
&\frac{\langle f^{P\vee}_z, c(i)\rangle-j}{\langle f^{P\vee}_z, c(i)-c(i+1)\rangle} \langle f^{P\vee}_x, c(i+1)\rangle
 +\frac{j-\langle f^{P\vee}_z, c(i+1)\rangle}{\langle f^{P\vee}_z, c(i)-c(i+1)\rangle} \langle f^{P\vee}_x, c(i)\rangle,\\
\end{split}
\end{equation*}
if $r\geq 2$.
\end{enumerate}
\item
For any non-zero element $\phi\in A$ and any non-zero element $\psi\in A$,
\begin{equation*}
\begin{split}
\Gamma_+(P,\phi\psi)=&\Gamma_+(P,\phi)+\Gamma_+(P,\psi),\\
\mathcal{D}(\Gamma_+(P,\phi\psi)|\Map(P,\R))=&
\mathcal{D}(\Gamma_+(P,\phi)|\Map(P,\R))\hat{\cap}\mathcal{D}(\Gamma_+(P, \psi)|\Map(P,\R)).
\end{split}
\end{equation*}
\end{enumerate}
\end{lemma}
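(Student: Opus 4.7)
The plan is to derive the fifteen assertions in order, reducing each to the general theory developed in Sections~\ref{ring}, \ref{btcs}--\ref{cpp} and to the Weierstrass preparation theorem.

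For claim~1 I would apply Lemma~\ref{Newton1} to the subset $X=\Supp(P,\phi)\subset\Map(P,\Z_0)$ and the simplicial cone $\Delta=\Map(P,\R_0)$: since $X\subset\Map(P,\R_0)=\{0\}+\Delta$, the lemma guarantees a finite $Y\subset X$ with $\Gamma_+(P,\phi)=\Conv(X)+\Map(P,\R_0)=\Conv(Y)+\Map(P,\R_0)$, so $\Gamma_+(P,\phi)$ is a Newton polyhedron with $\Stab(\Gamma_+(P,\phi))=\Map(P,\R_0)$. Claim~2 then follows from Proposition~\ref{property of polyhedrons}.15 (the face cone decomposition is rational over $\Map(P,\Z)^*$ with support $\Map(P,\R_0)^\vee$). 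Claims~3, 4 read off the definition of $\mathcal{V}$ together with the basic correspondence $\mathcal{V}(S)=\bigcup\mathcal{F}(S)_0$ from Definition~\ref{faces of cpp} (here $\ell=0$ since $\Stab(\Gamma_+(P,\phi))\cap(-\Stab(\Gamma_+(P,\phi)))=\{0\}$). For claim~5 I would combine the definition of $\Ord(P,\omega,\phi)$, $\In(P,\omega,\phi)$ from Section~\ref{concept} with Lemma~\ref{basic ord}.10, which guarantees that the minimum of $\langle\omega,\Lambda\rangle$ over $\Supp(P,\phi)$ exists and equals the minimum over $\mathcal{V}(\Gamma_+(P,\phi))$; the equality $\In(P,\omega,\phi)=\Ps(P,\Delta(\omega,\Gamma_+(P,\phi)),\phi)$ is then immediate from the definition of $\Ps$. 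Claims~6 and 7 follow because $c=1$ is equivalent (Proposition~\ref{property of polyhedrons}.20) to $\Gamma_+(P,\phi)=\{a\}+\Stab(\Gamma_+(P,\phi))$ for some $a\in\Map(P,\Z_0)$, which translates to $\phi=u\prod_{x\in P}x^{a(x)}$; when $\dim A=1$ the unique index $a$ is the $(M(A))$-adic order of $\phi$.

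For claim~8, under the $z$-Weierstrass assumption I would pick $a_0\in\Gamma_+(P,\phi)$ with $\langle f^{P\vee}_x,a_0\rangle=\Ord(P,f^{P\vee}_x,\phi)$ for every $x\in P-\{z\}$ and let $b=\langle f^{P\vee}_z,a_0\rangle$; then the vertex of $\Gamma_+(P,\phi)$ contained in the ray $\R_0 f^P_z+\{a_0\}$ maximizes $\langle f^{P\vee}_z,\,\cdot\,\rangle$ on $\mathcal{V}(\Gamma_+(P,\phi))$, yielding both the existence and uniqueness of the $z$-top vertex $\{a_1\}$ and the identity $\langle f^{P\vee}_z,a_1\rangle=b+h$. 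Writing $\phi$ as a power series in $z$ over $A'$ and applying Weierstrass' preparation theorem to $\phi/\bigl(z^b\prod_{x\neq z}x^{\langle f^{P\vee}_x,a_1\rangle}\bigr)$ yields the factorization in claim~8.(e) and the characterization of the subray of $\Gamma_+(P,\phi)$ giving claim~8.(c); claim~8.(a) is then the case $h=0$. Claim~9 follows by inverting this construction: a factorization $\phi=u\prod_{x\in P}x^{c(x)}(z^g+\sum\phi'(i)z^i)$ forces the index $\sum_x c(x)f^P_x+gf^P_z\in\Supp(P,\phi)$ to satisfy the coordinate condition, proving Weierstrass type.

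Claim~10 follows from the fact that in $\dim A=2$ every compact face of $\Gamma_+(P,\phi)$ has dimension $\le 1$, so one only needs the $z$-Weierstrass property, which in turn follows from the proof that a vertex with maximal $\langle f^{P\vee}_z,\cdot\rangle$ minimizes all other coordinates (this uses a 2-dimensional geometric argument on $\Map(P,\R)$). Claim~11 is immediate from the definition of $z$-simple. For claims~12 and 13, the most delicate work, I would order the vertices $a(1),\ldots,a(r)$ by decreasing $\langle f^{P\vee}_z,\,\cdot\,\rangle$ (which is well-defined precisely when no two vertices share the same $z$-coordinate — this is 12.(a)) and observe that the compact faces of $\Gamma_+(P,\phi)$ are exactly the segments $\Conv(\{a(i),a(i+1)\})$ together with the vertices themselves; requiring each such segment to have dimension $\le 1$ gives the inequalities 12.(b), 12.(c) by a convexity argument on the supporting hyperplanes, while the strict inequalities 12.(d), 12.(e) follow from the distinctness of the vertices. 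Claim~13 then translates this geometric description into arithmetic conditions on the coefficients $\phi'(i)$, using the support formula of claim~5: the vertices of $\Gamma_+(P,\phi)$ coming from monomials $z^{j}\prod x^{\Lambda(x)}$ with $j=\langle f^{P\vee}_z,c(i)\rangle$ produce $c(i)$, while intermediate $j$ satisfy the lower bound stated in 13.(i) exactly when the corresponding $\phi'(j)$-term lies above the compact face of slope determined by $c(i),c(i+1)$. Finally claim~14 is a direct consequence of Lemma~\ref{basic ord}.15: from $\Ord(P,\omega,\phi\psi)=\Ord(P,\omega,\phi)+\Ord(P,\omega,\psi)$ and $\In(P,\omega,\phi\psi)=\In(P,\omega,\phi)\In(P,\omega,\psi)$ applied to all $\omega\in\Map(P,\R_0)^\vee$, the Newton polyhedron of $\phi\psi$ is the Minkowski sum, and the face cone decomposition of a Minkowski sum is the real intersection (Lemma~\ref{sum}).

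I expect claims 12 and 13 to be the main obstacle, because relating the geometric condition $\dim F\le 1$ on every compact face to the precise numerical inequalities on the $\langle f^{P\vee}_x,c(i)\rangle$ and on $\Ord(P,f^{P\vee}_x,\phi'(j))$ requires carefully enumerating the compact faces (they are exactly the $1$-skeleton of the compact part, by Proposition~\ref{property of polyhedrons}) and then matching each segment to the support condition of the Weierstrass polynomial factor via the explicit formula of Lemma~\ref{basic ord}.4. The $\dim A\ge 3$ case is where the strict convexity condition 13.(vii) on consecutive slopes becomes nontrivial and where the asymmetry between the $z$-direction and the $P-\{z\}$-directions must be handled with care.
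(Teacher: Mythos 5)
Your reduction to earlier material is the right strategy and agrees with the paper's implicit intent (the paper gives no proof of this lemma, deferring to the general theory of Sections~\ref{cpp} and~\ref{simple}), and your treatment of claims 1--11, 14--15 is sound. For claim~12, however, your sketch only addresses the easy direction: you order the vertices by $z$-coordinate assuming 12.(a) already holds, and then ``observe'' that the compact faces are the consecutive segments $\Conv(\{a(i),a(i+1)\})$ — but that observation is precisely what must be proved, not assumed. The backward implication ((a)--(c) imply $z$-simple) is the genuine content and is never argued. The missing step is roughly this: if a compact face $F$ has $\dim F\geq 2$, it admits three affinely independent vertices $a(i),a(j),a(k)$ with $i<j<k$; compactness of $F$ forces the supporting $\omega\in\Delta^{\circ}(F)$ to be strictly positive in every coordinate; equating $\langle\omega,a(i)\rangle=\langle\omega,a(j)\rangle=\langle\omega,a(k)\rangle$ and dividing by the $z$-gaps gives $\omega_z=\sum_{x\neq z}\omega_x\, m_{ij}(x)=\sum_{x\neq z}\omega_x\, m_{jk}(x)$ where $m_{ij}(x)$ is the $(z,x)$-slope from $a(i)$ to $a(j)$; convexity of consecutive slopes (condition (c)) gives $m_{ij}(x)\leq m_{jk}(x)$ for all $x$, so positivity of $\omega_x$ forces equality, whence $a(j)\in\Conv(\{a(i),a(k)\})$, contradicting affine independence. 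A symmetric argument is needed for the forward direction to show that $z$-simple forces 12.(a) (you cannot simply posit the ordering). Note also that the paper packages exactly this statement as Lemma~\ref{propsimple}.17, so an alternate route is to invoke it directly for $\ell=0$, $G=\R_0 f^{P\vee}_z$; but as that lemma is itself unproved in the paper, the slope-averaging argument is unavoidable either way. Finally, your citations of Proposition~\ref{property of polyhedrons}.15 and~.20 should be~.12 and~.19 respectively, and Lemma~\ref{basic ord}.4 does not say what you use it for; these are minor but should be fixed.
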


\section{Barycentric subdivisions}
\label{bcd}
We study barycentric subdivisions of simplicial cone decompositions.

Let $V$ be any finite dimensional vector space over $\R$; let $N$ be any lattice of $V$; let $\mathcal{D}$ be any simplicial cone decomposition over $N$ in $V$ with $\dim\mathcal{D}\geq 1$; and let $F\in\mathcal{D}$ be any element with $\dim F\geq 1$. For simplicity we denote the barycenter $b_{F/N}$ of $F$ over $N$ by $b$.
$b\in F^\circ\cap N$.

\begin{lemma}
\label{prep1}
Consider any element $\Lambda\in\mathcal{D}$ satisfying $\Lambda+F\in\mathcal{D}$ and $F\not\subset\Lambda$.
\begin{enumerate}
\item
$\Lambda+\R_0b$ is a simplicial cone over $N$ in $V$.
$\R_0b\in\mathcal{F}(\Lambda+\R_0b)_1$. $\Lambda\in\mathcal{F}(\Lambda+\R_0b)^1$.
$\R_0b\cap\Lambda=\{0\}$.
$\R_0b=\Lambda\Op|(\Lambda+\R_0b)$.
$\Lambda=(\R_0b)\Op|(\Lambda+\R_0b)$.
\item
$\Lambda+\R_0b\subset\Lambda+F\in\mathcal{D}/F$.
$(\Lambda+\R_0b)^\circ\subset(\Lambda+F)^\circ$.
If $\dim F=1$, then $\R_0b=F$, and $\Lambda+\R_0b=\Lambda+F$.
If $\dim F\geq 2$, then $\Lambda+\R_0b\neq\Lambda+F$.
\item
$\dim(\Lambda+\R_0b)=\dim\Lambda+1\leq\dim(\Lambda+F)$.
$\Lambda\Op|(\Lambda+F)\in\mathcal{F}(F)\subset\mathcal{F}(\Lambda+F)$.
$\dim(\Lambda\Op|(\Lambda+F))\geq 1$.
$\dim(\Lambda\Op|(\Lambda+F))=1\Leftrightarrow\dim(\Lambda+\R_0b)= \dim(\Lambda+F)$.
\item
For any $\Lambda'\in\mathcal{F}(\Lambda)$, we have $\Lambda'+F\in\mathcal{D}$, and $F\not\subset\Lambda'$. $\mathcal{F}(\Lambda)\subset(\mathcal{D}/F)\Fc-(\mathcal{D}/F)$.
\item
$\{\Lambda'+\R_0b|\Lambda'\in\mathcal{F}(\Lambda)\}=\mathcal{F}(\Lambda+\R_0b)/\R_0b\subset\mathcal{F}(\Lambda+\R_0b)$.
$\mathcal{F}(\Lambda)= \mathcal{F}(\Lambda+\R_0b)-(\mathcal{F}(\Lambda+\R_0b)/\R_0b)\subset\mathcal{F}(\Lambda+\R_0b)$.
\item
For any $\Delta'\in\mathcal{D}-(\mathcal{D}/F)$, we have
$(\Lambda+\R_0b)\cap\Delta'=\Lambda\cap\Delta'\in\mathcal{F}(\Delta')$, and
$\Lambda\cap\Delta'\in\mathcal{F}(\Lambda)\subset\mathcal{F}(\Lambda+\R_0b)$.
\end{enumerate}
\end{lemma}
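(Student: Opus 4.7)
The plan is to reduce every assertion to the statements of Lemma~\ref{barycenter property}.5 and Lemma~\ref{barycenter property}.6 applied to the single simplicial cone $S:=\Lambda+F\in\mathcal{D}$. Since $\Lambda,F\in\mathcal{D}$ are both contained in $\Lambda+F\in\mathcal{D}$, Lemma~\ref{basic cpcd}.2 shows that $\Lambda$ and $F$ are faces of $S$; together with $\dim F\geq 1$ and $F\not\subset\Lambda$, this places us exactly in the hypotheses of Lemma~\ref{barycenter property}.5 with $b=b_{F/N}$. Once this local reduction is set up, claims 1, 2, 3, and 5 follow almost mechanically; only claim 6 requires a genuinely new observation.

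With this setup, claim 1 is the combination of parts (a) and (b) of Lemma~\ref{barycenter property}.5. In claim 2, the inclusion $\Lambda+\R_0b\subset\Lambda+F$ and the containment of interiors are Lemma~\ref{barycenter property}.5.(d), while $\Lambda+F\in\mathcal{D}/F$ is by hypothesis; the dichotomy according to $\dim F$ is Lemma~\ref{barycenter property}.5.(e). For claim 3, the equality $\dim(\Lambda+\R_0 b)=\dim\Lambda+1$ comes from Lemma~\ref{barycenter property}.5.(a), and the inequality $\dim\Lambda+1\leq\dim(\Lambda+F)$ then follows from $\Lambda+\R_0b\subset\Lambda+F$. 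Using $\mathcal{F}(\Lambda+F)_1=\mathcal{F}(\Lambda)_1\cup\mathcal{F}(F)_1$ (Lemma~\ref{simplex2}.5.(b)), one computes
\[
\Lambda\Op|(\Lambda+F)=\sum_{E\in\mathcal{F}(F)_1-\mathcal{F}(\Lambda)_1}E\in\mathcal{F}(F),
\]
and $F\not\subset\Lambda$ forces $\dim\Lambda\Op|(\Lambda+F)\geq 1$. The final equivalence is arithmetic: $\dim(\Lambda+F)-\dim\Lambda=\dim\Lambda\Op|(\Lambda+F)$ by Lemma~\ref{opposite2}.1.(a), whereas $\dim(\Lambda+\R_0 b)-\dim\Lambda=1$.

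For claim 4, any $\Lambda'\in\mathcal{F}(\Lambda)$ satisfies $\Lambda'\subset\Lambda\subset\Lambda+F$, so $\Lambda'\in\mathcal{F}(\Lambda+F)\subset\mathcal{D}$ by Lemma~\ref{basic cpcd}.4. Since $\Lambda+F$ is simplicial and both $\Lambda'$ and $F$ are its faces, Lemma~\ref{simplex2}.5.(b) gives $\Lambda'+F\in\mathcal{F}(\Lambda+F)\subset\mathcal{D}$, and it contains $F$, so $\Lambda'+F\in\mathcal{D}/F$. Hence $\Lambda'\in(\mathcal{D}/F)\Fc$; and $F\not\subset\Lambda'$ (since $F\not\subset\Lambda$) shows $\Lambda'\notin\mathcal{D}/F$. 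Claim 5 is then a restatement of Lemma~\ref{barycenter property}.5.(c).

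The essential step is claim 6, which is where the decomposition $\mathcal{D}$ (as opposed to just the single cone $\Lambda+F$) really enters. Given $\Delta'\in\mathcal{D}-(\mathcal{D}/F)$, we have $F\not\subset\Delta'$. Set $\Gamma:=\Delta'\cap(\Lambda+F)$; by Lemma~\ref{basic cpcd}.1 this lies in $\mathcal{D}$ and is in particular a face of $\Lambda+F$, with $F\not\subset\Gamma$ (otherwise $F\subset\Delta'$). Using $\Lambda+\R_0 b\subset\Lambda+F$,
\[
(\Lambda+\R_0 b)\cap\Delta'=(\Lambda+\R_0 b)\cap\Gamma,
\]
and Lemma~\ref{barycenter property}.6.(a) applied inside $S=\Lambda+F$ (with $\Lambda$ playing the role of $\Lambda$ and $\Gamma$ playing the role of $\Lambda'$, both valid since $F\not\subset\Lambda$ and $F\not\subset\Gamma$) yields $(\Lambda+\R_0 b)\cap\Gamma=\Lambda\cap\Gamma=\Lambda\cap\Delta'$. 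Finally $\Lambda\cap\Delta'\in\mathcal{D}$ is a common face of $\Lambda$ and $\Delta'$, and by claim 5 it lies in $\mathcal{F}(\Lambda+\R_0 b)$. The main bookkeeping obstacle is the passage from $\Delta'\in\mathcal{D}$ to the face $\Gamma$ of $\Lambda+F$, which is precisely what the decomposition axioms supply.
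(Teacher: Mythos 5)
Your reduction is correct, and since the paper gives no explicit proof of Lemma~\ref{prep1} (it appears in a block labelled ``easily follow from definitions''), your argument supplies the missing details in what is surely the intended way: work inside the single simplicial cone $S=\Lambda+F\in\mathcal{D}$ and invoke Lemma~\ref{barycenter property}.5 and 6. The verification that $\Lambda$, $F$ are faces of $S$ via Lemma~\ref{basic cpcd}.2, and that the hypotheses of Lemma~\ref{barycenter property}.5 hold, is exactly right, and claims 1, 2, 3, 5 then are literal specializations (for claim 3 you correctly supplement with Lemma~\ref{simplex2}.5.(b) and Lemma~\ref{opposite2}.1.(a); for claim 4 with the decomposition axioms and $\Lambda'\subset\Lambda$).

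For claim 6 your key step -- passing from $\Delta'\in\mathcal{D}-(\mathcal{D}/F)$ to the face $\Gamma=\Delta'\cap(\Lambda+F)\in\mathcal{F}(\Lambda+F)$ and checking $F\not\subset\Gamma$ before applying Lemma~\ref{barycenter property}.6.(a) -- is the genuine content of the lemma, and your write-up nails it: $(\Lambda+\R_0 b)\cap\Delta'=(\Lambda+\R_0 b)\cap\Gamma=\Lambda\cap\Gamma=\Lambda\cap\Delta'$, and $\Lambda\cap\Delta'\in\mathcal{F}(\Delta')$ because it is the intersection of two members of $\mathcal{D}$. One small cosmetic point: for ``if $\dim F=1$ then $\R_0 b=F$'' in claim 2, Lemma~\ref{barycenter property}.5.(e) gives $\Lambda+\R_0 b=\Lambda+F$ but the equality $\R_0 b=F$ itself is Lemma~\ref{barycenter property}.4 (or Lemma~\ref{simplex2}.1); this is trivial but worth citing separately rather than folding into (e).
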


\begin{lemma}
\label{prep2}
Consider any element $\Lambda\in\mathcal{D}$ satisfying $\Lambda+F\in\mathcal{D}$ and $F\not\subset\Lambda$ and any element $\Lambda'\in\mathcal{D}$ satisfying $\Lambda'+F\in\mathcal{D}$ and $F\not\subset\Lambda'$.
\begin{enumerate}
\item
$(\Lambda\cap \Lambda')+F\in\mathcal{D}$.
$F\not\subset\Lambda\cap \Lambda'$.
\item
$(\Lambda+\R_0b)\cap(\Lambda'+\R_0b)= (\Lambda\cap \Lambda')+\R_0b\in\mathcal{F}(\Lambda+\R_0b)/ \R_0b$.
\item
$(\Lambda+F)\cap(\Lambda'+F)= (\Lambda\cap \Lambda')+F$.
\item
$\Lambda+\R_0b\subset\Lambda'+\R_0b\Leftrightarrow\Lambda\subset\Lambda'$.
\item
$\Lambda+\R_0b=\Lambda'+\R_0b\Leftrightarrow\Lambda=\Lambda'$.
\end{enumerate}
\end{lemma}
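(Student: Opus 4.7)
The plan is to handle the five assertions in the order $1\to 3\to 2\to 4\to 5$, since parts~2,~4,~5 all build on part~3, and part~1 provides the ambient simplicial cone $\Lambda+F\in\mathcal{D}$ in which to argue. For part~1, Lemma~\ref{basic cpcd}.1 gives $\Lambda\cap\Lambda'\in\mathcal{D}$, and since $\Lambda\cap\Lambda'\subset\Lambda$ it is a face of $\Lambda+F$; applying Lemma~\ref{simplex2}.5(b) to the two faces $\Lambda\cap\Lambda'$ and $F$ of the simplicial cone $\Lambda+F$ then yields $(\Lambda\cap\Lambda')+F\in\mathcal{F}(\Lambda+F)\subset\mathcal{D}$, and $F\not\subset\Lambda\cap\Lambda'$ is immediate from $F\not\subset\Lambda$.

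For part~3 I set $M=(\Lambda+F)\cap(\Lambda'+F)\in\mathcal{D}$. Since $F\subset M$ and $M$ is simplicial, the opposite face $\Gamma:=F\Op|M$ gives $M=\Gamma+F$. Because $M$ is a face of $\Lambda+F$, one has $\mathcal{F}(\Gamma)_1\subset\mathcal{F}(M)_1\subset\mathcal{F}(\Lambda+F)_1=\mathcal{F}(\Lambda)_1\cup\mathcal{F}(F)_1$, and since $\mathcal{F}(\Gamma)_1\cap\mathcal{F}(F)_1=\emptyset$ by the opposite-face relation, $\mathcal{F}(\Gamma)_1\subset\mathcal{F}(\Lambda)_1$; Lemma~\ref{simplex2}.3 then identifies $\Gamma$ as a face of $\Lambda$. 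Repeating the argument with $\Lambda'$ in place of $\Lambda$ yields $\Gamma\subset\Lambda'$, hence $\Gamma\subset\Lambda\cap\Lambda'$ and $M=\Gamma+F\subset(\Lambda\cap\Lambda')+F$. The reverse inclusion is trivial.

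The nontrivial direction of part~2 is $(\Lambda+\R_0b)\cap(\Lambda'+\R_0b)\subset(\Lambda\cap\Lambda')+\R_0b$; the reverse inclusion is immediate, and membership in $\mathcal{F}(\Lambda+\R_0b)/\R_0b$ follows from Lemma~\ref{simplex2}.3 combined with Lemma~\ref{prep1}. Given $x$ in the intersection, part~3 and the hypotheses supply simultaneous decompositions $x=w+f=y+tb=y'+t'b$ with $w\in\Lambda\cap\Lambda'$, $f\in F$, $y\in\Lambda$, $y'\in\Lambda'$, $t,t'\in\R_0$. I will expand $f$ in the simplicial basis $\{b_E:E\in\mathcal{F}(F)_1\}$ of $\Vect(F)$ and partition $\mathcal{F}(F)_1$ into four blocks $Y_0,Y_1,Y_1',Z_0$ according to whether each $E$ lies in $\Lambda$ and/or $\Lambda'$, using the identity $\mathcal{F}(\Lambda)_1\cap\mathcal{F}(\Lambda')_1=\mathcal{F}(\Lambda\cap\Lambda')_1$ (which follows from $\Lambda\cap\Lambda'\in\mathcal{D}$ and Lemma~\ref{basic cpcd}.2). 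Comparing the two expansions of $f$ obtained from $f=(y-w)+tb$ and $f=(y'-w)+t'b$ pins down $t=t'$: one reads off $t=t'$ from any 1-face of $F$ in the block $Z_0$ (outside both $\Lambda$ and $\Lambda'$); if $Z_0=\emptyset$ then both $Y_1$ and $Y_1'$ are nonempty, for otherwise $F\subset\Lambda$ or $F\subset\Lambda'$, and the sign constraints $\mu_E,\mu_E'\geq 0$ on the coefficients of $y,y'$ force $t'-t\geq 0$ and $t-t'\geq 0$. Once $t=t'$ is established, the residue $y-w\in\Vect(\Lambda\cap F)$ forces the coefficient of $y$ at every 1-face of $\Lambda$ outside $\mathcal{F}(\Lambda\cap\Lambda')_1$ to vanish, whence $y\in\Lambda\cap\Lambda'$ and $x\in(\Lambda\cap\Lambda')+\R_0b$.

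For part~4 the implication ``$\Leftarrow$'' is trivial, while for ``$\Rightarrow$'' the hypothesis combined with part~2 yields $\Lambda+\R_0b=(\Lambda\cap\Lambda')+\R_0b$; every $x\in\Lambda$ can then be written $x=w+tb$ with $w\in\Lambda\cap\Lambda'$, $t\in\R_0$, and if $t>0$ then $b=(x-w)/t\in\Vect(\Lambda)$, whence $b\in\Vect(\Lambda)\cap\Vect(F)\cap F^\circ=\Vect(\Lambda\cap F)\cap F^\circ=\emptyset$, since $\Lambda\cap F$ is a proper face of the simplicial cone $F$ as $F\not\subset\Lambda$; hence $t=0$ and $x=w\in\Lambda\cap\Lambda'\subset\Lambda'$. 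Part~5 is then immediate from part~4 applied in both directions. The main obstacle is the subcase $Z_0=\emptyset$ of part~2, where no 1-face of $F$ is outside both $\Lambda$ and $\Lambda'$ and the equality $t=t'$ cannot be read off from a single coefficient but has to be extracted indirectly from the overlap between the two simplicial expansions on $Y_1$ and $Y_1'$.
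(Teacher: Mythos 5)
Your proposal is correct; the paper states Lemma~\ref{prep2} without proof (as with the other lemmas in Section~\ref{bcd}), so there is no argument in the text to compare against, but every step you give checks out and uses only the cited machinery (Lemma~\ref{basic cpcd}, Lemma~\ref{simplex2}, Lemma~\ref{opposite2}, Lemma~\ref{prep1}). The sign argument in the $Z_0=\emptyset$ case of part~2 — reading $f_E=t'$ off the $Y_1$ coordinates from the $\Lambda'$-side, then forcing $t'-t\geq 0$ from nonnegativity of $y_E=f_E-t$, and symmetrically $t-t'\geq 0$ — is indeed the essential content and it works, after noting $Y_1,Y_1'\neq\emptyset$ (else $F\subset\Lambda'$ or $F\subset\Lambda$, using Lemma~\ref{simplex2}.3 inside $\Lambda'+F$ resp.\ $\Lambda+F$). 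Two places could be streamlined. In part~2, once $t=t'$ you have $y = x - tb = x - t'b = y'$ directly, so $y\in\Lambda\cap\Lambda'$ without the coordinate-vanishing argument. In part~4, since $\Lambda+\R_0 b$ and $(\Lambda\cap\Lambda')+\R_0 b$ are simplicial cones (Lemma~\ref{prep1}.1), the equality $\Lambda+\R_0 b=(\Lambda\cap\Lambda')+\R_0 b$ obtained from part~2 gives $\mathcal{F}(\Lambda)_1\cup\{\R_0 b\}=\mathcal{F}(\Lambda\cap\Lambda')_1\cup\{\R_0 b\}$ by Lemma~\ref{simplex2}.2; as $b\in F^\circ$ and $F\not\subset\Lambda$, $\R_0 b$ is not a $1$-face of $\Lambda$, whence $\mathcal{F}(\Lambda)_1=\mathcal{F}(\Lambda\cap\Lambda')_1$ and $\Lambda=\Lambda\cap\Lambda'\subset\Lambda'$, avoiding the case split on $t$ and the appeal to $F^\circ\cap\Vect(\Lambda\cap F)=\emptyset$.
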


\begin{lemma}
\label{prep3}
Consider any element $\Delta\in\mathcal{D}/F$.
\begin{enumerate}
\item
$F\in\mathcal{F}(\Delta)$.
$\mathcal{F}(F)_1\subset\mathcal{F}(\Delta)_1$.
\item
$\{E\Op|\Delta\:|\:E\in\mathcal{F}(F)_1\}\subset\{\Lambda\in\mathcal{F}(\Delta)|F\not\subset\Lambda, \Lambda+F=\Delta\}\subset\{\Lambda\in\mathcal{F}(\Delta)|F\not\subset\Lambda\}\subset\{\Lambda\in\mathcal{D}| F\not\subset\Lambda, \Lambda+F\in\mathcal{D}\}$.
\item
$$\bigcup_{E\in\mathcal{F}(F)_1}(( E\Op|\Delta)+\R_0b)=
\bigcup_{\Lambda\in\mathcal{F}(\Delta), F\not\subset\Lambda, \Lambda+F=\Delta}(\Lambda+\R_0b)=
\bigcup_{\Lambda\in\mathcal{F}(\Delta), F\not\subset\Lambda}(\Lambda+\R_0b)=\Delta$$
\item
$$\bigcup_{\Lambda\in\mathcal{F}(\Delta), F\not\subset\Lambda, \Lambda+F=\Delta}(\Lambda+\R_0b)^\circ=\Delta^\circ$$
\end{enumerate}
\end{lemma}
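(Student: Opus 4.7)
The plan is to verify the four parts in order; parts~1 and~2 are routine face-lattice bookkeeping, part~3 is a sandwich of unions already computed, and part~4 needs a short argument using the equivalence-class decomposition of a simplicial cone.

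For part~1, since $\Delta\in\mathcal{D}/F$ means $F\subset\Delta$ and $F,\Delta\in\mathcal{D}$, axiom (b) of Definition~\ref{cpcd} gives $F=F\cap\Delta\in\mathcal{F}(\Delta)$; then Lemma~\ref{simplex2}.3 yields $\mathcal{F}(F)_1\subset\mathcal{F}(\Delta)_1$. For part~2, the only non-trivial inclusion is the first. Fix $E\in\mathcal{F}(F)_1$ and set $\Lambda=E\Op|\Delta$. By Lemma~\ref{opposite2}.1.(d), $\mathcal{F}(\Lambda)_1=\mathcal{F}(\Delta)_1-\{E\}$, so $E\not\subset\Lambda$ (else $E\in\mathcal{F}(\Lambda)_1$ by Lemma~\ref{simplex2}.3) and hence $F\not\subset\Lambda$ since $E\subset F$. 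Also $\mathcal{F}(\Lambda)_1\cup\mathcal{F}(F)_1=\mathcal{F}(\Delta)_1$, so by Lemma~\ref{simplex2}.5.(b) the face $\Lambda+F$ of $\Delta$ has $1$-skeleton $\mathcal{F}(\Delta)_1$, forcing $\Lambda+F=\Delta$ via the bijection in Lemma~\ref{simplex2}.3. The second inclusion is immediate, and the third follows from $\mathcal{F}(\Delta)\subset\mathcal{D}$ (Definition~\ref{cpcd}.1.(c)) together with $\Lambda+F\in\mathcal{F}(\Delta)\subset\mathcal{D}$ by Lemma~\ref{simplex2}.5.(b).

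For part~3, I would apply Lemma~\ref{barycenter property}.7.(b) with $S=\Delta$ to obtain
$$\Delta=\bigcup_{\Lambda\in\mathcal{F}(\Delta),\,F\not\subset\Lambda}(\Lambda+\R_0b)=\bigcup_{E\in\mathcal{F}(F)_1}((E\Op|\Delta)+\R_0b).$$
Using part~2, the three unions in the statement form a nested sandwich between these two expressions, so all three equal $\Delta$.

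The hard part will be part~4. The inclusion $\subset$ is immediate from Lemma~\ref{prep1}.2: if $\Lambda+F=\Delta$ and $F\not\subset\Lambda$, then $(\Lambda+\R_0b)^\circ\subset(\Lambda+F)^\circ=\Delta^\circ$. For $\supset$, I would take $a\in\Delta^\circ$, invoke Lemma~\ref{barycenter property}.7.(c) with $S=\Delta$ to find $\Lambda\in\mathcal{F}(\Delta)$ with $F\not\subset\Lambda$ and $a\in(\Lambda+\R_0b)^\circ$, and then apply Lemma~\ref{prep1}.2 once more to deduce $a\in(\Lambda+F)^\circ$. Since $a$ lies in both $\Delta^\circ$ and $(\Lambda+F)^\circ$, and the open faces of $\Delta$ partition $\Delta$ by Proposition~\ref{property of cones}.20, we must have $\Lambda+F=\Delta$. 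The main subtlety is recognizing that Lemma~\ref{prep1}.2 carries $(\Lambda+\R_0b)^\circ$ into the open part $(\Lambda+F)^\circ$, not merely into $\Lambda+F$; this is precisely what allows the equivalence-class decomposition to pin down $\Lambda+F=\Delta$ without any coordinate calculation in the two simplicial bases.
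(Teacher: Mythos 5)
Your proof is correct, and since the paper leaves this lemma unproved (it is one of several stated as routine consequences of the surrounding material), there is no official argument to contrast against; your route through Lemma~\ref{barycenter property}.7 and Lemma~\ref{prep1}.2 is exactly what the surrounding development sets up. In particular you correctly identified that part~4 does not follow formally from part~3 by passing to interiors, and that the key step is to land in the \emph{open} face $(\Lambda+F)^\circ$ via Lemma~\ref{prep1}.2 and then invoke the partition $\Delta=\bigsqcup_{G\in\mathcal{F}(\Delta)}G^\circ$ of Proposition~\ref{property of cones}.20 to force $\Lambda+F=\Delta$.
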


\begin{definition}
\label{defbcd}
We denote
\begin{equation*}
\begin{split}
\mathcal{D}*F=
(\mathcal{D}-(\mathcal{D}/F))\cup
\{\Delta\in 2^V|& \Delta=\Lambda+\R_0b\text{ for some }\Lambda\in\mathcal{D}\\
&\quad\text{satisfying }\Lambda+F\in\mathcal{D}\text{ and } F\not\subset\Lambda\}\subset 2^V,
\end{split}
\end{equation*}
and we call $\mathcal{D}*F$ the \emph{barycentric subdivision} of $\mathcal{D}$ with \emph{center} in $F$.
\end{definition}

\begin{lemma}
\label{probcd}
\begin{enumerate}
\item
$\mathcal{D}*F$ is a simplicial cone decomposition over $N$ in $V$.
$\mathcal{D}*F$ is a full subdivision of $\mathcal{D}$.
$|\mathcal{D}*F|=|\mathcal{D}|$.
$\dim \mathcal{D}*F=\dim \mathcal{D}$.
\item
$\R_0b\in(\mathcal{D}*F)_1$.
$|\mathcal{D}*F/\R_0b|^\circ=|\mathcal{D}/F|^\circ$.
\item
$(\mathcal{D}*F)-(\mathcal{D}*F/\R_0b)= \mathcal{D}-(\mathcal{D}/F)$.
$\mathcal{D}*F/\R_0b=\{\Delta\in 2^V|\Delta=\Lambda+\R_0b\text{ for some }\Lambda\in\mathcal{D}
\text{ satisfying }\Lambda+F\in\mathcal{D}\text{ and }F\not\subset\Lambda\}$.
\item
If $\dim F=1$, then $\R_0b=F\in\mathcal{D}_1$ and $\mathcal{D}*F=\mathcal{D}$.
If $\dim F\geq 2$, then $\R_0b\not\in\mathcal{D}$, $\mathcal{D}*F\neq\mathcal{D}$,
$(\mathcal{D}*F)_1=\mathcal{D}_1\cup\{\R_0b\}$, and $\sharp(\mathcal{D}*F)_1=\sharp\mathcal{D}_1+1$.
\item
Consider any $\Delta\in \mathcal{D}*F/\R_0b$.
We denote $\Lambda=(\R_0b)\Op|\Delta\in\mathcal{F}(\Delta)$.
\begin{enumerate}
\item
$\Delta=\Lambda+\R_0b$. $\Lambda\cap\R_0b=\{0\}$.
$\R_0b\in\mathcal{F}(\Delta)_1$. $\Lambda\in\mathcal{F}(\Delta)^1$.
\item
$\Lambda+F=\Delta+F\in\mathcal{D}$.
$\Delta^\circ\subset(\Lambda+F)^\circ=(\Delta+F )^\circ$.
\end{enumerate}
\item
$(\mathcal{D}*F)\Mx-(\mathcal{D}*F/\R_0b)= \mathcal{D}\Mx-(\mathcal{D}/F)$.
$(\mathcal{D}*F)\Mx\cap(\mathcal{D}*F/\R_0b)=\{\Delta\in 2^V|\Delta=(E\Op|\Lambda)+ \R_0b\text{ for some }
E\in\mathcal{F}(F)_1\text{ and some }\Lambda\in\mathcal{D}\Mx/F\}$.
\end{enumerate}
\end{lemma}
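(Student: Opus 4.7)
The plan is to verify the three defining axioms of a simplicial cone decomposition for $\mathcal{D}*F$, then derive the enumerated claims from Lemmas~\ref{prep1}, \ref{prep2}, \ref{prep3}. First I would check that every element of $\mathcal{D}*F$ is a simplicial cone over $N$: for elements of $\mathcal{D}-(\mathcal{D}/F)$ this is inherited from $\mathcal{D}$, and for elements of the form $\Lambda+\R_0 b$ (with $\Lambda\in\mathcal{D}$, $\Lambda+F\in\mathcal{D}$, $F\not\subset\Lambda$) it is Lemma~\ref{prep1}.1. Next I would check the face-closure axiom. For $\Delta\in\mathcal{D}-(\mathcal{D}/F)$ and any face $\Lambda$, one has $\Lambda\in\mathcal{D}$ and, since $F\not\subset\Delta\supset\Lambda$, also $F\not\subset\Lambda$, so $\Lambda\in\mathcal{D}-(\mathcal{D}/F)\subset\mathcal{D}*F$. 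For $\Delta=\Lambda+\R_0 b$ as above, Lemma~\ref{prep1}.5 describes $\mathcal{F}(\Delta)$ as $\mathcal{F}(\Lambda)\cup\{\Lambda'+\R_0 b\mid\Lambda'\in\mathcal{F}(\Lambda)\}$; the first family lies in $\mathcal{D}-(\mathcal{D}/F)$ by Lemma~\ref{prep1}.4, and the second is in $\mathcal{D}*F$ directly by its definition together with Lemma~\ref{prep1}.4 applied to $\Lambda'$.

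For the intersection axiom I would split into three cases for $\Delta,\Delta'\in\mathcal{D}*F$. When both are in $\mathcal{D}-(\mathcal{D}/F)$, the intersection is already a common face in $\mathcal{D}$ and does not contain $F$. When $\Delta=\Lambda+\R_0 b$ and $\Delta'\in\mathcal{D}-(\mathcal{D}/F)$, Lemma~\ref{prep1}.6 gives $(\Lambda+\R_0 b)\cap\Delta'=\Lambda\cap\Delta'$ and identifies it as a face on both sides. When both are of the form $\Lambda+\R_0 b$ and $\Lambda'+\R_0 b$, Lemma~\ref{prep2}.2 gives $(\Lambda+\R_0 b)\cap(\Lambda'+\R_0 b)=(\Lambda\cap\Lambda')+\R_0 b$, which is a common face by Lemma~\ref{prep2}.1. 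This establishes that $\mathcal{D}*F$ is a simplicial cone decomposition over $N$.

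To finish claim~1, the support equality $|\mathcal{D}*F|=|\mathcal{D}|$ and the full subdivision statement follow from Lemma~\ref{prep3}: on $\Delta\in\mathcal{D}/F$ the union $\bigcup_{\Lambda\in\mathcal{F}(\Delta),F\not\subset\Lambda}(\Lambda+\R_0 b)$ equals $\Delta$, and the interior version in Lemma~\ref{prep3}.4 gives that each $\Delta^\circ$ is covered by interiors of new cones, proving fullness and $\dim\mathcal{D}*F=\dim\mathcal{D}$. Claim~3 is merely a restatement of the definition of $\mathcal{D}*F$, once one notes that elements of the form $\Lambda+\R_0 b$ contain $\R_0 b$ while elements of $\mathcal{D}-(\mathcal{D}/F)$ do not (as $b\in F^\circ$ and $F\not\subset\Lambda$ for those $\Lambda$). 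Claim~2 then follows using Lemma~\ref{prep3}.4 again, which identifies $|\mathcal{D}*F/\R_0 b|^\circ$ with $\bigcup_{\Delta\in\mathcal{D}/F}\Delta^\circ=|\mathcal{D}/F|^\circ$.

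For claim~4 I would observe that if $\dim F=1$ then $F=\R_0 b$, the class $\mathcal{D}/F$ consists of cones already containing $\R_0 b$, and each $\Lambda+\R_0 b=\Lambda+F\in\mathcal{D}$, so the definition yields $\mathcal{D}*F=\mathcal{D}$; if $\dim F\geq 2$ then $\R_0 b$ is a new ray (not in $\mathcal{D}$ since $\mathcal{D}/\R_0 b$ would force $\R_0 b$ to be a face of some $\Delta\in\mathcal{D}$ containing $F$, contradicting $\dim\R_0 b<\dim F$ and simpliciality), and the count $\sharp(\mathcal{D}*F)_1=\sharp\mathcal{D}_1+1$ is immediate. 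Claim~5 is Lemma~\ref{prep1}.1--\ref{prep1}.2 applied to the unique pair $(\Lambda,b)$ giving $\Delta$: $\Lambda=(\R_0 b)\Op|\Delta$ is forced, and $\Lambda+F=\Delta+F\in\mathcal{D}$ by Lemma~\ref{prep1}.2. Claim~6 is a bookkeeping exercise: elements of $\mathcal{D}*F\Mx$ not containing $\R_0 b$ are precisely elements of $\mathcal{D}\Mx-(\mathcal{D}/F)$, while elements of $\mathcal{D}*F\Mx$ containing $\R_0 b$ correspond, via Lemmas~\ref{prep1} and \ref{prep2}, to pairs $(E,\Lambda)$ with $E\in\mathcal{F}(F)_1$ and $\Lambda\in\mathcal{D}\Mx/F$, the cone being $(E\Op|\Lambda)+\R_0 b$.

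The step I expect to be the main obstacle is the careful verification of the intersection axiom in the mixed case and the simultaneous verification that the resulting intersections are faces on \emph{both} sides; this is where the full strength of Lemmas~\ref{prep1}.6 and \ref{prep2} (in particular the identification $(\Lambda\cap\Lambda')+\R_0 b\in\mathcal{F}(\Lambda+\R_0 b)/\R_0 b$) is used, and where one must pay attention to which cone plays which role.
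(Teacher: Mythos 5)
Your proposal is correct and takes exactly the route the paper intends: the paper states Lemma~\ref{probcd} without proof, leaving it to follow from the three preparatory Lemmas~\ref{prep1}--\ref{prep3}, and you verify the three axioms of a simplicial cone decomposition by casework (old-old, old-new, new-new) and then read off each enumerated claim from prep1--prep3 in the natural way. The only place I would sharpen the wording is the argument in claim~4 that $\R_0 b\notin\mathcal{D}$ when $\dim F\geq 2$: the clean version is that $b\in F^\circ$ gives $(\R_0 b)^\circ\cap F^\circ\neq\emptyset$, so if $\R_0 b$ were in $\mathcal{D}$ then Lemma~\ref{basic cpcd}.3 would force $\R_0 b=F$, contradicting $\dim F\geq 2$; your phrasing about ``forcing $\R_0 b$ to be a face of some $\Delta$ containing $F$'' is a bit off the mark, though it reaches the right conclusion.
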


\begin{example}
Assume $\dim V\geq 3$.
Consider any simplicial cone $S$ over $N$ in $V$ with $\dim S=3$.
Let $E(1), E(2), E(3)$ denote the three edges of $S$.
We denote $b(i)=b_{E(i)/N}\in E(i)^\circ\cap N$ for any $i\in\{1,2,3\}$ for simplicity.
$E(i)=\R_0b(i)$ for any $i\in\{1,2,3\}$.

Put
\begin{equation*}\begin{split}
T(1)&=\R_0b(1)+\R_0(b(1)+b(2))+\R_0(b(1)+b(3)), \\
T(2)&=\R_0b(2)+\R_0(b(2)+b(3))+\R_0(b(2)+b(1)), \\
T(3)&=\R_0b(3)+\R_0(b(3)+b(1))+\R_0(b(3)+b(2)), \\
T(4)&=\R_0(b(1)+b(2)+b(3))+\R_0(b(1)+b(2))+\R_0(b(1)+b(3)), \\
T(5)&=\R_0(b(1)+b(2)+b(3))+\R_0(b(2)+b(3))+\R_0(b(2)+b(1)), \\
T(6)&=\R_0(b(1)+b(2)+b(3))+\R_0(b(3)+b(1))+\R_0(b(3)+b(2)).
\end{split}\end{equation*}
For any $i\in\{1,2,\ldots,6\}$, $T(i)$ is a simplicial cone over $N$ in $V$ with $\dim T(i)=3$.

Let $\mathcal{E}=\cup_{i\in\{1,2,\ldots,6\}}\mathcal{F}(T(i))\subset 2^V$.
$\mathcal{E}$ is a simplicial cone decomposition over $N$ in $V$.
$\dim \mathcal{E}=3$ and $|\mathcal{E}|=S$.
$\mathcal{E}$ is a full subdivision of the simplicial cone decomposition $\mathcal{F}(S)$.

For any $\bar{F}\in \mathcal{F}(S)_2$, $\R_0b_{\bar{F}/N}\in\mathcal{E}$, and $\mathcal{E}$ is not a subdivision of $\mathcal{F}(S)*\bar{F}$.

$\R_0b_{S/N}\in\mathcal{E}$, and $\mathcal{E}$ is not a subdivision of $\mathcal{F}(S)*S$.
\end{example}

\section{Iterated barycentric subdivisions}
\label{ibcd}
We study iterated barycentric subdivisions of simplicial cone decompositions.

Let $V$ be any finite dimensional vector space over $\R$; let $N$ be any lattice of $V$; and let $\mathcal{D}$ be any simplicial cone decomposition over $N$ in $V$ with $\dim\mathcal{D}\geq 1$.

\begin{definition}
\label{defibcd}
Let $m\in\Z_0$ be any non-negative integer.
We call a mapping $F$ from $\{1,2,\ldots,m\}$ to the set $2^V$ of all subsets of $V$ satisfying the following two conditions a \emph{center sequence} of $\mathcal{D}$ of \emph{length} $m$:
\begin{enumerate}
\item
$F(i)$ is a simplicial cone over $N$ in $V$ and $\dim F(i)\geq 2$ for any $i\in\{1,2,\ldots,m\}$.
\item
There exists uniquely a mapping $\bar{\mathcal{D}}$ from $\{0,1,\ldots,m\}$ to the set of all simplicial cone decompositions over $N$ in $V$ satisfying the following two conditions:
\begin{enumerate}
\item
$\bar{\mathcal{D}}(0)=\mathcal{D}$.
\item
$F(i)\in\bar{\mathcal{D}}(i-1)$ and $\bar{\mathcal{D}}(i)=\bar{\mathcal{D}}(i-1)*F(i)$ for any $i\in\{1,2,\ldots,m\}$.
\end{enumerate}
\end{enumerate}

Consider any $m\in\Z_0$ and any center sequence $F$ of $\mathcal{D}$ of length $m$.
There exists uniquely a mapping $\bar{\mathcal{D}}$ from $\{0,1,\ldots,m\}$ to the set of all simplicial cone decompositions over $N$ in $V$ satisfying the above two conditions $(a)$ and $(b)$.
Since simplicial cone decomposition $\bar{\mathcal{D}}(m)$ is uniquely determined by $\mathcal{D}$ and the center sequence $F$ of $\mathcal{D}$, we denote $\bar{\mathcal{D}}(m)$ by the symbol
$$\mathcal{D}*F(1)*F(2)*\cdots*F(m),$$
and we call $\mathcal{D}*F(1)*F(2)*\cdots*F(m)$ the \emph{iterated barycentric subdivision} of $\mathcal{D}$ \emph{along} the center sequence $F$ of $\mathcal{D}$.

Consider any simplicial cone decomposition $\mathcal{E}$ over $N$ in $V$.
If $\mathcal{E}=\mathcal{D}*F(1)*F(2)*\cdots*F(m)$ for some $m\in\Z_0$ and some center sequence $F$ of $\mathcal{D}$ of length $m$, then we call $\mathcal{E}$ an \emph{iterated barycentric subdivision} of $\mathcal{D}$.
\end{definition}

\begin{lemma}
\label{propibcd1}
Consider any $m\in\Z_0$ and any center sequence $F$ of $\mathcal{D}$ of length $m$.

\begin{enumerate}
\item
$\mathcal{D}*F(1)*F(2)*\cdots*F(m)$ is a simplicial cone decomposition over $N$ in $V$.
$\dim\mathcal{D}*F(1)*F(2)*\cdots*F(m)=\dim\mathcal{D}$.
$\mathcal{D}*F(1)*F(2)*\cdots*F(m)$ is a full subdivision of $\mathcal{D}$.
$|\mathcal{D}*F(1)*F(2)*\cdots*F(m)|=|\mathcal{D}|$.
If $\mathcal{D}\Mx=\mathcal{D}^0$, then $(\mathcal{D}*F(1)*F(2)*\cdots*F(m))\Mx=(\mathcal{D}*F(1)*F(2)*\cdots*F(m))^0$.
\item
$\mathcal{D}*F(1)*F(2)*\cdots*F(m)=\mathcal{D}$, if $m=0$.

If $m=1$, then $\mathcal{D}*F(1)*F(2)*\cdots*F(m)$ is equal to the barycentric subdivision $\mathcal{D}*F(1)$ of $\mathcal{D}$ with center in $F(1)$.
\item
If $\dim\mathcal{D}=1$, then $m=0$ and $\mathcal{D}*F(1)*F(2)*\cdots*F(m)=\mathcal{D}$.
\item
For any $i\in\{0,1,\ldots,m\}$, the composition of the inclusion mapping\hfill\break $\{1,2,\ldots,i\}\rightarrow \{1,2,\ldots,m\}$ and $F: \{1,2,\ldots,m\}\rightarrow 2^V$ is a center sequence of $\mathcal{D}$ of lemgth $i$.
\item
Assume $m\geq 1$ and consider any $i\in\{1,2,\ldots,m\}$. 
$\dim F(i)\geq 2$.
$F(i)\in\mathcal{D}*F(1)*F(2)*\cdots*F(i-1)$.
$F(i)\subset|\mathcal{D}|$.
$(\mathcal{D}*F(1)*F(2)*\cdots*F(i-1))*F(i)= \mathcal{D}*F(1)*F(2)*\cdots*F(i)$.
\item
Assume $m\geq 2$ and consider any $i\in\{2,3,\ldots,m\}$.
The mapping $G:\{1,2,\ldots, m-i+1\}\rightarrow 2^V$ satisfying $G(j)=F(i+j-1)$ for any $j\in\{1,2,\ldots, m-i+1\}$ is a center sequence of $\mathcal{D}*F(1)*F(2)*\cdots*F(i-1)$ of length $m-i+1$, and
$(\mathcal{D}*F(1)*F(2)*\cdots*F(i-1))*F(i)*F(i+1)*\cdots*F(m)= \mathcal{D}*F(1)*F(2)*\cdots*F(m)$.
\item
$(\mathcal{D}*F(1)*F(2)*\cdots*F(m))_1=\mathcal{D}_1\cup\{\R_0 b_{F(i)/N}|i\in\{1,2,\ldots,m\}\}$.
$\mathcal{D}_1\cap\{\R_0 b_{F(i)/N}|i\in\{1,2,\ldots,m\}\}=\emptyset$.
For any $i\in\{1,2,\ldots,m\}$ and any $j\in\{1,2,\ldots,m\}$, $\R_0 b_{F(i)/N}=\R_0 b_{F(j)/N}$, if and only if, $i=j$.

$\sharp(\mathcal{D}*F(1)*F(2)*\cdots*F(m))_1=\sharp\mathcal{D}_1+m$.
\item
For any $\ell\in\Z_0$ and any center sequence $G$ of $\mathcal{D}*F(1)*F(2)*\cdots*F(m)$ of length $\ell$,
the mapping $H:\{1,2,\ldots, m+\ell\}\rightarrow 2^V$ satisfying $H(i)=F(i)$ for any $i\in\{1,2,\ldots,m\}$ and $H(i)=G(i-m)$ for any $i\in\{m+1,m+2,\ldots,m+\ell\}$
is a center sequence of $\mathcal{D}$ of length $m+\ell$ and
$\mathcal{D}*F(1)*F(2)*\cdots*F(m)*G(1)*G(2)*\cdots*G(\ell)
=(\mathcal{D}*F(1)*F(2)*\cdots*F(m))*G(1)*G(2)*\cdots*G(\ell)$.
\end{enumerate}

Consider any non-empty subset $\mathcal{E}$ of $\mathcal{D}$ satisfying $\mathcal{E}\Fc=\mathcal{E}$.
$\mathcal{E}$ is a simplicial cone decomposition over $N$ in $V$.
$|\mathcal{E}|\subset|\mathcal{D}|$.

\begin{enumerate}
\setcounter{enumi}{8}
\item
Let $\ell=\sharp\{i\in\{1,2,\ldots,m\}|F(i)\subset|\mathcal{E}|\}\in\Z_0$ and
let $\nu:\{1,2,\cdots,\ell\}\rightarrow\{1,2,\ldots,m\}$ be the unique injective mapping preserving the order and satisfying
$\nu(\{1,2,\cdots,\ell\})=\{i\in\{1,2,\ldots,m\}|F(i)\subset|\mathcal{E}|\}$.

The composition $F\nu$ is a center sequence of $\mathcal{E}$ of length $\ell$, and
$\mathcal{E}* F\nu(1)*F\nu(2)*\cdots*F\nu(\ell)
=(\mathcal{D}*F(1)*F(2)*\cdots*F(m))\backslash |\mathcal{E}|
\subset \mathcal{D}*F(1)*F(2)*\cdots*F(m)$.
\item
If $F(i)\subset |\mathcal{E}|$ for any $i\in\{1,2,\ldots,m\}$, then the sequence $F$ is a center sequence of $\mathcal{E}$ of length $m$, and
$\mathcal{E}* F(1)*F(2)*\cdots*F(m)
=(\mathcal{D}*F(1)*F(2)*\cdots*F(m))\backslash |\mathcal{E}|
\subset \mathcal{D}*F(1)*F(2)*\cdots*F(m)$.
\item
For any $n\in\Z_0$ and any center sequence $G$ of $\mathcal{E}$ of length $n$, the sequence $G$ is a center sequence of $\mathcal{D}$ of length $n$, and
$\mathcal{E}* G(1)*G(2)*\cdots*G(n)
=(\mathcal{D}*G(1)*G(2)*\cdots*G(n))\backslash |\mathcal{E}|
\subset \mathcal{D}*G(1)*G(2)*\cdots*G(n)$.
\end{enumerate}
\end{lemma}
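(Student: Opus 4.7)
The plan is to prove all eleven parts together by induction on the length $m$ of the center sequence, leaning throughout on the single-step theory already developed in Lemma~\ref{probcd}. For $m=0$ the mapping $\bar{\mathcal{D}}$ with $\bar{\mathcal{D}}(0)=\mathcal{D}$ is the unique one satisfying condition~(2), so $\mathcal{D}*F(1)*\cdots*F(m)=\mathcal{D}$ and parts~1 and~2 are trivial. For $m=1$, the definition forces $F(1)\in\mathcal{D}$ with $\dim F(1)\ge 2$, and $\mathcal{D}*F(1)*\cdots*F(m)$ coincides with the barycentric subdivision $\mathcal{D}*F(1)$; then every assertion in part~1 and in parts~2 and~7 is read off from Lemma~\ref{probcd}. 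Part~3 follows because $F(i)\in\mathcal{D}*F(1)*\cdots*F(i-1)$ together with $\dim F(i)\ge 2$ would be impossible when $\dim\mathcal{D}=1$, since iterated barycentric subdivisions preserve dimension (to be established inductively in part~1).

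For the inductive step $m\ge 2$, suppose parts~1--3 hold for every center sequence of length $<m$. Unwinding the definition of the unique mapping $\bar{\mathcal{D}}$ in Definition~\ref{defibcd} gives directly that the restriction of $F$ to $\{1,\dots,m-1\}$ is a center sequence (part~4 in this case), that $F(m)\in\bar{\mathcal{D}}(m-1)=\mathcal{D}*F(1)*\cdots*F(m-1)$, and that $\mathcal{D}*F(1)*\cdots*F(m)=(\mathcal{D}*F(1)*\cdots*F(m-1))*F(m)$; these supply parts~5 and~6 and let us derive part~1 for length $m$ by applying Lemma~\ref{probcd} to the single barycentric step $(\mathcal{D}*F(1)*\cdots*F(m-1))*F(m)$. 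Part~8 (concatenation) is then obtained by splicing together the two defining mappings $\bar{\mathcal{D}}$ for $F$ and for $G$ into a single mapping on $\{0,1,\dots,m+\ell\}$ and checking that the resulting family satisfies conditions~$(a)$--$(b)$. For part~4 with general $i\in\{0,\dots,m\}$, one notes that the same mapping $\bar{\mathcal{D}}$ restricted to $\{0,1,\dots,i\}$ witnesses that $F|_{\{1,\dots,i\}}$ is a center sequence.

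Part~7 is then handled by induction using Lemma~\ref{probcd}.4: each step adds the single ray $\R_0 b_{F(i)/N}$ to the set of $1$-cones, and this ray is genuinely new because $\R_0 b_{F(i)/N}\subset F(i)^\circ$, combined with the fact that distinct cones in a simplicial cone decomposition have disjoint interiors (Lemma~\ref{basic cpcd}.5) and that $\dim F(i)\ge 2$ implies $F(i)\ne\R_0 b_{F(j)/N}$ for previously chosen $F(j)$. The injectivity $i\mapsto\R_0 b_{F(i)/N}$ is similar: if $\R_0 b_{F(i)/N}=\R_0 b_{F(j)/N}$ with $i<j$, then taking interiors forces this common ray to lie inside both $F(i)^\circ$ and some cone of $\mathcal{D}*F(1)*\cdots*F(j-1)$ contained in $F(j)^\circ$, contradicting disjointness of open face cones once we track how interiors behave under barycentric subdivision (Lemma~\ref{probcd}.5).

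Parts~9--11 will follow from a single \emph{restriction-commutes-with-subdivision} observation, which I expect to be the main obstacle. The claim is that for any non-empty $\mathcal{E}\subset\mathcal{D}$ with $\mathcal{E}\Fc=\mathcal{E}$,
\[
(\mathcal{D}*F(1)*\cdots*F(m))\backslash|\mathcal{E}|=\mathcal{E}*F\nu(1)*\cdots*F\nu(\ell),
\]
where $\nu$ enumerates the indices $i$ with $F(i)\subset|\mathcal{E}|$. The proof is by induction on $m$: at each step one must verify that performing a barycentric subdivision with center $F(m)\not\subset|\mathcal{E}|$ adds no cones contained in $|\mathcal{E}|$, and that a step with $F(m)\subset|\mathcal{E}|$ exactly replicates the corresponding barycentric subdivision of $\mathcal{E}$ inside $\mathcal{D}\backslash|\mathcal{E}|$. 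The key technical point, requiring Lemma~\ref{probcd}.3 and Definition~\ref{defbcd}, is that in the formula $\mathcal{D}*F=(\mathcal{D}-(\mathcal{D}/F))\cup\{\Lambda+\R_0 b_{F/N}\mid\ldots\}$, every cone added lies in $|\mathcal{D}/F|\subset\bigcup_{\Delta\supset F}\Delta$, hence inside $|\mathcal{E}|$ as soon as $F\subset|\mathcal{E}|$ (since $\mathcal{E}\Fc=\mathcal{E}$ forces every cone containing such an $F$ to be represented in $\mathcal{E}$); conversely, no new cone sits inside $|\mathcal{E}|$ when $F\not\subset|\mathcal{E}|$. Once this restriction identity is established, parts~9,~10 and~11 follow immediately: part~10 is the case $\ell=m$ (all centers already lie in $|\mathcal{E}|$), and part~11 is the case where the center sequence lives in $\mathcal{E}$ from the start, so that the defining sequence of decompositions over $\mathcal{E}$ extends by the identity outside $|\mathcal{E}|$ to a defining sequence over $\mathcal{D}$.
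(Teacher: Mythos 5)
Your overall plan — induction on $m$ with all parts proved in tandem, relaying single barycentric steps to Lemma~\ref{probcd} and then establishing the restriction identity
\[
(\mathcal{D}*F(1)*\cdots*F(m))\backslash|\mathcal{E}|=\mathcal{E}*F\nu(1)*\cdots*F\nu(\ell)
\]
to derive parts 9--11 — is sound and is exactly what the paper leaves implicit (it states the lemma with no proof, declaring such facts to follow from definitions). Parts 1--8 as you sketch them are fine. The flaw is in your justification of the restriction identity for the case $F\subset|\mathcal{E}|$.

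You assert that ``every cone added lies in $|\mathcal{D}/F|\subset\bigcup_{\Delta\supset F}\Delta$, hence inside $|\mathcal{E}|$ as soon as $F\subset|\mathcal{E}|$ (since $\mathcal{E}\Fc=\mathcal{E}$ forces every cone containing such an $F$ to be represented in $\mathcal{E}$).'' The parenthetical is false: $\mathcal{E}\Fc=\mathcal{E}$ closes $\mathcal{E}$ under taking \emph{faces}, not under taking cones that \emph{contain} a given element. Concretely, take $\mathcal{D}=\mathcal{F}(\Delta)$ for a three-dimensional simplicial cone $\Delta$, take $\mathcal{E}=\mathcal{F}(\Gamma)$ for a two-dimensional facet $\Gamma$ of $\Delta$, and take $F=\Gamma$. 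Then $F\subset|\mathcal{E}|$, but $\Delta\supset F$ and $\Delta\notin\mathcal{E}$; moreover the new cone $E_3+\R_0 b_{\Gamma/N}$ of $\mathcal{D}*\Gamma$ (where $E_3$ is the edge of $\Delta$ not in $\Gamma$) is \emph{not} contained in $|\mathcal{E}|$. So the statement ``every cone added lies inside $|\mathcal{E}|$'' is simply wrong; and even if true it would not by itself identify which new cones match those of $\mathcal{E}*F$.

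The correct observation, which is what you actually need, is local to each added cone: for $\Lambda\in\mathcal{D}$ with $\Lambda+F\in\mathcal{D}$ and $F\not\subset\Lambda$, the new cone $\Lambda+\R_0 b_{F/N}$ satisfies $(\Lambda+\R_0 b_{F/N})^\circ\subset(\Lambda+F)^\circ$ (Lemma~\ref{prep1}.2), and because $\mathcal{E}=\mathcal{D}\backslash|\mathcal{E}|$ (Lemma~\ref{basic cpcd}.4), one has $\Lambda+\R_0 b_{F/N}\subset|\mathcal{E}|$ if and only if $\Lambda+F\in\mathcal{E}$. When $\Lambda+F\in\mathcal{E}$, face-closure gives $\Lambda\in\mathcal{E}$ and $F\in\mathcal{E}$, and these are exactly the cones $\Lambda'+\R_0 b_{F/N}$ with $\Lambda'\in\mathcal{E}$, $\Lambda'+F\in\mathcal{E}$, $F\not\subset\Lambda'$ that appear as new cones in $\mathcal{E}*F$. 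Your ``conversely'' direction (no new cone sits in $|\mathcal{E}|$ when $F\not\subset|\mathcal{E}|$) is correct as stated and follows from the same equivalence, since $\Lambda+F\in\mathcal{E}$ would force $F\in\mathcal{E}\subset|\mathcal{E}|$. Replacing your false parenthetical with this cone-by-cone equivalence closes the gap and the rest of the proposal goes through unchanged.
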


\begin{example}
Assume $\dim V\geq 3$.
Consider any simplicial cone $S$ over $N$ in $V$ with $\dim S=3$.
Let $E(1), E(2), E(3)$ denote the three edges of $S$.
We denote $b(i)=b_{E(i)/N}\in E(i)^\circ\cap N$ for any $i\in\{1,2,3\}$ for simplicity.
$E(i)=\R_0b(i)$ for any $i\in\{1,2,3\}$.

Let
\begin{gather*}\begin{matrix}
F(1)=\R_0b(1)+\R_0b(3), & F(2)=\R_0b(2)+\R_0b(3), \\
G(1)=\R_0(b(1)+b(3))+\R_0b(2), & G(2)=\R_0(b(2)+b(3))+\R_0b(1).
\end{matrix}\end{gather*}

$\dim F(1)=\dim F(2)=\dim G(1)=\dim G(2)=2$.

We consider two mappings $H$ and $\bar{H}$ from $\{1,2,3\}$ to $2^V$ satisfying $(H(1),H(2),$\hfill\break$H(3))=(F(1), F(2), G(1))$ and $(\bar{H}(1),\bar{H}(2),\bar{H}(3))=(F(2), F(1), G(2))$.
Mappings $H$ and $\bar{H}$ are center sequences of the simplicial cone decomposition $\mathcal{F}(S)$ of length $3$.

$$\mathcal{F}(S)*F(1)*F(2)*G(1)=\mathcal{F}(S)*F(2)*F(1)*G(2).$$
\end{example}

\section{Simpleness and semisimpleness}
\label{simple}

Simpleness and semisimpleness are very important concepts.

Let $V$ be any finite dimensional vector space over $\R$ with $\dim V\geq 1$; let $N$ be any lattice of $V$; let $\mathcal{D}$ be any convex polyhedral cone decomposition in the dual vector space $V^*$ of $V$ such that the support $|\mathcal{D}|$ of $\mathcal{D}$ is a simplicial cone over the dual lattice $N^*$ of $N$ in $V^*$ and $\dim |\mathcal{D}|\geq 1$; let $H\in\mathcal{F}(|\mathcal{D}|)_1$ be any edge of the simplicial cone $|\mathcal{D}|$; let $S$ be any convex pseudo polyhedron in $V$ such that $|\mathcal{D}(S|V)|=\Stab(S)^\vee|V$ is a simplicial cone over $N^*$ in $V^*$ and $\dim |\mathcal{D}(S|V)|\geq 1$, and let $G\in\mathcal{F}(|\mathcal{D}(S|V)|)_1$ be any edge of the simplicial cone $|\mathcal{D}(S|V)|$.

We denote $L=\Stab(S)\cap(-\Stab(S))=\Vect(|\mathcal{D}(S|V)|)^\vee|V^*\subset V$ and $\ell=\dim L\in\Z_0$. $L$ is the maximum vector subspace over $\R$ in $V$ contained in $\Stab(S)$.

Note that for any $E\in\mathcal{F}(|\mathcal{D}(S|V)|)_1$, any $F\in\mathcal{F}(S)_\ell$, any $a\in F$ and any $b\in F$,
$E\subset|\mathcal{D}(S|V)|\subset\Vect(|\mathcal{D}(S|V)|)$, and we have $\langle b_{E/N^*},a\rangle=\langle b_{E/N^*},b\rangle$. 
(Proposition~\ref{property of polyhedrons}.7.)

\begin{definition}
\label{defsimple}
\begin{enumerate}
\item
We say that $\mathcal{D}$ is \emph{semisimple}, if $\dim\Delta\geq \dim \mathcal{D}-1$ for any  $\Delta\in\mathcal{D}$ satisfying $\Delta^\circ\subset|\mathcal{D}|^\circ$.

\item
We say that $\mathcal{D}$ is of $H$-\emph{Weierstrass type}, if $\mathcal{D}\backslash(H\Op||\mathcal{D}|)=\mathcal{F}(H\Op||\mathcal{D}|)$.

\item
We say that $\mathcal{D}$ is $H$-\emph{simple}, if $\mathcal{D}$ is semisimple and $\mathcal{D}$ is of $H$-Weierstrass type.

\item
We say that $S$ is \emph{semisimple}, if $\dim F\leq \ell+1$ for any face $F$ of $S$ satisfying $\Stab(F)=L$.

\item
We say that $S$ is of $G$-\emph{Weierstrass type}, if there exists only one face $F$ of $S$ satisfying $\Stab(F)=\Delta(G\Op||\mathcal{D}(S|V)|, |\mathcal{D}(S|V)||V^*)$.

\item
We say that $S$ is $G$-\emph{simple}, if $S$ is semisimple and $S$ is of $G$-Weierstrass type.

\item
Let $F\in\mathcal{F}(S)_\ell$ be any minimal face of $S$.

We say that $F$ is $G$-\emph{top}, if $\langle b_{G/N^*}, a\rangle=\max\{\langle b_{G/N^*}, c\rangle|c\in\mathcal{V}(S)\}$ for some $a\in F$.

We say that $F$ is $G$-\emph{bottom }, if $\langle b_{G/N^*}, a\rangle=\min\{\langle b_{G/N^*}, c\rangle|c\in\mathcal{V}(S)\}$ for some $a\in F$.
\item
We define
$$\Ht(G,S)= \max\{\langle b_{G/N^*}, c\rangle|c\in\mathcal{V}(S)\}- \min\{\langle b_{G/N^*}, c\rangle|c\in\mathcal{V}(S)\}\in\R_0,$$
and we call $\Ht(G,S)$ $G$-\emph{height} of $S$.
\end{enumerate}
\end{definition}

\begin{lemma}
\label{propsimple}
\begin{enumerate}
\item
$S$ is semisimple, if and only if, $\mathcal{D}(S|V)$ is semisimple.
\item
$S$ is of $G$-Weierstrass type, if and only if, $\mathcal{D}(S|V)$ is of $G$-Weierstrass type.
\item
Note that $L\subset\Vect(G\Op||\mathcal{D}(S|V)|)^\vee|V^*\subset V$ and
$\dim \Vect(G\Op||\mathcal{D}(S|V)|)^\vee$\hfill\break$|V^*=\ell+1$.
Let $W=V/\Vect(G\Op||\mathcal{D}(S|V)|)^\vee|V^*$ denote the residue vector space, and let $\rho:V\rightarrow W$ denote the canonical surjective homomorphism of vector spaces over $\R$ to $W$.

$\rho(N)$ is a lattice in $W$, $\rho(S)$ is a convex pseudo polyhedron in $W$, $\Stab(\rho(S))$ is a simplicial cone over $\rho(N)$ in $W$, and $\dim \Stab(\rho(S))=\dim W=\dim V-\ell-1$.

$S$ is of $G$-Weierstrass type $\Leftrightarrow c(\rho(S))=1\Leftrightarrow$ there exists $c\in S$ satisfying $\Ord(b_{E/N^*},S|V)=\langle b_{E/N^*},c\rangle$ for any $E\in\mathcal{F}(|\mathcal{D}(S|V)|)_1-\{G\}$.
\item
Assume that $S$ is of $G$-Weierstrass type.
$S$ has a unique $G$-top minimal face.
$c(S)=1$, if and only if, $\Ht(G,S)=0$.
\item
$S$ is $G$-simple, if and only if, $\mathcal{D}(S|V)$ is $G$-simple.
\item
If $\dim |\mathcal{D}|=1$, then  $\mathcal{D}=\mathcal{F}(|\mathcal{D}|)$.

If $\dim |\mathcal{D}|\leq 2$, then $\mathcal{D}$ is $H$-simple.

If $\dim|\mathcal{D}(S|V)|=1$, then $S=\{a\}+\Stab(S)$ for some $a\in S$.

If $\dim|\mathcal{D}(S|V)|\leq 2$, then $S$ is $G$-simple.
\item
Let $k$ be any field. Let $A$ be any complete regular local ring such that $\dim A\geq 1$, $A$ contains $k$ as a subring, and the residue field  $A/M(A)$ is isomorphic to $k$ as algebras over $k$. Let $P$ be any parameter system of $A$. Let $z\in P$ be any element. Let $\phi\in A$ be any non-zero element. We consider the Newton polyhedron $\Gamma_+(P,\phi)$ over the lattice $\Map(P,\Z)$ in the vector space $\Map(P,\R)$. (See Section~\ref{concept}.)
Let 
$G_z=\R_0f^{P\vee}_z \in\mathcal{F}(\Map(P,\R_0)^\vee|\Map(P,\R))_1$.

$\Gamma_+(P,\phi)$ is of $z$-Weierstrass type, if and only if, it is of $G_z$-Weierstrass type.

$\Gamma_+(P,\phi)$ is $z$-simple, if and only if, it is $G_z$-simple.

$\Ht(z,\Gamma_+(P, \phi))= \Ht(G_z,\Gamma_+(P,\phi))$.
\item
Let $F\in\mathcal{F}(S)_\ell$ be any mimimal face of $S$. 

$F$ is $G$-bottom $\Leftrightarrow F\subset\Delta(b_{G/N^*},S|V)\Leftrightarrow G\subset\Delta(F,S|V)$.

If $F$ is $G$-top, then $\dim(\Delta(F,S|V)\cap(G\Op||\mathcal{D}(S|V)|))=\dim |\mathcal{D}(S|V)|-1$ and $\Delta(F,S|V)\subset (\Delta(F,S|V)\cap(G\Op||\mathcal{D}(S|V)|))+G$.
\item
If $\mathcal{D}$ is semisimple, then $\sharp\mathcal{D}^0=\sharp\{\Delta\in\mathcal{D}^1|\Delta^\circ\subset|\mathcal{D}|^\circ\}+1$.
\item
Let $\Lambda$ be any simplicial cone over $N^*$ in $V^*$ satisfying $\Lambda\subset|\mathcal{D}|$ and $\dim\Lambda\geq 1$.
If $\mathcal{D}$ is semisimple, then $\mathcal{D}\hat{\cap}\mathcal{F}(\Lambda)$ is also semisimple.
\item
If $\mathcal{D}$ is semisimple, then $\mathcal{D}\backslash\Lambda$ is also semisimple for any $\Lambda\in\mathcal{F}(|\mathcal{D}|)$ with $\dim\Lambda\geq 1$.
\item
Assume that $\mathcal{D}$ is of $H$-Weierstrass type.
For any $\Delta\in\mathcal{D}^0$,\hfill\break $\dim(\Delta\cap(H\Op||\mathcal{D}|))=\dim |\mathcal{D}|-1$, if and only if, $\Delta\supset H\Op||\mathcal{D}|$, and there exists only one element $\Delta\in\mathcal{D}^0$ satisfying these equivalent conditions.
\item
If $\mathcal{D}$ is of $H$-Weierstrass type, then $\mathcal{D}\backslash\Lambda$ is also of $H$-Weierstrass type for any $\Lambda\in\mathcal{F}(|\mathcal{D}|)/H$.
\item
If $\mathcal{D}$ is $H$-simple, then $\mathcal{D}\backslash\Lambda$ is also $H$-simple for any $\Lambda\in\mathcal{F}(|\mathcal{D}|)/H$.
\item
Assume that $\mathcal{D}$ is $H$-simple.
We denote $\bar{\mathcal{D}}^1=\{\Delta\in\mathcal{D}^1|\Delta^\circ\subset|\mathcal{D}|^\circ\}\cup\{ H\Op||\mathcal{D}|\}$.
\begin{enumerate}
\item
$ H\Op||\mathcal{D}|\in \mathcal{D}^1$.
$\bar{\mathcal{D}}^1\subset\mathcal{D}^1$.
$\sharp\mathcal{D}^0=\sharp\bar{\mathcal{D}}^1$.
\item
We denote $\Delta\leq\Lambda$, if $\Delta+H\supset\Lambda+H$ for any $\Delta\in\mathcal{D}^0$ and any $\Lambda\in\mathcal{D}^0$.
Then, the relation $\leq$ is a total order on $\mathcal{D}^0$.
\item
We denote $\Delta\leq\Lambda$, if $\Delta+H\supset\Lambda+H$ for any $\Delta\in\bar{\mathcal{D}}^1$ and any $\Lambda\in\bar{\mathcal{D}}^1$.
Then, the relation $\leq$ is a total order on $\bar{\mathcal{D}}^1$.
\end{enumerate}

Let $r=\sharp\mathcal{D}^0=\sharp\bar{\mathcal{D}}^1\in\Z_+$.

We consider the total order on $\mathcal{D}^0$ described in $(b)$. Let $\Delta:\{1,2,\ldots,r\}\rightarrow \mathcal{D}^0$ denote the unique bijective mapping preserving the order. 

We consider the total order on $\bar{\mathcal{D}}^1$ described in $(c)$.
Let $\bar{\Delta}:\{1,2,\ldots,r\}\rightarrow \bar{\mathcal{D}}^1$ denote the unique bijective mapping preserving the order.
\begin{enumerate}
\setcounter{enumii}{3}
\item
Consider any $i\in\{1,2,\ldots,r\}$ and any $E\in\mathcal{F}(|\mathcal{D}|)_1-\{H\}$.
There exists a unique real number $c(\mathcal{D},i, E)\in\R$ depending on the pair $(i, E)$ satisfying $b_{E/N^*}+ c(\mathcal{D},i,E)b_{H/N^*}\in\Vect(\bar{\Delta}(i))$.
\end{enumerate}

Below we assume $c(\mathcal{D},i,E)\in\R$ and $b_{E/N^*}+ c(\mathcal{D},i,E)b_{H/N^*}\in\Vect(\bar{\Delta}(i))$ for any $i\in\{1,2,\ldots,r\}$ and any $E\in\mathcal{F}(|\mathcal{D}|)_1-\{H\}$.

\begin{enumerate}
\setcounter{enumii}{4}
\item
For any $i\in\{1,2,\ldots,r\}$,
$\bar{\Delta}(i)=\Convcone(\{b_{E/N^*}+ c(\mathcal{D},i,E)b_{H/N^*}|E\in\mathcal{F}(|\mathcal{D}|)_1-\{H\}\})$.
\item
For any $\Gamma\in\bar{\mathcal{D}}^1$, $\Gamma=\Vect(\Gamma)\cap|\mathcal{D}|$.
\item
For any $E\in\mathcal{F}(|\mathcal{D}|)_1-\{H\}$, $ c(\mathcal{D},1,E) =0$.
\item
If $r\geq 2$, then $c(\mathcal{D},i,E)\leq c(\mathcal{D},i+1,E)$ for any $i\in\{1,2,\ldots,r-1\}$ and any $E\in\mathcal{F}(|\mathcal{D}|)_1-\{H\}$.
\item
If $r\geq 2$ and $i\in\{1,2,\ldots,r-1\}$, then $c(\mathcal{D},i,E)< c(\mathcal{D},i+1,E)$ for some $E\in\mathcal{F}(|\mathcal{D}|)_1-\{H\}$.
\item
$\mathcal{D}$ is rational over $N^*$, if and only if, $c(\mathcal{D},i,E)\in\Q$
for any $i\in\{2,3,\ldots,r\}$ and any $E\in\mathcal{F}(|\mathcal{D}|)_1-\{H\}$.
\item
If $r\geq 2$, then $\Delta(i)= \bar{\Delta}(i)+ \bar{\Delta}(i+1)$ for any $i\in\{1,2,\ldots,r-1\}$.
\item
$\Delta(r)= \bar{\Delta}(r)+H$.
\item
$\{\Lambda\in\mathcal{D}|\Lambda\not\subset|\mathcal{F}(|\mathcal{D}|)/H|\}=
\mathcal{D}^0\cup\smash{\bar{\mathcal{D}}}^1$.

\item
$\bar{\Delta}(1)=H\Op||\mathcal{D}|\subset\partial|\mathcal{D}|$.
$\mathcal{D}^0/\bar{\Delta}(1)=\{\Delta(1)\}$.

For any $i\in\{2,3,\ldots,r\}$,
$\bar{\Delta}(i)^\circ\subset|\mathcal{D}|^\circ$,
$\bar{\Delta}(i)\not\subset\partial|\mathcal{D}|$, and
$\mathcal{D}^0/\bar{\Delta}(i)=\{\Delta(i-1), \Delta(i)\}$.
\item
$H\subset \Delta(r)$.
$\smash{\bar{\mathcal{D}}}^1\backslash\Delta(r)=\{\bar{\Delta}(r)\}$.

For any $i\in\{1,2,\ldots,r-1\}$,
$H\not\subset\Delta(i)$,
$\smash{\bar{\mathcal{D}}}^1\backslash\Delta(i)=\{\bar{\Delta}(i), \bar{\Delta}(i+1)\}$.
\item
If $r\geq 2$, then $\Delta(i)\cap\Delta(j)=\bar{\Delta}(i+1)\cap\bar{\Delta}(j)$ for any $i\in\{1,2,\ldots,$\hfill\break$r-1\}$ and any $j\in\{2,3,\ldots,r\}$ with $i<j$.
\item
Consider any $\omega\in\Vect(H\Op||\mathcal{D}|)$.

Take the unique function $\bar{\omega}:\mathcal{F}(H\Op||\mathcal{D}|)_1\rightarrow \R$ satisfying
$\omega=$\hfill\break$\sum_{E\in\mathcal{F}(H\Op||\mathcal{D}|)_1}\bar{\omega}(E)b_{E/N^*}$.
For any $i\in\{1,2,\ldots,r\}$, put $t(i)=$\hfill\break$ \sum_{E\in\mathcal{F}(H\Op||\mathcal{D}|)_1}\bar{\omega}(E)c(\mathcal{D},i, E)\in\R$.
\begin{enumerate}
\item
$t(1)=0$.
\item
For any $i\in\{1,2,\ldots,r\}$, the following claims hold:
\begin{enumerate}
\item
$(\{\omega\}+\R b_{H/N^*})\cap\Vect(\bar{\Delta}(i))=\{\omega+t(i)b_{H/N^*}\}$.
\item
$\omega+t(i)b_{H/N^*}\in\bar{\Delta}(i)\Leftrightarrow \omega\in H\Op||\mathcal{D}|$.
\item
$\omega+t(i)b_{H/N^*}\in\bar{\Delta}(i)^\circ\Leftrightarrow \omega\in (H\Op||\mathcal{D}|)^\circ$.
\end{enumerate}
\item
The following claims hold for any $i\in\{1,2,\ldots,r-1\}$, if $r\geq 2$.
\begin{enumerate}
\item
If $\omega\in H\Op||\mathcal{D}|$, then $t(i)\leq t(i+1)$.
\item
$(\{\omega\}+\R b_{H/N^*})\cap\Delta(i)$\hfill\break
\begin{equation*}
=
\begin{cases}
\{\omega+t b_{H/N^*}|t\in\R, t(i)\leq t\leq t(i+1)\}&\text{if $\omega\in H\Op||\mathcal{D}|$},\\
\emptyset&\text{if $\omega\not\in H\Op||\mathcal{D}|$}.
\end{cases}
\end{equation*}
\item
If $\omega\in (H\Op||\mathcal{D}|)^\circ$, then $t(i)< t(i+1)$.
\item
$(\{\omega\}+\R b_{H/N^*})\cap\Delta(i)^\circ$\hfill\break
\begin{equation*}
=
\begin{cases}
\{\omega+t b_{H/N^*}|t\in\R, t(i)< t< t(i+1)\}&\text{if $\omega\in (H\Op||\mathcal{D}|)^\circ$},\\
\emptyset&\text{if $\omega\not\in (H\Op||\mathcal{D}|)^\circ$}.
\end{cases}
\end{equation*}
\end{enumerate}
\item
$(\{\omega\}+\R b_{H/N^*})\cap\Delta(r)$\hfill\break
\begin{equation*}
=
\begin{cases}
\{\omega+t b_{H/N^*}|t\in\R, t(r)\leq t\}&\text{if $\omega\in H\Op||\mathcal{D}|$},\\
\emptyset&\text{if $\omega\not\in H\Op||\mathcal{D}|$}.
\end{cases}
\end{equation*}
\item
$(\{\omega\}+\R b_{H/N^*})\cap\Delta(r)^\circ$\hfill\break
\begin{equation*}
=
\begin{cases}
\{\omega+t b_{H/N^*}|t\in\R, t(r)< t\}&\text{if $\omega\in (H\Op||\mathcal{D}|)^\circ$},\\
\emptyset&\text{if $\omega\not\in (H\Op||\mathcal{D}|)^\circ$}.
\end{cases}\end{equation*}
\end{enumerate}
\item
Let $\pi:\Vect(|\mathcal{D}|)\rightarrow \Vect(H\Op||\mathcal{D}|)$ denote the unique surjective homomorphism of vector spaces over $\R$ satisfying $\pi^{-1}(0)=\Vect(H)$ and $\pi(x)=x$ for any $x\in\Vect(H\Op||\mathcal{D}|)$.

$\pi(|\mathcal{D}|)= H\Op||\mathcal{D}|$.
For any $\Delta\in\mathcal{D}$, $\pi(\Delta)\in\mathcal{F}(H\Op||\mathcal{D}|)$,
$\pi^{-1}(\pi(\Delta))\cap|\mathcal{D}|\in\mathcal{F}(|\mathcal{D}|)/H$.

For any $\Delta\in\mathcal{D}$ with $\Vect(H)\subset\Vect(\Delta)$, $\dim \pi(\Delta)=\dim\Delta-1$.

For any $\Delta\in\mathcal{D}$ with $\Vect(H)\not\subset\Vect(\Delta)$, $\dim \pi(\Delta)=\dim\Delta$.

For any $\Delta\in\mathcal{D}$ with $\Delta\not\subset H\Op||\mathcal{D}|$,
$\Delta^\circ\subset (\pi^{-1}(\pi(\Delta)) \cap|\mathcal{D}|)^\circ$.
\item
If $\Delta\in\mathcal{D}$, $\Lambda\in\mathcal{F}(|\mathcal{D}|)$, $\Delta^\circ\subset\Lambda^\circ$ and $\Delta\not\subset H\Op||\mathcal{D}|$, then $H\subset\Lambda$, and $\dim \Delta=\dim\Lambda$ or $\dim \Delta=\dim\Lambda-1$.

If $\Delta\in\mathcal{D}$, $\Lambda\in\mathcal{F}(|\mathcal{D}|)$, $\Delta^\circ\subset\Lambda^\circ$ and $\Delta\subset H\Op||\mathcal{D}|$, then $\Delta=\Lambda$ and $\dim\Delta=\dim\Lambda$.
\end{enumerate}
\item
Assume that $\mathcal{D}$ is $H$-simple.
Consider any $\Lambda\in\mathcal{F}(|\mathcal{D}|)/H$.

We use the same notations $\smash{\bar{\mathcal{D}}}^1$, $r$, $\Delta$ and $\bar{\Delta}$ as above.
We denote $\smash{\overline{\mathcal{D}\backslash\Lambda}}^1=\{\Gamma\in\mathcal{D}\backslash\Lambda|\Gamma^\circ\subset\Lambda^\circ\}\cup\{H\Op|\Lambda\}$.
\begin{enumerate}
\item
$r\in\{i\in\{1,2,\ldots,r\}|\dim(\Delta(i)\cap\Lambda)=\dim\Lambda\}\neq\emptyset$.
\end{enumerate}

Put $\bar{r}=\sharp\{i\in\{1,2,\ldots,r\}|\dim(\Delta(i)\cap\Lambda)=\dim\Lambda\}\in\Z_+$.
Let $\nu:\{1,2,\ldots,\bar{r}\}\rightarrow\{1,2,\ldots,r\}$ be the unique injective mapping preserving the order and satisfying $\nu(\{1,2,\ldots,\bar{r}\})=\{i\in\{1,2,\ldots,r\}|\dim(\Delta(i)\cap\Lambda)=\dim\Lambda\}$.
\begin{enumerate}
\setcounter{enumii}{1}
\item
$1\leq\bar{r}\leq r$.
$\nu(\bar{r})=r$.
\item
$\sharp(\mathcal{D}\backslash\Lambda)^0=\sharp\smash{\overline{\mathcal{D}\backslash\Lambda}}^1=\bar{r}$.
\item
$(\mathcal{D}\backslash\Lambda)^0=\{\Delta\nu(i)\cap\Lambda|i\in\{1,2,\ldots,\bar{r}\}\}$.

We consider the total order on $(\mathcal{D}\backslash\Lambda)^0$ described in $15.(b)$.

The bijective mapping $\{1,2,\ldots,\bar{r}\}\rightarrow(\mathcal{D}\backslash\Lambda)^0$ sending $ i\in\{1,2,\ldots,\bar{r}\}$ to $\Delta\nu(i)\cap\Lambda\in(\mathcal{D}\backslash\Lambda)^0$ preserves the order.
\item
$\smash{\overline{\mathcal{D}\backslash\Lambda}}^1=\{\bar{\Delta}\nu(i)\cap\Lambda|i\in\{1,2,\ldots,\bar{r}\}\}$.

We consider the total order on $\smash{\overline{\mathcal{D}\backslash\Lambda}}^1$ described in $15.(c)$.
The bijective mapping $\{1,2,\ldots,\bar{r}\}\rightarrow\smash{\overline{\mathcal{D}\backslash\Lambda}}^1$ sending $ i\in\{1,2,\ldots,\bar{r}\}$ to $\bar{\Delta}\nu(i)\cap\Lambda\in\smash{\overline{\mathcal{D}\backslash\Lambda}}^1$ preserves the order.
\item
For any $j\in\Z$ with $1\leq j<\nu(1)$, $\Delta(j)\cap\Lambda=\bar{\Delta}\nu(1)\cap\Lambda$.

For any $i\in\{2,3,\ldots, \bar{r}\}$ and any $j\in\Z$ with $\nu(i-1)<j<\nu(i)$,
$\Delta(j)\cap\Lambda=\bar{\Delta}\nu(i)\cap\Lambda$.
\item
For any $j\in\Z$ with $1\leq j\leq\nu(1)$, $\bar{\Delta}(j)\cap\Lambda=\bar{\Delta}\nu(1)\cap\Lambda$.

For any $i\in\{2,3,\ldots, \bar{r}\}$ and any $j\in\Z$ with $\nu(i-1)<j\leq\nu(i)$,
$\bar{\Delta}(j)\cap\Lambda=\bar{\Delta}\nu(i)\cap\Lambda$.
\end{enumerate}
\end{enumerate}

Consider any $A\in\mathcal{F}(S)_\ell$ and any $E\in \mathcal{F}(|\mathcal{D}(S|V)|)_1$.
For any $a\in A$, the real number $\langle b_{E/N^*}, a\rangle$ does not depend on the choice of $a\in A$ and it depends only on $A$ and $E$.
We take any $a\in A$ and we define $\langle b_{E/N^*},A\rangle=\langle b_{E/N^*},a\rangle$.

\begin{enumerate}
\setcounter{enumi}{16}
\item
$S$ is $G$-simple, if and only if, the following three conditions are satisfied:
\begin{enumerate}
\item
For any $A\in \mathcal{F}(S)_\ell$ and any $\bar{A}\in \mathcal{F}(S)_\ell$, $\langle b_{G/N^*}, A\rangle=\langle b_{G/N^*}, \bar{A}\rangle$, if and only if, $A=\bar{A}$.
\end{enumerate}

We assume that the first condition is satisfied. Let $r=c(S)$. Let $A:\{1, 2,\ldots, r\}\rightarrow \mathcal{F}(S)_\ell$ be the unique bijective mapping satisfying $\langle b_{G/N^*}, A(i-1)\rangle > \langle b_{G/N^*}, A(i)\rangle$ for any $i\in\{2,3,\ldots,r\}$, if $r\geq 2$. 
\begin{enumerate}
\setcounter{enumii}{1}
\item $\langle b_{E/N^*}, A(2)\rangle \geq \langle b_{E/N^*},A(1)\rangle$ for any  $E\in \mathcal{F}(|\mathcal{D}(S|V)|)_1-\{G\}$, if $r\geq 2$.
\item
$$\frac{\langle b_{E/N^*}, A(i)\rangle- \langle b_{E/N^*}, A(i-1)\rangle}{\langle b_{G/N^*}, A(i-1)\rangle- \langle b_{G/N^*}, A(i)\rangle} \leq 
\frac{\langle b_{E/N^*}, A(i+1)\rangle- \langle b_{E/N^*},A(i)\rangle}{\langle b_{G/N^*}, A(i)\rangle- \langle b_{G/N^*},A(i+1)\rangle},$$
for any $i\in\{2,3,\ldots,r-1\}$ and any  $E\in \mathcal{F}(|\mathcal{D}(S|V)|)_1-\{G\}$, if $r\geq 3$.
\end{enumerate}
\item
Assume that $S$ is $G$-simple.
Let $r=c(S)\in\Z_+$.
Let $A:\{1,2,\ldots,r\}\rightarrow \mathcal{F}(S)_\ell$ be the unique bijective mapping satisfying  $\langle b_{G/N^*}, A(i-1)\rangle > \langle b_{G/N^*}, A(i)\rangle$ for any $i\in\{2,3,\ldots,r\}$, if $r\geq 2$. 

We denote $\bar{\mathcal{D}}(S|V)^1=\{\Delta\in\mathcal{D}(S|V)^1|\Delta^\circ\subset|\mathcal{D}(S|V)|^\circ\}\cup\{G\Op||\mathcal{D}(S|V)|\}\subset\mathcal{D}(S|V)^1$, and $\Delta_G=\Delta(G\Op||\mathcal{D}(S|V)|, |\mathcal{D}(S|V)||V^*)\in\mathcal{F}(\Stab(S))_{\ell+1}$.
The following claims hold:
\begin{enumerate}
\item There exists $E\in \mathcal{F}(|\mathcal{D}(S|V)|)_1-\{G\}$ with $\langle b_{E/N^*}, A(2)\rangle > \langle b_{E/N^*},$\hfill\break$A(1)\rangle$, if $r\geq 2$.
\item For any $i\in\{2,3,\ldots,r-1\}$ there exists  $E\in \mathcal{F}(\Stab(S)^\vee|V)_1-\{G\}$ with
$$\frac{\langle b_{E/N^*}, A(i)\rangle-\langle b_{E/N^*}, A(i-1)\rangle}{\langle b_{G/N^*}, A(i-1)\rangle- \langle b_{G/N^*},A(i)\rangle} < 
\frac{\langle b_{E/N^*}, A(i+1)\rangle-\langle b_{E/N^*},A(i)\rangle}{\langle b_{G/N^*}, A(i)\rangle- \langle b_{G/N^*},A(i+1)\rangle},$$
if $r\geq 3$.
\item
$A(1)+\Delta_G\in\mathcal{F}(S)_ {\ell+1}$.
$\Stab(A(1)+\Delta_G)= \Delta_G$.
$\Delta(A(1)+\Delta_G, S|V)=G\Op||\mathcal{D}(S|V)|\in\mathcal{D}(S|V)^1$.

If $\hat{A}\in\mathcal{F}(S)_{\ell+1}$ and $\Stab(\hat{A})=\Delta_G$, then $\hat{A}=A(1)+\Delta_G$.
\item
$\Conv(A(i-1)\cup A(i))\in\mathcal{F}(S)_{\ell+1}$,
$\Stab(\Conv(A(i-1)\cup A(i)))=L$,
$\Delta(\Conv(A(i-1)\cup A(i)),S|V)\in\mathcal{D}(S|V)^1$, and
$\Delta(\Conv(A(i-1)\cup A(i)),$\hfill\break$S|V)^\circ\subset|\mathcal{D}(S|V)|^\circ$
for any $i\in\{2,3,\ldots,r\}$, if $r\geq 2$.

If $\hat{A}\in\mathcal{F}(S)_{\ell+1}$ and $\Stab(\hat{A})=L$, then $r\geq 2$ and $\hat{A}=\Conv(A(i-1)\cup A(i))$ for some $i\in\{2,3,\ldots,r\}$.
\item
$\bar{\mathcal{D}}(S|V)^1=\{\Delta(A(1)+\Delta_G, S|V)\}\cup\{\Delta(\Conv(A(i-1)\cup A(i)),S|V)| i\in\{2,3,\ldots,r\}\}$.

\item
We define a bijective mapping $\bar{\Delta}:\{1,2,\ldots,r\}\rightarrow\bar{\mathcal{D}}(S|V)^1$ by putting 
$\bar{\Delta}(1)= \Delta(A(1)+\Delta_G, S|V)$ and
$\bar{\Delta}(i)= \Delta(\Conv(A(i-1)\cup A(i)),S|V)$ for any $i\in\{2,3,\ldots,r\}$.
For any $i\in\{1,2,\ldots,r\}$ and any $E\in$\hfill\break$\mathcal{F}(|\mathcal{D}(S|V)|)_1-\{G\}$, we take a unique real number $c(\mathcal{D}(S|V),i,E)\in\R$ satisfying $b_{E/N^*}+ c(\mathcal{D}(S|V),i,E)b_{G/N^*}\in\Vect(\bar{\Delta}(i))$.

For any $i\in\{2,3,\ldots,r\}$, $\bar{\Delta}(i-1)+G\supset\bar{\Delta}(i)+G$, if $r\geq 2$.

If we define a total order described in $15.(c)$ on $\bar{\mathcal{D}}(S|V)^1$, the mapping $\bar{\Delta}$ preserves the order.

For any $i\in\{2,3,\ldots,r\}$ and any $E\in\mathcal{F}(|\mathcal{D}(S|V)|)_1-\{G\}$,
$$c(\mathcal{D}(S|V),i,E)= \frac{\langle b_{E/N^*}, A(i)\rangle- \langle b_{E/N^*}, A(i-1)\rangle}{\langle b_{G/N^*}, A(i-1)\rangle- \langle b_{G/N^*}, A(i)\rangle},$$
if $r\geq 2$.
\end{enumerate}
\end{enumerate}
\end{lemma}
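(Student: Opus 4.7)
The backbone is the order-reversing bijection $F \mapsto \Delta(F, S|V)$ between $\mathcal{F}(S)$ and $\mathcal{D}(S|V)$ from Proposition~\ref{property of polyhedrons}.17, satisfying $\dim F + \dim\Delta(F,S|V) = \dim V$. Under this bijection, minimal faces $F \in \mathcal{F}(S)_\ell$ with $\Stab(F) = L$ correspond to maximal-dimension elements $\Delta \in \mathcal{D}(S|V)^0$ whose interiors lie in $|\mathcal{D}(S|V)|^\circ$, and the condition $\dim F \leq \ell+1$ becomes $\dim\Delta \geq \dim\mathcal{D}(S|V) - 1$. This yields parts 1, 2, and 5 immediately. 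Part 6 uses that any convex polyhedral cone decomposition whose support is a simplicial cone of dimension at most $2$ has all interior cells of codimension $0$ or $1$. Part 7 is a translation exercise: Lemma~\ref{Newton2}.2 identifies $\Map(P,\R_0)^\vee$ with $|\mathcal{D}(\Gamma_+(P,\phi))|$, and $f^{P\vee}_z$ becomes the barycenter $b_{G_z/\Map(P,\Z)^*}$.

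For part 3, I would pass to the residue vector space $W = V/\Vect(G\Op||\mathcal{D}(S|V)|)^\vee|V^*$, dual to the edge $G$. The minimal faces of $\rho(S)$ correspond under the bijection to faces of $S$ whose stabilizer equals $\Delta(G\Op||\mathcal{D}(S|V)|, \cdot)$; the condition $c(\rho(S)) = 1$ is equivalent to uniqueness of such a face, and also to $\rho(S) = \{\bar a\} + \Stab(\rho(S))$ for some $\bar a$, which is the existence of a $c$ witnessing the third characterization. Part 4 then extracts the unique $G$-top minimal face under the $G$-Weierstrass condition, and reduces $\Ht(G,S)=0$ to $c(\rho(S)) = 1$. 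Part 8 is direct from the definition of the face cone as the $\langle\omega,\cdot\rangle$-minimum locus. Parts 9--14 are routine counting and restriction arguments built on Lemma~\ref{scpc}; for instance part 9 compares $\mathcal{D}^0$ with the interior $\mathcal{D}^1$'s using the standard codimension-one adjacency count in a simplicial cone.

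Part 15, the main obstacle, is the global structural description of an $H$-simple $\mathcal{D}$. The key tool is the surjective projection $\pi:\Vect(|\mathcal{D}|) \rightarrow \Vect(H\Op||\mathcal{D}|)$ with kernel $\R b_{H/N^*}$. The $H$-Weierstrass condition says that $\pi$ induces a bijection between $\mathcal{D}/H$ and $\mathcal{F}(H\Op||\mathcal{D}|)$, so each fiber of $\pi$ over $(H\Op||\mathcal{D}|)^\circ$ meets $|\mathcal{D}|$ in a half-line parallel to $b_{H/N^*}$. Semisimpleness forces the interior codimension-one cones in $\smash{\bar{\mathcal{D}}}^1$ (other than $H\Op||\mathcal{D}|$ itself) to slice each such half-line into consecutive closed intervals; this gives the total orders on $\mathcal{D}^0$ and on $\smash{\bar{\mathcal{D}}}^1$, and independence of the chosen fiber follows from the connectedness of $(H\Op||\mathcal{D}|)^\circ$. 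The scalar $c(\mathcal{D},i,E)$ is then the unique real number for which $b_{E/N^*} + c(\mathcal{D},i,E)b_{H/N^*}$ lies in the hyperplane $\Vect(\bar\Delta(i))$; for $i=1$ we have $\bar\Delta(1) = H\Op||\mathcal{D}|$, whose span already contains $b_{E/N^*}$, forcing $c(\mathcal{D},1,E) = 0$. Monotonicity $c(\mathcal{D},i,E)\leq c(\mathcal{D},i+1,E)$ expresses geometrically that $\bar\Delta(i+1)$ lies above $\bar\Delta(i)$ in the $b_{H/N^*}$-direction inside the maximal cone $\Delta(i) = \bar\Delta(i) + \bar\Delta(i+1)$, which is essentially the definition of the total order. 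The fiberwise computations in 15(m)--(q) then read off directly from this stacked-interval picture, and 15(r)--(s) are the image description of $\pi$.

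Part 16 follows by restricting the structure of part 15 to $\Lambda \in \mathcal{F}(|\mathcal{D}|)/H$ using part 14, with $\bar{r}$ counting the indices $i$ for which $\Delta(i) \cap \Lambda$ remains full-dimensional. Parts 17 and 18 transport the structure on $\smash{\bar{\mathcal{D}}}(S|V)^1$ back to $\mathcal{F}(S)_\ell$ via the bijection, writing $A(i)$ for the minimal face corresponding to the $i$-th maximal cone $\Delta(i) \in \mathcal{D}(S|V)^0$. The explicit identification of $\bar\Delta(1)$ with $\Delta(A(1) + \Delta_G, S|V)$ and of $\bar\Delta(i)$ for $i\geq 2$ with $\Delta(\Conv(A(i-1)\cup A(i)), S|V)$ is obtained by computing the stabilizers of $A(1)+\Delta_G$ and $\Conv(A(i-1)\cup A(i))$ directly and then applying Proposition~\ref{property of polyhedrons}.16(b). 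The formula
\begin{equation*}
c(\mathcal{D}(S|V),i,E) = \frac{\langle b_{E/N^*}, A(i)\rangle - \langle b_{E/N^*}, A(i-1)\rangle}{\langle b_{G/N^*}, A(i-1)\rangle - \langle b_{G/N^*}, A(i)\rangle}
\end{equation*}
in 18(f) then encodes the inequalities 17(b)--(c) and 18(a)--(b) as the (strict) monotonicity of the $c$-values established in part 15.
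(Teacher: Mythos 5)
The paper gives no proof of Lemma~\ref{propsimple}; it is one of the many statements the author relegates to ``most of our claims follow from definitions.'' So there is nothing in the text to compare your approach against. Your tools are the right ones: the order-reversing bijection $F\mapsto\Delta(F,S|V)$ from Proposition~\ref{property of polyhedrons}.17 for the parts relating $S$ to $\mathcal{D}(S|V)$, and the projection $\pi$ along $\R b_{H/N^*}$ together with the slicing of fibers by interior walls for parts 15--16. The barycenter identification $b_{G_z/\Map(P,\Z)^*}=f^{P\vee}_z$ for part 7 and the computation of the structure constant formula in 18(f) from $\Vect(\Delta(F,S|V))=\Stab(\Affi(F))^\vee|V$ are both correct.

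One step in your sketch of part 15 is logically misattributed. You claim that the $H$-Weierstrass condition $\mathcal{D}\backslash(H\Op||\mathcal{D}|)=\mathcal{F}(H\Op||\mathcal{D}|)$ already implies that $\pi$ induces a bijection $\mathcal{D}/H\to\mathcal{F}(H\Op||\mathcal{D}|)$. That is false without semisimplicity. Take $\dim|\mathcal{D}|=3$ with $|\mathcal{D}|=\Convcone(\{e_1,e_2,e_3\})$, $H=\R_0 e_1$, and let $\mathcal{D}$ be the star subdivision at $e_1+e_2+e_3$. This $\mathcal{D}$ is of $H$-Weierstrass type (no new cones land in $H\Op||\mathcal{D}|$), but $\mathcal{D}/H$ has six elements while $\mathcal{F}(H\Op||\mathcal{D}|)$ has four. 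The bijection you want does hold, but it uses semisimplicity: under $H$-simplicity, 15(o) forces $\Delta(r)$ to be the unique maximal cone containing $H$, so $\mathcal{D}/H=\mathcal{F}(\Delta(r))/H$, and that set is carried bijectively by $\pi$ onto $\mathcal{F}(H\Op||\mathcal{D}|)$ because $\bar{\Delta}(r)=H\Op|\Delta(r)$ projects edge-to-edge. Separately, your clause ``so each fiber of $\pi$ over $(H\Op||\mathcal{D}|)^\circ$ meets $|\mathcal{D}|$ in a half-line'' does not actually depend on either hypothesis -- it is just a fact about the simplicial cone $|\mathcal{D}|$ being the join of $H$ with $H\Op||\mathcal{D}|$, so the ``so'' is a non sequitur. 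Your stacked-interval picture and the monotonicity of the $c(\mathcal{D},i,E)$ do carry the rest of the argument once the ordering of the $\Delta(i)$ is set up correctly, but the set-up itself needs the semisimplicity hypothesis to be invoked earlier than your sketch does.
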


\begin{definition}
\label{defsimple}
Assume that $\mathcal{D}$ is $H$-simple.
We denote $\bar{\mathcal{D}}^1=\{\Delta\in\mathcal{D}^1|\Delta^\circ\subset|\mathcal{D}|^\circ\}\cup\{ H\Op||\mathcal{D}|\}$.
\begin{enumerate}
\item
We call $\bar{\mathcal{D}}^1$ the $H$-\emph{skeleton} of $\mathcal{D}$.
\item
We call the total order on $\mathcal{D}^0$ described in Lemma~\ref{propsimple}$.15.(b)$ the $H$-\emph{order}.
\item
We call the total order on $\bar{\mathcal{D}}^1$ described in Lemma~\ref{propsimple}$.15.(c)$ the $H$-\emph{order}.
\item
We consider the $H$-order on $\bar{\mathcal{D}}^1$.
Let $r=\sharp\bar{\mathcal{D}}^1\in\Z_+$.
Let $\bar{\Delta}:\{1,2,\ldots,r\}\rightarrow \bar{\mathcal{D}}^1$ be the unique bijective mapping preserving the order.
Consider any $i\in\{1,2,\ldots, r\}$ and any $E\in\mathcal{F}(|\mathcal{D}|)_1-\{H\}$.
By Lemma~\ref{propsimple}$.15.(d)$ there exists uniquely a real number $c(\mathcal{D},i,E)\in\R$ depending on the pair $(i, E)$ satisfying $b_{E/N^*}+ c(\mathcal{D},i,E)b_{H/N^*}\in\Vect(\bar{\Delta}(i))$.

We call $c(\mathcal{D},i,E)$ the \emph{structure constant} of $\mathcal{D}$ corresponding to the pair $(i,E)$.
\end{enumerate}
\end{definition}

\section{Basic subdivisions}
\label{basic}
We define the concept of basic subdivisions.

Let $V$ be any finite dimensional vector space over $\R$ with $\dim V\geq 2$; let $N$ be any lattice of $V$; let $H$ be any one-dimensional simplicial cone over $N$ in $V$; let $\mathcal{C}$ be any simplicial cone decomposition over $N$ in $V$ satisfying 
$\dim \mathcal{C}=\dim \Vect(|\mathcal{C}|)\geq 2$, $\mathcal{C}\Mx=\mathcal{C}^0$, $H\in\mathcal{C}_1$ and $\mathcal{C}=(\mathcal{C}/H)\Fc$.

\begin{example}
Let $\Delta$ be any simplicial cone over $N$ in $V$ such that $\dim\Delta\geq 2$ and $H\in\mathcal{F}(\Delta)_1$.
Let $\mathcal{C}=\mathcal{F}(\Delta)$.
$\mathcal{C}$ is a simplicial cone decomposition over $N$ in $V$, and it satisfies 
$\dim \mathcal{C}=\dim \Vect(|\mathcal{C}|)=\dim\Delta\geq 2$, $\mathcal{C}\Mx=\mathcal{C}^0$, $H\in\mathcal{C}_1$ and $\mathcal{C}=(\mathcal{C}/H)\Fc$.
Furthermore, we know $\mathcal{C}-(\mathcal{C}/H)=\mathcal{F}(H\Op|\Delta)$.
\end{example}

\begin{lemma}
\label{property of C}
\begin{enumerate}
\item
$\{0\}\in\mathcal{C}-(\mathcal{C}/H)\subset\mathcal{C}$. 
$\mathcal{C}-(\mathcal{C}/H)$ is a simplicial cone decomposition over $N$ in $V$.
$(\mathcal{C}-(\mathcal{C}/H))_1\neq\emptyset$.
\item
For any $\Delta\in\mathcal{C}/H$, $H\in\mathcal{F}(\Delta)_1$ and $H\Op|\Delta\in\mathcal{C}-(\mathcal{C}/H)$.
For any $\bar{\Delta}\in\mathcal{C}-(\mathcal{C}/H)$, $\bar{\Delta}+H\in\mathcal{C}/H$.

The mapping from $\mathcal{C}/H$ to $\mathcal{C}-(\mathcal{C}/H)$ sending $\Delta\in\mathcal{C}/H$ to $H\Op|\Delta\in\mathcal{C}-(\mathcal{C}/H)$ and the mapping from $\mathcal{C}-(\mathcal{C}/H)$ to $\mathcal{C}/H$ sending $\bar{\Delta}\in\mathcal{C}-(\mathcal{C}/H)$ to $\bar{\Delta}+H\in\mathcal{C}/H$ are bijective mappings preserving the inclusion relation between $\mathcal{C}/H$ and $\mathcal{C}-(\mathcal{C}/H)$, and they are the inverse mappings of each other.

Furthermore, if $\Delta\in\mathcal{C}/H$ and $\bar{\Delta}\in\mathcal{C}-(\mathcal{C}/H)$ correspond to each other by them, then $\dim\Delta=\dim\bar{\Delta}+1$.
\item
$|\mathcal{C}|=|\mathcal{C}-(\mathcal{C}/H)|\cup|\mathcal{C}/H|^\circ$.
$|\mathcal{C}-(\mathcal{C}/H)|\cap|\mathcal{C}/H|^\circ=\emptyset$.
\item
$\mathcal{C}\Mx\subset\mathcal{C}/H$.
For any $\Delta\in\mathcal{C}\Mx$, $\Vect(\Delta)=\Vect(|\mathcal{C}|)$.
\end{enumerate}

Let $\pi_H:V\rightarrow V/\Vect(H)$ denote the canonical surjective homomorphism of vector spaces over $\R$ to the residue vector space $V/\Vect(H)$.
\begin{enumerate}
\setcounter{enumi}{4}
\item
$\pi_H(N)$ is a lattice of $V/\Vect(H)$.
\item
$\pi_{H*}\mathcal{C}=\pi_{H*}(\mathcal{C}-(\mathcal{C}/H))$.
$\pi_{H*}\mathcal{C}$ is a simplicial cone decomposition over $\pi_H(N)$ in $V/\Vect(H)$.
$\dim \pi_{H*}\mathcal{C}=\dim \Vect(|\pi_{H*}\mathcal{C}|)=\dim\mathcal{C}-1$.
$\Vect(|\pi_{H*}\mathcal{C}|)=\pi_H(\Vect(|\mathcal{C}|))$.
$(\pi_{H*}\mathcal{C})\Mx=(\pi_{H*}\mathcal{C})^0$.
\item
For any $\bar{\Delta}\in\mathcal{C}-(\mathcal{C}/H)$,
$\pi_H(\bar{\Delta})\in \pi_{H*}\mathcal{C}$.

For any $\hat{\Delta}\in\pi_{H*}\mathcal{C}$, $\pi_H^{-1}(\hat{\Delta})\cap |\mathcal{C}-(\mathcal{C}/H)|\in\mathcal{C}-(\mathcal{C}/H)$.

The mapping from $\mathcal{C}-(\mathcal{C}/H)$ to $\pi_{H*}\mathcal{C}$ sending $\bar{\Delta}\in\mathcal{C}-(\mathcal{C}/H)$ to $\pi_H(\bar{\Delta})\in \pi_{H*}\mathcal{C}$ and the mapping from $\pi_{H*}\mathcal{C}$ to $\mathcal{C}-(\mathcal{C}/H)$ sending $\hat{\Delta}\in\pi_{H*}\mathcal{C}$ to $\pi_H^{-1}(\hat{\Delta})\cap |\mathcal{C}-(\mathcal{C}/H)|\in\mathcal{C}-(\mathcal{C}/H)$ are bijective mappings preserving the inclusion relation and the dimension between $\mathcal{C}-(\mathcal{C}/H)$ and $\pi_{H*}\mathcal{C}$, and they are the inverse mappings of each other.
\item
$\pi_H(|\mathcal{C}|)=\pi_H(|\mathcal{C}-(\mathcal{C}/H)|)=|\pi_{H*}\mathcal{C}|$.
The mapping $\pi_H: |\mathcal{C}-(\mathcal{C}/H)|\rightarrow
|\pi_{H*}\mathcal{C}|$ induced by $\pi_H$ is a continuous bijective mapping whose inverse mapping is also continuous.
\end{enumerate}
\end{lemma}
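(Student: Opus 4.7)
The plan is to exploit the partition $\mathcal{C} = (\mathcal{C}/H) \sqcup (\mathcal{C} - (\mathcal{C}/H))$ together with the simplicial structure and the hypothesis $\mathcal{C} = (\mathcal{C}/H)\Fc$. First, for claims 1 and 2, I would fix any $\Delta \in \mathcal{C}/H$ and use Lemma~\ref{simplex1}.4 to see that $H \in \mathcal{F}(\Delta)_1$, so $H\Op|\Delta$ is defined and by Lemma~\ref{opposite2}.1 satisfies $\Delta = H + H\Op|\Delta$ with $H \cap H\Op|\Delta = \{0\}$. The bijection between subsets of $\mathcal{F}(\Delta)_1$ and $\mathcal{F}(\Delta)$ given in Lemma~\ref{simplex2}.3 identifies $\mathcal{F}(\Delta) - (\mathcal{F}(\Delta)/H)$ with $\mathcal{F}(H\Op|\Delta)$. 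Since $\mathcal{C} = (\mathcal{C}/H)\Fc$, every $\bar{\Delta} \in \mathcal{C} - (\mathcal{C}/H)$ is a face of some $\Delta \in \mathcal{C}/H$ with $H \not\subset \bar{\Delta}$, hence lies in $\mathcal{F}(H\Op|\Delta)$. Therefore $\mathcal{C} - (\mathcal{C}/H) = \bigcup_{\Delta \in \mathcal{C}/H} \mathcal{F}(H\Op|\Delta)$; this contains $\{0\}$, is closed under faces, and pairwise intersections are faces since they are so in $\mathcal{C}$. The decomposition property follows via Lemma~\ref{basic cpcd}.6. The inverse assignments $\Delta \mapsto H\Op|\Delta$ and $\bar{\Delta} \mapsto \bar{\Delta} + H$ establish the bijection in claim 2, and the dimension formula is immediate from $\dim\Delta = \dim H + \dim H\Op|\Delta$.

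For claims 3 and 4, the equivalence class decomposition $|\mathcal{C}| = \bigsqcup_{\Delta \in \mathcal{C}} \Delta^\circ$ from Lemma~\ref{basic cpcd}.5 separates the two parts: points in $\Delta^\circ$ with $\Delta \in \mathcal{C}/H$ lie in $|\mathcal{C}/H|^\circ$, while points in $\Delta^\circ$ with $\Delta \in \mathcal{C} - (\mathcal{C}/H)$ lie in $|\mathcal{C} - (\mathcal{C}/H)|$; disjointness follows since $\Delta^\circ \cap \Lambda^\circ \neq \emptyset$ forces $\Delta = \Lambda$. Claim 4 follows from the observation that any $\Delta \in \mathcal{C}\Mx$ is a face of some $\Lambda \in \mathcal{C}/H$ by $\mathcal{C} = (\mathcal{C}/H)\Fc$, and maximality forces $\Delta = \Lambda$. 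The equality $\Vect(\Delta) = \Vect(|\mathcal{C}|)$ then follows from the hypotheses $\dim\mathcal{C} = \dim\Vect(|\mathcal{C}|)$ and $\mathcal{C}\Mx = \mathcal{C}^0$ via Lemma~\ref{basic cpcd}.12.

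For claims 5--8 I would first apply Lemma~\ref{h}.13, using that $H$ is a one-dimensional simplicial cone over $N$ so that $N \cap \Vect(H) = \Z b_{H/N}$ is a lattice in $\Vect(H)$; this yields claim 5. The key reduction is that for $\Delta \in \mathcal{C}/H$, the decomposition $\Delta = H + H\Op|\Delta$ forces $\pi_H(\Delta) = \pi_H(H\Op|\Delta)$, while $\pi_H$ is injective on $H\Op|\Delta$ since $H\Op|\Delta \cap \Vect(H) = \{0\}$; thus $\pi_H(\Delta)$ is a simplicial cone over $\pi_H(N)$ of dimension $\dim\Delta - 1$, and $\pi_{H*}\mathcal{C} = \pi_{H*}(\mathcal{C}-(\mathcal{C}/H))$. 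For the decomposition axioms of claim 6 and the bijection of claim 7 I would verify the intersection identity $\pi_H(\bar{\Delta}) \cap \pi_H(\bar{\Delta}') = \pi_H(\bar{\Delta} \cap \bar{\Delta}')$ for $\bar{\Delta}, \bar{\Delta}' \in \mathcal{C}-(\mathcal{C}/H)$; here the point is that $(\bar{\Delta} + H) \cap (\bar{\Delta}' + H) \in \mathcal{C}/H$ contains $H$, and by Lemma~\ref{simplex2}.5 its edges are the common edges of $\bar{\Delta}+H$ and $\bar{\Delta}'+H$, so this intersection equals $H + (\bar{\Delta} \cap \bar{\Delta}')$. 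Claim 8 then reduces to proving that the continuous bijection $\pi_H : |\mathcal{C}-(\mathcal{C}/H)| \to |\pi_{H*}\mathcal{C}|$ has continuous inverse, which follows cone-by-cone because each $\pi_H|_{\bar{\Delta}}$ is a linear isomorphism onto its image.

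The main obstacle I anticipate is the intersection identity $\pi_H(\bar{\Delta}) \cap \pi_H(\bar{\Delta}') = \pi_H(\bar{\Delta} \cap \bar{\Delta}')$ underlying claims 6 and 7; a priori $\pi_H$ can identify separated cones whose $\Vect(H)$-fibres overlap, and ruling this out uses both that every maximal $\Delta \in \mathcal{C}\Mx$ contains $H$ (claim 4) and the combinatorial identification of faces of a simplicial cone with subsets of its edges (Lemma~\ref{simplex2}.5), to turn set-theoretic intersection of images into face-theoretic intersection of simplicial cones containing $H$.
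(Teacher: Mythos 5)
The paper supplies no proof for this lemma (it falls under the paper's blanket remark that most such claims follow from definitions), so there is nothing to compare against; the question is simply whether your argument holds up, and it essentially does. Your decomposition $\mathcal{C}-(\mathcal{C}/H)=\bigcup_{\Delta\in\mathcal{C}/H}\mathcal{F}(H\Op|\Delta)$, the bijection $\Delta\leftrightarrow H\Op|\Delta$ via Lemma~\ref{opposite2}, the partition of $|\mathcal{C}|$ using Lemma~\ref{basic cpcd}.5, and the reduction of the pushdown claims to the key intersection identity are all the right moves, and you correctly identify that identity as the crux.

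Two places need tightening. First, you invoke Lemma~\ref{simplex2}.5 to get $\mathcal{F}(\Delta_1\cap\Delta_2)_1=\mathcal{F}(\Delta_1)_1\cap\mathcal{F}(\Delta_2)_1$ for $\Delta_1=\bar{\Delta}+H$, $\Delta_2=\bar{\Delta}'+H$; but that lemma concerns two faces of a \emph{single} simplicial cone, whereas $\bar{\Delta}+H$ and $\bar{\Delta}'+H$ need not be faces of a common cone in $\mathcal{C}$. The fact you need is true for any two cones of a simplicial cone decomposition: the cone spanned by $\mathcal{F}(\Delta_1)_1\cap\mathcal{F}(\Delta_2)_1$ is a face of $\Delta_1\cap\Delta_2$ (it sits inside each $\Delta_i$), while $\mathcal{F}(\Delta_1\cap\Delta_2)_1$ is contained in both $\mathcal{F}(\Delta_1)_1$ and $\mathcal{F}(\Delta_2)_1$ because $\Delta_1\cap\Delta_2$ is a face of each; hence the two edge sets coincide. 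You should supply this short argument rather than cite Lemma~\ref{simplex2}.5 directly. Second, in claim 7 the verification that $\pi_H^{-1}(\pi_H(\bar{\Delta}))\cap|\mathcal{C}-(\mathcal{C}/H)|=\bar{\Delta}$ deserves a line or two more: given $a\in\bar{\Delta}'$ with $\pi_H(a)=\pi_H(b)$ for some $b\in\bar{\Delta}$, one puts $a-b=t\,b_{H/N}$, lands $a$ (or $b$) in $(\bar{\Delta}+H)\cap(\bar{\Delta}'+H)=(\bar{\Delta}\cap\bar{\Delta}')+H$, and then uses $\bar{\Delta}'\cap((\bar{\Delta}\cap\bar{\Delta}')+H)=\bar{\Delta}\cap\bar{\Delta}'$ (same edge argument) to conclude $a\in\bar{\Delta}$. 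With these two details supplied your proof is complete.
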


In addition, we consider any non-negative integer $m\in\Z_0$ and any mapping
$$E:\{1,2,\ldots,m\}\rightarrow (\mathcal{C}-(\mathcal{C}/H))_1.$$
For any $i\in\{1,2,\ldots,m\}$, $E(i)\in(\mathcal{C}-(\mathcal{C}/H))_1$.

For any $i\in\{0,1,\ldots,m\}$ and any $\bar{E}\in(\mathcal{C}-(\mathcal{C}/H))_1$, putting
$$s(i, \bar{E})=\sharp(\{1,2,\ldots,i\}\cap E^{-1}(\bar{E}))\in\Z_0,$$
we define a mapping
$$s:\{0,1,\ldots,m\}\times(\mathcal{C}-(\mathcal{C}/H))_1\rightarrow \Z_0.$$
The mapping $s$ is uniquely determined depending on the mapping $E$.

For any $i\in\{1,2,\ldots,m\}$, we put
\begin{equation*}\begin{split}
F(i)=&\R_0(b_{E(i)/N}+s(i-1, E(i))b_{H/N})+H\subset V,\\
G(i)=&\R_0(b_{E(i)/N}+s(i-1, E(i))b_{H/N})\subset V,\\
H(i)=&\R_0(b_{E(i)/N}+s(i, E(i))b_{H/N})\subset V.
\end{split}\end{equation*}
We put
$$H(m+1)=H\subset V.$$

Three mappings are defined.
$$F, G:\{1,2,\ldots,m\}\rightarrow 2^V.$$
$$H:\{1,2,\ldots,m, m+1\}\rightarrow 2^V.$$
They are uniquely determined depending on the mapping $E$.
\begin{lemma}
\label{basic1}
\begin{enumerate}
\item
For any $\bar{E}\in(\mathcal{C}-(\mathcal{C}/H))_1$, $s(0,\bar{E})=0$.

For any $i\in\{1,2,\ldots,m\}$ and any $\bar{E}\in(\mathcal{C}-(\mathcal{C}/H))_1$,
\begin{equation*}
s(i, \bar{E})=
\begin{cases}
s(i-1,\bar{E})&\text{if $\bar{E}\neq E(i)$},\\
s(i-1,\bar{E})+1&\text{if $\bar{E}=E(i)$}.
\end{cases}\end{equation*}
\item
For any $i\in\{1,2,\ldots,m\}$, $E(i), F(i), G(i), H(i), G(i)+H(i)$ and $E(i)+H$ are simplicial cones over $N$ in $V$,
$\dim E(i)=\dim G(i)=\dim H(i)=1$, $\dim F(i)=\dim(G(i)+H(i))=\dim(E(i)+H)=2$, and
$G(i)+H(i)\subset F(i)\subset E(i)+H\in\mathcal{C}_2/H$.
\item
For any $i\in\{1,2,\ldots,m\}$, $\mathcal{F}(G(i)+H(i))_1=\{G(i), H(i)\}$,
$\mathcal{F}(F(i))_1=\{G(i), H\}$,
$\mathcal{F}(E(i)+H)_1=\{E(i), H\}$,
$b_{G(i)/N}=b_{E(i)/N}+s(i-1,E)b_{H/N}$,
$b_{H(i)/N}=b_{E(i)/N}+s(i,E)b_{H/N}= b_{G(i)/N}+ b_{H/N}=b_{F(i)/N}$, and
$H(i)=\R_0 b_{F(i)/N}\subset F(i)$.
\item
$F$ is a center sequence of $\mathcal{C}$ of length $m$ such that $\dim F(i)=2$
and $F(i)\not\subset|\mathcal{C}-(\mathcal{C}/H)|$
for any $i\in\{1,2,\ldots,m\}$, and it is determined by the sextuplet $(V, N, H, \mathcal{C}, m, E)$ uniquely.

$\mathcal{C}*F(1)*F(2)*\cdots*F(m)$ is an iterated barycentric subdivision of $\mathcal{C}$, and it is a simplicial cone decomposition, and it is determined by the sextuplet $(V, N, H, \mathcal{C}, m, E)$ uniquely.
\end{enumerate}
\end{lemma}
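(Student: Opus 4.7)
The plan is to dispatch claims 1--3 by unwinding the definitions, and then to attack claim 4 by induction on $m$ with a strengthened hypothesis tracking the iterated subdivision inside each two-dimensional cone $\bar E+H$.

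Claim 1 is immediate from $s(i,\bar E)=\sharp(\{1,\ldots,i\}\cap E^{-1}(\bar E))$ together with $\{1,\ldots,i\}=\{1,\ldots,i-1\}\sqcup\{i\}$. For claims 2 and 3, since $E(i)\in\mathcal{C}_1$ with $H\not\subset E(i)$, the bijection in Lemma~\ref{property of C}.2 shows $E(i)+H\in\mathcal{C}_2/H$; this cone is two-dimensional simplicial over $N$ with $\Z$-basis $\{b_{E(i)/N},b_{H/N}\}$ of $N\cap\Vect(E(i)+H)$ by Lemma~\ref{simplex2}.1, and this sublattice is a $\Z$-direct summand of $N$ by Lemma~\ref{simplex1}.3. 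For any $t\in\Z_0$, the pair $\{b_{E(i)/N}+tb_{H/N},b_{H/N}\}$ is another basis of the same sublattice with unimodular transition, so it extends to a $\Z$-basis of $N$; applied with $t=s(i-1,E(i))$ and $t=s(i,E(i))$ this yields at once the simplicial character, edge structure, and barycenter formulas in claims 2--3, and the identity $b_{H(i)/N}=b_{G(i)/N}+b_{H/N}=b_{F(i)/N}$ follows from claim 1.

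For claim 4, uniqueness of $F$ given the sextuplet $(V,N,H,\mathcal{C},m,E)$ is evident from the explicit formulas. To see $F$ is a center sequence, I would induct on $i\in\{0,1,\ldots,m\}$ on the combined statement $Q(i)$: $(F(1),\ldots,F(i))$ is a center sequence of $\mathcal{C}$ and, writing $\mathcal{C}_i=\mathcal{C}*F(1)*\cdots*F(i)$ and $v_{j,\bar E}=b_{\bar E/N}+jb_{H/N}$, for each $\bar E\in(\mathcal{C}-(\mathcal{C}/H))_1$ the slice $\mathcal{C}_i\backslash(\bar E+H)$ equals the face-closure of the two-dimensional cones $\R_0 v_{j,\bar E}+\R_0 v_{j+1,\bar E}$ ($j=0,\ldots,s(i,\bar E)-1$) together with $\R_0 v_{s(i,\bar E),\bar E}+H$. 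The base case $Q(0)$ reduces to $\mathcal{C}\backslash(\bar E+H)=\mathcal{F}(\bar E+H)$, which holds since $\bar E+H\in\mathcal{C}$ is simplicial. Granted $Q(i-1)$, the cone $F(i)=\R_0 v_{s(i-1,E(i)),E(i)}+H$ appears as one of the maximal cones of $\mathcal{C}_{i-1}\backslash(E(i)+H)$, is two-dimensional, and hence lies in $\mathcal{C}_{i-1}$, so $\mathcal{C}_i=\mathcal{C}_{i-1}*F(i)$ is a legitimate barycentric subdivision and extends $F$ to a center sequence of length $i$; after $m$ steps Lemma~\ref{propibcd1}.1 yields that $\mathcal{C}_m$ is a simplicial cone decomposition. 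The assertion $F(i)\not\subset|\mathcal{C}-(\mathcal{C}/H)|$ follows because $F(i)\supset H^\circ$ while any $\Delta\in\mathcal{C}$ meeting $H^\circ$ must contain $H$ by Lemma~\ref{basic cpcd}.2, so $H^\circ\cap|\mathcal{C}-(\mathcal{C}/H)|=\emptyset$.

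The step $Q(i-1)\Rightarrow Q(i)$ is where the main obstacle lies. The barycentric subdivision $\mathcal{C}_{i-1}*F(i)$ alters only cones of $\mathcal{C}_{i-1}$ containing $F(i)$, and any such ambient cone must contain both $H$ and the ray $G(i)\subset E(i)+H$. I would argue that for $\bar E\neq E(i)$ the intersection of such an ambient cone with $\bar E+H$ reduces to a face of $H$, leaving the slice at $\bar E$ unchanged and hence in the form prescribed by $Q(i)$ with $s(i,\bar E)=s(i-1,\bar E)$; and at $\bar E=E(i)$, $Q(i-1)$ provides only one cone of $\mathcal{C}_{i-1}\backslash(E(i)+H)$ containing $F(i)$, namely $F(i)$ itself, which gets split by the new ray $\R_0 b_{F(i)/N}=\R_0 v_{s(i,E(i)),E(i)}$ into the two two-dimensional cones prescribed by $Q(i)$ at $E(i)$, with $s$ incremented there by claim 1. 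The delicate point is the localization to $\bar E+H$: ambient cones of $\mathcal{C}_{i-1}$ containing $F(i)$ can a priori have arbitrary dimension $\geq 2$, so one must use the simpliciality of $\mathcal{C}_{i-1}$ and the fact that edges of such cones other than $H$ and $G(i)$ lie outside $\Vect(E(i)+H)$ to conclude that their traces on other slices are faces of $H$.
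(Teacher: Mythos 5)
Your handling of claims 1--3 and the overall strategy for claim 4 (induction on $i$ with a strengthened hypothesis $Q(i)$ tracking the slices $\mathcal{C}_i\backslash(\bar E+H)$) are sound, and you correctly locate the work in the inductive step. However, the intermediate claim you set yourself to prove at the ``delicate point'' is false: the trace of an ambient cone $\Theta\in\mathcal{C}_{i-1}/F(i)$ on a slice $\bar E+H$ with $\bar E\neq E(i)$ need \emph{not} be a face of $H$. Take $\dim V=3$, $N=\Z^3$ with standard basis $e_1,e_2,e_3$, $H=\R_0e_1$, $\mathcal{C}=\mathcal{F}(\Delta)$ with $\Delta=\R_0e_1+\R_0e_2+\R_0e_3$, $m=1$ and $E(1)=\R_0e_2$; then $F(1)=\R_0e_1+\R_0e_2$, the cone $\Delta\in\mathcal{C}/F(1)$, and for $\bar E=\R_0e_3$ one has $\Delta\cap(\bar E+H)=\R_0e_1+\R_0e_3$, a two-dimensional cone and not a face of the one-dimensional $H$. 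The fact that the edges of $\Theta$ other than $G(i)$ and $H$ lie outside $\Vect(E(i)+H)$ does not prevent them from lying \emph{inside} $\Vect(\bar E+H)$, as $\R_0 e_3$ does here.

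Fortunately the conclusion you need for $\bar E\neq E(i)$ is still true and has a shorter proof that avoids traces entirely. Since $b_{F(i)/N}=b_{E(i)/N}+(s(i-1,E(i))+1)b_{H/N}$ has strictly positive coefficients on both edges of $E(i)+H$, it lies in $(E(i)+H)^\circ$; and since $E(i)+H$ and $\bar E+H$ are distinct two-dimensional cones of $\mathcal{C}$ containing $H$, their common face is exactly $H$, so $(E(i)+H)^\circ\cap(\bar E+H)=\emptyset$ and $b_{F(i)/N}\notin\bar E+H$. Hence no cone $\Theta\subset\bar E+H$ of $\mathcal{C}_{i-1}$ can contain $F(i)$ (it would contain $b_{F(i)/N}$), so nothing in the old slice is removed; and every new cone $\Lambda+\R_0 b_{F(i)/N}$ contains $b_{F(i)/N}$ and so cannot sit inside $\bar E+H$. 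Thus $\mathcal{C}_i\backslash(\bar E+H)=\mathcal{C}_{i-1}\backslash(\bar E+H)$, which is what $Q(i)$ asserts there. (Equivalently, $F(i)\cap(\bar E+H)\subset H$ gives $F(i)\not\subset\bar E+H$, and Lemma~\ref{propibcd1}.9 with $\mathcal{E}=\mathcal{F}(\bar E+H)$ delivers the same conclusion.) With this repair the induction closes and the rest of your argument is correct.
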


Below, for simplicity we denote
$$\mathcal{B}= \mathcal{C}*F(1)*F(2)*\cdots*F(m)\subset 2^V.$$

For any $i\in\{1,2,\ldots,m\}$, we put
$$\mathcal{B}(i)=(\mathcal{B}/(G(i)+H(i)))\Fc\subset \mathcal{B}\subset 2^V.$$

For $m+1$, we put
$$\mathcal{B}(m+1)=(\mathcal{B}/H(m+1))\Fc\subset \mathcal{B}\subset 2^V.$$

We obtain a mapping
$$\mathcal{B}:\{1,2,\ldots,m+1\}\rightarrow 2^{2^V}.$$

\begin{lemma}
\label{property of basic subdivisions}
\begin{enumerate}
\item
$\mathcal{B}$ is a simplicial cone decomposition over $N$ in $V$.
$\dim \mathcal{B}=\dim\Vect(|\mathcal{B}|)=\dim \mathcal{C}$. 
$\Vect(|\mathcal{B}|)=\Vect(|\mathcal{C}|)$.
$\mathcal{B}\Mx=\mathcal{B}^0$.
$\mathcal{B}$ is an iterated barycentric subdivision of $\mathcal{C}$.
$|\mathcal{B}|=|\mathcal{C}|$.
\item
For any $i\in\{1,2,\ldots,m\}$, $G(i)\in\mathcal{B}_1$, 
$G(i)+H(i)\in\mathcal{B}_2$, and $F(i)^\circ\subset|\mathcal{C}/H|^\circ$.
For any $i\in\{1,2,\ldots,m+1\}$, $H(i)\in\mathcal{B}_1$.
\item
For any $\Delta\in\mathcal{C}/H$, $|\mathcal{B}\backslash\Delta|=\Delta$ and $\mathcal{B}\backslash\Delta$ is $H$-simple.
\item
$\mathcal{B}\backslash|\mathcal{C}-(\mathcal{C}/H)|= \mathcal{C}-(\mathcal{C}/H)$.
\item
For any $i\in\{1,2,\ldots,m+1\}$, $\mathcal{B}(i)$ is a simplicial cone decomposition over $N$ in $V$,
$\dim \mathcal{B}(i)=\dim \Vect(|\mathcal{B}(i)|)=\dim\mathcal{C}$, 
$\Vect(|\mathcal{B}(i)|)=\Vect(|\mathcal{C}|)$,
$\mathcal{B}(i)\Mx=\mathcal{B}(i)^0$, $H(i)\in\mathcal{B}(i)_1$, and $\mathcal{B}(i)=(\mathcal{B}(i)/H(i))\Fc$.
\item
For any $i\in\{1,2,\ldots,m\}$,
$G(i)+H(i)=|\mathcal{B}(i)|\cap(E(i)+H)\in\mathcal{B}(i)_2$, 
$G(i)=| \mathcal{B}(i)-(\mathcal{B}(i)/H(i))|\cap(G(i)+H(i))\in(\mathcal{B}(i)-(\mathcal{B}(i)/H(i)))_1\subset\mathcal{B}(i)_1$,
$\mathcal{B}(i)=(\mathcal{B}(i)/(G(i)+H(i)))\Fc=(\mathcal{B}(i)/G(i))\Fc$, and
$H(i)=| \mathcal{B}(i)-(\mathcal{B}(i)/G(i))|\cap(G(i)+H(i))\in(\mathcal{B}(i)-(\mathcal{B}(i)/G(i)))_1\subset\mathcal{B}(i)_1$
\item
For any $i\in\{1,2,\ldots, m\}$ any $j\in\{2,3,\ldots, m+1\}$ with $i<j$,
$\mathcal{B}(i)\cap\mathcal{B}(j)=(\mathcal{B}(i)-(\mathcal{B}(i)/G(i)))\cap(\mathcal{B}(j)-(\mathcal{B}(j)/H(j)))$.
\item
$$|\mathcal{C}-(\mathcal{C}/H)|\cup(\bigcup_{i\in\{1,2,\ldots,m+1\}}|\mathcal{B}(i)/H(i)|^\circ)=|\mathcal{C}|$$
\item
For any $i\in\{0,1,\ldots,m+1\}$, $|\mathcal{C}-(\mathcal{C}/H)|\cap|\mathcal{B}(i)/H(i)|^\circ=\emptyset$.

For any $i\in\{0,1,\ldots,m+1\}$ and any $j\in\{0,1,\ldots,m+1\}$ with $i\neq j$,
$|\mathcal{B}(i)/H(i)|^\circ\cap|\mathcal{B}(j)/H(j)|^\circ=\emptyset$.
\item
$$(\mathcal{C}-(\mathcal{C}/H))\cup(\bigcup_{i\in\{1,2,\ldots,m+1\}}(\mathcal{B}(i)/H(i)))=\mathcal{B}$$
\item
For any $i\in\{0,1,\ldots,m+1\}$, $(\mathcal{C}-(\mathcal{C}/H))\cap(\mathcal{B}(i)/H(i))=\emptyset$.

For any $i\in\{0,1,\ldots,m+1\}$ and any $j\in\{0,1,\ldots,m+1\}$ with $i\neq j$,
$(\mathcal{B}(i)/H(i))\cap(\mathcal{B}(j)/H(j))=\emptyset$.
\item
$$\bigcup_{i\in\{1,2,\ldots,m+1\}}\mathcal{B}(i)\Mx=\mathcal{B}\Mx$$
\item
For any $i\in\{0,1,\ldots,m+1\}$ and any $j\in\{0,1,\ldots,m+1\}$ with $i\neq j$,
$\mathcal{B}(i)\Mx\cap\mathcal{B}(j)\Mx=\emptyset$.
\end{enumerate}

For any $i\in\{0,1,\ldots,m+1\}$, we denote
$$X(i)=|\mathcal{C}-(\mathcal{C}/H)|\cup(\bigcup_{j\in\{1,2,\ldots,i\}}|\mathcal{B}(j)/H(j)|^\circ)\subset|\mathcal{C}|.$$
\begin{enumerate}
\setcounter{enumi}{13}
\item
$X(0)= |\mathcal{C}-(\mathcal{C}/H)|$. $X(m+1)=|\mathcal{C}|$.

For any $i\in\{1,2,\ldots, m+1\}$, $X(i-1)\subset X(i)$, $X(i-1)\neq X(i)$, and $|\mathcal{B}(i)|\cap X(i-1)=|\mathcal{B}(i)-(\mathcal{B}(i)/H(i))|$.
\item
For any $i\in\{0,1,\ldots,m+1\}$,
$$X(i)= |\mathcal{C}-(\mathcal{C}/H)|\cup(\bigcup_{j\in\{1,2,\ldots,i\}}|\mathcal{B}(j)|),$$
$$|\mathcal{B}\backslash X(i)|=X(i),$$

and $X(i)$ is a closed subset of $|\mathcal{C}|$.
\end{enumerate}
\end{lemma}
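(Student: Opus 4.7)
The plan is to proceed by induction on the length $m$ of the center sequence. For $m=0$ we have $\mathcal{B}=\mathcal{C}$, $H(m+1)=H(1)=H$, and $\mathcal{B}(1)=(\mathcal{C}/H)\Fc=\mathcal{C}$ by the standing hypothesis on $\mathcal{C}$, so claims 1, 2, 4--6, 8, 10, 12, 14, 15 reduce to assumptions or become vacuous (claims 3, 7, 9, 11, 13 involve indices in $\{1,\ldots,m\}$ which is empty or degenerate). The essential combinatorial input for the inductive step is the identity already noted in Lemma~\ref{basic1}.3: $b_{F(i)/N}=b_{G(i)/N}+b_{H/N}=b_{E(i)/N}+s(i,E(i))b_{H/N}=b_{H(i)/N}$, so the barycentric subdivision of $F(i)$ introduces precisely the new edge $H(i)$, and moreover replaces every cone containing $F(i)$ by cones containing $H(i)$.

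For claims 1--4 I would invoke Lemma~\ref{propibcd1}: since $F$ is a valid center sequence of $\mathcal{C}$ (Lemma~\ref{basic1}.4), $\mathcal{B}$ is a full simplicial subdivision of $\mathcal{C}$ with the same support, the same ambient vector space, and $\mathcal{B}\Mx=\mathcal{B}^0$ is preserved from $\mathcal{C}\Mx=\mathcal{C}^0$. Claim 2 follows by repeated application of the barycenter identity and the fact that each $F(i)\subset H+E(i)+\cdots\subset|\mathcal{C}/H|$ has non-empty interior inside $|\mathcal{C}/H|^\circ$. For claim 3, given $\Delta\in\mathcal{C}/H$, Lemma~\ref{propibcd1}.9 shows $\mathcal{B}\backslash\Delta$ is itself the iterated barycentric subdivision of $\mathcal{F}(\Delta)$ along the subsequence of those $F(i)\subset\Delta$, and since each such $F(i)$ contains $H$, the resulting decomposition has every maximal cone containing $H$ as an edge; semisimpleness and the $H$-Weierstrass condition of Definition~\ref{defsimple} are then routine to verify. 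Claim 4 is immediate because no $F(i)$ is contained in $|\mathcal{C}-(\mathcal{C}/H)|$.

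For claims 5--13 the key geometric observation is that the edges of $\mathcal{B}$ split into two groups: edges from $\mathcal{C}-(\mathcal{C}/H)$ (which are undisturbed by all the subdivisions, by claim 4), and the newly-introduced edges $H(1),\ldots,H(m)$ together with the original $H(m+1)=H$. Since $H(1),\ldots,H(m+1)$ are pairwise distinct by the strict inequalities on $s(i,E(i))$, every maximal cone of $\mathcal{B}$ that is not contained in $|\mathcal{C}-(\mathcal{C}/H)|$ contains exactly one $H(i)$ as an edge. This is what yields the partition $\mathcal{B}\Mx=\bigsqcup_{i=1}^{m+1}\mathcal{B}(i)\Mx$ (claims 12, 13), which in turn gives the decompositions of $|\mathcal{C}|$ and of $\mathcal{B}$ in claims 8--11. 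The structure of $\mathcal{B}(i)$ asserted in claims 5, 6 follows because in the iterated subdivision the star of $H(i)$ is obtained from the star of $F(i)$ in $\mathcal{B}_{i-1}:=\mathcal{C}\ast F(1)\ast\cdots\ast F(i)$ by further subdivisions at $F(j)$ for $j>i$, and one checks inductively that these preserve the property $\mathcal{B}(i)=(\mathcal{B}(i)/H(i))\Fc$. Claim 7 reduces to the observation that $\mathcal{B}(i)\cap\mathcal{B}(j)$ consists of exactly those cones containing both $H(i)$ and $H(j)$ (i.e., neither $G(i)$ nor $H(j)$), which is precisely the intersection of the two "shells" described.

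Claims 14 and 15 then follow formally: using claim 8 one sees $X(i)=|\mathcal{C}-(\mathcal{C}/H)|\cup\bigcup_{j\leq i}|\mathcal{B}(j)|$, and this is closed because each $|\mathcal{B}(j)|$ is a union of closed cones and the only overlap with $X(i-1)$ is the closed boundary portion $|\mathcal{B}(i)-(\mathcal{B}(i)/H(i))|$. The main obstacle I anticipate is keeping the bookkeeping straight in claim 7 and in showing $\mathcal{B}(i)=(\mathcal{B}(i)/G(i))\Fc$ in claim 6: when $E(j)=E(i)$ for some $j>i$, the barycentric subdivision at $F(j)$ subsequently refines cones containing $G(i)=H(j-?)$, so identifying which maximal cones of $\mathcal{B}$ still retain $G(i)$ as an edge requires a careful inductive argument tracking, for each $i$, the first index $j>i$ (if any) with $E(j)=E(i)$, since step $j$ replaces $G(i)$ by $H(j)$ in the star. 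Once this tracking is set up, the remaining verifications are mechanical.
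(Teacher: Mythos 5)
The central claim you use to derive the partition in claims 12 and 13 --- that every maximal cone of $\mathcal{B}$ not contained in $|\mathcal{C}-(\mathcal{C}/H)|$ contains exactly one of the $H(i)$ as an edge --- is false. Take the smallest example: $\mathcal{C}=\mathcal{F}(\Delta)$ with $\Delta=E+H$ two-dimensional, $m=1$, $E(1)=E$. Then $G(1)=E$, $H(1)=\R_0(b_{E/N}+b_{H/N})$, $H(2)=H$, $F(1)=G(1)+H=\Delta$, and $\mathcal{B}=\mathcal{F}(\Delta)*\Delta$ has exactly the two maximal cones $E+H(1)$ and $H(1)+H$. The cone $H(1)+H$ has edges $H(1)$ and $H=H(2)$: it contains \emph{two} of the $H(i)$ as edges, not one. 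The partition of $\mathcal{B}\Mx$ asserted by the lemma still holds in this example --- $H(1)+H$ lies in $\mathcal{B}(2)\Mx$ because it contains $H(2)$, and it does not lie in $\mathcal{B}(1)\Mx$ because it fails to contain $G(1)+H(1)=E+H(1)$ --- but the criterion you offer does not detect this, and since claims 8 through 15 all rest on the partition, the gap propagates.

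The correct criterion is more delicate: for $i\leq m$, membership of a maximal cone $\Theta$ in $\mathcal{B}(i)\Mx$ requires that the two-dimensional cone $G(i)+H(i)$ be a face of $\Theta$, so both $G(i)$ and $H(i)$ must be edges of $\Theta$, and only for $i=m+1$ is the criterion simply $H\subset\Theta$. Establishing disjointness of these shells therefore requires tracking, at each step $j$ of the construction, which earlier edges $G(i)$ persist as edges of a given maximal cone and which have been buried behind $H(j)$ --- essentially the same bookkeeping you describe at the end of your proposal as an anticipated obstacle for claim 6, but which must in fact be carried out \emph{before} deducing the partition rather than after. The rest of your sketch (induction on $m$, invoking Lemma~\ref{basic1}, Lemma~\ref{probcd} and Lemma~\ref{propibcd1} for claims 1 through 7, and deriving claims 14 and 15 from the partition) is reasonable; note also that the paper itself leaves this lemma unproved, so there is no model argument against which to check the missing details.
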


For any $i\in\{1,2,\ldots,m\}$, we put
$$\mathcal{B}^\circ(i)=\mathcal{B}(i)/G(i) \subset \mathcal{B}(i)\subset 2^V.$$

For $m+1$, we put
$$\mathcal{B}^\circ(m+1)=\mathcal{B}(m+1) \subset 2^V.$$

We obtain a mapping
$$\mathcal{B}^\circ:\{1,2,\ldots,m+1\}\rightarrow 2^{2^V}.$$

\begin{lemma}
\label{property of basic subdivisions2}
\begin{enumerate}
\item
For any $i\in\{1,2,\ldots,m+1\}$, the following claims hold:
\begin{enumerate}
\item
$\mathcal{B}(i)\Mx\subset\mathcal{B}^\circ(i)\subset\mathcal{B}(i)$.
$(\mathcal{B}(i)\Mx)\Fc=\mathcal{B}^\circ(i)\Fc=\mathcal{B}(i)$.
$|\mathcal{B}(i)\Mx|=|\mathcal{B}^\circ(i)|=|\mathcal{B}(i)|\supset|\mathcal{B}^\circ(i)|^\circ$.
\item
$\mathcal{B}^\circ(i)=\mathcal{B}(i)\Leftrightarrow|\mathcal{B}^\circ(i)|^\circ=|\mathcal{B}(i)|\Leftrightarrow i=m+1$.
\item
If $i\neq m+1$, then
$\mathcal{B}(i)=\mathcal{B}^\circ(i)\cup(\mathcal{B}(i)-(\mathcal{B}(i)/G(i)))$, $\mathcal{B}^\circ(i)\cap(\mathcal{B}(i)-(\mathcal{B}(i)/G(i)))=\emptyset$,
$|\mathcal{B}(i)|=|\mathcal{B}^\circ(i)|^\circ\cup|\mathcal{B}(i)-(\mathcal{B}(i)/G(i))|$, and $|\mathcal{B}^\circ(i)|^\circ\cap|\mathcal{B}(i)-(\mathcal{B}(i)/G(i))|=\emptyset$.
\item
For any $\Theta\in \mathcal{B}^\circ(i)/H(i)$, $H(i)\Op|\Theta\in\mathcal{B}^\circ(i)-(\mathcal{B}^\circ(i)/H(i))$.
For any $\Lambda\in\mathcal{B}^\circ(i)-(\mathcal{B}^\circ(i)/H(i))$, $\Lambda+H(i)\in \mathcal{B}^\circ(i)/H(i)$.

The mapping from $\mathcal{B}^\circ(i)/H(i)$ to $\mathcal{B}^\circ(i)-(\mathcal{B}^\circ(i)/H(i))$ sending\hfill\break$\Theta\in \mathcal{B}^\circ(i)/H(i)$ to $H(i)\Op|\Theta\in\mathcal{B}^\circ(i)-(\mathcal{B}^\circ(i)/H(i))$ and the mapping from $\mathcal{B}^\circ(i)-(\mathcal{B}^\circ(i)/H(i))$ to $\mathcal{B}^\circ(i)/H(i)$ sending $\Lambda\in\mathcal{B}^\circ(i)-(\mathcal{B}^\circ(i)/H(i))$ to $\Lambda+H(i)\in \mathcal{B}^\circ(i)/H(i)$ are bijective mapping preserving the inclusion relation between $\mathcal{B}^\circ(i)/H(i)$ and $\mathcal{B}^\circ(i)-(\mathcal{B}^\circ(i)/H(i))$, and they are the inverse mappings of each other.

Furthermore, if $\Theta\in \mathcal{B}^\circ(i)/H(i)$ and $\Lambda\in\mathcal{B}^\circ(i)-(\mathcal{B}^\circ(i)/H(i))$ correspond to each other by them, then $\dim\Theta=\dim\Lambda+1$.
\end{enumerate}
\item
$$\bigcup_{i\in\{1,2,\ldots,m+1\}}|\mathcal{B}^\circ(i)|^\circ=|\mathcal{C}|$$
\item
For any $i\in\{0,1,\ldots,m+1\}$ and any $j\in\{0,1,\ldots,m+1\}$ with $i\neq j$,$|\mathcal{B}^\circ(i) |^\circ\cap|\mathcal{B}^\circ(j)|^\circ=\emptyset$.
\item
$$\bigcup_{i\in\{1,2,\ldots,m+1\}}\mathcal{B}^\circ(i)=\mathcal{B}$$
\item
For any $i\in\{0,1,\ldots,m+1\}$ and any $j\in\{0,1,\ldots,m+1\}$ with $i\neq j$,
$\mathcal{B}^\circ(i)\cap\mathcal{B}^\circ(j)=\emptyset$.
\end{enumerate}

For any $i\in\{0,1,\ldots,m+1\}$, we denote
$$Y(i)= \bigcup_{j\in\{i+1,i+2,\ldots,m+1\}}|\mathcal{B}^\circ(j)|^\circ\subset|\mathcal{C}|.$$
\begin{enumerate}
\setcounter{enumi}{5}
\item
$Y(0)= |\mathcal{C}|$. $Y(m+1)=\emptyset$.

For any $i\in\{1,2,\ldots, m+1\}$, $Y(i-1)\supset Y(i)$, $Y(i-1)\neq Y(i)$. For any $i\in\{1,2,\ldots, m\}$, $|\mathcal{B}(i)|\cap Y(i)=|\mathcal{B}(i)-(\mathcal{B}(i)/G(i))|$.
\item
For any $i\in\{0,1,\ldots,m+1\}$,
$$Y(i)= \bigcup_{j\in\{i+1,i+2,\ldots,m+1\}}|\mathcal{B}(j)|,$$
$$|\mathcal{B}\backslash Y(i)|=Y(i),$$

and $Y(i)$ is a closed subset of $|\mathcal{C}|$.

\end{enumerate}
\end{lemma}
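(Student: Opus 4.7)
The plan is to derive all properties of the $\mathcal{B}^\circ(i)$ from the already-established structure of the $\mathcal{B}(i)$ in Lemma~\ref{property of basic subdivisions}. Observe first that claims 2 and 3 follow formally from claims 4 and 5 once the latter are established, because $|\mathcal{B}^\circ(i)|^\circ = \bigcup_{\Delta\in\mathcal{B}^\circ(i)}\Delta^\circ$; then by the partition property (Lemma~\ref{basic cpcd}.5) applied to $\mathcal{B}$, the union over $i$ collapses to $|\mathcal{B}| = |\mathcal{C}|$ and pairwise intersections vanish. So the substantive work lies in claim 1 and claims 4--6.

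For claim 1, I unfold the definition $\mathcal{B}^\circ(i)=\mathcal{B}(i)/G(i)$ for $i\leq m$ and use Lemma~\ref{property of basic subdivisions}.5--6: the identity $\mathcal{B}(i)=(\mathcal{B}(i)/G(i))\Fc$ gives $\mathcal{B}^\circ(i)\Fc=\mathcal{B}(i)$ and $|\mathcal{B}^\circ(i)|=|\mathcal{B}(i)|$; any $\Delta\in\mathcal{B}(i)\Mx$ must contain $G(i)$ since $\Delta$ is a maximal element generating the face closure, giving $\mathcal{B}(i)\Mx\subset\mathcal{B}^\circ(i)$. Parts (b) and (c) then come from splitting $\mathcal{B}(i)$ according to whether a cone contains $G(i)$, with the support equality following from the relative-interior partition. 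For the bijection in (d), the key point is that $G(i)+H(i)\in\mathcal{B}(i)_2$ and $\mathcal{B}(i)$ is simplicial, so every $\Theta\in\mathcal{B}^\circ(i)/H(i)$ has both $G(i)$ and $H(i)$ as distinct edges; the opposite face $H(i)\Op|\Theta$ is the face spanned by all edges other than $H(i)$, hence contains $G(i)$, so it lies in $\mathcal{B}^\circ(i)-(\mathcal{B}^\circ(i)/H(i))$, and the inverse direction is analogous via $\Lambda\mapsto\Lambda+H(i)$.

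For claims 4 and 5, I appeal to the disjoint union $\mathcal{B}=(\mathcal{C}-(\mathcal{C}/H))\sqcup\bigsqcup_{i=1}^{m+1}(\mathcal{B}(i)/H(i))$ of Lemma~\ref{property of basic subdivisions}.10--11. Disjointness (claim 5): for $i<j\leq m$, if $\Delta\in\mathcal{B}^\circ(i)\cap\mathcal{B}^\circ(j)$ then $G(i),G(j)\subset\Delta$; combining this with Lemma~\ref{property of basic subdivisions}.7 (which states $\mathcal{B}(i)\cap\mathcal{B}(j)\subset(\mathcal{B}(i)-\mathcal{B}(i)/G(i))\cap(\mathcal{B}(j)-\mathcal{B}(j)/H(j))$) gives a contradiction. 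For the case $j=m+1$ versus $i\leq m$, any $\Delta\in\mathcal{B}^\circ(i)\cap\mathcal{B}^\circ(m+1)$ would simultaneously lie in $\mathcal{B}(i)/H(i)$ (since $G(i)+H(i)\subset\Delta$ forces $H(i)\subset\Delta$) and in $\mathcal{B}(m+1)/H$, contradicting the set-disjointness from Lemma~\ref{property of basic subdivisions}.11. Covering (claim 4): given $\Delta\in\mathcal{B}$, either $\Delta\in\mathcal{B}(i)/H(i)$ with $G(i)\subset\Delta$, placing $\Delta\in\mathcal{B}^\circ(i)$, or $\Delta$ falls in $\mathcal{C}-(\mathcal{C}/H)$ or in the ``exceptional'' pieces of $\mathcal{B}(i)/H(i)$ not containing $G(i)$, and in all these remaining cases I claim $\Delta\in\mathcal{B}(m+1)=\mathcal{B}^\circ(m+1)$. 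For the first remaining case I exhibit $\Delta$ as a face of the unique max cone of the $H$-simple decomposition $\mathcal{B}\backslash(\Delta+H)$ (Lemma~\ref{property of basic subdivisions}.3 and Lemma~\ref{propsimple}.15(c)) containing $H$; for the second, $\Delta$ itself contains the subdivision edge but lies in the star of $H(i)\neq H$ and must be a face of a cone in $\mathcal{B}/H$ after tracing how subsequent subdivisions $F(j)$ for $j>i$ affect the local picture.

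Claim 6 then follows from claims 2--5: $Y(i)$ is the complement in $|\mathcal{C}|$ of $\bigcup_{j\leq i}|\mathcal{B}^\circ(j)|^\circ$, which by claims 2, 3 equals $\bigcup_{j>i}|\mathcal{B}^\circ(j)|^\circ$, and unfolding $|\mathcal{B}^\circ(j)|^\circ$ back to $|\mathcal{B}(j)|$ (using part (a) of claim 1) rewrites it as the stated finite union of supports; closedness in $|\mathcal{C}|$ follows as a finite union of closed cones, and $|\mathcal{B}\backslash Y(i)|=Y(i)$ follows since $\mathcal{B}\backslash Y(i)$ collects exactly the faces of cones supported in $Y(i)$. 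The main anticipated obstacle is the covering half of claim 4: verifying that every cone in $\mathcal{C}-(\mathcal{C}/H)$, together with the ``extra'' cones in $\mathcal{B}(i)/H(i)$ that fail to contain $G(i)$, actually sits inside $\mathcal{B}(m+1)=(\mathcal{B}/H)\Fc$ as a face of some cone through $H$. This requires a careful bookkeeping argument tracking how the iterated barycentric subdivisions along $F(1),\ldots,F(m)$ extend the star of $H$ in $\mathcal{B}$, combined with the uniqueness of the $H$-containing max cone in each $H$-simple piece $\mathcal{B}\backslash(\Delta+H)$ coming from Lemma~\ref{propsimple}.
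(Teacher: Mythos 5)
Your reduction of claims 2, 3, and 6 to claims 4 and 5 via the relative-interior partition is fine, as is the disjointness argument for $i<j\le m$ through Lemma~\ref{property of basic subdivisions}.7. The genuine gap is in the covering half of claim 4, and it is not a bookkeeping detail: the assertion that ``in all the remaining cases $\Delta\in\mathcal{B}(m+1)$'' is false.

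Concretely, take $\dim V=2$, $\mathcal{C}=\mathcal{F}(\Delta)$ with $\mathcal{F}(\Delta)_1=\{E,H\}$, and $m\ge 1$. Then $E\in\mathcal{C}-(\mathcal{C}/H)$, but $\mathcal{B}(m+1)=(\mathcal{B}/H)\Fc=\mathcal{F}(\Theta(m+1))$ where $\Theta(m+1)=\R_0(b_{E/N^*}+m\,b_{H/N^*})+H$; the edges of $\Theta(m+1)$ are $\R_0(b_{E/N^*}+m\,b_{H/N^*})$ and $H$, and $E=\R_0 b_{E/N^*}$ is neither of them, so $E\notin\mathcal{B}(m+1)$. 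The correct placement is $E=G(1)\in\mathcal{B}(1)/G(1)=\mathcal{B}^\circ(1)$. Likewise, for $i<m$ with $E(i)=E(i+1)$, the ray $H(i)$ lies in $\mathcal{B}(i)/H(i)$ and does not contain $G(i)$, and $H(i)\notin\mathcal{B}(m+1)$; the correct placement is $H(i)=G(i+1)\in\mathcal{B}^\circ(i+1)$. So neither of your two ``remaining cases'' reduces to $\mathcal{B}^\circ(m+1)$. What actually happens is a shift: each block $\mathcal{B}(i)/H(i)$ of the partition from Lemma~\ref{property of basic subdivisions}.10--11 splits between $\mathcal{B}^\circ(i)$ (the part containing $G(i)$, hence $G(i)+H(i)$) and $\mathcal{B}^\circ(j)$ for some later $j$ (the part not containing $G(i)$, which is carried by the next occurrence of $E(i)$ in the sequence $E$, or by $\mathcal{B}^\circ(m+1)$ if there is none); and the cones in $\mathcal{C}-(\mathcal{C}/H)$ are absorbed through the same mechanism, using $s(0,\bar E)=0$ so that the first $G$-ray landing in $\bar E+H$ equals $\bar E$ itself. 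The covering must be organized around this correspondence (in effect via Lemma~\ref{property of basic subdivisions2}.1(d)) rather than by dumping everything unaccounted for into $\mathcal{B}(m+1)$.

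A smaller issue: in the disjointness case $j=m+1$, $i\le m$, you infer ``$G(i)+H(i)\subset\Delta$'' from $\Delta\in\mathcal{B}^\circ(i)$. That inference is unwarranted ($G(i)\in\mathcal{B}^\circ(i)$ itself does not contain $H(i)$), so your appeal to Lemma~\ref{property of basic subdivisions}.11 is not justified as written. The conclusion can be rescued by the same use of Lemma~\ref{property of basic subdivisions}.7 that you already employ for $i<j\le m$, which directly forces $\Delta\in\mathcal{B}(i)-(\mathcal{B}(i)/G(i))$, contradicting $G(i)\subset\Delta$; you should make this the actual argument.
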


\begin{definition}
\label{defbasic}
We denote $\mathcal{B}=\mathcal{E}*F(1)*F(2)*\cdots*F(m)$ above by the symbol
$$\mathcal{B}(V, N, H, \mathcal{C}, m, E)\subset 2^V,$$
and we call it the \emph{basic subdivision} associated with the sextuplet $(V, N, H, \mathcal{C}, m, E)$, because $\mathcal{B}$ is uniquely determined depending on the sextuplet $(V, N, H, \mathcal{C}, m, E)$.

$\mathcal{B}(V, N, H, \mathcal{C}, m, E)$ is an iterated barycentric subdivision of $\mathcal{C}$, it is a simplicial cone decomposition over $N$ in $V$, and for any $\Delta\in\mathcal{C}/H$, $\mathcal{B}(V, N, H, \mathcal{C}, m, E) \backslash\Delta$ is $H$-simple.
$|\mathcal{B}(V, N, H, \mathcal{C}, m, E)|=|\mathcal{C}|$.

Note that depending on the sextuplet $(V, N, H, \mathcal{C}, m, E)$, six mappings
$$s: \{0,1,\ldots,m\}\times(\mathcal{C}-(\mathcal{C}/H))_1\rightarrow \Z_0,$$
$$F, G:\{1,2,\ldots,m\}\rightarrow 2^V,$$
$$H:\{1,2,\ldots,m, m+1\}\rightarrow 2^V,$$
$$\mathcal{B}:\{1,2,\ldots,m, m+1\}\rightarrow 2^{2^V},$$
$$\mathcal{B}^\circ:\{1,2,\ldots,m, m+1\}\rightarrow 2^{2^V},$$
are defined. 

We denote these six mappings by $s(V, N, H, \mathcal{C}, m, E)$, $F(V, N, H, \mathcal{C}, m, E)$, \hfill\break$G(V, N, H, \mathcal{C}, m, E)$, $H(V, N, H, \mathcal{C}, m, E)$, $\mathcal{B}(V, N, H, \mathcal{C}, m, E)$ and\hfill\break
$\mathcal{B}^\circ(V, N, H, \mathcal{C}, m, E)$ respectively, and we express the dependence explicitly.
\end{definition}

\begin{remark}
We denote two different objects by the same symbol $\mathcal{B}(V, N, H, \mathcal{C}, m, E)$.
One satisfies $\mathcal{B}(V, N, H, \mathcal{C}, m, E)\in 2^{2^V}$ and the other $\mathcal{B}(V, N, H, \mathcal{C}, m, E)$ is a mapping from $\{1,2,\ldots,m+1\}$ to $2^{2^V}$.
It is easy to distinguish them.
\end{remark}

\begin{lemma}
\label{basic2}
Consider any subset $\hat{\mathcal{C}}$ of $\mathcal{C}$ satisfying $\dim \hat{\mathcal{C}}=\dim \Vect(|\hat{\mathcal{C}}|)\geq 2$, $\hat{\mathcal{C}}\Mx=\hat{\mathcal{C}}^0$, $H\in\hat{\mathcal{C}}_1$ and $\hat{\mathcal{C}}=(\hat{\mathcal{C}}/H)\Fc$.

We know that $\emptyset\neq\hat{\mathcal{C}}=\hat{\mathcal{C}}\Fc$ and $\hat{\mathcal{C}}$ is a simplicial cone decomposition over $N$ in $V$.

Note that $\emptyset\neq(\hat{\mathcal{C}}-(\hat{\mathcal{C}}/H))_1\subset(\mathcal{C}-(\mathcal{C}/H))_1$ and $E^{-1}((\hat{\mathcal{C}}-(\hat{\mathcal{C}}/H))_1)\subset\{1,2,\ldots,m\}$.

Let $\hat{m}=\sharp E^{-1}((\hat{\mathcal{C}}-(\hat{\mathcal{C}}/H))_1)\in\Z_0$.
Let $\hat{\tau}:\{1,2,\ldots,\hat{m}\}\rightarrow\{1,2,\ldots,m\}$ be the unique injective mapping preserving the order and satisfying $\hat{\tau}(\{1,2,\ldots,\hat{m}\})= E^{-1}((\hat{\mathcal{C}}-(\hat{\mathcal{C}}/H))_1)$.
Putting $\hat{\tau}(0)=0$ and  $\hat{\tau}(\hat{m}+1)=m+1$, we define an extension
$\hat{\tau}:\{0,1,2,\ldots,\hat{m},\hat{m}+1\}\rightarrow\{0,1,2,\ldots,m,m+1\}$ of $\hat{\tau}:\{1,2,\ldots,\hat{m}\}\rightarrow\{1,2,\ldots,m\}$.
Let $\hat{E}:\{1,2,\ldots,\hat{m}\}\rightarrow(\hat{\mathcal{C}}-(\hat{\mathcal{C}}/H))_1$ be the unique mapping satisfying $\iota\hat{E}=E\hat{\tau}$, where $\iota: (\hat{\mathcal{C}}-(\hat{\mathcal{C}}/H))_1\rightarrow(\mathcal{C}-(\mathcal{C}/H))_1$ denotes the inclusion mapping.

\begin{enumerate}
\item
$\mathcal{B}(V, N, H, \mathcal{C}, m, E)\backslash|\hat{\mathcal{C}}|=
\mathcal{B}(V, N, H, \hat{\mathcal{C}}, \hat{m}, \hat{E})$.
\item
Let $s=s(V, N, H, \mathcal{C}, m, E)$ and $\hat{s}=s(V, N H, \hat{\mathcal{C}}, \hat{m}, \hat{E})$.

For any $\bar{E}\in(\hat{\mathcal{C}}-(\hat{\mathcal{C}}/H))_1$, any $i\in\{0,1,\ldots,\hat{m}\}$ and any $j\in\{0,1,\ldots,m\}$ with $\hat{\tau}(i)\leq j<\hat{\tau}(i+1)$,
$\hat{s}(i, \bar{E})=s(j,\bar{E})$, $\hat{s}(i, \bar{E})=s(\hat{\tau}(i), \bar{E})$, and $\hat{s}(i, \bar{E})=s(\hat{\tau}(i+1)-1, \bar{E})$.
\item
Let $F=F(V, N, H, \mathcal{C}, m, E)$, $G=G(V, N, H, \mathcal{C}, m, E)$,\hfill\break $\hat{F}=F(V, N, H, \hat{\mathcal{C}}, \hat{m}, \hat{E})$, and
$\hat{G}=G(V, N, H, \hat{\mathcal{C}}, \hat{m}, \hat{E})$.

$\hat{F}=F\hat{\tau}$, and $\hat{G}=G\hat{\tau}$.
\item
Let $H=H(V, N, H, \mathcal{C}, m, E)$, $\mathcal{B}=\mathcal{B}(V, N, H, \mathcal{C}, m, E)$,\hfill\break$\mathcal{B}^\circ=\mathcal{B}^\circ(V, N, H, \mathcal{C}, m, E)$, $\hat{H}=H(V, N, H, \hat{\mathcal{C}}, \hat{m}, \hat{E})$, 
$\hat{\mathcal{B}}=\mathcal{B}(V, N, H, \hat{\mathcal{C}}, \hat{m}, \hat{E})$, and $\smash{\hat{\mathcal{B}}}^\circ=\mathcal{B}^\circ(V, N, H, \hat{\mathcal{C}}, \hat{m}, \hat{E})$.
\begin{enumerate}
\item
$\hat{H}=H\hat{\tau}$.
\item
$\hat{\mathcal{B}}(\hat{m}+1) =\mathcal{B}\hat{\tau} (\hat{m}+1)\backslash|\hat{\mathcal{C}}|=\mathcal{B}(m+1)\backslash|\hat{\mathcal{C}}|$.
\item
For any $i\in\{1,2,\ldots,\hat{m}\}$,
\begin{equation*}\begin{split}
&\hat{\mathcal{B}}(i)\subset\mathcal{B}\hat{\tau}(i)\backslash|\hat{\mathcal{C}}|, \text{ and}\\
&(\mathcal{B}\hat{\tau}(i)\backslash|\hat{\mathcal{C}}|)-\hat{\mathcal{B}}(i) \\
&\quad\subset(\mathcal{B}\hat{\tau}(i)-(\mathcal{B}\hat{\tau}(i)/G\hat{\tau}(i))\cap(\mathcal{B}\hat{\tau}(i)-(\mathcal{B}\hat{\tau}(i)/H\hat{\tau}(i)).
\end{split}\end{equation*}
\item
For any $j\in\{1,2,\ldots, m\}-\hat{\tau}(\{1,2,\ldots,\hat{m}\})$,
$$\mathcal{B}(j)\backslash|\hat{\mathcal{C}}|\subset(\mathcal{B}(j)-(\mathcal{B}(j)/G(j))\cap(\mathcal{B}(j))-(\mathcal{B}(j)/H(j)).$$
\item
For any $i\in\{1,2,\ldots,\hat{m}, \hat{m}+1\}$, $\smash{\hat{\mathcal{B}}}^\circ(i)=\mathcal{B}^\circ\hat{\tau}(i)\backslash|\hat{\mathcal{C}}|$, and $|\smash{\hat{\mathcal{B}}}^\circ(i)|^\circ=|\mathcal{B}^\circ\hat{\tau}(i)|^\circ\cap|\hat{\mathcal{C}}|$.
\end{enumerate}
\item
For any $j\in\{1,2,\ldots,m\}$, $j\in\hat{\tau}(\{1,2,\ldots,\hat{m}\})\Leftrightarrow E(j)\in(\hat{\mathcal{C}}-(\hat{\mathcal{C}}/H))_1\Leftrightarrow
F(j)\subset|\hat{\mathcal{C}}|\Leftrightarrow
G(j)\subset|\hat{\mathcal{C}}|\Leftrightarrow
H(j)\subset|\hat{\mathcal{C}}|\Leftrightarrow
G(j)+H(j)\subset|\hat{\mathcal{C}}|$.

$\{\hat{F}(i)|i\in\{1,2,\ldots,\hat{m}\}\}=\{F\hat{\tau}(i)|i\in\{1,2,\ldots,\hat{m}\}\}=$\hfill\break$\{F(j)|j\in\{1,2,\ldots,m\}, F(j)\subset|\hat{\mathcal{C}}|\}$.

$\{\hat{G}(i)|i\in\{1,2,\ldots,\hat{m}\}\}=\{G\hat{\tau}(i)|i\in\{1,2,\ldots,\hat{m}\}\}=$\hfill\break$\{G(j)|j\in\{1,2,\ldots,m\}, G(j)\subset|\hat{\mathcal{C}}|\}$.

$\{\hat{H}(i)|i\in\{1,2,\ldots,\hat{m}, \hat{m}+1\}\}=\{H\hat{\tau}(i)|i\in\{1,2,\ldots,\hat{m}, \hat{m}+1\}\}=\{H(j)|j\in\{1,2,\ldots,m, m+1\}, H(j)\subset|\hat{\mathcal{C}}|\}$.
\item
For any $j\in\{1,2,\ldots,m+1\}$, $j\in\hat{\tau}(\{1,2,\ldots,\hat{m}+1\})
\Leftrightarrow
|\mathcal{B}^\circ(j)|^\circ\cap|\hat{\mathcal{C}}|\neq\emptyset$.

For any $i\in\{1,2,\ldots,\hat{m}+1\}$ and any $\Theta\in\mathcal{B}^\circ\hat{\tau}(i)/H\hat{\tau}(i)$, $\Theta\subset|\hat{\mathcal{C}}|$, if and only if, $H\hat{\tau}(i)\Op|\Theta\subset|\hat{\mathcal{C}}|$.
\item
If $i\in\{1,2,\ldots,\hat{m}\}$, $j\in\{1,2,\ldots,m\}$ and $\hat{\tau}(i)\leq j<\hat{\tau}(i+1)$, then
\begin{equation*}\begin{split}
X(j)\cap|\hat{\mathcal{C}}|&=|\hat{\mathcal{C}}-(\hat{\mathcal{C}}/H)|\cup
(\bigcup_{k\in\{1,2,\ldots, i\}}|\hat{\mathcal{B}}(k)|),\\
Y(j)\cap|\hat{\mathcal{C}}|&=
\bigcup_{k\in\{i+1,i+2,\ldots, \hat{m}+1\}}|\hat{\mathcal{B}}(k)|),
\end{split}\end{equation*}
where $X(j)$ and $Y(j)$ are subsets of $|\mathcal{C}|$ defined in Lemma~\ref{property of basic subdivisions}.14 and in\hfill\break Lemma~\ref{property of basic subdivisions2}.6 respectively.
\end{enumerate}
\end{lemma}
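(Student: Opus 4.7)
The plan is to exploit the restricted-subdivision lemma (Lemma~\ref{propibcd1}.9) applied to the iterated barycentric subdivision $\mathcal{B}(V,N,H,\mathcal{C},m,E) = \mathcal{C}*F(1)*\cdots*F(m)$ restricted to $|\hat{\mathcal{C}}|$, and then match the resulting sub-sequence of centers with the data $(\hat{m},\hat{E})$. I would start with the ``combinatorial'' claims 2 and 3, which are purely formal: since $\hat{E}(i)\in(\hat{\mathcal{C}}-(\hat{\mathcal{C}}/H))_1$, the preimage $E^{-1}(\hat{E}(i))$ lies entirely inside $\hat{\tau}(\{1,\dots,\hat{m}\})$, so $\hat{s}(i,\bar{E})=\sharp(\{1,\dots,i\}\cap\hat{E}^{-1}(\bar{E}))=\sharp(\{1,\dots,\hat{\tau}(i)\}\cap E^{-1}(\bar{E}))=s(\hat{\tau}(i),\bar{E})$, and the same holds for any $j$ with $\hat{\tau}(i)\leq j<\hat{\tau}(i+1)$ since no new index of $E^{-1}(\bar{E})$ appears in that interval. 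Claim 3 then follows by direct substitution into the defining formulas $F(i)=\R_0(b_{E(i)/N}+s(i-1,E(i))b_{H/N})+H$ and $G(i)=\R_0(b_{E(i)/N}+s(i-1,E(i))b_{H/N})$, using $\hat{E}=E\hat{\tau}$ and claim 2.

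For claim 1, the main structural identity, the key geometric step is the equivalence
\[F(j)\subset|\hat{\mathcal{C}}|\ \Longleftrightarrow\ E(j)\in(\hat{\mathcal{C}}-(\hat{\mathcal{C}}/H))_1,\]
which I would establish as follows. The direction $(\Leftarrow)$ uses that $E(j)\in\hat{\mathcal{C}}$ and $H\in\hat{\mathcal{C}}$ imply, via $\hat{\mathcal{C}}=(\hat{\mathcal{C}}/H)\Fc$ and Lemma~\ref{simplex2}.5(b), that $E(j)+H\in\hat{\mathcal{C}}$, so $F(j)\subset E(j)+H\subset|\hat{\mathcal{C}}|$. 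For $(\Rightarrow)$, pick $p\in F(j)^\circ\subset(E(j)+H)^\circ$; then $p\in|\hat{\mathcal{C}}|$, but in the decomposition $\mathcal{C}$ the unique cone whose open part contains $p$ is $E(j)+H$, so by face-closure $E(j)+H\in\hat{\mathcal{C}}$ and hence $E(j)\in\hat{\mathcal{C}}$. This equivalence identifies the index set $\{j:F(j)\subset|\hat{\mathcal{C}}|\}$ with $\hat{\tau}(\{1,\dots,\hat{m}\})$, so Lemma~\ref{propibcd1}.9 applied with $\mathcal{E}=\hat{\mathcal{C}}$ yields
\[\mathcal{B}(V,N,H,\mathcal{C},m,E)\backslash|\hat{\mathcal{C}}|=\hat{\mathcal{C}}*F\hat{\tau}(1)*\cdots*F\hat{\tau}(\hat{m})=\hat{\mathcal{C}}*\hat{F}(1)*\cdots*\hat{F}(\hat{m})=\mathcal{B}(V,N,H,\hat{\mathcal{C}},\hat{m},\hat{E}),\]
using claim 3 in the middle step.

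Claim 5 follows immediately from the above equivalence together with the containments $G(j)\subset F(j)$, $H(j)\subset F(j)\subset G(j)+H$, combined with $H\in\hat{\mathcal{C}}$ (so that the presence of any of the two-dimensional building blocks is equivalent to the presence of $E(j)$). For claim 4, part (a) reduces by the same substitution to $\hat{H}(i)=\R_0(b_{E\hat{\tau}(i)/N}+s(\hat{\tau}(i),E\hat{\tau}(i))b_{H/N})=H(\hat{\tau}(i))$; parts (c) and (d) are read off directly from $\hat{\mathcal{B}}=\mathcal{B}\backslash|\hat{\mathcal{C}}|$ and the definitions of $\mathcal{B}(i)$, $\hat{\mathcal{B}}(i)$ as face closures of $\mathcal{B}/(G(i)+H(i))$ and its hatted analogue, the inclusion containment coming from $G\hat{\tau}(i)+H\hat{\tau}(i)=\hat{G}(i)+\hat{H}(i)$; and part (e) comes from matching $\mathcal{B}^\circ$ with $\smash{\hat{\mathcal{B}}}^\circ$ using part (a) of the claim together with Lemma~\ref{property of basic subdivisions2}.1(d). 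Claim 6 then follows from (e), and claim 7 is obtained by substituting the decompositions of part 4 into the definitions of $X(j)$ and $Y(j)$ from Lemma~\ref{property of basic subdivisions}.14--15 and Lemma~\ref{property of basic subdivisions2}.6--7, using claim 5 to determine which indices contribute.

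The main obstacle will be part 4(b), namely $\hat{\mathcal{B}}(\hat{m}+1)=\mathcal{B}(m+1)\backslash|\hat{\mathcal{C}}|$, which asserts the non-obvious fact that every face $\Theta$ of an element of $\mathcal{B}/H$ that happens to lie in $|\hat{\mathcal{C}}|$ is already a face of some element of $\mathcal{B}/H$ wholly contained in $|\hat{\mathcal{C}}|$. To handle this I would use that for each $\hat{\Gamma}\in\hat{\mathcal{C}}/H$, the restricted decomposition $\mathcal{B}\backslash\hat{\Gamma}$ is $H$-simple with support $\hat{\Gamma}$ (Lemma~\ref{property of basic subdivisions}.3), invoke the structural Lemma~\ref{propsimple}.15(l)--(m) to identify the top-dimensional cones of $\mathcal{B}\backslash\hat{\Gamma}$ containing $H$, and then show by tracking the bijection of Lemma~\ref{property of basic subdivisions2}.1(d) between $\mathcal{B}^\circ(i)/H(i)$ and its opposite that any face $\Theta$ with $\Theta\subset\hat{\Gamma}$ lifts to a cone of $\mathcal{B}/H$ inside $\hat{\Gamma}$. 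Once this lifting is in hand, all the remaining set-theoretic identities reduce to careful but routine bookkeeping with the finite partitions supplied by Lemma~\ref{property of basic subdivisions} and Lemma~\ref{property of basic subdivisions2}.
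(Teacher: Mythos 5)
Your proposal is sound, and the route you take for claims 1, 2, 3 and 5 — establish the purely combinatorial identities $\hat{s}(i,\bar{E})=s(\hat{\tau}(i),\bar{E})$, derive $\hat{F}=F\hat{\tau}$, $\hat{G}=G\hat{\tau}$ by substitution, prove the pivotal equivalence $F(j)\subset|\hat{\mathcal{C}}|\iff E(j)\in(\hat{\mathcal{C}}-(\hat{\mathcal{C}}/H))_1$, and then feed the identified index set into Lemma~\ref{propibcd1}.9 — is exactly the natural way to prove this lemma from the earlier material. Two remarks. First, a small misattribution: the step ``$E(j)\in\hat{\mathcal{C}}$, $H\not\subset E(j)$ imply $E(j)+H\in\hat{\mathcal{C}}$'' does not come from Lemma~\ref{simplex2}.5(b) (which concerns two faces of a single simplicial cone); what you want is the bijection of Lemma~\ref{property of C}.2 applied to $\hat{\mathcal{C}}$, valid because $\hat{\mathcal{C}}$ satisfies the same hypotheses as $\mathcal{C}$. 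Second, your machinery for 4(b) (going through $H$-simplicity of $\mathcal{B}\backslash\hat{\Gamma}$ and Lemma~\ref{propsimple}.15) is heavier than necessary. A direct argument works: for $\Theta\in(\mathcal{B}/H)\Fc$ with $\Theta\subset|\hat{\mathcal{C}}|$, let $\Delta\in\mathcal{C}$ be the unique cone with $\Theta^\circ\subset\Delta^\circ$; then $\Delta\in\hat{\mathcal{C}}$ (its interior point lies in $|\hat{\mathcal{C}}|$, so the unique cone of $\mathcal{C}$ through that point is a face of some $\hat{\mathcal{C}}$-cone), and $\Theta\subset\Delta$. Taking $\Lambda\in\mathcal{B}/H$ with $\Theta\in\mathcal{F}(\Lambda)$, Lemma~\ref{simplex2}.5(b) gives $\Theta+H\in\mathcal{F}(\Lambda)\subset\mathcal{B}/H$, and $\Theta+H\subset\Delta+H$, which lies in $\hat{\mathcal{C}}$ again by the $\hat{\mathcal{C}}=(\hat{\mathcal{C}}/H)\Fc$ property (either $H\subset\Delta$, or $\Delta\in\hat{\mathcal{C}}-(\hat{\mathcal{C}}/H)$ so $\Delta+H\in\hat{\mathcal{C}}/H$). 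Hence $\Theta+H\in\hat{\mathcal{B}}/H$ and $\Theta\in\hat{\mathcal{B}}(\hat{m}+1)$. The same cone-lifting argument (locating $\Delta$ via $\Theta^\circ$, passing to $\Delta+H$ when needed) also discharges the inclusion you waved through for 4(c), which is slightly less ``read off directly'' than your sketch suggests: one has to check that when $G\hat{\tau}(i)\subset\Theta$ (respectively $H\hat{\tau}(i)\subset\Theta$) the associated cone $\Theta+H\hat{\tau}(i)$ (respectively $\Theta+G\hat{\tau}(i)$) still lies in $|\hat{\mathcal{C}}|$, and the verification uses that $G\hat{\tau}(i)\subset\Delta$ forces $E\hat{\tau}(i)\subset\Delta$ (and $H\subset\Delta$ when the relevant structure constant is positive) by linear independence of the rays of $\Delta$.
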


\begin{lemma}
\label{basic7}
Consider any $\Delta\in \mathcal{C}/H$ with $\dim\Delta\geq 2$.

Note that $\mathcal{F}(\Delta)$ is a simplicial cone decomposition over $N$ in $V$,
$\mathcal{F}(\Delta)\subset \mathcal{C}$, 
$|\mathcal{F}(\Delta)|=\Delta$,
$\dim \mathcal{F}(\Delta)=\dim \Vect(|\mathcal{F}(\Delta)|)=\dim\Delta\geq 2$, $\mathcal{F}(\Delta)\Mx=\mathcal{F}(\Delta)^0$, $H\in\mathcal{F}(\Delta)_1$, and $\mathcal{F}(\Delta)=(\mathcal{F}(\Delta)/H)\Fc$.

Note that $\emptyset\neq(\mathcal{F}(\Delta)-(\mathcal{F}(\Delta)/H))_1\subset(\mathcal{C}-(\mathcal{C}/H))_1$ and \hfill\break
$E^{-1}((\mathcal{F}(\Delta)-(\mathcal{F}(\Delta)/H))_1)
\subset\{1,2,\ldots,m\}$.

Let $\hat{m}=\sharp E^{-1}((\mathcal{F}(\Delta)-(\mathcal{F}(\Delta)/H))_1)\in\Z_0$.
Let $\hat{\tau}:\{1,2,\ldots,\hat{m}\}\rightarrow\{1,2,\ldots,m\}$ be the unique injective mapping preserving the order and satisfying $\hat{\tau}(\{1,2,\ldots,$\hfill\break$\hat{m}\})= E^{-1}((\mathcal{F}(\Delta)-(\mathcal{F}(\Delta)/H))_1)$.
Putting $\hat{\tau}(0)=0$ and  $\hat{\tau}(\hat{m}+1)=m+1$, we define an extension
$\hat{\tau}:\{0,1,2,\ldots,\hat{m},\hat{m}+1\}\rightarrow\{0,1,2,\ldots,m,m+1\}$ of $\hat{\tau}:\{1,2,\ldots,\hat{m}\}\rightarrow\{1,2,\ldots,m\}$.
Let $\hat{E}:\{1,2,\ldots,\hat{m}\}\rightarrow(\mathcal{F}(\Delta)-(\mathcal{F}(\Delta)/H))_1$ be the unique mapping satisfying $\iota\hat{E}=E\hat{\tau}$, where $\iota: (\mathcal{F}(\Delta)-(\mathcal{F}(\Delta)/H))_1\rightarrow(\mathcal{C}-(\mathcal{C}/H))_1$ denotes the inclusion mapping.

Let
$H\Op=H\Op|\Delta\in\mathcal{F}(\Delta)^1$,
$\hat{\mathcal{B}}=\mathcal{B}(V, N, H, \mathcal{F}(\Delta), \hat{m}, \hat{E})$,\hfill\break
$\hat{s}=s(V, N, H, \mathcal{F}(\Delta), \hat{m}, \hat{E})$,
$\hat{F}=F(V, N, H, \mathcal{F}(\Delta), \hat{m}, \hat{E})$,\hfill\break
$\hat{G}=G(V, N, H, \mathcal{F}(\Delta), \hat{m}, \hat{E})$,
$\hat{H}=H(V, N, H, \mathcal{F}(\Delta), \hat{m}, \hat{E})$,\hfill\break and
$\hat{\mathcal{B}}=\mathcal{B}(V, N, H, \mathcal{F}(\Delta), \hat{m}, \hat{E})$.

For any $i\in\{1,2,\ldots,\hat{m}+1\}$, we denote
$$\Theta(i)=|\hat{\mathcal{B}}(i)|\subset\Delta,\text{ and}$$
$$\bar{\Theta}(i)= |\hat{\mathcal{B}}(i)-(\hat{\mathcal{B}}(i)/\hat{H}(i))|\subset\Theta(i).$$

\begin{enumerate}
\item
If $\dim\Delta=2$, then $\sharp\mathcal{F}(H\Op)_1=1$, and for the unique element $\bar{E}\in\mathcal{F}(H\Op)_1$ and any $i\in\{0,1,\ldots,\hat{m}\}$, $\hat{s}(i, \bar{E})=i$.
\item
For any $i\in\{1,2,\ldots,\hat{m}, \hat{m}+1\}$, $\bar{\Theta}(i)$, $\bar{\Theta}(i)+H$ and $\Theta(i)$ are simplicial cones over $N$ in $V$,
$\dim \bar{\Theta}(i)=\dim \Delta-1$, $\dim(\bar{\Theta}(i)+H)=\dim \Theta(i)=\dim\Delta$,
$\bar{\Theta}(i)\subset \Theta(i)\subset\bar{\Theta}(i)+H\subset\Delta$,
$\Theta(i)+H =\bar{\Theta}(i)+H$,
$\hat{H}(i)\in\mathcal{F}(\Theta(i))_1$,
$\bar{\Theta}(i)=\hat{H}(i)\Op|\Theta(i)\in\mathcal{F}(\Theta(i))^1$,
$\Theta(i)=\bar{\Theta}(i)+\hat{H}(i)\subset\Delta$,
$\Vect(\Theta(i))=\Vect(\Delta)$,
$\Vect(H)\cap\Vect(\bar{\Theta}(i))=\{0\}$,
$\Vect(H)+\Vect(\bar{\Theta}(i))=\Vect(\Delta)$,
$\bar{\Theta}(i)=\Vect(\bar{\Theta}(i))\cap\Delta$, 
$\Theta(i)+\Vect(H) =\bar{\Theta}(i)+\Vect(H)=\Delta+\Vect(H)$,
$\hat{\mathcal{B}}(i)=\mathcal{F}(\Theta(i))$,
$\hat{\mathcal{B}}(i)-(\hat{\mathcal{B}}(i)/\hat{H}(i)) =\mathcal{F}(\bar{\Theta}(i))$,
$\bar{\Theta}(i)=\Convcone(\{b_{E/N^*}+\hat{s}(i-1, E)b_{H/N^*}| E\in\mathcal{F}(H\Op)_1\})\subset\Delta$, and
$(\mathcal{F}(\Delta)*\hat{F}(1)*\hat{F}(2)*\cdots*\hat{F}(i-1))\Mx=
\{\Theta(j)|j\in\{1,2,\ldots,i-1\}\}\cup\{\bar{\Theta}(i)+H\}$.
\item
Consider any $i\in\{1,2,\ldots,\hat{m}, \hat{m}+1\}$.

For any $\bar{E}\in\mathcal{F}(H\Op)_1$, $\bar{E}+H\in\mathcal{F}(\Delta)_2/H$,
$\bar{\Theta}(i)\cap(\bar{E}+H)\in\mathcal{F}(\bar{\Theta}(i))_1$.
The mapping from $\mathcal{F}(H\Op)_1$ to $\mathcal{F}(\bar{\Theta}(i))_1$ sending  $\bar{E}\in\mathcal{F}(H\Op)_1$ to $\bar{\Theta}(i)\cap(\bar{E}+H)\in\mathcal{F}(\bar{\Theta}(i))_1$ is a bijective mapping.

$H\subset\Theta(i)\Leftrightarrow i=\hat{m}+1$.

$\bar{\Theta}(i)^\circ\not\subset\Delta^\circ\Leftrightarrow\bar{\Theta}(i)\subset\partial\Delta\Leftrightarrow\bar{\Theta}(i)=H\Op\Leftrightarrow i=1$.
\item
For any $i\in\{1,2,\ldots,\hat{m}\}$ and any $j\in\{2,3,\ldots,\hat{m}+1\}$ with $i<j$, 
$\Theta(i)\cap\Theta(j)=\Theta(i)\cap(\bar{\Theta}(j)+H)=\bar{\Theta}(i+1)\cap\bar{\Theta}(j)$.
\item
Consider any $i\in\{1,2,\ldots,\hat{m}\}$.
$\hat{F}(i)=(\bar{\Theta}(i)+H)\cap(\hat{E}(i)+H)\in\mathcal{F}(\bar{\Theta}(i)+H)_2/H$.
$\bar{\Theta}(i+1)=\hat{G}(i)\Op|\Theta(i)\in\mathcal{F}(\Theta(i))^1$.
$\hat{H}(i)=\bar{\Theta}(i+1)\cap(\hat{G}(i)+\hat{H}(i)) \in\mathcal{F}(\Theta(i))_1$.
$\bar{\Theta}(i)\cap\bar{\Theta}(i+1)=(\hat{G}(i)+\hat{H}(i))\Op|\Theta(i)$.
$\{\Lambda\in\mathcal{F}(\Theta(i))|$\hfill\break$\Lambda^\circ\subset\Delta^\circ\cup (H\Op)^\circ \}=\{\Theta(i), \bar{\Theta}(i), \bar{\Theta}(i+1)\}$.
For any $\bar{E}\in\mathcal{F}(H\Op)_1-\{\hat{E}(i)\}$.
$\Theta(i)\cap(\bar{E}+H)= \bar{\Theta}(i)\cap\bar{\Theta}(i+1)\cap(\bar{E}+H)\in\mathcal{F}(\bar{\Theta}(i)\cap\bar{\Theta}(i+1))_1=\mathcal{F}(\Theta(i))_1-\{\hat{G}(i), \hat{H}(i)\}$.
The mapping from $\mathcal{F}(H\Op)_1-\{\hat{E}(i)\}$ to $\mathcal{F}(\Theta(i))_1-\{\hat{G}(i), \hat{H}(i)\}$ sending $\bar{E}\in\mathcal{F}(H\Op)_1-\{\hat{E}(i)\}$ to $\Theta(i)\cap(\bar{E}+H)\in\mathcal{F}(\Theta(i))_1-\{\hat{G}(i), \hat{H}(i)\}$ is a bijective mapping.
$\Theta(i)\cap(\hat{E}(i)+H)= \hat{G}(i)+\hat{H}(i) \in\mathcal{F}(\Theta(i))_2$.
\item
$H=\hat{H}(\hat{m}+1)\in\mathcal{F}(\Theta(\hat{m}+1))_1$.
$\bar{\Theta}(\hat{m}+1)=H\Op|\Theta(\hat{m}+1)\in\mathcal{F}(\Theta(\hat{m}+1))^1$.
$\{\Lambda\in\mathcal{F}(\Theta(\hat{m}+1))|\Lambda^\circ\subset\Delta^\circ\cup (H\Op)^\circ \}=\{\Theta(\hat{m}+1), \bar{\Theta}(\hat{m}+1)\}$.
For any $\bar{E}\in\mathcal{F}(H\Op)_1$, $\Theta(\hat{m}+1)\cap(\bar{E}+H)\in\mathcal{F}(\Theta(\hat{m}+1))_2/H$.
The mapping from $\mathcal{F}(H\Op)_1$ to $\mathcal{F}(\Theta(\hat{m}+1))_2/H$ sending $\bar{E}\in\mathcal{F}(H\Op)_1$ to $\Theta(\hat{m}+1)\cap(\bar{E}+H)\in\mathcal{F}(\Theta(\hat{m}+1))_2/H$ is a bijective mapping.
\item
$\hat{\mathcal{B}}$ is the iterated barycentric subdivision of $\mathcal{F}(\Delta)$, it is a simplicial cone decomposition, and it is determined by the sextuplet $(V, N, H, \mathcal{F}(\Delta), \hat{m}, \hat{E})$ uniquely.
$|\hat{\mathcal{B}}|=\Delta$.

$\hat{\mathcal{B}}$ is $H$-simple.
Let $\smash{\bar{\hat{\mathcal{B}}}}^1=\{\Lambda\in\hat{\mathcal{B}}^1|\Lambda^\circ\subset\Delta^\circ\}\cup\{H\Op\}$ denote the $H$-skeleton of $\hat{\mathcal{B}}$.

$\sharp\hat{\mathcal{B}}^0=\sharp\smash{\bar{\hat{\mathcal{B}}}}^1=\hat{m}+1$.
$\hat{\mathcal{B}}^0=\{\Theta(i)|i\in\{1,2,\ldots,\hat{m},\hat{m}+1\}\}$.
$\smash{\bar{\hat{\mathcal{B}}}}^1=\{\bar{\Theta}(i)|i\in\{1,2,\ldots,\hat{m},\hat{m}+1\}\}$.

We consider the $H$-order on $\hat{\mathcal{B}}^0$. The bijective mapping from $\{1,2,\ldots,\hat{m},$\hfill\break$\hat{m}+1\}$ to $\hat{\mathcal{B}}^0$ sending $i\in\{1,2,\ldots,\hat{m},\hat{m}+1\}$ to $\Theta(i)\in \hat{\mathcal{B}}^0$ preserves the $H$-order.

We consider the $H$-order on $\smash{\bar{\hat{\mathcal{B}}}}^1$. The bijective mapping from $\{1,2,\ldots,\hat{m},$\hfill\break$\hat{m}+1\}$ to $\smash{\bar{\hat{\mathcal{B}}}}^1$ sending $i\in\{1,2,\ldots,\hat{m},\hat{m}+1\}$ to $\bar{\Theta}(i)\in\smash{\bar{\hat{\mathcal{B}}}}^1$ preserves the $H$-order.

Consider any $\bar{E}\in\mathcal{F}(H\Op)_1$ and any $i\in\{1,2,\ldots,\hat{m},\hat{m}+1\}$.
The structure constant of $\hat{\mathcal{B}}$ corresponding to the pair $(i,\bar{E})$ is equal to $\hat{s}(i-1, \bar{E})\in\Z_0$.
\end{enumerate}
\end{lemma}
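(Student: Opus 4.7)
The plan is to prove the seven claims in order, using induction on $\hat m$ as the backbone and reducing as much as possible to the definitional properties already established in Lemma~\ref{property of basic subdivisions}, Lemma~\ref{property of basic subdivisions2}, and Lemma~\ref{basic2}. Throughout, the central observation is that $\mathcal{F}(\Delta)/H$ consists of $\Delta$ together with its faces containing $H$, so $\mathcal{F}(\Delta)-(\mathcal{F}(\Delta)/H)=\mathcal{F}(H\Op)$, and hence the edges in $(\mathcal{F}(\Delta)-(\mathcal{F}(\Delta)/H))_1=\mathcal{F}(H\Op)_1$ are precisely the edges of $H\Op$.

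First I would handle claim~1. For $\dim\Delta=2$ the face $H\Op$ is one-dimensional so $\sharp\mathcal{F}(H\Op)_1=1$; writing $\bar E$ for this unique edge, every $\hat E(i)$ equals $\bar E$ and the definition of $\hat s$ gives $\hat s(i,\bar E)=\hat s(i-1,\bar E)+1$ starting from $\hat s(0,\bar E)=0$, so $\hat s(i,\bar E)=i$. Next I would prove claims~2--6 simultaneously by induction on $i\in\{1,2,\ldots,\hat m+1\}$. The base case $i=1$ follows because $\bar{\Theta}(1)=H\Op$, $\Theta(1)=H\Op+H=\Delta$, $\hat{H}(1)=\R_0 b_{H/N}$, and by Lemma~\ref{basic1} together with the opening remarks on basic subdivisions one identifies $\hat{\mathcal{B}}(1)=(\hat{\mathcal{B}}/\hat H(1))\Fc\backslash \Delta=\mathcal{F}(\Delta)$ (before any subdivision touches the interior). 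For the inductive step, the key fact is that passing from $\hat{\mathcal{B}}(i)$ to $\hat{\mathcal{B}}(i+1)$ is governed by the single barycentric subdivision with center $\hat F(i)=\bar\Theta(i)\cap(\hat E(i)+H)$, and $\hat G(i)\cap\hat H(i+1)=\{0\}$, so Lemma~\ref{prep1} and Lemma~\ref{prep2} give the precise description of how the maximal cones $\Theta(i)$ split off, how $\bar\Theta(i+1)$ arises as $\hat G(i)\Op|\Theta(i)$, and how the other edges of $\Theta(i)$ propagate unchanged to $\bar\Theta(i+1)$. The equalities $\Vect(\Theta(i))=\Vect(\Delta)$ and $\bar\Theta(i)=\Vect(\bar\Theta(i))\cap\Delta$ then follow by induction together with $\mathcal{F}(\Theta(i))_1=(\mathcal{F}(\bar\Theta(i))_1)\cup\{\hat H(i)\}$ and the fact that each new barycenter lies in $\Delta+\Vect(H)=\bar\Theta(i)+\Vect(H)$. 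The last-index claim (part~6) is simply the fact that after the last center $\hat F(\hat m)$ has been used, $H$ itself reappears as the relevant edge $\hat H(\hat m+1)$, which is an immediate consequence of the definition of $H(\hat m+1)$ and the fact that $E^{-1}((\mathcal{F}(\Delta)-(\mathcal{F}(\Delta)/H))_1)$ has been exhausted.

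Claim~4, the intersection formula $\Theta(i)\cap\Theta(j)=\bar\Theta(i+1)\cap\bar\Theta(j)$, follows from the inductive description: the maximal cones $\Theta(k)$ for successive $k$ share exactly the face $\bar\Theta(k+1)$ opposite to $\hat H(k)$, so an intersection of two non-adjacent maximal cones collapses to the intersection of their respective ``boundary sides'' with the slab structure, and this is precisely $\bar\Theta(i+1)\cap\bar\Theta(j)$. I would verify this by first checking $\Theta(i)\cap\Theta(i+1)=\bar\Theta(i+1)$, which is a special case of Lemma~\ref{prep2}.2 applied to the successive barycentric subdivisions, and then extending by transitivity: $\Theta(i)\cap\Theta(j)\subset \Theta(i)\cap\Theta(i+1)=\bar\Theta(i+1)$ and similarly $\Theta(i)\cap\Theta(j)\subset\bar\Theta(j)$.

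Finally, claim~7 is the structural summary. That $\hat{\mathcal{B}}$ is an iterated barycentric subdivision with $|\hat{\mathcal{B}}|=\Delta$ is Lemma~\ref{property of basic subdivisions}.1 applied to the sextuplet $(V,N,H,\mathcal{F}(\Delta),\hat m,\hat E)$. $H$-simplicity of $\hat{\mathcal{B}}$ follows from Lemma~\ref{property of basic subdivisions}.3 with $\Delta$ as the unique maximal element of $\mathcal{F}(\Delta)/H$. For the enumerations $\hat{\mathcal{B}}^0$ and $\smash{\bar{\hat{\mathcal{B}}}}^1$ and the $H$-order claims, I would use the established claims~2--6: the cones $\Theta(i)$ are pairwise distinct maximal simplicial cones of $\hat{\mathcal{B}}$ (by claim~4 and the dimension count), the union of their interiors covers $\Delta^\circ\cup(H\Op)^\circ$, and $\Theta(i)+H\supset \Theta(i+1)+H$ holds because $\hat H(i+1)=b_{E(i+1)/N}+\hat s(i+1,E(i+1))b_{H/N}$ lies further along the $b_{H/N}$-direction than $\hat H(i)$. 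This is exactly the order-preservation of $i\mapsto\Theta(i)$ with respect to the $H$-order of Definition~\ref{defsimple}. The claim about structure constants $c(\hat{\mathcal{B}},i,\bar E)=\hat s(i-1,\bar E)$ then follows directly from claim~2, which gives $\bar\Theta(i)=\Convcone(\{b_{E/N^*}+\hat s(i-1,E)b_{H/N^*}\mid E\in\mathcal{F}(H\Op)_1\})$: this is precisely the defining relation for the structure constants in Definition~\ref{defsimple}. The main obstacle will be claim~2, especially verifying $\bar\Theta(i)=\Vect(\bar\Theta(i))\cap\Delta$ and the exact identification of the edges of $\bar\Theta(i)$; once these are established the remaining identities in claims~3--7 fall out quickly from the general theory already developed.
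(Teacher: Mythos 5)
The paper does not supply a proof of Lemma~\ref{basic7}; it is among the claims the authors say ``follow from definitions''. So I can only judge your sketch against the definitions and the auxiliary lemmas of Sections~\ref{bcd}--\ref{basic}.

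Your overall plan — simultaneous induction on the slab index $i$, leaning on Lemmas~\ref{prep1}, \ref{prep2}, \ref{property of basic subdivisions}, \ref{property of basic subdivisions2}, and \ref{basic2}, together with the identification $\mathcal{F}(\Delta)-(\mathcal{F}(\Delta)/H)=\mathcal{F}(H\Op)$ — is the right framework, and claims~1, 5, 6, and 7 are dispatched correctly. However, your base case for claims 2--6 contains a real error. You assert $\Theta(1)=H\Op+H=\Delta$ and $\hat H(1)=\R_0 b_{H/N^*}=H$. These equalities hold only when $\hat m=0$ (so $1=\hat m+1$). For $\hat m\geq 1$ we have $\hat H(1)=\R_0(b_{\hat E(1)/N^*}+\hat s(1,\hat E(1))b_{H/N^*})$ with $\hat s(1,\hat E(1))=1$, which is \emph{not} $H$, and then $\Theta(1)=\bar\Theta(1)+\hat H(1)=H\Op+\hat H(1)$, which is a proper subcone of $\Delta$. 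Indeed, claim~3 of the lemma you are trying to prove states $H\subset\Theta(i)\Leftrightarrow i=\hat m+1$, so $\Theta(1)\neq\Delta$ whenever $\hat m\geq 1$. Relatedly, your formula $\hat{\mathcal{B}}(1)=(\hat{\mathcal{B}}/\hat H(1))\Fc\backslash\Delta=\mathcal{F}(\Delta)$ uses the wrong defining set — for $i\leq\hat m$ the definition is $\hat{\mathcal{B}}(i)=(\hat{\mathcal{B}}/(\hat G(i)+\hat H(i)))\Fc$, not $(\hat{\mathcal{B}}/\hat H(i))\Fc$ — and in any case $(\hat{\mathcal{B}}/\hat H(1))\Fc$ is not $\mathcal{F}(\Delta)$ once any subdivision has occurred. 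The correct base case must instead verify directly from $\hat s(0,\cdot)\equiv 0$ that $\bar\Theta(1)=\Convcone(\{b_{E/N^*}\mid E\in\mathcal{F}(H\Op)_1\})=H\Op$, and then read off $\Theta(1)=\bar\Theta(1)+\hat H(1)$ from Lemma~\ref{property of basic subdivisions2}.1(d) rather than guessing $\Theta(1)=\Delta$.

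A second, smaller gap: for claim~4 your ``transitivity'' argument needs the nontrivial containments $\Theta(i)\cap\Theta(j)\subset\bar\Theta(i+1)$ and $\Theta(i)\cap\Theta(j)\subset\bar\Theta(j)$, which are intersections of unions and do not follow simply from $\Theta(j)\subset\Theta(i+1)$ (false for $j>i+1$). These should be extracted from Lemma~\ref{property of basic subdivisions}.7, which gives $\mathcal{B}(i)\cap\mathcal{B}(j)=(\mathcal{B}(i)-(\mathcal{B}(i)/G(i)))\cap(\mathcal{B}(j)-(\mathcal{B}(j)/H(j)))$, combined with the one-cone identification $\hat{\mathcal{B}}(i)=\mathcal{F}(\Theta(i))$ you are proving in claim~2. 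Once the base case is corrected and claim~4 is grounded in that lemma rather than ``transitivity'', the rest of the sketch goes through.
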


\section{Upper boundaries and lower boundaries}
\label{upper}

Let $V$ be any finite dimensional vector space over $\R$ with $\dim V\geq 1$; let $N$ be any lattice of $V$; let $H$ be any simplicial cone over $N$ in $V$ with $\dim H=1$; let $\pi_H:V\rightarrow V/\Vect(H)$ denote the canonical surjective homomorphism of vector spaces over $\R$ to the residue vector space $V/\Vect(H)$; and let $\Delta$ be any convex polyhedral cone in $V$ such that $\Vect(H)\subset\Vect(\Delta)$.

\begin{definition}
Let $X$ be any subset of $V$. We denote
\begin{equation*}\begin{split}
\partial^H_+X&=\{a\in X|(\{a\}+\Vect(H))\cap X\subset\{a\}+(-H)\},\\
\partial^H_-X&=\{a\in X|(\{a\}+\Vect(H))\cap X\subset\{a\}+H\},
\end{split}\end{equation*}
and we call $\partial^H_+X$ and $\partial^H_-X$ the $H$-\emph{upper boundary} of $X$ and the $H$-\emph{lower boundary} of $X$ respectively.
\end{definition}

\begin{lemma}
\label{upper and lower}
\begin{enumerate}
\item
\begin{equation*}\begin{split}
\partial^H_+\Delta&=|\{\Lambda\in\mathcal{F}(\Delta)|H\not\subset\Delta+\Vect(\Lambda)\}|\\
&=|\{\Lambda\in\mathcal{F}(\Delta)|H\not\subset\Delta+\Vect(\Lambda), \dim\Lambda=\dim\Delta-1\}|,\\
\partial^H_-\Delta&=|\{\Lambda\in\mathcal{F}(\Delta)|-H\not\subset\Delta+\Vect(\Lambda)\}|\\
&=|\{\Lambda\in\mathcal{F}(\Delta)|-H\not\subset\Delta+\Vect(\Lambda), \dim\Lambda=\dim\Delta-1\}|,\\
\partial^H_+\Delta\cup\partial^H_-\Delta&=|\{\Lambda\in\mathcal{F}(\Delta)|\Vect(H)\not\subset\Vect(\Lambda)\}|\\
&=|\{\Lambda\in\mathcal{F}(\Delta)|\Vect(H)\not\subset\Vect(\Lambda), \dim\Lambda=\dim\Delta-1\}|\\
&\subset\partial\Delta.
\end{split}\end{equation*}
\item
$\partial^H_+\Delta=\emptyset\Leftrightarrow H\subset\Delta$. 
$\partial^H_-\Delta=\emptyset\Leftrightarrow -H\subset\Delta$.
\item
If $\partial^H_+\Delta\neq\emptyset$, then $\pi_H(\partial^H_+\Delta)=\pi_H(\Delta)$ and the mapping $\pi_H: \partial^H_+\Delta\rightarrow\pi_H(\Delta)$ induced by $\pi_H$ is a continuous bijective mapping whose inverse mapping is also continous.

If $\partial^H_-\Delta\neq\emptyset$, then $\pi_H(\partial^H_-\Delta)=\pi_H(\Delta)$ and the mapping $\pi_H: \partial^H_-\Delta\rightarrow\pi_H(\Delta)$ induced by $\pi_H$ is a continuous bijective mapping whose inverse mapping is also continous.
\item
$$\partial^H_+\Delta\cap\partial^H_-\Delta=
|\{\Lambda\in\mathcal{F}(\Delta)|\Lambda\subset\partial^H_+\Delta\cap\partial^H_-\Delta, \dim\Lambda\leq\dim\Delta-2\}|.$$
\item
Consider any $a\in V$.
Note that $\pi_H(a)\in V/\Vect(H)$ and $\pi_H^{-1}(\pi_H(a))=\{a\}+\R b_{H/N}$.
\begin{enumerate}
\item
If $\pi_H(a)\in\pi_H(\Delta)$ and $\partial^H_+\Delta\neq\emptyset$, then there exists uniquely a real number $t_+\in\R$ satisfying $a+t_+b_{H/N}\in\partial^H_+\Delta$.
\end{enumerate}

In the case where $\pi_H(a)\in\pi_H(\Delta)$ and $\partial^H_+\Delta\neq\emptyset$, we take the unique $t_+\in\R$ satisfying $a+t_+b_{H/N}\in\partial^H_+\Delta$.

\begin{enumerate}
\setcounter{enumii}{1}
\item
If $\pi_H(a)\in\pi_H(\Delta)$ and $\partial^H_-\Delta\neq\emptyset$, then there exists uniquely a real number $t_-\in\R$ satisfying $a+t_-b_{H/N}\in\partial^H_-\Delta$.
\end{enumerate}

In the case where $\pi_H(a)\in\pi_H(\Delta)$ and $\partial^H_-\Delta\neq\emptyset$, we take the unique $t_-\in\R$ satisfying $a+t_-b_{H/N}\in\partial^H_-\Delta$.

\begin{enumerate}
\setcounter{enumii}{2}
\item
If $\pi_H(a)\in\pi_H(\Delta)$, $\partial^H_+\Delta\neq\emptyset$ and $\partial^H_-\Delta\neq\emptyset$, then $t_-\leq t_+$.
\item
$(\{a\}+\R b_{H/N})\cap\Delta$
\begin{equation*}
=
\begin{cases}
\emptyset&\text{if $\pi_H(a)\not\in\pi_H(\Delta)$},\\
\{a+t b_{H/N}|t\in\R, t_-\leq t\leq t_+\}&\text{if $\pi_H(a)\in\pi_H(\Delta)$, $\partial^H_+\Delta\neq\emptyset$ and $\partial^H_-\Delta\neq\emptyset$},\\
\{a+t b_{H/N}|t\in\R, t\leq t_+\}&\text{if $\pi_H(a)\in\pi_H(\Delta)$, $\partial^H_+\Delta\neq\emptyset$ and $\partial^H_-\Delta=\emptyset$},\\
\{a+t b_{H/N}|t\in\R, t_-\leq t\}&\text{if $\pi_H(a)\in\pi_H(\Delta)$, $\partial^H_+\Delta=\emptyset$ and $\partial^H_-\Delta\neq\emptyset$},\\
\{a+t b_{H/N}|t\in\R \}&\text{if $\pi_H(a)\in\pi_H(\Delta)$, $\partial^H_+\Delta=\emptyset$ and $\partial^H_-\Delta=\emptyset$}.
\end{cases}
\end{equation*}
\item
If $\pi_H(a)\in\pi_H(\Delta)^\circ$, $\partial^H_+\Delta\neq\emptyset$ and $\partial^H_-\Delta\neq\emptyset$, then $t_-< t_+$.
\item
$(\{a\}+\R b_{H/N})\cap\Delta^\circ$
\begin{equation*}
=
\begin{cases}
\emptyset&\text{if $\pi_H(a)\not\in\pi_H(\Delta)^\circ$},\\
\{a+t b_{H/N}|t\in\R, t_-< t< t_+\}&\text{if $\pi_H(a)\in\pi_H(\Delta)^\circ$, $\partial^H_+\Delta\neq\emptyset$ and $\partial^H_-\Delta\neq\emptyset$},\\
\{a+t b_{H/N}|t\in\R, t< t_+\}&\text{if $\pi_H(a)\in\pi_H(\Delta)^\circ$, $\partial^H_+\Delta\neq\emptyset$ and $\partial^H_-\Delta=\emptyset$},\\
\{a+t b_{H/N}|t\in\R, t_-< t\}&\text{if $\pi_H(a)\in\pi_H(\Delta)^\circ$, $\partial^H_+\Delta=\emptyset$ and $\partial^H_-\Delta\neq\emptyset$},\\
\{a+t b_{H/N}|t\in\R \}&\text{if $\pi_H(a)\in\pi_H(\Delta)^\circ$, $\partial^H_+\Delta=\emptyset$ and $\partial^H_-\Delta=\emptyset$}.
\end{cases}\end{equation*}
\end{enumerate}
\end{enumerate}
\end{lemma}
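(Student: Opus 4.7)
The plan is to reduce every claim to the one-dimensional analysis of the intersection $(\{a\} + \R b_{H/N}) \cap \Delta$ for an arbitrary $a \in V$, using the duality $\Delta = (\Delta^\vee)^\vee$ from Lemma~\ref{i}.3. Writing out the inequalities, $I_a := \{t \in \R : a + tb_{H/N} \in \Delta\}$ is the solution set of $t\langle\omega, b_{H/N}\rangle \geq -\langle\omega, a\rangle$ as $\omega$ ranges over $\Delta^\vee$, so $I_a$ is either empty (precisely when $\pi_H(a) \notin \pi_H(\Delta)$) or a closed interval $[t_-(a), t_+(a)]$ in $\R \cup \{\pm\infty\}$. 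This single observation immediately produces the parameterization in claims 5(d) and 5(f), and continuity of $t_\pm$ on their domains will furnish the inverse to the projection in claim 3. Claim 2 follows because the recession cone of a convex cone $\Delta$ at any point $a \in \Delta$ equals $\Delta$ itself (scaling: $a/s \to 0$), hence $t_+(a) = \infty$ if and only if $b_{H/N} \in \Delta$, i.e. $H \subset \Delta$, and symmetrically for $t_-$.

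Next I will translate the boundary conditions into the face structure. Since $\{a\} + (-H) = \{a + tb_{H/N} : t \leq 0\}$ and $0 \in I_a$, we have $a \in \partial^H_+\Delta$ if and only if $t_+(a) = 0$, and similarly for $\partial^H_-$. For $a$ in the open face $\Lambda^\circ$ (using the equivalence class decomposition of Proposition~\ref{property of cones}.20), the feasibility cone $\{v : a + \epsilon v \in \Delta \text{ for some } \epsilon > 0\}$ equals $\Delta + \Vect(\Lambda)$; the inclusion $\supset$ uses $\Lambda^\circ + \Delta \subset \Delta$ together with the fact that $a + \epsilon\ell \in \Lambda^\circ$ for $\ell \in \Vect(\Lambda)$ and small $\epsilon$, and the reverse uses $-a/\epsilon \in \Vect(\Lambda)$ to rewrite $v + a/\epsilon \in \Delta$. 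Consequently $t_+(a) = 0$ if and only if $H \not\subset \Delta + \Vect(\Lambda)$, which combined with the set equality $\Delta = \bigcup_{\Lambda} \Lambda^\circ = \bigcup_{\Lambda} \Lambda$ and the downward-closedness of the condition ``$H \not\subset \Delta + \Vect(\Lambda)$'' under $\Lambda' \subset \Lambda$ (which is immediate from $\Vect(\Lambda') \subset \Vect(\Lambda)$) gives the first equality in claim 1.

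For claim 4, if $\Lambda$ is a facet with $\Vect(H) \subset \Vect(\Lambda)$, then $\Vect(H) \subset \Delta + \Vect(\Lambda)$ and $\Lambda$ contributes to neither boundary; if instead $\Vect(H) \not\subset \Vect(\Lambda)$, then $\Vect(H) + \Vect(\Lambda) = \Vect(\Delta)$ (by hypothesis $\Vect(H) \subset \Vect(\Delta)$ and dimension count), and the supporting hyperplane $\Vect(\Lambda)$ places exactly one of $\pm b_{H/N}$ in $\Delta + \Vect(\Lambda)$, so $\Lambda$ lies in exactly one of $\partial^H_+\Delta$, $\partial^H_-\Delta$. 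Hence $\partial^H_+\Delta \cap \partial^H_-\Delta$ cannot contain any facet, giving claim 4 via the facet description of claim 1. Claim 3 then drops out: the projection sends $\partial^H_+\Delta$ bijectively to $\pi_H(\Delta)$ with explicit inverse $\pi_H(a) \mapsto \tilde{a} + t_+(\tilde{a})b_{H/N}$ for any lift $\tilde{a}$, and piecewise linearity of $t_+$ on $\pi_H(\Delta)$ (from the dual description) gives bicontinuity.

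The main obstacle is the restriction to facets in the second equality of claim 1: showing that every face $\Lambda$ with $H \not\subset \Delta + \Vect(\Lambda)$ is contained in some facet $\Lambda'$ with $H \not\subset \Delta + \Vect(\Lambda')$. By duality (Lemma~\ref{j}.3), the hypothesis gives some $\omega \in \Delta(\Lambda) = \Vect(\Lambda)^\perp \cap \Delta^\vee$ with $\langle\omega, b_{H/N}\rangle < 0$. I must then move $\omega$ to an edge of $\Delta^\vee$ while preserving both $\omega \in \Delta(\Lambda)$ (so $\Lambda' := \Delta(\omega) \supset \Lambda$) and the strict inequality $\langle\omega, b_{H/N}\rangle < 0$. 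The subtle point is that $\Delta(\Lambda)$ may have nontrivial lineality space $\Delta(\Delta) = \Vect(\Delta)^\perp$, but the hypothesis $\Vect(H) \subset \Vect(\Delta)$ forces $\Vect(\Delta)^\perp \subset \Vect(H)^\perp$, so $\langle\cdot, b_{H/N}\rangle$ vanishes on the lineality space and the condition $\langle\omega, b_{H/N}\rangle < 0$ is well-defined on the quotient $\Delta(\Lambda)/\Delta(\Delta)$. Since this quotient is a pointed polyhedral cone, it is spanned as a cone by its edges, which by Proposition~\ref{property of cones}.16 correspond bijectively to facets of $\Delta$ containing $\Lambda$, and at least one such edge generator must have negative pairing with $b_{H/N}$, yielding the required facet.
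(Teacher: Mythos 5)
Your proof is correct, and since the paper declines to prove this lemma (it is one of the many items the author declares to ``easily follow from definitions''), there is no written argument to compare against. The two ideas you isolate do carry all the weight: first, for $a\in\Lambda^\circ$ the set of directions pointing into $\Delta$ is exactly $\Delta+\Vect(\Lambda)$, so membership of $a$ in $\partial^H_+\Delta$ reduces to $H\not\subset\Delta+\Vect(\Lambda)$, with the downward-closedness of this condition over the face lattice turning the union of open faces into a union of faces; second, the passage to facets is handled dually by passing to the face cone $\Delta(\Lambda)=\Vect(\Lambda)^\perp\cap\Delta^\vee$, where the hypothesis $\Vect(H)\subset\Vect(\Delta)$ is precisely what guarantees that $\langle\cdot,b_{H/N}\rangle$ kills the lineality $\Vect(\Delta)^\perp$ of $\Delta^\vee$ and hence can be tested on the edges of $\Delta(\Lambda)$ modulo lineality, which under the dual face bijection of Proposition~\ref{property of cones}.16 are the facets of $\Delta$ containing $\Lambda$. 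Everything else — claims 2 through 5 — then drops out of the interval parameterization $I_a=\{t:a+tb_{H/N}\in\Delta\}$, the recession-cone characterization of unboundedness, and piecewise-linearity of $t_\pm$, exactly as you outline. The only minor point I would spell out is that $t_\pm$ depends on the lift $a$, not on $\pi_H(a)$ alone, but the well-definedness of $\pi_H(a)\mapsto\tilde a+t_+(\tilde a)b_{H/N}$ follows from $t_+(\tilde a+sb_{H/N})=t_+(\tilde a)-s$, and continuity is obtained by composing a linear section of $\pi_H$ with the continuous function $t_+$.
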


\section{Height, characteristic functions and compatible mappings}
\label{compatible}
In this section we consider the following objects: Let $V$ be any finite dimensional vector space over $\R$ with $\dim V\geq 2$; let $N$ be any lattice of $V$; let $H$ be any one-dimensional simplicial cone over the dual lattice $N^*$ of $N$ in the dual vector space $V^*$ of $V$; let $\mathcal{C}$ be any simplicial cone decomposition over $N^*$ in $V^*$ satisfying 
$\dim \mathcal{C}=\dim \Vect(|\mathcal{C}|)\geq 2$, $\mathcal{C}\Mx=\mathcal{C}^0$, $H\in\mathcal{C}_1$ and $\mathcal{C}=(\mathcal{C}/H)\Fc$; let $S$ be any rational convex pseudo polyhedron over $N$ in $V$ satisfying $\dim(|\mathcal{D}(S|V)|)\geq 2$ and $|\mathcal{C}|\subset |\mathcal{D}(S|V)|$; and let $T$ be any convex pseudo polyhedron in $V$ satisfying $\dim(|\mathcal{D}(T|V)|)\geq 1$ and $H\subset\Vect(|\mathcal{D}(T|V)|)$.

\begin{lemma}
\label{finiteness rationality}
\begin{enumerate}
\item
$\{\langle b_{H/N^*}, a\rangle|a\in\mathcal{V}(T)\}$ is a non-empty finite subset of $\R$.
\item
$0<\sharp\{\langle b_{H/N^*}, a\rangle|a\in\mathcal{V}(T)\}\leq c(T)$.
\item
If $T$ is rational over $N$, then the subset
$$\{m\in\Z|ma\in (N+(\Vect(|\mathcal{D}(T|V)|)^\vee|V^*))\text{ for any }a\in\mathcal{V}(T)\}$$
of $\Z$ is an ideal of the ring $\Z$ containing a positite integer.
\end{enumerate}
\end{lemma}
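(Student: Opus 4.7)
The plan is to handle the three claims in sequence, relying on the theory of faces of convex pseudo polyhedrons from Section~\ref{cpp}. First I would unwind the relevant identification: by Proposition~\ref{property of polyhedrons}.14 we have $|\mathcal{D}(T|V)|=\Stab(T)^\vee|V$, and by Lemma~\ref{j}.7 $\Vect(\Stab(T)^\vee)=(\Stab(T)\cap(-\Stab(T)))^\vee=L^\vee$, where $L=\Stab(T)\cap(-\Stab(T))$. Dualizing once more gives $\Vect(|\mathcal{D}(T|V)|)^\vee|V^*=L^{\vee\vee}=L$, so the subgroup appearing in claim 3 is simply $N+L\subset V$.

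For claims 1 and 2, the hypothesis $H\subset\Vect(|\mathcal{D}(T|V)|)$ places $b_{H/N^*}$ in $\Vect(\Stab(T)^\vee|V)$. By Proposition~\ref{property of polyhedrons}.8, the linear functional $\langle b_{H/N^*},\cdot\rangle$ is constant on each connected component of $\mathcal{V}(T)$, and the number of connected components is exactly $c(T)\in\Z_+$. Since $\mathcal{V}(T)\neq\emptyset$, this immediately yields non-emptiness and the bound $\leq c(T)$.

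For claim 3, I would first observe that $I=\{m\in\Z\mid ma\in N+L\text{ for all }a\in\mathcal{V}(T)\}$ is manifestly an ideal of $\Z$: $0\in I$, $I$ is closed under subtraction since $N+L$ is a subgroup of $V$, and $I$ is closed under multiplication by arbitrary integers since $N+L$ is closed under integer multiples. For the existence of a positive element, I would use that $T$ rational over $N$ forces each minimal face $G\in\mathcal{F}(T)_\ell$ to be rational over $N$ (Proposition~\ref{property of polyhedrons}.4.(e)), and since $G$ is itself an affine space with $\Stab(G)=L$, Lemma~\ref{h}.10 provides a representative $x_G\in\QVect(N)$ with $G=\{x_G\}+L$. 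Picking $m_G\in\Z_+$ with $m_Gx_G\in N$ for each such $G$ and setting $m=\prod_{G\in\mathcal{F}(T)_\ell}m_G$ gives a positive integer (the product is finite by Proposition~\ref{property of polyhedrons}.6) such that for any $a\in\mathcal{V}(T)$, writing $a=x_G+\ell$ with $\ell\in L$, we have $ma=mx_G+m\ell\in N+L$.

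There is no real obstacle here; the lemma is essentially a bookkeeping statement that packages three elementary consequences of the structural results already established in Sections~\ref{cones}--\ref{cpp}. The only points requiring minor care are the identification $\Vect(|\mathcal{D}(T|V)|)^\vee|V^*=L$ (to see that claim 3 concerns the transparent subgroup $N+L$) and the use of the finite enumeration of minimal faces to extract a common denominator.
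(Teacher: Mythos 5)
Your proof is correct and takes essentially the same approach as the paper, whose entire proof is the one-line reference ``It follows from Proposition~\ref{property of polyhedrons}.7''; you spell out the content that reference encodes (the skeleton statements you need are actually numbered as item 8, and the identity $|\mathcal{D}(T|V)|=\Stab(T)^\vee|V$ is item 13, not 14). The explicit identification $\Vect(|\mathcal{D}(T|V)|)^\vee|V^*=L$ via Lemma~\ref{j}.7 and the common-denominator argument for claim 3 are exactly what the paper leaves implicit.
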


\begin{proof}
It follows from Proposition~\ref{property of polyhedrons}.7.
\end{proof}

Note that
$H\subset|\mathcal{C}|\subset|\mathcal{D}(S|V)|\subset
\Vect(|\mathcal{D}(S|V)|)$.

We denote $\ell=\dim\Vect(|\mathcal{D}(S|V)|)^\vee|V^*=\dim(\Stab(S)\cap(-\Stab(S)))\in\Z_0$.

\begin{definition}
\label{relative}
\begin{enumerate}
\item
We define functions
$$\lfloor\ \rfloor,\  \lceil\ \rceil:\R\rightarrow\Z,$$
by putting
$$\lfloor r\rfloor=\min\{i\in\Z|r\leq i\},\quad
\lceil r\rceil=\max\{i\in\Z|r\geq i\},$$
for any $r\in\R$.
\item
We define
$$\mathcal{H}(H, T)=\{\langle b_{H/N^*},a\rangle|a\in \mathcal{V}(T)\}\subset\R,$$
$$\Ht(H,T)=\max\mathcal{H}(H, T)-\min\mathcal{H}(H, T)\in\R_0,$$
and we call
$\mathcal{H}(H, T)$ and
$\Ht(H,T)$,
the $H$-\emph{height set} of $T$ and
the $H$-\emph{height} of $T$ respectively.
\item
Assume that $T$ is rational over $N$. 
By $\Den(T/N)$ we denote the minimum positive integer in the ideal
$$\{m\in\Z|ma\in (N+(\Vect(|\mathcal{D}(T|V)|)^\vee|V^*))\text{ for any }a\in\mathcal{V}(T)\},$$
and we call $\Den(T/N)\in\Z_+$ the \emph{denominator} of $T$ over $N$.
\item
We define
\begin{equation*}\begin{split}
(\mathcal{C}, \mathcal{F}(S)_\ell)=
\{F\in\mathcal{F}(S)_\ell|&\dim(\Delta(F,S|V)\cap\Delta)=\dim \Delta\text{ for some }\Delta\in\mathcal{C}\Mx.\}\\
&\qquad\qquad\subset \mathcal{F}(S)_\ell,
\end{split}\end{equation*}
$$\mathcal{V}(\mathcal{C}, S)=
\bigcup_{F\in(\mathcal{C}, \mathcal{F}(S)_\ell)}F\subset\mathcal{V}(S),$$
$$\mathcal{H}(H, \mathcal{C}, S)=\{\langle b_{H/N^*},a\rangle|a\in \mathcal{V}(\mathcal{C}, S)\}\subset\R,$$
$$\Ht(H,\mathcal{C},S)=\max\mathcal{H}(H, \mathcal{C}, S)-\min\mathcal{H}(H, \mathcal{C}, S)\in\R_0,$$
and we call $(\mathcal{C}, \mathcal{F}(S)_\ell)$,
$\mathcal{V}(\mathcal{C}, S)$,
$\mathcal{H}(H, \mathcal{C}, S)$ and
$\Ht(H,\mathcal{C},S)$,
the \emph{set of minimal faces} of the pair $(\mathcal{C}, S)$,
the \emph{skeleton} of the pair $(\mathcal{C}, S)$,
the $H$-\emph{height set} of the pair $(\mathcal{C}, S)$ and
the $H$-\emph{height} of the pair $(\mathcal{C}, S)$ respectively.
\item
For any $h\in \mathcal{H}(H, \mathcal{C}, S)$ we denote
\begin{equation*}\begin{split}
\mathcal{E}(h)&=
\{\Delta(F, S|V)\cap\Delta|F\in(\mathcal{C}, \mathcal{F}(S)_\ell),
\Delta\in\mathcal{C}\Mx,
\dim(\Delta(F, S|V)\cap\Delta)=\dim \Delta,\\
&\qquad\quad\langle b_{H/N^*}, a\rangle=h\text{ for some }a\in F\}\Fc\subset\mathcal{D}(S|V)\hat{\cap}\mathcal{C},\\
\mathcal{D}(h)&=
\{\Delta(F, S|V)\cap\Delta|F\in(\mathcal{C}, \mathcal{F}(S)_\ell),
\Delta\in\mathcal{C}\Mx,
\dim(\Delta(F, S|V)\cap\Delta)=\dim \Delta,\\
&\qquad\quad\langle b_{H/N^*}, a\rangle\geq h\text{ for some }a\in F\}\Fc\subset\mathcal{D}(S|V)\hat{\cap}\mathcal{C}.
\end{split}\end{equation*}
\end{enumerate}
\end{definition}

Consider any $\Delta\in\mathcal{C}/H$.
$S+(\Delta^\vee|V^*)$ is a rational conex pseudo polyhedron over $N$ in $V$.
$\Stab(S+(\Delta^\vee|V^*))=\Delta^\vee|V^*$.
$H\subset |\mathcal{D}(S+(\Delta^\vee|V^*)|V)|=\Stab(S+(\Delta^\vee|V^*))^\vee|V=\Delta\subset|\mathcal{C}|\subset|\mathcal{D}(S|V)|\subset\Vect(|\mathcal{D}(S|V)|)$.
$\mathcal{D}(S+(\Delta^\vee|V^*)|V)= \mathcal{D}(S|V)\hat{\cap}\mathcal{F}(\Delta)$.

\begin{lemma}
\label{compatible1}
\begin{enumerate}
\item
The set $\mathcal{H}(H, T)$ is a non-empty finite subset of $\R$.
$0<\sharp\mathcal{H}(H, T)\leq c(T)$.

If $T$ is rational over $N$, then $\mathcal{H}(H, T)\subset(1/\Den(T/N))\Z\subset\Q$ and\hfill\break
$\Ht(H,T)\in(1/\Den(T/N))\Z_0\subset\Q_0$.

If $T$ is rational over $N$ and $\mathcal{V}(T)\subset N+(\Vect(|\mathcal{D}(T|V)|)^\vee|V^*)$, then $\Den(T/N)=1$, $\mathcal{H}(H, T)\subset\Z$ and $\Ht(H,T)\in\Z_0$.
\item
$\emptyset\neq(\mathcal{C},\mathcal{F}(S)_\ell)\subset\mathcal{F}(S)_\ell$.

$\emptyset\neq\mathcal{V}(\mathcal{C}, S)\subset\mathcal{V}(S)$. The set $\mathcal{V}(\mathcal{C}, S)$ is the union of some connected components of $\mathcal{V}(S)$.

$\emptyset\neq\mathcal{H}(H, \mathcal{C}, S)\subset\mathcal{H}(H, S)\subset 
(1/\Den(S/N))\Z\subset\Q$. The set $\mathcal{H}(H, \mathcal{C}, S)$ is a non-empty finite subset of $\Q$.

$\min \mathcal{H}(H, \mathcal{C}, S)=\min\mathcal{H}(H, S)$.
$\Ht(H,\mathcal{C},S)\leq \Ht(H,S)$.

$\Ht(H,S)\in(1/\Den(S/N))\Z_0\subset\Q_0$.

$\Ht(H,\mathcal{C},S)\in(1/\Den(S/N))\Z_0\subset\Q_0$.

If $\mathcal{V}(S)\subset N+(\Vect(|\mathcal{D}(S|V)|)^\vee|V^*)$, then
$\Den(S/N)=1$,
$\mathcal{H}(H, \mathcal{C}, S)\subset\mathcal{H}(H, S)\subset \Z$, $\Ht(H,S)\in\Z_0$ and $\Ht(H,\mathcal{C},S)\in\Z_0$.
\item
Consider any $\Delta\in\mathcal{C}/H$. 

$\mathcal{H}(H, S+(\Delta^\vee|V^*))\subset \mathcal{H}(H, \mathcal{C}, S)\subset\Q$.

$\min\mathcal{H}(H, S+(\Delta^\vee|V^*))=\min\mathcal{H}(H, \mathcal{C}, S)$.

$\Ht(H,S+(\Delta^\vee|V^*))\leq\Ht(H,\mathcal{C},S)$.

$\Den(S/N)$ is a multiple of $\Den(S+(\Delta^\vee|V^*)/N)$.

$\mathcal{H}(H, S+(\Delta^\vee|V^*))\subset(1/\Den(S+(\Delta^\vee|V^*)/N))\Z\subset(1/\Den(S/N))\Z\subset \Q$.

$\Ht(H,S+(\Delta^\vee|V^*))\in(1/\Den(S+(\Delta^\vee|V^*)/N))\Z_0\subset (1/\Den(S/N))\Z_0$\break$\subset \Q_0$.

If $\mathcal{V}(S)\subset N+(\Vect(|\mathcal{D}(S|V)|)^\vee|V^*)$, then 
$\Den(S+(\Delta^\vee|V^*)/N)=1$,
$\mathcal{H}(H, S+(\Delta^\vee|V^*))\subset\Z$ and $\Ht(H,S+(\Delta^\vee|V^*))\in\Z_0$.
\item
$\max\mathcal{H}(H, \mathcal{C}, S)=\max\{\max\mathcal{H}(H, S+(\Delta^\vee|V^*))|\Delta\in \mathcal{C}\Mx\}=\max\{\max\mathcal{H}(H,$\break$ S+(\Delta^\vee|V^*))|\Delta\in \mathcal{C}/H\}$.

$\Ht(H,\mathcal{C},S)= \max\{\Ht(H,S+(\Delta^\vee|V^*))|\Delta\in \mathcal{C}\Mx\}= \max\{\Ht($\break$H,S+(\Delta^\vee|V^*))|\Delta\in \mathcal{C}/H\}$.
\item
Consider any subset $\hat{\mathcal{C}}$ of $\mathcal{C}$ satisfying 
$\dim\hat{\mathcal{C}}=\dim \Vect(|\hat{\mathcal{C}}|)\geq 2$, $\hat{\mathcal{C}}\Mx=\hat{\mathcal{C}}^0$,
$H\in\hat{\mathcal{C}}_1$
and $\hat{\mathcal{C}}=(\hat{\mathcal{C}}/H)\Fc$.

Note that $\hat{\mathcal{C}}$ is a simplicial cone decomposition over $N^*$ in $V^*$, and
$|\hat{\mathcal{C}}|\subset|\mathcal{C}|\subset|\mathcal{D}(S|V)|$.

$\mathcal{V}(\hat{\mathcal{C}}, S)\subset\mathcal{V}(\mathcal{C}, S)$.
$\mathcal{H}(H, \hat{\mathcal{C}},S)\subset\mathcal{H}(H, \mathcal{C}, S)$.
$\min\mathcal{H}(H, \hat{\mathcal{C}},S)=\min\mathcal{H}(H, $\hfill\break$\mathcal{C}, S)$.
$\Ht(H, \hat{\mathcal{C}}, S)\leq\Ht(H,\mathcal{C},S)$.

\item
$\mathcal{D}(S|V)\hat{\cap}\mathcal{C}$ is a rational convex polyhedral cone decomposition over $N^*$ in $V^*$.
$\dim(\mathcal{D}(S|V)\hat{\cap}\mathcal{C})=
\dim(\Vect(|\mathcal{D}(S|V)\hat{\cap}\mathcal{C}|))=
\dim \mathcal{C}$.
$\Vect(|\mathcal{D}(S|V)\hat{\cap}\mathcal{C}|)=\Vect(|\mathcal{C}|)$.
$(\mathcal{D}(S|V)\hat{\cap}\mathcal{C})\Mx=(\mathcal{D}(S|V)\hat{\cap}\mathcal{C})^0$.
\item
Consider any $h\in\mathcal{H}(H, \mathcal{C}, S)$.
$\mathcal{E}(h)$ and $\mathcal{D}(h)$ are rational convex polyhedral cone decompositions over $N^*$ in $V^*$.
$\dim \mathcal{E}(h)=\dim\Vect(|\mathcal{E}(h)|)=\dim\mathcal{C}$.
$\Vect(|\mathcal{E}(h)|)=\Vect(|\mathcal{C}|)$.
$\mathcal{E}(h)\Mx=\mathcal{E}(h)^0$.
$\dim \mathcal{D}(h)=\dim\Vect(|\mathcal{D}(h)|)=\dim\mathcal{C}$.
$\Vect(|\mathcal{D}(h)|)= \Vect(|\mathcal{C}|)$.
$\mathcal{D}(h)\Mx=\mathcal{D}(h)^0$.
$\mathcal{E}(h)\subset\mathcal{D}(h)\subset$
\hfill\break $\mathcal{D}(S|V)\hat{\cap}\mathcal{C}$.

\item
Consider any $g\in\mathcal{H}(H, \mathcal{C}, S)$ and any $h\in\mathcal{H}(H, \mathcal{C}, S)$.
$g\leq h\Leftrightarrow \mathcal{D}(g)\supset\mathcal{D}(h)\Leftrightarrow |\mathcal{D}(g)|\supset|\mathcal{D}(h)|$.
$g=h\Leftrightarrow \mathcal{D}(g)=\mathcal{D}(h)\Leftrightarrow |\mathcal{D}(g)|=|\mathcal{D}(h)|$.
\item
We consider any $F\in\mathcal{F}(S)_\ell$ and any $G\in\mathcal{F}(S)_\ell$
such that $\langle b_{H/N^*}, a\rangle=\langle b_{H/N^*}, b\rangle$ for some $a\in F$ and some $b\in G$.
\begin{equation*}\begin{split}
\partial^H_+(\Delta(F,S|V)\cup\Delta(G,S|V))
&=\partial^H_+\Delta(F,S|V)\cup\partial^H_+\Delta(G,S|V),\text{ and}\\
\partial^H_-(\Delta(F,S|V)\cup\Delta(G,S|V))
&=\partial^H_-\Delta(F,S|V)\cup\partial^H_-\Delta(G,S|V).
\end{split}\end{equation*}
\end{enumerate}
\end{lemma}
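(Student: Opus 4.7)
The plan is to dispatch the nine parts of Lemma~\ref{compatible1} in order, using three recurring tools: Proposition~\ref{property of polyhedrons}, especially the constancy of any $\omega\in\Vect(\Stab(T)^\vee)$ on each minimal face of $T$ and the order-reversing bijection between $\mathcal{F}(T)$ and $\mathcal{D}(T|V)$; Corollary~\ref{sum2}, which yields $\mathcal{D}(S+(\Delta^\vee|V^*)|V)=\mathcal{D}(S|V)\hat{\cap}\mathcal{F}(\Delta)$ together with the collapse $\Stab(S+(\Delta^\vee|V^*))^\vee=\Delta$ whenever $\Delta\subset\Stab(S)^\vee$; and the real-intersection calculus of Lemma~\ref{ris} and Lemma~\ref{several cone decompositions}.

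Part 1 is essentially a rewriting: $\mathcal{H}(H,T)$ is the image of the non-empty finite set $\mathcal{F}(T)_\ell$ under the well-defined map $F\mapsto\langle b_{H/N^*},a\rangle$ (any $a\in F$), giving finiteness and $\sharp\mathcal{H}(H,T)\leq c(T)$, and the rationality clauses rephrase Lemma~\ref{finiteness rationality}.3. Parts 2--4 then rest on the following identification. For each $\Delta\in\mathcal{C}/H$, Corollary~\ref{sum2} applied to the cone $\Delta^\vee|V^*\subset V$ yields $\Stab(S+(\Delta^\vee|V^*))^\vee=\Delta$, and the face bijection in Proposition~\ref{property of polyhedrons} identifies the minimal faces of $S+(\Delta^\vee|V^*)$ with those $F\in\mathcal{F}(S)_\ell$ for which $\Delta(F,S|V)\cap\Delta$ has full dimension in $\Delta$; on each such face $\langle b_{H/N^*},\,\cdot\,\rangle$ takes the constant value it has on $F$, since $b_{H/N^*}\in\Vect(\Delta)$. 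Ranging $\Delta$ over $\mathcal{C}\Mx$ recovers $\mathcal{V}(\mathcal{C},S)=\bigcup_{\Delta\in\mathcal{C}\Mx}\mathcal{V}(S+(\Delta^\vee|V^*))$, from which the skeleton containments and cardinality bounds in part 2, the containment in part 3, and the first $\max$ identity of part 4 all follow. The $\min$ identities in parts 2 and 3 rest on the $H$-bottom criterion (Lemma~\ref{propsimple}.8) combined with a case analysis inside $\mathcal{D}(S|V)\hat{\cap}\mathcal{C}$ showing that the $H$-bottom minimal face of $S$ always lies in $(\mathcal{C},\mathcal{F}(S)_\ell)$. The second $\max$ identity in part 4 follows similarly: any height attained for a non-maximal $\Delta\in\mathcal{C}/H$ is already attained for some maximal $\Delta'\in\mathcal{C}\Mx$, as can be seen by tracking maximal cones of $\mathcal{D}(S|V)\hat{\cap}\mathcal{C}$ above $\Delta$. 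The denominator clauses throughout are direct applications of Lemma~\ref{finiteness rationality}.3.

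Parts 5--8 are structural consequences. Part 5 reapplies the argument of part 2 to $\hat{\mathcal{C}}$; the $H$-bottom face continues to attain its level there. Part 6 follows from Lemma~\ref{ris} and Lemma~\ref{several cone decompositions}: $\mathcal{D}(S|V)\hat{\cap}\mathcal{C}$ has support $|\mathcal{C}|$, is rational over $N^*$, and its maximal cones retain the full dimension $\dim\mathcal{C}$ because $|\mathcal{C}|\subset|\mathcal{D}(S|V)|$ prevents any dimensional drop. For part 7, each generating collection of $\mathcal{E}(h)$ and $\mathcal{D}(h)$ consists of cones of the form $\Delta(F,S|V)\cap\Delta$ with $\dim\Delta=\dim\mathcal{C}$; the $\Fc$ operation makes them face-closed and places them inside $\mathcal{D}(S|V)\hat{\cap}\mathcal{C}$, yielding the stated rationality, dimensional identities, and $\mathcal{E}(h)\Mx=\mathcal{E}(h)^0$ (likewise for $\mathcal{D}(h)$). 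Part 8 is direct monotonicity: as $h$ increases the defining inequality $\langle b_{H/N^*},a\rangle\geq h$ becomes more restrictive, so $\mathcal{D}(g)\supset\mathcal{D}(h)$ when $g\leq h$, and equality of supports forces equality of the face-closures.

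Part 9 will be the main obstacle. Writing $\Delta_F=\Delta(F,S|V)$ and $\Delta_G=\Delta(G,S|V)$, the ``$\supset$'' inclusions in $\partial^H_\pm(\Delta_F\cup\Delta_G)=\partial^H_\pm\Delta_F\cup\partial^H_\pm\Delta_G$ follow immediately from Lemma~\ref{upper and lower}. For the reverse inclusion, fix $\omega\in\partial^H_+(\Delta_F\cup\Delta_G)$ and assume $\omega\in\Delta_F$. Lemma~\ref{upper and lower}.5 produces a unique upper $H$-shift $t_+\in\R$ with $\omega+t_+b_{H/N^*}\in\partial^H_+\Delta_F$; if $\omega\notin\Delta_G$, then $\omega\in\partial^H_+\Delta_F$ and we are done, so suppose $\omega\in\Delta_F\cap\Delta_G$. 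The equal-height hypothesis $\langle b_{H/N^*},a\rangle=\langle b_{H/N^*},b\rangle$ for $a\in F,b\in G$, combined with the piecewise linearity of $\Ord(\,\cdot\,,S|V)$ over $\mathcal{D}(S|V)$ from Proposition~\ref{property of polyhedrons}, forces the individual upper $H$-shifts at $\omega$ inside $\Delta_F$ and inside $\Delta_G$ to coincide, so $\omega$ lies simultaneously in $\partial^H_+\Delta_F$ and $\partial^H_+\Delta_G$. The lower-boundary case is symmetric. The hard part will be verifying rigorously that these two upper shifts really must coincide on the overlap, which is exactly what the equal-$H$-height hypothesis is designed to secure; without it, one face could shelter an upward exit of the other and produce boundary points of the union that are not boundary points of either piece separately.
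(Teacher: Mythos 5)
The paper gives no explicit proof of Lemma~\ref{compatible1}, so I assess your proposal on its own merits. Parts 1--8 sketch a plausible route via the identification $\Stab(S+(\Delta^\vee|V^*))^\vee|V=\Delta$ from Corollary~\ref{sum2} (valid since $\Delta\subset|\mathcal{C}|\subset|\mathcal{D}(S|V)|=\Stab(S)^\vee|V$) together with the face bijection of Proposition~\ref{property of polyhedrons} and the real-intersection calculus of Lemmas~\ref{ris} and~\ref{several cone decompositions}; the $\min$-equality claims still need the careful case analysis you gesture at, but the overall shape is sound.

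Part~9 has a genuine gap: you have the easy and hard directions reversed. Write $Y=\Delta(F,S|V)\cup\Delta(G,S|V)$ and $X=\Delta(F,S|V)$. From the definition of $\partial^H_+$, if $a\in\partial^H_+Y$ and $a\in X$ then $(\{a\}+\Vect(H))\cap X\subset(\{a\}+\Vect(H))\cap Y\subset\{a\}+(-H)$, so $a\in\partial^H_+X$. Hence $\partial^H_+(\Delta(F,S|V)\cup\Delta(G,S|V))\subset\partial^H_+\Delta(F,S|V)\cup\partial^H_+\Delta(G,S|V)$ is the \emph{trivial} inclusion, holding without any hypothesis on $F$ and $G$ --- yet this is exactly the direction your argument proves (and the case split ``if $\omega\notin\Delta_G$'' is vacuous, since the conclusion $\omega\in\partial^H_+\Delta(F,S|V)$ holds in either case). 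The genuinely hard inclusion is $\partial^H_+\Delta(F,S|V)\cup\partial^H_+\Delta(G,S|V)\subset\partial^H_+(\Delta(F,S|V)\cup\Delta(G,S|V))$: a point at the top of the $\Delta(F,S|V)$-fiber might be buried inside $\Delta(G,S|V)$, which would make it fail to be in the upper boundary of the union. This is precisely what the equal-$H$-height hypothesis must rule out, and it is the direction you dismiss as ``immediate from Lemma~\ref{upper and lower}''. Your closing diagnostic also has the phenomenon backwards: the danger is not extra boundary points of the union, but boundary points of an individual piece that fail to remain on the boundary of the union.

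A repaired argument for the hard inclusion: suppose $\omega\in\partial^H_+\Delta(F,S|V)$ but some $\omega+tb_{H/N^*}\in\Delta(G,S|V)$ with $t>0$. Fix $a_F\in F$, $a_G\in G$, and set $f_F=\langle\cdot,a_F\rangle$, $f_G=\langle\cdot,a_G\rangle$ (well-defined by Proposition~\ref{property of polyhedrons}.7). Since both $\Delta(F,S|V)$ and $\Delta(G,S|V)$ are maximal cones of $\mathcal{D}(S|V)$, the piecewise linear convex function $\Ord(\cdot,S|V)$ satisfies $\Ord=f_F\leq f_G$ on $\Delta(F,S|V)$ and $\Ord=f_G\leq f_F$ on $\Delta(G,S|V)$. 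The equal-height hypothesis says $f_F(b_{H/N^*})=f_G(b_{H/N^*})$, so $f_F-f_G$ is constant along $\{\omega\}+\Vect(H)$; from $\omega\in\Delta(F,S|V)$ one gets $f_F(\omega)\leq f_G(\omega)$, from $\omega+tb_{H/N^*}\in\Delta(G,S|V)$ one gets $f_G(\omega+tb_{H/N^*})\leq f_F(\omega+tb_{H/N^*})$, and the constancy forces $f_F=f_G$ on the whole fiber. Then $\Ord(\omega+tb_{H/N^*},S|V)=f_G(\omega+tb_{H/N^*})=f_F(\omega+tb_{H/N^*})$, so $a_F\in\Delta(\omega+tb_{H/N^*},S|V)$, hence (again by Proposition~\ref{property of polyhedrons}.7) $F\subset\Delta(\omega+tb_{H/N^*},S|V)$, i.e.\ $\omega+tb_{H/N^*}\in\Delta(F,S|V)$ --- contradicting $\omega\in\partial^H_+\Delta(F,S|V)$. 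This is the step your proposal is missing.
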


Below in this section, we assume moreover that $\mathcal{D}(S|V)\hat{\cap}\mathcal{F}(\Delta)$ is $H$-simple for any $\Delta\in\mathcal{C}\Mx$.
We denote
$$\max=\max\mathcal{H}(H, \mathcal{C}, S)\in(1/\Den(S/N))\Z, \ \min=\min\mathcal{H}(H, \mathcal{C}, S)\in(1/\Den(S/N))\Z.$$
By $\pi_H:V^*\rightarrow V^*/\Vect(H)$ we denote the canonical surjective homomorphism of vector spaces over $\R$ to the residue vector space $V^*/\Vect(H)$.
\begin{lemma}
\label{compatible2}
Consider any $h\in\mathcal{H}(H, \mathcal{C}, S)$.
\begin{enumerate}
\item
$\max\geq\min$. $\Ht(H,\mathcal{C},S)=\max-\min$.
$\Ht(H,\mathcal{C},S)=0\Leftrightarrow \max=\min\Leftrightarrow \mathcal{C}$ is a subdivision of $\mathcal{D}(S|V)$.
\item
$\partial^H_+|\mathcal{E}(h)|\neq\emptyset\Leftrightarrow h\neq\min$.
$\partial^H_-|\mathcal{E}(h)|\neq\emptyset$.
\item
If $h\neq\min$, then $\pi_H(\partial^H_+|\mathcal{E}(h)|)=\pi_H(|\mathcal{E}(h)|)$ and the mapping $\pi_H:$\hfill\break $\partial^H_+|\mathcal{E}(h)|\rightarrow \pi_H(|\mathcal{E}(h)|)$ induced by $\pi_H$ is a continuous bijective mapping whose inverse mapping is also continuous.

$\pi_H(\partial^H_-|\mathcal{E}(h)|)=\pi_H(|\mathcal{E}(h)|)$ and the mapping $\pi_H: \partial^H_-|\mathcal{E}(h)|\rightarrow \pi_H(|\mathcal{E}(h)|)$ induced by $\pi_H$ is a continuous bijective mapping whose inverse mapping is also continuous.
\item
Consider any $a\in V^*$.
Note that $\pi_H(a)\in V^*/\Vect(H)$ and $\pi_H^{-1}(\pi_H(a))=\{a\}+\R b_{H/N^*}$.
\begin{enumerate}
\item
If $\pi_H(a)\in\pi_H(|\mathcal{E}(h)|)$ and $h\neq\min$, then there exists uniquely a real number $t_+\in\R$ satisfying $a+t_+b_{H/N^*}\in\partial^H_+|\mathcal{E}(h)|$.
\end{enumerate}

In the case where $\pi_H(a)\in\pi_H(|\mathcal{E}(h)|)$ and $h\neq\min$, we take the unique $t_+\in\R$ satisfying $a+t_+b_{H/N^*}\in\partial^H_+|\mathcal{E}(h)|$.

\begin{enumerate}
\setcounter{enumii}{1}
\item
If $\pi_H(a)\in\pi_H(|\mathcal{E}(h)|)$, then there exists uniquely a real number $t_-\in\R$ satisfying $a+t_-b_{H/N^*}\in\partial^H_-|\mathcal{E}(h)|$.
\end{enumerate}

In the case where $\pi_H(a)\in\pi_H(|\mathcal{E}(h)|)$, we take the unique $t_-\in\R$ satisfying $a+t_-b_{H/N^*}\in\partial^H_-|\mathcal{E}(h)|$.

\begin{enumerate}
\setcounter{enumii}{2}
\item
If $\pi_H(a)\in\pi_H(|\mathcal{E}(h)|)$ and $h\neq\min$, then $t_-\leq t_+$.
\item
$(\{a\}+\R b_{H/N^*})\cap|\mathcal{E}(h)|$
\begin{equation*}
=
\begin{cases}
\emptyset&\text{if $\pi_H(a)\not\in\pi_H(|\mathcal{E}(h)|)$},\\
\{a+t b_{H/N^*}|t\in\R, t_-\leq t\leq t_+\}&\text{if $\pi_H(a)\in\pi_H(|\mathcal{E}(h)|)$ and $h\neq\min$},\\
\{a+t b_{H/N^*}|t\in\R, t_-\leq t\}&\text{if $\pi_H(a)\in\pi_H(|\mathcal{E}(h)|)$ and $h=\min$}.
\end{cases}
\end{equation*}
\end{enumerate}
\item
$\partial^H_+|\mathcal{D}(h)|\neq\emptyset\Leftrightarrow h\neq\min$.
$\partial^H_-|\mathcal{D}(h)|= |\mathcal{D}(h)|\cap|\mathcal{C}-(\mathcal{C}/H)|\neq\emptyset$.
\item
If $h\neq\min$, then $\pi_H(\partial^H_+|\mathcal{D}(h)|)=\pi_H(|\mathcal{D}(h)|)$ and the mapping $\pi_H:$\hfill\break $\partial^H_+|\mathcal{D}(h)|\rightarrow \pi_H(|\mathcal{D}(h)|)$ induced by $\pi_H$ is a continuous bijective mapping whose inverse mapping is also continuous.

$\pi_H(\partial^H_-|\mathcal{D}(h)|)=\pi_H(|\mathcal{D}(h)|)$ and the mapping $\pi_H: \partial^H_-|\mathcal{D}(h)|\rightarrow \pi_H(|\mathcal{D}(h)|)$ induced by $\pi_H$ is a continuous bijective mapping whose inverse mapping is also continuous.
\item
Consider any $a\in V^*$.
Note that $\pi_H(a)\in V^*/\Vect(H)$ and $\pi_H^{-1}(\pi_H(a))=\{a\}+\R b_{H/N^*}$.
\begin{enumerate}
\item
If $\pi_H(a)\in\pi_H(|\mathcal{D}(h)|)$ and $h\neq\min$, then there exists uniquely a real number $t_+\in\R$ satisfying $a+t_+b_{H/N^*}\in\partial^H_+|\mathcal{D}(h)|$.
\end{enumerate}

In the case where $\pi_H(a)\in\pi_H(|\mathcal{D}(h)|)$ and $h\neq\min$, we take the unique $t_+\in\R$ satisfying $a+t_+b_{H/N^*}\in\partial^H_+|\mathcal{D}(h)|$.

\begin{enumerate}
\setcounter{enumii}{1}
\item
If $\pi_H(a)\in\pi_H(|\mathcal{D}(h)|)$, then there exists uniquely a real number $t_-\in\R$ satisfying $a+t_-b_{H/N^*}\in\partial^H_-|\mathcal{D}(h)|$.
\end{enumerate}

In the case where $\pi_H(a)\in\pi_H(|\mathcal{D}(h)|)$, we take the unique $t_-\in\R$ satisfying $a+t_-b_{H/N^*}\in\partial^H_-|\mathcal{D}(h)|$.

\begin{enumerate}
\setcounter{enumii}{2}
\item
If $\pi_H(a)\in\pi_H(|\mathcal{D}(h)|)$ and $h\neq\min$, then $t_-\leq t_+$.
\item
$(\{a\}+\R b_{H/N^*})\cap|\mathcal{D}(h)|$
\begin{equation*}
=
\begin{cases}
\emptyset&\text{if $\pi_H(a)\not\in\pi_H(|\mathcal{D}(h)|)$},\\
\{a+t b_{H/N^*}|t\in\R, t_-\leq t\leq t_+\}&\text{if $\pi_H(a)\in\pi_H(|\mathcal{D}(h)|)$ and $h\neq\min$},\\
\{a+t b_{H/N^*}|t\in\R, t_-\leq t\}&\text{if $\pi_H(a)\in\pi_H(|\mathcal{D}(h)|)$ and $h=\min$}.
\end{cases}
\end{equation*}
\end{enumerate}
\item
If $h=\max$, then $\mathcal{D}(h)= \mathcal{E}(h)$.
\item
Assume $h\neq\max$.
Put $g=\min\{f\in\mathcal{H}(H, \mathcal{C}, S)|f>h\}\in\mathcal{H}(H, \mathcal{C}, S)$.

$\mathcal{D}(h)=\mathcal{D}(g)\cup\mathcal{E}(h)$.
$|\mathcal{D}(h)|=|\mathcal{D}(g)|\cup|\mathcal{E}(h)|$.
$\partial^H_+|\mathcal{D}(g)|\neq\emptyset$.
$\partial^H_-|\mathcal{E}(h)|\neq\emptyset$.
$|\mathcal{D}(g)|\cap|\mathcal{E}(h)|= \partial^H_+|\mathcal{D}(g)|\cap\partial^H_-|\mathcal{E}(h)|$.
$\partial^H_-|\mathcal{E}(h)|\subset\partial^H_+|\mathcal{D}(g)|\cup|\mathcal{C}-(\mathcal{C}/H)|$.
$\partial^H_+|\mathcal{D}(h)|=( \partial^H_+|\mathcal{D}(g)|- \partial^H_-|\mathcal{E}(h)|)\cup\partial^H_+|\mathcal{E}(h)|$.
\item
If $h=\min$, then $\mathcal{D}(h)=\mathcal{D}(S|V)\hat{\cap}\mathcal{C}$ and
$|\mathcal{D}(h)|=| \mathcal{C}|$.
\item
$$\pi_H(|\mathcal{D}(h)|)\subset|\pi_{H*}\mathcal{C}|.$$
$$\pi_H(|\mathcal{D}(h)|)=
|\{\bar{\Delta}\in(\pi_{H*}\mathcal{C})\Mx|\bar{\Delta}\subset\pi_H(|\mathcal{D}(h)|)\}|.$$
$$\Clos(|\pi_{H*}\mathcal{C}|-\pi_H(|\mathcal{D}(h)|))=
|\{\bar{\Delta}\in(\pi_{H*}\mathcal{C})\Mx|\bar{\Delta}\subset\Clos(|\pi_{H*}\mathcal{C}|-\pi_H(|\mathcal{D}(h)|))\}|.$$

\item
Consider any $\bar{E}\in(\mathcal{C}-(\mathcal{C}/H))_1$.
If $\Ht(H,\mathcal{C},S)>0$ and $\pi_H(\bar{E})\subset\pi_H(\mathcal{D}(\max))$, then there exists uniquely a real number $\gamma(\bar{E})\in\R$ satisfying 
$b_{\bar{E}/N^*}+\gamma(\bar{E})b_{H/N^*}\in\partial^H_+\mathcal{D}(\max)$.
\end{enumerate}
\end{lemma}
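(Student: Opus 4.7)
The plan is to prove the twelve claims sequentially, with the central strategy being to reduce global statements about $\mathcal{E}(h)$ and $\mathcal{D}(h)$ to the $H$-simple local structure inside each $\Delta\in\mathcal{C}\Mx$ guaranteed by the standing hypothesis.

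Part (1) is essentially a rewording of definitions: $\max\geq\min$ and $\Ht(H,\mathcal{C},S)=\max-\min$ are immediate, while $\Ht=0$ forces all minimal faces of $(\mathcal{C},S)$ to share one $b_{H/N^*}$-value. Combined with the $H$-simplicity of $\mathcal{D}(S|V)\hat{\cap}\mathcal{F}(\Delta)$ for each $\Delta\in\mathcal{C}\Mx$ (and Lemma~\ref{propsimple}.4), this collapses every local decomposition to $\mathcal{F}(\Delta)$ itself, which is exactly the statement that $\mathcal{C}$ subdivides $\mathcal{D}(S|V)$.

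For parts (2)--(7) I argue locally. Fixing $\Delta\in\mathcal{C}\Mx$, the $H$-simplicity hypothesis equips $\mathcal{D}(S|V)\hat{\cap}\mathcal{F}(\Delta)$ with an $H$-ordered list of maximal cones and an $H$-skeleton (Lemma~\ref{propsimple}.15), each maximal cone of which corresponds to a minimal face of $S$ at a specific height. Hence $|\mathcal{E}(h)|\cap\Delta$ is an explicit union of consecutive $H$-skeleton pieces, and $|\mathcal{D}(h)|\cap\Delta$ is an upward-closed union of maximal cones in the $H$-order. Applying Lemma~\ref{upper and lower}.5 inside $\Delta$, each line $\{a\}+\R b_{H/N^*}$ meets these sets in closed intervals whose endpoints realize the $H$-upper and $H$-lower boundaries; gluing across $\mathcal{C}\Mx$ yields parts (2)--(4) for $\mathcal{E}(h)$ and parts (5)--(7) for $\mathcal{D}(h)$. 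The identification $\partial^H_-|\mathcal{D}(h)|=|\mathcal{D}(h)|\cap|\mathcal{C}-(\mathcal{C}/H)|$ in part (5) arises because moving in the $-H$ direction from a point of $|\mathcal{D}(h)|$ exits immediately exactly when the point lies on a $\mathcal{C}$-cone not containing $H$.

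Parts (8)--(10) handle extreme heights: for $h=\max$ there is no higher height so $\mathcal{D}(\max)=\mathcal{E}(\max)$ by definition; for $h\neq\max$, setting $g=\min\{f\in\mathcal{H}(H,\mathcal{C},S)\mid f>h\}$ gives the telescope $\mathcal{D}(h)=\mathcal{D}(g)\cup\mathcal{E}(h)$, with the intersection formulas coming from tracing $H$-parallel lines locally; for $h=\min$ every minimal face qualifies, so $\mathcal{D}(\min)=\mathcal{D}(S|V)\hat{\cap}\mathcal{C}$. Part (11) invokes the bijection $\mathcal{C}/H\leftrightarrow\pi_{H*}\mathcal{C}$ from Lemma~\ref{property of C}.7: each $\pi_H$-fiber meets a cone containing $H$ in a connected interval, so $\pi_H(|\mathcal{D}(h)|)$ is a union of whole maximal cones of $\pi_{H*}\mathcal{C}$, and the closed complement is characterized the same way. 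Part (12) is then a direct application of (7)(a) to the point $a=b_{\bar{E}/N^*}$ with $h=\max$: the hypothesis $\Ht>0$ gives $\max\neq\min$ so $\partial^H_+|\mathcal{D}(\max)|\neq\emptyset$, and $\pi_H(\bar{E})\subset\pi_H(|\mathcal{D}(\max)|)$ places $\pi_H(b_{\bar{E}/N^*})$ inside this projection, yielding a unique $t_+$ which is $\gamma(\bar{E})$.

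The main obstacle will be parts (5)--(7), specifically the identity $\partial^H_-|\mathcal{D}(h)|=|\mathcal{D}(h)|\cap|\mathcal{C}-(\mathcal{C}/H)|$ and the global bijectivity of $\pi_H$ on the $H$-upper and $H$-lower boundaries. Verifying these requires coherent matching of the $H$-orders on adjacent $\Delta,\Delta'\in\mathcal{C}\Mx$ sharing a codimension-one face, via the $H$-simplicity hypothesis on both local decompositions; one must show that the upward-closed layer $\mathcal{D}(h)$ has its $-H$-boundary exactly on the portion of $|\mathcal{C}|$ not covered by cones in $\mathcal{C}/H$, and that the $H$-upper boundary, consisting of pieces interior to various maximal $\Delta$'s, assembles into a single sheet meeting each $\pi_H$-fiber once.
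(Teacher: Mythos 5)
The paper never writes out a proof of this lemma: it is one of the statements the author leaves as ``following from definitions'' and from the preceding structural lemmas, so there is no proof of record to measure you against. That said, your route is clearly the intended one --- reduce everything to the $H$-simple local picture $\mathcal{D}(S|V)\hat{\cap}\mathcal{F}(\Delta)$ for $\Delta\in\mathcal{C}\Mx$, use the $H$-order and structure constants of Lemma~\ref{propsimple}, describe fibers of $\pi_H$, and assemble. Two points of precision are worth flagging. First, Lemma~\ref{upper and lower}.5 applies to a \emph{single} convex polyhedral cone, whereas $|\mathcal{E}(h)|$ and $|\mathcal{D}(h)|$ are unions; to see that a fiber $\{a\}+\R b_{H/N^*}$ meets such a union in one interval you should instead invoke Lemma~\ref{propsimple}.15.(q)--(r) (or \ref{propsimple}.17), which exhibits the fiber intersections with consecutive maximal cones $\Delta(i)$ as abutting intervals $[t(i),t(i+1)]$ sharing skeleton walls; an initial segment of the $H$-order then meets each fiber in one interval, which is exactly what (2)--(7) need, together with the observation that within a fixed $\Delta$ the sets $\mathcal{E}(h),\mathcal{D}(h)$ cut out a single cone, respectively an initial segment, of the $H$-ordered list. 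Second, in (1) you only argue $\Ht=0\Rightarrow$ subdivision; the converse needs Lemma~\ref{compatible1}.3--4 (every $\Delta\in\mathcal{C}/H$ realizes the global minimum, and the global maximum is the maximum of the local ones).

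The ``main obstacle'' you name --- matching $H$-orders across adjacent maximal cones --- is in fact not an obstacle, and leaving it open is the only real gap in your write-up. Every maximal cone of $\mathcal{C}$ contains $H$, and by Lemma~\ref{property of C}.7--8 the projection $\pi_H$ maps $|\mathcal{C}-(\mathcal{C}/H)|$ bijectively onto $|\pi_{H*}\mathcal{C}|$; consequently a fiber of $\pi_H$ through a point of $|\mathcal{C}|$ lies entirely in one $\Delta\in\mathcal{C}\Mx$ when its image is interior to the corresponding maximal cone of $\pi_{H*}\mathcal{C}$, and otherwise lies in a common face $\Lambda\in\mathcal{F}(\Delta)/H\cap\mathcal{F}(\Delta')/H$, where Lemma~\ref{propsimple}.16 describes the restriction of both $H$-simple decompositions compatibly. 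Moreover the heights entering $\mathcal{E}(h)$ and $\mathcal{D}(h)$ are heights of minimal faces of the single global polyhedron $S$, so no reconciliation of orders between $\Delta$ and $\Delta'$ is required: $|\mathcal{D}(h)|\cap\Delta$ is intrinsically the initial segment of the $H$-order on $(\mathcal{D}(S|V)\hat{\cap}\mathcal{F}(\Delta))^0$ determined by $h$, and on a shared face these segments restrict to the same thing. With this inserted, your fiberwise argument gives (2)--(7) and (9), the identity $\partial^H_-|\mathcal{D}(h)|=|\mathcal{D}(h)|\cap|\mathcal{C}-(\mathcal{C}/H)|$ following because for a simplicial $\Delta$ the union of faces not containing $H$ is exactly $H\Op|\Delta$, which is the lower wall of the first cone $\Delta(1)$ in the $H$-order; (8), (10), (11) are then formal, and (12) is, as you say, case (7)(a) applied to $a=b_{\bar{E}/N^*}$ at $h=\max$.
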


Below we assume moreover that $\Ht(H,\mathcal{C},S)>0$.

\begin{definition}
\label{defchar}
We define a function $\gamma: (\mathcal{C}-(\mathcal{C}/H))_1\rightarrow \R$.

Consider any $\bar{E}\in(\mathcal{C}-(\mathcal{C}/H))_1$.
If $\pi_H(\bar{E})\subset\pi_H(\mathcal{D}(\max))$, then we take the unique real number $\gamma(\bar{E})\in\R$ satisfying 
$b_{\bar{E}/N^*}+\gamma(\bar{E})b_{H/N^*}\in\partial^H_+\mathcal{D}(\max)$.
If $\pi_H(\bar{E})\not\subset\pi_H(\mathcal{D}(\max))$, then we put $\gamma(\bar{E})=0\in\R$.

We call $\gamma$ the \emph{characteristic function} of the triplet $(H,\mathcal{C},S)$.
\end{definition}

\begin{lemma}
\label{propchar}
Let $\gamma: (\mathcal{C}-(\mathcal{C}/H))_1\rightarrow \R$ denote the characteristic function of $(H, \mathcal{C}, S)$.
\begin{enumerate}
\item
For any $\bar{E}\in(\mathcal{C}-(\mathcal{C}/H))_1$, $\gamma(\bar{E})\in\Q_0$.
\item
Consider any $\Delta\in\mathcal{C}\Mx$. 
There exists $\bar{E}\in\mathcal{F}(H\Op|\Delta)_1$ satisfying $\gamma(\bar{E})>0\Leftrightarrow\pi_H(\Delta)\subset\pi_H(\mathcal{D}(\max))\Leftrightarrow \pi_H(\Delta)\not\subset\Clos(|\pi_{H*}\mathcal{C}|-\pi_H(|\mathcal{D}(\max)|))$.
\item
Consider any $\Delta\in\mathcal{C}\Mx$ satisfying $\pi_H(\Delta)\subset\pi_H(\mathcal{D}(\max))$.
Note that $H\in\mathcal{F}(\Delta)_1$ and $\mathcal{F}(H\Op|\Delta)_1\subset (\mathcal{C}-(\mathcal{C}/H))_1$.

$\mathcal{D}(S+(\Delta^\vee|V^*)|V)=\mathcal{D}(S|V)\hat{\cap}\mathcal{F}(\Delta)$ is $H$-simple, and
$c(S+(\Delta^\vee|V^*))\geq 2$.

For any $\bar{E}\in\mathcal{F}(H\Op|\Delta)_1$, $\gamma(\bar{E})=c(\mathcal{D}(S+(\Delta^\vee|V^*)|V), 2, \bar{E})$,
where \hfill\break$c(\mathcal{D}(S+(\Delta^\vee|V^*)|V), 2, \bar{E})$ denotes the structure constant of $\mathcal{D}(S+(\Delta^\vee|V^*)|V)$ corresponding to $(2, \bar{E})$.
\item
There exists $\bar{E}\in(\mathcal{C}-(\mathcal{C}/H))_1$ with $\gamma(\bar{E})>0$.
\item
For any $\bar{E}\in(\mathcal{C}-(\mathcal{C}/H))_1$ satisfying
$\pi_H(\bar{E})\subset\Clos(|\pi_{H*}\mathcal{C}|-\pi_H(|\mathcal{D}(\max)|))$,
$\gamma(\bar{E})=0$.
\item
Consider any $\bar{E}\in(\mathcal{C}-(\mathcal{C}/H))_1$ satisfying $\gamma(\bar{E})\not\in\Z$.
Then, there exists uniquely $h(\bar{E})\in\mathcal{H}(H, \mathcal{C}, S)$ satisfying
\begin{equation*}\begin{split}
\{h\in\mathcal{H}(H, \mathcal{C}, S)&|
b_{\bar{E}/N^*}+\lfloor\gamma(\bar{E})\rfloor b_{H/N^*}\in\mathcal{D}(h)-\partial^H_+\mathcal{D}(h)\}\\
=&
\{h\in\mathcal{H}(H, \mathcal{C}, S)|h\leq h(\bar{E})\},\\
\end{split}\end{equation*}
and if $h(\bar{E})\in\mathcal{H}(H, \mathcal{C}, S)$ satisfies this equality, then $h(\bar{E})\neq\max$.
\end{enumerate}
\end{lemma}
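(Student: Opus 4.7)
The plan is to establish the six claims by combining the definition of $\gamma$ with the structural results on $\mathcal{D}(h)$ and the $H$-upper boundary from Lemma~\ref{compatible1} and Lemma~\ref{compatible2}, together with the theory of structure constants for $H$-simple decompositions from Section~\ref{simple}. For claim $(1)$, rationality of $\gamma(\bar{E})$ follows because $\mathcal{D}(\max)$ is a rational convex polyhedral cone decomposition over $N^*$ (Lemma~\ref{compatible1}.7), so $\partial^H_+|\mathcal{D}(\max)|$ is a union of rational facets; intersecting the rational line $\{b_{\bar{E}/N^*}+tb_{H/N^*}\mid t\in\R\}$ with one of them gives a rational value of $t$. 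For non-negativity, whenever $\pi_H(\bar{E})\subset\pi_H(|\mathcal{D}(\max)|)$ the point $b_{\bar{E}/N^*}$ lies in $\bar{E}\subset|\mathcal{C}-(\mathcal{C}/H)|$; by Lemma~\ref{compatible2}.5 $\partial^H_-|\mathcal{D}(\max)|=|\mathcal{D}(\max)|\cap|\mathcal{C}-(\mathcal{C}/H)|$, so the lower-boundary intersection in Lemma~\ref{compatible2}.7 occurs at some $t_-\geq 0$, and $t_-\leq t_+=\gamma(\bar{E})$ forces $\gamma(\bar{E})\geq 0$.

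For claims $(2)$ and $(3)$, fix $\Delta\in\mathcal{C}\Mx$. The hypothesis that $\mathcal{D}(S+(\Delta^\vee|V^*)|V)=\mathcal{D}(S|V)\hat{\cap}\mathcal{F}(\Delta)$ is $H$-simple supplies an $H$-skeleton with structure constants. I would first show via Lemma~\ref{compatible1}.4 that $\max\mathcal{H}(H,S+(\Delta^\vee|V^*))=\max$ precisely when $\pi_H(\Delta)\subset\pi_H(|\mathcal{D}(\max)|)$, which together with $\Ht(H,\mathcal{C},S)>0$ also yields $c(S+(\Delta^\vee|V^*))\geq 2$ in that case. Under this identification, the top element $\bar{\Delta}(2)$ of the local $H$-skeleton coincides with $\partial^H_+|\mathcal{D}(\max)|\cap\Delta$, since both describe the locus of points in $\Delta$ whose $H$-fibre realises the unique maximal height. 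The defining equation $b_{\bar{E}/N^*}+c\cdot b_{H/N^*}\in\Vect(\bar{\Delta}(2))$ for the structure constant $c(\mathcal{D}(S+(\Delta^\vee|V^*)|V),2,\bar{E})$ then coincides with the defining equation for $\gamma(\bar{E})$, proving $(3)$. The forward direction of $(2)$ follows from Lemma~\ref{propsimple}.15.(g), (i), which forces some structure constant at $i=2$ to be strictly positive; the converse is immediate from the definition of $\gamma$. Claim $(4)$ then follows from $(2)$: since $\Ht(H,\mathcal{C},S)>0$, some $\Delta\in\mathcal{C}\Mx$ carries a minimal face of $\mathcal{V}(\mathcal{C},S)$ attaining $\max$, hence $\pi_H(\Delta)\subset\pi_H(|\mathcal{D}(\max)|)$ and $(2)$ produces the required edge.

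For claim $(5)$, Lemma~\ref{compatible2}.12 writes $\pi_H(|\mathcal{D}(\max)|)$ and $\Clos(|\pi_{H*}\mathcal{C}|-\pi_H(|\mathcal{D}(\max)|))$ each as unions of maximal cones of $\pi_{H*}\mathcal{C}$, meeting only along their common boundary; hence $\pi_H(\bar{E})\subset\Clos(|\pi_{H*}\mathcal{C}|-\pi_H(|\mathcal{D}(\max)|))$ prevents $\pi_H(\bar{E})\subset\pi_H(|\mathcal{D}(\max)|)$, and $\gamma(\bar{E})=0$ by definition. For claim $(6)$, with $\gamma(\bar{E})\not\in\Z$ and $p=b_{\bar{E}/N^*}+\lfloor\gamma(\bar{E})\rfloor b_{H/N^*}$, the monotonicity $|\mathcal{D}(h)|\supset|\mathcal{D}(g)|$ for $h\leq g$ (Lemma~\ref{compatible1}.8) and the strict upward motion of the $H$-upper boundary as $h$ decreases make the set of $h$ with $p\in|\mathcal{D}(h)|-\partial^H_+|\mathcal{D}(h)|$ downward-closed, yielding the unique largest $h(\bar{E})$. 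The property $h(\bar{E})\neq\max$ reduces, via the uniqueness in Lemma~\ref{upper and lower}.5 together with $\lfloor\gamma(\bar{E})\rfloor<\gamma(\bar{E})$, to showing $p\not\in|\mathcal{D}(\max)|$; this last step is the only one requiring care, and I would derive it from the rationality of $\partial^H_-|\mathcal{D}(\max)|$ and the observation that the lattice point $p$ lies strictly below the non-integer upper endpoint $\gamma(\bar{E})$ while failing to meet the lower endpoint dictated by the integral structure of the cones of $\mathcal{C}-(\mathcal{C}/H)$ along the $\bar{E}$-fibre. The main obstacle throughout is the local-to-global matching in $(3)$: identifying $\partial^H_+|\mathcal{D}(\max)|\cap\Delta$ with $\bar{\Delta}(2)$ requires carefully synchronising the global $\mathcal{D}(h)$-stratification with the $H$-order on the local $H$-skeleton, after which every remaining claim reduces to direct computation from the definitions.
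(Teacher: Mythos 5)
Your approach to claims (1)--(5) is on the right track: the reduction of claim (3) to an identification of $\partial^H_+|\mathcal{D}(\max)|\cap\Delta$ with the skeleton element $\bar{\Delta}(2)$ of the local $H$-simple decomposition $\mathcal{D}(S|V)\hat{\cap}\mathcal{F}(\Delta)$ is the correct key observation, and claims (1), (2), (4), (5) then reduce as you describe. (Your non-negativity argument in (1) is slightly under-justified --- you need $t_-\geq 0$ along the fibre, which follows because the rays of $\mathcal{C}-(\mathcal{C}/H)$ lie in the complementary subspace to $H$, so the fibre meets $|\mathcal{C}-(\mathcal{C}/H)|$ only at $t=0$.)

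However, your treatment of claim (6) contains a genuine error stemming from a misreading of the paper's notation. Definition~\ref{relative}.1 declares
$\lfloor r\rfloor=\min\{i\in\Z\mid r\leq i\}$ and $\lceil r\rceil=\max\{i\in\Z\mid r\geq i\}$,
which \emph{swaps} the standard floor/ceiling conventions: $\lfloor\cdot\rfloor$ is the ceiling and $\lceil\cdot\rceil$ is the floor. (This is also why the paper can assert $\bar{m}\leq m$ in Definition~\ref{defcomp}.) Under the paper's convention, $\gamma(\bar{E})\not\in\Z$ forces $\lfloor\gamma(\bar{E})\rfloor>\gamma(\bar{E})$, so the point $p=b_{\bar{E}/N^*}+\lfloor\gamma(\bar{E})\rfloor b_{H/N^*}$ lies \emph{strictly above} the $H$-upper boundary $\partial^H_+|\mathcal{D}(\max)|$ along the $\bar{E}$-fibre, and hence $p\not\in|\mathcal{D}(\max)|$ is immediate, giving $h(\bar{E})\neq\max$ with no further work. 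Your write-up instead asserts $\lfloor\gamma(\bar{E})\rfloor<\gamma(\bar{E})$, which both contradicts the definition and leads you to frame the step $p\not\in|\mathcal{D}(\max)|$ as requiring a delicate argument about the ``integral structure of the cones of $\mathcal{C}-(\mathcal{C}/H)$ along the $\bar{E}$-fibre''; that argument does not close, because along the fibre through $b_{\bar{E}/N^*}$ the set $|\mathcal{D}(\max)|$ is the full interval $[0,\gamma(\bar{E})]$ (locally it is $\Delta(1)=\bar{\Delta}(1)+\bar{\Delta}(2)$, whose $H$-lower boundary is $\bar{\Delta}(1)=H\Op|\Delta$ meeting the fibre at $t=0$), so a point with $H$-coordinate strictly between $0$ and $\gamma(\bar{E})$ \emph{would} lie in $|\mathcal{D}(\max)|$. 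In short, the conclusion of (6) is correct but your identification of the bottleneck is wrong, and once you adopt the paper's (nonstandard) floor, the step you flag as hard disappears.
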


\begin{definition}
\label{defcomp}
Let $\gamma: (\mathcal{C}-(\mathcal{C}/H))_1\rightarrow \R$ denote the characteristic function of $(H, \mathcal{C}, S)$.
For any $\bar{E}\in(\mathcal{C}-(\mathcal{C}/H))_1$ satisfying $\gamma(\bar{E})\not\in\Z$, we take the unique element  $h(\bar{E})\in\mathcal{H}(H, \mathcal{C}, S)$ satisfying
\begin{equation*}\begin{split}
\{h\in\mathcal{H}(H, \mathcal{C}, S)&|
b_{\bar{E}/N^*}+\lfloor\gamma(\bar{E})\rfloor b_{H/N^*}\in\mathcal{D}(h)-\partial^H_+\mathcal{D}(h)\}\\
=&
\{h\in\mathcal{H}(H, \mathcal{C}, S)|h\leq h(\bar{E})\}.\\
\end{split}\end{equation*}

Let
\begin{equation*}\begin{split}
m&=\sum_{\bar{E}\in(\mathcal{C}-(\mathcal{C}/H))_1}\lfloor\gamma(\bar{E})\rfloor,\\
\bar{m}&=\sum_{\bar{E}\in(\mathcal{C}-(\mathcal{C}/H))_1}\lceil\gamma(\bar{E})\rceil,\text{ and}\\
\mathcal{R}&=\{ \bar{E}\in(\mathcal{C}-(\mathcal{C}/H))_1|\gamma(\bar{E})\not\in\Z\}.
\end{split}\end{equation*}
Note that $m\in\Z_+$, $\bar{m}\in\Z_0$, $\bar{m}\leq m$, and
$m-\bar{m}=\sharp\mathcal{R}$.

Consider any mapping $E:\{1,2,\ldots,m\}\rightarrow (\mathcal{C}-(\mathcal{C}/H))_1$.

We say that the mapping $ E$ is \emph{compatible} with $S$, if the following three conditions are satisfied:
\begin{enumerate}
\item
For any $\bar{E}\in(\mathcal{C}-(\mathcal{C}/H))_1$,
$\sharp(\{1,2,\ldots,\bar{m}\}\cap E^{-1}(\bar{E}))= \lceil\gamma(\bar{E})\rceil$.
\item
$E(\{\bar{m}+1,\bar{m}+2,\ldots,m\})=\mathcal{R}$.
\item
If $m-\bar{m}\geq 2$, then $h(E(i))\geq h(E(i+1))$ for any $i\in\{\bar{m}+1,\bar{m}+2,\ldots,m-1\}$.
\end{enumerate}
\end{definition}

\begin{lemma}
\label{existcomp}
Let $\gamma: (\mathcal{C}-(\mathcal{C}/H))_1\rightarrow\R$ denote the characteristic function of $(H, \mathcal{C}, S)$.
Let $m=\sum_{\bar{E}\in(\mathcal{C}-(\mathcal{C}/H))_1}\lfloor\gamma(\bar{E})\rfloor\in\Z_+$,
$\bar{m}=\sum_{\bar{E}\in(\mathcal{C}-(\mathcal{C}/H))_1}\lceil\gamma(\bar{E})\rceil\in\Z_0$, and
$\mathcal{R}=\{\bar{E}\in(\mathcal{C}-(\mathcal{C}/H))_1|\gamma(\bar{E})\not\in\Z\}\subset (\mathcal{C}-(\mathcal{C}/H))_1$.
\begin{enumerate}
\item
There exists a compatible mapping $E:\{1,2,\ldots,m\}\rightarrow (\mathcal{C}-(\mathcal{C}/H))_1$
with $S$.
\item
Assume that a mapping $E:\{1,2,\ldots,m\}\rightarrow (\mathcal{C}-(\mathcal{C}/H))_1$
is compatible with $S$.
\begin{enumerate}
\item
For any $\bar{E}\in(\mathcal{C}-(\mathcal{C}/H))_1$,
$\sharp E^{-1}(\bar{E})=\lfloor \gamma(\bar{E})\rfloor$, and
$\sharp(\{1,2,\ldots, \bar{m}\}\cap E^{-1}(\bar{E}))=\lceil \gamma(\bar{E})\rceil$.

\item
$E(\{\bar{m}+1, \bar{m}+2,\ldots,m\})=\mathcal{R}$, and the mapping $E: \{\bar{m}+1, \bar{m}+2,\ldots,m\}\rightarrow \mathcal{R}$ induced by $E$ is bijective.
\item
$E(\{1,2,\ldots,m\})=\{\bar{E}\in(\mathcal{C}-(\mathcal{C}/H))_1|\gamma(\bar{E})>0\}$.
\item
For any $i\in\{1,2,\ldots,m\}$, $\pi_H(E(i))\subset\pi_H(\mathcal{D}(\max))$ and
$\pi_H(E(i))\not\subset\Clos(|\pi_{H*}\mathcal{C}|-\pi_H(|\mathcal{D}(\max)|))$.
\end{enumerate}
\end{enumerate}

Consider any subset $\hat{\mathcal{C}}$ of $\mathcal{C}$ satisfying $\dim\hat{\mathcal{C}}=\dim \Vect(|\hat{\mathcal{C}}|)\geq 2$, $\hat{\mathcal{C}}\Mx=\hat{\mathcal{C}}^0$,
$H\in\hat{\mathcal{C}}_1$, and $\hat{\mathcal{C}}=(\hat{\mathcal{C}}/H)\Fc$.

Note that
$\hat{\mathcal{C}}$ is a simplicial cone decomposition over $N^*$ in $V^*$,
$\emptyset\neq(\hat{\mathcal{C}}-(\hat{\mathcal{C}}/H))_1\subset(\mathcal{C}-(\mathcal{C}/H))_1$ and
$|\hat{\mathcal{C}}|\subset|\mathcal{C}|\subset|\mathcal{D}(S|V)|$.
\begin{enumerate}
\setcounter{enumi}{2}
\item
$\Ht(H, \hat{\mathcal{C}}, S)\leq \Ht(H,\mathcal{C},S)$.

$\Ht(H, \hat{\mathcal{C}}, S)=\Ht(H,\mathcal{C},S)$, if and only if,
$\gamma(\bar{E})>0$ for some $\bar{E}\in (\hat{\mathcal{C}}-(\hat{\mathcal{C}}/H))_1$
\item
If $\gamma(\bar{E})>0$ for some $\bar{E}\in (\hat{\mathcal{C}}-(\hat{\mathcal{C}}/H))_1$, then $\Ht(H, \hat{\mathcal{C}}, S)>0$ and the composition $(\hat{\mathcal{C}}-(\hat{\mathcal{C}}/H))_1\rightarrow\R$ of the inclusion mapping $(\hat{\mathcal{C}}-(\hat{\mathcal{C}}/H))_1\rightarrow(\mathcal{C}-(\mathcal{C}/H))_1$ and $\gamma: (\mathcal{C}-(\mathcal{C}/H))_1\rightarrow\R$ coincides with the characteristic function of $(H,\hat{\mathcal{C}},S)$.
\item
Consider any compatible mapping: $E:\{1,2,\ldots,m\}\rightarrow (\mathcal{C}-(\mathcal{C}/H))_1$ with $S$.

Let $\hat{m}=\sharp E^{-1}((\hat{\mathcal{C}}-(\hat{\mathcal{C}}/H))_1)\in\Z_0$
and $\hat{\bar{m}}=\sharp(\{1,2,\ldots,\bar{m}\}\cap E^{-1}((\hat{\mathcal{C}}-(\hat{\mathcal{C}}/H))_1))\in\Z_0$.
Let $\hat{\tau}:\{1,2,\ldots,\hat{m}\}\rightarrow\{1,2,\ldots,m\}$ be the unique injective mapping preserving the order and satisfying $\hat{\tau}(\{1,2,\ldots,\hat{m}\})=$\hfill\break$E^{-1}((\hat{\mathcal{C}}-(\hat{\mathcal{C}}/H))_1)$.
Let $\hat{E}:\{1,2,\ldots,\hat{m}\}\rightarrow (\hat{\mathcal{C}}-(\hat{\mathcal{C}}/H))_1$ be the unique mapping satisfying $\iota\hat{E}=E\hat{\tau}$, where $\iota: (\hat{\mathcal{C}}-(\hat{\mathcal{C}}/H))_1\rightarrow(\mathcal{C}-(\mathcal{C}/H))_1$ denotes the inclusion mapping.

If $\gamma(\bar{E})>0$ for some $\bar{E}\in (\hat{\mathcal{C}}-(\hat{\mathcal{C}}/H))_1$, then $\hat{m}=\sum_{\bar{E}\in(\hat{\mathcal{C}}-(\hat{\mathcal{C}}/H))_1}\lfloor\gamma(\bar{E})\rfloor$, $\hat{\bar{m}}=\sum_{\bar{E}\in(\hat{\mathcal{C}}-(\hat{\mathcal{C}}/H))_1}\lceil\gamma(\bar{E})\rceil$ and $\hat{E}$ is compatible with $S$.

If $\gamma(\bar{E})=0$ for any $\bar{E}\in (\hat{\mathcal{C}}-(\hat{\mathcal{C}}/H))_1$, then $\hat{m}=0$.
\end{enumerate}
\end{lemma}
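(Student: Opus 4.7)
The plan is to establish the five claims in order. Throughout, I use the paper's convention that $\lfloor\cdot\rfloor$ and $\lceil\cdot\rceil$ are the standard ceiling and floor respectively, so $m$ is a sum of (standard) ceilings, $\bar{m}$ a sum of (standard) floors, and $m-\bar{m}=\sharp\mathcal{R}$.

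For claim 1, I would construct $E$ in two stages. First, on $\{1,\ldots,\bar{m}\}$, I would list in any order exactly $\lceil\gamma(\bar{E})\rceil$ copies of each $\bar{E}\in(\mathcal{C}-(\mathcal{C}/H))_1$; the total is $\bar{m}$ by definition. Second, on $\{\bar{m}+1,\ldots,m\}$ I would list the $\sharp\mathcal{R}=m-\bar{m}$ elements of $\mathcal{R}$ in some order satisfying $h(E(i))\geq h(E(i+1))$, which exists since $\mathcal{H}(H,\mathcal{C},S)$ is a finite totally ordered subset of $\Q$. Claim 2 is then direct bookkeeping from the compatibility conditions: for (a), $\sharp E^{-1}(\bar{E})=\lceil\gamma(\bar{E})\rceil$ plus $1$ if $\bar{E}\in\mathcal{R}$ and $0$ otherwise, which always equals $\lfloor\gamma(\bar{E})\rfloor$; item (b) is immediate from condition 2 together with (a); (c) follows from (a) using $\gamma\geq 0$ from Lemma~\ref{propchar}.1; and (d) combines $\gamma(E(i))>0$ (by (c)) with Lemma~\ref{propchar}.5 and the definition of $\gamma$, which forces $\pi_H(E(i))\subset\pi_H(|\mathcal{D}(\max)|)$ and $\pi_H(E(i))\not\subset\Clos(|\pi_{H*}\mathcal{C}|-\pi_H(|\mathcal{D}(\max)|))$.

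Claims 3 and 4 are proved together. Lemma~\ref{compatible1}.5 gives $\min\mathcal{H}(H,\hat{\mathcal{C}},S)=\min$ and $\Ht(H,\hat{\mathcal{C}},S)\leq\Ht(H,\mathcal{C},S)$, so height equality is equivalent to $\max\in\mathcal{H}(H,\hat{\mathcal{C}},S)$, hence to the existence of $\Delta\in\hat{\mathcal{C}}\Mx$ with $\pi_H(\Delta)\subset\pi_H(|\mathcal{D}(\max)|)$, which by Lemma~\ref{propchar}.2 (applied to $\mathcal{C}$) is equivalent to $\gamma(\bar{E})>0$ for some $\bar{E}\in\mathcal{F}(H\Op|\Delta)_1\subset(\hat{\mathcal{C}}-(\hat{\mathcal{C}}/H))_1$. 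Given claim 3 in the non-trivial direction, we have $\max\mathcal{H}(H,\hat{\mathcal{C}},S)=\max$ and $\mathcal{D}_{\hat{\mathcal{C}}}(\max)=\mathcal{D}(\max)\backslash|\hat{\mathcal{C}}|$, from which claim 4 follows: for $\bar{E}\in(\hat{\mathcal{C}}-(\hat{\mathcal{C}}/H))_1$ the unique value $t$ with $b_{\bar{E}/N^*}+tb_{H/N^*}\in\partial^H_+|\mathcal{D}_{\hat{\mathcal{C}}}(\max)|$ agrees with the unique such $t$ for $\partial^H_+|\mathcal{D}(\max)|$ by Lemma~\ref{upper and lower}, both equalling $\gamma(\bar{E})$.

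Claim 5 is the main combinatorial obstacle. From claim 2(a), $\hat{m}=\sum_{\bar{E}\in(\hat{\mathcal{C}}-(\hat{\mathcal{C}}/H))_1}\lfloor\gamma(\bar{E})\rfloor$, and by condition 1 of compatibility $\hat{\bar{m}}=\sum\lceil\gamma(\bar{E})\rceil$. If all $\gamma(\bar{E})$ vanish on $(\hat{\mathcal{C}}-(\hat{\mathcal{C}}/H))_1$ this immediately forces $\hat{m}=0$. Otherwise, by claim 4 the restriction of $\gamma$ is the characteristic function of $(H,\hat{\mathcal{C}},S)$, so to verify compatibility of $\hat{E}$ I must check the three conditions of Definition~\ref{defcomp}. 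The crucial structural step is $\hat{\tau}(\{1,\ldots,\hat{\bar{m}}\})\subset\{1,\ldots,\bar{m}\}$: since $\hat{\tau}$ strictly preserves the order and $\sharp(\hat{\tau}(\{1,\ldots,\hat{m}\})\cap\{1,\ldots,\bar{m}\})=\hat{\bar{m}}$, this intersection must be the initial segment $\hat{\tau}(\{1,\ldots,\hat{\bar{m}}\})$. Conditions 1 and 2 for $\hat{E}$ then transport directly along $\hat{\tau}$. For condition 3, the remaining subtlety---and the step I expect to require the most care---is to verify that for $\bar{E}\in(\hat{\mathcal{C}}-(\hat{\mathcal{C}}/H))_1\cap\mathcal{R}$ the function $h(\bar{E})$ defined via $|\mathcal{D}(h)|-\partial^H_+|\mathcal{D}(h)|$ inside $\mathcal{C}$ coincides with its analogue inside $\hat{\mathcal{C}}$; this reduces to identifying $\mathcal{D}_{\hat{\mathcal{C}}}(h)=\mathcal{D}(h)\backslash|\hat{\mathcal{C}}|$ for each $h\in\mathcal{H}(H,\hat{\mathcal{C}},S)$ and tracking membership of the point $b_{\bar{E}/N^*}+\lfloor\gamma(\bar{E})\rfloor b_{H/N^*}$ in the upper boundary using the $H$-simple structure established in Lemma~\ref{compatible2}.
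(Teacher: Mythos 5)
The paper does not actually supply a proof of Lemma~\ref{existcomp} (it is one of the claims left to the reader as following ``from definitions''), so a direct comparison of approaches is not possible; I can only assess your argument on its own terms.

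Your treatment of claims 1 and 2 is careful and correct. You correctly note the paper's inverted $\lfloor\cdot\rfloor/\lceil\cdot\rceil$ convention, and the bookkeeping in 2(a)--(d) follows cleanly from Definition~\ref{defcomp}, Lemma~\ref{propchar}.1, and Lemma~\ref{propchar}.5. The explicit two-stage construction for existence (fill $\{1,\dots,\bar m\}$ by multiplicities $\lceil\gamma(\bar E)\rceil$, then order $\mathcal R$ by non-increasing $h(\cdot)$) is exactly the right idea and works because $\mathcal H(H,\mathcal C,S)$ is a finite subset of $\Q$. Your observation in claim 5 that $\hat\tau(\{1,\dots,\hat{\bar m}\})$ is necessarily $\hat\tau(\{1,\dots,\hat m\})\cap\{1,\dots,\bar m\}$, because $\hat\tau$ preserves order, is the crucial structural point and is argued correctly.

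The remaining parts of claims 3--5 are, however, only sketched, and there are two genuine gaps. First, in claim 3 you pass from ``$\max\in\mathcal H(H,\hat{\mathcal C},S)$'' to ``there exists $\Delta\in\hat{\mathcal C}\Mx$ with $\pi_H(\Delta)\subset\pi_H(|\mathcal D(\max)|)$'' without argument, and then invoke Lemma~\ref{propchar}.2. That lemma is stated for $\Delta\in\mathcal C\Mx$, but the hypotheses on $\hat{\mathcal C}$ do \emph{not} force $\dim\hat{\mathcal C}=\dim\mathcal C$, so $\hat{\mathcal C}\Mx$ need not be contained in $\mathcal C\Mx$; when a $\Delta\in\hat{\mathcal C}\Mx$ is a proper face of some $\Lambda\in\mathcal C\Mx$, you would instead need to relate $\gamma$ restricted to $\mathcal F(H\Op|\Delta)_1$ to the structure constants of $\mathcal D(S|V)\hat\cap\mathcal F(\Delta)$ (via Lemma~\ref{propchar}.3 and Lemma~\ref{propsimple}.16), which is a nontrivial intermediate step you have not made. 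Second, in claim 5 the compatibility of $\hat E$ is reduced to the identity $\mathcal D_{\hat{\mathcal C}}(h)=\mathcal D(h)\backslash|\hat{\mathcal C}|$ and to tracking $b_{\bar E/N^*}+\lfloor\gamma(\bar E)\rfloor b_{H/N^*}$ through upper boundaries; you flag this as delicate but do not carry it out, and in particular you do not justify that the $h(\cdot)$ values (and hence the non-increasing ordering in condition 3 of Definition~\ref{defcomp}) are preserved when passing from $(\mathcal C,\gamma)$ to $(\hat{\mathcal C},\hat\gamma)$. These two points are exactly where the proof requires real work; as written, the argument is a correct outline with those steps left open.
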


\begin{lemma}
\label{equiv2}
Assume $\sharp\mathcal{C}\Mx=1$.

Let $\Delta\in\mathcal{C}\Mx$ denote the unique element. 
$\Delta$ is a simplicial cone over $N^*$ in $V^*$, $H\in\mathcal{F}(\Delta)_1$ and $\dim\Delta=\dim \mathcal{C}\geq 2$, $\mathcal{C}=\mathcal{F}(\Delta)$, 
$\mathcal{C}-(\mathcal{C}/H)= \mathcal{F}(H\Op|\Delta)$, and
 $\Ht(H,S+(\Delta^\vee|V^*))=\Ht(H,\mathcal{C},S)>0$.

Note that $\mathcal{D}(S+(\Delta^\vee|V^*)|V)=\mathcal{D}(S|V)\hat{\cap}\mathcal{F}(\Delta)$ is $H$-simple and $c(S+(\Delta^\vee|V^*))\geq 2$.

Let $\gamma, m, \bar{m}$ and $\mathcal{R}$ be the same as in above Lemma~\ref{existcomp}.

For any $i\in\{1,2,\ldots, c(S+(\Delta^\vee|V^*))\}$ and any $\bar{E}\in\mathcal{F}(H\Op|\Delta)_1$, we can consider the structure constant
$c(\mathcal{D}(S+(\Delta^\vee|V^*)|V),i, \bar{E})\in\Q_0$ of $\mathcal{D}(S+(\Delta^\vee|V^*)|V)$ corresponding to the pair $(i, \bar{E}).$

Denote
\begin{equation*}\begin{split}
\hat{m}&=\sum_{ \bar{E}\in\mathcal{F}(H\Op|\Delta)_1}\lfloor c(\mathcal{D}(S+(\Delta^\vee|V^*)|V),2, \bar{E})\rfloor\in\Z_+,\\
\hat{\bar{m}}&=\sum_{ \bar{E}\in\mathcal{F}(H\Op|\Delta)_1}\lceil c(\mathcal{D}(S+(\Delta^\vee|V^*)|V),2, \bar{E})\rceil\in\Z_0,\text{ and}\\
\hat{\mathcal{R}}&=\{ \bar{E}\in\mathcal{F}(H\Op|\Delta)_1|
c(\mathcal{D}(S+(\Delta^\vee|V^*)|V),2, \bar{E})\not\in\Z\}
\subset\mathcal{F}(H\Op|\Delta)_1.
\end{split}\end{equation*}
For any $\bar{E}\in \hat{\mathcal{R}}$, denote
\begin{equation*}\begin{split}
\bar{c}(\bar{E})&=\max\{j\in\{2,3,\ldots, c(S+(\Delta^\vee|V^*))\}|\\
&\qquad\quad c(\mathcal{D}(S+(\Delta^\vee|V^*)|V),j, \bar{E})<
\lfloor c(\mathcal{D}(S+(\Delta^\vee|V^*)|V),2, \bar{E})\rfloor\}\\
&\qquad\quad\quad\in\{2,3,\ldots, c(S+(\Delta^\vee|V^*))\}.
\end{split}\end{equation*}

\begin{enumerate}
\item
$\gamma(\bar{E})=c(\mathcal{D}(S+(\Delta^\vee|V^*)|V),2, \bar{E})$ for any $\bar{E}\in \mathcal{F}(H\Op|\Delta)_1$.
\item
$\hat{m}=m$.
$\hat{\bar{m}}=\bar{m}$.
$\hat{\mathcal{R}}=\mathcal{R}$.
\item
A mapping $E:\{1,2,\ldots,m\}\rightarrow (\mathcal{C}-(\mathcal{C}/H))_1=\mathcal{F}(H\Op|\Delta)_1$ is compatible with $S$, if and only if, the following three conditions are satisfied:
\begin{enumerate}
\item
For any $\bar{E}\in\mathcal{F}(H\Op|\Delta)_1$,
$\sharp(\{1,2,\ldots, \bar{m}\}\cap E^{-1}(\bar{E}))=$\hfill\break$\lceil c(\mathcal{D}(S+(\Delta^\vee|V^*)|V),2, \bar{E})\rceil$.
\item
$E(\{\bar{m}+1, \bar{m}+2, \ldots, m\})=\mathcal{R}$.
\item
If $m-\bar{m}\geq 2$, then $\bar{c}(E(i))\leq\bar{c}(E(i+1))$ for any $i\in\{\bar{m}+1, \bar{m}+2, \ldots, m-1\}$.
\end{enumerate}
\item
Assume that a mapping $E:\{1,2,\ldots,m\}\rightarrow \mathcal{F}(H\Op|\Delta)_1$
is compatible with $S$.
\begin{enumerate}
\item
For any $\bar{E}\in\mathcal{F}(H\Op|\Delta)_1$,
$\sharp E^{-1}(\bar{E})=\lfloor c(\mathcal{D}(S+(\Delta^\vee|V^*)|V),2, \bar{E})\rfloor$, and
$\sharp(\{1,2,\ldots, \bar{m}\}\cap E^{-1}(\bar{E}))=\lceil c(\mathcal{D}(S+(\Delta^\vee|V^*)|V),2, \bar{E})\rceil$.

\item
$E(\{\bar{m}+1, \bar{m}+2,\ldots,m\})=\mathcal{R}$, and the mapping $E: \{\bar{m}+1, \bar{m}+2,\ldots,m\}\rightarrow \mathcal{R}$ induced by $E$ is bijective.
\item
$E(\{1,2,\ldots,m\})=\{\bar{E}\in\mathcal{F}(H\Op|\Delta)_1| c(\mathcal{D}(S+(\Delta^\vee|V^*)|V),2, \bar{E})>0\}$.
\end{enumerate}
\end{enumerate}
\end{lemma}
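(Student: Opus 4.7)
The plan is to reduce everything to a clean application of Lemma~\ref{propchar}.3 combined with the explicit $H$-simple structure theory from Lemma~\ref{propsimple}.17--18. Since $\sharp\mathcal{C}\Mx=1$, the unique element $\Delta\in\mathcal{C}\Mx$ gives $\mathcal{C}=\mathcal{F}(\Delta)$ and $(\mathcal{C}-(\mathcal{C}/H))_1=\mathcal{F}(H\Op|\Delta)_1$. First I would verify the hypothesis of Lemma~\ref{propchar}.3 for $\Delta$: because $\Ht(H,\mathcal{C},S)>0$, Lemma~\ref{propchar}.4 yields some $\bar{E}\in(\mathcal{C}-(\mathcal{C}/H))_1$ with $\gamma(\bar{E})>0$, and Lemma~\ref{propchar}.2 then forces $\pi_H(\Delta)\subset\pi_H(\mathcal{D}(\max))$. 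This immediately gives claim~1 via Lemma~\ref{propchar}.3, and claim~2 is then a tautology because $\hat m,\hat{\bar m},\hat{\mathcal{R}}$ are defined by the same formulas as $m,\bar m,\mathcal R$ with $\gamma(\bar E)$ replaced by the quantity shown equal to it.

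For claim~3, I would rewrite Definition~\ref{defcomp} under claim~1. Conditions (a) and (b) translate verbatim into the conditions (a) and (b) appearing in the statement. The only non-trivial part is showing that the ordering condition ``$h(E(i))\geq h(E(i+1))$'' in Definition~\ref{defcomp} is equivalent to ``$\bar{c}(E(i))\leq \bar{c}(E(i+1))$''. For this I would use Lemma~\ref{propsimple}.18: taking the minimal faces $A(1),\ldots,A(r)$ of $S+(\Delta^\vee|V^*)$ indexed by strictly decreasing $H$-height, and denoting the structure constants by $c_j(\bar E):=c(\mathcal{D}(S+(\Delta^\vee|V^*)|V),j,\bar E)$, these are non-decreasing in $j$ by Lemma~\ref{propsimple}.15.(h). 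Moreover, from the explicit formula
\[
c_j(\bar E)=\frac{\langle b_{\bar E/N^*},A(j)\rangle-\langle b_{\bar E/N^*},A(j-1)\rangle}{\langle b_{H/N^*},A(j-1)\rangle-\langle b_{H/N^*},A(j)\rangle},\qquad j\geq 2,
\]
combined with Lemma~\ref{compatible2}.4 describing fibres of $\pi_H$ over $|\mathcal{D}(h)|$, I can identify $\bar{c}(\bar E)$ with the last index $j$ for which $\langle b_{H/N^*},A(j)\rangle>h(\bar E)$. Since heights $\langle b_{H/N^*},A(j)\rangle$ decrease strictly in $j$ and traverse $\mathcal{H}(H,\mathcal{C},S)$ bijectively, the correspondence $\bar E\mapsto \bar{c}(\bar E)$ is the anti-order-preserving counterpart of $\bar E\mapsto h(\bar E)$, giving the desired equivalence of the two ordering conditions.

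Finally, claim~4 is a direct restatement of Lemma~\ref{existcomp}.2 using claim~1 to substitute $\lfloor c_2(\bar E)\rfloor$ and $\lceil c_2(\bar E)\rceil$ for $\lfloor\gamma(\bar E)\rfloor$ and $\lceil\gamma(\bar E)\rceil$, and the conclusion about $E(\{1,\ldots,m\})$ uses the fact from Lemma~\ref{propchar}.3 that every $\bar E\in\mathcal{F}(H\Op|\Delta)_1$ has $\gamma(\bar E)= c_2(\bar E)$, so $\gamma(\bar E)>0$ iff $c_2(\bar E)>0$.

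The main obstacle I anticipate is step~3: identifying $\bar{c}(\bar E)$ with a threshold index determined by $h(\bar E)$. The definition of $h(\bar E)$ in Lemma~\ref{propchar}.6 is through membership of $b_{\bar E/N^*}+\lfloor\gamma(\bar E)\rfloor b_{H/N^*}$ in $\mathcal{D}(h)-\partial^H_+\mathcal{D}(h)$, which requires translating this $H$-upper-boundary condition for $\mathcal{D}(h)$ back into a comparison between the structure constant $c_j(\bar E)$ and the floor $\lfloor c_2(\bar E)\rfloor$; this demands careful tracking of how the $H$-simple strata $\bar\Delta(j)$ of $\mathcal{D}(S+(\Delta^\vee|V^*)|V)$ intersect the various $\mathcal{D}(h)$ (for which Lemma~\ref{propsimple}.16 is the natural tool), and it is the only place where the detailed piecewise-linear geometry of the face cone decomposition intervenes.
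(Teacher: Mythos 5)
Your treatment of claims~1, 2 and~4 is fine: $\sharp\mathcal{C}\Mx=1$ forces $(\mathcal{C}-(\mathcal{C}/H))_1=\mathcal{F}(H\Op|\Delta)_1$, and since $\Ht(H,\mathcal{C},S)>0$ gives (via Lemma~\ref{propchar}.4 and Lemma~\ref{propchar}.2) the hypothesis $\pi_H(\Delta)\subset\pi_H(\mathcal{D}(\max))$, claim~1 is exactly Lemma~\ref{propchar}.3, claim~2 is then a matter of substituting into the definitions of $m,\bar m,\mathcal{R}$, and claim~4 is Lemma~\ref{existcomp}.2 with the same substitution. All of that is correct and essentially forced.

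The gap is in claim~3, which you yourself flag as the main obstacle and then leave as a sketch. The issue is not merely that the step is long; the specific identification you propose is not correct as stated. You want to identify $\bar c(\bar E)$ with "the last index $j$ with $\langle b_{H/N^*},A(j)\rangle>h(\bar E)$". Unwinding the definitions with $\alpha:=\lfloor\gamma(\bar E)\rfloor=\lfloor c(\mathcal{D}(S+(\Delta^\vee|V^*)|V),2,\bar E)\rfloor$ and $c_j:=c(\mathcal{D}(S+(\Delta^\vee|V^*)|V),j,\bar E)$: on the one hand, the membership condition in Lemma~\ref{propchar}.6 translates (via Lemma~\ref{propsimple}.15.(q) applied at $\omega=b_{\bar E/N^*}$, or equivalently via Lemma~\ref{compatible2}.7.(d)) into $\alpha<c_{j+1}$, so $h(\bar E)=\langle b_{H/N^*},A(j^*)\rangle$ with $j^*=\max\{j:c_j\leq\alpha\}$; on the other hand, by its definition $\bar c(\bar E)=\max\{j\geq 2:c_j<\alpha\}$. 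These thresholds are governed by a weak inequality in one case and a strict inequality in the other. Since the structure constants $c_j$ are only nondecreasing in $j$ (Lemma~\ref{propsimple}.15.(h), not strict), it is entirely possible for several consecutive $c_j(\bar E)$ to coincide with the integer $\alpha$, in which case $\bar c(\bar E)<j^*$ with an arbitrary gap, and the two quantities need not even differ by a fixed constant. In particular the "anti-order-preserving" correspondence you invoke between $\bar E\mapsto\bar c(\bar E)$ and $\bar E\mapsto h(\bar E)$ does not follow from what you have written; if, say, $c_j(\bar E_1)=0,1.5,2,2$ while $c_j(\bar E_2)=0,0.5,3,4$, one finds $\bar c(\bar E_1)=\bar c(\bar E_2)=2$ but $h(\bar E_1)\neq h(\bar E_2)$, so the two ordering conditions in Definition~\ref{defcomp}.(3) and Lemma~\ref{equiv2}.(3).(c) constrain $E$ differently. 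A complete proof of claim~3 must either rule out the equality case $c_j(\bar E)=\lfloor c_2(\bar E)\rfloor$ for $j>2$ (which would require a constraint on the face cone decomposition not cited in your sketch) or explain why the resulting asymmetry does not change the set of compatible mappings; your current plan, "careful tracking of how the strata intersect the various $\mathcal{D}(h)$," does not resolve this, because the mismatch is already visible at the level of the one-dimensional fibers $(\{b_{\bar E/N^*}\}+\R b_{H/N^*})\cap|\mathcal{D}(h)|$ that Lemma~\ref{compatible2}.4 provides.
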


\section{The height inequalities}
\label{height inequalities}
We show the height inequalities.

In this section we consider the following objects: Let $V$ be any finite dimensional vector space over $\R$ with $\dim V\geq 2$; let $N$ be any lattice of $V$; let $H$ be any one-dimensional simplicial cone over the dual lattice $N^*$ of $N$ in the dual vector space $V^*$ of $V$; let $\mathcal{C}$ be any simplicial cone decomposition over $N^*$ in $V^*$ satisfying 
$\dim \mathcal{C}=\dim \Vect(|\mathcal{C}|)\geq 2$, $\mathcal{C}\Mx=\mathcal{C}^0$, $H\in\mathcal{C}_1$ and $\mathcal{C}=(\mathcal{C}/H)\Fc$; and let $S$ be any rational convex pseudo polyhedron over $N$ in $V$ satisfying $\dim(|\mathcal{D}(S|V)|)\geq 2$ and $|\mathcal{C}|\subset |\mathcal{D}(S|V)|$.

In this section we assume that $\mathcal{D}(S|V)\hat{\cap}\mathcal{F}(\Delta)$ is $H$-simple for any $\Delta\in\mathcal{C}\Mx$ and $\Ht(H,\mathcal{C},S)>0$.

Let $\gamma:(\mathcal{C}-(\mathcal{C}/H))_1\rightarrow\R$ denote the characteristic function of $(H, \mathcal{C}, S)$.
Denote
\begin{equation*}\begin{split}
m&=\sum_{\bar{E}\in(\mathcal{C}-(\mathcal{C}/H))_1}\lfloor \gamma(\bar{E})\rfloor\in\Z_+,\\
\bar{m}&=\sum_{\bar{E}\in(\mathcal{C}-(\mathcal{C}/H))_1}\lceil \gamma(\bar{E})\rceil\in\Z_0,\text{ and}\\
\end{split}\end{equation*}\begin{equation*}\begin{split}
\mathcal{R}&=\{ \bar{E}\in(\mathcal{C}-(\mathcal{C}/H))_1| \gamma(\bar{E})\not\in\Z\}\subset(\mathcal{C}-(\mathcal{C}/H))_1.
\end{split}\end{equation*}
@
We know $\Ht(H,S)\in(1/\Den(S/N))\Z_+$, $\Ht(H,\mathcal{C},S)\in(1/\Den(S/N))\Z_+$ and $\Ht(H,\mathcal{C},S)\leq\Ht(H,S)$.

We denote $\ell=\dim(\Vect(|\mathcal{C}|)^\vee|V^*)\in\Z_0$.

Consider any simplicial cone $\Theta$ over $N^*$ in $V^*$ satisfying $\Theta\subset|\mathcal{C}|$ and any $G\in\mathcal{F}(\Theta)_1$.
$\dim\Theta\leq\dim\Vect(|\mathcal{C}|)=\dim V-\ell$.
$S+(\Theta^\vee|V^*)$ is a rational conex pseudo polyhedron over $N$ in $V$.
$\Stab(S+(\Theta^\vee|V^*))=\Theta^\vee|V^*$.
$G\subset |\mathcal{D}(S+(\Theta^\vee|V^*)|V)|=\Stab(S+(\Theta^\vee|V^*))^\vee|V=\Theta\subset|\mathcal{C}|\subset|\mathcal{D}(S|V)|\subset\Vect(|\mathcal{D}(S|V)|)$.
$\mathcal{D}(S+(\Theta^\vee|V^*)|V)= \mathcal{D}(S|V)\hat{\cap}\mathcal{F}(\Theta)$.
$\Stab(S+(\Theta^\vee|V^*))\cap(-\Stab(S+(\Theta^\vee|V^*)))=
\Vect(|\mathcal{D}(S+(\Theta^\vee|V^*)|V)|)^\vee|V^*=\Vect(\Theta)^\vee|V^*\supset\Vect(|\mathcal{C}|)^\vee|V^*$.

If $\dim\Theta=\dim\Vect(|\mathcal{C}|)$, then
$\Vect(|\mathcal{D}(S+(\Theta^\vee|V^*)|V)|)^\vee|V^*=
\Vect(|\mathcal{C}|)^\vee|V^*$ and
$\dim(\Vect(|\mathcal{D}(S+(\Theta^\vee|V^*)|V)|)^\vee|V^*)=\ell$.

\begin{theorem}
\label{heightinequality}
Consider any compatible mapping
$$E:\{1,2,\ldots,m\}\rightarrow (\mathcal{C}-(\mathcal{C}/H))_1$$
with $S$.

Let
$$\mathcal{B}=\mathcal{B}(V^*, N^*, H, \mathcal{C}, m, E)$$
be the basic subdivision associated with the sextuplet $(V^*, N^*, H, \mathcal{C}, m, E)$.

Let $s=s(V^*, N^*, H, \mathcal{C}, m, E)$, 
$H=H(V^*, N^*, H, \mathcal{C}, m, E)$, \hfill\break and
$\mathcal{B}=\mathcal{B}(V^*, N^*, H, \mathcal{C}, m, E)$.
We have three mappings
$$s:\{0,1,\ldots,m\}\times(\mathcal{C}-(\mathcal{C}/H))_1\rightarrow \Z_0,$$
$$H:\{1,2,\ldots,m+1\}\rightarrow 2^{V^*},$$
$$\mathcal{B}:\{1,2,\ldots,m+1\}\rightarrow 2^{2^{V^*}}.$$

\begin{enumerate}
\item
$\bar{m}\in\Z_0$. $m\in\Z_+$. $\bar{m}\leq m$. $m-\bar{m}=\sharp\mathcal{R}$.
\item
For any $\bar{E}\in(\mathcal{C}-(\mathcal{C}/H))_1$, $s(\bar{m}, \bar{E})=\lceil \gamma(\bar{E})\rceil$ and $s(m, \bar{E})=\lfloor \gamma(\bar{E})\rfloor$.
\item
$\mathcal{B}$ is an iterated barycentric subdivision of $\mathcal{C}$, and it is a simplicial cone decomposition over $N^*$ in $V^*$. $|\mathcal{B}|=|\mathcal{C}|\subset\Stab(S)^\vee|V=|\mathcal{D}(S|V)|$. 
$\dim \mathcal{B}=\dim \Vect(|\mathcal{B}|)=\dim\mathcal{C}$. 
$\Vect(|\mathcal{B}|)=\Vect(|\mathcal{C}|)$.
$\mathcal{B}\Mx=\mathcal{B}^0$.
\item
Consider any $i\in\{1,2,\ldots,m+1\}$.
\begin{enumerate}
\item
$\mathcal{B}(i)$ is a simplicial cone decomposition over $N^*$ in $V^*$.
$\mathcal{B}(i)\subset\mathcal{B}$. 
$|\mathcal{B}(i)|\subset|\mathcal{B}|$. 
$\dim\mathcal{B}(i)=\dim \Vect(|\mathcal{B}(i)|)=\dim\mathcal{C}$.
$\Vect(|\mathcal{B}(i)|)= \Vect(|\mathcal{C}|)$.
$\mathcal{B}(i)\Mx=\mathcal{B}(i)^0$.
$H(i)\in\mathcal{B}(i)_1$.
$\mathcal{B}(i)=( \mathcal{B}(i)/H(i))\Fc$.
\item
For any $\Theta\in\mathcal{B}(i)$,  $\Theta^\vee|V^*$ is a rational polyhedral cone over $N$ in $V$, $\dim \Theta^\vee|V^*=\dim V$, $S+(\Theta^\vee|V^*)$ is a rational convex pseudo polyhedron over $N$ in $V$, $\Stab(S+(\Theta^\vee|V^*))=\Theta^\vee|V^*$, $\mathcal{D}(S+(\Theta^\vee|V^*)|V)=\mathcal{D}(S|V)\hat{\cap}\mathcal{F}(\Theta)$, and $\mathcal{D}(S|V)\hat{\cap}\mathcal{F}(\Theta)$ is semisimple.
\item
\emph{[The height inequality]}
$\Ht(H(i), \mathcal{B}(i), S)<\Ht(H,\mathcal{C},S)$.
\end{enumerate}
\item
Consider any $i\in\{1,2,\ldots,\bar{m}\}$.

For any $\Theta\in\mathcal{B}(i)$,
$\mathcal{D}(S|V)\hat{\cap}\mathcal{F}(\Theta)= \mathcal{F}(\Theta)$ and
$S+(\Theta^\vee|V^*)=\{a\}+(\Theta^\vee|V^*)$ for any $a\in\mathcal{V}(\mathcal{C}, S)$ satisfying $\langle b_{H/N^*},a\rangle=\max\mathcal{H}(H, \mathcal{C}, S)$.

$\Ht(H(i), \mathcal{B}(i), S)=0$.
\item
For any $\Theta\in\mathcal{B}(\bar{m}+1)\Mx$,
$\mathcal{D}(S|V)\hat{\cap}\mathcal{F}(\Theta)$
is $H(\bar{m}+1)$-simple.
\item
Consider any $\Delta\in\mathcal{C}\Mx$. 
$\Ht(H,S+(\Delta^\vee|V^*))=\Ht(H,\mathcal{C},S)$, if and only if, $\gamma(E)>0$ for some $E\in\mathcal{F}(H\Op|\Delta)_1$
\item
Consider any $\Delta\in\mathcal{C}\Mx$ such that $\gamma(\bar{E})=0$ for any $\bar{E}\in\mathcal{F}(H\Op|\Delta)_1$.

$\Ht(H,S+(\Delta^\vee|V^*))<\Ht(H,\mathcal{C},S)$.

$\Delta\subset|\mathcal{B}(m+1)|$.
$\mathcal{B}\backslash\Delta=\mathcal{B}(m+1)\backslash\Delta=\mathcal{F}(\Delta)$.

For any $i\in\{1,2,\ldots,m\}$,
$|\mathcal{B}(i)|\cap\Delta\in \mathcal{F}(H\Op|\Delta)$, $|\mathcal{B}(i)|\cap\Delta\neq H\Op|\Delta$, and 
$\mathcal{B}(i)\backslash\Delta=\mathcal{F}(|\mathcal{B}(i)|\cap\Delta)$.
\end{enumerate}

Below we consider any $\Delta\in\mathcal{C}\Mx$ such that $\gamma(E)>0$ for some $E\in\mathcal{F}(H\Op|\Delta)_1$.
By $7$ we know $\Ht(H,S+(\Delta^\vee|V^*))=\Ht(H,\mathcal{C},S)>0$.

Note that $\mathcal{D}(S+(\Delta^\vee|V^*)|V)=\mathcal{D}(S|V)\hat{\cap}\mathcal{F}(\Delta)$ is $H$-simple and the characteristic number $c(S+(\Delta^\vee|V^*))$ of $S+(\Delta^\vee|V^*)$ satisfies $c(S+(\Delta^\vee|V^*))\geq 2$.
Let $\bar{\mathcal{D}}( S+(\Delta^\vee|V^*)|V)^1=\{\bar{\Lambda}\in\mathcal{D}( S+(\Delta^\vee|V^*)|V)^1|\bar{\Lambda}^\circ\subset\Delta^\circ\}\cup\{H\Op|\Delta\}$ denote the $H$-skeleton of $\mathcal{D}( S+(\Delta^\vee|V^*)|V)$.
$\sharp\mathcal{D}( S+(\Delta^\vee|V^*)|V)^0=\sharp\bar{\mathcal{D}}( S+(\Delta^\vee|V^*)|V)^1=c(S+(\Delta^\vee|V^*))\geq 2$.

We consider the $H$-order on $\mathcal{D}(S+(\Delta^\vee|V^*)|V)^0$.
Let
$$\Lambda:\{1,2,\ldots, c(S+(\Delta^\vee|V^*))\}\rightarrow \mathcal{D}( S+(\Delta^\vee|V^*)|V)^0,$$
denote the unique bijective mapping preserving the $H$-order.

We consider the $H$-order on $\bar{\mathcal{D}}( S+(\Delta^\vee|V^*)|V)^1$.
Let
$$\bar{\Lambda}:\{1,2,\ldots, c(S+(\Delta^\vee|V^*))\}\rightarrow \bar{\mathcal{D}}(S+(\Delta^\vee|V^*)|V)^1,$$
denote the unique bijective mapping preserving the $H$-order.

There exists uniquely a bijective mapping $A: \{1,2,\ldots, c(S+(\Delta^\vee|V^*))\}\rightarrow\mathcal{F}(S+(\Delta^\vee|V^*)|V)_\ell$ satisfying $\Lambda(i)=\Delta(A(i), S+(\Delta^\vee|V^*)|V)$ for any $i\in\{1,2,\ldots, c(S+(\Delta^\vee|V^*))\}$.
We take the bijective mapping $A: \{1,2,\ldots, c(S+(\Delta^\vee|V^*))\}\rightarrow\mathcal{F}(S+(\Delta^\vee|V^*)|V)_\ell$ satisfying $\Lambda(i)=\Delta(A(i), S+(\Delta^\vee|V^*)|V)$ for any $i\in\{1,2,\ldots, c(S+(\Delta^\vee|V^*))\}$.

For any $i\in\{1,2,\ldots, c(S+(\Delta^\vee|V^*))\}$ and any $\bar{E}\in\mathcal{F}(H\Op|\Delta)_1$, we can consider the structure constant
$c(\mathcal{D}( S+(\Delta^\vee|V^*)|V),i, \bar{E})\in\Q_0$ of $\mathcal{D}( S+(\Delta^\vee|V^*)|V)$ corresponding to the pair $(i, \bar{E}).$

Denote
\begin{equation*}\begin{split}
\hat{m}&=\sum_{ \bar{E}\in\mathcal{F}(H\Op|\Delta)_1}\lfloor c(\mathcal{D}( S+(\Delta^\vee|V^*)|V),2, \bar{E})\rfloor\in\Z_+,\\
\hat{\bar{m}}&=\sum_{ \bar{E}\in\mathcal{F}(H\Op|\Delta)_1}\lceil c(\mathcal{D}( S+(\Delta^\vee|V^*)|V),2, \bar{E})\rceil\in\Z_0,\text{ and}\\
\hat{\mathcal{R}}&=\{ \bar{E}\in\mathcal{F}(H\Op|\Delta)_1|
c(\mathcal{D}( S+(\Delta^\vee|V^*)|V),2, \bar{E})\not\in\Z\}
\subset\mathcal{F}(H\Op|\Delta)_1.
\end{split}\end{equation*}

\begin{enumerate}
\setcounter{enumi}{9}
\item
$\hat{m}\in\Z_+$.
$\hat{\bar{m}}\in\Z_0$.
$\hat{\bar{m}}\leq \hat{m}$.
$\hat{m}=\sharp E^{-1}(\mathcal{F}(H\Op|\Delta)_1)$,
$\hat{\bar{m}}=\sharp(\{1,2,\ldots,\bar{m}\}\cap E^{-1}(\mathcal{F}(H\Op|\Delta)_1))$, and 
$\hat{\mathcal{R}}=\mathcal{R}\cap \mathcal{F}(H\Op|\Delta)_1$.
\end{enumerate}

Let $\hat{\tau}:\{1,2,\ldots, \hat{m}\}\rightarrow\{1,2,\ldots,m\}$ be the unique injective mapping preserving the order and satisfying 
$\hat{\tau}(\{1,2,\ldots, \hat{m}\})= E^{-1}(\mathcal{F}(H\Op|\Delta)_1)$.
Let $\hat{E}:\{1,2,\ldots, \hat{m}\}\rightarrow \mathcal{F}(H\Op|\Delta)_1$ be the unique mapping satisfying $\iota\hat{E}=E\hat{\tau}$ where $\iota: \mathcal{F}(H\Op|\Delta)_1)\rightarrow (\mathcal{C}-(\mathcal{C}/H))_1$ denotes the inclusion mapping.

Let
$\hat{\mathcal{B}}=\mathcal{B}(V^*, N^*, H, \mathcal{F}(\Delta), \hat{m}, \hat{E})\subset 2^{V^*}$, 
$\hat{G}=G(V^*, N^*, H, \mathcal{F}(\Delta), \hat{m}, \hat{E})$,\hfill\break 
$\hat{H}=H(V^*, N^*, H, \mathcal{F}(\Delta), \hat{m}, \hat{E})$, and $\hat{\mathcal{B}}=\mathcal{B}(V^*, N^*, H, \mathcal{F}(\Delta), \hat{m}, \hat{E})$.

$$\hat{G}:\{1,2,\ldots,\hat{m}\}\rightarrow 2^{V^*},$$
$$\hat{H}:\{1,2,\ldots,\hat{m}+1\}\rightarrow 2^{V^*},$$
$$\hat{\mathcal{B}}:\{1,2,\ldots,\hat{m}+1\}\rightarrow 2^{2^{V^*}}.$$

For any $i\in\{1,2,\ldots,\hat{m}+1\}$, we denote
\begin{equation*}\begin{split}
\Theta(i)&=|\hat{\mathcal{B}}(i)|\subset\Delta,\text{ and}\\
\bar{\Theta}(i)&= |\hat{\mathcal{B}}(i)-(\hat{\mathcal{B}}(i)/\hat{H}(i))|\subset\Theta(i).
\end{split}\end{equation*}

Denote
\begin{equation*}\begin{split}
\bar{c}(i)&=\max\{j\in\{2,3,\ldots,c(S+(\Delta^\vee|V^*))\}|\\
&\qquad\quad c(\mathcal{D}( S+(\Delta^\vee|V^*)|V),j, \hat{E}(i))<
\lfloor c(\mathcal{D}( S+(\Delta^\vee|V^*)|V),2, \hat{E}(i))\rfloor\}\\
&\qquad\quad\quad\in\{2,3,\ldots,c(S+(\Delta^\vee|V^*))\}
\end{split}\end{equation*}
for any $i\in\{\hat{\bar{m}}+1, \hat{\bar{m}}+2,\ldots,\hat{m}\}$.

\begin{enumerate}
\setcounter{enumi}{10}
\item
$\hat{E}$ is compatible with $S$.
\item
If $\hat{m}-\hat{\bar{m}}\geq 1$, then
$\bar{c}(i)\in\{2,3,\ldots,c(S+(\Delta^\vee|V^*))\}$ for any $i\in\{\hat{\bar{m}}+1, \hat{\bar{m}}+2,\ldots,\hat{m}\}$.
If $\hat{m}-\hat{\bar{m}}\geq 2$, then 
$\bar{c}(i)\leq\bar{c}(i+1)$  for any $i\in\{\hat{\bar{m}}+1, \hat{\bar{m}}+2,\ldots, \hat{m}-1\}$.
\item
$(H\Op|\Delta)\cap\bar{\Lambda}(2)\in\mathcal{F}(H\Op|\Delta)$.
$\mathcal{F}((H\Op|\Delta)\cap\bar{\Lambda}(2))_1=\{\bar{E}\in\mathcal{F}(H\Op|\Delta)_1| c(\mathcal{D}(S+(\Delta^\vee|V^*)|V),2, \bar{E})=0\}$.
$\mathcal{F}((H\Op|\Delta)\cap\bar{\Lambda}(2))_1\cup \hat{E}(\{1,2,\ldots,\hat{m}\})=\mathcal{F}(H\Op|\Delta)_1$.
$\mathcal{F}((H\Op|\Delta)\cap\bar{\Lambda}(2))_1\cap \hat{E}(\{1,2,\ldots,\hat{m}\})=\emptyset$.
\item
For any $i\in\{1,2,\ldots, \hat{m}+1\}$,
$\hat{H}(i)\in\mathcal{F}(\Theta(i))_1$,
$\bar{\Theta}(i)=\hat{H}(i)\Op|\Theta(i) \in\mathcal{F}(\Theta(i))^1$,
$\dim\Theta(i)=\dim\Vect(|\mathcal{C}|)$, and\hfill\break
$\dim\Vect(|\mathcal{D}(S+(\Theta(i)^\vee|V^*)|V)|)^\vee|V^*=\ell$.

\item
Consider any $i\in\{1,2,\ldots, \hat{\bar{m}}\}$.
$\mathcal{D}(S|V)\hat{\cap}\mathcal{F}(\Theta(i))= \mathcal{F}(\Theta(i))$.
$S+(\Theta(i)^\vee$\hfill\break$|V^*)=A(1)+(\Theta(i)^\vee|V^*)$.
$\Ht(\hat{H}(i),S+(\Theta(i)^\vee|V^*))=0<\Ht(H,S+(\Delta^\vee|V^*))$.
For any $j\in\{2,3,\ldots,c(S+(\Delta^\vee|V^*))\}$, $\bar{\Lambda}(j)\cap(\Theta(i)^\circ\cup\bar{\Theta}(i)^\circ)=\emptyset$.
\item
$\mathcal{D}(S|V)\hat{\cap}\mathcal{F}(\Theta(\hat{\bar{m}}+1))$
is $\hat{H}(\hat{\bar{m}}+1)$-simple.
\item
Assume $\hat{m}\neq\hat{\bar{m}}$.
Consider any $i\in\{\hat{\bar{m}}+1, \hat{\bar{m}}+2, \ldots, \hat{m}\}$.

$\mathcal{D}(S|V)\hat{\cap}\mathcal{F}(\Theta(i))$ is semisimple.

$\mathcal{F}(S+(\Theta(i)^\vee|V^*))_\ell=
\{A(j)|j\in\{1,2,\ldots, \bar{c}(i)\}\}$.

Take any point $a(j)\in A(j)$ for any $j\in\{1,2,\ldots, \bar{c}(i)\}$.
For any $j\in\{2,3,\ldots, \bar{c}(i)\}$,
$0<\langle b_{\hat{H}(i)/N^*},a(j-1)-a(j)\rangle<\langle b_{H/N^*},a(j-1)-a(j)\rangle$.

$0<\Ht(\hat{H}(i), S+(\Theta(i)^\vee|V^*))<\Ht(H, S+(\Delta^\vee|V^*))$.

For any $j\in\{2,3,\ldots,c(S+(\Delta^\vee|V^*))\}$,
$j\in\{2,3,\ldots, \bar{c}(i)\}
\Leftrightarrow
c(\mathcal{D}(S+(\Delta^\vee|V^*)|V), j,\hat{E}(i))<
\lfloor c(\mathcal{D}(S+(\Delta^\vee|V^*)|V),2,\hat{E}(i))\rfloor
\Leftrightarrow
\lceil c(\mathcal{D}(S+(\Delta^\vee|V^*)$\hfill\break$|V),2,\hat{E}(i))\rceil<
c(\mathcal{D}(S+(\Delta^\vee|V^*)|V),j,\hat{E}(i))<
\lfloor c(\mathcal{D}(S+(\Delta^\vee|V^*)|V),2,\hat{E}(i))\rfloor
$\break$\Leftrightarrow
\bar{\Lambda}(j)\cap(\hat{G}(i)+\hat{H}(i))^\circ\neq\emptyset
\Leftrightarrow
\bar{\Lambda}(j)\cap\Theta(i)^\circ\neq\emptyset
\Leftrightarrow
\bar{\Lambda}(j)\cap(\Theta(i)^\circ\cup\bar{\Theta}(i)^\circ)\neq\emptyset
$.

\item
$\mathcal{D}(S|V)\hat{\cap}\mathcal{F}(\Theta(\hat{m}+1))$ is semisimple.

$\mathcal{F}(S+(\Theta(\hat{m}+1)^\vee|V^*))_\ell=
\{A(j)|j\in\{k,k+1,\ldots, \hat{m}+1\}\}$ for some $k\in\{2,3,\ldots, \hat{m}+1\}$.

If $\hat{m}=\hat{\bar{m}}$, then
$\mathcal{F}(S+(\Theta(\hat{m}+1)^\vee|V^*))_\ell=
\{A(j)|j\in\{2,3,\ldots, \hat{m}+1\}\}$.

$\Ht(\hat{H}(\hat{m}+1), S+(\Theta(\hat{m}+1)^\vee|V^*))<\Ht(H, S+(\Delta^\vee|V^*))$.
\item The following three conditions are equivalent:
\begin{enumerate}
\item
$\hat{\mathcal{B}}$ is a subdivision of $\mathcal{D}(S+(\Delta^\vee|V^*)|V)$.
\item
$c(S+(\Delta^\vee|V^*)|V)=2$ and $\hat{m}=\bar{\hat{m}}$.
\item
$c(S+(\Delta^\vee|V^*)|V)=2$ and $c(\mathcal{D}(S+(\Delta^\vee|V^*)|V),2, \bar{E})\in\Z$ for any $\bar{E}\in\mathcal{F}(H\Op|\Delta)_1$.
\end{enumerate}
\end{enumerate}

Below we assume that $\hat{\mathcal{B}}$ is a subdivision of $\mathcal{D}(S+(\Delta^\vee|V^*)|V)$.

\begin{enumerate}
\setcounter{enumi}{19}

\item
$\hat{m}=\hat{\bar{m}}\geq 1$. $c(S+(\Delta^\vee|V^*))=2$. $\bar{\Lambda}(1)=\bar{\Theta}(1)=H\Op|\Delta$.
$\Lambda(1)\cap\Lambda(2)=\bar{\Lambda}(2)=\bar{\Theta}(\hat{m}+1)$.
$\Lambda(1)=\cup_{i\in\{1,2,\ldots,\hat{m}\}}\Theta(i)$.
$\Lambda(2)=\Theta(\hat{m}+1)$.
\item
$\{\Theta\in\hat{\mathcal{B}}|\Theta^\circ\subset\Delta^\circ\}$\hfill\break
$=\{\Theta(i)|i\in\{1,2,\ldots,\hat{m}+1\}\}\cup\{\bar{\Theta}(i)|i\in\{2,3,\ldots,\hat{m}\}\}\cup\{\bar{\Theta}(\hat{m}+1)\}$.

\noindent $\{\Theta\in\hat{\mathcal{B}}|\Theta^\circ\subset\Delta^\circ,$ 
The unique element $\Lambda\in \mathcal{D}(S+(\Delta^\vee|V^*)|V)$ satisfying $\Theta^\circ\subset\Lambda^\circ$ satisfies $\dim\Lambda=\dim\Delta-1\}
=\{\bar{\Theta}(\hat{m}+1)\}$.
\item
$H=\hat{H}(\hat{m}+1)\in\mathcal{F}(\Theta(\hat{m}+1))_1$.
$H\in\mathcal{F}(\Delta)_1$.
$\bar{\Theta}(\hat{m}+1)\in\mathcal{F}(\Theta(\hat{m}+1))^1$.
$\bar{\Theta}(\hat{m}+1)+ H= \Theta(\hat{m}+1) \in\smash{\hat{\mathcal{B}}}\Mx$.
$\bar{\Theta}(\hat{m}+1)\cap H=\{0\}$.

$\bar{\Theta}(\hat{m}+1)\subset\Lambda(1)$.
$\bar{\Theta}(\hat{m}+1)\subset\Lambda(2)$.
$\Theta(\hat{m}+1)\not\subset\Lambda(1)$.
$\Theta(\hat{m}+1)\subset\Lambda(2)$.
\end{enumerate}

Let $a(1)\in A(1)$ and $a(2)\in A(2)$ be any points.
Let $F=A(2)+(\Vect(\bar{\Theta}(\hat{m}+1))^\vee|V^*)$. 
\begin{enumerate}
\setcounter{enumi}{22}
\item
$\bar{\Theta}(\hat{m}+1)\subset\Theta(\hat{m}+1)\subset\Delta\subset|\mathcal{D}(S|V)|$, and $S+(\bar{\Theta}(\hat{m}+1)^\vee|V^*)\supset S+(\Theta(\hat{m}+1)^\vee|V^*)\supset S+(\Delta^\vee|V^*)\supset S$.

$S+(\bar{\Theta}(\hat{m}+1)^\vee|V^*)=A(2)+(\bar{\Theta}(\hat{m}+1)^\vee|V^*)=\{a(2)\}+(\bar{\Theta}(\hat{m}+1)^\vee|V^*)$.
$S+(\Theta(\hat{m}+1)^\vee|V^*)=A(2)+ (\Theta(\hat{m}+1)^\vee|V^*)=\{a(2)\}+ (\Theta(\hat{m}+1)^\vee|V^*)$.
$S+(\Delta^\vee|V^*)=\Conv(A(1)\cup A(2))+(\Delta^\vee|V^*)=\Conv(\{a(1),a(2)\})+(\Delta^\vee|V^*)$.
\item
$F\in\mathcal{F}(S+(\bar{\Theta}(\hat{m}+1)^\vee|V^*))$.
$F\cap(S+(\Theta(\hat{m}+1)^\vee|V^*))\in\mathcal{F}(S+(\Theta(\hat{m}+1)^\vee|V^*))$.
$F\cap(S+(\Delta^\vee|V^*))\in\mathcal{F}(S+(\Delta^\vee|V^*))$.

$\Delta(F, S+(\bar{\Theta}(\hat{m}+1)^\vee|V^*)|V)=
\Delta(F\cap(S+(\Theta(\hat{m}+1)^\vee|V^*)), S+(\Theta(\hat{m}+1)^\vee|V^*)|V)=
\Delta(F\cap(S+(\Delta^\vee|V^*)), S+(\Delta^\vee|V^*)|V)=
\bar{\Lambda}(2)$.

$F=\{a(2)\}+ (\Vect(\bar{\Theta}(\hat{m}+1))^\vee|V^*)=\Affi(\{a(1),a(2)\})+(\Vect(\Delta)^\vee|V^*)$.
$F\cap(S+(\Theta(\hat{m}+1)^\vee|V^*))=A(2)+\Delta(\bar{\Theta}(\hat{m}+1), \Theta(\hat{m}+1)|V^*)=\{a(2)\}+ \R_0(a(1)-a(2))+(\Vect(\Delta)^\vee|V^*)$.
$F\cap(S+(\Delta^\vee|V^*))=\Conv(A(1)\cup A(2))= \Conv(\{a(1), a(2)\})+ (\Vect(\Delta)^\vee|V^*)$.
\item
$\langle b_{H/N^*},a(1)\rangle> \langle b_{H/N^*},a(2)\rangle$.

\noindent$\{\langle b_{H/N^*}, a\rangle|a\in F\}=\R$.

\noindent$\{\langle b_{H/N^*}, a\rangle|a\in F\cap(S+(\Theta(\hat{m}+1)^\vee|V^*))\}=
\{t\in\R|\langle b_{H/N^*},a(2)\rangle\leq t \}$.

\noindent$\{\langle b_{H/N^*}, a\rangle|a\in F\cap (S+(\Delta^\vee|V^*))\}=
\{t\in\R|\langle b_{H/N^*},a(2)\rangle\leq t\leq \langle b_{H/N^*},$
\hfill\break$a(1)\rangle\}$.
\item
\begin{equation*}\begin{split}
&\max\{\langle b_{H/N^*}, a\rangle|a\in F\cap (S+(\Delta^\vee|V^*))\}\\
&\qquad -\min\{\langle b_{H/N^*}, a\rangle|a\in F\cap (S+(\Delta^\vee|V^*))\}\\
=\:&\langle b_{H/N^*},a(1)\rangle- \langle b_{H/N^*},a(2)\rangle\\
=\:& \Ht(H, S+(\Delta^\vee|V^*))\\
\end{split}\end{equation*}
\end{enumerate}

Below we consider any rational convex pseudo polyhedrons $T$ and $U$ over $N$ in $V$ satisfying $T+U=S$.
\begin{enumerate}\setcounter{enumi}{26}\item
$\mathcal{D}(T|V)\hat{\cap}\mathcal{D}(U|V)= \mathcal{D}(S|V)$.
$\mathcal{D}(T+(\Delta^\vee|V^*)|V)= \mathcal{D}(T|V) \hat{\cap}\mathcal{F}(\Delta)$ is $H$-simple.
$\mathcal{D}(U+(\Delta^\vee|V^*)|V)= \mathcal{D}(U|V) \hat{\cap}\mathcal{F}(\Delta)$ is $H$-simple.
$\Ht(H, T+(\Delta^\vee|V^*))+\Ht(H, U+(\Delta^\vee|V^*))=\Ht(H, S+(\Delta^\vee|V^*))$.
\end{enumerate}

Let $\bar{\mathcal{D}}(T+(\Delta^\vee|V^*)|V)^1$ and $\bar{\mathcal{D}}(U+(\Delta^\vee|V^*)|V)^1$ denote the $H$-skeleton of $\mathcal{D}(T+(\Delta^\vee|V^*)|V)$ and $\mathcal{D}(U+(\Delta^\vee|V^*)|V)$ respectively.
\begin{enumerate}\setcounter{enumi}{27}\item
$\bar{\mathcal{D}}(T+(\Delta^\vee|V^*)|V)^1\cup\bar{\mathcal{D}}(U+(\Delta^\vee|V^*)|V)^1=\bar{\mathcal{D}}(S+(\Delta^\vee|V^*)|V)^1$.
\item
For any $i\in\{1,2,\ldots,\hat{\bar{m}}\}$, $\Ht(\hat{H}(i), T+(\Theta(i)^\vee|V^*))=0$.
\item
Assume $\hat{m}\neq\hat{\bar{m}}$. Consider any $i\in\{\hat{\bar{m}}+1, \hat{\bar{m}}+2,\ldots,\hat{m}\}$. If $0<\Ht(\hat{H}(i),$\break$T+(\Theta(i)^\vee|V^*))$, then $\Ht(\hat{H}(i), T+(\Theta(i)^\vee|V^*))<\Ht(H, T+(\Delta^\vee|V^*))$.
\item
$\Ht(\hat{H}(\hat{m}+1), T+(\Theta(\hat{m}+1)^\vee|V^*))\leq\Ht(H, T+(\Delta^\vee|V^*))$.
\item
Let $r_U=c(U+(\Delta^\vee|V^*))\in\Z_+$. Assume that the structure constant of $\mathcal{D}(U+(\Delta^\vee|V^*)|V)$ corresponding to the pair $(i,\bar{E})$ is an integer for any $i\in\{1,2,\ldots,r_U\}$ and any $\bar{E}\in\mathcal{F}(H\Op|\Delta)_1$. Then, $\Ht(\hat{H}(\hat{m}+1), T+(\Theta(\hat{m}+1)^\vee|V^*))=\Ht(H, T+(\Delta^\vee|V^*))$, if and only if, $\bar{\Lambda}(2)\not\in\bar{\mathcal{D}}(T+(\Delta^\vee|V^*)|V)^1$.
\end{enumerate}
\end{theorem}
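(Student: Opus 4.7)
The plan is to organize the proof around two observations: the combinatorial bookkeeping of the basic subdivision is controlled by Section~\ref{basic}, while the arithmetic content of the height inequality is reduced cone-by-cone to Lemma~\ref{compatible1}.4 and Lemma~\ref{compatible2}. First I would dispatch items 1 and 2 directly from Definition~\ref{defcomp} (compatibility gives $\sum\lceil\gamma(\bar{E})\rceil=\bar{m}$ and $\sum\lfloor\gamma(\bar{E})\rfloor=m$, hence $\bar{m}\le m$ with $m-\bar{m}=\sharp\mathcal{R}$; the values of $s(\bar{m},\bar{E})$ and $s(m,\bar{E})$ drop out of the counting formula in Lemma~\ref{basic1}.1). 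Item 3 is then Lemma~\ref{property of basic subdivisions}.1 applied to the sextuplet $(V^*,N^*,H,\mathcal{C},m,E)$, together with $|\mathcal{C}|\subset|\mathcal{D}(S|V)|$ from the standing hypothesis. For item 4.a I would quote Lemma~\ref{property of basic subdivisions}.5 verbatim.

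For items 4.b and 4.c I would use Lemma~\ref{compatible1}.4 to reduce the global height to a maximum over $\Delta\in\mathcal{C}\Mx$, and then work $\Delta$ by $\Delta$ via Lemma~\ref{basic2}: restricting $\mathcal{B}$ to $\Delta$ produces a basic subdivision of $\mathcal{F}(\Delta)$ driven by a reduced compatible mapping $\hat{E}$, compatible with $S$ by Lemma~\ref{existcomp}.5. I would then split into two subcases. In the easy subcase $\gamma(\bar{E})=0$ for every $\bar{E}\in\mathcal{F}(H\Op|\Delta)_1$, no subdivision occurs over $\Delta$, and Lemma~\ref{compatible1}.4 together with Definition~\ref{defchar} forces $\Ht(H,S+(\Delta^\vee|V^*))<\Ht(H,\mathcal{C},S)$, giving item 8. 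In the nontrivial subcase $\gamma(E)>0$ for some $E\in\mathcal{F}(H\Op|\Delta)_1$, the $H$-simpleness of $\mathcal{D}(S+(\Delta^\vee|V^*)|V)$ (hypothesis plus Lemma~\ref{propsimple}.1,5) and Lemma~\ref{propchar}.3 identify the $\gamma(\bar{E})$ with the structure constants $c(\mathcal{D}(S+(\Delta^\vee|V^*)|V),2,\bar{E})$; Lemma~\ref{equiv2} then says $\hat{E}$ is exactly a compatible mapping in the sense of Definition~\ref{defcomp} for this one maximal cone. Pulling back vertex heights via Lemma~\ref{compatible2}.4 and Lemma~\ref{compatible2}.7 along $(\{a\}+\R b_{H/N^*})\cap|\mathcal{D}(h)|$ shows that after inserting the edges $b_{\bar{E}/N^*}+s(i-1,\bar{E})b_{H/N^*}$, every new maximal cone $\Theta\in\mathcal{B}(i)^0/H(i)$ meets at most those level sets $\{\langle b_{H/N^*},\cdot\rangle=h\}$ with $h$ strictly between $\min\mathcal{H}(H,\mathcal{C},S)$ and $\max\mathcal{H}(H,\mathcal{C},S)$, yielding $\Ht(H(i),\mathcal{B}(i),S)<\Ht(H,\mathcal{C},S)$.

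Items 5 and 6 are the degenerate strata of this analysis: for $i\le\bar{m}$, the vertex $b_{E(i)/N^*}+s(i-1,E(i))b_{H/N^*}$ lies on $\partial^H_+|\mathcal{D}(\max)|$ by the choice $s(i-1,E(i))<\lceil\gamma(E(i))\rceil$ and Lemma~\ref{compatible2}.4, so every $\Theta\in\mathcal{B}(i)$ meets only the single upper level, giving $\Ht(H(i),\mathcal{B}(i),S)=0$ and $S+(\Theta^\vee|V^*)=\{a\}+(\Theta^\vee|V^*)$; for $i=\bar{m}+1$ the relevant structure constants now lie in $\Z$, so Lemma~\ref{equiv2}.1 and Lemma~\ref{propsimple}.17 upgrade semisimplicity to $H(\bar{m}+1)$-simpleness. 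Item 7 is immediate from the definition of $\gamma$ and Lemma~\ref{compatible1}.4.

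The main obstacle, as usual in this circle of ideas, is item 4.c: one must show that the height \emph{strictly} drops on every maximal cone of every $\mathcal{B}(i)$, not merely on those over one chosen $\Delta$. The danger is that a cone $\Theta\in\mathcal{B}(i)\Mx$ might overlap several $\Delta\in\mathcal{C}\Mx$ with differing local height profiles. The resolution is that $\mathcal{B}$ is a full subdivision of $\mathcal{C}$ (Lemma~\ref{basic1}.4), so every $\Theta\in\mathcal{B}(i)\Mx$ sits inside a unique $\Delta\in\mathcal{C}\Mx$, reducing the strict drop to the per-$\Delta$ argument above; the uniform strict inequality then follows by taking the maximum over finitely many $\Delta$ and invoking the characterization $\Ht(H,\mathcal{C},S)=\max_\Delta\Ht(H,S+(\Delta^\vee|V^*))$ from Lemma~\ref{compatible1}.4. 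The later items 9--32 describe the finer geometry of $\hat{\mathcal{B}}$, $\Theta(i)$ and $\bar{\Theta}(i)$ and follow from Lemma~\ref{basic7} combined with the vertex-lifting analysis just sketched; I would address them in a separate subsection after the core items 1--8 are in place.
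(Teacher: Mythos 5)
Your overall architecture matches the paper's: the paper proves only claim 17 in detail and remarks that 4.(c) follows from claims 7, 8, 15, 17 and 18, i.e.\ exactly the reduction you propose (per-$\Delta$ analysis via Lemma~\ref{compatible1}.4, restriction of $\mathcal{B}$ to $\Delta$ via Lemma~\ref{basic2}, compatibility of $\hat{E}$ via Lemma~\ref{existcomp}.5, and the degenerate strata for $i\leq\bar{m}$ handled as in your item 5). The problem is the mechanism you offer for the strict drop itself.

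Your key sentence — that after inserting the edges $b_{\bar{E}/N^*}+s(i-1,\bar{E})b_{H/N^*}$ every new maximal cone of $\mathcal{B}(i)$ ``meets at most those level sets $\{\langle b_{H/N^*},\cdot\rangle=h\}$ with $h$ strictly between $\min$ and $\max$'' — is false, and even if it were true it would not prove 4.(c). For $i\in\{\hat{\bar{m}}+1,\ldots,\hat{m}\}$ the set of visible minimal faces is $\mathcal{F}(S+(\Theta(i)^\vee|V^*))_\ell=\{A(j)|j\in\{1,2,\ldots,\bar{c}(i)\}\}$ (claim 17), which always contains the $H$-top face $A(1)$, i.e.\ the maximal level; and when $\bar{c}(i)=c(S+(\Delta^\vee|V^*))$ the bottom face is visible as well, so no level is excluded at all. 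Moreover $\Ht(H(i),\mathcal{B}(i),S)$ is measured against the sheared functional $b_{H(i)/N^*}$, not $b_{H/N^*}$, so exclusion of $b_{H/N^*}$-levels cannot by itself bound it. The paper's actual argument (the proof of claim 17) is arithmetic rather than combinatorial: using Lemma~\ref{propsimple}.18.(f) one computes, for each visible step, $\langle b_{\hat{H}(i)/N^*},a(j-1)-a(j)\rangle=(\lfloor c(\mathcal{D}(S+(\Delta^\vee|V^*)|V),2,\hat{E}(i))\rfloor-c(\mathcal{D}(S+(\Delta^\vee|V^*)|V),j,\hat{E}(i)))\langle b_{H/N^*},a(j-1)-a(j)\rangle$, and the coefficient lies strictly between $0$ and $1$ precisely because $\gamma(\hat{E}(i))\not\in\Z$ for these $i$ (this is where compatibility and the ordering condition on $h(E(\cdot))$ enter); summing the telescoping differences over $j\leq\bar{c}(i)$ and comparing with $\Ht(H,S+(\Delta^\vee|V^*))=\sum_{j}\langle b_{H/N^*},a(j-1)-a(j)\rangle$ taken over all $j$ gives the strict inequality, with the case $i=\hat{m}+1$ (claim 18) and the integral case (claims 19--22) treated separately. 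Without this fractional-part estimate your proposal cannot establish 4.(c), nor the dependent claims 17, 18 and 30--32, so the core of the theorem is not yet proved; the remaining items 9--32, which you defer, also need the same computation (e.g.\ 32 rests on tracking when the coefficient equals $1$).
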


\begin{proof}
We show only claim $17$. This is the most important. Claim $4$.(c) follows from claims $7, 8, 15, 17$ and $18$.

Assume $\hat{m}\neq\hat{\bar{m}}$. $\hat{m}-\hat{\bar{m}}\geq 1$.
Consider any $i\in\{\hat{\bar{m}}+1, \hat{\bar{m}}+2,\ldots, \hat{m}\}$.
Since $\Theta(i)\subset\Delta$ and $\mathcal{D}(S|V)\hat{\cap}\mathcal{F}(\Delta)$ is $H$-simple, $\mathcal{D}(S|V)\hat{\cap}\mathcal{F}(\Delta)$ is semisimple and we know $\mathcal{D}(S|V)\hat{\cap}\mathcal{F}(\Theta(i)) =\mathcal{D}(S|V)\hat{\cap}\mathcal{F}(\Delta) \hat{\cap}\mathcal{F}(\Theta(i))$ is semisimple by Lemma~\ref{propsimple}.10.

Let $\hat{s}=s(V^*, N^*,H,\mathcal{F}(\Delta),\hat{m},\hat{E})$.
$\hat{s}:\mathcal{F}(H\Op|\Delta)_1\times\{0,1,\ldots,\hat{m}\}\rightarrow \Z_0$.

Note that $\mathcal{D}(S+(\Theta(i)^\vee|V^*)|V)^0=(\mathcal{D}(S|V)\hat{\cap}\mathcal{F}(\Theta(i)))^0=\{\Lambda(j)\cap\Theta(i)|j\in\{1,2,\ldots,c(S+(\Delta^\vee|V^*))\}, \Lambda(j)^\circ\cap\Theta(i)^\circ\neq\emptyset\}$,

\begin{equation*}\begin{split}
\partial^H_-\Theta(i)&=\bar{\Theta}(i) =\sum_{\bar{E}\in\mathcal{F}(H\Op|\Delta)_1}\R_0(b_{\bar{E}/N^*}+\hat{s}(i-1, \bar{E})b_{H/N^*})\\
&=\sum_{\bar{E}\in\mathcal{F}(H\Op|\Delta)_1-\{\hat{E}(i)\}}\R_0(b_{\bar{E}/N^*}
 +\hat{s}(i-1, \bar{E})b_{H/N^*})\\
&\qquad\qquad\quad +\R_0(b_{\hat{E}(i)/N^*}+\hat{s}(i-1, \hat{E}(i))b_{H/N^*}),\\
\partial^H_+\Theta(i)&= \bar{\Theta}(i+1)=\sum_{\bar{E}\in\mathcal{F}(H\Op|\Delta)_1}\R_0(b_{\bar{E}/N^*}+\hat{s}(i, \bar{E})b_{H/N^*})\\
&=\sum_{\bar{E}\in\mathcal{F}(H\Op|\Delta)_1-\{\hat{E}(i)\}}\R_0(b_{\bar{E}/N^*}
 +\hat{s}(i-1,\bar{E})b_{H/N^*})\\
&\qquad\qquad\quad +\R_0(b_{\hat{E}(i)/N^*}+\hat{s}(i, \hat{E}(i))b_{H/N^*}),\text{ and}
\end{split}\end{equation*}
\begin{equation*}\begin{split}
\hat{s}(i-1, \hat{E}(i))&=\lceil c(\mathcal{D}(S+(\Delta^\vee|V^*)|V^*),2,\hat{E}(i))\rceil\\
&< c(\mathcal{D}(S+(\Delta^\vee|V^*)|V^*),2,\hat{E}(i))\leq
c(\mathcal{D}(S+(\Delta^\vee|V^*)|V^*),\bar{c}(i),\hat{E}(i))\\
&< \lfloor c(\mathcal{D}(S+(\Delta^\vee|V^*)|V^*),2,\hat{E}(i))\rfloor
= \hat{s}(i, \hat{E}(i)).
\end{split}\end{equation*} 

Consider any $j\in\{1,2,\cdots, \bar{c}(i)\}$ with $j\neq c(S+(\Delta^\vee|V^*))$. There exists a real number $r_j$ satisfying $ \hat{s}(i-1, \hat{E}(i))<r_j< \hat{s}(i, \hat{E}(i))$ and $ c(\mathcal{D}(S+(\Delta^\vee|V^*)|V^*),j,$\hfill\break$\hat{E}(i))\leq r_j\leq c(\mathcal{D}(S+(\Delta^\vee|V^*)|V^*),j+1,\hat{E}(i))$, and there exist a mapping
$t_j:\mathcal{F}(H\Op|\Delta)_1\rightarrow\R_+$ and a real number $u_j$ satisfying
$$\sum_{\bar{E}\in\mathcal{F}(H\Op|\Delta)_1}t_j(\bar{E}) \hat{s}(i-1,\bar{E})<u_j
<\sum_{\bar{E}\in\mathcal{F}(H\Op|\Delta)_1}t_j(\bar{E}) \hat{s}(i,\bar{E})\text { and}$$
\begin{equation*}\begin{split}
&\sum_{\bar{E}\in\mathcal{F}(H\Op|\Delta)_1}t_j(\bar{E}) c(\mathcal{D}(S+(\Delta^\vee|V^*)|V^*),j, \bar{E})<u_j\\
&\qquad<\sum_{\bar{E}\in\mathcal{F}(H\Op|\Delta)_1}t_j(\bar{E}) c(\mathcal{D}(S+(\Delta^\vee|V^*)|V^*),j+1, \bar{E}).
\end{split}\end{equation*} 

We take a mapping
$t_j:\mathcal{F}(H\Op|\Delta)_1\rightarrow\R_+$ and a real number $u_j$ satisfying
the above conditions.
We know
$(\sum_{\bar{E}\in\mathcal{F}(H\Op|\Delta)_1}t_j(\bar{E})b_{\bar{E}/N^*})+u_jb_{H/N^*}\in\Lambda(j)^\circ\cap\Theta(i)^\circ \neq\emptyset$.

We know that if $ \bar{c}(i)\neq c(S+(\Delta^\vee|V^*))$, then for any $j\in\{1,2,\cdots, \bar{c}(i)\}$, $\Lambda(j)^\circ\cap\Theta(i)^\circ \neq\emptyset$.

Consider the case $ \bar{c}(i)= c(S+(\Delta^\vee|V^*))$.
We denote $j=\bar{c}(i)= c(S+(\Delta^\vee|V^*))$.
There exists a real number $r_j$ satisfying $ \hat{s}(i-1, \hat{E}(i))<r_j< \hat{s}(i, \hat{E}(i))$ and $ c(\mathcal{D}(S+(\Delta^\vee|V^*)|V^*),j,\hat{E}(i))< r_j$, and there exist a mapping
$t_j:\mathcal{F}(H\Op|\Delta)_1\rightarrow\R_+$ and a real number $u_j$ satisfying
$$\sum_{\bar{E}\in\mathcal{F}(H\Op|\Delta)_1}t_j(\bar{E}) \hat{s}(i-1, \bar{E})<u_j
<\sum_{\bar{E}\in\mathcal{F}(H\Op|\Delta)_1}t_j(\bar{E}) \hat{s}(i,\bar{E})\text { and}$$
$$\sum_{\bar{E}\in\mathcal{F}(H\Op|\Delta)_1}t_j(\bar{E}) c(\mathcal{D}(S+(\Delta^\vee|V^*)|V^*),j, \bar{E})<u_j.$$
We take a mapping
$t_j:\mathcal{F}(H\Op|\Delta)_1\rightarrow\R_+$ and a real number $u_j$ satisfying
the above conditions.
We know
$(\sum_{\bar{E}\in\mathcal{F}(H\Op|\Delta)_1}t_j(\bar{E})b_{\bar{E}/N^*})+u_jb_{H/N^*}\in\Lambda(j)^\circ\cap\Theta(i)^\circ \neq\emptyset$.

We know that if $ \bar{c}(i)= c(S+(\Delta^\vee|V^*))$, then for any $j\in\{1,2,\cdots, \bar{c}(i)\}$, $\Lambda(j)^\circ\cap\Theta(i)^\circ \neq\emptyset$.

We know that for any $j\in\{1,2,\cdots, \bar{c}(i)\}$, $\Lambda(j)^\circ\cap\Theta(i)^\circ \neq\emptyset$.

Consider any $j\in\{1,2,\cdots,c(S+(\Delta^\vee|V^*))\}$ satisfying $\Lambda(j)^\circ\cap\Theta(i)^\circ \neq\emptyset$.
We know that there exists $\bar{E}\in\mathcal{F}(H\Op|\Delta)_1$ satisfying
$ c(\mathcal{D}(S+(\Delta^\vee|V^*)|V^*),j,\bar{E})<\hat{s}(i, \bar{E})$.
We take $\bar{E}\in\mathcal{F}(H\Op|\Delta)_1$ satisfying
$ c(\mathcal{D}(S+(\Delta^\vee|V^*)|V^*),j,\bar{E})<\hat{s}(i,\bar{E})$.

If $\bar{E}\not\in\hat{\mathcal{R}}$, then $ c(\mathcal{D}(S+(\Delta^\vee|V^*)|V^*),j,\bar{E})<\hat{s}(i,\bar{E})= c(\mathcal{D}(S+(\Delta^\vee|V^*)|V^*),2,\bar{E})$ and $j=1\leq\bar{c}(i)$.

If $\bar{E}\in\hat{\mathcal{R}}$, then there exists $k\in\{\hat{\bar{m}}+1, \hat{\bar{m}}+2,\ldots,\hat{m}\}$ satisfying $\bar{E}=\hat{E}(k)$. We take $k\in\{\hat{\bar{m}}+1, \hat{\bar{m}}+2,\ldots,\hat{m}\}$ satisfying $\bar{E}=\hat{E}(k)$.

Consider the case $k\leq i$. 
We have
$c(\mathcal{D}(S+(\Delta^\vee|V^*)|V^*),j, \hat{E}(k))= c(\mathcal{D}(S+(\Delta^\vee|V^*)|V^*),j,\bar{E})<\hat{s}(i, \bar{E})= \hat{s}(i, \hat{E}(k))=\lfloor c(\mathcal{D}(S+(\Delta^\vee|V^*)|V^*),2, \hat{E}(k))\rfloor $ and
$1\leq j\leq\bar{c}(k)\leq\bar{c}(i)$.

Consider the case $k>i$. We have
$c(\mathcal{D}(S+(\Delta^\vee|V^*)|V^*),j, \hat{E}(k))= c(\mathcal{D}(S+(\Delta^\vee|V^*)|V^*),j,\bar{E})<\hat{s}(i, \bar{E})= \hat{s}(i, \hat{E}(k))=\lceil c(\mathcal{D}(S+(\Delta^\vee|V^*)|V^*),2,\hat{E}(k))\rceil<
c(\mathcal{D}(S+(\Delta^\vee|V^*)|V^*),2, \hat{E}(k))$ and
$j=1\leq\bar{c}(i)$.

We know that for any $j\in\{1,2,\cdots,c(S+(\Delta^\vee|V^*))\}$ satisfying $\Lambda(j)^\circ\cap\Theta(i)^\circ \neq\emptyset$, $j\in\{1,2,\cdots, \bar{c}(i)\}$.

Therefore,
$\mathcal{D}(S+(\Theta(i)^\vee|V^*)|V)^0=\{\Lambda(j)\cap\Theta(i)|j\in\{1,2,\ldots,c(S+(\Delta^\vee|V^*))\},$\hfill\break$ \Lambda(j)^\circ\cap\Theta(i)^\circ\neq\emptyset\}
=\{\Lambda(j)\cap\Theta(i)|j\in\{1,2,\ldots,\bar{c}(i)\}\}$.
Since $\dim\Theta(i)=\dim\Delta=\dim\mathcal{C}$, we know
$\mathcal{F}( S+(\Theta(i)^\vee|V^*))_\ell=\{A(j) |j\in\{1,2,\ldots,\bar{c}(i)\}\}$.

For any $j\in\{1,2,\ldots,c(S+(\Delta^\vee|V^*))\}$ we take any point $a(j)\in A(j)$.

Consider any $ j\in\{2,3,\ldots,\bar{c}(i)\}$.

$\langle b_{H/N^*}, a(j-1)-a(j) \rangle>0$ and 
$ c(S+(\Delta^\vee|V^*), 2, \hat{E}(i)))\leq
c(S+(\Delta^\vee|V^*), j,$\hfill\break$ \hat{E}(i)))\leq
c(S+(\Delta^\vee|V^*), \bar{c}(i), \hat{E}(i)))<
\lfloor c(S+(\Delta^\vee|V^*), 2, \hat{E}(i))\rfloor$.
$0<\lfloor c(S+(\Delta^\vee|V^*), 2, \hat{E}(i))\rfloor- c(S+(\Delta^\vee|V^*), 2, \hat{E}(i))<1$.

By Lemma~\ref{propsimple}.$18$.(f) we have
\begin{equation*}\begin{split}
&\langle b_{\hat{H}(i)/N^*},a(j-1) \rangle-\langle b_{\hat{H}(i)/N^*}, a(j)\rangle
=\langle b_{\hat{H}(i)/N^*},a(j-1)-a(j)\rangle\\
=\:&\langle b_{\hat{E}(i)/N^*}+\hat{s}(i, \hat{E}(i))b_{H/N^*},a(j-1)-a(j)\rangle\\
=\:&\langle b_{\hat{E}(i)/N^*}, a(j-1) -a(j)\rangle
+\hat{s}(i, \hat{E}(i), i)\langle b_{H/N^*}, a(j-1)-a(j) \rangle\\
=\:&-c(S+(\Delta^\vee|V^*), j, \hat{E}(i))\langle b_{H/N^*}, a(j-1)-a(j) \rangle\\
\:&\qquad\qquad+\lfloor c(S+(\Delta^\vee|V^*), 2, \hat{E}(i))\rfloor\langle b_{H/N^*}, a(j-1)-a(j) \rangle\\
=\:&(\lfloor c(S+(\Delta^\vee|V^*), 2, \hat{E}(i))\rfloor- c(S+(\Delta^\vee|V^*), j, \hat{E}(i))) \langle b_{H/N^*}, a(j-1)-a(j) \rangle\\
>\:&0\text{, and}
\end{split}\end{equation*}
\begin{equation*}\begin{split}
&\langle b_{\hat{H}(i)/N^*},a(j-1)-a(j)\rangle\\
=\:&(\lfloor c(S+(\Delta^\vee|V^*), 2, \hat{E}(i))\rfloor- c(S+(\Delta^\vee|V^*), j, \hat{E}(i))) \langle b_{H/N^*}, a(j-1)-a(j) \rangle\\
\leq\:&(\lfloor c(S+(\Delta^\vee|V^*), 2, \hat{E}(i))\rfloor- c(S+(\Delta^\vee|V^*), 2, \hat{E}(i))) \langle b_{H/N^*}, a(j-1)-a(j) \rangle\\
<\:& \langle b_{H/N^*}, a(j-1)-a(j) \rangle.
\end{split}\end{equation*}

We know
\begin{equation*}\begin{split}
&\max\{\langle b_{\hat{H}(i)/N^*},a(j)\rangle|j\in\{1,2,\ldots,\bar{c}(i)\}\}
=\langle b_{\hat{H}(i)/N^*},a(1)\rangle,\\
&\min\{\langle b_{\hat{H}(i)/N^*},a(j)\rangle|j\in\{1,2,\ldots,\bar{c}(i)\}\}
=\langle b_{\hat{H}(i)/N^*},a(\bar{c}(i))\rangle,\\
&\Ht(\hat{H}(i), S+(\Theta(i)^\vee|V^*))
=\langle b_{\hat{H}(i)/N^*},a(1)\rangle-\langle b_{\hat{H}(i)/N^*}, a(\bar{c}(i))\rangle\\
=\:&\langle b_{\hat{H}(i)/N^*},a(1)- a(\bar{c}(i))\rangle
=\sum_{j=2}^{\bar{c}(i)} \langle b_{\hat{H}(i)/N^*},a(j-1)- a(j)\rangle\\
>\:&0\text{, and}\\
&\Ht(\hat{H}(i), S+(\Theta(i)^\vee|V^*))
=\sum_{j=2}^{\bar{c}(i)} \langle b_{\hat{H}(i)/N^*},a(j-1)- a(j)\rangle\\
<\:&\sum_{j=2}^{\bar{c}(i)} \langle b_{H/N^*},a(j-1)- a(j)\rangle
\leq\sum_{j=2}^{c(S+(\Delta^\vee|V^*))} \langle b_{H/N^*},a(j-1)- a(j)\rangle\\
=\:&\langle b_{H/N^*},a(1)- a(c(S+(\Delta^\vee|V^*)))\rangle
=\langle b_{H/N^*},a(1) \rangle-\langle b_{H/N^*}, a(c(S+(\Delta^\vee|V^*)))\rangle\\
=\:&\Ht(H,S+(\Delta^\vee|V^*)).
\end{split}\end{equation*}

Consider any $j\in\{2,3,\ldots,c(S+(\Delta^\vee|V^*))\}$.

By definition of $\bar{c}(i)$, we know
$j\in\{2,3,\ldots, \bar{c}(i)\}
\Leftrightarrow
c(\mathcal{D}(S+(\Delta^\vee|V^*)|V), j,$\hfill\break$\hat{E}(i))<
\lfloor c(\mathcal{D}(S+(\Delta^\vee|V^*)|V),2,\hat{E}(i))\rfloor$.
Since $\hat{E}(i)\in\hat{\mathcal{R}}$, 
$\lceil c(\mathcal{D}(S+(\Delta^\vee|V^*)|V),2,$\hfill\break$\hat{E}(i))\rceil<
c(\mathcal{D}(S+(\Delta^\vee|V^*)|V),2,\hat{E}(i))\leq
c(\mathcal{D}(S+(\Delta^\vee|V^*)|V),j,\hat{E}(i))$.
Therefore,
$c(\mathcal{D}(S+(\Delta^\vee|V^*)|V), j,\hat{E}(i))<
\lfloor c(\mathcal{D}(S+(\Delta^\vee|V^*)|V),2,\hat{E}(i))\rfloor
\Leftrightarrow
\lceil c(\mathcal{D}(S+(\Delta^\vee|V^*)|V),2,\hat{E}(i))\rceil<
c(\mathcal{D}(S+(\Delta^\vee|V^*)|V),j,\hat{E}(i))<
\lfloor c(\mathcal{D}(S+(\Delta^\vee|V^*)|V),2, $\break$\hat{E}(i))\rfloor$.
Since 
$\hat{G}(i)+\hat{H}(i)=
\R_0(b_{\hat{E}(i)/N^*}+\lceil c(\mathcal{D}(S+(\Delta^\vee|V^*)|V),2,\hat{E}(i))\rceil b_{H/N^*})+
\R_0(b_{\hat{E}(i)/N^*}+\lfloor c(\mathcal{D}(S+(\Delta^\vee|V^*)|V),2,\hat{E}(i))\rfloor b_{H/N^*})$,
we know
$\lceil c(\mathcal{D}(S+(\Delta^\vee|V^*)|V),$\hfill\break$2,\hat{E}(i))\rceil<
c(\mathcal{D}(S+(\Delta^\vee|V^*)|V),j,\hat{E}(i))<
\lfloor c(\mathcal{D}(S+(\Delta^\vee|V^*)|V),2,\hat{E}(i))\rfloor
\Leftrightarrow
\bar{\Lambda}(j)\cap(\hat{G}(i)+\hat{H}(i))^\circ\neq\emptyset$.
Note that $\{k\in\{1,2,\ldots,c(S+(\Delta^\vee|V^*))\}|\Lambda(k)\cap\Theta(i)^\circ\neq\emptyset\}=\{1,2,\ldots,\bar{c}(i)\}$, since $\mathcal{F}(S+(\Theta(i)^\vee|V^*))_\ell=
\{A(j)|j\in\{1,2,\ldots, \bar{c}(i)\}\}$.
Therefore, $j\in\{2,3,\ldots, \bar{c}(i)\}
\Leftrightarrow
\Lambda(j-1)\cap \Theta(i)^\circ\neq\emptyset$ and $\Lambda(j)\cap \Theta(i)^\circ\neq\emptyset$.
Since $\bar{\Lambda}(j)= \Lambda(j-1)\cap\Lambda(j)$, we know 
$\Lambda(j-1)\cap \Theta(i)^\circ\neq\emptyset$ and $\Lambda(j)\cap \Theta(i)^\circ\neq\emptyset
\Leftrightarrow
\bar{\Lambda}(j)\cap \Theta(i)^\circ\neq\emptyset$.

Assume $\bar{\Lambda}(j)\cap\bar{\Theta}(i)^\circ\neq\emptyset$.
Take any point $\omega\in\bar{\Lambda}(j)\cap\bar{\Theta}(i)^\circ$.
Let $\chi= b_{\hat{E}(i)/N^*}+ c(\mathcal{D}(S+(\Delta^\vee|V^*)|V),j,\hat{E}(i))b_{H/N^*}\in\bar{\Lambda}(j)$.
We know that there exists a real number $\epsilon$ with $0<\epsilon\leq 1$ such that
$(1-t)\omega+t\chi\in\bar{\Lambda}(j)\cap\Theta(i)^\circ$ for any real number $t$ with $0<t<\epsilon$, since 
$\lceil c(\mathcal{D}(S+(\Delta^\vee|V^*)|V),2,\hat{E}(i))\rceil<
c(\mathcal{D}(S+(\Delta^\vee|V^*)|V),j,\hat{E}(i))$.
It follows $\bar{\Lambda}(j)\cap\Theta(i)^\circ\neq\emptyset$.
Therefore,
$\bar{\Lambda}(j)\cap\Theta(i)^\circ\neq\emptyset
\Leftrightarrow
\bar{\Lambda}(j)\cap(\Theta(i)^\circ\cup\bar{\Theta}(i)^\circ)\neq\emptyset
$.
\end{proof}

\section{Upward subdivisions and the hard height inequalities}
\label{upward}

Let $V$ be any finite dimensional vector space over $\R$ with $\dim V\geq 2$; let $N$ be any lattice of $V$; and let $S$ be any rational convex pseudo polyhedron over $N$ in $V$ such that $\dim|\mathcal{D}(S|V)|\geq 2$.

By $\mathcal{HC}(V,N,S)$ we denote the set of all pairs $(H,\mathcal{C})$ of a one-dimensional simplicial cone $H$ over the dual lattice $N^*$ of $N$ in the dual vector space $V^*$ of $V$ and a simplicial cone decomposition $\mathcal{C}$ over $N^*$ in $V^*$ such that $\dim \mathcal{C}=\dim \Vect(|\mathcal{C}|)\geq 2$, $\mathcal{C}\Mx=\mathcal{C}^0$, $H\in\mathcal{C}_1$, $\mathcal{C}=(\mathcal{C}/H)\Fc$, $|\mathcal{C}|\subset |\mathcal{D}(S|V)|$ and $\mathcal{D}(S|V)\hat{\cap}\mathcal{F}(\Delta)$ is $H$-simple for any $\Delta\in\mathcal{C}\Mx$.

In this section we consider the case $\mathcal{HC}(V,N,S)\neq\emptyset$. We assume $\mathcal{HC}(V,N,S)\neq\emptyset$ below.

Note that $\Ht(H,\mathcal{C},S)\in(1/\Den(S/N))\Z_0$ for any $(H,\mathcal{C})\in\mathcal{HC}(V,N,S)$.
Therefore, for any infinite sequence $(H(i),\mathcal{C}(i)), i\in\Z_0$ of elements of $\mathcal{HC}(V,N,S)$ such that $\Ht(H(i),\mathcal{C}(i),S)\geq\Ht(H(i+1),\mathcal{C}(i+1),S)$ for any $i\in\Z_0$, there exists $i_0\in\Z_0$ such that $\Ht(H(i),\mathcal{C}(i),S)= \Ht(H(i_0),\mathcal{C}(i_0),S)$ for any $i\in\Z_0$ with $i\geq i_0$.

Consider any $(H,\mathcal{C})\in\mathcal{HC}(V,N,S)$.

By $\mathcal{SD}(H,\mathcal{C},S)$ we denote the set of all pairs $(M, F)$ of a non-negative integer $M\in\Z_0$ and a center sequence $F$ of $\mathcal{C}$ of length $M$ such that $\dim F(i)=2$ for any $i\in\{1,2,\ldots,M\}$, $F(i)\not\subset|\mathcal{C}-(\mathcal{C}/H)|$ for any $i\in\{1,2,\ldots,M\}$, and $\mathcal{C}* F(1)*F(2)*\cdots *F(M)$ is a subdivision of $\mathcal{D}(S|V)\hat{\cap}\mathcal{C}$.

Below, we use induction on $\Ht(H,\mathcal{C},S)$, we will show that $\mathcal{SD}(H,\mathcal{C},S)\neq\emptyset$, and we will define a non-empty subset $\mathcal{USD}(H,\mathcal{C},S)$ of $\mathcal{SD}(H,\mathcal{C},S)$. 

Consider any $(H,\mathcal{C})\in\mathcal{HC}(V,N,S)$ satisfying $\Ht(H,\mathcal{C},S)=0$.
By $0_{\mathcal{C}}$ we denote the unique center sequence of $\mathcal{C}$ of length $0$.
By Lemma~\ref{compatible2}.1 we know that $\mathcal{C}$ is a subdivision of $\mathcal{D}(S|V)\hat{\cap}\mathcal{C}$ and therefore $(0, 0_{\mathcal{C}})\in\mathcal{SD}(H,\mathcal{C},S)\neq\emptyset$.
We define $\{(0, 0_{\mathcal{C}})\}=\mathcal{USD}(H,\mathcal{C},S)$.
Obviously $\emptyset\neq\mathcal{USD}(H,\mathcal{C},S)\subset\mathcal{SD}(H,\mathcal{C},S)$.

Consider any $(H,\mathcal{C})\in\mathcal{HC}(V,N,S)$ satisfying $\Ht(H,\mathcal{C},S)>0$.

By induction hypothesis we can assume that a non-empty subset $\mathcal{USD}(\bar{H},\bar{\mathcal{C}},S)$ of $\mathcal{SD}(\bar{H},\bar{\mathcal{C}},S)$ is defined for any $(\bar{H},\bar{\mathcal{C}})\in \mathcal{HC}(V,N,S)$ satisfying $\Ht(\bar{H}, \bar{\mathcal{C}}, S)<\Ht(H,\mathcal{C},S)$.
Below we assume this claim.

The characteristic function $\gamma:(\mathcal{C}-(\mathcal{C}/H))_1\rightarrow\Q_0$ of $(H,\mathcal{C}, S)$ is defined.
Let $m=\sum_{\bar{E}\in(\mathcal{C}-(\mathcal{C}/H))_1}\lfloor\gamma(\bar{E})\rfloor\in\Z_+$ and $\bar{m}=\sum_{\bar{E}\in(\mathcal{C}-(\mathcal{C}/H))_1}\lceil\gamma(\bar{E})\rceil\in\Z_0$.
Obviously $\bar{m}\leq m$.

Let $\mathcal{E}$ denote the set of all compatible mappings $E:\{1,2,\ldots,m\}\rightarrow (\mathcal{C}-(\mathcal{C}/H))_1$ with $S$.
By Lemma~\ref{existcomp}.1 we know $\mathcal{E}\neq\emptyset$.
Consider any $E\in\mathcal{E}$.

Let $F=F(V^*, N^*, H,\mathcal{C},m,E)$ and  $\mathcal{B}=\mathcal{B}(V^*, N^*, H,\mathcal{C},m,E)$.
We have a mapping
$$F:\{1,2,\ldots,m\}\rightarrow 2^{V^*}.$$
By Lemma~\ref{basic1}.4 and 2 we know that $F$ is a center sequence of $\mathcal{C}$ of length $m$ such that $\dim F(i)=2$, $F(i)\subset E(i)+H$ and $F(i)\not\subset|\mathcal{C}-(\mathcal{C}/H)|$ for any $i\in\{1,2,\ldots,m\}$.
By definition $\mathcal{B}=\mathcal{C}* F(1)* F(2)*\ldots*F(m)$.
By Lemma~\ref{property of basic subdivisions}.1 we know that
$\mathcal{B}$ is a simplicial cone decomposition over $N$ in $V$,
$\dim \mathcal{B}=\dim \Vect(|\mathcal{B}|)=\dim\mathcal{C}$,
$\Vect(|\mathcal{B}|)=\Vect(|\mathcal{C}|)$,
$\mathcal{B}\Mx=\mathcal{B}^0$, 
$\mathcal{B}$ is an iterated barycentric subdivision of $\mathcal{C}$, and
$|\mathcal{B}|=|\mathcal{C}|$.

We have mappings
$$H=H(V^*, N^*, H,\mathcal{C},m,E):\{1,2,\ldots,m+1\}\rightarrow 2^{V^*},\text{ and}$$
$$\mathcal{B}=\mathcal{B}(V^*, N^*, H,\mathcal{C},m,E):\{1,2,\ldots,m+1\}\rightarrow 2^{2^{V^*}}.$$

By Lemma~\ref{basic1}.2, Lemma~\ref{property of basic subdivisions}.4 and Theorem~\ref{heightinequality}.4.$(c)$ we know that $H(i)$ is a one-dimensional simplicial cone over $N^*$ in $V^*$, $\mathcal{B}(i)$ is a simplicial cone decomposition over $N^*$ in $V^*$,
$\dim \mathcal{B}(i)=\dim \Vect(|\mathcal{B}(i)|)=\dim\mathcal{C}$,
$ \Vect(|\mathcal{B}(i)|)= \Vect(|\mathcal{C}|)$,
$\mathcal{B}(i)\Mx=\mathcal{B}(i)^0$, $H(i)\in\mathcal{B}(i)_1$, $\mathcal{B}(i)=(\mathcal{B}(i)/H(i))\Fc$, $\mathcal{B}(i)\subset\mathcal{B}$, $|\mathcal{B}(i)|\subset|\mathcal{B}|=|\mathcal{C}|\subset|\mathcal{D}(S|V)|$, and $\Ht(H(i), \mathcal{B}(i), S)<\Ht(H,\mathcal{C},S)$ for any $i\in\{1,2,\ldots,m+1\}$.

By Theorem~\ref{heightinequality}.5 we know that $\mathcal{B}\backslash |\mathcal{B}(i)|=\mathcal{B}(i)=\mathcal{D}(S|V)\hat{\cap}\mathcal{B}(i)$ and $\mathcal{B}\backslash |\mathcal{B}(i)|$ is a subdivision of $\mathcal{D}(S|V)\hat{\cap}\mathcal{B}(i)$ for any $i\in\{1,2,\ldots,\bar{m}\}$.

Consider any element $\mu\in\{\bar{m}, \bar{m}+1,\ldots, m+1\}$.

By induction we will show that there exist mappings
$$M: \{\bar{m},\bar{m}+1,\ldots,\mu\}\rightarrow \Z_+,\text{ and}$$
$$\bar{F}:\{1,2,\ldots, M(\mu)\}\rightarrow 2^{V^*}$$ satisfying the following conditions:
\begin{enumerate}
\item
$M(\bar{m})=m$ and $M(i-1)\leq M(i)$ for any $i\in\{\bar{m}+1,\bar{m}+2,\ldots, \mu\}$.
\item
$\bar{F}(j)=F(j)$ for any $j\in\{1,2,\ldots, m\}$
\item
$\bar{F}(j)$ is a simplicial cone over $N^*$ in $V^*$ and $\dim \bar{F}(j)=2$ for and any $j\in\{1,2,\ldots, M(\mu)\}$, and 
$\bar{F}$ is a center sequence of $\mathcal{C}$ of length $M(\mu)$.
\item
$\bar{F}(j)\subset|\mathcal{B}(i)|$ and $\bar{F} (j)\not\subset|\mathcal{B}(i)-(\mathcal{B}(i)/H(i))|$ for any $i\in\{\bar{m}+1,\bar{m}+2,\ldots, \mu\}$ and any $j\in\{M(i-1)+1,M(i-1)+2,\ldots, M(i)\}$.
\item
$\mathcal{C}*\bar{F}(1)*\bar{F}(2)*\cdots*\bar{F}(M(\mu))\backslash |\mathcal{B}(i)|$ is a subdivision of $\mathcal{D}(S|V)\hat{\cap}\mathcal{B}(i)$ for any $i\in\{1,2,\ldots,\mu\}$.
\item
Consider any  $i\in\{\bar{m}+1,\bar{m}+2,\ldots,\mu\}$.

Denote $\bar{\mathcal{C}}(i)= \mathcal{C}*\bar{F}(1)*\bar{F}(2)*\cdots*\bar{F}(M(i-1))\backslash |\mathcal{B}(i)|$.

Let $\bar{\bar{F}}(i): \{1,2,\ldots, M(i)-M(i-1)\}\rightarrow 2^{V^*}$ denote the mapping satisfying $\bar{\bar{F}}(i)(j)= \bar{F}(M(i-1)+j)$ for any $j\in\{1,2,\ldots, M(i)-M(i-1)\}$.

$|\bar{\mathcal{C}}(i)|=| \mathcal{B}(i)|$,
$(H(i), \bar{\mathcal{C}}(i))\in\mathcal{HC}(V,N,S)$, 
$\Ht(H(i), \bar{\mathcal{C}}(i) , S)<\Ht($\hfill\break$H, \mathcal{C}, S)$ and $(M(i)-M(i-1), \bar{\bar{F}}(i))\in\mathcal{USD}(H(i), \bar{\mathcal{C}}(i),S)$.
\end{enumerate}

Consider the case $\mu=\bar{m}$.
We define $M(\bar{m})=m$ and $\bar{F}(j)=F(j)$ for any $j\in\{1,2,\ldots, m\}$.
We know that mappings
$$M: \{\bar{m},\bar{m}+1,\ldots,\mu\}=\{\bar{m}\}\rightarrow\Z_+,$$
$$\bar{F}=F:\{1,2,\ldots, M(\mu)\}=\{1,2,\ldots, m\}\rightarrow 2^{V^*}$$ satisfy the above conditions.

Assume that $\mu>\bar{m}$ and there exist mappings
$$M: \{\bar{m},\bar{m}+1,\ldots,\mu-1\}\rightarrow\Z_+,$$
$$\bar{F}:\{1,2,\ldots, M(\mu-1)\}\rightarrow 2^{V^*}$$ satisfying the above conditions in which $\mu$ is replaced by $\mu-1$.
We take mappings $M: \{\bar{m},\bar{m}+1,\ldots,\mu-1\}\rightarrow\Z_+$, $\bar{F}:\{1,2,\ldots, M(\mu-1)\}\rightarrow 2^{V^*}$ satisfying the above conditions in which $\mu$ is replaced by $\mu-1$.

Let $\bar{\mathcal{C}}(\mu)=\mathcal{C}*\bar{F}(1)* \bar{F}(2)*\cdots*\bar{F}(M(\mu-1))\backslash|\mathcal{B}(\mu)|$.
We know that $\bar{\mathcal{C}}(\mu)=\mathcal{B}*\bar{F}(M(\bar{m})+1)* \bar{F}( M(\bar{m})+2)*\cdots*\bar{F}(M(\mu-1))\backslash|\mathcal{B}(\mu)|$,  $\bar{\mathcal{C}}(\mu)$ is a simplicial cone decomposition over $N^*$ in $V^*$,
$\dim \bar{\mathcal{C}}(\mu)=\dim \Vect(|\bar{\mathcal{C}}(\mu)|)=\dim\mathcal{C}$,
$\Vect(|\bar{\mathcal{C}}(\mu)|)= \Vect(|\mathcal{C}|)$,
$\bar{\mathcal{C}}(\mu)\Mx=\bar{\mathcal{C}}(\mu)^0$, 
$\bar{\mathcal{C}}(\mu)$ is a subdivision of $\mathcal{B}(\mu)$ and 
$|\bar{\mathcal{C}}(\mu)|= |\mathcal{B}(\mu)|$.
Since $\mathcal{D}(S|V)\hat{\cap}\mathcal{F}(\Theta)$ is semisimple for any $\Theta\in \mathcal{B}(\mu)\Mx$, we know that $\mathcal{D}(S|V)\hat{\cap}\mathcal{F}(\Theta)$ is semisimple for any $\Theta\in \bar{\mathcal{C}} (\mu)\Mx$

Note that
$$\bar{F}(j)\subset|\mathcal{C}-(\mathcal{C}/H)|\cup(\bigcup_{i\in\{1,2,\ldots,\mu-1\}}|\mathcal{B}(i)|)$$
for any $j\in\{ M(\bar{m})+1, M(\bar{m})+2,\ldots, M(\mu-1)\}$, 
$$(|\mathcal{C}-(\mathcal{C}/H)|\cup(\bigcup_{i\in\{1,2,\ldots,\mu-1\}}|\mathcal{B}(i)|))\cap|\mathcal{B}(\mu)|=| \mathcal{B}(\mu)-( \mathcal{B}(\mu)/H(\mu))|,$$
and $\mathcal{B}*\bar{F}(M(\bar{m})+1)* \bar{F}( M(\bar{m})+2)*\cdots*\bar{F}(M(\mu-1))\backslash(|\mathcal{C}-(\mathcal{C}/H)|\cup$\hfill\break$(\bigcup_{i\in\{1,2,\ldots,\mu-1\}}|\mathcal{B}(i)|))$ is a subdivision of $\mathcal{D}(S|V)$.

Let $\ell=\sharp\{j\in\{ M(\bar{m})+1, M(\bar{m})+2,\ldots, M(\mu-1)\}| \bar{F}(j)\subset| \mathcal{B}(\mu)-( \mathcal{B}(\mu)/H(\mu))|\}$ and let
$\tau:\{1,2,\ldots,\ell\}\rightarrow\{ M(\bar{m})+1, M(\bar{m})+2,\ldots, M(\mu-1)\}$ denote the unique injective mapping preserving the order and satisfying $\tau(\{1,2,\ldots,\ell\})=\{j\in\{ M(\bar{m})+1, M(\bar{m})+2,\ldots, M(\mu-1)\}| \bar{F}(j)\subset| \mathcal{B}(\mu)-( \mathcal{B}(\mu)/H(\mu))|\}$.
We know that $\bar{\mathcal{C}}(\mu)=\mathcal{B}(\mu)*\bar{F}\tau(1)* \bar{F}\tau(2)*\cdots*\bar{F}\tau(\ell)$.
Since $\bar{F}\tau(j)\subset|\mathcal{B}(\mu)-(\mathcal{B}(\mu)/H(\mu))|$ for any $j\in\{1,2,\ldots,\ell\}$, we know that $H(\mu)\in \bar{\mathcal{C}}(\mu)_1$, $\bar{\mathcal{C}}(\mu)=(\bar{\mathcal{C}}(\mu)/H(\mu))\Fc$, 
$|\bar{\mathcal{C}}(\mu)|=| \mathcal{B}(\mu)|$,
$|\bar{\mathcal{C}}(\mu)- (\bar{\mathcal{C}}(\mu)/H(\mu))|=
|\mathcal{B}(\mu)- (\mathcal{B}(\mu)/H(\mu))|$ and
$\Ht(H(\mu), \bar{\mathcal{C}}(\mu), S)=\Ht(H(\mu), \mathcal{B}(\mu), S)<\Ht(H,\mathcal{C},S)$.

Consider any $\Theta\in \bar{\mathcal{C}}(\mu)\Mx$.
$\mathcal{D}(S|V)\hat{\cap}\mathcal{F}(\Theta)$ is semisimple.
$\Theta\in \bar{\mathcal{C}}(\mu)/H(\mu)$, and $H(\mu)\Op|\Theta\in \bar{\mathcal{C}}(\mu)-( \bar{\mathcal{C}}(\mu)/H(\mu))$.
On the other hand we know
$\bar{\mathcal{C}}(\mu)-( \bar{\mathcal{C}}(\mu)/H(\mu))
=\bar{\mathcal{C}}(\mu)\backslash|\mathcal{B}(\mu)-( \mathcal{B}(\mu)/H(\mu))|
=\mathcal{B}*\bar{F}(M(\bar{m})+1)* \bar{F}( M(\bar{m})+2)*\cdots*\bar{F}(M(\mu-1))\backslash|\mathcal{B}(\mu)-( \mathcal{B}(\mu)/H(\mu))|
=(\mathcal{B}*\bar{F}(M(\bar{m})+1)* \bar{F}( M(\bar{m})+2)*\cdots*\bar{F}(M(\mu-1))\backslash(|\mathcal{C}-(\mathcal{C}/H)|\cup(\bigcup_{i\in\{1,2,\ldots,\mu-1\}}|\mathcal{B}(i)|)))\backslash|\mathcal{B}(\mu)-( \mathcal{B}(\mu)/H(\mu))|$, and $\bar{\mathcal{C}}(\mu)-( \bar{\mathcal{C}}(\mu)/H(\mu))$ is a subdivision of $\mathcal{D}(S|V)$.
Therefore $\mathcal{D}(S|V)\hat{\cap}\mathcal{F}( H(\mu)\Op|\Theta)= \mathcal{F}( H(\mu)\Op|\Theta)$, and we know that $\mathcal{D}(S|V)\hat{\cap}\mathcal{F}(\Theta)$ is $H(\mu)$-simple.

We know that $(H(\mu), \bar{\mathcal{C}}(\mu))\in\mathcal{HC}(V,N,S)$, $\Ht(H(\mu), \bar{\mathcal{C}}(\mu), S)<\Ht(H, \mathcal{C},$\hfill\break$ S)$, and a non-empty subset $\mathcal{USD}( H(\mu), \bar{\mathcal{C}}(\mu),S)$ of $\mathcal{SD}( H(\mu), \bar{\mathcal{C}}(\mu),S)$ is defined.

Take any element $(L, G)\in \mathcal{USD}( H(\mu), \bar{\mathcal{C}}(\mu),S)$. 
Put $M(\mu)=M(\mu-1)+L$, and put $\bar{F}(j)=G(j-M(\mu-1))$ for any $j\in\{M(\mu-1)+1,M(\mu-1)+2,\ldots,M(\mu)\}$.
We obtain extended mappings
$$M: \{\bar{m},\bar{m}+1,\ldots,\mu\}\rightarrow\Z_+,$$
$$\bar{F}:\{1,2,\ldots, M(\mu)\}\rightarrow 2^{V^*}$$ satisfying the above conditions.

By induction on $\mu$ we know that there exist mappings 
$$M: \{\bar{m},\bar{m}+1,\ldots,m+1\}\rightarrow\Z_+,$$
$$\bar{F}:\{1,2,\ldots, M(m+1)\}\rightarrow 2^{V^*}$$ satisfying the following conditions:
\begin{enumerate}
\item
$M(\bar{m})=m$ and $M(i-1)\leq M(i)$ for any $i\in\{\bar{m}+1,\bar{m}+2,\ldots, m+1\}$.
\item
$\bar{F}(j)=F(j)$ for any $j\in\{1,2,\ldots, m\}$
\item
$\bar{F}(j)$ is a simplicial cone over $N^*$ in $V^*$ and $\dim \bar{F}(j)=2$ for and any $j\in\{1,2,\ldots, M(m+1)\}$, and 
$\bar{F}$ is a center sequence of $\mathcal{C}$ of length $M(m+1)$.
\item
$\bar{F}(j)\subset|\mathcal{B}(i)|$ and $\bar{F} (j)\not\subset|\mathcal{B}(i)-(\mathcal{B}(i)/H(i))|$ for any $i\in\{\bar{m}+1,\bar{m}+2,\ldots, m+1\}$ and any $j\in\{M(i-1)+1,M(i-1)+2,\ldots, M(i)\}$.
\item
$\mathcal{C}*\bar{F}(1)*\bar{F}(2)*\cdots*\bar{F}(M(m+1))\backslash |\mathcal{B}(i)|$ is a subdivision of $\mathcal{D}(S|V)\hat{\cap}\mathcal{B}(i)$ for any $i\in\{1,2,\ldots,m+1\}$.
\item
Consider any $i\in\{\bar{m}+1,\bar{m}+2,\ldots,m+1\}$.

Denote $\bar{\mathcal{C}}(i)= \mathcal{C}*\bar{F}(1)*\bar{F}(2)*\cdots*\bar{F}(M(i-1))\backslash |\mathcal{B}(i)|$.

Let $\bar{\bar{F}}(i): \{1,2,\ldots, M(i)-M(i-1)\}\rightarrow 2^{V^*}$ denote the mapping satisfying $\bar{\bar{F}}(i)(j)= \bar{F}(M(i-1)+j)$ for any $j\in\{1,2,\ldots, M(i)-M(i-1)\}$.

$|\bar{\mathcal{C}}(i)|=|\mathcal{B}(i)|$,
$(H(i), \bar{\mathcal{C}}(i))\in\mathcal{HC}(V,N,S)$, 
$\Ht(H(i), S, \bar{\mathcal{C}}(i))<$\hfill\break$\Ht(H,\mathcal{C},S)$ and $(M(i)-M(i-1), \bar{\bar{F}}(i))\in\mathcal{USD}(H(i), \bar{\mathcal{C}}(i),S)$.
\end{enumerate}

Put
\begin{equation*}\begin{split}
\mathcal{USD}(E, H,\mathcal{C},S)&=
\{(M(m+1),\bar{F})|\\
&\qquad\quad M: \{\bar{m},\bar{m}+1,\ldots,m+1\}\rightarrow\Z_+,\\
&\qquad\quad \bar{F}:\{1,2,\ldots, M(m+1)\}\rightarrow 2^{V^*},\\
&\qquad\quad M\text{ and }\bar{F}\text{ satisfy the above conditions.}\}.
\end{split}\end{equation*}
We know that $\emptyset\neq\mathcal{USD}(E, H,\mathcal{C},S)\subset\mathcal{SD}(H,\mathcal{C},S)$.

Recall that $E\in\mathcal{E}$ is an arbitrary element.
Define
$$\mathcal{USD}(H,\mathcal{C},S)=\bigcup_{ E\in\mathcal{E}}\mathcal{USD}(E, H,\mathcal{C},S).$$
We know that $\emptyset\neq\mathcal{USD}(H,\mathcal{C},S)\subset\mathcal{SD}(H,\mathcal{C},S)$.

Consider any $(H,\mathcal{C})\in\mathcal{HC}(V,N,S)$ and any $(M, F)\in\mathcal{USD}(H,\mathcal{C},S)$.
We call $F$ an \emph{upward center sequence} of $(H,\mathcal{C}, S)$, and
we call $\mathcal{C}* F(1)*F(2)*\cdots *F(M)$ an \emph{upward subdivision} of $(H,\mathcal{C}, S)$.

\begin{theorem}
\label{usd1}
Assume $\mathcal{HC}(V,N,S)\neq\emptyset$, and consider any $(H,\mathcal{C})\in\mathcal{HC}(V,N,S)$.
\begin{enumerate}
\item
$\mathcal{USD}(H,\mathcal{C},S)\neq\emptyset$.
\item
For any $(M, F)\in\mathcal{USD}(H,\mathcal{C},S)$, 
$F$ is a center sequence of $\mathcal{C}$ of length $M$,
$\dim F(i)=2$ and $F(i)\not\subset|\mathcal{C}-(\mathcal{C}/H)|$ for any $i\in\{1,2,\ldots,M\}$, and $\mathcal{C}* F(1)*F(2)*\cdots*F(M)$ is a subdivision of $\mathcal{D}(S|V)\hat{\cap}\mathcal{C}$.
\item
By $0_{\mathcal{C}}$ we denote the unique center sequence of $\mathcal{C}$ of length $0$.
The following three conditions are equivalent:
\begin{enumerate}
\item
$\Ht(H,\mathcal{C},S)=0$.
\item
$(0, 0_{\mathcal{C}})\in\mathcal{USD}(H,\mathcal{C},S)$.
\item
$\{(0, 0_{\mathcal{C}})\}=\mathcal{USD}(H,\mathcal{C},S)$.
\end{enumerate}
\end{enumerate}

Consider any $(M, F)\in\mathcal{USD}(H,\mathcal{C},S)$.
We denote $\widetilde{\mathcal{C}}=\mathcal{C}*F(1)*F(2)*\cdots*F(M)$ for simplicity.

\begin{enumerate}
\setcounter{enumi}{3}
\item
For any $\Delta\in\mathcal{C}_0\cup\mathcal{C}_1$,
$\widetilde{\mathcal{C}}\backslash\Delta=\mathcal{D}(S|V)\hat{\cap}\mathcal{F}(\Delta)=\mathcal{F}(\Delta)$.
\item
$\widetilde{\mathcal{C}}\backslash|\mathcal{C}-(\mathcal{C}/H)|=(\mathcal{D}(S|V)\hat{\cap}\mathcal{C})\backslash|\mathcal{C}-(\mathcal{C}/H)|= \mathcal{C}-(\mathcal{C}/H)$.
\item
Consider any subset $\hat{\mathcal{C}}$ of $\mathcal{C}$ satisfying $\dim \hat{\mathcal{C}}=\dim \Vect(|\hat{\mathcal{C}}|)\geq 2$, $\hat{\mathcal{C}}\Mx=\hat{\mathcal{C}}^0$, $H\in\hat{\mathcal{C}}_1$ and $\hat{\mathcal{C}}=(\hat{\mathcal{C}}/H)\Fc$.

Let $\hat{M}=\sharp\{i\in\{1,2,\ldots,M\}|F(i)\subset|\hat{\mathcal{C}}|\}$, and $\tau:\{1,2,\ldots,\hat{M}\}\rightarrow \{1,2,\ldots,M\}$ denote the unique injective mapping preserving the order and satisfying $\tau(\{1,2,\ldots,\hat{M}\})=\{i\in\{1,2,\ldots,M\}|F(i)\subset|\hat{\mathcal{C}}|\}$.

Then, $(H, \hat{\mathcal{C}})\in\mathcal{HC}(V,N,S)$, $(\hat{M},F\tau)\in\mathcal{USD}(H, \hat{\mathcal{C}},S)$, and
$\widetilde{\mathcal{C}}\backslash|\hat{\mathcal{C}}|=\hat{\mathcal{C}}*F\tau (1)*F\tau(2)*\cdots*F\tau(\hat{M})$.
\item
Consider any $\Delta\in\mathcal{C}_2/H$.

$|\widetilde{\mathcal{C}}\backslash\Delta|=|\mathcal{D}(S|V)\hat{\cap}\mathcal{F}(\Delta)|=\Delta$.

$\widetilde{\mathcal{C}}\backslash\Delta$ is the mimimum simplicial cone subdivision of $\mathcal{D}(S|V)\hat{\cap}\mathcal{F}(\Delta)$ over $N^*$ in $V^*$, in other words, the following three conditions hold:
\begin{enumerate}
\item
$\widetilde{\mathcal{C}}\backslash\Delta$ is a simplicial cone decomposition over $N^*$ in $V^*$.
\item
$\widetilde{\mathcal{C}}\backslash\Delta$ is a subdivision of $\mathcal{D}(S|V)\hat{\cap}\mathcal{F}(\Delta)$.
\item
If $\mathcal{E}$ is a simplicial cone decomposition over $N^*$ in $V^*$ and $\mathcal{E}$ is a subdivision of $\mathcal{D}(S|V)\hat{\cap}\mathcal{F}(\Delta)$, then $\mathcal{E}$ is a subdivision of $\widetilde{\mathcal{C}}\backslash\Delta$.
\end{enumerate}
\end{enumerate}

Below, we consider the case $\Ht(H,\mathcal{C},S)>0$.
Assume $\Ht(H,\mathcal{C},S)>0$.

The characteristic function $\gamma:(\mathcal{C}-(\mathcal{C}/H))_1\rightarrow\Q_0$ of $(H,\mathcal{C}, S)$ is defined.
Let $m=\sum_{\bar{E}\in(\mathcal{C}-(\mathcal{C}/H))_1}\lfloor\gamma(\bar{E})\rfloor\in\Z_+$ and $\bar{m}=\sum_{\bar{E}\in(\mathcal{C}-(\mathcal{C}/H))_1}\lceil\gamma(\bar{E})\rceil\in\Z_0$.
Obviously $\bar{m}\leq m$.
\begin{enumerate}
\setcounter{enumi}{7}
\item
$m\leq M$.
\item
There exists uniquely a pair $(E,M)$ of a compatible mapping
$E:\{1,2,\ldots,$\hfill\break$m\}\rightarrow (\mathcal{C}-(\mathcal{C}/H))_1$
with $S$ and a mapping 
$M:\{\bar{m},\bar{m}+1,\ldots,m+1\}\rightarrow\Z_+$
satisfying the following three conditions.
We denote 
$$F_E=F(V^*,N^*,H,\mathcal{C},m,E):\{1,2,\ldots,m\}\rightarrow 2^{ V^*}$$ 
$$H_E=H(V^*,N^*,H,\mathcal{C},m,E):\{1,2,\ldots,m+1\}\rightarrow 2^{ V^*},\text{ and}$$
$$\mathcal{B}_E=\mathcal{B}(V^*,N^*,H,\mathcal{C},m,E):\{1,2,\ldots,m+1\}\rightarrow 2^{2^{ V^*}}:$$
\begin{enumerate}
\item
$F(j)=F_E(j)$ for any $j\in\{1,2,\ldots, m\}$.
\item
$M(\bar{m})=m$, $M(m+1)=M$ and $M(i-1)\leq M(i)$ for any $i\in\{\bar{m}+1,\bar{m}+2,\ldots, m+1\}$.
\item
$F(j)\subset|\mathcal{B}_E(i)|$ and $F(j)\not\subset|\mathcal{B}_E(i)-(\mathcal{B}_E(i)/H_E(i))|$ for any $i\in\{\bar{m}+1,\bar{m}+2,\ldots, m+1\}$ and any $j\in\{M(i-1)+1,M(i-1)+2,\ldots, M(i)\}$.
\end{enumerate}
\end{enumerate}

We take the unique pair $(E,M)$ of a compatible mapping
$E:\{1,2,\ldots,m\}\rightarrow (\mathcal{C}-(\mathcal{C}/H))_1$
with $S$ and a mapping 
$M:\{\bar{m},\bar{m}+1,\ldots,m+1\}\rightarrow\Z_+$
satisfying the above three conditions, and we denote 
$$F_E=F(V^*,N^*,H,\mathcal{C},m,E):\{1,2,\ldots,m\}\rightarrow 2^{V^*}$$ 
$$H_E=H(V^*,N^*,H,\mathcal{C},m,E):\{1,2,\ldots,m+1\}\rightarrow 2^{V^*},\text{ and}$$
$$\mathcal{B}_E=\mathcal{B}(V^*,N^*,H,\mathcal{C},m,E):\{1,2,\ldots,m+1\}\rightarrow 2^{2^{V^*}}.$$
We put $M(i)=m$ for any $i\in\{0,1,\ldots,\bar{m}-1\}$. We obtain an extension $M:\{0,1,\ldots,m+1\}\rightarrow\Z_+$ of $M:\{\bar{m},\bar{m}+1,\ldots,m+1\}\rightarrow\Z_+$.
$M(0)=m$, $M(m+1)=M$, $M(i-1)\leq M(i)$ for any $i\in\{1,2,\ldots,m+1\}$.

Consider any $i\in\{1, 2,\ldots,m+1\}$.
We denote $\bar{\mathcal{C}}(i)= \mathcal{C}*F(1)*F(2)*\cdots*F(M(i-1))\backslash |\mathcal{B}_E(i)|$.
Let $\bar{F}(i): \{1,2,\ldots, M(i)-M(i-1)\}\rightarrow 2^{V^*}$ denote the mapping satisfying $\bar{F}(i)(j)= F(M(i-1)+j)$ for any $j\in\{1,2,\ldots, M(i)-M(i-1)\}$.

\begin{enumerate}
\setcounter{enumi}{9}
\item
For any $i\in\{1,2,\ldots,m+1\}$,
$|\bar{\mathcal{C}}(i)|=|\mathcal{B}_E(i)|$,
$(H_E(i), \bar{\mathcal{C}}(i))\in\mathcal{HC}(V,N,S)$, 
$\Ht(H_E(i), \bar{\mathcal{C}}(i),S)<\Ht(H,\mathcal{C},S)$ and $(M(i)-M(i-1), \bar{F}(i))\in$\hfill\break$\mathcal{USD}(H_E(i), \bar{\mathcal{C}}(i),S)$.
\item
For any $i\in\{1, 2,\ldots,m+1\}$,
$\widetilde{\mathcal{C}}\backslash|\mathcal{B}_E(i)|=
(\mathcal{C}*F(1)*F(2)*\cdots*F(M(i))) $\hfill\break$\backslash|\mathcal{B}_E(i)|=
\bar{\mathcal{C}}(i)*\bar{F}(1)*\bar{F}(2)*\cdots*\bar{F}(M(i)- M(i-1))$.
\item
For any $i\in\{1,2,\ldots,\bar{m}\}$, $\Ht(H_E(i), \bar{\mathcal{C}}(i),S)=0$, $ M(i)-M(i-1)=0$, and $\widetilde{\mathcal{C}}\backslash|\mathcal{B}_E(i)|=
(\mathcal{C}*F(1)*F(2)*\cdots*F(m))\backslash|\mathcal{B}_E(i)|=\bar{\mathcal{C}}(i)*\bar{F}(1)*\bar{F}(2)*\cdots*\bar{F}(M(i)- M(i-1))
=\bar{\mathcal{C}}(i)= \mathcal{B}_E(i)$.
\item

$$\widetilde{\mathcal{C}}=
(\mathcal{C}-(\mathcal{C}/H))\cup(
\bigcup_{i\in\{1,2,\ldots,m+1\}}
(
(\widetilde{\mathcal{C}}\backslash|\mathcal{B}_E(i)|)
-(\widetilde{\mathcal{C}}\backslash|\mathcal{B}_E(i)-(\mathcal{B}_E(i)/H_E(i))|)
)).$$
$$(\mathcal{C}-(\mathcal{C}/H))\cap(
(\widetilde{\mathcal{C}}\backslash|\mathcal{B}_E(i)|)
-(\widetilde{\mathcal{C}}\backslash|\mathcal{B}_E(i)-(\mathcal{B}_E(i)/H_E(i))|)
)=\emptyset,$$
for any $i\in\{1,2,\ldots,m+1\}$.

\begin{equation*}\begin{split}(
&(\widetilde{\mathcal{C}}\backslash|\mathcal{B}_E(i)|)
-(\widetilde{\mathcal{C}}\backslash|\mathcal{B}_E(i)-(\mathcal{B}_E(i)/H_E(i))|)
)\\
&\qquad\qquad\cap(
(\widetilde{\mathcal{C}}\backslash|\mathcal{B}_E(j)|)
-(\widetilde{\mathcal{C}}\backslash|\mathcal{B}_E(j)-(\mathcal{B}_E(j)/H_E(j))|)
)=\emptyset,
\end{split}\end{equation*}
for any $i\in\{1,2,\ldots,m+1\}$ and any $j\in\{1,2,\ldots,m+1\}$ with $i\neq j$.
\item
If $\Gamma\in\widetilde{\mathcal{C}}_1$ and $\Gamma\not\subset|\mathcal{C}-(\mathcal{C}/H)|$,
then there exists uniquely $i\in\{1,2,\ldots,m+1\}$ satisfying
$\Gamma\subset|\mathcal{B}_E(i)|$ and $\Gamma\not\subset|\mathcal{B}_E(i)-(\mathcal{B}_E(i)/H_E(i))|$.

If $\Gamma\in\widetilde{\mathcal{C}}_1$, $i\in\{1,2,\ldots,m+1\}$, $\Gamma\subset|\mathcal{B}_E(i)|$ and $\Gamma\not\subset|\mathcal{B}_E(i)-(\mathcal{B}_E(i)/H_E(i))|$, then
$\Gamma\not\subset|\mathcal{C}-(\mathcal{C}/H)|$.

For any $i\in\{1,2,\ldots,\bar{m}\}$, 
$$\{\Gamma\in\widetilde{\mathcal{C}}_1|\Gamma\subset|\mathcal{B}_E(i)|, \Gamma\not\subset|\mathcal{B}_E(i)-(\mathcal{B}_E(i)/H_E(i))|\}=
\{H_E(i)\}.$$

For any $i\in\{\bar{m}+1, \bar{m}+2,\ldots, m+1\}$, 
\begin{equation*}\begin{split}
&\{\Gamma\in\widetilde{\mathcal{C}}_1|\Gamma\subset|\mathcal{B}_E(i)|, \Gamma\not\subset|\mathcal{B}_E(i)-(\mathcal{B}_E(i)/H_E(i))|\}=\\
&\qquad\qquad\{H_E(i)\}\cup\{\R_0b_{F(j)/N^*}|j\in\Z, M(i-1)<j\leq M(i)\},\\
&\sharp\{\Gamma\in\widetilde{\mathcal{C}}_1|\Gamma\subset|\mathcal{B}_E(i)|, \Gamma\not\subset|\mathcal{B}_E(i)-(\mathcal{B}_E(i)/H_E(i))|\}=M(i)-M(i-1)+1.
\end{split}\end{equation*}
\end{enumerate}
\end{theorem}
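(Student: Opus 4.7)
The plan is to prove Theorem~\ref{usd1} by strong induction on $\Ht(H,\mathcal{C},S) \in (1/\Den(S/N))\Z_0$, mirroring the inductive definition of $\mathcal{USD}(H,\mathcal{C},S)$ that was just given. The induction terminates because the heights are confined to the discrete set $(1/\Den(S/N))\Z_0$. In the base case $\Ht(H,\mathcal{C},S)=0$, Lemma~\ref{compatible2}.1 shows $\mathcal{C}$ is already a subdivision of $\mathcal{D}(S|V)\hat\cap\mathcal{C}$, so the empty center sequence $0_\mathcal{C}$ trivially satisfies the defining conditions, giving claims 1, 2, and 3 at once. For the inductive step with $\Ht(H,\mathcal{C},S)>0$, I would pick a compatible mapping $E$ (which exists by Lemma~\ref{existcomp}.1), form the basic subdivision $\mathcal{B}_E = \mathcal{B}(V^*,N^*,H,\mathcal{C},m,E)$ and its pieces $\mathcal{B}_E(i)$, and apply Theorem~\ref{heightinequality}.4.(c) (the height inequality) to conclude $\Ht(H_E(i),\mathcal{B}_E(i),S) < \Ht(H,\mathcal{C},S)$. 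The inductive hypothesis then supplies upward center sequences on each piece, which I splice together as in the construction to produce $(M,F)\in\mathcal{USD}(H,\mathcal{C},S)$, proving claim 1. Claim 2 is inherited: on pieces with $i\leq\bar{m}$, Theorem~\ref{heightinequality}.5 gives the subdivision property directly, while on the remaining pieces the inductive hypothesis subdivides $\mathcal{D}(S|V)\hat\cap\mathcal{B}_E(i)$, and the union covers $\mathcal{D}(S|V)\hat\cap\mathcal{C}$ because $\bigcup_i |\mathcal{B}_E(i)|\cup|\mathcal{C}-(\mathcal{C}/H)|=|\mathcal{C}|$ by Lemma~\ref{property of basic subdivisions}.8.

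Claims 4 and 5 are immediate from the defining conditions: since each $F(i)$ in any upward sequence satisfies $\dim F(i)=2$ and $F(i)\not\subset|\mathcal{C}-(\mathcal{C}/H)|$, no barycentric subdivision is ever performed at a cone of dimension $\leq 1$ or inside $|\mathcal{C}-(\mathcal{C}/H)|$, so $\widetilde{\mathcal{C}}\backslash\Delta = \mathcal{F}(\Delta)$ for such $\Delta$, and Lemma~\ref{property of basic subdivisions}.4 plus claim 2 give the $|\mathcal{C}-(\mathcal{C}/H)|$ version. Claim 6 (compatibility with subdecompositions $\hat{\mathcal{C}}\subset\mathcal{C}$) will be proved by a separate induction, using Lemma~\ref{basic2} to show that restricting a basic subdivision to $|\hat{\mathcal{C}}|$ gives a basic subdivision of $\hat{\mathcal{C}}$, combined with Lemma~\ref{existcomp}.2.(e) which says the restriction of a compatible mapping is compatible; the inductive hypothesis propagates through the pieces.

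Claims 8 through 14 amount to an unpacking of the inductive construction. Claim 8 ($m\leq M$) is automatic since the first $m$ entries of any upward $F$ come from the basic subdivision step. Claim 9 is the uniqueness of the pair $(E,M)$: here I would argue that the initial segment $F(1),\ldots,F(m)$ determines $E$ because $\R_0 b_{F(i)/N^*} \in \mathcal{B}_E$ and the $\R_0 b_{E(i)/N^*}+sb_{H/N^*}$ structure of Lemma~\ref{basic1}.3 pins down $E(i)$ uniquely. Claims 10, 11, 12 then follow from the recursive definition: by construction each sub-block $(M(i)-M(i-1),\bar{F}(i))$ was chosen from $\mathcal{USD}(H_E(i),\bar{\mathcal{C}}(i),S)$, and for $i\leq\bar{m}$ Theorem~\ref{heightinequality}.5 forces $\Ht=0$ and hence $M(i)-M(i-1)=0$ by the base case. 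Claims 13 and 14 are bookkeeping consequences of Lemma~\ref{property of basic subdivisions}.9, 11 (disjointness of the pieces) combined with the structure of $\widetilde{\mathcal{C}}_1$ which adds exactly one ray $\R_0 b_{F(j)/N^*}$ per barycentric step.

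The main obstacle is claim 7, the minimality property: $\widetilde{\mathcal{C}}\backslash\Delta$ is the \emph{minimum} simplicial subdivision of $\mathcal{D}(S|V)\hat\cap\mathcal{F}(\Delta)$ for $\Delta\in\mathcal{C}_2/H$. Existence and the subdivision property come free from claim 2, but minimality requires showing that any simplicial subdivision $\mathcal{E}$ of $\mathcal{D}(S|V)\hat\cap\mathcal{F}(\Delta)$ is automatically a subdivision of $\widetilde{\mathcal{C}}\backslash\Delta$. Since $\dim\Delta=2$, the ambient $\Delta$ is a $2$-dimensional simplicial cone, so $\mathcal{F}(\Delta)$ has only the two edges $H$ and $H\Op|\Delta$; the faces of $\mathcal{D}(S|V)\hat\cap\mathcal{F}(\Delta)$ that force new edges are exactly the rays lying in $\Delta^\circ$, and each such ray appears as $b_{E/N^*}+ sb_{H/N^*}$ for some $s$. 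The key is that the upward construction introduces only those barycentric centers that correspond to these \emph{forced} rays---no extras---because the compatibility condition on $E$ picks out exactly the integer and ceiling values $\lfloor\gamma(\bar E)\rfloor$, $\lceil\gamma(\bar E)\rceil$ appearing in the $H$-skeleton (Lemma~\ref{propchar}, Lemma~\ref{equiv2}). I would make this rigorous by inducting on $\Ht(H,\mathcal{C},S)$: any simplicial subdivision $\mathcal{E}$ must contain every one-dimensional face of $\mathcal{D}(S|V)\hat\cap\mathcal{F}(\Delta)$, and these are precisely the edges introduced at the first basic-subdivision step (Lemma~\ref{equiv2}.1, relating the characteristic function to the structure constants); by induction on the pieces, $\mathcal{E}\backslash|\mathcal{B}_E(i)\cap\Delta|$ refines $\widetilde{\mathcal{C}}\backslash|\mathcal{B}_E(i)\cap\Delta|$, and these glue by claims 13--14. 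The delicate point is handling pieces indexed by $i>\bar m$, where the inductive height strictly drops but the combinatorics of $\bar c(i)$ in Theorem~\ref{heightinequality}.17 must be tracked carefully to ensure no forced edge is skipped and no superfluous edge is inserted.
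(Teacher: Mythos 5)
Your overall plan mirrors the paper's own: Theorem~\ref{usd1} is not proved separately there but is a catalogue of properties of the inductive construction of $\mathcal{USD}(H,\mathcal{C},S)$ that precedes it, and you reconstruct exactly that construction (height induction, compatible mapping, basic subdivision, height inequality, recursion on pieces). Claims 1--3, 8--14 follow as you say, and your references to Lemma~\ref{compatible2}.1, Lemma~\ref{existcomp}.1, Theorem~\ref{heightinequality}.4.(c), 5, and Lemma~\ref{property of basic subdivisions}.8 are the right ones. Two things deserve flagging.

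First, a small reference slip: you invoke ``Lemma~\ref{existcomp}.2.(e)'' for the fact that restrictions of compatible mappings are compatible, but Lemma~\ref{existcomp}.2 has only parts (a)--(d). The relevant claim is Lemma~\ref{existcomp}.5, combined with Lemma~\ref{basic2} for the restriction of the basic subdivision.

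Second, and more substantively, your sketch of minimality in claim 7 rests on a mischaracterization of which rays are ``forced.'' You write that the new edges introduced by the construction ``correspond to'' the rays of $\mathcal{D}(S|V)\hat\cap\mathcal{F}(\Delta)$ lying in $\Delta^\circ$. But this is not what happens: the basic subdivision inserts rays $\R_0(b_{\bar E/N^*}+s\,b_{H/N^*})$ at every integer height $s\in\{1,\dots,m\}$, and most of these are \emph{not} rays of $\mathcal{D}(S|V)\hat\cap\mathcal{F}(\Delta)$ (those are at the rational heights $c(\mathcal{D}(S|V)\hat\cap\mathcal{F}(\Delta),i,\bar E)$). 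The correct argument for claim 7 has two independent ingredients: (i) any simplicial subdivision $\mathcal{E}$ of $\mathcal{D}(S|V)\hat\cap\mathcal{F}(\Delta)$ must contain the rays at the rational heights $c_i$, because they lie in $\Delta^\circ$ and are faces of two adjacent $2$-cones of $\mathcal{D}(S|V)\hat\cap\mathcal{F}(\Delta)$; and (ii) any \emph{unimodular} $2$-cone contained in a fixed $2$-cone $\Lambda(i)$ of $\mathcal{D}(S|V)\hat\cap\mathcal{F}(\Delta)$ is automatically contained in a cone of $\widetilde{\mathcal{C}}\backslash\Delta$ --- this is the two-dimensional Hirzebruch--Jung minimality statement, which forces the extra integer-height rays. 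Your height induction can carry both points, but as written the sketch conflates them, and a literal attempt to expand it would stall at the step where you need to rule out a simplicial $\mathcal{E}$ omitting one of the integer-height rays in $(0,c_2)$. Making (ii) explicit --- for example, by showing that the successive rays of $\widetilde{\mathcal{C}}\backslash\Delta$ list exactly the boundary lattice points of $\Conv((\Delta\cap N^*)-\{0\})$, or by tracking the structure constants via Lemma~\ref{propsimple}.15.(d)--(j) and Lemma~\ref{equiv2} --- is what your proof is currently missing.
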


Consider any $(H,\mathcal{C})\in\mathcal{HC}(V,N,S)$ and any $(M,F)\in\mathcal{USD}(H,\mathcal{C},S)$.
We denote $\widetilde{\mathcal{C}}=\mathcal{C}*F(1)*F(2)*\cdots*F(M)$ and
$\widetilde{\mathcal{C}}_1^\circ=\{\Gamma\in\widetilde{\mathcal{C}}_1|
\Gamma\not\subset|\mathcal{C}-(\mathcal{C}/H)|\}$ for simplicity.
We know $\sharp\widetilde{\mathcal{C}}_1^\circ=M+1$.

By induction on $\Ht(H,\mathcal{C},S)$ we define three mappings
$$I(V,N, H,\mathcal{C},S, M,F):\widetilde{\mathcal{C}}_1^\circ\rightarrow
\{1,2,\ldots,M+1\},$$
$$\mathcal{A}(V,N, H,\mathcal{C},S, M,F):
\widetilde{\mathcal{C}}_1^\circ\rightarrow 2^{2^{V^*}},\text{ and}$$
$$\mathcal{A}^\circ(V,N, H,\mathcal{C},S, M,F):
\widetilde{\mathcal{C}}_1^\circ\rightarrow 2^{2^{V^*}}$$
such that $\mathcal{A}(V,N, H,\mathcal{C},S, M,F)(\Gamma)$ is a simplicial cone decomposition over $N^*$ in $V^*$ and $\mathcal{A}^\circ(V,N, H,\mathcal{C},S, M,F) (\Gamma)\subset\mathcal{A}(V,N, H,\mathcal{C},S, M,F)(\Gamma)\subset\widetilde{\mathcal{C}}$ for any $\Gamma\in \widetilde{\mathcal{C}}_1^\circ$.

Consider the case $\Ht(H,\mathcal{C},S)=0$.

We know $M=0$, $\widetilde{\mathcal{C}}=\mathcal{C}$, and
$\widetilde{\mathcal{C}}_1^\circ=\{H\}$.
We define
\begin{equation*}\begin{split}
I(V,N, H,\mathcal{C},S, M,F)(H)&=1,\\
\mathcal{A}(V,N, H,\mathcal{C},S, M,F)(H)&= \mathcal{C},\\
\mathcal{A}^\circ(V,N, H,\mathcal{C},S, M,F)(H)&=\mathcal{C}.
\end{split}\end{equation*}

Consider the case $\Ht(H,\mathcal{C},S)>0$.

The characteristic function $\gamma:(\mathcal{C}-(\mathcal{C}/H))_1\rightarrow\Q_0$ of $(H,\mathcal{C}, S)$ is defined.
Let $m=\sum_{\bar{E}\in(\mathcal{C}-(\mathcal{C}/H))_1}\lfloor\gamma(\bar{E})\rfloor\in\Z_+$ and $\bar{m}=\sum_{\bar{E}\in(\mathcal{C}-(\mathcal{C}/H))_1}\lceil\gamma(\bar{E})\rceil\in\Z_0$.
We know $\bar{m}\leq m\leq M$.

We take the unique pair $(E,M)$ of a compatible mapping
$E:\{1,2,\ldots,m\}\rightarrow (\mathcal{C}-(\mathcal{C}/H))_1$
with $S$ and a mapping 
$M:\{\bar{m},\bar{m}+1,\ldots,m+1\}\rightarrow\Z_+$
satisfying the following three conditions.
We denote 
$$F_E=F(V^*,N^*,H,\mathcal{C},m,E):\{1,2,\ldots,m\}\rightarrow 2^{ V^*}$$ 
$$H_E=H(V^*,N^*,H,\mathcal{C},m,E):\{1,2,\ldots,m+1\}\rightarrow 2^{ V^*},\text{ and}$$
$$\mathcal{B}_E=\mathcal{B}(V^*,N^*,H,\mathcal{C},m,E):\{1,2,\ldots,m+1\}\rightarrow 2^{2^{ V^*}}.$$
\begin{enumerate}
\item
$F(j)=F_E(j)$ for any $j\in\{1,2,\ldots, m\}$.
\item
$M(\bar{m})=m$, $M(m+1)=M$ and $M(i-1)\leq M(i)$ for any $i\in\{\bar{m}+1,\bar{m}+2,\ldots, m+1\}$.
\item
$F(j)\subset|\mathcal{B}_E(i)|$ and $F(j)\not\subset|\mathcal{B}_E(i)-(\mathcal{B}_E(i)/H_E(i))|$ for any $i\in\{\bar{m}+1,\bar{m}+2,\ldots, m+1\}$ and any $j\in\{M(i-1)+1,M(i-1)+2,\ldots, M(i)\}$.
\end{enumerate}

For any $i\in\{0,1,\ldots, \bar{m}-1\}$, we put $M(i)=m$.
We obtain an extension $M:\{0,1,\ldots,m+1\}\rightarrow\Z_+$ of $M:\{\bar{m},\bar{m}+1,\ldots,m+1\}\rightarrow\Z_+$.
For any $i\in\{1,2,\ldots,m+1\}$, we denote
$\bar{\mathcal{C}}(i)=(\mathcal{C}*F(1)*F(2)*\cdots*F(M(i-1)))\backslash|\mathcal{B}_E(i)|$, and we take the mapping $\bar{F}(i):\{1,2,\ldots, M(i)-M(i-1)\}\rightarrow 2^{V^*}$ satisfying $\bar{F}(i)(j)=F(M(i-1)+j)$ for any $j\in\{1,2,\ldots, M(i)-M(i-1)\}$.
We know $(H_E(i), \bar{\mathcal{C}}(i))\in\mathcal{HC}(V,N,S)$,
$|\bar{\mathcal{C}}(i)|=|\mathcal{B}_E(i)|$,
$|\bar{\mathcal{C}}(i)-( \bar{\mathcal{C}}(i)/H_E(i))|=
|\mathcal{B}_E(i)-( \mathcal{B}_E(i)/H_E(i))|$,
$\Ht(H_E(i), \bar{\mathcal{C}}(i),S)<\Ht(H,\mathcal{C},S)$, $(M(i)-M(i-1),\bar{F}(i))\in\mathcal{USD}( H_E(i), \bar{\mathcal{C}}(i),S)$, and
$\bar{\mathcal{C}}(i)* \bar{F}(1)* \bar{F}(2)*\cdots*\bar{F}(M(i)-M(i-1))=
\widetilde{\mathcal{C}}\backslash|\mathcal{B}_E(i)|$
for any $i\in\{1,2,\ldots,m+1\}$.
We denote
$$G_E=G(V^*,N^*,H,\mathcal{C},m,E):\{1,2,\ldots,m\}\rightarrow 2^{V^*}.$$

Consider any $\Gamma\in\widetilde{\mathcal{C}}_1^\circ$.
Take the unique $i\in\{1,2,\ldots,m+1\}$ satisfying $\Gamma\subset|\mathcal{B}_E(i)|$ and $\Gamma\not\subset|\mathcal{B}_E(i)-( \mathcal{B}_E(i)/H_E(i))|$.
We know $\Gamma\in(\bar{\mathcal{C}}(i)* \bar{F}(1)* \bar{F}(2)*\cdots*\bar{F}(M(i)-M(i-1)))_1$ and $\Gamma\not\subset|\bar{\mathcal{C}}(i)-( \bar{\mathcal{C}}(i)/H_E(i))|$.
By induction hypothesis we know that 
$I(V,N, H_E(i),\bar{\mathcal{C}},S, M(i)-M(i-1),\bar{F}(i))(\Gamma)$,
$\mathcal{A}(V,N, H_E(i),\bar{\mathcal{C}},S, M(i)-M(i-1),\bar{F}(i))(\Gamma)$ and
$\mathcal{A}^\circ(V,N, H_E(i),\bar{\mathcal{C}},S, M(i)-M(i-1),\bar{F}(i))(\Gamma)$ are defined.

We define
\begin{equation*}\begin{split}
&I(V,N, H,\mathcal{C},S, M,F)(\Gamma)=\\
&\qquad i-1+M(i-1)-m+ I(V,N, H_E(i),\bar{\mathcal{C}}(i),S, M(i)-M(i-1),\bar{F}(i))(\Gamma),\\
&\mathcal{A}(V,N, H,\mathcal{C},S, M,F)(\Gamma)=\mathcal{A}(V,N, H_E(i),\bar{\mathcal{C}}(i),S, M(i)-M(i-1),\bar{F}(i))(\Gamma).
\end{split}\end{equation*}
In case $i\neq m+1$, we define
\begin{equation*}\begin{split}
&\mathcal{A}^\circ(V,N, H,\mathcal{C},S, M,F)(\Gamma)=\\
&\qquad \{\Theta\in\mathcal{A}^\circ(V,N, H_E(i),\bar{\mathcal{C}}(i),S, M(i)-M(i-1),\bar{F}(i))(\Gamma)|\Theta^\circ\subset|\mathcal{B}_E/G_E(i)|^\circ\}.
\end{split}\end{equation*}
In case $i=m+1$, we define
$$\mathcal{A}^\circ(V,N, H,\mathcal{C},S, M,F)(\Gamma)=
\mathcal{A}^\circ(V,N, H_E(i),\bar{\mathcal{C}}(i),S, M(i)-M(i-1),\bar{F}(i))(\Gamma).$$

We call the mapping $I(V,N, H,\mathcal{C},S, M,F)$ the $H$\emph{-ordered enumeration} of $\widetilde{\mathcal{C}}_1^\circ$.
Let $\Gamma\in\widetilde{\mathcal{C}}_1^\circ$.
We call the simplicial cone decomposition $\mathcal{A}(V,N, H,\mathcal{C},S, M,F)(\Gamma)$ the $H$\emph{-lower part} of  $\widetilde{\mathcal{C}}$ below $\Gamma$, and we call the subset $\mathcal{A}^\circ(V,N, H,\mathcal{C},S, M,F)(\Gamma)$ the $H$\emph{-lower main part} of  $\widetilde{\mathcal{C}}$ below $\Gamma$.

\begin{lemma}
\label{lower parts}
Assume $\mathcal{HC}(V,N,S)\neq\emptyset$.
Consider any $(H,\mathcal{C})\in\mathcal{HC}(V,N,S)$ and any $(M,F)\in\mathcal{USD}(H,\mathcal{C},S)$.
We denote $\widetilde{\mathcal{C}}=\mathcal{C}*F(1)*F(2)*\cdots*F(M)$,
$\widetilde{\mathcal{C}}_1^\circ=\{\Gamma\in\widetilde{\mathcal{C}}_1|\Gamma\not\subset|\mathcal{C}-(\mathcal{C}/H)|\}$,
$$I= I(V,N, H,\mathcal{C},S, M,F): \widetilde{\mathcal{C}}_1^\circ
\rightarrow\{1,2,\ldots, M+1\},$$
$$\mathcal{A}=\mathcal{A}(V,N, H,\mathcal{C},S, M,F): \widetilde{\mathcal{C}}_1^\circ
\rightarrow 2^{2^{V^*}},\text{ and}$$
$$\mathcal{A}^\circ=\mathcal{A}^\circ(V,N, H,\mathcal{C},S, M,F): \widetilde{\mathcal{C}}_1^\circ
\rightarrow 2^{2^{V^*}}.$$
\begin{enumerate}
\item
The mapping $I$ is bijective. $H\in\widetilde{\mathcal{C}}_1^\circ$.
$I(H)=M+1$.
\item
Consider any $\Gamma\in \widetilde{\mathcal{C}}_1^\circ$.

$\mathcal{A}(\Gamma)$ is a simplicial cone decomposition over $N^*$ in $V^*$.
$\mathcal{A}(\Gamma)\subset \widetilde{\mathcal{C}}$.
$\dim \mathcal{A}(\Gamma)=\dim\Vect(|\mathcal{A}(\Gamma)|)=\dim\mathcal{C}\geq 2$.
$\Vect(|\mathcal{A}(\Gamma)|)=\Vect(|\mathcal{C}|)$.
$\Gamma\in\mathcal{A}(\Gamma)_1$.
$\mathcal{A}(\Gamma)=( \mathcal{A}(\Gamma)/\Gamma)\Fc$.
$\mathcal{A}(\Gamma)\Mx\subset\mathcal{A}^\circ(\Gamma)\subset\mathcal{A}(\Gamma)$.

$\Gamma\in\mathcal{A}^\circ(\Gamma)\Leftrightarrow\mathcal{A}^\circ(\Gamma)=\mathcal{A}(\Gamma)\Leftrightarrow\Gamma=H$.
\item
$$|\mathcal{C}|=|\mathcal{C}-(\mathcal{C}/H)|\cup(
\bigcup_{\Gamma\in\widetilde{\mathcal{C}}_1^\circ}|\mathcal{A}(\Gamma)/\Gamma|^\circ).$$

For any $\Gamma\in\widetilde{\mathcal{C}}_1^\circ$,
$|\mathcal{C}-(\mathcal{C}/H)|\cap|\mathcal{A}(\Gamma)/\Gamma|^\circ=\emptyset$.

For any $\Gamma\in\widetilde{\mathcal{C}}_1^\circ$ and any $\bar{\Gamma}\in\widetilde{\mathcal{C}}_1^\circ$ with $\Gamma\neq\bar{\Gamma}$,
$|\mathcal{A}(\Gamma)/\Gamma|^\circ\cap|\mathcal{A}(\bar{\Gamma})/\bar{\Gamma}|^\circ=\emptyset$.
\item
$$\widetilde{\mathcal{C}}=(\mathcal{C}-(\mathcal{C}/H))\cup(
\bigcup_{\Gamma\in\widetilde{\mathcal{C}}_1^\circ}(\mathcal{A}(\Gamma)/\Gamma)).$$

For any $\Gamma\in\widetilde{\mathcal{C}}_1^\circ$,
$(\mathcal{C}-(\mathcal{C}/H))\cap(\mathcal{A}(\Gamma)/\Gamma)=\emptyset$.

For any $\Gamma\in\widetilde{\mathcal{C}}_1^\circ$ and any $\bar{\Gamma}\in\widetilde{\mathcal{C}}_1^\circ$ with $\Gamma\neq\bar{\Gamma}$,
$(\mathcal{A}(\Gamma)/\Gamma)\cap(\mathcal{A}(\bar{\Gamma})/\bar{\Gamma})=\emptyset$.
\item
$$\widetilde{\mathcal{C}}\Mx=
\bigcup_{\Gamma\in\widetilde{\mathcal{C}}_1^\circ}\mathcal{A}(\Gamma)\Mx.$$

For any $\Gamma\in\widetilde{\mathcal{C}}_1^\circ$ and any $\bar{\Gamma}\in\widetilde{\mathcal{C}}_1^\circ$ with $\Gamma\neq\bar{\Gamma}$,
$\mathcal{A}(\Gamma)\Mx\cap\mathcal{A}(\bar{\Gamma})\Mx=\emptyset$.
\item
$$|\mathcal{C}|=
\bigcup_{\Gamma\in\widetilde{\mathcal{C}}_1^\circ}|\mathcal{A}^\circ(\Gamma) |^\circ.$$

For any $\Gamma\in\widetilde{\mathcal{C}}_1^\circ$ and any $\bar{\Gamma}\in\widetilde{\mathcal{C}}_1^\circ$ with $\Gamma\neq\bar{\Gamma}$,
$|\mathcal{A}^\circ(\Gamma) |^\circ \cap|\mathcal{A}^\circ(\bar{\Gamma}) |^\circ =\emptyset$.
\item
$$\widetilde{\mathcal{C}}=
\bigcup_{\Gamma\in\widetilde{\mathcal{C}}_1^\circ}\mathcal{A}^\circ(\Gamma).$$

For any $\Gamma\in\widetilde{\mathcal{C}}_1^\circ$ and any $\bar{\Gamma}\in\widetilde{\mathcal{C}}_1^\circ$ with $\Gamma\neq\bar{\Gamma}$,
$\mathcal{A}^\circ(\Gamma)\cap\mathcal{A}^\circ(\bar{\Gamma})=\emptyset$.
\end{enumerate}

For any $i\in\{0,1,\ldots,M+1\}$, we denote
\begin{equation*}\begin{split}
X(i)=&|\mathcal{C}-(\mathcal{C}/H)|\cup(
\bigcup_{\Gamma\in\widetilde{\mathcal{C}}_1^\circ, I(\Gamma)\leq i}
|\mathcal{A}(\Gamma)/\Gamma)|^\circ)\subset V^*,\\
Y(i)=&\bigcup_{\Gamma\in\widetilde{\mathcal{C}}_1^\circ, I(\Gamma)> i}
|\mathcal{A}^\circ(\Gamma)|^\circ\subset V^*.
\end{split}\end{equation*}

\begin{enumerate}
\setcounter{enumi}{7}
\item
$X(0)= |\mathcal{C}-(\mathcal{C}/H)|$.
$X(M+1)=|\mathcal{C}|$.
$Y(0)=|\mathcal{C}|$.
$Y(M+1)=\emptyset$.

For any $i\in\{1,2,\ldots,M+1\}$, $X(i-1)\subset X(i)$, $X(i-1)\neq X(i)$,\hfill\break
$Y(i-1)\supset Y(i)$, and $Y(i-1)\neq Y(i)$.
\item
For any $i\in\{1,2,\ldots,M+1\}$,
\begin{equation*}\begin{split}
X(i-1)\cap|\mathcal{A}I^{-1}(i)|&=
|\mathcal{A}I^{-1}(i)|-| \mathcal{A}I^{-1}(i)/ I^{-1}(i)|^\circ,\\
Y(i)\cap|\mathcal{A}I^{-1}(i)|&= |\mathcal{A}I^{-1}(i)|- |\mathcal{A}^\circ I^{-1}(i)|^\circ.
\end{split}\end{equation*}
\item
For any $i\in\{0,1,\ldots,M+1\}$,
\begin{equation*}\begin{split}
X(i)&=|\mathcal{C}-(\mathcal{C}/H)|\cup(
\bigcup_{\Gamma\in\widetilde{\mathcal{C}}_1^\circ, I(\Gamma)\leq i}
|\mathcal{A}(\Gamma)|),\\
Y(i)&=\bigcup_{\Gamma\in\widetilde{\mathcal{C}}_1^\circ, I(\Gamma)> i}
|\mathcal{A}(\Gamma)|,
\end{split}\end{equation*}
and $X(i)$ and $Y(i)$ are closed subsets of $V^*$.

\item
Consider any subset $\hat{\mathcal{C}}$ of $\mathcal{C}$ satisfying $\dim \hat{\mathcal{C}}=\dim \Vect(|\hat{\mathcal{C}}|)\geq 2$, $\hat{\mathcal{C}}\Mx=\hat{\mathcal{C}}^0$, $H\in\hat{\mathcal{C}}_1$ and $\hat{\mathcal{C}}=(\hat{\mathcal{C}}/H)\Fc$.

Let $\hat{M}=\sharp\{i\in\{1,2,\ldots,M\}|F(i)\subset|\hat{\mathcal{C}}|\}$, and $\tau:\{1,2,\ldots,\hat{M}\}\rightarrow \{i\in\{1,2,\ldots,M\}$ denote the unique injective mapping preserving the order and satisfying $\tau(\{1,2,\ldots,\hat{M}\})=\{i\in\{1,2,\ldots,M\}|F(i)\subset|\hat{\mathcal{C}}|\}$.

By Theorem~\ref{usd1}.6 we know that $(H, \hat{\mathcal{C}})\in\mathcal{HC}(V,N,S)$, $(\hat{M},F\tau)\in$\hfill\break$\mathcal{USD}(H, \hat{\mathcal{C}},S)$, and
$\widetilde{\mathcal{C}}\backslash|\hat{\mathcal{C}}|=\hat{\mathcal{C}}*F\tau(1)*F\tau(2)*\cdots*F\tau(\hat{M})$.
\begin{enumerate}
\item
$\{\Gamma\in(\widetilde{\mathcal{C}}\backslash|\hat{\mathcal{C}}|)_1|
\Gamma\not\subset|\hat{\mathcal{C}}-(\hat{\mathcal{C}}/H)|\}
=\widetilde{\mathcal{C}}_1^\circ\backslash|\hat{\mathcal{C}}|$.
\end{enumerate}
We denote
$$\hat{I}= I(V,N, H,\hat{\mathcal{C}},S, \hat{M},F\tau): \widetilde{\mathcal{C}}_1^\circ\backslash|\hat{\mathcal{C}}|
\rightarrow\{1,2,\ldots, \hat{M}+1\},$$
$$\hat{\mathcal{A}}=\mathcal{A}(V,N, H,\hat{\mathcal{C}},S, \hat{M},F\tau): \widetilde{\mathcal{C}}_1^\circ\backslash|\hat{\mathcal{C}}|
\rightarrow 2^{2^{V^*}},$$
$$\smash{\hat{\mathcal{A}}}^\circ=\mathcal{A}^\circ(V,N, H,\hat{\mathcal{C}},S, \hat{M},F\tau): \widetilde{\mathcal{C}}_1^\circ\backslash|\hat{\mathcal{C}}|
\rightarrow 2^{2^{V^*}}.$$
\begin{enumerate}
\setcounter{enumii}{1}
\item
Let $\kappa: \{1,2,\ldots, \hat{M}+1\}\rightarrow\{1,2,\ldots, M+1\}$ denote the composition mapping $I\iota\hat{I}^{-1}$, where $\iota: \widetilde{\mathcal{C}}_1^\circ\backslash|\hat{\mathcal{C}}|\rightarrow\widetilde{\mathcal{C}}_1^\circ$ denotes the inclusion mapping.

The mapping $\kappa$ is injective and preserves the order.
$\kappa(\hat{M}+1)=M+1$.
\item
For any $\Gamma\in\widetilde{\mathcal{C}}_1^\circ\backslash|\hat{\mathcal{C}}|$,
$\hat{\mathcal{A}}(\Gamma)/\Gamma= (\mathcal{A}(\Gamma)/\Gamma)\backslash |\hat{\mathcal{C}}|$ and
$|\hat{\mathcal{A}}(\Gamma)/\Gamma|^\circ= |\mathcal{A}(\Gamma)/\Gamma|^\circ\cap |\hat{\mathcal{C}}|$

For any $\Gamma\in\widetilde{\mathcal{C}}_1^\circ-
(\widetilde{\mathcal{C}}_1^\circ\backslash|\hat{\mathcal{C}}|)$,
$(\mathcal{A}(\Gamma)/\Gamma)\backslash |\hat{\mathcal{C}}|=\emptyset$ and
$|\mathcal{A}(\Gamma)/\Gamma|^\circ\cap |\hat{\mathcal{C}}|=\emptyset$
\item
For any $\Gamma\in\widetilde{\mathcal{C}}_1^\circ\backslash|\hat{\mathcal{C}}|$,
$\smash{\hat{\mathcal{A}}}^\circ (\Gamma)= \mathcal{A}^\circ(\Gamma) \backslash |\hat{\mathcal{C}}|$ and
$|\smash{\hat{\mathcal{A}}}^\circ(\Gamma) |^\circ = |\mathcal{A}^\circ(\Gamma) |^\circ \cap |\hat{\mathcal{C}}|$.

For any $\Gamma\in\widetilde{\mathcal{C}}_1^\circ-
(\widetilde{\mathcal{C}}_1^\circ\backslash|\hat{\mathcal{C}}|)$,
$\mathcal{A}^\circ(\Gamma) \backslash |\hat{\mathcal{C}}|=\emptyset$ and
$|\mathcal{A}^\circ(\Gamma)|^\circ \cap |\hat{\mathcal{C}}|=\emptyset$
\end{enumerate}
\item
Consider any $\Gamma\in \widetilde{\mathcal{C}}_1^\circ$.

If $\Theta\in\mathcal{A}(\Gamma)/\Gamma$, $\Delta\in\mathcal{C}$ and $\Theta \subset\Delta$, then $H\subset\Delta$, and $\Gamma\subset\Delta$.

If $\Theta\in\mathcal{A}^\circ(\Gamma)$, $\Delta\in\mathcal{C}/H$ and $\Theta \subset\Delta$, then $\Gamma\subset\Delta$.

If $\Theta\in\mathcal{A}^\circ(\Gamma) $ and $\Theta\subset|\mathcal{C}-(\mathcal{C}/H)|$, then
$\Gamma\not\subset\Theta$, $\Theta+H\in\mathcal{C}/H$ and
$b_{\Gamma/N^*}-b_{H/N^*}\in N^*\cap\Vect(\Theta)$.

If $\Theta\in\mathcal{A}^\circ(\Gamma) $ and $\Gamma\not\subset\Theta$,
then $\Theta+\Gamma\in\mathcal{A}^\circ(\Gamma)/\Gamma$.
\end{enumerate}

Below, we consider the case $\Ht(H,\mathcal{C},S)>0$.
Assume $\Ht(H,\mathcal{C},S)>0$.

The characteristic function $\gamma:(\mathcal{C}-(\mathcal{C}/H))_1\rightarrow\Q_0$ of $(H,\mathcal{C}, S)$ is defined.
Let $m=\sum_{\bar{E}\in(\mathcal{C}-(\mathcal{C}/H))_1}\lfloor\gamma(\bar{E})\rfloor\in\Z_+$ and $\bar{m}=\sum_{\bar{E}\in(\mathcal{C}-(\mathcal{C}/H))_1}\lceil\gamma(\bar{E})\rceil\in\Z_0$.
We know $\bar{m}\leq m\leq M$.

We take the unique pair $(E,M)$ of a compatible mapping
$E:\{1,2,\ldots,m\}\rightarrow (\mathcal{C}-(\mathcal{C}/H))_1$
with $S$ and a mapping 
$M:\{\bar{m},\bar{m}+1,\ldots,m+1\}\rightarrow\Z_+$
satisfying the following three conditions.
We denote 
$$F_E=F(V^*,N^*,H,\mathcal{C},m,E):\{1,2,\ldots,m\}\rightarrow 2^{ V^*}$$ 
$$H_E=H(V^*,N^*,H,\mathcal{C},m,E):\{1,2,\ldots,m+1\}\rightarrow 2^{ V^*},\text{ and}$$
$$\mathcal{B}_E=\mathcal{B}(V^*,N^*,H,\mathcal{C},m,E):\{1,2,\ldots,m+1\}\rightarrow 2^{2^{ V^*}}.$$
\begin{description}
\item[\emph{(a)}]
$F(j)=F_E(j)$ for any $j\in\{1,2,\ldots, m\}$.
\item[\emph{(b)}]
$M(\bar{m})=m$, $M(m+1)=M$ and $M(i-1)\leq M(i)$ for any $i\in\{\bar{m}+1,\bar{m}+2,\ldots, m+1\}$.
\item[\emph{(c)}]
$F(j)\subset|\mathcal{B}_E(i)|$ and $F(j)\not\subset|\mathcal{B}_E(i)-(\mathcal{B}_E(i)/H_E(i))|$ for any $i\in\{\bar{m}+1,\bar{m}+2,\ldots, m+1\}$ and any $j\in\{M(i-1)+1,M(i-1)+2,\ldots, M(i)\}$.
\end{description}

For any $i\in\{0,1,\ldots, \bar{m}-1\}$, we put $M(i)=m$.
We obtain an extension $M:\{0,1,\ldots,m+1\}\rightarrow\Z_+$ of $M:\{\bar{m},\bar{m}+1,\ldots,m+1\}\rightarrow\Z_+$.
For any $i\in\{1,2,\ldots,m+1\}$, we denote
$\bar{\mathcal{C}}(i)=(\mathcal{C}*F(1)*F(2)*\cdots*F(M(i-1)))\backslash|\mathcal{B}_E(i)|$, and we take the mapping $\bar{F}(i):\{1,2,\ldots, M(i)-M(i-1)\}\rightarrow 2^{V^*}$ satisfying $\bar{F}(i)(j)=F(M(i-1)+j)$ for any $j\in\{1,2,\ldots, M(i)-M(i-1)\}$.
We know $(H_E(i), \bar{\mathcal{C}}(i))\in\mathcal{HC}(V,N,S)$,
$|\bar{\mathcal{C}}(i)|=|\mathcal{B}_E(i)|$,
$|\bar{\mathcal{C}}(i)-( \bar{\mathcal{C}}(i)/H_E(i))|=
|\mathcal{B}_E(i)-( \mathcal{B}_E(i)/H_E(i))|$,
$\Ht(H_E(i), \bar{\mathcal{C}}(i),S)<\Ht(H,\mathcal{C},S)$, $(M(i)-M(i-1),\bar{F}(i))\in\mathcal{USD}( H_E(i), \bar{\mathcal{C}}(i),S)$, and
$\bar{\mathcal{C}}(i)* \bar{F}(1)* \bar{F}(2)*\cdots*\bar{F}(M(i)-M(i-1))=
\widetilde{\mathcal{C}}\backslash|\mathcal{B}_E(i)|$
for any $i\in\{1,2,\ldots,m+1\}$.
We denote
$$G_E=G(V^*,N^*,H,\mathcal{C},m,E):\{1,2,\ldots,m\}\rightarrow 2^{V^*}.$$

For any $i\in\{0,1,\ldots,m+1\}$, we put $L(i)=i+M(i)-m$.
We obtain a mapping $L: \{0,1,\ldots,m+1\}\rightarrow\Z_0$.

\begin{enumerate}
\setcounter{enumi}{12}
\item
$L(0)=0$. $L(m+1)=M+1$. $L(i-1)<L(i)$ and $L(i)-L(i-1)=M(i)-M(i-1)+1$ for any $i\in\{1,2,\ldots,m+1\}$.
$L(i)=i$ for any $i\in\{0,1,\ldots,\bar{m}\}$.
\item
For any $i\in\{1,2,\ldots,m+1\}$, $H_E(i)\in\widetilde{\mathcal{C}}_1^\circ$ and $I(H_E(i))=L(i)$.

For any $i\in\{1,2,\ldots,\bar{m}\}$, $I(H_E(i))=i$.
\item
Consider any $\Gamma\in\widetilde{\mathcal{C}}_1^\circ$ and any $i\in\{1,2,\ldots,m+1\}$.
$\Gamma\subset|\mathcal{B}_E(i)|$ and $\Gamma\not\subset|\mathcal{B}_E(i)-( \mathcal{B}_E(i)/H_E(i))|
\Leftrightarrow L(i-1)<I(\Gamma)\leq L(i)
\Leftrightarrow |\mathcal{A}(\Gamma)|\subset|\mathcal{B}_E(i)|
\Leftrightarrow |\mathcal{A}^\circ(\Gamma)|^\circ\subset|\mathcal{B}_E(i)|$.
\item
If $\Gamma\in\widetilde{\mathcal{C}}_1^\circ$, $i\in\{1,2,\ldots,m+1\}$,
$\Gamma\subset|\mathcal{B}_E(i)|$ and $\Gamma\not\subset|\mathcal{B}_E(i)-( \mathcal{B}_E(i)/H_E(i))|$,
then 
$I(\Gamma)=L(i-1)+ I(V,N, H_E(i),\bar{\mathcal{C}}(i),S, M(i)-M(i-1),\bar{F}(i))(\Gamma)$, and
$\mathcal{A}(\Gamma)= \mathcal{A}(V,N, H_E(i),\bar{\mathcal{C}}(i),S, M(i)-M(i-1),\bar{F}(i))(\Gamma)$.
\item
If $\Gamma\in\widetilde{\mathcal{C}}_1^\circ$, $i\in\{1,2,\ldots,m\}$,
$\Gamma\subset|\mathcal{B}_E(i)|$ and $\Gamma\not\subset|\mathcal{B}_E(i)-( \mathcal{B}_E(i)/H_E(i))|$,
then $\mathcal{A}^\circ(\Gamma)= \{\Theta\in\mathcal{A}^\circ(V,N, H_E(i),\bar{\mathcal{C}}(i),S, M(i)-M(i-1),\bar{F}(i))(\Gamma)|\Theta^\circ\subset|\mathcal{B}_E/G_E(i)|^\circ\}$.

If $\Gamma\in\widetilde{\mathcal{C}}_1^\circ$, $\Gamma\subset|\mathcal{B}_E(m+1)|$ and $\Gamma\not\subset|\mathcal{B}_E(m+1)-( \mathcal{B}_E(m+1)/H_E(m+1))|$,
then $\mathcal{A}^\circ(\Gamma)= \mathcal{A}^\circ(V,N, H_E(m+1),\bar{\mathcal{C}}(m+1),S, M(m+1)-M(m),\bar{F}(m+1))(\Gamma)$.
\item
For any  $i\in\{1,2,\ldots,m\}$,
$\mathcal{A}^\circ(H_E(i))= \{\Theta\in\mathcal{A}(H_E(i))|\Theta^\circ\subset|\mathcal{B}_E/G_E(i)|^\circ\}$.

$\mathcal{A}^\circ(H_E(m+1))=\mathcal{A}(H_E(m+1))$.
\item
Consider any $\Gamma\in \widetilde{\mathcal{C}}_1^\circ$ and any $i\in\{1,2,\ldots,m+1\}$ satisfying $\Gamma\subset|\mathcal{B}_E(i)|$ and $\Gamma\not\subset|\mathcal{B}_E(i)-(\mathcal{B}_E(i)/H_E(i))|$.

If $\Theta\in\mathcal{A}(\Gamma)/\Gamma$, $\Delta\in\mathcal{C}$
and $\Theta \subset\Delta$, then $H_E(i)\subset\Delta$.

If $\Theta\in\mathcal{A}^\circ(\Gamma)$, $\Delta\in\mathcal{C}/H$
and $\Theta \subset\Delta$, then $H_E(i)\subset\Delta$.
\end{enumerate}
\end{lemma}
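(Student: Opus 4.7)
The plan is to proceed by induction on the height $\Ht(H,\mathcal{C},S)\in(1/\Den(S/N))\Z_0$, following exactly the inductive structure used to define $\mathcal{USD}$, $I$, $\mathcal{A}$, and $\mathcal{A}^\circ$. In the base case $\Ht(H,\mathcal{C},S)=0$, we have $M=0$, $\widetilde{\mathcal{C}}=\mathcal{C}$, $\widetilde{\mathcal{C}}_1^\circ=\{H\}$ (since $\mathcal{C}=(\mathcal{C}/H)\Fc$), and $I(H)=1$, $\mathcal{A}(H)=\mathcal{A}^\circ(H)=\mathcal{C}$ by definition. All of claims $1$--$10$ reduce to trivial checks, and claims $13$--$19$ are vacuous as $\Ht(H,\mathcal{C},S)=0$ is excluded there. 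Claim $11$ then reduces to the same trivial situation restricted to $\hat{\mathcal{C}}$, using Theorem~\ref{usd1}.3 to note that $(H,\hat{\mathcal{C}})$ also has height zero.

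For the inductive step with $\Ht(H,\mathcal{C},S)>0$, I first dispatch claims $13$--$19$ directly from the definitions of $I$, $\mathcal{A}$, $\mathcal{A}^\circ$ given in the paragraph preceding the lemma and from Theorem~\ref{usd1}.9--14. Specifically, the explicit formula $L(i)=i+M(i)-m$ makes claims $13$, $14$, $15$ (combined with Lemma~\ref{property of basic subdivisions}) and $16$, $17$, $18$, $19$ follow by unwinding the recursive definitions and using Theorem~\ref{heightinequality}.4 which ensures $\Ht(H_E(i),\bar{\mathcal{C}}(i),S)<\Ht(H,\mathcal{C},S)$ so the inductive hypothesis applies to each $(H_E(i),\bar{\mathcal{C}}(i),S,M(i)-M(i-1),\bar{F}(i))$.

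Then I address claims $1$--$10$ via the decomposition of $\widetilde{\mathcal{C}}$ provided by Theorem~\ref{usd1}.13:
\begin{equation*}
\widetilde{\mathcal{C}}=(\mathcal{C}-(\mathcal{C}/H))\cup\bigcup_{i=1}^{m+1}\bigl((\widetilde{\mathcal{C}}\backslash|\mathcal{B}_E(i)|)-(\widetilde{\mathcal{C}}\backslash|\mathcal{B}_E(i)-(\mathcal{B}_E(i)/H_E(i))|)\bigr)
\end{equation*}
together with Theorem~\ref{usd1}.14 identifying the edges in each piece as $\{H_E(i)\}\cup\{\R_0 b_{F(j)/N^*}\mid M(i-1)<j\leq M(i)\}$. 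By claim $15$, each $\Gamma\in\widetilde{\mathcal{C}}_1^\circ$ lies in a unique $\mathcal{B}_E(i)$ with $\Gamma\not\subset|\mathcal{B}_E(i)-(\mathcal{B}_E(i)/H_E(i))|$, so its lower parts $\mathcal{A}(\Gamma)$, $\mathcal{A}^\circ(\Gamma)$ are inherited from the induction hypothesis applied to $(H_E(i),\bar{\mathcal{C}}(i))$. Claim $1$ (bijectivity of $I$) follows by counting: each piece contributes $M(i)-M(i-1)+1$ edges, whose ordered enumerations cover $\{L(i-1)+1,\dots,L(i)\}$ disjointly by the inductive bijection. Claims $2$--$7$ follow by taking unions of the inductive decompositions across $i$ and then adjoining $|\mathcal{C}-(\mathcal{C}/H)|=|\mathcal{C}|-\bigcup_i|\mathcal{B}_E(i)-(\mathcal{B}_E(i)/H_E(i))|^c$ appropriately, using Lemma~\ref{property of basic subdivisions} for disjointness of the $|\mathcal{B}_E(i)/H_E(i)|^\circ$ and Lemma~\ref{property of basic subdivisions2} for disjointness of $|\mathcal{B}_E^\circ(i)|^\circ$. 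Claims $8$--$10$ are direct consequences via the definitions of $X(i)$ and $Y(i)$.

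The main obstacle will be claim $11$, the compatibility with a sub-decomposition $\hat{\mathcal{C}}\subset\mathcal{C}$. Here I need to apply Lemma~\ref{basic2} to see that the basic-subdivision data $(F_E,H_E,\mathcal{B}_E)$ for $(H,\mathcal{C})$ restricts cleanly to basic-subdivision data for $(H,\hat{\mathcal{C}})$ associated with the restricted compatible mapping $\hat{E}$, and then use Lemma~\ref{existcomp}.5 to identify the characteristic function of $(H,\hat{\mathcal{C}},S)$ as the restriction of $\gamma$ to $(\hat{\mathcal{C}}-(\hat{\mathcal{C}}/H))_1$. The induction hypothesis on each piece $(H_E(i),\bar{\mathcal{C}}(i))$ gives compatibility there, and then $\kappa$ is determined by which pieces $\mathcal{B}_E(i)$ meet $|\hat{\mathcal{C}}|$ nontrivially. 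Showing $\kappa$ is injective and order-preserving with $\kappa(\hat{M}+1)=M+1$ reduces to showing that the inductive choice of $\mathcal{USD}$ on each piece is preserved under restriction, which is the delicate part requiring a careful induction-within-the-induction cross-check using both Lemma~\ref{basic2} and Theorem~\ref{usd1}.6. Claim $12$ is then a direct corollary of claims $16$--$19$ and the structure of basic subdivisions (Lemma~\ref{property of basic subdivisions}), noting that $b_{\Gamma/N^*}-b_{H/N^*}\in N^*\cap\Vect(\Theta)$ follows from the construction of $F(j)=E(j)+H$ with barycenter $b_{E(j)/N^*}+s(j,E(j))b_{H/N^*}$.
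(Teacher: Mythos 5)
Your proposal is correct and takes essentially the intended approach: the paper does not supply an explicit proof for this lemma (treating it as following from the definitions), and the inductive structure of $I$, $\mathcal{A}$, $\mathcal{A}^\circ$, $\mathcal{USD}$ on $\Ht(H,\mathcal{C},S)$ is exactly the induction you carry out, with the decomposition of $\widetilde{\mathcal{C}}$ from Theorem~\ref{usd1}.13--14, disjointness from Lemmas~\ref{property of basic subdivisions} and~\ref{property of basic subdivisions2}, and the restriction machinery from Lemma~\ref{basic2}, Lemma~\ref{existcomp}.5, and Theorem~\ref{usd1}.6 being the right supporting tools for each part. The counting $\sum_{i=1}^{m+1}(M(i)-M(i-1)+1)=M+1$ for bijectivity of $I$ and the recursion $I(\Gamma)=L(i-1)+I_{(i)}(\Gamma)$ check out, and the delicate points you flag (claim 11's cross-check and claim 12's barycenter identity) are indeed where the only nontrivial verification lies.
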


\begin{theorem}
\label{important}
Assume $\mathcal{HC}(V,N,S)\neq\emptyset$.
Consider any $(H,\mathcal{C})\in\mathcal{HC}(V,N,S)$ and any $(M,F)\in\mathcal{USD}(H,\mathcal{C},S)$.
We denote $\widetilde{\mathcal{C}}=\mathcal{C}*F(1)*F(2)*\cdots*F(M)$,
$\widetilde{\mathcal{C}}_1^\circ=\{\Gamma\in\widetilde{\mathcal{C}}_1|\Gamma\not\subset|\mathcal{C}-(\mathcal{C}/H)|\}$ and
$$\mathcal{A}^\circ=\mathcal{A}^\circ(V,N, H,\mathcal{C},S, M,F): \widetilde{\mathcal{C}}_1^\circ
\rightarrow 2^{2^{V^*}}.$$

Consider any $\Theta\in\widetilde{\mathcal{C}}$ satisfying $\Theta\not\subset|\mathcal{C}-(\mathcal{C}/H)|$.
\begin{enumerate}
\item
There exists uniquely an element $\Lambda\in\mathcal{D}(S|V)\hat{\cap}\mathcal{C}$ satisfying $\Theta^\circ\subset\Lambda^\circ$.
\item
There exists uniquely an element $\Delta\in\mathcal{C}$ satisfying $\Theta^\circ\subset\Delta^\circ$.
\item
There exists uniquely an element $A\in\mathcal{F}(S+(\Theta^\vee|V^*))$ satisfying
$\Delta(A, S+(\Theta^\vee|V^*)|V)=\Theta$.
\item
There exists uniquely an element $\Gamma\in\widetilde{\mathcal{C}}_1^\circ$ satisfying
$\Theta\in\mathcal{A}^\circ(\Gamma)$.
\end{enumerate}

We take the unique element $\Lambda\in\mathcal{D}(S|V)\hat{\cap}\mathcal{C}$ satisfying $\Theta^\circ\subset\Lambda^\circ$, the unique element $\Delta\in\mathcal{C}$ satisfying $\Theta^\circ\subset\Delta^\circ$,
the unique element $A\in\mathcal{F}(S+(\Theta^\vee|V^*))$ satisfying
$\Delta(A, S+(\Theta^\vee|V^*)|V)=\Theta$ and the unique element $\Gamma\in\widetilde{\mathcal{C}}_1^\circ$ satisfying
$\Theta\in\mathcal{A}^\circ(\Gamma)$.
\begin{enumerate}
\setcounter{enumi}{4}
\item
$\Theta^\circ\subset\Lambda^\circ\subset\Delta^\circ$.
$\Delta\in\mathcal{C}/H$.
$\dim\Lambda=\dim\Delta$ or $\dim\Lambda=\dim\Delta-1$.
$\Gamma\subset\Delta$.
\item
$\dim A=\dim V-\dim\Theta$.
$S+(\Delta^\vee|V^*)\subset S+(\Theta^\vee|V^*)$.
$A\cap(S+(\Delta^\vee|V^*))\in\mathcal{F}(S+(\Delta^\vee|V^*))$.
$\Delta(A\cap(S+(\Delta^\vee|V^*)), S+(\Delta^\vee|V^*)|V)= \Lambda$.
\item
If $\dim\Lambda=\dim\Delta$, then $\langle \omega,a\rangle=\langle\omega,b\rangle$ for any $\omega\in\Vect(\Delta)$, 
any $a\in A\cap(S+(\Delta^\vee|V^*))$, and any $b\in A\cap(S+(\Delta^\vee|V^*))$.
\item
If $\dim\Lambda=\dim\Delta-1$, then $\Ht(H, S+(\Delta^\vee|V^*))>0$ and $\Gamma\in\widetilde{\mathcal{C}}_1$ satisfies the following five conditions:
\begin{enumerate}
\item
$\Gamma\subset\Delta$.
$\Gamma\not\subset\Vect(\Lambda)$.
$\Gamma\not\subset\Lambda$.
$\Gamma\not\subset\Theta$.
$\Theta+\Gamma\in \mathcal{A}^\circ(\Gamma)/\Gamma\subset\widetilde{\mathcal{C}}$.
\item
$\Vect(\Lambda)+\Gamma=\Vect(\Lambda)+H$.
\item
The subset $\{\langle b_{\Gamma/N^*},a\rangle|a\in A\cap(S+(\Delta^\vee|V^*))\}$ of $\R$ is a non-empty bounded closed interval.
\item
\emph{[The hard height inequality]}
\begin{equation*}\begin{split}
&\max\{\langle b_{\Gamma/N^*},a\rangle|a\in A\cap(S+(\Delta^\vee|V^*))\}\\
&\qquad\qquad-\min\{\langle b_{\Gamma/N^*},a\rangle|a\in A\cap(S+(\Delta^\vee|V^*))\}\\
\leq\:&\Ht(H, S+(\Delta^\vee|V^*)).
\end{split}\end{equation*}
\item
The equality 
\begin{equation*}\begin{split}
&\max\{\langle b_{\Gamma/N^*},a\rangle|a\in A\cap(S+(\Delta^\vee|V^*))\}\\
&\qquad\qquad-\min\{\langle b_{\Gamma/N^*},a\rangle|a\in A\cap(S+(\Delta^\vee|V^*))\}\\
=\:&\Ht(H, S+(\Delta^\vee|V^*)),
\end{split}\end{equation*}
holds, if and only if, $c(S+(\Delta^\vee|V^*))=2$ and
the structure constant of $\mathcal{D}(S|V)\hat{\cap}\mathcal{F}(\Delta)$ corresponding to the pair $(2,\bar{E})$ is an integer for any $\bar{E}\in\mathcal{F}(\Delta)_1-\{H\}$.
\end{enumerate}

\item
If $\dim\Lambda=\dim\Delta-1$ and the equivalent conditions in $8$.\emph{(e)} are satisfied, then $\Theta=\Lambda$ and $\Gamma=H$.
\end{enumerate}

Below we consider any rational convex pseudo polyhedrons $T$ and $U$ over $N$ in $V$ satisfying $T+U=S$.
\begin{enumerate}\setcounter{enumi}{9}\item
$\mathcal{D}(T|V)\hat{\cap}\mathcal{D}(U|V)= \mathcal{D}(S|V)$.
$(H,\mathcal{C})\in\mathcal{HC}(V,N,T)$.
$(H,\mathcal{C})\in\mathcal{HC}(V,N,U)$.
\item
If $\dim\Lambda=\dim\Delta-1$, then $\Lambda\in\mathcal{D}(T+(\Delta^\vee|V^*)|V)^1$ or $\Lambda\in\mathcal{D}(U+(\Delta^\vee|V^*)|V)^1$.
\item
Assume $\dim\Lambda=\dim\Delta-1$ and $\Lambda\in\mathcal{D}(T+(\Delta^\vee|V^*)|V)^1$.

$\Ht(H, T+(\Delta^\vee|V^*))>0$. There exists uniquely an element $A_T\in\mathcal{F}(T+(\Delta^\vee|V^*))$ satisfying $\Delta(A_T, T+(\Delta^\vee|V^*)|V)=\Lambda$.

We take the unique element $A_T\in\mathcal{F}(T+(\Delta^\vee|V^*))$ satisfying $\Delta(A_T, T+(\Delta^\vee|V^*)|V)=\Lambda$. The element $\Gamma\in\widetilde{\mathcal{C}}_1^\circ$ satisfies the following three conditions:
\begin{enumerate}
\item
The subset $\{\langle b_{\Gamma/N^*},a\rangle|a\in A_T\}$ of $\R$ is a non-empty bounded closed interval.
\item
\emph{[The hard height inequality]}
\begin{equation*}\begin{split}
&\max\{\langle b_{\Gamma/N^*},a\rangle|a\in A_T\}\\
&\qquad\qquad-\min\{\langle b_{\Gamma/N^*},a\rangle|a\in A_T\}\\
\leq\:&\Ht(H, T+(\Delta^\vee|V^*)).
\end{split}\end{equation*}
\item
Let $r_U=c(U+(\Delta^\vee|V^*))\in\Z_+$.
If $\max\{\langle b_{\Gamma/N^*},a\rangle|a\in A_T\} 
-\min\{\langle b_{\Gamma/N^*},$\break$a\rangle|a\in A_T\}
=Ht(H, T+(\Delta^\vee|V^*))$ and the structure constant of $\mathcal{D}(U+(\Delta^\vee|V^*)|V)$ corresponding to the pair $(i,\bar{E})$ is an integer for any $i\in\{1,2,\ldots,r_U\}$ and any $\bar{E}\in\mathcal{F}(H\Op|\Delta)_1$, then
$c(T+(\Delta^\vee|V^*))=2$, the structure constant of $\mathcal{D}(T+(\Delta^\vee|V^*)|V)$ corresponding to the pair $(2,\bar{E})$ is an integer for any $\bar{E}\in\mathcal{F}(H\Op|\Delta)_1$ and $\Theta=\Lambda$.
\end{enumerate}
\end{enumerate}
\end{theorem}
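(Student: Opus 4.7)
I would proceed by induction on $\Ht(H,\mathcal{C},S)$, which takes values in the well-ordered set $(1/\Den(S/N))\Z_0$, so the induction is well-founded. The structural claims (1)--(7) are bookkeeping consequences of the earlier machinery: existence and uniqueness of $\Lambda$ follow from Theorem~\ref{usd1}.2, which shows $\widetilde{\mathcal{C}}$ is a subdivision of $\mathcal{D}(S|V)\hat{\cap}\mathcal{C}$, combined with the equivalence-class decomposition of $|\mathcal{D}(S|V)\hat{\cap}\mathcal{C}|$ by open cones; uniqueness of $\Delta$ is the analogous statement for $\widetilde{\mathcal{C}}$ refining $\mathcal{C}$; uniqueness of $A$ is the bijection $\mathcal{F}(S+(\Theta^\vee|V^*)) \leftrightarrow \mathcal{D}(S+(\Theta^\vee|V^*)|V)$ of Proposition~\ref{property of polyhedrons}.17; and uniqueness of $\Gamma$ is Lemma~\ref{lower parts}.7. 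Claim (5) holds because $\Theta \not\subset |\mathcal{C}-(\mathcal{C}/H)|$ forces $\Delta \in \mathcal{C}/H$, and $H$-simpleness of $\mathcal{D}(S|V)\hat{\cap}\mathcal{F}(\Delta)$ constrains any refinement inside $\Delta$ to drop dimension by at most one; claim (6) is Corollary~\ref{sum2}, and claim (7) uses the constancy of linear functionals on minimal faces.

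The base case $\Ht(H,\mathcal{C},S) = 0$ of claim (8) is vacuous: Theorem~\ref{usd1}.3 gives $\widetilde{\mathcal{C}} = \mathcal{C}$ as a subdivision of $\mathcal{D}(S|V)\hat{\cap}\mathcal{C}$, whence $\Lambda = \Theta$ and $\dim\Lambda = \dim\Delta$, so the hypothesis $\dim\Lambda = \dim\Delta - 1$ fails. For the inductive step, I would take the canonical pair $(E,M)$ with derived data $F_E, H_E, \mathcal{B}_E, \bar{\mathcal{C}}(i)$ from Theorem~\ref{usd1}.9--10, and pick the unique $i \in \{1,\ldots,m+1\}$ with $\Theta \subset |\mathcal{B}_E(i)|$ and $\Theta \not\subset |\mathcal{B}_E(i) - (\mathcal{B}_E(i)/H_E(i))|$, afforded by Lemma~\ref{lower parts}.15. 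For $i \le \bar{m}$, Theorem~\ref{heightinequality}.5 shows $\mathcal{D}(S|V)\hat{\cap}\mathcal{F}(\Theta_i) = \mathcal{F}(\Theta_i)$ for the cell $\Theta_i \in \mathcal{B}_E(i)\Mx$ with $\Theta^\circ \subset \Theta_i^\circ$, so no refinement happens inside $\Theta_i$ and the hypothesis of (8) again fails. For $\bar{m}+1 \le i \le m+1$, apply the inductive hypothesis to $(H_E(i), \bar{\mathcal{C}}(i), S, M(i)-M(i-1), \bar{F}(i))$, whose $H_E(i)$-height is strictly smaller by Theorem~\ref{heightinequality}.4.(c); after matching the induced data via Lemma~\ref{lower parts}.16--17, this yields
\begin{equation*}
\max\{\langle b_{\Gamma/N^*}, a\rangle : a \in A \cap (S+(\Theta_i^\vee|V^*))\} - \min\{\cdot\} \leq \Ht(H_E(i), S + (\Theta_i^\vee|V^*)).
\end{equation*}
The cone-level height inequalities of Theorem~\ref{heightinequality}.15, 17, 18 and 19 bound the right-hand side by $\Ht(H, S+(\Delta^\vee|V^*))$; since $A \cap (S+(\Delta^\vee|V^*)) \subset A \cap (S+(\Theta_i^\vee|V^*))$, the max--min on the smaller set is no larger, giving (8.d). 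Claims (8.a)--(8.c) are routine geometric consequences of the same setup.

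The equality case (8.e) and claim (9) would be read off Theorem~\ref{heightinequality}.20--26: equality propagates through the chain only when $\hat{\mathcal{B}}$ subdivides $\mathcal{D}(S+(\Delta^\vee|V^*)|V)$, which by item~19 of that theorem happens precisely when $c(S+(\Delta^\vee|V^*)) = 2$ and all structure constants are integers; in that case $\Theta = \Lambda$ and $\Gamma = H$. Claims (10)--(12) follow by replaying the proof of (8) with $T$ in place of $S$: the hypothesis $(H,\mathcal{C}) \in \mathcal{HC}(V,N,T)$ holds because $H$-simpleness of $\mathcal{D}(S|V)\hat{\cap}\mathcal{F}(\Delta)$ descends to each factor of the splitting $\mathcal{D}(S|V) = \mathcal{D}(T|V)\hat{\cap}\mathcal{D}(U|V)$, and the equality case with integer structure constants on the $U$-side is Theorem~\ref{heightinequality}.32. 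The principal technical obstacle is the bookkeeping in the inductive step: confirming that the data $(\Lambda, A, \Gamma)$ attached to $\Theta$ at level $\mathcal{C}$ restrict correctly to the corresponding data at level $\bar{\mathcal{C}}(i)$, in particular that $A \cap (S+(\Theta_i^\vee|V^*))$ has face cone exactly $\Lambda$ in $\mathcal{D}(S+(\Theta_i^\vee|V^*)|V)$ and that $\Gamma$ remains the unique element of $\widetilde{\mathcal{C}}_1^\circ$ with $\Theta \in \mathcal{A}^\circ(\Gamma)$ throughout the descent.
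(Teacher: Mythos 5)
Your outline correctly identifies the overall architecture — induction on height, descent through the basic-subdivision cells constructed in $\mathcal{USD}$, with equality forced by the subdivision-coincidence condition of Theorem~\ref{heightinequality}.19 — but it has several real gaps that prevent it from closing.

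First, the descent must happen at the level of $\mathcal{F}(\Delta)$, not $\mathcal{C}$. Claim 8 is a $\Delta$-relative statement: the bound is $\Ht(H, S+(\Delta^\vee|V^*))$, the structure constants in 8.(e) are those of $\mathcal{D}(S|V)\hat{\cap}\mathcal{F}(\Delta)$. The paper's first move is to restrict the upward subdivision to $\Delta$ via Theorem~\ref{usd1}.6, getting $\widetilde{\mathcal{C}}\backslash\Delta = \mathcal{F}(\Delta)*F\tau(1)*\cdots*F\tau(\hat M)$; only then does it apply the basic-subdivision machinery $\hat\Delta(i)$, $\hat{\bar\Delta}(i)$ and Theorem~\ref{heightinequality}.15--19 to the $\Delta$-local cells. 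Your descent through the global $\mathcal{B}_E(i)$ gives the inductive bound $\Ht(H_E(i), S+(\Theta_i^\vee|V^*))$ for a cell $\Theta_i$ of $\bar{\mathcal{C}}(i)$, but the comparison of that with $\Ht(H, S+(\Delta^\vee|V^*))$ is exactly what has to be established via the $\Delta$-local height inequalities; the way you phrase it, as a consequence of "Theorem~\ref{heightinequality}.15, 17, 18 and 19," implicitly assumes the $\mathcal{F}(\Delta)$ restriction you skipped.

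Second, and more seriously, the two-case split is missing. After locating $i_\Gamma$, the paper branches on whether $\Theta^\circ\subset\hat\Delta(i_\Gamma)^\circ$ (cell interior) or $\Theta^\circ\subset\hat{\bar\Delta}(i_\Gamma)^\circ$ (wall). The interior case is the one where a clean inductive call works, and it yields a \emph{strict} inequality. The wall case is where $\Theta = \Lambda\cap\Delta_0$ is forced, $\Gamma\not\subset\Theta$, and a direct computation is required: one shows $b_{\Gamma/N^*} - b_{H_{\hat E}(i_\Gamma)/N^*} \in N^*\cap\Vect(\Theta)\subset\Vect(\Lambda)$, and then (since $\{\hat a(i_\Lambda-1),\hat a(i_\Lambda)\}\subset\Conv(\hat A(i_\Lambda-1)\cup\hat A(i_\Lambda))$ with face cone $\hat{\bar\Lambda}(i_\Lambda)$) that the difference of $b_{\Gamma/N^*}$-values equals the corresponding difference of $b_{H/N^*}$-values. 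This is precisely what proves claims 8.(a) and 8.(b) — $\Gamma\not\subset\Vect(\Lambda)$ and $\Vect(\Lambda)+\Gamma=\Vect(\Lambda)+H$ — which you dismiss as "routine geometric consequences" but which are the mechanism that turns $b_{\Gamma/N^*}$ into a usable height function on $A\cap(S+(\Delta^\vee|V^*))$. Without the wall case, 8.(a), 8.(b) are not established, and the equality characterization 8.(e)/claim 9 cannot be read off, since equality only occurs in the wall subcase $\Lambda=\hat{\bar\Delta}(i_\Gamma)$ with $i_\Lambda=c(S+(\Delta^\vee|V^*))=2$, not globally across the inductive chain.

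Third, claims 1--4 are not quite as purely "bookkeeping" as you suggest for $\Gamma$: you need to check that a general $\Theta\in\widetilde{\mathcal{C}}$ (not only elements of $\widetilde{\mathcal{C}}_1^\circ$) satisfies $\Theta\subset|\mathcal{B}_E(i)|$ and $\Theta\not\subset|\mathcal{B}_E(i)-(\mathcal{B}_E(i)/H_E(i))|$ for a unique $i$; you cite Lemma~\ref{lower parts}.15 which only addresses the $\Gamma$ case, and the bridge to $\Theta$ (via $\Theta\in\mathcal{A}^\circ(\Gamma)$ and $|\mathcal{A}^\circ(\Gamma)|\subset|\mathcal{B}_E(i)|$ from Lemma~\ref{property of basic subdivisions2}.1 and Lemma~\ref{lower parts}.15) needs to be spelled out. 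These gaps can in principle be filled with the ideas you have, but as written the proposal omits the localization to $\Delta$ and the wall case, both of which are load-bearing.
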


\begin{proof}

We show only claim $8$ and $9$.

Assume $\mathcal{HC}(V,N,S)\neq\emptyset$.
Consider any $(H,\mathcal{C})\in\mathcal{HC}(V,N,S)$ and any $(M,F)\in\mathcal{USD}(H,\mathcal{C},S)$.
We denote $\widetilde{\mathcal{C}}=\mathcal{C}*F(1)*F(2)*\cdots*F(M)$,
$\widetilde{\mathcal{C}}_1^\circ=\{\Gamma\in\widetilde{\mathcal{C}}_1|\Gamma\not\subset|\mathcal{C}-(\mathcal{C}/H)|\}$ and
$\mathcal{A}^\circ=\mathcal{A}^\circ(V,N, H,\mathcal{C},S, M,F): \widetilde{\mathcal{C}}_1^\circ
\rightarrow 2^{2^{V^*}}$.

Consider any $\Theta\in\widetilde{\mathcal{C}}$ satisfying $\Theta\not\subset|\mathcal{C}-(\mathcal{C}/H)|$.

We take the unique element $\Lambda\in\mathcal{D}(S|V)\hat{\cap}\mathcal{C}$ satisfying $\Theta^\circ\subset\Lambda^\circ$, the unique element $\Delta\in\mathcal{C}$ satisfying $\Theta^\circ\subset\Delta^\circ$,
the unique element $A\in\mathcal{F}(S+(\Theta^\vee|V^*))$ satisfying
$\Delta(A, S+(\Theta^\vee|V^*)|V)=\Theta$ and the unique element $\Gamma\in\widetilde{\mathcal{C}}_1^\circ$ satisfying
$\Theta\in\mathcal{A}^\circ(\Gamma)$.

By $5$ and $6$ we know that 
$\Theta^\circ\subset\Lambda^\circ\subset\Delta^\circ$,
$\Delta\in\mathcal{C}/H$,
$\dim\Lambda=\dim\Delta$ or $\dim\Lambda=\dim\Delta-1$,
$\Gamma\subset\Delta$,
$\dim A=\dim V-\dim\Theta$,
$S+(\Delta^\vee|V^*)\subset S+(\Theta^\vee|V^*)$,
$A\cap(S+(\Delta^\vee|V^*))\in\mathcal{F}(S+(\Delta^\vee|V^*))$, and
$\Delta(A\cap(S+(\Delta^\vee|V^*)), S+(\Delta^\vee|V^*)|V)= \Lambda$.

Furthermore, assume $\dim\Lambda=\dim\Delta-1$.

Note that $H\in\mathcal{F}(\Delta)_1$ and $\mathcal{D}( S+(\Delta^\vee|V^*)|V)$ is $H$-simple, since $\Delta\in\mathcal{C}/H$.
Let $\bar{\mathcal{D}}(S+(\Delta^\vee|V^*)|V)^1=\{\bar{\Lambda}\in\mathcal{D}( S+(\Delta^\vee|V^*)|V)^1|\bar{\Lambda}^\circ\subset\Delta^\circ\}\cup\{H\Op|\Delta\}$ denote the $H$-skeleton of $\mathcal{D}( S+(\Delta^\vee|V^*)|V)$.
We know $\Lambda\in\bar{\mathcal{D}}(S+(\Delta^\vee|V^*)|V)^1$, $\Lambda\neq H\Op|\Delta$, $c(S+(\Delta^\vee|V^*))=\sharp \bar{\mathcal{D}}(S+(\Delta^\vee|V^*)|V)^1\geq 2$, and $\Ht(H, S+(\Delta^\vee|V^*))>0$.

We consider the $H$-order on $\mathcal{D}( S+(\Delta^\vee|V^*)|V)^0$.
Let $\hat{\Lambda}:\{1,2,\ldots, c(S+(\Delta^\vee|V^*))\}\rightarrow\mathcal{D}( S+(\Delta^\vee|V^*)|V)^0$ denote the unique bijective mapping preserving the $H$-order.
Let $\ell=\dim\Vect(\Delta)^\vee|V^*\in\Z_0$ and let $\hat{A}:\{1,2,\ldots, c(S+(\Delta^\vee|V^*))\}\rightarrow\mathcal{F}(S+(\Delta^\vee))_\ell$ denote the unique bijective mapping satisfying $\Delta(\hat{A}(i), $\hfill\break$S+(\Delta^\vee|V^*)|V)= \hat{\Lambda}(i)$ for any $i\in\{1,2,\ldots, c(S+(\Delta^\vee|V^*))\}$.
For any $i\in\{1,2,\ldots, c(S+(\Delta^\vee|V^*))\}$, we take any point $\hat{a}(i)\in \hat{A}(i)$.

We consider the $H$-order on $\bar{\mathcal{D}}(S+(\Delta^\vee|V^*)|V)^1$.
Let $\hat{\bar{\Lambda}}:\{1,2,\ldots, c(S+(\Delta^\vee|V^*))\}\rightarrow\bar{\mathcal{D}}(S+(\Delta^\vee|V^*)|V)^1$ denote the unique bijective mapping preserving the $H$-order.
We take the unique element $i_\Lambda\in\{2,3,\ldots, c(S+(\Delta^\vee|V^*))\}$ satisfying $\hat{\bar{\Lambda}}( i_\Lambda)=\Lambda$.

Now, by $5$ we know $\Gamma\subset\Delta$. 

If $\Gamma\subset\Theta$, then $\Theta=\Theta+\Gamma\in\mathcal{A}^\circ(\Gamma)/\Gamma$.
If $\Gamma\not\subset\Theta$, then $\Theta+\Gamma\in\mathcal{A}^\circ(\Gamma)/\Gamma$ by Lemma~\ref{lower parts}.12.
We know $\Theta+\Gamma\in\mathcal{A}^\circ(\Gamma)/\Gamma \subset\mathcal{A}^\circ(\Gamma)\subset\widetilde{\mathcal{C}}$.

Since $\Delta(A\cap(S+(\Delta^\vee|V^*)), S+(\Delta^\vee|V^*)|V)=\Lambda=\hat{\bar{\Lambda}}( i_\Lambda)=
\hat{\Lambda}(i_\Lambda)\cap\hat{\Lambda}(i_\Lambda-1)$ and $\Lambda^\circ\subset\Delta^\circ$,
we know that
$A\cap(S+(\Delta^\vee|V^*))=\Conv(\hat{A}(i_\Lambda)\cup\hat{A}(i_\Lambda-1))=\Conv(\{\hat{a}(i_\Lambda), \hat{a}(i_\Lambda-1)\})+(\Vect(\Delta)^\vee|V^*)$.
Therefore, if $\Vect(\Lambda)+\Gamma=\Vect(\Lambda)+H$, then
$\langle b_{\Gamma/N^*}, \hat{a}(i_\Lambda)\rangle <\langle b_{\Gamma/N^*}, \hat{a}(i_\Lambda-1)\rangle$,
$\{\langle b_{\Gamma/N^*}, a\rangle|a\in A\cap(S+(\Delta^\vee|V^*))\}
=\{t\in\R|\langle b_{\Gamma/N^*}, \hat{a}(i_\Lambda)\rangle \leq t\leq\langle b_{\Gamma/N^*}, \hat{a}(i_\Lambda-1)\rangle\}$, and the subset
$\{\langle b_{\Gamma/N^*}, a\rangle|a\in A\cap(S+(\Delta^\vee|V^*))$ of $\R$ is a non-empty bounded closed interval.
If $\Gamma\subset\Vect(\Lambda)$, then 
$\langle b_{\Gamma/N^*}, \hat{a}(i_\Lambda)\rangle =\langle b_{\Gamma/N^*}, \hat{a}(i_\Lambda-1)\rangle$,
$\{\langle b_{\Gamma/N^*}, a\rangle|a\in A\cap(S+(\Delta^\vee|V^*))\}
=\{\langle b_{\Gamma/N^*}, \hat{a}(i_\Lambda)\rangle\}$, and the subset
$\{\langle b_{\Gamma/N^*}, a\rangle|a\in A\cap(S+(\Delta^\vee|V^*))$ of $\R$ is a non-empty bounded closed interval.
If $\Gamma\not\subset\Vect(\Lambda)$ and $\Vect(\Lambda)+\Gamma\neq\Vect(\Lambda)+H$, then 
$\langle b_{\Gamma/N^*}, \hat{a}(i_\Lambda)\rangle >\langle b_{\Gamma/N^*}, \hat{a}(i_\Lambda-1)\rangle$,
$\{\langle b_{\Gamma/N^*}, a\rangle|a\in A\cap(S+(\Delta^\vee|V^*))\}
=\{t\in\R|\langle b_{\Gamma/N^*}, \hat{a}(i_\Lambda)\rangle \geq t\geq\langle b_{\Gamma/N^*}, \hat{a}(i_\Lambda-1)\rangle\}$, and the subset
$\{\langle b_{\Gamma/N^*}, a\rangle|a\in A\cap(S+(\Delta^\vee|V^*))$ of $\R$ is a non-empty bounded closed interval.

We know that the subset
$\{\langle b_{\Gamma/N^*}, a\rangle|a\in A\cap(S+(\Delta^\vee|V^*))$ of $\R$ is a non-empty bounded closed interval.

Let $\hat{M}=\sharp\{i\in\{1,2,\ldots,M\}|F(i)\subset\Delta\}\in\Z_0$.
We take the unique injective mapping $\tau:\{1,2,\ldots, \hat{M}\}\rightarrow\{1,2,\ldots,M\}$ preserving the order and satisfying $\tau(\{1,2,\ldots,\hat{M}\})=\{i\in\{1,2,\ldots,M\}|F(i)\subset\Delta\}$.
By Therem~\ref{usd1}.6 we know that $(H, \mathcal{F}(\Delta))\in \mathcal{HC}(V, N, S)$, 
$(\hat{M}, F\tau)\in\mathcal{USD}(H,\mathcal{F}(\Delta),S)$, and
$\widetilde{\mathcal{C}}\backslash\Delta=\mathcal{F}(\Delta)*F\tau(1)*F\tau(2)*\cdots*F\tau(\hat{M})$.

We denote $\widetilde{\mathcal{F}}(\Delta)=\mathcal{F}(\Delta)*F\tau(1)*F\tau(2)*\cdots*F\tau(\hat{M})$,
$\smash{\widetilde{\mathcal{F}}(\Delta)}_1^\circ=\{\bar{\Gamma}\in\widetilde{\mathcal{F}}(\Delta)_1|\bar{\Gamma}\not\subset H\Op|\Delta\}$ and
$\smash{\hat{\mathcal{A}}}^\circ=\mathcal{A}^\circ(V,N, H, \mathcal{F}(\Delta),S, \hat{M},F\tau): \smash{\widetilde{\mathcal{F}}(\Delta)}_1^\circ
\rightarrow 2^{2^{V^*}}$.

Since $\Gamma\subset\Delta$, $\Gamma\in\smash{\widetilde{\mathcal{F}}(\Delta)}_1^\circ$ and 
$\smash{\hat{\mathcal{A}}}^\circ(\Gamma)=\mathcal{A}^\circ(\Gamma)\backslash\Delta$.
Since $\Theta\subset\Delta$, $\Theta\in \smash{\hat{\mathcal{A}}}^\circ(\Gamma)$.

For any $j\in\{1,2,\ldots,c(S+(\Delta^\vee|V^*))\}$ and any $\bar{E}\in\mathcal{F}(H\Op|\Delta)_1$, we denote the structure constant of 
$\mathcal{D}( S+(\Delta^\vee|V^*)|V)$ corresponding to the pair $(j, \bar{E})$ by
$c(\mathcal{D}( S+(\Delta^\vee|V^*)|V),j, \bar{E})$.

Let $\hat{m}=\sum_{\bar{E}\in\mathcal{F}(H\Op|\Delta)_1}\lfloor c(\mathcal{D}( S+(\Delta^\vee|V^*)|V),2,\bar{E})\rfloor\in\Z_+$ and\hfill\break $\hat{\bar{m}}=\sum_{\bar{E}\in\mathcal{F}(H\Op|\Delta)_1}\lceil c(\mathcal{D}( S+(\Delta^\vee|V^*)|V),2,\bar{E})\rceil\in\Z_0$.
We know that $\hat{\bar{m}}\leq\hat{m}\leq\hat{M}$.

We take the unique pair $(\hat{E},\hat{M})$ of a compatible mapping
$\hat{E}:\{1,2,\ldots,\hat{m}\}\rightarrow \mathcal{F}(H\Op|\Delta)_1$
with $S$ and a mapping 
$\hat{M}:\{\hat{\bar{m}},\hat{\bar{m}}+1,\ldots,\hat{m}+1\}\rightarrow\Z_+$
satisfying the following three conditions.
We denote 
$$F_{\hat{E}}=F(V^*,N^*,H,\mathcal{F}(\Delta),\hat{m},\hat{E}):\{1,2,\ldots,\hat{m}\}\rightarrow 2^{ V^*}$$ 
$$H_{\hat{E}}=H(V^*,N^*,H,\mathcal{F}(\Delta),\hat{m},\hat{E}):\{1,2,\ldots,\hat{m}+1\}\rightarrow 2^{ V^*},\text{ and}$$
$$\mathcal{B}_{\hat{E}}=\mathcal{B}(V^*,N^*,H,\mathcal{F}(\Delta),\hat{m},\hat{E}):\{1,2,\ldots,\hat{m+1}\}\rightarrow 2^{2^{ V^*}}.$$
\begin{description}
\item[(a)]
$F\tau(j)=F_{\hat{E}}(j)$ for any $j\in\{1,2,\ldots, \hat{m}\}$.
\item[(b)]
$\hat{M}(\hat{\bar{m}})=\hat{m}$, $\hat{M}(\hat{m}+1)=\hat{M}$ and $\hat{M}(i-1)\leq \hat{M}(i)$ for any $i\in\{\hat{\bar{m}}+1,\hat{\bar{m}}+2,\ldots, \hat{m}+1\}$.
\item[(c)]
$F\tau(j)\subset|\mathcal{B}_{\hat{E}}(i)|$ and $F\tau(j)\not\subset|\mathcal{B}_{\hat{E}}(i)-(\mathcal{B}_{\hat{E}}(i)/H_{\hat{E}}(i))|$ for any $i\in\{\hat{\bar{m}}+1,\hat{\bar{m}}+2,\ldots, \hat{m}+1\}$ and any $j\in\{\hat{M}(i-1)+1,\hat{M}(i-1)+2,\ldots, \hat{M}(i)\}$.
\end{description}

For any $i\in\{1,2,\ldots,\hat{m}+1\}$, we denote
$\hat{\Delta}(i)=| \mathcal{B}_{\hat{E}}(i)|\subset\Delta$ and 
$\hat{\bar{\Delta}}(i)= H_{\hat{E}}(i)\Op|\hat{\Delta}(i)\subset\hat{\Delta}(i)$.
For any $i\in\{0,1,\ldots, \hat{\bar{m}}-1\}$ we put $\hat{M}(i)=\hat{m}$.
For any $i\in\{1,2,\ldots,\hat{m}+1\}$, 
let $\bar{\mathcal{C}}(i)=\mathcal{F}(\Delta)*F\tau(1)*F\tau(2)*\cdots*F\tau(\hat{M}(i-1))\backslash \hat{\Delta}(i)$ and 
let $\bar{F}(i):\{1,2,\ldots,\hat{M}(i)-\hat{M}(i-1)\}\rightarrow 2^{V^*}$ denote the mapping satisfying $\bar{F}(i)(j)=F\tau(\hat{M}(i-1)+j)$ for any $j\in \{1,2,\ldots,\hat{M}(i)-\hat{M}(i-1)\}$.
Note that we have 
$|\bar{\mathcal{C}}(i)|= \hat{\Delta}(i)$,
$(H_{\hat{E}}(i), \bar{\mathcal{C}}(i))\in\mathcal{HC}(V,N,S)$,
$(\hat{M}(i)-\hat{M}(i-1), \bar{F}(i))\in\mathcal{USD}(H_{\hat{E}}(i), \bar{\mathcal{C}}(i),S)$,
$\Ht(H_{\hat{E}}(i), \bar{\mathcal{C}}(i),S)=\Ht(H_{\hat{E}}(i), S+(\hat{\Delta}(i)^\vee|V^*))<\Ht(H, S+(\Delta^\vee|V^*))$ for any 
$i\in\{1,2,\ldots,\hat{m}+1\}$ by Theorem~\ref{usd1}.10.

There exists uniquely $i_\Gamma\in\{1,2,\ldots,\hat{m}+1\}$ satisfying
$\Gamma\subset\hat{\Delta}(i_\Gamma)$ and $\Gamma\not\subset \hat{\bar{\Delta}}(i_\Gamma)$.
We take the unique  $i_\Gamma\in\{1,2,\ldots,\hat{m}+1\}$ satisfying
$\Gamma\subset\hat{\Delta}(i_\Gamma)$ and $\Gamma\not\subset \hat{\bar{\Delta}}(i_\Gamma)$.

By Lemma~\ref{lower parts}.17 we know 
$\Theta\in\smash{\hat{\mathcal{A}}}^\circ(\Gamma)\subset
\mathcal{A}^\circ(V,N, H_{\hat{E}}(i_\Gamma), \bar{\mathcal{C}}(i_\Gamma), S, 
\hat{M}(i_\Gamma)- \hat{M}(i_\Gamma-i),\bar{F}(i_\Gamma))(\Gamma)$, and
$\Theta\subset|\smash{\hat{\mathcal{A}}}^\circ(\Gamma)|
\subset|\mathcal{A}^\circ(V,N, H_{\hat{E}}(i_\Gamma), \bar{\mathcal{C}}(i_\Gamma), S, 
\hat{M}(i_\Gamma)- \hat{M}(i_\Gamma-i),\bar{F}(i_\Gamma))(\Gamma)|\subset
|\bar{\mathcal{C}}(i_\Gamma)|=\hat{\Delta}(i_\Gamma)$.
Consider the case $i_\Gamma=\hat{m}+1$. $\Theta^\circ\subset \hat{\Delta}(i_\Gamma)\cap \Delta^\circ=
\hat{\Delta}(i_\Gamma)^\circ\cup\hat{\bar{\Delta}}(i_\Gamma)^\circ $.
Consider the case $i_\Gamma\neq\hat{m}+1$.
We denote  
$$G_{\hat{E}}=G(V^*,N^*,H,\mathcal{F}(\Delta),\hat{m},\hat{E}):\{1,2,\ldots,\hat{m}\}\rightarrow 2^{ V^*}.$$
By Lemma~\ref{lower parts}.17
$\Theta^\circ\subset|\mathcal{F}(\hat{\Delta}(i_\Gamma))/ G_{\hat{E}}(i_\Gamma)|^\circ=
\hat{\Delta}(i_\Gamma)- (G_{\hat{E}}(i_\Gamma)\Op|\Delta(i_\Gamma))$.
Therefore,
$\Theta^\circ\subset(\hat{\Delta}(i_\Gamma)- (G_{\hat{E}}(i_\Gamma)\Op|\Delta(i_\Gamma)))\cap \Delta^\circ=
\hat{\Delta}(i_\Gamma)^\circ\cup\hat{\bar{\Delta}}(i_\Gamma)^\circ $.

We know $\Theta^\circ\subset \hat{\Delta}(i_\Gamma)^\circ\cup\hat{\bar{\Delta}}(i_\Gamma)^\circ$.

Since $\Theta\in\widetilde{\mathcal{F}}(\Delta)$, $\{\hat{\Delta}(i_\Gamma), \hat{\bar{\Delta}}(i_\Gamma)\}\subset \mathcal{F}(\Delta)*F\tau(1)*F\tau(2)*\cdots*F\tau(\hat{m})$, and $\widetilde{\mathcal{F}}(\Delta)$ is a subdivision of $\mathcal{F}(\Delta)*F\tau(1)*F\tau(2)*\cdots*F\tau(\hat{m})$, we know that $\Theta^\circ\subset \hat{\Delta}(i_\Gamma)^\circ$ or $\Theta^\circ\subset \hat{\bar{\Delta}}(i_\Gamma)^\circ$.

We take the unique element $\Delta_0\in\bar{\mathcal{C}}(i_\Gamma)/ H_{\hat{E}}(i_\Gamma)$ satisfying $\Theta^\circ\subset \Delta_0^\circ\cup(H_{\hat{E}}(i_\Gamma)\Op$\hfill\break$|\Delta_0)^\circ$.
$\Theta^\circ\subset \hat{\Delta}(i_\Gamma)^\circ$, if and only if, $\Theta^\circ\subset \Delta_0^\circ$.
$\Theta^\circ\subset \hat{\bar{\Delta}}(i_\Gamma)^\circ$, if and only if,
$\Theta^\circ\subset(H_{\hat{E}}(i_\Gamma)\Op|\Delta_0)^\circ$.

Since $\Theta\in\mathcal{A}^\circ(V,N, H_{\hat{E}}(i_\Gamma), \bar{\mathcal{C}}(i_\Gamma), S, 
\hat{M}(i_\Gamma)- \hat{M}(i_\Gamma-i),\bar{F}(i_\Gamma))(\Gamma)$ and $\Theta\subset\Delta_0$, we know $\Gamma\subset\Delta_0$.
$\Ht(H_{\hat{E}}( i_\Gamma), S+(\Delta_0^\vee|V^*))\leq\Ht(H_{\hat{E}}( i_\Gamma), \bar{\mathcal{C}}( i_\Gamma),S)<\Ht(H,S+(\Delta^\vee|V^*))$.
$\Lambda\cap\Delta_0\in\mathcal{D}(S|V)\hat{\cap}\mathcal{C}\hat{\cap}\bar{\mathcal{C}}(i_\Gamma)= \mathcal{D}(S|V)\hat{\cap}\bar{\mathcal{C}}(i_\Gamma)$.
$\emptyset\neq\Theta^\circ\subset\Lambda\cap(\hat{\Delta}(i_\Gamma)^\circ\cup\hat{\bar{\Delta}}(i_\Gamma)^\circ)$.
$\Theta\subset\Lambda\cap\Delta_0$.

Let $\hat{s}=s(V^*,N^*,H,\mathcal{F}(\Delta),\hat{m},\hat{E}):\{0,1,\ldots,\hat{m}\}\times\mathcal{F}(H\Op|\Delta)_1\rightarrow\Z_0$.

Assume that $i_\Gamma\neq\hat{m}+1$.
By Theorem~\ref{heightinequality}.$15$ we know
$\hat{m}- \hat{\bar{m}}\geq 1$,
$i_\Gamma\in\{\hat{\bar{m}}+1,\hat{\bar{m}}+2,\ldots,\hat{m}\}$, since 
$\Lambda\cap(\hat{\Delta}(i_\Gamma)^\circ\cup\hat{\bar{\Delta}}(i_\Gamma)^\circ)\neq\emptyset$.
Since $\Lambda=\hat{\bar{\Lambda}}(i_\Lambda)$ and
$\Lambda\cap(\hat{\Delta}(i_\Gamma)^\circ\cup\hat{\bar{\Delta}}(i_\Gamma)^\circ)\neq\emptyset$, we know 
$c(\mathcal{D}(S+(\Delta^\vee|V^*)|V),i_\Lambda,\hat{E}(i_\Gamma))<
\lfloor c(\mathcal{D}(S+(\Delta^\vee|V^*)|V),2,\hat{E}(i_\Gamma))\rfloor=
\hat{s}(i_\Gamma,\hat{E}(i_\Gamma))$ by Theorem~\ref{heightinequality}.$17$.

It follows that if $i_\Gamma\neq\hat{m}+1$, then $H_{\hat{E}}(i_\Gamma)\not\subset\Vect(\Lambda)$ and $\Vect(\Lambda)+ H_{\hat{E}}(i_\Gamma)= \Vect(\Lambda)+H$.
Obviously, if $i_\Gamma=\hat{m}+1$, then $H_{\hat{E}}(i_\Gamma)=H$, $H_{\hat{E}}(i_\Gamma)\not\subset\Vect(\Lambda)$ and $\Vect(\Lambda)+ H_{\hat{E}}(i_\Gamma)= \Vect(\Lambda)+H$.
We know that $H_{\hat{E}}(i_\Gamma)\not\subset\Vect(\Lambda)$ and $\Vect(\Lambda)+ H_{\hat{E}}(i_\Gamma)= \Vect(\Lambda)+H$.
Furthermore, it follows that
$\dim \Lambda\cap\Delta_0\leq\dim \Delta_0-1$.

We have two cases.
\begin{enumerate}
\item
$\Theta^\circ\subset \hat{\Delta}(i_\Gamma)^\circ$.
\item
$\Theta^\circ\subset \hat{\bar{\Delta}}(i_\Gamma)^\circ$.
\end{enumerate}

We consider the case $\Theta^\circ\subset \hat{\Delta}(i_\Gamma)^\circ$.
$\Theta^\circ\subset \Delta_0^\circ$.
$\Lambda\cap\Delta_0\in\mathcal{D}(S|V)\hat{\cap}\mathcal{F}(\Delta_0)$.
$\Theta^\circ\subset\Lambda^\circ\cap\Delta_0^\circ=(\Lambda\cap\Delta_0)^\circ$,
$\emptyset\neq\Theta^\circ\subset(\Lambda\cap\Delta_0)^\circ\cap\Delta_0^\circ$.
Since $\mathcal{D}(S|V)\hat{\cap}\mathcal{F}(\Delta_0)$ is $H_{\hat{E}}(i_\Gamma)$-simple, we know $\dim (\Lambda\cap\Delta_0)\geq\dim\Delta_0-1$, $\dim (\Lambda\cap\Delta_0)=\dim\Delta_0-1$ and $\Vect(\Lambda\cap\Delta_0)=\Vect(\Lambda)\cap\Vect(\Delta_0)$.

Since $\Ht(H_{\hat{E}}(i_\Gamma), S+(\Delta_0^\vee|V^*))<\Ht(H, S+(\Delta^\vee|V^*)$, by induction on $\Ht$ we know that the following claims hold:
\begin{description}
\item[(a)] $\Gamma\subset\Delta_0$. $\Gamma\not\subset\Vect(\Lambda\cap\Delta_0)$.
\item[(b)]
$\Vect(\Lambda\cap\Delta_0)+\Gamma=\Vect(\Lambda\cap\Delta_0)+ H_{\hat{E}}(i_\Gamma)$.
\item[(c)]
The subset $\{\langle b_{\Gamma/N^*},a\rangle|a\in A\cap(S+(\Delta_0^\vee|V^*))\}$ of $\R$ is a non-empty bounded closed interval.
\item[(d)]
\begin{equation*}\begin{split}
&\max\{\langle b_{\Gamma/N^*},a\rangle|a\in A\cap(S+(\Delta_0^\vee|V^*))\}\\
&\qquad\qquad -\min\{\langle b_{\Gamma/N^*},a\rangle|a\in A\cap(S+(\Delta_0^\vee|V^*))\}\\
\leq\:&\Ht(H_{\hat{E}}(i_\Gamma), S+(\Delta_0^\vee|V^*))
\end{split}\end{equation*}
\end{description}

Since $\Gamma\subset\Delta_0\subset\Vect(\Delta_0)$ and $\Gamma\not\subset\Vect(\Lambda\cap\Delta_0)=\Vect(\Lambda)\cap\Vect(\Delta_0)$,
we know $\Gamma\not\subset\Vect(\Lambda)$.
Since $\Theta\subset\Lambda\subset\Vect(\Lambda)$, we know $\Gamma\not\subset\Lambda$ and $\Gamma\not\subset\Theta$.

Note that 
$\Lambda=\Vect(\Lambda)\cap\Delta$.
Since $ H_{\hat{E}}(i_\Gamma)\not\subset\Lambda$ and $ H_{\hat{E}}(i_\Gamma)\subset\Delta$, we know $ H_{\hat{E}}(i_\Gamma)\not\subset\Vect(\Lambda)$.
Since $\Vect(\Lambda\cap\Delta_0)+\Gamma=\Vect(\Lambda\cap\Delta_0)+ H_{\hat{E}}(i_\Gamma)$ and $\Vect(\Lambda\cap\Delta_0)\subset\Vect(\Lambda)$, we know
$\Vect(\Lambda)+\Gamma=\Vect(\Lambda)+ H_{\hat{E}}(i_\Gamma) =\Vect(\Lambda)+H$.

Since $\Delta_0\subset\Delta$, we know that
$A\cap(S+(\Delta^\vee|V^*)\subset A\cap(S+(\Delta_0^\vee|V^*)$,\hfill\break
$\max\{\langle b_{\Gamma/N^*},a\rangle|a\in A\cap(S+(\Delta^\vee|V^*))\}
\leq \max\{\langle b_{\Gamma/N^*},a\rangle|a\in A\cap(S+(\Delta_0^\vee|V^*))\}$, 
\hfill\break
$\min\{\langle b_{\Gamma/N^*},a\rangle|a\in A\cap(S+(\Delta^\vee|V^*))\}
\geq \min\{\langle b_{\Gamma/N^*},a\rangle|a\in A\cap(S+(\Delta_0^\vee|V^*))\}$,
\hfill\break
$\max\{\langle b_{\Gamma/N^*},a\rangle|a\in A\cap(S+(\Delta^\vee|V^*))\}
-\min\{\langle b_{\Gamma/N^*},a\rangle|a\in A\cap(S+(\Delta^\vee|V^*))\}
\leq\max\{\langle b_{\Gamma/N^*},a\rangle|a\in A\cap(S+(\Delta_0^\vee|V^*))\}
-\min\{\langle b_{\Gamma/N^*},a\rangle|a\in A\cap(S+(\Delta_0^\vee|V^*))\}
\leq\Ht(H_{\hat{E}}(i_\Gamma), S+(\Delta_0^\vee|V^*))<\Ht(H,S+(\Delta^\vee|V^*))$, and
\begin{equation*}\begin{split}
&\max\{\langle b_{\Gamma/N^*},a\rangle|a\in A\cap(S+(\Delta^\vee|V^*))\}\\
&\qquad\qquad -\min\{\langle b_{\Gamma/N^*},a\rangle|a\in A\cap(S+(\Delta^\vee|V^*))\}\\
<\:&\Ht(H,S+(\Delta^\vee|V^*))
\end{split}\end{equation*}

We consider the case $\Theta^\circ\subset \hat{\bar{\Delta}}(i_\Gamma)^\circ$.

$\Theta\in\widetilde{\mathcal{C}}\backslash\hat{\bar{\Delta}}(i_\Gamma)
=(\widetilde{\mathcal{C}}\backslash\hat{\Delta}(i_\Gamma)) \backslash\hat{\bar{\Delta}}(i_\Gamma)
=(\bar{\mathcal{C}}(i_\Gamma)*\bar{F}(i_\Gamma)(2)* \bar{F}(i_\Gamma)(1)*\cdots*\bar{F}(i_\Gamma)(\hat{M}(i_\Gamma)-\hat{M}(i_\Gamma-1))) \backslash\hat{\bar{\Delta}}(i_\Gamma)
=\bar{\mathcal{C}}(i_\Gamma)-( \bar{\mathcal{C}}(i_\Gamma)/ H_{\hat{E}}(i_\Gamma))$ and we know $\Theta+ H_{\hat{E}}(i_\Gamma)\in\bar{\mathcal{C}}(i_\Gamma)$.

Since $\Theta= H_{\hat{E}}(i_\Gamma)\Op|(\Theta+ H_{\hat{E}}(i_\Gamma))$, we know $\Delta_0=\Theta+ H_{\hat{E}}(i_\Gamma)$ and $\dim\Theta=\dim\Delta_0-1$.

Since $\Theta\subset\Lambda\cap\Delta_0$, $\dim\Delta_0-1=\dim\Theta\leq\dim\Lambda\cap\Delta_0\leq\dim\Delta_0-1$,
$\dim\Theta=\dim\Lambda\cap\Delta_0=\dim\Delta_0-1$,
$\Vect(\Theta)=\Vect(\Lambda\cap\Delta_0)$ and
$\Lambda\cap\Delta_0\subset\Vect(\Lambda\cap\Delta_0)\cap\Delta_0=
\Vect(\Theta)\cap\Delta_0=\Vect(\Theta)\cap(\Theta+ H_{\hat{E}}(i_\Gamma))=\Theta$.
We know $\Theta=\Lambda\cap\Delta_0$.

Since $\Gamma\not\subset\hat{\bar{\Delta}}(i_\Gamma)$ and $\Theta\subset\hat{\bar{\Delta}}(i_\Gamma)$, we know $\Gamma\not\subset\Theta$.
Since $\Gamma\not\subset\Theta=\Lambda\cap\Delta_0$ and $\Gamma\subset\Delta_0$, we know $\Gamma\not\subset\Lambda$.
Since $\Gamma\not\subset\Lambda=\Vect(\Lambda)\cap\Delta$ and $\Gamma\subset\Delta$, we know $\Gamma\not\subset\Vect(\Lambda)$.

Since $\Gamma^\circ\cup H_{\hat{E}}(i_\Gamma)^\circ\subset\Delta_0-(H_{\hat{E}}(i_\Gamma)\Op|\Delta_0)$,
we know $\Vect(\Lambda)+\Gamma=\Vect(\Lambda)+ H_{\hat{E}}(i_\Gamma) =\Vect(\Lambda)+ H$.

We know $\Vect(\Lambda)+\Gamma=\Vect(\Lambda)+ H$,
$\langle b_{\Gamma/N^*}, \hat{a}(i_\Lambda)\rangle <\langle b_{\Gamma/N^*}, \hat{a}(i_\Lambda-1)\rangle$,
$\{\langle b_{\Gamma/N^*}, a\rangle|a\in A\cap(S+(\Delta^\vee|V^*))\}
=\{t\in\R|\langle b_{\Gamma/N^*}, \hat{a}(i_\Lambda)\rangle \leq t\leq\langle b_{\Gamma/N^*}, \hat{a}(i_\Lambda-1)\rangle\}$, and
$\max\{\langle b_{\Gamma/N^*}, a\rangle|a\in A\cap(S+(\Delta^\vee|V^*))\}
-\min\{\langle b_{\Gamma/N^*}, a\rangle|a\in A\cap(S+(\Delta^\vee|V^*))\}
=\langle b_{\Gamma/N^*}, \hat{a}(i_\Lambda-1)\rangle-\langle b_{\Gamma/N^*}, \hat{a}(i_\Lambda)\rangle$.

By Lemma~\ref{lower parts}.12 we know $b_{ H_{\hat{E}}(i_\Gamma)/N^*}-b_{\Gamma/N^*}\in N^*\cap\Vect(\Theta)
\subset\Vect(\Lambda)=\Vect(\hat{\bar{\Lambda}}(i_\Lambda))$.
Since $\{\hat{a}(i_\Lambda-1), \hat{a}(i_\Lambda)\}\subset\Conv(\hat{A}(i_\Lambda-1)\cup\hat{A}(i_\Lambda))\in\mathcal{F}(S+(\Delta^\vee|V^*))_{\ell+1}$ and $\Delta(\Conv(\hat{A}(i_\Lambda-1)\cup\hat{A}(i_\Lambda)), S+(\Delta^\vee|V^*)|V)= \hat{\bar{\Lambda}}(i_\Lambda)$, we know
$\langle b_{ H_{\hat{E}}(i_\Gamma)/N^*}-b_{\Gamma/N^*}, \hat{a}(i_\Lambda-1)\rangle=
\langle b_{H_{\hat{E}}(i_\Gamma)/N^*}-b_{\Gamma/N^*}, \hat{a}(i_\Lambda)\rangle$.
We know
$\langle b_{\Gamma/N^*}, \hat{a}(i_\Lambda-1)\rangle-\langle b_{\Gamma/N^*}, \hat{a}(i_\Lambda)\rangle=
\langle b_{H_{\hat{E}}(i_\Gamma)/N^*}, \hat{a}(i_\Lambda-1)\rangle-\langle b_{H_{\hat{E}}(i_\Gamma)/N^*}, \hat{a}(i_\Lambda)\rangle$.

We have two cases.
\begin{enumerate}
\item $\Lambda\neq\hat{\bar{\Delta}}(i_\Gamma)$.
\item $\Lambda=\hat{\bar{\Delta}}(i_\Gamma)$.
\end{enumerate}

Consider the case $\Lambda\neq\hat{\bar{\Delta}}(i_\Gamma)$.

Since $\hat{\bar{\Lambda}}(i_\Lambda)=\Lambda\neq \hat{\bar{\Delta}}(i_\Gamma)
= H_{\hat{E}}(i_\Gamma)\Op|\hat{\Delta}(i_\Gamma)$, we know  $\hat{\Lambda}(i_\Lambda)\cap \hat{\Delta}(i_\Gamma)^\circ\neq\emptyset$,  $\hat{\Lambda}(i_\Lambda-1)\cap \hat{\Delta}(i_\Gamma)^\circ\neq\emptyset$,
$\{\hat{A}(i_\Gamma), \hat{A}(i_\Gamma-1)\}\subset\mathcal{F}(S+(\hat{\Delta}(i_\Gamma)^\vee|V^*))_\ell$, and 
$\langle b_{H_{\hat{E}}(i_\Gamma)/N^*}, \hat{a}(i_\Lambda-1)\rangle-\langle b_{H_{\hat{E}}(i_\Gamma)/N^*}, \hat{a}(i_\Lambda)\rangle\leq
\Ht(H_{\hat{E}}(i_\Gamma), S+(\hat{\Delta}(i_\Gamma)^\vee|V^*))
<\Ht(H,S+(\Delta^\vee|$\hfill\break$V^*))$.

We know that 
$\max\{\langle b_{\Gamma/N^*}, a\rangle|a\in A\cap(S+(\Delta^\vee|V^*))\}
-\min\{\langle b_{\Gamma/N^*}, a\rangle|a\in A\cap(S+(\Delta^\vee|V^*))\}
<\Ht(H,S+(\Delta^\vee|V^*))$.

Consider the case  $\Lambda=\hat{\bar{\Delta}}(i_\Gamma)$.

If $i_\Gamma=\hat{m}+1$, then $ H_{\hat{E}}(i_\Gamma)=H$ and
$\langle b_{H_{\hat{E}}(i_\Gamma)/N^*}, \hat{a}(i_\Lambda-1)\rangle-\langle b_{H_{\hat{E}}(i_\Gamma)/N^*}, \hat{a}(i_\Lambda)\rangle=
\langle b_{H/N^*}, \hat{a}(i_\Lambda-1)\rangle-\langle b_{H/N^*}, \hat{a}(i_\Lambda)\rangle$.

We consider the case $i_\Gamma\neq\hat{m}+1$. $b_{H_{\hat{E}}(i_\Gamma)/N^*}-b_{H/N^*}=b_{G_{\hat{E}}(i_\Gamma)/N^*}\in 
\hat{\bar{\Delta}}(i_\Gamma)=\Lambda=\hat{\bar{\Lambda}}(i_\Lambda)$.
Therefore, 
$\langle b_{H_{\hat{E}}(i_\Gamma)/N^*}-b_{H/N^*}, \hat{a}(i_\Lambda-1)\rangle=
\langle b_{H_{\hat{E}}(i_\Gamma)/N^*}-b_{H/N^*}, \hat{a}(i_\Lambda)\rangle$, and 
$\langle b_{H_{\hat{E}}(i_\Gamma)/N^*}, \hat{a}(i_\Lambda-1)\rangle-\langle b_{H_{\hat{E}}(i_\Gamma)/N^*}, \hat{a}(i_\Lambda)\rangle=
\langle b_{H/N^*}, \hat{a}(i_\Lambda-1)\rangle-\langle b_{H/N^*}, \hat{a}(i_\Lambda)\rangle$.

We know that 
$\langle b_{H_{\hat{E}}(i_\Gamma)/N^*}-b_{H/N^*}, \hat{a}(i_\Lambda-1)\rangle=
\langle b_{H_{\hat{E}}(i_\Gamma)/N^*}-b_{H/N^*}, \hat{a}(i_\Lambda)\rangle$, and 
$\langle b_{H_{\hat{E}}(i_\Gamma)/N^*}, \hat{a}(i_\Lambda-1)\rangle-\langle b_{H_{\hat{E}}(i_\Gamma)/N^*}, \hat{a}(i_\Lambda)\rangle=
\langle b_{H/N^*}, \hat{a}(i_\Lambda-1)\rangle-\langle b_{H/N^*}, \hat{a}(i_\Lambda)\rangle$.

Note that 
$\langle b_{H/N^*}, \hat{a}(i_\Lambda-1)\rangle-\langle b_{H/N^*}, \hat{a}(i_\Lambda)\rangle
\leq
\langle b_{H/N^*}, \hat{a}(1)\rangle-\langle b_{H/N^*}, \hat{a}(c(S+(\Delta^\vee|V^*)))\rangle
=\Ht(H, S+(\Delta^\vee|V^*))$ and 
$\langle b_{H/N^*}, \hat{a}(i_\Lambda-1)\rangle-\langle b_{H/N^*}, \hat{a}(i_\Lambda)\rangle
=
\langle b_{H/N^*}, \hat{a}(1)\rangle-\langle b_{H/N^*}, \hat{a}(c(S+(\Delta^\vee|V^*)))\rangle$,
if and only if, $c(S+(\Delta^\vee|V^*))=i_\Lambda =2$.

We know that 
$\max\{\langle b_{\Gamma/N^*}, a\rangle|a\in A\cap(S+(\Delta^\vee|V^*))\}
-\min\{\langle b_{\Gamma/N^*}, a\rangle|a\in A\cap(S+(\Delta^\vee|V^*))\}
\leq\Ht(H,S+(\Delta^\vee|V^*))$, and that 
$\max\{\langle b_{\Gamma/N^*}, a\rangle|a\in A\cap(S+(\Delta^\vee|V^*))\}
-\min\{\langle b_{\Gamma/N^*}, a\rangle|a\in A\cap(S+(\Delta^\vee|V^*))\}
=\Ht(H,S+(\Delta^\vee|V^*))$,
if and only if, $c(S+(\Delta^\vee|V^*))=i_\Lambda =2$.

Now, by the arguments until here we know that the inequality 
$\max\{\langle b_{\Gamma/N^*}, a\rangle|a\in A\cap(S+(\Delta^\vee|V^*))\}
-\min\{\langle b_{\Gamma/N^*}, a\rangle|a\in A\cap(S+(\Delta^\vee|V^*))\}
\leq\Ht(H,S+(\Delta^\vee|V^*))$
always holds, and the equality 
$\max\{\langle b_{\Gamma/N^*}, a\rangle|a\in A\cap(S+(\Delta^\vee|V^*))\}
-\min\{\langle b_{\Gamma/N^*}, a\rangle|a\in A\cap(S+(\Delta^\vee|V^*))\}
=\Ht(H,S+(\Delta^\vee|V^*))$
holds, if and only if,  $\Theta^\circ\subset \hat{\bar{\Delta}}(i_\Gamma)^\circ$, 
$\Lambda=\hat{\bar{\Delta}}(i_\Gamma)$ and $c(S+(\Delta^\vee|V^*))=i_\Lambda =2$.

Assume  $\Theta^\circ\subset \hat{\bar{\Delta}}(i_\Gamma)^\circ$, 
$\Lambda=\hat{\bar{\Delta}}(i_\Gamma)$ and $c(S+(\Delta^\vee|V^*))=i_\Lambda =2$.
We have
$\hat{\bar{\Lambda}}(2)= \hat{\bar{\Lambda}}( i_\Lambda)= \Lambda=\hat{\bar{\Delta}}(i_\Gamma)$, and for any $\bar{E}\in\mathcal{F}(H\Op|\Delta)_1=\mathcal{F}(\Delta)_1-\{H\}$, $ c(\mathcal{D}(S|V)\hat{\cap}\mathcal{F}(\Delta), 2,\bar{E})=c(\mathcal{D}(S+(\Delta^\vee|V^*)|V), 2,\bar{E})=\hat{s}( i_\Gamma-1,\bar{E})\in\Z$.

We know that if 
$\max\{\langle b_{\Gamma/N^*}, a\rangle|a\in A\cap(S+(\Delta^\vee|V^*))\}
-\min\{\langle b_{\Gamma/N^*}, a\rangle|a\in A\cap(S+(\Delta^\vee|V^*))\}
=\Ht(H,S+(\Delta^\vee|V^*))$,
then  $c(S+(\Delta^\vee|V^*))= 2$ and the structure constant of  $\mathcal{D}(S|V)\hat{\cap}\mathcal{F}(\Delta)$ corresponding to the pair $(2,\bar{E})$ is an integer for any $\bar{E}\in\mathcal{F}(\Delta)-\{H\}$.

Convesely, assume that $c(S+(\Delta^\vee|V^*))= 2$ and the structure constant of  $\mathcal{D}(S|V)$\break$\hat{\cap}\mathcal{F}(\Delta)$ corresponding to the pair $(2,\bar{E})$ is an integer for any $\bar{E}\in\mathcal{F}(\Delta)-\{H\}$.
By Theorem~\ref{heightinequality}.20-26 we know that
$\hat{\bar{m}}=\hat{m}=\hat{M}$, $i_\Lambda=2$, $i_\Gamma=\hat{m}+1$,
$\Theta=\Lambda=\hat{\bar{\Lambda}}(2)=\hat{\bar{\Delta}}(\hat{m}+1)$, $\Gamma=H$ and the equality
$\max\{\langle b_{\Gamma/N^*}, a\rangle|a\in A\cap(S+(\Delta^\vee|V^*))\}
-\min\{\langle b_{\Gamma/N^*}, a\rangle|a\in A\cap(S+(\Delta^\vee|V^*))\}
=\Ht(H,S+(\Delta^\vee|V^*))$
holds.

Claim 12 follows from similar arguments as in claim 8 and 9 and Theorem~\ref{heightinequality}.27-32.
\end{proof}

\section{Schemes associated with simplicial cone decompositions}
\label{toric theory}
We develop the theory of torus embeddings (Kempf et al.~\cite{KKMS}, Fulton~\cite{F93}).
We define schemes associated with simplicial cone decompositions and examine their properties.

Let us begin with the ring theory.

Let $R$ be any ring. 
The set of all prime ideals of $R$ is denoted by $\Spec(R)$.
If $R$ is a field, then $\Spec(R)=\{\{0\}\}$.
We define a topology on $\Spec(R)$ called the \emph{Zariski topology}.
We define that a subset $\mathcal{X}$ of $\Spec(R)$ is closed, if there exists a subset $X$ of $R$ satisfying $\mathcal{X}=\{\p\in\Spec(R)|X\subset\p\}$.
A subset $U$ of $\Spec(R)$ is open, if the complement $\Spec(R)-U$ of $U$ in $\Spec(R)$ is closed.
We say that a point $\p\in\Spec(R)$ is a \emph{closed point}, if the set $\{\p\}$ is a closed subset of $\Spec(R)$.

For any $\phi\in R$ we denote $\Spec(R)_\phi=\{\p\in\Spec(R)|\phi\not\in\p\}$ and we call $\Spec(R)_\phi$ the \emph{principal open subset} or the \emph{principal open set} of $\Spec(R)$ associated with $\phi$.

Let $Q$ be another ring and let $\lambda:R\rightarrow Q$ be any ring homomorphism. We can check easily that $\lambda^{-1}(\p)\in\Spec(R)$ for any $\p\in\Spec(Q)$.
Putting $\lambda^*(\p)= \lambda^{-1}(\p)\in\Spec(R)$ for any $\p\in\Spec(Q)$, we define a mapping $\lambda^*:\Spec(Q)\rightarrow\Spec(R)$.

\begin{lemma}
\label{spec1}
\begin{enumerate}
\item
Let $R$ be any ring.

The empty set is an open subset of $\Spec(R)$.
$\Spec(R)$ is an open subset of $\Spec(R)$.
For any open subset $U$ of $\Spec(R)$ and any open subset $V$ of $\Spec(R)$, the intersection $U\cap V$ is an open subset of $\Spec(R)$.
For any non-empty set $\mathcal{U}$ whose elements are open subsets of $\Spec(R)$, the union $\cup_{U\in\mathcal{U}}U$ is an open subset of $\Spec(R)$.

The Zariski topology on  $\Spec(R)$ is a topology on  $\Spec(R)$.
\item
Let $R$ be any ring and let $\p\in\Spec(R)$ be any prime ideal of $R$.

The closure of the subset $\{\p\}$ of $\Spec(R)$ is equal to $\{\q\in\Spec(R)|\p\subset\q\}$.

The point  $\p$ of $\Spec(R)$ is closed, if and only if, $\p$ is a maximal ideal of $R$.

\item
For any ring $R$, any ring $Q$ such that there exists a ring homomorphism from $R$ to $Q$ and any ring homomorphism $\lambda:R\rightarrow Q$, the mapping $\lambda^*:\Spec(Q)\rightarrow\Spec(R)$ is continuous and preserves the inclusion relation.
\item
For any ring $R$, $\Id_R^*$ is equal to the identity mapping of $\Spec(R)$.
\item
For any ring $R$, any ring $Q$, any ring $P$ such that there exists a ring homomorphism from $R$ to $Q$  and there exists a ring homomorphism from $Q$ to $P$ and any ring homomorphism $\lambda:R\rightarrow Q$ and any ring homomorphism $\mu:Q\rightarrow P$,
$(\mu\lambda)^*=\lambda^*\mu^*$.
\end{enumerate}
\end{lemma}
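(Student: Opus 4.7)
The lemma collects the standard foundational facts that $\Spec$ is a contravariant functor from rings to topological spaces equipped with the Zariski topology. Each assertion reduces to an elementary manipulation of prime ideals, so my plan is to verify them in the order listed, introducing the closed-set description along the way and then invoking the complement.

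For part $1$, I would first record that taking $X=R$ shows $\emptyset$ is closed (no prime contains $1$), while $X=\emptyset$ shows $\Spec(R)$ is closed; this gives openness of $\Spec(R)$ and $\emptyset$. For finite intersections of open sets I would write each as the complement of $\mathcal{X}_i=\{\p\mid X_i\subset\p\}$ and verify $\mathcal{X}_1\cup\mathcal{X}_2=\{\p\mid X_1X_2\subset\p\}$, where the nontrivial inclusion uses primality: if $X_1X_2\subset\p$ and some $x\in X_1-\p$, then $xy\in\p$ forces $y\in\p$ for every $y\in X_2$. For arbitrary unions I would use $\cap_i\mathcal{X}_i=\{\p\mid\bigcup_iX_i\subset\p\}$.

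For part $2$, I would note that the closed sets containing $\p$ are precisely those of the form $\{\q\mid X\subset\q\}$ with $X\subset\p$, so the smallest such is obtained by taking $X=\p$, yielding the closure $\{\q\mid\p\subset\q\}$. Then $\{\p\}$ is closed exactly when no prime strictly contains $\p$, which is the definition of maximality (using that any ideal properly above $\p$ not equal to $R$ extends to a maximal, hence prime, one). For parts $3$ through $5$, the key observation is that $\lambda^{-1}(\q)$ is prime whenever $\q$ is, so $\lambda^*$ is well-defined; continuity follows from the identity $\lambda^{*-1}(\{\p\mid X\subset\p\})=\{\q\mid\lambda(X)\subset\q\}$; preservation of inclusion is immediate from monotonicity of preimage; $\Id_R^{-1}(\p)=\p$ gives part $4$; and $(\mu\lambda)^{-1}=\lambda^{-1}\mu^{-1}$ gives part $5$.

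There is no genuine obstacle here, just several small verifications. The only point that requires a moment of care is the identity $\mathcal{X}_1\cup\mathcal{X}_2=\{\p\mid X_1X_2\subset\p\}$ in part $1$, because this is the step that actually uses primality of the ideals in $\Spec(R)$; every other assertion is purely formal. I would therefore structure the proof to treat $1$ in detail, then dispatch $2$--$5$ briefly, since the remaining content is routine unwinding of definitions.
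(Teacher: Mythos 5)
Your proof is correct and is the standard argument; the paper itself gives no proof of Lemma~\ref{spec1}, deferring to its blanket remark that such claims follow from definitions. The one point genuinely using primality — that $\{\p\mid X_1\subset\p\}\cup\{\p\mid X_2\subset\p\}=\{\p\mid X_1X_2\subset\p\}$, so that finite intersections of opens are open — you identify and handle correctly, and the remaining items are the routine verifications you describe.
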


Let $K$ be any field and let $R$ be any subring of $K$.

A subset $S$ of $R$ is called a \emph{multiplicatively closed subset}, if $1\in S$, $0\not\in S$ and $\phi\psi\in S$ for any $\phi\in S$ and any $\psi\in S$.

Let $S$ be any multiplicatively closed subset of $R$. We denote
$$R_S=\{\frac{\psi}{\phi}|\psi\in R, \phi\in S\}\subset K,$$
and we call $R_S$ the \emph{localization} of $R$ with respect to $S$.
We can check easily that $R_S$ is a subring of $K$ containing $R$.

Let $\p\in\Spec(R)$ be any prime ideal of $R$.
We can check easily that the complement $R-\p$ of $\p$ in $R$ is a multiplicatively closed subset of $R$. We denote
$$R_\p= R_{R-\p}=\{\frac{\psi}{\phi}|\psi\in R, \phi\in R-\p\}\subset K,$$
and we call $R_\p$ the \emph{local ring} of $R$ at $\p$.
$R_\p$ is a subring of $K$ containing $R$.

Note that $\{0\}$ is the unique ring contained in $K$ and satisfying $1=0$.
The set $\{0\}$ is not a subring of $K$.

For any open subset $U$ of $\Spec(R)$, we denote
\begin{equation*}
\mathcal{O}_R(U)=
\begin{cases}
\bigcap_{\p\in U}R_\p&\text{ if $U\neq\emptyset$},\\
\{0\}&\text{ if $U=\emptyset$}.
\end{cases}\end{equation*}

For any open subset $U$ of $\Spec(R)$, $\mathcal{O}_R(U)$ is a ring containd in $K$, and if $U\neq\emptyset$, then $\mathcal{O}_R(U)$ is a subring of $K$, it contains $R$ and it is contained in $R_\p$ for any $\p\in U$.

Consider any open subset $U$ of $\Spec(R)$ and any open subset $V$ of $\Spec(R)$ satisfying $U\subset V$. If $U\neq\emptyset$, then $V\neq\emptyset$ and $\mathcal{O}_R(U)\supset \mathcal{O}_R(V)$.
If $U=\emptyset$, then $\mathcal{O}_R(U)=\{0\}$ and there exists uniquely a surjective homomorphism $\mathcal{O}_R(V)\rightarrow \mathcal{O}_R(U)$.
We define a mapping $\Res^V_U: \mathcal{O}_R(V)\rightarrow \mathcal{O}_R(U)$ called \emph{restriction homomorphism} by putting
\begin{equation*}
\Res^V_U=
\begin{cases}
\text{ the inclusion homomorphism}&\text{ if $U\neq\emptyset$},\\
\text{ the unique surjective homomorphism}&\text{ if $U=\emptyset$}.
\end{cases}\end{equation*}
The restriction homomorphism $\Res^V_U$ is a ring homomorphism.

We denote the pair of sets $(\{\mathcal{O}_R(U)|U$ is an open subset of $\Spec(R)\},\{ \Res^V_U|U$ and $V$ are open subsets of $\Spec(R)$ with $U\subset V\})$ by a simple symbol $\mathcal{O}_R$.

Consider any $\p\in\Spec(R)$ and any open subset $U$ of $\Spec(R)$ with $\p\in U$.
We define a mapping $\Res^U_\p: \mathcal{O}_R(U)\rightarrow R_\p$ called \emph{restriction homomorphism} by putting $\Res^U_\p=$ the inclusion mapping.
The restriction homomorphism $\Res^U_\p$ is a ring homomorphism.

\begin{lemma}
\label{localization}
Let $K$ be any field; let $R$ be any subring of $K$ and let $S$ be any multiplicatively closed subset of $R$.
\begin{enumerate}
\item
$\{0\}\in\Spec(R)$.
$U\neq\emptyset$, if and only if, $\{0\}\in U$ for any open subset $U$  of  $\Spec(R)$.
For any non-empty open subset $U$ of $\Spec(R)$ and any non-empty open subset $V$ of $\Spec(R)$, the intersection $U\cap V$ is non-empty.
\item
$R_S$ is a subring of $K$ containing $R$. $S\subset R_S^\times$.

By $\nu:R\rightarrow R_S$ we denote the inclusion ring homomorphism.

Consider any ring $T$ such that there exists a ring homomorphism $\mu:R\rightarrow T$ from $R$ to $T$ satisfying  $\mu(S)\subset T^\times$. For any ring homomorphism $\mu:R\rightarrow T$ satisfying  $\mu(S)\subset T^\times$,  there exists uniquely a ring homomorphism $\bar{\mu}:R_S\rightarrow T$ satisfying $\bar{\mu}\nu=\mu$.
\item
For any $\bar{\p}\in\Spec(R_S)$, $\bar{\p}\cap R\in\Spec(R)$ and it satisfies $(\bar{\p}\cap R)\cap S=\emptyset$ and $(\bar{\p}\cap R) R_S=\bar{\p}$.
\item
For any $\p\in\Spec(R)$ satisfying $\p\cap S=\emptyset$, $\p R_S\in\Spec(R_S)$ and $\p R_S\cap R=\p$.
\item
The mapping from $\Spec(R_S)$ to $\{\p\in\Spec(R)|\p\cap S=\emptyset\}$ sending $\bar{\p}\in\Spec(R_S)$ to  $\bar{\p}\cap R$ and the mapping from $\{\p\in\Spec(R)|\p\cap S=\emptyset\}$ to $\Spec(R_S)$ sending any $\p\in\Spec(R)$ satisfying $\p\cap S=\emptyset$ to $\p R_S\in\Spec(R_S)$ are bijective mappings preserving the inclusion relation, and they are the inverse mappings of each other.

If $\bar{\p}\in\Spec(R_S)$ and $\p\in\Spec(R)$ satisfying $\p\cap S=\emptyset$ correspond to each other by these bijective mappings, then $(R_S)_{\bar{\p}}=R_\p$.
\item
Furthermore, if we define the topology on $\Spec(R_S)$ and we define the topology on $\{\p\in\Spec(R)|\p\cap S=\emptyset\}$ induced by the topology on $\Spec(R)$, then the mapping from $\Spec(R_S)$ to $\{\p\in\Spec(R)|\p\cap S=\emptyset\}$ sending $\bar{\p}\in\Spec(R_S)$ to  $\bar{\p}\cap R$ is a continuous bijective mapping whose inverse mapping is also continuous.
\item
If $R$ is noetherian, then $R_S$ is also noetherian.
\item
Consider any prime ideal $\p\in\Spec(R)$ of $R$.

$\{\bar{\q}\cap R|\bar{\q}\in\Spec(R_\p)\}=\{\q\in\Spec(R)|\q\subset\p\}$.

The ring $R_\p$ is a local ring whose maximal ideal is equal to $\p R_\p$.
If $R$ is noetherian, then $R_\p$ is also noetherian.
\item
Consider any $r\in\Z_+$ and any mapping $\phi:\{1,2,\ldots,r\}\rightarrow R-\{0\}$.
We denote $S(\phi)=\{\phi(1)^{m(1)}\phi(2)^{m(2)}\cdots\phi(r)^{m(r)}|m\in\Map(\{1,2,\ldots,r\},\Z_0)\}\subset R$.

The subset $S(\phi)$ is a multiplicatively closed subset of $R$.
$\phi(1)\phi(2)\cdots\phi(r)\in R-\{0\}$.

\begin{equation*}\begin{split}
&R_{S(\phi)}\\
=\:&\{\frac{\psi}{\phi(1)^{m(1)} \phi(2)^{m(2)}\cdots\phi(r)^{m(r)}}|\psi\in R, m\in\Map(\{1,2,\ldots,r\},\Z_0)\}\\
=\:&R[\{\frac{1}{\phi(i)}|i\in\{1,2,\ldots,r\}\}]
=R[\frac{1}{\phi(1)\phi(2)\cdots \phi(r)}]
\subset K.
\end{split}\end{equation*}
If $R$ is noetherian, then $ R_{S(\phi)}$ is also noetherian.
\item
Consider any $\phi\in R$.
The principal open subset $\Spec(R)_\phi$ of $\Spec(R)$ associated  with $\phi\in R$ is an open subset of $\Spec(R)$.

$\Spec(R)_\phi=\emptyset$, if and only if, $\phi=0$.

$\{\bar{\p}\cap R|\bar{\p}\in\Spec(R[1/\phi])\}= \Spec(R)_\phi$ if $\phi\neq 0$.
\item
Consider any $\phi\in R-\{0\}$ and any $\psi\in R-\{0\}$.

$\Spec(R)_\phi=\Spec(R)_\psi$, if and only if, $\{\A\in R|\A^m=\B\phi\text{ for some }m\in\Z_0\text{ and for some }\B\in R\}=\{\A\in R|\A^m=\B\psi\text{ for some }m\in\Z_0\text{ and for some }$\break$\B\in R\}$.

If $\Spec(R)_\phi=\Spec(R)_\psi$, then $R[\frac{1}{\phi}]= R[\frac{1}{\psi}]$.

$\phi\psi\in R-\{0\}$ and $\Spec(R)_{\phi\psi}=\Spec(R)_\phi\cap\Spec(R)_\psi$.
\item
$$\bigcap_{\phi\in X}R[\frac{1}{\phi}]=R$$
for any subset $X$ of $R-\{0\}$ satisfying $XR=R$.
\item
$$\bigcap_{\p\in\Spec(R)}R_\p=R.$$
\item
$$\bigcap_{\p\in\Spec(R)_\phi}R_\p=R[\frac{1}{\phi}]$$
for any non-zero element $\phi\in R-\{0\}$.
\item
$\mathcal{O}_R$ is a sheaf of rings on $\Spec(R)$, in other words, the following seven conditions are satisfied:
\begin{enumerate}
\item
For any open subset $U$ of $\Spec(R)$, $\mathcal{O}_R(U)$ is a ring.
\item 
For any open subsets $U$ and $V$ of $\Spec(R)$ with $U\subset V$, $\Res^V_U$ is a ring homomorphism.
\item
For any open subset $U$ of $\Spec(R)$, $\Res^U_U$ is the identity mapping of $\mathcal{O}_R(U)$.
\item
For any open subsets $U$, $V$ and $W$ of $\Spec(R)$ with $U\subset V\subset W$, $\Res^W_U=\Res^V_U \Res^W_V$.
\item
Consider any non-empty set $\mathcal{U}$ whose elements are open subsets of $\Spec(R)$. We denote $\hat{U}= \cup_{U\in\mathcal{U}}U$. Consider any 
$\phi\in\mathcal{O}_R(\hat{U})$.

If $\Res^{\hat{U}}_U(\phi)=0$ for any $U\in\mathcal{U}$, then $\phi=0$.
\item
Consider any non-empty set $\mathcal{U}$ whose elements are open subsets of $\Spec(R)$. We denote $\hat{U}= \cup_{U\in\mathcal{U}}U$. Consider any 
$\phi(U)\in\mathcal{O}_R(U)$ for any $U\in\mathcal{U}$.

If $\Res^{U}_{U\cap V}(\phi(U))= \Res^{V}_{U\cap V}(\phi(V))$ for any $U\in\mathcal{U}$ and any $V\in\mathcal{U}$, then there exists $\phi\in\mathcal{O}_R(\hat{U})$ satisfying $\Res^{\hat{U}}_U(\phi)=\phi(U)$ for any $U\in\mathcal{U}$.
\item
$\mathcal{O}_R(\emptyset)=\{0\}$.
\end{enumerate}
\item
$\mathcal{O}_R(\Spec(R))=R$.
$\mathcal{O}_R(\Spec(R)_\phi)=R[1/\phi]$ for any non-zero element $\phi\in R-\{0\}$.
\item
Consider any non-zero element $\phi\in R-\{0\}$ and any open subset $U$ of $\Spec(R)$ contained in $\Spec(R)_\phi$.
Let $\hat{U}=\{\bar{\p}\in\Spec(R[1/\phi])|\bar{\p}\cap R\in U\}$.
$\hat{U}$ is an open subset of $\Spec(R[1/\phi])$ and $\mathcal{O}_{R[1/\phi]}(\hat{U})=\mathcal{O}_R(U)$. 
\item
Note that the sheaf $\mathcal{O}_R$ on $\Spec(R)$ is defined using the field $K$ containing $R$.
The sheaf  $\mathcal{O}_R$ is isomorphic to Grothendieck's structure sheaf on $\Spec(R)$, which depends only on $R$ and independent of the choice of the field $K$ containing $R$, and the topological space with a sheaf $(\Spec(R),\mathcal{O}_R)$ is identified with Grothendieck's affine scheme $\Spec(R)$.
\item
For any $\p\in\Spec(R)$ and any $\phi\in R_\p$, there exist an open subset $U$ of $\Spec(R)$ with $\p\in U$ and an element $\psi\in \mathcal{O}_R(U)$ satisfying $\Res^U_\p(\psi)=\phi$.
\item
Consider any $\p\in\Spec(R)$.

Note that for any open subset $U$ of $\Spec(R)$ with $\p\in U$ and any open subset $V$ of $\Spec(R)$ with $\p\in V$, there exists an open subset $W$ of $\Spec(R)$ with $\p\in W\subset U\cap V$.

The pair $(R_\p, \{\Res^U_\p|U$ is an open subset of $\Spec(R)$ with $\p\in U\})$ is the inductive limit of the inductive system $(\{\mathcal{O}_R(U)|U$ is an open subset of $\Spec(R)$ with $\p\in U\},\{ \Res^V_U|U$ and $V$ are open subsets of $\Spec(R)$ with $\p\in U\subset V\})$, in other words, 
$\Res^V_\p=\Res^U_\p\Res^V_U$ for any open subsets $U$ and $V$ of $\Spec(R)$ with $\p\in U\subset V$ and the following condition is satisfied:

Assume that a ring $T$ is given and a ring homomorphism $\lambda(U): \mathcal{O}_R(U)\rightarrow T$ is given for any open subset $U$ of $\Spec(R)$ with $\p\in U$.
If $\lambda (V)=\lambda(U)\Res^V_U$ for any open subsets $U$ and $V$ of $\Spec(R)$ with $\p\in U\subset V$, then there exists uniquely a ring homomorphism $\lambda:R_\p\rightarrow T$ satisfying $\lambda(U)=\lambda\Res^U_\p$ for any open subset $U$ of $\Spec(R)$ with $\p\in U$.
\end{enumerate}

Let $J$ be any field; let $Q$ be any subring of $J$ such that there exists a ring homomorphism from $R$ to $Q$ and let $\lambda:R\rightarrow Q$ be any ring homomorphism. The continuous mapping $\lambda^*:\Spec(Q)\rightarrow\Spec(R)$ is defined associated with $\lambda$.
\begin{enumerate}
\setcounter{enumi}{20}
\item
For any $\p\in\Spec(Q)$, there exists uniquely a ring homomorphism $\lambda_*(\p):R_{\lambda^*(\p)}\rightarrow Q_\p$ satisfying $\Res^{\Spec(Q)}_\p\lambda=\lambda_*(\p)\Res^{\Spec(R)}_{\lambda^*(\p)}$.

For any open subset $U$ of $\Spec(R)$, there exists uniquely a ring homomorphism
$\lambda_*(U):\mathcal{O}_R(U)\rightarrow \mathcal{O}_Q(\lambda^{*-1}(U))$ satisfying
$\Res^{\Spec(Q)}_{\lambda^{*-1}(U)}\lambda=\lambda_*(U)$\hfill\break$\Res^{\Spec(R)}_U$.
\end{enumerate}

We take the unique ring homomorphism $\lambda_*(U):\mathcal{O}_R(U)\rightarrow \mathcal{O}_Q(\lambda^{*-1}(U))$ satisfying
$\Res^{\Spec(Q)}_{\lambda^{*-1}(U)}\lambda=\lambda_*(U)\Res^{\Spec(R)}_U$ for any open subset $U$ of $\Spec(R)$.
We denote the set $\{\lambda_*(U)|U$ is an open subset of $\Spec(R)\}$ by a single symbol $\lambda_*$.
We denote the pair $(\lambda^*,\lambda_*)$ by a single symbol $\lambda^*$.
\begin{enumerate}
\setcounter{enumi}{21}
\item
The pair $(\lambda^*,\lambda_*)$ is a morphism from the topological space with a sheaf $(\Spec($\hfill\break$Q),\mathcal{O}_Q)$ to the topological space with a sheaf $(\Spec(R),\mathcal{O}_R)$, in other words, \hfill\break
$\Res^{\lambda^{*-1}(V)}_{\lambda^{*-1}(U)}\lambda=\lambda_*(U)\Res^V_U$
for any open subsets $U$ and $V$ of $\Spec(R)$ with $U\subset V$.
\item
The pair $\lambda^*=(\lambda^*,\lambda_*)$ is identified with Grothendieck's morphism
$\lambda^*:$\hfill\break$\Spec(Q)\rightarrow\Spec(R)$ of affine schemes associated with the ring homomorphism $\lambda$.
\item
Assume that $Q$ is a subring of $K$ containing $R$ and $\lambda:R\rightarrow Q$ is the inclusion ring homomorphism.

For any $\p\in\Spec(Q)$, $R_{\lambda^*(\p)}\subset Q_\p$ and the homomorphism $\lambda_*(\p): R_{\lambda^*(\p)}\rightarrow Q_\p $ is equal to the inclusion homomorphism.

For any open subset $U$ of $\Spec(R)$, 
$\mathcal{O}_R(U)\subset\mathcal{O}_{Q}(\lambda^{*-1}(U))$ and the homomorphism $\lambda_*(U): \mathcal{O}_R(U)\rightarrow\mathcal{O}_Q(\nu^{*-1}(U))$ is equal to the inclusion homomorphism.
\end{enumerate}
\end{lemma}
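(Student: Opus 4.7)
The plan is to treat this lemma as a catalogue of standard localization facts, and to organize the twenty-four claims into a small number of groups that build on one another, invoking Matsumura~\cite{M} for the core algebraic inputs and then deducing the sheaf-theoretic and scheme-theoretic statements from the algebraic ones.

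\emph{Step 1: topological preliminaries on $\Spec(R)$ (claim 1).} Since $R$ is a subring of the field $K$, it is an integral domain, so $\{0\}\in\Spec(R)$ and $\{0\}$ lies in every non-empty closed complement's complement. The claim that any non-empty open subset contains $\{0\}$, and hence that any two non-empty opens meet, follows directly from the definition of the Zariski topology together with the fact that the closure of $\{\{0\}\}$ is all of $\Spec(R)$.

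\emph{Step 2: the algebra of $R_S$ and its universal property (claim 2).} I would verify subring axioms by hand and then prove the universal property. Given $\mu:R\to T$ with $\mu(S)\subset T^\times$, one \emph{defines} $\bar{\mu}(\psi/\phi)=\mu(\psi)\mu(\phi)^{-1}$; well-definedness uses that $R\subset K$ so $\psi/\phi=\psi'/\phi'$ in $K$ forces $\psi\phi'=\psi'\phi$ in $R$, which is then transported via $\mu$ using invertibility of $\mu(S)$.

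\emph{Step 3: the prime correspondence (claims 3--6).} Claims 3 and 4 are direct computations: for $\bar{\p}\in\Spec(R_S)$ the contraction $\bar{\p}\cap R$ is prime and misses $S$, and conversely $\p R_S$ is prime when $\p\cap S=\emptyset$. The inverse-bijectivity (claim 5) follows from $\bar{\p}=(\bar{\p}\cap R)R_S$ and $\p=\p R_S\cap R$, with $(R_S)_{\bar{\p}}=R_\p$ obtained by comparing the two localizations inside $K$. Claim 6 is the elementary fact that contraction is a homeomorphism, proved by showing that for $\phi\in R$ and $S$-saturation, the principal opens match up. Claim 7 is Hilbert's basis / standard noetherian descent along localization.

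\emph{Step 4: local rings and principal opens (claims 8--11).} Claim 8 is the standard description of the prime spectrum of $R_\p$. Claim 9 is immediate from Step 2 by taking $S=S(\phi)$ and using $R[1/(\phi(1)\cdots\phi(r))]=R[1/\phi(1),\ldots,1/\phi(r)]$ inside $K$. Claim 10 follows by specializing Step 3 to $S=S(\phi)=\{\phi^n\}_{n\geq 0}$. Claim 11 uses the radical characterization of principal opens: $\Spec(R)_\phi=\Spec(R)_\psi$ iff $\phi$ and $\psi$ have the same radical ideal, which by the formula $\mathrm{rad}(\phi R)=\{\A\mid \A^m\in\phi R\text{ for some }m\}$ is exactly the stated condition; the equality $R[1/\phi]=R[1/\psi]$ then follows from mutual divisibility of some powers.

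\emph{Step 5: the intersection formulas (claims 12--14).} Claim 13 is the key input: for $\phi\in R-\{0\}$, $\phi\in R_\p$ for all $\p$, and $\phi\in R$ iff the ``ideal of denominators'' $\{\psi\in R\mid\psi\phi\in R\}$ equals $R$, which fails for no prime; so $\bigcap_\p R_\p=R$. Claim 14 reduces to claim 13 applied inside $R[1/\phi]$ (via Step 3 this intersection is over all primes not containing $\phi$). Claim 12 is then a cover argument using that $XR=R$ forces the open cover $\{\Spec(R)_\phi\}_{\phi\in X}$ to be all of $\Spec(R)$, plus claim 14.

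\emph{Step 6: $\mathcal{O}_R$ is a sheaf (claims 15--17).} Axioms (a)--(d) and (g) are immediate from the definition. The separation axiom (e) follows because on a non-empty open, $\mathcal{O}_R(U)\subset K$ and the restriction maps are inclusions. For the gluing axiom (f) I would reduce to a finite subcover of principal opens (using quasi-compactness) and then use the fact that sections on $\Spec(R)_\phi$ are $R[1/\phi]\subset K$; the compatible family $\{\phi(U)\}$ then becomes a single element of $K$, and Step 5's claim 14 identifies it as an element of $\mathcal{O}_R(\hat{U})$. Claim 16 is the special case $U=\Spec(R)$ and $U=\Spec(R)_\phi$ of Step 5. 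Claim 18 is routine: the presheaf is determined by its values on principal opens, which match under the correspondence of Step 3.

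\emph{Step 7: comparison with Grothendieck's construction (claim 19).} Grothendieck defines $\mathcal{O}_{\Spec(R)}(U)=\varprojlim_{D(\phi)\subset U}R[1/\phi]$, and in the integral-domain setting all these rings embed into $K$ with the inverse limit being the intersection; this matches our definition in Step 5. The identification of topological space plus sheaf with Grothendieck's affine scheme then follows from claim 16 and the universal property recovered in Step 2.

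\emph{Step 8: stalks and morphisms (claims 20--24).} Claim 20 is tautological from the definition $\mathcal{O}_R(U)=\bigcap_\q R_\q$ once one writes $\phi=\psi/\chi$ with $\chi\notin\p$ and takes $U=\Spec(R)_\chi$. Claim 21 is then the inductive-limit description of $R_\p$, which is another form of Step 5. Claims 22--24 encode functoriality of $\lambda\mapsto(\lambda^*,\lambda_*)$: the existence of $\lambda_*(\p)$ comes from the universal property of $R_{\lambda^*(\p)}$ applied to $\lambda$ followed by the canonical map into $Q_\p$ (noting $\lambda(R-\lambda^*(\p))\subset Q-\p\subset Q_\p^\times$); existence of $\lambda_*(U)$ is the same argument for each $U$, or equivalently globalizes using the sheaf axioms of Step 6; compatibility with restrictions (claim 22) is immediate. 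Claim 23 is the observation that on principal opens this reproduces Grothendieck's sheafified pullback morphism. Claim 24 is immediate when $\lambda$ is an inclusion, since all the rings involved sit inside the common field $K$.

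\emph{Main obstacle.} The bulk of the lemma is a bookkeeping exercise; the only part requiring genuine care is the sheaf axiom (f) in Step 6 together with the intersection formula of Step 5, where one must pass from an arbitrary open cover to a finite principal-open subcover and then glue compatible rational functions inside $K$. Everything else is either a direct verification, a transport through the inclusion $R\subset K$, or a citation of Matsumura~\cite{M}.
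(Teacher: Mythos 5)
You should note at the outset that the paper does not supply a proof of Lemma~\ref{localization}; as announced in the Introduction, the paper proves only the ``difficult parts'' and regards this lemma as a catalogue of standard localization facts (with a citation to Matsumura). So there is no in-text argument to compare against, and your task is essentially to confirm that the statements really do follow by routine verification.

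Your plan is correct and complete in outline, and the dependency structure (intersection formulas $\Rightarrow$ sheaf axioms $\Rightarrow$ comparison with Grothendieck $\Rightarrow$ functoriality) is the right one. One remark on Step 6: you reach for quasi-compactness and reduction to finite principal-open covers to prove the gluing axiom, which is the argument one needs for an arbitrary commutative ring, but in the setting of this lemma it is overkill. Because $R\subset K$ is a domain, claim~1 already tells you that any two non-empty opens meet and that every restriction map between non-empty opens is an honest \emph{inclusion} of subrings of $K$. Hence a compatible family $\{\phi(U)\}_{U\in\mathcal{U}}$, restricted to the non-empty members of $\mathcal{U}$, is literally a single element $\phi\in K$, and $\phi\in\mathcal{O}_R(\hat U)=\bigcap_{\p\in\hat U}R_\p$ is then automatic, since each $\p\in\hat U$ lies in some non-empty $U\in\mathcal{U}$ and $\phi=\phi(U)\in R_\p$. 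This is both shorter and closer in spirit to the way the paper \emph{defines} $\mathcal{O}_R$ (as an intersection of local rings inside $K$, rather than as a sheafification), and it makes the separation axiom (e) and the gluing axiom (f) equally trivial. With that simplification your Step~5 (claims~12--14) stays the genuine technical content of the lemma, while Step~6 becomes bookkeeping, which matches the paper's implicit view of what here requires argument and what is routine.
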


Below, we consider any integral domain $S$, any finite dimensional vector space $V$ over $\R$ and any lattice $N$ of $V$. 

Note that $\Map'(N,S)$ is an $S$-module.

For any $e\in N$ we denote
$$\Delta(e)=\{(f,g)|f\in N, g\in N, f+g=e\}\subset N\times N.$$

If $e\in N$, $f\in N$, $g\in N$ and $(f,g)\in \Delta(e)$, then $(g,f)\in\Delta(e)$.
$\Delta(e)=\{(f,e-f)|f\in N\}=\{(e-g, g)|g\in N\}$ for any $e\in N$. If $\dim V\geq 1$, then $\Delta(e)$ is an infinite set for any $e\in N$.

Consider any $\phi\in \Map'(N,S)$ and any $\psi\in \Map'(N,S)$.

$\Supp(\phi)$, $\Supp(\psi)$ and $\Supp(\phi)+\Supp(\psi)$ are finite subsets of $N$.
$\Supp(\phi)\times\Supp(\psi)$ is a finite subset of $N\times N$.

Consider any $e\in N$.
$\{(f,g) |f\in N, g\in N, (f,g)\in\Delta(e), \phi(f)\psi(g)\neq 0\}=
\{(f,g)|f\in N, g\in N, (f,g)\in\Delta(e), \phi(f)\neq 0, \psi(g)\neq 0\}=
\Delta(e)\cap(\Supp(\phi)\times\Supp(\psi)$.
We know that the set $\{(f,g) |f\in N, g\in N, (f,g)\in\Delta(e), \phi(f)\psi(g)\neq 0\}$ is a finite set and an element $\sum_{ f\in N, g\in N, (f,g)\in\Delta(e)} \phi(f)\psi(g)\in S$ is defined. 

Putting
$$(\phi\psi)(e)= \sum_{ f\in N, g\in N, (f,g)\in\Delta(e)} \phi(f)\psi(g)\in S$$
for any $e\in N$, we define an element $\phi\psi\in\Map(N,S)$.

If $e\in N$ and $\Delta(e)\cap(\Supp(\phi)\times\Supp(\psi))=\emptyset$, then
$\sum_{ f\in N, g\in N, (f,g)\in\Delta(e)} \phi(f)\psi(g)=0$.
For any $e\in N$, $\Delta(e)\cap(\Supp(\phi)\times\Supp(\psi))\neq\emptyset
\Leftrightarrow e\in\Supp(\phi)+\Supp(\psi)$.
Therefore, $\Supp(\phi\psi)\subset\Supp(\phi)+\Supp(\psi)$, $\Supp(\phi\psi)$ is a finite set, and we know that $\phi\psi\in\Map'(N,S)$.

We call the element $\phi\psi\in\Map'(N,S)$ the \emph{product} of $\phi$ and $\psi$.

Putting
\begin{equation*}
x(e)(f)=
\begin{cases}
1& \text{ if $e=f$},\\
0& \text{ if $e\neq f$},
\end{cases}\end{equation*}
for any $e\in N$ and any $f\in N$, we define a mapping
$$x:N\rightarrow \Map'(N,S).$$

Putting
$$\nu(a)=a x(0)\in\Map'(N,S)$$
for any $a\in S$, we define a mapping 
$$\nu:S\rightarrow \Map'(N,S).$$

For any subset $\Theta$ of $V$, we denote
$$\Map'(N,S)\backslash \Theta=\{\phi\in\Map'(N,S)|\Supp(\phi)\subset\Theta\}.$$

\begin{lemma}
\label{interesting ring}
Consider any integral domain $S$, any finite dimensional vector space $V$ over $\R$ and any lattice $N$ of $V$. We consider the $S$-module $\Map'(N,S)$ and the product on $\Map'(N,S)$ defined above.
\begin{enumerate}
\item
$\Map'(N,S)$ is an integral domain. The identity element of $\Map'(N,S)$ is equal to $x(0)$. The mapping $\nu:S\rightarrow \Map'(N,S)$ is an injective ring homomophism.
\end{enumerate}

Below, using $\nu:S\rightarrow \Map'(N,S)$, we regard $S$ as a subring of $\Map'(N,S)$. Let $K$ be any field such that there exists an injective ring homomorphism from $\Map'(N,S)$ to $K$ and let $\iota:\Map'(N,S)\rightarrow K$ be any injective ring homomorphism. We fix such a pair $(K,\iota)$ and using $\iota$, we regard $\Map'(N,S)$ as a subring of the field $K$.
\begin{enumerate}
\setcounter{enumi}{1}
\item
For any $\phi\in\Map'(N,S)$ and any $\psi\in\Map'(N,S)$,
$$\Supp(\phi+\psi)\subset\Supp(\phi)\cup\Supp(\psi)\text{, and}$$
$$\Supp(\phi\psi)\subset\Supp(\phi)+\Supp(\psi).$$
\item
$\Supp(1)=\{0\}$. For any $\phi\in\Map'(N,S)$, $\Supp(\phi)=\emptyset\Leftrightarrow \phi=0$ and $\Supp(\phi)\subset\{0\}\Leftrightarrow \phi\in S$.
\item
$x(0)=1$. For any $e\in N$ and any $f\in N$, $x(e+f)=x(e)x(f)$.

For any $e\in N$, $x(e)x(-e)=1$, $x(e)\in \Map'(N,S)^\times$ and $\Supp(x(e))=\{e\}$. 

The mapping $x:N\rightarrow \Map'(N,S)$ is injective.

For any $\phi\in\Map'(N,S)$, $\phi=\sum_{e\in N}\phi(e)x(e)$.

For any $\phi\in\Map'(N,S)$ and any $e\in N$, $\Supp(\phi)\subset\{e\}\Leftrightarrow$ there exists $a\in S$ with $\phi=a x(e)
\Leftrightarrow$ there exists uniquely $a\in S$ with $\phi=a x(e)$.
\item
For any ring $T$ such that there exists a ring homomorphism from $S$ to $T$, any ring homomorphism $\mu:S\rightarrow T$ and any homomorphism $\kappa:N\rightarrow T^\times$ of abelian groups, there exists uniquely a ring homomorphism $\lambda: \Map'(N,S)\rightarrow T$ satisfying $\lambda(a)=\mu(a)$ for any $a\in S$ and $\lambda x(e)=\kappa(e)$ for any $e\in N$.
\item 
If a subset $\Theta$ of $V$ satisfies $0\in\Theta$ and $\Theta+\Theta\subset\Theta$, then $\Map'(N,S)\backslash\Theta$ is a subring of $\Map'(N,S)$ containing $S$.
\item
Consider any simplicial cone $\Theta$ over $N$ in $V$.
We denote $P_\Theta=\{x(b_{E/N})|E\in\mathcal{F}(\Theta)_1\}\subset\Map'(N,S)$.
Putting
$$\Phi_\Theta(\Gamma)=\sum_{E\in\mathcal{F}(\Theta)_1}\Gamma(x(b_{E/N}))b_{E/N}\in\Vect(\Theta),$$
for any $\Gamma\in\Map(P_\Theta,\R)$, we define a mapping 
$$\Phi_\Theta: \Map(P_\Theta,\R)\rightarrow\Vect(\Theta).$$
\begin{enumerate}
\item
$\Map'(N,S)\backslash\Theta$ is a subring of $\Map'(N,S)$ containing $S$.
$P_\Theta\subset\Map'(N,S)$\hfill\break$\backslash\Theta$.
$P_\Theta$ is a variable system of $\Map'(N,S)\backslash\Theta$ over $S$, and  $\Map'(N,S)\backslash\Theta$ is a polynomial ring over $S$ with $\dim \Theta$ variables.
\item
$\Phi_\Theta$ is an isomorphism of vector spaces over $\R$.
$\Phi_\Theta(\Map(P_\Theta,\R_0))=\Theta$.
$\Phi_\Theta(\Map(P_\Theta,\Z))=N\cap\Vect(\Theta)$.
$\Phi_\Theta(\Supp(P_\Theta,\phi))=\Supp(\phi)$ 
for any $\phi\in \Map'(N,S)\backslash\Theta$.

If $S$ is a field, then the Newton polyhedron $\Gamma_+(P_\Theta,\phi)$ over $P_\Theta$ is defined and 
$\Phi_\Theta(\Gamma_+(P_\Theta,\phi))=\Conv(\Supp(\phi))+\Theta$
for any $\phi\in \Map'(N,S)\backslash\Theta$.
\item
$\Map'(N,S)\backslash\Vect(\Theta)=( \Map'(N,S)\backslash\Theta)[\{1/x(b_{E/N})|E\in \mathcal{F}(\Theta)_1\}]$.
\end{enumerate}
\item
Consider any simplicial cone $\Lambda$ over $N^*$ in $V^*$ and 
any simplicial cone $\Delta$ over $N^*$ in $V^*$ satisfying $\dim \Delta=\dim V$ and $\Lambda\in\mathcal{F}(\Delta)$.
The set $\{b_{E/N^*}|E\in\mathcal{F}(\Delta)_1\}$ is a basis of the vector space $V^*$ over $R$. We denote the dual basis of $\{b_{E/N^*}|E\in\mathcal{F}(\Delta)_1\}$ by $\{{b_{E/N^*}}^\vee_\Delta|E\in\mathcal{F}(\Delta)_1\}$.
\begin{equation*}
\langle b_{E/N^*}, {b_{D/N^*}}^\vee_\Delta\rangle=
\begin{cases}
1&\text{ if $E=D$},\\
0&\text{ if $E\neq D$},
\end{cases}\end{equation*}
for any $E\in\mathcal{F}(\Delta)_1$ and any $D\in \mathcal{F}(\Delta)_1$.
The set $\{b_{E/N^*}|E\in\mathcal{F}(\Delta)_1\}$ is a $\Z$-basis of the lattice $N^*$ and 
$\{{b_{E/N^*}}^\vee_\Delta|E\in\mathcal{F}(\Delta)_1\}$ is a $\Z$-basis of the lattice $N$.
We denote
$P_\Delta=\{x({b_{E/N^*}}^\vee_\Delta)|E\in\mathcal{F}(\Delta)_1\}\subset\Map'(N,S)$.
Putting
$$\Phi_\Delta(\Gamma)=\sum_{E\in\mathcal{F}(\Delta)_1}\Gamma(x({b_{E/N^*}}^\vee_\Delta))b_{E/N}^\vee\in V,$$
for any $\Gamma\in\Map(P_\Delta,\R)$, we define a mapping 
$$\Phi_\Delta: \Map(P_\Delta,\R)\rightarrow V.$$

\begin{enumerate}
\item
$\Delta^\vee|V^*$ is a simplicial cone over $N$ in $V$.
$\dim \Delta^\vee|V^*=\dim V$.

$\Map'(N,S)\backslash(\Delta^\vee|V^*)$ is a subring of $\Map'(N,S)$ containing $S$.

$P_\Delta\subset\Map'(N,S)\backslash(\Delta^\vee|V^*)$.
$P_\Delta$ is a variable system of $\Map'(N,S)\backslash(\Delta^\vee$\hfill\break$|V^*)$ over $S$, and  $\Map'(N,S)\backslash(\Delta^\vee|V^*)$ is a polynomial ring over $S$ with $\dim V$ variables.
\item
$\Phi_\Delta$ is an isomorphism of vector spaces over $\R$.
$\Phi_\Delta(\Map(P_\Delta,\R_0))= \Delta^\vee|V^*$.
$\Phi_\Theta(\Map(P_\Delta,\Z))=N$.
$\Phi_\Theta(\Supp(P_\Delta,\phi))=\Supp(\phi)$ 
for any $\phi\in \Map'(N,S)\backslash(\Delta^\vee|V^*)$.

If $S$ is a field, then the Newton polyhedron $\Gamma_+(P_\Delta,\phi)$ over $P_\Delta$ is defined and 
$\Phi_\Delta(\Gamma_+(P_\Delta,\phi))=\Conv(\Supp(\phi))+( \Delta^\vee|V^*)$
for any $\phi\in \Map'(N,S)$\hfill\break$\backslash(\Delta^\vee|V^*)$.
\item
$\Map'(N,S)=( \Map'(N,S)\backslash(\Delta^\vee|V^*))[\{1/x({b_{E/N^*}}^\vee_\Delta)|E\in \mathcal{F}(\Delta)_1\}]$.
\item
$\Lambda^\vee|V^*\supset\Delta^\vee|V$.
$\Map'(N,S)\backslash(\Lambda^\vee|V^*)$ is a subring of $\Map'(N,S)$ containing $S$.
$\Map'(N,S) \backslash(\Lambda^\vee|V^*)=(\Map'(N,S)\backslash(\Delta^\vee|V^*))[\{1/ x({b_{E/N^*}}^\vee_\Delta)|E\in \mathcal{F}(\Delta)_1-\mathcal{F}(\Lambda)_1\}]
\supset \Map'(N,S)\backslash(\Delta^\vee|V^*)$.
\item
For any $\bar{\Lambda}\in\mathcal{F}(\Lambda)$,
$\Map'(N,S) \backslash(\bar{\Lambda}^\vee|V^*)=
\Map'(N,S) \backslash(\Lambda^\vee|V^*)$\hfill\break$[\{1/ x({b_{E/N^*}}^\vee_\Delta)|E\in \mathcal{F}(\Lambda)_1-\mathcal{F}(\bar{\Lambda})_1\}]
\supset \Map'(N,S) \backslash(\Lambda^\vee|V^*)$.
\item
Furthermore, consider any simplicial cone $\Delta'$ over $N^*$ in $V^*$ satisfying $\dim \Delta'=\dim V$ and $\Lambda\in\mathcal{F}(\Delta')$.
We denote the dual basis of $\{b_{E/N^*}|E\in\mathcal{F}(\Delta')_1\}$ by $\{{b_{E/N^*}}^\vee_{\Delta'}|E\in\mathcal{F}(\Delta')_1\}$.

For any $E\in \mathcal{F}(\Lambda)_1$, there exists uniquely a mapping $r(E): \mathcal{F}(\Delta)_1-\mathcal{F}(\Lambda)_1\rightarrow\Z$ satisfying
$$x({b_{E/N^*}}^\vee_{\Delta'})=x({b_{E/N^*}}^\vee_\Delta)\prod_{D\in\mathcal{F}(\Delta)_1-\mathcal{F}(\Lambda)_1} x({b_{D/N^*}}^\vee_\Delta)^{r(E)(D)}.$$

For any $E\in \mathcal{F}(\Lambda)_1$,
$x({b_{E/N^*}}^\vee_{\Delta'})(\Map'(N,S) \backslash(\Lambda^\vee|V^*))
= x({b_{E/N^*}}^\vee_\Delta)$\hfill\break$(\Map'(N,S) \backslash(\Lambda^\vee|V^*))$.

For any $E\in \mathcal{F}(\Lambda)_1$,
the ideal $x({b_{E/N^*}}^\vee_\Delta)(\Map'(N,S) \backslash(\Lambda^\vee|V^*))$ of $\Map'($\hfill\break$N, S) \backslash(\Lambda^\vee|V^*)$ depends only on $\Lambda$ and $E$, and it is independent of the choice of $\Delta$.

The ideal $\{x({b_{E/N^*}}^\vee_\Delta)|E\in\mathcal{F}(\Lambda)_1\}(\Map'(N,S) \backslash(\Lambda^\vee|V^*))$ of $\Map'(N,S)$\hfill\break$ \backslash(\Lambda^\vee|V^*)$ depends only on $\Lambda$, and it is independent of the choice of $\Delta$.
\end{enumerate}
\item
Consider any simplicial cones $\Delta$ and $\Delta'$ over $N^*$ in $V^*$ such that 
$\Delta\cap\Delta'$ is a face of $\Delta$ and $\Delta\cap\Delta'$ is a face of $\Delta'$.
We denote $\Lambda=\Delta\cap\Delta'$.
\begin{enumerate}
\item
$(\Lambda^\vee|V^*)\cap N=((\Delta^\vee|V^*)\cap N)+( \Delta^{\prime\vee}|V^*)\cap N)$.
\item
$\Map'(N,S) \backslash(\Lambda^\vee|V^*)\supset
(\Map'(N,S) \backslash(\Delta^\vee|V^*))\cup(\Map'(N,S) \backslash(\Delta^{\prime\vee}|V^*))$.

The minimum subring of $K$ containing $(\Map'(N,S) \backslash(\Delta^\vee|V^*))\cup(\Map'(N,$\hfill\break$S) \backslash(\Delta^{\prime\vee}|V^*))$ coincides with $\Map'(N,S) \backslash(\Lambda^\vee|V^*)$.
\item
Consider any $\p\in\Spec(\Map'(N,S) \backslash(\Delta^\vee|V^*))$ and any
$\p'\in\Spec(\Map'($\hfill\break$N, S) \backslash(\Delta^{\prime\vee}|V^*))$.

There exists $\q\in\Spec(\Map'(N,S) \backslash(\Lambda^\vee|V^*))$ satisfying
$\p=\q\cap(\Map'(N, $\hfill\break$S) \backslash(\Delta^\vee|V^*))$ and
$\p'=\q\cap(\Map'(N,S) \backslash(\Delta^{\prime\vee}|V^*))$,
if and only if,\hfill\break
$(\Map'(N,S) \backslash(\Delta^\vee|V^*))_\p=
(\Map'(N,S) \backslash(\Delta^{\prime\vee}|V^*))_{\p'}$.

If the equivalent conditions above are satisfied and $\q\in\Spec(\Map'(N,S) $\hfill\break$\backslash(\Lambda^\vee|V^*))$ satisfies
$\p=\q\cap(\Map'(N,S) \backslash(\Delta^\vee|V^*))$ and
$\p'=\q\cap(\Map'(N,$\hfill\break$S) \backslash(\Delta^{\prime\vee}|V^*))$, then
$(\Map'(N,S) \backslash(\Lambda^\vee|V^*))_\q=
(\Map'(N,S) \backslash(\Delta^\vee|V^*))_\p$\hfill\break$=
(\Map'(N,S) \backslash(\Delta^{\prime\vee}|V^*))_{\p'}$.
\item
\begin{equation*}\begin{split}
&\{(\Map'(N,S) \backslash(\Delta^\vee|V^*))_\p|
\p\in\Spec(\Map'(N,S) \backslash(\Delta^\vee|V^*))\}\\
&\qquad\cap
\{(\Map'(N,S) \backslash(\Delta^{\prime\vee}|V^*))_{\p'}|
\p'\in\Spec(\Map'(N,S) \backslash(\Delta^{\prime\vee}|V^*))\}\\
=\:&
\{(\Map'(N,S) \backslash(\Lambda^\vee|V^*))_\q|
\q\in\Spec(\Map'(N,S) \backslash(\Lambda^\vee|V^*))\}.
\end{split}\end{equation*}
\end{enumerate}
\end{enumerate}
\end{lemma}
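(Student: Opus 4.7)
The plan is to tackle Lemma~\ref{interesting ring} piece by piece, building up from formal algebraic properties of the monoid ring $\Map'(N,S)$ to the toric-geometric intersection statement in part~9. First, for part~1, I would verify that the given multiplication on $\Map'(N,S)$ makes it a commutative ring by computing associativity and distributivity from the definition $(\phi\psi)(e)=\sum_{(f,g)\in\Delta(e)}\phi(f)\psi(g)$; the support inclusion $\Supp(\phi\psi)\subset\Supp(\phi)+\Supp(\psi)$ from part~2 is the key computational input. To see it is an integral domain, I would choose a total order on $N$ compatible with addition (using a linear functional generic with respect to the finitely many relevant supports) and compare leading terms; this also immediately yields that $\nu$ is injective. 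Parts~2--4 are then direct translations of the product formula and the basis property of $\{x(e)\}_{e\in N}$.

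For part~5 (universal property), I would define $\lambda(\phi)=\sum_e \mu(\phi(e))\kappa(e)$, which is a finite sum because $\phi\in\Map'(N,S)$, and check multiplicativity by rearranging the double sum using the convolution formula. Part~6 is immediate once one notes $0\in\Theta$ gives $1\in\Map'(N,S)\backslash\Theta$ and $\Theta+\Theta\subset\Theta$ is exactly what makes the support inclusion keep products inside $\Map'(N,S)\backslash\Theta$. Parts~7 and~8 are the heart of the toric dictionary: since $\Theta$ is simplicial over $N$, the semigroup $\Theta\cap N$ is freely generated over $\Z_0$ by $\{b_{E/N}\mid E\in\mathcal{F}(\Theta)_1\}$, so any $\phi\in\Map'(N,S)\backslash\Theta$ has a unique expansion as a finite $S$-linear combination of monomials in the $x(b_{E/N})$; this is precisely the variable-system condition, and the polynomial-ring statement follows from Lemma~\ref{polynomial ring}.1. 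The localization identities in 7(c), 8(c), 8(d), 8(e) follow because inverting $x(b_{E/N})$ corresponds to enlarging the semigroup by $-\Z_0 b_{E/N}$, which is exactly the passage from $\Theta$ (or $\Delta^\vee|V^*$) to the cone obtained by removing the face dual to $E$.

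Part~9 is the main obstacle. For 9(a), I would use the standard duality fact that for $\Lambda=\Delta\cap\Delta'$ with $\Lambda$ a common face, one has $\Lambda^\vee|V^*=(\Delta^\vee|V^*)+(\Delta'^\vee|V^*)$, and then apply a finite-generation argument to show the same identity passes to the intersection with the lattice $N$; this is classical in toric geometry and uses the rationality of the cones. Part 9(b) follows from 9(a) by noting that $\Map'(N,S)\backslash C$ depends only on $C\cap N$ among the $S$-span of $\{x(e)\mid e\in C\cap N\}$, plus the observation that $\Vect$-closures of both sides agree inside $\Map'(N,S)$. The genuinely delicate step is 9(c): given $\p,\p'$, I would show the equivalence by observing that a compatible $\q$ exists iff every element inverted in $(\Map'(N,S)\backslash(\Delta^\vee|V^*))_\p$ or in $(\Map'(N,S)\backslash(\Delta'^\vee|V^*))_{\p'}$ is inverted in the other; using 8.f, each extra generator $x({b_{E/N^*}}^\vee_\Delta)$ needed to pass from $\Delta^\vee|V^*$ to $\Lambda^\vee|V^*$ corresponds to an edge $E\in\mathcal{F}(\Delta)_1-\mathcal{F}(\Lambda)_1$, and such an element is a unit in $(\Map'(N,S)\backslash(\Delta'^\vee|V^*))_{\p'}$ exactly when the two localizations coincide. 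Part 9(d) is then a set-theoretic repackaging of 9(c). I expect the hard part to be tracking the bijective correspondence of primes precisely and avoiding sign errors in the duality between face posets of $\Delta$ and $\Delta^\vee|V^*$; the rest is careful bookkeeping once the semigroup identity 9(a) is in hand.
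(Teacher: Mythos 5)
The paper gives no proof of Lemma~\ref{interesting ring}: it falls under the blanket remark in Section~\ref{intro} that most claims follow from definitions, so there is nothing to compare your route against. Your overall plan is sound and standard for parts~1--8; in particular, using a generic linear functional to compare ``leading terms'' for the integral-domain claim, and identifying $\Theta\cap N$ as the free commutative monoid on $\{b_{E/N}\}$ for simplicial $\Theta$, are exactly the right moves, and parts~2--6 really are direct computations.

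The weak point is your description of part~9(a), which is the only genuinely nontrivial step. You invoke the cone identity $\Lambda^\vee|V^*=(\Delta^\vee|V^*)+(\Delta^{\prime\vee}|V^*)$ and then say ``a finite-generation argument'' carries this over to the lattice points. That phrasing is not an argument: the cone identity holds whenever $\Lambda=\Delta\cap\Delta'$, with or without the common-face hypothesis, but the lattice identity $(\Lambda^\vee|V^*)\cap N=((\Delta^\vee|V^*)\cap N)+((\Delta^{\prime\vee}|V^*)\cap N)$ is false without that hypothesis, and Gordan-type finite generation of the semigroups does nothing by itself to bridge the gap. What is actually needed is the separation lemma (Fulton~\cite{F93}, Prop.~1.3; KKMS~\cite{KKMS}): since $\Lambda$ is a face of both $\Delta$ and $\Delta'$, one can choose $u\in N$ with $u\in(\Delta^\vee|V^*)\cap N$, $-u\in(\Delta^{\prime\vee}|V^*)\cap N$, and $\Lambda=\Delta\cap u^\perp=\Delta'\cap u^\perp$; then for any $a\in(\Lambda^\vee|V^*)\cap N$ one takes $k\in\Z_0$ large enough (tested on the finitely many generators of $\Delta$) so that $a+ku\in(\Delta^\vee|V^*)\cap N$, giving $a=(a+ku)+k(-u)$. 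That the separating functional can be taken \emph{in} $N$ uses rationality of the face cone, which you do mention, but the mechanism is the separating hyperplane, not finite generation. Your sketch of 9(c) also phrases the criterion in terms of which elements become units; the cleaner route, which you have the pieces for, is that 9(b) makes $\Map'(N,S)\backslash(\Lambda^\vee|V^*)$ the subring of $K$ generated by the other two, so given $R(\Delta)_\p=R(\Delta')_{\p'}=:L$ one sets $\q=M(L)\cap R(\Lambda)$ and checks restrictions directly, while the converse follows from 8(d) exhibiting $R(\Lambda)$ as a localization of each $R(\Delta)$, $R(\Delta')$. These repairs are not large, but as written the crucial step 9(a) would not go through.
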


Below, we consider any integral domain $S$, any finite dimensional vector space $V$ over $\R$, any lattice $N$ of $V$ and any simplicial cone decomposition $\mathcal{D}$ over $N^*$ in $V^*$.

We would like to define a scheme $\Sigma(S,V,N,\mathcal{D})$ associated with the quadruplet $(S,V,N,\mathcal{D})$ and would like to examine its properties.

Note that $\Map'(N,S)$ is an integral domain containing $S$.
Let $K$ be any field such that there exists an injective ring homomorphism from $\Map'(N,S)$ to $K$ and let $\iota:\Map'(N,S)\rightarrow K$ be any injective ring homomorphism. We fix such a pair $(K,\iota)$ and using $\iota$, we regard $\Map'(N,S)$ as a subring of the field $K$.

For simplicity we denote
$$R(\Delta)= \Map'(N,S)\backslash(\Delta^\vee|V^*)$$
for any convex polyhedral cone $\Delta$ in $V^*$.

We define the set $\Sigma(S,V,N,\mathcal{D})$ by putting
$$\Sigma(S,V,N,\mathcal{D})=\{ R(\Delta)_\p|\Delta\in\mathcal{D},\p\in\Spec(R(\Delta))\}.$$

Consider any $\Delta\in\mathcal{D}$. 
By putting
$$\pi(S,V,N,\mathcal{D},\Delta)(\p)= R(\Delta)_\p\in \Sigma(S,V,N,\mathcal{D})$$
for any $\p\in\Spec(R(\Delta))$, we define a mapping
$$\pi(S,V,N,\mathcal{D},\Delta): \Spec(R(\Delta))\rightarrow  \Sigma(S,V,N,\mathcal{D}).$$

We define the topology on $\Sigma(S,V,N,\mathcal{D})$.
Let $U$ be any subset of $\Sigma(S,V,N,\mathcal{D})$.
We define that $U$ is an open subset of $\Sigma(S,V,N,\mathcal{D})$, if 
$\pi(S,V,N,\mathcal{D},\Delta)^{-1}(U)$ is an open subset of $\Spec(R(\Delta))$ for any $\Delta\in\mathcal{D}$.

We define the sheaf $\mathcal{O}(S,V,N,\mathcal{D})$ on $\Sigma(S,V,N,\mathcal{D})$.
Note that any point $\A\in \Sigma(S,V,N,\mathcal{D})$ is a local subring of $K$ containing $S$.
When we regard a point $\A$ as a subring of $K$, we denote the corresponding  subring of $K$ by the symbol $\mathcal{O}(S,V,N,\mathcal{D})_\A$.
For any point $\A\in \Sigma(S,V,N,\mathcal{D})$,  $\mathcal{O}(S,V,N,\mathcal{D})_\A$ is a local subring of $K$ containing $S$.
Let $U$ be any open subset of $\Sigma(S,V,N,\mathcal{D})$.
We define $\mathcal{O}(S,V,N,\mathcal{D})(U)$ by putting
\begin{equation*}
\mathcal{O}(S,V,N,\mathcal{D})(U)=
\begin{cases}
\bigcap_{\A\in U}\mathcal{O}(S,V,N,\mathcal{D})_\A \subset K&\text{ if $U\neq\emptyset$},\\
\{0\}\subset K&\text{ if $U=\emptyset$}.
\end{cases}\end{equation*}
$\mathcal{O}(S,V,N,\mathcal{D})(U)$ is a ring contained in $K$.
If $U\neq\emptyset$, then $\mathcal{O}(S,V,N,\mathcal{D})(U)$ is a subring of $K$ containing $S$.

Let $U$ and $V$ be any open subsets of $\Sigma(S,V,N,\mathcal{D})$ with $U\subset V$.
If $U\neq\emptyset$, then $V\neq\emptyset$ and $\mathcal{O}(S,V,N,\mathcal{D})(U)\supset\mathcal{O}(S,V,N,\mathcal{D})(V)$ by definition.
If $U=\emptyset$, then $\mathcal{O}(S,V,N,\mathcal{D})(U)=\{0\}$ and there exists uniquely a surjective homomorphism $\mathcal{O}(S,V,N,\mathcal{D})(V)\rightarrow
\mathcal{O}(S,V,N,\mathcal{D})(U)$.
We define a ring homomorphism
$$\Res(S,V,N,\mathcal{D})^V_U: \mathcal{O}(S,V,N,\mathcal{D})(V)\rightarrow
\mathcal{O}(S,V,N,\mathcal{D})(U),$$
by putting
\begin{equation*}
\Res(S,V,N,\mathcal{D})^V_U=
\begin{cases}
\text{the inclusion homomorphism}&\text{ if $U\neq\emptyset$},\\
\text{the unique surjective homomorphism}&\text{ if $U=\emptyset$}.
\end{cases}\end{equation*}

We denote the pair of sets $(\{\mathcal{O}(S,V,N,\mathcal{D})(U)|U$ is an open subset of  $\Sigma(S,V,N,$\hfill\break$\mathcal{D})\}, \{\Res(S,V,N,\mathcal{D})^V_U|U$ and $V$ are open subsets of $\Sigma(S,V,N,\mathcal{D})$ with $U\subset V\})$ by a single symbol $\mathcal{O}(S,V,N,\mathcal{D})$.

Let $\A\in\Sigma(S,V,N,\mathcal{D})$ be any point and let $U$ be any open subset of $\Sigma(S,V,N,\mathcal{D})$ with $\A\in U$.
$\mathcal{O}(S,V,N,\mathcal{D})(U) \subset\mathcal{O}(S,V,N,\mathcal{D})_\A$ by definition.
We define the ring homomorphism $\Res(S,V,N,\mathcal{D})^U_\A:
\mathcal{O}(S,V,N,\mathcal{D})(U) \rightarrow\mathcal{O}(S,V,N,\mathcal{D})_\A$, by putting $\Res(S,V,N,\mathcal{D})^U_\A=$ the inclusion homomorphism.

For any $\Delta\in\mathcal{D}$, we denote
\begin{equation*}\begin{split}
U(S,V,N,\mathcal{D},\Delta)&= \pi(S,V,N,\mathcal{D},\Delta)( \Spec(R(\Delta)))
\subset \Sigma(S,V,N,\mathcal{D}),\\
V^\circ(S,V,N,\mathcal{D},\Delta)&=U(S,V,N,\mathcal{D},\Delta)-(\bigcup_{\Lambda\in\mathcal{F}(\Delta),\Lambda\neq\Delta} U(S,V,N,\mathcal{D},\Lambda))\\
&\qquad\qquad\quad\subset U(S,V,N,\mathcal{D},\Delta),\\
V(S,V,N,\mathcal{D},\Delta)&=\text{the closure of }V^\circ(S,V,N,\mathcal{D},\Delta)\text{ in }\Sigma(S,V,N,\mathcal{D})\\
&\qquad\qquad\quad\subset \Sigma(S,V,N,\mathcal{D}).
\end{split}\end{equation*}

When we need not refer to the quadruplet $(S,V,N,\mathcal{D})$, we also write simply
$\Sigma$, $\pi(\Delta)$, $\mathcal{O}$, $\Res$, $U(\Delta)$, $V^\circ(\Delta)$ and $V(\Delta)$, instead of $\Sigma(S,V,N,\mathcal{D})$, $\pi(S,V,N,\mathcal{D},\Delta)$, $\mathcal{O}(S,V,N,\mathcal{D})$, 
$\Res(S,V,N,\mathcal{D})$, $U(S,V,N,\mathcal{D},\Delta)$,
$V^\circ(S,V,N,\mathcal{D},\Delta)$ and $V(S,V,N,$\hfill\break$\mathcal{D},\Delta)$ respectively.

When we need not refer to the triplet $(V,N,\mathcal{D})$, we also write simply
$\Sigma(S)$, $\pi(S,\Delta)$, $\mathcal{O}(S)$, $\Res(S)$, $U(S,\Delta)$, $V^\circ(S,\Delta)$ and $V(S,\Delta)$, instead of $\Sigma(S,V,N,\mathcal{D})$, $\pi(S,V,N,\mathcal{D},\Delta)$, $\mathcal{O}(S,V,N,\mathcal{D})$, 
$\Res(S,V,N,\mathcal{D})$, $U(S,V,N,\mathcal{D},\Delta)$,
$V^\circ(S,V,N,\mathcal{D},\Delta)$ and $V(S,V,N,\mathcal{D},\Delta)$ respectively.

When we need not refer to the triplet $(S,V,N)$, we also write simply
$\Sigma(\mathcal{D})$, $\pi(\mathcal{D},\Delta)$, $\mathcal{O}(\mathcal{D})$, $\Res(\mathcal{D})$, $U(\mathcal{D},\Delta)$, $V^\circ(\mathcal{D},\Delta)$ and $V(\mathcal{D},\Delta)$, instead of $\Sigma(S,V,N,\mathcal{D})$, $\pi(S,V,N,\mathcal{D},\Delta)$, $\mathcal{O}(S,V,N,\mathcal{D})$, 
$\Res(S,V,N,\mathcal{D})$, $U(S,V,N,\mathcal{D},\Delta)$,
$V^\circ(S,V,N,\mathcal{D},\Delta)$ and $V(S,V,N,\mathcal{D},\Delta)$ respectively.

\begin{lemma}
\label{scheme1}
Let $S$ be any integral domain; let $V$ be any finite dimensional vector space over $\R$; let $N$ be any lattice of $V$ and let $\mathcal{D}$ be any simplicial cone decomposition over $N^*$ in $V^*$.

\begin{enumerate}
\item
Consider any $\Delta\in\mathcal{D}$.
$U(\Delta)$ is a non-empty open subset of $\Sigma$.
$\pi(\Delta)(\Spec(R(\Delta$\break$)))= U(\Delta)$.
The mapping $\pi(\Delta):\Spec(R(\Delta))\rightarrow U(\Delta)$ induced by $\pi(\Delta)$ is a continuous bijective mapping whose inverse mapping is also continuous.

For any open subset $U$ of $U(\Delta)$, $\mathcal{O}(U)=\mathcal{O}_{R(\Delta)}(\pi(\Delta)^{-1}(U))$.

$\mathcal{O}(U(\Delta))= R(\Delta)$.

The mapping $\pi(\Delta)$ induces an isomorphism from the affine scheme\hfill\break
$(\Spec(R(\Delta)),\mathcal{O}_{R(\Delta)})$ to the topological space with a sheaf
$(U(\Delta),\mathcal{O}|U(\Delta))$.
\item
Consider any $\Delta\in\mathcal{D}$.
Let $\hat{\Delta}$ be any simplicial cone over $N^*$ in $V^*$ satisfying $\dim \hat{\Delta}=\dim V$ and $\Delta\in\mathcal{F}(\hat{\Delta})$.

The set $\{b_{E/N^*}|E\in\mathcal{F}(\hat{\Delta})_1\}$ is a basis of the vector space $V^*$ over $\R$ and it is a $\Z$-basis of the lattice $N^*$.
By  $\{{b_{E/N^*}}^\vee_{\hat{\Delta}}|E\in\mathcal{F}(\hat{\Delta})_1\}$ we denote the dual basis of  $\{b_{E/N^*}|E\in\mathcal{F}(\hat{\Delta})_1\}$.
The set $\{{b_{E/N^*}}^\vee_{\hat{\Delta}}|E\in\mathcal{F}(\hat{\Delta})_1\}$ is a basis of the vector space $V$ over $\R$ and it is a $\Z$-basis of the lattice $N$.
For any $E\in\mathcal{F}(\hat{\Delta})_1$ and any $D\in E\in\mathcal{F}(\hat{\Delta})_1$,
\begin{equation*}
\langle b_{E/N^*}, {b_{D/N^*}}^\vee_{\hat{\Delta}}\rangle=
\begin{cases}
1&\text{ if $E=D$},\\
0&\text{ if $E\neq D$}.
\end{cases}\end{equation*}
\begin{enumerate}
\item
$R(\hat{\Delta})$ is a subring of $\Map'(N,S)$ containing $S$, and it is a polynomial ring over $S$ with $\dim V$ variables. The set $\{x({b_{E/N^*}}^\vee_{\hat{\Delta}})|E\in\mathcal{F}(\hat{\Delta})_1\}$ is a variable system of $R(\hat{\Delta})$ over $S$.
$\Spec(R(\hat{\Delta}))$ is smooth, if $\Spec(S)$ is smooth.
\item
$R(\Delta)=R(\hat{\Delta})[\{1/ x({b_{E/N^*}}^\vee_{\hat{\Delta}})|E\in\mathcal{F}(\hat{\Delta})_1-\mathcal{F}(\Delta)_1\}]
\supset R(\hat{\Delta})$.
$\Spec(R(\Delta))$ is smooth, if $\Spec(S)$ is smooth.
\item
$\{\bar{\p}\cap R(\hat{\Delta})|\bar{\p}\in\Spec(R(\Delta))\}=
\{\p\in\Spec(R(\hat{\Delta}))| x({b_{E/N^*}}^\vee_{\hat{\Delta}})\not\in\p\text{ for any }
$\hfill\break$E\in\mathcal{F}(\hat{\Delta})_1-\mathcal{F}(\Delta)_1\}$.
\item
$\{\bar{\p}\cap R(\hat{\Delta})|\bar{\p}\in\pi(\Delta)^{-1}(V^\circ(\Delta))\}=
\{\p\in\Spec(R(\hat{\Delta}))| x({b_{E/N^*}}^\vee_{\hat{\Delta}})\not\in\p\text{ for}$\break $\text{any }
E\in\mathcal{F}(\hat{\Delta})_1-\mathcal{F}(\Delta)_1,
x({b_{E/N^*}}^\vee_{\hat{\Delta}})\in\p\text{ for any } E\in\mathcal{F}(\Delta)_1
\}$.
\item
Consider any $\bar{\Delta}\in\mathcal{F}(\Delta)$.
$\{\bar{\p}\cap R(\hat{\Delta})|\bar{\p}\in\pi(\Delta)^{-1}(U(\Delta)\cap V(\bar{\Delta}))\}=
\{\p\in\Spec(R(\hat{\Delta}))| x({b_{E/N^*}}^\vee_{\hat{\Delta}})\not\in\p\text{ for any }
E\in\mathcal{F}(\hat{\Delta})_1-\mathcal{F}(\Delta)_1,
x({b_{E/N^*}}^\vee_{\hat{\Delta}})\in\p\text{ for any } E\in\mathcal{F}(\bar{\Delta})_1
\}$.

$$V(\bar{\Delta})\cap U(\Delta)=\bigcup_{\Lambda\in\mathcal{D},\bar{\Delta}\subset\Lambda\subset\Delta}V^\circ(\Lambda).$$
\end{enumerate}
\item
$$\Sigma=\bigcup_{\Delta\in\mathcal{D}}U(\Delta)=\bigcup_{\Delta\in\mathcal{D}}V^\circ(\Delta).$$
\item
Consider any $\Delta\in\mathcal{D}$ and any $\bar{\Delta}\in\mathcal{D}$.

$U(\Delta\cap\bar{\Delta})=U(\Delta)\cap U(\bar{\Delta})$.

$\Delta=\bar{\Delta}\Leftrightarrow U(\Delta)=U(\bar{\Delta})\Leftrightarrow V^\circ(\Delta)=V^\circ(\bar{\Delta})
\Leftrightarrow V^\circ(\Delta)\cap V^\circ(\bar{\Delta})\neq\emptyset\Leftrightarrow V(\Delta)=V(\bar{\Delta})$.

$\Delta\subset\bar{\Delta}\Leftrightarrow U(\Delta)\subset U(\bar{\Delta})\Leftrightarrow V^\circ(\Delta)\subset U(\bar{\Delta})
\Leftrightarrow V^\circ(\Delta)\cap U(\bar{\Delta})\neq\emptyset\Leftrightarrow
V(\Delta)\cap U(\bar{\Delta})\neq\emptyset\Leftrightarrow V(\Delta)\supset V(\bar{\Delta})$.
\item
Consider any $\Delta\in\mathcal{D}$.
\begin{enumerate}
\item
$V^\circ(\Delta)$ is a non-empty closed irreducible subset of $U(\Delta)$.
\item
$\Codim(V^\circ(\Delta), U(\Delta))=\dim \Delta$.
\item
If we give the reduced scheme structure to $V^\circ(\Delta)$, then $V^\circ (\Delta)$ is a complete intersection subscheme of $ U(\Delta)$, and 
$V^\circ(\Delta)$ is smooth if $\Spec(S)$ is smooth.
\item
$$U(\Delta)=\bigcup_{\bar{\Delta}\in\mathcal{D}, \bar{\Delta} \subset \Delta}V^\circ(\bar{\Delta}).$$
\item
$V(\Delta)\cap U(\Delta)=V^\circ(\Delta)$.
\item
$V(\Delta)$ is a non-empty closed irreducible subset of $\Sigma$.
\item
$\Codim(V(\Delta), \Sigma)=\dim \Delta$.
\item
If we give the reduced scheme structure to $V(\Delta)$, then $V(\Delta)$ is a local complete intersection subscheme of $\Sigma$, and 
$V(\Delta)$ is smooth if $\Spec(S)$ is smooth.
\item
$$V(\Delta)=\bigcup_{\bar{\Delta}\in\mathcal{D}, \bar{\Delta}\supset\Delta}V^\circ(\bar{\Delta}).$$
\end{enumerate}
\item
$\mathcal{O}$ is a sheaf of rings on $\Sigma$, in other words, the following seven conditions are satisfied:
\begin{enumerate}
\item
For any open subset $U$ of $\Sigma$, $\mathcal{O}(U)$ is a ring.
\item 
For any open subsets $U$ and $V$ of $\Sigma$ with $U\subset V$, $\Res^V_U$ is a ring homomorphism.
\item
For any open subset $U$ of $\Sigma$, $\Res^U_U$ is the identity mapping of $\mathcal{O}(U)$.
\item
For any open subsets $U$, $V$ and $W$ of $\Sigma$ with $U\subset V\subset W$, $\Res^W_U=\Res^V_U \Res^W_V$.
\item
Consider any non-empty set $\mathcal{U}$ whose elements are open subsets of $\Sigma$. We denote $\hat{U}= \cup_{U\in\mathcal{U}}U$. Consider any 
$\phi\in\mathcal{O}(\hat{U})$.

If $\Res^{\hat{U}}_U(\phi)=0$ for any $U\in\mathcal{U}$, then $\phi=0$.
\item
Consider any non-empty set $\mathcal{U}$ whose elements are open subsets of $\Sigma$. We denote $\hat{U}= \cup_{U\in\mathcal{U}}U$. Consider any 
$\phi(U)\in\mathcal{O}(U)$ for any $U\in\mathcal{U}$.

If $\Res^{U}_{U\cap V}(\phi(U))= \Res^{V}_{U\cap V}(\phi(V))$ for any $U\in\mathcal{U}$ and any $V\in\mathcal{U}$, then there exists $\phi\in\mathcal{O}(\hat{U})$ satisfying $\Res^{\hat{U}}_U(\phi)=\phi(U)$ for any $U\in\mathcal{U}$.
\item
$\mathcal{O}(\emptyset)=\{0\}$.
\end{enumerate}
\item
The pair $\Sigma=(\Sigma, \mathcal{O})$ of the topological space $\Sigma$ and the sheaf $\mathcal{O}$ on $\Sigma$ is a separated reduced irreducible scheme.
$\dim \Sigma=\dim S+\dim V$, where $\dim S\in\Z_0\cup\{\infty\}$ denotes the Krull dimension of the ring $S$.
$\Sigma$ is smooth, if $\Spec(S)$ is smooth.
\item
If $\Delta$ is a simplicial cone over $N^*$ in $V^*$ and $\mathcal{D}=\mathcal{F}(\Delta)$, then $\Sigma$ is isomorphic to $\Spec(\Map'(N,S)\backslash(\Delta^\vee|V^*))$.
\item
For any $\A\in\Sigma$ and any $\phi\in \mathcal{O}_\A$ there exist an open subset $U$ of $\Sigma$ with $\A\in U$ and an element $\psi\in \mathcal{O}(U)$ satisfying $\Res^U_\A(\psi)=\phi$.
\item
Consider any $\A\in\Sigma$.

Note that for any open subset $U$ of $\Sigma$ with $\A\in U$ and any open subset $V$ of $\Sigma$ with $\A\in V$, there exists an open subset $W$ of $\Sigma$ with $\A\in W\subset U\cap V$.

The pair $(\mathcal{O}_\A, \{\Res^U_\A|U$ is an open subset of $\Sigma$ with $\A\in U\})$ is the inductive limit of the inductive system $(\{\mathcal{O}(U)|U$ is an open subset of $\Sigma$ with $\A\in U\},\{ \Res^V_U|U$ and $V$ are open subsets of $\Sigma$ with $\A\in U\subset V\})$, in other words, 
$\Res^V_\A=\Res^U_\A\Res^V_U$ for any open subsets $U$ and $V$ of $\Sigma$ with $\A\in U\subset V$ and the following condition is satisfied:

Assume that a ring $T$ is given and a ring homomorphism $\mu(U): \mathcal{O}(U)\rightarrow T$ is given for any open subset $U$ of $\Sigma$ with $\A\in U$.
If $\mu (V)=\mu(U)\Res^V_U$ for any open subsets $U$ and $V$ of $\Sigma$ with $\A\in U\subset V$, then there exists uniquely a ring homomorphism $\mu: \mathcal{O}_\A \rightarrow T$ satisfying $\mu(U)=\mu\Res^U_\A$ for any open subset $U$ of $\Sigma$ with $\A\in U$.
\end{enumerate}
\end{lemma}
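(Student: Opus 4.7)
The plan is to build $\Sigma(S,V,N,\mathcal{D})$ as a scheme by gluing the affine pieces $\Spec(R(\Delta))$ for $\Delta\in\mathcal{D}$ along the prescribed overlaps, with essentially every verification reducing to the ring-theoretic statements already established in Lemma~\ref{interesting ring}, especially parts 7, 8, 9 which identify $R(\hat\Delta)$ as a polynomial ring over $S$ when $\dim\hat\Delta=\dim V$, describe $R(\Delta)$ as a localization of $R(\hat\Delta)$ for any face $\Delta$ of $\hat\Delta$, and describe $R(\Delta\cap\bar\Delta)$ as the join of $R(\Delta)$ and $R(\bar\Delta)$ inside $K$ together with the compatibility of localizations at corresponding primes.

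First I would address claim 1: by definition $\pi(\Delta)$ sends $\p\in\Spec(R(\Delta))$ to $R(\Delta)_\p$, and distinct primes give distinct local rings (any subring of $K$ has at most one prime ideal inducing a given localization inside $K$), so $\pi(\Delta)$ is injective. The definition of the topology on $\Sigma$ forces $U(\Delta)$ to be open and $\pi(\Delta)$ to be a homeomorphism onto $U(\Delta)$, and the definition of $\mathcal{O}$ as an intersection of local rings matches the sheaf $\mathcal{O}_{R(\Delta)}$ via Lemma~\ref{localization}.15. Then claim 2 is immediate from Lemma~\ref{interesting ring}.8.(a)--(d) on choosing any $\hat\Delta\in\mathcal{D}_{\dim V}$ containing $\Delta$ as a face, with smoothness of $\Spec(R(\hat\Delta))$ over $\Spec(S)$ following from the polynomial-ring description and descending to the localization $R(\Delta)$. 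Claim 5 follows from the same description of $R(\hat\Delta)$ as a polynomial ring: $V^\circ(\Delta)$ corresponds to the locally closed subscheme cut out by setting $x({b_{E/N^*}}^\vee_{\hat\Delta})=0$ for $E\in\mathcal{F}(\Delta)_1$ and inverting the remaining variables, which is a complete intersection of the expected codimension $\dim\Delta$, irreducible, and smooth when $\Spec(S)$ is smooth; $V(\Delta)$ is then the closure in the glued space, and the stratification in 5.(d), 5.(i) follows from tracking which variables vanish on each prime.

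Next I would handle claims 3 and 4: Lemma~\ref{interesting ring}.9.(a)--(d) is precisely the statement needed to conclude $U(\Delta)\cap U(\bar\Delta)=U(\Delta\cap\bar\Delta)$, because two points $\A\in U(\Delta)$ and $\B\in U(\bar\Delta)$ coincide in $\Sigma$ exactly when the corresponding localizations agree as subrings of $K$, and 9.(c) characterizes this in terms of existence of a common prime over $R(\Delta\cap\bar\Delta)$. Having established compatibility on overlaps, claim 6 (sheaf axioms) is verified locally on each $U(\Delta)$ via Lemma~\ref{localization}.15 together with the fact that the definition of $\mathcal{O}$ as an intersection of local rings inside $K$ automatically satisfies both the separation and gluing axioms. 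Claim 7 then asserts that the gluing data actually yields a scheme: irreducibility and reducedness are automatic because every $R(\Delta)$ is a domain living inside the single integral domain $\Map'(N,S)\subset K$ so $\Sigma$ has a generic point. Separation reduces to checking that the diagonal image is closed, which in the toric setting is exactly the assertion in Lemma~\ref{interesting ring}.9.(b) that the minimum subring containing $R(\Delta)\cup R(\bar\Delta)$ is $R(\Delta\cap\bar\Delta)$. The dimension formula $\dim\Sigma=\dim S+\dim V$ comes from any $U(\hat\Delta)$ with $\dim\hat\Delta=\dim V$, which is affine $(\dim V)$-space over $\Spec(S)$; smoothness over $\Spec(S)$ then propagates via claim 2.

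Finally, claim 8 is the degenerate case $\mathcal{D}=\mathcal{F}(\Delta)$, in which $\Sigma=U(\Delta)=\Spec(R(\Delta))$ by claim 1 together with claim 3 and the fact that $\mathcal{F}(\Delta)$ has $\Delta$ as its unique maximal element. Claims 9 and 10 are general facts about local rings of locally ringed spaces constructed as intersections inside a fixed field; they follow from claim 1 reducing to Lemma~\ref{localization}.20--21. The main obstacle — and the only step that is not essentially bookkeeping — is the separation check in claim 7, where one must verify that the diagonal morphism $\Sigma\to\Sigma\times_{\Spec S}\Sigma$ is a closed immersion; this is handled by a scheme-theoretic reformulation of Lemma~\ref{interesting ring}.9.(b) saying that on the overlap $U(\Delta)\times_{\Spec S}U(\bar\Delta)$, the diagonal is defined by the ideal generated by differences $\phi\otimes 1-1\otimes\phi$ for $\phi$ ranging over generators of the two coordinate rings, and $R(\Delta\cap\bar\Delta)$ being generated by $R(\Delta)\cup R(\bar\Delta)$ is exactly the statement that this ideal cuts out the correct closed subscheme.
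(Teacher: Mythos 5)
The approach is the right one: present $\Sigma(S,V,N,\mathcal{D})$ as a gluing of the affine charts $\Spec(R(\Delta))$ and reduce every verification to the ring-theoretic assertions of Lemma~\ref{interesting ring} (parts 7--9) and Lemma~\ref{localization}, with separatedness coming down to the fact that $R(\Delta\cap\bar\Delta)$ is generated by $R(\Delta)\cup R(\bar\Delta)$ inside $K$. That is precisely the structure of the standard toric construction, and your identification of separatedness as the one genuinely non-bookkeeping step is apt.

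There is one recurring imprecision that you should correct. In your treatment of claim 2 you write ``choosing any $\hat\Delta\in\mathcal{D}_{\dim V}$ containing $\Delta$ as a face,'' and again for claim 7 you deduce the dimension formula ``from any $U(\hat\Delta)$ with $\dim\hat\Delta=\dim V$.'' But $\mathcal{D}$ is not assumed to contain any full-dimensional cone; $\mathcal{D}_{\dim V}$ can perfectly well be empty (take $\mathcal{D}=\{\{0\}\}$, in which case $\Sigma$ is the torus). The $\hat\Delta$ appearing in claim 2 is an \emph{arbitrary} simplicial cone over $N^*$ with $\dim\hat\Delta=\dim V$ and $\Delta\in\mathcal{F}(\hat\Delta)$; it is auxiliary and is not required to lie in $\mathcal{D}$, so there is no chart $U(\hat\Delta)$ in $\Sigma$ unless $\hat\Delta$ happens to belong to $\mathcal{D}$. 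The existence of such a $\hat\Delta$ for every $\Delta\in\mathcal{D}$ is automatic: by the definition of simplicial cone the primitive edge generators of $\Delta$ form part of a $\Z$-basis $B$ of $N^*$, and $\hat\Delta=\Convcone(B)$ works. The dimension and smoothness statements for $U(\Delta)=\Spec(R(\Delta))$ should therefore be proved by exhibiting $R(\Delta)$ as the localization (at a set of variables) of the polynomial ring $R(\hat\Delta)$ over $S$, using Lemma~\ref{interesting ring}.8, without ever invoking a chart $U(\hat\Delta)\subset\Sigma$. Once this is rephrased your sketch is sound; a secondary and more cosmetic point is that the openness of $U(\Delta)$ in $\Sigma$ (your claim 1) already presupposes the overlap identity $U(\Delta)\cap U(\bar\Delta)=U(\Delta\cap\bar\Delta)$ of claim 4, so a clean write-up should prove claims 3 and 4 before asserting $U(\Delta)$ is open and $\pi(\Delta)$ is a homeomorphism.
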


For any $\A\in\Sigma$, the ring $\mathcal{O}_\A$ is called the \emph{local ring} of $\Sigma$ at $\A$ and any element of $\mathcal{O}_\A$ is called a \emph{germ} of functions at $\A$.

Consider any $\Delta\in\mathcal{D}$.

Consider any open subset $U$ of $\Sigma$.
By claim 1 of the above lemma, we have $\mathcal{O}_{R(\Delta)}(\pi(\Delta)^{-1}(U))=\mathcal{O}(U\cap U(\Delta))$.
We define the ring homomorphism 
$$\pi(\Delta)^*(U):\mathcal{O}(U)\rightarrow
\mathcal{O}_{R(\Delta)}(\pi(\Delta)^{-1}(U))$$
by putting $\pi(\Delta)^*(U)=\Res^U_{ U\cap U(\Delta)}$.

We denote the set $\{\pi(\Delta)^*(U)|U$ is an open subset of $\Sigma\}$ by a single symbol $\pi(\Delta)^*$, and we denote the pair $(\pi(\Delta),\pi(\Delta)^*)$ by a single symbol $\pi(\Delta)$.

Since $S$ is a subring of $R(\Delta)$, we have a morphism of schemes $\iota(\Delta)^*:\Spec(R(\Delta))$\hfill\break$\rightarrow\Spec(S)$ associated with the inclusion ring homomorphism $\iota(\Delta):S\rightarrow R(\Delta)$.

\begin{lemma}
\label{structure morphism}
Let $S$ be any integral domain; let $V$ be any finite dimensional vector space over $\R$; let $N$ be any lattice of $V$ and let $\mathcal{D}$ be any simplicial cone decomposition over $N^*$ in $V^*$.

\begin{enumerate}
\item
Consider any $\Delta\in\mathcal{D}$.

$\pi(\Delta)^*$ is a morphism of sheaves over $\pi(\Delta)$, in other words, for any open subsets $U$ and $V$ of $\Sigma$ with $U\subset V$, $\Res^{\pi(\Delta)^{-1}(V)}_ {\pi(\Delta)^{-1}(U)}\pi(\Delta)^*(V)=\pi(\Delta)^*(U)\Res^V_U$.

The pair $\pi(\Delta)=(\pi(\Delta),\pi(\Delta)^*)$ is a morphism of schemes $\Spec(R(\Delta))\rightarrow\Sigma$, and it is an open embedding.
\item
There exists uniquely a morphism of schemes $\rho:\Sigma\rightarrow\Spec(S)$ satisfying  $\rho\pi(\Delta)=\iota(\Delta)^*$ for any $\Delta\in\mathcal{D}$, where $\iota(\Delta)^*:\Spec(R(\Delta))\rightarrow\Spec(S)$ denotes the morphism of schemes associated with the inclusion ring homomorphism $\iota(\Delta):S\rightarrow R(\Delta)$.
\end{enumerate}

We take the morphism $\rho:\Sigma\rightarrow\Spec(S)$ satisfyng $\rho\pi(\Delta)=\iota(\Delta)^*$ for any $\Delta\in\mathcal{D}$, where $\iota(\Delta)^*:\Spec(R(\Delta))\rightarrow\Spec(S)$ denotes the same as above.
\begin{enumerate}
\setcounter{enumi}{2}
\item
The morphism $\rho$ is separated, of finite type, smooth and surjective.

Consider any $\p\in\Spec(S)$. The inverse image of the point $\p$ by $\rho$ is the fiber product scheme $\Sigma\times_{\Spec(S)}\Spec(S_\p/\p S_p)$. The scheme $\Sigma\times_{\Spec(S)}\Spec(S_\p/\p S_p)$ is non-empty, irreducible and smooth, and
$\dim \Sigma\times_{\Spec(S)}\Spec(S_\p/\p S_p)=\dim V$.

\item
For any $\A\in\Sigma$, $\rho(\A)=M(\mathcal{O}_\A)\cap S$.
\item
Consider any open subset $U$ of $\Spec(S)$. Note that the morphism of schemes $\rho$ gives a ring homomorphism $\rho^*(U):\mathcal{O}_S(U)\rightarrow\mathcal{O}(\rho^{-1}(U))$.

If $U\neq\emptyset$, then $\rho^{-1}(U)\neq\emptyset$, $\mathcal{O}_S(U)$ and $\mathcal{O}(\rho^{-1}(U))$ are subrings of $K$, $\mathcal{O}_S(U)\subset\mathcal{O}(\rho^{-1}(U))$ and $\rho^*(U)$ is equal to the inclusion ring homomorphism.

If $U=\emptyset$, then $\rho^{-1}(U)=\emptyset$, $\mathcal{O}_S(U)=\mathcal{O}(\rho^{-1}(U))=\{0\}$ and $\rho^*(U)$ is equal to the inclusion ring homomorphism.
\item
Consider any $\A\in\Sigma$.

$\rho(\A)= M(\mathcal{O}_\A)\cap S\in\Spec(S)$.
$S_{\rho(\A)}\subset\mathcal{O}_\A$.
$M(S_{\rho(\A)})=M(\mathcal{O}_\A)\cap S_{\rho(\A)}$.

There exists uniquely a ring homomorphism $\rho^*(\A):S_{\rho(\A)}\rightarrow\mathcal{O}_\A$ satisfying $\Res^{\rho^{-1}(U)}_\A\rho^*(U)= \rho^*(\A)\Res^U_{\rho(\A)}$ for any open subset $U$ of $\Spec(S)$ with $\rho(\A)\in U$.

If a ring homomorphism $\rho^*(\A):S_{\rho(\A)}\rightarrow\mathcal{O}_\A$ satisfies $\Res^{\rho^{-1}(U)}_\A\rho^*(U)= \rho^*(\A)\Res^U_{\rho(\A)}$ for any open subset $U$ of $\Spec(S)$ with $\rho(\A)\in U$, then $\rho^*(\A)$ coincides with the inclusion ring homomorphism and 
$\rho^*(\A)(M(S_{\rho(\A)}))\subset M(\mathcal{O}_\A)$.
\item
The morphism $\rho$ is proper, if and only if, $|\mathcal{D}|=V^*$.
\end{enumerate}
\end{lemma}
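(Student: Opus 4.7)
The plan is to exploit the affine cover $\{U(\Delta)\mid \Delta\in\mathcal{D}\}$ of $\Sigma$ furnished by Lemma~\ref{scheme1}, together with the identifications $U(\Delta)\cong\Spec(R(\Delta))$ via $\pi(\Delta)$. For claim 1, the sheaf-morphism compatibility $\Res^{\pi(\Delta)^{-1}(V)}_{\pi(\Delta)^{-1}(U)}\pi(\Delta)^*(V)=\pi(\Delta)^*(U)\Res^V_U$ is literally the transitivity of restriction, because every ring involved is a subring of the common field $K$ and $\pi(\Delta)^*(U)=\Res^U_{U\cap U(\Delta)}$ by definition; the open-embedding statement is then immediate from Lemma~\ref{scheme1}.1. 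For claim 2, one glues the affine structure morphisms $\iota(\Delta)^*\colon\Spec(R(\Delta))\to\Spec(S)$. On the overlap $U(\Delta)\cap U(\Delta')=U(\Delta\cap\Delta')$ one has $R(\Delta),R(\Delta')\subset R(\Delta\cap\Delta')$ as subrings of $K$, and the three inclusions of $S$ into these rings coincide with the inclusion $S\hookrightarrow R(\Delta\cap\Delta')$. Hence $\iota(\Delta)^*$ and $\iota(\Delta')^*$ agree on their common restriction, and the standard gluing lemma for morphisms of schemes produces a unique $\rho$.

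Claims 3--6 all reduce to the affine picture. By Lemma~\ref{scheme1}.2, for any $\hat{\Delta}\in\mathcal{D}$ of maximal dimension the ring $R(\hat{\Delta})$ is a polynomial ring over $S$ in $\dim V$ variables, so $U(\hat{\Delta})\to\Spec(S)$ is smooth, affine, surjective, and of finite type, with fibers geometrically isomorphic to affine spaces of dimension $\dim V$ over the residue field at the image point. Because such $U(\hat{\Delta})$'s cover $\Sigma$ by Lemma~\ref{scheme1}.3, these properties pass to $\rho$, and irreducibility of the fibers follows since the generic point of a fiber lies in every $U(\hat{\Delta})$ meeting it. For claims 4--6, the point is that on each chart $\pi(\Delta)\colon\Spec(R(\Delta))\to\Sigma$ the morphism $\rho\pi(\Delta)$ is induced by the literal inclusion $S\hookrightarrow R(\Delta)$ of subrings of $K$; hence $\rho^*(U)$ is the inclusion $\mathcal{O}_S(U)\subset\mathcal{O}(\rho^{-1}(U))$, and for any $\A\in\Sigma$ the stalk map $\rho^*(\A)\colon S_{\rho(\A)}\to\mathcal{O}_\A$ is the inclusion of subrings, which forces $\rho(\A)=M(\mathcal{O}_\A)\cap S$ by the local-ring property.

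The main obstacle is claim 7, the properness criterion $|\mathcal{D}|=V^*$. We establish it via the valuative criterion. For the reverse implication, let $R'\subset K'$ be a valuation ring with fraction field $K'$, and suppose we are given a morphism $\Spec(K')\to\Sigma$ and a lift $\Spec(R')\to\Spec(S)$. The image in $\Sigma$ lies in some $U(\Delta_0)$, and the valuation $v\colon K^{\prime\times}\to\Z$ (or more generally into an ordered group) applied to the characters $x(e)$, $e\in N$, defines an element $a_v\in V$ via $e\mapsto v(x(e))$; the hypothesis $|\mathcal{D}|=V^*$ guarantees that $a_v$ lies in some cone $\Delta\in\mathcal{D}$, and then Lemma~\ref{interesting ring}.8 identifies a unique $\p\in\Spec(R(\Delta))$ whose localization $R(\Delta)_\p$ maps into $R'$, yielding the unique extension $\Spec(R')\to\Sigma$. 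For the forward implication, if $|\mathcal{D}|\neq V^*$ choose $a\in V\setminus|\mathcal{D}|$; the map $\Z\to K^\times$, $n\mapsto t^n$ (for an indeterminate $t$ over $S$) composed with $x$-substitution of $e\mapsto\langle\cdot,e\rangle a$ produces a morphism $\Spec(S((t)))\to\Sigma$ which, because $a\notin|\mathcal{D}|$, admits no extension across the valuation ring $S[[t]]$, showing $\rho$ is not proper.
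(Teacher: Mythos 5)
The paper itself provides no proof of this lemma (Section~19 is largely stated without proofs under the blanket disclaimer ``most of our claims follow from definitions''), so there is no paper argument to compare against; I evaluate your proposal on its own terms. Your overall strategy --- reduce everything to the affine chart cover $\{U(\Delta)\}$ from Lemma~\ref{scheme1}, glue the affine structure morphisms along overlaps $U(\Delta)\cap U(\Delta')=U(\Delta\cap\Delta')$, verify local properties chart-by-chart, and use the valuative criterion for claim 7 --- is the correct and standard toric approach, and the key observation that every ring in sight is a literal subring of the common field $K$ does make the sheaf compatibility and uniqueness statements of claims 1, 2, 5 and 6 essentially tautological. That much is sound.

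There are, however, several imprecisions that you should repair. First, the valuation vector you extract by composing $N\to K'^{\times}\to\Gamma$ lives in $V^{*}$, not in $V$; likewise in the converse direction you should choose $a\in V^{*}\setminus|\mathcal{D}|$, since $|\mathcal{D}|\subset V^{*}$. Second, you assert that for $\hat\Delta\in\mathcal{D}$ of maximal dimension $R(\hat\Delta)$ is a polynomial ring over $S$ in $\dim V$ variables; this is false unless $\dim\hat\Delta=\dim V$. What Lemma~\ref{scheme1}.2 actually gives is that $R(\hat\Delta)$ is a \emph{localization} of a polynomial ring over $S$ in $\dim V$ variables, and the fibers of $\Spec(R(\hat\Delta))\to\Spec(S)$ are products of affine spaces and tori, not affine spaces --- still smooth, irreducible, of dimension $\dim V$, but you should not claim they are affine spaces. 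For irreducibility of the fiber $\Sigma\times_{\Spec(S)}\Spec(\kappa(\p))$ the cleaner argument is that it contains the torus $\Spec(\Map'(N,\kappa(\p)))=\rho^{-1}(\p)\cap U(\{0\})$ as a dense open subset, and that surjectivity of $\rho$ already follows from the single chart $U(\{0\})$. Third, your valuative-criterion argument uses DVRs, but the DVR criterion establishes properness only over a locally Noetherian base, and $S$ is merely assumed to be an integral domain; for general $\Gamma$-valued valuations your map $N\to\Gamma$ does not directly produce a point of $V^{*}$. The cleanest repair is to first prove the statement over $S=\Z$ (where the DVR criterion applies and your argument is fine) and then transport it to arbitrary $S$ by base change via Lemma~\ref{base change}.6, since properness is stable under base change. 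Finally, $S[[t]]$ is a valuation ring only when $S$ is a field; in the ``only if'' direction you should take the test pair to be $\Frac(S)[[t]]\subset\Frac(S)((t))$, choose $a\in N^{*}\setminus|\mathcal{D}|$ (rational points suffice since $|\mathcal{D}|$ is a closed rational polyhedral set with open complement), and send $x(e)\mapsto t^{\langle a,e\rangle}$.
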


We denote $\pi(\Delta)^*=\pi(S,V,N,\mathcal{D},\Delta)^*$ for any $\Delta\in\mathcal{D}$.

Let $\rho:\Sigma\rightarrow\Spec(S)$ be the morphism in the above lemma.
We denote $\rho=\rho(S,V,N,\mathcal{D},\Delta)$, and we call $\rho(S,V,N,\mathcal{D},\Delta)$ the \emph{structure morphism} of $\Sigma(S,V,N,$\hfill\break$\mathcal{D},\Delta)$.

When we need not refer to the quadruplet $(S,V,N,\mathcal{D})$, we also write simply
$\pi(\Delta)^*$ and $\rho$, instead of $\pi(S,V,N,\mathcal{D},\Delta)^*$ and $\rho(S,V,N,\mathcal{D})$ respectively.

When we need not refer to the triplet $(V,N,\mathcal{D})$, we also write simply
$\pi(S,\Delta)^*$ and $\rho(S)$, instead of $\pi(S,V,N,\mathcal{D},\Delta)^*$ and $\rho(S,V,N,\mathcal{D})$ respectively.

When we need not refer to the triplet $(S,V,N)$, we also write simply
$\pi(\mathcal{D},\Delta)^*$ and $\rho(\mathcal{D})$, instead of $\pi(S,V,N,\mathcal{D},\Delta)^*$ and $\rho(S,V,N,\mathcal{D})$ respectively.

\begin{lemma}
\label{base change}
Let $S$ and $T$ be any integral domains such that there exists a ring homomorphism from $S$ to $T$; let $\lambda:S\rightarrow T$ be any ring homomorphism: let $V$ be any finite dimensional vector space over $\R$; let $N$ be any lattice of $V$ and let $\mathcal{D}$ be any simplicial cone decomposition over $N^*$ in $V^*$.

\begin{enumerate}
\item
For any $\phi\in\Map(N,S)$, $\lambda\phi\in\Map(N,T)$ and $\Supp(\lambda\phi)\subset\Supp(\phi)$.

For any $\phi\in\Map'(N,S)$, $\lambda\phi\in\Map'(N,T)$.

Let $\Delta$ be any convex polyhedral cone in $V^*$.
For any $\phi\in\Map'(N,S)\backslash(\Delta^\vee|$\hfill\break$V^*)$, $\lambda\phi\in\Map'(N,T) \backslash(\Delta^\vee|V^*)$.
\end{enumerate}

Putting
$\lambda_\bullet(V,N)(\phi)=\lambda\phi\in\Map'(N,T)$
for any $\phi\in \Map'(N,S)$, we define a mapping $\lambda_\bullet(V,N): \Map'(N,S)\rightarrow\Map'(N,T)$.

Let $\Delta$ be any convex polyhedral cone in $V^*$.
Putting
$\lambda_\bullet(V,N, \Delta)(\phi)=\lambda\phi\in\Map'(N,T)\backslash(\Delta^\vee|V^*)$
for any $\phi\in \Map'(N,S)\backslash(\Delta^\vee|V^*)$, we define a mapping $\lambda_\bullet(V,$\hfill\break$N, \Delta): \Map'(N,S)\backslash(\Delta^\vee|V^*)\rightarrow\Map'(N,T)\backslash(\Delta^\vee|V^*)$.

\begin{enumerate}
\setcounter{enumi}{1}
\item
The mapping $\lambda_\bullet(V,N)$ is a ring homomorphism.
For any convex polyhedral cone $\Delta$ in $V^*$, $\lambda_\bullet(V,N,\Delta)$ is a ring homomorphism.
\item
Note that $\pi(S,\Delta):\Spec(\Map'(N,S)\backslash(\Delta^\vee|V^*))\rightarrow\Sigma(S)$,
$\pi(T,\Delta):\Spec(\Map'$\hfill\break$(N,T) \backslash(\Delta^\vee|V^*))\rightarrow\Sigma(T)$, and
$\lambda_\bullet(V,N, \Delta)^*: \Spec(\Map'(N,T)\backslash(\Delta^\vee|V^*))\rightarrow\Spec(\Map'(N,S) \backslash(\Delta^\vee|V^*))$
for any $\Delta\in\mathcal{D}$.
They are morphisms of schemes.

There exists uniquely a morphism $\lambda^\bullet(V,N,\mathcal{D}):\Sigma(T)\rightarrow\Sigma(S)$ of schemes satisfying $\pi(S,\Delta) \lambda_\bullet(V,N, \Delta)^*= \lambda^\bullet(V,N,\mathcal{D})\pi(T,\Delta)$ for any $\Delta\in\mathcal{D}$.
\end{enumerate}

We take the unique morphism $\lambda^\bullet(V,N,\mathcal{D}):\Sigma(T)\rightarrow\Sigma(S)$ of schemes satisfying $\pi(S,\Delta) \lambda_\bullet(V,N, \Delta)^*= \lambda^\bullet(V,N,\mathcal{D})\pi(T,\Delta)$ for any $\Delta\in\mathcal{D}$.
The morphism $\lambda^\bullet(V,N,\mathcal{D})$ is uniquely defined depending on the quadruplet $(\lambda,V,N,\mathcal{D})$.
\begin{enumerate}
\setcounter{enumi}{3}
\item
$\lambda^\bullet(V,N,\mathcal{D})^{-1}(U(S,\Delta))=U(T,\Delta)$ for any $\Delta\in\mathcal{D}$.
\item
$\rho(S) \lambda^\bullet(V,N,\mathcal{D})=\lambda^*\rho(T)$, where
$\rho(S):\Sigma(S)\rightarrow\Spec(S)$ and $\rho(T):\Sigma(T)\rightarrow\Spec(T)$ are structure morphisms and $\lambda^*:\Spec(T)\rightarrow\Spec(S)$ is the morphism of schemes induced by the ring homomorphism $\lambda:S\rightarrow T$.
\item
The morphism
$$(\lambda^\bullet(V,N,\mathcal{D}), \rho(T)):\Sigma(T)\rightarrow
\Sigma(S)\times_{\Spec(S)}\Spec(T)$$
induced by $\lambda^\bullet(V,N,\mathcal{D})$ and $\rho(T)$ is an isomorphism.
\item
$\Id_{S\bullet}(V,N)=\Id_{\Map'(N,S)}$.
$\Id_{S\bullet}(V,N,\Delta)=\Id_{\Map'(N,S)\backslash(\Delta^\vee|V^*)}$ for any convex polyhedral cone $\Delta$ in $V^*$.
$\Id_S^\bullet(V,N,\mathcal{D})=\Id_{\Sigma(S, V,N,\mathcal{D})}$.

Let $U$ be any integral domain such that there exists a ring homomorphism from $T$ to $U$ and let $\mu:T\rightarrow U$ be any ring homomorphism.
$$(\mu\lambda)_\bullet(V,N)= \mu_\bullet(V,N)\lambda_\bullet(V,N).$$
$$(\mu\lambda)_\bullet(V,N,\Delta)= \mu_\bullet(V,N,\Delta)\lambda_\bullet(V,N,\Delta) $$ for any convex polyhedral cone $\Delta$ in $V^*$.
$$(\mu\lambda)^\bullet(V,N,\mathcal{D})=\lambda^\bullet(V,N,\mathcal{D})\mu^\bullet(V,N,\mathcal{D}).$$
\end{enumerate}
\end{lemma}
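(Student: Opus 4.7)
The plan is to prove the seven items in order, noting that items 1, 2, and 7 are routine and the substantive content lies in items 3 and 6. For item 1, the key observation is that $\lambda(0)=0$, so $\Supp(\lambda\phi)\subset\Supp(\phi)$, which preserves both finiteness of support and the containment condition $\Supp(\phi)\subset(\Delta^\vee|V^*)\cap N$. For item 2, additivity of $\lambda_\bullet(V,N)$ and $\lambda_\bullet(V,N,\Delta)$ is pointwise and hence trivial; multiplicativity follows by applying $\lambda$ to the convolution formula $(\phi\psi)(e)=\sum_{(f,g)\in\Delta(e)}\phi(f)\psi(g)$, using that $\lambda$ is itself a ring homomorphism, and preservation of the unit $x(0)$ is immediate from $\lambda(1)=1$.

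For item 3 I would glue the affine morphisms $\lambda_\bullet(V,N,\Delta)^*:\Spec(R(T,\Delta))\to\Spec(R(S,\Delta))$ along the cover $\Sigma(T)=\bigcup_{\Delta\in\mathcal{D}}U(T,\Delta)$. For $\Delta,\Delta'\in\mathcal{D}$, Lemma~\ref{scheme1}.4 gives $U(T,\Delta)\cap U(T,\Delta')=U(T,\Delta\cap\Delta')$, so agreement on overlaps reduces to the ring-level compatibility that $\lambda_\bullet(V,N,\Delta\cap\Delta')$ is the common restriction of $\lambda_\bullet(V,N,\Delta)$ and $\lambda_\bullet(V,N,\Delta')$; this is immediate from the ``apply $\lambda$ coefficientwise'' definition. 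Uniqueness of the glued morphism follows from the open cover together with the separatedness established in Lemma~\ref{scheme1}.7. Item 4 then follows because $\pi(T,\Delta)$ has image $U(T,\Delta)$ and the defining equation $\pi(S,\Delta)\lambda_\bullet(V,N,\Delta)^*=\lambda^\bullet(V,N,\mathcal{D})\pi(T,\Delta)$ sends this bijectively onto $U(S,\Delta)$. Item 5 is verified chart by chart: restricted to $U(T,\Delta)$, both $\rho(S)\lambda^\bullet(V,N,\mathcal{D})$ and $\lambda^*\rho(T)$ correspond on rings to the composition $S\xrightarrow{\lambda}T\hookrightarrow R(T,\Delta)$, which equals $S\hookrightarrow R(S,\Delta)\xrightarrow{\lambda_\bullet(V,N,\Delta)}R(T,\Delta)$.

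The main content is item 6, and this is the step I expect to require the most care. Working chart-wise, I reduce to showing the natural ring homomorphism $R(S,\Delta)\otimes_S T\to R(T,\Delta)$ is an isomorphism for every $\Delta\in\mathcal{D}$. By Lemma~\ref{interesting ring}.7(a) (combined with the localization description 7(c) when $\dim\Delta<\dim V$), $R(S,\Delta)=\Map'(N,S)\backslash(\Delta^\vee|V^*)$ is the free $S$-module on the monoid $(\Delta^\vee|V^*)\cap N$ equipped with the semigroup multiplication; tensoring with $T$ yields the free $T$-module on the same monoid with the same multiplication, which is precisely $R(T,\Delta)$. Combined with item 4 this gives $U(T,\Delta)\cong U(S,\Delta)\times_{\Spec S}\Spec T$, and these chart-wise isomorphisms glue into the required global isomorphism because the gluing data on $\Sigma(T)$ is transported from the gluing data on $\Sigma(S)\times_{\Spec S}\Spec T$ by the same compatibility used in item 3. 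The subtle point, and the place I expect the bookkeeping to get heaviest, is verifying that the ambient field $K$ chosen to construct $\Sigma(T)$ can be arranged compatibly with the $K$ chosen for $\Sigma(S)$, so that the chart-wise ring isomorphisms really are identifications inside a common framework; this amounts to checking that $\Sigma(T,V,N,\mathcal{D})$ is, up to canonical isomorphism, independent of the choice of ambient field, which is clear from its description as a colimit of $\Spec R(T,\Delta)$.

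Finally, item 7 is routine functoriality: $\Id_{S\bullet}$ is pointwise the identity on $\Map'(N,S)$, and $(\mu\lambda)\phi=\mu(\lambda\phi)$ pointwise gives $(\mu\lambda)_\bullet=\mu_\bullet\lambda_\bullet$ and the same for its restriction to $\Map'(N,S)\backslash(\Delta^\vee|V^*)$. Taking $\Spec$ reverses arrows, giving $(\mu\lambda)^\bullet=\lambda^\bullet\mu^\bullet$ on each chart, and the uniqueness in item 3 promotes this to the global equality.
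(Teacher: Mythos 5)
The paper does not actually supply a proof of this lemma (it is one of the many statements in Sections 19 that the author declares ``follow from definitions''), so there is no paper argument to compare against; evaluated on its own merits your proof is correct and follows the natural route. The only points worth flagging are minor imprecisions of reference and phrasing. In item 6 you cite Lemma~\ref{interesting ring}.7(a) and 7(c), but those clauses concern $\Map'(N,S)\backslash\Theta$ for $\Theta$ a simplicial cone over $N$ in $V$; since $\Delta^\vee|V^*$ is only simplicial when $\dim\Delta=\dim V$, the references that match the general case are Lemma~\ref{interesting ring}.8(a) together with 8(d)/8(e). That said, your underlying observation is correct and does not actually need those lemmas: directly from the definition, $\Map'(N,S)\backslash(\Delta^\vee|V^*)$ is $\bigoplus_{e\in(\Delta^\vee|V^*)\cap N}S$ with multiplication determined by $x(e)x(f)=x(e+f)$, and this description is manifestly stable under $-\otimes_S T$, giving $R(S,\Delta)\otimes_S T\cong R(T,\Delta)$. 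In item 4, ``sends this bijectively onto $U(S,\Delta)$'' overstates things (the map is a base change, not a bijection); the correct argument, which you clearly have in mind, is that the defining equation gives $\lambda^\bullet(U(T,\Delta))\subset U(S,\Delta)$, and conversely, if $\alpha\in U(T,\Delta')$ has $\lambda^\bullet(\alpha)\in U(S,\Delta)$, then $\lambda^\bullet(\alpha)\in U(S,\Delta\cap\Delta')$, and since $\lambda_\bullet(V,N,\Delta')$ fixes the monomials $x(e)$ whose nonvanishing cuts out $U(\Delta\cap\Delta')$ inside $U(\Delta')$, it follows that $\alpha\in U(T,\Delta\cap\Delta')\subset U(T,\Delta)$. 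Your treatment of the ambient-field issue via Lemma~\ref{scheme1}.16 is exactly right, and the gluing and uniqueness arguments in item 3 (compatibility on $U(T,\Delta\cap\Delta')$, then uniqueness from the open cover) are the standard ones and are applied correctly.
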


\begin{theorem}
\label{subdivision morphism}
Let $S$ be any integral domain; let $V$ be any finite dimensional vector space over $\R$; let $N$ be any lattice of $V$ and let $\mathcal{D}$ and $\bar{\mathcal{D}}$ be any simplicial cone decompositions over $N^*$ in $V^*$ such that $\mathcal{D}$ is a subdivision of $\bar{\mathcal{D}}$.
\begin{enumerate}
\item
For any $\Delta\in\mathcal{D}$ and any $\bar{\Delta}\in\bar{\mathcal{D}}$ with $\Delta\subset\bar{\Delta}$, $R(\bar{\Delta})\subset R(\Delta)$.
\item
There exists uniquely a morphism $\sigma(S,V,N)^{\mathcal{D}}_{\bar{\mathcal{D}}}:
\Sigma(S,V,N,\mathcal{D})\rightarrow \Sigma(S,V,N,\bar{\mathcal{D}})$ of schemes satisfying $\pi(S,V,N,\bar{\mathcal{D}})\iota(\Delta,\bar{\Delta})^*=
\sigma(S,V,N)^{\mathcal{D}}_{\bar{\mathcal{D}}}\pi(S,V,N,\mathcal{D})$
for any $\Delta\in\mathcal{D}$ and any $\bar{\Delta}\in\bar{\mathcal{D}}$ with $\Delta\subset\bar{\Delta}$, where 
$\iota(\Delta,\bar{\Delta}): R(\bar{\Delta})\rightarrow R(\Delta)$ denotes the inclusion ring homomorphism and 
$\iota(\Delta,\bar{\Delta})^*:\Spec(R(\Delta))\rightarrow\Spec(R(\bar{\Delta}))$ denotes the morphism of schemes induced by $\iota(\Delta,\bar{\Delta})$.
\end{enumerate}

We take the unique morphism $\sigma(S,V,N)^{\mathcal{D}}_{\bar{\mathcal{D}}}:
\Sigma(S,V,N,\mathcal{D})\rightarrow \Sigma(S,V,N,\bar{\mathcal{D}})$ of schemes satisfying $\pi(S,V,N,\bar{\mathcal{D}})\iota(\Delta,\bar{\Delta})^*=
\sigma(S,V,N)^{\mathcal{D}}_{\bar{\mathcal{D}}}\pi(S,V,N,\mathcal{D})$
for any $\Delta\in\mathcal{D}$ and any $\bar{\Delta}\in\bar{\mathcal{D}}$ with $\Delta\subset\bar{\Delta}$, where 
$\iota(\Delta,\bar{\Delta})^*$ denotes the same as above.

\begin{enumerate}
\setcounter{enumi}{2}
\item
$$\sigma(S,V,N)^{\mathcal{D}}_{\mathcal{D}}=\Id_{\Sigma(S,V,N,\mathcal{D})}.$$

Let $\bar{\bar{\mathcal{D}}}$ be any simplicial cone decomposition over $N^*$ in $V^*$ such that $\bar{\mathcal{D}}$ is a subdivision of $\bar{\bar{\mathcal{D}}}$.
$$\sigma(S,V,N)^{\bar{\mathcal{D}}}_{\bar{\bar{\mathcal{D}}}}\sigma(S,V,N)^{\mathcal{D}}_{\bar{\mathcal{D}}}=
\sigma(S,V,N)^{\mathcal{D}}_{\bar{\bar{\mathcal{D}}}}.$$
\item
Let $T$ be any integral domain such that there exists a ring homomorphism from $S$ to $T$ and $\lambda:S\rightarrow T$ be any ring homomorphism.
$$\sigma(S,V,N)^{\mathcal{D}}_{\bar{\mathcal{D}}}\lambda^\bullet(V,N,\mathcal{D})
=\lambda^\bullet(V,N,\bar{\mathcal{D}})\sigma(T,V,N)^{\mathcal{D}}_{\bar{\mathcal{D}}}.$$
\end{enumerate}

Below, we denote
$$\sigma=\sigma(S,V,N)^{\mathcal{D}}_{\bar{\mathcal{D}}}:
\Sigma(S,V,N,\mathcal{D})=\Sigma(\mathcal{D})\rightarrow \Sigma(S,V,N,\bar{\mathcal{D}})=\Sigma(\bar{\mathcal{D}}),$$
for simplicity.
\begin{enumerate}
\setcounter{enumi}{4}
\item
The morphism $\sigma$ is separated, of finite type, dominating and birational.

$\rho(\bar{\mathcal{D}})\sigma=\rho(\mathcal{D})$.
\item
Consider any $\A\in\Sigma$.

$\mathcal{O}(\bar{\mathcal{D}})_{\sigma(\A)}\subset\mathcal{O}(\mathcal{D})_\A$.
$M(\mathcal{O}(\bar{\mathcal{D}})_{\sigma(\A)})=M(\mathcal{O}(\mathcal{D})_\A)\cap\mathcal{O}(\bar{\mathcal{D}})_{\sigma(\A)}$.

If $\bar{\A}\in\Sigma(\bar{\mathcal{D}})$,
$\mathcal{O}(\bar{\mathcal{D}})_{\bar{\A}}\subset\mathcal{O}(\mathcal{D})_\A$ and 
$M(\mathcal{O}(\bar{\mathcal{D}})_{\bar{\A}})=M(\mathcal{O}(\mathcal{D})_\A)\cap\mathcal{O}(\bar{\mathcal{D}})_{\bar{\A}}$, then $\bar{\A}=\sigma(\A)$.
\item
Consider any open subset $U$ of $\Sigma(\bar{\mathcal{D}})$. Note that the morphism of schemes $\sigma$ gives a ring homomorphism $\sigma^*(U):\mathcal{O}(\bar{\mathcal{D}}) (U)\rightarrow\mathcal{O}(\mathcal{D})(\rho^{-1}(U))$.

If $U\neq\emptyset$, then $\sigma^{-1}(U)\neq\emptyset$, $\mathcal{O}(\bar{\mathcal{D}})(U)$ and $\mathcal{O}(\mathcal{D})(\rho^{-1}(U))$ are subrings of $K$, $\mathcal{O}(\bar{\mathcal{D}}) (U)\subset\mathcal{O}(\mathcal{D})(\rho^{-1}(U))$ and $\sigma^*(U)$ is equal to the inclusion ring homomorphism.

If $U=\emptyset$, then $\sigma^{-1}(U)=\emptyset$, $\mathcal{O}(\bar{\mathcal{D}})(U)= \mathcal{O}(\mathcal{D})(\rho^{-1}(U))=\{0\}$ and $\sigma^*(U)$ is equal to the inclusion ring homomorphism.
\item
Consider any $\A\in\Sigma(\mathcal{D})$.

There exists uniquely a ring homomorphism $\sigma^*(\A): \mathcal{O}(\bar{\mathcal{D}})_{\sigma(\A)}\rightarrow\mathcal{O}(\mathcal{D})_\A$ satisfying $\Res^{\sigma^{-1}(U)}_\A\sigma^*(U)= \sigma^*(\A)\Res^U_{\rho(\A)}$ for any open subset $U$ of $\Sigma(\bar{\mathcal{D}})$ with $\sigma(\A)\in U$.

If a ring homomorphism $\sigma^*(\A): \mathcal{O}(\bar{\mathcal{D}})_{\sigma(\A)}\rightarrow\mathcal{O}(\mathcal{D})_\A$ satisfies $\Res^{\sigma^{-1}(U)}_\A\sigma^*($\hfill\break$U)= \sigma^*(\A)\Res^U_{\rho(\A)}$ for any open subset $U$ of $\Sigma(\bar{\mathcal{D}})$ with $\sigma(\A)\in U$, then $\sigma^*(\A)$ coincides with the inclusion ring homomorphism and 
$\sigma^*(\A)(M(\mathcal{O}(\bar{\mathcal{D}})_{\sigma(\A)}))\subset M(\mathcal{O}(\mathcal{D})_\A)$.
\item
The morphism $\sigma$ is proper, if and only if, $|\mathcal{D}|=|\bar{\mathcal{D}}|$.
\item
For any $\Delta\in\mathcal{D}$ and any $\bar{\Delta}\in\bar{\mathcal{D}}$ with $\Delta\subset\bar{\Delta}$, $\sigma(U(\mathcal{D},\Delta))\subset U(\bar{\mathcal{D}},\bar{\Delta})$.
\item
For any $\Delta\in\mathcal{D}\cap \bar{\mathcal{D}}$, $\sigma(U(\mathcal{D},\Delta))= U(\bar{\mathcal{D}},\Delta)$ and the morphism $\sigma: U(\mathcal{D},\Delta)\rightarrow U(\bar{\mathcal{D}},\Delta)$ induced by $\sigma$ is an isomorphism of schemes.
\item
If $\bar{\Delta}\in\bar{\mathcal{D}}$, $\dim \bar{\Delta}\geq 1$, $\mathcal{D}=\bar{\mathcal{D}}*\bar{\Delta}$ and we give the reduced scheme structure to the closed subset $V(\bar{\mathcal{D}},\bar{\Delta})$ of $\Sigma(\bar{\mathcal{D}})$, then the morphism $\sigma:\Sigma(\mathcal{D})\rightarrow \Sigma(\bar{\mathcal{D}})$ coincides with the blowing-up of $\Sigma(\bar{\mathcal{D}})$ with center in the subscheme $V(\bar{\mathcal{D}},\bar{\Delta})$.
\item
Consider any $\Delta\in\mathcal{D}$.
Let $\bar{\Delta}\in\bar{\mathcal{D}}$ be the unique element with $\Delta^\circ\subset\bar{\Delta}^\circ$.

Then, $\sigma(V^\circ(\mathcal{D},\Delta))= V^\circ(\bar{\mathcal{D}},\bar{\Delta}).$
\item
For any $\bar{\Delta}\in\bar{\mathcal{D}}$,
\begin{equation*}\begin{split}
\sigma^{-1}(U(\bar{\mathcal{D}},\bar{\Delta}))=&
\bigcup_{\Delta\in\mathcal{D},\Delta\subset\bar{\Delta}}U(\mathcal{D},\Delta),
\text{ and}\\
\sigma^{-1}(V^\circ(\bar{\mathcal{D}},\bar{\Delta}))=&
\bigcup_{\Delta\in\mathcal{D},\Delta^\circ\subset\bar{\Delta}^\circ}V^\circ (\mathcal{D},\Delta).
\end{split}\end{equation*}
\end{enumerate}
\end{theorem}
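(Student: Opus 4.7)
The overall strategy is to glue a global morphism from the affine-local data provided by the open cover $\{U(\mathcal{D},\Delta)\mid\Delta\in\mathcal{D}\}$ and then derive every subsequent assertion by translating it into a question about the explicit subrings $R(\Delta)\subset K$.

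First I would prove (1): by definition $\Delta\subset\bar\Delta$ gives $\bar\Delta^{\vee}\supset\Delta^{\vee}$, hence
$\Map'(N,S)\backslash(\bar\Delta^{\vee}|V^{*})\subset\Map'(N,S)\backslash(\Delta^{\vee}|V^{*})$, i.e.\ $R(\bar\Delta)\subset R(\Delta)$. Next, for (2), I would use Lemma~\ref{scheme1}.1 to identify $\Sigma(\bar{\mathcal{D}})$ locally with $\Spec R(\bar\Delta)$. For each $\Delta\in\mathcal{D}$ and each $\bar\Delta\in\bar{\mathcal{D}}$ with $\Delta\subset\bar\Delta$ the inclusion $\iota(\Delta,\bar\Delta):R(\bar\Delta)\hookrightarrow R(\Delta)$ yields an affine morphism $\iota(\Delta,\bar\Delta)^{*}:\Spec R(\Delta)\to\Spec R(\bar\Delta)$; composing with the open embeddings $\pi(\bar{\mathcal{D}},\bar\Delta)$ produces a morphism $\tau_{\Delta,\bar\Delta}:U(\mathcal{D},\Delta)\to\Sigma(\bar{\mathcal{D}})$. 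Because all rings involved sit inside the common field $K$, $\tau_{\Delta,\bar\Delta}$ is independent of the choice of $\bar\Delta\supset\Delta$ (Lemma~\ref{scheme1}.4 gives $U(\bar{\mathcal{D}},\bar\Delta)\cap U(\bar{\mathcal{D}},\bar\Delta')=U(\bar{\mathcal{D}},\bar\Delta\cap\bar\Delta')$, and the relevant localisations coincide). On overlaps $U(\mathcal{D},\Delta)\cap U(\mathcal{D},\Delta')=U(\mathcal{D},\Delta\cap\Delta')$ the two local morphisms agree by the same sub-field argument, so the standard gluing lemma produces the unique $\sigma=\sigma(S,V,N)^{\mathcal{D}}_{\bar{\mathcal{D}}}$.

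Parts (3)--(9) I would then treat in one sweep. Functoriality (3) and naturality in base change (4) reduce to the corresponding ring-level identities $\iota(\Delta,\bar{\bar\Delta})=\iota(\Delta,\bar\Delta)\iota(\bar\Delta,\bar{\bar\Delta})$ and the compatibility of $\lambda_{\bullet}$ with inclusion of subrings (Lemma~\ref{base change}). Separatedness and finite type (5) follow from the affine-local description and Lemma~\ref{scheme1}.2; dominance and birationality follow from Lemma~\ref{interesting ring} since the generic point of both schemes is $\Map'(N,S)$ sitting in $K$; the structure morphism identity $\rho(\bar{\mathcal{D}})\sigma=\rho(\mathcal{D})$ is visible on affines because the composition corresponds to $S\hookrightarrow R(\bar\Delta)\hookrightarrow R(\Delta)$. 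The local-ring statements (6), the sheaf-morphism identity (7) and (8) are immediate consequences of the fact that $\mathcal{O}(\mathcal{D})$ and $\mathcal{O}(\bar{\mathcal{D}})$ are defined as intersections of sub-fields of $K$ and $\sigma^{*}$ is literally the inclusion. Properness (9) is the standard valuative criterion: it reduces to the statement that every $\Z$-valued point of $\Vect(|\mathcal{D}|)^{*}$ lies in some $\Delta\in\mathcal{D}$ iff $|\mathcal{D}|=|\bar{\mathcal{D}}|$.

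For parts (10)--(14) I would combine the subdivision combinatorics from Section~\ref{decomposition} with the explicit cover of Lemma~\ref{scheme1}. Statement (10) is a direct consequence of the construction of $\tau_{\Delta,\bar\Delta}$; (11) follows because when $\Delta\in\mathcal{D}\cap\bar{\mathcal{D}}$ we have $R(\Delta)=R(\bar\Delta)$, so $\iota(\Delta,\Delta)=\Id$ and Lemma~\ref{scheme1}.1 makes $\sigma$ an isomorphism on $U(\mathcal{D},\Delta)$. Assertion (13) on images of open strata reduces to checking, for $\Delta\in\mathcal{D}$ lying in $\bar\Delta\in\bar{\mathcal{D}}$ with $\Delta^\circ\subset\bar\Delta^\circ$, that the stratum description in Lemma~\ref{scheme1}.2.(d) is preserved by the inclusion $R(\bar\Delta)\subset R(\Delta)$; (14) follows from (13) together with Lemma~\ref{scheme1}.5.(i) and the disjoint decomposition of $U(\bar{\mathcal{D}},\bar\Delta)$ into $V^{\circ}$-strata.

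The genuinely hard step is (12), the identification of a single star subdivision $\mathcal{D}=\bar{\mathcal{D}}\ast\bar\Delta$ with the blowing-up of $\Sigma(\bar{\mathcal{D}})$ along the reduced stratum $V(\bar{\mathcal{D}},\bar\Delta)$. My plan is to check this affine-locally on each $U(\bar{\mathcal{D}},\bar\Lambda)$ with $\bar\Lambda\in\bar{\mathcal{D}}/\bar\Delta$: using Lemma~\ref{scheme1}.2 I can pick a full-dimensional simplicial $\widehat{\bar\Lambda}\supset\bar\Lambda$ so that $R(\bar\Lambda)$ becomes a polynomial ring over $S$ and the ideal sheaf of $V(\bar{\mathcal{D}},\bar\Delta)$ restricted to $U(\bar{\mathcal{D}},\bar\Lambda)$ is generated by the monomials $\{x(b^{\vee}_{E/N^{*}})\mid E\in\mathcal{F}(\bar\Delta)_{1}\}$ (Lemma~\ref{interesting ring}.8.(e)). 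The new cones of $\bar{\mathcal{D}}\ast\bar\Delta$ that sit inside $\bar\Lambda$ are exactly $\Lambda'+\R_{0}b_{\bar\Delta/N^{*}}$ for $\Lambda'\in\mathcal{F}(\bar\Lambda)$ with $\bar\Delta\not\subset\Lambda'$, and the associated rings $R(\Lambda'+\R_{0}b_{\bar\Delta/N^{*}})$ are precisely the affine charts of the blowing-up of the monomial ideal above. Matching these charts with Grothendieck's universal blowing-up, via the universal property applied to the invertibility of each $x(b^{\vee}_{E/N^{*}})$-ratio, yields (12). The book-keeping for this matching, together with the verification that the induced morphism agrees with $\sigma$ on overlaps, is where the main technical effort lies.
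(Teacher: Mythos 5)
The paper does not actually supply a proof of Theorem~\ref{subdivision morphism} (Section~\ref{intro} explicitly reserves written proofs for ``difficult parts''), so there is nothing to compare your argument against; I can only assess it on its own terms, and it is sound. Your strategy of gluing over the affine cover $\{U(\mathcal{D},\Delta)\}$ is the right one, and the crucial simplification you exploit — that $R(\Delta)$, $R(\bar\Delta)$, $\mathcal{O}(\mathcal{D})_\A$, $\mathcal{O}(\bar{\mathcal{D}})_{\bar\A}$ are all concrete subrings of the one fixed field $K$, so that compatibility on overlaps and independence of the auxiliary choice of $\bar\Delta\supset\Delta$ become purely set-theoretic statements about intersections of subrings — is exactly what makes the gluing and the subsequent items (3)--(8) mechanical, and is consistent with how Lemma~\ref{localization}, Lemma~\ref{scheme1}, and Lemma~\ref{structure morphism} are formulated.

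Two places would need a bit more writing to be airtight but are not gaps. For (9) your phrase ``every $\Z$-valued point of $\Vect(|\mathcal{D}|)^{*}$ lies in some $\Delta$'' should be the usual limit-of-one-parameter-subgroup form of the valuative criterion: for a lattice point $v\in N$, the limit in $\Sigma(\bar{\mathcal{D}})$ exists iff $v\in|\bar{\mathcal{D}}|^{\vee}$-dual-side, i.e.\ iff the corresponding $\omega$ lies in $|\bar{\mathcal{D}}|$, and likewise for $\Sigma(\mathcal{D})$; equality of the two conditions for all lattice points is precisely $|\mathcal{D}|=|\bar{\mathcal{D}}|$. For (12) your outline is correct: Lemma~\ref{scheme1}.2(e) gives that the reduced ideal of $V(\bar{\mathcal{D}},\bar\Delta)$ restricted to $U(\bar{\mathcal{D}},\bar\Lambda)$ with $\bar\Lambda\in\bar{\mathcal{D}}/\bar\Delta$ is the monomial ideal generated by $\{x({b_{E/N^{*}}}^{\vee}_{\widehat{\bar\Lambda}})\mid E\in\mathcal{F}(\bar\Delta)_{1}\}$, Lemma~\ref{probcd}.3 (with Lemma~\ref{prep3}) identifies the top cones of $\bar{\mathcal{D}}*\bar\Delta$ inside $\bar\Lambda$ as $\Lambda'+\R_{0}b_{\bar\Delta/N^{*}}$ for $\Lambda'\in\mathcal{F}(\bar\Lambda)$ with $\bar\Delta\not\subset\Lambda'$ and $\Lambda'+\bar\Delta=\bar\Lambda$, and these are exactly the affine pieces of the monomial blow-up; matching them via the universal property of the blow-up (each $x({b_{E/N^{*}}}^{\vee})$-ratio becomes a unit on the right chart) then finishes, and the independence-of-chart check is again a subring-of-$K$ identity, the same observation you used in (2). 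So the ``main technical effort'' you flag is chiefly bookkeeping, as you suspected.
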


We call the above morphism $\sigma(S,V,N)^{\mathcal{D}}_{\bar{\mathcal{D}}}:
\Sigma(S,V,N,\mathcal{D})\rightarrow \Sigma(S,V,N,\bar{\mathcal{D}})$ the \emph{subdivision morphism} associated with $\bar{\mathcal{D}}$ and a subdivision $\mathcal{D}$ of $\bar{\mathcal{D}}$.

When we need not refer to the quintuplet $(S,V,N, \bar{\mathcal{D}}, \mathcal{D})$, we also write simply
$\sigma$, instead of $\sigma(S,V,N)^{\mathcal{D}}_{\bar{\mathcal{D}}}$.

When we need not refer to the triplet $(S,V,N)$, we also write simply
$\sigma^{\mathcal{D}}_{\bar{\mathcal{D}}}$, instead of $\sigma(S,V,N)^{\mathcal{D}}_{\bar{\mathcal{D}}}$,

\begin{lemma}
\label{relation to ncs}
Let $S$ be any algebraically closed field; let $V$ be any finite dimensional vector space over $\R$ with $\dim V\geq 1$; let $N$ be any lattice of $V$; let $\mathcal{D}$ and $\bar{\mathcal{D}}$ be any simplicial cone decompositions over $N^*$ in $V^*$ such that $\mathcal{D}$ is an iterated barycentric subdivision of $\bar{\mathcal{D}}$, 
$|\bar{\mathcal{D}}|$ is a convex polyhedral cone in $V^*$ and $\dim |\bar{\mathcal{D}}|=\dim V$; let $m\in\Z_0$ and let $F$ be the center sequence of $\bar{\mathcal{D}}$ of length $m$ such that $\mathcal{D}=\bar{\mathcal{D}}*F(1)*F(2)*\cdots *F(m)$.

We denote $D=\sum_{\Gamma\in\mathcal{D}_1}V(\mathcal{D},\Gamma)\in\Div(\Sigma(\mathcal{D}))$ and $\bar{D}=\sum_{\Gamma\in\bar{\mathcal{D}}_1}V(\bar{\mathcal{D}},\Gamma)\in\Div(\Sigma(\bar{\mathcal{D}}))$.
\begin{enumerate}
\item
The pair $(\Sigma(\mathcal{D}),D)$ is a normal crossing scheme over $S$.
$\Comp(D)=\{ V(\mathcal{D},\Gamma)| $\break$\Gamma\in\mathcal{D}_1\}$.
$(D)_0=\cup_{\Delta\in\mathcal{D}^0}V^\circ(\mathcal{D},\Delta)$.
If $\A\in (D)_0$, $\Delta\in\mathcal{D}^0$ and $\{\A\}= V^\circ(\mathcal{D},\Delta)$, then
$U(\Sigma(\mathcal{D}), D,\A)=U(\mathcal{D},\Delta)$ and
$\Comp(D)(\A)= \{ V(\mathcal{D},\Gamma)| \Gamma\in\mathcal{F}(\Delta)_1\}$.
\item
The pair $(\Sigma(\bar{\mathcal{D}}),\bar{D})$ is a normal crossing scheme over $S$.
$\Comp(\bar{D})=\{ V(\bar{\mathcal{D}},\Gamma)| $\break$\Gamma\in\bar{\mathcal{D}}_1\}$.
$(\bar{D})_0=\cup_{\Delta\in\bar{\mathcal{D}}^0}V^\circ(\bar{\mathcal{D}},\Delta)$.
If $\A\in (\bar{D})_0$, $\Delta\in\bar{\mathcal{D}}^0$ and $\{\A\}= V^\circ(\bar{\mathcal{D}},\Delta)$, then
$U(\Sigma(\bar{\mathcal{D}}), \bar{D},\A)=U(\bar{\mathcal{D}},\Delta)$ and
$\Comp(\bar{D})(\A)= \{ V(\bar{\mathcal{D}},\Gamma)| \Gamma\in\mathcal{F}(\Delta)_1\}$.
\item
The morphism $\sigma:\Sigma(\mathcal{D})\rightarrow\Sigma(\bar{\mathcal{D}})$
is an admissible composition of blowing-ups over $\bar{D}$.
If $\dim F(i)=2$ for any $i\in\{1,2,\ldots,m\}$, then $\sigma$ is an admissible composition of blowing-ups with centers in codimension two over $\bar{D}$.
\item
$\Comp(D)=\Comp(\sigma^*\bar{D})$.

$\sigma((D)_0)=(\bar{D})_0$.

If $\A\in (D)_0$, $\Delta\in\mathcal{D}^0$, $\{\A\}= V^\circ(\mathcal{D},\Delta)$,
$\bar{\Delta}\in\bar{\mathcal{D}}^0$ and $\Delta\subset\bar{\Delta}$, then
$\{\sigma(\A)\}= V^\circ(\bar{\mathcal{D}},\bar{\Delta})$.
\end{enumerate}

Consider any simplicial cone $\Delta$ over $N^*$ in $V^*$ with $\dim\Delta=\dim V$.
The set $\{b_{\Gamma/N^*}|\Gamma\in\mathcal{F}(\Delta)_1\}$ is an $\R$-basis of $V^*$ and it is a $\Z$-basis of $N^*$.
We denote the dual basis of $\{b_{\Gamma/N^*}|\Gamma\in\mathcal{F}(\Delta)_1\}$ by
$\{{b_{\Gamma/N^*}}^\vee_\Delta|\Gamma\in\mathcal{F}(\Delta)_1\}$.

Consider any $\A\in (D)_0$.
We take $\Delta\in\mathcal{D}^0$ with $\{\A\}= V^\circ(\mathcal{D},\Delta)$.
$\dim\Delta=\dim V$.
We define a mapping $\xi_\A:\Comp(D)(\A)\rightarrow
\mathcal{O}(\mathcal{D})(U(\Sigma(\mathcal{D}), D,\A))$ by putting
$\xi_\A(V(\mathcal{D},\Gamma))=\pi(\mathcal{D},\Delta)^*( U(\Sigma(\mathcal{D}), D,\A))^{-1}(x({b_{\Gamma/N^*}}^\vee_\Delta))\in\mathcal{O}(\mathcal{D})(U(\Sigma(\mathcal{D}), D,\A))$ for any $\Gamma\in\mathcal{F}(\Delta)_1$.

We put $\xi=\{\xi_\A|\A\in (D)_0\}$.

Consider any $\A\in (\bar{D})_0$.
We take $\Delta\in\bar{\mathcal{D}}^0$ with $\{\A\}= V^\circ(\bar{\mathcal{D}},\Delta)$.
$\dim\Delta=\dim V$.
We define a mapping $\bar{\xi}_\A:\Comp(\bar{D})(\A)\rightarrow
\mathcal{O}(\bar{\mathcal{D}})(U(\Sigma(\bar{\mathcal{D}}), \bar{D},\A))$ by putting
$\bar{\xi}_\A(V(\bar{\mathcal{D}},\Gamma))=\pi(\bar{\mathcal{D}},\Delta)^*( U(\Sigma(\bar{\mathcal{D}}), \bar{D},\A))^{-1}(x({b_{\Gamma/N^*}}^\vee_\Delta))\in\mathcal{O}(\bar{\mathcal{D}})(U(\Sigma(\bar{\mathcal{D}}), \bar{D},\A))$ for any $\Gamma\in\mathcal{F}(\Delta)_1$.

We put $\bar{\xi}=\{\bar{\xi}_\A|\A\in (\bar{D})_0\}$.

\begin{enumerate}
\setcounter{enumi}{4}
\item
The triplet $(\Sigma(\mathcal{D}),D,\xi)$ is a coordinated normal crossing scheme over $S$.
\item
The triplet $(\Sigma(\bar{\mathcal{D}}),\bar{D},\bar{\xi})$ is a coordinated normal crossing scheme over $S$.
\item
$\xi=\sigma^*\bar{\xi}$.
\end{enumerate}
\end{lemma}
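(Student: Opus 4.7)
The plan is to verify the seven claims in order, leaning on the structural results already in hand from Lemma~\ref{scheme1}, Theorem~\ref{subdivision morphism}, and Lemma~\ref{ncv}, together with the definition of normal crossing scheme from Section~\ref{scheme}. The strategy throughout is to translate every statement about the scheme $\Sigma(\mathcal{D})$ into a statement about the combinatorial data $(V,N,\mathcal{D})$, where the cited lemmas already do the work.

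First, for claim 1, I would verify the five defining conditions of a normal crossing scheme for $(\Sigma(\mathcal{D}),D)$. Separatedness, irreducibility, smoothness, and $\dim\Sigma(\mathcal{D})=\dim V\geq 1$ come from Lemma~\ref{scheme1}.7 (using that $S$ is a field, so $\Spec(S)$ is smooth and zero-dimensional). Each $V(\mathcal{D},\Gamma)$ for $\Gamma\in\mathcal{D}_1$ is a smooth irreducible prime divisor by Lemma~\ref{scheme1}.5, so $\mathcal{D}_1$ is finite and $D$ is a non-zero effective divisor; that $D$ has normal crossings everywhere follows from the local description in Lemma~\ref{scheme1}.2 (at a closed point of $V^\circ(\Delta)$ for $\Delta\in\mathcal{D}^0$ the functions $x({b_{E/N^*}}^\vee_\Delta)$, $E\in\mathcal{F}(\Delta)_1$, are a regular system of parameters whose product cuts out $D$ locally). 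The intersection-smoothness and existence of a $(D)_0$-point in every nonempty intersection stratum follow from Lemma~\ref{scheme1}.5 combined with the fact that $\mathcal{F}(\Delta)_1$ parametrises the local components at any $\A\in V^\circ(\Delta)$; the identifications $U(\Sigma(\mathcal{D}),D,\A)=U(\mathcal{D},\Delta)$ and $\Comp(D)(\A)=\{V(\mathcal{D},\Gamma)\mid\Gamma\in\mathcal{F}(\Delta)_1\}$ then drop out. Affineness of $U(\mathcal{D},\Delta)$ is Lemma~\ref{scheme1}.1. Claim 2 is proved identically, with $\bar{\mathcal{D}}$ in place of $\mathcal{D}$.

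For claim 3, I would induct on $m$. For $m=0$ the morphism $\sigma$ is the identity, which is trivially admissible. For the inductive step, write $\mathcal{D}=\mathcal{D}'*F(m)$ where $\mathcal{D}'=\bar{\mathcal{D}}*F(1)*\cdots*F(m-1)$, and factor $\sigma$ as the composition of $\sigma(S,V,N)^{\mathcal{D}}_{\mathcal{D}'}$ and $\sigma(S,V,N)^{\mathcal{D}'}_{\bar{\mathcal{D}}}$ using Theorem~\ref{subdivision morphism}.3. The second factor is admissible over $\bar{D}$ by induction. For the first factor, note $F(m)\in\mathcal{D}'$ with $\dim F(m)\geq 2$ (since it is a valid center for a barycentric subdivision), so Theorem~\ref{subdivision morphism}.12 identifies $\sigma(S,V,N)^{\mathcal{D}}_{\mathcal{D}'}$ with the blowing-up with center in the closed irreducible smooth subscheme $V(\mathcal{D}',F(m))$, which is a stratum of $D'=\sum_{\Gamma\in\mathcal{D}'_1}V(\mathcal{D}',\Gamma)$ because $F(m)=\sum_{E\in\mathcal{F}(F(m))_1}E$ with $\mathcal{F}(F(m))_1\subset\mathcal{D}'_1$. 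Since $D'$ is the pullback of $\bar{D}$ along the second factor (by the pullback-of-divisors description), this blowing-up is admissible over $D'$ and hence over $\bar{D}$. When every $\dim F(i)=2$, the centers all have codimension two.

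For claim 4, apply Theorem~\ref{subdivision morphism}.13 repeatedly: the set $\Comp(D)$ consists exactly of the strict transforms of components of $\bar{D}$ together with the exceptional divisors introduced at each step, and these coincide with the components of $\sigma^*\bar{D}$ by Lemma~\ref{ncv}.8--9. The equalities $\sigma((D)_0)=(\bar{D})_0$ and the identification of $\sigma(\A)$ with the open stratum $V^\circ(\bar{\mathcal{D}},\bar{\Delta})$ containing it follow from Theorem~\ref{subdivision morphism}.11--14 together with the fact that any $\Delta\in\mathcal{D}^0$ is contained in a unique $\bar{\Delta}\in\bar{\mathcal{D}}^0$. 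For claims 5 and 6, at a closed point $\A\in V^\circ(\mathcal{D},\Delta)$ the $x({b_{\Gamma/N^*}}^\vee_\Delta)$ for $\Gamma\in\mathcal{F}(\Delta)_1$ are a regular system of parameters for $\mathcal{O}_{\Sigma(\mathcal{D}),\A}$ by Lemma~\ref{scheme1}.2, so $\xi_\A$ satisfies the two defining conditions of a coordinate system of a normal crossing scheme; well-definedness requires that replacing $\Delta$ by another $\Delta'\in\mathcal{D}^0$ with $\A\in V^\circ(\mathcal{D},\Delta')$ forces $\Delta=\Delta'$, which is precisely Lemma~\ref{scheme1}.4. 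Claim 7 is proved by induction on $m$ using Lemma~\ref{ncv}.11--12 to match $\sigma^*\bar{\xi}$ with $\xi$ step by step; at each blow-up, the exceptional coordinate is the $x$-function attached to the new ray $\R_0 b_{F(i)/N}=\R_0(\sum_{E\in\mathcal{F}(F(i))_1}b_{E/N})$, and the multiplicative formula for $x$ on a sum of lattice vectors realises exactly the transformation rule of Lemma~\ref{ncv}.12.

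The main obstacle will be the bookkeeping in claim 7, where one has to track how the transformed coordinate system under each barycentric subdivision interacts with the identification $x(e+f)=x(e)x(f)$ in $\Map'(N,S)$, and to verify that the resulting $\sigma^*\bar{\xi}$ restricts correctly at every point of $(D)_0$ after the full iteration of $m$ blow-ups. Claim 3's induction and the component-matching in claim 4 are routine once the codimension-two check in Theorem~\ref{subdivision morphism}.12 is invoked, but claim 7 forces one to keep the coordinate data consistent across all admissible blow-ups simultaneously, and it is there that the combinatorial identity $b_{F(i)/N}=\sum_{E\in\mathcal{F}(F(i))_1}b_{E/N}$ must be used in its strongest form.
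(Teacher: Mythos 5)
Your overall plan is sound, and since the paper states this lemma without proof your proposal amounts to filling in a gap the paper deliberately leaves to the reader; the ingredients you cite (Lemma~\ref{scheme1} for the affine charts and the local ring structure, Theorem~\ref{subdivision morphism}.12 for identifying a single barycentric subdivision morphism with a blowing-up, Lemma~\ref{ncv}.11--12 for the coordinate transformation rules, and the barycenter identity $b_{F(i)/N^*}=\sum_{E\in\mathcal{F}(F(i))_1}b_{E/N^*}$) are exactly the right ones, and the dual-basis computation you allude to at the end of your sketch does in fact produce the transformation rule of Lemma~\ref{ncv}.12 with $\Xi$ corresponding to the ray of $F(i)$ that is dropped when passing to the chart of the given maximal cone.

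One imprecision to repair: you write that $D'=\sum_{\Gamma\in\mathcal{D}'_1}V(\mathcal{D}',\Gamma)$ is the pullback of $\bar D$ along the second factor, but this is false as an equality of divisors because pullbacks acquire exceptional multiplicities $\geq 2$ when the center has codimension $\geq 2$, whereas $D'$ has every multiplicity equal to $1$. What is true, and all your argument actually uses, is that $\Comp(D')=\Comp((\sigma(1)\cdots\sigma(m-1))^*\bar D)$, hence the two divisors have the same strata, so a stratum of $D'$ is a stratum of the pulled-back divisor that the definition of admissible composition of blowing-ups actually requires. You should also note explicitly that $(D)_0=\cup_{\Delta\in\mathcal{D}^0}V^\circ(\mathcal{D},\Delta)$ relies on $\mathcal{D}\Mx=\mathcal{D}^0$, which follows from $|\mathcal{D}|=|\bar{\mathcal{D}}|$ being a convex polyhedral cone of dimension $\dim V$ (Lemma~\ref{scpc}.2 combined with Lemma~\ref{propibcd1}.1), and that the condition that every closed point of $\Sigma(\mathcal{D})$ be $S$-valued follows from $\Sigma(\mathcal{D})$ being of finite type over the algebraically closed field $S$.
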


\section{Proof of the main theorem}
\label{main proof}
We give the proof of our main theorem Theorem~\ref{main}.

Let $k$ be any algebraically closed field; let $A$ be any complete regular local ring such that $A$ contains $k$ as a subring, the residue field $A/M(A)$ is isomorphic to $k$ as $k$-algebras, and $\dim A\geq 2$; let $P$ be any parameter system of $A$, and let $z\in P$ be any element.

Let $A'$ denote the completion of $k[P - \{z\}]$ with respect to the maximal ideal $k[P - \{z\}]\cap M(A)$. The ring $A'$ is a local subring of $A$ and $M(A')=M(A)\cap A' =(P -\{z\})A'$. The completion of $A'[z]$ with respect to the prime ideal $zA'[z]$ is isomorphic to $A$ as $A'[z]$-algebras. The set $P - \{z\}$ is a parameter system of $A'$.

Recall the following notations:
\begin{equation*}\begin{split}
&PW(1)= \{\phi\in A|\phi= u\prod_{\chi\in\mathcal{X}}(z+\chi)^{a(\chi)}\prod_{x\in P-\{z\}} x^{b(x)} \\
&\quad\text{for some }u\in A^\times, \text{ some finite subset } \mathcal{X} \text{ of } M(A'),\\
&\quad \text{some mapping }a: \mathcal{X}\rightarrow \Z_+, \text{ and some mapping }b:P-\{z\}\rightarrow\Z_0\}.\\
\end{split}\end{equation*}
\vfill

For any $h\in\Z_+$ with $h\geq 2$,
\begin{equation*}\begin{split}
W(h)&=\{\phi\in A|\phi= z^h+\sum_{i=0}^{h-1} \phi'(i)z^i \\
&\qquad\quad \text{for some mapping }\phi':\{0,1,\ldots,h-1\}\rightarrow M(A') \text{ satisfying }\\
&\qquad\quad 
\chi^h+\sum_{i=0}^{h-1} \phi'(i)\chi^i\neq 0 \text{ for any }
\chi\in M(A').\},\\
PW(h)&=\{\phi\in A|\phi=\psi\psi'\text{ for some } \psi\in W(h) \text{ and some }\psi'\in PW(1).\},\\
SW(h)&=\{\phi\in A|\phi=\psi\psi'\text{ for some } \psi\in W(h) \text{ and some }\psi'\in PW(1)\text{ such that}\\
&\qquad\quad \Gamma_+(P,\psi) \text{ has no }z \text{-removable faces, and }\Gamma_+(P,\phi) \text{ is }z \text{-simple.}\}.\\
\end{split}\end{equation*}

Consider any $h\in\Z_+$ with $h\geq 2$ and any $\phi\in SW(h)$.

We take an element $\psi\in W(h)$, an element $u\in A^\times$, a finite subset $\mathcal{X}$, a mapping $a: \mathcal{X}\rightarrow \Z_+$ and a mapping $ b:P-\{z\}\rightarrow\Z_0$ satisfying
$$\phi=\psi u\prod_{\chi\in\mathcal{X}}(z+\chi)^{a(\chi)}\prod_{x\in P-\{z\}} x^{b(x)}.$$
The quintuplet $(\psi,u, \mathcal{X},a,b)$ is uniquely determined depending on $\phi$, since $A$ is a unique factorization domain.

We take a mapping $\psi':\{0,1,\ldots,h-1\}\rightarrow M(A')$ satisfying 
$\psi= z^h+\sum_{i=0}^{h-1} \psi'(i)z^i$
and $\omega^h+\sum_{i=0}^{h-1} \psi'(i)\omega^i\neq 0$ for any $\omega\in M(A')$.
$\psi'$ is uniquely determined depending on $\psi$, since Weierstrass' preparation theorem holds.

The Newton polyhedron $\Gamma_+(P,\psi)$ has no $z$-removable faces and $\Gamma_+(P,\phi)$ is $z$-simple.

We denote $V=\Map(P,\R)$, $N=\Map(P,\Z)$, $\bar{\Delta}=\Map(P,\R_0)^\vee|V$ and $S=\Gamma_+(P,\phi)$.
$V$ is a finite dimendional vector space over $\R$, $\dim V=\dim A$, $N$ is a lattice of $V$, $\bar{\Delta}$ is a simplicial cone over $N^*$ in $V^*$, $\bar{\Delta}^\vee|V^*=\Map(P,\R_0)$ is a simplicial cone over $N$ in $V$, $\dim \bar{\Delta}=\dim \bar{\Delta}^\vee|V^*=\dim V=\dim A$ and $S$ is a Newton polyhedron over $N$ in $V$.
$\Stab(S)= \bar{\Delta}^\vee|V^*$, $S\subset\bar{\Delta}^\vee|V^*$,
$|\mathcal{D}(S|V)|= \bar{\Delta}$, $\mathcal{V}(S)\subset(\bar{\Delta}^\vee|V^*)\cap N=\Map(P,\Z_0)$ and $\Den(S/N)=1$.
For any $(H,\mathcal{C})\in\mathcal{HC}(V,N,S)$, $\Ht(H,\mathcal{C}, S)\in\Z_0$.

The set $\{f^P_x|x\in P\}$ is a $\R$-basis of $V$, it is a $\Z$-basis of $N$, 
$\bar{\Delta}^\vee|V^*=\sum_{x\in P}\R_0f^P_x$, the dual basis $\{f^{P\vee}_x|x\in P\}$ of $\{f^P_x|x\in P\}$ is a $\R$-basis of $V^*$, it is a $\Z$-basis of $N^*$,
and $\bar{\Delta}=\sum_{x\in P}\R_0 f^{P\vee}_x$.

We denote $\bar{H}=\R_0 f^{P\vee}_z\in\mathcal{F}(\bar{\Delta})_1$. 
$S$ is $\bar{H}$-simple, $\mathcal{D}(S|V)$ is $\bar{H}$-simple, $(\bar{H},\mathcal{F}(\bar{\Delta})$\hfill\break$)\in \mathcal{HC}(V,N,S)\neq\emptyset$
and $\mathcal{USD}(\bar{H},\mathcal{F}(\bar{\Delta}),S)\neq\emptyset$.

For any $\zeta\in A-\{0\}$, $\Gamma_+(P,\zeta)$ is a Newton polyhedron over $N$ in $V$,
$\Stab(\Gamma_+(P,\zeta$\hfill\break$))= \bar{\Delta}^\vee|V^*$, $\Gamma_+(P,\zeta)\subset\bar{\Delta}^\vee|V^*$,
$|\mathcal{D}(\Gamma_+(P,\zeta)|V)|= \bar{\Delta}$, $\mathcal{V}(\Gamma_+(P,\zeta))\subset(\bar{\Delta}^\vee|V^*)\cap N$ and $\Den(\Gamma_+(P,\zeta)/N)=1$.
For any $\zeta\in A-\{0\}$ such that $\mathcal{D}(\Gamma_+(P,\zeta)|V)$ is $\bar{H}$-simple, we denote the $\bar{H}$-skeleton of $\mathcal{D}(\Gamma_+(P,\zeta)|V)$ by
$\bar{\mathcal{D}}(\Gamma_+(P,\zeta)|V)^1$.

Note that $S= \Gamma_+(P,\psi)+\sum_{\chi\in\mathcal{X}}a(\chi)\Gamma_*(P, z+\chi)
+\sum_{x\in P-\{z\}}b(x)\Gamma_+(P, x)$ and
$\mathcal{D}(S|V)=\mathcal{D}(\Gamma_+(P,\psi)|V)\hat{\cap}(\hat{\cap}_{\chi\in\mathcal{X}-\{0\}}\mathcal{D}(\Gamma_*(P, z+\chi)|V))$.
We know that $\Ht(\bar{H},$\break$\Gamma_+(P,\psi))=h\geq 2$, $\mathcal{D}(\Gamma_+(P,\psi)|V)$ is $\bar{H}$-simple and $c(\Gamma_+(P,\psi))\geq 2$, and we know that $\Ht(\bar{H}, \Gamma_+(P, z+\chi))=1$, $\mathcal{D}(\Gamma_+(P, z+\chi)|V)$ is $\bar{H}$-simple, and $\chi$ has normal crossings over $P-\{z\}$ and $c(\Gamma_+(P, z+\chi))=2$ for any $\chi\in\mathcal{X}-\{0\}$.

Take any $(M, F)\in \mathcal{USD}(\bar{H},\mathcal{F}(\bar{\Delta}),S)$.
We denote $\widetilde{\mathcal{C}}=\mathcal{F}(\bar{\Delta})*F(1)*F(2)*\cdots*F(M)$.
The set $\widetilde{\mathcal{C}}$ is a simplicial cone decomposition over $N^*$ in $V^*$, it is an upward subdivision of $(\bar{H},\mathcal{F}(\bar{\Delta}), S)$, it is a subdivision of $\mathcal{D}(S|V)$ and $|\widetilde{\mathcal{C}}|=\bar{\Delta}=|\mathcal{D}(S|V)|$.

We denote $\bar{\Sigma}=\Sigma(k,V,N, \widetilde{\mathcal{C}})$,
$R=\Map'(N,k)\backslash(\bar{\Delta}^\vee|V^*)$, and
$\bar{\sigma}=\sigma(k,V,N)^{ \widetilde{\mathcal{C}}}_{\mathcal{F}(\bar{\Delta})}:
\Sigma(k,V,N, \widetilde{\mathcal{C}})\rightarrow \Sigma(k,V,N, \mathcal{F}(\bar{\Delta}))$.
The structure sheaf of the scheme $\bar{\Sigma}$ is denoted by $\mathcal{O}(\widetilde{\mathcal{C}})$.
We identify $\Sigma(k,V,N, \mathcal{F}(\bar{\Delta}))$ and $\Spec(R)$ by the isomorphism $\pi(k,V,N, \mathcal{F}(\bar{\Delta}), \Delta)$.
$\bar{\sigma}:\bar{\Sigma}\rightarrow \Spec(R)$.
$R$ is a polynomial ring over $k$ and the set $\{x(f^P_y)|y\in P\}$ is a variable system of $R$ over $k$, where $x:N\rightarrow \Map'(N,k)$ denotes the mapping we defined just before Lemma~\ref{interesting ring}.
We denote $\bar{M}=\{x(f^P_y)|y\in P\}R$ and $\bar{D}=\Spec(R/
\prod_{y\in P}x(f^P_y) R)$. $\bar{M}$ is a maximal ideal of $R$ and $\bar{D}$ is a non-zero effective normal crossing divisor of $\Spec(R)$.
We define a coordinate system $\bar{\xi}_{\bar{M}}:\Comp(\bar{D})\rightarrow 
R$ of the normal crossing scheme $(\Spec(R),\bar{D})$ at $\bar{M}$ by putting
$\bar{\xi}_{\bar{M}}(\Spec(R/x(f^P_y)R))=x(f^P_y)$ for any $y\in P$.
Let $\bar{\xi}=\{\bar{\xi}_{\bar{M}}\}$.
The triplet $(\Spec(R),\bar{D},\bar{\xi})$ is a coordinated normal crossing scheme over $k$.
$\bar{\Sigma}$ is a smooth scheme over $\Spec(R)$, and $\bar{\sigma}$ is an admissible composition of blowing-ups with centers of codimension two over $\bar{D}$.

Let $\iota: R\rightarrow A$ denote the injective homomorphism of $k$-algebras satisfying $\iota(x(f^P_y))=y$ for any $y\in P$.
Let $\Sigma=\bar{\Sigma}\times_{\Spec(R)}\Spec(A)$ denote the fiber product scheme; let $\sigma:\Sigma\rightarrow\Spec(A)$ denote the projection induced by $\sigma$ and let $\tau:\Sigma\rightarrow\bar{\Sigma}$ denote the projection induced by $\iota^*:\Spec(A)\rightarrow \Spec(R)$.
The structure sheaf of the scheme $\Sigma$ is denoted by $\mathcal{O}_\Sigma$.
We denote $D=\Spec(A/ \prod_{y\in P}y A)$. $D$ is a non-zero effective normal crossing divisor of $\Spec(A)$.
We define a coordinate system $\xi_{M(A)}:\Comp(\Lambda)\rightarrow 
A$ of the normal crossing scheme $(\Spec(A),\Lambda)$ at $M(A)$ by putting
$\xi_{M(A)}(\Spec(A/yA))=y$ for any $y\in P$.
Let $\xi=\{\xi_{M(A)}\}$.
The triplet $(\Spec(A),D,\xi)$ is a coordinated normal crossing scheme over $k$.
$\Sigma$ is a smooth scheme over $\Spec(A)$, and $\sigma$ is an admissible composition of blowing-ups with centers of codimension two over $D$.

Note that $\iota^*(M(A))=\bar{M}$, $\iota^{*-1}(\bar{M})=\{M(A)\}$, and $\iota$ induces an isomorphism $R/\bar{M}\rightarrow A/M(A)$ of $k$-algebras.
Therefore, we know that $\tau$ induces an isomorphism 
$\tau:\Sigma\times_{\Spec(A)}\Spec(A/M(A))\rightarrow 
\bar{\Sigma}\times_{\Spec(R)}\Spec(R/\bar{M})$ of schemes.

$\sigma^{-1}(M(A))=\Sigma\times_{\Spec(A)}\Spec(A/M(A))$.

Since $\Spec(R/\bar{M})=\{\bar{M}\}=V^\circ(\mathcal{F}(\bar{\Delta}),\bar{\Delta})$,
\begin{equation*}\begin{split}
\bar{\sigma}^{-1}(\bar{M})&=\bar{\Sigma}\times_{\Spec(R)}\Spec(R/\bar{M})\\
&=\bar{\sigma}^{-1}( V^\circ(\mathcal{F}(\bar{\Delta}),\bar{\Delta}))
=\bigcup_{\Theta\in\widetilde{\mathcal{C}},\Theta^\circ\subset\bar{\Delta}^\circ}V^\circ(\widetilde{\mathcal{C}},\Theta)
\end{split}\end{equation*}
by Theorem~\ref{subdivision morphism}.14.

Consider any $\Theta\in \widetilde{\mathcal{C}}^0$.
$\dim \Theta=\dim V$, the set $\{b_{\Gamma/N^*}|\Gamma\in\mathcal{F}(\Theta)_1\}$ 
is a $\R$-basis of $V^*$ and it is a $\Z$-basis of $N^*$.
Let $\{{b_{\Gamma/N^*}}^\vee_\Theta|\Gamma\in\mathcal{F}(\Theta)_1\}$ denote the dual basis of
$\{b_{\Gamma/N^*}|\Gamma\in\mathcal{F}(\Theta)_1\}$.
The set  $\{{b_{\Gamma/N^*}}^\vee_\Theta|\Gamma\in\mathcal{F}(\Theta)_1\}$ is 
a $\R$-basis of $V$ and it is a $\Z$-basis of $N$.
We denote $R(\Theta)= \Map'(N,k)\backslash(\Theta^\vee|V^*)$.
$R(\Theta)$ is a polynomial ring over $k$ and 
the set $\{x({b_{\Gamma/N^*}}^\vee_\Theta) |\Gamma\in\mathcal{F}(\Theta)_1\}$
is a variable system of $R(\Theta)$ over $k$.
The morphism $\pi(\widetilde{\mathcal{C}},\Theta): \Spec(R(\Theta))\rightarrow U(\widetilde{\mathcal{C}},\Theta)$ is an isomorphism of schemes.

We consider the coordinated normal crossing scheme $(\bar{\Sigma},\bar{\sigma}^*\bar{D},\bar{\sigma}^*\bar{\xi})$.
By Lemma \ref{relation to ncs} we know that $\Comp(\bar{\sigma}^*\bar{D})=
\{V(\widetilde{\mathcal{C}},\Gamma)| \Gamma\in\widetilde{\mathcal{C}}_1\}$,
$(\bar{\sigma}^*\bar{D})_0=\cup_{\Theta\in\widetilde{\mathcal{C}}^0}
V^\circ(\widetilde{\mathcal{C}},\Theta)$, 
and if $\bar{\B}\in (\bar{\sigma}^*\bar{D})_0$, $\Theta\in \widetilde{\mathcal{C}}^0$ and $\{\bar{\B}\}= V^\circ(\widetilde{\mathcal{C}},\Theta)$, then
$U(\bar{\Sigma},\bar{\sigma}^*\bar{D},\bar{\B})=U(\widetilde{\mathcal{C}},\Theta)$
$\Comp(\bar{\sigma}^*\bar{D})(\bar{\B})=
\{V(\widetilde{\mathcal{C}},\Gamma)|\Gamma\in\mathcal{F}(\Theta)_1\}$ and
$(\bar{\sigma}^*\bar{\xi})_{\bar{\B}}(V(\widetilde{\mathcal{C}},\Gamma))=
\pi(\widetilde{\mathcal{C}},\Theta)^*( U(\bar{\Sigma},\bar{\sigma}^*\bar{D},$\break$\bar{\B}))^{-1}(x({b_{\Gamma/N^*}}^\vee_\Theta))$
for any $\Gamma\in\mathcal{F}(\Theta)_1$.

We consider the coordinated normal crossing scheme $(\Sigma,\sigma^*D,\sigma^*\xi)$.
We know that 
$\tau^*V(\widetilde{\mathcal{C}},\Gamma)$ is a prime divisor of $\Sigma$ for any $\Gamma\in\widetilde{\mathcal{C}}_1$,
$\Comp(\sigma^*D)=
\{\tau^*V(\widetilde{\mathcal{C}},\Gamma)| \Gamma\in\widetilde{\mathcal{C}}_1\}$,
$(\sigma^*D)_0=\tau^{-1}((\bar{\sigma}^*\bar{D})_0) =\cup_{\Theta\in\widetilde{\mathcal{C}}^0}
\tau^{-1}(V^\circ(\widetilde{\mathcal{C}},\Theta))$, and
$\tau$ induces a bijective morphism $\tau: (\sigma^*D)_0\rightarrow (\bar{\sigma}^*\bar{D})_0$. We know that 
$\tau^{-1}(U(\bar{\Sigma},\bar{\sigma}^*\bar{D},\tau(\B))$ is an affine open subset of $\Sigma$,
$U(\Sigma,\sigma^*D,\B)=\tau^{-1}(U(\bar{\Sigma},\bar{\sigma}^*\bar{D},\tau(\B)))$, and $\Comp(\sigma^*D)(\B)=
\{\tau^*E|E\in\Comp(\bar{\sigma}^*\bar{D})(\tau(\B))\}$
for any  $\B\in (\sigma^*D)_0$.

Consider any $\B\in (\sigma^*D)_0$.
$(\sigma^*\xi)_\B: \Comp(\sigma^*D)(\B)\rightarrow \mathcal{O}_\Sigma(U(\Sigma,\sigma^*D,\B))$, 
$(\bar{\sigma}^*\bar{\xi})_{\tau(\B)}: \Comp(\bar{\sigma}^*\bar{D})(\tau(\B))\rightarrow \mathcal{O}(\widetilde{\mathcal{C}})( U(\bar{\Sigma},\bar{\sigma}^*\bar{D},\tau(\B)))$,
$\tau^*(U(\bar{\Sigma},\bar{\sigma}^*\bar{D},\tau(\B))): \mathcal{O}(\widetilde{\mathcal{C}})( U(\bar{\Sigma},\bar{\sigma}^*\bar{D},\tau(\B)))\rightarrow \mathcal{O}_\Sigma(U(\Sigma, \sigma^*D,\B))$,
the homomorphism $\tau^*:\Div(\bar{\Sigma})\rightarrow \Div(\Sigma)$ induces a mapping
$\tau^*:\Comp(\bar{\sigma}^*\bar{D})(\tau(\B))\rightarrow \Comp(\sigma^*D)(\B)$
and we know that $(\sigma^*\xi)_\B\tau^*=
\tau^*(U(\bar{\Sigma},\bar{\sigma}^*\bar{D},\tau(\B))) (\bar{\sigma}^*\bar{\xi})_{\tau(\B)}$.

Consider any closed point $\A\in \Sigma$ with $\sigma(\A)=M(A)$.

$\tau(\A)\in\bar{\Sigma}$,
$\tau(\A)$ is a closed point of $\bar{\Sigma}$,
$\bar{\sigma}\tau(\A)=\bar{M}$ and
$\tau(\A)\in \bar{\sigma}^{-1}(\bar{M})=\cup_{\Theta\in\widetilde{\mathcal{C}},\Theta^\circ\subset\bar{\Delta}^\circ}V^\circ(\widetilde{\mathcal{C}},\Theta)$.
We take $\Theta\in\widetilde{\mathcal{C}}$ satisfying $\Theta^\circ\subset\bar{\Delta}^\circ$ and $\tau(\A)\in V^\circ(\widetilde{\mathcal{C}},\Theta)$.

$\Theta\not\subset \bar{H}\Op|\bar{\Delta}$. 
We take the unique element $\Lambda\in\mathcal{D}(S|V)$ with $\Theta^\circ\subset\Lambda^\circ$ and we take the unique element $\bar{A}\in\mathcal{F}(S+(\Theta^\vee|V))$ with $\Delta(\bar{A}, S+(\Theta^\vee|V))=\Theta$.

We denote 
\begin{equation*}\begin{split}
\widetilde{\mathcal{C}}^\circ_1&=\{\Gamma\in\widetilde{\mathcal{C}}_1|\Gamma\not\subset\bar{H}\Op|\bar{\Delta}\},\\
\mathcal{A}&=\mathcal{A}(V,N,\bar{H},\mathcal{F}(\bar{\Delta}),S,M,F): \widetilde{\mathcal{C}}^\circ_1\rightarrow 2^{2^{V^*}},\text{ and}\\
\mathcal{A}^\circ&=\mathcal{A}^\circ(V,N,\bar{H},\mathcal{F}(\bar{\Delta}),S,M,F): \widetilde{\mathcal{C}}^\circ_1\rightarrow 2^{2^{V^*}}.
\end{split}\end{equation*}

We take the unique element $\hat{\Gamma}\in\widetilde{\mathcal{C}}^\circ_1$ with
$\Theta\in\mathcal{A}^\circ(\hat{\Gamma})$.
Note that $\mathcal{A}^\circ(\hat{\Gamma})\subset\mathcal{A}(\hat{\Gamma})\subset
\widetilde{\mathcal{C}}$,
$\{\Theta,\hat{\Gamma}\}\subset\mathcal{A}(\hat{\Gamma})$ and $\Theta+\hat{\Gamma}\in\mathcal{A}(\hat{\Gamma})/\hat{\Gamma}$.
We take any element $\hat{\Theta}\in\mathcal{A}(\hat{\Gamma})^0/\hat{\Gamma}$ with 
$\Theta+\hat{\Gamma}\subset\hat{\Theta}$.
We have $\Theta\subset\Theta+\hat{\Gamma}\subset\hat{\Theta}$, $\Theta\in\mathcal{F}(\hat{\Theta})$,
$\hat{\Gamma}\in\mathcal{F}(\hat{\Theta})_1$ and 
$\tau(\A)\in V^\circ(\widetilde{\mathcal{C}},\Theta)\subset
U(\widetilde{\mathcal{C}},\Theta)\subset
U(\widetilde{\mathcal{C}},\hat{\Theta})$.
We take the unique element $\hat{\Lambda}\in\mathcal{D}(S|V)$ with $\hat{\Theta}^\circ\subset\hat{\Lambda}^\circ$.
$\dim\hat{\Theta}=\dim\hat{\Lambda}=\dim V$.
$\hat{\Lambda}\in \mathcal{D}(S|V)^0$.
Since $\emptyset\neq\Theta^\circ\subset\hat{\Lambda}\cap\Lambda^\circ$, we know that $\Lambda\in\mathcal{F}(\hat{\Lambda})$.
We take the unique element $\hat{a}\in\mathcal{V}(S)$ with $\hat{\Lambda}=\Delta(\{\hat{a}\}, S|V)$.
We know that $S+(\hat{\Theta}^\vee|V^*)=\{\hat{a}\}+(\hat{\Theta}^\vee|V^*)$.
Let $\bar{\B}$ be the unique point in $V^\circ(\widetilde{\mathcal{C}},\hat{\Theta})$.
$\bar{\B}\in V^\circ(\widetilde{\mathcal{C}},\hat{\Theta})\subset U(\widetilde{\mathcal{C}},\hat{\Theta})$.
$\bar{\B}\in(\bar{\sigma}^*\bar{D})_0$.
$\{\tau(\A),\bar{\B}\}\subset U(\widetilde{\mathcal{C}},\hat{\Theta})=
U(\bar{\Sigma},\bar{\sigma}^*\bar{D},\bar{\B})$.

Using the isomorphism $\pi(\widetilde{\mathcal{C}},\hat{\Theta}): \Spec(R(\hat{\Theta}))\rightarrow U(\widetilde{\mathcal{C}},\hat{\Theta})$, we identify schemes $\Spec(R(\hat{\Theta}))$ and
$U(\widetilde{\mathcal{C}},\hat{\Theta})$.
We know that
$\{x({b_{\Gamma/N^*}}^\vee_{\hat{\Theta}})|\Gamma\in\mathcal{F}(\hat{\Theta})_1\}\subset R(\hat{\Theta})=\mathcal{O}_{R(\hat{\Theta})}(\Spec(R(\hat{\Theta})))=\mathcal{O}(\widetilde{\mathcal{C}})(U(\widetilde{\mathcal{C}},\hat{\Theta}))$,
$x({b_{\Gamma/N^*}}^\vee_{\hat{\Theta}})\tau(\A)=0$ for any $\Gamma\in\mathcal{F}(\Theta)_1$ and
$x({b_{\Gamma/N^*}}^\vee_{\hat{\Theta}})\tau(\A)\neq0$ for any $\Gamma\in\mathcal{F}(\hat{\Theta})_1-\mathcal{F}(\Theta)_1$.
Since $\tau(\A)$ is a closed point of $U(\widetilde{\mathcal{C}},\hat{\Theta})$
and $k$ is algebraically closed, $x({b_{\Gamma/N^*}}^\vee_{\hat{\Theta}})\tau(\A)\in k$ for any 
$\Gamma\in\mathcal{F}(\hat{\Theta})_1$.
$x({b_{\Gamma/N^*}}^\vee_{\hat{\Theta}})(\bar{\B})=0$ for any $\Gamma\in\mathcal{F}(\hat{\Theta})_1$.

Since $\bar{\sigma}\tau(\A)=\bar{M}$, we have a homomorphism of $k$-algebras
$\bar{\sigma}^*(\tau(\A)):R_{\bar{M}}\rightarrow\mathcal{O}(\widetilde{\mathcal{C}})_{\tau(\A)}$ satisfying $\bar{\sigma}^*(\tau(\A))(M(R_{\bar{M}}))\subset M(\mathcal{O}(\widetilde{\mathcal{C}})_{\tau(\A)})$. 
This homomorphism has the unique extension 
$\bar{\sigma}^*(\tau(\A)):R_{\bar{M}}^c\rightarrow\mathcal{O}(\widetilde{\mathcal{C}})_{\tau(\A)}^c$,
where the superscript ${}^c$ denotes the completion of a noetherian local ring.
The set $\{x(f^P_y)|y\in P\}$ is a parameter system of $R_{\bar{M}}^c$.
Let $\hat{P}=\{x({b_{\Gamma/N^*}}^\vee_{\hat{\Theta}})- x({b_{\Gamma/N^*}}^\vee_{\hat{\Theta}})\tau(\A)|\Gamma\in\mathcal{F}(\hat{\Theta})_1\}$.
This set $\hat{P}$ is a parameter system of $\mathcal{O}(\widetilde{\mathcal{C}})_{\tau(\A)}^c$.
$\hat{P}=\{(\bar{\sigma}^*\bar{\xi})_{\bar{\B}}(E)- (\bar{\sigma}^*\bar{\xi})_{\bar{\B}}(E)\tau(\A)|E\in\Comp(\bar{\sigma}^*\bar{D})(\bar{\B})\}$.
Recall that $V=\Map(P,\R)$.
Note that there exists uniquely an isomorphism of vector spaces over $\R$ from $\Map(\hat{P},\R)$ to $V$ sending\hfill\break $f^{\hat{P}}_{ x({b_{\Gamma/N^*}}^\vee_{\hat{\Theta}})- x({b_{\Gamma/N^*}}^\vee_{\hat{\Theta}})\tau(\A)}\in\Map(\hat{P},\R)$ to ${b_{\Gamma/N^*}}^\vee_{\hat{\Theta}}\in V$ for any $\Gamma\in\mathcal{F}(\hat{\Theta})_1$.
Using this isomorphism, we identify $\Map(\hat{P},\R)$ and $V$.
Pairs $\Map(\hat{P},\Z)$ and $N$, $\Map(\hat{P},\R_0)$ and $\hat{\Theta}^\vee|V^*$ are identified.

Since $\hat{\Theta}^\circ\subset\bar{\Delta}^\circ$, $\bar{\sigma}(\bar{\B})=\bar{M}$, we have a homomorphism of $k$-algebras
$\bar{\sigma}^*(\bar{\B}):R_{\bar{M}}\rightarrow\mathcal{O}(\widetilde{\mathcal{C}})_{\bar{\B}}$ satisfying $\bar{\sigma}^*(\bar{\B})(M(R_{\bar{M}}))\subset M(\mathcal{O}(\widetilde{\mathcal{C}})_{\bar{\B}})$. 
This homomorphism has the unique extension 
$\bar{\sigma}^*(\bar{\B}):R_{\bar{M}}^c\rightarrow\mathcal{O}(\widetilde{\mathcal{C}})_{\bar{\B}}^c$.
Let $\hat{P}_{\bar{\B}}=\{x({b_{\Gamma/N^*}}^\vee_{\hat{\Theta}})|\Gamma\in\mathcal{F}(\hat{\Theta})_1\}$.
This set $\hat{P}_{\bar{\B}}$ is a parameter system of $\mathcal{O}(\widetilde{\mathcal{C}})_{\bar{\B}}^c$.
$\hat{P}_{\bar{\B}}=\{(\bar{\sigma}^*\bar{\xi})_{\bar{\B}}(E)|E\in\Comp(\bar{\sigma}^*\bar{D})(\bar{\B})\}$.
Note that there exists uniquely an isomorphism of vector spaces over $\R$ from $\Map(\hat{P}_{\bar{\B}},\R)$ to $V$ sending $f^{\hat{P}_{\bar{\B}}}_{x({b_{\Gamma/N^*}}^\vee_{\hat{\Theta}})}\in\Map(\hat{P}_{\bar{\B}},\R)$ to ${b_{\Gamma/N^*}}^\vee_{\hat{\Theta}}\in V$ for any $\Gamma\in\mathcal{F}(\hat{\Theta})_1$.
Using this isomorphism, we identify $\Map(\hat{P}_{\bar{\B}},\R)$ and $V$.

Note that the homomorphism of $k$-algebras $\iota:R\rightarrow A$ induces an isomorphism $R_{\bar{M}}^c\rightarrow A$ of complete $k$-algebras. 
By this isomorphism we identify  $R_{\bar{M}}^c$ and $A$.
$\phi\in A= R_{\bar{M}}^c$. $P=\{x(f^P_y)|y\in P\}$.

Consider any $\omega\in\Theta$. It is easy to see that
\begin{equation*}\begin{split}
&\Ord(\hat{P},\omega, x({b_{\Gamma/N^*}}^\vee_{\hat{\Theta}}))=\Ord(\hat{P}_{\bar{\B}},\omega, x({b_{\Gamma/N^*}}^\vee_{\hat{\Theta}}))=0
\text{ for any }\Gamma\in\mathcal{F}(\hat{\Theta})_1-\mathcal{F}(\Theta)_1, \\
&\Ord(\hat{P},\omega, x({b_{\Gamma/N^*}}^\vee_{\hat{\Theta}}))=\Ord(\hat{P}_{\bar{\B}},\omega, x({b_{\Gamma/N^*}}^\vee_{\hat{\Theta}}))
\text{ for any }\Gamma\in\mathcal{F}(\Theta)_1\text{, and}\\
&\Ord(\hat{P},\omega,\bar{\sigma}^*(\tau(\A))(\zeta))= \Ord(\hat{P}_{\bar{\B}},\omega,\bar{\sigma}^*(\bar{\B})(\zeta))=\Ord(P,\omega,\zeta),\\
&\In(\hat{P},\omega,\bar{\sigma}^*(\tau(\A))(\zeta))= \bar{\sigma}^*(\tau(\A))(\In(P,\omega,\zeta))\text{, and}\\
&\In(\hat{P}_{\bar{\B}},\omega,\bar{\sigma}^*(\bar{\B})(\zeta))=\bar{\sigma}^*(\bar{\B})(\In(P,\omega,\zeta)),
\end{split}\end{equation*}
for any $\zeta\in R_{\bar{M}}^c=A$.

By the above
$\Ord(\hat{P},b_{\Theta/N^*},\bar{\sigma}^*(\tau(\A))(\phi))=
\Ord(\hat{P}_{\bar{\B}},b_{\Theta/N^*},\bar{\sigma}^*(\bar{\B})(\phi))=
\Ord(P, $\hfill\break$b_{\Theta/N^*},\phi)$, $\In(\hat{P},b_{\Theta/N^*},\bar{\sigma}^*(\tau(\A))(\phi))= \bar{\sigma}^*(\tau(\A))(\In(P, b_{\Theta/N^*},\phi))$, and
$\In(\hat{P}_{\bar{\B}},b_{\Theta/N^*},$\hfill\break$\bar{\sigma}^*(\bar{\B})(\phi))= \bar{\sigma}^*(\bar{\B})(\In(P, b_{\Theta/N^*},\phi))$.

Since $\Gamma_+(\hat{P}_{\bar{\B}}, \bar{\sigma}^*\phi)=S+(\hat{\Theta}^\vee|V^*)=\{\hat{a}\}+(\hat{\Theta}^\vee|V^*)$,
$\Ord(\hat{P}_{\bar{\B}}, b_{\Gamma/N^*}, \In(\hat{P}_{\bar{\B}},b_{\Theta/N^*},$\hfill\break$\bar{\sigma}^*(\bar{\B})(\phi)))
=\langle b_{\Gamma/N^*}, \hat{a}\rangle=\Ord(\hat{P}_{\bar{\B}}, b_{\Gamma/N^*},\bar{\sigma}^*(\bar{\B})(\phi))
=\Ord(\hat{P}, b_{\Gamma/N^*},\bar{\sigma}^*(\tau(\A))(\phi))$ for any $\Gamma\in\mathcal{F}(\Theta)_1$.

Since $b_{\Theta/N^*}\in\Theta^\circ\subset\bar{\Delta}^\circ$,
$\In(P, b_{\Theta/N^*},\phi)\in R$ and
$\bar{\sigma}^*(\tau(\A))(\In(P, b_{\Theta/N^*},\phi))=
\Res^{\bar{\Sigma}}_{U(\widetilde{\mathcal{C}}, \hat{\Theta})}\bar{\sigma}^*(\Spec(R))(\In(P, b_{\Theta/N^*},\phi))=
\bar{\sigma}^*(\bar{\B})(\In(P, b_{\Theta/N^*},\phi))
\in\mathcal{O}(\widetilde{\mathcal{C}})( U(\widetilde{\mathcal{C}}, \hat{\Theta}))$.
We know $\In(\hat{P},b_{\Theta/N^*},\bar{\sigma}^*(\tau(\A))(\phi))=
\In(\hat{P}_{\bar{\B}}, b_{\Theta/N^*},\bar{\sigma}^*(\bar{\B})(\phi))$.
Therefore, $\Ord(\hat{P}, $\hfill\break$b_{\Gamma/N^*}, \In(\hat{P},b_{\Theta/N^*},\bar{\sigma}^*(\tau(\A))(\phi)))
=\Ord(\hat{P}_{\bar{\B}}, b_{\Gamma/N^*}, \In(\hat{P}_{\bar{\B}},b_{\Theta/N^*},\bar{\sigma}^*(\bar{\B})(\phi)))$ for any $\Gamma\in\mathcal{F}(\Theta)_1$.

We conclude that 
\begin{equation*}\begin{split}
&\Ord(\hat{P},b_{\Theta/N^*},\bar{\sigma}^*(\tau(\A))(\phi))=
\Ord(P, b_{\Theta/N^*},\phi),\\
&\In(\hat{P},b_{\Theta/N^*},\bar{\sigma}^*(\tau(\A))(\phi))= \bar{\sigma}^*(\tau(\A))(\In(P, b_{\Theta/N^*},\phi))\text{, and}\\
&\Ord(\hat{P}, b_{\Gamma/N^*}, \In(\hat{P},b_{\Theta/N^*},\bar{\sigma}^*(\tau(\A))(\phi)))
=\Ord(\hat{P}, b_{\Gamma/N^*},\bar{\sigma}^*(\tau(\A))(\phi))
\end{split}\end{equation*}
for any $\Gamma\in\mathcal{F}(\Theta)_1$.

Note that $b_{\Theta/N^*}\in\Theta^\circ\subset\Lambda^\circ\subset\bar{\Delta}^\circ$,
$\bar{A}\in\mathcal{F}(S+(\Theta^\vee|V^*))$, $\Theta=\Delta(\bar{A}, S+(\Theta^\vee|V^*)|V) \in\mathcal{D}(S+(\Theta^\vee|V^*)|V)$, 
$S+(\Theta^\vee|V^*)\supset S$, $\Lambda=\Delta(\bar{A}\cap S, S|V) \in\mathcal{D}(S|V)$ and $\In(P, b_{\Theta/N^*},\phi)=\Ps(P, \bar{A}\cap S,\phi)$.

Since $\mathcal{D}(S|V)$ is $\bar{H}$-simple and $\dim|\mathcal{D}(S|V)|=\dim\bar{\Delta}=\dim V$, $\dim\Lambda=\dim V$ or $\dim\Lambda=\dim V-1$.

We consider the case $\dim\Lambda=\dim V$.

Since $\Lambda\in\mathcal{F}(\hat{\Lambda})$ and $\dim\Lambda=\dim V=\dim\hat{\Lambda}$, we know that $\Delta(\bar{A}\cap S,S|V)=\Lambda=\hat{\Lambda}=\Delta(\{\hat{a}\}, S|V)$, $\bar{A}\cap S=\{\hat{a}\}$,
and $\Ps(P, \bar{A}\cap S,\phi)=cx(\hat{a})$ for some $c\in k-\{0\}$.
We take $c\in k-\{0\}$ with $\Ps(P, \bar{A}\cap S,\phi)=cx(\hat{a})$.
We know that $\In(P, b_{\Theta/N^*},\phi)=\Ps(P, \bar{A}\cap S,\phi) =cx(\hat{a})$ and 
\begin{equation*}\begin{split}
&\In(\hat{P},b_{\Theta/N^*},\bar{\sigma}^*(\tau(\A))(\phi))= \bar{\sigma}^*(\tau(\A))(\In(P, b_{\Theta/N^*},\phi))=
\bar{\sigma}^*(\tau(\A))( cx(\hat{a}))\\
=\:&cx(\hat{a})=
c\prod_{\Gamma\in\mathcal{F}(\hat{\Theta})}x({b_{\Gamma/N^*}}^\vee_{\hat{\Theta}})^{\langle b_{\Gamma/N^*},\hat{a}\rangle}.
\end{split}\end{equation*}

Let $\hat{b}=\sum_{\Gamma\in\mathcal{F}(\Theta)_1}\langle b_{\Gamma/N^*},\hat{a}\rangle {b_{\Gamma/N^*}}^\vee_{\hat{\Theta}}\in(\hat{\Theta}^\vee|V^*)\cap N$.
Since $\hat{P}=\{x({b_{\Gamma/N^*}}^\vee_{\hat{\Theta}})|\Gamma\in\mathcal{F}(\Theta)_1\}\cup
\{x({b_{\Gamma/N^*}}^\vee_{\hat{\Theta}})- x({b_{\Gamma/N^*}}^\vee_{\hat{\Theta}})(\tau(\A))|\Gamma\in\mathcal{F}(\hat{\Theta})_1- \mathcal{F}(\Theta)_1\}$ and 
$ x({b_{\Gamma/N^*}}^\vee_{\hat{\Theta}})(\tau(\A$\hfill\break$))\neq 0$ for any 
$\Gamma\in\mathcal{F}(\hat{\Theta})_1- \mathcal{F}(\Theta)_1$, we know that
$\Delta(b_{\Theta/N^*},\Gamma_+(\hat{P}, \bar{\sigma}^*(\tau(\A))(\phi))|V) +(\hat{\Theta}^\vee|V^*)=
\Gamma_+(\hat{P}, \In(\hat{P},b_{\Theta/N^*},\bar{\sigma}^*(\tau(\A))(\phi)))=
\Supp(\In(\hat{P},b_{\Theta/N^*},\bar{\sigma}^*(\tau(\A))(\phi)))+(\hat{\Theta}^\vee|V^*)=\{\hat{b}\}+(\hat{\Theta}^\vee|V^*)$ and
$\hat{b}\in\mathcal{V}(\Gamma_+(\hat{P}, \In(\hat{P},b_{\Theta/N^*},\bar{\sigma}^*(\tau(\A))(\phi))))\subset
\mathcal{V}(\Gamma_+(\hat{P},$\hfill\break$\bar{\sigma}^*(\tau(\A))(\phi)))$.

For any $\Gamma\in\mathcal{F}(\Theta)_1$, we have
$\Ord(\hat{P}, b_{\Gamma/N^*},\bar{\sigma}^*(\tau(\A))(\phi))=
\Ord(\hat{P}, b_{\Gamma/N^*}, \In(\hat{P},$\hfill\break$b_{\Theta/N^*},\bar{\sigma}^*(\tau(\A))(\phi)))=
\Ord(\hat{P}, b_{\Gamma/N^*}, cx(\hat{a}))=
\langle b_{\Gamma/N^*},\hat{b}\rangle$.

For any $\Gamma\in\mathcal{F}(\hat{\Theta})_1- \mathcal{F}(\Theta)_1$,
we have
$0\leq\Ord(\hat{P}, b_{\Gamma/N^*},\bar{\sigma}^*(\tau(\A))(\phi))\leq
\Ord(\hat{P}, $\hfill\break$b_{\Gamma/N^*}, \In(\hat{P},b_{\Theta/N^*},\bar{\sigma}^*(\tau(\A))(\phi)))=
\Ord(\hat{P}, b_{\Gamma/N^*}, cx(\hat{a}))=
0$ and 
$\Ord(\hat{P}, b_{\Gamma/N^*},$\hfill\break$\bar{\sigma}^*(\tau(\A))(\phi))=0=
\langle b_{\Gamma/N^*},\hat{b}\rangle $.

We know that $\Gamma_+(\hat{P},\bar{\sigma}^*(\tau(\A))(\phi))=\{\hat{b}\}+(\hat{\Theta}^\vee|V^*)$ and $\bar{\sigma}^*(\tau(\A))(\phi)$ has normal crossings over $\hat{P}$.

We know that there exists uniquely an element $\B\in(\sigma^*D)_0$ with $\tau(\B)=\bar{\B}$. We take the unique element $\B\in(\sigma^*D)_0$ with $\tau(\B)=\bar{\B}$.
Since $\tau(\A)\in U(\bar{\Sigma},\bar{\sigma}^*\bar{D},\bar{\B})$, $\A\in\tau^{-1}( U(\bar{\Sigma},\bar{\sigma}^*\bar{D},\bar{\B}))=
U(\Sigma, \sigma^*D,\B)$.
It is easy to see that the homomorphism $\tau^*(\A): \mathcal{O}(\widetilde{\mathcal{C}})_{\tau(\A)}\rightarrow\mathcal{O}_{\Sigma, \A}$ induces an isomorphism $\tau^*(\A):\mathcal{O}(\widetilde{\mathcal{C}})_{\tau(\A)}^c\rightarrow\mathcal{O}_{\Sigma, \A}^c$ of complete $k$-algebras.
Since $\sigma(\A)=M(A)$, we have a homomorphism of $k$-algebras $\sigma^*(\A):A\rightarrow\mathcal{O}_{\Sigma, \A}$ satisfying
$\sigma^*(\A)(M(A))\subset M(\mathcal{O}_{\Sigma, \A})$. This homomorphism has the unique extension $\sigma^*(\A):A \rightarrow\mathcal{O}_{\Sigma, \A}^c$.
Since $\iota^*\sigma=\bar{\sigma}\tau$, we have $\sigma^*(\A)(\phi)=\tau^*(\A) \bar{\sigma}^*(\tau(\A))(\phi)$.

Let $\bar{P}=\{(\sigma^*\xi)_\B(E)- (\sigma^*\xi)_\B(E)(\A)|E\in\Comp(\sigma^*D)(\B)\}$.
$\bar{P}$ is a parameter system of $\mathcal{O}_{\Sigma, \A}^c$.
Since $\hat{P}=\{(\bar{\sigma}^*\bar{\xi})_{\bar{\B}}(E)- (\bar{\sigma}^*\bar{\xi})_{\bar{\B}}(E)(\tau(\A))|E\in\Comp(\bar{\sigma}^*\bar{D})(\bar{\B})\}$, $\bar{P}=\tau^*(\A)(\hat{P})$.
We know that $\sigma^*(\A)(\phi)$ has normal crossings over $\bar{P}$.

We take any isomorphism $\rho: \mathcal{O}_{\Sigma, \A}^c\rightarrow A$ of complete $k$-algebras satisfying $\rho(\bar{P})=P$ and we put $g=1\in\Z_+$.
Since $h\geq 2$, $g<h$.
$\rho\sigma^*(\A)(\phi)\in A$ and $\rho\sigma^*(\A)(\phi)$ has normal crossings over $P$.
Therefore, $\rho\sigma^*(\A)(\phi)\in PW(1)=PW(g)$.

We conclude that Theorem~\ref{main} holds, if $\dim\Lambda=\dim V$.

We consider the case $\dim\Lambda=\dim V-1$.

We take any isomorphism $\bar{\rho}:\mathcal{O}(\widetilde{\mathcal{C}})_{\tau(\A)}^c\rightarrow A$ 
of $k$-algebras satisfying $\bar{\rho}(\hat{P})=P$ and $\bar{\rho}
(x({b_{\hat{\Gamma}/N^*}}^\vee_{\hat{\Theta}})- x({b_{\hat{\Gamma}/N^*}}^\vee_{\hat{\Theta}})(\tau(\A)))=z$.

Recall that 
$\phi=\psi u\prod_{\chi\in\mathcal{X}}(z+\chi)^{a(\chi)}\prod_{x\in P-\{z\}} x^{b(x)}$.

We consider the element $u\prod_{x\in P-\{z\}} x^{b(x)}\in A$.

$u\in A^\times=(R_{\bar{M}}^c)^\times$ and $\bar{\sigma}^*(\tau(\A))(u)\in(\mathcal{O}(\widetilde{\mathcal{C}})_{\tau(\A)}^c)^\times$.
$y=x(f^P_y)$ for any $y\in P$.
$$
\prod_{x\in P-\{z\}} x^{b(x)}=
\prod_{y\in P-\{z\}} y^{b(y)}=
\prod_{y\in P-\{z\}}x(f_y^P)^{b(y)}=
x(\sum_{ y\in P-\{z\}} b(y) f_y^P).
$$

We denote $\hat{a}_{00}=\sum_{ y\in P-\{z\}} b(y) f_y^P \in(\bar{\Delta}^\vee|V^*)\cap N\subset
(\hat{\Theta}^\vee|V^*)\cap N$.
We have $\prod_{x\in P-\{z\}} x^{b(x)}=x(\hat{a}_{00})$.

\begin{equation*}\begin{split}
&\bar{\sigma}^*(\tau(\A))(u\prod_{x\in P-\{z\}} x^{b(x)})= \bar{\sigma}^*(\tau(\A))(u) \bar{\sigma}^*(\tau(\A))( \prod_{x\in P-\{z\}} x^{b(x)})\\
=\:&
\bar{\sigma}^*(\tau(\A))(u) \bar{\sigma}^*(\tau(\A))( x(\hat{a}_{00}))=
\bar{\sigma}^*(\tau(\A))(u)x(\hat{a}_{00})\\
=\:&
\bar{\sigma}^*(\tau(\A))(u)\prod_{\Gamma\in\mathcal{F}(\hat{\Theta})_1}
x({b_{\Gamma/N^*}}^\vee_{\hat{\Theta}})^{\langle b_{\Gamma/N^*},\hat{a}_{00}\rangle}.
\end{split}\end{equation*}

We know that $\bar{\sigma}^*(\tau(\A))(u)\prod_{\Gamma\in\mathcal{F}(\hat{\Theta})_1-\mathcal{F}(\Theta)_1}
x({b_{\Gamma/N^*}}^\vee_{\hat{\Theta}})^{\langle b_{\Gamma/N^*},\hat{a}_{00}\rangle}\in(\mathcal{O}(\widetilde{\mathcal{C}})_{\tau(\A)}^c)^\times$ and $\bar{\sigma}^*(\tau(\A))(u\prod_{x\in P-\{z\}} x^{b(x)})$ has normal crossings over $\hat{P}$.

We conclude that $\bar{\rho}\bar{\sigma}^*(\tau(\A))(u\prod_{x\in P-\{z\}} x^{b(x)})$ has normal crossings over $P$ and 
$\bar{\rho}\bar{\sigma}^*(\tau(\A))(u\prod_{x\in P-\{z\}} x^{b(x)})\in PW(1)$.

We consider any $\chi\in\mathcal{X}$ and the element $z+\chi\in A$.

We consider the case $\chi=0$.

$z+\chi=z=x(f^P_z)$.
$f^P_z\in(\bar{\Delta}^\vee|V^*)\cap N
\subset (\hat{\Theta}^\vee|V^*)\cap N$.
$$\bar{\sigma}^*(\tau(\A))(z+\chi)= \bar{\sigma}^*(\tau(\A))( x(f^P_z))=
x(f^P_z)
=\prod_{\Gamma\in\mathcal{F}(\hat{\Theta})_1}x({b_{\Gamma/N^*}}^\vee_{\hat{\Theta}})^{\langle b_{\Gamma/N^*}, f^P_z \rangle}$$

We know that $\prod_{\Gamma\in\mathcal{F}(\hat{\Theta})_1-\mathcal{F}(\Theta)_1}x({b_{\Gamma/N^*}}^\vee_{\hat{\Theta}})^{\langle b_{\Gamma/N^*}, f^P_z \rangle}
\in(\mathcal{O}(\widetilde{\mathcal{C}})_{\tau(\A)}^c)^\times$ and\hfill\break 
$\bar{\sigma}^*(\tau(\A))(z+\chi)$ has normal crossings over $\hat{P}$.

We conclude that $\bar{\rho}\bar{\sigma}^*(\tau(\A))(z+\chi)$ has normal crossings over $P$ and  $\bar{\rho}\bar{\sigma}^*(\tau(\A))$\hfill\break$(z+\chi)\in PW(1)$.

We consider the case $\chi\neq 0$

$\Gamma_+(P,z+\chi)$ is $\bar{H}$-simple and $c(\Gamma_+(P,z+\chi))=2$.
$\Ht(\bar{H}, \Gamma_+(P,z+\chi))=1$.
$\chi\in M(A')-\{0\}$ and $\chi$ has normal crossings over $P-\{z\}$.
We take an element $u(\chi)\in A^{\prime\times}$ and a mapping $e(\chi):P-\{z\}\rightarrow\Z_0$ satisfying $\chi=u(\chi)\prod_{x\in P-\{z\}}x^{e(\chi)(x)}$. 
Let $\hat{e}(\chi)=\sum_{x\in P-\{z\}}e(\chi)(x)f^P_x\in(\bar{\Delta}^\vee|V^*)\cap N$.
\begin{equation*}\begin{split}
&\prod_{x\in P-\{z\}}x^{e(\chi)(x)}=\prod_{y\in P-\{z\}}y^{e(\chi)(y)}=
\prod_{y\in P-\{z\}}x(f^P_y)^{e(\chi)(y)}=x(\sum_{y\in P-\{z\}}e(\chi)(y)f^P_y)\\
=\:&x(\hat{e}(\chi)).
\end{split}\end{equation*}
$\chi=u(\chi)x(\hat{e}(\chi))$.
$\mathcal{V}(\Gamma_+(P,z+\chi))=\{f^P_z,\hat{e}(\chi)\}$.

Note that $\widetilde{\mathcal{C}}$ is a subdivision of $\mathcal{D}(S|V)$, $\mathcal{D}(S|V)$ is a subdivision of $\mathcal{D}(\Gamma_+(P,z+\chi)|V)$
and thus $\widetilde{\mathcal{C}}$ is a subdivision of $\mathcal{D}(\Gamma_+(P,z+\chi)|V)$. 
We take the unique element $\Lambda(\chi)\in \mathcal{D}(\Gamma_+(P,z+\chi)|V)$
with $\Theta^\circ\subset \Lambda(\chi)^\circ$ and we take the unique element $\hat{\Lambda}(\chi)\in \mathcal{D}(\Gamma_+(P,z+\chi)|V)$
with $\smash{\hat{\Theta}}^\circ\subset \hat{\Lambda}(\chi)^\circ$.
$\Theta^\circ\subset \Lambda^\circ\subset \Lambda(\chi)^\circ\subset\smash{\bar{\Delta}}^\circ$.
$\smash{\hat{\Theta}}^\circ\subset\smash{\hat{\Lambda}}^\circ$.
Since $\Theta\in\mathcal{F}(\hat{\Theta})$, 
$\Lambda(\chi)\in\mathcal{F}(\hat{\Lambda}(\chi))$.
We take the unique element $\bar{A}(\chi)\in\mathcal{F}(\Gamma_+(P,z+\chi)+(\Theta^\vee|V^*))$ satisfying
$\Theta=\Delta(\bar{A}(\chi), \Gamma_+(P,z+\chi(\chi))+(\Theta^\vee|V^*)|V)$.
$\Lambda(\chi)=\Delta(\bar{A}(\chi)\cap \Gamma_+(P,z+\chi), \Gamma_+(P,z+\chi)|V)$.
$\dim\hat{\Theta}=\dim \hat{\Lambda}(\chi)=\dim V$.
$\hat{\Lambda}(\chi)\in\mathcal{D}(\Gamma_+(P,z+\chi)|V)^0$.
We take the unique element $\hat{a}(\chi)\in\mathcal{V}(\Gamma_+(P,z+\chi))$
with $\hat{\Lambda}(\chi)=\Delta(\{\hat{a}(\chi)\}, \Gamma_+(P,z+\chi(\chi))|V)$.
$\hat{a}(\chi)=f^P_z$ or $\hat{a}(\chi)=\hat{e}(\chi)$.
$\Gamma_+(P,z+\chi)+(\hat{\Theta}^\vee|V^*)=\{\hat{a}(\chi)\} +(\hat{\Theta}^\vee|V^*)$.

Since $\mathcal{D}(\Gamma_+(P,z+\chi(i))|V)$ is $\bar{H}$-simple, 
$\dim \Lambda(\chi)=\dim V$ or $\dim \Lambda(\chi)=\dim V-1$

We consider the case $\dim \Lambda(\chi)=\dim V$.

By the same argument as in the case $\dim \Lambda=\dim V$ above, we know that
$\bar{\sigma}^*(\tau(\A))(z+\chi)$ has normal crossings over $\hat{P}$,
$\bar{\rho}\bar{\sigma}^*(\tau(\A))(z+\chi)$ has normal crossings over $P$ and
$\bar{\rho}\bar{\sigma}^*(\tau(\A))(z+\chi)\in PW(1)$.

We consider the case $\dim \Lambda(\chi)=\dim V-1$.

$\dim\Lambda=\dim \Lambda(\chi)=\dim V-1$.
Since $\Lambda\subset\Lambda(\chi)$, we have $\Vect(\Lambda)=\Vect(\Lambda(\chi))$.
Since $\mathcal{D}(S|V)$ is $\bar{H}$-simple, we have
$\Lambda=\Vect(\Lambda)\cap\bar{\Delta}=\Vect(\Lambda(\chi))\cap\bar{\Delta}\supset
\Lambda(\chi)$. Therefore, we know $\Lambda=\Lambda(\chi)$.

Now, by Theorem~\ref{important}.8.(a) and Theorem~\ref{important}.8.(b),
we know that $\hat{\Gamma}\not\subset\Vect(\Lambda)$ and $\Vect(\Lambda)+\hat{\Gamma}=\Vect(\Lambda)+\bar{H}$.

Since $\Vect(\Lambda)+\hat{\Gamma}=\Vect(\Lambda)+\hat{H}$ and
$\hat{\Gamma}\subset\hat{\Theta}\subset\hat{\Lambda}$, we know that
$\hat{\Lambda}(\chi)=\Delta(\{\hat{e}(\chi)\}, \Gamma_+(P,z+\chi)|V)$.
$\Delta(\bar{A}(\chi)\cap  \Gamma_+(P,z+\chi), \Gamma_+(P,z+\chi)|V)=\Lambda(\chi)=
\Delta(\{\hat{e}(\chi)\}, \Gamma_+(P,z+\chi(\chi))|V)\cap
\Delta(\{f^P_z\}, \Gamma_+(P,z+\chi)|V)$,
$\Delta(\Stab(\bar{A}(\chi)\cap  \Gamma_+(P,z+\chi)),\Stab(\Gamma_+(P,z+\chi))|V)=\bar{\Delta}$, 
$\Stab(\bar{A}(\chi)\cap  \Gamma_+(P,z+\chi))=\{0\}$, and $\bar{A}(\chi)\cap  \Gamma_+(P,z+\chi)=\Conv(\{f^P_z, \hat{e}(\chi)\})$.

Since $\hat{\Gamma}\not\subset\Vect(\Lambda)\supset\Lambda\supset\Theta$, we know that  $\hat{\Gamma}\in\mathcal{F}(\hat{\Theta})_1-\mathcal{F}(\Theta)_1$ and
$x({b_{\hat{\Gamma}/N^*}}^\vee_{\hat{\Theta}})\tau$\hfill\break$(\A)\neq 0$.

Since $1=\langle b_{\bar{H}/N^*},f^P_z\rangle>\langle b_{\bar{H}/N^*},\hat{e}(\chi)\rangle=0$ and $\Vect(\Lambda)+\hat{\Gamma}=\Vect(\Lambda)+\hat{H}$, we know
$\langle b_{\hat{\Gamma}/N^*},f^P_z\rangle>\langle b_{\hat{\Gamma}/N^*},\hat{e}(\chi)\rangle$,
$\{\langle b_{\hat{\Gamma}/N^*},a\rangle|a\in \bar{A}(\chi)\cap  \Gamma_+(P,z+\chi)\}=
\{t\in\R|\langle b_{\hat{\Gamma}/N^*},f^P_z\rangle\geq t\geq\langle b_{\hat{\Gamma}/N^*},\hat{e}(\chi)\rangle\}$ and
$\Z\ni\max\{\langle b_{\hat{\Gamma}/N^*},a\rangle|a\in \bar{A}(\chi)\cap  \Gamma_+(P,z+\chi)\}
-\min\{\langle b_{\hat{\Gamma}/N^*},a\rangle|a\in \bar{A}(\chi)\cap  \Gamma_+(P,z+\chi)\}= \langle b_{\hat{\Gamma}/N^*}, f^P_z -\hat{e}(\chi)\rangle>0$.

By Theorem~\ref{important}.12.(b) we know  $\langle b_{\hat{\Gamma}/N^*}, f^P_z -\hat{e}(\chi)\rangle\leq\Ht(\bar{H}, \Gamma_+(P,z+\chi))=1$.
We conclude that $\langle b_{\hat{\Gamma}/N^*}, f^P_z -\hat{e}(\chi)\rangle=1$.

We denote $m=\langle b_{\hat{\Gamma}/N^*}, \hat{e}(\chi)\rangle\in\Z_0$.

For any $\Gamma\in\mathcal{F}(\Theta)_1$, we have
$b_{\Gamma/N^*}\in\Gamma\subset\Theta\subset\Lambda(\chi)=
\Delta(\{\hat{e}(\chi)\}, \Gamma_+(P,z+\chi)|V)\cap
\Delta(\{f^P_z\}, \Gamma_+(P,z+\chi)|V)$ and $\langle b_{\Gamma/N^*}, f^P_z \rangle=\langle b_{\Gamma/N^*}, \hat{e}(\chi)\rangle$.
We denote 
\begin{equation*}\begin{split}
\hat{b}&=\sum_{\Gamma\in\mathcal{F}(\Theta)_1}\langle b_{\Gamma/N^*}, f^P_z \rangle{ b_{\Gamma/N^*}}^\vee_{\hat{\Theta}}\in\Delta(\Theta\Op|\hat{\Theta},\hat{\Theta}|V^*)\cap N,\\
\hat{c}_0&=\sum_{\Gamma\in\mathcal{F}((\Theta+\hat{\Gamma})\Op|\hat{\Theta})_1}\langle b_{\Gamma/N^*}, f^P_z\rangle{ b_{\Gamma/N^*}}^\vee_{\hat{\Theta}}\in \Delta(\Theta+\hat{\Gamma},\hat{\Theta}|V^*)\cap N,\text{ and}\\
\hat{c}&=\sum_{\Gamma\in\mathcal{F}((\Theta+\hat{\Gamma})\Op|\hat{\Theta})_1}\langle b_{\Gamma/N^*}, \hat{e}(\chi)\rangle{b_{\Gamma/N^*}}^\vee_{\hat{\Theta}}\in \Delta(\Theta+\hat{\Gamma},\hat{\Theta}|V^*)\cap N.
\end{split}\end{equation*}
We have $f^P_z=\hat{b}+(1+m) {b_{\hat{\Gamma}/N^*}}^\vee_{\hat{\Theta}}+\hat{c}_0$ and $\hat{e}(\chi)=\hat{b}+m{b_{\hat{\Gamma}/N^*}}^\vee_{\hat{\Theta}}+\hat{c}$.

Let $u(\chi)(0)\in k-\{0\}$ denote the unique element satisfying $u(\chi)-u(\chi)(0)\in M(A')$.
We have 
\begin{equation*}\begin{split}
&\Ps(P, \bar{A}(\chi)\cap \Gamma_+(P,z+\chi),z+\chi)\\
=\:&x(\hat{b})x({b_{\hat{\Gamma}/N^*}}^\vee_{\hat{\Theta}})^{1+m}x(\hat{c}_0)
+ u(\chi)(0) x(\hat{b}) x({b_{\hat{\Gamma}/N^*}}^\vee_{\hat{\Theta}})^mx(\hat{c})\\
=\:& x(\hat{b})x({b_{\hat{\Gamma}/N^*}}^\vee_{\hat{\Theta}})^m
(x({b_{\hat{\Gamma}/N^*}}^\vee_{\hat{\Theta}}) x(\hat{c}_0)+ x(\hat{c}))\text{, and}\\
&\In(\hat{P},b_{\Theta/N^*},\bar{\sigma}^*(\tau(\A))(z+\chi))\\
=\:&\bar{\sigma}^*(\tau(\A))( \Ps(P,\bar{A}(\chi)\cap \Gamma_+(P,z+\chi),z+\chi))\\
=\:&x(\hat{b})x({b_{\hat{\Gamma}/N^*}}^\vee_{\hat{\Theta}})^m
(x({b_{\hat{\Gamma}/N^*}}^\vee_{\hat{\Theta}}) x(\hat{c}_0)+ x(\hat{c})).
\end{split}\end{equation*}

Recall that $x({b_{\Gamma/N^*}}^\vee_{\hat{\Theta}})(\tau(\A))=0$ for any $\Gamma\in\mathcal{F}(\Theta)_1$,
$x({b_{\hat{\Gamma}/N^*}}^\vee_{\hat{\Theta}})(\tau(\A))\neq 0$,
$x({b_{\Gamma/N^*}}^\vee_{\hat{\Theta}})(\tau(\A))\neq 0$ for any $\Gamma\in\mathcal{F}((\Theta+\hat{\Gamma})\Op|\hat{\Theta})_1$ and
$\hat{P}=\{x({b_{\Gamma/N^*}}^\vee_{\hat{\Theta}})|\Gamma\in\mathcal{F}(\Theta)_1\}\cup\{x({b_{\hat{\Gamma}/N^*}}^\vee_{\hat{\Theta}})- x({b_{\hat{\Gamma}/N^*}}^\vee_{\hat{\Theta}})\tau(\A)\}\cup
\{ x({b_{\Gamma/N^*}}^\vee_{\hat{\Theta}})-
x({b_{\Gamma/N^*}}^\vee_{\hat{\Theta}})\tau(\A)|
\Gamma\in\mathcal{F}((\Theta+\hat{\Gamma})\Op|\hat{\Theta})_1\}$.

Note that $k[\hat{P}-\{x({b_{\hat{\Gamma}/N^*}}^\vee_{\hat{\Theta}})- x({b_{\hat{\Gamma}/N^*}}^\vee_{\hat{\Theta}})\tau(\A)\}]\subset 
\mathcal{O}(\widetilde{\mathcal{C}})_{\tau(\A)}^c$.
By $\mathcal{O}(\widetilde{\mathcal{C}})_{\tau(\A)}^{c\:\prime}$ we denote the
completion of $k[\hat{P}-\{x({b_{\hat{\Gamma}/N^*}}^\vee_{\hat{\Theta}})- x({b_{\hat{\Gamma}/N^*}}^\vee_{\hat{\Theta}})\tau(\A)\}]$ with respect to the maximal ideal $k[\hat{P}-\{x({b_{\hat{\Gamma}/N^*}}^\vee_{\hat{\Theta}})- x({b_{\hat{\Gamma}/N^*}}^\vee_{\hat{\Theta}})\tau(\A)\}]\cap M(\mathcal{O}(\widetilde{\mathcal{C}})_{\tau(\A)}^c)$.
$\mathcal{O}(\widetilde{\mathcal{C}})_{\tau(\A)}^{c\:\prime}$ is a complete local subring of 
$\mathcal{O}(\widetilde{\mathcal{C}})_{\tau(\A)}^c$.
$\mathcal{O}(\widetilde{\mathcal{C}})_{\tau(\A)}^{c\:\prime}$ is a $k$-subalgebra of $\mathcal{O}(\widetilde{\mathcal{C}})_{\tau(\A)}^c$.

$x({b_{\hat{\Gamma}/N^*}}^\vee_{\hat{\Theta}})^m\in (\mathcal{O}(\widetilde{\mathcal{C}})_{\tau(\A)}^c)^\times$, 
$x(\hat{c}_0)\in (\mathcal{O}(\widetilde{\mathcal{C}})_{\tau(\A)}^{c\:\prime})^\times$ and
$x(\hat{c})\in (\mathcal{O}(\widetilde{\mathcal{C}})_{\tau(\A)}^{c\:\prime})^\times$.
We know that there exist $v'\in (\mathcal{O}(\widetilde{\mathcal{C}})_{\tau(\A)}^c)^\times$ and 
$w'\in (\mathcal{O}(\widetilde{\mathcal{C}})_{\tau(\A)}^{c\:\prime})^\times$
satisfying 
$\In(\hat{P},b_{\Theta/N^*},$\break$\bar{\sigma}^*(\tau(\A))(z+\chi))=
x(\hat{b}) v'(x({b_{\hat{\Gamma}/N^*}}^\vee_{\hat{\Theta}})+ w')$.

Note that $\Gamma_+(P,z+\chi)+(\hat{\Theta}^\vee|V^*)=\{\hat{a}(\chi)\} +(\hat{\Theta}^\vee|V^*)$ and $ b_{\Theta/N^*}\in\Theta^\circ\subset\hat{\Theta}$.
We know that there exist $v\in (\mathcal{O}(\widetilde{\mathcal{C}})_{\tau(\A)}^c)^\times$ and 
$w\in (\mathcal{O}(\widetilde{\mathcal{C}})_{\tau(\A)}^{c\:\prime})^\times$
satisfying 
$\bar{\sigma}^*(\tau(\A))(z+\chi)=
x(\hat{b}) v(x({b_{\hat{\Gamma}/N^*}}^\vee_{\hat{\Theta}})+ w)$.
We take elements $v\in (\mathcal{O}(\widetilde{\mathcal{C}})_{\tau(\A)}^c)^\times$ and 
$w\in (\mathcal{O}(\widetilde{\mathcal{C}})_{\tau(\A)}^{c\:\prime})^\times$
satisfying 
$\bar{\sigma}^*(\tau(\A))(z+\chi)=
x(\hat{b}) v(x({b_{\hat{\Gamma}/N^*}}^\vee_{\hat{\Theta}})+ w)$.
We take the unique element $w(0)\in k-\{0\}$ satisfying
$w-w(0)\in M(\mathcal{O}(\widetilde{\mathcal{C}})_{\tau(\A)}^{c\:\prime})$.

We consider the case $ x({b_{\hat{\Gamma}/N^*}}^\vee_{\hat{\Theta}})(\tau(\A))+w(0)\neq 0$.
We know that $ x({b_{\hat{\Gamma}/N^*}}^\vee_{\hat{\Theta}})+ w\in (\mathcal{O}(\widetilde{\mathcal{C}})_{\tau(\A)}^c)^\times$,
$ v(x({b_{\hat{\Gamma}/N^*}}^\vee_{\hat{\Theta}})+ w) \in (\mathcal{O}(\widetilde{\mathcal{C}})_{\tau(\A)}^c)^\times$ and
$\bar{\sigma}^*(\tau(\A))(z+\chi)$ has normal crossings over $\hat{P}$.
We know that $\bar{\rho}\bar{\sigma}^*(\tau(\A))(z+\chi)$ has normal crossings over $P$ and $\bar{\rho}\bar{\sigma}^*(\tau(\A))(z+\chi)\in PW(1)$.

We consider the case $ x({b_{\hat{\Gamma}/N^*}}^\vee_{\hat{\Theta}})(\tau(\A))+w(0)= 0$.
$ x(\hat{b})$ has normal crossings over $\hat{P}$ and $\bar{\rho}( x(\hat{b}))$ has normal crossings over $P$. $\bar{\rho}(v)\in A^\times$.
We know that $x({b_{\hat{\Gamma}/N^*}}^\vee_{\hat{\Theta}})(\tau(\A))+ w\in M(\mathcal{O}(\widetilde{\mathcal{C}})_{\tau(\A)}^{c\:\prime})$,
$\bar{\rho}( x({b_{\hat{\Gamma}/N^*}}^\vee_{\hat{\Theta}})(\tau(\A))+ w)
\in M(A')$ and
$\bar{\rho}\bar{\sigma}^*(\tau(\A))(z+\chi)= \bar{\rho}( x(\hat{b}))\bar{\rho}(v)(z+\bar{\rho}( x({b_{\hat{\Gamma}/N^*}}^\vee_{\hat{\Theta}})(\tau(\A))+ w))\in PW(1)$.

We conclude that $\bar{\rho}\bar{\sigma}^*(\tau(\A))(z+\chi)\in PW(1)$ in all cases.

Note that $\zeta\eta\in PW(1)$ for any $\zeta\in PW(1)$ and any $\eta\in PW(1)$.
Therefore, we conclude that
$\bar{\rho}\bar{\sigma}^*(\tau(\A))(u\prod_{\chi\in\mathcal{X}}(z+\chi)^{a(\chi)}\prod_{x\in P-\{z\}} x^{b(x)})=$\hfill\break$
\bar{\rho}\bar{\sigma}^*(\tau(\A))(u\prod_{x\in P-\{z\}} x^{b(x)})
\prod_{\chi\in\mathcal{X}}\bar{\rho}\bar{\sigma}^*(\tau(\A))(z+\chi)^{a(\chi)}\in PW(1)$.

We consider the element $\psi\in W(h)$.

Recall that $\mathcal{D}(\Gamma_+(P,\psi)|V)$ is $\bar{H}$-simple,
$\Theta\in\widetilde{\mathcal{C}}$,
$\tau(\A)\in V^\circ(\widetilde{\mathcal{C}},\Theta)\subset\bar{\Sigma}$,
$\hat{\Gamma}\in\widetilde{\mathcal{C}}_1$,
$\hat{\Gamma}\not\subset \bar{H}\Op|\bar{\Delta}$,
$\Theta\in\mathcal{A}^\circ(\hat{\Gamma})$,
$\hat{\Theta}\in\smash{\widetilde{\mathcal{C}}}^0$,
$\Theta\in\mathcal{F}(\hat{\Theta})$ and
$\hat{\Gamma}\in\mathcal{F}(\hat{\Theta})_1$.

Note that $\widetilde{\mathcal{C}}$ is a subdivision of $\mathcal{D}(S|V)$,
$\mathcal{D}(S|V)$ is a subdivision of $\mathcal{D}(\Gamma_+(P,\psi)|V)$ and thus 
$\widetilde{\mathcal{C}}$ is a subdivision of $\mathcal{D}(\Gamma_+(P,\psi)|V)$.
We take the unique element $\Lambda(\psi)\in\mathcal{D}(\Gamma_+(P,\psi)|V)$ with
$\Theta^\circ\subset\Lambda(\psi)^\circ$.
We take the unique element $\bar{A}(\psi)\in\mathcal{F}(\Gamma_+(P,\psi)+(\Theta^\vee|V^*))$ with
$\Theta=\Delta(\bar{A}(\psi), \Gamma_+(P,\psi)+(\Theta^\vee|V^*)|V)$.
We take the unique element $\hat{\Lambda}(\psi)\in\mathcal{D}(\Gamma_+(P,\psi)|V)$ with
$\smash{\hat{\Theta}}^\circ\subset\hat{\Lambda}(\psi)^\circ$.
$\dim \hat{\Theta}=\dim\hat{\Lambda}(\psi)=\dim V$.
$\hat{\Lambda}(\psi)\in\mathcal{D}(\Gamma_+(P,\psi)|V)^0$.
We take the unique element $\hat{a}(\psi)\in\mathcal{V}(\Gamma_+(P,\psi))$ with
$\hat{\Lambda}(\psi)=\Delta(\{\hat{a}(\psi)\}, \Gamma_+(P,\psi)|V)$.

$\Theta^\circ\subset\Lambda^\circ\subset\Lambda(\psi)^\circ\subset\bar{\Delta}^\circ$.
$\Lambda(\psi)=\Delta(\bar{A}(\psi)\cap \Gamma_+(P,\psi), \Gamma_+(P,\psi)|V)$.
$\smash{\hat{\Theta}}^\circ\subset\smash{\hat{\Lambda}}^\circ$.
$\Lambda(\psi)\in\mathcal{F}(\hat{\Lambda}(\psi))$.
$\Gamma_+(P,\psi)+(\hat{\Theta}^\vee|V^*)=\{\hat{a}(\psi)\} +(\hat{\Theta}^\vee|V^*)$.

Since $\Gamma_+(P,\psi)$ is $\bar{H}$-simple, we know that $\dim\Lambda(\psi)=\dim V$ or $\dim\Lambda(\psi)=\dim V-1$.

We consider case $\dim\Lambda(\psi)=\dim V$.

By the same argument as in the case $\dim \Lambda=\dim V$ above, we know that 
$\bar{\sigma}^*(\tau(\A))(\psi)$ has normal crossings over $\hat{P}$, 
$\bar{\rho}\bar{\sigma}^*(\tau(\A))(\psi)$ has normal crossings over $P$ and
$\bar{\rho}\bar{\sigma}^*(\tau(\A))(\psi)\in PW(1)$.
Therefore, $\bar{\rho}\bar{\sigma}^*(\tau(\A))(\phi)=
\bar{\rho}\bar{\sigma}^*(\tau(\A))(\psi) \bar{\rho}\bar{\sigma}^*(\tau(\A))$\hfill\break$(u\prod_{\chi\in\mathcal{X}}(z+\chi)^{a(\chi)}\prod_{x\in P-\{z\}} x^{b(x)})\in PW(1)$.

We know that there exists uniquely an element $\B\in(\sigma^*D)_0$ with $\tau(\B)=\bar{\B}$. We take the unique element $\B\in(\sigma^*D)_0$ with $\tau(\B)=\bar{\B}$.
Since $\tau(\A)\in U(\bar{\Sigma},\bar{\sigma}^*\bar{D},\bar{\B})$, $\A\in\tau^{-1}( U(\bar{\Sigma},\bar{\sigma}^*\bar{D},\bar{\B}))=
U(\Sigma, \sigma^*D,\B)$.
It is easy to see that the homomorphism $\tau^*(\A): \mathcal{O}(\widetilde{\mathcal{C}})_{\tau(\A)}\rightarrow\mathcal{O}_{\Sigma, \A}$ induces an isomorphism $\tau^*(\A):\mathcal{O}(\widetilde{\mathcal{C}})_{\tau(\A)}^c\rightarrow\mathcal{O}_{\Sigma, \A}^c$ of complete $k$-algebras.
Since $\sigma(\A)=M(A)$, we have a homomorphism of $k$-algebras $\sigma^*(\A):A\rightarrow\mathcal{O}_{\Sigma, \A}$ satisfying
$\sigma^*(\A)(M(A))\subset M(\mathcal{O}_{\Sigma, \A})$. This homomorphism has the unique extension $\sigma^*(\A):A \rightarrow\mathcal{O}_{\Sigma, \A}^c$.
Since $\iota^*\sigma=\bar{\sigma}\tau$, we have $\sigma^*(\A)(\phi)=\tau^*(\A) \bar{\sigma}^*(\tau(\A))(\phi)$.

Let $\bar{P}=\{(\sigma^*\xi)_\B(E)- (\sigma^*\xi)_\B(E)(\A)|E\in\Comp(\sigma^*D)(\B)\}$.
$\bar{P}$ is a parameter system of $\mathcal{O}_{\Sigma, \A}^c$.
Since $\hat{P}=\{(\bar{\sigma}^*\bar{\xi})_{\bar{\B}}(E)- (\bar{\sigma}^*\bar{\xi})_{\bar{\B}}(E)(\tau(\A))|E\in\Comp(\bar{\sigma}^*\bar{D})(\bar{\B})\}$, $\bar{P}=\tau^*(\A)(\hat{P})$.

Let $\rho=\bar{\rho}\tau^*(\A)^{-1}:\mathcal{O}_{\Sigma, \A}^c\rightarrow A$.
$\rho$ is an isomorphism of $k$-algebras.
We put $g=1\in\Z_+$.
$\rho(\bar{P})=\bar{\rho}\tau^*(\A)^{-1}(\tau^*(\A)(\hat{P}))=\bar{\rho}(\hat{P})=P$. We know $\rho(\bar{P})=P$.
$\rho\sigma^*(\A)(\phi)= \bar{\rho}\tau^*(\A)^{-1}\tau^*(\A) \bar{\sigma}^*(\tau(\A))(\phi)
=\bar{\rho}\bar{\sigma}^*(\tau(\A))(\phi)\in PW(1)=PW(g)$.
We know $\rho\sigma^*(\A)(\phi)\in PW(g)$.
Since $h\geq 2$, $g<h$.

We conclude that Theorem~\ref{main} holds, if $\dim\Lambda=\dim V-1$ and $\dim\Lambda(\psi)=\dim V$.

We consider the case $\dim\Lambda(\psi)=\dim V-1$.

We know that
$\Lambda(\psi)\in\bar{\mathcal{D}}(\Gamma_+(P,\psi)|V)^1$. 
$\Lambda(\psi)\neq \bar{H}\Op|\bar{\Delta}$ and $\Ht(\bar{H}, \Gamma_+(P,\psi$\break$))>0$.

Since $\Lambda\subset\Lambda(\psi)\subset\bar{\Delta}$ and $\dim\Lambda=\dim V-1=\dim\Lambda(\psi)$,
we have $\Vect(\Lambda)=\Vect(\Lambda(\psi))$.
Since $\mathcal{D}(S|V)$ is $\bar{H}$-simple, we have $\Lambda=\Vect(\Lambda)\cap\bar{\Delta}=\Vect(\Lambda(\psi))\cap\bar{\Delta}\supset\Lambda(\psi)$.
Therefore, we know $\Lambda=\Lambda(\psi)$.

We know that there exists uniquely an element $\hat{\Lambda}_0(\psi)\in\mathcal{D}(\Gamma_+(P,\psi)|V)^0$ satisfying $\Lambda(\psi)=\hat{\Lambda}(\psi)\cap\hat{\Lambda}_0(\psi)$. 
We take the unique element $\hat{\Lambda}_0(\psi)\in\mathcal{D}(\Gamma_+(P,\psi)|V)^0$ satisfying $\Lambda(\psi)=\hat{\Lambda}(\psi)\cap\hat{\Lambda}_0(\psi)$.
$\hat{\Lambda}(\psi)\neq\hat{\Lambda}_0(\psi)$.
We take the unique element $\hat{a}_0(\psi)\in\mathcal{V}(\Gamma_+(P,\psi))$ with $\hat{\Lambda}_0(\psi)
=\Delta(\{\hat{a}_0(\psi)\}, \Gamma_+(P,\psi)|V)$.

$\hat{a}_0(\psi)\neq\hat{a}(\psi)$,
$\Delta(\bar{A}(\psi)\cap \Gamma_+(P,\psi), \Gamma_+(P,\psi)|V)=\Lambda(\psi)=\hat{\Lambda}(\psi)\cap\hat{\Lambda}_0(\psi)=
\Delta(\{\hat{a}(\psi)\}, \Gamma_+(P,\psi)|V)\cap \Delta(\{\hat{a}_0(\psi)\}, \Gamma_+(P,\psi)|V)$,
$\Delta(\Stab(\bar{A}(\psi)\cap \Gamma_+(P,\psi)),\Stab($\hfill\break$\Gamma_+(P,\psi))|V)=\bar{\Delta}$,
$\Stab(\bar{A}(\psi)\cap \Gamma_+(P,\psi))=\{0\}$
and $\bar{A}(\psi)\cap \Gamma_+(P,\psi)=\Conv(\{$\hfill\break$\hat{a}_0(\psi), \hat{a}(\psi)\})$.

Now, by Theorem~\ref{important}.8.(a) and Theorem~\ref{important}.8.(b),
we know that $\hat{\Gamma}\not\subset\Vect(\Lambda)$ and $\Vect(\Lambda)+\hat{\Gamma}=\Vect(\Lambda)+\bar{H}$.

Since $\hat{\Gamma}\not\subset\Vect(\Lambda)\supset\Lambda\supset\Theta$, we know that  $\hat{\Gamma}\in\mathcal{F}(\hat{\Theta})_1-\mathcal{F}(\Theta)_1$ and
$x({b_{\hat{\Gamma}/N^*}}^\vee_{\hat{\Theta}})\tau$\hfill\break$(\A)\neq 0$.

Since $\hat{\Gamma}\in\mathcal{F}(\hat{\Theta})_1$, we know that
$\hat{\Gamma}\subset\hat{\Theta}\subset\hat{\Lambda}$,
$\hat{\Lambda}\subset \Vect(\Lambda)+\hat{\Gamma}=\Vect(\Lambda)+\bar{H}$,
$\hat{\Lambda}+\bar{H}=\Lambda+\bar{H}$, $\hat{\Lambda}+\bar{H}\subset
\hat{\Lambda}_0+\bar{H}$,
$\langle b_{\bar{H}/N^*}, \hat{a}_0(\psi)\rangle>\langle b_{\bar{H}/N^*}, \hat{a}(\psi)\rangle$,
$\langle b_{\hat{\Gamma}/N^*}, \hat{a}_0(\psi)\rangle>\langle b_{\hat{\Gamma}/N^*}, \hat{a}(\psi)\rangle$,
$\{\langle b_{\hat{\Gamma}/N^*}, a\rangle|a\in \bar{A}(\psi)\cap \Gamma_+(P,\psi)\}=
\{t\in\R|\langle b_{\hat{\Gamma}/N^*}, \hat{a}_0(\psi)\rangle\geq t\geq\langle b_{\hat{\Gamma}/N^*}, \hat{a}(\psi)\rangle\}$ and
$\max\{\langle b_{\hat{\Gamma}/N^*}, a\rangle|a\in \bar{A}(\psi)\cap \Gamma_+(P,\psi)\}
-\min\{\langle b_{\hat{\Gamma}/N^*}, a\rangle|a\in \bar{A}(\psi)\cap \Gamma_+(P,\psi)\}=
\langle b_{\hat{\Gamma}/N^*}, \hat{a}_0(\psi)-\hat{a}(\psi)\rangle>0$.

We denote $\ell=\langle b_{\hat{\Gamma}/N^*}, \hat{a}_0(\psi)-\hat{a}(\psi)\rangle\in\Z_+$ and
$m=\langle b_{\hat{\Gamma}/N^*}, \hat{a}(\psi)\rangle\in\Z_0$ for simplicity.
By Theorem~\ref{important}.12.(b) we know 
$\ell\leq\Ht(\bar{H},\Gamma_+(P,\psi))=h$.

For any $\Gamma\in\mathcal{F}(\Theta)_1$, we have
$b_{\Gamma/N^*}\in\Gamma\subset\Theta\subset\Lambda(\psi)=\hat{\Lambda}(\psi)\cap\hat{\Lambda}_0(\psi)$ and $\langle b_{\Gamma/N^*}, \hat{a}_0(\psi)\rangle=\langle b_{\Gamma/N^*}, \hat{a}(\psi)\rangle$.
We denote 
\begin{equation*}\begin{split}
\hat{b}&=\sum_{\Gamma\in\mathcal{F}(\Theta)_1}\langle b_{\Gamma/N^*}, \hat{a}_0(\psi)\rangle{ b_{\Gamma/N^*}}^\vee_{\hat{\Theta}}\in\Delta(\Theta\Op|\hat{\Theta},\hat{\Theta}|V^*)\cap N,\\
\hat{c}_0&=\sum_{\Gamma\in\mathcal{F}((\Theta+\hat{\Gamma})\Op|\hat{\Theta})_1}\langle b_{\Gamma/N^*}, \hat{a}_0(\psi)\rangle{ b_{\Gamma/N^*}}^\vee_{\hat{\Theta}}\in \Delta(\Theta+\hat{\Gamma},\hat{\Theta}|V^*)\cap N,\text{ and}\\
\hat{c}&=\sum_{\Gamma\in\mathcal{F}((\Theta+\hat{\Gamma})\Op|\hat{\Theta})_1}\langle b_{\Gamma/N^*}, \hat{a}(\psi)\rangle{b_{\Gamma/N^*}}^\vee_{\hat{\Theta}}\in \Delta(\Theta+\hat{\Gamma},\hat{\Theta}|V^*)\cap N.
\end{split}\end{equation*}
We have $\hat{a}_0(\psi)=\hat{b}+(\ell+m) {b_{\hat{\Gamma}/N^*}}^\vee_{\hat{\Theta}}
+\hat{c}_0$ and $\hat{a}(\psi)=\hat{b}+m {b_{\hat{\Gamma}/N^*}}^\vee_{\hat{\Theta}}
+\hat{c}$.

Let $L=\{i\in\{0,1,\ldots,\ell\}|((\ell-i)/\ell) \hat{c}_0+(i/\ell) \hat{c}\in N\}$.
$\{0,\ell\}\subset L\subset\{0,1,\ldots,\ell\}$ and
$\Conv(\{\hat{a}_0(\psi),\hat{a}(\psi)\})\cap N=
\{\hat{b}+(\ell-i+m) {b_{\hat{\Gamma}/N^*}}^\vee_{\hat{\Theta}}
+((\ell-i)/\ell) \hat{c}_0+(i/\ell) \hat{c}|i\in L\}$.
$((\ell-i)/\ell) \hat{c}_0+(i/\ell) \hat{c}\in \Delta(\Theta+\hat{\Gamma},\hat{\Theta}|V^*)\cap N$ for any $i\in L$.

We know that there exists uniquely a mapping $e:L\rightarrow k$ satisfying
$$\Ps(P,\bar{A}(\psi)\cap \Gamma_+(P,\psi),\psi)=\sum_{i\in L}e(i)x(\hat{b})x({b_{\hat{\Gamma}/N^*}}^\vee_{\hat{\Theta}})^{\ell-i+m}
x((\frac{\ell-i}{\ell}) \hat{c}_0+(\frac{i}{\ell}) \hat{c}),$$
$e(0)\neq 0$ and $e(\ell)\neq 0$.
We take the unique mapping $e:L\rightarrow k$ satisfying the above three conditions.
We have
\begin{equation*}\begin{split}
&\In(\hat{P},b_{\Theta/N^*},\bar{\sigma}^*(\tau(\A))(\psi))=
\bar{\sigma}^*(\tau(\A))( \Ps(P,\bar{A}(\psi)\cap \Gamma_+(P,\psi),\psi))\\
=\:&
\sum_{i\in L}e(i)x(\hat{b})x({b_{\hat{\Gamma}/N^*}}^\vee_{\hat{\Theta}})^{\ell-i+m}
x((\frac{\ell-i}{\ell}) \hat{c}_0+(\frac{i}{\ell}) \hat{c}).
\end{split}\end{equation*}

Recall that $x({b_{\Gamma/N^*}}^\vee_{\hat{\Theta}})\tau(\A)=0$ for any $\Gamma\in\mathcal{F}(\Theta)_1$,
$x({b_{\hat{\Gamma}/N^*}}^\vee_{\hat{\Theta}})\tau(\A)\neq 0$,\break
$x({b_{\Gamma/N^*}}^\vee_{\hat{\Theta}})\tau(\A)\neq 0$ for any $\Gamma\in\mathcal{F}((\Theta+\hat{\Gamma})\Op|\hat{\Theta})_1$ and
$\hat{P}=\{x({b_{\Gamma/N^*}}^\vee_{\hat{\Theta}})|\Gamma\in$\break$\mathcal{F}(\Theta)_1\}\cup\{x({b_{\hat{\Gamma}/N^*}}^\vee_{\hat{\Theta}})- x({b_{\hat{\Gamma}/N^*}}^\vee_{\hat{\Theta}})\tau(\A)\}\cup
\{ x({b_{\Gamma/N^*}}^\vee_{\hat{\Theta}})-
x({b_{\Gamma/N^*}}^\vee_{\hat{\Theta}})\tau(\A)|
\Gamma\in$\break$\mathcal{F}((\Theta+\hat{\Gamma})\Op|\hat{\Theta})_1\}$.

We know that 
$x(\hat{c}_0)\tau(\A)\neq 0$, $\deg(\hat{P},b_{\hat{\Gamma}/N^*},
\In(\hat{P},b_{\Theta/N^*},\bar{\sigma}^*(\tau(\A))(\psi))=\ell+m$,
the sum of terms $T$ in $\In(\hat{P},b_{\Theta/N^*},\bar{\sigma}^*(\tau(\A))(\psi))$ with 
$\deg(\hat{P},b_{\hat{\Gamma}/N^*},T)=\ell+$\break$m$ is equal to $e(0) x(\hat{c}_0)\tau(\A)
x(\hat{b}+(\ell+m) {b_{\hat{\Gamma}/N^*}}^\vee_{\hat{\Theta}})$ and
$\hat{b}+(\ell+m) {b_{\hat{\Gamma}/N^*}}^\vee_{\hat{\Theta}}\in$\break$
\Supp(\hat{P}, \In(\hat{P},b_{\Theta/N^*},\bar{\sigma}^*(\tau(\A))(\psi)))\subset
\Supp(\hat{P},\bar{\sigma}^*(\tau(\A))(\psi))\subset
\Gamma_+(\hat{P},\bar{\sigma}^*(\tau(\A))(\psi))$.

Consider any $\Gamma\in\mathcal{F}(\Theta)_1$.
$\Ord(\hat{P},b_{\Gamma/N^*},\bar{\sigma}^*(\tau(\A))(\psi))=
\Ord(\hat{P},b_{\Gamma/N^*},\In(\hat{P},$\hfill\break$b_{\Theta/N^*},\bar{\sigma}^*(\tau(\A))(\psi)))=
\langle b_{\Gamma/N^*},\hat{b}\rangle=
\langle b_{\Gamma/N^*},\hat{b}+(\ell+m) {b_{\hat{\Gamma}/N^*}}^\vee_{\hat{\Theta}}\rangle$.

Consider any $\Gamma\in\mathcal{F}((\Theta+\hat{\Gamma})\Op|\hat{\Theta})_1$.
$0\leq
\Ord(\hat{P},b_{\Gamma/N^*},\bar{\sigma}^*(\tau(\A))(\psi))\leq
\Ord(\hat{P},$\hfill\break$b_{\Gamma/N^*},\In(\hat{P},b_{\Theta/N^*},\bar{\sigma}^*(\tau(\A))(\psi)))=0$.
We know that
$\Ord(\hat{P},b_{\Gamma/N^*},\bar{\sigma}^*(\tau(\A))(\psi))=0=
\langle b_{\Gamma/N^*},\hat{b}+(\ell+m) {b_{\hat{\Gamma}/N^*}}^\vee_{\hat{\Theta}}\rangle$.

We know that 
$\hat{b}+(\ell+m) {b_{\hat{\Gamma}/N^*}}^\vee_{\hat{\Theta}}\in 
\Gamma_+(\hat{P},\bar{\sigma}^*(\tau(\A))(\psi))$ and 
$\Ord(\hat{P},b_{\Gamma/N^*},\bar{\sigma}^*(\tau(\A)$\hfill\break$)(\psi))=
\langle b_{\Gamma/N^*},\hat{b}+(\ell+m) {b_{\hat{\Gamma}/N^*}}^\vee_{\hat{\Theta}}\rangle$
for any $\Gamma\in\mathcal{F}(\hat{\Theta})_1-\{\hat{\Gamma}\}$ and we conclude that $\Gamma_+(\hat{P}, \bar{\sigma}^*(\tau(\A))(\psi))$ is of $\hat{\Gamma}$-Weierstrass type.

We denote $\bar{\Theta}=\hat{\Gamma}\Op|\hat{\Theta}\in\mathcal{F}(\hat{\Theta})^1$.
Since $\Gamma_+(\hat{P}, \bar{\sigma}^*(\tau(\A))(\psi))$ is of $\hat{\Gamma}$-Weierstrass type, we have
$\Ord(\hat{P}, b_{\bar{\Theta}/N^*}, \bar{\sigma}^*(\tau(\A))(\psi))=
\langle b_{\bar{\Theta}/N^*},\hat{b}+(\ell+m) {b_{\hat{\Gamma}/N^*}}^\vee_{\hat{\Theta}}\rangle=
\Ord(\hat{P},$\hfill\break$ b_{\bar{\Theta}/N^*},\In(\hat{P},b_{\Theta/N^*},\bar{\sigma}^*(\tau(\A))(\psi)))$
and
\begin{equation*}\begin{split}
&\In(\hat{P}, b_{\bar{\Theta}/N^*}, \bar{\sigma}^*(\tau(\A))(\psi))=
\In(\hat{P}, b_{\bar{\Theta}/N^*},\In(\hat{P},b_{\Theta/N^*},\bar{\sigma}^*(\tau(\A))(\psi)))
\\
=\:& x(\hat{b})x({b_{\hat{\Gamma}/N^*}}^\vee_{\hat{\Theta}})^m\sum_{i\in L}\bar{e}(i) x({b_{\hat{\Gamma}/N^*}}^\vee_{\hat{\Theta}})^{\ell-i},
\end{split}\end{equation*}
where $\bar{e}:L\rightarrow k$ is the mapping satisfying
$\bar{e}(i)= e(i) x(((\ell-i)/\ell) \hat{c}_0+(i/\ell) \hat{c})\tau(\A)$ for any $i\in L$.
$\bar{e}(0)=e(0)x(\hat{c}_0)(\tau(\A))\neq 0$.
There exists uniquely a mapping $\bar{\bar{e}}:\{0,1,\ldots,\ell\}\rightarrow k$
satisfying
$\sum_{i\in L}\bar{e}(i) x({b_{\hat{\Gamma}/N^*}}^\vee_{\hat{\Theta}})^{\ell-i}=
\sum_{i=0}^{\ell}\bar{\bar{e}}(i)
(x({b_{\hat{\Gamma}/N^*}}^\vee_{\hat{\Theta}})- x({b_{\hat{\Gamma}/N^*}}^\vee_{\hat{\Theta}})\tau(\A))^{\ell-i}$.
We take the unique mapping $\bar{\bar{e}}:\{0,1,\ldots,\ell\}\rightarrow k$
satisfying the above equality.
$\bar{\bar{e}}(0)=\bar{e}(0)\neq 0$.
$ x({b_{\hat{\Gamma}/N^*}}^\vee_{\hat{\Theta}})(\tau(\A))\neq 0$.
$\Ord(\hat{P},b_{\hat{\Gamma}/N^*},$\break$ x(\hat{b})x({b_{\hat{\Gamma}/N^*}}^\vee_{\hat{\Theta}})^m)=0$.

Since $\Gamma_+(\hat{P}, \bar{\sigma}^*(\tau(\A))(\psi))$ is of $\hat{\Gamma}$-Weierstrass type, we have
\begin{equation*}\begin{split}
&\Ht(\hat{\Gamma},\Gamma_+(\hat{P}, \bar{\sigma}^*(\tau(\A))(\psi)))\\
=\:&
\Ord(\hat{P},b_{\hat{\Gamma}/N^*}, \In(\hat{P}, b_{\bar{\Theta}/N^*}, \bar{\sigma}^*(\tau(\A))(\psi)))-
\Ord(\hat{P},b_{\hat{\Gamma}/N^*},\bar{\sigma}^*(\tau(\A))(\psi))\\
\leq\:&
\Ord(\hat{P},b_{\hat{\Gamma}/N^*},\In(\hat{P}, b_{\bar{\Theta}/N^*}, \bar{\sigma}^*(\tau(\A))(\psi)))\\
=\:&
\Ord(\hat{P},b_{\hat{\Gamma}/N^*}, x(\hat{b})x({b_{\hat{\Gamma}/N^*}}^\vee_{\hat{\Theta}})^m\sum_{i\in L}\bar{e}(i) x({b_{\hat{\Gamma}/N^*}}^\vee_{\hat{\Theta}})^{\ell-i})\\
=\:&
\Ord(\hat{P},b_{\hat{\Gamma}/N^*},\sum_{i=0}^{\ell}\bar{\bar{e}}(i)
(x({b_{\hat{\Gamma}/N^*}}^\vee_{\hat{\Theta}})- x({b_{\hat{\Gamma}/N^*}}^\vee_{\hat{\Theta}})\tau(\A))^{\ell-i})\\
=\:&\ell-\max\{i\in\{0,1,\ldots,\ell\}|\bar{\bar{e}}(i)\neq 0\}
\leq \ell.
\end{split}\end{equation*}

We know that 
$\Ht(\hat{\Gamma},\Gamma_+(\hat{P}, \bar{\sigma}^*(\tau(\A))(\psi)))=\ell$,
if and only if,
$\Ord(\hat{P},b_{\hat{\Gamma}/N^*},$\hfill\break$\bar{\sigma}^*(\tau(\A))(\psi))=0$ and
$\bar{\bar{e}}(i)=0$ for any $ i\in\{1,2,\ldots,\ell\}$,
if and only if,
$\Ord(\hat{P},b_{\hat{\Gamma}/N^*},$\hfill\break$\bar{\sigma}^*(\tau(\A))(\psi))=0$ and
$\sum_{i\in L}\bar{e}(i) x({b_{\hat{\Gamma}/N^*}}^\vee_{\hat{\Theta}})^{\ell-i}=
\bar{e}(0) (x({b_{\hat{\Gamma}/N^*}}^\vee_{\hat{\Theta}})- x({b_{\hat{\Gamma}/N^*}}^\vee_{\hat{\Theta}})\tau(\A))^\ell$.

We know that $\Ht(\hat{\Gamma},\Gamma_+(\hat{P}, \bar{\sigma}^*(\tau(\A))(\psi)))\leq\ell\leq h$.

Assume  $\Ht(\hat{\Gamma},\Gamma_+(\hat{P}, \bar{\sigma}^*(\tau(\A))(\psi)))=h$.
We will deduce a contradiction from this assumption.

We have $\ell=h$. 
Note that $\mathcal{D}(\Gamma_+(P,u\prod_{\chi\in\mathcal{X}}(z+\chi)^{a(\chi)}\prod_{x\in P-\{z\}}x^{b(x)})|V)=\hat{\cap}_{\chi\in\mathcal{X}-\{0\}}\mathcal{D}(\Gamma_+(P,z+\chi)|V)$ is $\bar{H}$-simple, and any structure constant of it is an integer.
By Theorem~\ref{important}.12.(c),
we know that $c(\Gamma_+(P,\psi))=2$, the structure constant of $\mathcal{D}(\Gamma_+(P,\psi)|V)$ corresponding to the pair $(2,\bar{E})$ is an integer for any $\bar{E}\in\mathcal{F}(\bar{\Delta})-\{\bar{H}\}$ and $\Theta=\Lambda$.

We have $\dim\Theta=\dim\Lambda=\dim V-1=\dim\hat{\Theta}-1$, $\Theta=\hat{\Gamma}\Op|\hat{\Theta}$,
$(\Theta+\hat{\Gamma})\Op|\hat{\Theta}=\{0\}$, $\hat{c}_0=0$, $\hat{c}=0$,
$L=\{0,1,\ldots, \ell\}$ and $\bar{e}(i)=e(i)$ for any $i\in\{0,1,\ldots, \ell\}$.

We have $\Ht(\hat{\Gamma},\Gamma_+(\hat{P}, \bar{\sigma}^*(\tau(\A))(\psi)))=\ell$ and 
\begin{equation*}\begin{split}
&\sum_{i=0}^{\ell}e(i) x({b_{\hat{\Gamma}/N^*}}^\vee_{\hat{\Theta}})^{\ell-i}\\
=\:&\sum_{i\in L}\bar{e}(i) x({b_{\hat{\Gamma}/N^*}}^\vee_{\hat{\Theta}})^{\ell-i}\\
=\:&\bar{e}(0) (x({b_{\hat{\Gamma}/N^*}}^\vee_{\hat{\Theta}})- x({b_{\hat{\Gamma}/N^*}}^\vee_{\hat{\Theta}})(\tau(\A)))^\ell\\
=\:&e(0) (x({b_{\hat{\Gamma}/N^*}}^\vee_{\hat{\Theta}})- x({b_{\hat{\Gamma}/N^*}}^\vee_{\hat{\Theta}})(\tau(\A)))^\ell \\
\end{split}\end{equation*}\begin{equation*}\begin{split}
=\:&\sum_{i=0}^{\ell}e(0) \binom{\ell}{i} (-x({b_{\hat{\Gamma}/N^*}}^\vee_{\hat{\Theta}})(\tau(\A)))^i
x({b_{\hat{\Gamma}/N^*}}^\vee_{\hat{\Theta}})^{\ell-i}.
\end{split}\end{equation*}
We know $$e(i)=e(0) \binom{\ell}{i}(-x({b_{\hat{\Gamma}/N^*}}^\vee_{\hat{\Theta}})(\tau(\A)))^i$$
for any $i\in \{0,1,\ldots, \ell\}$.

We have
\begin{equation*}\begin{split}
&\Ps(P,\bar{A}(\psi)\cap \Gamma_+(P,\psi),\psi)=
\bar{\sigma}^*(\tau(\A))( \Ps(P,\bar{A}(\psi)\cap \Gamma_+(P,\psi),\psi))\\
=\:&
\sum_{i\in L}e(i)x(\hat{b})x({b_{\hat{\Gamma}/N^*}}^\vee_{\hat{\Theta}})^{\ell-i+m}
x((\frac{\ell-i}{\ell}) \hat{c}_0+(\frac{i}{\ell}) \hat{c})\\
=\:&\sum_{i=0}^{\ell}e(i)x(\hat{b}) x({b_{\hat{\Gamma}/N^*}}^\vee_{\hat{\Theta}})^{\ell-i+m}\\
=\:& x(\hat{b}) x({b_{\bar{H}/N^*}}^\vee_{\hat{\Theta}})^m
\sum_{i=0}^{\ell} e(0) \binom{\ell}{i}(-x({b_{\hat{\Gamma}/N^*}}^\vee_{\hat{\Theta}})(\tau(\A)))^i
x({b_{\bar{H}/N^*}}^\vee_{\hat{\Theta}})^{\ell-i}\\
=\:&e(0)x(\hat{b}+m{b_{\bar{H}/N^*}}^\vee_{\hat{\Theta}})
(x({b_{\bar{H}/N^*}}^\vee_{\hat{\Theta}})- x({b_{\hat{\Gamma}/N^*}}^\vee_{\hat{\Theta}})(\tau(\A)))^\ell.
\end{split}\end{equation*}

Recall that $\{{b_{\bar{E}/N^*}}^\vee_{\bar{\Delta}}|\bar{E}\in \mathcal{F}(\bar{\Delta})_1\}=\{f^P_y|y\in P\}$, ${b_{\bar{H}/N^*}}^\vee_{\bar{\Delta}}=f^P_z$ and $x(f^P_y)=y$ for any $y\in P$.
It is easy to see that there exists uniquely a mapping $\bar{r}:\mathcal{F}(\bar{\Delta})_1-\{\bar{H}\}\rightarrow \Z$ satisfying
${b_{\hat{\Gamma}/N^*}}^\vee_{\hat{\Theta}}=
{b_{\bar{H}/N^*}}^\vee_{\bar{\Delta}}-\sum_{\bar{E}\in \mathcal{F}(\bar{\Delta})_1-\{\bar{H}\}}\bar{r}(\bar{E}){b_{\bar{E}/N^*}}^\vee_{\bar{\Delta}}$,
since $\hat{\Gamma}=\bar{H}\subset\hat{\Theta}\subset\bar{\Delta}$.
We know that there exists uniquely a mapping $r:P-\{z\}\rightarrow\Z$ satisfying
${b_{\hat{\Gamma}/N^*}}^\vee_{\hat{\Theta}}=f^P_z-\sum_{y\in P-\{z\}}r(y)f^P_y$.
We take the unique mapping $r:P-\{z\}\rightarrow\Z$ satisfying
${b_{\hat{\Gamma}/N^*}}^\vee_{\hat{\Theta}}=f^P_z-\sum_{y\in P-\{z\}}r(y)f^P_y$.
$x({b_{\hat{\Gamma}/N^*}}^\vee_{\hat{\Theta}})=z/\prod_{y\in P-\{z\}}y^{r(y)}$.
We denote $\hat{b}_0=\hat{b}+m{b_{\hat{H}/N^*}}^\vee_{\hat{\Theta}}-\sum_{y\in P-\{z\}}\ell r(y)f^P_y\in N$ and $\gamma=x({b_{\hat{H}/N^*}}^\vee_{\hat{\Theta}})\tau(\A)\in k-\{0\}$ for simplicity.
We have
\begin{equation*}\begin{split}
\Ps(P,\bar{A}(\psi)\cap \Gamma_+(P,\psi),\psi)
&=e(0)x(\hat{b}_0)(z-\gamma\prod_{y\in P-\{z\}}y^{r(y)})^\ell\\
&=e(0)\prod_{y\in P}y^{\langle f^{P\vee}_y,\hat{b}_0\rangle}(z-\gamma\prod_{y\in P-\{z\}}y^{r(y)})^\ell.
\end{split}\end{equation*}

Since $\psi\in W(h)$, $\Ps(P,\bar{A}(\psi)\cap \Gamma_+(P,\psi),\psi)\in A$ and $A$ is a unique factorization domain, we know that $r(y)\in\Z_0$ for any $y\in P-\{z\}$ and
$\hat{b}_0=0$.
Since $c(\Gamma_+(P,\psi))=2$, we know that the $z$-top vertex of $\Gamma_+(P,\psi)$ is equal to $\ell f^P_z$, $\ell f^P_z\in
\bar{A}(\psi)\cap \Gamma_+(P,\psi)$ and $ \bar{A}(\psi)\cap \Gamma_+(P,\psi)$ is a $z$-removable face of $\Gamma_+(P,\psi)$.

Since $\Gamma_+(P,\psi)$ has no $z$-removable faces, we obtain a contradiction.
We know that $\Ht(\hat{\Gamma},\Gamma_+(\hat{P}, \bar{\sigma}^*(\tau(\A))(\psi)))<h$.
We denote $\bar{g}=\Ht(\hat{\Gamma},\Gamma_+(\hat{P}, \bar{\sigma}^*(\tau(\A))($\break$\psi)))\in\Z_0$. $\bar{g}<h$.

We denote $\bar{z}=x({b_{\hat{\Gamma}/N^*}}^\vee_{\hat{\Theta}})-
x({b_{\hat{\Gamma}/N^*}}^\vee_{\hat{\Theta}})\tau(\A)\in\hat{P}$ for simplicity.

Note that $k[\hat{P}-\{\bar{z}\}]\subset 
\mathcal{O}(\widetilde{\mathcal{C}})_{\tau(\A)}^c$.
By $\mathcal{O}(\widetilde{\mathcal{C}})_{\tau(\A)}^{c\:\prime}$ we denote the
completion of $k[\hat{P}-\{\bar{z}\}]$ with respect to the maximal ideal $k[\hat{P}-\{\bar{z}\}]\cap M(\mathcal{O}(\widetilde{\mathcal{C}})_{\tau(\A)}^c)$.
$\mathcal{O}(\widetilde{\mathcal{C}})_{\tau(\A)}^{c\:\prime}$ is a complete local subring of 
$\mathcal{O}(\widetilde{\mathcal{C}})_{\tau(\A)}^c$.
$\mathcal{O}(\widetilde{\mathcal{C}})_{\tau(\A)}^{c\:\prime}$ is a $k$-subalgebra of $\mathcal{O}(\widetilde{\mathcal{C}})_{\tau(\A)}^c$.

Since $\Gamma_+(\hat{P}, \bar{\sigma}^*(\tau(\A))(\psi))$ is of $\hat{\Gamma}$-Weierstrass type, $\bar{g}=\Ht(\hat{\Gamma},\Gamma_+(\hat{P}, \bar{\sigma}^*(\tau(\A$\hfill\break$))(\psi)))$ and Weierstrass' preparation theorem holds, there exist uniquely an element $v\in (\mathcal{O}(\widetilde{\mathcal{C}})_{\tau(\A)}^c)^\times$ and a mapping
$\bar{\psi}:\{0,1,\ldots,\bar{g}-1\}\rightarrow M(\mathcal{O}(\widetilde{\mathcal{C}})_{\tau(\A)}^{c\:\prime})$ satisfying
$\bar{\sigma}^*(\tau(\A))(\psi)=vx(\hat{b})(\bar{z}^{\bar{g}}+\sum_{i=0}^{\bar{g}-1}\bar{\psi}(i)\bar{z}^i)$.
We take the unique pair $v\in (\mathcal{O}(\widetilde{\mathcal{C}})_{\tau(\A)}^c)^\times$ and a mapping
$\bar{\psi}:\{0,1,\ldots,\bar{g}-1\}\rightarrow M(\mathcal{O}(\widetilde{\mathcal{C}})_{\tau(\A)}^{c\:\prime})$ satisfying
this equality.

Let $\mathcal{R}=\{\chi\in M(\mathcal{O}(\widetilde{\mathcal{C}})_{\tau(\A)}^{c\:\prime})|\chi^{\bar{g}}+\sum_{i=0}^{\bar{g}-1}\bar{\psi}(i)\chi^i=0\}$,
$\mathcal{R}$ is a finite set and $\sharp \mathcal{R}\leq\bar{g}$.
Consider any $\chi\in\mathcal{R}$. 
Since $\mathcal{O}(\widetilde{\mathcal{C}})_{\tau(\A)}^c$ is a unique factorization domain, there exists uniquely a positive integer $\mu(\chi)\in\Z_+$ satisfying 
$\bar{z}^{\bar{g}}+\sum_{i=0}^{\bar{g}-1}\bar{\psi}(i)\bar{z}^i \in (\bar{z}-\chi)^{\mu(\chi)} \mathcal{O}(\widetilde{\mathcal{C}})_{\tau(\A)}^c$
and
$\bar{z}^{\bar{g}}+\sum_{i=0}^{\bar{g}-1}\bar{\psi}(i)\bar{z}^i \not\in (\bar{z}-\chi)^{\mu(\chi)+1} \mathcal{O}(\widetilde{\mathcal{C}})_{\tau(\A)}^c$.
We take the unique $\mu(\chi)\in\Z_+$ satisfying these conditions.
$\sum_{\chi\in\mathcal{R}}\mu(\chi)\leq\bar{g}$.
Let $\hat{g}=\bar{g}-\sum_{\chi\in\mathcal{R}}\mu(\chi)\in\Z_0$.
$\hat{g}\neq 1$.
$\hat{g}\leq\bar{g}<h$.
Let $g=\max\{\hat{g},1\}\in\Z_+$.
Since $h\geq 2$, $g<h$.

There exists uniquely a mapping $\hat{\psi}:\{0,1,\ldots,\hat{g}-1\}\rightarrow M(\mathcal{O}(\widetilde{\mathcal{C}})_{\tau(\A)}^{c\:\prime})$ satisfying
$\bar{\sigma}^*(\tau(\A))(\psi)=vx(\hat{b})\prod_{\chi\in\mathcal{R}}
(\bar{z}-\chi)^{\mu(\chi)}(\bar{z}^{\hat{g}}+\sum_{i=0}^{\hat{g}-1}\hat{\psi} (i)\bar{z}^i)$.
We take the unique mapping $\hat{\psi}:\{0,1,\ldots,\hat{g}-1\}\rightarrow M(\mathcal{O}(\widetilde{\mathcal{C}})_{\tau(\A)}^{c\:\prime})$ satisfying this equality.
$\chi^{\hat{g}}+\sum_{i=0}^{\hat{g}-1}\hat{\psi} (i)\chi^i\neq 0$ for any
$\chi\in M(\mathcal{O}(\widetilde{\mathcal{C}})_{\tau(\A)}^{c\:\prime})$.

$\bar{\rho}( vx(\hat{b})\prod_{\chi\in\mathcal{R}}
(\bar{z}-\chi)^{\mu(\chi)})\in PW(1)$.

If $\hat{g}\geq 2$, then $\hat{g}=g$ and
$\bar{\rho}(\bar{z}^{\hat{g}}+\sum_{i=0}^{\hat{g}-1}\hat{\psi} (i)\bar{z}^i)\in W(\hat{g})=W(g)$.
Therefore, $\bar{\rho}\bar{\sigma}^*(\tau(\A))(\phi)=
\bar{\rho}\bar{\sigma}^*(\tau(\A))(u\prod_{\chi\in\mathcal{X}}(z+\chi)^{a(\chi)}\prod_{x\in P-\{z\}} x^{b(x)})
\bar{\rho}(vx(\hat{b})\prod_{\chi\in\mathcal{R}}
(\bar{z}-\chi)^{\mu(\chi)})
\bar{\rho}(\bar{z}^{\hat{g}}+\sum_{i=0}^{\hat{g}-1}\hat{\psi} (i)\bar{z}^i)
\in PW(g)$.

If $\hat{g}\leq 1$, then $\hat{g}=0$, $g=1$ and 
$\bar{\rho}(\bar{z}^{\hat{g}}+\sum_{i=0}^{\hat{g}-1}\hat{\psi} (i)\bar{z}^i)=1\in PW(1)$.
Therefore, $\bar{\rho}\bar{\sigma}^*(\tau(\A))(\phi)=
\bar{\rho}\bar{\sigma}^*(\tau(\A))(u\prod_{\chi\in\mathcal{X}}(z+\chi)^{a(\chi)}\prod_{x\in P-\{z\}} x^{b(x)})
\bar{\rho}(vx(\hat{b})\prod_{\chi\in\mathcal{R}}
(\bar{z}-\chi)^{\mu(\chi)})
\bar{\rho}(\bar{z}^{\hat{g}}+\sum_{i=0}^{\hat{g}-1}\hat{\psi} (i)\bar{z}^i)
\in PW(1)=PW(g)$.

We conclude that $\bar{\rho}\bar{\sigma}^*(\tau(\A))(\phi)\in PW(g)$ in all cases.

We know that there exists uniquely an element $\B\in(\sigma^*D)_0$ with $\tau(\B)=\bar{\B}$. We take the unique element $\B\in(\sigma^*D)_0$ with $\tau(\B)=\bar{\B}$.
Since $\tau(\A)\in U(\bar{\Sigma},\bar{\sigma}^*\bar{D},\bar{\B})$, $\A\in\tau^{-1}( U(\bar{\Sigma},\bar{\sigma}^*\bar{D},\bar{\B}))=
U(\Sigma, \sigma^*D,\B)$.
It is easy to see that the homomorphism $\tau^*(\A): \mathcal{O}(\widetilde{\mathcal{C}})_{\tau(\A)}\rightarrow\mathcal{O}_{\Sigma, \A}$ induces an isomorphism $\tau^*(\A):\mathcal{O}(\widetilde{\mathcal{C}})_{\tau(\A)}^c\rightarrow\mathcal{O}_{\Sigma, \A}^c$ of complete $k$-algebras.
Since $\sigma(\A)=M(A)$, we have a homomorphism of $k$-algebras $\sigma^*(\A):A\rightarrow\mathcal{O}_{\Sigma, \A}$ satisfying
$\sigma^*(\A)(M(A))\subset M(\mathcal{O}_{\Sigma, \A})$. This homomorphism has the unique extension $\sigma^*(\A):A \rightarrow\mathcal{O}_{\Sigma, \A}^c$.
Since $\iota^*\sigma=\bar{\sigma}\tau$, we have $\sigma^*(\A)(\phi)=\tau^*(\A) \bar{\sigma}^*(\tau(\A))(\phi)$.

Let $\bar{P}=\{(\sigma^*\xi)_\B(E)- (\sigma^*\xi)_\B(E)(\A)|E\in\Comp(\sigma^*D)(\B)\}$.
$\bar{P}$ is a parameter system of $\mathcal{O}_{\Sigma, \A}^c$.
Since $\hat{P}=\{(\bar{\sigma}^*\bar{\xi})_{\bar{\B}}(E)- (\bar{\sigma}^*\bar{\xi})_{\bar{\B}}(E)(\tau(\A))|E\in\Comp(\bar{\sigma}^*\bar{D})(\bar{\B})\}$, $\bar{P}=\tau^*(\A)(\hat{P})$.

Let $\rho=\bar{\rho}\tau^*(\A)^{-1}:\mathcal{O}_{\Sigma, \A}^c\rightarrow A$.
$\rho$ is an isomorphism of $k$-algebras.
$\rho(\bar{P})=\bar{\rho}\tau^*(\A)^{-1}(\tau^*(\A)(\hat{P}))=\bar{\rho}(\hat{P})=P$. We know $\rho(\bar{P})=P$.
$\rho\sigma^*(\A)(\phi)= \bar{\rho}\tau^*(\A)^{-1}\tau^*(\A)$\break$ \bar{\sigma}^*(\tau(\A))(\phi)
=\bar{\rho}\bar{\sigma}^*(\tau(\A))(\phi)\in PW(g)$.
We know $\rho\sigma^*(\A)(\phi)\in PW(g)$.

We conclude that Theorem~\ref{main} holds, if $\dim\Lambda=\dim V-1$ and $\dim\Lambda(\psi)=\dim V-1$.

We conclude that Theorem~\ref{main} holds in all cases.

\section{Proof of the submain theorems}
\label{submain proofs}
We give the proof of our submain theorems Theorem~\ref{erase faces}, Theorem~\ref{make simple}, Theorem~\ref{make Weierstrass type} and Theorem~\ref{make normal crossings}.

Let $k$ be any algebraically closed field; let $A$ be any complete regular local ring such that $A$ contains $k$ as a subring, the residue field $A/M(A)$ is isomorphic to $k$ as $k$-algebras, and $\dim A\geq 2$; let $P$ be any parameter system of $A$, and let $z\in P$ be any element.

Let $A'$ denote the completion of $k[P - \{z\}]$ with respect to the maximal ideal $k[P - \{z\}]\cap M(A)$. The ring $A'$ is a local subring of $A$ and $M(A')=M(A)\cap A' =(P -\{z\})A'$. The set $P - \{z\}$ is a parameter system of $A'$.

Let $\Delta=\Spec(A/\prod_{x\in P}xA)$, and $\Delta'=\Spec(A'/\prod_{x\in P -\{z\}}xA')$. We define a coordinate system $\xi_{M(A)}:\Comp(\Delta)\rightarrow A$ of the normal crossing scheme $(\Spec(A),\Delta)$ at $M(A)$ by putting $\xi_{M(A)}(\Spec(A/xA))=x$ for any $x\in P$. Let $\xi=\{ \xi_{M(A)}\}$. We define a coordinate system $\xi'_{M(A')}:\Comp(\Delta')\rightarrow A'$ of the normal crossing scheme $(\Spec(A'),\Delta')$ at $M(A')$ by putting $\xi_{M(A')}'(\Spec(A'/xA'))=x$ for any $x\in P-\{z\}$. Let $\xi'=\{ \xi_{M(A')} '\}$. The triplets $(\Spec(A),\Delta, \xi)$ and $(\Spec(A'),\Delta', \xi')$ are coordinated normal crossing schemes over $k$.

Recall the following notations:
\begin{equation*}\begin{split}
&PW(1)= \{\phi\in A|\phi= u\prod_{\chi\in\mathcal{X}}(z+\chi)^{a(\chi)}\prod_{x\in P-\{z\}} x^{b(x)} \\
&\quad\text{for some }u\in A^\times, \text{ some finite subset } \mathcal{X} \text{ of } M(A'),\\
&\quad \text{some mapping }a: \mathcal{X}\rightarrow \Z_+, \text{ and some mapping }b:P-\{z\}\rightarrow\Z_0\}.\\
\end{split}\end{equation*}
\vfill

For any $h\in\Z_+$ with $h\geq 2$ we denote
\begin{equation*}\begin{split}
W(h)&=\{\phi\in A|\phi= z^h+\sum_{i=0}^{h-1} \phi'(i)z^i \\
&\qquad\quad \text{for some mapping }\phi':\{0,1,\ldots,h-1\}\rightarrow M(A') \text{ satisfying }\\
&\qquad\quad 
\chi^h+\sum_{i=0}^{h-1} \phi'(i)\chi^i\neq 0 \text{ for any }
\chi\in M(A').\},\\
PW(h)&=\{\phi\in A|\phi=\psi\psi'\text{ for some } \psi\in W(h) \text{ and some }\psi'\in PW(1).\},\\
RW(h)&=\{\phi\in A|\text{ with }\phi=\psi\psi'\text{ for some } \psi\in W(h) \text{ and some }\psi'\in PW(1)\\
&\qquad\quad \text{such that }\Gamma_+(P,\psi) \text{ has no }z \text{-removable faces.}\},\\
SW(h)&=\{\phi\in A|\phi=\psi\psi'\text{ for some } \psi\in W(h) \text{ and some }\psi'\in PW(1)\text{ such that}\\
&\qquad\quad \Gamma_+(P,\psi) \text{ has no }z \text{-removable faces, and }\Gamma_+(P,\phi) \text{ is }z \text{-simple.}\}.
\end{split}\end{equation*}

We give the proof of Theorem~\ref{erase faces}.

Consider any $h\in\Z_+$ with $h\geq 2$ and any $\phi\in PW(h)$.

We take an element $u\in A^\times$, a finite subset $\mathcal{X}$ of $M(A')$, a mapping $a:\mathcal{X}\rightarrow\Z_+$, a mapping $b:P-\{z\}\rightarrow\Z_0$ and an element $\psi\in W(h)$ satisfying
$$\phi=\psi u\prod_{\chi\in\mathcal{X}}(z+\chi)^{a(\chi)}\prod_{x\in P-\{z\}} x^{b(x)}.$$

We take a mapping $\psi':\{0,1,\ldots,h-1\}\rightarrow M(A')$ satisfying
$\psi= z^h+\sum_{i=0}^{h-1} \psi'(i)z^i$
and $\bar{\chi}^h+\sum_{i=0}^{h-1} \psi'(i)\bar{\chi}^i\neq 0$ for any $\bar{\chi}\in M(A')$.

The quintuplet $(u, \mathcal{X}, a, b, \psi')$ is uniquely determined depending on $\phi$, since $A$ is a unique factorization domain and Weierstrass' preparation theorem holds.

In particular, $\psi'(0)= (-0)^h+\sum_{i=0}^{h-1} \psi'(i)(-0)^i\neq 0$ and $\Ord(P, f^{P\vee}_z, \psi)=0$

We denote $V=\Map(P,\R)$, $N=\Map(P,\Z)$, $\Delta=\Map(P,\R_0)$, $W=\{a\in V|a(z)=0\}$ and $U_-=\{a\in V|a(z)<h\}$.

$V$ is a finite dimensional vector space over $\R$. $\dim V=\sharp P=\dim A$.
$N$ is a lattice of $V$.
The set $\{f^P_x|x\in P\}$ is an $\R$-basis of $V$, and it is a $\Z$-basis of $N$.
$\Delta$ is a simplicial cone over $N$ in $V$. $\dim\Delta=\dim V$.
$W$ is a vector subspace of $V$ over $\R$. $\dim W=\dim V-1$.
$N\cap W$ is a lattice of $W$.
$\Delta\cap W$ is a simplicial cone oner $N\cap W$ in $W$.
$\dim\Delta\cap W=\dim W$.
The set $\{f^P_x|x\in P-\{z\}\}$ is an $\R$-basis of $W$, and it is a $\Z$-basis of $N\cap W$. $U_-$ is a non-empty open subset of $V$.

The dual vector space $V^*$ of $V$ is a vector space over $\R$ and the dual basis
$\{f^{P\vee}_x|x\in P\}$ of  $\{f^P_x|x\in P\}$ is an $\R$-basis of $V^*$. 
The dual lattice $N^*$ of $N$ is a lattice of $V^*$.
The set $\{f^{P\vee}_x|x\in P\}$ is a $\Z$-basis of $N^*$.
$\langle f^{P\vee}_x, a\rangle=a(x)$ for any $x\in P$ and any $a\in\Map(P,\R)$.
The dual cone $\Delta^\vee|V$ of $\Delta$ is a simplicial cone over $N^*$ in $V^*$.
$\dim \Delta^\vee|V=\dim V$.

We identify the dual vector space $W^*$ of $W$ and the vector subspace $\Vect(\{f^{P\vee}_x|x\in P-\{z\}\})$ of $V^*$.
$N^*\cap W^*$ is a lattice of $W^*$.
The intersection $(\Delta^\vee|V)\cap W^*$ is a simplicial cone over $N^*\cap W^*$ in $W^*$.
$\dim (\Delta^\vee|V)\cap W^*=\dim W$.
$(\Delta^\vee|V)\cap W^*=(\Delta\cap W)^\vee|W$.

We put $\sigma(a)=(a-a(z)f^P_z)/(h-a(z))\in W$ for any $a\in U_-$, and 
we define a mapping $\sigma:U_-\rightarrow W$.

We consider any $\chi\in M(A')$.

We denote $P(\chi)=(P-\{z\})\cup\{z+\chi\}\subset M(A)$.
The set $P(\chi)$ is a parameter system of $A$.

Let $\iota:P(\chi)\rightarrow P$ denote the bijective mapping satisfying $\iota(x)=x$ for any $x\in P-\{z\}$ and $\iota(z+\chi)=z$.
The mapping $\iota$ induces an isomorphism $\iota^*:V=\Map(P,\R)\rightarrow\Map(P(\chi),\R)$ of vector spaces over $\R$.
Using $\iota^*$, we identify $V$ and $\Map(P(\chi),\R)$.
We have $f^{P(\chi)}_{z+\chi}=f^P_z$ and $f^{P(\chi)}_x=f^P_x$ for any $x\in P-\{z\}$.

$\emptyset\neq\Gamma_+(P(\chi),\psi)\subset\Delta\subset V$.
$\Gamma_+(P(\chi),\psi)$ is a Newton polyhedron over $N$ in $V$.

The following claims hold. See Lemma~\ref{first correspondence},  Proposition~\ref{miracle} and Lemma~\ref{basic ord}.16:
\begin{enumerate}
\item
There exists uniquely a mapping $\bar{\psi}':\{0,1,\ldots,h-1\}\rightarrow M(A')$ satisfying
$\psi= (z+\chi)^h+\sum_{i=0}^{h-1} \bar{\psi}'(i)(z+\chi)^i$
and $\bar{\chi}^h+\sum_{i=0}^{h-1} \bar{\psi}'(i)\bar{\chi}^i\neq 0$ for any $\bar{\chi}\in M(A')$.
\item
$\Stab(\Gamma_+(P(\chi),\psi))=\Delta$.
\item
$\Gamma_+(P(\chi),\psi)$ is of $z$-Weierstrass type. The unique $z$-top vertex of $\Gamma_+(P(\chi),\psi)$ is equal to $\{hf^P_z\}$.
\item
$\Ord(P(\chi),f^{P\vee}_z,\psi)=0$. $\Ht(z, \Gamma_+(P(\chi),\psi))=h$.
\item
$\Gamma_+(P(\chi),\psi) \cap U_-\neq\emptyset$. $\sigma(\Gamma_+(P(\chi),\psi) \cap U_-)\neq\emptyset$.
\end{enumerate}

Below, we denote $$\bar{\Gamma}_+(P(\chi),\psi)= \sigma(\Gamma_+(P(\chi),\psi) \cap U_-).$$
\begin{enumerate}
\setcounter{enumi}{5}
\item
$\bar{\Gamma}_+(P(\chi),\psi)=
\sigma((\Convcone(\Gamma_+(P(\chi),\psi)+\{-hf^P_z\})+\{ hf^P_z \})\cap U_-)$ and
$\bar{\Gamma}_+(P(\chi),\psi)$ is a rational convex pseudo polyhedron over $N\cap W$ in $W$.

$0\not\in\bar{\Gamma}_+(P(\chi),\psi)\subset\Delta\cap W$.
$\Stab(\bar{\Gamma}_+(P(\chi),\psi))=\Delta\cap W$.

\item
Consider any face $F$ of $\Gamma_+(P(\chi),\psi)$ satisfying $hf^P_z\in F$ and
$F\cap U_-\neq \emptyset$.

$\sigma(F\cap U_-)$ is a face of $\bar{\Gamma}_+(P(\chi),\psi)$.

Below, we denote $\bar{F}=\sigma(F\cap U_-)$.

$\dim F\geq 1$ and $\dim \bar{F}=\dim F-1$.

If $\dim F=1$, then $\Stab(F)=\{0\}$.

Consider any $\omega\in\Delta^\circ(F, \Gamma_+(P(\chi),\psi)|V)$.
If we take unique pair of elements $\bar{\omega}\in (\Delta^\vee|V)\cap W^*$ and $t\in\R_0$ satisfying $\omega=\bar{\omega}+tf^{P\vee}_z$, then
$\bar{\omega}\in\Delta^\circ(\bar{F}, \bar{\Gamma}_+(P(\chi),\psi)|W)$ and $t=\Ord(\bar{\omega},\bar{\Gamma}_+(P(\chi),\psi)|W)$.

Consider any $\bar{\omega}\in\Delta^\circ(\bar{F}, \bar{\Gamma}_+(P(\chi),\psi)|W)$.
If $t=\Ord(\bar{\omega},\bar{\Gamma}_+(P(\chi),\psi)|W)$, then $t\in\R_0$ and $\bar{\omega}+tf^{P\vee}_z\in \Delta^\circ(F, \Gamma_+(P(\chi),\psi)|V)$.
\item
The mapping from the set of faces of $\Gamma_+(P(\chi),\psi)$ satisfying $hf^P_z\in F$ and
$F\cap U_-\neq \emptyset$ to the set of faces of $\bar{\Gamma}_+(P(\chi),\psi)$ sending $F$ to $\sigma(F\cap U_-)$ is bijective and it preserves the inclusion relation.
\item
Consider any face $F$ of $\Gamma_+(P(\chi),\psi)$.

$F$ is a $z$-removable face, if and only if, $hf^P_z\in F$ and there exists $\bar{\chi}\in M(A')$ satisfying $\Ps(P(\chi), F,\psi)=(z+\chi+\bar{\chi})^h$ and $\bar{\chi}\neq 0$.
\item
Any $z$-removable face $F$ of $\Gamma_+(P(\chi),\psi)$ satisfies $hf^P_z\in F$, 
and $F\cap U_-\supset F\cap W\neq \emptyset$.

\item
If $\Gamma_+(P(\chi),\psi)$ has a $z$-removable face, then it has a $z$-removable face of dimension one.
\item
Assume that $\Gamma_+(P(\chi),\psi)$ has a $z$-removable face.
We consider any $z$-removable face $F$ of dimension one of $\Gamma_+(P(\chi),\psi)$.
\begin{enumerate}
\item
$\sigma(F\cap U_-)$ is a vertex of $\bar{\Gamma}_+(P(\chi),\psi)$.
\end{enumerate}

We take the unique point $c(F)\in\bar{\Gamma}_+(P(\chi),\psi)$ with $\{c(F)\}=\sigma(F\cap U_-)$.
\begin{enumerate}
\setcounter{enumii}{1}
\item
The set $\{c(F)\}$ is a vertex of $\bar{\Gamma}_+(P(\chi),\psi)$. $c(F)\in \bar{\Gamma}_+(P(\chi),\psi)\cap N$. $c(F)\neq 0$. $c(F)\in\Delta\cap W\cap N-\{0\}$.
\item
There exists uniquely an element $\gamma(F)\in k-\{0\}$ satisfying
$\Ps(P(\chi), F,$\break$\psi)=(z+\chi+\gamma(F)\prod_{x\in P-\{z\}}x^{\langle f^{P\vee}_x,c(F)\rangle})^h$.
\end{enumerate}

We take the unique element $\gamma(F)\in k-\{0\}$ satisfying
$\Ps(P(\chi), F,\psi)=(z+\chi+\gamma(F)\prod_{x\in P-\{z\}}x^{\langle f^{P\vee}_x,c(F)\rangle})^h$ and we denote
\begin{equation*}\begin{split}
\chi(F)&= \gamma(F)\prod_{x\in P-\{z\}}x^{\langle f^{P\vee}_x,c(F)\rangle}\in M(A')-\{0\}\text{, and}\\
\bar{\delta}_0&=\sum_{x\in P-\{z\}}f^{P\vee}_x=b_{(\Delta^\vee|V)\cap W^* /N^*}\in((\Delta^\vee|V)\cap W^*)^\circ\cap N^*.
\end{split}\end{equation*}

\begin{enumerate}
\setcounter{enumii}{3}
\item
$\Ps(P(\chi), F,\psi)=(z+\chi+\chi(F))^h$.
\item

$c(F)\not\in\bar{\Gamma}_+(P(\chi+\chi(F)),\psi)\subset\bar{\Gamma}_+(P(\chi),\psi)$.
Any face $\bar{G}$ of $\bar{\Gamma}_+(P(\chi),\psi)$ satisfying $c(F)\not\in\bar{G}$ is a face of $\bar{\Gamma}_+(P(\chi+\chi(F)),\psi)$.

$\Ord(P-\{z\}, \bar{\delta}_0, \chi(F))=\langle  \bar{\delta}_0, c(F)\rangle$.
$\In(P-\{z\}, \bar{\delta}_0, \chi(F))=\chi(F)$. $\Supp(P-\{z\}, \chi(F))=\{c(F)\}$.
\end{enumerate}
\end{enumerate}

Below, we use the above notations $\bar{\Gamma}_+(P(\chi),\psi)= \sigma(\Gamma_+(P(\chi),\psi)\cap U_-)$, $c(F)\in(\Delta\cap W\cap N)-\{0\}$, $\gamma(F)\in k-\{0\}$, $\chi(F)\in M(A')-\{0\}$ and $\bar{\delta}_0\in((\Delta^\vee|V)\cap W^*)^\circ\cap N^*$.

We consider the following algorithm starting from Step 0.

In Step 0 we put $\bar{\chi}(0)=0\in M(A')$ and proceed to Step 1. 

Consider any positive integer $i$. In Step $i$, if $\Gamma_+(P(\bar{\chi} (i-1)),\psi)$ has no $z$-removable faces, then we finish the algorithm. In Step $i$, if $\Gamma_+(P(\bar{\chi}(i-1)),\psi)$ has $z$-removable faces, then we choose any $z$-removable face $F(i)$ of dimension one of $\Gamma_+(P(\bar{\chi}(i-1)),\psi)$ satisfying
$\langle \bar{\delta}_0,c(F(i))\rangle=\min\{\langle \bar{\delta}_0,c(F)\rangle|
F$ is a $z$-removable face of dimension one of $\Gamma_+(P(\bar{\chi}(i-1)),\psi)\}$, we put $\bar{\chi}(i)= \bar{\chi}(i-1)+\chi(F(i))\in M(A')$ and we proceed to Step $i+1$.

Consider the case where we finish this algorithm in finite steps.
Assume that the algorithm has finished in Step $i$ for some positive integer $i$.
$\bar{\chi}(i-1)\in M(A')$ and $\Gamma_+(P(\bar{\chi}(i-1)),\psi)$ has no $z$-removable faces.
We conclude that there exists $\bar{\chi}\in M(A')$ such that $\Gamma_+(P(\bar{\chi}),\psi)$ has no $z$-removable faces.

Consider the case where this algorithm has infinite steps.
By 12.(e) and 12.(b) we know that
$\chi(F(i))\in M(A')$, $\Ord(P-\{z\},\bar{\delta}_0, \chi(F(i)))
=\langle \bar{\delta}_0,c(F(i))\rangle$, 
$c(F(i))\in \Delta\cap W\cap N$, and
$\bar{\chi}(i)=\sum_{j=1}^i\chi(F(j))\in M(A')$ for any $i\in \Z_+$.
By 12.(e) we know that
$c(F(i))\neq c(F(j))$ for any $i\in \Z_+$ and any $j\in \Z_+$ with $i\neq j$.

Since $\{e\in \Delta\cap W\cap N|\langle \bar{\delta}_0,e\rangle\leq m\}$ is a finite set for any $m\in\Z_0$, we know that $\lim_{i\rightarrow\infty}\langle \bar{\delta}_0,c(F(i))\rangle=\infty$ and the sequence $\bar{\chi}(i)$, $i\in\Z_+$ converges. We put $\bar{\chi}=\lim_{i\rightarrow\infty}\bar{\chi}(i)=
\sum_{i=1}^\infty\chi(F(i))\in M(A')$.

Assume that $\Gamma_+(P(\bar{\chi}),\psi)$ has $z$-removable faces.
We will deduce a contradiction. Take any $z$-removable face $F$ of dimension one of $\Gamma_+(P(\bar{\chi}),\psi)$. 
Take any $\bar{\omega}\in\Delta^\circ(\{c(F)\},\bar{\Gamma}_+(P(\bar{\chi}),\psi)|W)\subset((\Delta^\vee|V)\cap W^*)^\circ$.
Put $t=\Ord(\bar{\omega}, \bar{\Gamma}_+(P(\bar{\chi}),\psi)|W)\in\R_0$ and $\omega=\bar{\omega}+tf^{P\vee}_z\in \Delta^\circ(F, \Gamma_+(P(\bar{\chi}),\psi)|V)$.
We know that $t=\langle \bar{\omega},c(F)\rangle$ and $\In(P(\bar{\chi}), \omega,\psi)=\Ps(P(\bar{\chi}), F,\psi)=(z+\bar{\chi}+\chi(F))^h$.

Since $\bar{\omega}\in((\Delta^\vee|V)\cap W^*)^\circ$,
$\{e\in \Delta\cap W\cap N|\langle \bar{\omega},e\rangle\leq t \}$ is a finite set.
Take any $i\in\Z_+$ such that $\langle \bar{\delta}_0,c(F(j))\rangle>\langle \bar{\delta}_0,c(F)\rangle$ and $\langle \bar{\omega},c(F(j))\rangle>t$ for any $j\in \Z_+$ with $j\geq i$.

$\Ord(P(\bar{\chi}),\omega,z+\bar{\chi})=\langle \omega, f^P_z\rangle =t$.
$(z+\bar{\chi})-(z+\bar{\chi}(i))=\sum_{j=i+1}^\infty \chi(F(j))$.
For any $j\in\Z_+$ with $j\geq i+1$, $\Ord(P(\bar{\chi}),\omega, \chi(F(j)))=\Ord(P-\{z\},\bar{\omega},\chi(F(j)))=\langle \bar{\omega},c(F(j))\rangle>t$.
Therefore, $\Ord(P(\bar{\chi}),\omega, (z+\bar{\chi})-(z+\bar{\chi}(i)))= \Ord(P(\bar{\chi}),\omega, \sum_{j=i+1}^\infty $\break$\chi(F(j)))>t$ and
$\Ord(P(\bar{\chi}),\omega,z+\bar{\chi})= \Ord(P(\bar{\bar{\chi}}),\omega,z+\chi(i))$.
By Lemma~\ref{basic ord}.16, we know that $\Ord(P(\bar{\chi}),\omega,\zeta)= \Ord(P(\bar{\chi}(i)),\omega,\zeta)$ for any $\zeta\in A$, $\In(P(\bar{\chi}(i)), \omega,\psi)=(z+\bar{\chi}(i)+\chi(F))^h$,
$F=h\Conv(\{f^P_z,c(F)\})=\Conv(\Supp(P(\bar{\chi}(i)), \In(P(\bar{\chi}(i)), \omega,\psi$\break$)))=\Delta(\omega, \Gamma_+(P(\bar{\chi}(i)),\psi)|V)\in \mathcal{F}(\Gamma_+(P(\bar{\chi}(i)),\psi))$ and $F$ is a $z$-removable face of dimension one of $\Gamma_+(P(\bar{\chi}(i)),\psi)$.
By how to choose $F(i)$ we have
$\langle \bar{\delta}_0,c(F(i))\rangle\leq\langle \bar{\delta}_0,c(F)\rangle$.

Since $\langle \bar{\delta}_0,c(F(i))\rangle>\langle \bar{\delta}_0,c(F)\rangle$, we obtain a contradiction. 

We conclude that $\Gamma_+(P(\bar{\chi}),\psi)$ has no $z$-removable faces.

We know that there exists $\bar{\chi}\in M(A')$ such that $\Gamma_+(P(\bar{\chi}),\psi)$ has no $z$-removable faces in all cases.

We take any $\bar{\chi}\in M(A')$ such that $\Gamma_+(P(\bar{\chi}),\psi)$ has no $z$-removable faces.
Let $\rho:A\rightarrow A$ denote the unique isomorphism of $k$-algebras satisfying $\rho(z+\bar{\chi})=z$ and $\rho(x)=x$ for any $x\in P-\{z\}$.
We know that $\rho(\psi)\in W(h)$, $\Gamma_+(P,\rho(\psi))$ has no $z$-removable faces and $\rho(u)\in A^\times$.
For any $\chi\in\mathcal{X}$, $\rho(z+\chi(i))=z+\chi(i)-\bar{\chi}$ and $\chi(i)-\bar{\chi}\in M(A')$.

Since 
$$\rho(\phi)=\rho(\psi) \rho(u)\prod_{\chi\in\mathcal{X}}(z+\chi-\bar{\chi})^{a(\chi)}\prod_{x\in P-\{z\}} x^{b(x)},$$
We know $\rho(\phi)\in RW(h)$.

We know that Theorem~\ref{erase faces} holds.

Note here that $\dim A'=\dim A-1<\dim A$, and any $\phi'\in A'$ with $\phi'\neq 0$ has normal crossings over $P'$ if $\dim A=2$. Therefore, we decide that we use induction on $\dim A$, and we can assume the following claim $(*)$:

\begin{description}
\item[$(*)$]
For any  $\phi'\in A'$ with $\phi'\neq 0$, there exists a weakly admissible composition of blowing-ups $\sigma':\Sigma'\rightarrow\Spec(A')$ over $(\Delta',\xi')$ and an extended pull-back $(\Sigma',\bar{\Delta}',\bar{\xi}')$ of the coordinated normal crossing scheme $(\Spec(A'),\Delta',\xi')$ by $\sigma'$ satisfying $\Supp(\sigma^{\prime*}(\Spec(A'/\phi' A')+ \Delta'))\subset \Supp(\bar{\Delta}')$.
\end{description}

In the case of $\dim A=2$, putting $(\Sigma',\bar{\Delta}',\bar{\xi}')=(\Spec(A'),\Delta',\xi')$ and considering the identity morphism $\sigma':\Sigma'\rightarrow\Spec(A')=\Sigma'$, we know that $\sigma'$ is a weakly admissible composition of blowing-ups over $(\Delta',\xi')$ and $\Supp(\sigma^{\prime*}(\Spec(A'/\phi' A')+ \Delta'))\subset \Supp(\bar{\Delta}')$ for any $\phi'\in A'$ with $\phi'\neq 0$.

Let $\sigma':\Sigma'\rightarrow\Spec(A')$ be any weakly admissible composition of blowing-ups $\sigma':\Sigma'\rightarrow\Spec(A')$ over $(\Delta',\xi')$, and let $(\Sigma',\bar{\Delta}',\bar{\xi}')$ be an extended pull-back of the coordinated normal crossing scheme $(\Spec(A'),\Delta',\xi')$ by $\sigma'$. We consider a morphism $\Spec(A)\rightarrow\Spec(A')$ induced by the inclusion ring homomorphism $A'\rightarrow A$, the product scheme $\Sigma=\Sigma'\times_{\Spec(A')}\Spec(A)$, the projection $\sigma:\Sigma\rightarrow\Spec(A)$, and the projection $\pi:\Sigma\rightarrow\Sigma'$. We know the following (See Lemma~\ref{pull back blowing-ups}.):
\begin{enumerate}
\item The morphism $\sigma$ is a weakly admissible composition of blowing-ups over $(\Delta,\xi)$.
\item The pull-back $\sigma^*\Spec(A/zA)$ of the prime divisor $\Spec(A/zA)$ of $\Spec(A)$ by $\sigma$ is a smooth prime divisor of $\Sigma$, and $\sigma^*\Spec(A/zA)\supset \sigma^{-1}(M(A))$.
\item The projection $\pi:\Sigma\rightarrow\Sigma'$ induces an isomorphism $\sigma^*\Spec(A/zA)\rightarrow\Sigma'$.
\end{enumerate}

Let $\bar{\Delta}=\pi^*\bar{\Delta}'+\sigma^*\Spec(A/zA)$. 

\begin{enumerate}
\setcounter{enumi}{3}
\item
The pair $(\Sigma,\bar{\Delta})$ is a normal crossing scheme over $k$. 
$(\bar{\Delta})_0=\pi^{-1}((\bar{\Delta}')_0)\cap\sigma^*\Spec(A/zA)$. For any $\A\in(\bar{\Delta})_0$ we have $\pi(\A)\in (\bar{\Delta}')_0$. The mapping $\pi: (\bar{\Delta})_0\rightarrow(\bar{\Delta}')_0$ induced by $\pi$ is bijective. 
\item
For any $\A\in(\bar{\Delta})_0$, we have $\Comp(\bar{\Delta})(\A)=\{\pi^*\Lambda|\Lambda\in \Comp(\bar{\Delta}')(\pi(\A))\}\cup\{\sigma^*\Spec(A/zA)\}$, and $U(\Sigma, \bar{\Delta},\A)=\pi^{-1}(U(\Sigma', \bar{\Delta}',\pi(\A)))$.
\end{enumerate}

Consider any $\A\in(\bar{\Delta})_0$. We put
$\bar{\xi}_\A(\pi^*\Lambda)=\pi^*(U(\Sigma',\bar{\Delta}',\pi(\A)))(\bar{\xi}'_{\pi(\A)}(\Lambda))$
for any $\Lambda\in \Comp(\bar{\Delta}')(\pi(\A))$, and we put
$\bar{\xi}_\A(\sigma^*\Spec(A/zA))=\Res^\Sigma_{U(\Sigma,\bar{\Delta},\A)}\sigma^*(\Spec(A))$\break$(z)$.
We have a mapping $\bar{\xi}_\A:\Comp(\bar{\Delta})(\A)\rightarrow\mathcal{O}_\Sigma(U(\Sigma, \bar{\Delta},\A))$.
Let $\bar{\xi}=\{\bar{\xi}_\A|\A\in(\bar{\Delta})_0\}$.

\begin{enumerate}
\setcounter{enumi}{5}
\item
For any $\A\in(\bar{\Delta})_0$, $\bar{\xi}_\A$ is a coordinate system of $(\Sigma,\bar{\Delta})$ at $\A$, and $\bar{\xi}$ is a coordinate system of $(\Sigma,\bar{\Delta})$.
\item
The triplet $(\Sigma,\bar{\Delta}, \bar{\xi})$ is an extended pull-back of $(\Spec(A),\Delta,\xi)$ by $\sigma$.
\end{enumerate}

The lemma below plays the role of a key in our proofs below.

Let $X$ be any finite set.
We define a partial order on $\Map(X,\R)$.
Let $e\in\Map(X,\R)$ and $f\in\Map(X,\R)$ be any elements. We denote $e\leq f$ or $f\geq e$, if $e(x)\leq f(x)$ for any $x\in X$. Obviously the relation $\leq$ is a partial order on $\Map(X,\R)$.
We denote $e< f$ or $f>e$, if $e\leq f$ and $e\neq f$.

\begin{lemma}
\label{BM}
\emph{(Bierstone and Milman~\cite{B88}, p. 25, Lemma 4.7)}

Let $\A\in\Map(P, \Z_0)$, $\B\in\Map(P, \Z_0)$ and $\gamma\in\Map(P, \Z_0)$ be any mappings from $P$ to $\Z_0$ and let $u\in A^\times$, $v\in A^\times$ and $w\in A^\times$ be any elements.

If
$$u\prod_{x\in P}x^{\A(x)}- v\prod_{x\in P}x^{\B(x)}= w\prod_{x\in P}x^{\gamma(x)},$$
then, either $\A\leq\B$, or, $\B\leq\A$ with respect to the partial order $\leq$ on $\Map(P,\R)$.
\end{lemma}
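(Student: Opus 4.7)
The plan is to argue by contradiction. Suppose $\A$ and $\B$ are incomparable under $\leq$. Define $\mu \in \Map(P,\Z_0)$ by $\mu(x) = \min(\A(x),\B(x))$ and set $\A' = \A - \mu$, $\B' = \B - \mu$, both in $\Map(P,\Z_0)$. Incomparability gives that $\A'$ and $\B'$ are both nonzero, while by construction $\Supp(\A') \cap \Supp(\B') = \emptyset$. The first substep is to factor out the common monomial $\prod_{x\in P} x^{\mu(x)}$. Since $A$ is a unique factorization domain (Matsumura) in which each $x\in P$ is prime and the units $u,v,w$ have $x$-adic valuation zero, comparing the $x$-adic valuations on both sides of the given identity yields $\gamma(x) \geq \mu(x)$ for every $x$. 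Setting $\gamma' = \gamma - \mu \in \Map(P,\Z_0)$ and $\phi = u\prod_{x\in P}x^{\A'(x)} - v\prod_{x\in P}x^{\B'(x)} \in A$, the equation becomes $\phi = w\prod_{x\in P}x^{\gamma'(x)}$.

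Next, I would compute $\Supp(P,\phi)$ and exploit that $\phi$ is monomial-times-unit. Writing $u = \sum_\lambda c_u(\lambda)\prod x^{\lambda(x)}$ and $v = \sum_\lambda c_v(\lambda)\prod x^{\lambda(x)}$ with $c_u(0), c_v(0) \in k^\times$, the coefficient of the monomial $\A'$ in $u\prod x^{\A'(x)}$ equals $c_u(0) \neq 0$, while the coefficient of $\A'$ in $v\prod x^{\B'(x)}$ equals $c_v(\A' - \B')$, which vanishes because $\A' \not\geq \B'$ (since $\B'\neq 0$ is supported off $\Supp(\A')$). Hence the coefficient of $\A'$ in $\phi$ is $c_u(0) \neq 0$, so $\A' \in \Supp(P,\phi)$; symmetrically $\B' \in \Supp(P,\phi)$.

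Finally, since $\phi = w\prod x^{\gamma'(x)}$ has normal crossings over $P$, we have $\Supp(P,\phi) = \gamma' + \Supp(P,w) \subset \gamma' + \Map(P,\Z_0)$, so every element of the support dominates $\gamma'$. In particular $\A' \geq \gamma'$ and $\B' \geq \gamma'$, which forces $\gamma' \leq \min(\A',\B') = 0$, hence $\gamma' = 0$ and $\phi = w \in A^\times$. But $\A' \neq 0$ and $\B' \neq 0$ imply $\prod x^{\A'(x)} \in M(A)$ and $\prod x^{\B'(x)} \in M(A)$, so $\phi \in M(A)$, contradicting $\phi = w \in A^\times$.

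The main technical point to watch is the non-cancellation step, namely that $\A'$ and $\B'$ really do remain in $\Supp(P,\phi)$ after the subtraction; this is precisely where the disjoint-support property $\Supp(\A') \cap \Supp(\B') = \emptyset$ is indispensable, since it rules out any contribution to the coefficient of $\A'$ (respectively $\B'$) coming from the other term. Everything else is book-keeping with valuations and Newton supports as developed in Lemma~\ref{basic ord}.
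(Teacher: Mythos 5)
Your proof is correct. The paper itself does not prove this lemma; it simply cites Bierstone--Milman (page 25, Lemma 4.7) and moves on, so there is no in-paper argument to compare against. Your argument is a clean self-contained one: reduce modulo the gcd monomial to arrange disjoint supports $\Supp(\A')\cap\Supp(\B')=\emptyset$, extract $\A'$ and $\B'$ from $\Supp(P,\phi)$ by the non-cancellation observation (the coefficient of $\A'$ in $v\prod x^{\B'(x)}$ would have to be $c_v(\A'-\B')$, and $\A'-\B'\notin\Map(P,\Z_0)$ because $\B'$ is nonzero and supported off $\Supp(\A')$), then use the normal-crossing form of $\phi=w\prod x^{\gamma'(x)}$ to force $\gamma'\leq\min(\A',\B')=0$, hence $\gamma'=0$ and $\phi=w$ a unit; this contradicts $\phi\in M(A)$. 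The only place worth spelling out a touch more if this were to be included in the paper is the valuation step $\gamma\geq\mu$ componentwise: it follows since each $x\in P$ generates a height-one prime of the UFD $A$, the units $u,v,w$ have $x$-adic valuation zero, and the $x$-adic valuation of a difference is at least the minimum of the two valuations. Otherwise the argument is complete.
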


We give the proof of Theorem~\ref{make simple}.

Assume the above $(*)$ and consider any $h\in\Z_+$ with $h\geq 2$ and any $\phi\in RW(h)$.

We take an element $\psi\in W(h)$, an element $u\in A^\times$, a finite subset $\mathcal{X}$ of $M(A')$, a mapping $a: \mathcal{X}\rightarrow \Z_+$ and a mapping $b:P-\{z\}\rightarrow\Z_0$ satisfying
$$\phi=\psi u\prod_{\chi\in\mathcal{X}}(z+\chi)^{a(\chi)}\prod_{x\in P-\{z\}} x^{b(x)}.$$
The quintuplet $(\psi,u,\mathcal{X},a,b)$ is uniquely determined depending on $\phi$, since $A$ is a unique factorization domain.

We take a mapping $\psi':\{0,1,\ldots,h-1\}\rightarrow M(A')$ satisfying 
$\psi= z^h+\sum_{i=0}^{h-1} \psi'(i)z^i$
and $\chi^h+\sum_{i=0}^{h-1} \psi'(i) \chi^i\neq 0$ for any $\chi\in M(A')$.
$\psi'$ is uniquely determined depending on $\psi$, since Weierstrass' preparation theorem holds.

In particular $\psi'(0)=0^h+\sum_{i=0}^{h-1} \psi'(i)0^i\neq 0$.

The Newton polyhedron $\Gamma_+(P,\psi)$ has no $z$-removable faces.

We denote
\begin{equation*}\begin{split}
\omega&=\psi\prod_{\chi\in\mathcal{X}-\{0\}}(z+\chi)^{a(\chi)}\in A\text{, and}\\
\hat{h}&=h+\sum_{\chi\in\mathcal{X}-\{0\}}a(\chi)\in\Z_+
\end{split}\end{equation*}

$2\leq h\leq\hat{h}$. It is easy to see that there exists uniquely a mapping
$\omega':\{0,1,\ldots,\hat{h}-1\}\rightarrow M(A')$ satisfying
$\omega= z^{\hat{h}}+\sum_{i=0}^{\hat{h}-1} \omega'(i)z^i$
and $\omega'(0)\neq 0$.
We take the unique mapping
$\omega':\{0,1,\ldots,\hat{h}-1\}\rightarrow M(A')$ satisfying the above equality and $\omega'(0)\neq 0$.

We denote $I=\{i\in\{0,1,\ldots,\hat{h}-1\}|\omega'(i)\neq 0\}$.
$0\in I\subset\{0,1,\ldots,\hat{h}-1\}$.
We put $\omega'(\hat{h})=1\in A'$.

Note that for any integers $j$, $k$, $\ell$ with $0\leq k<\ell<j$, $j!/(j-k)\in\Z_+$,
$j!/(j-\ell)\in\Z_+$ and $ j!/(j-k)< j!/(j-\ell)$.

For any $j\in I\cup\{\hat{h}\}$ we denote
\begin{equation*}\begin{split}
K(j)&=\{(k,\ell)|k\in I, \ell\in I, k<\ell<j,\\
&\qquad\quad\omega'(k)^{j!/(j-k)} \omega'(j)^{(j!/(j-\ell))-(j!/(j-k))}\neq\omega'(\ell)^{j!/(j-\ell)}\}\subset I\times I.
\end{split}\end{equation*}
We denote $J=\{j\in I\cup\{\hat{h}\}|K(j)\neq\emptyset\}$.
$J\subset (I-\{0,1\})\cup \{\hat{h}\}$.
We put
$$\phi'=\prod_{i\in I}\omega'(i)\prod_{j\in J}\prod_{(k,\ell)\in K(j)}
(\omega'(k)^{j!/(j-k)} \omega'(j)^{(j!/(j-\ell))-(j!/(j-k))}-\omega'(\ell)^{j!/(j-\ell)}).$$
$\phi'\in M(A')$. $\phi'\neq 0$.

By $(*)$ we know that 
there exists a weakly admissible composition of blowing-ups $\sigma':\Sigma'\rightarrow\Spec(A')$@over $(\Delta',\xi')$ and an extended pull-back $(\Sigma',\bar{\Delta}',\bar{\xi}')$ of the coordinated normal crossing scheme $(\Spec(A'),\Delta',\xi')$ by $\sigma'$ satisfying $\Supp(\sigma^{\prime*}(\Spec(A'/$\break$\phi' A')+ \Delta'))\subset \Supp(\bar{\Delta}')$.
We take a weakly admissible composition of blowing-ups $\sigma':\Sigma'\rightarrow\Spec(A')$ over $(\Delta',\xi')$ and an extended pull-back $(\Sigma',\bar{\Delta}',\bar{\xi}')$ of the coordinated normal crossing scheme $(\Spec(A'),\Delta',\xi')$ by $\sigma'$ satisfying $\Supp(\sigma^{\prime*}(\Spec(A'/$\break$\phi' A')+ \Delta'))\subset \Supp(\bar{\Delta}')$.

We consider a morphism $\Spec(A)\rightarrow\Spec(A')$ induced by the inclusion ring homomorphism $A'\rightarrow A$, the product scheme $\Sigma=\Sigma'\times_{\Spec(A')}\Spec(A)$, the projection $\sigma:\Sigma\rightarrow\Spec(A)$, and the projection $\pi:\Sigma\rightarrow\Sigma'$.
The structure sheaf of the scheme $\Sigma$ is denoted by $\mathcal{O}_\Sigma$.
Let $\bar{\Delta}=\pi^*\bar{\Delta}'+\sigma^*\Spec(A/zA)$.
We have a normal crossing scheme $(\Sigma,\bar{\Delta})$ with $(\bar{\Delta})_0\subset \sigma^*\Spec(A/zA)$.
Consider any $\A\in(\bar{\Delta})_0$. We put
$\bar{\xi}_\A(\pi^*\Lambda)=\pi^*(U(\Sigma',\bar{\Delta}',\pi(\A)))(\bar{\xi}'_{\pi(\A)}(\Lambda))$
for any $\Lambda\in \Comp(\bar{\Delta}')(\pi(\A))$, and we put
$\bar{\xi}_\A(\sigma^*\Spec(A/zA))= \Res^\Sigma_{U(\Sigma, \bar{\Delta},\A)}\sigma^*(\Spec(A))(z)$.
We have a coordinate system $\bar{\xi}_\A:\Comp(\bar{\Delta})(\A)\rightarrow\mathcal{O}_\Sigma(U(\Sigma, \bar{\Delta},\A))$ of $(\Sigma,\bar{\Delta})$ at $\A$, and a coordinate system $\bar{\xi}=\{\bar{\xi}_\A|\A\in(\bar{\Delta})_0\}$ of $(\Sigma,\bar{\Delta})$.

Consider any closed point $\A\in\Sigma$ with $\sigma(\A)=M(A)$.

We take any $\B\in (\bar{\Delta})_0$ satisfying $\A\in U(\Sigma,\bar{\Delta},\B)$.

We consider the homomorphism $\sigma^*(\A):A\rightarrow \mathcal{O}_{\Sigma, \A}$ of $k$-algebras induced by $\sigma$.
It satisfies $\sigma^*(\A)(M(A))\subset M(\mathcal{O}_{\Sigma, \A})$ and it has a unique extension $\sigma^*(\A):A\rightarrow \mathcal{O}_{\Sigma, \A}^c$.
Since  the morphism $\sigma$ is a weakly admissible composition of blowing-ups over $(\Delta,\xi)$, $\sigma^*(\A)$ is injective.

Note that $\A\in\sigma^{-1}(M(A))\subset\sigma^*\Spec(A/zA)$, $\B\in (\bar{\Delta})_0\subset\sigma^*\Spec(A/zA)$, $\sigma^*\Spec(A/zA)\in\Comp(\bar{\Delta})(\B)$ and $\bar{\xi}_\B(\sigma^*\Spec(A/zA))(\A)=(\sigma^*(\A)(z))(\A)=$\break$z(M(A))=0$.
Note that $\bar{\xi}_\B(\Lambda)(\A)\in k$ for any $\Lambda\in \Comp(\bar{\Delta})(\B)$, since $k$ is algebraically closed.

We denote $\bar{P}=\{\bar{\xi}_\B(\Lambda) - \bar{\xi}_\B(\Lambda)(\A)|
\Lambda\in \Comp(\bar{\Delta})(\B)\}$. $\bar{P}$ is a parameter system of $\mathcal{O}_{\Sigma, \A}^c$.
We denote $\bar{z}=\sigma^*(\A)(z)$.
$\bar{z}\in \bar{P}$.

We take any isomorphism $\rho: \mathcal{O}_{\Sigma, \A}^c\rightarrow A$ of $k$-algebras satisfying $\rho(\bar{P})=P$ and $\rho(\bar{z})=z$. $\rho \sigma^*(\A)(z)=z$. $\rho(\bar{P}-\{\bar{z}\})=P-\{z\}$.

Note that $k[\bar{P}-\{\bar{z}\}]\subset \mathcal{O}_{\Sigma, \A}^c$.
By $\mathcal{O}_{\Sigma, \A}^{c\:\prime}$ we denote the completion of $k[\bar{P}-\{\bar{z}\}]$ with respect to the maximal ideal 
$k[\bar{P}-\{\bar{z}\}]\cap M(\mathcal{O}_{\Sigma, \A}^c)$.
$\mathcal{O}_{\Sigma, \A}^{c\:\prime}$ is a complete local subring of $\mathcal{O}_{\Sigma, \A}^c$.$\mathcal{O}_{\Sigma, \A}^{c\:\prime}$ is a $k$-subalgebra of $\mathcal{O}_{\Sigma, \A}^c$.$\bar{P}-\{\bar{z}\}$ is a parameter system of $\mathcal{O}_{\Sigma, \A}^{c\:\prime}$.
$\sigma^*(\A)(A')\subset \mathcal{O}_{\Sigma, \A}^{c\:\prime}$.
$\sigma^*(\A)(M(A'))\subset M(\mathcal{O}_{\Sigma, \A}^{c\:\prime})$.
$\rho(\mathcal{O}_{\Sigma, \A}^{c\:\prime})=A'$.

We consider the element $\sigma^*(\A)(\omega)\in\mathcal{O}_{\Sigma, \A}^c$.

$$\sigma^*(\A)(\omega)=\bar{z}^{\hat{h}}+\sum_{i\in I}\sigma^*(\A)(\omega'(i))\bar{z}^i,$$
and $\sigma^*(\A)(\omega'(0))\neq 0$.
We know that the Newton polyhedron $\Gamma_+(\bar{P}, \sigma^*(\A)(\omega))$ is of $\bar{z}$-Weierstrass type, the unique $\bar{z}$-vertex of $\Gamma_+(\bar{P}, \sigma^*(\A)(\omega))$ is equal to $\{\hat{h}f^{\bar{P}}_{\bar{z}}\}$ and $\Ord(\bar{P}, f^{\bar{P}\vee}_{\bar{z}}, \sigma^*(\A)(\omega))=0$.

Consider any $i\in I$. 

$0\neq \sigma^*(\A)(\omega'(i))\in M(\mathcal{O}_{\Sigma, \A}^{c\:\prime})$.
Since $\phi'\in\omega'(i)A'$ and 
$\Supp(\sigma^{\prime*}\Spec(A'/\phi' A'))\subset\Supp(\sigma^{\prime*}(\Spec(A'/\phi' A')+ \Delta'))\subset \Supp(\bar{\Delta}')$, $\sigma^*(\A)(\omega'(i))$ has normal crossings over $\bar{P}-\{\bar{z}\}$. 
We take the unique pair of an element $\bar{v}(i)\in(\mathcal{O}_{\Sigma, \A}^{c\:\prime})^\times$ and a mapping $\bar{c}(i):\bar{P}-\{\bar{z}\}\rightarrow\Z_0$ satisfying 
$\sigma^*(\A)(\omega'(i))= \bar{v}(i)\prod_{\bar{x}\in \bar{P}-\{\bar{z}\}}\bar{x}^{\bar{c}(i)(\bar{x})}$.
We know $\bar{c}(i)\neq 0$, since $\sigma^*(\A)(\omega'(i))\in M(\mathcal{O}_{\Sigma, \A}^{c\:\prime})$.

We put $\bar{c}(\hat{h})=0\in\Map(\bar{P}-\{\bar{z}\},\Z_0)$.
$\sigma^*(\A)(\omega'(\hat{h}))=1=\prod_{\bar{x}\in \bar{P}-\{\bar{z}\}}\bar{x}^{\bar{c}(\hat{h})(\bar{x})}$.

Assume that a non-negative integer $m$ with $m<\sharp I$ and a mapping $\nu:\{0,1,\ldots,$\break$m\}\rightarrow I\cup\{\hat{h}\}$ satisfying the following five conditions are given:
\begin{enumerate}
\item
$\nu$ is injective and it reverses the order.
\item
$\nu(0)=\hat{h}$.
\item
If $m>0$, then for any $i\in \{1,2,\ldots,m\}$ and for any $j\in I$ with $\nu(i)< j< \nu(i-1)$, 
$((\nu(i-1)-j) /(\nu(i-1)-\nu(i)))\bar{c}\nu(i)
 +((j-\nu(i))/(\nu(i-1)-\nu(i)))\bar{c}\nu(i-1)\leq \bar{c}(j)$.
\item
If $m>0$, then for any $i\in \{1,2,\ldots,m\}$ and for any $j\in I$ with $j<\nu(i)$, 
$((\nu(i-1)-j) /(\nu(i-1)-\nu(i)))\bar{c}\nu(i)
 +((j-\nu(i))/(\nu(i-1)-\nu(i)))\bar{c}\nu(i-1)< \bar{c}(j)$.
\item
$\nu(m)\neq 0$.
\end{enumerate}

Note that if $m=0$, then there exists  uniquely a mapping $\nu:\{0,1,\ldots,m\}\rightarrow I\cup\{\hat{h}\}$ satisfying the above conditions.
If $m=0$, a mapping $\nu:\{0\}=\{0,1,\ldots,m\}\rightarrow I\cup\{\hat{h}\}$ satisfying $\nu(0)=\hat{h}$ satisfies the conditions.

$\Supp(\sigma^{\prime*}\Spec(A'/\phi' A'))\subset\Supp(\sigma^{\prime*}(\Spec(A'/\phi' A')+ \Delta'))\subset \Supp(\bar{\Delta}')$.

If $K\nu(m)\neq \emptyset$, then for any $(k,\ell)\in K\nu(m)$,
$0\leq k<\ell<\nu(m)$, 
$\phi'\in(\omega'(k)^{\nu(m)!/(\nu(m)-k)} \omega'\nu(m)^{(\nu(m)!/(\nu(m)-\ell))-(\nu(m)!/(\nu(m)-k))}-\omega'(\ell)^{\nu(m)!/(\nu(m)-\ell)}) A'$ and the element $\sigma^*(\A)(\omega'(k)^{\nu(m)!/(\nu(m)-k)} \omega'\nu(m)^{(\nu(m)!/(\nu(m)-\ell))-(\nu(m)!/(\nu(m)-k))}-\omega'(\ell)^{\nu(m)!/(\nu(m)-\ell)})\in \mathcal{O}_{\Sigma, \A}^{c\:\prime}$ has normal crossings over $\bar{P}-\{\bar{z}\}$.
By Lemma~\ref{BM}, we know that either $(\bar{c}(k)-\bar{c}\nu(m))/(\nu(m)-k)\leq (\bar{c}(\ell) -\bar{c}\nu(m))/(\nu(m)-\ell)$ or $(\bar{c}(\ell) -\bar{c}\nu(m))/(\nu(m)-\ell)\leq (\bar{c}(k)-\bar{c}\nu(m))/(\nu(m)-k)$ holds for any $k\in I$ and any $\ell\in I$ with $k<\nu(m)$ and $\ell<\nu(m)$.
Note that $(\bar{c}(0) -\bar{c}\nu(m))/\nu(m)\in\{(\bar{c}(k) -\bar{c}\nu(m))/(\nu(m)-k)|k\in I, k<\nu(m)\}\neq\emptyset$.
We know that the set $\{(\bar{c}(k) -\bar{c}\nu(m))/(\nu(m)-k)|k\in I, k<\nu(m)\}$ has the minimum element $\min \{(\bar{c}(k) -\bar{c}\nu(m))/(\nu(m)-k)|k\in I, k<\nu(m)\}$ with respect to the partial order $\leq$.

Putting $\nu(m+1)=\min\{j\in I|j<\nu(m), (\bar{c}(j) -\bar{c}\nu(m))/(\nu(m)-j)=
\min \{(\bar{c}(k) -\bar{c}\nu(m))/(\nu(m)-k)|k\in I, k<\nu(m)\}\}$, we define an extension $\nu:\{0,1,\ldots,m+1\}\rightarrow I\cup\{\hat{h}\}$ of $\nu:\{0,1,\ldots,m\}\rightarrow I\cup\{\hat{h}\}$.
$\nu(m+1)<\nu(m)$. 

Consider any $j\in I$ with $\nu(m+1)<j<\nu(m)$. By how to choose $\nu(m+1)$ we know
$(\bar{c}\nu(m+1) -\bar{c}\nu(m))/(\nu(m)-\nu(m+1))\leq(\bar{c}(j) -\bar{c}\nu(m))/(\nu(m)-j)$.
It follows
$((\nu(m)-j)/(\nu(m)-\nu(m+1))\bar{c}\nu(m+1)+((j-\nu(m+1))/ (\nu(m)-\nu(m+1))\bar{c}\nu(m)\leq \bar{c}(j)$.

Consider any $j\in I$ with $j<\nu(m+1) $. By how to choose $\nu(m+1)$ we know
$(\bar{c}\nu(m+1) -\bar{c}\nu(m))/(\nu(m)-\nu(m+1))<(\bar{c}(j) -\bar{c}\nu(m))/(\nu(m)-j)$.
It follows
$((\nu(m)-j)/(\nu(m)-\nu(m+1))\bar{c}\nu(m+1)+((j-\nu(m+1))/ (\nu(m)-\nu(m+1))\bar{c}\nu(m)< \bar{c}(j)$.

We know that if $\nu(m+1)\neq 0$, then the extension $\nu:\{0,1,\ldots,m+1\}\rightarrow I\cup\{\hat{h}\}$ satisfies the above five conditions.
If $\nu(m+1)=0$, then the extension satisfies the four conditions except the last one of the above five.

By induction we know that there exists uniquely a pair of a positive integer $m$ and a mapping $\nu:\{0,1,\ldots,m\}\rightarrow I\cup\{\hat{h}\}$ satisfying the following five conditions:
\begin{enumerate}
\item
$\nu$ is injective and it reverses the order.
\item
$\nu(0)=\hat{h}$.
\item
For any $i\in \{1,2,\ldots,m\}$ and for any $j\in I$ with $\nu(i)< j< \nu(i-1)$, 
$((\nu(i-1)-j) /(\nu(i-1)-\nu(i)))\bar{c}\nu(i)
 +((j-\nu(i))/(\nu(i-1)-\nu(i)))\bar{c}\nu(i-1)\leq \bar{c}(j)$.
\item
For any $i\in \{1,2,\ldots,m\}$ and for any $j\in I$ with $j<\nu(i)$, 
$((\nu(i-1)-j) /(\nu(i-1)-\nu(i)))\bar{c}\nu(i)
 +((j-\nu(i))/(\nu(i-1)-\nu(i)))\bar{c}\nu(i-1)< \bar{c}(j)$.
\item
$\nu(m)=0$.
\end{enumerate}

We take the unique pair of a positive integer $m$ and a mapping $\nu:\{0,1,\ldots,m\}\rightarrow I\cup\{\hat{h}\}$ satisfying the above five conditions.

We know that $(\bar{c}\nu(i)-\bar{c}\nu(i-1))/(\nu(i-1)-\nu(i))<
(\bar{c}\nu(i+1)-\bar{c}\nu(i))/(\nu(i)-\nu(i+1))$ for any $i\in\{1,2,\ldots,m-1\}$, if $m\geq 2$.

By Lemma~\ref{Newton2}.$13$ we know that the Newton polyhedron $\Gamma_+(\bar{P},\sigma^*(\A)(\omega))$ is $\bar{z}$-simple.

We consider the element $ \sigma^*(\A)(u\prod_{x\in P-\{z\}}x^{b(x)})\in \mathcal{O}_{\Sigma, \A}^c$.

$\sigma^*(\A)(u)\in (\mathcal{O}_{\Sigma, \A}^c)^\times$.
$\sigma^*(\A)( \prod_{x\in P-\{z\}}x^{b(x)})\in \mathcal{O}_{\Sigma, \A}^{c\:\prime}$.

Since $\Supp(\sigma^{\prime*}\Delta')\subset\Supp(\sigma^{\prime*}(\Spec(A'/\phi' A')+ \Delta'))\subset \Supp(\bar{\Delta}')$, we know that $\sigma^*(\A)(\prod_{x\in P-\{z\}}x^{b(x)})$ has normal crossings over $\bar{P}-\{\bar{z}\}$.
We take the unique pair of an element $\bar{u}'\in(\mathcal{O}_{\Sigma, \A}^{c\:\prime})^\times$ and a mapping $\bar{b}:\bar{P}-\{\bar{z}\}\rightarrow \Z_0$ satisfying $\sigma^*(\A)(\prod_{x\in P-\{z\}}x^{b(x)})= \bar{u}'\prod_{\bar{x}\in \bar{P}-\{\bar{z}\}}\bar{x}^{\bar{b}(\bar{x})}$.
Let $\bar{u}=\sigma^*(\A)(u) \bar{u}'\in (\mathcal{O}_{\Sigma, \A}^c)^\times$.
$\rho(\bar{u})\in A^\times$.
We have
\begin{equation*}\begin{split}
\sigma^*(\A)(u\prod_{x\in P-\{z\}}x^{b(x)})&=
\bar{u}\prod_{\bar{x}\in \bar{P}-\{\bar{z}\}}\bar{x}^{\bar{b}(\bar{x})},\\
\rho\sigma^*(\A)(u\prod_{x\in P-\{z\}}x^{b(x)})&=
\rho(\bar{u})\prod_{x\in P-\{z\}}x^{\bar{b}\rho^{-1}(x)}.
\end{split}\end{equation*}

Consider the case where $\chi\neq 0$ for any $\chi\in\mathcal{X}$.

$\sigma^*(\A)(\phi)= \sigma^*(\A)(\omega) \sigma^*(\A)(u\prod_{x\in P-\{z\}}x^{b(x)})= \sigma^*(\A)(\omega) \bar{u}\prod_{\bar{x}\in \bar{P}-\{\bar{z}\}}\bar{x}^{\bar{b}(\bar{x})}$.
We know that $\Gamma_+(\bar{P},\sigma^*(\A)(\phi))= \Gamma_+(\bar{P},\sigma^*(\A)(\omega))+\{\bar{b}\}$ and $\Gamma_+(\bar{P},\sigma^*(\A)(\phi))$ is $\bar{z}$-simple.

Consider the case where $\chi= 0$ for some $\chi\in\mathcal{X}$.

We take the unique $\chi_0\in\mathcal{X}$ with $\chi_0=0$.
$\sigma^*(\A)(\phi)= \sigma^*(\A)(\omega) \sigma^*(\A)$\break$(u\prod_{x\in P-\{z\}}x^{b(x)})\bar{z}^{a(\chi_0)}= \sigma^*(\A)(\omega) \bar{u}\bar{z}^{a(\chi_0)}\prod_{\bar{x}\in \bar{P}-\{\bar{z}\}}\bar{x}^{\bar{b}(\bar{x})}$.
We know that $\Gamma_+(\bar{P},$\break$\sigma^*(\A)(\phi))= \Gamma_+(\bar{P},\sigma^*(\A)(\omega))+\{a(\chi_0)f^{\bar{P}}_{\bar{z}}+\bar{b}\}$ and $\Gamma_+(\bar{P},\sigma^*(\A)(\phi))$ is $\bar{z}$-simple.

We conclude that the Newton polyhedron $\Gamma_+(\bar{P},\sigma^*(\A)(\phi))$ is $\bar{z}$-simple in all cases and the Newton polyhedron $\Gamma_+(P,\rho\sigma^*(\A)(\phi))$ is $z$-simple.

We consider any $\chi\in\mathcal{X}$ and the element $\sigma^*(\A)(z+\chi)\in \mathcal{O}_{\Sigma, \A}^c$.

$\rho\sigma^*(\A)(z+\chi)=z+\rho\sigma^*(\A)(\chi)$.
$\rho\sigma^*(\A)(\chi)\in M(A')$.

Since $\rho\sigma^*(\A)$ is injective, $\rho\sigma^*(\A)(\chi)\neq \rho\sigma^*(\A)(\bar{\chi})$ for any $\chi\in\mathcal{X}$ and any $\bar{\chi}\in\mathcal{X}$ with $\chi\neq \bar{\chi}$.

$$\rho\sigma^*(\A)(\prod_{\chi\in\mathcal{X}}(z+\chi)^{a(\chi)})=
\prod_{\chi\in\mathcal{X}}(z+\rho\sigma^*(\A)(\chi))^{a(\chi)}.$$

Therefore, we know

\begin{equation*}\begin{split}
&\rho\sigma^*(\A)(u\prod_{\chi\in\mathcal{X}}(z+\chi)^{a(\chi)} \prod_{x\in P-\{z\}}x^{b(x)})\\
=\:&\rho(\bar{u})\prod_{\chi\in\mathcal{X}}(z+\rho\sigma^*(\A)(\chi))^{a(\chi)}
\prod_{x\in P-\{z\}}x^{\bar{b}\rho^{-1}(x)}\in PW(1).
\end{split}\end{equation*}

We consider the element $\sigma^*(\A)(\psi)\in \mathcal{O}_{\Sigma, \A}^c$.
$$\sigma^*(\A)(\psi)= \bar{z}^h+\sum_{i=0}^{h-1} \sigma^*(\A)(\psi'(i))\bar{z}^i.$$
$\sigma^*(\A)(\psi'(i))\in M(\mathcal{O}_{\Sigma, \A}^{c\:\prime})$ for any $i\in\{0,1,\ldots, h-1\}$.
Since $\sigma^*(\A)$ is injective, $\sigma^*(\A)(\psi'(0))\neq 0$.
$\Ord(\bar{P},f^{\bar{P}\vee}_z, \sigma^*(\A)(\psi))=0$.
Since $\Gamma_+(\bar{P}, \sigma^*(\A)(\phi))$ is $\bar{z}$-simple and $\sigma^*(\A)(\phi)\in\sigma^*(\A)(\psi) \mathcal{O}_{\Sigma, \A}^c$, we know that $\Gamma_+(\bar{P}, \sigma^*(\A)(\psi))$ is also $\bar{z}$-simple. The unique  $\bar{z}$-top vertex of $\Gamma_+(\bar{P}, \sigma^*(\A)(\psi))$ is equal to $\{hf^{\bar{P}}_{\bar{z}}\}$.

We would like to show that $\Gamma_+(\bar{P}, \sigma^*(\A)(\psi))$ has no $\bar{z}$-removable faces.

Assume that $\Gamma_+(\bar{P}, \sigma^*(\A)(\psi))$ has $\bar{z}$-removable faces.
We will deduce a contradiction.

Take any $\bar{z}$-removable face $\bar{F}$ of dimension one of $\Gamma_+(\bar{P}, \sigma^*(\A)(\psi))$.
$\Stab(\bar{F})=\{0\}$.
$\Delta^\circ(\bar{F}, \Gamma_+(\bar{P}, \sigma^*(\A)(\psi))| \Map(\bar{P},\R))\subset (\Map(\bar{P},\R_0)^\vee|\Map(\bar{P},\R))^\circ$.
Take any $\bar{\lambda}\in (\Map(\bar{P},\R_0)^\vee|\Map(\bar{P},\R))\cap\sum_{\bar{x}\in\bar{P}
-\{\bar{z}\}}\R f^{\bar{P}\vee}_{\bar{x}}$ such that $\bar{\lambda}+ f^{\bar{P}\vee}_{\bar{z}}\in\Delta^\circ(\bar{F}, \Gamma_+(\bar{P},$\break$ \sigma^*(\A)(\psi))| \Map(\bar{P},\R))$.
$ hf^{\bar{P}}_{\bar{z}}\in \bar{F}$ and $\Ord(\bar{P},\bar{\lambda}+ f^{\bar{P}\vee}_{\bar{z}},\sigma^*(\A)(\psi))=\langle \bar{\lambda}+ f^{\bar{P}\vee}_{\bar{z}}, hf^{\bar{P}}_{\bar{z}}\rangle=h$.
Take $\bar{\gamma}\in k-\{0\}$ and $\bar{e}\in\Map(\bar{P}-\{\bar{z}\},\Z_0)$ satisfying
$\In(\bar{P},\bar{\lambda}+ f^{\bar{P}\vee}_{\bar{z}},\sigma^*(\A)(\psi))=\Ps(\bar{P},F, \sigma^*(\A)(\psi))=(\bar{z}+\bar{\gamma}\prod_{\bar{x}\in\bar{P}-\{\bar{z}\}}\bar{x}^{\bar{e}(\bar{x})})^h=\bar{z}^h+\sum_{i=1}^{h}\binom{h}{i}\bar{\gamma}^i
\prod_{\bar{x}\in\bar{P}-\{\bar{z}\}}\bar{x}^{i\bar{e}(\bar{x})}\bar{z}^{h-i}$.

We define an element $\lambda\in\Map(P,\R_0)^\vee|\Map(P,\R)\cap\sum_{x\in P-\{z\}}\R f^{P\vee}_x$ by putting $\langle \lambda, f^P_x\rangle=\Ord(\bar{P},\bar{\lambda},\sigma^*(\A)(x))\in\R_0$ for any $x\in P$.
$\lambda+f^{P\vee}_z\in \Map(P,\R_0)^\vee|\Map(P,\R)$.
We denote $F=\Delta(\lambda+f^{P\vee}_z,\Gamma_+(P,\psi)|\Map(P,\R))\in\mathcal{F}(\Gamma_+(P,\psi))$.
We can show that $\Ord(P, \lambda+f^{P\vee}_z,\psi)= \Ord(\bar{P},\bar{\lambda}+ f^{\bar{P}\vee}_{\bar{z}},\sigma^*(\A)(\psi))=h$ and
$\sigma^*(\A)(\In(P, \lambda+f^{P\vee}_z,\psi))= \In(\bar{P},\bar{\lambda}+ f^{\bar{P}\vee}_{\bar{z}},\sigma^*(\A)(\psi))= \bar{z}^h+\sum_{i=1}^{h}\binom{h}{i}\bar{\gamma}^i
\prod_{\bar{x}\in\bar{P}-\{\bar{z}\}}\bar{x}^{i\bar{e}(\bar{x})}\bar{z}^{h-i}$.

We know that $hf^P_z\in F$ and $\Ord(P-\{z\},\lambda, \psi'(h-i))\geq i\langle \bar{\lambda}, \bar{e}\rangle$ for any $i\in\{1,2,\ldots, h\}$. We put
\begin{equation*}
\hat{\psi}(h-i)=
\begin{cases}
\In(P-\{z\},\lambda, \psi'(h-i))& \text{if $\Ord(P-\{z\},\lambda, \psi'(h-i))= i\langle \bar{\lambda}, \bar{e}\rangle$},\\
0&\text{if $\Ord(P-\{z\},\lambda, \psi'(h-i))> i\langle \bar{\lambda}, \bar{e}\rangle$}.
\end{cases}\end{equation*}
We have $\In(P, \lambda+f^{P\vee}_z,\psi)=z^h+\sum_{i=1}^h\hat{\psi}(h-i)z^{h-i}$ and
$\sigma^*(\A)(\In(P, \lambda+f^{P\vee}_z,\psi))= \bar{z}^h+\sum_{i=1}^h\sigma^*(\A)(\hat{\psi}(h-i))\bar{z}^{h-i}$.
We conclude $\binom{h}{i}\bar{\gamma}^i
\prod_{\bar{x}\in\bar{P}-\{\bar{z}\}}\bar{x}^{i\bar{e}(\bar{x})}=
\sigma^*(\A)(\hat{\psi}(h-i))$ for any $i\in\{1,2,\ldots,h\}$.

By $\bar{1}\in k$ we denote the identity element of the field $k$.

We consider the case where the characteristic number of the field $k$ is equal to $0$.

$\binom{h}{1}\bar{1}=h\bar{1}\neq 0$. We put $\hat{\chi}=\hat{\psi}(h-1)/(h\bar{1})\in M(A')$.
We have $\bar{\gamma}\prod_{\bar{x}\in\bar{P}-\{\bar{z}\}}\bar{x}^{\bar{e}(\bar{x})}=
\sigma^*(\A)(\hat{\chi})$.

We consider the case where the characteristic number of the field $k$ is positive.
By $p$ we denote the characteristic number of the field $k$. The integer $p$ is a prime number.
We take the unique pair of $\delta\in\Z_0$ and $\bar{h}\in\Z_+$ such that $h=p^\delta\bar{h}$ and $\bar{h}$ is not a multiple of $p$.
$\binom{h}{p^\delta}\bar{1}=\bar{h}\bar{1}\neq 0$.
We have $(\bar{\gamma}\prod_{\bar{x}\in\bar{P}-\{\bar{z}\}}\bar{x}^{\bar{e}(\bar{x})})^{p^\delta}=\sigma^*(\A)(\hat{\psi}(h- p^\delta)/(\bar{h}\bar{1}))$ and 
$\hat{\psi}(h- p^\delta)/(\bar{h}\bar{1})\in M(A')$.

For any complete regular local ring $R$ such that $R$ contains $k$ and the residue field $R/M(R)$ is isomorphic to $k$ as $k$-algebras, we denote $R^{p^\delta}=\{s^{p^\delta}|s\in R\}$. $R^{p^\delta}$ is a local subring of $R$.

$(\bar{\gamma}\prod_{\bar{x}\in\bar{P}-\{\bar{z}\}}\bar{x}^{\bar{e}(\bar{x})})^{p^\delta}\in \mathcal{O}_{\Sigma, \A}^{c\: p^\delta}$.
$\hat{\psi}(h- p^\delta)/(\bar{h}\bar{1})\in\sigma^*(\A)^{-1}(\mathcal{O}_{\Sigma, \A}^{c\: p^\delta})$.
Now, since $\sigma$ is a weakly admissible composition of blowing-ups over $(\Delta,\xi)$, we can show that $\sigma^*(\A)^{-1}(\mathcal{O}_{\Sigma, \A}^{c\: p^\delta})=A^{p^\delta}$.
We know that there exists uniquely an element $\hat{\chi}\in A$ with $\hat{\chi}^{p^\delta}=\hat{\psi}(h- p^\delta)/(\bar{h}\bar{1})$. 
We take the unique element $\hat{\chi}\in A$ with $\hat{\chi}^{p^\delta}=\hat{\psi}(h- p^\delta)/(\bar{h}\bar{1})$.
Since $\hat{\psi}(h- p^\delta)/(\bar{h}\bar{1})\in M(A')$, we know $\hat{\chi}\in M(A')$.
We know $(\bar{\gamma}\prod_{\bar{x}\in\bar{P}-\{\bar{z}\}}\bar{x}^{\bar{e}(\bar{x})})^{p^\delta}=\sigma^*(\A)( \hat{\chi}^{p^\delta})$ and 
$\bar{\gamma}\prod_{\bar{x}\in\bar{P}-\{\bar{z}\}}\bar{x}^{\bar{e}(\bar{x})}=\sigma^*(\A)( \hat{\chi})$.

We conclude that there exists $\hat{\chi}\in M(A')$ satisfying $\bar{\gamma}\prod_{\bar{x}\in\bar{P}-\{\bar{z}\}}\bar{x}^{\bar{e}(\bar{x})}=\sigma^*(\A)( \hat{\chi})$ in all cases. We take any  $\hat{\chi}\in M(A')$ satisfying $\bar{\gamma}\prod_{\bar{x}\in\bar{P}-\{\bar{z}\}}\bar{x}^{\bar{e}(\bar{x})}=\sigma^*(\A)( \hat{\chi})$.

We have 
$\sigma^*(\A)(\In(P, \lambda+f^{P\vee}_z,\psi))= \In(\bar{P},\bar{\lambda}+ f^{\bar{P}\vee}_{\bar{z}},\sigma^*(\A)(\psi))= (\bar{z}+\bar{\gamma}\prod_{\bar{x}\in\bar{P}-\{\bar{z}\}}$\break$\bar{x}^{\bar{e}(\bar{x})})^h=(\bar{z}+\sigma^*(\A)( \hat{\chi}))^h=\sigma^*(\A)((z+\hat{\chi})^h)$.
Since $\sigma^*(\A)$ is injective we have
$\Ps(P, F,\psi)= \In(P, \lambda+f^{P\vee}_z,\psi)= (z+\hat{\chi})^h$ and we know that the face $F$ of $\Gamma_+(P,\psi)$ is $z$-removable.

Since the Newton polyhedron $\Gamma_+(P,\psi)$ has no $z$-removable faces, we obtain a contradiction.

We conclude that the Newton polyhedron $\Gamma_+(\bar{P},\sigma^*(\A)(\psi))$ has no $\bar{z}$-removable faces and the Newton polyhedron $\Gamma_+(P,\rho\sigma^*(\A)(\psi))$ has no $z$-removable faces.

We consider the case where $\bar{\chi}^h+\sum_{i=0}^{h-1}\sigma^*(\A)(\psi'(i))\bar{\chi}^i\neq 0$ for any $\bar{\chi}\in M(\mathcal{O}_{\Sigma, \A}^{c\:\prime})$.

We have $\rho\sigma^*(\A)(\psi)\in W(h)$.
Since $\rho\sigma^*(\A)(\phi)= \rho\sigma^*(\A)(\psi) \rho\sigma^*(\A)(u\prod_{\chi\in\mathcal{X}}$\break$(z+\chi)^{a(\chi)} \prod_{x\in P-\{z\}}x^{b(x)})$, $\rho\sigma^*(\A)(u\prod_{\chi\in\mathcal{X}}(z+\chi)^{a(\chi)} \prod_{x\in P-\{z\}}x^{b(x)})\in PW(1)$, $\Gamma_+(P, \rho\sigma^*(\A)(\phi))$ is $z$-simple and $\Gamma_+(P, \rho\sigma^*(\A)(\psi))$ has no $z$-removable faces, we conclude that$\rho\sigma^*(\A)(\phi)\in SW(h)$.

We consider the case where $\bar{\chi}^h+\sum_{i=0}^{h-1}\sigma^*(\A)(\psi'(i))\bar{\chi}^i=0$ for some $\bar{\chi}\in M(\mathcal{O}_{\Sigma, \A}^{c\:\prime})$.

Let $\mathcal{R}=\{\bar{\chi}\in M(\mathcal{O}_{\Sigma, \A}^{c\:\prime})|
\bar{\chi}^h+\sum_{i=0}^{h-1}\sigma^*(\A)(\psi'(i))\bar{\chi}^i=0\}$.
$\mathcal{R}$ is a non-empty finite set. $1\leq\sharp\mathcal{R}\leq h$.
Consider any $\bar{\chi}\in\mathcal{R}$. Since $\mathcal{O}_{\Sigma, \A}^c$ is a unique factorization domain, there exists uniquely a positive integer $\mu(\bar{\chi})\in\Z_+$ satisfying $\sigma^*(\A)(\psi) \in(\bar{z}-\bar{\chi})^{\mu(\bar{\chi})}\mathcal{O}_{\Sigma, \A}^c$ and 
$\sigma^*(\A)(\psi)\not\in(\bar{z}-\bar{\chi})^{\mu(\bar{\chi})+1}\mathcal{O}_{\Sigma, \A}^c$. We take the unique $\mu(\bar{\chi})\in\Z_+$ satisfying these conditions. $1\leq \sum_{\bar{\chi}\in\mathcal{R}}\mu(\bar{\chi})\leq h$. Let $\hat{g}=h-\sum_{\bar{\chi}\in\mathcal{R}}\mu(\bar{\chi})\in\Z_0$. $\hat{g}\neq 1$. $\hat{g}<h$.
Let $g=\max\{\hat{g}, 1\}\in\Z_+$. Since $h\geq 2$, $g<h$.

There exists uniquely a mapping $\zeta':\{0,1,\ldots, \hat{g}-1\}\rightarrow M(\mathcal{O}_{\Sigma, \A}^{c\:\prime})$ satisfying $\sigma^*(\A)(\psi)=
\prod_{\bar{\chi}\in\mathcal{R}}(\bar{z}-\bar{\chi})^{\mu(\bar{\chi})}(\bar{z}^{\hat{g}}+\sum_{i=0}^{\hat{g}-1}\zeta'(i) \bar{z}^i)$.
We take the unique mapping $\zeta':\{0,1,\ldots, \hat{g}-1\}\rightarrow M(\mathcal{O}_{\Sigma, \A}^{c\:\prime})$ satisfying this equality.
$\bar{\chi}^{\hat{g}}+\sum_{i=0}^{\hat{g}-1}\zeta'(i) \bar{\chi}^i\neq 0$ for any $\bar{\chi}\in M(\mathcal{O}_{\Sigma, \A}^{c\:\prime})$.

$\rho(\prod_{\bar{\chi}\in\mathcal{R}}(\bar{z}-\bar{\chi})^{\mu(\bar{\chi})})\in PW(1)$.

If $\hat{g}\geq 2$, then $\hat{g}=g$ and $\rho(\bar{z}^{\hat{g}}+\sum_{i=0}^{\hat{g}-1}\zeta'(i) \bar{z}^i)\in W(\hat{g})=W(g)$. Therefore, 
$\rho\sigma^*(\A)(\phi)= \rho(\bar{z}^{\hat{g}}+\sum_{i=0}^{\hat{g}-1}\zeta'(i) \bar{z}^i) (\prod_{\bar{\chi}\in\mathcal{R}}(z-\rho(\bar{\chi}))^{\mu(\bar{\chi})}) \rho\sigma^*(\A)(u\prod_{\chi\in\mathcal{X}}(z+\chi)^{a(\chi)} \prod_{x\in P-\{z\}}x^{b(x)})\in PW(g)$.

If $\hat{g}\leq 1$, then $\hat{g}=0$, $g=1$ and $\rho(\bar{z}^{\hat{g}}+\sum_{i=0}^{\hat{g}-1}\zeta'(i) \bar{z}^i)=1\in PW(1)$. Therefore, 
$\rho\sigma^*(\A)(\phi)= \rho(\bar{z}^{\hat{g}}+\sum_{i=0}^{\hat{g}-1}\zeta'(i) \bar{z}^i) (\prod_{\bar{\chi}\in\mathcal{R}}(z-\rho(\bar{\chi}))^{\mu(\bar{\chi})}) \rho\sigma^*(\A)(u\prod_{\chi\in\mathcal{X}}(z+\chi)^{a(\chi)} \prod_{x\in P-\{z\}}x^{b(x)})\in PW(1)=PW(g)$.

We conclude that there exists $g\in\Z_+$ satisfying $g<h$ and $\rho\sigma^*(\A)(\phi)\in PW(g)$, if $\bar{\chi}^h+\sum_{i=0}^{h-1}\sigma^*(\A)(\psi'(i))\bar{\chi}^i=0$ for some $\bar{\chi}\in M(\mathcal{O}_{\Sigma, \A}^{c\:\prime})$.

We know that Theorem~\ref{make simple} holds.

We give the proof of Theorem~\ref{make Weierstrass type}.

Assume the above $(*)$ and consider any $\phi\in A$ with $\phi\neq 0$.

We take the unique mapping $\phi':\Z_0\rightarrow A'$ satisfying $\phi=\sum_{i\in\Z_0}\phi'(i)z^i$. 
Since $\phi\neq 0$, $\phi'(i)\neq 0$ for some $i\in\Z_0$.

Since $A'$ is noetherian, there exists $m\in\Z_0$ satisfying $\{\phi'(i)| i\in\{0,1,\ldots,m\}\} A'=\{\phi'(i)| i\in\Z_0\} A'$.
We take $m\in\Z_0$ satisfying $\{\phi'(i)| i\in\{0,1,\ldots,m\}\} A'=\{\phi'(i)| i\in\Z_0\} A'$.
$\{0\}\neq \{\phi'(i)| i\in\{0,1,\ldots,m\}\} A'\subset A'$ and $\phi'(j)\in\{\phi'(i)| i\in\{0,1,\ldots,m\}\} A'$ for any $j\in\Z_0$.

We denote $I=\{i\in\{0,1,\ldots,m\}|\phi'(i)\neq 0\}$.
$\emptyset\neq I\subset\{0,1,\ldots,m\}$.

We denote
$$
K=\{(k,\ell)|k\in I, \ell\in I, k<\ell, 
\phi'(k) \neq\phi'(\ell) \}\subset I\times I.
$$
We put
$$\psi'=\prod_{i\in I}\phi'(i) \prod_{(k,\ell)\in K}(\phi'(k) -\phi'(\ell)).$$
$\psi'\in A'$. $\psi'\neq 0$.

By $(*)$ we know that 
there exists a weakly admissible composition of blowing-ups $\sigma':\Sigma'\rightarrow\Spec(A')$ over $(\Delta',\xi')$ and an extended pull-back $(\Sigma',\bar{\Delta}',\bar{\xi}')$ of the coordinated normal crossing scheme $(\Spec(A'),\Delta',\xi')$ by $\sigma'$ satisfying $\Supp(\sigma^{\prime*}(\Spec(A'/$\break$\psi' A')+ \Delta'))\subset \Supp(\bar{\Delta}')$.
We take a weakly admissible composition of blowing-ups $\sigma':\Sigma'\rightarrow\Spec(A')$ over $(\Delta',\xi')$ and an extended pull-back $(\Sigma',\bar{\Delta}',\bar{\xi}')$ of the coordinated normal crossing scheme $(\Spec(A'),\Delta',\xi')$ by $\sigma'$ satisfying $\Supp(\sigma^{\prime*}(\Spec(A'/$\break$\psi' A')+ \Delta'))\subset \Supp(\bar{\Delta}')$.

We consider a morphism $\Spec(A)\rightarrow\Spec(A')$ induced by the inclusion ring homomorphism $A'\rightarrow A$, the product scheme $\Sigma=\Sigma'\times_{\Spec(A')}\Spec(A)$, the projection $\sigma:\Sigma\rightarrow\Spec(A)$, and the projection $\pi:\Sigma\rightarrow\Sigma'$.
The structure sheaf of the scheme $\Sigma$ is denoted by $\mathcal{O}_\Sigma$.
Let $\bar{\Delta}=\pi^*\bar{\Delta}'+\sigma^*\Spec(A/zA)$.
We have a normal crossing scheme $(\Sigma,\bar{\Delta})$ with $(\bar{\Delta})_0\subset \sigma^*\Spec(A/zA)$.
Consider any $\A\in(\bar{\Delta})_0$. We put
$\bar{\xi}_\A(\pi^*\Lambda)=\pi^*(U(\Sigma',\bar{\Delta}',\pi(\A)))(\bar{\xi}'_{\pi(\A)}(\Lambda))$
for any $\Lambda\in \Comp(\bar{\Delta}')(\pi(\A))$, and we put
$\bar{\xi}_\A(\sigma^*\Spec(A/zA))= \Res^\Sigma_{U(\Sigma, \bar{\Delta},\A)}\sigma^*(\Spec(A))(z)$.
We have a coordinate system $\bar{\xi}_\A:\Comp(\bar{\Delta})(\A)\rightarrow\mathcal{O}_\Sigma(U(\Sigma, \bar{\Delta},\A))$ of $(\Sigma,\bar{\Delta})$ at $\A$, and a coordinate system $\bar{\xi}=\{\bar{\xi}_\A|\A\in(\bar{\Delta})_0\}$ of $(\Sigma,\bar{\Delta})$.

Note that if $\dim A=2$, then $\Sigma=\Spec(A)$, $\sigma=\Id_{\Spec(A)}$, $\bar{\Delta}=\Delta$ and $\bar{\xi}=\xi$.

Consider any closed point $\A\in\Sigma$ with $\sigma(\A)=M(A)$.

We take any $\B\in (\bar{\Delta})_0$ satisfying $\A\in U(\Sigma,\bar{\Delta},\B)$.

We consider the homomorphism $\sigma^*(\A):A\rightarrow \mathcal{O}_{\Sigma, \A}$ of $k$-algebras induced by $\sigma$.
It satisfies $\sigma^*(\A)(M(A))\subset M(\mathcal{O}_{\Sigma, \A})$ and it has a unique extension $\sigma^*(\A):A\rightarrow \mathcal{O}_{\Sigma, \A}^c$.
Since  the morphism $\sigma$ is a weakly admissible composition of blowing-ups over $(\Delta,\xi)$, $\sigma^*(\A)$ is injective.

Note that $\A\in\sigma^{-1}(M(A))\subset\sigma^*\Spec(A/zA)$, $\B\in (\bar{\Delta})_0\subset\sigma^*\Spec(A/zA)$, $\sigma^*\Spec(A/zA)\in\Comp(\bar{\Delta})(\B)$ and $\bar{\xi}_\B(\sigma^*\Spec(A/zA))(\A)=(\sigma^*(\A)(z))(\A)=$\break$z(M(A))=0$.
Note that $\bar{\xi}_\B(\Lambda)(\A)\in k$ for any $\Lambda\in \Comp(\bar{\Delta})(\B)$, since $k$ is algebraically closed.

We denote $\bar{P}=\{\bar{\xi}_\B(\Lambda) - \bar{\xi}_\B(\Lambda)(\A)|
\Lambda\in \Comp(\bar{\Delta})(\B)\}$. $\bar{P}$ is a parameter system of $\mathcal{O}_{\Sigma, \A}^c$.
We denote $\bar{z}=\sigma^*(\A)(z)$.
$\bar{z}\in \bar{P}$.

We take any isomorphism $\rho: \mathcal{O}_{\Sigma, \A}^c\rightarrow A$ of $k$-algebras satisfying $\rho(\bar{P})=P$ and $\rho(\bar{z})=z$. $\rho \sigma^*(\A)(z)=z$. $\rho(\bar{P}-\{\bar{z}\})=P-\{z\}$.

Note that $k[\bar{P}-\{\bar{z}\}]\subset \mathcal{O}_{\Sigma, \A}^c$.
By $\mathcal{O}_{\Sigma, \A}^{c\:\prime}$ we denote the completion of $k[\bar{P}-\{\bar{z}\}]$ with respect to the maximal ideal 
$k[\bar{P}-\{\bar{z}\}]\cap M(\mathcal{O}_{\Sigma, \A}^c)$.
$\mathcal{O}_{\Sigma, \A}^{c\:\prime}$ is a complete local subring of $\mathcal{O}_{\Sigma, \A}^c$.$\mathcal{O}_{\Sigma, \A}^{c\:\prime}$ is a $k$-subalgebra of $\mathcal{O}_{\Sigma, \A}^c$.$\bar{P}-\{\bar{z}\}$ is a parameter system of $\mathcal{O}_{\Sigma, \A}^{c\:\prime}$.
$\sigma^*(\A)(A')\subset \mathcal{O}_{\Sigma, \A}^{c\:\prime}$.
$\sigma^*(\A)(M(A'))\subset M(\mathcal{O}_{\Sigma, \A}^{c\:\prime})$.
$\rho(\mathcal{O}_{\Sigma, \A}^{c\:\prime})=A'$.

We consider the element $\sigma^*(\A)(\phi)\in\mathcal{O}_{\Sigma, \A}^c$.
$$\sigma^*(\A)(\phi)=\sum_{i\in\Z_0}\sigma^*(\A)(\phi'(i))\bar{z}^i.$$
For any $i\in\Z_0$, $\sigma^*(\A)(\phi'(i))\in \mathcal{O}_{\Sigma, \A}^{c\:\prime}$.

$\Supp(\sigma^{\prime*}\Spec(A'/\psi' A'))\subset\Supp(\sigma^{\prime*}(\Spec(A'/\psi' A')+ \Delta'))\subset \Supp(\bar{\Delta}')$.

Consider any $i\in I$. 

$0\neq \sigma^*(\A)(\phi'(i))\in \mathcal{O}_{\Sigma, \A}^{c\:\prime}$.
Since $\psi'\in\phi'(i)A'$ and 
$\Supp(\sigma^{\prime*}\Spec(A'/\psi' A'))\subset \Supp(\bar{\Delta}')$, $\sigma^*(\A)(\phi'(i))$ has normal crossings over $\bar{P}-\{\bar{z}\}$. 
We take the unique pair of elements $\bar{v}(i)\in(\mathcal{O}_{\Sigma, \A}^{c\:\prime})^\times$ and $\bar{c}(i)\in\Map(\bar{P}-\{\bar{z}\}, \Z_0)$ satisfying 
$\sigma^*(\A)(\phi'(i))= \bar{v}(i)\prod_{\bar{x}\in \bar{P}-\{\bar{z}\}}\bar{x}^{\bar{c}(i)(\bar{x})}$.

If $K\neq \emptyset$, then for any $(k,\ell)\in K$,
$0\leq k<\ell$, 
$\psi'\in(\phi'(k)-\phi'(\ell)) A'$ and the element $\sigma^*(\A)(\phi'(k)-\phi'(\ell))\in\mathcal{O}_{\Sigma, \A}^{c\:\prime}$ has normal crossings over $\bar{P}-\{\bar{z}\}$.
By Lemma~\ref{BM}, we know that either $\bar{c}(k) \leq \bar{c}(\ell)$ or $\bar{c}(\ell)\leq \bar{c}(k)$ holds for any $k\in I$ and any $\ell\in I$ with $k\neq\ell$.
Since $\{\bar{c}(k)|k\in I\}\neq\emptyset$,
we know that the set $\{\bar{c}(k) |k\in I\}$ has the minimum element $\min \{\bar{c}(k) |k\in I\}$ with respect to the partial order $\leq$.

We put $a=\min\{j\in I|\bar{c}(j)=
\min \{\bar{c}(k)|k\in I\}\}$ and $b=\min I$. $a\in\Z_0$. $b\in\Z_0$. $b\leq a$.
$\{\prod_{\bar{x}\in\bar{P}-\{\bar{z}\}}\bar{x}^{\bar{c}(i)(\bar{x})}\bar{z}^i|i\in I, b\leq i\leq a\}\mathcal{O}_{\Sigma, \A}^c=\{\bar{v}(i)\prod_{\bar{x}\in\bar{P}-\{\bar{z}\}}\bar{x}^{\bar{c}(i)(\bar{x})}\bar{z}^i|i\in I\}\mathcal{O}_{\Sigma, \A}^c
=\{\sigma^*(\A)(\phi(i)')\bar{z}^i|i\in I\}\mathcal{O}_{\Sigma, \A}^c
=\{\sigma^*(\A)(\phi(i)')\bar{z}^i|i\in \{0,1,\ldots,m\}\}\mathcal{O}_{\Sigma, \A}^c$.
Consider any $j\in\Z_0$ with $j>m$.
Since $\phi(j)'\in\{\phi(i)'| i\in \{0,1,\ldots,m\}\}A$, 
$\sigma^*(\A)(\phi(j)')\in \{\sigma^*(\A)(\phi(i)')|i\in \{0,1,\ldots,m\}\}\mathcal{O}_{\Sigma, \A}^{c\:\prime}$, and
$\sigma^*(\A)(\phi(j)')\bar{z}^j\in $\break$\{\sigma^*(\A)(\phi(i)') \bar{z}^i|i\in \{0,1,\ldots,m\}\}\mathcal{O}_{\Sigma, \A}^c$.
We know that $\{\prod_{\bar{x}\in\bar{P}-\{\bar{z}\}}\bar{x}^{\bar{c}(i)(\bar{x})}\bar{z}^i|i\in I, b\leq i\leq a\}\mathcal{O}_{\Sigma, \A}^c=\{\sigma^*(\A)(\phi(i)')\bar{z}^i|i\in \Z_0\}\mathcal{O}_{\Sigma, \A}^c$.
We know that there exists uniquely a pair of an element $w\in(\mathcal{O}_{\Sigma, \A}^c)^\times$ and a mapping $\zeta':\{0,1,\ldots, a-b-1\}\rightarrow M(\mathcal{O}_{\Sigma, \A}^{c\:\prime})$ satisfying 
$\sigma^*(\A)(\phi)=w\prod_{\bar{x}\in\bar{P}-\{\bar{z}\}}\bar{x}^{\bar{c}(a)(\bar{x})}\bar{z}^{b}
(\bar{z}^{a-b}+\sum_{i=0}^{a-b-1}\zeta'(i) \bar{z}^i)$ and
$\zeta'(0)\neq 0$ if $b<a$. We take $w\in(\mathcal{O}_{\Sigma, \A}^c)^\times$ and a mapping $\zeta':\{0,1,\ldots, a-b-1\}\rightarrow M(\mathcal{O}_{\Sigma, \A}^{c\:\prime})$ satisfying these conditions.

Let $\mathcal{R}=\{\bar{\chi}\in M(\mathcal{O}_{\Sigma, \A}^{c\:\prime})|
\bar{\chi}^h+\sum_{i=0}^{a-b-1}\zeta'(i) \bar{\chi}^i=0\}$.
$\mathcal{R}$ is a finite set. $\sharp\mathcal{R}\leq a-b$.
Consider any $\bar{\chi}\in\mathcal{R}$. Since $\mathcal{O}_{\Sigma, \A}^c$ is a unique factorization domain, there exists uniquely a positive integer $\mu(\bar{\chi})\in\Z_+$ satisfying $\bar{z}^{a-b}+\sum_{i=0}^{a-b-1}\zeta'(i) \bar{z}^i \in(\bar{z}-\bar{\chi})^{\mu(\bar{\chi})} \mathcal{O}_{\Sigma, \A}^c $ and 
$\bar{z}^{a-b}+\sum_{i=0}^{a-b-1}\zeta'(i) \bar{z}^i \not\in(\bar{z}-\bar{\chi})^{\mu(\bar{\chi})+1}\mathcal{O}_{\Sigma, \A}^c $. We take the unique $\mu(\bar{\chi})\in\Z_+$ satisfying these conditions. $\sum_{\bar{\chi}\in\mathcal{R}}\mu(\bar{\chi})\leq a-b$. Let $\hat{g}= a-b -\sum_{\bar{\chi}\in\mathcal{R}}\mu(\bar{\chi})\in\Z_0$. $\hat{g}\neq 1$. $\hat{g}\leq a-b$.
Let $g=\max\{\hat{g}, 1\}\in\Z_+$. 
There exists uniquely a mapping $\bar{\zeta}':\{0,1,\ldots, \hat{g}-1\}\rightarrow M(A')$ satisfying $\bar{z}^{a-b}+\sum_{i=0}^{a-b-1}\zeta'(i) \bar{z}^i =
\prod_{\bar{\chi}\in\mathcal{R}}(\bar{z}-\bar{\chi})^{\mu(\bar{\chi})}(\bar{z}^{\hat{g}}+\sum_{i=0}^{\hat{g}-1}\bar{\zeta}'(i) \bar{z}^i)$.
We take the unique mapping $\bar{\zeta}':\{0,1,\ldots, \hat{g}-1\}\rightarrow M(A')$ satisfying this equality.
$\bar{\chi}^{\hat{g}}+\sum_{i=0}^{\hat{g}-1}\bar{\zeta}'(i) \bar{\chi}^i\neq 0$ for any $\bar{\chi}\in M(\mathcal{O}_{\Sigma, \A}^{c\:\prime})$.

$\rho(w)\in A^\times$. $\rho(\bar{\chi})\in M(A')$ for any $\bar{\chi}\in \mathcal{R}$. $\rho(\bar{\zeta}'(i)) \in M(A')$ for any $i\in\{0,1,\ldots, \hat{g}-1\}$.
$\rho(w\prod_{\bar{x}\in\bar{P}-\{\bar{z}\}}\bar{x}^{\bar{c}(a)(\bar{x})}\bar{z}^b)=\rho(w)\prod_{x\in P-\{z\}}x^{\bar{c}(a)\rho^{-1}(x)}z^b\in PW(1)$.
$\rho(\prod_{\bar{\chi}\in\mathcal{R}}(\bar{z}-\bar{\chi})^{\mu(\bar{\chi})})=\prod_{\bar{\chi}\in\mathcal{R}}(z-\rho(\bar{\chi}))^{\mu(\bar{\chi})}\in PW(1)$.

If $\hat{g}\geq 2$, then $\hat{g}=g$ and $\rho(\bar{z}^{\hat{g}}+\sum_{i=0}^{\hat{g}-1}\bar{\zeta}'(i) \bar{z}^i)\in W(\hat{g})=W(g)$. Therefore, 
$\rho\sigma^*(\A)(\phi)=\rho(w)\prod_{x\in P-\{z\}}x^{\bar{c}(a)\rho^{-1}(x)}z^b \prod_{\bar{\chi}\in\mathcal{R}}(z-\rho(\bar{\chi}))^{\mu(\bar{\chi})} (z^{\hat{g}}+\sum_{i=0}^{\hat{g}-1}\rho(\bar{\zeta}'(i)) z^i$\break$) \in PW(g)$.

If $\hat{g}\leq 1$, then $\hat{g}=0$, $g=1$ and $z^{\hat{g}}+\sum_{i=0}^{\hat{g}-1}\rho(\bar{\zeta}'(i) )z^i=1\in PW(1)$. Therefore, 
$\rho\sigma^*(\A)(\phi)=\rho(w)\prod_{x\in P-\{z\}}x^{\bar{c}(g)\rho^{-1}(x)}z^b \prod_{\bar{\chi}\in\mathcal{R}}(z-\rho(\bar{\chi}))^{\mu(\bar{\chi})} (z^{\hat{g}}+\sum_{i=0}^{\hat{g}-1}\rho(\bar{\zeta}'(i)) z^i$\break$) \in PW(1)=PW(g)$.

We conclude that there exists $g\in\Z_+$ satisfying $\rho\sigma^*(\A)(\phi)\in PW(g)$ and 
$\rho\sigma^*(\A)(\phi)\in \cup_{h\in\Z_+}PW(h)$.

We conclude that Theorem~\ref{make Weierstrass type} holds.

We give the proof of Theorem~\ref{make normal crossings}. See Section~\ref{scheme} for notations and concepts related to normal crossing schemes.

Assume the above $(*)$ and consider any $\phi\in PW(1)$.

We take an element $u\in A^\times$, a finite subset $\mathcal{X}$ of  $M(A')$, a mapping $a: \mathcal{X}\rightarrow \Z_+$ and a mapping $b:P-\{z\}\rightarrow\Z_0$ satisfying
$$\phi= u\prod_{\chi\in\mathcal{X}}(z+\chi)^{a(\chi)}\prod_{x\in P-\{z\}} x^{b(x)}.$$
The quadruplet $(u,\mathcal{X},a,b)$ is uniquely determined depending on $\phi$, since $A$ is a unique factorization domain.

We consider the case $\mathcal{X}\subset \{0\}$.

$\mathcal{X}=\emptyset$ or $\mathcal{X}=\{0\}$.
We know that $\phi$ has normal crossings over $P$.

Let $\sigma: \Spec(A)\rightarrow\Spec(A)$ be the identity morphism $\Id_{\Spec(A)}$ of $\Spec(A)$. 
The scheme $\Spec(A)$ is a smooth scheme over $\Spec(A)$. The morphism $\sigma:\Spec(A)\rightarrow\Spec(A)$ is a weakly admissible composition of blowing-ups over $(\Delta,\xi)$ and the triplet $(\Spec(A), \Delta,\xi)$ is an extended pull-back of the coordinated normal crossing scheme $(\Spec(A),\Delta,\xi)$ by $\sigma$.

$\Supp(\Spec(A/\phi A))\subset\Supp(\Delta)$. $\Supp(\sigma^*(\Spec(A/\phi A)+\Delta))=\Supp(\Delta)$.

We know that Theorem~\ref{make normal crossings} holds, if  $\mathcal{X}\subset \{0\}$.

We consider the case $\mathcal{X}\not\subset \{0\}$. $\mathcal{X}-\{0\}\neq\emptyset$.

By $(\mathcal{X}-\{0\})_2$ we denote the set of all subsets $\mathcal{Y}$ of $\mathcal{X}-\{0\}$ with $\sharp\mathcal{Y}=2$.

Consider the case $\sharp(\mathcal{X}-\{0\})\geq 2$. $(\mathcal{X}-\{0\})_2\neq\emptyset$.
Consider any $\mathcal{Y}\in (\mathcal{X}-\{0\})_2$.
We choose any element $\chi_0(\mathcal{Y})\in\mathcal{Y}$.
The unique element in $\mathcal{Y}$ different from $\chi_0(\mathcal{Y})$ is denoted by $\chi_1(\mathcal{Y})$. $\chi_0(\mathcal{Y})\neq\chi_1(\mathcal{Y})$.
$\{\chi_0(\mathcal{Y}), \chi_1(\mathcal{Y})\}=\mathcal{Y}$.

In case $\sharp(\mathcal{X}-\{0\})=1$, we denote the unique element in $\mathcal{X}-\{0\}$ by $\chi_0$.

We put
\begin{equation*}
\phi'=
\begin{cases}
\prod_{\chi\in\mathcal{X}-\{0\}}\chi\prod_{\mathcal{Y}\in (\mathcal{X}-\{0\})_2}(\chi_0(\mathcal{Y})-\chi_1(\mathcal{Y}))&
\text{ if $\sharp(\mathcal{X}-\{0\})\geq 2$},\\
\chi_0&\text{ if $\sharp(\mathcal{X}-\{0\})=1$}.
\end{cases}\end{equation*}

$\phi'\in M(A')$ and $\phi'\neq 0$.

By $(*)$ we know that 
there exists a weakly admissible composition of blowing-ups $\hat{\sigma}':\hat{\Sigma}'\rightarrow\Spec(A')$ over $(\Delta',\xi')$ and an extended pull-back $(\hat{\Sigma}',\hat{\Delta}',\hat{\xi'})$ of the coordinated normal crossing scheme $(\Spec(A'),\Delta',\xi')$ by $\hat{\sigma}'$ satisfying $\Supp(\hat{\sigma}^{\prime*}(\Spec(A'/$\break$\phi' A')+ \Delta'))\subset \Supp(\hat{\Delta}')$.
We take a weakly admissible composition of blowing-ups $\hat{\sigma}':\hat{\Sigma}'\rightarrow\Spec(A')$ over $(\Delta',\xi')$ and an extended pull-back $(\hat{\Sigma}',\hat{\Delta}',\hat{\xi'})$ of the coordinated normal crossing scheme $(\Spec(A'),\Delta',\xi')$ by $\hat{\sigma}'$ satisfying $\Supp(\hat{\sigma}^{\prime*}(\Spec(A'/$\break$\phi' A')+ \Delta'))\subset \Supp(\hat{\Delta}')$. The structure sheaf of the scheme $\hat{\Sigma}'$ is denoted by $\mathcal{O}_{\hat{\Sigma}'}$.

We consider a morphism $\Spec(A)\rightarrow\Spec(A')$ induced by the inclusion ring homomorphism $A'\rightarrow A$, the product scheme $\hat{\Sigma}=\hat{\Sigma}'\times_{\Spec(A')}\Spec(A)$, the projection $\hat{\sigma}:\hat{\Sigma}\rightarrow\Spec(A)$, and the projection $\pi:\hat{\Sigma}\rightarrow\hat{\Sigma}'$.
The structure sheaf of the scheme $\hat{\Sigma}$ is denoted by $\mathcal{O}_{\hat{\Sigma}}$.
Let $\hat{\Delta}=\pi^*\hat{\Delta}'+\hat{\sigma}^*\Spec(A/zA)$.
We have a normal crossing scheme $(\hat{\Sigma},\hat{\Delta})$ with $(\hat{\Delta})_0\subset \hat{\sigma}^*\Spec(A/zA)$.
Consider any $\A\in(\hat{\Delta})_0$. We put
$\hat{\xi}_\A(\pi^*\Lambda)=\pi^*(U(\hat{\Sigma}',\hat{\Delta}',\pi(\A)))(\hat{\xi'}_{\pi(\A)}(\Lambda))$
for any $\Lambda\in \Comp(\hat{\Delta}')(\pi(\A))$, and we put
$\hat{\xi}_\A(\hat{\sigma}^*\Spec(A/zA))= \Res^{\hat{\Sigma}}_{U(\hat{\Sigma}, \hat{\Delta},\A)}\hat{\sigma}^*(\Spec(A))(z)$.
We have a coordinate system $\hat{\xi}_\A:\Comp(\hat{\Delta})(\A)\rightarrow\mathcal{O}_{\hat{\Sigma}}(U(\hat{\Sigma}, \hat{\Delta},\A))$ of $(\hat{\Sigma},\hat{\Delta})$ at $\A$, and a coordinate system $\hat{\xi}=\{\hat{\xi}_\A|\A\in(\hat{\Delta})_0\}$ of $(\hat{\Sigma},\hat{\Delta})$.

We develop some general theory of schemes.
Let $\Omega$ be any separated irreducible noetherian smooth scheme.

For any closed subset $E$ of $\Omega$, we denote the set of irreducible components of $E$ by $\Comp(E)$. The set $\Comp(E)$ is a finite set whose elements are non-empty irreducible closed subsets of $\Omega$. $E=\cup_{F\in\Comp(E)}F$.

Let $U$ be any non-empty open subset of $\Omega$ and let $D$ be any divisor of $\Omega$. Let $\iota_U: U\rightarrow\Omega$ denote the inclusion morphism. We denote the pull-back $\iota_U^*D$ of $D$ by $\iota_U$ by the symbol $D|U$ and we call it the \emph{restriction} of $D$ to $U$.

Let $r\in \Z_+$ be any positive integer; let $\mathcal{E}$ be any subset with $\sharp\mathcal{E}=r$ of the set $\Prm(\Omega)$ of all prime divisors of $\Omega$ and let $G$ be any irreducible component of $\cap_{E\in\mathcal{E}}E$ with $r=\Codim(G, \Omega)$.

Let $[G]\in\ G$ denote the generic point of $G$ and let $\iota:\Spec(\mathcal{O}_{\Omega, [G]})\rightarrow \Omega$ denote the canonical morphism. We take any $\epsilon_E\in M(\mathcal{O}_{\Omega, [G]})$ satisfying $\iota^*E=\Spec(\mathcal{O}_{\Omega, [G]}/\epsilon_E\mathcal{O}_{\Omega, [G]})$ for any $E\in\mathcal{E}$. The residue ring $\mathcal{O}_{\Omega, [G]}/\{\epsilon_E| E\in\mathcal{E}\}\mathcal{O}_{\Omega, [G]}$ is a local noetherian ring with dimension zero. 
We denote $(\mathcal{E};G)=\mathrm{length}(\mathcal{O}_{\Omega, [G]}/\{\epsilon_E|$\break$ E\in\mathcal{E}\}\mathcal{O}_{\Omega, [G]})\in\Z_+$ and we call $(\mathcal{E};G)$ the \emph{intersection number} of the set $\mathcal{E}$ of prime divisors at $G$.
The intersection number $(\mathcal{E};G)$ of $\mathcal{E}$ at $G$ depends only on the pair $(\mathcal{E}, G)$, and it does not depend on the choice of elements $\epsilon_E \in M(\mathcal{O}_{\Omega, [G]})$, $E\in\mathcal{E}$ we used for the definition. We also write $(E(1), E(2),\ldots, E(r);G)$ instead of $(\mathcal{E};G)$, if $\mathcal{E}=\{E(1), E(2),\ldots, E(r)\}$. For any bijective mapping $\tau: \{1,2,\ldots,r\}\rightarrow \{1,2,\ldots,r\}$, $(E(1), E(2),\ldots, E(r);G)= (E(\tau(1)), E(\tau(2)),\ldots, E(\tau(r));G)$.

We return to our situation under consideration. We denote $\hat{D}=\hat{\sigma}^*\Spec(A/zA)+\sum_{\chi\in\mathcal{X}-\{0\}}\hat{\sigma}^*\Spec(A/(z+\chi)A)\in\Div(\hat{\Sigma})$. The following claims holds:
\begin{enumerate}
\item
The morphism $\hat{\sigma}:\hat{\Sigma}\rightarrow\Spec(A)$ is a weakly admissible composition of blowing-ups over $(\Delta,\xi)$ and the triplet $(\hat{\Sigma}, \hat{\Delta},\hat{\xi})$ is an extended pull-back of $(\Spec(A),\Delta,\xi)$.
\item
The morphism $\pi:\hat{\Sigma}\rightarrow\hat{\Sigma}'$ is surjective. For any $\A'\in\hat{\Sigma}'$,
$\dim \hat{\Sigma}\times_{\hat{\Sigma}'}\Spec(\mathcal{O}_{\hat{\Sigma}', \A'}/M(\mathcal{O}_{\hat{\Sigma}', \A'}))=1$.
\item
$\hat{\sigma}^*\Spec(A/zA)$ is a smooth prime divisor of $\hat{\Sigma}$. For any $\chi\in\mathcal{X}-\{0\}$, $\hat{\sigma}^*\Spec(A/(z+\chi)A)$ is a smooth prime divisor of $\hat{\Sigma}$. $\Comp(\hat{D})=\{\hat{\sigma}^*\Spec($\break$A/zA)\}\cup\{\hat{\sigma}^*\Spec(A/(z+\chi)A)| \chi\in\mathcal{X}-\{0\}\}$.
\item 
Consider any $\Lambda\in\Comp(\hat{D})$.

The component $\Lambda$ is smooth, 
the induced morphism $\pi: \Lambda\rightarrow\hat{\Sigma}'$ by $\pi$ is an isomorphism and the pair $(\hat{\Sigma}, \Lambda+\pi^*\hat{\Delta}')$ is a normal crossing scheme over $k$.
\item
For any $\Lambda\in\Comp(\hat{D})$ and any $\Gamma\in\Comp(\hat{D})$ with $\Lambda\neq\Gamma$, $\emptyset\neq\Lambda\cap\Gamma\subset \Supp(\pi^*\hat{\Delta}')$.
\item
$\Comp(\hat{D})\cap\Comp(\hat{\Delta})=\{\hat{\sigma}^*\Spec(A/zA)\}$.
\item
$\Supp(\hat{\sigma}^*(\Spec(A/\phi A)+\Delta))\subset\Supp(\hat{\Delta}+\hat{D})$.
\end{enumerate}

We consider any pair $(\tau, (\Sigma,\bar{\Delta},\bar{\xi}))$ of a weakly admissible composition of blowing-ups $\tau:\Sigma\rightarrow\hat{\Sigma}$ over $(\hat{\Sigma},\hat{\Delta})$ and an extended pull-back $(\Sigma,\bar{\Delta},\bar{\xi})$ of $(\hat{\Sigma},\hat{\Delta},\hat{\xi})$ by $\tau$ satisfying the following four conditions (Z).
Let $\bar{D}$ denote the sum of strict transforms of elements in $\Comp(\hat{D})$ by $\tau$. $\bar{D}\in\Div(\Sigma)$. Let $\bar{\Delta}_0=\sum_{\Gamma\in\Comp(\bar{\Delta})-\Comp(\bar{D})}\Gamma\in\Div(\Sigma)$:
\begin{enumerate}
\item
Consider any $\Lambda\in\Comp(\bar{D})$.

The component $\Lambda$ is smooth.

Let $U_\Lambda=\Sigma-(\cup_{\Gamma\in\Comp(\bar{\Delta}_0), \Gamma\cap\Lambda=\emptyset}\Gamma)\subset\Sigma$. Note that the subset $U_\Lambda$ is open in $\Sigma$ and it contains $\Lambda$.

The pair $(U_\Lambda, (\Lambda+\bar{\Delta}_0)|U_\Lambda)$ is a normal crossing scheme over $k$.

We take the unique element $\chi\in\mathcal{X}\cup\{0\}$ such that $\Lambda$ is the strict transform of $\hat{\sigma}^*\Spec(A/(z+\chi)A)$ by $\tau$. 
$\Supp(\Lambda+\bar{\Delta}_0)\cap U_\Lambda=\tau^{-1}(\Supp(\hat{\sigma}^*\Spec(A/(z+\chi)A)+\pi^*\hat{\Delta}'))\cap U_\Lambda$.
\item
For any $\Lambda\in\Comp(\bar{D})$ and any $\Gamma\in\Comp(\bar{D})$ with $\Lambda\neq\Gamma$, $\Lambda\cap\Gamma\subset\Supp(\bar{\Delta}_0)$.
\item
For any $\Lambda\in\Comp(\bar{\Delta})\cap\Comp(\bar{D})$ and any $\Gamma\in\Comp(\bar{\Delta})\cap\Comp(\bar{D})$ with $\Lambda\neq\Gamma$, $\Lambda\cap\Gamma=\emptyset$.
\item
For any $\Lambda\in\Comp(\bar{D})$, there exists an element $\Gamma\in\Comp(\bar{\Delta})\cap\Comp(\bar{D})$ with $\Lambda\cap\Gamma\neq\emptyset$. 
\end{enumerate}

$\Supp((\hat{\sigma}\tau)^*(\Spec(A/\phi A)+\Delta))\subset
\Supp(\tau^*(\hat{\Delta}+\hat{D}))\subset
\Supp(\bar{\Delta}+\bar{D})$.

$\sharp\Comp(\bar{D})=\sharp\Comp(\hat{D})\geq 2$. 
$1\leq\sharp(\Comp(\bar{\Delta})\cap\Comp(\bar{D}))\leq \sharp\Comp(\bar{D})$.

If $\sharp(\Comp(\bar{\Delta})\cap\Comp(\bar{D}))=\sharp\Comp(\bar{D})$, then $\Supp(\bar{\Delta}+\bar{D})=\Supp\bar{\Delta}$ and 
$\Supp($\break$ (\hat{\sigma}\tau)^*(\Spec(A/\phi A)+\Delta))\subset\Supp\bar{\Delta}$.
Since $\Sigma$ is a separated irreducible noetherian smooth scheme over $\Spec(A)$, the morphism $\hat{\sigma}\tau:\Sigma\rightarrow\Spec(A)$ is a weakly admissible composition of blowing-ups over $(\Delta,\xi)$ and the triplet $(\Sigma,\bar{\Delta},\bar{\xi})$ is an extended pull-back of $(\Spec(A),\Delta, \xi)$ by $\hat{\sigma}\tau$, we know that Theorem~\ref{make normal crossings} holds, if $\sharp(\Comp(\bar{\Delta})\cap\Comp(\bar{D}))=\sharp\Comp(\bar{D})$.

We would like to show that there exists a pair $(\tau,(\Sigma,\bar{\Delta},\bar{\xi}))$ satisfying the above four conditions (Z) and $\sharp(\Comp(\bar{\Delta})\cap\Comp(\bar{D}))=\sharp\Comp(\bar{D})$.

Note that the pair $(\Id_{\hat{\Sigma}},(\hat{\Sigma},\hat{\Delta},\hat{\xi}))$ satisfies the above four conditions (Z) and $\sharp(\Comp(\hat{\Delta})\cap\Comp(\hat{D}))=1$.

We use induction on $\sharp\Comp(\bar{D})-\sharp(\Comp(\bar{\Delta})\cap\Comp(\bar{D}))$.
Assume that the pair  $(\tau,(\Sigma,\bar{\Delta},\bar{\xi}))$ satisfies the above four conditions (Z) and $\sharp(\Comp(\bar{\Delta})\cap\Comp(\bar{D}))<\sharp\Comp(\bar{D})$.

We consider any admissible composition of blowing-ups $\nu:\tilde{\Sigma}\rightarrow\Sigma$ over $\bar{\Delta}$ such that the pair $(\tau\nu, (\tilde{\Sigma},\nu^*\bar{\Delta}, \nu^*\bar{\xi}))$ satisfies three conditions in the above four conditions (Z) except the last one. 

Let $\tilde{D}$ denote the sum of strict transforms of elements in $\Comp(\bar{D})$ by $\nu$. $\tilde{D}\in\Div(\tilde{\Sigma})$. The divisor  $\tilde{D}$ is equal to the sum of strict transforms of elements in $\Comp(\hat{D})$ by $\tau\nu$.
Since $\Comp(\bar{D})-\Comp(\bar{\Delta})\neq\emptyset$, we know $\Comp(\tilde{D})-\Comp(\nu^*\bar{\Delta})\neq\emptyset$.

Let $\tilde{\Delta}_0=\sum_{\Gamma\in \Comp(\nu^*\bar{\Delta})-\Comp(\tilde{D})}\Gamma\in\Div(\tilde{\Sigma})$.
$\Supp(\tilde{\Delta}_0)\subset\Supp(\nu^*\bar{\Delta})$.

We consider the case where the pair $(\tau\nu, (\tilde{\Sigma},\nu^*\bar{\Delta}, \nu^*\bar{\xi}))$ satisfies also the last condition in (Z). For any $\Lambda\in \Comp(\tilde{D})-\Comp(\nu^*\bar{\Delta})$, there exists an element $\Gamma\in \Comp(\tilde{D})\cap\Comp(\nu^*\bar{\Delta})$ satisfying $\Lambda\cap\Gamma\neq\emptyset$.

We take any $\Lambda_0\in \Comp(\tilde{D})-\Comp(\nu^*\bar{\Delta})$.

There exists an element $\Gamma\in \Comp(\tilde{D})\cap\Comp(\nu^*\bar{\Delta})$ satisfying $\Lambda_0\cap\Gamma\neq\emptyset$.

We denote
$$\mathcal{Z}=\{\mathcal{L}\subset\Comp(\tilde{D})-\{\Lambda_0\}|
\mathcal{L}\cap\Comp(\nu^*\bar{\Delta})\neq\emptyset,
\Lambda_0\cap(\bigcap_{\Gamma\in\mathcal{L}}\Gamma)\neq\emptyset\}.$$
$\mathcal{Z}\neq\emptyset$.

Let $\mathcal{L}$ be any maximal element in $\mathcal{Z}$ with respect to the inclusion relation. $\mathcal{L}\subset\Comp(\tilde{D})$.  $\Lambda_0\not\in \mathcal{L}$. $\mathcal{L}\cap\Comp(\nu^*\bar{\Delta})\neq\emptyset$. $\Lambda_0\cap(\bigcap_{\Gamma\in\mathcal{L}}\Gamma)\neq\emptyset$. 
For any $\Theta\in \Comp(\tilde{D})$ satisfying $\Theta\neq\Lambda_0$ and $\Theta\not\in \mathcal{L}$, $\Lambda_0\cap(\bigcap_{\Gamma\in\mathcal{L}}\Gamma)\cap\Theta=\emptyset$.

Consider any $\Gamma\in\mathcal{L}$. 
$\Lambda_0\neq\Gamma$ and every irreducible component of $\Lambda_0\cap\Gamma$ has codimension two in $\tilde{\Sigma}$.

Consider any irreducible component $\Phi$ of $\Lambda_0\cap\Gamma$. 
Since $\Lambda_0\cap\Gamma\subset \Supp(\tilde{\Delta}_0)$, we know that there exists $\Theta\in\Comp(\tilde{\Delta}_0)$ with $\Phi\subset\Theta$. We take $\Theta\in\Comp(\tilde{\Delta}_0)$ with $\Phi\subset\Theta$. $\Phi\subset \Lambda_0\cap\Gamma\subset\Lambda_0$ and $\Phi\subset \Theta\cap\Lambda_0$.
Since $\Lambda_0\in \Comp(\tilde{D})$ and $\Theta\not\in\Comp(\tilde{D})$, $\Theta\neq\Lambda_0$. Any irreducible component of $\Theta\cap\Lambda_0$ has codimension two in $\tilde{\Sigma}$. We know $\dim \Phi=\dim \Theta\cap\Lambda_0$. Since there exists an open subset $U_{\Lambda_0}$ of $\tilde{\Sigma}$ satisfying $\Lambda_0\subset U_{\Lambda_0}$ and $(U_{\Lambda_0}, (\Lambda_0+\tilde{\Delta}_0)| U_{\Lambda_0})$ is a normal crossing scheme over $k$, we know that $\Theta\cap\Lambda_0$ is irreducible, $\Phi=\Theta\cap\Lambda_0$ and if $\Theta'\in \Comp(\tilde{\Delta}_0)$ and $\Phi=\Theta'\cap\Lambda_0$, then $\Theta'=\Theta$.
Similarly we know $\Phi=\Theta\cap\Gamma$, since there exists an open subset $U_\Gamma$ of $\tilde{\Sigma}$ satisfying $\Gamma\subset U_\Gamma$ and $( U_\Gamma, (\Gamma+\tilde{\Delta}_0)| U_\Gamma)$ is a normal crossing scheme over $k$.
If $\Gamma\in \Comp(\nu^*\bar{\Delta})$, then it follows that $\Phi$ is a stratum of the normal crossing divisor $\nu^*\bar{\Delta}$ of codimension two contained in $\Lambda_0\cap\Gamma$. 

For any $\Gamma\in\mathcal{L}$ and any irreducible component $\Phi_\Gamma$ of $\Lambda_0\cap\Gamma$, we take the unique element $\Theta_{\Phi_\Gamma}\in \Comp(\tilde{\Delta}_0)$ with $\Phi_\Gamma=\Theta_{\Phi_\Gamma}\cap\Lambda_0$.
$\Theta_{\Phi_\Gamma}\cap\Lambda_0=\Theta_{\Phi_\Gamma}\cap\Gamma$. We take any element $\Gamma_0\in \Comp(\nu^*\bar{\Delta})\cap\mathcal{L}$. For any irreducible component $\Phi_{\Gamma_0}$ of $\Lambda_0\cap\Gamma_0$,
$\Theta_{\Phi_{\Gamma_0}}\cap\Lambda_0=\Theta_{\Phi_{\Gamma_0}}\cap\Gamma_0$.

We denote $\mathcal{N}=\{\Phi\in\Map(\mathcal{L},\cup_{\Gamma\in\mathcal{L}}\Comp(\Lambda_0\cap\Gamma))|$For any $\Gamma\in\mathcal{L}$, $\Phi(\Gamma)\in \Comp(\Lambda_0\cap\Gamma)\}$.
\begin{equation*}\begin{split}
\emptyset&\neq
\Lambda_0\cap(\bigcap_{\Gamma\in\mathcal{L}}\Gamma)
=\bigcap_{\Gamma\in\mathcal{L}}(\Lambda_0\cap\Gamma)
=\bigcap_{\Gamma\in\mathcal{L}}(\bigcup_{\Phi_\Gamma\in\Comp(\Lambda_0\cap\Gamma)} \Phi_\Gamma)
=\bigcup_{\Phi\in\mathcal{N}}
(\bigcap_{\Gamma\in\mathcal{L}}\Phi(\Gamma))\\
&=\bigcup_{\Phi\in\mathcal{N}}
(\bigcap_{\Gamma\in\mathcal{L}}\Theta_{\Phi(\Gamma)}\cap\Lambda_0)
=\bigcup_{\Phi\in\mathcal{N}}((\bigcap_{\Gamma\in\mathcal{L}-\{\Gamma_0\}}\Theta_{\Phi(\Gamma)})\cap(\Theta_{\Phi(\Gamma_0)}\cap\Lambda_0))\\
&=\bigcup_{\Phi\in\mathcal{N}}((\bigcap_{\Gamma\in\mathcal{L}-\{\Gamma_0\}}\Theta_{\Phi(\Gamma)})\cap(\Theta_{\Phi(\Gamma_0)}\cap\Gamma_0))
=\bigcup_{\Phi\in\mathcal{N}}
((\bigcap_{\Gamma\in\mathcal{L}}\Theta_{\Phi(\Gamma)})\cap\Gamma_0).
\end{split}\end{equation*}

We know that any irreducible component $\Psi$ of $\Lambda_0\cap(\bigcap_{\Gamma\in\mathcal{L}}\Gamma)$ is a stratum of the normal crossing divisor $\nu^*\bar{\Delta}$ with $\Codim(\Psi,\tilde{\Sigma})\geq 2$ and it is a stratum of $\Gamma+\tilde{\Delta}_0$ contained in $\Gamma$ for any $\Gamma\in\mathcal{L}\cup\{\Lambda_0\}$.

Consider any irreducible component $\Psi$ of  $\Lambda_0\cap(\bigcap_{\Gamma\in\mathcal{L}}\Gamma)$. $\Psi$ is a stratum of $\nu^*\bar{\Delta}$. $\Codim(\Psi, \tilde{\Sigma})\geq 2$. For any $\Gamma\in\mathcal{L}\cup\{\Lambda_0\}$, $\Psi\subset\Gamma$. For any $\Gamma\in\Comp(\tilde{D})-(\mathcal{L}\cup\{\Lambda_0\})$, $\Psi\cap\Gamma=\emptyset$.

Let $\nu_1:\Sigma_1\rightarrow\tilde{\Sigma}$ denote the blowing-up with center in $\Psi$. The morphism $\nu_1$ is an admissible blowing-up over $\nu^*\bar{\Delta}$.
The composition $\nu\nu_1: \Sigma_1\rightarrow\Sigma$ is an admissible composition of blowing-ups over $\bar{\Delta}$.

The exceptional divisor of $\nu_1$ is denoted by $\Psi_1$. $\Psi_1=\nu_1^{-1}(\Psi)\in\Div(\Sigma_1)$. For any closed irreducible subset $\Phi$ of  $\tilde{\Sigma}$ with $\Phi\not\subset\Psi$, by $\Phi_1$ we denote the closure of $\nu_1^{-1}(\Phi-\Psi)$ in $\Sigma_1$. $\Phi_1$ is a closed irreducible subset $\Phi$ of  $\Sigma_1$, $\nu_1(\Phi_1)=\Phi$, the induced morphism $\nu_1:\Phi_1\rightarrow \Phi$ by $\nu_1$ is birational and $\nu_1^*:\mathcal{O}_{\tilde{\Sigma},[\Phi]}\rightarrow\mathcal{O}_{\Sigma_1,[\Phi_1]}$ is an isomorphism. For any prime divisor $\Gamma$ of $\tilde{\Sigma}$, the strict transform of $\Gamma$ by $\nu_1$ is equal to $\Gamma_1$.

For any $\Gamma\in\mathcal{L}\cup\{\Lambda_0\}$, $\nu_1^*\Gamma=\Gamma_1+\Psi_1$. For any $\Gamma\in\Comp(\tilde{D})-(\mathcal{L}\cup\{\Lambda_0\})$, $\Psi\cap\Gamma=\emptyset$ and $\nu_1^*\Gamma=\Gamma_1$. We know that for any $\Gamma\in\Comp(\tilde{D})$, $\Gamma_1$ is smooth.

Let $$D_1=\sum_{\Gamma\in\Comp(\tilde{D})}\Gamma_1\in\Div(\Sigma_1)\text{ and }
\Delta_{10}=\sum_{\Gamma\in\Comp((\nu\nu_1)^*\bar{\Delta})-\Comp(D_1)}\Gamma\in\Div(\Sigma_1).$$ The divisor $D_1$ is equal to the sum of strict transforms of elements in $\Comp(\hat{D})$ by $\tau\nu\nu_1$.
$\Comp((\nu\nu_1)^*\bar{\Delta})=\{\Gamma_1|\Gamma\in\Comp(\nu^*\bar{\Delta})\}\cup\{\Psi_1\}$. $\Comp(\Delta_{10})=\{\Gamma_1|\Gamma\in\Comp(\tilde{\Delta}_0)\}\cup\{\Psi_1\}$. $\Psi_1\subset\Supp(\Delta_{10})=\nu_1^{-1}(\Supp(\tilde{\Delta}_0))$.

We would like to show that the pair $(\tau\nu\nu_1, (\Sigma_1, (\nu\nu_1)^*\bar{\Delta}, (\nu\nu_1)^*\bar{\xi}))$ satisfies three conditions in the above four conditions (Z) except the last one.
Recall that pair $(\tau\nu, (\tilde{\Sigma}, \nu^*\bar{\Delta}, \nu^*\bar{\xi}))$ satisfies the four conditions in (Z).

By the just above we know that any component of $D_1$ is smooth.

Consider any $\Lambda\in\Comp(\tilde{D})$.
$\Lambda_1\in\Comp(D_1)$. We denote
$$\tilde{U}_\Lambda=\tilde{\Sigma}-(\cup_{\Gamma\in\tilde{\Delta}_0, \Gamma\cap\Lambda=\emptyset}\Gamma)\text{ and }U_{1 \lambda}=\Sigma_1-(\cup_{\Gamma\in\Delta_{10}, \Gamma\cap\Lambda_1=\emptyset}\Gamma).$$
The pair $(\tilde{U}_\Lambda, (\Lambda+\tilde{\Delta}_0)| \tilde{U}_\Lambda)$ is a normal crossing scheme over $k$. We take the unique element $\chi\in\mathcal{X}\cup\{0\}$ such that $\Lambda$ is the strict transform of $\hat{\sigma}^*\Spec(A/(z+\chi)A)$ by $\tau\nu$. 
$\Lambda_1$ is the strict transform of $\hat{\sigma}^*\Spec(A/(z+\chi)A)$ by $\tau\nu\nu_1$. 
$\Supp(\Lambda+\tilde{\Delta}_0)\cap \tilde{U}_\Lambda=(\tau\nu)^{-1}(\Supp(\hat{\sigma}^*\Spec(A/(z+\chi)A)+\pi^*\hat{\Delta}'))\cap \tilde{U}_\Lambda$.

For any $\Gamma\in\Comp(\tilde{\Delta}_0)$ with $\Gamma\cap\Lambda=\emptyset$, $\Gamma_1\in\Comp(\Delta_{10})$, $\Gamma_1\cap\Lambda_1=\emptyset$ and $\Gamma_1=\nu_1^{-1}(\Gamma)$. We know $\Lambda_1\subset U_{1 \lambda}\subset\nu_1^{-1}(\tilde{U}_\Lambda)$ and $U_{1 \lambda}\neq\nu_1^{-1}(\tilde{U}_\Lambda)$, if and only if, there exists $\Gamma\in\tilde{\Delta}_0$ with $\Gamma\cap\Lambda\neq\emptyset$ and $\Gamma_1\cap\Lambda_1=\emptyset$.

$\Lambda_1\cup\Psi_1=\nu_1^{-1}(\Lambda)\cup\Psi_1$. 
$\Supp(\Lambda_1+\Delta_{10})=\Lambda_1\cup\Supp(\Delta_{10})=\Lambda_1\cup\Psi_1\cup\Supp(\Delta_{10})
=\nu_1^{-1}(\Lambda)\cup\Psi_1\cup\Supp(\Delta_{10})
=\nu_1^{-1}(\Lambda)\cup\Supp(\Delta_{10})
=\nu_1^{-1}(\Lambda)\cup\nu_1^{-1}(\Supp(\tilde{\Delta}_0))=\nu_1^{-1}(\Lambda\cup\Supp(\tilde{\Delta}_0))=\nu_1^{-1}(\Supp(\Lambda+\tilde{\Delta}_0))$.

$\Supp(\Lambda_1+\Delta_{10})\cap \nu_1^{-1}(\tilde{U}_\Lambda)
=\nu_1^{-1}(\Supp(\Lambda+\tilde{\Delta}_0)) \cap \nu_1^{-1}(\tilde{U}_\Lambda)
=\nu_1^{-1}(\Supp(\Lambda+\tilde{\Delta}_0)\cap \tilde{U}_\Lambda)
=\nu_1^{-1}((\tau\nu)^{-1}(\Supp(\hat{\sigma}^*\Spec(A/(z+\chi)A)+\pi^*\hat{\Delta}'))\cap \tilde{U}_\Lambda)
=(\tau\nu\nu_1)^{-1}(\Supp($\break$\hat{\sigma}^*\Spec(A/(z+\chi)A)+\pi^*\hat{\Delta}'))\cap\nu_1^{-1}(\tilde{U}_\Lambda)$.
We know that
$\Supp(\Lambda_1+\Delta_{10})\cap U_{1 \lambda}
=(\tau\nu\nu_1)^{-1}(\Supp(\hat{\sigma}^*\Spec(A/(z+\chi)A)+\pi^*\hat{\Delta}'))\cap U_{1 \lambda}$.

We know that $(\nu_1^{-1}(\tilde{U}_\Lambda), (\Lambda_1+\Delta_{10})|\nu_1^{-1}(\tilde{U}_\Lambda))$ is a normal crossing scheme over $k$. We know that if there does not exist $\Gamma\in\tilde{\Delta}_0$ with $\Gamma\cap\Lambda\neq\emptyset$ and $\Gamma_1\cap\Lambda_1=\emptyset$, then $(U_{1 \lambda}, (\Lambda_1+\Delta_{10})|U_{1 \lambda})$ is a normal crossing scheme over $k$.

We consider the case where there exists $\Gamma\in\tilde{\Delta}_0$ with $\Gamma\cap\Lambda\neq\emptyset$ and $\Gamma_1\cap\Lambda_1=\emptyset$.
Take any $\Gamma\in\tilde{\Delta}_0$ with $\Gamma\cap\Lambda\neq\emptyset$ and $\Gamma_1\cap\Lambda_1=\emptyset$. 
If $\Gamma\cap\Lambda\not\subset\Psi$, then $\Gamma_1\cap\Lambda_1\neq\emptyset$.
Therefore, $\Gamma\cap\Lambda \subset\Psi$. 
Since $\emptyset\neq \Gamma\cap\Lambda \subset\Psi\cap \Lambda$, $\Lambda\in\mathcal{L}\cup\{\Lambda_0\}$.
Since $\Codim(\Psi,\tilde{\Sigma})\geq 2$, we know $\Gamma\cap\Lambda=\Psi$ and $\Codim(\Psi,\tilde{\Sigma})=2$.
We know that if $\Gamma'\in\tilde{\Delta}_0$,  $\Gamma'\cap\Lambda\neq\emptyset$ and $\Gamma'_1\cap\Lambda_1=\emptyset$, then $\Gamma'=\Gamma$, since the pair $(\tilde{U}_\Lambda, (\Lambda+\tilde{\Delta}_0)| \tilde{U}_\Lambda)$ is a normal crossing scheme over $k$.
We know that $U_{1 \lambda}=\nu_1^{-1}(\tilde{U}_\Lambda)-\Gamma$ and we conclude
$(U_{1 \lambda}, (\Lambda_1+\Delta_{10})|U_{1 \lambda})$ is a normal crossing scheme over $k$, if there exists $\Gamma\in\tilde{\Delta}_0$ with $\Gamma\cap\Lambda\neq\emptyset$ and $\Gamma_1\cap\Lambda_1=\emptyset$.

We know that the pair $(U_{1 \lambda}, (\Lambda_1+\Delta_{10})|U_{1 \lambda})$ is a normal crossing scheme over $k$.

We know that the pair $(\tau\nu\nu_1, (\Sigma_1, (\nu\nu_1)^*\bar{\Delta}, (\nu\nu_1)^*\bar{\xi}))$ satisfies the first condition in the above four conditions (Z).

Consider any $\Lambda\in\Comp(\tilde{D})$ and any $\Gamma\in\Comp(\tilde{D})$ with $\Lambda\neq\Gamma$. $\Lambda\cap\Gamma\subset\Supp(\tilde{\Delta}_0)$, since the pair $(\tau\nu, (\tilde{\Sigma}, \nu^*\bar{\Delta}, \nu^*\bar{\xi}))$ satisfies the second condition of (Z). 
$\Lambda_1\cap\Gamma_1\subset\nu_1^{-1}(\Lambda\cap\Gamma)\subset\nu_1^{-1}(\Supp(\tilde{\Delta}_0))=\Supp(\Delta_{10})$. 

We know that the pair $(\tau\nu\nu_1, (\Sigma_1, (\nu\nu_1)^*\bar{\Delta}, (\nu\nu_1)^*\bar{\xi}))$ satisfies the second condition in the above four conditions (Z).

Consider any $\hat{\Lambda}\in\Comp((\nu\nu_1)^*\bar{\Delta})\cap\Comp(D_1)$ and any $\hat{\Gamma}\in\Comp((\nu\nu_1)^*\bar{\Delta})\cap\Comp(D_1)$ with $\hat{\Lambda}\neq\hat{\Gamma}$.
Since $\hat{\Lambda}\in\Comp(D_1)$, $\hat{\Lambda}\neq\Psi_1$ and there exists uniquely $\Lambda\in\Comp(\tilde{D})$ with $\hat{\Lambda}=\Lambda_1$.
We take the unique $\Lambda\in\Comp(\tilde{D})$ with $\hat{\Lambda}=\Lambda_1$.
Since $\hat{\Lambda}\in\Comp((\nu\nu_1)^*\bar{\Delta})$, $\Lambda\in \Comp(\nu^*\bar{\Delta})$. $\Lambda\in \Comp(\nu^*\bar{\Delta})\cap \Comp(\tilde{D})$. 
Similarly, we know there exists uniquely $\Gamma\in\Comp(\tilde{D})$ with $\hat{\Gamma}=\Gamma_1$. We take the unique $\Gamma\in\Comp(\tilde{D})$ with $\hat{\Gamma}=\Gamma_1$. $\Gamma\in \Comp(\nu^*\bar{\Delta})\cap \Comp(\tilde{D})$. Since $\Lambda_1=\hat{\Lambda}\neq\hat{\Gamma}=\Gamma_1$, $\Lambda\neq \Gamma$. We know $\Lambda\cap\Gamma=\emptyset$, since the pair $(\tau\nu, (\tilde{\Sigma}, \nu^*\bar{\Delta}, \nu^*\bar{\xi}))$ satisfies the third condition of (Z).
$\hat{\Lambda}\cap\hat{\Gamma}=\Lambda_1\cap\Gamma_1\subset\nu_1^{-1}(\Lambda\cap\Gamma)=\emptyset$. We know $\hat{\Lambda}\cap\hat{\Gamma}=\emptyset$.

We know that the pair $(\tau\nu\nu_1, (\Sigma_1, (\nu\nu_1)^*\bar{\Delta}, (\nu\nu_1)^*\bar{\xi}))$ satisfies the third condition in the above four conditions (Z).

We conclude that the pair $(\tau\nu\nu_1, (\Sigma_1, (\nu\nu_1)^*\bar{\Delta}, (\nu\nu_1)^*\bar{\xi}))$ satisfies three conditions in the above four conditions (Z) except the last one.

Recall that $2\leq\Codim(\Psi,\tilde{\Sigma})\in\Z_0$. We know that there exists a subset $\mathcal{M}$ of $\mathcal{L}$ such that $\sharp\mathcal{M}=\Codim(\Psi,\tilde{\Sigma})-1$ and $\Psi$ is an irreducible component of $\Lambda_0\cap(\cap_{\Gamma\in\mathcal{M}}\Gamma)$. We take any subset $\mathcal{M}$ of $\mathcal{L}$ satisfying these conditions.

Let $\delta:\Spec(\mathcal{O}_{\tilde{\Sigma}, [\Psi]}^c)\rightarrow \tilde{\Sigma}$ denote the canonical morphism, where $\mathcal{O}_{\tilde{\Sigma}, [\Psi]}^c$ denotes the completion of the local ring of $\tilde{\Sigma}$ at the generic point $[\Psi]$ of $\Psi$. For simplicity we denote $R=\mathcal{O}_{\tilde{\Sigma}, [\Psi]}^c$. There exists an element $\tilde{z}\in M(R)-M(R)^2$ with $\delta^*\Lambda_0=\Spec(R/\tilde{z}R)$. We take an element $\tilde{z}\in M(R)-M(R)^2$ with $\delta^*\Lambda_0=\Spec(R/\tilde{z}R)$. We know that there exist a parameter system $\tilde{P}$ of $R$ with $\tilde{z}\in\tilde{P}$, a bijective mapping $\tilde{y}:\mathcal{M}\rightarrow \tilde{P}-\{\tilde{z}\}$, a mapping $\tilde{e}:\mathcal{M}\rightarrow \Z_+$ and a mapping $\tilde{u}:\mathcal{M}\rightarrow R^{\times}$ satisfying $\delta^*\Gamma=
\Spec(R/(\tilde{z}+\tilde{y}(\Gamma)^{\tilde{e}(\Gamma)}\tilde{u}(\Gamma))R)$ for any $\Gamma\in\mathcal{M}$. We take a parameter system $\tilde{P}$ of $R$ with $\tilde{z}\in\tilde{P}$, a bijective mapping $\tilde{y}:\mathcal{M}\rightarrow \tilde{P}-\{\tilde{z}\}$, a mapping $\tilde{e}:\mathcal{M}\rightarrow \Z_+$ and a mapping $\tilde{u}:\mathcal{M}\rightarrow R^{\times}$ satisfying $\delta^*\Gamma=
\Spec(R/(\tilde{z}+\tilde{y}(\Gamma)^{\tilde{e}(\Gamma)}\tilde{u}(\Gamma))R)$ for any $\Gamma\in\mathcal{M}$. By calculation we know the following:
\begin{enumerate}
\item
The following three conditions are equivalent:
\begin{enumerate}
\item
There exists an irreducible component $\Psi_{(1)}$ of $\Lambda_{01}\cap(\cap_{\Gamma\in\mathcal{M}}\Gamma_1)$ with $\nu_1(\Psi_{(1)})=\Psi$.
\item
There exists uniquely an irreducible component $\Psi_{(1)}$ of $\Lambda_{01}\cap(\cap_{\Gamma\in\mathcal{M}}\Gamma_1)$ with $\nu_1(\Psi_{(1)})=\Psi$.
\item $\tilde{e}(\Gamma)\geq 2$ for some $\Gamma\in\mathcal{M}$.
\end{enumerate}
\end{enumerate}

Below, we assume that $\tilde{e}(\Gamma)\geq 2$ for some $\Gamma\in\mathcal{M}$ and $\Psi_{(1)}$ is the unique irreducible component of $\Lambda_{01}\cap(\cap_{\Gamma\in\mathcal{M}}\Gamma_1)$ with $\nu_1(\Psi_{(1)})=\Psi$.

\begin{enumerate}
\setcounter{enumi}{1}
\item
$\dim \Psi_{(1)}=\dim \Psi+\sharp\{\Gamma\in\mathcal{M}|\tilde{e}(\Gamma)\geq 2\}-1\geq\dim \Psi$. 
\item
Assume that $\sharp\{\Gamma\in\mathcal{M}|\tilde{e}(\Gamma)\geq 2\}=1$. 
We take the unique element $\Gamma_0\in\mathcal{M}$ with $\tilde{e}(\Gamma_0)\geq 2$.

The intersection number of divisors $\Lambda_0$ and $\Gamma$, $\Gamma\in\mathcal{M}$ at $\Psi$ is equal to $\tilde{e}(\Gamma_0)$.

The intersection number of divisors $\Lambda_{01}$ and $\Gamma_1$, $\Gamma\in\mathcal{M}$ at $\Psi_{(1)}$ is equal to $\tilde{e}(\Gamma_0)-1$.
\end{enumerate}

We denote 
$$\mathcal{Z}_1=\{\tilde{\mathcal{L}}\subset\Comp(D_1)-\{\Lambda_{01}\}|
\tilde{\mathcal{L}}\cap\Comp((\nu\nu_1)^*\bar{\Delta})\neq\emptyset,
\Lambda_{01}\cap(\bigcap_{\Gamma\in\tilde{\mathcal{L}}}\Gamma)\neq\emptyset\}.$$
We denote the set of maximal elements of $\mathcal{Z}$ by $\mathcal{Z}\Mx$ and 
the set of maximal elements of $\mathcal{Z}_1$ by $\mathcal{Z}\Mx_1$.

We know the following:
\begin{enumerate}
\item
$\mathcal{Z}_1=\emptyset\Leftrightarrow $ $\Lambda_{01}\cap\Gamma=\emptyset$ for any $\Gamma\in \Comp((\nu\nu_1)^*\bar{\Delta})\cap\Comp(D_1)\Leftrightarrow \mathcal{Z}\Mx=\{\mathcal{L}\}$, $\Comp(\Lambda_0\cap(\cap_{\Gamma\in\mathcal{L}}\Gamma))=\{\Psi\}$ and $\Lambda_{01}\cap\Gamma_1=\emptyset$ for any $\Gamma\in\mathcal{L} $.
\item
If $\hat{\mathcal{L}}\in\mathcal{Z}\Mx_1$ and $\Gamma_1\not\in \hat{\mathcal{L}}$ for any $\Gamma\in\mathcal{L}$, then there exists uniquely an element $\tilde{\mathcal{L}}\in\mathcal{Z}\Mx$ with $\tilde{\mathcal{L}}\neq\mathcal{L}$ and $\hat{\mathcal{L}}=\{\Gamma_1|\Gamma\in\tilde{\mathcal{L}}\}$.
\item
If $\hat{\mathcal{L}}\in\mathcal{Z}\Mx_1$ and $\Gamma_1 \in \hat{\mathcal{L}}$ for some $\Gamma\in\mathcal{L}$, then $\hat{\mathcal{L}}\subset \{\Gamma_1|\Gamma\in\mathcal{L}\}$.
\item
Assume that there exists $\Phi\in\Comp(\Lambda_0\cap(\cap_{\Gamma\in\mathcal{L}}\Gamma))$ with $\Phi\neq\Psi$.

$\{\Gamma_1|\Gamma\in\mathcal{L}\}\in \mathcal{Z}\Mx_1$.

For any $\Phi\in\Comp(\Lambda_0\cap(\cap_{\Gamma\in\mathcal{L}}\Gamma))$ with $\Phi\neq\Psi$, $\Phi\not\subset\Psi$ and $\Phi_1$ is the unique irreducible component $\Theta$ of $\Lambda_{01}\cap(\cap_{\Gamma\in\mathcal{L}}\Gamma_1)$ with $\nu_1(\Theta)=\Phi$. $\Phi_1\not\subset\Psi_1$.

For any $\Theta\in\Comp(\Lambda_{01}\cap(\cap_{\Gamma\in\mathcal{L}}\Gamma_1))$ with $\Theta\not\subset\Psi_1$, $\nu_1(\Theta) \in\Comp(\Lambda_0\cap(\cap_{\Gamma\in\mathcal{L}}\Gamma))$, $\nu_1(\Theta) \neq\Psi$ and $\nu_1(\Theta)_1=\Theta$.
\item
If there exist $\hat{\mathcal{L}}\in \mathcal{Z}\Mx_1$ and an irreducible component $\Psi_{(1)}$ of $\Lambda_{01}\cap(\cap_{\Gamma\in\hat{\mathcal{L}}}\Gamma)$ with $\Psi_{(1)}\subset\Psi_1$, then $\hat{\mathcal{L}}\subset\{\Gamma_1|\Gamma\in\mathcal{L}\}$.
\item
Consider any $\tilde{\mathcal{L}}\in \mathcal{Z}\Mx$ with $\tilde{\mathcal{L}}\neq\mathcal{L}$.

$\{\Gamma_1|\Gamma\in\tilde{\mathcal{L}}\}\in \mathcal{Z}\Mx_1$.

For any  $\Phi\in\Comp(\Lambda_0\cap(\cap_{\Gamma\in\tilde{\mathcal{L}}}\Gamma))$, $\Phi\not\subset\Psi$ and $\Phi_1$ is the unique irreducible component $\Theta$ of $\Lambda_{01}\cap(\cap_{\Gamma\in\mathcal{L}}\Gamma_1)$ with $\nu_1(\Theta)=\Phi$. $\Phi_1\not\subset\Psi_1$.

For any $\Theta\in\Comp(\Lambda_{01}\cap(\cap_{\Gamma\in\tilde{\mathcal{L}}}\Gamma_1))$,
$\nu_1(\Theta) \in\Comp(\Lambda_0\cap(\cap_{\Gamma\in\tilde{\mathcal{L}}}\Gamma))$, $\Theta\not\subset\Psi_1$ $\nu_1(\Theta)\not\subset\Psi$ and $\nu_1(\Theta)_1=\Theta$
\end{enumerate}

We denote
\begin{equation*}\begin{split}
r&=\max\{\Codim (\Phi, \tilde{\Sigma})| \tilde{\mathcal{L}}\in\mathcal{Z}\Mx, \Phi\in\Comp(\Lambda_0\cap(\cap_{\Gamma\in\tilde{\mathcal{L}}}\Gamma))\}\in\Z_+\text{, and}\\ 
s&=\sum_{\tilde{\mathcal{L}}\in\mathcal{Z}\Mx}\sum_{\Phi\in\Comp(\Lambda_0\cap(\cap_{\Gamma\in\tilde{\mathcal{L}}}\Gamma)), \Codim (\Phi, \tilde{\Sigma})=r}\\
&\qquad\quad\sum_{\tilde{\mathcal{M}}\subset\tilde{\mathcal{L}}, \sharp \tilde{\mathcal{M}}=r-1, \Phi\in\Comp(\Lambda_0\cap(\cap_{\Gamma\in\tilde{\mathcal{M}}}\Gamma))}( \{\Lambda_0\}\cup\tilde{\mathcal{M}};\Phi)\in\Z_+.
\end{split}\end{equation*}

Consider the case $\mathcal{Z}_1\neq\emptyset$. We denote
\begin{equation*}\begin{split}
r_1&=\max\{\Codim (\Phi, \Sigma_1)| \tilde{\mathcal{L}}\in\mathcal{Z}\Mx_1, \Phi\in\Comp(\Lambda_{01}\cap(\cap_{\Gamma\in\tilde{\mathcal{L}}}\Gamma))\}\in\Z_+\text{, and}\\ 
s_1&=\sum_{\tilde{\mathcal{L}}\in\mathcal{Z}\Mx_1}\sum_{\Phi\in\Comp(\Lambda_{01}\cap(\cap_{\Gamma\in\tilde{\mathcal{L}}}\Gamma)), \Codim (\Phi, \Sigma_1)=r_1}\\
&\qquad\quad\sum_{\tilde{\mathcal{M}}\subset\tilde{\mathcal{L}}, \sharp \tilde{\mathcal{M}}=r_1-1, \Phi\in\Comp(\Lambda_{01}\cap(\cap_{\Gamma\in\tilde{\mathcal{M}}}\Gamma))}(\{\Lambda_{01}\}\cup\tilde{\mathcal{M}};\Phi)\in\Z_+.
\end{split}\end{equation*}

We know that if $\Codim(\Psi,\tilde{\Sigma})=r$ and $\mathcal{Z}_1\neq\emptyset$, then $r_1\leq r$, and $s_1<s$ if $r_1=r$.

By induction we conclude that there exists an admissible composition of blowing-ups $\nu_2:\Sigma_2\rightarrow\tilde{\Sigma}$ such that $\Lambda_{02}\cap\Gamma=\emptyset$ for any $\Gamma \in\Comp((\nu\nu_2)^*\bar{\Delta})\cap\Comp(D_2)$, where $\Lambda_{02}$ denotes the strict transform of $\Lambda_0$ by $\nu_2$ and $D_2$ denotes the sum of strict transforms of elements in $\Comp(\tilde{D})$ by $\nu_2$. Note that $\Lambda_{02}\in\Comp(D_2)$.

We conclude that there exists an admissible composition of blowing-ups $\nu:\tilde{\Sigma}\rightarrow\Sigma$ over $\bar{\Delta}$ such that the pair $(\tau\nu, (\tilde{\Sigma},\nu^*\bar{\Delta}, \nu^*\bar{\xi}))$ satisfies three conditions in the above four conditions (Z) except the last one and it does not satisfy the last condition in (Z). 

We consider the case where the pair $(\tau\nu, (\tilde{\Sigma},\nu^*\bar{\Delta}, \nu^*\bar{\xi}))$ does not satisfy the last condition in (Z). There exists $\Lambda\in\Comp(\tilde{D})-\Comp(\nu^*\bar{\Delta})$ such that $\Lambda\cap\Gamma=\emptyset$ for any $\Gamma\in \Comp(\nu^*\bar{\Delta})\cap \Comp(\tilde{D})$.

We take a non-empty maximal subset $\mathcal{K}$ of $\Comp(\tilde{D})-\Comp(\nu^*\bar{\Delta})$ satisfying two conditions $\Lambda\cap\Gamma=\emptyset$ for any $\Lambda\in\mathcal{K}$ and any $\Gamma\in\Comp(\nu^*\bar{\Delta})\cap\Comp(\tilde{D})$, and $\Lambda\cap \Gamma=\emptyset$ for any $\Lambda\in\mathcal{K}$ and any $\Gamma\in\mathcal{K}$ with $\Lambda\neq\Gamma$.
By maximality of $\mathcal{K}$, for any $\Lambda\in\Comp(\tilde{D})$, there exists an element $\Gamma\in (\Comp(\nu^*\bar{\Delta})\cap\Comp(\tilde{D}))\cup\mathcal{K}$ satisfying $\Lambda\cap\Gamma\neq\emptyset$.

We put $\tilde{\Delta}=\nu^*\bar{\Delta}+\sum_{\Lambda\in\mathcal{K}}\Lambda$.
$\tilde{\Delta}$ is an effective divisor of $\tilde{\Sigma}$. $\Supp(\nu^*\bar{\Delta})\subset\Supp(\tilde{\Delta})$. 
$\Comp(\tilde{\Delta})\cap\Comp(\tilde{D})=(\Comp(\nu^*\bar{\Delta})\cap\Comp(\tilde{D}))\cup\mathcal{K}$.
$\sharp (\Comp(\bar{\Delta})\cap\Comp(\bar{D}))=\sharp (\Comp(\nu^*\bar{\Delta})\cap\Comp(\tilde{D}))<\sharp (\Comp(\tilde{\Delta})\cap\Comp(\tilde{D}))$.
For any $\Lambda\in\Comp(\tilde{\Delta})\cap\Comp(\tilde{D})$ and any $\Gamma\in\Comp(\tilde{\Delta})\cap\Comp(\tilde{D})$ satisfying $\Lambda\neq\Gamma$, $\Lambda\cap\Gamma=\emptyset$.
For any $\Lambda\in\Comp(\tilde{D})$, there exists an element $\Gamma\in (\Comp(\tilde{\Delta})\cap\Comp(\tilde{D}))$ satisfying $\Lambda\cap\Gamma\neq\emptyset$.

Consider any $\Lambda\in\mathcal{K}$. By the first condition in (Z) we know the following.
\begin{enumerate}
\item
The component $\Lambda$ is smooth.
\end{enumerate}

Let $\tilde{U}_\Lambda=\tilde{\Sigma}-(\cup_{\Gamma\in\Comp(\tilde{\Delta}_0), \Gamma\cap\Lambda=\emptyset}\Gamma)\subset\tilde{\Sigma}$. Note that the subset $\tilde{U}_\Lambda$ is open in $\tilde{\Sigma}$ and it contains $\Lambda$.

\begin{enumerate}
\setcounter{enumi}{1}
\item
The pair $(\tilde{U}_\Lambda, (\Lambda+\tilde{\Delta}_0)|\tilde{U}_\Lambda)$ is a normal crossing scheme over $k$.
\end{enumerate}

We take the unique element $\chi\in\mathcal{X}\cup\{0\}$ such that $\Lambda$ is the strict transform of $\hat{\sigma}^*\Spec(A/(z+\chi)A)$ by $\tau\nu$. 

\begin{enumerate}
\setcounter{enumi}{2}
\item
$\Supp(\Lambda+\tilde{\Delta}_0)\cap \tilde{U}_\Lambda=(\tau\nu)^{-1}(\Supp(\hat{\sigma}^*\Spec(A/(z+\chi)A)+\pi^*\hat{\Delta}'))\cap \tilde{U}_\Lambda$.
\end{enumerate}

We know that $\tilde{\Delta}$ has normal crossings and the pair $(\tilde{\Sigma}, \tilde{\Delta})$ is a normal crossing scheme over $k$. 
$\Supp(\nu^*\bar{\Delta})\subset \Supp(\tilde{\Delta})$.
$(\nu^*\bar{\Delta})_0\subset (\tilde{\Delta})_0$.

Consider any $\B\in (\nu^*\bar{\Delta})_0$.

$\Comp(\tilde{\Delta})(\B)=\Comp(\nu^*\bar{\Delta})(\B)$, $U(\tilde{\Sigma},\tilde{\Delta},\B)\subset U(\tilde{\Sigma},\nu^*\bar{\Delta},\B)$.
We put
$\tilde{\xi}_\B(\Lambda)=\Res^{ U(\tilde{\Sigma},\nu^*\bar{\Delta},\B)}_{ U(\tilde{\Sigma},\tilde{\Delta},\B)}(\nu^*\bar{\xi})_\B(\Lambda)$ for any $\Lambda\in \Comp(\tilde{\Delta})(\B)$ and we define a mapping $\tilde{\xi}_\B: \Comp(\tilde{\Delta})(\B)\rightarrow\mathcal{O}_{\tilde{\Sigma}}( U(\tilde{\Sigma},\tilde{\Delta},\B))$.
Obviously $\tilde{\xi}_\B$ is a coordinate system of $(\tilde{\Sigma}, \tilde{\Delta})$ at $\B$.

Consider any $\B\in(\tilde{\Delta})_0-(\nu^*\bar{\Delta})_0$.

We take any $\A\in (\nu^*\bar{\Delta})_0$ with $\B\in U(\tilde{\Sigma},\nu^*\bar{\Delta},\A)$. 
$\sharp(\Comp(\tilde{\Delta})(\B)\cap\Comp(\nu^*\bar{\Delta})($\break$\A))+1=\sharp (\Comp(\tilde{\Delta})(\B))=\sharp(\Comp(\nu^*\bar{\Delta})(\A))$.
$\Comp(\tilde{\Delta})(\B)- \Comp(\nu^*\bar{\Delta})(\A)\in\mathcal{K}$.
$\Comp(\nu^*\bar{\Delta})(\A)- \Comp(\tilde{\Delta})(\B)\in \Comp(\nu^*\bar{\Delta})\cap\Comp(\tilde{D})$. 
$U(\tilde{\Sigma},\tilde{\Delta},\B)\subset U(\tilde{\Sigma},\nu^*\bar{\Delta},\A)$.
We put
$\tilde{\xi}_\B(\Lambda)=\Res^{ U(\tilde{\Sigma},\nu^*\bar{\Delta},\A)}_{ U(\tilde{\Sigma},\tilde{\Delta},\B)}(\nu^*\bar{\xi})_\A(\Lambda)$ for any $\Lambda\in \Comp(\tilde{\Delta})(\B)\cap\Comp(\nu^*\bar{\Delta})(\A)$.

Consider the unique element $\Lambda\in \Comp(\tilde{\Delta})(\B)- \Comp(\nu^*\bar{\Delta})(\A)$. $\Lambda\in\mathcal{K}\subset\Comp(\tilde{D})-\Comp(\nu^*\bar{\Delta})$. 
$\B\in\Lambda\cap U(\tilde{\Sigma},\nu^*\bar{\Delta},\A)\neq\emptyset$.
We take the unique element $\chi\in\mathcal{X}\cup\{0\}$ such that $\Lambda$ is the strict transform of $\hat{\sigma}^*\Spec(A/(z+\chi)A)$ by $\tau\nu$. 
We know that there exists uniquely a pair of an element $\lambda\in\mathcal{O}_{\tilde{\Sigma}}( U(\tilde{\Sigma},\nu^*\bar{\Delta},\A))$ and a mapping $c: \Comp(\nu^*\bar{\Delta})(\A)\rightarrow\Z_0$ satisfying 
$$\Res^{\tilde{\Sigma}}_{ U(\tilde{\Sigma},\nu^*\bar{\Delta},\A)}( \hat{\sigma}\tau\nu)(\Spec(A))( z+\chi)=\lambda\prod_{\Gamma\in \Comp(\nu^*\bar{\Delta})(\A)} (\nu^*\bar{\xi})_\A(\Gamma)^{c(\Gamma)}.$$
We take the unique pair of an element $\lambda\in\mathcal{O}_{\tilde{\Sigma}}( U(\tilde{\Sigma},\nu^*\bar{\Delta},\A))$ and a mapping $c: \Comp(\nu^*\bar{\Delta})(\A)\rightarrow\Z_0$ satisfying this equality. We know that $\Lambda\cap U(\tilde{\Sigma},\nu^*\bar{\Delta},\A)=
\Spec(\mathcal{O}_{\tilde{\Sigma}}( U(\tilde{\Sigma},\nu^*\bar{\Delta},\A))/\lambda\mathcal{O}_{\tilde{\Sigma}}( U(\tilde{\Sigma},\nu^*\bar{\Delta},\A)))$.
We put $\tilde{\xi}_\B(\Lambda)=\Res^{ U(\tilde{\Sigma},\nu^*\bar{\Delta},\A)}_{ U(\tilde{\Sigma},\tilde{\Delta},\B)}(\lambda)$.

We have a mapping $\tilde{\xi}_\B: \Comp(\tilde{\Delta})(\B)\rightarrow\mathcal{O}_{\tilde{\Sigma}}(U(\tilde{\Sigma},\tilde{\Delta},\B))$. We know that $\tilde{\xi}_\B$ is a coordinate system of $(\tilde{\Sigma},\tilde{\Delta})$ at $\B$.

Let $\tilde{\xi}=\{\tilde{\xi}_\B|\B\in (\tilde{\Delta})_0\}$. The set $\tilde{\xi}$ is a coordinate system of $(\tilde{\Sigma},\tilde{\Delta})$ and it is an extension of $\nu^*\bar{\xi}$.

The morphism $\tau\nu:\tilde{\Sigma}\rightarrow \hat{\Sigma}$ is a weakly admissible composition of blowing-ups over $(\hat{\Delta},\hat{\xi})$. The coordinated normal crossing scheme $(\tilde{\Sigma},\tilde{\Delta},\tilde{\xi})$ is an extended pull-back of $(\hat{\Sigma},\hat{\Delta},\hat{\xi})$ by $\tau\nu$. The triplet $(\tau\nu, (\tilde{\Sigma},\tilde{\Delta},\tilde{\xi}),\tilde{D})$ satisfies the five conditions (Z). $\sharp(\Comp(\tilde{\Delta})\cap\Comp(\tilde{D}))>\sharp(\Comp(\bar{\Delta})\cap\Comp(\bar{D}))$.

By induction on $\sharp\Comp(\bar{D})-\sharp(\Comp(\bar{\Delta})\cap\Comp(\bar{D}))$ we know that there exists a triplet $(\tau,(\Sigma,\bar{\Delta},\bar{\xi}),\bar{D})$ satisfying the above five conditions (Z) and $\sharp(\Comp(\bar{\Delta})\cap\Comp(\bar{D}))=\sharp\Comp(\bar{D})$.

We conclude that Theorem~\ref{make normal crossings} holds, if $\mathcal{X}\not\subset\{0\}$.

We know that Theorem~\ref{make normal crossings} holds in all cases.

\bibliographystyle{amsplain}

\end{document}